\newtheorem{theorem}{\bf Theorem}[subsection]
\newtheorem{prop}[theorem]{\bf Proposition}
\newtheorem{cor}[theorem]{\bf Corollary}
\newtheorem{lemme}[theorem]{\bf Lemma}
\newtheorem{definition}[theorem]{\bf Definition}
\theoremstyle{remark}
\newtheorem{example}[theorem]{\bf Exemple}
\theoremstyle{remark}
\newtheorem{rem}[theorem]{\bf Remark}
\theoremstyle{remark}
\newtheorem{notation}[theorem]{\bf Notation}
 \numberwithin{equation}{subsection}
\newcommand{\resp}{{\it resp.\;}}
\newcommand{\sorth}{\begin{picture}(15,10)(-3,-10)
\put(0,-9){\line(1,0){10}}
\put(4,-9){\line(0,1){7}}
\put(6,-9){\line(0,1){7}}
   \end{picture}}
\newcommand{\ad}{\operatorname{ad}}
\newcommand{\tr}{\operatorname{tr}}
\newcommand{\codim}{\operatorname{codim}}
\def\al{\alpha}
\def\be{\beta}
\def\De{\Delta}
\def\de{\delta}
\def\ep{\varepsilon}
\def\ga{\gamma}
\def\la{\lambda}
\def\om{\omega}
\def\Om{\Omega}
\def\si{\sigma}
\def\Si{\Sigma}
\def\te{\theta}
\def\i{\iota}
\def\plus{\mathop{\hbox{$\oplus$}}}
\def\go{\mathfrak}
\def\bb{\mathbb}
\def\C{\bb C}
\def\Z{\bb Z}
\def\N{\bb N}
\def\R{\bb R}
\def\Q{\bb Q}
\def\H{\bb H}
\def\cal{\mathcal}
\def\cqfd{\qed}
\def\plus{\mathop{\hbox{$\oplus$}}}
\def\me{\medskip}
\def\no{\noindent}
 \def\beq{\begin{equation}}
\def\eeq{\end{equation}}
\newenvironment{res}
              {\begin{equation}\begin{minipage}{0.85\textwidth}}
               {\end{minipage}\end{equation}}
\def\ber{\begin{res}}
\def\eer{\end{res}}
\def\qed{{\null\hfill\ \raise3pt\hbox{\framebox[0.1in]{}}\break\null}}
\def\bb{\mathbb}
\def\ad{{\rm{ad}}}
\def\det{{\rm{det}}}
\def\cal{\mathcal }
\def\cA{{\mathcal A}}
\def\cB{{\mathcal B}}
\def\cE{{\mathcal E}}
\def\cF{{\mathcal F}}
\def\cH{{\mathcal H}}
\def\cJ{{\mathcal J}}
\def\cK{{\mathcal K}}
\def\cL{{\mathcal L}}
\def\cM{{\mathcal M}}
\def\cN{{\mathcal N}}
\def\cO{{\mathcal O}}
\def\cP{{\mathcal P}}
\def\cR{{\mathcal R}}
\def\cS{{\mathcal S}}
\def\cT{{\mathcal T}}
\def\cV{{\mathcal V}}
\def\cZ{{\mathcal Z}}
\DeclareFontFamily{OT1}{rsfs}{}
\DeclareFontShape{OT1}{rsfs}{n}{it}{<-> rsfs10}{}
\DeclareMathAlphabet{\mathscr}{OT1}{rsfs}{n}{it}
\def\Cr{{\mathscr C}}
\def\Or{{\mathscr O}} 
\def\Sr{{\mathscr S}}
\def\C{\mathbb C}
\def\Q{\mathbb Q}
\def\R{\mathbb R}
\def\N{\mathbb N}
\def\P{\mathbb P}
\def\Q{\mathbb Q}
\def\R{\mathbb R}
\def\X{\mathbb X}
\def\Z{\mathbb Z}
\def\bC{\mathbf C}
\def\bP{\mathbf P}
\def\bU{\mathbf U}
 \def\adots{\mathinner{\mkern2mu\raise1pt\hbox{.}
\mkern3mu\raise4pt\hbox{.}\mkern1mu\raise7pt\hbox{.}}}
\author{ Pascale Harinck}   
\author{Hubert Rubenthaler}
\address
{Pascale.~Harinck, CMLS, CNRS, \'Ecole polytechnique, Institut Polytechnique de Paris,   91128 Palaiseau Cedex, France.\\
E-mail: {\tt pascale.harinck@polytechnique.edu}}
\address{Hubert Rubenthaler\\ Institut de Recherche Math\'ematique Avanc\'ee\\
Universit\'e de Strasbourg et CNRS\\
7 rue Ren\'e Descartes\\
67084 Strasbourg Cedex\\ France\\
E-mail: {\tt rubenth@math.unistra.fr}}
\begin{document}

 \parindent=0pt
\title[Classification, Structure, and associated Local Zeta Functions]{Classification, Structure, and associated Local Zeta Functions for a class of p-adic symmetric spaces }



  \maketitle
  
  \vskip 30pt  
 \hskip 300pt {A la M\'emoire d'Anne Fricker}
 \centerline{\bf Part I: Structure and Orbits}
  
 \vskip 100pt 
 
 {\bf Abstract:}\,

 {\bf  Part I}:\,\, {\it   Let  $F$ be  a p-adic field of characteristic $0$.  Let  $\widetilde{\go g}$ be  a reductive Lie algebra over $F$ which is endowed with a short $\Z$-grading:  $\widetilde{\go g}=\widetilde{\go g}_{-1}\oplus \widetilde{\go g}_{0}\oplus \widetilde{\go g}_{1}$. It is known that the representation $(\widetilde{\go g}_{0},\widetilde{\go g}_{1})$ is always prehomogeneous. Under some additional conditions we classify these objects using weighted Satake-Tits diagrams.
  Moreover we  study  the orbits of $\widetilde {G}_{0}$ in $\widetilde{\go g}_{1}$, where $\widetilde {G}_{0}$ is an algebraic group defined over $F$,  whose Lie algebra is $\widetilde{\go g}_{0}$. It turns out that the open orbits are symmetric spaces. We then investigate the $P$-orbits in $\widetilde{\go g}_{1}$, where $P$ is a minimal $\sigma$-split parabolic  subgroup of $\widetilde {G}_{0}$ ($\sigma$ being the involution defining the  symmetric spaces).}

 {\bf  Part II}:\,\, {\it In the second part we define and study the zeta functions associated to the minimal spherical principal  series of representations (of $\widetilde {G}_{0}$) for  the reductive $p$-adic symmetric 
 spaces described  in Part I.    We prove that these zeta functions satisfy a functional equation which is given explicitly (see Theorem \ref{thm-principal} and Theorem \ref{thm-mainbis}).
 Moreover, for a subclass of these spaces, we define $L$-functions and $\ep$-factors associated to the representations.}
 
This paper is essentially the synthesis of two preceding papers by the authors (arXiv:2003.05764 and arXiv:2407.06667)
 with some additions and some corrections.
 
\vskip 180pt
\hskip 30ptAMS classification: 22E46, 16S32
\vfill\eject
 {\small \def\contentsname{Table of Contents}
\tableofcontents}

\medskip\medskip\medskip\medskip\medskip\medskip
\newpage

 \vskip 50pt

\section*{Introduction}

 \hskip 10pt The  purpose of this paper is to   define  local Zeta functions, for a class of reductive p-adic symmetric spaces  and attached to the   representations of the $\sigma$-minimal series,  and to prove their explicit functional equation. This goal will be reached in the second part.
 
  \vskip 10pt
 \hskip 10pt This paper follows the same lines as the corresponding paper which dealt with the real case by Nicole Bopp and the second author (\cite{B-R}). But of course, due to the big   differences between  the structures of the base fields, the proofs (as well as the definition of the group which acts), are often rather different.

 \vskip 30pt

\centerline{\bf Part I}
\vskip 30pt

In the  first part, we are only concerned with classification and structure theory for these symmetric spaces.

\hskip 10pt
\hskip 10pt Let $F$ be a p-adic field of characteristic $0$ whose residue class field has characteristic $\neq 2$. The main object under consideration is a reductive  Lie algebra $\widetilde{ {\go g}}$ endowed with a short $\Z$-grading 
$$\widetilde{ {\mathfrak g}}=V^-\oplus {\mathfrak g}\oplus V^+\quad (V^+\not = \{0\}).$$
 \vskip 10pt
 
\hskip 10pt This means that $[\go{g}, \go{g}]\subset \go{g}$, $[\go{g},V^\pm]\subset V^\pm$, $[V^+,V^+]=[V^-,V^-]=\{0\}$. We also suppose that the corresponding representation of $\go{g}$ on $V^+$ is absolutely irreducible.  Then $(\go{g},V^+)$ is an infinitesimal prehomogeneous vector space. It is well known that such a grading is defined by a grading element $H_{0}$. We normalize it in such a way that $\ad(H_{0})$ has eigenvalues $-2$,$0$, $2$ on $V^-$, $\go{g}$, $V^+$, respectively.

We will call such an object a {\it graded Lie algebra}.

  \vskip 10pt

\hskip 10pt We introduce a natural algebraic subgroup $G$ of the group of automorphisms of $\widetilde{\go{g}}$ whose Lie algebra is $\go{g}$ (which was first used by Iris Muller (\cite {Mu97})). This group is defined  at the beginning of section \ref{sub-section-generic-strongly-orth} as the centralizer of $H_{0}$ in the group of automorphisms of  $\widetilde{\go{g}}$ which become elementary over a field extension.  Then $(G, V^+)$ is a prehomogeneous vector space.   
 \vskip 10pt
\hskip 10pt Although a part of the structure theory can be carried out with  no further assumption (section \ref{sub-section-definitions} up to section \ref{sub-section-generic-strongly-orth}), we need to introduce the so-called {\it regularity condition}, and we will essentially   consider only regular graded Lie algebras in this paper. 
A graded Lie algebra is said to be regular (Definition \ref{def-regulier}) if the grading element $H_{0}$   is the semi-simple element of an $\go{sl}_{2}$-triple. This condition is also equivalent to the existence of a non-trivial relative invariant polynomial $\Delta_{0}$ of the prehomogeneous space $(G, V^+)$, and also, as we will see in section \ref{section-G/H}, to the fact that the various open $G$-orbits in $V^+$ are symmetric spaces. 
 \vskip 10pt
\hskip 10pt These    open orbits of $G$ in $V^+$ (and in $V^-$) are precisely the symmetric spaces we are interested in.

 \vskip 10pt
 \hskip 10pt  One  can always suppose that  $ {\widetilde{\go{g}}}$ is semi-simple. Then the  assumptions on the grading imply that $\overline{\go{g}}+ \overline{V^+}$  (where the overline stands for scalar extension to an algebraic closure) is a maximal parabolic subalgebra of $\overline{\widetilde{\go{g}}}$, and hence it is defined by the single root which is removed from the root basis of  $\overline{\widetilde{\go{g}}}$ to obtain the root system of $\go{g}$. It can be shown that this single root is a ``white'' root in the Satake-Tits diagram of $\widetilde{\go{g}}$. Therefore the gradings we are interested in are in one to one correspondence with ``weighted'' Satake-Tits diagrams where one ``white'' root is circled. The classification is done in section  \ref{section-classification} and the list of the allowed diagrams is given in Table 1 in section \ref{sec:section-Table}.
 \vskip 10pt
 \hskip 10pt  A key tool in the orbital descripion of $(G,V^+)$ is a kind of {\it principal diagonal} 
 $$\widetilde{\go{g}}^{\lambda_{0}}\oplus\ldots\oplus \widetilde{\go{g}}^{\lambda_{k}}\subset V^+$$
 where $\lambda_{0},\lambda_{1},\ldots,\lambda_{k}$ is a maximal subset of strongly orthogonal roots living in $V^+$. This sequence starts with the root $\lambda_{0}$, the root defining the above mentioned parabolic, and  is obtained by an induction process which we call ``descent'' (see sections \ref{section-descente1} and \ref{sub-section-descente2}). 
  \vskip 10pt
 \hskip 10pt
 A first step in the classification of the orbits is to prove that any $G$-orbits meets this principal diagonal. This is done in Theorem \ref{th-V+caplambda}. Another step is the study of the so-called rank 1 case in section \ref{subsectionk=0} (Theorem \ref{th-k=0}).    

 \vskip 10pt
 \hskip 10pt

 Finally, in order to classify the orbits, we need to distinguish  three Types (see Definition \ref{def-type}) and the full classification of the orbits is obtained in Theorem   \ref{thm-orbites-e04} for Type $I$, Theorem \ref{thm-orbites-e1} and Theorem \ref{thm-orbites-e2} for Type $II$, and Theorem  \ref{th-d=3} for Type $III$. \\
 \vskip 10pt
 \hskip 10pt Section \ref{section-G/H}  is devoted to the study of the associated symmetric spaces.  Let $\Omega_{1},\ldots,\Omega_{r_0}$ be the open orbits in $V^+$. As we said before these open orbits are symmetric spaces. More precisely this means that if we choose elements $I_{j}^+\in \Omega_{j}$, and if $H_{j}={ Z}_{G}(I_{j}^+)$  then for all $j$  there exists an involution $\sigma_{j}$ of $\go{g}$ (in fact the restriction of an  involution of $\widetilde{\go{g}}$ which stabilizes $\go{g}$) such that $H_{j}$ is an open subgroup of the fixed point group $G^{\sigma_{j}}$. Therefore $\Omega_{j}\simeq G/H_{j}$ can be viewed as a symmetric space. A striking fact is that all these $G$-symmetric spaces have the same minimal $\sigma_{j}$-split parabolic subgroup $P$  (i.e. $\sigma_j(P)$ is the opposite parabolic of $P$  and $P$ is minimal for this property). Moreover $(P,V^+)$ is again a prehomogeneous space. We define and study a family of polynomials $\Delta_{0},\ldots,\Delta_{k}$ which are the fundamental relative invariants of $(P,V^+)$. We also determine the open orbits of $(P, V^+)$  in terms of the values of the $\Delta_{j}$ (Theorem \ref{th-Porbites}). Finally we introduce an involution $\gamma$ of $\widetilde{\go{g}}$ which exchanges $V^+$ and $V^-$ and which allows to define the fundamental relative invariants of $(P,V^-)$ and to determine its open orbits.

  \vskip 10pt
 \hskip 10pt We must also mention that the study of graded algebras over a $p$-adic field was initiated by Iris Muller in a series of paper (\cite{Mu98}, \cite{Mu97}, \cite{Mu02}, \cite{Mu}),  in a more general context (the so-called quasi-commutative case), but her results seem to us less precise and sometimes weaker than  ours. Moreover she never considered the symmetric space   aspects of these spaces.
 
  \vskip 10pt
 \hskip 10pt Finally it should be noticed that, via the Kantor-Koecher-Tits construction (which is still valid over a p-adic field), there is a bijection between the regular graded Lie algebras we consider here and absolutely simple Jordan algebra structures on $V^+$ (see \cite{FK}, and the references there). And, probably,   the group $G$ which is used in this paper is very closed to the  ``structure group'' for the $p$-adic Jordan algebra $V^+$.

 \vskip 30pt

\centerline{\bf Part II}
\vskip 30pt

In this second part  we define and study the local zeta functions associated to  representations of the minimal spherical principal series for the class of symmetric spaces introduced  in the first part.  We obtain explicit functional equation for these zeta functions.

  It is worth noticing that Wen-Wei Li was interested in similar questions for prehomogeneous spaces $(G,X)$ where $G$ is a reductive group over a local field of characteristic zero whose open orbits in $X$ are spherical varieties (see section 6.3 in \cite{Li}).\\
 Further developments of our second part could  certainly  be related to the work of Wen-Wei Li. 
 \vskip 5pt
Let us first  describe, formally,   the  general setting where this paper takes place. Let $G$ be a connected reductive algebraic group over a p-adic local field $F$ of characteristic zero. Suppose we are given an irreducible regular prehomogeneous vector space $(G,V)$ defined over $F$ (see \cite{Sa} for example) and denote by $\Delta_{0}$ the fundamental relative invariant. Then the dual representation $(G,V^*)$ is still a prehomogeneous vector space of the same type. We denote by $\Delta_{0}^*$ its fundamental relative invariant.

Suppose,  for sake of simplicity, only in this introduction, that $(G,V)$ has only one open orbit $\Omega$. Let $H$ be the isotropy subgroup of an element $I\in \Omega$. Moreover, suppose that $G/H\simeq \Omega$ is a symmetric space corresponding to an involution $\sigma$ of $G$.
 
 Then the dual space $(G,V^*)$ has the same property. More precisely the open orbit $\Omega^*\subset V^*$ contains the element $I^*= \frac{d \Delta_{0}}{\Delta_{0}}(I)$ (see \cite{Sa}) and the isotropy subgroup of $I^*$ is still $H$.

Consider now a minimal $\sigma$-split parabolic subgroup $P$ of $G$ (this means that $P$ and $\si(P)$ are opposite and that  $P$ is minimal for this property) and denote by $L=P\cap \si(P)$ its $\si$-stable Levi subgroup.  Again for sake of simplicity,  we suppose that $P$ also has a unique open orbit in $V$.  It is well known (see \cite{B-D}) that if $\chi$ is a character of $L$ which is trivial on $L\cap H$, the induced representation $\pi_{\chi}= \text{Ind}_{P}^{G}(\chi)$ is generically $H$- distinguished (or $H$-spherical).  This means that for ``almost all''  characters $\chi$ the dual space $I_{\chi}^*$ of the space $I_{\chi}$ of $\pi_{\chi}$ contains a nonzero  $H$-invariant vector $\xi$. Therefore if $w\in I_{\chi}$ the coefficient $\langle \pi_{\chi}^*(g)\xi,w\rangle $ is right $H$-invariant and hence can be considered as a function on $\Omega$ (or $\Omega^*$). The minimal spherical series for $G/H$ is the set of the representations $\pi_{\chi}$. 

  Let $\cS(V)$ (respectively $\cS(V^*)$) be the spaces of locally constant functions with compact support on $V$ (respectively $V^*$). Let also $d^{*}X$ (resp. $d^*Y$) be the $G$-invariant measure on $V$ (resp. $V^*$). 
  For $\Phi\in \cS(V)$ and $\Psi\in \cS(V^*)$, and $s\in \C$, let us define (formally!) the following local zeta functions:
  
  $$Z(\Phi,s,\xi,w)=\int_{\Omega}\Phi(X) |\Delta_{0}(X)|^s\langle \pi_{\chi}^*(X)\xi,w\rangle d^*X,$$
  
  $$Z^*(\Psi,s,\xi,w)=\int_{\Omega^*}\Psi(Y) |\Delta_{0}^*(Y)|^s\langle \pi_{\chi}^*(Y)\xi,w\rangle d^*Y.$$
  
  It is expected    that these zeta functions can be correctly defined (via absolute convergence and meromorphic continuation)   and that they should verify a functional of the type:
  
  $$Z^*(\cF(\Phi), m-s, \xi, w)= \gamma(s, \chi)Z(\Phi,s, \xi,w), $$
  where $\cF:\cS(V)\longrightarrow \cS(V^*)$ is the Fourier transform, $m$ is a suitable ``shift'', and $\gamma(s, \chi)$ a meromorphic function.
  
  The aim of this part is to perform this program, including an explicit form of $\gamma(s, \chi)$ in terms of local Tate factors and Weil constants related to the Fourier transform of some quadratic characters. This is done  even in the case where $G$ has several open orbits, each of it being a symmetric space, for a class of $p$-adic prehomogeneous vector spaces (which is essentially the class described in the first part). See Theorem \ref{thm-principal} and \ref{thm-mainbis} below.
 
 Our results contain, as a particular case, the case of $GL_{n}(F)\times GL_{n}(F)$ acting on the space $M_{n}(F)$ of $n$ by $n$ matrices, which gives rise to the Godement-Jacquet zeta function for the principal minimal series for $GL_{n}(F)$.
 
 Moreover, in the case where $G$ and $P$ have a unique open orbit in $V^+$,  the space of $H$-invariant linear forms on $I_\chi$ is $1$-dimensional. In that case we define and prove the existence of $L$-functions which describe the poles of the zeta functions $Z(\Phi,s,\xi,w)$ and $Z^*(\Psi,s,\xi,w)$ for all $\Phi\in\cS(V^+)$, $\Psi\in \cS(V^-)$ and $w\in I_\chi$. We define  also  the corresponding $\ep$-factors. This  generalizes the  results of Godement-Jacquet  for the principal minimal series for $GL_{n}(F)$. See Proposition \ref{prop-L(z,pi)} and Theorem \ref{thm-analogueGJ} below.\me

 Let us now describe   the content of the second part.
 
 $\bullet$ In section \ref{sec:Preliminaries} we summarize briefly the main properies of the graded Lie algebras which were obtained in Part I.  
    
     For technical reasons we also introduce a subgroup $\tilde P$ of the minimal $\sigma$-split parabolic $P$ which will play an important role. The representations $(P,V^{\pm})$ and $(\tilde P, V^{\pm})$ are prehomogeneous and the open orbits are described.
 \me

 $\bullet  $ Section \ref{sec:moyennes-Weil} is devoted to fix some  imported tools. First of all we define the Fourier transform 
$\cF: \cS(V^{+})\longrightarrow \cS(V^{-})$ and prove that there exists a unique pair of measures on $V^{+}\times V^{-}$ which are dual for $\cF $ and verify some  additional compatibility condition. 
 We also define and study the  so-called {\it mean functions} (see definition \ref{def-moyennes}) which correspond, roughly speaking, to integration on an $\cN$ orbit where $\cN$ is the nilradical of a $\sigma$-split parabolic, non necessarily minimal. We also normalize the measures on various subspaces of $\go g$. These normalizations are necessary to compute precisely the factors in the final functional equations. In this section these normalizations are   also needed to compute precisely the   Weil constants corresponding to some quadratic forms occurring in the classification of the orbits (see Proposition \ref{prop-calcul-gamma}).
  \me

 $\bullet  $ As said before the representations $(\tilde P, V^{\pm})$ are prehomogeneous and regular. But there are several fundamental relative invariants named $\Delta_{0},\Delta_{1},\ldots,\Delta_{k}$. In this situation one can classically define   zeta functions associated to this prehomogeneous space. Roughly speaking they are of the form $\cK(f,s)=\int_{V^+}f(X)|\Delta(X)|^{s}dX$ where $f\in \cS(V^+)$, $s=(s_{0},\ldots,s_{k})\in \C^{k+1}$ and where $|\Delta(X)|^{s}=|\Delta_{0}(X)|^{s_{0}}\ldots|\Delta_{k}(X)|^{s_{k}}$. See Definition \ref{def-zetaP}. In such a situation it is known from the work of F. Sato (\cite{Sa}) that there exists a functional equation if the prehomogeneous spaces satisfies a certain condition  ($A2'$).   In section \ref{sec:zetaPH} we prove that the spaces $(\tilde P, V^{\pm})$ satisfy this condition   (Theorem \ref{ConditionA2}) and compute explicitly the constants in the functional equation (Theorem \ref{EF-(P,V^+)}).
  \me

 $\bullet$  Section \ref{sec:zeta} contains the main results.  The symmetric spaces  $G/H_{i}$ ($i=1,\ldots, r_0$) we consider are the open $G$-orbits in $V^{+}$. Let $\sigma_{i}$ be the corresponding involution of $G$. The key point here is the fact that the parabolic subgroup $P$ defined previously is minimal $\sigma_{i}$-split for all $\sigma_{i}$.  Therefore these symmetric spaces have the same minimal spherical series. 
 
 The zeta functions associated to such representations are defined in \ref{def-pi(X)-fonction-zeta} as integrals  depending on several complex parameters $\mu_{j}$ and $z$.  We prove that they are rational functions in the variables $q^{\pm \mu_{j}}, q^{\pm z }$ ($q$ is the  cardinal of the residue field of $F$), that they satisfy a functional equation which is explicitly computed (see Theorem \ref{thm-principal}). The main ingredient of the proofs is the work of P. Blanc and P. Delorme (\cite {B-D})  and  the close relation, via the Poisson kernel,  between these zeta functions associated to representations and the zeta functions of the prehomogeneous space $(P,V^+)$ studied in section 3.
We also give a second version of the main Theorem, which modulo the introduction of an  operator valued ``gamma'' factor, has a very simple form (see Theorem \ref{thm-mainbis}).
Finally,  in the case where  $G$ and $P$ have a unique open orbit in $V^+$,  we   define and prove the existence of $L$-functions and  $\ep$-factors associated to these representations (Proposition \ref{prop-L(z,pi)} and Theorem \ref{thm-analogueGJ}).
\me
\medskip

${\bf Acknowledgments}$

During the writing of this paper we have greatly benefitted from help trough discussions or  mails from several colleagues. We would like to thank  Giuseppe Ancona, Henri Carayol, Jan Denef, Rutger Noot, Guy Rousseau, Torsten Schoeneberg and Marcus Slupinski.

   \newpage
   
  {\,}
 \vskip 200pt

  \part{} { \centering{\Huge {\bf Structure and Orbits}}}
 \newpage
 
 \vskip 20pt
 \section{3-graded Lie algebras}\label{section-PH}
 \vskip 20pt
 \subsection{A class of graded algebras}\label{sub-section-definitions}\hfill
 
 In this paper the ground field $F$ is a $p$-adic field of characteristic $0$, i.e. a finite extension of $\Q_{p}$. Moreover we will always suppose that the residue class field has characteristic $\neq 2$ (non dyadic case). We will denote by $\overline{F}$ an algebraic closure of $F$. In the sequel, if $U$ is a $F$-vector space, we will set 
 $$\overline U=U\otimes_{F}\overline{F}$$.

 \begin{definition}\label{def-alg-grad-1}
 
 Throughout this paper a reductive  Lie algebra $\widetilde{{\mathfrak g}}$ over $F$  \index{g@$\widetilde{{\mathfrak g}}$} satisfying the following two  hypothesis will be called {\it a {graded Lie algebra}\index{graded Lie algebra}}:
 
 \begin{itemize}
 \item[$(\bf H_1)$]\label{H1}{\it  There exists an  element $H_0\in \widetilde{ {\mathfrak g}}$ such that 
$\ad H_0$ defines a   ${\mathbb  Z}$-grading  of the form }
$$\widetilde{ {\mathfrak g}}=V^-\oplus {\go{g}}\oplus V^+\quad (V^+\not = \{0\})\ ,$$
{\it where } $[H_0,X]=\begin{cases}
 0&\hbox{  for }X\in {\mathfrak g}\ ;\\
         2X&\hbox{  for }X\in V^+\ ;\\
        -2X&\hbox{  for }X\in V^-\ .
\end{cases}$\index{g@{$\go{g}$}} \index{Vplus@$V^+$}\index{Vmoins@$V^-$}\index{H0@$H_{0}$}

 ({\it Therefore, in fact,  } $H_{0}\in \go{g}$)
\item[$(\bf H_2)$]\label{H2}{\it  The $($bracket$)$ representation  of ${\mathfrak g}$ on  $\overline {V^+}$ is irreducible. $($In other words, the representation $({\mathfrak g},V^+)$ is absolutely irreducible$)$}
\end{itemize}

\end{definition}

\vskip 5pt 

\noindent The following relations are trivial consequences from $(\bf H_1)$ :
$$[{\mathfrak g},V^+]\subset V^+\ ;\ [{\mathfrak g},V^-]\subset V^-\ ;\ [{\mathfrak g},{\mathfrak g}]\subset {\mathfrak g}\
;\ [V^+,V^-]\subset {\mathfrak g}\ ;\ [V^+,V^+]=[V^-,V^-]=\{0\} .$$

\vskip 20pt

\subsection{The restricted root system}\label{subsec:racines}\hfill

There exists a maximal split abelian Lie subalgebra  $\go{a}$\index{a@$\go{a}$} of $\go{g}$ containing $H_{0}$. Then $\go{a}$ is also maximal split abelian in $\widetilde{\go g}$.

Denote by  $\widetilde \Sigma$ \index{Sigmatilde@$\widetilde \Sigma$}the roots of $(\widetilde{\go g},\go{a})$ and by  $\Sigma$ \index{Sigma@$\Sigma$}the roots of $(\go{g},\go{a})$. (These are effectively root systems: see \cite{Seligman}, p.10).

 \vskip 5pt

 Let $H_\al$ be the coroot of  $\al\in\tilde{\Si}$, and for $\al,\be\in \tilde{\Si}$, we set as usual $n(\al,\be)=\al(H_\be)$. For $\mu\in\go a^*$, we denote by $\tilde{\go g}^\mu$ the subspace of weight $\mu$ of $\tilde{\go g}$. 
 \vskip 5pt
  \me
  
  We will denote by $\widetilde{W}$ and  $W$ \index{Wtilde@$\widetilde{W}$}\index{W@$W$} the Weyl groups of $\widetilde{\Sigma}$   and $\Sigma$, respectively. $W$ is the subgroup of  $\widetilde{W}$ generated by the reflections with respect to the roots in $\Sigma$. In particular $H_{0}$ is fixed by each element of  $W$.

\vskip 5pt
Let $\text{Aut}_{e}(\go{g})$\index{Aut@$\text{Aut}_{e}(\go{g})$} denote the group of elementary automorphisms of $\go{g}$ (\cite{Bou2} VII, \S3, $n^\circ 1$).

Two maximal split abelian subalgebras of $\go{g}$ are conjugated by  $\text{Aut}_{e}(\go{g})$ (\cite{Seligman}, Theorem 2, page 27, or  \cite{Schoeneberg}, Theorem 3.1.16 p. 27)

\vskip 10pt

\begin{theorem}\label{thbasepi} {\rm (Cf. Th.  1.2 p.10 of  \cite{B-R})}\- 

{\rm (1)} There exists a system of simple roots   $\widetilde{ \Pi }$\label{Pitilde}\index{Pitilde@$\widetilde{ \Pi }$} in $\widetilde{ \Sigma }$
such that
$${\nu}\in \widetilde{ \Pi }\Longrightarrow  \nu (H_0)=0 \hbox{  or }2\ .$$

{\rm (2)} There exists an unique root \label{lambda0} $\lambda _0\in \widetilde{ \Pi }$ \index{lambda0@$\lambda _0$}such that $\lambda _0(H_0)=2$.

{\rm (3)} If the decomposition of a  positive root $\lambda\in \widetilde{ \Sigma }$ in the basis
$\widetilde{
\Pi }$ is given by
$$\lambda =m_0\lambda _0+\sum_{\nu \in \widetilde{ \Pi } \backslash \{\lambda _0\}} m_\nu \nu \
,\ m_0\in {\mathbb Z}^+, m_\nu \in {\mathbb Z}^+$$
then $m_0=0$ or $m_0=1$. Moreover $\lambda $ belongs to $\Sigma $ if and only if $m_0=0$.

\end{theorem}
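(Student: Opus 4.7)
The plan is to deduce everything from a single crucial observation: since $\ad(H_0)$ acts on $\widetilde{\go g}$ with eigenvalues in $\{-2,0,2\}$, and since for $\alpha \in \widetilde{\Sigma}$ the restriction of $\ad(H_0)$ to $\widetilde{\go g}^{\alpha}$ is the scalar $\alpha(H_0)$, every restricted root satisfies $\alpha(H_0) \in \{-2,0,2\}$. The roots in $\Sigma$ are exactly those with $\alpha(H_0)=0$, while the weights of $\go a$ on $V^{\pm}$ are those with $\alpha(H_0)=\pm 2$.

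For part (1), I would choose the system $\widetilde{\Pi}$ by picking any total order on $\go a^*$ for which $H_0$ is dominant, for instance a lexicographic order whose first coordinate is evaluation against $H_0$ (completed to a basis of $\go a$). Then every positive root $\alpha$ satisfies $\alpha(H_0)\ge 0$, and combined with the observation above this forces $\alpha(H_0)\in\{0,2\}$. In particular this holds for every $\nu\in\widetilde{\Pi}$.

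For (2), the existence of at least one simple root with $\nu(H_0)=2$ is immediate: if every simple root had $\nu(H_0)=0$, then every positive root would have $\alpha(H_0)=0$, whence $V^+ = \{0\}$, contradicting $(H_1)$. The uniqueness is the main point, and it is where hypothesis $(H_2)$ enters. Suppose for contradiction that $\nu_1 \neq \nu_2$ both satisfy $\nu_i(H_0)=2$. Writing any $\alpha \in \widetilde{\Sigma}^+$ in the basis $\widetilde{\Pi}$ as $\alpha = \sum m_\nu \nu$, we get $\alpha(H_0)=2(m_{\nu_1}+m_{\nu_2})$, so $m_{\nu_1}+m_{\nu_2}\in\{0,1\}$. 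Decompose $\overline{V^+} = \overline{V^+_1} \oplus \overline{V^+_2}$ where $\overline{V^+_i}$ is the span of weight spaces with $m_{\nu_i}=1$, $m_{\nu_{3-i}}=0$. Then any root $\beta$ of $(\go g,\go a)$ must have $m_{\nu_1}(\beta)=m_{\nu_2}(\beta)=0$ (since $\beta(H_0)=0$ and all simple-root coefficients of $\beta$ have the same sign), so $\ad \overline{\go g}^{\beta}$ preserves the decomposition. Consequently $\overline{V^+_1}$ and $\overline{V^+_2}$ are both nonzero $\overline{\go g}$-submodules, contradicting the absolute irreducibility in $(H_2)$. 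This gives the unique root $\lambda_0$.

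Part (3) is then immediate arithmetic: writing $\lambda=m_0\lambda_0+\sum_{\nu\neq\lambda_0}m_\nu\nu$ for $\lambda\in\widetilde{\Sigma}^+$, one has $\lambda(H_0)=2m_0$ because all other simple roots vanish on $H_0$ by parts (1) and (2); since $\lambda(H_0)\in\{0,2\}$, this forces $m_0\in\{0,1\}$. The equivalence $\lambda\in\Sigma \iff \lambda(H_0)=0 \iff m_0=0$ is the defining property of $\Sigma$ recalled above. The principal obstacle is really just the irreducibility argument in (2); once one sees that a pair of simple roots with value $2$ would split $V^+$ into two $\go g$-stable pieces, the rest is a clean bookkeeping consequence of the eigenvalue constraint $\alpha(H_0)\in\{-2,0,2\}$.
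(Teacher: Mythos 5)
Your proof is correct and follows essentially the same route as the paper: an order on $\widetilde{\Sigma}$ adapted to $H_0$ for (1), and for (2) the observation that a second simple root with value $2$ would split $\overline{V^+}$ into two nonzero $\go g$-stable pieces, contradicting $(\bf H_2)$ (the paper's $V_0$ is exactly your $\overline{V^+_1}$). The only cosmetic difference is in (3), where you use pure $H_0$-eigenvalue bookkeeping while the paper invokes the commutativity of $V^+$ as the nilradical of $\go g\oplus V^+$; both are valid.
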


\begin{proof}

Let $S$ be the subset of $\widetilde{ \Sigma }$ given by
$$S=\{\lambda \in \widetilde{ \Sigma }\mid \lambda (H_0)=0\ or\   2\}\ .$$
It is easily seen from $(\bf H_1)$  that $S$ is a parabolic subset of $\widetilde{ \Sigma }$,
i.e.
${\mathfrak g}\oplus V^+$ is a parabolic subalgebra of $\widetilde{ {\mathfrak g}}$. It is well known 
(see \cite{Bou1} chap. 6 Prop. 20) that there exists an order on $\widetilde{ \Sigma }$ such that,
 if  a   root 
$\lambda\in \widetilde{ \Sigma } $ is positive for this order, then $\widetilde{ {\mathfrak
g}}^\lambda
$ is a subspace of    ${\mathfrak g}\oplus V^+$. If $\widetilde{ \Pi }$ denotes the set of simple roots
of
$\widetilde{ \Sigma }$  corresponding  to this order, then $\widetilde{ \Pi }$ satisfies (1).
\vskip 5pt 

There exists at least  one root $\lambda _0\in \widetilde{ \Pi }$ such that $\lambda _0(H_0)=2$
because  $V^+\not =\{ 0\}$. Moreover the commutativity of  
$  V^+$ which is the nilradical of the parabolic algebra $\go
g\oplus V^+$ implies  (3).
\vskip 5pt 
 Let us suppose  that there exists in $\widetilde{ \Pi }$ a root $\lambda _1\not = \lambda
_0$ such that $\lambda _1(H_0)=2$. Let $V_0$ be  the sum of the root spaces $\widetilde{ {\mathfrak
g}}^\lambda $ for the roots $\lambda $ of the form 
$$ \lambda =\lambda _0+\sum_{\nu \in \widetilde{ \Pi },\nu (H_0)=0}m_\nu \nu
\quad (m_\nu \in {\mathbb Z}^+)\ .$$
Since $V_0$ does not contain  $ \widetilde {\go g}^{\lambda_1}$, it
is  a non-trivial  subspace of $V^+$ which is invariant under the action of ${\mathfrak g}$.
This gives a contradiction with $(\bf H_2)$ and the uniqueness of $\lambda _0$ such that $\lambda
_0(H_0)=2$ follows, hence  (2) is proved.

 \end{proof}
Let us fix once and for all such a set of simple roots $\widetilde{ \Pi }$ of $\widetilde{ \Sigma }$. Then 
$$\widetilde{ \Pi }=\Pi \cup \{\lambda _0\}\ ,$$ \index{Pi@$\Pi$}
where $\Pi=\{\nu \in \widetilde{ \Pi }\,|\, \nu(H_{0})=0\}$  is a set of simple roots of  $ \Sigma $. We denote by  $\widetilde{\Sigma}^+$ (resp. $\Sigma^+$) the set of positive roots of $\widetilde{\Sigma}$ (resp. $\Sigma$) for the order defined by $\widetilde{ \Pi }$ (resp. $\Pi$).

  \vskip 5pt

Then we have the following
characterization of  $\lambda _0\ $:
\vskip 5pt 

\begin{cor}\label{corlambda0} The root $\lambda _0$ is the unique  root in $\widetilde{
\Sigma }$ such that 
\begin{align*} 
\bullet \hskip 15pt &\lambda _0(H_0)=2\ ;\cr
   \bullet \hskip 15pt & \lambda \in \Sigma ^+\Longrightarrow \lambda _0-\lambda \notin
\widetilde{ \Sigma }\ . 
\end{align*}
 \end{cor}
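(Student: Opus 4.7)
The plan is first to verify that $\lambda_0$ itself satisfies both listed properties, and then to establish uniqueness by combining the absolute irreducibility of $V^+$ with a PBW-style generation argument.

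\emph{Existence.} The first bullet is Theorem \ref{thbasepi}(2). For the second, any $\lambda \in \Sigma^+$ expands as $\lambda = \sum_{\nu \in \Pi} m_\nu \nu$ with $m_\nu \in \Z_{\geq 0}$ and not all zero, so in the basis $\widetilde{\Pi}$ the element $\lambda_0 - \lambda$ has coefficient $+1$ on $\lambda_0$ and a strictly negative coefficient on at least one $\nu \in \Pi$. Since any root must have all its coefficients of the same sign in the basis of simple roots, $\lambda_0 - \lambda \notin \widetilde{\Sigma}$.

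\emph{Uniqueness.} Suppose $\mu \in \widetilde{\Sigma}$ satisfies both conditions. Because $\widetilde{\nu}(H_0) \in \{0,2\}$ for every $\widetilde{\nu} \in \widetilde{\Pi}$, positive (resp.\ negative) roots take values $\geq 0$ (resp.\ $\leq 0$) on $H_0$; hence $\mu(H_0) = 2$ forces $\mu \in \widetilde{\Sigma}^+$, and then Theorem \ref{thbasepi}(3) gives $\mu = \lambda_0 + \alpha$ with $\alpha \in Q^+(\Pi) := \sum_{\nu \in \Pi} \Z_{\geq 0}\,\nu$. The second condition now translates into $[\go g^{-\lambda}, \widetilde{\go g}^\mu] \subset \widetilde{\go g}^{\mu-\lambda} = \{0\}$ for every $\lambda \in \Sigma^+$ (note $\mu - \lambda \neq 0$, as it is non-zero on $H_0$); equivalently, the $\go a$-weight space $\widetilde{\go g}^\mu$ is annihilated by $\go n^- := \bigoplus_{\lambda \in \Sigma^+} \go g^{-\lambda}$.

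Write $\go g = \go n^- \oplus \go m \oplus \go n^+$ for the restricted root decomposition, with $\go m = Z_{\go g}(\go a)$ and $\go n^+ := \bigoplus_{\lambda \in \Sigma^+} \go g^{\lambda}$. The subspace $\widetilde{\go g}^\mu$ is automatically $\go m$-stable, so after base change to $\overline F$, the Poincaré--Birkhoff--Witt theorem yields
\[
\cU(\overline{\go g}) \cdot (\widetilde{\go g}^\mu \otimes_F \overline F) \;=\; \cU(\overline{\go n^+}) \cdot (\widetilde{\go g}^\mu \otimes_F \overline F).
\]
By $(\mathbf{H_2})$ the left-hand side equals the whole of $\overline{V^+}$, while the $\overline{\go a}$-weights of the right-hand side all lie in $\mu + Q^+(\Pi)$. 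In particular $\lambda_0$ occurs as such a weight, so $-\alpha = \lambda_0 - \mu \in Q^+(\Pi)$; together with $\alpha \in Q^+(\Pi)$ this forces $\alpha = 0$ and hence $\mu = \lambda_0$. The only step that requires care is the generation identity above: it genuinely uses \emph{absolute} irreducibility (and not mere $F$-irreducibility) in order to reach all of $\overline{V^+}$ from a single $\go n^-$-annihilated weight space; everything else is formal manipulation in the root lattice.
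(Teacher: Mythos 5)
Your proof is correct, but it takes a genuinely different route from the paper's. The paper argues purely inside the root lattice: it writes a candidate $\mu$ as an ordered sum $\nu_1+\dots+\nu_k+\lambda_0+\nu_{k+1}+\dots+\nu_p$ of simple roots in which every partial sum is a root, and then either subtracts the last simple root $\nu_p\in\Pi$ or subtracts the root $\nu_1+\dots+\nu_k\in\Sigma^+$; in both cases one finds $\lambda\in\Sigma^+$ with $\mu-\lambda\in\widetilde{\Sigma}$, contradicting the second bullet. You instead read the second bullet as saying that $\widetilde{\go g}^{\mu}$ is a lowest restricted-weight space of $V^+$ (annihilated by $\go n^-$), and use PBW plus irreducibility to confine the $\go a$-weights of $V^+$ to $\mu+Q^+(\Pi)$, which pins down $\mu=\lambda_0$. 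Both arguments are valid: the paper's is more elementary and self-contained, while yours is more conceptual and in effect anticipates the lowest-weight characterization that the authors exploit later (e.g.\ in the proof of Proposition \ref{ell-0-simple}). One small inaccuracy in your closing remark: the generation identity does not genuinely require \emph{absolute} irreducibility. Since $\go a$ is split, the weight space $\widetilde{\go g}^{\mu}$, the triangular decomposition $\go g=\go n^-\oplus\go m\oplus\go n^+$ and the PBW factorization are all defined over $F$, so mere $F$-irreducibility of $(\go g,V^+)$ (which $({\bf H_2})$ implies) already yields $\cU(\go n^+)\cdot\widetilde{\go g}^{\mu}=V^+$ and the same weight comparison; no base change to $\overline{F}$ is needed. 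This does not affect the correctness of your argument.
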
 

 \begin{proof}  It is clear that $\lambda_{0}$ satisfies the two properties. Let $\mu\in \widetilde{
\Sigma }$ a root satisfying the same properties. Then the first property implies  that $\mu\in \widetilde{\Sigma }^+$. Suppose that $\mu\neq\lambda_{0}$. Then $\mu=\lambda_{0}+\Sigma_{\nu \in A} m_{\nu}\nu$ with $A\subset \Pi $, $A\neq \emptyset$,  and  $m_{\nu}\neq0$. This implies that $\mu=\nu_{1}+\dots+\nu_{k}+\lambda_{0}+\nu_{k+1}+\dots+\nu_{p}$, where each partial sum is a root. Then either $\mu-\nu_{p}\in \widetilde{
\Sigma }$, or (if $\nu_{k+1}=\dots=\nu_{p}=0$) one has  $\mu - (\nu_{1}+\dots+\nu_{k})=\lambda_{0}\in \widetilde{
\Sigma }$. In both cases  the second property would not be verified.

 \end{proof}
 
 \begin{rem}\label{rem-precisions}
 
 Let  $\go{m}$ \index{mgothique@$\go{m}$} be the centralizer of  $\go{a}$ in $\go{g}$ (which is also the centralizer of $\go{a}$ in $\widetilde {\go g}$). We have then the following decompositions:
 $$\widetilde{\go{g}}=\go{m}\oplus \sum_{\lambda\in \widetilde{\Sigma}}\widetilde{\go{g}}^\lambda,\hskip 10pt\go{g}=\go{m}\oplus \sum_{\lambda\in  {\Sigma}} {\go{g}}^\lambda, \hskip 10pt\ V^+=\sum_{\lambda\in \widetilde{\Sigma}^+\setminus \Sigma^+}\widetilde{\go{g}}^\lambda.$$
 The algebra  $[\go{m},\go{m}]$ is {\it anisotropic} (i.e., its unique  split abelian subalgebra  is $\{0\}$), and is called the    {\it anisotropic kernel} \index{anisotropic kernel} of  $\widetilde{\go{g}}$.
 \end{rem}

 \vskip 20pt
 \subsection{Extension to the algebraic closure}\label{section1-extension}\hfill
 \vskip 10pt
 Let us fix an algebraic closure  $\overline F$ \index{Fbarre@$\overline F$} of $F$. Remember that for each vector space  $U$ over  $F$ we  note $\overline U $ the vector space  obtained by extension of the scalars:
$$\overline U =U\otimes _{F} \overline F.$$
  We have then the decomposition 
 $$\overline{\widetilde {\go g}}=\overline{V^-}\oplus \overline{\go g}\oplus \overline{V^+}$$\index{ggothiquetilde@$\overline{\widetilde {\go g}}$}\index{Vbarre@$\overline{V^\pm}$}\index{ggothiquebarre@$\overline{\go g}$}
 and according to   $(\bf{H_{2}})$, the representation $(\overline{\go g},\overline{V^+})$ is irreducible.
 Let $\go j$ \index{jgothique@$\go j$} be a  Cartan subalgebra of  $ \widetilde {\go g}$ which contains  $\go a$. Then  $\go j\subset \go{g}$ and ${\go j}$ is also a Cartan subalgebra of  ${\go g}$. This implies that  $\overline {\go j}$ \index{jgothiquebarre@$\overline {\go j}$} is a Cartan subalgebra of $\overline{\widetilde {\go g}}$ (and also of $\overline{\go g}$) (see \cite{Bou2}, chap.VII, \S 2, Prop.3)

 Let  $\widetilde{\cal R}$ \index{Rtilde@$\widetilde{\cal R}$} (resp. ${\cal R}$) \index{R@${\cal R} $ (Part I)} be the roots of the pair $(\overline{\widetilde {\go g}}, \overline {\go j})$ (resp.   $(\overline{\go g}, \overline {\go j})$) 	and let  $\overline{\widetilde {\go g}}^{^{\alpha}}$ (resp. ${\overline{\go g}}^{^{\alpha}}$) be the corresponding root spaces.

 Let $X\in \overline{\widetilde {\go g}}^{^{\alpha}}$. Let us write $X=\sum a_{i}X_{i}$ where  $a_{i}\in \overline{F}^{^*}$ and where the  elements  $X_{i}\in \widetilde {\go g}$ are $\overline{F}$-free eigenvectors of $\go{a}$. Then for $H\in \go{a}$, we have  $[H,X]=\alpha(H)X=\sum a_{i}[H,X_{i}]=\sum a_{i}\alpha(H)X_{i}$. Hence $[H,X_{i}]=\alpha(H)X_{i}$. If $X_{i}=\sum_{\lambda\in \Sigma \cup \{0\}}X_{\lambda}$, we obtain $[H,X_{i}]=\sum \lambda(H)X_{\lambda}=\alpha(H)\sum X_{\lambda}$. Therefore  $\alpha(H)\in F$, in other words the  restrictions to $\go{a}$ of roots belonging to $\widetilde{\cal R}$,  take values in  $F$. 
 
  We denote by $\rho: {\overline {\go j}}^*\longrightarrow {\go a}^*$ \index{rho@$\rho$} the restriction morphism.   
  One sees easily that  $\rho(\widetilde{\cal R})= \widetilde{\Sigma}\cup\{0\}$ and that $\rho({\cal R})=\Sigma\cup \{0\}$.
 
 Let us recall the following well known result:
 
 \begin{lemme}\label{lemme-espacesradiciels-cloture}\hfill
 
 Let  $\lambda\in \widetilde{\Sigma}$. Let $S_{\lambda}=\{\alpha\in \widetilde{\cal R},\,\rho(\alpha)=\lambda\}$. Then we have:
 $$\overline{\widetilde{\go{g}}^\lambda}=\sum_{\alpha\in S_{\lambda}}\overline{\widetilde {\go g}}^{^{\alpha}}$$
 \end{lemme}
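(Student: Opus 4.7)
The plan is to exploit the fact that $\mathfrak{a}\subset \overline{\mathfrak{j}}$, so that the $\overline{\mathfrak{j}}$-root space decomposition of $\overline{\widetilde{\mathfrak{g}}}$ refines the $\mathfrak{a}$-weight decomposition. Concretely, for every $\alpha\in\widetilde{\mathcal R}$ and every $H\in\mathfrak{a}$, each $X\in\overline{\widetilde{\mathfrak{g}}}^{\alpha}$ satisfies $[H,X]=\alpha(H)X=\rho(\alpha)(H)X$, so that $\overline{\widetilde{\mathfrak{g}}}^{\alpha}$ is an $\mathfrak{a}$-weight space of weight $\rho(\alpha)$. This is the only input needed; both inclusions then follow from the uniqueness of weight space decompositions.

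For the inclusion $\supset$, I would take $X\in \overline{\widetilde{\mathfrak{g}}}^{\alpha}$ with $\alpha\in S_{\lambda}$, and decompose it according to the $\overline{F}$-linear extension of the $\mathfrak{a}$-weight decomposition $\widetilde{\mathfrak{g}}=\mathfrak{m}\oplus\bigoplus_{\mu\in\widetilde{\Sigma}}\widetilde{\mathfrak{g}}^{\mu}$, writing $X=X_{0}+\sum_{\mu\in\widetilde{\Sigma}}X_{\mu}$ with $X_{0}\in\overline{\mathfrak{m}}$ and $X_{\mu}\in\overline{\widetilde{\mathfrak{g}}^{\mu}}$. Applying $\operatorname{ad}(H)$ for $H\in\mathfrak{a}$ gives $\lambda(H)X=\sum_{\mu}\mu(H)X_{\mu}$ (and $0\cdot X_{0}$), so uniqueness forces all components other than $X_{\lambda}$ to vanish; hence $X\in\overline{\widetilde{\mathfrak{g}}^{\lambda}}$.

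For the reverse inclusion $\subset$, I would start from $X\in\widetilde{\mathfrak{g}}^{\lambda}\subset\overline{\widetilde{\mathfrak{g}}^{\lambda}}$, seen inside $\overline{\widetilde{\mathfrak{g}}}$, and decompose it into $\overline{\mathfrak{j}}$-root components: $X=X_{0}+\sum_{\alpha\in\widetilde{\mathcal R}}X_{\alpha}$ with $X_{\alpha}\in\overline{\widetilde{\mathfrak{g}}}^{\alpha}$. Computing $\operatorname{ad}(H)X$ for $H\in\mathfrak{a}$ in two ways gives $\lambda(H)X=\sum_{\alpha}\rho(\alpha)(H)X_{\alpha}$, so the uniqueness of the $\overline{\mathfrak{j}}$-decomposition forces $X_{\alpha}=0$ whenever $\rho(\alpha)\neq\lambda$ (and $X_{0}=0$ since $\lambda\neq 0$). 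Passing from $\widetilde{\mathfrak{g}}^{\lambda}$ to $\overline{\widetilde{\mathfrak{g}}^{\lambda}}$ by $\overline{F}$-linearity completes the argument.

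There is no real obstacle: the argument is purely a compatibility statement between two simultaneous diagonalizations, and it uses nothing beyond linear algebra plus the inclusion $\mathfrak{a}\subset\overline{\mathfrak{j}}$ and the fact (already noted just above the lemma) that restrictions of roots in $\widetilde{\mathcal R}$ to $\mathfrak{a}$ lie in $\widetilde{\Sigma}\cup\{0\}$. The only minor point to keep track of is that the $\mathfrak{a}$-weight decomposition of $\widetilde{\mathfrak{g}}$ tensors correctly with $\overline{F}$, which is immediate because the weight spaces are already defined over $F$.
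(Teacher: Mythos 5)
Your proof is correct and follows essentially the same route as the paper's: both inclusions are obtained by comparing the $\go{a}$-weight decomposition with the $\overline{\go j}$-root decomposition under $\ad(H)$ for $H\in\go{a}$ and invoking uniqueness of weight components. The only cosmetic difference is that for the first inclusion the paper writes $X$ as an $\overline F$-combination of $\go a$-eigenvectors lying in $\widetilde{\go g}$, whereas you decompose directly in $\overline{\widetilde{\go g}}$; the content is identical.
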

 
 \begin{proof} Let  $\alpha\in S_{\lambda}$ and $X\in \overline{\widetilde {\go g}}^{^{\alpha}}$. The element $X$ can be written
$X=\sum_{i=1}^{n} a_{i}X_{i}$ where $a_{i}\in \overline{F}^*$ and where the elements  $X_{i}\in \widetilde{\go{g}}$ are free over  $\overline{F}$ and are eigenvectors $\go{a}$ ($\go{a}$ is split). Then for $H\in \go{a}$ we have:

$[H,X]=\sum_{i=1}^{n}a_{i}[H,X_{i}]= \sum_{i=1}^{n}a_{i}\gamma_{i}(H)X_{i}=\alpha(H)X=\lambda(H)X= \sum_{i=1}^{n}   a_{i}\lambda(H)X_{i}$. Therefore for each $H\in \go{a}$,  $\gamma_{i}(H)=\lambda(H)$.This implies the inclusion $\sum_{\alpha\in S_{\lambda}}\overline{\widetilde {\go g}}^{^{\alpha}}\subset \overline{\widetilde{\go{g}}^\lambda}$. Conversely let $X\in \overline{\widetilde{\go{g}}^\lambda}$ whose root space decomposition in $\overline{\widetilde{\go{g}}}$ is given by  $X= \sum _{\beta\in \widetilde{\cal R}\cup\{0\}}X_{\beta}$. For $H\in \go{a}$ we have: 
$$[H,X]=\lambda(H)X=\sum \lambda(H)X_{\beta}=\sum \beta(H)X_{\beta},$$
and hence $\rho(\beta)=\lambda$.
 \end{proof}

\vskip 3pt
 
Set $\widetilde P=\{\alpha\in \widetilde{\cal R},\, \rho(\alpha)\in \widetilde{\Sigma}^+\cup \{0\}\}$. One shows easily that  $\widetilde P$ is a parabolic subset $\widetilde{\cal R}$. Therefore there is an order on  $\widetilde{\cal R}$ such that if $\widetilde{\cal R}^+$ is the set of positive roots for this order  one has  $\widetilde{\cal R}^+\subset \widetilde P$ (\cite{Bou1}, chap. VI, \S1, $n^\circ 7$, Prop. 20). Then:

 $$\rho(\widetilde{\cal R}^+)= \widetilde{\Sigma}^+\cup \{0\}$$
 and hence
 $$\rho({\cal R}^+)= {\Sigma}^+\cup\{0\}.$$
 We will denote by  $\widetilde{\Psi}$ the set of simple roots of  $\widetilde{\cal R}$ corresponding to  $\widetilde{\cal R}^+$.
 
 \begin{prop}\label{prop.alpha0}\hfill
 
  There is a unique simple root  $\alpha_{0}\in \widetilde{\Psi}$ \index{alpha0@$\alpha_{0}$} such that  $\rho(\alpha_{0})= \lambda_{0}$.
 \end{prop}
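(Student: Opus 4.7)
The plan is to establish existence and uniqueness of $\alpha_0$ separately, using Lemma \ref{lemme-espacesradiciels-cloture}, the $H_0$-grading on $\overline{\widetilde{\go g}}$, and the absolute irreducibility hypothesis $(\bf H_2)$.

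For existence, I would pick any $\alpha \in \widetilde{\cal R}^+$ with $\rho(\alpha) = \lambda_0$; such an $\alpha$ exists by Lemma \ref{lemme-espacesradiciels-cloture} (since $\widetilde{\go g}^{\lambda_0} \neq \{0\}$) together with the compatibility $\rho(\widetilde{\cal R}^+) = \widetilde{\Sigma}^+ \cup \{0\}$ established just before the statement. Expanding $\alpha = \sum_{\beta \in \widetilde{\Psi}} n_\beta \beta$ with $n_\beta \in \Z_{\geq 0}$, and each $\rho(\beta) = \sum_{\nu \in \widetilde{\Pi}} m_{\beta, \nu} \nu$ with $m_{\beta, \nu} \in \Z_{\geq 0}$, applying $\rho$ gives $\lambda_0 = \sum_\nu \bigl(\sum_\beta n_\beta m_{\beta, \nu}\bigr) \nu$. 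Linear independence of $\widetilde{\Pi}$ together with integrality and non-negativity then force exactly one $\beta_0 \in \widetilde{\Psi}$ to satisfy $n_{\beta_0} = m_{\beta_0, \lambda_0} = 1$, while $m_{\beta_0, \nu} = 0$ for $\nu \in \Pi$; in particular $\rho(\beta_0) = \lambda_0$.

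For uniqueness, suppose $\alpha_0, \alpha_0' \in \widetilde{\Psi}$ both satisfy $\rho(\alpha_0) = \rho(\alpha_0') = \lambda_0$. Then $\alpha_0(H_0) = \alpha_0'(H_0) = \lambda_0(H_0) = 2$, so $\overline{\widetilde{\go g}}^{\alpha_0}$ and $\overline{\widetilde{\go g}}^{\alpha_0'}$ both lie in $\overline{V^+}$. The key claim is that any simple root $\alpha \in \widetilde{\Psi}$ with $\alpha(H_0) = 2$ is a lowest weight of the $\overline{\go g}$-module $\overline{V^+}$. Indeed, for any positive root $\beta$ of $\overline{\go g}$ --- equivalently $\beta \in \widetilde{\cal R}^+$ with $\beta(H_0) = 0$ --- one has $(\alpha - \beta)(H_0) = 2$, so if $\alpha - \beta$ were a root of $\overline{\widetilde{\go g}}$ it would automatically lie in $\widetilde{\cal R}^+$ (since only positive roots can take value $2$ on $H_0$); but then $\alpha = \beta + (\alpha - \beta)$ would express $\alpha$ as a sum of two positive roots, contradicting simplicity. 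Hence $\alpha - \beta$ is not a root, and $\alpha$ is a lowest weight of $\overline{V^+}$. Invoking $(\bf H_2)$, the representation $(\overline{\go g}, \overline{V^+})$ is irreducible, so its lowest weight space is unique and one-dimensional, which forces $\alpha_0 = \alpha_0'$.

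I do not foresee a serious obstacle: the only real care is in the lowest-weight step, where one must correctly identify the positive roots of $\overline{\go g}$ inside $\widetilde{\cal R}^+$ and use the fact that the $H_0$-eigenvalues on $\overline{\widetilde{\go g}}$ are still $-2, 0, 2$, so that every positive root of $\overline{\widetilde{\go g}}$ satisfies the dichotomy $\alpha(H_0) \in \{0, 2\}$. Both are immediate from the setup and from Lemma \ref{lemme-espacesradiciels-cloture}.
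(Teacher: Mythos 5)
Your proof is correct, but it takes a genuinely different route from the paper's. The paper only argues uniqueness: it assumes two distinct simple roots $\alpha_0,\beta_0$ restrict to $\lambda_0$ and splits into cases according to whether they lie in the same irreducible component of $\widetilde{\cal R}$. In the same-component case the highest root of that component would restrict to $(m_0+m_1)\lambda_0+\cdots$ with $m_0+m_1\geq 2$, contradicting Theorem \ref{thbasepi}\,(3); in the different-components case the two highest roots are both highest weights of the irreducible module $(\overline{\go g},\overline{V^+})$, contradicting $(\bf H_2)$. You instead prove, uniformly and without any case split, that every simple root $\alpha\in\widetilde{\Psi}$ with $\alpha(H_0)=2$ is a \emph{lowest} weight of $\overline{V^+}$ --- the point being that $\alpha-\beta$ cannot be a root for $\beta\in{\cal R}^+$, since it would be a positive root of $\overline{\widetilde{\go g}}$ (having $H_0$-eigenvalue $2$) and would then decompose $\alpha$ as a sum of two positive roots --- and you conclude by uniqueness of the lowest weight in an irreducible module. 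Your version is cleaner and avoids invoking the structure of irreducible components and their highest roots; it also makes explicit the existence half, which the paper leaves implicit (it does follow, as you show, from $\rho(\widetilde{\cal R}^+)=\widetilde{\Sigma}^+\cup\{0\}$ and the simplicity of $\lambda_0$). Your key observation is moreover exactly the one the paper uses later in the proof of Proposition \ref{ell-0-simple}, where $\alpha_0$ is identified as the lowest weight of $(\overline{\go g},\overline{V^+})$, so the two approaches are fully compatible.
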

 
 \begin{proof} Suppose that there are two distinct simple roots $\alpha_{0},\beta_{0}$ such that  $\rho(\alpha_{0})=\rho(\beta_{0})= \lambda_{0}$, and suppose that these  two roots belong to the same irreducible component of   $\widetilde{\cal R}$.
 
  Then the highest root in this irreducible component will be of the form:
 $$\gamma=m_{0}\alpha_{0}+m_{1}\beta_{0} +\sum_{\alpha\in \widetilde{\Psi}\setminus\{\alpha_{0},\beta_{0}\}}m_{\alpha}\alpha,\,\,\, m_{0},m_{1}\geq1, \,m_{\alpha}\geq 0,$$
 and then
 $$\rho(\gamma)=(m_{0}+m_{1})\lambda_{0} +\sum_{\nu \in \Pi}m_{\nu}\nu.$$
 And this is impossible according to Theorem \ref{thbasepi}. 
 
 Consequently each of the roots $\alpha_{0}$ and $\beta_{0}$ belong to a different irreducible component of  $\widetilde{\cal R}$. Let $\omega_{0}$ (resp.  $\omega_{1}$) the highest root of the irreducible  component of $\widetilde{\cal R}$ containing  $\alpha_{0}$ (resp. $\beta_{0}$). As $\omega_{i}(H_{0})=2$ (because $\alpha_{0}(H_{0})=\beta_{0}(H_{0})=2$ and $\omega_{i}(H_{0})=-2,0, 2$) the linear forms   $\omega_{i}$ are dominant weights of the irreducible representation  $(\overline{\go g},\overline{V^+})$. This implies the result.

 
 
  \end{proof}
  
  \begin{rem}\label{simplicite-eventuelle} If we had supposed that $[\widetilde {\go g},\widetilde {\go g}]$ was absolutely simple, then the second part of the proof would have been superfluous. 
  \end{rem}

  \vskip 20pt
 \subsection{The highest root in  $\widetilde{\Sigma}$}\hfill
 \vskip 10pt
 \begin{prop}\label{prop.plusgranderacine}\hfill
 
There is a unique root $\lambda^0 \in \widetilde{\Sigma}$ \index{lambda0@$\lambda^0$} such that
 \begin{align*}
\bullet\hskip 15pt &\lambda ^0(H_0)=2\ ;\\
\bullet \hskip 15pt &\forall  \lambda \in  \Sigma ^+,  \lambda ^0+\lambda \notin \widetilde{
\Sigma }\ .
\end{align*}
This root  $\lambda^0$ is the highest root of the irreducible component of $\widetilde{\Sigma}$ which contains  $\lambda_{0}$.

 \end{prop}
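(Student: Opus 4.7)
The plan is to define $\lambda^0$ as the candidate in the statement (the highest root of the irreducible component of $\widetilde{\Sigma}$ containing $\lambda_0$), verify that it satisfies both properties, and then establish uniqueness through a standard ``chain of simple roots'' argument, mirroring in reverse the proof of Corollary \ref{corlambda0}.

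For existence, let $\widetilde{\Sigma}_1$ denote the irreducible component of $\widetilde{\Sigma}$ containing $\lambda_0$, and let $\lambda^0$ be its highest root. I would first check $\lambda^0(H_0)=2$: write $\lambda^0 = m_0\lambda_0 + \sum_{\nu\in\Pi}m_\nu\nu$. By Theorem \ref{thbasepi}(3), $m_0\in\{0,1\}$, while a classical property of irreducible root systems ensures that the highest root has strictly positive coefficient on every simple root in its component, so $m_0=1$ and $\lambda^0(H_0)=2$. The second property is then immediate: if $\lambda\in\Sigma^+$ belongs to an irreducible component different from $\widetilde{\Sigma}_1$, then it is orthogonal to $\lambda^0$ and $\lambda^0+\lambda$ cannot be a root (a root lives in a single irreducible component); if $\lambda$ lies in $\widetilde{\Sigma}_1$ itself, then $\lambda^0+\lambda\notin\widetilde{\Sigma}_1$ by maximality of $\lambda^0$ for the natural partial order.

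For uniqueness, suppose $\mu\in\widetilde{\Sigma}$ satisfies both bullet points. From $\mu(H_0)=2$ and Theorem \ref{thbasepi}(3), $\mu=\lambda_0+\sum_{\nu\in\Pi}m_\nu\nu$ with integers $m_\nu\geq 0$. Since the support of any root in the Dynkin diagram of $\widetilde{\cal R}$ (hence of $\widetilde{\Sigma}$) is connected and contains $\lambda_0$, the root $\mu$ lies in $\widetilde{\Sigma}_1$. Assume $\mu\neq\lambda^0$. Then the standard saturation property (in an irreducible root system, every root strictly below the highest root admits a simple root $\nu\in\widetilde{\Pi}$ with $\mu+\nu\in\widetilde{\Sigma}$; see e.g.\ \cite{Bou1}) produces such a $\nu$. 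Since $\nu\in\widetilde{\Pi}$ we have $\nu(H_0)\in\{0,2\}$, so $(\mu+\nu)(H_0)\in\{2,4\}$; but any root has $H_0$-value in $\{-2,0,2\}$, forcing $\nu(H_0)=0$, i.e.\ $\nu\in\Pi\subset\Sigma^+$. This contradicts the second bullet for $\mu$, so $\mu=\lambda^0$.

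The only real obstacle is the saturation lemma (existence of a simple root to add to any non-highest root), together with the orthogonality of distinct irreducible components; both are standard and I would simply cite Bourbaki. Otherwise the proof is a direct adaptation of the argument of Corollary \ref{corlambda0}: there one removed simple roots of $\Sigma^+$ to descend from any root of $H_0$-weight $2$ to $\lambda_0$, while here one adds them to ascend to $\lambda^0$.
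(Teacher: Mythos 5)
Your proof is correct, but it follows a genuinely different route from the paper's. You stay entirely inside the restricted root system $\widetilde{\Sigma}$: existence of $\lambda^0$ follows from the strict positivity of the coefficients of the highest root of the component of $\lambda_0$ combined with Theorem \ref{thbasepi}(3), and uniqueness from a saturation lemma (any positive root other than the highest root of its component admits a simple root that can be added to it) together with the constraint that the $H_0$-eigenvalues of roots lie in $\{-2,0,2\}$. The paper instead passes to the absolute root system $\widetilde{\cal R}$ over $\overline{F}$ and identifies $\lambda^0$ as $\rho(\omega)$, where $\omega$ is the highest weight of the absolutely irreducible module $(\overline{\go{g}},\overline{V^+})$; both bullets and the uniqueness are then read off from highest-weight theory and $(\bf H_2)$. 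Your argument is more elementary and purely combinatorial; the paper's buys the extra identification $\lambda^0=\rho(\omega)$, which is explicitly reused later (the proofs of Proposition \ref{prop-gtilde(1)} and Proposition \ref{propWconjugues} appeal to ``Proposition \ref{prop.plusgranderacine} and its proof'').

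Two points in your write-up deserve attention. First, the saturation lemma is true, but to my knowledge it is not stated verbatim in \cite{Bou1}, so it should be proved or referenced precisely; a short argument: if $\mu+\nu\notin\widetilde{\Sigma}$ for every simple $\nu$, the root strings give $n(\mu,\nu)\geq 0$ for all simple $\nu$, so $\mu$ is dominant; then take $\gamma$ minimal among the roots strictly greater than $\mu$ in the usual partial order, note that $(\gamma-\mu,\gamma)>0$ (since $(\gamma-\mu,\gamma-\mu)>0$ and $(\gamma-\mu,\mu)\geq 0$ by dominance), pick a simple $\nu$ in the support of $\gamma-\mu$ with $(\gamma,\nu)>0$, and conclude $\gamma-\nu=\mu$ by minimality, i.e.\ $\mu+\nu\in\widetilde{\Sigma}$. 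Second, $\widetilde{\Sigma}$ is only a restricted root system, hence a priori possibly non-reduced; the classical facts you quote (positive coefficients of the highest root, connectedness of supports, the string formula) do hold in that generality, but since you invoke them for $\widetilde{\Sigma}$ rather than for $\widetilde{\cal R}$ this should be said explicitly.
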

 
 \begin{proof} Let $\omega$ be the highest weight of the representation $(\go{g}, \overline{V^+})$. We will show that the restriction of $\omega$ to $\go{a}$   is the unique root in $\widetilde{\Sigma}$ satisfying the conditions of the proposition.

 Define  $\lambda^0=\rho(\omega)$. Then $\lambda^0\in \widetilde{\Sigma}^+$ and  $\lambda^0(H_{0})=\omega(H_{0})=2$.  Let $\lambda\in \Sigma^+$. If $\lambda+\lambda^0\in \widetilde{\Sigma}$ there exist two roots $\alpha, \beta\in \widetilde{\cal R}$ such that $\rho(\alpha)=\lambda$ and $\rho(\beta)=\lambda+\lambda^0$. As  ${\overline{\go{g}}}^\beta\subset\overline{V^+}$, the root $\beta$ will be a weight of  $(\overline{\go{g}},\overline{V^+})$ and therefore can be written :
 $\beta=\omega-\sum_{\gamma\in \widetilde{\Psi}}m_{\gamma}\gamma$ ($m_{\gamma}\in \N$).
 Restricting this equality to  $\go{a}$ one gets:
 $$\rho(\beta)=\lambda+\lambda^0=\rho(\omega)-\sum_{\gamma\in \widetilde{\Psi}}m_{\gamma}\rho(\gamma)=\lambda^0- \sum_{\gamma\in \widetilde{\Psi}}m_{\gamma}\rho(\gamma).$$
 Hence
 $$\lambda=- \sum_{\gamma\in \widetilde{\Psi}}m_{\gamma}\rho(\gamma).\eqno{(*)}$$
 
 But  $\lambda\in \Sigma^+\subset \widetilde\Sigma^+$ and from the hypothesis  $\rho(\gamma)\in \widetilde\Sigma^+\cup\{0\}$; the preceding equation $(*)$ is therefore  impossible and we have showed that $\lambda^0=\rho(w)$ satisfies the required properties.

Let $\lambda^1$ be another root in $ \widetilde\Sigma$ having these properties. In fact $\lambda^1\in \widetilde\Sigma^+$. Set
$$S=\{\alpha\in \widetilde{\cal R}^+\,|\, \rho(\alpha)=\lambda^1\}.$$
Each element in  $S$ is a weight   of $(\go{g},\overline{V^+})$ (as $\alpha(H_{0})=\rho(\alpha)(H_{0})=\lambda^1(H_{0})=2$, we have $\overline{\widetilde {\go g}}^{^{\alpha}}\in \overline{V^+}$). Let $\omega^1$ be a maximal element in $S$ for the order induced by $\widetilde{\cal R}^+$.

\hskip 10pt $\bullet $ if  $\beta\in \widetilde{\cal R}^+$ is such that $\rho(\beta)=0$, then $\omega^1+\beta$ is not a root, because we would have  $\omega^1+\beta\in S$ and  this contradicts  the maximality of $\omega^1$.

\hskip 10pt $\bullet $ If $\beta\in \widetilde{\cal R}^+$ is such that $\rho(\beta)\neq 0$, then  $\omega^1+\beta$ is not a root, because in that case $\rho(\omega^1+\beta)=\lambda^1+\rho(\beta)$ would be a root of $ \widetilde\Sigma^+$ and this contradicts the second property.

Therefore  $\omega^1$ is a highest weight of   $(\overline{\go{g}},\overline{V^+})$, hence $\omega^1=\omega$. This implies $\lambda^1=\lambda^0$.

The commutativity of  $V^+$ implies that  $\lambda^0+\lambda\notin \widetilde{\Sigma}$ for $\lambda\in \widetilde{\Sigma}^+\setminus {\Sigma}^+$. From the obtained characterisation of  $\lambda^0$ we obtain the last assertion.
 
 \end{proof}

  \vskip 20pt
 \subsection{The first step in the descent}\label{section-descente1}\index{descent}\hfill
  \vskip 10pt
  
  Let  $\widetilde{ {\go l}}_0$ \index{ltilde0@$\widetilde{ {\go l}}_0$} be the algebra generated by the root spaces ${\widetilde{\go{g}}^{\lambda_0}}$ and ${\widetilde{\go{g}}^{-\lambda_{0}}}$.
  One has:
  $$\widetilde{ {\go l}}_0={\widetilde{\go{g}}^{-\lambda_{0}}}\oplus[{\widetilde{\go{g}}^{-\lambda_{0}}},{\widetilde{\go{g}}^{\lambda_{0}}}]\oplus {\widetilde{\go{g}}^{\lambda_{0}}}.$$
  (Just remark that ${\widetilde{\go{g}}^{-\lambda_{0}}}\oplus[{\widetilde{\go{g}}^{-\lambda_{0}}},{\widetilde{\go{g}}^{\lambda_{0}}}]\oplus {\widetilde{\go{g}}^{\lambda_{0}}}$ is a Lie algebra). This algebra is graded by the  coroot  $H_{\lambda_{0}}\in \go{a}$  of $\la_0$.
 
 We will need the following result:
 
 \begin{lemme}\label{lemme-alg.simple} \hfill

 Let $\go{u}=\go{u}_{-1}\oplus \go{u}_{0}\oplus \go{u}_{1}$ be a semi-simple graded Lie algebra over $F$. Suppose that $\go{u}_{1}$ is an absolutely simple   $\go{u}_{0}$-module. Then the Lie algebra  $\go{u}'$ generated by $\go{u}_{1} $ and  $\go{u}_{-1}$ is absolutely simple.

 \end{lemme}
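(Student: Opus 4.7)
Since $[\go{u}_{\pm 1},\go{u}_{\pm 1}]=0$, one checks using the Jacobi identity that
\[
\go{u}'=\go{u}_{-1}\oplus [\go{u}_{1},\go{u}_{-1}]\oplus \go{u}_{1}
\]
is already closed under bracket, hence is the subalgebra generated by $\go{u}_{1}$ and $\go{u}_{-1}$. It is a graded subalgebra of $\go{u}$. The plan is to extend scalars to $\overline F$ and show that $\overline{\go{u}'}$ coincides with a single simple ideal of $\overline{\go{u}}$.

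First I would decompose the semi-simple algebra $\overline{\go{u}}=\go{s}_{1}\oplus\cdots\oplus\go{s}_{r}$ into simple ideals. Since the grading is given by the inner derivation $\ad(H_{0})$ with $H_{0}\in \go{u}_{0}$, and each $\go{s}_{i}$ is preserved by every inner derivation of $\overline{\go{u}}$, each $\go{s}_{i}$ is $\ad(H_{0})$-stable and therefore inherits a $3$-grading
\[
\go{s}_{i}=(\go{s}_{i})_{-1}\oplus (\go{s}_{i})_{0}\oplus (\go{s}_{i})_{1},
\]
with $\overline{\go{u}_{j}}=\bigoplus_{i}(\go{s}_{i})_{j}$ for $j\in\{-1,0,1\}$.

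Next I would use the absolute irreducibility hypothesis. Each $(\go{s}_{i})_{1}$ is stable under the action of $\overline{\go{u}_{0}}=\bigoplus_{i}(\go{s}_{i})_{0}$, because $[(\go{s}_{j})_{0},(\go{s}_{i})_{1}]\subset \go{s}_{j}\cap \go{s}_{i}$ equals $(\go{s}_{i})_{1}$ if $i=j$ and $\{0\}$ otherwise. Thus $\overline{\go{u}_{1}}$ is a direct sum of $\overline{\go{u}_{0}}$-submodules $(\go{s}_{i})_{1}$; by irreducibility exactly one index $i_{0}$ gives $(\go{s}_{i_{0}})_{1}=\overline{\go{u}_{1}}$, and $(\go{s}_{i})_{1}=0$ for $i\neq i_{0}$. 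For such an $i\neq i_{0}$, the subspace $(\go{s}_{i})_{-1}$ would be an (abelian) ideal of the simple algebra $\go{s}_{i}$, hence $(\go{s}_{i})_{-1}=0$; therefore $\overline{\go{u}_{-1}}=(\go{s}_{i_{0}})_{-1}$ as well.

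Finally I would conclude by showing $\overline{\go{u}'}=\go{s}_{i_{0}}$. One inclusion is clear since $\overline{\go{u}'}$ is generated by $(\go{s}_{i_{0}})_{\pm 1}\subset \go{s}_{i_{0}}$. For the reverse inclusion one verifies that $\overline{\go{u}'}=(\go{s}_{i_{0}})_{-1}\oplus [(\go{s}_{i_{0}})_{1},(\go{s}_{i_{0}})_{-1}]\oplus (\go{s}_{i_{0}})_{1}$ is an ideal of $\go{s}_{i_{0}}$: stability under $(\go{s}_{i_{0}})_{\pm 1}$ is immediate from $[\go{u}_{\pm 1},\go{u}_{\pm 1}]=0$, and stability under $(\go{s}_{i_{0}})_{0}$ on the middle term follows from Jacobi. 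As $\overline{\go{u}'}\neq 0$ and $\go{s}_{i_{0}}$ is simple, equality holds, so $\overline{\go{u}'}$ is simple, i.e.\ $\go{u}'$ is absolutely simple.

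The only delicate point is the argument in the third paragraph: one must make sure that the absolute irreducibility assumption really forces a single simple ideal to carry all of $\overline{\go{u}_{1}}$, and that the abelianness of $\go{u}_{-1}$ rules out the remaining simple ideals from contributing in grade $-1$. Everything else is a direct bookkeeping of a $3$-graded decomposition.
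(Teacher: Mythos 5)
Your proof is correct, but it takes a genuinely different route from the paper's. The paper first observes that $\go{u}'=\go{u}_{-1}\oplus[\go{u}_{-1},\go{u}_{1}]\oplus\go{u}_{1}$ is an ideal of $\go{u}$, hence semi-simple, and then proves simplicity of $\overline{\go{u}'}$ head-on: it takes an arbitrary ideal $J\subset\overline{\go{u}'}$, shows that $J\cap\overline{\go{u}_{1}}$ is a $\overline{\go{u}_{0}}$-submodule of $\overline{\go{u}_{1}}$ (the point being that the Killing-orthogonal ideal $\overline{\go{u}'}^{\perp}\subset\overline{\go{u}_{0}}$ commutes with $\overline{\go{u}_{1}}$, so only $[\overline{\go{u}_{-1}},\overline{\go{u}_{1}}]$ acts nontrivially), and then splits into two cases: if $J\cap\overline{\go{u}_{1}}=\overline{\go{u}_{1}}$ it uses the grading element $U_{0}\in[\go{u}_{-1},\go{u}_{1}]$ to force $\overline{\go{u}_{-1}}\subset J$ and hence $J=\overline{\go{u}'}$; if $J\cap\overline{\go{u}_{1}}=\{0\}$ it concludes $J=\{0\}$ via Killing-form orthogonality. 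You instead decompose $\overline{\go{u}}$ into simple ideals $\go{s}_{i}$, note that each inherits the $3$-grading from the inner grading derivation, use absolute irreducibility to place all of $\overline{\go{u}_{1}}$ in a single factor $\go{s}_{i_{0}}$, rule out degree-$(-1)$ contributions from the other factors because an abelian ideal of a simple algebra is zero, and identify $\overline{\go{u}'}$ with a nonzero ideal of $\go{s}_{i_{0}}$, hence with $\go{s}_{i_{0}}$ itself. Your argument is somewhat more structural and avoids the Killing-form manipulations and the ``orthogonal ideals commute'' fact, and as a bonus it identifies $\overline{\go{u}'}$ as the unique simple ideal of $\overline{\go{u}}$ meeting $\overline{V^{+}}$; the paper's argument is more elementary in that it never invokes the full decomposition into simple ideals. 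Both hinge on the same two inputs: the simplicity of $\overline{\go{u}_{1}}$ as a $\overline{\go{u}_{0}}$-module and the existence of the (inner) grading element.
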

 
 \begin{proof} The algebra $\overline{\go{u}}$ is again graded and semi-simple:
 $$\overline{\go{u}}=\overline{\go{u}_{-1}}\oplus \overline{\go{u}_{0}}\oplus \overline{\go{u}_{1}}$$
  and from the hypothesis  $\overline{\go{u}_{1}}$ is a simple $\overline{\go{u}_{0}}$-module. As before for $\widetilde{ {\go l}}_0$, one has  $\go{u}'=\go{u}_{-1}\oplus  [\go{u}_{-1}, \go{u}_{1}]\oplus \go{u}_{1}$, and one verifies easily that  $\go{u}'=\go{u}_{-1}\oplus  [\go{u}_{-1}, \go{u}_{1}]\oplus \go{u}_{1}$ is an ideal of $\go{u}$, and therefore semi-simple. Then it is enough to prove that $\overline{\go{u}'}=\overline{\go{u}_{-1}}\oplus  [\overline{\go{u}_{-1}},\overline{ \go{u}_{1}}]\oplus \overline{\go{u}_{1}}$ is a simple algebra over  $\overline{F}.$ Let $J$ be an ideal of $\overline{\go{u}'}$. We will show that $J=\{0\}$ or  $J=\overline{\go{u}'}$. 
  
  Note first that the ideal $\overline{\go{u}'}^{\perp}=\overline{\go{u}}''$, orthogonal of $\overline{\go{u}'}$ for the Killing form of  $\overline{\go{u}}$ , is a subset of  $\overline{\go{u}_{0}}$. Hence
  $$\overline{\go{u}}=\overline{\go{u}_{-1}}\oplus ( [\overline{\go{u}_{-1}},\overline{ \go{u}_{1}}]\oplus\overline{\go{u}_{0}}'')\oplus \overline{\go{u}_{1}}$$
 As $J$ is an ideal of $\overline{\go{u}'}$, the space  $J\cap  \overline{\go{u}_{1}}$ is stable  under $[\overline{\go{u}_{-1}},\overline{ \go{u}_{1}}] $. On the other hand  $J\cap  \overline{\go{u}_{1}}$ is also stable under  $\overline{\go{u}_{0}}''$ because  $[\overline{\go{u}_{0}}'',\overline{\go{u}_{1}}]=\{0\}$ (In a semi-simple Lie algebras orthogonal ideals commute). Therefore  $J\cap  \overline{\go{u}_{1}}$ is a sub-$\overline{\go{u}_{0}}$-module of $\overline{ \go{u}_{1}}$. From the hypothesis, either $J\cap  \overline{\go{u}_{1}}=   \overline{\go{u}_{1}}$ or   $J\cap  \overline{\go{u}_{1}}=\{0\}$.
  
  $\bullet$ If $J\cap  \overline{\go{u}_{1}}=   \overline{\go{u}_{1}}$, then  $[\overline{\go{u}_{-1}},\overline{ \go{u}_{1}}]\subset J$. Let  $U_{0}$ be the grading element  (which always exists). From the semi-simplicity of $\go{u}'$, one gets $U_{0}\in [\go{u}_{-1}, \go{u}_{1}]$. Hence $U_{0}$  is in $J$. This implies that $\overline{\go{u}_{-1}}\subset J$, and finally  $J=\overline{\go{u}'}$.
  
   $\bullet$ If $J\cap  \overline{\go{u}_{1}}=   \{0\}$, then if $J^{\perp}$ is the ideal of $\overline{\go{u}'}$ orthogonal to $J$ for the Killing form, we get $J^{\perp}=\overline{\go{u}'}$, hence $J=\{0\}$.
   
   This proves that $\overline{\go{u}'}$ is simple.
 
 \end{proof}
 
 \begin{prop}\label{ell-0-simple}\hfill
 
The representation  $([{\widetilde{\go{g}}^{-\lambda_{0}}},{\widetilde{\go{g}}^{\lambda_{0}}}], {\widetilde{\go{g}}^{\lambda_{0}}})$ is absolutely simple and the algebra  $\widetilde{ {\go l}}_0={\widetilde{\go{g}}^{-\lambda_{0}}}\oplus[{\widetilde{\go{g}}^{-\lambda_{0}}},{\widetilde{\go{g}}^{\lambda_{0}}}]\oplus {\widetilde{\go{g}}^{\lambda_{0}}}$ is absolutely simple of split rank  $1$.
 \end{prop}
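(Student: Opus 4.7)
The plan is to apply Lemma~\ref{lemme-alg.simple} to $\widetilde{\go l}_0$ itself, viewed as a short $\Z$-graded Lie algebra via $\ad(H_{\la_0})$: its $-2$-, $0$- and $+2$-eigenspaces are $\widetilde{\go g}^{-\la_0}$, $M:=[\widetilde{\go g}^{-\la_0},\widetilde{\go g}^{\la_0}]$ and $\widetilde{\go g}^{\la_0}$, respectively. Since $\widetilde{\go l}_0$ is visibly generated by $\widetilde{\go g}^{\pm\la_0}$, the lemma will deliver at once the absolute simplicity of $\widetilde{\go l}_0$, provided its two hypotheses are established: that $\widetilde{\go l}_0$ is semi-simple, and that $\widetilde{\go g}^{\la_0}$ is absolutely simple as $M$-module.

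That $\widetilde{\go l}_0$ is a graded subalgebra is routine: the commutativity of $V^\pm$ gives $[\widetilde{\go g}^{\pm\la_0},\widetilde{\go g}^{\pm\la_0}]\subset[V^\pm,V^\pm]=\{0\}$, and all other inclusions follow from the original grading of $\widetilde{\go g}$. For semi-simplicity, I argue as follows. The radical of $\widetilde{\go l}_0$ is a graded ideal $\go r=\go r_{-2}\oplus\go r_0\oplus\go r_2$. A Jacobson--Morozov argument rules out $\go r_{\pm 2}\neq 0$: any non-zero $E\in\go r_2\subset\widetilde{\go g}^{\la_0}$ would complete (using the non-degeneracy of the Killing-form pairing between $\widetilde{\go g}^{\la_0}$ and $\widetilde{\go g}^{-\la_0}$) to an $\go{sl}_2$-triple inside $\go r$, contradicting solvability. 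The component $\go r_0\subset M$ must then centralise $\widetilde{\go g}^{\pm\la_0}$, hence all of $\widetilde{\go l}_0$; that $\go r_0=0$ is the classical content of the fact that the rank-one restricted-root subalgebra attached to $\la_0$ is semi-simple when $2\la_0\notin\widetilde{\Si}$, which holds here because $[V^+,V^+]=\{0\}$.

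The main obstacle is the absolute simplicity of the $M$-module $\widetilde{\go g}^{\la_0}$. Extending scalars to $\overline{F}$ and using Lemma~\ref{lemme-espacesradiciels-cloture}, one writes $\overline{\widetilde{\go g}^{\la_0}}=\bigoplus_{\al\in S_{\la_0}}\overline{\widetilde{\go g}}^{\,\al}$ with $S_{\la_0}=\{\al\in\widetilde{\cal R}:\rho(\al)=\la_0\}$. By Proposition~\ref{prop.alpha0}, each $\al\in S_{\la_0}$ has the form $\al_0+\de$ with $\de$ a non-negative integer combination of simple roots in $\widetilde{\cal R}_0:=\{\be\in\widetilde{\cal R}:\rho(\be)=0\}$. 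The algebra $\overline{M}$ contains the coroots $H_\al$ ($\al\in S_{\la_0}$) together with the root spaces $\overline{\widetilde{\go g}}^{\,\be-\al}$ for $\al,\be\in S_{\la_0}$ with $\be-\al$ a root. Any non-zero $\overline{M}$-submodule of $\overline{\widetilde{\go g}^{\la_0}}$ is then a sum of absolute root spaces (the $\ad(H_\al)$ act diagonally with pairwise distinct eigenvalues on distinct $\overline{\widetilde{\go g}}^{\,\al}$), and the root vectors in $\overline{M}$ shift these components among themselves, so the irreducibility reduces to the connectedness of $S_{\la_0}$ under addition and subtraction of roots of $\widetilde{\cal R}_0$. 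This connectedness is extracted from the absolute irreducibility $({\bf H_2})$ of $(\overline{\go g},\overline{V^+})$ together with Corollary~\ref{corlambda0}, which identifies $\overline{\widetilde{\go g}^{\la_0}}$ as the subspace of $\overline{V^+}$ annihilated by the negative root spaces of $\overline{\go g}$, whence it is the irreducible lowest-weight piece for the anisotropic Levi generated by $\widetilde{\cal R}_0$ inside the irreducible $\overline{\go g}$-module $\overline{V^+}$.

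Lemma~\ref{lemme-alg.simple} then yields that $\widetilde{\go l}_0$ is absolutely simple, which also entails the first assertion of the proposition. For the split rank: $FH_{\la_0}\subset\go a\cap\widetilde{\go l}_0$ is a split toral subalgebra, and since the $\go a$-weights occurring in $\widetilde{\go l}_0$ are only $0$ and $\pm\la_0$, $FH_{\la_0}$ is a maximal split toral subalgebra of $\widetilde{\go l}_0$; the split rank is therefore $1$.
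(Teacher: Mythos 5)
Your proof rests on the same key lemma as the paper's (Lemma \ref{lemme-alg.simple}), combined with a lowest-weight analysis of $\overline{\widetilde{\go g}^{\lambda_0}}$, but you feed the lemma a different graded algebra, and this changes the bookkeeping. The paper takes $\go u=\widetilde{\go g}^{-\lambda_0}\oplus\go m\oplus\widetilde{\go g}^{\lambda_0}$ with $\go m={\cal Z}_{\go g}(\go a)$: the subalgebra generated by $\go u_{\pm1}$ is exactly $\widetilde{\go l}_0$, so its absolute simplicity (and in particular its semi-simplicity) is an \emph{output} of the lemma, and the only input to verify is that $(\overline{\go m},\overline{\widetilde{\go g}^{\lambda_0}})$ is irreducible --- which the paper proves by the same ``a second lowest weight would differ from $\alpha_0$ by simple roots restricting to $0$'' device you gesture at. Your choice $\go u=\widetilde{\go l}_0$, $\go u_0=M=[\widetilde{\go g}^{-\lambda_0},\widetilde{\go g}^{\lambda_0}]$ has the merit of addressing the first assertion of the Proposition head-on (irreducibility over $M$, which the paper's proof leaves implicit), and your reduction from $M$ to $\go m$ is sound: a root space $\overline{\widetilde{\go g}}^{\gamma}$ of $\overline{\go m}$ acts non-trivially on $\overline{\widetilde{\go g}^{\lambda_0}}$ only if $\gamma=\beta-\alpha$ with $\alpha,\beta\in S_{\lambda_0}$, in which case it lies in $\overline M$ together with the coroots $H_\alpha$ that separate the root spaces, so $\overline M$- and $\overline{\go m}$-submodules of $\overline{\widetilde{\go g}^{\lambda_0}}$ coincide.

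The price of your choice is that semi-simplicity of $\widetilde{\go l}_0$ becomes an input, and your treatment of it has a genuine hole. The Jacobson--Morozov step does not work as stated: a non-zero $E\in\go r_2$ cannot in general be completed to an $\go{sl}_2$-triple \emph{inside} $\go r$, nor even to one with middle element $H_{\lambda_0}$. A correct variant: pick $F\in\widetilde{\go g}^{-\lambda_0}$ with $\widetilde{B}(E,F)\neq0$; then $h=[F,E]\in[\widetilde{\go l}_0,\go r]\subset\go r$ and $\widetilde{B}(h,H_{\lambda_0})=-2\widetilde{B}(F,E)\neq0$, so $\ad h$ is not nilpotent (it commutes with the semisimple $\ad H_{\lambda_0}$ and pairs non-trivially with it under the trace form), contradicting the fact that $[\widetilde{\go l}_0,\go r]$ lies in the nilpotent radical. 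More seriously, your disposal of $\go r_0$ appeals to ``the classical fact that the rank-one restricted-root subalgebra attached to $\lambda_0$ is semi-simple'' --- but that classical fact \emph{is} the semi-simplicity you are in the middle of proving, so as written the step is circular; you would need either to cite it outright (Seligman, as the paper does for the companion fact that $H_{\lambda_0}$ embeds in an $\go{sl}_2$-triple with ends in $\widetilde{\go g}^{\pm\lambda_0}$) or to give an actual argument that a central element of $M$ vanishes. The paper's choice of $\go u$ sidesteps all of this, which is why its proof is shorter.
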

 
 \begin{proof} Let $\go{m}={\cal Z}_{\go{g}}(\go{a})$ be the centralizer of $\go{a}$ in  $\go{g}$. Consider the graded algebra
 $$\go{u}= {{\widetilde{\go{g}}}^{-\lambda_{0}}\oplus\go{m}\oplus{\widetilde{\go{g}}}^{\lambda_{0}}}$$
From Lemma  \ref{lemme-alg.simple}, to prove that $\widetilde{ {\go l}}_0$ is absolutely simple, it is enough to show that  the representation $(\go{m}, {\widetilde{\go{g}}^{\lambda_{0}}})$ is absolutely simple. One has:
 $$\overline{\go{m}}=\overline{\go{j}}\oplus\sum_{\{\alpha\in \widetilde{\cal R}\,|\, \rho(\alpha)=0\}}{\overline{\widetilde{\go g}}}^{^{\alpha}}.$$
 
The algebra $\overline{\go{m}}$ is reductive, and its root system for the Cartan subalgebra $\overline{\go{j}}$ is  $\widetilde{\cal R}_{0}=\{\alpha\in \widetilde{\cal R}\,|\, \rho(\alpha)=0\}$. We put the order induced by $\widetilde{\cal R}^+$ on $\widetilde{\cal R}_{0}$.
    
   We have to show that the module $(\overline{\go{m}},\overline{{\widetilde{\go{g}}}^{\lambda_{0}})}$ is  simple. If it would not, this  module would have a lowest weight $\alpha_{1}$  distinct from $\alpha_{0}$ (see  Proposition \ref{prop.alpha0}). But from hypothesis  (${\bf {H_{2}}}$), the module ($\overline{\go{g}},\overline {V^+}$) is simple, and its lowest weight (with respect to ${\cal R}^+=\widetilde{\cal R}^+\cap {\cal R}$) is  $\alpha_{0}$. There exists then a sequence  $\beta_{1},\dots,\beta_{k}$ of simple roots in $\Psi=\widetilde{\Psi}\setminus \{\alpha_{0}\}$ such that $\alpha_{1}=\alpha_{0}+\beta_{1}+\dots+\beta_{k}$, and such that each partial sum is a root. But as  $\rho(\alpha_{0})=\rho(\alpha_{1})=\lambda_{0}$, and as the roots $\beta_{i}$ are in  $\widetilde{\cal R}^+$, we obtain  $\rho(\beta_{i})=0$, for $i=1,\dots,k$. Hence $\beta_{i}\in \widetilde{\cal R}_{0}^+$ and  $\alpha_{1}-\beta_{k}=\alpha_{0}+\beta_{1}+\dots+\beta_{k-1}$ is a root. As $\alpha_{1}$ is a lowest weight, this is impossible. 
   
  The fact that this algebra is of split rank one is easy. 
 
  \end{proof}
  
  Let  $\widetilde{\go{g}}_{1}\index{ggothique1@$\widetilde{\go{g}}_{1}$}={\cal Z}_{\widetilde{\go{g}}}(\widetilde{ {\go l}}_0)$ be the  centralizer of $\widetilde{ {\go l}}_0$ in $\widetilde{\go{g}}$. This is a reductive subalgebra (see \cite{Bou2},chap.VII, \S 1, $n^\circ 5$, Prop.13). The same is then true for $\overline{\widetilde{\go{g}}_{1}}=\overline{{\cal Z}_{\widetilde{\go{g}}}(\widetilde{ {\go l}}_0)}={\cal Z}_{\overline{\widetilde {\go g}}}(\overline{\widetilde{ {\go l}}_0})$ (\cite{Bou3}, Chap. I, \S 6, $n^\circ 10$). (We will show the last equality in the proof of the next proposition).

  \begin{prop}\label{prop-gtilde(1)}\hfill
  
  If $\widetilde{\go{g}}_{1}\cap V^+\neq\{0\}$, then  $\widetilde{\go{g}}_{1}$ satisfies the hypothesis $({\bf H_{1}})$ and $({\bf H_{2}})$, where the grading is defined by the element  $H_{1}= H_{0}-H_{\lambda_{0}}$\index{H1@$H_{1}$}. More precisely the grading is given by
$$\widetilde{\go{g}}_{1}= V_{1}^-\oplus \go{g}_{1}\oplus V_{1}^+$$ \index{V1@$V_{1}^{\pm}$} \index{ggothique1@$\go{g}_{1}$}
where  $\go{g}_{1}=\go{g}\cap \widetilde{\go{g}}_{1}$, $V_{1}^{\pm}=V^{\pm}\cap \widetilde{\go{g}}_{1}$.
  \end{prop}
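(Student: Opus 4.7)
The plan is to verify hypotheses $(\mathbf{H_1})$ and $(\mathbf{H_2})$ for $\widetilde{\go g}_1$ with grading element $H_1 = H_0 - H_{\lambda_0}$.

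First I show $H_1 \in \widetilde{\go g}_1$. Both $\ad(H_0)$ and $\ad(H_{\lambda_0})$ act on $\widetilde{\go g}^{\pm\lambda_0}$ as multiplication by $\pm 2$, and both annihilate the middle factor $[\widetilde{\go g}^{\lambda_0}, \widetilde{\go g}^{-\lambda_0}]$ (the first because this subspace sits in the $H_0$-weight $0$ part, the second because $\lambda_0+(-\lambda_0)=0$). Hence $\ad(H_1)$ vanishes on all of $\widetilde{\go l}_0$. Next, $\ad(H_0)$ scales each summand of $\widetilde{\go l}_0$, so it normalizes $\widetilde{\go l}_0$ and consequently preserves $\widetilde{\go g}_1$; since $\widetilde{\go g}_1$ is then a sum of $\ad(H_0)$-eigenspaces sitting inside $V^- \oplus \go g \oplus V^+$, we obtain the decomposition $\widetilde{\go g}_1 = V_1^- \oplus \go g_1 \oplus V_1^+$. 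Finally, because $H_{\lambda_0} \in \widetilde{\go l}_0$, we have $\ad(H_{\lambda_0})|_{\widetilde{\go g}_1} = 0$, so $\ad(H_1) = \ad(H_0)$ on $\widetilde{\go g}_1$, yielding eigenvalues $-2, 0, 2$ on $V_1^-$, $\go g_1$, $V_1^+$; together with the hypothesis $V_1^+ \neq \{0\}$ this establishes $(\mathbf{H_1})$.

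The essential work is $(\mathbf{H_2})$: absolute irreducibility of $(\go g_1, V_1^+)$. Passing to $\overline F$, I first establish the useful identity
\[
\overline{V_1^+} = \{X \in \overline{V^+} : \ad(H_{\lambda_0})X = 0\}.
\]
The inclusion $\subset$ is immediate. For $\supset$, since $[V^+,V^+]=0$ automatically gives $[\overline{\widetilde{\go g}^{\lambda_0}}, X] = 0$ for any $X \in \overline{V^+}$, only $[\overline{\widetilde{\go g}^{-\lambda_0}}, X] = 0$ needs to be checked. Taking $X$ in a root space $\overline{\widetilde{\go g}}^\alpha \subset \overline{V^+}$ with $\alpha(H_{\lambda_0}) = 0$, the $\lambda_0$-string through the restricted weight $\rho(\alpha)$ cannot extend upward (since $(\rho(\alpha)+\lambda_0)(H_0)=4$) and has Cartan integer $n(\rho(\alpha),\lambda_0)=\alpha(H_{\lambda_0})=0$, so it cannot extend downward either; hence $\rho(\alpha) - \lambda_0 \notin \widetilde{\Sigma}$ and the required bracket vanishes.

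With this identification, $\overline{V_1^+}$ is the zero-weight space of $\ad(H_{\lambda_0})$ in the absolutely irreducible $\overline{\go g}$-module $\overline{V^+}$, and $\overline{\go g_1}$ is contained in the centralizer of $H_{\lambda_0}$ in $\overline{\go g}$, hence preserves this weight space. To conclude irreducibility, I plan to exhibit a unique (up to scalar) lowest $\overline{\go j}$-weight vector in $\overline{V_1^+}$ and to show it generates the whole space under $\overline{\go g_1}$. The candidate is the root vector attached to the minimal $\alpha \in \widetilde{\cal R}$ with $\alpha(H_0)=2$, $\alpha(H_{\lambda_0})=0$, and $\alpha + \beta \notin \widetilde{\cal R}$ for every $\beta$ with $\overline{\widetilde{\go g}}^\beta \subset \overline{\widetilde{\go l}_0}$; this $\alpha$ exists (by the assumption $V_1^+\neq\{0\}$) and can be located via the Satake--Tits diagram after removing the node $\alpha_0$ identified in Proposition \ref{prop.alpha0}. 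The main obstacle is the root-theoretic analysis of the centralizer $\overline{\widetilde{\go g}_1}$ of the possibly non-split simple algebra $\overline{\widetilde{\go l}_0}$, together with a branching argument, using the simplicity of $\overline{V^+}$ as $\overline{\go g}$-module, to show that the $\overline{\go g_1}$-submodule generated by this candidate lowest weight vector exhausts all of $\overline{V_1^+}$, so that no second irreducible component can appear.
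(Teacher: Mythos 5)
Your treatment of $(\bf H_1)$ is correct and matches the paper's: $\ad H_1$ kills $\widetilde{\go l}_0$ because $\ad H_0$ and $\ad H_{\lambda_0}$ agree there, and since $H_{\lambda_0}\in\widetilde{\go l}_0$ the operators $\ad H_1$ and $\ad H_0$ coincide on $\widetilde{\go g}_1$. Your identification $\overline{V_1^+}=\{X\in\overline{V^+}:\ad(H_{\lambda_0})X=0\}$ via the symmetric $\lambda_0$-string is also sound (it is essentially Proposition \ref{racines-g1}(1), proved just after). The problem is $(\bf H_2)$: you never prove it. You announce a plan — exhibit a candidate lowest weight vector and show it generates $\overline{V_1^+}$ under $\overline{\go g_1}$ — and then explicitly label the decisive step ("the root-theoretic analysis of the centralizer \ldots together with a branching argument \ldots to show that the submodule \ldots exhausts all of $\overline{V_1^+}$") as the main obstacle, without carrying it out. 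Moreover the candidate is not well specified: the condition you impose ($\alpha+\beta\notin\widetilde{\cal R}$ for $\beta$ with $\overline{\widetilde{\go g}}^\beta\subset\overline{\widetilde{\go l}_0}$) expresses membership in the centralizer, not the lowest weight property, which would require $\alpha-\beta\notin\widetilde{\cal R}$ for the positive roots of $\overline{\go g_1}$; and proving uniqueness of the lowest weight vector is equivalent to the irreducibility you are trying to establish, so as stated the plan is close to circular.

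The paper closes exactly this gap by running the argument in the opposite direction. It first computes $\overline{\widetilde{\go g}_1}=\overline{\go j}_1\oplus\sum_{\beta\in\widetilde{\cal R}_1}\overline{\widetilde{\go g}}^\beta$ with $\widetilde{\cal R}_1$ the roots strongly orthogonal to $S_{\lambda_0}$, then takes the highest weight $\omega_1$ of an \emph{arbitrary} irreducible component of $(\overline{\go g_1},\overline{V_1^+})$ and shows $\omega_1+\beta\notin\widetilde{\cal R}$ for every $\beta\in{\cal R}^+$: for $\beta\in{\cal R}_1^+$ this is the definition of $\omega_1$, and for $\beta\notin{\cal R}_1^+$ one produces $\gamma\in S_{\lambda_0}$ with $\gamma+\beta\in\widetilde{\cal R}$ and $\gamma-\beta\notin\widetilde{\cal R}$, whence $(\beta,\gamma)<0$, so that $\omega_1+\beta\in\widetilde{\cal R}$ would force $\omega_1+\beta+\gamma$ to be a root of $H_0$-eigenvalue $4$, a contradiction. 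Thus $\omega_1$ is the (unique) highest weight $\omega$ of the irreducible module $(\overline{\go g},\overline{V^+})$, and uniqueness of the highest weight vector in $\overline{V^+}$ forbids a second component. If you want to salvage your write-up, this root-string computation is the missing content you must supply; the rest of what you wrote can stand.
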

    
  \begin{proof} It is clear that  $H_{1}\in \widetilde{\go{g}}_{1}$. As $H_{\lambda_{0}}\in \widetilde{ {\go l}}_0$, the actions of  $\ad H_{0}$ and of  $\ad H_{1}$ on $\widetilde{\go{g}}_{1}$ are the same. As $V^+_{1}= \widetilde{\go{g}}_{1}\cap V^+\neq\{0\}$, the eigenvalues of $\ad H_{1}$ on  $\widetilde{\go{g}}_{1}$ are $-2,0,2$. Therefore the hypothesis ${\bf{H_{1}}}$ is satisfied.
  
Note that $\overline{V_{1}^+}=\overline{V^+}\cap \overline{\widetilde{\go{g}}_{1}}$. It remains to show that   the representation  $(\go{g}_{1},\overline{V_{1}^+})$ (or $(\overline{\go{g}_{1}},\overline{V_{1}^+})$) is irreducible over $\overline{F}$.

 From Lemma \ref{lemme-espacesradiciels-cloture} one has  $\overline{\widetilde{ {\go l}}_0}= \overline{\widetilde{\go{g}}}^{-S_{\lambda_{0}}}\oplus [\overline{\widetilde{\go{g}}}^{-S_{\lambda_{0}}},\overline{\widetilde{\go{g}}}^{S_{\lambda_{0}}}]\oplus \overline{\widetilde{\go{g}}}^{S_{\lambda_{0}}}$ where $\overline{\widetilde{\go{g}}}^{S_{\lambda_{0}}}= \overline{\widetilde{\go{g}}^{\lambda_{0}}}=\sum_{\alpha\in S_{\lambda_{0}}}\overline{\widetilde {\go g}}^{^{\alpha}}$.
  Let us first describe  $\overline{\widetilde{\go{g}}_{1}}= \overline{{\cal Z}_{\widetilde{\go{g}}}(\widetilde{ {\go l}}_0)}$. It is easy to see that  $\overline{{\cal Z}_{\widetilde{\go{g}}}(\widetilde{ {\go l}}_0)}\subset {\cal Z}_{\overline{\widetilde {\go g}}}(\overline{\widetilde{ {\go l}}_0})$. Let  $X\in {\cal Z}_{\overline{\widetilde {\go g}}}(\overline{\widetilde{ {\go l}}_0})$. Let us write
  $X=H+\sum_{\alpha\in \widetilde{\cal R}}X_{\alpha} $ whith $H\in \overline{\go{j}}$ and with $X_{\alpha}\in {\overline{\widetilde{\go g}}}^{^{\alpha}}$. As $[X, X_{\beta}]=0$ for  $\beta\in \pm S_{\lambda_{0}}$, we obtain that   $X\in \overline{\go{j}}_{1}\oplus \sum_{\beta\in \widetilde{ {\cal R}}_1}\overline{\widetilde {\go g}}^{^{\beta}}$, where 
   $$\overline{\go{j}}_{1}=\{H\in \overline{\go {j}}\,|\, \alpha(H)=0\,, \forall \alpha\in S_{\lambda_{0}}\}$$
   
   $$ \widetilde{ {\cal R}}_1=\{\beta \in \widetilde{ {\cal R}}\mid  \ \beta \sorth
\alpha, \, \forall \alpha \in S_{\lambda_{0}} \}.$$ \index{Rtilde1@ $\widetilde{ {\cal R}}_1$}
We will now show that $\overline{\go{j}}_{1}\oplus \sum_{\beta\in \widetilde{ {\cal R}}_1}\overline{\widetilde {\go g}}^{^{\beta}}\subset \overline{{\cal Z}_{\widetilde{\go{g}}}(\widetilde{ {\go l}}_0)}$.

$\bullet$ \hskip 5pt Consider first $\overline{\widetilde {\go g}}^{^{\beta}}$ for  $\beta\in \widetilde{ {\cal R}}_1$. Any element  $X\in \overline{\widetilde {\go g}}^{^{\beta}}\subset \widetilde{\go{g}}$ can be written $X= \sum_{i=1}^{n}e_{i}X_{i}$ where $X_{i}\in \widetilde{\go{g}}$ and where the $e_{i}$'s are elements of $\overline{F}$ which are free over  $F$. As $\beta \in \widetilde{ {\cal R}}_1$ one has  $[X, \overline{\widetilde {\go g}}^{^{\beta}}]=\{0\}$ for each $\gamma\in S_{\lambda_{0}}$. As $\widetilde{\go{g}}^{\lambda_{0}}\subset\overline{\widetilde{\go{g}}}^{S_{\lambda_{0}}}\subset \sum_{\alpha\in S_{\lambda_{0}}}\overline{\widetilde {\go g}}^{^{\alpha}}$, one has $[X, \widetilde{\go{g}}^{\lambda_{0}}]=\{0\}=\sum_{i=1}^{n}e_{i}[X_{i},\widetilde{\go{g}}^{\lambda_{0}}]$. As $[X_{i},\widetilde{\go{g}}^{\lambda_{0}}]\subset\widetilde{\go{g}}$ and as the  $e_{i}$'s are free over  $F$, we obtain that  $ [X_{i},\widetilde{\go{g}}^{\lambda_{0}}]=\{0\}$.

One would similarly prove that $ [X_{i},\widetilde{\go{g}}^{-\lambda_{0}}]=\{0\}$. Hence $X\in \overline{{\cal Z}_{\widetilde{\go{g}}}(\widetilde{ {\go l}}_0)}$.  

$\bullet$ \hskip 5pt Consider now  $\overline{\go{j}}_{1}=\{H\in \overline{\go {j}}\,|\, \alpha(H)=0\,, \forall \alpha\in S_{\lambda_{0}}\}$. An element  $u\in\overline{\go{j}}_{1}$ can be written  $u=\sum_{i=1}^{n}a_{i}u_{i}$ where the elements $a_{i}\in \overline{F}$ are free over $F$ and where $u_{i}\in \go{j}$. Then for  $\alpha\in S_{\lambda_{0}}$ one has: $\alpha(u)=0=\sum_{i=1}^{n}a_{i}\alpha(u_{i})$. This implies that $\alpha(u_{i})=0$, for any  $\alpha\in S_{\lambda_{0}}$ and any   $i$. Therefore the elements $u_{i}$ belong to $\go{j}_{1}=\{u\in \go{j}, \alpha(u)=0,\,\forall \alpha\in S_{\lambda_{0}}\}$. But then  $u=\sum_{i=1}^{n}a_{i}u_{i}\in \overline{(\go{j}_{1})}$ and $[u_{i},  \widetilde{\go{g}}^{\pm\lambda_{0}}]\subset [u_{i}, \overline{\widetilde{\go{g}}}^{\pm S_{\lambda_{0}}}]=\{0\}$ (from the definition of  $\go{j}_{1}$). Hence $u_{i}\in {\cal Z}_{\widetilde{\go{g}}}(\widetilde{ {\go l}}_0)$, and therefore $u\in \overline{{\cal Z}_{\widetilde{\go{g}}}(\widetilde{ {\go l}}_0)}$.

Finally we have proved that 
$$\overline{\widetilde{\go{g}}_{1}}=\overline{{\cal Z}_{\widetilde{\go{g}}}(\widetilde{ {\go l}}_0)}={\cal Z}_{\overline{\widetilde {\go g}}}(\overline{\widetilde{ {\go l}}_0})=\overline{\go{j}}_{1}\oplus \sum_{\beta\in \widetilde{ {\cal R}}_1}\overline{\widetilde {\go g}}^{^{\beta}}.$$

If $\beta\in  \widetilde{ {\cal R}}_1$, then  $\alpha(H_{\beta})=0$ for all root $\alpha\in S_{\lambda_{0}}$. Hence $H_{\beta}\in \overline{\go{j}}_{1}$, and the restriction of $\beta $ to $\overline{\go{j}}_{1}$ is non zero. This implies that $\overline{\go{j}}_{1}$ is a Cartan subalgebra of the reductive algebra $\overline{\widetilde{\go{g}}_{1}}$ and $\widetilde{ {\cal R}}_1$ can be seen as the root system of the pair  $(\overline{\widetilde{\go{g}}_{1}},\overline{\go{j}}_{1})$. The order on  $\widetilde{\cal R}$ defined by  $\widetilde{\cal R}^+$ induces an order on  $\widetilde{ {\cal R}}_1$ and on the root system  $ {\cal R}_1= \widetilde{ {\cal R}}_1\cap  {\cal R}$ \index{R1@${\cal R}_1$}of the pair  $(\overline{\go{g}_{1}},\overline{\go{j}}_{1})$ by setting:
$$\widetilde{ {\cal R}}_1^+=\widetilde{ {\cal R}}_1\cap \widetilde{ {\cal R}}^+,\,\, {\cal R}_1^+={\cal R}_{1}\cap  \widetilde{ {\cal R}}^+=\widetilde{ {\cal R}}_1\cap {\cal R}^+.$$

Let  $\omega_{1}$ be the highest weight, for the preceding order, of one of the irreducible components of the representation $(\overline{\go{g}_{1}},\overline{V_{1}^+})$. $\omega_{1}$ is a root of $\widetilde{ {\cal R}}_1^+$ and we will show  that it is also    the highest weight of the representation   $(\overline{\go{g}}, \overline{V^+})$ which is irreducible from ${\bf (H_{2})}$. This will show that  $(\overline{\go{g}_{1}},\overline{V_{1}^+})$  is irreducible. To do this we will show  that if $\beta\in {\cal R}^+$ then $\omega_{1}+\beta\notin \widetilde{\cal R}$.
\vskip 5pt

\hskip 10pt $(1)$ If  $\beta\in {\cal R}_{1}^+$, then from the definition of  $\omega_{1}$ we get $[\overline{\widetilde {\go g}}^{^{\omega_{1}}},\overline{\widetilde {\go g}}^{^{\beta}}]$=\{0\}. Hence $\omega_{1}+\beta\notin \widetilde{\cal R}$.
\vskip 5pt
\hskip 10pt $(2)$ If $\beta\notin {\cal R}_{1}^+$, there exists  a root $\alpha\in S_{\lambda_{0}}$ such that $\alpha$ and  $\beta$ are not strongly orthogonal. We will show that in that case there is a root $\gamma\in S_{\lambda_{0}}$ such that 
$$(*)\hskip 15pt \gamma+\beta\in \widetilde{\cal R}\text{  and  }\gamma-\beta\notin \widetilde{\cal R}.$$

\vskip 5pt
\hskip 25pt $(2.1)$ If $\rho(\beta)\neq 0$, then $\alpha-\beta$ is not a root. If it would be the case, we would have 
$$\rho(\alpha-\beta)=\rho(\alpha)-\rho(\beta)=\lambda_{0}-\rho(\beta).$$
But  $\rho(\beta)\neq \lambda_{0}$ as $\beta\in \Sigma^+$, so  $\lambda_{0}-\rho(\beta)$ would be a root, and this is impossible as  $\lambda_{0}$ is a simple root in  $\widetilde{\Sigma}^+$. In this case the root  $\alpha$ satisfies $(*)$.

\vskip 5pt
\hskip 25pt $(2.2)$ If  $\rho(\beta)=0$, consider the root  $\beta$-string  through $\alpha$. These roots are in  $S_{\lambda_{0}}$ and as $\alpha$ and  $\beta$ are not strongly orthogonal, this string contains at least two roots. This implies that there exists a root  $\gamma$ verifying  $(*)$.

Therefore, from  \cite{Bou1} (chap. 6, \S 1, Prop. 9) we obtain that  $(\beta,\gamma)<0$. As $\omega_{1}$ and  $\gamma$ are strongly orthogonal, we have    $(\omega_{1},\gamma)=0$ ($\omega_{1}\in \widetilde{ {\cal R}}_1^+$ and  $\gamma\in S_{\lambda_{0}}$).

Now if  $\omega_{1}+\beta$ is a root, we would have:
$$(\omega_{1}+\beta, \gamma)=(\omega_{1},\gamma)+(\beta,\gamma)=(\beta,\gamma)<0.$$
Then $\omega_{1}+\beta+\gamma$ is a root and  $(\omega_{1}+\beta+\gamma)(H_{0})=4 $ (as $\omega_{1}(H_{0})=2, \beta(H_{0})=0, \gamma(H_{0})=2$). Hence $\omega_{1}+\beta$ is not a root.

This means that  $\omega_{1}$ is a highest weight of  $(\overline{\go{g}}, \overline{V^+})$ which is irreducible from  ${\bf (H_{2})}$. Therefore $(\go{g}_{1},\overline{V_{1}^+})$ is irreducible (and  $\omega_{1}=\omega$).
\vskip 5pt
 
  \end{proof}
  
  \vskip 10pt
  
  Set  $\go{a}_{1}=\go{a}\cap \widetilde{\go{g}}_{1}$. Then  $\go{a}_{1}$ is a maximal split abelian  subalgebra of    $\widetilde{\go{g}}_{1} $  included in $\go{g}_{1}$ (because the actions of the maximal split abelian subalgebras can be diagonalized in all finite dimensional representation).
  
  \begin{prop}\label{racines-g1}\hfill

  $1)$ The root system $\widetilde{\Sigma}_{1}$ \index{Sigma1tilde@$\widetilde{\Sigma}_{1}$}of the pair $(\widetilde{\go{g}}_{1},\go{a}_{1})$ is
  $$\widetilde{\Sigma}_{1}=\{\lambda\in  \widetilde{\Sigma}\,,\,\lambda\sorth \lambda_{0}\}=\{\lambda\in  \widetilde{\Sigma}\,,\,\lambda\perp \lambda_{0}\}.$$
  (where $\perp$ means "orthogonal" and where  $\sorth$ means  "strongly  orthogonal".)
  \vskip 3pt
  
  $2)$ Consider the order on $\widetilde{\Sigma}_{1}$ defined by 
  $$\widetilde{\Sigma}_{1}^+=\widetilde{\Sigma}^+\cap \widetilde{\Sigma}_{1}. $$
  This order satisfies the properties of    Theorem \ref{thbasepi} for the graded algebra $\widetilde{\go{g}}_{1}$. 
   \vskip 3pt
  
  $3)$ The set of simple roots $\widetilde{\Pi}_{1}$ \index{Pitilde1@$\widetilde{\Pi}_{1}$}defined by  $\widetilde{\Sigma}_{1}^+$ is given by:
  $$\widetilde{\Pi}_{1}=({\Pi}\cap\widetilde{\Sigma}_{1})\cup\{\lambda_{1}\}$$
  where  $\lambda_{1}$ \index{lambda1@$\lambda_{1}$}is the unique root of  $\widetilde{\Pi}_{1}$ such that $\lambda_{1}(H_{0}-H_{\lambda_{0}})=\lambda_{1}(H_{0})=2$.

  \end{prop}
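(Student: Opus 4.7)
The plan is to prove the three parts in sequence, with Part~3 being the most substantive.

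\textbf{Part 1.} First I would establish the equivalence $\lambda\perp\lambda_0 \Leftrightarrow \lambda\sorth\lambda_0$ for $\lambda\in\widetilde{\Sigma}\setminus\{\pm\lambda_0\}$. The direction $\sorth\Rightarrow\perp$ is standard. Conversely, if $\lambda\perp\lambda_0$ the $\lambda_0$-string through $\lambda$ is symmetric, so $\lambda+\lambda_0\in\widetilde{\Sigma}\Leftrightarrow\lambda-\lambda_0\in\widetilde{\Sigma}$. For $\lambda\in\Sigma^\pm$, Corollary~\ref{corlambda0} (applied to $\pm\lambda\in\Sigma^+$) gives $\lambda_0\mp\lambda\notin\widetilde{\Sigma}$, whence strong orthogonality; for $\lambda\in V^\pm\setminus\{\pm\lambda_0\}$ the value $(\lambda\pm\lambda_0)(H_0)=\pm4$ is forbidden by the grading, yielding the same conclusion. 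Next I would characterize $\widetilde{\Sigma}_1$: noting that $\go a$ normalizes $\widetilde{\go l}_0$ and hence its centralizer $\widetilde{\go g}_1$, the subalgebra $\widetilde{\go g}_1$ decomposes into $\go a$-weight spaces. A weight $\lambda$ with $\widetilde{\go g}^\lambda\cap\widetilde{\go g}_1\neq\{0\}$ must satisfy $\lambda(H_{\lambda_0})=0$, i.e.\ $\lambda\perp\lambda_0$; conversely, $\lambda\sorth\lambda_0$ implies $[\widetilde{\go g}^\lambda,\widetilde{\go g}^{\pm\lambda_0}]\subset\widetilde{\go g}^{\lambda\pm\lambda_0}=\{0\}$, so $\widetilde{\go g}^\lambda$ commutes with $\widetilde{\go l}_0$ and lies in $\widetilde{\go g}_1$. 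The restriction of such a $\lambda$ to $\go a_1$ is nonzero (since $\lambda$ is not a multiple of $\lambda_0$), giving the claimed identification of $\widetilde{\Sigma}_1$ with $\{\lambda\in\widetilde{\Sigma}:\lambda\perp\lambda_0\}$ via restriction.

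\textbf{Part 2.} Since $\widetilde{\go g}_1$ centralizes $H_{\lambda_0}$, the element $H_1=H_0-H_{\lambda_0}$ acts on $\widetilde{\go g}_1$ exactly as $H_0$ does. In particular, for any $\mu\in\widetilde{\Sigma}_1^+$ one has $\mu(H_1)=\mu(H_0)\in\{0,2\}$ (the first equality uses $\mu\perp\lambda_0$, the second uses $\mu\in\widetilde{\Sigma}^+$). That $\widetilde{\Sigma}_1^+$ is a positive system of $\widetilde{\Sigma}_1$ is the standard fact that the intersection of a positive system with a closed subsystem is again a positive system. These two observations verify property~(1) of Theorem~\ref{thbasepi} for $\widetilde{\go g}_1$ equipped with grading element $H_1$.

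\textbf{Part 3.} Existence and uniqueness of $\lambda_1\in\widetilde{\Pi}_1$ with $\lambda_1(H_1)=2$ follow from Theorem~\ref{thbasepi}(2) applied to $\widetilde{\go g}_1$, which satisfies $(\mathbf{H}_1),(\mathbf{H}_2)$ by Proposition~\ref{prop-gtilde(1)}. The inclusion $\Pi\cap\widetilde{\Sigma}_1\subset\widetilde{\Pi}_1$ is immediate: any $\pi\in\Pi$ is indecomposable as a sum of two positive roots in $\widetilde{\Sigma}^+$, hence also in $\widetilde{\Sigma}_1^+$. For the reverse inclusion, take $\nu\in\widetilde{\Pi}_1$ with $\nu(H_1)=0$; then $\nu\in\Sigma^+\cap\widetilde{\Sigma}_1$, and in any decomposition $\nu=\mu_1+\mu_2$ inside $\widetilde{\Sigma}_1^+$ the two $H_1$-values lie in $\{0,2\}$ and sum to $0$, so both summands lie in $\Sigma_1^+:=\Sigma^+\cap\widetilde{\Sigma}_1$; hence $\nu$ is simple in the positive system $\Sigma_1^+$ of the root system $\Sigma_1:=\Sigma\cap\widetilde{\Sigma}_1$. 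The decisive step is to identify the simple roots of $\Sigma_1^+$ with $\Pi\cap\widetilde{\Sigma}_1$: given $\mu\in\Sigma_1^+$, write $\mu=\sum_{\pi\in\Pi}m_\pi\pi$ with $m_\pi\geq 0$; since $\pi$ and $\lambda_0$ are distinct simple roots of $\widetilde{\Pi}$ one has $(\pi,\lambda_0)\leq 0$, with $(\pi,\lambda_0)=0$ precisely when $\pi\in\widetilde{\Sigma}_1$, so the relation $0=(\mu,\lambda_0)=\sum_\pi m_\pi(\pi,\lambda_0)$ forces $m_\pi=0$ whenever $\pi\notin\widetilde{\Sigma}_1$. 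Thus $\mu$ is a non-negative integer combination of $\Pi\cap\widetilde{\Sigma}_1$, and a simple root of $\Sigma_1^+$ must therefore lie in $\Pi\cap\widetilde{\Sigma}_1$; the reverse inclusion is already clear.

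The main obstacle lies in the identification carried out in Part~3, and it hinges crucially on $\lambda_0$ being itself a simple root of $\widetilde{\Sigma}^+$, which yields $(\pi,\lambda_0)\leq 0$ for every $\pi\in\Pi$. Without this feature a closed subsystem of the form $\{\lambda\perp\alpha\}$ would in general acquire new simple roots that are non-trivial positive integer combinations of the ambient simple basis, and the clean description $\widetilde{\Pi}_1=(\Pi\cap\widetilde{\Sigma}_1)\cup\{\lambda_1\}$ would fail.
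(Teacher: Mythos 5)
Your proposal is correct and follows essentially the same route as the paper's proof: the string-symmetry argument combined with the grading constraint and the simplicity of $\lambda_0$ for the equivalence $\perp\Leftrightarrow\sorth$ in Part 1, the identity $\mu(H_1)=\mu(H_0)$ on $\widetilde{\Sigma}_1$ for Part 2, and the key inequality $(\pi,\lambda_0)\leq 0$ for $\pi\in\Pi$ forcing the support of any $\mu\in\Sigma_1^+$ into $\Pi\cap\widetilde{\Sigma}_1$ for Part 3. The closing remark about why the argument hinges on $\lambda_0$ being simple is exactly the mechanism the paper exploits.
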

  
  \begin{proof}\hfill
  
 $1)$ Let us first show that a root  $\lambda\in \widetilde{\Sigma}$ is strongly orthogonal to  $\lambda_{0}$ if and only if it is orthogonal to $\lambda_{0}$.

   Let   $\lambda\in \widetilde{\Sigma}$ be a root orthogonal to  $\lambda_{0}$. Let
   $$\lambda-q\lambda_{0},\dots,\lambda-\lambda_{0},\lambda, \lambda+\lambda_{0},\dots,\lambda+p\lambda_{0}$$
   be the  $\lambda_{0}$-string of roots through $\lambda$.
   From    \cite{Bou1} (Chap. VI, \S1, $n^\circ 3$, Prop. 9) one has  $p-q= -2\frac{(\lambda,\lambda_{0})}{(\lambda_{0},\lambda_{0})}=0$. Hence $p=q$, and the string is symmetric.

  \vskip 5pt
   
  $a)$ If $\lambda(H_{0})=2$, then as  $\lambda+\lambda_{0}\notin \widetilde{\Sigma}$  (from ${\bf(H_{1})}$),  we get  $\lambda-\lambda_{0}\notin \widetilde{\Sigma}$
  
  \vskip 5pt
  
 $b)$ If $\lambda(H_{0})=-2$, then $\lambda-\lambda_{0}\notin \widetilde{\Sigma}$,  and the same proof shows that  $\lambda+\lambda_{0}\notin\widetilde{\Sigma}$.    
     \vskip 5pt
  
    $c) $ If  $\lambda(H_{0})=0$ then $\lambda\in \Sigma$. Recall that  $\widetilde{\Pi}= \Pi\cup \{\lambda_{0}\}$. If $\lambda\in \Sigma^+$ then $\lambda-\lambda_{0}\notin \widetilde{\Sigma}$, and if $\lambda\in \Sigma^-$ then $\lambda+\lambda_{0}\notin \widetilde{\Sigma}$.  Again the same proof shows that, respectively,  $\lambda+\lambda_{0}\notin \widetilde{\Sigma}$ and  $\lambda-\lambda_{0}\notin \widetilde{\Sigma}$.
    
      \vskip 5pt
      In all cases we have showed that $\lambda\sorth \lambda_{0}$.
      
      For  $\lambda \in \{\lambda\in  \widetilde{\Sigma}\,,\,\lambda\sorth \lambda_{0}\}$, it is clear that  ${\widetilde{\go{g}}}^{\lambda}\subset \widetilde{\go{g}}_{1}={\cal Z}_{\widetilde{\go{g}}}(\widetilde{ {\go l}}_0)$, that   its coroot $H_{\lambda}$ belongs to  $\go{a}_{1}=\go{a}\cap \widetilde{\go{g}}_{1}$ and that  $\lambda_{|_{\go{a}_{1}}}$ is a root of the pair  $(\widetilde{\go{g}}_{1},\go{a}_{1})$. Conversely any root of the pair  $(\widetilde{\go{g}}_{1},\go{a}_{1})$ can be extended to a linear form on $\go{a}$ by setting  $\lambda(H_{0})=0$, and this extension  is a root orthogonal to $\lambda_{0}$, and hence strongly orthogonal to $\lambda_{0}$ from above, and finally this extension is in $\widetilde{\Sigma}_{1}$ .
      
      \vskip 5pt
      2) The set  $\widetilde{\Sigma}_{1}^+=\widetilde{\Sigma}^+\cap \widetilde{\Sigma}_{1}      $ defines an order on $\widetilde{\Sigma}$. For $\lambda\in \widetilde{\Sigma}_{1}^+$ one has  $\lambda(H_{1})=\lambda(H_{0})= 0 \text{ or }2$, and this gives 2).
      
       \vskip 5pt
       
       3) Let  $\widetilde{\Pi}_{1}$ be the set of simple roots in $\widetilde{\Sigma}_{1}^+$. From Theorem \ref{thbasepi}, $\widetilde{\Pi}_{1}=\Pi_{1}\cup \{\lambda_{1}\}$ where $\Pi_{1}=\{\lambda\in \widetilde{\Pi}_{1}\,,\, \lambda(H_{1})=0\}$\index{Pi1@$\Pi_{1}$}.  If $\lambda\in {\Pi}\cap\widetilde{\Sigma}_{1}$ then  $\lambda\in \widetilde{\Pi}_{1}$ and $\lambda(H_{1})=\lambda(H_{0})-\lambda(H_{\lambda_{0}})=0$.  Therefore $ {\Pi}\cap\widetilde{\Sigma}_{1}\subset \Pi_{1}$. Conversely let  $\mu\in \Pi_{1}$. Then  $\mu\in \Sigma^+$. Hence $\mu$ can be written  $\mu=\sum_{\nu \in \Pi}m_{\nu}\nu$, with $m_{\nu}\in \N$. One has  $(\mu,\lambda_{0})=0$ and   $(\nu, \lambda_{0})\leq 0$ (because $\Pi\cup \{\lambda_{0}\}= \widetilde{\Pi}$ is a set of simple roots). Therefore  if $m_{\nu}\neq 0$, one has $(\nu, \lambda_{0})=0$, and hence $\nu\in \widetilde{\Sigma}_{1}$. Finally we have proved that  $\Pi_{1}={\Pi}\cap\widetilde{\Sigma}_{1}$.

    \end{proof}
    
    \begin{rem} \label{rem-unicite-lambda0}\hfill
    
    One may remark that  $\Pi_{1}\subset \Pi$, but  $\widetilde{\Pi}_{1}$ is not a subset of $\widetilde{\Pi}$. Indeed $\lambda_{1}(H_{0})=2$, but there is only one root $\lambda$ in $\widetilde{\Pi}$ such that $\lambda(H_{0})=2$, namely $\lambda_{0}$. Hence $\lambda_{1}\notin \widetilde{\Pi}$.
    \end{rem}

  \vskip 20pt
 \subsection{The descent}\label{sub-section-descente2}\hfill
  \vskip 10pt
  
  \begin{theorem}\label{th-descente}\hfill 
  
 There exists a unique sequence of strongly orthogonal roots in  $\widetilde{\Sigma}^+\setminus \Sigma^+$, denoted by  $\lambda_{0},\lambda_{1},\dots,\lambda_{k}$ \index{lambdaj@$\lambda_{j}$}and a sequence of reductive Lie algebras  $\widetilde{\go{g}}\supset \widetilde{\go{g}}_{1}\supset \dots \supset \widetilde{\go{g}}_{k}$ \index{ggothiquetilde1@$\widetilde{\go{g}}_{j}$}such that 
  \vskip 5pt
  
  \hskip 10pt $(1)$ $\widetilde{\go{g}}_{j}={\cal Z}_{\widetilde{\go{g}}}(\widetilde{ {\go l}}_0\oplus \widetilde{ {\go l}}_1\oplus \dots\oplus \widetilde{ {\go l}}_{j-1})$ where $\widetilde{ {\go l}}_i={\widetilde{\go{g}}^{-\lambda_{i}}}\oplus[{\widetilde{\go{g}}^{-\lambda_{i}}},{\widetilde{\go{g}}^{\lambda_{i}}}]\oplus {\widetilde{\go{g}}^{\lambda_{i}}}$ is the subalgebra generated by  $\widetilde{\go{g}}^{\lambda_{i}}$ and $\widetilde{\go{g}}^{-\lambda_{i}}$.
  
 \vskip 5pt
 
 \hskip 10pt $(2)$ The algebra $\widetilde{\go{g}}_{j}$ is a graded Lie algebra verifying the hypothesis ${\bf (H_{1})}$ and ${\bf (H_{2})}$ with the grading element $H_{j}=H_{0}-H_{\lambda_{0}}-\dots-H_{\lambda_{j-1}}$  (where $H_{\lambda_s}$ is the coroot of $\la_s$, $s=0,\ldots, k$).  More precisely
 $$\widetilde{\go{g}}_{j}=V_{j}^-\oplus \go{g}_{j}\oplus V_{j}^+$$ \index{Vj@$V_{j}^\pm$} \index{ggothique@$\go{g}_{j}$}
 where $V_{j}^{\pm}= V^{\pm}\cap \widetilde{\go{g}}_{j}=V_{j-1}^{\pm}\cap  {\cal Z}_{\widetilde{\go{g} }_{j-1}}(  \widetilde{ {\go l}}_{j-1})$  and   $\widetilde{\go{g}}_{j}={\cal Z}_{\widetilde{\go{g}}}(\widetilde{ {\go l}}_0\oplus \widetilde{ {\go l}}_1\oplus \dots\oplus \widetilde{ {\go l}}_{j-1})={\cal Z}_{\widetilde{\go{g}}_{j-1}}( \widetilde{ {\go l}}_{j-1})$.

  \vskip 5pt
 
 \hskip 10pt $(3)$  $V^+\cap {\cal Z}_{\widetilde{\go{g}}}(\widetilde{ {\go l}}_0\oplus \widetilde{ {\go l}}_1\oplus \dots\oplus \widetilde{ {\go l}}_{k})=\{0\}$.
  \end{theorem}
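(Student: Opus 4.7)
The plan is to construct the sequence iteratively, using Propositions \ref{prop-gtilde(1)} and \ref{racines-g1} as the inductive step. The base case is given: Theorem \ref{thbasepi}(2) singles out $\lambda_0 \in \widetilde\Pi$ with $\lambda_0(H_0)=2$, and Proposition \ref{prop-gtilde(1)} shows that $\widetilde{\go g}_1 = {\cal Z}_{\widetilde{\go g}}(\widetilde{\go l}_0)$ is again a graded Lie algebra satisfying $({\bf H_1})$ and $({\bf H_2})$ with grading element $H_1 = H_0 - H_{\lambda_0}$, provided $V_1^+ = V^+ \cap \widetilde{\go g}_1 \neq \{0\}$.

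Assume inductively that we have constructed $\lambda_0,\ldots,\lambda_{j-1}$ and the chain $\widetilde{\go g}\supset\widetilde{\go g}_1\supset\cdots\supset\widetilde{\go g}_{j-1}$ with $V_{j-1}^+ \neq \{0\}$. Apply Proposition \ref{prop-gtilde(1)} to $\widetilde{\go g}_{j-1}$: it is a graded Lie algebra, so by Theorem \ref{thbasepi}(2) applied inside it, there is a unique root $\lambda_{j-1}$ in its simple-root basis $\widetilde\Pi_{j-1}$ with $\lambda_{j-1}(H_{j-1})=2$. Form $\widetilde{\go l}_{j-1}$ and set $\widetilde{\go g}_j = {\cal Z}_{\widetilde{\go g}_{j-1}}(\widetilde{\go l}_{j-1})$, which by iteration equals ${\cal Z}_{\widetilde{\go g}}(\widetilde{\go l}_0\oplus\cdots\oplus\widetilde{\go l}_{j-1})$ since an element centralizes a sum of ideals iff it centralizes each summand. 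Iterating Proposition \ref{racines-g1}(1), the root system of $\widetilde{\go g}_j$ relative to $\go a_j = \go a \cap \widetilde{\go g}_j$ consists precisely of those $\lambda \in \widetilde{\Sigma}$ strongly orthogonal to each of $\lambda_0,\ldots,\lambda_{j-1}$; in particular $\lambda_j$ (at the next stage) is strongly orthogonal to all earlier $\lambda_i$. Moreover, because $\lambda_j \perp \lambda_i$ for $i<j$ gives $\lambda_j(H_{\lambda_i})=0$, we have $\lambda_j(H_0) = \lambda_j(H_j) = 2$, so $\lambda_j \in \widetilde{\Sigma}^+ \setminus \Sigma^+$. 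The description $V_j^\pm = V^\pm \cap \widetilde{\go g}_j$ is immediate: elements of $\widetilde{\go g}_j$ commute with $H_{\lambda_0}+\cdots+H_{\lambda_{j-1}}$, so $\ad(H_j)$ and $\ad(H_0)$ agree on $\widetilde{\go g}_j$, identifying the $\pm 2$-eigenspaces.

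Termination comes from strict dimension decrease: since $\widetilde{\go g}^{\lambda_{j-1}} \subset \widetilde{\go l}_{j-1}$ is nontrivial and is not strongly orthogonal to itself, it is not contained in $\widetilde{\go g}_j$, so $\dim \widetilde{\go g}_j < \dim \widetilde{\go g}_{j-1}$. The process halts exactly when $V^+ \cap \widetilde{\go g}_{k+1} = \{0\}$, proving (3). Uniqueness of the sequence follows because at every stage the root $\lambda_j$ is forced to be the unique element of $\widetilde\Pi_j$ taking value $2$ on $H_j$ (Theorem \ref{thbasepi}(2) applied to $\widetilde{\go g}_j$), and the algebras $\widetilde{\go l}_j$ and $\widetilde{\go g}_{j+1}$ are then determined. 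The main technical obstacle is verifying that the induction hypotheses really propagate — specifically that the absolute irreducibility of $({\go g}_j, \overline{V_j^+})$ is preserved at each descent step; but this is exactly the content of Proposition \ref{prop-gtilde(1)}, applied at each stage to the graded algebra $\widetilde{\go g}_{j-1}$ in place of $\widetilde{\go g}$, so the inductive step reduces to a verbatim replay of that proposition.
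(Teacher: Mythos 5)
Your proposal is correct and follows essentially the same route as the paper, which proves the theorem by induction on $j$ starting from Proposition \ref{racines-g1} and using $\widetilde{\go g}_{j}={\cal Z}_{\widetilde{\go g}_{j-1}}(\widetilde{\go l}_{j-1})$; your added details on termination and uniqueness are sound (only note that the $\widetilde{\go l}_i$ are not ideals of $\widetilde{\go g}$, though centralizing a direct sum of subspaces is equivalent to centralizing each summand in any case).
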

  
  \begin{proof} The proof is done by induction on  $j$, starting from Proposition \ref{racines-g1}, and using the fact that $\widetilde{\go{g}}_{j}$ is the centralizer of  $\widetilde{ {\go l}}_{j-1}$ in  $\widetilde{\go{g}}_{j-1}$. It is worth noting that the construction stops for the index  $k$ such that $V^+\cap {\cal Z}_{\widetilde{\go{g}}_{k}}(\widetilde{ {\go l}}_{k})=\{0\}$,  which amounts to saying that  ${\cal Z}_{V^+}(\widetilde{ {\go l}}_0\oplus \widetilde{ {\go l}}_1\oplus \dots\oplus \widetilde{ {\go l}}_{k})=\{0\}$.
  
  \end{proof}

\begin{definition}\label{def-rang} The number $k+1$ \index{k@$k+1$}of strongly orthogonal roots  appearing in the preceding  Theorem  will be called the   {\bf rank} \index{rank} of the graded algebra  $\widetilde{\go{g}}$.
  \end{definition}
  
 \begin{notation}\label{notationsgj} Everything above also applies to the graded algebra $\widetilde{\go{g}}_{j}=V_{j}^-\oplus \go{g}_{j}\oplus V_{j}^+$  which is graded by  $H_{j}=H_{0}-H_{\lambda_{0}}-\dots-H_{\lambda_{j-1}}$. The algebra  $\go{a}_{j}=\go{a}\cap \widetilde{\go{g}}_{j}$ is a maximal split abelian subalgebra of $\widetilde{\go{g}}_{j}$ contained in  $\go{g}_{j}$. The set  $\widetilde{\Sigma}_{j}$ of roots of the pair  $(\widetilde{\go{g}}_{j}, \go{a}_{j})$ is the set of roots in $\widetilde{\Sigma}$ which are strongly orthogonal to $\lambda_{0},\lambda_{1},\dots,\lambda_{j-1}$. We put an order on  $\widetilde{\Sigma}_{j}$ by setting $\widetilde{\Sigma}_{j}^+= \widetilde{\Sigma}^+\cap \widetilde{\Sigma}_{j}$. The corresponding set of simple roots $\widetilde{\Pi}_{j}$ is given by  $\widetilde{\Pi}_{j}=\Pi_{j}\cup\{\lambda_{j}\}$ where $\Pi_{j}$ is the set of simple roots of $\Sigma_{j}$ defined by  $\Sigma_{j}^+=\Sigma_{j}\cap \Sigma^+$ (where $\Sigma_{j}= \Sigma\cap \widetilde{\Sigma}_{j}$). 
 
 \end{notation}
 
 \vskip 20pt
 
 \subsection{Generic elements in $V^+$ and maximal systems of long strongly orthogonal roots}\label{sub-section-generic-strongly-orth}\hfill
  \vskip 10pt
  
  We  define now the groups we will use. If  $\go{k}$ is a Lie algebra over  $F$, we will denote by  $\text{Aut}(\go{k})$ the group of automorphisms of $\go{k}$. The map 
  $g\longmapsto g\otimes1$ is an injective homomorphism from  $\text{Aut}(\go{k})$ in   $\text{Aut}(\go{k}\otimes_{F}\overline{F})= \text{Aut}(\overline{\go{k}})$. This allows to consider $\text{Aut}(\go{k})$ as a subgroup of    $\text{Aut}(\overline{\go{k}})$. 
  From now on   $\go{k}$ will be  reductive.
  
   We will denote by  $\text{Aut}_{e}(\go{k})$ \index{Aute@$\text{Aut}_{e}(\,.\,)$}the subgroup of elementary automorphisms of $\go{k}$, that is the  automorphisms which are finite products of automorphisms of the form $e^{\ad x}$, where $\ad(x)$ is nilpotent in $\go{k}$. If $g\in \text{Aut}_{e}(\go{k})$, then $g$ fixes pointwise the elements of   the center of $\go{k}$ and therefore  $\text{Aut}_{e}(\go{k})$ can be identified with $\text{Aut}_{e}([\go{k},\go{k}])$. 
   
   Let us set
   $\text{Aut}_{0}(\go{k})= \text{Aut}(\go{k})\cap \text{Aut}_{e}(\overline{\go{k}})$\index{Aut0@$\text{Aut}_{0}(\,.\,)$}. The elements of  $\text{Aut}_{0}(\go{k})$ are the automorphisms of $\go{k}$ which become elementary  after extension from  $F$ to  $\overline{F}$. Again $\text{Aut}_{0}(\go{k})$ can be identified with  $\text{Aut}_{0}([\go{k},\go{k}])$. 
   Finally we have the following inclusions:
   $$\text{Aut}_{e}(\go{k})\subset \text{Aut}_{0}(\go{k})\subset\text{Aut}([\go{k},\go{k}])\subset \text{Aut}(\go{k}).$$
   From \cite{Bou2} (Chap. VIII, \S 8, $n^\circ 4$, Corollaire de la Proposition 6, p.145), $\text{Aut}_{0}(\go{k})$ is open and closed in  $\text{Aut}([\go{k},\go{k}])$, and  $\text{Aut}([\go{k},\go{k}])$ is closed in  $\text{End}([\go{k},\go{k}])$ for the Zariski topology (\cite{Bou2}, Chap. VIII, \S5, $n^\circ 4$, Prop. 8 p.111), therefore $\text{Aut}_{0}(\go{k})$ is an algebraic group. As  $\text{Aut}_{0}(\go{k})$ is   open in $\text{Aut}([\go{k},\go{k}])$, its Lie algebra is the same as the Lie algebra of  $\text{Aut}(\go{k})$, namely $[\go{k},\go{k}]$. We know also that   $\text{Aut}_{0}(\overline{\go{k}})=\text{Aut}_{e}(\overline{\go{k}})$  is the connected component of the neutral element of  $\text{Aut}([\overline{\go{k}},\overline{\go{k}}])$ (\cite{Bou2}, Chap. VIII,\S5, $n^\circ 5$, Prop. 11, p. 113). 
   
 \vskip 10pt
 
 Define first:
\vskip 10pt

 $\bullet $    $ \widetilde{G}={\rm Aut}_0( \widetilde{\go g}).$\index{Gtilde@$\widetilde{G}$}
\vskip 10pt

 The group  $G$ we consider here is the following  (this group was first introduced by Iris Muller in \cite{Mu97} and \cite{Mu98}):  
 
\vskip 10pt

 $\bullet $   $G= {\cal Z}_{\text{Aut}_{0}(\widetilde{\go{g}})}(H_{0})=\{g\in \text{Aut}_{0}(\widetilde{\go{g}}),\, g.H_{0}=H_{0}\}$ is the centralizer of $H_{0}$ in  $\text{Aut}_{0}(\widetilde{\go{g}})$. \index{G@$G$}
\vskip 10pt

 The Lie algebra of  $G$ is then ${\cal Z}_{ [\widetilde{\go{g}}, \widetilde{\go{g}}]}(H_{0})=\go{g}\cap [\widetilde{\go{g}}, \widetilde{\go{g}}]=[ \go{g},  \go{g}]+[V^+, V^-]\supset F H_{0}\oplus[\go{g},  \go{g}]$.

As the elements of  $G$ fix $H_{0}$, we obtain  that $V^+$ (which is the eigenspace of $\ad H_{0}$ for the eigenvalue $2$) is stable under the action of $G$. Of course the same is true for  $V^-$.  

\vskip 5pt
The representation  $(G, V^+)$ is a prehomogeneous vector space, or more precisely it is an $F$-form of a prehomogeneous vector space. This means just that this representation has a Zariski-open orbit.  In fact  $G$  can be viewed as  the $F$-points of an algebraic group  (also noted $G$). Then if  $\overline{G}= G(\overline{F})$ stands for the points over  $\overline{F}$ of $G$, $( \overline{G}, \overline{V^+})$ is a prehomogeneous vector space   from a well known result of Vinberg (\cite{Vinberg}). As $\overline{F}$ is algebraically closed, this prehomogeneous vector space has of course only one open orbit. It is well known that then  the representation   $(G, V^+)$ has only a finite number of open orbits. This is a consequence of a result of Serre (see \cite{Sa}, p.469, or \cite{Ig1},  p. 1).

\vskip 5pt \vskip 5pt


One has  $G(\overline{F})={\cal Z}_{\text{Aut}_{0}(\overline{\widetilde{\go{g}}})}(H_{0})={\cal Z}_{\text{Aut}_{e}(\overline{\widetilde{\go{g}}})}(H_{0})$. (Because over an algebraically closed field one has $\text{Aut}_{e}(\overline{\widetilde{\go{g}}})= \text{Aut}_{0}(\overline{\widetilde{\go{g}}})$). 
Moreover, as we mentioned before, $\text{Aut}_{0}(\widetilde{\go{g}})$ and $\text{Aut}_{0}(\overline{\widetilde{\go{g}}})$ are closed and are the connected components of the neutral element  in $\text{Aut}(\widetilde{\go{g}})$ and  $\text{Aut}(\overline{\widetilde{\go{g}}})$ respectively (for the Zariski topology of   $\text{End}([\widetilde{\go{g}},\widetilde{\go{g}}])$ and $\text{End}([\overline{\widetilde{\go{g}}},\overline{\widetilde{\go{g}}}])$. 

The Lie subalgebra $\go{t}=FH_{0}$ is algebraic. Let us denote by $T$ the corresponding one dimensional torus.
Then as $G(\overline{F})={\cal Z}_{\text{Aut}_{0}(\overline{\widetilde{\go{g}}})}(H_{0})= {\cal Z}_{\text{Aut}_{0}(\overline{\widetilde{\go{g}}})}(T)$, we obtain   that $G(\overline{F})$ is connected (\cite{Hum} Theorem 22.3 p.140).

\vskip 5pt \vskip 5pt

\begin{definition}\label{def-elementsgeneriques}  An element $X\in V^+$ is called generic \index{generic} if it satisfies one of the following equivalent conditions:

$(i)$ The $G$-orbit of $X$ in $V^+$ is open.

$(ii)$ $\ad(X): \go{g}\longrightarrow V^+$ is surjective.
\end{definition}
\vskip 5pt
\begin{lemme}\label{lem-sl2-generique} \hfill

Let $X\in V^+$. If there exists $Y\in V^-$ such that  $(Y,H_{0},X)$ is an $\go{sl}_{2}$-triple, then $X$ is generic in $V^+$.
\end{lemme}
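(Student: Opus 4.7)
The plan is to identify $X$ as generic by verifying condition $(ii)$ in Definition \ref{def-elementsgeneriques}, namely that $\ad(X):\go g\longrightarrow V^+$ is surjective. The tool is the representation theory of $\go{sl}_2$ applied to the adjoint action of the triple $(Y,H_0,X)$ on $\widetilde{\go g}$.

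The first step is to decompose $\widetilde{\go g}$ as an $\go{sl}_2$-module under the triple $(Y,H_0,X)$. By hypothesis $(\bf H_1)$ the eigenvalues of $\ad H_0$ on $\widetilde{\go g}$ lie in $\{-2,0,2\}$; since in a finite-dimensional irreducible $\go{sl}_2$-module of highest weight $m$ the weights are $-m,-m+2,\dots,m$, only the trivial module (highest weight $0$) and the adjoint module (highest weight $2$, dimension $3$) can appear. Complete reducibility over a field of characteristic $0$ gives
$$\widetilde{\go g}=U\oplus\bigoplus_{i\in I}W_i,$$
where $U$ is the isotypic component of trivial summands (so $U\subset\go g$) and each $W_i$ is a $3$-dimensional irreducible with weight decomposition $W_i=W_i^{-2}\oplus W_i^{0}\oplus W_i^{2}$.

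Next I would match this decomposition with the grading: $V^+=\bigoplus_{i\in I}W_i^{2}$, $V^-=\bigoplus_{i\in I}W_i^{-2}$, and $\go g=U\oplus\bigoplus_{i\in I}W_i^{0}$. The map $\ad(X)$ kills $U$ (trivial action) and kills each $W_i^{2}$; on each $W_i^{0}$ it sends the (unique up to scalar) weight-zero vector to a non-zero weight-two vector spanning $W_i^{2}$, which is a standard fact for the $3$-dimensional $\go{sl}_2$-irreducible. Hence $\ad(X)$ restricts to an $F$-linear isomorphism $\bigoplus_{i}W_i^{0}\overset{\sim}{\longrightarrow}\bigoplus_{i}W_i^{2}=V^+$, so $\ad(X):\go g\to V^+$ is surjective and $X$ is generic.

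The argument is essentially automatic once the $\go{sl}_2$-decomposition is invoked; the only point to be careful about is that the $\go{sl}_2$-module structure is defined over $F$ (not just $\overline F$), but this is guaranteed because the classification of finite-dimensional irreducible $\go{sl}_2$-modules in characteristic zero is insensitive to field extension. I do not anticipate a genuine obstacle here.
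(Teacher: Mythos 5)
Your proof is correct: it verifies condition $(ii)$ of Definition \ref{def-elementsgeneriques} and the $\go{sl}_2$-module decomposition you invoke is legitimate over $F$ (the triple spans a split $\go{sl}_2$, and Weyl's complete reducibility holds in characteristic $0$). The paper reaches the same surjectivity of $\ad(X):\go g\to V^+$ by a much shorter, purely computational route: for $v\in V^+$ one has $[X,v]=0$ by commutativity of $V^+$, so $2v=[H_0,v]=[[Y,X],v]=-\ad(X)\ad(Y)v$, which exhibits the explicit preimage $-\tfrac12[Y,v]\in\go g$ of $v$ in one line. Your argument buys a conceptual picture (the matching of $V^\pm$ and $\go g$ with the weight spaces of the trivial and adjoint constituents), but it carries more machinery than needed, and the step ruling out highest weights $m\geq 3$ relies on the shortness of the grading; the paper's note after the lemma points out that the statement actually persists for arbitrary $\Z$-gradings, which is seen more readily from the bare $\go{sl}_2$ fact that the raising operator surjects $E_0$ onto $E_2$ (item 4 of section \ref{sl2module}) than from the classification of the irreducible summands. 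Either way, there is no gap.
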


\begin{proof}
For $v\in V^+$ one has  $2v=[H_{0},v]=\ad([Y,X])v=-\ad(X)\ad(Y)v$. Hence $\ad(X)$ is surjective from  $\go{g}$ onto   $V^+$.

\end{proof}

{\bf Note}: Although, in the preceding proof we used the commutativity of  $V^+$, the result is in fact true for any $\Z$-graded algebra.

\vskip 5pt

For $i=0,\dots,k$, let us  choose once and for all $X_{i}\in \widetilde{\go{g}}^{\lambda_{i}}$ and  $Y_{i}\in \widetilde{\go{g}}^{-\lambda_{i}}$, such that  $(Y_{i},H_{\lambda_{i}},X_{i}) $ is an  $\go{sl}_{2}$-triple. This is always possible  (see for example \cite{Seligman}, Corollary of Lemma 6, p.6, or \cite{Schoeneberg}, Proposition 3.1.9 p.23).

\begin{lemme}\label{Xk-generique}\hfill

The element $X_{k}$ is generic in $V_{k}^+ $.
\end{lemme}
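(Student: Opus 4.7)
The plan is to apply Lemma \ref{lem-sl2-generique} inside the smaller graded algebra $\widetilde{\go{g}}_k = V_k^- \oplus \go{g}_k \oplus V_k^+$ itself, which by Theorem \ref{th-descente}(2) again satisfies hypotheses $({\bf H_1})$ and $({\bf H_2})$ with grading element $H_k = H_0 - H_{\lambda_0} - \cdots - H_{\lambda_{k-1}}$. Concretely, it will suffice to exhibit some $Y \in V_k^-$ such that $(Y, H_k, X_k)$ is an $\go{sl}_2$-triple in the convention used in the previous lemma, i.e.\ $[Y, X_k] = H_k$, $[H_k, X_k] = 2 X_k$ and $[H_k, Y] = -2Y$.

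The main obstacle is that the $\go{sl}_2$-triple already at hand is $(Y_k, H_{\lambda_k}, X_k)$, whose middle term is the coroot $H_{\lambda_k}$ rather than the grading element $H_k$. These two elements of $\go{a}_k$ act identically on $X_k$ and on $Y_k$ (by $\pm 2$), so $H_k - H_{\lambda_k}$ commutes with both, but it is in general a nonzero element of $\go{a}_k$, so one cannot simply substitute $H_k$ for $H_{\lambda_k}$ in the triple. To get around this, I would invoke the refined form of the Jacobson--Morozov theorem: given a semi-simple element $H$ and a nilpotent element $X$ in a reductive Lie algebra over a field of characteristic zero with $[H, X] = 2X$, one can complete the pair to an $\go{sl}_2$-triple with the prescribed $H$ (see Bourbaki, \emph{Groupes et algèbres de Lie}, Chap.~VIII, \S 11).

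In our setting the hypotheses are immediate: $H_k$ lies in the split torus $\go{a}_k \subset \widetilde{\go{g}}_k$ and is therefore semi-simple; $X_k \in \widetilde{\go{g}}^{\lambda_k}$ is a nonzero root vector, hence $\ad$-nilpotent; and $[H_k, X_k] = 2 X_k$ because $X_k \in V_k^+$. The refined Jacobson--Morozov theorem then provides $Y_0 \in \widetilde{\go{g}}_k$ with $[X_k, Y_0] = H_k$ and $[H_k, Y_0] = -2 Y_0$. Setting $Y = -Y_0$, the identities $[Y, X_k] = H_k$ and $[H_k, Y] = -2Y$ hold simultaneously, so $Y$ lies in the $-2$-eigenspace $V_k^-$ of $\ad H_k$, and $(Y, H_k, X_k)$ is the required triple.

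Applying Lemma \ref{lem-sl2-generique} to $\widetilde{\go{g}}_k$ with this triple then yields that $\ad X_k : \go{g}_k \to V_k^+$ is surjective, which is precisely the statement that $X_k$ is generic in $V_k^+$. The whole proof is in essence a reduction to Lemma \ref{lem-sl2-generique}; the only genuinely delicate point is the replacement of the coroot $H_{\lambda_k}$ by the grading element $H_k$ of $\widetilde{\go{g}}_k$, for which Jacobson--Morozov is exactly the right tool.
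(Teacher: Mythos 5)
Your reduction stands or falls with the existence of $Y\in V_k^-$ such that $(Y,H_k,X_k)$ is an $\go{sl}_2$-triple, and the tool you invoke to produce it does not apply. The refined Jacobson--Morozov statement (Morozov's lemma, Bourbaki, Chap.~VIII, \S 11, $n^\circ 2$) does not say that any semi-simple $H$ with $[H,X]=2X$ can be completed, together with a given nilpotent $X$, into an $\go{sl}_2$-triple; it requires in addition that $H$ lie in the image of $\ad X$. Without that hypothesis the conclusion is false (take $X=(e,0)$ and $H=(h,h')$ with $h'\neq 0$ in $\go{sl}_2\times\go{sl}_2$). Here the missing hypothesis $H_k\in[X_k,\widetilde{\go{g}}_k]$ is precisely what you cannot assume: at this point of the paper regularity has not yet been imposed (Definition \ref{def-regulier} comes later), and the identity $H_0=H_{\lambda_0}+\cdots+H_{\lambda_k}$, which would give $H_k=H_{\lambda_k}\in\operatorname{im}(\ad X_k)$, is proved in Theorem \ref{th-decomp-Eij} \emph{using} Proposition \ref{X0+...+Xkgenerique}, hence using this very lemma — so invoking it would be circular. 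Worse, the intermediate claim is simply false in general: for $\widetilde{\go{g}}=\go{sl}_3$ graded so that $V^+$ is the upper-right $2\times 1$ block (here $k=0$, $H_k=H_0=\mathrm{diag}(2/3,2/3,-4/3)$ and $X_0$ is a nonzero element of the block), the upper-left $2\times 2$ block of $[Y,X_0]$ has rank at most $1$ for every $Y\in V^-$, so it can never equal $\tfrac{2}{3}I_2$; no triple $(Y,H_0,X_0)$ with $Y\in V^-$ exists, even though $X_0$ is generic. In other words, the existence of your triple is equivalent to the regularity of $\widetilde{\go{g}}_k$, which is neither available nor true under the standing hypotheses.

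The lemma must therefore be proved with the triple that is actually available, namely $(Y_k,H_{\lambda_k},X_k)$ whose middle term is the coroot. This is what the paper does: for $z$ in a root space $\widetilde{\go{g}}^{\lambda}\subset V_k^+$ one has $\ad(X_k)\ad(Y_k)z=\ad([X_k,Y_k])z=-\lambda(H_{\lambda_k})z$ (using $[X_k,z]=0$), with $\ad(Y_k)z\in\go{g}_k$; and $\lambda(H_{\lambda_k})\neq 0$, because $\lambda(H_{\lambda_k})=0$ would give $\lambda\perp\lambda_k$, hence $\lambda\sorth\lambda_k$ by Proposition \ref{racines-g1}, hence $\widetilde{\go{g}}^{\lambda}\subset V^+\cap{\cal Z}_{\widetilde{\go{g}}}(\widetilde{\go{l}}_0\oplus\cdots\oplus\widetilde{\go{l}}_k)=\{0\}$ by Theorem \ref{th-descente}~(3), a contradiction. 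This yields the surjectivity of $\ad(X_k):\go{g}_k\to V_k^+$ directly, with no appeal to a triple containing the grading element.
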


\begin{proof} The Lie algebra $\widetilde{\go{g}}_{k}$ is graded by  $H_{k}=H_{0}-H_{\lambda_{0}}-\dots-H_{\lambda_{k-1}}$ and not by $H_{\lambda_{k}}$, therefore we cannot use Lemma \ref{lem-sl2-generique}.

We will show that  $\ad(X_{k}):\go{g}_{k}\longrightarrow V_{k}^+$ is surjective (Cf.  definition \ref{def-elementsgeneriques}). Let  $\lambda$ be a root such that $ \widetilde{\go{g}}^{\lambda}\subset V_{k}^+$ and let $z\in \widetilde{\go{g}}^{\lambda}$. Then 
$$\ad(X_{k})\ad(Y_{k})z=-\ad(H_{k})z=-\lambda(H_{k})z.$$
 As $\ad(Y_{k})z\in \go{g}_{k}$, it is enough to show that  $\lambda(H_{k})\neq0$. If $\lambda=\lambda_{k}$, then of course  $\lambda(H_{k})=2$. If $\lambda\neq\lambda_{k}$ and  $\lambda(H_{k})=0$, then $\lambda\perp \lambda_{k}$, and hence  $\lambda\sorth \lambda_{k}$, by Proposition \ref{racines-g1} 1). Then $\widetilde{\go{g}}^{\lambda}\subset V^+\cap {\cal Z}_{\widetilde{\go{g}}}(\widetilde{ {\go l}}_0\oplus \widetilde{ {\go l}}_1\oplus \dots\oplus \widetilde{ {\go l}}_{k})=\{0\}$. Contradiction.

\end{proof}

\begin{lemme}\label{recurrence-genericite}\hfill

If $X$ is generic in  $V^+_{j}$, then $X_{0}+X_{1}+\dots+X_{j-1}+X$ is generic in  $V^+$.
\end{lemme}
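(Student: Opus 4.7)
The plan is to apply the $\go{sl}_{2}$-criterion of Lemma \ref{lem-sl2-generique}: I will construct an explicit $\go{sl}_{2}$-triple $(W,H_{0},Z)$ inside $\widetilde{\go{g}}$ with $Z=X_{0}+\dots+X_{j-1}+X$, which will force $Z$ to be generic in $V^{+}$. The three elements of this triple will be built as a sum of $\go{sl}_{2}$-triples living in pairwise commuting subalgebras, one in each $\widetilde{\go{l}}_{i}$ for $i<j$ and one in $\widetilde{\go{g}}_{j}$.

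The first step is to promote the genericity hypothesis on $X$ to an $\go{sl}_{2}$-triple statement inside $\widetilde{\go{g}}_{j}$. Since $X$ is generic in $V^{+}_{j}$, Definition \ref{def-elementsgeneriques} gives $\ad(X)(\go{g}_{j})=V_{j}^{+}$, but more importantly $H_{j}=H_{0}-H_{\lambda_{0}}-\dots-H_{\lambda_{j-1}}$ belongs to $[X,\go{g}_{j}]$. Decomposing a preimage into $\ad(H_{j})$-eigenspaces and using that $H_{j}$ has weight $0$ while $X$ has weight $2$, only the weight $-2$ component contributes; this component lies in $V_{j}^{-}$ and yields $Y\in V_{j}^{-}$ with $[X,Y]=H_{j}$. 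Together with the automatic bracket relations $[H_{j},X]=2X$ and $[H_{j},Y]=-2Y$, we obtain an $\go{sl}_{2}$-triple $(Y,H_{j},X)$ in $\widetilde{\go{g}}_{j}$.

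The second step is the bracket computation for $Z=\sum_{i<j}X_{i}+X$ and $W=\sum_{i<j}Y_{i}+Y$. The key structural input is Theorem \ref{th-descente}: the subalgebras $\widetilde{\go{l}}_{0},\ldots,\widetilde{\go{l}}_{j-1},\widetilde{\go{g}}_{j}$ pairwise commute, since each $\widetilde{\go{g}}_{s+1}$ is the centralizer of $\widetilde{\go{l}}_{s}$ in $\widetilde{\go{g}}_{s}$, and hence $[\widetilde{\go{l}}_{i},\widetilde{\go{l}}_{i'}]=0$ for $i\neq i'$ and $[\widetilde{\go{l}}_{i},\widetilde{\go{g}}_{j}]=0$ for $i<j$. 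All cross-brackets in $[Z,W]$ therefore vanish, leaving
\[
[Z,W]=\sum_{i<j}[X_{i},Y_{i}]+[X,Y]=\sum_{i<j}H_{\lambda_{i}}+H_{j}=H_{0}.
\]
Since $Z\in V^{+}$ and $W\in V^{-}$, the relations $[H_{0},Z]=2Z$ and $[H_{0},W]=-2W$ are automatic, so $(W,H_{0},Z)$ is an $\go{sl}_{2}$-triple in $\widetilde{\go{g}}$.

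The final step is a direct invocation of Lemma \ref{lem-sl2-generique}, which immediately yields that $Z=X_{0}+\dots+X_{j-1}+X$ is generic in $V^{+}$. The only non-routine point is Step 1, namely extracting $Y\in V_{j}^{-}$ (rather than a mere element of $\go{g}_{j}$) completing $X$ into an $\go{sl}_{2}$-triple adapted to the grading element $H_{j}$ of $\widetilde{\go{g}}_{j}$; this is handled by the weight-decomposition argument above and is the one place where the precise form of the grading of $\widetilde{\go{g}}_{j}$ enters.
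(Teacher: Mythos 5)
Your Steps 2 and 3 are fine, but Step 1 contains the real content of the lemma and it does not go through. First, as written the claim ``$H_{j}$ belongs to $[X,\go{g}_{j}]$'' cannot be right, since $[X,\go{g}_{j}]\subset V_{j}^{+}$ while $H_{j}\in\go{g}_{j}$; what you need is $H_{j}\in[X,V_{j}^{-}]$. But that statement is \emph{not} a consequence of genericity. Genericity of $X$ in $V_{j}^{+}$ only says that $\ad(X):\go{g}_{j}\to V_{j}^{+}$ is surjective; it gives no control on the image of $\ad(X):V_{j}^{-}\to\go{g}_{j}$, and in general the grading element need not lie in that image. A concrete example: in $\widetilde{\go{g}}=\go{sl}_{3}$ graded by the parabolic with $V^{+}=\langle E_{13},E_{23}\rangle$ and $\go{g}\simeq\go{gl}_{2}$, the element $X=E_{13}$ is generic, yet $[X,V^{-}]=\langle E_{11}-E_{33},\,E_{12}\rangle$ does not contain the grading element, which is proportional to $\mathrm{diag}(1,1,-2)$. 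The equivalence ``generic $\Longleftrightarrow$ completable into an $\go{sl}_{2}$-triple with the grading element'' is exactly the regularity condition $(\bf H_{3})$ together with Proposition \ref{prop-generiques-cas-regulier}, and neither is available here: the lemma is proved in the paper \emph{before} regularity is assumed, and even granting $\widetilde{\go{g}}$ regular, the fact that the subalgebra $\widetilde{\go{g}}_{j}$ inherits $(\bf H_{3})$ is Corollary \ref{cor-decomp-j}, whose proof rests on Theorem \ref{th-decomp-Eij}, hence on Proposition \ref{X0+...+Xkgenerique}, hence on the very lemma you are proving. So your route is both logically circular and, in the generality in which the lemma is stated, simply false at Step 1.

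For comparison, the paper reduces by induction to $j=1$ and verifies surjectivity of $\ad(X_{0}+X)$ on $\go{g}$ directly: on $V_{1}^{+}$ this is the genericity of $X$; on $\widetilde{\go{g}}^{\lambda_{0}}$ one uses only the $\go{sl}_{2}$-triple $(Y_{0},H_{\lambda_{0}},X_{0})$ inside $\widetilde{\go{l}}_{0}$ (which exists unconditionally, unlike a triple adapted to $H_{0}$ or $H_{1}$); and on the remaining root spaces of $V^{+}$ a root-string argument shows $[\go{g}^{\lambda-\lambda_{0}},V_{1}^{+}]=0$, so that $\ad(X_{0}+X)$ again hits $-2z$ there. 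If you want to keep an $\go{sl}_{2}$-flavoured argument, you would have to replace your Step 1 by something of this kind; the triple $(Y,H_{j},X)$ you postulate is not obtainable from genericity alone.
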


\begin{proof}By induction we must just prove the Lemma  for $j=1$. Let $X$ be be generic in  $V_{1}^+$. We will prove that  $[\go{g},X_{0}+X]=V^+$. 

$\bullet $ One has $[\go{g}_{1}, X_{0}+X]=[\go{g}_{1}, X]= V^+_{1}$, from the definition of  $\go{g}_{1}$ and because $X$ is generic in $V_{1}^+$.

$\bullet$ If $z\in {\widetilde{\go{g}}}^{\lambda_{0}}$, then $[Y_{0},z]\in \go{g}\cap  \widetilde{ {\go l}}_0$ and hence  $[Y_{0},z]$ commutes with $X\in V^+_{1}$. But then  $\ad(X_{0}+X)[Y_{0},z]=\ad(X_{0})[Y_{0},z]= [-H_{0},z]=-2z$.

$\bullet$   If  $z\in {\widetilde{\go{g}}}^{\lambda}$ with $\lambda\neq\lambda_{0}$ and  $z\notin V^+_{1}$, then $\mu=\lambda-\lambda_{0}$ is a root (as $\lambda$ is not strongly orthogonal to  $\lambda_{0}$) which is positive, hence in  $\Sigma^+$ (because  $\lambda=\lambda_{0}+\dots$). Therefore  $\mu-\lambda_{0}$ is not a root. As $\mu+\lambda_{0}$ is a root, one has $(\mu,\lambda_{0})<0$ (see \cite{Bou1} (Chap. VI, \S1, $n^\circ 3$, Prop. 9)).

Suppose first that  $[\go{g}^{\mu},V_{1}^+]\neq\{0\}$. Then there exists a root  $\nu\in \widetilde{\Sigma}^+$ such  $\widetilde{\go{g}}^{\nu}\subset V^+_{1}$ and  $\mu+\nu\in \widetilde{\Sigma}^+$. One would have   $(\mu+\nu, \lambda_{0})= (\mu,\lambda_{0})<0$. Hence  $\mu+\nu+\lambda_{0}$ would be a root, such that 
$$(\mu+\nu+\lambda_{0})(H_{0})=\mu(H_{0})+\nu(H_{0})+\lambda_{0}
(H_{0})=0+2+2=4$$
and this is impossible from  the hypothesis ${\bf (H_{1})}$

Therefore  $[\go{g}^{\mu},V_{1}^+]=\{0\}$ and then, as  $U=\ad( Y_{0})z\in \go{g}^{\mu}$,  we have  $\ad(X_{0}+X)U=\ad(X_{0})U=\ad(X_{0})\ad(Y_{0})z=-\ad(H_{0})z=-2z$.

\end{proof}

\vskip 5pt
 
The two preceding results imply:
 
 \vskip 5pt
 
 \begin{prop}\label{X0+...+Xkgenerique}
  An element of the form  $X_{0}+X_{1}+\dots+X_{k}$, where $X_{i}\in \widetilde{\go{g}}^{\lambda_{i}}\setminus \{0\}$, is generic in  $V^+$.
 \end{prop}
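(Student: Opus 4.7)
The plan is to deduce the proposition directly from Lemmas \ref{Xk-generique} and \ref{recurrence-genericite}, which together essentially contain the full content of the claim. The only gap between those two lemmas and the proposition lies in the scope of the elements $X_i$: the lemmas are stated using the particular $X_i$ chosen ``once and for all'' as part of an $\go{sl}_2$-triple, whereas here the $X_i \in \widetilde{\go{g}}^{\lambda_i}\setminus\{0\}$ are arbitrary.

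To bridge this gap, I would begin by invoking the existence result cited just before Lemma \ref{Xk-generique} (Seligman, Corollary of Lemma 6, or Schoeneberg Proposition 3.1.9): for every nonzero $X_i \in \widetilde{\go{g}}^{\lambda_i}$ there exists $Y_i \in \widetilde{\go{g}}^{-\lambda_i}$ such that $(Y_i, H_{\lambda_i}, X_i)$ is an $\go{sl}_2$-triple. Replacing the fixed ``once and for all'' choice by any such arbitrary tuple places us exactly in the setup of Lemmas \ref{Xk-generique} and \ref{recurrence-genericite}. Lemma \ref{Xk-generique} then says $X_k$ is generic in $V_k^+$, and applying Lemma \ref{recurrence-genericite} with $j=k$ and $X=X_k$ yields at once that $X_0+X_1+\cdots+X_{k-1}+X_k$ is generic in $V^+$, which is the desired conclusion.

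The only point that requires a quick sanity check — and the most plausible obstacle — is that the proofs of Lemmas \ref{Xk-generique} and \ref{recurrence-genericite} do not rely on any particular feature of the initially fixed $(Y_i, H_{\lambda_i}, X_i)$ beyond the bracket relations of an $\go{sl}_2$-triple together with the root-string and strong-orthogonality properties of $\lambda_0, \ldots, \lambda_k$ established during the descent. Inspection of those proofs confirms that this is indeed the case: everything computed there is of the form $\ad(X_i)\ad(Y_i)z$ or a bracket in $\widetilde{\go{l}}_i$, for which only the $\go{sl}_2$-relations are used. Hence the lemmas apply to the present arbitrary choice of $X_i$'s and the proposition follows.
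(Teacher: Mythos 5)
Your proposal is correct and follows the paper's own argument exactly: the paper deduces the proposition in one line from Lemma \ref{Xk-generique} and Lemma \ref{recurrence-genericite}, which is precisely what you do. You are in fact slightly more careful than the paper, which silently assumes that an arbitrary nonzero $X_{i}\in \widetilde{\go{g}}^{\lambda_{i}}$ (not just the one fixed ``once and for all'') can be completed to an $\go{sl}_{2}$-triple $(Y_{i},H_{\lambda_{i}},X_{i})$ and that the two lemmas apply verbatim to such a choice — the point you address explicitly.
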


   \begin{prop}\label{prop-conjdeslambda(i)}\hfill
 
For $j=0,\dots,k$ let us denote by   $W_{j}$ the Weyl group of the pair  $(\go{g}_{j},\go{a}_{j})$   (where $(\go a_0, W_{0})=(\go a, W)$). Let $w_{j}$ be the unique element of  $W_{j}$ such that $w_{j}(\Sigma_{j}^+)=\Sigma_{j}^-$. Then $w_{j}(\lambda_{j})=\lambda^0$ and the roots  $\lambda_{j}$ are long roots in  $\widetilde{\Sigma}$.

\end{prop}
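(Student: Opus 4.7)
My plan is to reduce the statement to the $j=0$ case by applying Proposition \ref{prop.plusgranderacine} to the graded Lie algebra $\widetilde{\go g}_j$: by Theorem \ref{th-descente}, this is again a graded Lie algebra satisfying $({\bf H_1})$ and $({\bf H_2})$ with grading element $H_j$, and $\lambda_j\in\widetilde{\Pi}_j$ plays the role that $\lambda_0$ plays for $\widetilde{\go g}$. Thus the ``$\lambda^0$'' in the statement is to be read as the highest root $\lambda^0_{(j)}$ of the irreducible component of $\widetilde{\Sigma}_j$ containing $\lambda_j$, produced by Proposition \ref{prop.plusgranderacine} applied to $\widetilde{\go g}_j$.

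First I would verify that $W_j$ fixes $H_j$ pointwise. Since $W_j$ is generated by the reflections $s_\alpha$ with $\alpha\in\Sigma_j$, it suffices to show $\alpha(H_j)=0$ for each such $\alpha$. Now $\alpha(H_0)=0$ because $\alpha$ is a root of $\go g$, and Proposition \ref{racines-g1} gives $\alpha\sorth\lambda_i$ for $i<j$, hence $(\alpha,\lambda_i)=0$ and $\alpha(H_{\lambda_i})=0$; summing, $\alpha(H_j)=\alpha(H_0)-\sum_{i<j}\alpha(H_{\lambda_i})=0$. Next I verify the two characterizing properties of $\lambda^0_{(j)}$ from Proposition \ref{prop.plusgranderacine} for the element $w_j(\lambda_j)$. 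The first is immediate: $w_j(\lambda_j)(H_j)=\lambda_j(w_j^{-1}H_j)=\lambda_j(H_j)=2$. For the second, given $\mu\in\Sigma_j^+$, set $\nu=-w_j^{-1}(\mu)\in\Sigma_j^+$ (possible since $w_j$ exchanges $\Sigma_j^+$ and $\Sigma_j^-$); then $w_j(\lambda_j)+\mu=w_j(\lambda_j-\nu)$ lies in $\widetilde{\Sigma}_j$ iff $\lambda_j-\nu$ does, and this is excluded by the characterizing property of $\lambda_j$ in Corollary \ref{corlambda0} applied to $\widetilde{\go g}_j$. Uniqueness in Proposition \ref{prop.plusgranderacine} then forces $w_j(\lambda_j)=\lambda^0_{(j)}$.

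It remains to see that $\lambda_j$ is long in the ambient $\widetilde{\Sigma}$. From $w_j(\lambda_j)=\lambda^0_{(j)}$, the element $\lambda_j$ is $W_j$-conjugate to the highest root of an irreducible component of $\widetilde{\Sigma}_j$, hence is a long root of $\widetilde{\Sigma}_j$; and the embedding $\widetilde{\Sigma}_j\hookrightarrow\widetilde{\Sigma}$ furnished by Proposition \ref{racines-g1} preserves the restriction of the Killing form to $\go a^*$, so lengths are intrinsic. The main obstacle is to rule out strictly longer roots in $\widetilde{\Sigma}\setminus\widetilde{\Sigma}_j$. I would argue by induction on $j$: for $j=0$ the equality $w_0(\lambda_0)=\lambda^0$ identifies $\lambda_0$ with the highest root of the irreducible component of $\widetilde{\Sigma}$ containing it, hence automatically long in $\widetilde{\Sigma}$. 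For $j\ge 1$, the crucial point is that $\lambda^0_{(j)}$ has the same length as $\lambda^0$; this reduces to exhibiting, inside the irreducible component of $\widetilde{\Sigma}$ containing $\lambda_0$, a long root strongly orthogonal to $\lambda_0,\ldots,\lambda_{j-1}$ (a Kostant-cascade type argument on the strongly orthogonal family $\{\lambda_0,\ldots,\lambda_{j-1}\}$ provides one), forcing the maximal root length of $\widetilde{\Sigma}_j$ to coincide with that of $\widetilde{\Sigma}$ and closing the induction.
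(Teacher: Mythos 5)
The first two-thirds of your argument --- that $W_j$ fixes $H_j$ pointwise, and that $w_j(\lambda_j)$ satisfies the two characterizing properties of Proposition \ref{prop.plusgranderacine} applied to $\widetilde{\go g}_j$ --- is correct and is essentially the paper's own argument (the paper writes it out for $j=0$ and invokes the analogy for general $j$). The problem is the last step. The proposition asserts $w_j(\lambda_j)=\lambda^0$ for the \emph{single} root $\lambda^0$ of Proposition \ref{prop.plusgranderacine}, i.e. the highest root of the component of $\widetilde{\Sigma}$ containing $\lambda_0$; you only obtain $w_j(\lambda_j)=\lambda^0_{(j)}$, the highest root of the component of $\widetilde{\Sigma}_j$ containing $\lambda_j$, and you ``re-read'' the statement accordingly. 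That is a genuinely weaker statement (it is precisely the identity $\lambda^0_{(j)}=\lambda^0$ that later yields the $W$-conjugacy of the $\lambda_j$'s in Proposition \ref{propWconjugues}), and the missing identification is also exactly what your length argument needs.

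Your proposed repair --- exhibiting a long root of $\widetilde{\Sigma}$ strongly orthogonal to $\lambda_0,\dots,\lambda_{j-1}$ by ``a Kostant-cascade type argument'' --- is where the gap sits. The cascade is built downward from the highest root, whereas $\lambda_0,\dots,\lambda_{j-1}$ are produced by the descent starting from the simple root $\lambda_0$; relating the two families is essentially the content of Proposition \ref{propWconjugues}, which is proved \emph{after} and \emph{using} the present proposition, so the appeal is circular as stated. Moreover, even granting such a long root, it would only show that the maximal root length of $\widetilde{\Sigma}_j$ equals that of $\widetilde{\Sigma}$; it would not show that $\lambda^0_{(j)}$ --- the highest root of the particular component containing $\lambda_j$ --- attains it, unless the exhibited root lies in that same component. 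The paper closes both gaps at once with a representation-theoretic fact you do not use: in the proof of Proposition \ref{prop-gtilde(1)} it is shown that the highest weight $\omega$ of $(\overline{\go g},\overline{V^+})$ is again the highest weight of $(\overline{\go g_1},\overline{V_1^+})$, hence by induction of every $(\overline{\go g_j},\overline{V_j^+})$. Consequently $\lambda^0=\rho(\omega)$ lies in $\widetilde{\Sigma}_j$ and is its highest root there, so $\lambda^0_{(j)}=\lambda^0$; the equality $w_j(\lambda_j)=\lambda^0$ follows, and since the highest root of an irreducible system is long and $w_j$ lies in the Weyl group of $\widetilde{\Sigma}$, which preserves lengths, each $\lambda_j$ is long in $\widetilde{\Sigma}$.
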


\begin{proof}\hfill

One has   $w_{0}\in W_{0}\subset{\cal Z}_{\text{Aut}_{e}(\widetilde{\go{g}})}(H_{0})\subset G={\cal Z}_{\text{Aut}_{0}(\widetilde{\go{g}})}(H_{0})$. Hence  $$w_{0}(\lambda_{0})(H_{0})=\lambda_{0}(w_{0}(H_{0}))=\lambda_{0}(H_{0})=2.$$

On the other hand, from Corollary  \ref{corlambda0}, one has $\lambda_{0}-\lambda\notin \widetilde{\Sigma}$ for $\lambda\in \Sigma^+$. Therefore $w_{0}(\lambda_{0})-w_{0}(\lambda)\notin \widetilde{\Sigma}$. But  $w_{0}(\lambda)$ takes all values in $-\Sigma^+$ when $\lambda$ varies in $  \Sigma^+$. Therefore  $w_{0}(\lambda_{0})+\lambda\notin \widetilde{\Sigma}$ if $\lambda\in \Sigma^+$.  Proposition \ref{prop.plusgranderacine} implies then that  $w_{0}(\lambda_{0})=\lambda^0$. It is easy to see that  $\lambda^0$ is also the highest root of  the root systems  $\widetilde{\Sigma}_{j}$.   As $\lambda_{j}$ is the analogue of  $\lambda_{0}$ in $\widetilde{\go{g}}_{j}$, we obtain that $w_{j}(\lambda_{j})=\lambda^0$. As the highest root  $\lambda^{0}$ is a long root (see \cite{Bou1}, Chap. VI,\S1 $n^\circ 1$, Proposition 25 p.165), the roots $\lambda_{j}$ are all long.

\end{proof}

\begin{prop}\label{prop-systememax}\hfill

$(1)$ The set  $\lambda_{0},\dots,\lambda_{k}$ is a maximal system of strongly orthogonal long roots in  $\widetilde{\Sigma}^+\setminus \Sigma^+  $.

$(2)$ If  $\beta_{0},\dots,\beta_{m}$ is another maximal system of strongly orthogonal long roots in $\widetilde{\Sigma}^+\setminus \Sigma^+  $ then  $m=k$ and there exists  $w\in W$ such that  $w(\beta_{j})=\lambda_{j}$ for  $j=0,\dots,k$.

\end{prop}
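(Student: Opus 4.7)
Assertion (1) follows quickly from the descent. The roots $\lambda_{0},\ldots,\lambda_{k}$ are strongly orthogonal by Theorem \ref{th-descente} and long by Proposition \ref{prop-conjdeslambda(i)}. If $\mu\in\widetilde{\Sigma}^{+}\setminus\Sigma^{+}$ were strongly orthogonal to each $\lambda_{j}$, then $\mu\pm\lambda_{j}\notin\widetilde{\Sigma}$ would force $[\widetilde{\go{g}}^{\mu},\widetilde{\go{l}}_{j}]=\{0\}$ for every $j$, so $\widetilde{\go{g}}^{\mu}\subset V^{+}\cap{\cal Z}_{\widetilde{\go{g}}}(\widetilde{\go{l}}_{0}\oplus\cdots\oplus\widetilde{\go{l}}_{k})=\{0\}$ by Theorem \ref{th-descente}(3), a contradiction.

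For (2) I would argue by induction on the rank $k+1$, pivoting on the following lemma: \emph{every long root $\beta\in\widetilde{\Sigma}^{+}\setminus\Sigma^{+}$ is $W$-conjugate to $\lambda_{0}$}. To prove it I would produce $w\in W$ with $w(\beta)=\lambda^{0}$ and then invoke Proposition \ref{prop-conjdeslambda(i)}, which gives $\lambda_{0}\in W\cdot\lambda^{0}$ via $w_{0}(\lambda_{0})=\lambda^{0}$. By the uniqueness in Proposition \ref{prop.plusgranderacine}, it suffices to verify $w(\beta)(H_{0})=2$ (automatic, since $W$ fixes $H_{0}$) and $w(\beta)+\lambda\notin\widetilde{\Sigma}$ for every $\lambda\in\Sigma^{+}$; pulling back by $w^{-1}$, the latter becomes $S(\beta)\cap w^{-1}(\Sigma^{+})=\emptyset$ where $S(\beta):=\{\alpha\in\Sigma:\beta+\alpha\in\widetilde{\Sigma}\}$. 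A standard root-string computation shows that if $\beta$ is long and $\alpha+\beta\in\widetilde{\Sigma}$, then $(\alpha,\beta)<0$: the $\beta$-string through $\alpha$ has length at most $2$ in the simply-laced case and at most $3$ in the doubly-laced or $G_{2}$ cases, and the only borderline case $(\alpha,\beta)=0$ with both $\alpha\pm\beta\in\widetilde{\Sigma}$ is excluded because $|\alpha\pm\beta|^{2}=|\alpha|^{2}+|\beta|^{2}$ is then never an admissible squared root length. Thus $S(\beta)$ lies in the open half-space $\{\alpha:(\alpha,\beta)<0\}$, and choosing $w\in W$ such that $w(\beta)$ is dominant for $\Sigma^{+}$ — possible because $W$ acts transitively on Weyl chambers — makes $\beta$ dominant for $w^{-1}(\Sigma^{+})$; then $\alpha\in w^{-1}(\Sigma^{+})$ implies $(\alpha,\beta)\geq 0$, so $\alpha\notin S(\beta)$, as required.

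The inductive step is then routine. The lemma supplies $w\in W$ with $w(\beta_{0})=\lambda_{0}$. The roots $w(\beta_{1}),\ldots,w(\beta_{m})$ remain long, mutually strongly orthogonal, and strongly orthogonal to $\lambda_{0}$, while each satisfies $w(\beta_{i})(H_{1})=w(\beta_{i})(H_{0})-w(\beta_{i})(H_{\lambda_{0}})=2$; by Proposition \ref{racines-g1} they lie in $\widetilde{\Sigma}_{1}^{+}\setminus\Sigma_{1}^{+}$ and form a maximal such system there (any strict extension, combined with $\lambda_{0}=w(\beta_{0})$, would contradict the maximality of $\{\beta_{0},\ldots,\beta_{m}\}$). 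Applying the induction hypothesis to $\widetilde{\go{g}}_{1}$, whose rank is $k$ and whose descent is $\lambda_{1},\ldots,\lambda_{k}$, yields $w'\in W_{1}$ with $w'(w(\beta_{j}))=\lambda_{j}$ for $j=1,\ldots,m$ and $m=k$. Since $W_{1}$ is generated by reflections $s_{\alpha}$ with $\alpha\in\Sigma_{1}\subset\Sigma\cap\lambda_{0}^{\perp}$, one has $W_{1}\subset W$ and $w'$ fixes $\lambda_{0}$, so $w'w\in W$ realizes the required conjugation. The base case $k=0$ coincides with the lemma itself, since by (1) a rank-one graded algebra admits only singleton maximal strongly orthogonal systems of long roots in $\widetilde{\Sigma}^{+}\setminus\Sigma^{+}$. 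The main obstacle throughout is the root-string fact powering the lemma; once it is in hand, everything else is essentially bookkeeping.
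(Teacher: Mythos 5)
Your proof is correct and follows essentially the same strategy as the paper's: both parts reduce (2) to showing that any long root $\beta_{0}\in\widetilde{\Sigma}^{+}\setminus\Sigma^{+}$ is $W$-conjugate to $\lambda_{0}$, and then induct on the rank via the descent to $\widetilde{\go{g}}_{1}$, with part (1) handled identically through Theorem \ref{th-descente}(3). The only difference is in the packaging of that key lemma: where you move $\beta_{0}$ into the dominant chamber and identify $w(\beta_{0})$ with $\lambda^{0}$ via Proposition \ref{prop.plusgranderacine}, the paper forms the parabolic subset $\Phi=\{\gamma\in\Sigma:\gamma-\beta_{0}\notin\widetilde{\Sigma}\}$, chooses a basis of $\Sigma$ inside it, and identifies the conjugate with $\lambda_{0}$ via Corollary \ref{corlambda0} --- both arguments resting on the same root-string computation exploiting that $\beta_{0}$ is long.
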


\begin{proof} The following proof is adapted from  Th\'eor\`eme 2.12 in \cite{M-R-S} which concerns the complex case.

$(1)$ We have already seen that the roots $\lambda_{j}$ are long and strongly orthogonal. Suppose that there exists a long root  $\lambda_{k+1}$which is strongly orthogonal to each root $\lambda_{j}$ ($j=0,\ldots,k$). Then $\widetilde{\go{g}}^{\lambda_{k+1}}$ would be included in ${\cal Z}_{V^+}(\widetilde{ {\go l}}_0\oplus \widetilde{ {\go l}}_1\oplus \dots\oplus \widetilde{ {\go l}}_{k})=\{0\}$ (Theorem \ref{th-descente}). Contradiction.

$(2)$ Set  $\Phi=\{\gamma\in \Sigma\,|\, \gamma-\beta_{0}\notin \widetilde{\Sigma}\}$. Let us show that $\Phi$ is a parabolic subset of  $\Sigma$.

- We first show that $\Phi$ is a closed subset. Let $\gamma_{1},\gamma_{2}\in \Phi$ such that  $\gamma_{1}+\gamma_{2}\in \Sigma$. Then $[\go{g}^{\gamma_{1}},\go{g}^{\gamma_{2}}]\neq\{0\}$.  Let  $X_{\gamma_{i}}\in \go{g}^{\gamma_{i}}\setminus \{0\}$ ($i=1,2$) such that  $X_{\gamma_{1}+\gamma_{2}}=[X_{\gamma_{1}},X_{\gamma_{2}}]\neq0$. The Jacobi identity implies that  $[X_{\gamma_{1}+\gamma_{2}},X_{-\beta_{0}}]=0$. Hence $\gamma_{1}+\gamma_{2}-\beta_{0}\notin \Phi$. And hence $\Phi$ is closed.

-  It remains to show that  $\Phi\cup(-\Phi)=\Sigma$. If this is not the case, it exists  $\gamma_{0}\in \Sigma$ such that  $\gamma_{0}-\beta_{0}\in \widetilde{\Sigma}$ and $\gamma_{0}+\beta_{0}\in \widetilde{\Sigma}$. Therefore the $\beta_{0}$-string of roots through $\gamma_{0}$ can be written:
$$\gamma_{0}-\beta_{0},\gamma_{0},\gamma_{0}+\beta_{0}$$
(remember that  $V^\pm$ are commutative). From \cite{Bou1}(chap. VI, \S1, $n^\circ 3$, Corollaire de la proposition 9 p.149) one has
$$n(\gamma_{0}-\beta_{0}, \beta_{0})=-2 \eqno{(*)}.$$

But from  (\cite{Bou1} Chap. VI, p.148) this is only  possible, as $\beta_{0}$ is long, if  $\gamma_{0}-\beta_{0}=-\beta_{0}$, in other words if $\gamma_{0}=0$. Contradiction. Hence $\Phi$ is parabolic in $\Sigma$

Then from \cite{Bou1} (Chap. VI, \S 1, $n^\circ 7$, Prop. 20, p.161) there exists a basis $\Pi'$ of  $\Sigma$ such that  $\Pi'\subset \Phi$. Hence it exists  $w_{0}\in W$ such that  $w_{0}(\Pi')= \Pi$. Then  $w_{0}(\beta_{0})(H_{0})=\beta_{0}(w_{0}(H_{0}))=\beta_{0}(H_{0})=2$. One also has $\lambda-w_{0}(\beta_{0})\notin \widetilde{\Sigma}$ for  $\lambda\in \Sigma^+$. If one would have $\lambda-w_{0}(\beta_{0})\in \widetilde{\Sigma}$, then as  $\lambda=w_{0}\lambda'$ (with $\lambda'$ positive for $\Pi'$), then $\omega(\lambda')-w_{0}(\beta_{0})\in \widetilde{\Sigma}$, and hence $\lambda'-\beta_{0}\in \widetilde{\Sigma}$, this is impossible from the definition of  $\Phi$.
  But we know from Corollary \ref{corlambda0} that these properties characterize   $\lambda_{0}$. Hence $w_{0}(\beta_{0})=\lambda_{0}$. 

Suppose first  $k=0$. In this case the set  $\lambda_{0}=w_{0}(\beta_{0}), w_{0}(\beta_{1}),\dots,w_{0}(\beta_{m})$ is a set of strongly orthogonal roots. Hence $w_{0}(\beta_{1}),\dots,w_{0}(\beta_{m})\in {\cal Z}_{V^+}(\widetilde{ {\go l}}_0)=\{0\} $  This means that if  $k=0$ then $m=0$  and the assertion $(2)$ is proved in that case.

The general case goes by induction on $k$. Suppose that the result is true when the rank of the graded algebra is  $<k$. In view of the above,  there exists  $w_{0}\in W$ such that $w_{0}(\beta_{0})=\lambda_{0}$. Then $w_{0}(\beta_{1}),\dots,w(\beta_{m})$ is a maximal system of strongly orthogonal long roots in $\widetilde{\Sigma}_{1}^+\setminus \Sigma_{1}^+  $. As the graded algebra $\widetilde{\go{g}}_{1}$ is of rank  $k-1$, we have $m=k $ by induction  and there exists $w_{1}\in W_{1}$ ($W_{1}\subset W$ is the Weyl group of $\Sigma_{1}$) such that $w_{1}(w_{0}(\beta_{i}))=\lambda_{i} $ for $i=1,\dots,k$. The assertion $(2)$ is then proved with  $w=w_{0}w_{1}.$

\end{proof}

 \begin{cor} \label{cor-structureG}{\rm (see \cite{Mu98} Lemme 2.1. p. 166, for the regular case defined below)} \hfill
 
 Let  $\lambda_{0},\lambda_{1},\dots,\lambda_{k}$ be the maximal system of strongly orthogonal long roots obtained from the descent. Let $H_{\lambda_{0}}, H_{\lambda_{1}},\dots,H_{\lambda_{k}}$ be the corresponding co-roots. For $i=0,\dots,k$ we denote by  $G_{H_{\lambda_{i}}}$ the stabilizer of $H_{\lambda_{i}}$ in $G$. Then we have :
 
 $$G= {\rm Aut}_{e}(\go{g}).(\bigcap_{i=0}^k G_{H_{\lambda_{i}}}).$$
  \end{cor}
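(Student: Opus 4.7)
The plan is to proceed by induction on the rank $k+1$ of the graded algebra, using the descent of Theorem \ref{th-descente}. The base case ($k=0$) amounts to showing $G = \text{Aut}_e(\go{g}) \cdot G_{H_{\lambda_0}}$, and the inductive step reduces the problem to the analogous statement for the rank-$k$ graded algebra $\widetilde{\go{g}}_1$.

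Fix $g \in G$. The first goal is to modify $g$ by an element of $\text{Aut}_e(\go{g})$ so that the result fixes $H_{\lambda_0}$. Since $g$ centralizes $H_0$, it preserves $\go{g}$ (the $0$-eigenspace of $\text{ad}(H_0)$), and so $g(\go{a})$ is another maximal split abelian subalgebra of $\go{g}$. Invoking the conjugacy of such subalgebras under $\text{Aut}_e(\go{g})$ recalled at the start of Section \ref{subsec:racines}, I choose $e_1 \in \text{Aut}_e(\go{g})$ so that $e_1 g$ normalizes $\go{a}$. Its induced action on $\go{a}$ fixes $H_0$, permutes $\widetilde\Sigma$, and preserves the $(H_0)$-grading. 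Since the Weyl group $W$ (which fixes $H_0$ and is realized as a subgroup of $\text{Aut}_e(\go{g}) \cap G$, exactly as in the proof of Proposition \ref{prop-conjdeslambda(i)}) acts transitively on positive systems of $\Sigma$, I may further find $w_1 \in W$ such that $\tau := w_1 e_1 g$ preserves $\Sigma^+$. Combined with preservation of the $H_0$-grading, this forces $\tau$ to preserve $\widetilde\Sigma^+$ and hence the basis $\widetilde\Pi$. By the uniqueness of $\lambda_0$ in Theorem \ref{thbasepi}(2) as the sole element of $\widetilde\Pi$ with value $2$ at $H_0$, we get $\tau(\lambda_0) = \lambda_0$, so $\tau$ fixes the coroot $H_{\lambda_0}$.

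For the inductive step, I would observe that $\tau$ normalizes $\go{a}$ and fixes $H_{\lambda_0}$, so it preserves the weight spaces $\widetilde{\go{g}}^{\pm\lambda_0}$, the subalgebra $\widetilde{\go{l}}_0$, and its centralizer $\widetilde{\go{g}}_1$. The restriction $\tau|_{\widetilde{\go{g}}_1}$ centralizes $H_1 = H_0 - H_{\lambda_0}$ and belongs to the analogous group $G_1 := \mathcal{Z}_{\text{Aut}_0(\widetilde{\go{g}}_1)}(H_1)$ attached to the rank-$k$ graded algebra $\widetilde{\go{g}}_1$. The inductive hypothesis then yields $\tau|_{\widetilde{\go{g}}_1} = e' h'$ with $e' \in \text{Aut}_e(\go{g}_1)$ and $h'$ fixing $H_{\lambda_1}, \ldots, H_{\lambda_k}$. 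I would lift $e'$ to $\tilde e' \in \text{Aut}_e(\go{g})$ via the natural map: a generator $\exp(\text{ad}_{\go{g}_1}(x))$ with $x \in \go{g}_1$ nilpotent in $\go{g}_1$ corresponds to $\exp(\text{ad}_{\widetilde{\go{g}}}(x))$, which is well-defined (nilpotency transfers because $\go{g}_1$ is the centralizer of the reductive subalgebra $\widetilde{\go{l}}_0$, and representations of reductive Lie algebras preserve nilpotency) and fixes $\widetilde{\go{l}}_0$ pointwise (because $\go{g}_1 = \mathcal{Z}_{\go{g}}(\widetilde{\go{l}}_0)$). Consequently $\tilde e'^{-1} \tau$ fixes $H_{\lambda_0}$ and restricts to $h'$ on $\widetilde{\go{g}}_1$, hence lies in $\bigcap_{i=0}^k G_{H_{\lambda_i}}$. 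Rearranging gives $g \in \text{Aut}_e(\go{g}) \cdot \bigcap_{i=0}^k G_{H_{\lambda_i}}$, closing the induction.

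The main obstacle is the lifting step: verifying both that the embedding $\text{Aut}_e(\go{g}_1) \hookrightarrow \text{Aut}_e(\go{g})$ fixes $\widetilde{\go{l}}_0$ pointwise, and that $\tau|_{\widetilde{\go{g}}_1}$ really lies in $\text{Aut}_0(\widetilde{\go{g}}_1)$ rather than merely in $\text{Aut}(\widetilde{\go{g}}_1)$. The first point rests on Jordan decomposition compatibility together with the bracket relation $[\go{g}_1, \widetilde{\go{l}}_0] = 0$. The second point, needed so that the inductive hypothesis actually applies, requires a compatibility argument with base change to $\overline F$: since $\tau \in G \subset \text{Aut}_0(\widetilde{\go{g}})$ becomes elementary over $\overline F$ and stabilizes the $\overline F$-subalgebra $\overline{\widetilde{\go{g}}_1}$, its restriction inherits the elementary-over-$\overline F$ property, placing it in $\text{Aut}_0(\widetilde{\go{g}}_1)$.
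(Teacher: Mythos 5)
Your first two steps are sound and track the paper's opening move (conjugating $g(\go a)$ back to $\go a$ inside $\text{Aut}_e(\go g)$, then adjusting by a Weyl element), but the inductive step contains a genuine gap precisely at the point you flag as the main obstacle. To apply the inductive hypothesis you need $\tau|_{\widetilde{\go{g}}_1}\in\text{Aut}_0(\widetilde{\go{g}}_1)$, i.e.\ that the restriction becomes \emph{elementary} over $\overline F$. Your justification — that $\bar\tau$ is elementary on $\overline{\widetilde{\go g}}$ and stabilizes $\overline{\widetilde{\go g}_1}$, hence its restriction "inherits" the property — rests on a general principle that is false: the restriction of an elementary automorphism to a stabilized reductive subalgebra need not be elementary. (For instance, conjugation by a permutation matrix in $\go{sl}_4$ stabilizes the block-diagonal $\go{sl}_2\times\go{sl}_2$ but swaps its simple factors, landing outside the identity component of its automorphism group.) Since $[\widetilde{\go g}_1,\widetilde{\go g}_1]$ may well be non-simple, ruling this out requires an actual argument, and the paper's own Remark \ref{rem-inclusion-groupes} shows that the passage between $G$ and the groups $G_j$ attached to the descent is exactly the kind of point where care is needed. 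As written, the induction therefore does not close.

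The irony is that your own setup already contains a complete proof that bypasses the induction entirely. Once $\tau$ normalizes $\go a$, fixes $H_0$, and preserves $\Sigma^+$, it preserves $\widetilde\Sigma^+$ and hence $\widetilde\Pi$, giving $\tau(\lambda_0)=\lambda_0$ as you say; but then $\tau$ also preserves $\widetilde\Sigma_1=\{\lambda\in\widetilde\Sigma:\lambda\sorth\lambda_0\}$ and $\widetilde\Sigma_1^+=\widetilde\Sigma^+\cap\widetilde\Sigma_1$, hence the basis $\widetilde\Pi_1$, and since it fixes $H_1=H_0-H_{\lambda_0}$ the uniqueness statement of Proposition \ref{racines-g1} forces $\tau(\lambda_1)=\lambda_1$; iterating down the descent gives $\tau(\lambda_i)=\lambda_i$ for all $i$, so $\tau\in\bigcap_{i=0}^k G_{H_{\lambda_i}}$ directly, with no need to restrict $\tau$ to $\widetilde{\go g}_1$ as a group element or to lift anything back. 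This is essentially a root-by-root version of what the paper does in one stroke: after conjugating $\go a$ back, the images $hg(\lambda_0),\dots,hg(\lambda_k)$ form another maximal system of strongly orthogonal long roots in $\widetilde\Sigma^+\setminus\Sigma^+$, and Proposition \ref{prop-systememax} (2) — which you have available but do not use — produces a single $w\in W\subset\text{Aut}_e(\go g)$ with $whg(\lambda_i)=\lambda_i$ for all $i$ simultaneously.
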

 \begin{proof}\hfill
 
 Let $g\in G$. The elements  $g.H_{\lambda_{i}}$ belong to a maximal split torus  $\go{a}'$. As the maximal split tori of $\go{g}$ are conjugated under $\text{Aut}_{e}(\go{g})\subset G$ (\cite{Seligman}, Theorem 2, page 27, or \cite{Schoeneberg}, Theorem 3.1.16 p. 27) there exists  $h\in \text{Aut}_{e}(\go{g})$ such that $hg(\go{a})=\go{a}$, hence  $hg\in \text{Aut}(\go{g}, \go{a})$, the group of automorphisms of  $\go{g}$ stabilizing  $\go{a}$. But then $hg$ acts on  $\widetilde{\Sigma}$ and  $hg({\lambda_{0}}),\dots,hg(\lambda_{k})$ is a sequence of strongly orthogonal long roots in  $\widetilde{\Sigma}^+\setminus \Sigma^+  $. From the preceding proposition  \ref{prop-systememax}  , there exists  $w\in W\subset \text{Aut}_{e}(\go{g})$ such that  $whg({\lambda_{i}})=\lambda_{i}$.
Hence $whg\in \bigcap_{i=0}^k G_{H_{\lambda_{i}}}$. It follows that  $g=h^{-1}w^{-1}whg\in {\rm Aut}_{e}(\go{g}).(\bigcap_{i=0}^k G_{H_{\lambda_{i}}})$.

 \end{proof}
  \begin{definition}\label{def-a^0-L}
  We set:
   $$\go{a}^{0}=\oplus_{j=0}^k F H_{\lambda_{j}}\subset \go{a},$$ \index{agothique0@$\go{a}^{0}$}
   and
   $$L=Z_{G}(\go{a}^{0})=\bigcap_{i=0}^k G_{H_{\lambda_{i}}}.$$ \index{L@$L$}
   
 Hence, from the preceding Corollary, we have $G= {\rm Aut}_{e}(\go{g}).L$.

\end{definition}
 \begin{rem} \label{rem-inclusion-groupes}  Let  $j\in\{0,\ldots,k\}$. Let us denote by   $G_j$ the analogue of the group $G$ for the Lie algebra  $\tilde{\go g}_j$ (hence $G_0=G$). As $\overline{G_{j}} \subset {\rm Aut}_e(\tilde{\go g}_j\otimes \overline{F})\subset {\rm Aut}_e(\tilde{\go g}\otimes \overline{F})$, any element $g$ of  $G_j$  extends to  an elementary automorphism of $ \tilde{\go g}\otimes\overline{F}$, which we denote by $ext(g)$ and which acts trivially on $\oplus_{s=0}^{j-1}\tilde{\go l}_s$. Therefore  $ext(g)$ centralizes $H_{0}$ and one has:
$$ {\rm Aut}_e(\go g_j)\subset G_j\subset\overline{G_{j}} \subset \overline{G} ={\mathcal Z}_{{\rm Aut}_e(\tilde{\go g}\otimes \overline{F})}(H_0).$$ 
However, it may happen that, for  $g\in G_j$, the automorphism  $ext(g)$ does not stabilize  $\tilde{\go g}$ and then   $ext (g)$   does not define an automorphism of  $\tilde{\go g}$. For example  (see the proof of  Theorem \ref{thm-orbites-e1}), when   $\tilde{\go g}$ is the symplectic algebra $ {\go sp}(2n,F)$, graded by $H_0=\left(\begin{array}{cc} I_n & 0\\ 0 & -I_n\end{array}\right)$ where  $I_n$ is the identity matrix of size $n$, the group $G$ is the group of elements  ${\rm Ad}(g)$ for  $g=\left(\begin{array}{cc} \mathbf g & 0\\ 0 & \mu\;^{t}{\mathbf g}^{-1}\end{array}\right)$ where $\mathbf g\in GL(n,F)$ and  $\mu\in F^*$ and the group  $G_j$, as a subgroup of  $\overline{G}$, is the subgroup of elements of the form  ${\rm Ad}(g_j)$ where $$g_j=\left(\begin{array}{c|c} \begin{array}{cc}\mathbf g_j & 0\\ 0 & I_{j}\end{array} & 0\\
\hline
 0 &  \begin{array}{cc}\mu\; ^{t}{\mathbf g}_j^{-1} & 0\\ 0 & I_{j}\end{array} \end{array}\right)$$ with $\mathbf g_j\in GL(n-j,F)$ and  $\mu\in F^*$.\\
This shows that $G_j$ is not always included in   $G$. 
\end{rem} 
 \begin{definition}
 \label{def-regulier} A reductive graded Lie algebra $\widetilde{\go g}$ which verifies condition ${\bf (H_{1})}$ and ${\bf (H_{2})}$ is called  {\bf regular} \index{regular} if furthermore it satisfies: \vskip 3pt
  \hskip 15pt  $\bf{(H_{3})}$ There exist $I^+\in V^+$ and $I^-\in V^-$ such that $(I^-,H_{0},I^+)$ is an $\go{sl}_{2}$-triple.  
 \end{definition}
 
  \vskip 5pt
  
  \begin{prop}\label{prop-generiques-cas-regulier}\hfill
  
  In a regular graded Lie algebra $\widetilde{\go g}$, an element  $X\in V^+$ is generic  if and only if it exists $Y\in V^-$, such that  $(Y,H_{0},X)$ is an $\go{sl}_{2}$-triple. Moreover, for a fixed generic element $X$ , the element $Y$   is unique.
  \end{prop}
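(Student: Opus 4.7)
The strategy has three steps: dispose of the easy direction, establish uniqueness via a Killing-form duality, and then produce $Y$ over $F$ by Galois descent from a construction over $\overline F$. One can first reduce to the case when $\widetilde{\go g}$ is semi-simple: the centre plays no role in the bracket relations, and by $({\bf H_3})$ we have $H_0=[I^+,I^-]\in[\widetilde{\go g},\widetilde{\go g}]$. In that setting the Killing form $\kappa$ of $\widetilde{\go g}$ is non-degenerate and, by compatibility with the $\ad(H_0)$-grading, restricts to a perfect pairing between $V^+$ and $V^-$ and to a non-degenerate form on $\go g$.

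The implication ``$(Y,H_0,X)$ an $\go{sl}_{2}$-triple $\Longrightarrow X$ generic'' is already Lemma \ref{lem-sl2-generique}. For uniqueness, suppose $(Y,H_0,X)$ and $(Y',H_0,X)$ are two such triples; then $Y-Y'\in V^-$ and $\ad(X)(Y-Y')=0$, so it suffices to show that $\ad(X):V^-\longrightarrow\go g$ is injective whenever $X$ is generic. By Definition \ref{def-elementsgeneriques}(ii), $\ad(X):\go g\longrightarrow V^+$ is surjective; invariance of $\kappa$ gives
$$\kappa(\ad(X)U,W)=-\kappa(U,\ad(X)W),\qquad U\in\go g,\;W\in V^-,$$
so $\ad(X):V^-\longrightarrow\go g$ is, up to sign, the transpose of $\ad(X):\go g\longrightarrow V^+$ with respect to these non-degenerate pairings. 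Surjectivity of the latter therefore forces injectivity of the former, and uniqueness is established.

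For existence I would work first over $\overline F$. By $({\bf H_3})$ and Lemma \ref{lem-sl2-generique}, $I^+$ is generic; by Vinberg's theorem $(\overline G,\overline{V^+})$ is prehomogeneous, and since $\overline F$ is algebraically closed and $\overline G$ is connected, it has a single open orbit, necessarily equal to $\overline G\cdot I^+$. Genericity of $X$ over $F$ persists under scalar extension, so $X$ lies in this open orbit. Pick $g\in\overline G$ with $g\cdot I^+=X$ and set $\widetilde Y:=g\cdot I^-\in\overline{V^-}$. Since $g$ is a Lie-algebra automorphism fixing $H_0$, the triple $(\widetilde Y,H_0,X)$ is an $\go{sl}_{2}$-triple in $\overline{\widetilde{\go g}}$.

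To descend to $F$, note that the uniqueness argument of the previous paragraph applies verbatim over $\overline F$, so $\widetilde Y$ is the unique element of $\overline{V^-}$ enjoying this property. For every $\sigma\in\mathrm{Gal}(\overline F/F)$, the element $\sigma(\widetilde Y)$ also enjoys it (both $H_0$ and $X$ being $F$-rational), hence $\sigma(\widetilde Y)=\widetilde Y$ and $\widetilde Y\in V^-$. Setting $Y:=\widetilde Y$ completes the proof. The main subtlety I anticipate is the clean setting-up of the Killing-form duality in the reductive context; once the reduction to the semi-simple part is made and one verifies that $\kappa$ pairs $V^+$ with $V^-$ non-degenerately, both the uniqueness and the Galois descent are formal.
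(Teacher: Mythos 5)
Your proof is correct and follows essentially the same route as the paper: the easy direction is Lemma \ref{lem-sl2-generique}, and for existence one passes to $\overline F$, uses that the generic $X$ lies in the unique open $\overline G$-orbit $\overline G\cdot I^+$, transports $(I^-,H_0,I^+)$ by some $g\in\overline G$ (which fixes $H_0$), and then descends to $F$. The only variations are cosmetic: for the descent the paper writes $g\cdot I^-=\sum a_iY_i$ with $a_1=1$ and the $a_i$ free over $F$ and extracts $Y_1\in V^-$ directly, whereas you invoke uniqueness plus Galois invariance; and for uniqueness the paper simply cites the classical lemma of Bourbaki, whereas you give the (equally valid) Killing-form duality argument showing $\ad(X)\colon V^-\to\go g$ is injective.
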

  
  \begin{proof}
  
  If $X$ can be put in such an  $\go{sl}_{2}$-triple, then $X$ is generic in $V^+$ from Lemma  \ref{lem-sl2-generique}.
  
  Conversely, if $X$ is generic in  $V^+$, then $\ad(X): \go{g}\longrightarrow V^+$ is surjective (cf. Definition  \ref{def-elementsgeneriques}), hence $\ad(X): \overline{\go{g}}\longrightarrow \overline{V^+}$ is also surjective, therefore the  $\overline{G}$-orbit of  $X$ in $\overline{V^+}$ is open  and  $X$ is generic in   $\overline{V^+}$. But there is only one open orbit in  $\overline{V^+}$. Therefore $X$ is in the $\overline{G}$-orbit of  $I^+$ (Definition \ref{def-regulier}). Hence there exists $g\in \overline{G}$ such that  $g.I^+=X$. But then $(g.I^-, g.H_{0}=H_{0}, g.I^+=X)$ is an $\go{sl}_{2}$-triple. A standard tensor product argument (write
  $g.I^-=Y$ under the form $Y=\sum_{i=1}^n a_{i}Y_{i}$, with $a_{1}=1$, $a_{2},\dots,a_{n}$ elements of  $\overline{F}$ free over sur $F$, $Y_{i}\in V^-$), shows that $Y\in V^-$. Uniqueness is classical,(\cite{Bou2}, Chap VII, \S 11, $n^\circ 1$, Lemme 1).
  \end{proof}
  \vskip 10pt
  {\bf From now on we will always suppose that the graded Lie algebra $(\widetilde{\go{g}},H_{0})$ is regular.}
  
   \vskip 20pt
 \subsection{Structure of the regular graded Lie algebra $(\widetilde{\go{g}},H_{0})$}\hfill
  \vskip 10pt
  
  Let $\lambda_{0},\lambda_{1},\dots,\lambda_{k}$ be the sequence of strongly orthogonal  roots defined in Theorem \ref{th-descente}. Let $H_{\lambda_{0}},H_{\lambda_{1}},\dots,H_{\lambda_{k}}$ be the sequence of the corresponding co-roots.
  
 Remember that we have defined:
  $$\go{a}^{0}=\oplus_{j=0}^k F H_{\lambda_{j}}\subset \go{a}.$$

For  $i,j\in
\{0,1,\ldots ,k\}$ and 
$p,q\in {\bb Z}$ we define the subspaces $E_{i,j}(p,q)$ \label{Eij} of
$\widetilde{ {\go g}}$  by setting:
$$ E_{i,j}(p,q)=\left\{ X\in \widetilde{ {\go g}}\ \Bigl|\   [H_{\lambda
_\ell},X]=
\begin{cases}
pX&\hbox{  if }\ell=i\ ;\cr
qX&\hbox{ if }\ell=j\ ;\cr
0&\hbox{ if }\ell\notin \{i,j\}\ .
\end{cases}\right\}.
$$ \index{Eij@$E_{i,j}(p,q)$}
 \vskip 5pt
\begin{theorem}\label{th-decomp-Eij}\hfill

If $(\widetilde{\go{g}}, H_{0})$ is regular then 
$$H_{0}=H_{\lambda_{0}}+H_{\lambda_{1}}+\dots+H_{\lambda_{k}}.$$
Moreover one has the following decompositions:
\begin{align*}
(1) \hskip 25pt {\go g}&={\cal Z}_{\go g}({\go a}^0)\oplus\bigl(\oplus _{i\not =
j}E_{i,j}(1,-1)\bigr)\ ;\cr 
(2) \hskip 15pt V^+&=\bigl(\oplus_{j=0}^k \widetilde{ {\go g}}^{\lambda _j}\bigr)
\oplus\bigl(\oplus _{i<j}E_{i,j}(1,1)\bigr)\ ;\cr
(3) \hskip 15pt V^-&=\bigl(\oplus_{j=0}^k \widetilde{ {\go g}}^{-\lambda _j}\bigr) \oplus\bigl(\oplus
_{i<j}E_{i,j}(-1,-1)\bigr)\ .
\end{align*}
\end{theorem}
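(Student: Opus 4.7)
The plan is to establish the identity $H_0=\sum_{j=0}^{k} H_{\lambda_j}$ first, and to derive the three weight-space decompositions as a direct consequence. For the identity, strong orthogonality of the sequence $\lambda_0,\ldots,\lambda_k$ (in particular $\lambda_i\pm\lambda_j\notin\widetilde\Sigma$ for $i\neq j$) yields the commutation relations $[X_i,Y_j]=[X_i,X_j]=[Y_i,Y_j]=[H_{\lambda_i},X_j]=[H_{\lambda_i},Y_j]=0$ for $i\neq j$; setting $X=\sum X_j$, $Y=\sum Y_j$ and $H'=\sum H_{\lambda_j}$ one checks at once that $(Y,H',X)$ is an $\go{sl}_{2}$-triple. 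On the other hand $X$ is generic in $V^+$ by Proposition \ref{X0+...+Xkgenerique}, and by regularity there exists a unique $Y^{\ast}\in V^-$ making $(Y^{\ast},H_0,X)$ an $\go{sl}_{2}$-triple (Proposition \ref{prop-generiques-cas-regulier}); my goal is then to show $H'=H_0$.

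I plan to invoke Kostant's uniqueness for $\go{sl}_{2}$-triples sharing the same nilpositive: any two such triples are conjugate by $\exp(\ad\xi)$ for some $\xi$ in the strictly positive $H_0$-weight part of $Z_{\widetilde{\go g}}(X)$. Decomposing $\widetilde{\go g}$ under the $\go{sl}_{2}$-triple $(Y^{\ast},H_0,X)$, only trivial and $3$-dimensional irreducibles appear (since $\ad H_0$ has eigenvalues in $\{-2,0,2\}$ and $\ad X$ annihilates $V^+$), giving $Z_{\widetilde{\go g}}(X)=V^+\oplus Z_{\widetilde{\go g}}(\go{sl}_{2})$, whose strictly positive-weight part is exactly $V^+$. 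Since $[V^+,V^+]=0$, the exponential truncates at the first bracket: $\exp(\ad\xi)(H_0)=H_0+[\xi,H_0]=H_0-2\xi$. Hence $H'-H_0=-2\xi\in V^+$; but $H',H_0\in\go g$ and $V^+\cap\go g=\{0\}$, so $\xi=0$ and $H_0=H'$.

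With the identity at hand, for every $\lambda\in\widetilde\Sigma$ the $\go a^0$-weight of $\widetilde{\go g}^\lambda$ is the integer tuple $(\lambda(H_{\lambda_j}))_{j=0,\ldots,k}$, whose components sum to $\lambda(H_0)\in\{-2,0,2\}$. For $\widetilde{\go g}^\lambda\subset V^+$, every element is a highest-weight vector for the $\go{sl}_{2}$-triple $(Y_j,H_{\lambda_j},X_j)$ because $\ad X_j$ annihilates $V^+$, whence $\lambda(H_{\lambda_j})\geq 0$; and the upper bound $\lambda(H_{\lambda_j})\leq 2$ follows because a larger highest weight would force vectors of $H_0$-weight outside $\{-2,0,2\}$ inside the corresponding $\go{sl}_{2}^{(j)}$-irreducible. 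Non-negative integers summing to $2$ with components in $\{0,1,2\}$ take only the patterns ``one $2$'' or ``two $1$'s'', giving (2); the symmetric argument for $V^-$ yields (3). For $\lambda\in\Sigma$ the same $\go{sl}_{2}^{(j)}$-consideration forces $\lambda(H_{\lambda_j})\in\{-1,0,1\}$, and summing to $0$ leaves either the all-zero pattern (contributing to $Z_{\go g}(\go a^0)$) or ``one $+1$ and one $-1$''; patterns with multiple $\pm 1$'s are ruled out because the shifted root $\lambda-\lambda_i\in V^-$ would acquire a forbidden positive component, yielding (1).

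The main technical obstacle I expect is verifying that the weight space of pattern $(0,\ldots,2_j,\ldots,0)$ inside $V^+$ coincides with the single root space $\widetilde{\go g}^{\lambda_j}$ rather than a possibly larger direct sum. I plan to dispatch this by examining the $\lambda_j$-string through a hypothetical additional root $\lambda$ with this weight: since $\lambda+\lambda_j\notin\widetilde\Sigma$ (its $H_{\lambda_j}$-weight would exceed $2$), either $\lambda=\lambda_j$ or $\mu=\lambda-\lambda_j\in\Sigma$, and $\mu$ is orthogonal to every $\lambda_i$, hence strongly orthogonal to each by iterated application of Proposition \ref{racines-g1}(1); but then $\lambda=\mu+\lambda_j$ cannot be a root, a contradiction forcing $\lambda=\lambda_j$. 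A secondary concern is the legitimacy of Kostant's uniqueness in the $p$-adic setting, but since the conclusion $H'-H_0\in V^+\cap\go g=\{0\}$ is an $F$-rational identity it reduces cleanly by scalar extension to $\overline F$.
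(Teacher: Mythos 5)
Your proof is correct, and although it follows the same overall skeleton as the paper's (form the diagonal triple $(Y',\sum_j H_{\lambda_j},X)$ with $X=\sum_j X_j$, compare it with the triple $(Y^{*},H_0,X)$ furnished by regularity, then read off the three decompositions from the $\go a^0$-weights), both halves are executed by genuinely different means. For the identity $H_0=\sum_j H_{\lambda_j}$ the paper avoids Kostant's conjugacy theorem entirely: it notes that $(\ad X)^2\colon V^-\to V^+$ is injective and that $(\ad X)^2Y^{*}=(\ad X)^2Y'$, so $Y^{*}=Y'$ and the two semisimple elements coincide — a one-line argument using nothing beyond $\go{sl}_2$-module theory. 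Your route through Kostant is sound (the theorem is purely characteristic-zero algebra, and your scalar-extension remark disposes of the rationality worry); the key observations — that the strictly positive weight part of $Z_{\widetilde{\go g}}(X)$ is exactly $V^+$ and that $[V^+,V^+]=0$ truncates the exponential to $\exp(\ad\xi)(H_0)=H_0-2\xi$ — are exactly right, but the machinery buys nothing over the paper's shortcut. For the weight analysis, the paper conjugates by the little Weyl elements $w_j$ to reduce to nonnegative patterns with $\sum_j|p_j|\in\{0,2\}$, whereas you pin down the signs directly (highest/lowest weight vectors in $V^{\pm}$, root strings in $\go g$) and kill the pattern with two $+1$'s by shifting to a $V^-$-root with a forbidden positive component; both work, and yours dispenses with the $w_j$. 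Finally, your identification of the pattern $(0,\dots,2_j,\dots,0)$ with $\widetilde{\go g}^{\lambda_j}$ correctly steers clear of Corollary \ref{cor-orth=fortementorth} (which the paper deduces \emph{from} this theorem) by iterating Proposition \ref{racines-g1}(1) along the descent, where the paper instead uses that $\lambda_j$ is simple in $\widetilde{\Sigma}_j^+$; both variants are valid.
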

\vskip 5pt
\begin{proof}
For  $j=\{0,1,\dots,k\}$, we choose $X_{j}\in  \widetilde{ {\go g}}^{\lambda _j}\setminus \{0\}$. From Proposition \ref{X0+...+Xkgenerique} the element $X=X_{0}+X_{1}+\dots+X_{k} $ is generic. Therefore  $X$ can be put in an  $\go{sl}_{2}$-triple  of the form  $(Y,H_{0},X)$ with  $Y\in V^-$ (Proposition \ref{prop-generiques-cas-regulier}). We choose also  $X_{-j}\in  \widetilde{ {\go g}}^{-\lambda _j}$ such that  $(X_{-j},H_{\lambda_{j}}, X_{j})$ is an $\go{sl}_{2}$-triple, and we set $Y'=X_{-0}+X_{-1}+\dots+X_{-k}$. 

Then  $(Y', H_{\lambda_{0}}+H_{\lambda_{1}}+\dots+H_{\lambda_{k}},X)$ is again an  $\go{sl}_{2}$-triple. On the other hand
$$\ad(X)^2: V^-\longrightarrow V^+$$
is injective  and as  $\ad(X)^2Y=2X= \ad(X)^2Y'$, we obtain that $Y=Y'$. But then $H_{0}=\ad(X)Y=\ad(X)Y'=H_{\lambda_{0}}+H_{\lambda_{1}}+\dots+H_{\lambda_{k}}$. The first assertion is proved.
\vskip 10pt
Let now  $X$ be an element  of an eigenspace of  $\ad(\go{a})$, i.e. either an element of a root space of  $\widetilde{\go{g}}$ or an element of the centralizer  $\go{m}$ of $\go{a}$ in $\widetilde{\go{g}}$ (cf. Remark \ref{rem-precisions}). The representation theory of  $\go{sl}_{2}$ implies the existence, for all $j=0,1,\dots,k$, of an integer  $p_{j}\in \Z$ such that $[H_{\lambda_{j}},X]=p_{j}X$. As 
$H_{0}=H_{\lambda_{0}}+H_{\lambda_{1}}+\dots+H_{\lambda_{k}}$ one has 
$$p_0+p_1+\cdots +p_k=
\begin{cases}
2&\hbox{ if   } X\in V^+\ ;\cr
0&\hbox{  if } X\in {\go g}\ ;\hfill\cr
-2&\hbox{  if }X\in V^-\ .
\end{cases}$$

Define $w_j=e^{\ad X_j}e^{\ad X_{-j}}e^{\ad X_j}$. Hence $w_{j}$ is the unique non trivial element of the Weyl group of the Lie algebra isomorphic  to  $\go{sl}_{2}$ generated by the triple $(X_{-j},H_{\lambda_{{j}}}, X_{j})$. As the  $\lambda_{j}$ are strongly orthogonal  the elements   $w_{j}$ commute and  
$$w_j.H_{\lambda _i}=
\begin{cases}
H_{\lambda _i}\hbox{  for }i\not = j\ ;\cr
-H_{\lambda _j}\hbox{  for }i = j\ .
\end{cases}$$
Let $J\subset \{p_{0},p_{1},\dots,p_{k}\}$ be the subset of $i$'s such that $p_{i}<0$. If we set $w=\prod_{i\in J}w_{i}$, one obtains that the sequence of eigenvalues of $H_{\lambda_{j}}$ on $wX$ is  $|p_{0}|, |p_{1}|, \dots,|p_{k}|$. Hence  $|p_{0}|+|p_{1}|+\dots+|p_{k}|=0 \text{ or }2$. 

The case  $|p_{0}|+|p_{1}|+\dots+|p_{k}|=0$  occurs if and only if $X\in {\cal Z}_{\go g}({\go a}^0)$.  

If   $|p_{0}|+|p_{1}|+\dots+|p_{k}|=2$ then all  $p_{j}$ are zero, except for one which takes the value $\pm2$, or for two among them which take the value $\pm1$.  

 In the first case or if the two nonzero $p_i$'s are equal then $X\in V^+$ or $X\in V^-$.
Otherwise one  $p_{i}$ equals $1$ and the other $-1$,  and then $X\in \go{g}$.

To obtain the announced decompositions it remains to prove that
 $$E_{i,j}(0,2)=\widetilde{ {\go
g}}^{\lambda _j} \text{ and }  E_{i,j}(0,-2)=\widetilde{ {\go
g}}^{-\lambda _j} \text{ for } i\not =
j\ .$$

A root space  $\widetilde{ {\go g}}^{\lambda } $ occurs in  $E_{i,j}(0,2)$ if $\lambda(H_{j})=2$ and  $\lambda(H_{\ell})=0$ for $\ell\neq j$, and this means that  $\lambda\perp \lambda_{\ell}$ if $\ell\neq j$. As $(\lambda+\lambda_{\ell})(H_{0})=4$,   $\lambda+\lambda_{\ell}$ is not a root. If $\lambda-\lambda_{\ell}$ is a root then  $(\lambda,\lambda_{\ell})\neq 0$  (\cite{Bou1} (Chap. VI, \S1, $n^\circ 3$, Prop. 9)). Hence $\lambda\sorth \lambda_{\ell}$ for $\ell \neq j$ and $[\widetilde{ {\go g}}^{\lambda },\widetilde{ {\go g}}^{\pm \lambda_{\ell} }]=0  $ for $\ell \neq j$. In particular 
$\widetilde{ {\go g}}^{\lambda } \in {\cal Z}_{\widetilde{\go{g}}}(\widetilde{ {\go l}}_0\oplus \widetilde{ {\go l}}_1\oplus \dots\oplus \widetilde{ {\go l}}_{j-1})$, and hence  $\lambda\in \widetilde{\Sigma}_{j}$. As $\lambda\geq 0$, one has  $\lambda\in \widetilde{\Sigma}_{j}^+$. But  $\lambda_{j}$ is a simple root in $ \widetilde{\Sigma}_{j}^+$, and therefore $\lambda-\lambda_{j}$ is not a root. If $\lambda\neq\lambda_{j}$, the equality $\lambda(H_{\lambda_{j}})=2$ would imply that  $\lambda-\lambda_{j}$ is a root.
Hence $\lambda=\lambda_{j}$ and  $E_{i,j}(0,2)=\widetilde{ {\go
g}}^{\lambda _j}$. The same proof shows that $E_{i,j}(0,-2)=\widetilde{ {\go
g}}^{-\lambda _j}$.

\end{proof}
\vskip 5pt
\begin{cor}\label{cor-orth=fortementorth}\hfill

Let  $\lambda\in \widetilde{\Sigma}$. Then for $j=0,1,\dots,k$, one has :
$$\lambda\perp \lambda_{j}\Longleftrightarrow \lambda\sorth \lambda_{j}.$$
\end{cor}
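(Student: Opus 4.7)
The forward implication $\lambda \sorth \lambda_j \Rightarrow \lambda \perp \lambda_j$ is immediate. For the converse, the plan is to observe that if $\lambda \perp \lambda_j$, i.e.\ $\lambda(H_{\lambda_j}) = 0$, then the $\lambda_j$-string through $\lambda$ is symmetric (in the notation of \cite{Bou1}, Chap.~VI, \S1, $n^\circ 3$, Prop.~9, one has $p = q$). Consequently, it is enough to rule out $\lambda + \lambda_j \in \widetilde{\Sigma}$: by symmetry this will automatically force $\lambda - \lambda_j \notin \widetilde{\Sigma}$, yielding $\lambda \sorth \lambda_j$.

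The next step is a case analysis on $\lambda(H_0) \in \{-2, 0, 2\}$. When $\lambda(H_0) = 2$, one has $(\lambda + \lambda_j)(H_0) = 4$, which lies outside $\{-2, 0, 2\}$ and hence cannot be the $H_0$-eigenvalue of any root. When $\lambda(H_0) = -2$, one argues symmetrically that $(\lambda - \lambda_j)(H_0) = -4$ is not a valid root value, and the string symmetry then yields $\lambda + \lambda_j \notin \widetilde{\Sigma}$ as well.

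The main obstacle is the case $\lambda \in \Sigma$, i.e.\ $\lambda(H_0) = 0$. To handle it, I would assume for contradiction that $\lambda + \lambda_j \in \widetilde{\Sigma}$. Then $\widetilde{\go g}^{\lambda + \lambda_j} \subset V^+$, and its $H_{\lambda_\ell}$-eigenvalues are $\lambda(H_{\lambda_\ell}) + 2\delta_{\ell,j}$; in particular, the value at $\ell = j$ equals $2$. Invoking Theorem \ref{th-decomp-Eij}(2), which gives $V^+ = \bigl(\oplus_{j'=0}^{k} \widetilde{\go g}^{\lambda_{j'}}\bigr) \oplus \bigl(\oplus_{i<j'} E_{i,j'}(1,1)\bigr)$, I would note that each block $E_{i,j'}(1,1)$ has all its $H_{\lambda_\ell}$-eigenvalues in $\{0, 1\}$, so it cannot contain a root space whose $H_{\lambda_j}$-eigenvalue equals $2$. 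Hence $\widetilde{\go g}^{\lambda + \lambda_j}$ must sit inside some summand $\widetilde{\go g}^{\lambda_{j'}}$; matching the $H_{\lambda_j}$-eigenvalue forces $j' = j$, and since root spaces attached to distinct roots of $(\widetilde{\go g}, \go a)$ are in direct sum, this forces $\lambda + \lambda_j = \lambda_j$, i.e.\ $\lambda = 0$, which is absurd.

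The main technical point is to identify the right invariant to track: plain $H_0$-eigenvalue considerations dispose of the cases $\lambda \in V^\pm$, but the case $\lambda \in \Sigma$ genuinely requires the finer $\go a^0$-eigenspace decomposition of $V^+$ supplied by Theorem \ref{th-decomp-Eij}.
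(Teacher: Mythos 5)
Your proof is correct and follows essentially the same route as the paper's: the string-symmetry argument from Bourbaki to reduce to excluding $\lambda+\lambda_j$, the $H_0$-eigenvalue bound for $\lambda(H_0)=\pm 2$, and Theorem \ref{th-decomp-Eij} for $\lambda\in\Sigma$. The only (cosmetic) difference is that in the case $\lambda(H_0)=0$ you locate $\widetilde{\go g}^{\lambda+\lambda_j}$ directly in the decomposition of $V^+$ via its $H_{\lambda_j}$-eigenvalue, whereas the paper splits into two sub-cases according to whether $\go g^{\lambda}$ lies in ${\cal Z}_{\go g}(\go a^0)$ or in some $E_{r,s}(1,-1)$ — your version merges these into one uniform step.
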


\begin{proof} \hfill

Let  $\lambda\perp \lambda_{j}$.

If $\lambda(H_{0})=2$, then $\lambda+\lambda_{j}$ is not a root, and if  $\lambda-\lambda_{j}$ is a root  one would have $(\lambda,\lambda_{j})\neq 0$, see  \cite{Bou1} (Chap. VI, \S1, $n^\circ 3$, Prop. 9)). Therefore  $\lambda\sorth \lambda_{j}$.

If  $\lambda(H_{0})=-2$, the same proof shows that  $\lambda\sorth \lambda_{j}$.

If  $\lambda(H_{0})=0$, then either  $\go{g}^\lambda\subset {\cal Z}_{\go{g}}(\go{a}^0)$ or  $\go{g}^\lambda \subset  E_{r,s}(1,-1)$ for  $r\neq j$, $s\neq j$.

\hskip 15pt a) If  $\go{g}^\lambda\subset {\cal Z}_{\go{g}}(\go{a}^0)$, then $(\lambda+\lambda_{j})(H_{\lambda_{j}})=2$ and if  $\lambda+\lambda_{j}$ is a root the preceding Theorem  \ref{th-decomp-Eij} says that  $\lambda+\lambda_{j}=\lambda_{j}$, this is not possible. Hence  $\lambda+\lambda_{j}$ is not a root , and the same argument as before shows that $\lambda\sorth \lambda_{0}$.

\hskip 15pt b) If  $\go{g}^\lambda \subset  E_{r,s}(1,-1)$ and if  $\lambda+\lambda_{j}$ is a root, then $(\lambda+\lambda_{j})(H_{\lambda_{j}})=2$, $(\lambda+\lambda_{j})(H_{\lambda_{r}})=1$, and  $(\lambda+\lambda_{j})(H_{\lambda_{s}})=-1$, which is impossible. Again the same argument as before shows that $\lambda\sorth \lambda_{0}$.
\end{proof}
\vskip 5pt

\begin{rem}\label{rem-avantages-Eij} The decomposition of  $\widetilde{\go{g}}$ using the subspaces  $E_{i,j}(p,q)$ will be more useful that the root spaces decomposition. The bracket between two such spaces can be easily computed using the Jacobi identity. We will also show   that this decomposition is also  a ``root space decomposition`` with respect to another system of roots than  $\widetilde{\Sigma}$ (see Remark \ref{rem-decomp-racines} and Proposition \ref{prop-inclusionSP}). 

 \end{rem}
\vskip 5pt
The first part of Therorem \ref{th-decomp-Eij} shows that the grading of $\widetilde{\go{g}}_{j}$ is defined by  $H_{\lambda_{j}}+\dots+H_{\lambda_{k}}$. By setting  $I^+_{j}=X_{j}+\dots+X_{k}$, $I^-_{j}=X_{-j}+\dots+X_{-k}$, where the elements  $X_{\pm\ell}\in\widetilde{\go{g}}^{\pm \lambda_{\ell}}$ are chosen such that  $(X_{-\ell},H_{\lambda_{\ell}}, X_{\ell})$ is an ${\go{sl}}_{2}$-triple, one obtains an  ${\go{sl}}_{2}$-triple $(I^-_{j}, H_{\lambda_{j}}+\dots+H_{\lambda_{k}}, I^+_{j})$. Theorem \ref{th-descente} implies then the following decomposition of   $\widetilde{\go{g}}_{j}$.

\begin{cor}\label{cor-decomp-j}\hfill

 For $j=0,\ldots ,k$, the graded algebra  $(\widetilde{ {\go
g}}_j,H_{\lambda _j}+\cdots +H_{\lambda _k})$ satisfies the hypothesis $(\bf H_1)$, $(\bf H_2)$ and $(\bf H_3)$.
One also has the following decompositions:
 
\begin{align*}
(1)& \hskip 20pt {\go g}_j=\Bigl({\go z}_{\go g}({\go a}^0)\cap {\widetilde{\go g}}_j\Bigr)\oplus\Bigl(\oplus
_{r\not = s;j\leq r;j\leq s }E_{r,s}(1,-1)\Bigr)\ ;\cr 
(2)& \hskip 15pt V^+_j=\Bigl(\oplus_{s=j}^k \widetilde{ {\go g}}^{\lambda _s}\Bigr)
\oplus\Bigl(\oplus _{j\leq r<s}E_{r,s}(1,1)\Bigr)\ ;\cr
(3)& \hskip 15pt V^-_j=\Bigl(\oplus_{s=j}^k \widetilde{ {\go g}}^{-\lambda _s}\Bigr) \oplus\Bigl(\oplus
_{j\leq r<s}E_{r,s}(-1,-1)\Bigr)\ .
\end{align*}

\end{cor}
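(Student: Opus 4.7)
The plan is to reduce everything to an application of Theorem \ref{th-descente} and Theorem \ref{th-decomp-Eij}, first to identify the grading element, then to produce an $\go{sl}_2$-triple, and finally to read off the weight-space decompositions.

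\textbf{Step 1: the grading element.} Theorem \ref{th-descente} already guarantees that $\widetilde{\go g}_j$ satisfies $(\bf H_1)$ and $(\bf H_2)$ with grading element $H_j=H_0-H_{\lambda_0}-\cdots-H_{\lambda_{j-1}}$. By the first part of Theorem \ref{th-decomp-Eij} we have $H_0=H_{\lambda_0}+H_{\lambda_1}+\cdots+H_{\lambda_k}$, hence $H_j=H_{\lambda_j}+\cdots+H_{\lambda_k}$, which is what we want to grade with.

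\textbf{Step 2: hypothesis $(\bf H_3)$.} Fix, as in the proof of Theorem \ref{th-decomp-Eij}, $\go{sl}_2$-triples $(X_{-\ell},H_{\lambda_\ell},X_\ell)$ with $X_{\pm\ell}\in\widetilde{\go g}^{\pm\lambda_\ell}$ for each $\ell$, and set
\[
I_j^+=X_j+X_{j+1}+\cdots+X_k,\qquad I_j^-=X_{-j}+X_{-j-1}+\cdots+X_{-k}.
\]
Since the roots $\lambda_j,\ldots,\lambda_k$ are pairwise strongly orthogonal, $\pm\lambda_r\pm\lambda_s$ is not a root for $r\neq s$ in this range, so $[X_{\varepsilon\lambda_r},X_{\varepsilon'\lambda_s}]=0$ whenever $r\neq s$, $\varepsilon,\varepsilon'\in\{\pm\}$. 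A direct computation then gives $[I_j^-,I_j^+]=\sum_{\ell=j}^k[X_{-\ell},X_\ell]=\sum_{\ell=j}^k H_{\lambda_\ell}=H_j$, and similarly $[H_j,I_j^\pm]=\pm 2 I_j^\pm$. Moreover, each $X_{\pm\ell}\in V_j^\pm$ because $\widetilde{\go g}^{\pm\lambda_\ell}\subset\widetilde{\go g}_j$ for $\ell\geq j$ (the root $\lambda_\ell$ is strongly orthogonal to $\lambda_0,\ldots,\lambda_{j-1}$, so $\widetilde{\go g}^{\pm\lambda_\ell}$ centralizes $\widetilde{\go l}_0\oplus\cdots\oplus\widetilde{\go l}_{j-1}$). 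Thus $(I_j^-,H_j,I_j^+)$ is the required $\go{sl}_2$-triple and $(\bf H_3)$ holds.

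\textbf{Step 3: the $E_{r,s}$ decompositions.} Having verified $(\bf H_1)$, $(\bf H_2)$, $(\bf H_3)$ for $\widetilde{\go g}_j$, apply Theorem \ref{th-decomp-Eij} with $\widetilde{\go g}_j$ in place of $\widetilde{\go g}$. Since the sequence of strongly orthogonal roots produced by the descent for $\widetilde{\go g}_j$ is precisely $\lambda_j,\lambda_{j+1},\ldots,\lambda_k$ (by construction of the descent in Theorem \ref{th-descente}), the analogue of the $E_{r,s}(p,q)$ spaces inside $\widetilde{\go g}_j$ is
\[
E_{r,s}^{(j)}(p,q)=\widetilde{\go g}_j\cap E_{r,s}(p,q),\qquad j\leq r,s\leq k.
\]
But every $X\in E_{r,s}(p,q)$ with $j\leq r,s$ automatically has zero eigenvalue under $\ad H_{\lambda_\ell}$ for $\ell<j$, so it centralizes each $\widetilde{\go l}_\ell$ for $\ell<j$ and therefore lies in $\widetilde{\go g}_j$; hence $E_{r,s}^{(j)}(p,q)=E_{r,s}(p,q)$ for $j\leq r,s\leq k$. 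Likewise the centralizer piece of Theorem \ref{th-decomp-Eij} applied to $\widetilde{\go g}_j$ is $\cZ_{\go g_j}(\go a^0\cap\widetilde{\go g}_j)=\cZ_{\go g}(\go a^0)\cap\widetilde{\go g}_j$ (an element of $\widetilde{\go g}_j$ commutes with $H_{\lambda_0},\ldots,H_{\lambda_{j-1}}$ automatically). Substituting these identifications into the three decompositions of Theorem \ref{th-decomp-Eij} applied to $\widetilde{\go g}_j$ yields the three stated formulas.

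\textbf{Main obstacle.} The computations are all routine; the only subtle point is the bookkeeping in Step 3, namely verifying that the ``intrinsic'' $E$-spaces attached to $\widetilde{\go g}_j$ coincide with the restrictions of the ambient $E_{r,s}(p,q)$'s (and that the descent of $\widetilde{\go g}_j$ does reproduce $\lambda_j,\ldots,\lambda_k$). The latter is exactly the inductive content of Theorem \ref{th-descente}, and the former follows from the observation above that eigenvalue zero under $\ad H_{\lambda_\ell}$ for $\ell<j$ forces membership in $\widetilde{\go g}_j$.
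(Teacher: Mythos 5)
Your proposal is correct and follows essentially the same route as the paper: identify the grading element $H_{\lambda_j}+\cdots+H_{\lambda_k}$ via the first part of Theorem \ref{th-decomp-Eij}, exhibit the diagonal $\go{sl}_2$-triple $(I_j^-,H_{\lambda_j}+\cdots+H_{\lambda_k},I_j^+)$ for $(\bf H_3)$, and read the decompositions off Theorem \ref{th-decomp-Eij} applied to $\widetilde{\go g}_j$. The only step you state slightly too quickly is that zero eigenvalue under $\ad H_{\lambda_\ell}$ for $\ell<j$ forces membership in $\widetilde{\go g}_j$ — this needs orthogonality $\Rightarrow$ strong orthogonality (Corollary \ref{cor-orth=fortementorth}) to conclude that the bracket with $\widetilde{\go g}^{\pm\lambda_\ell}$ vanishes, but that result is available and the gap is cosmetic.
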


\vskip 5pt

One can also extend the preceding decomposition to a subset $A$ which is different from  $\{j,j+1,\dots,k\}$:
\vskip 5pt
\begin{cor}\label{cor-gA}\hfill

Let $A$ be a non empty subset of  $\{0,1,\dots,k\}$. Define  $H_{A}=\sum_{j\in A}H_{\lambda_{j}}$ and set:
\begin{align*}
{\go g}_A&=\{X\in {\go g}\mid [H_A,X]=0\}\ ;\cr
V^+_A&=\{X\in V^+\mid [H_A,X]=2X\}\ ;\cr
V^-_A&=\{X\in V^{-}\mid [H_A,X]=-2X\}\ .
\end{align*}
Then the graded algebra   $(\widetilde{ {\go g}}_A=V^-_A\oplus{\go g}_A\oplus V^+_A, H_A)$ 
 satisfies  $(\bf H_1)$, $(\bf H_2)$ and  $(\bf H_3)$.
\end{cor}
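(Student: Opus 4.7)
The plan is to verify the three hypotheses $(\bf H_1)$, $(\bf H_2)$, $(\bf H_3)$ for $(\widetilde{\go g}_A,H_A)$ by combining the explicit decomposition of Theorem~\ref{th-decomp-Eij} with an iterated application of Proposition~\ref{prop-gtilde(1)}. Set $B=\{0,\ldots,k\}\setminus A$. Since each $E_{i,j}(p,q)$ and $\widetilde{\go g}^{\pm\lambda_j}$ is a joint $\ad\go a^0$-eigenspace, Theorem~\ref{th-decomp-Eij} at once yields
\begin{align*}
V^{\pm}_A&=\bigoplus_{j\in A}\widetilde{\go g}^{\pm\lambda_j}\oplus\bigoplus_{\substack{i<j\\ i,j\in A}}E_{i,j}(\pm 1,\pm 1),\\
\go g_A&=\cal Z_{\go g}(\go a^0)\oplus\bigoplus_{\substack{i\neq j\\ i,j\in A}}E_{i,j}(1,-1)\oplus\bigoplus_{\substack{i\neq j\\ i,j\in B}}E_{i,j}(1,-1).
\end{align*}
Since $\widetilde{\go g}_A$ is the sum of the $\pm 2$ and $0$ eigenspaces of $\ad H_A$, and $[V^{\pm},V^{\pm}]=0$ rules out any contribution to $\pm 4$, $\widetilde{\go g}_A$ is closed under the bracket. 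Hypothesis $(\bf H_1)$ is immediate: $H_A\in\go a^0\subseteq\cal Z_{\go g}(\go a^0)\subseteq\go g_A$ and $V^+_A\supseteq\widetilde{\go g}^{\lambda_j}\neq 0$ for any $j\in A$. For $(\bf H_3)$ I take the $\go{sl}_{2}$-triples $(X_{-j},H_{\lambda_j},X_j)$ fixed in Subsection~\ref{sub-section-generic-strongly-orth} and put $I^{\pm}_A=\sum_{j\in A}X_{\pm j}\in V^{\pm}_A$; strong orthogonality gives $[X_i,X_{-j}]=0$ for $i\neq j$, whence $[I^-_A,I^+_A]=\sum_{j\in A}H_{\lambda_j}=H_A$, yielding the required $\go{sl}_{2}$-triple in $\widetilde{\go g}_A$.

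The main obstacle is $(\bf H_2)$. My plan is to split $\go g_A=\go g_A^{\circ}\oplus\go c$, with
\begin{equation*}
\go g_A^{\circ}=\cal Z_{\go g}(\go a^0)\oplus\bigoplus_{\substack{i\neq j\\ i,j\in A}}E_{i,j}(1,-1),\qquad \go c=\bigoplus_{\substack{i\neq j\\ i,j\in B}}E_{i,j}(1,-1),
\end{equation*}
and to observe that $[\go c,V^{\pm}_A]=0$: any nonzero bracket would be a joint $\ad\go a^0$-eigenvector in $V^{\pm}$ carrying a nonzero coordinate $\mp 1$ at an index in $B$, while no summand in Theorem~\ref{th-decomp-Eij}~(2)(3) exhibits such a pattern. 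Hence the $\go g_A$-action on $V^+_A$ factors through $\go g_A^{\circ}$, and it suffices to prove $(\go g_A^{\circ},V^+_A)$ absolutely irreducible. The subalgebra $\widetilde{\go g}_A^{\circ}:=V^-_A\oplus\go g_A^{\circ}\oplus V^+_A$ equals the iterated centralizer $\cal Z_{\widetilde{\go g}}\bigl(\bigoplus_{b\in B}\widetilde{\go l}_b\bigr)$.

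To handle this iterated centralizer, I would enumerate $B=\{b_1,\ldots,b_m\}$ arbitrarily and apply at each step a Weyl-transported version of Proposition~\ref{prop-gtilde(1)}. For the first step, Proposition~\ref{prop-conjdeslambda(i)} provides $w\in W\subseteq G$ with $w\lambda_0=\lambda_{b_1}$; since $w$ fixes $H_0$ and preserves the grading, it conjugates $\widetilde{\go g}_1=\cal Z_{\widetilde{\go g}}(\widetilde{\go l}_0)$ onto $\cal Z_{\widetilde{\go g}}(\widetilde{\go l}_{b_1})$, transferring $(\bf H_1)$, $(\bf H_2)$, $(\bf H_3)$ to this algebra with new grading element $w(H_0-H_{\lambda_0})=H_0-H_{\lambda_{b_1}}$. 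Iterating the same procedure inside the successive descent algebras along the chosen enumeration of $B$, one obtains $(\bf H_1)$, $(\bf H_2)$, $(\bf H_3)$ for $\widetilde{\go g}_A^{\circ}$ with grading element $H_A=H_0-\sum_{b\in B}H_{\lambda_b}$, and therefore $(\bf H_2)$ for $(\go g_A,V^+_A)$. The truly delicate point is justifying this generalization of Proposition~\ref{prop-gtilde(1)} from the canonical $\lambda_0$ to an arbitrary long strongly orthogonal root $\lambda_b$; it rests entirely on the $W$-conjugacy of maximal systems of strongly orthogonal long roots supplied by Proposition~\ref{prop-systememax}.
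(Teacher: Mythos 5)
Your verifications of $(\bf H_1)$ and $(\bf H_3)$ agree with the paper's (which simply transports the diagonal $\go{sl}_2$-triple construction from $\widetilde{\go g}_j$); note only that $(\bf H_1)$ presupposes $\widetilde{\go g}_A$ reductive, which the paper gets in one line from the identity $\widetilde{\go g}_A={\cal Z}_{\widetilde{\go g}}(H_{A^c})$ and which your bracket-closure check does not by itself deliver. For $(\bf H_2)$, however, you take a genuinely different and much heavier route. The paper argues directly: given a $\go g_A$-invariant subspace $U\subseteq\overline{V^+_A}$, it forms $U_1=[\go g(-1),U]$ and $U_0=[\go g(-1),U_1]$, where $\go g(\pm1)$ are the $\pm1$-eigenspaces of $\ad H_A$ in $\go g$, checks that $U_0\oplus U_1\oplus U$ is $\go g$-stable, invokes $(\bf H_2)$ for $\widetilde{\go g}$ to conclude this sum is all of $\overline{V^+}$, and recovers $U=\overline{V^+_A}$ by sorting $\ad H_A$-eigenvalues. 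This needs no conjugacy input and no descent. Your reduction to an iterated centralizer over $B$ replaces this two-line bootstrap by the full machinery of Propositions \ref{prop-gtilde(1)}, \ref{prop-systememax} and \ref{propWconjugues}, and the step you yourself flag as delicate is exactly where the work lies: at stage $i\geq 2$ you must show that the surviving roots form a maximal system of strongly orthogonal \emph{long} roots of the current subalgebra and that its Weyl group moves $\lambda_{b_i}$ onto the canonical first descent root. This can be carried out, but it is left as a plan, not a proof.

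There is also one outright false claim: $V^-_A\oplus\go g_A^{\circ}\oplus V^+_A$ is \emph{not} equal to ${\cal Z}_{\widetilde{\go g}}\bigl(\oplus_{b\in B}\widetilde{\go l}_b\bigr)$, since $\go g_A^{\circ}$ contains all of ${\cal Z}_{\go g}(\go a^0)$, in particular $H_{\lambda_b}$ for $b\in B$, which does not centralize $\widetilde{\go l}_b$. The paper's Remark \ref{remgjdifferent} records precisely this discrepancy for $A=\{j,\dots,k\}$. What is true, and what you need, is the containment ${\cal Z}_{\widetilde{\go g}}\bigl(\oplus_{b\in B}\widetilde{\go l}_b\bigr)\subseteq V^-_A\oplus\go g_A^{\circ}\oplus V^+_A$ together with the fact that the degree-$2$ part of this centralizer is all of $V^+_A$; irreducibility under the smaller algebra then passes upward to $\go g_A^{\circ}$ and hence to $\go g_A$. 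So your argument survives once the equality is weakened to this containment and the iterated transport of Proposition \ref{prop-gtilde(1)} is actually established — but the paper's direct argument renders the whole detour unnecessary.
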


\begin{proof}  Note that if  $A^c$ is the complementary set of $A$ in  $\{0,1,\dots,k\}$ and if  $H_{A^c}=\sum_{j\in A^c}H_{\lambda_{j}}$,  then  $\widetilde{ {\go g}}_A= {\cal Z}_{\widetilde{ {\go g}}}(H_{A^c})$. This implies that  $\widetilde{ {\go g}}_A$ is reductive (\cite{Bou2},chap.VII, \S 1, $n^\circ 5$, Prop.13). The hypothesis  $(\bf H_1)$ is then clearly verified.  The regularity condition  $(\bf H_3)$ can be proved in the same way as for  $\widetilde{\go{g}}_{j}$.

It remains to show  $(\bf H_2)$, that is that the representation $({\go g}_A, \overline{V^+_A})$ (or $(\overline{{\go g}_A}, \overline{V^+_A})$) is irreducible.  Let  $U$ be a subspace of $ \overline{V^+_A}$ which is invariant by ${\go g}_A$. From Theorem  \ref{th-decomp-Eij} the algebra  ${\go g}$ decomposes as follows:
$${\go g}={\go g}(-1)\oplus {\go g}_A \oplus {\go g}(1)\ ,$$
where  ${\go g}(-1)=\oplus _{i\notin A,j\in A} E_{i,j}(1,-1)$ and   ${\go g}(1)=\oplus _{i\in A
,j\notin A} E_{i,j}(1,-1)$. Remark also that ${\go g}(-1)=\{X\in \go{g}\,,\,[H_{A},X]=-X\}$   and  ${\go g}(1)=\{X\in \go{g}\,,\,[H_{A},X]=X\}$.

\vskip 5pt 
Note also  that $U$ is included in the eigenspace of $\ad(H_{A})$ for the eigenvalue  $2$.
\vskip 3pt
\hskip 15pt $\bullet$ $U_{1}=[\go{g}(-1),U] $ is then included in the eigenspace of $\ad(H_{A})$ for the eigenvalue  $1$.
\vskip 3pt
\hskip 15pt $\bullet$  $U_{0}=[\go{g}(-1),U_{1}] $ is then included in the eigenspace of  $\ad(H_{A})$ for the eigenvalue  $0$.
\vskip 3pt
\hskip 15pt   $\bullet$  One also has $[\go{g}(1),U]=\{0\}$.

\vskip 3pt

We will now show that  $U_{0}\oplus U_{1}\oplus U$ is  $\go{g}$-invariant  in  $ \overline{V^+}$.
\vskip 3pt
\hskip 10pt a) One has : $[\go{g}_{A},U]\subset U$, $[\go{g}(1),U]=\{0\}$, $[\go{g}(-1),U]=U_{1}$. Hence $[\go{g},U]\subset U_{1}\oplus U$.
\vskip 3pt
One also shows easily that:

\hskip 10pt b) $[\go{g},U_{1}]\subset U_{0}\oplus U_{1}\oplus U$,
\vskip 3pt
\hskip 10pt c) $[\go{g},U_{0}]\subset U_{0}\oplus U_{1}\oplus U$.

Therefore, using the hypothesis  ${\bf (H_{2})}$ for $\widetilde{ {\go g}}$, $U_{0}\oplus U_{1}\oplus U= \overline{V^+}$. But $U$ is contained in the eigenspace of  $\ad(H_{A})$ for the eigenvalue $2$, namely $\overline{V^+_A}$, and $U_{1}$ (resp.  $U_{0}$) corresponds to the eigenvalue $1$ (resp. $2$). This implies $U=\overline{V^+_A}$.

\end{proof}
\vskip 5pt

\begin{rem}\label{remgjdifferent}
Define $A_{j}=\{j,j+1,\dots,k\}$. Then the reductive algebras $\widetilde{\go{g}}_{j}$ and $\widetilde{ {\go g}}_{A_{j}}$ are graded by the same element $H_{\lambda_{j}}+\dots+H_{\lambda_{k}}$. But these algebras are not equal. The obvious inclusion $\widetilde{\go{g}}_{j}\subset \widetilde{ {\go g}}_{A_{j}}$ is strict as $H_{\lambda_{0}}\in {\go g}_{A_{j}}\setminus  {\go g}_j$. More precisely one has
\begin{align*}
&V^+_{A_{j}}=\widetilde{ {\go g}}_{A_{j}}\bigcap V^+=\Bigl(\oplus_{s=j}^k \widetilde{ {\go g}}^{\lambda _s}\Bigr)
\oplus\Bigl(\oplus _{j\leq r<s}E_{r,s}(1,1)\Bigr)
=V^+_j\ ;\cr
&{\go g}_{A_{j}}={\cal Z}_{\go g}({\go a}^0)\oplus\Bigl(\oplus _{r \not = s;j\leq r ;j\leq s
}E_{r ,s}(1,-1)\bigr)\oplus\bigl(\oplus _{r \not = s;r <j;s<j} E_{r ,s}(1,-1)\Bigr)\cr
&\supset_{\text{strict}} {\go g}_j={\cal Z}_{\go g}({\go a}^0)\cap {\go g}_j \oplus\Bigl(\oplus _{r \not = s;j\leq r ;j\leq s
}E_{r ,s}(1,-1)\bigr)\ .
\end{align*}

\end{rem}
\vskip 5pt

\begin{prop}\label{prop-lie(V+V-)simple}\hfill

The Lie algebra  $\widetilde{\go{G}}=V^-\oplus [V^-,V^+]\oplus V^+$ \index{Ggothique@$\widetilde{\go{G}}$} generated by $V^+$ and $V^-$ is a regular graded algebra  which is an absolutely simple ideal of $\widetilde{\go{g}}$.
\end{prop}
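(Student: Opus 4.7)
The plan is to verify separately that $\widetilde{\go{G}}$ is (a) a graded Lie subalgebra with grading element $H_{0}$, (b) an ideal of $\widetilde{\go{g}}$, (c) regular in the sense of Definition \ref{def-regulier}, and (d) absolutely simple. The decomposition $\widetilde{\go{G}}=V^{-}\oplus[V^{-},V^{+}]\oplus V^{+}$ is already an eigenspace decomposition of $\ad H_{0}$ with eigenvalues $-2,0,2$, so $(\bf H_{1})$ is immediate. For the ideal property, the inclusions $[\go g,V^{\pm}]\subset V^{\pm}$ take care of $[\go g,V^{\pm}]$, and the Jacobi identity handles both $[\go g,[V^{-},V^{+}]]\subset[V^{-},V^{+}]$ and $[V^{\pm},[V^{-},V^{+}]]\subset V^{\pm}$; the relation $[V^{\pm},V^{\pm}]=0$ disposes of the remaining cases. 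Regularity ($(\bf H_{3})$) follows by remarking that the $\go{sl}_{2}$-triple $(I^{-},H_{0},I^{+})$ provided by the regularity of $\widetilde{\go{g}}$ lies entirely in $\widetilde{\go{G}}$, since $H_{0}=[I^{+},I^{-}]\in[V^{-},V^{+}]$.

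For absolute simplicity I would appeal to Lemma \ref{lemme-alg.simple}. Since $V^{\pm}=\tfrac{1}{2}[H_{0},V^{\pm}]$ is contained in the derived algebra, the semisimple part $[\widetilde{\go{g}},\widetilde{\go{g}}]$ inherits the short grading $V^{-}\oplus([\widetilde{\go{g}},\widetilde{\go{g}}]\cap \go{g})\oplus V^{+}$. To invoke the lemma with $\go u_{0}:=[\widetilde{\go{g}},\widetilde{\go{g}}]\cap \go{g}$, I need $V^{+}$ to be absolutely simple as a $\go u_{0}$-module; but the reductivity of $\widetilde{\go{g}}$ gives $\go{g}=\go u_{0}\oplus Z(\widetilde{\go{g}})$, and Schur's lemma applied to $(\bf H_{2})$ shows that the center $Z(\widetilde{\go{g}})$ acts on $\overline{V^{+}}$ by scalars, so any $\go u_{0}$-submodule is automatically a $\go{g}$-submodule. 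The lemma then yields the absolute simplicity of $\widetilde{\go{G}}$.

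It remains to check $(\bf H_{2})$ for $\widetilde{\go{G}}$, that is, the absolute irreducibility of $\overline{V^{+}}$ under $\overline{[V^{-},V^{+}]}$. Given a nonzero $\overline{[V^{-},V^{+}]}$-stable subspace $W\subset\overline{V^{+}}$, I would consider $J=W+[\overline{V^{-}},W]+[\overline{V^{-}},[\overline{V^{-}},W]]$ and verify, using $[V^{\pm},V^{\pm}]=0$ together with repeated Jacobi, that $J$ is an ideal of $\overline{\widetilde{\go{G}}}$. By the absolute simplicity just established, $J=\overline{\widetilde{\go{G}}}$, and projecting on the degree $2$ piece yields $W=\overline{V^{+}}$. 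The main (and only) non-obvious bracket to compute is $[\overline{V^{+}},J]\subset J$, where the key identity $[\overline{V^{+}},[\overline{V^{-}},W]]=[\overline{[V^{-},V^{+}]},W]\subset W$ is exactly the place where the assumed stability of $W$ under $\overline{[V^{-},V^{+}]}$ is used; the remaining bookkeeping is routine.
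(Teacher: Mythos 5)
Your proof is correct, and it rests on the same key tool as the paper's (Lemma \ref{lemme-alg.simple}), but the two arguments are organized differently around the step $({\bf H_{2}})$. The paper first observes that, $\widetilde{\go{G}}$ being an ideal of the reductive algebra $\widetilde{\go{g}}$, its orthogonal complement is a complementary ideal contained in $\go{g}$ which therefore commutes with $V^+$; hence $[V^-,V^+]$ is the only part of $\go{g}$ acting effectively on $V^+$, and $({\bf H_{2}})$ for $\widetilde{\go{G}}$ follows in one line from $({\bf H_{2}})$ for $\widetilde{\go{g}}$. Only then does the paper invoke Lemma \ref{lemme-alg.simple} (applied to $\widetilde{\go{G}}$ itself) to get absolute simplicity. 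You reverse the order: you apply the lemma to $[\widetilde{\go{g}},\widetilde{\go{g}}]$ — correctly noting that $V^{\pm}\subset[\widetilde{\go{g}},\widetilde{\go{g}}]$ and that the central summand of $\go{g}$ acts by scalars on $\overline{V^+}$, so absolute irreducibility under $\go{g}$ passes to $[\widetilde{\go{g}},\widetilde{\go{g}}]\cap\go{g}$ — obtaining simplicity of $\widetilde{\go{G}}$ directly from $({\bf H_{2}})$ for $\widetilde{\go{g}}$, and you then deduce $({\bf H_{2}})$ for $\widetilde{\go{G}}$ a posteriori by showing that $J=W+[\overline{V^-},W]+[\overline{V^-},[\overline{V^-},W]]$ is an ideal. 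Your route costs an explicit (but routine, and correct) bracket verification where the paper uses the orthogonal-ideal trick; in exchange it makes the simplicity of $\widetilde{\go{G}}$ independent of the irreducibility statement for $[V^-,V^+]$, which is conceptually tidy. Both are complete proofs.
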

\begin{proof} As  $\widetilde{\go{g}}$ is regular, the element  $H_{0}\in [V^-,V^+]$, hence  $\bf(H_{1})$ is satisfied. The hypothesis  $\bf(H_{3})$ holds also clearly.

One easily verifies that  $\widetilde{\go{G}}$ is an ideal of  $\widetilde{\go{g}}$. Therefore $[V^-,V^+]$ is the only part of  $\go{g}$ which acts effectively on $V^+$. Therefore the representation $([V^-,V^+],V^+)$ is absolutely simple. In other words  $\bf(H_{2})$ is true.  
 
 The fact that $\widetilde{\go{G}}$ is simple is then a consequence of  Lemma \ref{lemme-alg.simple}.

\end{proof}

\begin{rem}\label{rem-simple} From the preceding Proposition   \ref{prop-lie(V+V-)simple}, we obtain that 
$\tilde{\go g}=\tilde{\go G}\oplus  {\go G}'$ where the subalgebra  $\tilde{\go G}=V^-\oplus [V^-, V^+]\oplus V^+$ is an abolutely simple graded  Lie algebra and where  ${\go G}'$ is the orthogonal of  $\tilde{\go G}$ in  $\tilde{\go g}$ with respect to the form $\tilde{B}$. Moreover, the subalgebra  $\tilde{\go G}$ is an ideal of  $\tilde{\go g}$ and hence  ${\go G}'$ is an ideal of  $\tilde{\go g}$ too. Therefore, if  $X\in {\go G}'$ is nilpotent over an algebraic closure of  $F$,  then  $e^{\ad X}$ acts trivially on  $\tilde{\go G}$. \\
Hence,  in order to classify the orbits of  $G$ in  $V^+$, one can suppose that  $\tilde{\go g}$ is simple. 
\end{rem}

  \vskip 20pt
 \subsection{Properties of the spaces  $E_{i,j}(p,q)$}\hfill
  \vskip 10pt
  
  \begin{prop}\label{proplambda>0} \hfill
  
  Let $\lambda\in \Sigma$ such that $\go{g}^{\lambda}\subset E_{i,j}(1,-1)$. Then $\lambda$ is positive if and only if  $i>j$.
  
  \end{prop}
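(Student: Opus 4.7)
The plan is to reduce to the descent graded subalgebra $\widetilde{\go g}_{\min(i,j)}$, inside which $\lambda_{\min(i,j)}$ becomes a genuine simple root. First I would record that the hypothesis $\go g^{\lambda}\subset E_{i,j}(1,-1)$ is equivalent to $\lambda(H_{\lambda_i})=1$, $\lambda(H_{\lambda_j})=-1$, and $\lambda(H_{\lambda_\ell})=0$ for $\ell\notin\{i,j\}$. Replacing $\lambda$ by $-\lambda$ exchanges the ordered pair $(i,j)$ with $(j,i)$ and exchanges positivity with negativity, so it suffices to treat the case $i>j$ and prove $\lambda\in\Sigma^+$; the case $j>i$ then follows by applying the result to $-\lambda$.

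Next I would apply the $\lambda_j$-string argument. Since the $\lambda_j$ are long roots by Proposition \ref{prop-conjdeslambda(i)} and $\lambda(H_{\lambda_j})=-1$, the usual string formula $q-p=\lambda(H_{\lambda_j})$ forces $p\geq 1$, so $\lambda+\lambda_j$ is a root. A direct computation on each $H_{\lambda_\ell}$, combined with the strong orthogonality of the $\lambda_s$, yields $(\lambda+\lambda_j)(H_{\lambda_\ell})=0$ for $\ell\notin\{i,j\}$, $(\lambda+\lambda_j)(H_{\lambda_j})=1$, and $(\lambda+\lambda_j)(H_{\lambda_i})=1$. By Corollary \ref{cor-orth=fortementorth} these orthogonalities upgrade to strong orthogonalities with each $\lambda_\ell$ for $\ell<j$, so Proposition \ref{racines-g1}\,(1) puts $\lambda+\lambda_j\in\widetilde{\Sigma}_j$; and evaluating on the grading element $H_{\lambda_j}+\cdots+H_{\lambda_k}$ of $\widetilde{\go g}_j$ gives $2$, so in fact $\lambda+\lambda_j\in V^+_j$. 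Since $V^+\subset\widetilde{\Sigma}^+$, this root lies in $\widetilde{\Sigma}_j^+=\widetilde{\Sigma}^+\cap\widetilde{\Sigma}_j$.

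Finally, I would apply Theorem \ref{thbasepi} to the graded algebra $\widetilde{\go g}_j$, which satisfies $(\mathbf{H_1})$--$(\mathbf{H_3})$ by Theorem \ref{th-descente}, and use Proposition \ref{racines-g1}\,(3), which identifies $\widetilde{\Pi}_j=\Pi_j\cup\{\lambda_j\}$ with $\Pi_j\subset\Pi$. Since $\lambda+\lambda_j$ lies in $V^+_j$, part (3) of Theorem \ref{thbasepi} (applied to $\widetilde{\go g}_j$) forces its $\lambda_j$-coefficient in the $\widetilde{\Pi}_j$-expansion to equal $1$, and all other coefficients are non-negative integers because it is a positive root; hence
\[
\lambda+\lambda_j=\lambda_j+\sum_{\nu\in\Pi_j}c_\nu\,\nu,\qquad c_\nu\in\Z_{\geq 0}.
\]
Subtracting $\lambda_j$ gives $\lambda=\sum_{\nu\in\Pi_j}c_\nu\,\nu$, and since $\lambda$ is a nonzero root and $\Pi_j\subset\Pi$, this forces $\lambda\in\Sigma^+$, as required.

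The main obstacle is essentially bookkeeping: one must check that the descent data of $\widetilde{\go g}_j$ (grading element $H_{\lambda_j}+\cdots+H_{\lambda_k}$, simple system $\widetilde{\Pi}_j$, positivity inherited from $\widetilde{\Sigma}^+$) really lets us treat $\lambda_j$ as a bona fide simple root playing the role of $\lambda_0$ in Theorem \ref{thbasepi}. Once that reduction is justified, the conclusion reduces to a one-line string computation together with reading off coefficients in the $\widetilde{\Pi}_j$-expansion.
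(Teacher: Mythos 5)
Your proof is correct and follows essentially the same route as the paper's: reduce to the case $i>j$ via $\lambda\mapsto-\lambda$, use Corollary \ref{cor-orth=fortementorth} to place $\lambda$ in $\Sigma_j$, deduce from $\lambda(H_{\lambda_j})=-1$ that $\lambda+\lambda_j$ is a root of $\widetilde{\Sigma}_j$, and conclude positivity from the fact that $\lambda_j$ is the distinguished simple root of $\widetilde{\Pi}_j$. Your final step merely spells out more explicitly (via Theorem \ref{thbasepi}\,(3) applied to $\widetilde{\go g}_j$) what the paper compresses into one sentence.
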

  
  \begin{proof} Let $\lambda\in \Sigma$ such that  $\lambda(H_{\lambda_{i}})=1$ and  $\lambda(H_{\lambda_{j}})=-1$. From Theorem  \ref{th-decomp-Eij}, $\lambda(H_{\lambda_{s}})=0$ if $s\neq i$ and  $s\neq j$.
  
 If $i> j$, then $\lambda\perp \lambda_{s}$ for $s< j$. But by Corollary \ref{cor-orth=fortementorth}, one has  $\lambda\sorth \lambda_{s}$, for $s< j$. Hence  $\lambda\in \widetilde{\Sigma}_{j}$. But as  $(\lambda,\lambda_{j})<0$ ($\lambda(H_{\lambda_{j}})=-1$),  $\lambda+\lambda_{j}$ is a root. We have also  $\lambda\sorth \lambda_{s}$ for  $s< j$ and    $\lambda_{j}\sorth \lambda_{s}$ for $s< j$. Hence $\lambda+\lambda_{j}\in \widetilde{\Sigma}_{j}$. As $\lambda\in \Sigma_{j}$ and as $\lambda_{j}\in \widetilde{\Pi}_{j}\setminus \Pi_{j}$ (Notation \ref{notationsgj}),we obtain that $\lambda\in \Sigma_{j}^+$. But then $\lambda\in {\Sigma}^+$.
  
 Conversely suppose that  $\lambda\in \Sigma$ and  $\go{g}^{\lambda}\subset E_{i,j}(1,-1)$ with $i< j$. Then  $\go{g}^{-\lambda}\in E_{j,i}(1,-1)$, and from above $-\lambda\in \Sigma^+$, hence $\lambda\in \Sigma^-$. Hence if  $\lambda$ is positive and  $\go{g}^\lambda\in E_{i,j}(1,-1)$, then $i>j$.  
  \end{proof}
  
\begin{prop}\label{propWconjugues}\hfill

           Let  $s_0$  be the unique element of   $W$ sending $\Sigma ^+$ on  $\Sigma ^-$.  Then
           $$s_0.\lambda _j=\lambda _{k-j} \hbox{  for  }j=0,1,\ldots ,k\ .$$
Moreover the roots   $\lambda _i$ and  $\lambda _j$ are conjugated under $W$ for all $i$ and  $j$ in  $\{0,1,\ldots ,k\}$.
\end{prop}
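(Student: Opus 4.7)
The plan is to derive both assertions from Proposition~\ref{prop-conjdeslambda(i)} together with an induction on $k$. The second assertion is immediate: Proposition~\ref{prop-conjdeslambda(i)} provides elements $w_j\in W_j\subset W$ with $w_j\lambda_j = \lambda^0$, so all of the $\lambda_j$'s lie in a single $W$-orbit.

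The first assertion hinges on the key identity $\lambda^0 = \lambda_k$. The proof of Proposition~\ref{prop-conjdeslambda(i)} observes that $\lambda^0$ is the highest root of every $\widetilde{\Sigma}_j$; specializing to $j=k$ gives $\lambda^0\in\widetilde{\Sigma}_k$, so $\lambda^0$ is strongly orthogonal to $\lambda_0,\ldots,\lambda_{k-1}$. By Corollary~\ref{cor-orth=fortementorth} the corresponding orthogonality $\lambda^0(H_{\lambda_i})=0$ for $i<k$ holds, and combined with $\lambda^0(H_0)=2$ and the decomposition $H_0 = \sum_{i=0}^k H_{\lambda_i}$ from Theorem~\ref{th-decomp-Eij} we obtain $\lambda^0(H_{\lambda_k}) = 2$. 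Hence $\widetilde{\go g}^{\lambda^0}\subset V_k^+$, and since $V_k^+ = \widetilde{\go g}^{\lambda_k}$ by Corollary~\ref{cor-decomp-j}, I conclude $\lambda^0 = \lambda_k$.

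Applying Proposition~\ref{prop-conjdeslambda(i)} at $j=0$ (where $w_0 = s_0$) then yields $s_0\lambda_0 = \lambda^0 = \lambda_k$, and by $s_0^2 = \operatorname{id}$ also $s_0\lambda_k = \lambda_0$. The remaining identities $s_0\lambda_j = \lambda_{k-j}$ for $1\leq j\leq k-1$ are handled by induction on $k$, the base case $k=0$ being trivial. Applying the inductive hypothesis to the graded algebra $\widetilde{\go g}_1$ (of rank $k$, with strongly orthogonal sequence $\lambda_1,\ldots,\lambda_k$) shows that its longest element $w_1\in W_1$ reverses that sequence, $w_1\lambda_{1+i} = \lambda_{k-i}$ for $0\leq i\leq k-1$, and $w_1$ also fixes $\lambda_0$ since the roots of $\Sigma_1$ are all strongly orthogonal to $\lambda_0$.

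The main obstacle is to upgrade the reversal realized by $w_1$ inside $\widetilde{\go g}_1$ to the reversal realized by $s_0$ on the full sequence. My approach is to analyze the conjugate $s_0 w_1 s_0\in W$: a direct computation using $s_0\lambda_0 = \lambda_k$, $s_0\lambda_k = \lambda_0$ and $w_1\lambda_0 = \lambda_0$ shows that $s_0 w_1 s_0$ fixes $\lambda_k$, hence belongs to the Weyl subgroup $W(\go g_A)$ with $A=\{0,\ldots,k-1\}$ (the reflections in roots of $\Sigma$ orthogonal to $\lambda_k$). Verifying that $s_0 w_1 s_0$ sends $\Sigma_A^+$ to $\Sigma_A^-$ identifies it as the longest element of $W(\go g_A)$; then the inductive hypothesis applied to the graded subalgebra $\widetilde{\go g}_A$ (Corollary~\ref{cor-gA}; rank $k$, strongly orthogonal sequence $\lambda_0,\ldots,\lambda_{k-1}$) gives $(s_0 w_1 s_0)\lambda_i = \lambda_{k-1-i}$ for $i=0,\ldots,k-1$. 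Unwinding this, $(s_0 w_1 s_0)\lambda_0 = s_0\lambda_1 = \lambda_{k-1}$, and the same computation with $\lambda_i$ in place of $\lambda_0$ gives $s_0\lambda_j = \lambda_{k-j}$ for each $1\leq j\leq k-1$, completing the induction.
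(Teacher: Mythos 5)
Your proof is correct, but it takes a genuinely different route from the paper's. On the two points where you coincide with the paper there is nothing to add: the identity $s_0\lambda_0=\lambda^0=\lambda_k$ is obtained from essentially the same uniqueness argument (a root of $V^+$ strongly orthogonal to $\lambda_0,\ldots,\lambda_{k-1}$ must be $\lambda_k$ by Theorem~\ref{th-decomp-Eij}), and the paper also deduces the conjugacy of the $\lambda_i$'s from elements $s_i\in W_i$ sending $\lambda_i$ to $\lambda_k$. The divergence is in the reversal $s_0\lambda_j=\lambda_{k-j}$. The paper re-runs the descent with respect to the reversed simple system $s_0\widetilde\Pi$, obtaining a sequence $\mu_0=\lambda_k,\mu_1,\ldots$, characterizes $\mu_1=s_0\lambda_1$ by explicit root-string conditions, checks that $\lambda_{k-1}$ satisfies them, and then inducts on the index $j$. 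You instead induct on the rank: the longest element $w_1$ of $W_1$ reverses $\lambda_1,\ldots,\lambda_k$ and fixes $\lambda_0$, its conjugate $s_0w_1s_0$ is the longest element of the subgroup of $W$ generated by reflections in the roots of $\Sigma$ orthogonal to $\lambda_k$ (your verification that it sends $\Sigma_A^+$ to $\Sigma_A^-$ is correct, since $s_0$ carries $\Sigma_1$ onto $\Sigma_A$), and the two reversals are played against each other. Your "same computation with $\lambda_i$ in place of $\lambda_0$" is really an inner induction on $i$: from $s_0\lambda_i=\lambda_{k-i}$ one gets $s_0\lambda_{i+1}=s_0w_1\lambda_{k-i}=(s_0w_1s_0)\lambda_i=\lambda_{k-1-i}$, which is fine. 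The net effect is a structurally cleaner single induction on the rank, at the cost of introducing the auxiliary algebra $\widetilde{\go g}_A$.

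The one step you should justify rather than assert is that the descent applied to $\widetilde{\go g}_A$, $A=\{0,\ldots,k-1\}$, produces exactly the sequence $\lambda_0,\ldots,\lambda_{k-1}$ in that order, so that the inductive hypothesis applies with that sequence. Corollary~\ref{cor-gA} only gives that $\widetilde{\go g}_A$ is regular; to identify its descent data, check that $\lambda_0(H_A)=2$ and that $\lambda_0-\lambda\notin\widetilde\Sigma$ for every $\lambda\in\Sigma_A^+\subset\Sigma^+$, so $\lambda_0$ satisfies the characterization of Corollary~\ref{corlambda0} for $\widetilde{\go g}_A$, and iterate through the successive centralizers $\widetilde{\go g}_{\{j,\ldots,k-1\}}$. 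This is routine but not free.
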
 

\begin{proof} Let us first prove that  $s_{0}.\lambda_{0}=\lambda_{k}$.

Recall (Corollary \ref{corlambda0}) that the root   $\lambda _0$ is the unique root in  $\widetilde{\Sigma }$ such that 
$$ (1)\quad\begin{cases}
\bullet&\hskip 5pt \lambda _0 (H_0)=2 \  ;\cr
\bullet& \hskip 5pt  \lambda \in \Sigma ^+\Longrightarrow \lambda _0-\lambda \notin \widetilde{
\Sigma }\ .
\end{cases}$$

Therefore the root  $\mu=s_{0}.\lambda_{0}$ is characterized by the properties:
$$(2)\quad
\begin{cases}
           \bullet&\hskip 5pt \mu  (H_0)=2 \  (\text{since }s_{0}.H_{0}=2);\\
            \bullet& \hskip 5pt \lambda   \in \Sigma ^+\Longrightarrow
            \mu +\lambda \notin \widetilde{\Sigma }\  (\text{since } s_{0}.\Sigma^+=\Sigma^-)\ . 
\end{cases}$$

From Proposition \ref{prop.plusgranderacine} (and its proof) we know that $s_{0}.\lambda_{0}$ is the root $\lambda^{0}$ which is the restriction to  $\go{a}$ of the highest weight  $\omega$ of  $\go{g}$ on  $\overline{V^+}$. From the proof of  Proposition \ref{prop-gtilde(1)} we know also that $\omega$ is the highest weight of  $\go{g}_{1}$ on $\overline{V_{1}^+}$, and by induction  $\omega$ will be the highest weight of $\go{g}_{k}$ on $\overline{V_{k}^+}$. Hence $\omega_{|_{\go{a}}}$ is a root of $\widetilde{\Sigma}$ which is strongly orthogonal to $\lambda_{0},\lambda_{1},\dots,\lambda_{k-1}$. From  Theorem \ref{th-decomp-Eij}, there is only one root having this property, namely $\lambda_{k}$. Hence  $\lambda^0=s_{0}.\lambda_{0}=\lambda_{k}$.
\vskip 5pt
We will now prove that  $s_{0}.\lambda_{1}=\lambda_{k-1}$. We first decide to take  $s_{0}.\widetilde{\Sigma}^+$ as the set of positive roots in $\widetilde{\Sigma}$. The corresponding base will be $s_{0}.\widetilde{\Pi}$. We have  $s_{0}.\Sigma^+=\Sigma^-=-\Sigma^+$. As  $H_{0}$ is fixed by  $W$, the elements  $\lambda\in s_{0}.\widetilde{\Pi}$ still verify $\lambda(H_{0})=0 \text{ or } 2$ (condition (1) in Theorem \ref{thbasepi}). We apply now all we did before  and we obtain by "descent" a sequence $\mu_{0},\mu_{1},\dots,\mu_{k}$ of strongly orthogonal roots.  The root $\mu_{0}$ is the unique root in $s_{0}.\widetilde{\Pi}$, such that  $\mu_{0}(H_{0})=2$. Hence  $\mu_{0}=s_{0}.\lambda_{0}=\lambda_{k}$. 
\vskip 5pt
Now we will prove that  $s_{0}.\lambda_{1}=\lambda_{k-1}$. The centralizer of  $\widetilde{\go{l}}_{k}$ verifies again ${\bf (H_{1})}$ and ${\bf (H_{2})}$ and we will apply the preceding results to ${\cal Z}_{\widetilde{\go{g}}}(\widetilde{\go{l}}_{k})$.  Corollary  \ref{corlambda0} applied to this graded algebra implies that the root $\mu_{1}$ is characterized by
$$  (3)\quad
\begin{cases}
           \bullet & \mu _1(H_0)=2 \ ;\hfill\cr
              \bullet & \mu _1\sorth \lambda _k\ ;\hfill\cr
            \bullet&  \lambda \in \Sigma ^+\hbox{  and }\lambda \sorth\lambda _k
                   \Longrightarrow  \mu_1 +\lambda \notin\widetilde{ \Sigma }\ .  
\end{cases}$$

The same Corollary  applied to the graded algebra $\widetilde{\go{g}}_{1}$ iimplies that the root  $\lambda_{1}$ is characterized by
$$\begin{cases}
              \bullet & \lambda _1(H_0)=2 \ ;\hfill\cr
             \bullet& \lambda _1\sorth\lambda _0\ ;\hfill\cr
              \bullet& \lambda \in \Sigma ^+\hbox{  and }\lambda \sorth \lambda _0
             \Longrightarrow  \lambda _1 -\lambda \notin\widetilde{
               \Sigma }\ . 
\end{cases} $$
\vskip 5pt

As $s_{0}.\Sigma^+=\Sigma^-$ and  $s_{0}.\lambda_{0}=\lambda_{k}$, we get  $\mu_{1}=s_{0}.\lambda_{1}$.
\vskip 5pt

On the other hand the root $\lambda_{k-1}$ appears in  $V^+$ and is strongly orthogonal to $\lambda_{k}$. Let  $\lambda\in \Sigma^+$ be a root strongly orthogonal to  $\lambda_{k}$. If  $\lambda(H_{\lambda_{k-1}})=0$, then $\lambda$ is strongly orthogonal to  $\lambda_{k-1}$ from Corollary \ref{cor-orth=fortementorth}. Hence $\lambda+\lambda_{k-1}$ is not a root. If $\lambda(H_{\lambda_{k-1}})\neq 0$, then by  Proposition \ref{proplambda>0} there exists  $j< k-1$ such that $\go{g}^{\lambda}\subset E_{k-1,j}(1,-1)$. As $(\lambda+\lambda_{k-1})(H_{\lambda_{k-1}})=3$, $\lambda+\lambda_{k-1}$ is not a root. This shows that  $\lambda_{k-1}$ verifies the properties  $(3)$. Hence $\lambda_{k-1}= \mu_{1}=s_{0}.\lambda_{1}$.
\vskip 5pt

The first assertion is then proved by induction on $j$.

For the second assertion one applies the preceding result  to the graded algebras  $\widetilde{\go{g}}_{i}$ and $\widetilde{\go{g}}_{j}$ where $\lambda_{i}$ and $\lambda_{j}$ play the role of  $\lambda_{0}$. There exists an element $s_{i}\in W_{i}$ ($W_{i}$ is the Weyl group of  $(\widetilde{\go{g}}_{i}, \go{a}_{i})$) such that $s_{i}.\lambda_{i}=\lambda_{k}$ and an element $s_{j}\in W_{j}$ such that  $s_{j}.\lambda_{j}=\lambda_{k}$. As $W_{i}$ and  $W_{j}$ are subgroups of $W$, $s=s_{j}^{-1}s_{i}$ is an element of  $W$ which verifies  $s.\lambda_{i}=\lambda_{j}$.

 \end{proof}
 \vskip 5pt
 \begin{prop}\label{prop-memedimension}\-
 
        {\rm (1)} For  $j =0,\ldots ,k$ the root spaces   $\widetilde{ {\go g}}^{\lambda _j}$ have the same dimension.
        
         {\rm (2)} More generally the Lie algebras  $\widetilde{ {\go l}}_i={\widetilde{\go{g}}^{-\lambda_{i}}}\oplus[{\widetilde{\go{g}}^{-\lambda_{i}}},{\widetilde{\go{g}}^{\lambda_{i}}}]\oplus {\widetilde{\go{g}}^{\lambda_{i}}}$ \index{ltildegothiquei@$\widetilde{ {\go l}}_i$} are two by two conjugated by $G$.

        {\rm (3)} For  $i\not = j$, the spaces   $E_{i,j}(1,1),E_{i,j}(-1,-1)$ and $ E_{i,j}(1,-1)$ have the same dimension. This dimension is non zero    and independant of the pair  $\{i,j\} \in \{0,1\ldots ,k\}^2 $.

 \end{prop}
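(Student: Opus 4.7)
The plan is to leverage three kinds of symmetry in sequence: Weyl group conjugacy of the roots $\lambda_j$, the action of the $\go{sl}_2$-triples $(Y_s, H_{\lambda_s}, X_s)$, and the absolute irreducibility hypothesis ${\bf (H_2)}$ applied to the subalgebras $\widetilde{\go g}_{\{i,j\}}$ from Corollary \ref{cor-gA}. Throughout, I will use the observation that for any $\alpha\in\Sigma$ the lift $\tilde s_\alpha=e^{\ad X_\alpha}e^{\ad X_{-\alpha}}e^{\ad X_\alpha}$ (with $X_{\pm\alpha}\in\go g^{\pm\alpha}$) is an elementary automorphism of $\widetilde{\go g}$ that fixes $H_0$ (because $\alpha(H_0)=0$), and therefore lies in $G$; hence the whole Weyl group $W$ lifts inside $G$ and permutes the root spaces $\widetilde{\go g}^\mu$ according to its action on $\mu$.

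For (1) and (2), Proposition \ref{propWconjugues} provides, for any $i,j$, an element $w\in W$ with $w\lambda_i=\lambda_j$. Its lift $\tilde w\in G$ maps $\widetilde{\go g}^{\pm\lambda_i}$ onto $\widetilde{\go g}^{\pm\lambda_j}$, whence the equality of dimensions in (1) and, by restriction, $\tilde w\cdot\widetilde{\go l}_i=\widetilde{\go l}_j$, which gives (2).

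For the equality part of (3), I would fix $i\ne j$ and study $E_{i,j}(1,-1)\oplus E_{i,j}(1,1)$ as a module for the $\go{sl}_2$-triple $(Y_j,H_{\lambda_j},X_j)$. A direct check with the Jacobi identity shows $\ad(X_j)\colon E_{i,j}(1,-1)\to E_{i,j}(1,1)$ and $\ad(Y_j)\colon E_{i,j}(1,1)\to E_{i,j}(1,-1)$, while $\ad(X_j)^2=0$ and $\ad(Y_j)^2=0$ on these spaces because $[V^+,V^+]=[V^-,V^-]=\{0\}$. Since only the $H_{\lambda_j}$-weights $\pm 1$ occur, this module is a sum of copies of the standard $2$-dimensional irreducible and $\ad(X_j)$ is a bijection, so $\dim E_{i,j}(1,-1)=\dim E_{i,j}(1,1)$. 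The symmetric argument with the triple $(Y_i,H_{\lambda_i},X_i)$ acting on $E_{i,j}(1,-1)\oplus E_{i,j}(-1,-1)$ yields $\dim E_{i,j}(1,-1)=\dim E_{i,j}(-1,-1)$. Independence of the common dimension on the pair $\{i,j\}$ follows from Proposition \ref{prop-systememax}(2): for any permutation $\sigma$ of $\{0,\dots,k\}$, the reordered tuple $(\lambda_{\sigma(0)},\dots,\lambda_{\sigma(k)})$ is again a maximal system of strongly orthogonal long roots in $\widetilde\Sigma^+\setminus\Sigma^+$, hence realized by some $w\in W$; its lift to $G$ carries $E_{i,j}(p,q)$ isomorphically onto $E_{\sigma(i),\sigma(j)}(p,q)$.

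The main obstacle is the non-vanishing. I plan to argue by contradiction: suppose $E_{i,j}(1,1)=0$ for some $i\ne j$. By Corollary \ref{cor-gA} the graded subalgebra $\widetilde{\go g}_{\{i,j\}}=V^-_{\{i,j\}}\oplus\go g_{\{i,j\}}\oplus V^+_{\{i,j\}}$ satisfies ${\bf (H_2)}$, and under our assumption $V^+_{\{i,j\}}=\widetilde{\go g}^{\lambda_i}\oplus\widetilde{\go g}^{\lambda_j}$. I will show $\widetilde{\go g}^{\lambda_i}$ is stable under $\go g_{\{i,j\}}$ by inspecting each summand given by Theorem \ref{th-decomp-Eij}: the centralizer piece ${\cal Z}_{\go g}(\go a^0)$ preserves $\go a$-weight spaces; the summand $E_{j,i}(1,-1)$ sends $\widetilde{\go g}^{\lambda_i}$ into $E_{i,j}(1,1)=\{0\}$; $E_{i,j}(1,-1)$ would produce a vector with $H_{\lambda_i}$-eigenvalue $3$ in $V^+$, which Theorem \ref{th-decomp-Eij} forbids; and for $r,s\notin\{i,j\}$ with $r\ne s$, the summand $E_{r,s}(1,-1)$ bracketed with $\widetilde{\go g}^{\lambda_i}$ produces a weight pattern not occurring in $V^+$. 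Symmetrically, $\widetilde{\go g}^{\lambda_j}$ is $\go g_{\{i,j\}}$-stable. Extending scalars to $\overline F$ then exhibits $\overline{V^+_{\{i,j\}}}$ as a proper non-trivial direct sum of two $\overline{\go g}_{\{i,j\}}$-submodules, contradicting the absolute irreducibility of $(\go g_{\{i,j\}},\overline{V^+_{\{i,j\}}})$ stated in Corollary \ref{cor-gA}. Hence $E_{i,j}(1,1)\ne\{0\}$, concluding (3).
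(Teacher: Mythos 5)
Your proof is correct and follows the same overall route as the paper's: parts (1) and (2) via the lift to $G$ of the Weyl group element of Proposition \ref{propWconjugues}, and the equality of $\dim E_{i,j}(\pm1,\pm1)$ for a fixed pair via the $\go{sl}_2$-triples $(Y_i,H_{\lambda_i},X_i)$ and $(Y_j,H_{\lambda_j},X_j)$, exactly as in the paper (where $\ad X_j$ is shown to be invertible with inverse $-\ad Y_j$). Two local points differ. For the independence of the dimension on the pair $\{i,j\}$, you apply Proposition \ref{prop-systememax}(2) to the reordered tuple to realize an arbitrary permutation of the $\lambda_j$ by a single $w\in W$; the paper instead builds a chain of isomorphisms $E_{i,j}(1,1)\simeq E_{k,j}(1,1)\simeq E_{k-j,0}(1,1)\simeq\cdots$ using elements $s_i\in W_i$ and $s_0$. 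Your version is shorter and equally valid. For the non-vanishing, you localize to the rank-two subalgebra $\widetilde{\go g}_{\{i,j\}}$ of Corollary \ref{cor-gA} and contradict its absolute irreducibility, which lets you treat each pair separately; the paper instead observes that if the $E_{i,j}(1,1)$ all vanished then $\go g={\cal Z}_{\go g}(\go a^0)$ and $\widetilde{\go g}^{\lambda_0}$ would be a proper $\go g$-submodule of $V^+$, contradicting $(\bf H_2)$ globally. Your stability checks for $\widetilde{\go g}^{\lambda_i}$ under each summand of $\go g_{\{i,j\}}$ are accurate, and since Corollary \ref{cor-gA} precedes this proposition in the text there is no circularity; the paper's global argument is marginally more economical but both are sound.
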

 
 \begin{proof} $(1)$ From  Proposition \ref{propWconjugues} there exists an element $s\in W$ such that  $s.\lambda_{i}=\lambda_{j}$. Let $g$ be an element of  $G$ such that  $g_{|_{\go{a}}}=s$. It is the easy to see that  $g. {\widetilde{\go{g}}^{\lambda_{i}}}={\widetilde{\go{g}}^{\lambda_{j}}}$. Therefore the vector spaces  ${\widetilde{\go{g}}^{\lambda_{i}}}$ and  ${\widetilde{\go{g}}^{\lambda_{j}}}$ are isomorphic.

 $(2)$ Let  $g\in G$  be the preceding element. Then one has also  $g. {\widetilde{\go{g}}^{-\lambda_{i}}}={\widetilde{\go{g}}^{-\lambda_{j}}}$. Hence $g.\widetilde{ {\go l}}_i=\widetilde{ {\go l}}_j$.
 
 $(3)$ Fix a pair $(i,j)$ with $i\neq j$. We choose $(X_{i},Y_{i})$ (resp. $(X_{j},Y_{j})$) in  $\widetilde{\go{g}}^{-\lambda_{i}}\times \widetilde{\go{g}}^{\lambda_{i}}$ (resp. in  $\widetilde{\go{g}}^{-\lambda_{j}}\times \widetilde{\go{g}}^{\lambda_{j}}$) such that$(Y_{i},H_{\lambda_{i}}, X_{i})$ (resp. $(Y_{j},H_{\lambda_{j}}, X_{j}))$ is an  $\go{sl}_{2}$-triple.
 
 Then $\ad X_i: E_{i,j}(-1,-1)\longrightarrow E_{i,j}(1,-1)$ is an isomorphism  whose   inverse is -$\ad Y_i$ (if  $u\in E_{i,j}(-1,-1)$ then $-\ad Y_{i}\ad X_{i}(u)=-[H_{\lambda_{i}},u]+[X_{i},[Y_{i},u]]=u$).
 
 Similarly  $\ad X_j: E_{i,j}(1,-1)\longrightarrow E_{i,j}(1,1)$ is an isomorphism whose inverse is  $-\ad Y_{j}$.

 This implies that the spaces  $E_{i,j}(\pm1,\pm1)$ are isomorphic when $i$ and  $j$ 	are fixed.
 
 In order to prove that the spaces  $E_{i,j}(1,1)$ are isomorphic  for distinct pairs  $(i,j)$   (with $i\neq j$), we will use the elements of the Weyl group which permute the $\lambda_{j}$.
 
 \vskip 5pt
 
 We prove first that  $E_{i,j}(1,1)\simeq E_{k,j}(1,1)$ for $j<i\leq k $ (recall that $k$ is the final index in the descent). Proposition  \ref{propWconjugues} applied to the graded algebra $\widetilde{\go{g}}_{i}$ implies the existence of  $s_{i}\in W_{i}$ which permutes  $\lambda_{i}$ and $\lambda_{k}$. The group  $W_{i}$ is generated by the reflections  defined by the roots $\lambda$ strongly orthogonal to  $\lambda_{0},\lambda_{1},\dots,\lambda_{i-1}$. As $j<i$, $\lambda_{j}$ is invariant under $W_{i}$. Hence 
 $$s_{i}: E_{i,j}(1,1)\longrightarrow E_{k,j}(1,1)$$
  is an isomorphism.
  Indeed if  $u\in E_{i,j}(1,1)$ then $[H_{\lambda_{k}}, s_{i}.u]= s_{i}. [s_{i}^{-1}.H_{\lambda_{k}},u]=s_{i}.[H_{\lambda_{i}},u]=s_{i}.u$ and  
  $[H_{\lambda_{j}}, s_{i}.u]= s_{i}. [s_{i}^{-1}.H_{\lambda_{j}},u]=s_{i}.[H_{\lambda_{j}},u]=s_{i}.u$. Hence $s_{i}.E_{i,j}(1,1)\subset E_{k,j}(1,1)$ and the restriction of $s_{i}^{-1}$ to $E_{k,j}(1,1)$ is the inverse.
  
  \vskip 5pt
  
Applying this to the triple $0<k-j\leq k$ one obtains that  
  $$s_{k-j}: E_{k-j,0}(1,1)\longrightarrow E_{k,0}(1,1)$$
  is an isomorphism.   
  
  On the  other hand, a similar proof  (and Proposition \ref{propWconjugues}) shows that
$$s_{0}:E_{k,j}(1,1)\longrightarrow E_{k-j,0}(1,1)$$
is an isomorphism.

This will imply that all the spaces  $E_{i,j}(1,1)$ are isomorphic. Indeed let us start from  $E_{i,j}(1,1)$ and  $E_{i',j'}(1,1)$ with  $j<i$ and $j'<i'$.

From above we have the following isomorphisms:
  $$E_{i,j}(1,1)\simeq E_{k,j}(1,1)\simeq E_{k-j,0}(1,1)\simeq E_{k,0}(1,1)\simeq E_{k-j',0}(1,1)\simeq E_{k,j'}(1,1)\simeq E_{i',j'}(1,1).$$
  
  It remains to prove that these spaces are not reduced to  $\{0\}$. \\
 If they were trivial,  the spaces $E_{i,j}(\pm1,\pm 1)$ would all be trivial and one would have the following decompositions:
  $$V^+=\oplus _{j=0}^k \widetilde{ {\go g}}^{\lambda _j}\quad\textrm{ and}\quad \go g= {\cal Z}_{\go g}({\go a}^0).$$
  
  But then $\widetilde{\go{g}}^{\lambda_{0}}$ would be invariant under $\go{g}$.  This is impossible by  ${\bf(H_{2})}$.
 
 \end{proof}
 \vskip 5pt
 \begin{notation}\label{notdle} In the rest of the paper we will use the following notations:
\medskip
           \begin{align*}
           \ell&=\dim \widetilde{ {\go g}}^{\lambda _j} \hbox{  for }j=0,\ldots ,k\ ;\cr
           d&=\dim E_{i,j}(\pm 1,\pm 1) \hbox{  for }i\not = j\in \{0,\ldots ,k\}\;(k\geq1)\cr
           e&=\dim \tilde{\go g}^{(\lambda_i\pm\lambda_j)/2}  \hbox{  for }i\not = j\in \{0,\ldots ,k\}\ (k\geq1)(e \text{ may be equal to } 0).\
                \end{align*} \index{lcal@$\ell$}\index{d@$d$}\index{e@$e$}
\end{notation} 
\vskip 5pt

From  Theorem \ref{th-decomp-Eij} giving the decomposition of $\dim V^+$, we obtain the following relation between, $k$, $d$ and $\ell$.

\begin{prop}\label{propdimV}
$$\dim V^+=(k+1)\left( \ell+\frac{kd}{2}\right)\ .$$
\end{prop}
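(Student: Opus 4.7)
The plan is to simply count dimensions in the decomposition of $V^+$ given in Theorem~\ref{th-decomp-Eij}(2), using the uniform dimension counts established in Proposition~\ref{prop-memedimension} and recorded in Notation~\ref{notdle}.

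First I would recall that Theorem~\ref{th-decomp-Eij} gives
$$V^+ = \bigoplus_{j=0}^{k} \widetilde{\go g}^{\lambda_j} \;\oplus\; \bigoplus_{0\le i<j\le k} E_{i,j}(1,1),$$
so that $\dim V^+ = \sum_{j=0}^{k} \dim \widetilde{\go g}^{\lambda_j} + \sum_{0\le i<j\le k} \dim E_{i,j}(1,1)$. Next, I would invoke Proposition~\ref{prop-memedimension}(1), which says that all the root spaces $\widetilde{\go g}^{\lambda_j}$ share the common dimension $\ell$, and Proposition~\ref{prop-memedimension}(3), which says that all spaces $E_{i,j}(1,1)$ (for $i\neq j$) share the common dimension $d$.

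Substituting these values gives
$$\dim V^+ = (k+1)\,\ell \;+\; \binom{k+1}{2}\, d \;=\; (k+1)\ell + \frac{k(k+1)}{2}\, d,$$
and factoring out $(k+1)$ yields the announced formula $\dim V^+ = (k+1)\bigl(\ell + \tfrac{kd}{2}\bigr)$.

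There is no real obstacle here: the statement is a pure bookkeeping consequence of the two preceding results, with the only minor point being the count of unordered pairs $\{i,j\}\subset\{0,1,\ldots,k\}$ with $i\neq j$, which equals $\binom{k+1}{2}$. Note that the formula is also consistent with the rank-$1$ case $k=0$, where it reduces correctly to $\dim V^+ = \ell$, and no spaces $E_{i,j}(1,1)$ occur.
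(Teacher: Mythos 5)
Your proposal is correct and is exactly the argument the paper intends: the proposition is stated as an immediate consequence of the decomposition of $V^+$ in Theorem~\ref{th-decomp-Eij}(2) together with the dimension counts of Proposition~\ref{prop-memedimension}, and your computation $(k+1)\ell+\binom{k+1}{2}d=(k+1)\bigl(\ell+\tfrac{kd}{2}\bigr)$ is the whole content.
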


  \vskip 20pt
 \subsection {Normalization of the Killing form} \label{sec:NKF}\hfill
  \vskip 10pt
  
  Let   $\widetilde{B}$ be a non degenerate  extension to $\widetilde{\go{g}}$ of the Killing form of  $[\widetilde{\go{g}},\widetilde{\go{g}}]$. As $\widetilde{\go{g}}={\cal Z}(\widetilde{\go{g}})\oplus [\widetilde{\go{g}},\widetilde{\go{g}}]$ where ${\cal Z}(\widetilde{\go{g}})$ is the center of  $\widetilde{\go{g}}$, we have 
  $$\widetilde{B}(z_{1}+u,z_{2}+u')= \kappa(z_{1},z_{2})+B(u,u')\hskip5pt \text{ for }z_{1},z_{2}\in {\cal Z}(\widetilde{\go{g}}) \text{ and }u,u'\in [\widetilde{\go{g}},\widetilde{\go{g}}],$$
  where  $B$ is the Killing form of  $[\widetilde{\go{g}},\widetilde{\go{g}}]$ and  $\kappa$ a non degenerate form on  ${\cal Z}(\widetilde{\go{g}})$.
  We fix once and for all such a form $\widetilde{B}$.
  \vskip 5pt
  
  \begin{definition} \label{defb(X,Y)}  For $X$ and $Y$ in   $\widetilde{{\go g}}$, we define the normalized Killing form   by setting :
$$b(X,Y)= -\frac{k+1}{4\,\dim V^+} \widetilde{ B }(X,Y)\ .$$ \index{b@$b(\,.\,,\,.\,)$}
\end{definition}
A first consequence of this definition is the following Lemma.
\vskip 5pt
\begin{lemme}\label{lemmeb}\-

For  $j\in\{0,\ldots,k\}$ one has 
  $b(H_{\lambda_j},H_{\lambda_j})=-2$. Moreover if  $(Y_{j},H_{\lambda _j},X_j) $ is an  $\go{sl}_{2}$-triple  such that $X_{j }\in \widetilde{ {\go g} }
^{\lambda _j}$ and  $Y_{j }\in \widetilde{ {\go g} }
^{-\lambda _j}$, then $b(X_j,Y_{j})=1$.

\end{lemme}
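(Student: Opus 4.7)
My plan is to compute $\widetilde{B}(H_{\lambda_j},H_{\lambda_j})$ directly as a trace using the decomposition of $\widetilde{\go g}$ in Theorem \ref{th-decomp-Eij}, deduce the first claim by plugging into Definition \ref{defb(X,Y)}, and then handle $b(X_j,Y_j)$ via $\go{sl}_2$-invariance of $\widetilde{B}$ rather than another trace.

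Since $H_{\lambda_j}$ is a coroot, it lies in $[\widetilde{\go g},\widetilde{\go g}]$ where $\widetilde{B}$ coincides with the Killing form, so $\widetilde{B}(H_{\lambda_j},H_{\lambda_j}) = \tr_{\widetilde{\go g}}\ad(H_{\lambda_j})^2$. From Theorem \ref{th-decomp-Eij} the $\ad(H_{\lambda_j})$-eigenvalues on the various summands are: $0$ on ${\cal Z}_{\go g}({\go a}^0)$; $\pm 2\delta_{ij}$ on the root spaces $\widetilde{\go g}^{\pm\lambda_i}$ (of dimension $\ell$); and $\epsilon\delta_{ij}+\epsilon'\delta_{sj}$ on each $E_{i,s}(\epsilon,\epsilon')$ (of dimension $d$ by Proposition \ref{prop-memedimension}). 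Summing squared eigenvalues weighted by dimensions: the pair $\widetilde{\go g}^{\lambda_j}\oplus\widetilde{\go g}^{-\lambda_j}$ contributes $8\ell$; the sum $\bigoplus_{i\neq s}E_{i,s}(1,-1)\subset\go g$ contributes $\sum_{i\neq s}(\delta_{ij}-\delta_{sj})^2 d = 2kd$ (because $(\delta_{ij}-\delta_{sj})^2 = \delta_{ij}+\delta_{sj}$ for $i\neq s$, and exactly $2k$ ordered pairs have $j\in\{i,s\}$); and the two sums $\bigoplus_{i<s}E_{i,s}(\pm 1,\pm 1)$ together contribute $2kd$. Hence $\widetilde{B}(H_{\lambda_j},H_{\lambda_j}) = 8\ell + 4kd = 8(\ell+kd/2)$. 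Combining with $\dim V^+ = (k+1)(\ell+kd/2)$ from Proposition \ref{propdimV}, the normalization factor $-\frac{k+1}{4\dim V^+}$ produces exactly $b(H_{\lambda_j},H_{\lambda_j}) = -2$ --- which is precisely what the choice of constant in Definition \ref{defb(X,Y)} was engineered to achieve.

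For the second assertion I would not redo a trace but instead invoke invariance of $\widetilde{B}$. From the $\go{sl}_2$-triple relations $[X_j,Y_j] = H_{\lambda_j}$ and $[Y_j,H_{\lambda_j}] = 2Y_j$, invariance gives
\[
\widetilde{B}(H_{\lambda_j},H_{\lambda_j}) = \widetilde{B}([X_j,Y_j],H_{\lambda_j}) = \widetilde{B}(X_j,[Y_j,H_{\lambda_j}]) = 2\,\widetilde{B}(X_j,Y_j),
\]
so $\widetilde{B}(X_j,Y_j) = \tfrac12\widetilde{B}(H_{\lambda_j},H_{\lambda_j})$ and therefore $b(X_j,Y_j) = \tfrac12 b(H_{\lambda_j},H_{\lambda_j})$; substituting the first claim delivers the asserted value.

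The only genuine subtlety is bookkeeping: the $E_{i,s}(1,-1)\subset\go g$ are indexed by ordered pairs $(i,s)$ with $i\neq s$, so $E_{i,s}(1,-1)$ and $E_{s,i}(1,-1)$ are distinct summands contributing separately to the trace, whereas the $E_{i,s}(\pm 1,\pm 1)\subset V^\pm$ are indexed by unordered pairs $i<s$. Once this distinction is respected and Proposition \ref{prop-memedimension} supplies the common dimensions $\ell$ and $d$, the numerical combination telescopes precisely with $\dim V^+$ and the whole statement reduces to a one-line arithmetic check.
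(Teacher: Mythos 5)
Your computation of $b(H_{\lambda_j},H_{\lambda_j})$ is correct, and it takes a genuinely more hands-on route than the paper: the paper first observes that the $H_{\lambda_j}$ are conjugate and mutually orthogonal and that $H_0=\sum_j H_{\lambda_j}$, so that $\widetilde{B}(H_{\lambda_j},H_{\lambda_j})=\frac{1}{k+1}\widetilde{B}(H_0,H_0)=\frac{1}{k+1}\tr_{\widetilde{\go g}}(\ad H_0)^2=\frac{8\dim V^+}{k+1}$, the last trace being immediate from the eigenvalues $0,\pm 2$ of $\ad H_0$. Your direct trace of $(\ad H_{\lambda_j})^2$ over the $E_{i,s}(p,q)$-decomposition avoids invoking Proposition \ref{propWconjugues}, at the price of the bookkeeping you describe; the count $8\ell+4kd=8(\ell+\tfrac{kd}{2})=\tfrac{8\dim V^+}{k+1}$ is right, and both routes then reduce the first claim to the definition of $b$.

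The second half, however, contains a sign error that inverts the conclusion. In this paper an $\go{sl}_2$-triple $(Y,H,X)$ with $X\in V^+$, $Y\in V^-$ satisfies $[Y,X]=H$, i.e.\ $[X_j,Y_j]=-H_{\lambda_j}$ (see the proof of Lemma \ref{lem-sl2-generique}, where $2v=\ad([Y,X])v$ for $v\in V^+$; this is Bourbaki's convention $[x,y]=-h$). You instead take $[X_j,Y_j]=+H_{\lambda_j}$, and your own chain then yields $\widetilde{B}(X_j,Y_j)=\tfrac12\widetilde{B}(H_{\lambda_j},H_{\lambda_j})$, hence $b(X_j,Y_j)=\tfrac12\cdot(-2)=-1$, which is the opposite of what the lemma asserts; so the final sentence ``substituting the first claim delivers the asserted value'' is false as written. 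With the correct convention the same invariance argument gives $\widetilde{B}(H_{\lambda_j},H_{\lambda_j})=-\widetilde{B}([X_j,Y_j],H_{\lambda_j})=-\widetilde{B}(X_j,[Y_j,H_{\lambda_j}])=-2\,\widetilde{B}(X_j,Y_j)$, whence $b(X_j,Y_j)=-\tfrac12\,b(H_{\lambda_j},H_{\lambda_j})=1$, which is how the paper concludes.
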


\begin{proof} As the elements  $H_{\lambda_{j}}$ are conjugated (Proposition \ref{propWconjugues}), as the roots  $\lambda_{j}$ are strongly orthogonal, and as $H_{0}=H_{\lambda_{0}}+H_{\lambda_{1}}+\dots+H_{\lambda_{k}}$ ( Theorem  \ref{th-decomp-Eij}) one has:
$$\widetilde{ B }(H_{\lambda _j},H_{\lambda _j})=\frac{1}{k+1}\widetilde{ B }(H_0,H_0)=\frac{1}{k+1}
{\rm tr}_{\widetilde{ {\go g} }}(\ad H_0)^2=8\,\frac{{\rm dim} V^+}{k+1}\,.$$
And then from the definition of $b$, we obtain  $b(H_{\lambda_j},H_{\lambda_j})=-2$.

On the other hand
$$\widetilde{ B }(Y_{j},X_j)=\frac{1}{2}\widetilde{ B }(Y_{j},[H_{\lambda
_j},X_j])=-\frac{1}{2}\widetilde{ B }(H_{\lambda _j},H_{\lambda
_j})=-4\,\frac{\dim V^+}{k+1}\, ,$$
and hence  $b(Y_{j},X_{j})=1$.

\end{proof}

Let  $A$ be a subset of $\{0,1,\dots,k\}$.  Consider the graded algebra $\widetilde{\go{g}}_{A}$ defined in Corollary \ref{cor-gA} which is graded by $H_{A}=\sum_{j\in A}H_{\lambda_{j}}$. We denote by  $b_{A}$ a normalized nondegenerate bilinear form $\widetilde{\go{g}}_{A}$ (defined as  $b$ on $\widetilde{\go{g}}$).

 \begin{lemme} Let  $\widetilde{\go{G}}_{A}$ be the subalgebra of $\widetilde{ {\go g} }_A$ generated by $V^+_A$ and $V^-_A$. If $X$ and  $Y$ belong to   $\widetilde{ {\go g} }_A$, then:
 $$b_A(X,Y)=b(X,Y)\ .$$
\end{lemme}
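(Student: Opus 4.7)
My plan is to exploit that $b|_{\widetilde{\go g}_A}$ and $b_A$ are both nondegenerate symmetric $\ad(\widetilde{\go g}_A)$-invariant bilinear forms on the reductive algebra $\widetilde{\go g}_A$, and to use the rigidity of invariant forms on a simple Lie algebra to reduce to a pointwise check on the coroots $H_{\lambda_j}$.

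First I would apply Corollary~\ref{cor-gA} together with Proposition~\ref{prop-lie(V+V-)simple} inside $\widetilde{\go g}_A$: the subalgebra $\widetilde{\go G}_A = V_A^- \oplus [V_A^-, V_A^+] \oplus V_A^+$ is an absolutely simple ideal of $\widetilde{\go g}_A$. Writing $C = {\cal Z}_{\widetilde{\go g}_A}(\widetilde{\go G}_A)$, one obtains a direct sum decomposition of ideals $\widetilde{\go g}_A = \widetilde{\go G}_A \oplus C$, and by standard invariance the two summands are automatically orthogonal for both $b|_{\widetilde{\go g}_A}$ and $b_A$. It is therefore enough to verify the equality on $\widetilde{\go G}_A$ and on $C$ separately.

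On the absolutely simple ideal $\widetilde{\go G}_A$, any two nondegenerate symmetric invariant bilinear forms are proportional (both are scalar multiples of the Killing form). To determine the proportionality constant I would evaluate at $(H_{\lambda_j}, H_{\lambda_j})$ for any $j \in A$: the coroot $H_{\lambda_j} = [X_j, Y_j]$ lies in $[V_A^-, V_A^+] \subset \widetilde{\go G}_A$, and applying Lemma~\ref{lemmeb} first to $\widetilde{\go g}$ (with descent $\lambda_0, \ldots, \lambda_k$) and then to $\widetilde{\go g}_A$ (whose descent is $\{\lambda_j\}_{j \in A}$, by the construction of Section~\ref{sub-section-descente2} applied to the regular graded algebra of Corollary~\ref{cor-gA}) yields
$$b(H_{\lambda_j}, H_{\lambda_j}) = -2 = b_A(H_{\lambda_j}, H_{\lambda_j}).$$
Since $H_{\lambda_j}$ is non-isotropic for the Killing form of $\widetilde{\go G}_A$, the scalar must equal $1$, and therefore $b = b_A$ on $\widetilde{\go G}_A$.

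The remaining equality on the commutant $C$ is the delicate step. For $u, v \in C$ the action on $\widetilde{\go G}_A$ is trivial, so $b_A(u, v)$ reduces to $-\frac{|A|}{4 \dim V^+_A} \tr_C(\ad u \, \ad v)$; matching this against $b(u, v) = -\frac{k+1}{4 \dim V^+} \tr_{\widetilde{\go g}}(\ad u \, \ad v)$ forces an explicit tallying of the contributions of the blocks $\widetilde{\go g}^{\pm \lambda_s}$ and $E_{i,j}(\pm 1, \pm 1)$ with $s \notin A$ or $\{i, j\} \not\subset A$ — exactly the pieces of $\widetilde{\go g}$ that lie outside $\widetilde{\go g}_A$. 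I expect the verification to amount to combining the dimension identity $\dim V^+ = (k+1)(\ell + \frac{kd}{2})$ of Proposition~\ref{propdimV} (and its analogue for $\widetilde{\go g}_A$) with the multiplicity counts from Proposition~\ref{prop-memedimension}, and this numerical bookkeeping is where I expect the main obstacle to lie.
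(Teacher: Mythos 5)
Your argument on $\widetilde{\go G}_A$ is exactly the paper's proof: one invokes Proposition~\ref{prop-lie(V+V-)simple} to see that $\widetilde{\go G}_A$ is absolutely simple, uses the one-dimensionality of the space of invariant bilinear forms there to get proportionality of $b$ and $b_A$, and pins down the constant by Lemma~\ref{lemmeb} (the paper evaluates on the pair $(X_j,Y_j)$, getting $1=1$; your evaluation on $(H_{\lambda_j},H_{\lambda_j})$, getting $-2=-2$, works equally well, granted the observation — which you correctly justify — that $\{\lambda_j\}_{j\in A}$ is a maximal strongly orthogonal system for $\widetilde{\go g}_A$ so that Lemma~\ref{lemmeb} applies to $b_A$). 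The step you flag as the main obstacle, namely the equality on the commutant $C$ of $\widetilde{\go G}_A$ in $\widetilde{\go g}_A$, is not carried out in the paper either: the paper's proof stops once the equality is established on $\widetilde{\go G}_A$, and that is the only part of the statement that is actually used (and the only part that can be asserted unconditionally, since $b_A$ involves an arbitrary choice of nondegenerate form $\kappa_A$ on the center of $\widetilde{\go g}_A$, so no identity between $b$ and $b_A$ on all of $\widetilde{\go g}_A$ can hold independently of that choice). So your instinct that the extension to $C$ is where the trouble lies is sound, but the resolution is that the lemma should be read — as both the presence of $\widetilde{\go G}_A$ in its statement and the paper's proof indicate — as an identity for $X,Y\in\widetilde{\go G}_A$; with that reading your proof is complete and coincides with the paper's.
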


\begin{proof} We know from Proposition \ref{prop-lie(V+V-)simple} that $\widetilde{\go{G}}_{A}$ is  absolutely  simple. Then the dimension of the space of invariant bilinear forms on  $\widetilde{\go{G}}_{A}$ is equal to $1$ ( \cite{Bou3}, Exercice 18 a) of \S 6). Hence the restrictions of $b$ and  $b_{A}$ to  $\widetilde{\go{G}}_{A}$ are proportional. For  $j\in A$, $X_{j }\in \widetilde{ {\go g} }
^{\lambda _j}$ and  $Y_{j }\in \widetilde{ {\go g} }^{-\lambda _j}$ such that $(Y_{j },H_{\lambda_{j}},X_{j}) $ is an $\go{sl}_{2}$-triple, we have  $b_{A}(X_{j},Y_{j})=b(X_{j},Y_{j})=1$ (  Lemma \ref{lemmeb}).

\end{proof}

     \vskip 20pt
 \subsection{The relative invariant $\Delta_{0}$}\hfill
  \vskip 10pt
  
  Recall (\S $1.7$) that the group we are interested in and which will act on $V^+$ is   
  $$G= {\cal Z}_{\text{Aut}_{0}(\widetilde{\go{g}})}(H_{0})=\{g\in \text{Aut}_{0}(\widetilde{\go{g}})\,,\, g.H_{0}=H_{0}\}$$
  Recall also that  $(G,V^+)$ is a prehomogeneous vector space. Let  $S$ be the complementary set of the union of the open orbits in $V^+$.

  Recall also the definition of a relative invariant:
  \begin{definition}\label{def-inv-rel}  A rational function  $R$ on $V^+$ is a relative invariant \index{relative invariant} under  $G$ if there exists a rational character  $\chi $ of $G$ (that is a morphism $\chi: G \longrightarrow F^*$) such that 
$$R(g.X)=\chi (g)R(X) \hbox{  for all }g\in G \hbox{  and all  }X\in V^+\setminus S .$$
\end{definition}

\begin{rem} \label{rem-extension-invariant} From the density of $G$ in  $\overline{G}=G(\overline{F})={\cal Z}_{\text{Aut}_{_{0}}(\overline{\widetilde{\go{g}}})}(H_{0})={\cal Z}_{\text{Aut}_{e}(\overline{\widetilde{\go{g}}})}(H_{0})$ (\S 1.7), and from the density of  $V^+$ in $\overline{V^+}$, the natural extension of $R$ from $V^+$ to  $\overline{V^+}$, is a relative  invariant of $(\overline{G}, \overline{V^+})$
\end{rem}
\vskip 5pt

For further purposes let us give the following definition (\cite{Bou2} (Chap. VIII, \S1, $n^\circ5$, Prop. 6, p.75)).
\begin{definition}\label{def-theta-h} If $(y,h,x)$ is an $\go{sl}_{2}$-triple in $\widetilde{ {\go g} }$ and $t\in \overline{F}$ we define the following two elements of $\text{\rm Aut}_{e}(\overline{\widetilde{ {\go g} }})$:
\begin{align*}&\theta_{x}(t)=e^{t\ad_{\overline{\widetilde{\go{g}}} }(x)}e^{t^{-1}\ad_{\overline{\widetilde{\go{g}}}}(y)} e^{t\ad_{\overline{\widetilde{\go{g}}} }(x)}\\
&h_{x}(t)=\theta_{x}(t)\theta_{x}(-1)
\end{align*}\index{theta@$\theta_{x}$}\index{hx@$h_{x}$}
(Although $\theta_{x}$ and $h_{x}$ depend in fact on $(y,h,x)$, to simplify the notation we will not indicate this dependance unless absolutely necessary)
\end{definition}

\begin{lemme}\label{lem-tId-dansG}
For $t\in F^*$, one has $t{\rm Id}_{V^+}\in L_{|_{V^+}}$.
More precisely  if $(Y,H_{0},X)$ is an $\go{sl}_{2}$-triple where $X\in V^+$, $X$ generic,  and $Y\in V^-$, the element $h_{X}(\sqrt{t})$ belongs to $\text{Aut}_{0}(\widetilde{\go{g}})$ and
$$h_X(\sqrt{t})_{|_{V^+}}=t{\rm Id}_{V^+},\quad h_X(\sqrt{t})_{|_{V^-}}=t^{-1}{\rm Id}_{V^-},\quad h_X(\sqrt{t})_{|_{\go{g}}}={\rm Id}_{\go{g}}.$$
Hence $h_{X}(\sqrt{t})\in L\subset G$.
\end{lemme}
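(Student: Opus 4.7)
The key tool is the $\go{sl}_2$-representation theory applied to the triple $(Y,H_0,X)$. Recall from Bourbaki (\cite{Bou2}, Chap.~VIII, \S1, n${}^\circ$5) that for any finite-dimensional $\go{sl}_2$-module, the element $h_x(s)=\theta_x(s)\theta_x(-1)$ acts on the weight-$n$ eigenspace of $\ad h$ as multiplication by $s^n$. I will apply this to the adjoint representation on $\overline{\widetilde{\go g}}$ with $h=H_0$.

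First I would set $s=\sqrt{t}\in \overline{F}$ and observe that a priori $h_X(\sqrt{t})$, defined by the formulas in Definition~\ref{def-theta-h}, lives in $\mathrm{Aut}_e(\overline{\widetilde{\go g}})$ since $X$ and $Y$ have nilpotent adjoint actions on $\overline{\widetilde{\go g}}$ (here I use that $X$ generic implies by Proposition~\ref{prop-generiques-cas-regulier} that $(Y,H_0,X)$ is indeed an $\go{sl}_2$-triple). The eigenvalues of $\ad H_0$ are exactly $-2,0,2$, with corresponding eigenspaces $V^-$, $\go g$, $V^+$ (hypothesis $({\bf H_1})$). Applying the general formula, $h_X(\sqrt{t})$ acts as $(\sqrt t)^2=t$ on $V^+$, as $(\sqrt t)^{-2}=t^{-1}$ on $V^-$, and as $(\sqrt t)^0=1$ on $\go g$. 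This immediately gives the three explicit restriction formulas in the statement.

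Next I would argue that $h_X(\sqrt{t})$ in fact stabilizes $\widetilde{\go g}$ (not just $\overline{\widetilde{\go g}}$) and so descends to an element of $\mathrm{Aut}(\widetilde{\go g})$. This follows from the previous step: on each summand $V^-$, $\go g$, $V^+$ of the $F$-decomposition of $\widetilde{\go g}$, the action is by a scalar lying in $F^*$ (namely $t^{-1}$, $1$, $t$). Therefore $h_X(\sqrt{t})(\widetilde{\go g})\subset \widetilde{\go g}$, so $h_X(\sqrt{t})\in \mathrm{Aut}(\widetilde{\go g})\cap \mathrm{Aut}_e(\overline{\widetilde{\go g}})=\mathrm{Aut}_0(\widetilde{\go g})$ by the very definition of $\mathrm{Aut}_0$ recalled at the beginning of Section~\ref{sub-section-generic-strongly-orth}.

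Finally I conclude that $h_X(\sqrt{t})\in L$. Since $h_X(\sqrt{t})$ acts as the identity on $\go g$, it fixes in particular $H_0\in \go g$, so $h_X(\sqrt t)\in G={\mathcal Z}_{\mathrm{Aut}_0(\widetilde{\go g})}(H_0)$. For the same reason it fixes each coroot $H_{\lambda_i}\in \go a\subset \go g$ ($i=0,\ldots,k$), so it centralizes $\go a^0=\oplus_{j=0}^k FH_{\lambda_j}$ and hence lies in $L=Z_G(\go a^0)=\bigcap_{i=0}^k G_{H_{\lambda_i}}$ (Definition~\ref{def-a^0-L}). The first assertion $t\,\mathrm{Id}_{V^+}\in L_{|_{V^+}}$ is then immediate from $h_X(\sqrt t)_{|_{V^+}}=t\,\mathrm{Id}_{V^+}$. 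The only mildly delicate point — and where I would be most careful — is the appearance of $\sqrt t$, which need not lie in $F$; the resolution, as above, is that only the \emph{even} powers of $\sqrt t$ occur in the induced action on $\widetilde{\go g}$, so the resulting automorphism is defined over $F$ even when $\sqrt t$ is not.
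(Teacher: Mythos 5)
Your proof is correct and follows essentially the same route as the paper's: both rest on the Bourbaki result that $h_X(s)$ acts by $s^n$ on the weight-$n$ space of the $\go{sl}_2$-triple, identify the weight spaces of $\ad H_0$ with $V^-,\go g,V^+$, and observe that only even powers of $\sqrt t$ occur, so the automorphism is defined over $F$ and lies in $\mathrm{Aut}_0(\widetilde{\go g})$. Your final step (it fixes $H_0$ and each $H_{\lambda_i}$ because it is the identity on $\go g$) just makes explicit what the paper leaves implicit in its last sentence.
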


\begin{proof}  

Denote by $\go{u}$ the Lie subalgebra (isomorphic to $\go{sl}_{2}$) generated by $(Y,H_{0},X)$. For $t\in F$ consider the automorphism $h_{X}(t)$ given in Definition \ref{def-theta-h}
Recall that  $V^+$ (resp. $V^-$, resp. $\go{g}$) is the space of weight vectors of weight   $2$  (resp. -2, resp. 0) of  $\widetilde{\go{g}}$ for the adjoint action of the algebra  $\go{u}$. From \cite{Bou2} (Chap. VIII, \S1, $n^\circ5$, Prop. 6, p.75), $h_X(t)_{|_{V^+}}$ is scalar multiplication by $t^2$,  $h_X(t)_{|_{V^-}}$ is scalar multiplication by $t^{-2}$,  and  $h_X(t)_{|_{\go{g}}}$ is the identity. Over  $\overline{F}$, we can consider  $\sqrt{t}\in \overline{F}^*$. Then the automorphism $h_X(\sqrt{t})$ belongs to  ${\rm Aut}_{e}(\overline{\go{\widetilde{\go{g}}}})$ and stabilizes $\widetilde{\go{g}}$ as  $h_X(\sqrt{t})_{|_{V^+}}=t{\rm Id}_{V^+}$,  $h_X(\sqrt{t})_{|_{V^-}}=t^{-1}{\rm Id}_{V^-}$, $h_X(\sqrt{t})_{|_{\go{g}}}={\rm Id}_{\go{g}}$. The definition of  $\theta_X(\sqrt{t})$ and $h_X(\sqrt{t})$) implies  that the automorphism  $h_X(\sqrt{t})$ belongs to ${\cal Z}_{\text{Aut}_{0}(\widetilde{\go{g}})}(H_{0})=G$ and more precisely to $L=\bigcap_{i=0}^k G_{H_{\lambda_{i}}}$.

 \end{proof}

 \vskip 5pt
\begin{theorem}\label{thexistedelta_0}\-

$(1)$ There exists on  $V^+$ a unique (up to scalar multiplication) relative invariant polynomial $\Delta_{0}$ \index{Delta0@$\Delta_{0}$} which is absolutely irreducible  (i.e. irreducible as a polynomial on $\overline{V^+}$).  We will denote by $\chi_{0}$ \index{khi0@$\chi_{0}$}the character of $\Delta_{0}$.  

$(2)$ Any relative invariant on  $V^+$ is  (up to scalar multiplication) a power of $\Delta_{0}$.

$(3)$ An element  $X\in V^+$ is generic if and only if $\Delta_{0}(X)\neq 0$.
\end{theorem}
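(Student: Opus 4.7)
The plan is to prove the theorem first over the algebraic closure $\overline F$, using the theory of absolutely irreducible prehomogeneous vector spaces, and then to descend to $F$ using Hilbert~90. The crucial inputs are the absolute irreducibility $(\bf H_2)$ of $(\overline{\go g}, \overline{V^+})$, the regularity hypothesis $(\bf H_3)$ (which via Proposition~\ref{prop-generiques-cas-regulier} provides an $\go{sl}_2$-triple $(I^-,H_0,I^+)$ with $I^+$ generic), the scalar automorphisms of $V^+$ from Lemma~\ref{lem-tId-dansG}, and the characterization of genericity in Definition~\ref{def-elementsgeneriques}(ii).

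For existence of a relative invariant I would first build one explicitly. Fix bases of $V^+$ and $V^-$ and set $P(X) = \det[\ad(X)^2|_{V^-}]$ for $X \in V^+$. The $\go{sl}_2$-triple $(I^-, H_0, I^+)$ decomposes $\widetilde{\go g}$ into irreducible $\go{sl}_2$-submodules whose weights, by $(\bf H_1)$, lie in $\{-2,0,2\}$, so $\ad(I^+)^2 \colon V^- \to V^+$ is an isomorphism and $P(I^+) \neq 0$. The equivariance $\ad(g X)^2 = g \ad(X)^2 g^{-1}$ for $g \in G$ shows $P$ is a nonzero relative invariant with character $g \mapsto \det(g|_{V^+})\det(g|_{V^-})^{-1}$, so $(\overline G, \overline{V^+})$ is in particular a regular PV in the sense of Sato--Kimura.

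Over $\overline F$, the connected group $\overline G$ preserves each irreducible factor of $\overline P$, so each such factor is itself an absolutely irreducible relative invariant, and every relative invariant of $(\overline G, \overline{V^+})$ is, up to scalar, a monomial in these fundamental ones. Using $(\bf H_2)$ together with a Schur-type argument on the image of $\overline G$ in $GL(\overline{V^+})$, the group of characters of $\overline G$ coming from relative invariants has rank exactly one, so there is a unique fundamental invariant $\overline{\Delta_0}$, absolutely irreducible and determined up to scalar. For descent, $\Gamma = \mathrm{Gal}(\overline F / F)$ permutes relative invariants (since $G$ and $V^+$ are $F$-rational), so $\sigma(\overline{\Delta_0}) = c_\sigma \overline{\Delta_0}$ defines a continuous $1$-cocycle; Hilbert~90 gives $c_\sigma = \sigma(a)/a$, and $\Delta_0 := a^{-1} \overline{\Delta_0}$ is $F$-rational. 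This proves Parts~(1) and~(2), the latter because any $F$-rational relative invariant equals $c \Delta_0^n$ over $\overline F$ with $c \in \overline F^\times$, forcing $c \in F^\times$.

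For Part~(3), Definition~\ref{def-elementsgeneriques}(ii) says $X \in V^+$ is generic iff $\ad(X) \colon \go g \to V^+$ is surjective. Since surjectivity of an $F$-linear map is preserved under scalar extension, this is equivalent to $X$ lying in the unique open $\overline G$-orbit of $\overline{V^+}$, which by construction of $\overline{\Delta_0}$ is exactly $\{\overline{\Delta_0} \neq 0\}$; hence $X$ is generic iff $\Delta_0(X) \neq 0$. The main obstacle will be the uniqueness of the fundamental relative invariant over $\overline F$: proving that the singular hypersurface $\{\overline{\Delta_0} = 0\}$ has a single irreducible component (rather than several $\overline G$-stable ones) is where $(\bf H_2)$ must be fully exploited, and is the subtlest step of the argument.
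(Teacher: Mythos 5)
Your proposal is correct and follows essentially the same route as the paper: the same candidate invariant $P(X)=\det\bigl((\ad X)^2\bigr)$, reduction to $\overline F$, Galois descent, and the same identification of the generic set with $\{\Delta_0\neq 0\}$ via $P$. The only differences are cosmetic: where you sketch the Schur/character-rank argument for uniqueness of the fundamental invariant over $\overline F$, the paper simply cites Sato--Kimura (Proposition 12), and where you invoke Hilbert~90 abstractly, the paper splits the cocycle explicitly by setting $\alpha=1/\Delta_0(x_0)$ at a generic $F$-rational point $x_0$ (whose existence is guaranteed by $(\bf H_3)$).
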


\begin{proof} We begin by constructing a non trivial relative invariant  $P$ of  $(G,V^+)$. We choose a base of  $V^+$ and a base of  $V^-$, for example the dual base  if we identify  $V^- $ and  $(V^+)^*$ by using the form $b$. One can then define a determinant  for all linear map from $V^-$ into  $V^+$. Consider the following linear map:
$$(\ad X)^2:V^-\longrightarrow V^+.$$
Set, for all  $X\in V^+$:
$$P(X)=\det(\ad X)^2.$$
We have, for $g\in G$:

\begin{align*}&P(g.X)=\det(\ad(g.X))^2)=\det(g.(\ad X)^2 g^{-1})\\
&=\det_{V^+}(g)\det_{V^-}(g^{-1})\det(\ad X)^2=\det_{V^+}(g)^2P(X)
\end{align*}
 If  $X$ is generic,  it can be put in an  $\go{sl}_{2}$-triple  $(Y,H_{0},X)$ ($Y\in V^-$), and then  $(\ad X)^2$ is an isomorphism between  $V^-$ and $V^+$. Hence $P\neq 0$. By Lemma \ref{lem-tId-dansG}, $t{\rm Id}_{V^+}\in G_{| V^+}$. Therefore the character of  $P$ is non trivial, hence $P$ is non constant.
 
By Remark \ref{rem-extension-invariant}, the natural extension of  $P$ to $\overline{V^+}$ is a relative invariant of  $(\overline{G},\overline{V^+})$. Then (by \cite{Sato-Kimura}, Proposition 12, p. 64), there exists a non trivial relative invariant  $\Delta_{0}$ of  $(\overline{G},\overline{V^+})$ which is an irreducible polynomial  and any other relative invariant is of the form $c. \Delta_{0}^m$ $(m\in \Z)$.

\vskip 5 pt 
We will need the following Lemma.
 
 \begin{lemme}\label{lem-invariantdefinisurF}
There exists  $\alpha\in \overline{F}^*$ such that  $\alpha\Delta_{0}$ takes values in  $F$ on  $V^+$.
\end{lemme}
 {\it Proof of the Lemma}:  
Let  ${\cal G}={\rm Gal}(\overline{F},F)$ be the Galois group of $\overline{F}$ over $F$. Let  $\sigma\in {\cal G}$. Then  $\sigma$ acts on $\overline{G}$ and fixes each point of $G$. It acts also on  $\overline{V^+}$ and fixes $V^+$. Then $\sigma$ acts on  $\Delta_{0}$ by  $\Delta_{0}^\sigma(x)=\sigma(\Delta_{0}(\sigma^{-1}x))$. The action on characters of  $\overline{G}$ is similarly defined. The polynomial   $\Delta_{0}^\sigma$ is still irreducible. Let now  $\overline{G}$ act on $\Delta_{0}^{\sigma}$.

 Let $\chi_{0} $ be the character of $\Delta_{0}$ (it is a character of $\overline{G}$ for the moment). Let  $x\in \overline{V^+}$ and  $g\in \overline{G}$.  One has:

$\Delta_{0}^\sigma(g.x)=\sigma(\Delta_{0}(\sigma^{-1}(g.x)))=\sigma(\Delta_{0}(\sigma^{-1}(g).\sigma^{-1}(x)))=\sigma(\chi_{0}(\sigma^{-1}(g))\sigma(\Delta_{0}(\sigma^{-1}(x)))$

$=\chi_{0}^\sigma(g)\Delta_{0}^\sigma(x).$ Hence   $\Delta_{0}^\sigma$   is an irreducible relative invariant of $(\overline{G},\overline{V^+})$, with character $\chi_{0}^\sigma$.   As this representation  is irreducible (${\bf (H_{3})}$), there exists  $c_{\sigma}\in \overline{F}$ such that 
$$\Delta_{0}^\sigma=c_{\sigma}\Delta_{0}.$$

Let  $x_{0}$ be a generic element of $V^+$, this implies that  $\Delta_{0}(x_{0})\neq 0$. Define $\alpha=\frac{1}{\Delta_{0}(x_{0})}.$ Then $$c_{\sigma}=\frac{\Delta_{0}^\sigma(x_{0})}{\Delta_{0}(x_{0})}= \frac{\sigma(\Delta_{0}(x_{0}))}{\Delta_{0}(x_{0})}=\frac{\alpha}{\sigma(\alpha)}.$$
For  $x\in V^+$ one has:
$$\sigma(\alpha)\Delta_{0}^\sigma(x)=\sigma(\alpha)c_{\sigma}\Delta_{0}(x)=\sigma(\alpha)\frac{\alpha}{\sigma(\alpha)}\Delta_{0}(x)=\alpha\Delta_{0}(x).$$
As $x\in V^+$, this can also be written:
$$\forall \sigma\in {\cal G},\,\,\sigma(\alpha\Delta_{0}(x))=\alpha\Delta_{0}(x)$$

One knows that the fixed points of  ${\cal G}$ in $\overline{F}$ are exactly the points in $F$. Hence for all  $x\in V^+$,  one has $\alpha\Delta_{0}(x)\in V^+$. 

 \hfill  {\it End of the proof of Lemma \ref{lem-invariantdefinisurF}}.

\vskip 20pt
{\bf From now on we will denote by  $\Delta_{0}$ the modified relative invariant of Lemma \ref{lem-invariantdefinisurF} which takes it values in  $F$ on $V^+$}. 
\vskip 20pt
Let  $P$ be a relative invariant of  $(G,V^+)$. Its extension  $\overline{P}$ to $\overline{V^+}$ is a relative invariant of  $(\overline{G}, \overline{V^+})$ (use the density of $G$ in $\overline{G}$ and of $V^+$ in  $\overline{V^+}$). Hence it exists  $a\in \overline{F}$ and  $m\in \Z$ such that  $\overline{P}=a\Delta_{0}^m$. As  $P(x)=a\Delta_{0}(x)^m$ for a generic  point $x$ in  $V^+$ and as  $\Delta_{0}(V^+)\subset F$, one get that $a\in F$. 

Hence assertions  $1) $ and  $2)$ are proved.

 One knows that  $X\in V^+$ is generic if and only if there exists an $\go{sl}_{2}$-triple $(Y,H_{0},X)$ ($Y\in V^-$) (Proposition \ref{prop-generiques-cas-regulier}). In that case $P(X)\neq0$ where $P$ is the relative invariant    defined at the beginning of the proof. Hence $\Delta_{0}(X)\neq0$. Conversely if $\Delta_{0}(X)\neq0$ for $X\in V^+$, then  $P(X)\neq0$, and therefore  $(\ad X)^2:V^-\longrightarrow V^+$ is an isomorphism. Hence $V^+={\rm Im}(\ad X_{|_{\go g}})$, and  $X$ is generic.
 
Assertion $3)$ is now proved.

\end{proof}
\vskip 5pt

    \vskip 20pt
 \subsection{The case $k=0$}\label{subsectionk=0}\hfill
  \vskip 10pt
  
  Recall that the case  $k=0$ corresponds to the graded algebra $$\widetilde{ {\go l}}_0={\widetilde{\go{g}}^{-\lambda_{0}}}\oplus[{\widetilde{\go{g}}^{-\lambda_{0}}},{\widetilde{\go{g}}^{\lambda_{0}}}]\oplus {\widetilde{\go{g}}^{\lambda_{0}}}.$$
  
 This algebra is an absolutely simple algebra of split rank  $1$ and it satisfies hypothesis  ${\bf (H_{1})}$ and ${\bf (H_{2})}$ (Proposition \ref{ell-0-simple}).  As this algebra is graded by  $H_{\lambda_{0}}$   and as there exist $X\in {\widetilde{\go{g}}^{\lambda_{0}}}$ and  $Y\in {\widetilde{\go{g}}^{-\lambda_{0}}}$ such that $(Y,H_{\lambda_{0}}, X)$ is an  $\go{sl}_{2}$-triple (\cite{Seligman}, Corollaire du Lemme 6, p.6, or  \cite{Schoeneberg}, Proposition 3.1.9 p.23), this algebra  $\widetilde{ {\go l}}_0$ satisfies also  ${\bf (H_{3})}$.

  \begin{lemme}\label{lemme-diagrammes-k=0}\hfill
  
 The absolutely simple Lie algebras of split rank  $1$ graded by  $H_{\lambda}$ ($\lambda$ being the unique restricted root) and which satisfy ${\bf (H_{1})},{\bf (H_  {2})}$ and  ${\bf (H_{3})}$ (this last condition is automatically satisfied) have the following  Satake-Tits diagrams $(d\in \N^*)$:
  \vskip 15pt
  
\hskip 70pt \hbox{\unitlength=0.5pt
\begin{picture}(280,30)
\put(-80,0) {$A_{2\delta-1}$}
  \put(10,10){\circle*{10}}
\put(15,10){\line (1,0){30}}
\put(50,10){\circle*{10}}
\put(60,10){\circle*{1}}
\put(65,10){\circle*{1}}
\put(70,10){\circle*{1}}
\put(75,10){\circle*{1}}
\put(80,10){\circle*{1}}
\put(90,10){\circle*{10}}
\put(95,10){\line (1,0){30}}
\put(130,10){\circle{10}}
\put(130,10){\circle{16}}
\put(135,10){\line (1,0){30}}
\put(170,10){\circle*{10}}
\put(180,10){\circle*{1}}
\put(185,10){\circle*{1}}
\put(190,10){\circle*{1}}
\put(195,10){\circle*{1}}
\put(200,10){\circle*{1}}
\put(210,10){\circle*{10}}
\put(215,10){\line (1,0){30}}
\put(250,10){\circle*{10}}
 \put(30,-20){$\delta-1$}  \put(180,-20){$\delta-1$} 
 \put(395,0){$B_{2}=C_{2}$}
 \put(505,10){\circle{10}}
 \put(505,10){\circle{16}}
\put(508,12){\line (1,0){41}}
\put(508,8){\line (1,0){41}}
\put(520,5){$>$}
\put(550,10){\circle*{10}}

\end{picture}
} 
\end{lemme}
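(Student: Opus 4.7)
\emph{Plan.} By Proposition \ref{ell-0-simple}, the algebra $\widetilde{\go l}_0$ is absolutely simple, of $F$-split rank one, equipped with the short grading defined by $H_{\lambda_0}$. Its restricted simple system reduces to $\{\lambda_0\}$, and by Proposition \ref{prop.alpha0} there is a unique simple absolute root $\alpha_0\in\widetilde\Psi$ with $\rho(\alpha_0)=\lambda_0$. Thus $\alpha_0$ is the only circled node of the Satake-Tits diagram, every other simple absolute root being black (hence in the anisotropic kernel). Since the grading is short (eigenvalues $\pm 2, 0$ of $\ad H_{\lambda_0}$), the coefficient of $\alpha_0$ in the highest root of the absolute root system $\widetilde{\cal R}$ must equal exactly $1$.

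\emph{Step 1: the two listed diagrams are realized.} For $A_{2\delta-1}$, take $\widetilde{\go l}_0=\go{sl}_2(D)$, where $D$ is a central division $F$-algebra of degree $\delta$; such $D$ exists for every $\delta\geq 1$ since the Brauer group of the $p$-adic field $F$ is isomorphic to $\Q/\Z$. The associated Satake-Tits diagram is as drawn (with $\delta-1$ black nodes on either side of the circled middle root), and this middle root has coefficient one in the highest root $\alpha_1+\cdots+\alpha_{2\delta-1}$. For $B_2=C_2$, take the special unitary Lie algebra of a Hermitian form of Witt index $1$ on a rank-two module over the unique quaternion division $F$-algebra; this algebra has $F$-rank one, anisotropic kernel of type $A_1$, and the circled long simple root has coefficient one in the highest root $\alpha_1+2\alpha_2$.

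\emph{Step 2: no other diagram occurs.} This is a classification statement. The classification of absolutely simple $F$-Lie algebras over $p$-adic fields (Tits, Kneser--Harder, applicable because the residue characteristic of $F$ is not $2$) tells us that every absolutely simple anisotropic $F$-algebra is an inner form of type $A$, namely $\go{sl}_1(D')$ for a central division $F$-algebra $D'$. Consequently the anisotropic kernel of $\widetilde{\go l}_0$, generated by the black simple roots $\widetilde\Psi\setminus\{\alpha_0\}$, decomposes into inner forms of type $A$. Combining this fact with (i) absolute simplicity of $\widetilde{\go l}_0$, (ii) the $F$-rank one condition, (iii) the coefficient-one requirement on $\alpha_0$, and (iv) the admissible Galois actions on the absolute Dynkin diagram, a case-by-case inspection of Tits' tables of admissible indices over non-archimedean local fields leaves precisely the two diagrams listed.

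\emph{Main obstacle.} The exceptional types $G_2$, $F_4$, $E_8$ are ruled out immediately since they admit no $F$-rank one form over a $p$-adic field. The classical types $B_n$, $C_n$, $D_n$ with $n\geq 3$ and the exceptional types $E_6$, $E_7$ are eliminated by combining the coefficient-one condition (which severely restricts the candidates for $\alpha_0$) with the type-$A$ anisotropic-kernel constraint. The most delicate case is excluding outer forms of $A_n$ of $F$-rank one (quasi-split unitary Lie algebras over a quadratic extension $E/F$): one must check that no such admissible Satake-Tits diagram simultaneously satisfies the $F$-rank one and coefficient-one conditions. This final verification, which uses both the structure of the Galois action on the Dynkin diagram (arcs joining roots in the same $*$-orbit) and the description of Hermitian forms over $p$-adic fields, is where the bulk of the case analysis lies.
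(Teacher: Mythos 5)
Your proof is correct and follows essentially the same route as the paper's: both reduce the statement to a case-by-case inspection of the admissible Satake--Tits indices over a $p$-adic field (Tits' tables, or Schoeneberg's propositions as cited in the text), filtered by the split-rank-one hypothesis and the two constraints forced by $(\mathbf{H_1})$ — the circled root must have coefficient $1$ in the highest root, and no two white roots may be joined by an arrow. Your explicit invocation of the fact that anisotropic kernels over $p$-adic fields are inner forms of type $A$ is a clean way of packaging what the cited table entries encode, but it does not change the nature of the argument.
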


   \begin{proof}
   
   The proof is a consequence of a careful reading of the tables of Tits  (\cite{Tits}). It can also be  extracted from the recent work of T. Schoeneberg (\cite{Schoeneberg}). For the convenience of the reader, we give some guidelines in connection with this last paper:

 - The fact that there exist no diagram of  type $G_{2}$, $F_{4}$, $E_{6} \text{ (inner forms)}$, $E_{6} \text{ (outer forms)}$, $E_{7}$, $E_{8}$, $D_{4}$ (with {\it trialitarian} action of the Galois group  $\overline{F}/F$) is a consequence of, respectively: Proposition 5.5.1 p.116,   Proposition 5.5.3 p.118,   Proposition 5.5.4 p.118,   Proposition 5.5.13 p.134,   Proposition 5.5.4 p.122,   Proposition 5.5.7 p.126, and of  Proposition 5.5.8 p.127. 
 
 - The case $A_{n}$ is a consequence of  Proposition 4.5.21 p. 93 (inner forms) and of pages 112-113 (outer forms).
 
 - The case $B_{n}$ is a consequence of Proposition 5.4.4 p. 113.
 
 - The case  $C_{n}$ is a consequence of Proposition 5.4.5 p. 113.
 
 - The case $D_{n}$ ({\it  non trialitarian})  is a consequence of  Proposition 5.4.6 p. 114 and of the fact that the diagrams on p. 115-116 do not occur.
 
 One can note that ${\bf (H_{1})}$ excludes  the diagrams of split rank  $1$ where the the unique white root has a coefficient  $>1$ in the highest root of the underlying Dynkin diagram, and those which have two white roots connected by an arrow.

   \end{proof} 
   
   \begin{cor}\label{cor-classification-k=0}\hfill
   
   The graded algebras  $$\widetilde{ {\go l}}_0={\widetilde{\go{g}}^{-\lambda_{0}}}\oplus[{\widetilde{\go{g}}^{-\lambda_{0}}},{\widetilde{\go{g}}^{\lambda_{0}}}]\oplus {\widetilde{\go{g}}^{\lambda_{0}}}$$
  are either isomorphic to $\go{sl}_{2}(D)$ where  $D$ \index{D@$D$} is a central division algebra over $F$, of degree  $\delta$, or isomorphic to  $\go{o}(q,5)$ where  $q$ is a non degenerate quadratic form on  $F^5$ which is the direct sum of an hyperbolic plane and an anisotropic form of dimension $3$ (in other word a form of index  $1$). 
   
   \end{cor}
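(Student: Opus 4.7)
The plan is to read off the two isomorphism classes of $\widetilde{\go l}_0$ directly from the two Satake--Tits diagrams produced in Lemma \ref{lemme-diagrammes-k=0}, by matching each diagram with an explicit $F$-form of the corresponding split Lie algebra via the classification of absolutely simple $p$-adic Lie algebras up to $F$-isomorphism.

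For the diagram of type $A_{2\delta-1}$, I would invoke the classical fact that every absolutely simple inner form of type $A_{n-1}$ over $F$ is of the shape $\go{sl}_m(A)$ with $A$ a central simple $F$-algebra of degree $n/m$, and hence (by Wedderburn) $A \simeq M_r(D)$ for some central division algebra $D$ over $F$. A direct inspection of the tables of \cite{Tits} rules out any outer form of type $A$ (these arise from Hermitian forms over a separable quadratic extension of $F$) as compatible with the first diagram of Lemma \ref{lemme-diagrammes-k=0}, so only inner forms occur. Since $\widetilde{\go l}_0$ has split rank $1$ (Proposition \ref{ell-0-simple}) and $\go{sl}_m(D)$ has split rank $m-1$, this forces $m=2$ and $\deg D = \delta$, hence $\widetilde{\go l}_0 \simeq \go{sl}_2(D)$ as claimed. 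For the diagram of type $B_2 = C_2$, I would use the fact (see \cite{Tits} or \cite{Schoeneberg}) that every absolutely simple $F$-Lie algebra of type $B_\ell$ is of the form $\go{o}(q, 2\ell+1)$ for some non-degenerate quadratic form $q$ in $2\ell+1$ variables over $F$, and that the number of circled nodes in its Satake--Tits diagram equals the Witt index of $q$. Applied to $\ell = 2$ with one circled node, this yields $\widetilde{\go l}_0 \simeq \go{o}(q,5)$ with $q$ of Witt index $1$, whose standard Witt decomposition is the orthogonal sum of a hyperbolic plane and a three-dimensional anisotropic form.

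The main obstacle is not a computation but rather a correct appeal to the classification of absolutely simple $p$-adic Lie algebras by their Satake--Tits diagrams enriched with the isomorphism class of the anisotropic kernel. In the two cases at hand this relies on two non-trivial inputs: the computation $\mathrm{Br}(F) \simeq \Q/\Z$, giving the unicity of a central division $F$-algebra of a given degree (needed for the $A$-case), and the classification of non-degenerate $p$-adic quadratic forms by their dimension, discriminant and Hasse invariant together with Witt's theorem (needed for the $B_2$-case). Once these two inputs are granted, the identifications of $\widetilde{\go l}_0$ with $\go{sl}_2(D)$ or with $\go{o}(q,5)$ are immediate.
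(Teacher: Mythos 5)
Your proposal is correct and follows essentially the same route as the paper, whose proof consists of a single citation of the tables of Tits and Schoeneberg; you simply make explicit the two standard classification inputs (inner forms of type $A$ are $\go{sl}_m(D)$ with split rank $m-1$, and forms of type $B_2$ with one white and one black node are $\go{o}(q,5)$ with Witt index equal to the number of white nodes). One cosmetic remark: where you write ``the number of circled nodes equals the Witt index'' you mean the number of \emph{white} nodes (there is only ever one circled node in these weighted diagrams), though in the $B_2$ diagram at hand the two notions coincide.
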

   
   \begin{proof}
   
   These are the only Lie algebras  over $F$ whose Satake-Tits diagram is of the type given in the preceding Lemma (\cite{Tits}, \cite{Schoeneberg}).
   
   \end{proof}
   
  \begin{definition}\label{def-1-type}
  By  \ref{prop-memedimension},  all the algebras  $\widetilde{\go{l}}_{i}$ are isomorphic  either  to

  \hskip 40pt \hbox{\unitlength=0.5pt
\begin{picture}(280,30)
\put(-80,0) {$A_{2\delta-1}$}
  \put(10,10){\circle*{10}}
\put(15,10){\line (1,0){30}}
\put(50,10){\circle*{10}}
\put(60,10){\circle*{1}}
\put(65,10){\circle*{1}}
\put(70,10){\circle*{1}}
\put(75,10){\circle*{1}}
\put(80,10){\circle*{1}}
\put(90,10){\circle*{10}}
\put(95,10){\line (1,0){30}}
\put(130,10){\circle{10}}
\put(130,10){\circle{16}}
\put(135,10){\line (1,0){30}}
\put(170,10){\circle*{10}}
\put(180,10){\circle*{1}}
\put(185,10){\circle*{1}}
\put(190,10){\circle*{1}}
\put(195,10){\circle*{1}}
\put(200,10){\circle*{1}}
\put(210,10){\circle*{10}}
\put(215,10){\line (1,0){30}}
\put(250,10){\circle*{10}}
 \put(30,-20){$\delta-1$}  \put(180,-20){$\delta-1$} 
 \end{picture}
} or to 
\hbox{\unitlength=0.5pt
\begin{picture}(280,30)
 \put(20,0){$B_{2}$}
 \put(90,10){\circle{10}}
 \put(90,10){\circle{16}}
\put(93,12){\line (1,0){41}}
\put(93,8){\line (1,0){41}}
\put(105,5){$>$}
\put(135,10){\circle*{10}}

\end{picture}
}
\vskip 2pt 
In the first case we will say that $\widetilde{\go g}$  is of $1$-type $A$ $($or $(A, \delta)$ to be more precise$)$, in the second case  we wil say that  $\widetilde{\go g}$ is of  $1$-type $B$.

   \end{definition}
   
   \vskip 5pt
   
   \begin{theorem}\label{th-k=0}\hfill
   
   $1)$ If $\widetilde{ {\go l}}_0=\go{sl}_{2}(D)$ where  $D$ is a central division algebra  over $F$, of degree  $\delta$, the group $G$   is the group of isomorphisms  of  $ \go{sl}_{2}(D)$ of the form  
   $$\begin{array}{rll}
  \go{sl}_{2}(D)\ni X=\begin{pmatrix} a&x\\
   y&b\\
   \end{pmatrix}&\longmapsto&\begin{pmatrix} u&0\\
   0&v\\
   \end{pmatrix} \begin{pmatrix} a&x\\
   y&b\\
   \end{pmatrix}\begin{pmatrix} u^{-1}&0\\
   0&v^{-1}\\
   \end{pmatrix}
   \end{array}
$$

where $a,b,x,y \in D$, with  $\tr_{D/F}(a+b)=0$ ($\tr_{D/F}$ \index{trace@$\tr_{D/F}$} is the reduced trace), and where  $u,v\in D^*$.

Therefore the action of  $G$ on $V^+\simeq D$ can be identified with the action of  $D^*\times D^*$ on  $D$ given by
$$(u,v).x=uxv^{-1},$$
 the group  $G$ being isomorphic to  $(D^*\times D^*)/H$ where $H=\{(\lambda,\lambda), \,\lambda\in F^*\}$.

 Hence there are two orbits: $\{0\}$ and $D^*$,   and the fundamental relative invariant is the reduced norm $\nu_{D/F}$ \index{nu@$\nu_{D/F}$} of $D$ over $F$. Its degree is $\delta$.

   $2)$ If    $\widetilde{ {\go l}}_0=\go{o}(q,5)$ where $q$ is a non degenerate quadratic form over  $F^5$ which is the sum of an hyperbolic plane  and an anisotropic  form   $Q$ of dimension  $3$, then the group  $G$ can be identified  with the group  $SO(Q)\times F^*$ acting by the natural action on  $F^3$.  The fundamental relative invariant is then  $Q$. There are four orbits, namely  $\{0\}$ and three open orbits  which are the sets  ${\cal O}_{i}=\{x\in F^3, Q(x)\in u_{i}\}$, ($i=1,2,3$), where  $u_{i}$ runs over the three classes modulo $F^{*^2} $  distinct from $-d(Q)$ ($d(Q)$ being the  discriminant of  $Q$).
   \end{theorem}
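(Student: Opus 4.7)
The plan is to treat each 1-type of $\widetilde{\go l}_0$ separately, and in each case to (a) identify $\mathrm{Aut}_0(\widetilde{\go l}_0)$ with a concrete matrix group, (b) compute its centralizer of $H_{\lambda_0}$, which is $G$, and then (c) read off the orbits and the fundamental relative invariant on $V^+$.

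For 1-type $A$, $\widetilde{\go l}_0 \simeq \go{sl}_2(D)$: After scalar extension $\overline{\widetilde{\go l}_0} \simeq \go{sl}_{2\delta}(\overline F)$, so $\mathrm{Aut}_e(\overline{\widetilde{\go l}_0}) = PGL_{2\delta}(\overline F)$; hence $\mathrm{Aut}_0(\widetilde{\go l}_0) = PGL_2(D)$, realized via $g \mapsto \mathrm{Ad}(g)$ for $g \in GL_2(D)$. Taking $H_{\lambda_0} = \mathrm{diag}(1,-1)$, its centralizer in $GL_2(D)$ is the diagonal torus $D^* \times D^*$, giving $G \simeq (D^* \times D^*)/H$ with $H$ the diagonal copy of $F^*$. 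Identifying $V^+$ with the upper-right entries (a copy of $D$), conjugation by $\mathrm{diag}(u,v)$ acts as $x \mapsto uxv^{-1}$. The reduced norm $\nu_{D/F}$ is absolutely irreducible of degree $\delta$ and satisfies $\nu(uxv^{-1}) = \nu(u)\nu(v)^{-1}\nu(x)$; since $\nu(x)=0 \iff x=0$ in $D$, the orbits are $\{0\}$ and $D^*$, and $\nu_{D/F}$ is the fundamental relative invariant by Theorem \ref{thexistedelta_0}.

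For 1-type $B$, $\widetilde{\go l}_0 \simeq \go o(q,5)$ with $q$ of Witt index $1$: Decompose $F^5 = \langle e\rangle \perp W \perp \langle f\rangle$ with $b(e,f)=1$, $b(e,e)=b(f,f)=0$, and $Q=q|_W$ anisotropic of dimension $3$. In this basis $H_{\lambda_0}=E_{11}-E_{55}$. Because the absolute type is $B_2$, which has trivial center, $\mathrm{Aut}_0(\go o(q,5)) \simeq SO(q)$; a direct matrix computation shows that its centralizer of $H_{\lambda_0}$ consists of block-diagonal elements $\mathrm{diag}(\lambda, M, \lambda^{-1})$ with $\lambda \in F^*$ and $M \in SO(Q)$, so $G \simeq F^* \times SO(Q)$. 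Under a natural equivariant identification of $V^+$ with $W \simeq F^3$, the centralizer acts by $(\lambda, M)\cdot x = \lambda M x$, whence $Q(g\cdot x) = \lambda^2 Q(x)$, making $Q$ a relative invariant with character $\lambda \mapsto \lambda^2$; by Theorem \ref{thexistedelta_0} this forces $Q$ to equal $\Delta_0$ up to scalar.

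The main step is the orbit classification in Case 2. By Witt's theorem, $SO(Q)$ acts transitively on each nonempty level set $\{x\in F^3: Q(x)=c\}$; scalar multiplication by $\lambda \in F^*$ rescales $c$ to $\lambda^2 c$. Hence the nontrivial $G$-orbits on $V^+$ are parametrized by the image of $Q$ in $F^*/F^{*2}$. Over a non-dyadic p-adic field $F$ one has $|F^*/F^{*2}|=4$, and the classical theorem on representations by an anisotropic ternary form asserts that $Q$ represents a nonzero $c$ if and only if $\langle c\rangle\perp(-Q)$ is isotropic, which holds precisely when $c \not\equiv -d(Q)\pmod{F^{*2}}$. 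This gives exactly three open orbits ${\cal O}_i=\{x: Q(x) \in u_i F^{*2}\}$ corresponding to the three classes of $F^*/F^{*2}$ distinct from $-d(Q)$. The anticipated obstacle is this last step, which relies on the Hasse--Minkowski classification of quadratic forms over a p-adic field; everything else reduces to concrete matrix computations in the appropriate basis.
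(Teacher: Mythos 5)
Your proposal is correct and its logical skeleton is the same as the paper's: identify $\mathrm{Aut}_0(\widetilde{\go l}_0)$ concretely, take the centralizer of the grading element to get $G$, read off the action on $V^+$, and classify orbits via the reduced norm (case 1) resp.\ Witt's theorem together with the representation theorem for anisotropic ternary forms (case 2). The genuine difference is in case 2: the paper realizes $\go{o}(q,5)$ inside $2\times 2$ matrices over the quaternion division algebra $D$ and works through an auxiliary $5$-dimensional module $W$ and its extension $\overline W$ (Lemmas \ref{lemme-caracterisation-Z(H)} and \ref{lemme-action-G}) to pin down $G$ and its action on $V^+\simeq [D,D]$, whereas you work directly in the standard orthogonal realization with a Witt basis $\langle e\rangle\perp W\perp\langle f\rangle$. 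Your route is shorter, but be aware that the step you dispatch in one line — ``$\mathrm{Aut}_0(\widetilde{\go l}_0)=PGL_2(D)$'', resp.\ ``$\mathrm{Aut}_0(\go{o}(q,5))\simeq SO(q)$'' — is precisely where the paper spends most of its effort: it passes to $\overline F$, invokes Bourbaki to describe $\mathrm{Aut}_e$ of the split form, and then checks (via block-diagonality forced by preservation of $V^\pm$, and Skolem--Noether in case 1) which elements descend to automorphisms of the $F$-form. For type $B_2$ your justification should be that the adjoint group is $SO_5$ and the diagram has no symmetries, so that $\mathrm{Aut}(\go{o}(q,5))(F)=SO(q)(F)$, together with the fact that $SO(\overline q)=\mathrm{Aut}_e(\overline{\go{o}(q,5)})$ is connected; ``trivial center'' alone is not quite the right reason. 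Two small points: with the paper's normalization $\lambda_0(H_0)=2$ one has $H_{\lambda_0}=H_0=2(E_{11}-E_{55})$ rather than $E_{11}-E_{55}$ (harmless for the centralizer computation), and the transitivity of $SO(Q)$ (not just $O(Q)$) on a level set $\{Q=c\}$ uses that in dimension $3$ the stabilizer of a vector contains reflections of determinant $-1$ — the paper makes this explicit via $\det(-\mathrm{Id})=-1$.
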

   
   \begin{proof} 1) Let us make explicit the structure of   $\go{sl}_{2}(D)$. For the material below, see \cite{Schoeneberg}, p. 93.
   
   It is well known that  
   $$ \go{sl}_{2}(D)=\{\begin{pmatrix} a&x\\
   y&b\\
   \end{pmatrix}, \text{ where } a,b,x,y \in D,\text{ with } \tr_{D/F}(a+b)=0\}.$$
  A maximal split abelian subalgebra is given by:
   $$\go{a}=\{\begin{pmatrix} t&0\\
   0&-t\\
   \end{pmatrix},\,  t\in F\}.$$
   It is easy to see that  
   $$\go{m}={\cal Z}_{\go{sl}_{2}(D)}(\go{a})=\{\begin{pmatrix} a&0\\
   0&b\\
   \end{pmatrix}, a,b \in D,  \tr_{D/F}(a+b)=0\}=\go{a}\oplus \begin{pmatrix} [D,D]&0\\
   0&[D,D]\\
   \end{pmatrix}$$
   (recall that  $[D,D]=\ker  (\tr_{D/F})$, by \cite{Bou4}, \S17, $n^\circ3$, Corollaire of  proposition 5, p. A VIII.337).
   
   Therefore the anisotropic kernel of  $\go{sl}_{2}(D)$ is given by 
   $$[\go{m}, \go{m}]=[{\cal Z}_{\go{sl}_{2}(D)}(\go{a}),{\cal Z}_{\go{sl}_{2}(D)}(\go{a})]=\begin{pmatrix} [D,D]&0\\
   0&[D,D]\\
   \end{pmatrix}\simeq [D,D]\oplus [D,D].$$
   
  Its  Satake-Tits diagram is 
   
   \vskip 15pt

   \hskip 200pt \hbox{\unitlength=0.5pt
\begin{picture}(280,30)
\put(-150,0) {$A_{d-1}\times A_{d-1}$}
  \put(10,10){\circle*{10}}
\put(15,10){\line (1,0){30}}
\put(50,10){\circle*{10}}
\put(60,10){\circle*{1}}
\put(65,10){\circle*{1}}
\put(70,10){\circle*{1}}
\put(75,10){\circle*{1}}
\put(80,10){\circle*{1}}
\put(90,10){\circle*{10}}
\put(170,10){\circle*{10}}
\put(180,10){\circle*{1}}
\put(185,10){\circle*{1}}
\put(190,10){\circle*{1}}
\put(195,10){\circle*{1}}
\put(200,10){\circle*{1}}
\put(210,10){\circle*{10}}
\put(215,10){\line (1,0){30}}
\put(250,10){\circle*{10}}
 \put(30,-20){$\delta-1$}  \put(180,-20){$\delta-1$}

\end{picture}
} 
\vskip 30pt

  The grading of  $\widetilde{\go g}=\go{sl}_{2}(D)$ is then defined by  $H_{0}=\begin{pmatrix} 1&0\\
   0&-1\\
   \end{pmatrix}$, and this implies that  $\widetilde{\go g}=\go{sl}_{2}(D)= V^-\oplus {\mathfrak g}\oplus V^+$ where 
   $$V^-= \{\begin{pmatrix} 0&0\\
   y&0\\
   \end{pmatrix}, \,y\in D\}\simeq D,\quad V^+= \{\begin{pmatrix} 0&x\\
   0&0\\
   \end{pmatrix}, \,x\in D\}\simeq D$$
  and 
   $$\go{g}=\{\begin{pmatrix} a&0\\
   0&b\\
   \end{pmatrix}, a,b\in D, \tr_{D/F}(a+b)=0\}.$$
   
   We will now determine the group  $G= {\cal Z}_{\text{Aut}_{0}(\go{sl}_{2}(D))}(H_{0})=\{g\in \text{Aut}_{0}(\go{sl}_{2}(D))\,,\, g.H_{0}=H_{0}\}$ and its action on  $V^+\simeq D$. \\
    Let  $g\in G$. As $g.H_{0}=H_{0}$, one get  $g.V^{-}\subset V^{-},\, g.V^{+}\subset V^{+}, g.\go{g}\subset\go{g}$.
Let  $\overline{g}$ be the natural  extension of  $g$ as an automorphism of  $\go{sl}_{2}(D) \otimes \overline{F}=\go{sl}_{2d}( \overline{F})$. From above, $\overline{g}$ stabilizes $\overline{V^{-}},\overline{V^{+}}$ and  $\overline{\go{g}}$. By \cite{Bou2} (Chap. VIII, \S 13, $n^\circ 1$, (VII), p.189), there exists  $U\in GL(2d,\overline{F})$   such that 
$\overline{g}.x=UxU^{-1}, \forall x\in \go{sl}_{2d}( \overline{F})$. Let us write  $U$ in  the  form 
$$U=\begin{pmatrix}\alpha&\beta\\
\gamma&\delta
\end{pmatrix}, \,\, \alpha,\beta,\gamma,\delta \in M_{d}( \overline{F}).$$

As $\overline{g}$ stabilizes $\overline{V^{+}}$, for all $x\in M_{d}(\overline{F})$, there exists  $x'\in M_{d}(\overline{F})$ such that 
$$\begin{pmatrix}\alpha&\beta\\
\gamma&\delta
\end{pmatrix}\begin{pmatrix} 0&x\\
   0&0\\
   \end{pmatrix}=\begin{pmatrix} 0&x'\\
   0&0\\
   \end{pmatrix}\begin{pmatrix}\alpha&\beta\\
\gamma&\delta
\end{pmatrix}$$
It follows that  $\gamma=0$. Similarly the invariance of  $\overline{V^{-}}$ implies that  $\beta=0$. Hence  $U= \begin{pmatrix}\alpha&0\\
0&\delta
\end{pmatrix}$, with  $\alpha,\delta \in GL_{d}(\overline{F})$. Let us now write down that the conjugation by  $U$ (that is, the action of  $\overline{g}$) stabilizes $\go{sl}_{2}(D)$. For all $a,b\in D$, the element 
$$ \begin{pmatrix}\alpha&0\\
0&\delta
\end{pmatrix} \begin{pmatrix}a&0\\
0&b
\end{pmatrix} \begin{pmatrix}\alpha^{-1}&0\\
0&\delta^{-1}
\end{pmatrix}= \begin{pmatrix}\alpha a \alpha^{-1}&0\\
0&\delta b \delta^{-1}
\end{pmatrix}$$
belongs to  $\go{sl}_{2}(D)$. Therefore the map  $a\longmapsto \alpha a \alpha^{-1}$ (resp. $b\longmapsto\delta b \delta^{-1}$ ) will be an  automorphism of the associative algebra  $D$. By the  Skolem-Noether Theorem (\cite{Blanchard} (Th\'eor\`eme III-4 p.70), \cite{Pierce} (12.6 p.230) there exists $u_0$ (resp. $v_0$) in $D^*$ such that  $\alpha a \alpha^{-1}=u_0 a u_0^{-1}$ for all  $a\in D$ (resp. $ \delta b \delta^{-1}=v_0 b v_0^{-1}$ for all  $b\in D$). Hence  $u_0^{-1}\alpha$ (resp. $v_0^{-1}\delta$) is an element of  $M_{d}(\overline{F})=\overline{D}$ which commutes with any element of  $D$, and hence it belongs to the center of  $M_{d}(\overline{F})=\overline{D}$. Therefore  $u_0^{-1}\alpha= \lambda.1 $ and $v_0^{-1}\delta= \mu.1 $ ($\lambda,\mu \in \overline{F}$), i.e. $\alpha=\lambda u_0$ and $\delta=\mu v_0$.  
  As  $\bar{g}$ stabilizes $V^+$ and $V^-$ and 
$$ \begin{pmatrix}\lambda u_0&0\\
0&\mu v_0
\end{pmatrix} \begin{pmatrix}a&x\\
y&b
\end{pmatrix} \begin{pmatrix}\lambda^{-1}u_0^{-1}&0\\
0&\mu^{-1} v_0^{-1}
\end{pmatrix}= \begin{pmatrix}u_0 a u_0^{-1}&\lambda \mu^{-1} u_0xv_0^{-1}\\
\lambda^{-1} \mu v_0yu_0^{-1}&v_0 b v_0^{-1}
\end{pmatrix},$$ 
we deduce that  $\lambda \mu^{-1} \in F^*$ and  $\overline{g}$ is the conjugation by  $V= \begin{pmatrix}  u&0\\
0&v
\end{pmatrix}$ with  $u=\lambda \mu^{-1}  u_0\in D^*$ and $v=v_0\in D^*$.

Therefore the assertions concerning the action and the orbits are clear. It is also clear that  the conjugation by $\begin{pmatrix}u&0\\
0&v
\end{pmatrix}$ induces the trivial automorphism if and only if $u=v=\lambda\in F^*$. This proves that $G= (D^*\times D^*)/H$. 

As $\nu_{D/F}$ is a polynomial on  $D$ which takes values in  $F$, it is also clear that  $P(\begin{pmatrix} 0&x\\
   0&0\\
   \end{pmatrix})=\nu_{D/F}(x)$ is a relative invariant. As the reduced norm is an irreducible polynomial  (\cite{Saltman}), this relative invariant is the fundamental one.
   
   \vskip 5pt
   
   2) We will now give a realization  \footnote{We thank Marcus Slupinski for having indicated this realization to us.} of the Lie algebra  (there is only one up to isomorphism) whose Satake-Tits diagram is
   
     \vskip 15pt
  
\hskip 70pt \hbox{\unitlength=0.5pt
\begin{picture}(280,30)
 
 \put(285,0){$B_{2}$}
 \put(355,10){\circle{10}}
\put(358,12){\line (1,0){41}}
\put(358,8){\line (1,0){41}}
\put(370,5){$>$}
\put(400,10){\circle*{10}}

\end{picture}
}.
\vskip 5pt 
For this, let  $D$  be a central division algebra of degree  $2$ over  $F$. There is only one such algebra  by \cite{Blanchard} (Corollaire V-2, p. 130), and of course $D= (\frac{\pi,\varepsilon}{F})$ is the unique quaternion division algebra over  $F$ (\cite{Lam}, Th. 2.2. p.152). Here $\pi$ is a uniformizer of  $F$ and    $\varepsilon$ \index{epsilon@$\varepsilon$} is a unit  which is not a square.  Let $a\longmapsto \overline{a}$ ($a\in D$), be the usual conjugation in a quaternion algebra. The derived Lie algebra  $[D,D]$ is then the space of pure quaternions, that is the set of  $a\in D$ such that  $\overline{a}=-a$.

Set 
$$\widetilde{ {\mathfrak g}}=\{X=\begin{pmatrix}a&b\\
c&-\overline{a}
\end{pmatrix}, a\in D, b,c\in [D,D]\}.$$
It is easy to verify that  $\widetilde{\go{g}}$ is a Lie algebra for the bracket $[X,X']=XX'-X'X$, $X,X'\in \widetilde{\go{g}}$. Set also 
$$\go{a}=\{\begin{pmatrix}t&0\\
0&-t
\end{pmatrix}, t\in F\}.$$
It is clear that  $\go{a}$ is a split torus in  $\widetilde{\go{g}}$. If  $\go{a}'$  is a split torus containing  $\go{a}$, then  $\go{a}'$ is contained in the eigenspace  for the eigenvalue  $0$ of  $\ad(\go{a})$, that is  $$\go{a}'\subset \{\begin{pmatrix}a&0\\
0&-\overline{a}
\end{pmatrix}, a\in D\}\simeq D.$$
As  $D=F.1\oplus [D,D]$, and as  $[D,D]$ is anisotropic (see for example \cite{Schoeneberg} Corollaire 4.4.3. p.78), we obtain that  $\go{a}'=\go{a}$, hence  $\go{a}$ is a split maximal torus in  $\widetilde{\go{g}}$. The Lie algebra  $\widetilde{\go{g}}$  is graded by  $H_{0}=\begin{pmatrix}1&0\\
0&-1
\end{pmatrix}$:

$$\widetilde{ {\mathfrak g}}=V^-\oplus {\mathfrak g}\oplus V^+ ,$$
where 
$$V^+= \{\begin{pmatrix}0&b\\
0&0
\end{pmatrix},b\in [D,D]\},$$
$$V^-= \{\begin{pmatrix}0&0\\
c&0
\end{pmatrix},c\in [D,D]\},$$
$$\go{g}= \{\begin{pmatrix}a&0\\
0&-\overline{a}
\end{pmatrix},a\in D\}\simeq D.$$
Note that  $[\go{g}, \go{g}]\simeq [D,D]$ is anisotropic of dimension  3.
Let us now show that  $\widetilde{ {\mathfrak g}}$ is  simple. If  $I$ is an ideal of  $\widetilde{ {\mathfrak g}}$, as $[\go{a},I]\subset I$,  one has:
$$I=(V^-\cap I)\oplus( \go{g}\cap I)\oplus (V^+\cap I).$$

If  $(V^+\cap I)\neq \{0\}$ and if  $b\in (V^+\cap I)\setminus \{0\}$ then the elements of the form $$[\begin{pmatrix}a&0\\
0&-\overline{a}
\end{pmatrix}, \begin{pmatrix}0&b\\
0&0
\end{pmatrix}]= \begin{pmatrix}0&ab+b\overline{a}\\
0&0
\end{pmatrix}= \begin{pmatrix}0&ab-\overline{ab}\\
0&0
\end{pmatrix}= \begin{pmatrix}0& 2 \text{Im}(ab)\\
0&0
\end{pmatrix}$$
run over  $V^+$ if  $a\in D$. An analogous statement is true if  $(V^-\cap I)\neq \{0\}$. If 
$( \go{g}\cap I)\neq \{0\}$, then $(V^-\cap I)\neq \{0\}$ and  $(V^+\cap I)\neq \{0\}$. On the other hand, if  $V^+\subset I$ and  $V^-\subset I$, then it is easy to see that  $\go{g}\subset I$. Finally we have shown that  $\widetilde{ {\mathfrak g}}$ has no non trivial ideal. Hence  $\widetilde{ {\mathfrak g}}$ is a simple Lie algebra of dimension $10$. Therefore  $\overline{\widetilde{\go{g}}}=\widetilde{\go{g}}\otimes_{F}\overline{F}$ is a semi-simple Lie algebra of  dimension $10$ over $\overline{F}$. There is only one such algebra, it is the orthogonal algebra $\go{o}(5,\overline{F})$ whose Dynkin diagram is  $B_{2}$. Therefore the algebra $\widetilde{ {\mathfrak g}}$, with the grading described before, is indeed the algebra whose Satake-Tits diagram is   

     \vskip 15pt
  
\hskip 70pt \hbox{\unitlength=0.5pt
\begin{picture}(280,30)
 
 \put(285,0){$B_{2}$}
 \put(355,10){\circle{10}}
\put(358,12){\line (1,0){41}}
\put(358,8){\line (1,0){41}}
\put(370,5){$>$}
\put(400,10){\circle*{10}}

\end{picture}
}.

(To avoid the preceding dimension argument, it is possible to compute explicitly $\overline{\widetilde{\go{g}}}$ by using the fact that  $\overline{F}$ is a splitting field of  $D$ and show  that 
$$\overline{\widetilde{\go{g}}}=\{\begin{pmatrix}A&B\\
C&-\tau(A)
\end{pmatrix}, A\in M_{2}(\overline{F}), B,C\in [M_{2}(\overline{F}),M_{2}(\overline{F})]=\go{sl}_{2}(\overline{F})\}$$
where the anti-involution  $\tau$ of  $M_{2}(\overline{F})$ is defined by  $\tau(\begin{pmatrix}\alpha&\beta\\
\gamma&\delta
\end{pmatrix})=\begin{pmatrix}\delta&-\beta\\
-\gamma&\alpha
\end{pmatrix}$. This algebra is  $\go{o}(5,\overline{F})$).

Remind that   $[D,D]=\text{Im}D=\{a\in D, \overline{a}=-a\}$. Set
$$W=\{\begin{pmatrix}h&\lambda\\
\mu&-h\\
\end{pmatrix}, \text{ where }h\in \text{Im}D=[D,D],\lambda,\mu\in F\}.$$

One sees easily that  $[\widetilde{\go{g}},W]\subset W$, where $[\,\,,\,\,]$ is the usual bracket of matrices. The corresponding representation is of  dimension 5. An easy  but a little tedious computation shows that the symmetric bilinear form  
$$\Psi(\begin{pmatrix}h&\lambda\\
\mu&-h\\
\end{pmatrix},\begin{pmatrix}h'&\lambda'\\
\mu'&-h'\\
\end{pmatrix})= -\frac{1}{2}(hh'+h'h+\lambda'\mu+\lambda \mu')$$
is invariant under the action of  $\widetilde{\go{g}}$. The form  $\Psi$ is the bilinear form  associated to the quadratic form  "determinant" $q(\begin{pmatrix}h&\lambda\\
\mu&-h\\
\end{pmatrix})= -h^2-\lambda\mu$. This form  is the direct sum of an hyperbolic plane and the anisotropic form $Q(h)=-h^2$ on  $[D,D]$. The space  $F^3$ in the statement of the Theorem  is therefore $[D,D]$ and the algebra  $\widetilde{\go{g}}$  is realized as the algebra  $\go{o}(q,5)$ as in the statement.

We need also to consider  $\overline{W}$. One has:

$$\overline{W}=\{\begin{pmatrix}U&x \text{Id}_{2}\\
y \text{Id}_{2}&-U\\
\end{pmatrix}, \text{ where }U\in [M_{2}(\overline{F}), M_{2}(\overline{F})]=\go{sl}_{2}(\overline{F}),x, y \in \overline{F}\}.$$

Let us denote by  $\overline{q}$, $\overline{Q}$ and $\overline{\Psi}$ the lifts    of $q,Q$ and  $\Psi$  to $\overline{W}$, respectively. 
These are given by  (remark that $UU'+U'U$ is a scalar matrix if $U,U'$ are in ${\go sl}_2(\bar{F})$): 
$$\overline{q}(\begin{pmatrix}U&x \text{Id}_{2}\\
y \text{Id}_{2}&-U\\
\end{pmatrix})= -U^2-xy$$
$$\overline{Q}(\begin{pmatrix}U&0\\
0&-U\\
\end{pmatrix})= -U^2$$
$$\overline{\Psi}(\begin{pmatrix}U&x \text{Id}_{2}\\
y \text{Id}_{2}&-U\\
\end{pmatrix}, \begin{pmatrix}U'&x' \text{Id}_{2}\\
y' \text{Id}_{2}&-U'\\
\end{pmatrix})= -\frac{1}{2}(UU'+U'U+x'y+xy')$$

\vskip 10pt

The Lie algebra  $\overline{\widetilde{\go{g}}}$ acts on  $\overline{W}$ by the adjoint action: $[\overline{\widetilde{\go{g}}},\overline{W}]\subset \overline{W}$. 
More explicitly a calculation shows that for  $\begin{pmatrix}A&B\\
C&-\tau(A)
\end{pmatrix}\in \overline{\widetilde{\go{g}}}$ and $\begin{pmatrix}U&x \text{Id}_{2}\\
y \text{Id}_{2}&-U\\
\end{pmatrix}\in \overline{W}$ one has

$$[\begin{pmatrix}A&B\\
C&-\tau(A)
\end{pmatrix},\begin{pmatrix}U&x \text{Id}_{2}\\
y \text{Id}_{2}&-U\\
\end{pmatrix}]=\begin{pmatrix}[A,U]+yB-xC&-(UB+BU)+x(A+\tau(A))\\
CU+UC-y(A+\tau(A))&[\tau(A),U]+xC-yB\\
\end{pmatrix}$$
and this last matrix  is effectively an element of $\overline{W}$.\\
 
This implies that the representation of  $\overline{\widetilde{\go{g}}}$ in $W$ is faithful, that the form  $\overline{\Psi}$ is invariant under $\overline{\widetilde{\go{g}}}$ and  therefore  $\overline{\widetilde{\go{g}}}$ is realized as   $\go{o}(\overline{W},\overline{q})$.

Let  $\varphi \in \text{Aut}(\overline{\widetilde{\go{g}}})$.   
As $\overline{\widetilde{\go{g}}} \simeq \go{o}(\overline{W},\overline{q})$ is a split simple Lie algebra, we know by \cite{Bou3}, Chap. VIII, \S 13, $n^\circ$2, (VII), p. 199, that 
$$\text{Aut}_{0}(\overline{\widetilde{\go{g}}})=\text{Aut}_{e}(\overline{\widetilde{\go{g}}})=\text{Aut}(\overline{\widetilde{\go{g}}})$$
and that there exists a unique  $M\in SO(\overline{W}, \overline{q})$ such that, for  $X\in \overline{\widetilde{\go{g}}}$ one has 
$$\varphi(X)=MXM^{-1}, $$
the right hand side  being a product of endomorphisms of  $\overline{W}$. In the rest of the proof  we will write $\varphi=M$ , by abuse of notation, and we will consider that  $ \varphi\in SO(\overline{W}, \overline{q})$.

Under the action of  $H_{0}$ the space $\overline{W}$ decomposes into three eigenspaces corresponding to the eigenvalues  $-2, 0, 2$ :

$$\overline{W}_{-2}=\{\begin{pmatrix}0&0\\
y \text{Id}_{2}& 0\\
\end{pmatrix}, y\in \overline{F}\},\,\,\overline{W}_{0}=\{\begin{pmatrix}U&0\\
0 & -U\\
\end{pmatrix}, U\in\go{sl}_{2}(\overline{F})\},\,\,\overline{W}_{2}=\{\begin{pmatrix}0&x \text{Id}_{2}\\
0 & 0\\
\end{pmatrix}, x\in \overline{F}\}.$$

We will need the following Lemma.

\begin{lemme}\label{lemme-caracterisation-Z(H)}\hfill

Let  $\varphi \in \text{\rm Aut}(\overline{\widetilde{\go{g}}})=SO(\overline{W}, \overline{q})$.

{\rm a)} $\varphi$ fixes  $H_{0} \Longleftrightarrow   \begin{cases}
\varphi\text{  stabilizes } \overline{W}_{-2},\overline{W}_{0}, \overline{W}_{2};\cr
 \varphi_{|_{\overline{W}_{0}}}\in SO(\overline{W}_{0}, \overline{Q}) ;\cr
 \text{ there exists }\alpha_{\varphi}\in \overline{F}^* \text{ such that }\varphi_{|_{\overline{W}_{-2}}}=\alpha^{-1}_{\varphi}{\rm Id}_{|_{\overline{W}_{-2}}} \text{ and  } \varphi_{|_{\overline{W}_{2}}}={\alpha_{\varphi}} {\rm Id}_{|_{\overline{W}_{2}}}.
\end{cases}
$
\vskip 10pt
{\rm b)} Let  $g\in  \text{\rm Aut}(\widetilde{\go{g}})$ and let  $\overline{g}$ be its natural extension to an element of   $\text{\rm Aut}(\overline{\widetilde{\go{g}}})$. Then one has:

$g\in G= {\cal Z}_{\text{Aut}_{0}(\widetilde{\go{g}})}(H_{0}) \Longleftrightarrow   \begin{cases}
\overline{g}\text{  stabilizes } {W}_{-2}, {W}_{0},  {W}_{2};\cr
\overline{g}_{|_{ {W}_{0}}}\in SO( {W}_{0},  {Q}) ;\cr
\alpha_{\overline{g}} \in F^*.\cr
\end{cases}$

\end{lemme}
\vskip 10pt

 {\it Proof of the Lemma}:  
 
a) Suppose that $\varphi$ commutes with $H_{0}$, then $H_{0}=\varphi H_{0}\varphi^{-1}$  (products of endomorphisms of $\overline{W}$).   ($i=-2,0,2$). Let  $T\in \overline{W}_{i}$ ($i=-2,0,2$). Then  $H_{0}.\varphi (T)= \varphi H_{0}\varphi^{-1} \varphi (T)=\varphi H_{0}(T)= \varphi(iT)=i\varphi(T)$. Hence $\varphi$ stabilizes the spaces $\overline{W}_{i}$.

As $\overline{W}_{-2}$ and  $\overline{W}_{2}$ are  $1$-dimensional , there exist $\alpha_{\varphi},\beta_{\varphi}\in \overline{F}^*$ such that 

$$\varphi(\begin{pmatrix}0&0\\
y \text{Id}_{2}& 0\\
\end{pmatrix})=\begin{pmatrix}0&0\\
\beta_{\varphi} y \text{Id}_{2}& 0\\
\end{pmatrix}, \,\, \varphi(\begin{pmatrix}0&x \text{Id}_{2}\\
0 & 0\\
\end{pmatrix})=\begin{pmatrix}0&\alpha_{\varphi} x \text{Id}_{2}\\
0 & 0\\
\end{pmatrix},\,\, x,y\in \overline{F}.$$

But as  $ \varphi\in SO(\overline{W}, \overline{q})$,  one has
$$\overline{q}(\varphi(\begin{pmatrix}0&x \text{Id}_{2}\\
y \text{Id}_{2}&0\\
\end{pmatrix}))=\overline{q}(\begin{pmatrix}0&\alpha_{\varphi} x \text{Id}_{2}\\
\beta_{\varphi} y \text{Id}_{2}&0\\
\end{pmatrix})=-\alpha_{\varphi}\beta_{\varphi} xy=\overline{q}(\begin{pmatrix}0&  x \text{Id}_{2}\\
  y \text{Id}_{2}&0\\
\end{pmatrix})=-xy.$$
Hence $\beta_{\varphi}=\alpha^{-1}_{\varphi}.$
As $\overline{q}_{|_{\overline{W}_{0}}}=\overline{Q}$ one get that  $ \varphi_{|_{\overline{W}_{0}}}\in SO(\overline{W}_{0}, \overline{Q})$.

Conversely suppose that  $\varphi$ satisfies the conditions on the  right hand side of  a). Let us look how $\varphi H_{0} \varphi^{-1}$ acts on  $\overline{W}$:

\xymatrix{
    {\begin{pmatrix}U&x \text{Id}_{2}\\
y \text{Id}_{2}&-U\\
\end{pmatrix}}  \ar@{|->}[r]^{\varphi^{-1}}   & {\begin{pmatrix}\varphi^{-1}(U)&\alpha^{-1}_{\varphi} x \text{Id}_{2}\\
\alpha_{\varphi} y \text{Id}_{2}&-\varphi^{-1}(U)\\
\end{pmatrix}}\ar@{|->}[r]^{H_{0} }&{\begin{pmatrix}0&2\alpha^{-1}_{\varphi} x \text{Id}_{2}\\
-2\alpha_{\varphi} y \text{Id}_{2}&0\\
\end{pmatrix}}\ar@{|->}[r]^{\varphi }&{\begin{pmatrix}0&2 x \text{Id}_{2}\\
-2y \text{Id}_{2}&0\\
\end{pmatrix}} 
 .}
 But one has:
 
 \xymatrix{
    {\begin{pmatrix}U&x \text{Id}_{2}\\
y \text{Id}_{2}&-U\\
\end{pmatrix}}  \ar@{|->}[r]^{{H_{0}}}   &  {\begin{pmatrix}0&2 x \text{Id}_{2}\\
-2y \text{Id}_{2}&0\\
\end{pmatrix}} 
 .}
 Therefore  $\varphi H_{0}\varphi^{-1}=H_{0}$, and this proves a).
 
 b) is a consequence of a).
 
 \hfill  {\it End of the proof of Lemma \ref{lemme-caracterisation-Z(H)}}.

   \vskip15pt
 If  $g\in G$, let   $\alpha_{g}$ be the element  $\alpha_{\overline{g}}\in F^*$ obtained in the preceding Lemma.
  \vskip15pt

 \begin{lemme}\label{lemme-action-G}\hfill
 
 The action of   $g\in G$ on  $V^+=\{\begin{pmatrix}0&b\\
 0&0\end{pmatrix},\,\, b\in [D,D]\}$ is as follows
 $$g.\begin{pmatrix}0&b\\
 0&0\end{pmatrix}= \begin{pmatrix}0&\alpha_{g}g_{|_{[D,D]}}(b)\\
 0&0\end{pmatrix}.$$
  \end{lemme}
  
  \vskip 10pt
 {\it Proof of the Lemma}: 
  \vskip 5pt
  
 In order to compute  $g.\begin{pmatrix}0&b\\
 0&0\end{pmatrix}=g\begin{pmatrix}0&b\\
 0&0\end{pmatrix}g^{-1}$, we will compute its action on the element 
 
 $\begin{pmatrix}h&\lambda\\
 \mu&-h
 \end{pmatrix}\in W$, using the preceding  Lemma \ref{lemme-caracterisation-Z(H)}. One has:

\centerline {\xymatrix{
    {\begin{pmatrix}h&\lambda\\
\mu&-h\\
\end{pmatrix}}  \ar@{|->}[r]^{g^{-1}}   & {\begin{pmatrix}g^{-1}(h)&\alpha^{-1}_{g} \lambda\\
\alpha_{g} \mu&-g^{-1}(h)\\
\end{pmatrix}}\ar@{|->}[r]^{\begin{pmatrix}0&b\\
 0&0\end{pmatrix}} &{\begin{pmatrix}\alpha_{g}\mu b&-bg^{-1}(h)-g^{-1}(h)b\\
0&-\alpha_{g}\mu b\\
\end{pmatrix}}\ar@{|->}[d]^{g }\\
{}&{}&{\begin{pmatrix}\alpha_{g}\mu g(b)& -\alpha(bg^{-1}(h)+g^{-1}(h)b)\\
0&-\alpha_{g}\mu g(b)\\
\end{pmatrix}} 
 (*).}
  } 
 Let us show now that the action of the element  $ \begin{pmatrix}0&\alpha_{g}g_{|_{[D,D]}}(b)\\
 0&0\end{pmatrix}$ is the same. One has:
 
 $$[ \begin{pmatrix}0&\alpha_{g}g_{|_{[D,D]}}(b)\\
 0&0\end{pmatrix}, \begin{pmatrix}h&\lambda\\
\mu&-h\\
\end{pmatrix} ]= {\begin{pmatrix}\alpha_{g}\mu g(b)& -\alpha(g(b) h+ hg(b))\\
0&-\alpha_{g}\mu g(b)\\
\end{pmatrix}}(**)$$

As the bilinear form associated to  $Q$ is   $B(b,h)=bh+hb$ ($b,h\in [D,D]$) and as $g_{|_{W_{0}}}\in SO(W_{0},Q)$ we obtain   $bg^{-1}(h)+g^{-1}(h)b=g(b) h+ hg(b)$, hence (*)=(**).

\vskip 5pt

\hfill {\it End of the proof of Lemma \ref{lemme-action-G}}.

\vskip 10pt

From Lemma  \ref{lemme-action-G} we know that the map  $$\begin{array}{rll}
G& \longrightarrow & SO(W_{0},Q)\times F^*\cr
g&\longmapsto &(g_{|_{W_{0}}},\alpha_{g})\cr
\end{array}$$
is a bijection. We have therefore proved the first part of the statement  2) Theorem \ref{th-k=0}.

From  \cite{Lam} (Th\'eor\`eme 2.2. 1)  p. 152 ), there are four classes modulo ${F^*}^2$ in $F^*$, which are the classes of  $1,\epsilon, \pi, \epsilon\pi$ ($\epsilon$ is a non square unit and $\pi$ is a uniformizer).  And from  \cite{Lam} (Corollary 2.5 3), p.153-154) the anisotropic form  $Q$ represents all (non zero) classes  except the class of  $-d(Q)$. If for  $b_{1}, b_{2}\in V^+\simeq [D,D]$, the elements  $Q(b_{1})$ and  $Q(b_{2})$   are in the same class  then there exists  $t\in F^*$ such that $Q(b_{1})=t^2Q(b_{2})=Q(tb_{2})$. Witt's Theorem  implies then that  there exists  $g\in O(Q)$ such that  $b_{1}=tg(b_{2})$. As the dimension is  3, det($-\text{Id}_{V^+}$)$=-1$ and one can suppose that   $g\in SO(Q)$. The Theorem is proved.

 \end{proof}
 
 \begin{rem}\label{0=j} As the algebras  $\widetilde{ {\go l}}_0$ and  $\widetilde{ {\go l}}_j$ 	are isomorphic (Proposition \ref{prop-memedimension}),  Corollary \ref{cor-classification-k=0} and Theorem \ref{th-k=0} will also be true for  $\widetilde{ {\go l}}_j$
. \end{rem}
 
 \begin{lemme} \label{lemmecodim}\hfill
\vskip 5pt
 Let us fix an element   $ 
X^1=X_1+X_2+\cdots +X_k \quad (X_j\in \widetilde{
       {\go g}}^{\lambda _j}\backslash \{0\}) $. Then for  $X\in \widetilde{ {\go g}}^{\lambda _0}$ one has  
       $$V^+=[\go g,X^1+X]+ \widetilde{ {\go g}}^{\lambda _0}\ .$$
       The   codimension   of the  $G$-orbit of  $X+X^1$ in  $V^+$ is given by  
$$\codim [\go g,X+X^1 ] = 
\begin{cases}
\ell \text{ if } X=0\ ;\\
0 \text{ if } X\not = 0 \ .
\end{cases}$$
       
       \end{lemme}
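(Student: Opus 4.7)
The plan is to use the refined decomposition of $V^{+}$ given by Theorem \ref{th-decomp-Eij}, namely
\[
V^{+} \;=\; \widetilde{\mathfrak g}^{\lambda_{0}} \;\oplus\; \Bigl(\bigoplus_{j=1}^{k}E_{0,j}(1,1)\Bigr) \;\oplus\; V^{+}_{1},
\]
and to check that, for every $X\in\widetilde{\mathfrak g}^{\lambda_{0}}$, the last two summands $V^{+}_{1}$ and $\bigoplus_{j\ge1}E_{0,j}(1,1)$ are contained in $[\mathfrak g, X^{1}+X]$. This immediately yields the first assertion $V^{+}=[\mathfrak g,X^{1}+X]+\widetilde{\mathfrak g}^{\lambda_{0}}$.

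For the inclusion $V^{+}_{1}\subset[\mathfrak g,X^{1}+X]$ I would apply Proposition \ref{X0+...+Xkgenerique} to the graded algebra $\widetilde{\mathfrak g}_{1}=V_{1}^{-}\oplus\mathfrak g_{1}\oplus V_{1}^{+}$ (whose descent provides the sequence $\lambda_{1},\dots,\lambda_{k}$): the element $X^{1}=X_{1}+\cdots+X_{k}$ is generic in $V^{+}_{1}$, so $[\mathfrak g_{1},X^{1}]=V^{+}_{1}$. Since $\mathfrak g_{1}\subset\widetilde{\mathfrak g}_{1}=\mathcal Z_{\widetilde{\mathfrak g}}(\widetilde{\mathfrak l}_{0})$ commutes with $X\in\widetilde{\mathfrak g}^{\lambda_{0}}\subset\widetilde{\mathfrak l}_{0}$, we obtain $[\mathfrak g_{1},X^{1}+X]=V^{+}_{1}$. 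For the inclusion $E_{0,j}(1,1)\subset[\mathfrak g,X^{1}+X]$ with $j\ge 1$, I would invoke Proposition \ref{prop-memedimension}(3): the map $\ad X_{j}\colon E_{0,j}(1,-1)\to E_{0,j}(1,1)$ is an isomorphism, so any $Y\in E_{0,j}(1,1)$ can be written $Y=[X_{j},Z]$ with $Z\in E_{0,j}(1,-1)\subset\mathfrak g$. A direct weight computation with respect to the basis $H_{\lambda_{0}},\dots,H_{\lambda_{k}}$ shows that $[Z,X_{s}]$ and $[Z,X]$, for $s\neq j$ in $\{1,\dots,k\}$, would have $H_{\lambda}$-weight configurations not occurring in the decomposition of $V^{+}$ in Theorem \ref{th-decomp-Eij} (e.g.\ weight $3$ on some $H_{\lambda_{l}}$, or a $(1,-1,2)$-pattern which is absent), hence they vanish. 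Therefore $[Z,X^{1}+X]=[Z,X_{j}]=-Y$, so $Y\in[\mathfrak g,X^{1}+X]$.

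It remains to compute the codimension. If $X\neq 0$, then $X+X_{1}+\cdots+X_{k}$ has all its components nonzero in the respective root spaces $\widetilde{\mathfrak g}^{\lambda_{j}}$, so Proposition \ref{X0+...+Xkgenerique} asserts that it is generic in $V^{+}$; by Definition \ref{def-elementsgeneriques}, $\ad(X^{1}+X)\colon\mathfrak g\to V^{+}$ is then surjective, giving $\codim[\mathfrak g,X^{1}+X]=0$. If $X=0$, I would decompose an arbitrary $Z\in\mathfrak g$ along $\mathfrak g=\mathcal Z_{\mathfrak g}(\mathfrak a^{0})\oplus\bigoplus_{i\neq j}E_{i,j}(1,-1)$ and check, again by pure weight bookkeeping, that every bracket $[Z,X^{1}]$ lands in $V_{1}^{+}\oplus\bigoplus_{j\ge1}E_{0,j}(1,1)$ with no component in $\widetilde{\mathfrak g}^{\lambda_{0}}$ (the only surviving brackets are $[Z_{0},X_{s}]\in\widetilde{\mathfrak g}^{\lambda_{s}}$ for $s\ge 1$ and $[Z_{i,j},X_{j}]\in E_{i,j}(1,1)$ for $j\ge 1$). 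Combined with the already-established $V^{+}=[\mathfrak g,X^{1}]+\widetilde{\mathfrak g}^{\lambda_{0}}$, this forces the sum to be direct and $[\mathfrak g,X^{1}]=V_{1}^{+}\oplus\bigoplus_{j\ge1}E_{0,j}(1,1)$, whose codimension in $V^{+}$ is $\dim\widetilde{\mathfrak g}^{\lambda_{0}}=\ell$.

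The computational heart of the proof is the weight analysis in paragraphs 2--3: one must systematically rule out every bracket whose target weight is incompatible with the $E_{i,j}$-decomposition of $V^{+}$. Nothing here is deep, but the argument must be organised carefully so that exactly the brackets $[Z,X_{j}]$ (with $j\ge 1$, matching the $\widetilde{\mathfrak g}^{\lambda_{j}}$-index of $Z$) survive; this is the main bookkeeping obstacle.
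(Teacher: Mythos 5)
Your proof is correct and follows essentially the same route as the paper: genericity of $X+X^{1}$ (Proposition \ref{X0+...+Xkgenerique}) settles the case $X\neq 0$, and for $X=0$ the decomposition $V^{+}=V^{+}_{1}\oplus\bigl(\oplus_{j\ge1}E_{0,j}(1,1)\bigr)\oplus\widetilde{\go g}^{\lambda_{0}}$ together with the explicit brackets $[E_{0,j}(1,-1),X_{j}]=E_{0,j}(1,1)$ and the weight bookkeeping gives $[\go g,X^{1}]=V^{+}_{1}\oplus\bigl(\oplus_{j\ge1}E_{0,j}(1,1)\bigr)$, hence codimension $\ell$. The only (harmless) difference is that you establish the identity $V^{+}=[\go g,X^{1}+X]+\widetilde{\go g}^{\lambda_{0}}$ uniformly in $X$ by noting that the bracketing elements annihilate $X$, whereas the paper obtains the case $X\neq0$ as an immediate consequence of genericity.
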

       
       \begin{proof}\hfill
       
      If $X\neq0$, then by  Proposition \ref{X0+...+Xkgenerique},   $X+X^1$ is generic in  $V^+$.  Hence $V^+=[\go g, X+X^1]$ and  $\codim [\go g,X+X^1 ] = 0$.
       
      If  $X=0$, we know by Corollary \ref{cor-decomp-j} that  
      $$V^+=V^+_1\oplus \left( \oplus_{j=1}^k E_{0,j}(1,1)  \right)\oplus
\widetilde{ {\go g}}^{\lambda _0}\ . $$

 $X^1$ is generic in  $V_{1}^+$ by  Proposition \ref{X0+...+Xkgenerique} applied to  $\widetilde{\go{g}}_{1}$. Therefore:
$$V_{1}^+=[\go{g}_{1},X^{1}]\subset [\go{g},X^1].\eqno (*)$$ 
   Let $Y_j\in
\widetilde{\go g}^{-\lambda_j}$ be such that  $\{Y_j,H_{\lambda
_j},X_j\}$ is an   $\go {sl}_2$-triple. Let    $A\in E_{0,j}(1,1)$, then  $B=[Y_j,A]$ is an element of   $E_{0,j}(1,-1)\subset \go{g}$ and one has
$$[B,X^1]=[B, X_{1}+X_{2}+\dots+X_{k}]=[B,X_j]=[[Y_{j},A],X_{j}]= [[Y_j,X_j],A]=[H_{j},A]=A\ .$$
Hence 
$\oplus_{j=1}^kE_{0,j}(1,1)\subset [\go g,X^1]$ and from decomposition $(*)$  above we get
$$V^+=[\go g,X^1]+ \widetilde{ {\go g}}^{\lambda _0}\ .$$    
To obtain the result on the codimension,  it is enough to prove that the preceding sum is direct.
The relations 
\begin{align*}
[\go z_{\go g}(\go a^0),X^1]&\subset \oplus_{j=1}^k
\widetilde {\go g}^{\lambda_j}\subset V^+_1\ ,\\
[E_{i,j}(1,-1), X^1]&\subset
         \begin{cases}
                       \{0\} \text{ if } j=0  ,\\
                      E_{i,j}(1,1) \text{ if } j\not =0\ ,
         \end{cases}
\end{align*}
imply that 
$$[\go g,X^1]\subset V^+_1\oplus\left(\oplus_{j=1}^k E_{0,j}(1,1)\right)\
.$$
Hence  
$$[\go g, X^1]= V^+_1\oplus\left(\oplus_{j=1}^k E_{0,j}(1,1)\right)\
,$$
and finally
$$V^+=[\go g, X^1]\oplus \widetilde {\go g}^{\lambda_0}\ .$$

       \end{proof}

    \vskip 20pt
 \subsection{Properties of  $\Delta_{0}$}\label{sec:proprietes-Delta-0}\hfill
  \vskip 10pt

Let  $\delta_{i}$ \index{deltai@$\delta_{i}$ (Part I)} be the fundamental relative invariant of  the prehomogeous vector space $\widetilde{ {\go l}}_i$ ($0\leq i\leq k$). By Theorem  \ref{thexistedelta_0}$,  \delta_{i}$ is absolutely irreducible.  As all the algebras $\widetilde{ {\go l}}_i$ are isomorphic (Remark \ref{0=j}),  	 the  $\delta_{i}$'s have all the same degree .  
\vskip 5pt

\begin{notation}\label{notation-kappa}

Let us denote by  $\kappa$ \index{kappa@$\kappa$} the common degree of the polynomial  $\delta_{i}$. By Theorem  \ref{th-k=0} there exists two types of graded algebras of  rank 1. 
One has \begin{align*}
 \kappa&=
         \begin{cases}
                      \delta \text{ in case   (1) } \text{corresponding  to the diagram}   \hskip 200pt \hbox{\unitlength=0.5pt
\begin{picture}(280,30)
  \put(-360,10){\circle*{10}}
\put(-355,10){\line (1,0){30}}
\put(-320,10){\circle*{10}}
\put(-310,10){\circle*{1}}
\put(-305,10){\circle*{1}}
\put(-300,10){\circle*{1}}
\put(-295,10){\circle*{1}}
\put(-290,10){\circle*{1}}
\put(-280,10){\circle*{10}}
\put(-275,10){\line (1,0){30}}
\put(-240,10){\circle{10}}
\put(-240,10){\circle{16}}
\put(-235,10){\line (1,0){30}}
\put(-200,10){\circle*{10}}
\put(-190,10){\circle*{1}}
\put(-185,10){\circle*{1}}
\put(-180,10){\circle*{1}}
\put(-175,10){\circle*{1}}
\put(-170,10){\circle*{1}}
\put(-165,10){\circle*{10}}
\put(-160,10){\line (1,0){30}}
\put(-125,10){\circle*{10}}
 \put(-330 ,-20){$\delta-1$}  \put(-190,-20){$\delta-1$} 
  \put(-100,0){$(\delta\in \N^*) $}\end{picture}} \\
  {}\\
                     2 \text{ in case  (2) } \text{corresponding to the diagram}   \hskip 70pt \hbox{\unitlength=0.5pt
\begin{picture}(280,30)
 \put(-70,10){\circle{10}}
 \put(-70,10){\circle{16}}
\put(-67,12){\line (1,0){41}}
\put(-67,8){\line (1,0){41}}
\put(-55,5){$>$}
\put(-25,10){\circle*{10}}
\end{picture}
}   \\
  \end{cases}
\end{align*}

\end{notation}

  \begin{theorem}\label{thpropridelta_0}\- 

  \noindent      {\rm (1)}  $\Delta _0$ is a homogeneous polynomial of degree   $\kappa (k+1)$.

 \noindent      {\rm (2)} For $j=0,\ldots ,k$, let $X_j$ be an element of   $\widetilde{
         {\go g}}^{\lambda
        _j}\backslash\{0\}$ and let  $x_j\in F$. Then 
      $$\Delta _0\Bigl(\sum _{j=0}^k x_jX_j\Bigr)=\prod _{j=0}^k x_j^\kappa\; . \Delta
          _0\Bigl(\sum _{j=0}^k X_j\Bigr)\ .$$

\end{theorem}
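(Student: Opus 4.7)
The plan is to exploit the $(k+1)$-dimensional torus in $L\subset G$ obtained from the strongly orthogonal $\go{sl}_2$-triples, then use Weyl-group symmetry and descent to the rank-$1$ subalgebra $\widetilde{\go l}_0$ to pin down the exponents.

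For each $j$, choose $Y_j\in\widetilde{\go g}^{-\lambda_j}$ so that $(Y_j,H_{\lambda_j},X_j)$ is an $\go{sl}_2$-triple inside $\widetilde{\go l}_j$, and set $\tau_j(t) := h_{X_j}(t)$ as in Definition \ref{def-theta-h}. By strong orthogonality of the $\lambda_i$, the element $H_{\lambda_i}$ ($i\neq j$) commutes with all of $\widetilde{\go l}_j$; hence $\tau_j(t)$ fixes every $H_{\lambda_i}$, fixes $H_0 = H_{\lambda_0}+\cdots+H_{\lambda_k}$ by Theorem \ref{th-decomp-Eij}, and therefore lies in $L$. On $\widetilde{\go g}^{\lambda_i}$ the $\go{sl}_2$-theory shows that $\tau_j(t)$ acts by $t^{\lambda_i(H_{\lambda_j})} = t^{2\delta_{ij}}$, so the commuting product $\tau(t_0,\dots,t_k):=\tau_0(t_0)\cdots\tau_k(t_k)$ sends $\sum_j X_j$ to $\sum_j t_j^2 X_j$. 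Consequently $t\mapsto \chi_0(\tau(t))$ is a rational character of the torus $(F^*)^{k+1}$, hence of the form $\prod_j t_j^{2n_j}$ with $n_j\in\Z_{\geq 0}$. By Proposition \ref{propWconjugues} any two $\lambda_i,\lambda_j$ are conjugate under some $w\in W\subset\text{Aut}_e(\go g)\subset G$; conjugating $\tau_i(t)$ by a lift of $w$ transports it to an element playing the role of $\tau_j(t)$ (up to a rescaling of $X_j,Y_j$ that does not alter the induced scalar action on root spaces), which forces $n_0=\cdots=n_k=:n$.

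Substituting $x_j = t_j^2$ in the previous identity yields $\Delta_0(\sum x_j X_j) = \prod x_j^{n}\cdot\Delta_0(\sum X_j)$ for $x_j\in(F^*)^2$, and the identity then extends by Zariski density to all $x_j\in F$; this establishes (2) up to determining $n$. Specializing $x_0=\cdots=x_k=t$ and using the homogeneity of $\Delta_0$ gives $\deg\Delta_0 = n(k+1)$, so (1) reduces to the single claim $n=\kappa$.

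The hard part is to identify $n=\kappa$. I would do this by restricting to a rank-$1$ slice: set $P(\xi_0) := \Delta_0(\xi_0 + X_1+\cdots+X_k)$ for $\xi_0\in\widetilde{\go g}^{\lambda_0}$. Using the idea of Remark \ref{rem-inclusion-groupes} applied to $\widetilde{\go l}_0$, elementary automorphisms of $\widetilde{\go l}_0$ extend to elements of $\text{Aut}_e(\widetilde{\go g})$ that act trivially on the centralizer $\widetilde{\go g}_1\ni X_1+\cdots+X_k$, commute with $H_0$, and thus lie in $G$; together with the dilation $\tau_0(t)$ they realize a homomorphic image of $G_{\widetilde{\go l}_0}$ fixing $X_1+\cdots+X_k$ and acting on $\widetilde{\go g}^{\lambda_0}$ in the rank-$1$ way. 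Hence $P$ is a non-zero relative invariant of $(G_{\widetilde{\go l}_0},\widetilde{\go g}^{\lambda_0})$, and Theorem \ref{thexistedelta_0} applied to this rank-$1$ prehomogeneous space gives $P = c\cdot\delta_0^m$ for some $c\in F^*$ and integer $m\geq 1$. Restricting to the line $\xi_0 = x_0 X_0$ and comparing with the formula from (2) (noting $\delta_0(X_0)\neq 0$ since $X_0$ is generic in $\widetilde{\go g}^{\lambda_0}$ for $\widetilde{\go l}_0$) forces $n = m\kappa$. The main remaining obstacle is to prove $m=1$; I anticipate carrying this out by computing $\Delta_0|_{\oplus_j\widetilde{\go g}^{\lambda_j}} = c'\prod_j\delta_j^{m}$ (a consequence of $L$-invariance and Weyl symmetry), and then using either the explicit rank-$1$ classification of Corollary \ref{cor-classification-k=0} to compare degrees case by case, or a factoriality argument exploiting the absolute irreducibility of $\Delta_0$: any $m\geq 2$ would produce, via $G$-translation of $\prod_j\delta_j$ off $\oplus_j\widetilde{\go g}^{\lambda_j}$, a relative invariant of $(G,V^+)$ of degree strictly less than $\deg\Delta_0$, contradicting that $\Delta_0$ is the fundamental one.
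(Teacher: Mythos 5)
Your reduction is sound up to the point you yourself flag: the torus $\tau(t_0,\dots,t_k)$ built from the commuting $\go{sl}_2$-triples, the Weyl-conjugation argument forcing a common exponent $n$, the Zariski-density extension to all $x_j\in F$, the identity $\deg\Delta_0=n(k+1)$, and the rank-$1$ slice giving $P=c\,\delta_0^{m}$ and hence $n=m\kappa$ are all correct and close in spirit to what the paper does (the paper's $\mu_i(X)=\Delta_0(X+X^i)$ is exactly your $P$, and its $g_j(t)$ are your $\tau_j$). But the entire content of the theorem sits in the equality $m=1$, and neither of your two proposed ways of obtaining it holds up as written. The factoriality argument fails: absolute irreducibility of $\Delta_0$ on $V^+$ says nothing about its restriction to the diagonal $\oplus_j\widetilde{\go g}^{\lambda_j}$ being reduced (restrictions of irreducible polynomials to subspaces are routinely proper powers, e.g. $x^2-yz$ restricted to $z=0$), and the function $\prod_j\delta_j$ is only defined on that diagonal -- there is no "$G$-translation off the diagonal" producing a polynomial relative invariant of $(G,V^+)$ of degree $\deg\Delta_0/m$, since the $\delta_j$ for $j\geq 1$ are not even $G$-relative invariants (only $P$-relative invariants, cf. Theorem \ref{th-delta_{j}invariants}). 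The case-by-case route through Corollary \ref{cor-classification-k=0} and Table 1 could in principle work, but it is not carried out, and it is not a small verification.

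The paper closes exactly this gap by a uniform argument you do not use: by Proposition 2.16 of \cite{M-R-S}, the degree of the fundamental relative invariant of a regular graded algebra over the algebraically closed field $\overline F$ equals the cardinality of a maximal system of strongly orthogonal roots in $\overline{V^+}$. Applying this to $\overline{\widetilde{\go l}_i}$ gives $\kappa$ roots $\beta_{i,1},\dots,\beta_{i,\kappa}$ for each $i$, their union of $\kappa(k+1)$ roots is maximal in $\overline{V^+}$ (otherwise the descent over $F$ would not terminate at $k$), and applying the same proposition to $\overline{\widetilde{\go g}}$ yields $\deg\Delta_0=\kappa(k+1)$ directly; comparing with your $\deg\Delta_0=m\kappa(k+1)$ then forces $m=1$. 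Without this input (or a completed classification check), your proof establishes only $\Delta_0(\sum x_jX_j)=\prod x_j^{m\kappa}\cdot\Delta_0(\sum X_j)$ and $\deg\Delta_0=m\kappa(k+1)$ for some undetermined $m\geq 1$, which is strictly weaker than the statement.
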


\begin{proof}\hfill

(1) Consider the prehomogeneous vector space  $\overline{\widetilde{ {\go l}}_0}= \widetilde{ {\go l}}_0\otimes_{F}\overline{F}=\overline{{\widetilde{\go{g}}^{-\lambda_{0}}}}\oplus[\overline{{\widetilde{\go{g}}^{-\lambda_{0}}}},\overline{{\widetilde{\go{g}}^{\lambda_{0}}}}]\oplus \overline{{\widetilde{\go{g}}^{\lambda_{0}}}}$. By  \cite{M-R-S} (Proposition 2.16) the degree of  $\delta_{0}$ ($=\kappa$) is equal to the number of strongly orthogonal roots (over  $\overline{F}$),    $\beta_{0,1},\dots,\beta_{0,\kappa}$,  appearing in the descent  applied to the graded algebra  $\overline{\widetilde{ {\go l}}_0}$ (see \cite{M-R-S} for details). More generally let us denote by  $\beta_{i,1},\dots,\beta_{i,\kappa}$ the strongly orthogonal roots  appearing in the descent  applied to the graded algebra $\overline{\widetilde{ {\go l}}_i}$. But then the  set of $\kappa(k+1)$ roots  $\beta_{0,1},\dots,\beta_{0,\kappa},\dots,\beta_{k,1},\dots,\beta_{k,\kappa}$ is a maximal set of strongly orthogonal roots in  $\overline{V^+}$ (if not it would exist a root  $\lambda_{k+1}$ over  $F$ which is strongly orthogonal to $\lambda_{0},\dots,\lambda_{k}$). Then, again by Proposition 2.16 of  \cite{M-R-S}, one obtain that the degree of  $\Delta_{0}$ is  $\kappa(k+1)$.
\vskip 5pt
(2) For  $i=0,\dots,k$, let us fix elements  $X_{i}\in \widetilde{ {\go g}}^{\lambda _i}\setminus\{0\}$. Consider the polynomial map on  $\widetilde{ {\go g}}^{\lambda _i}$ given by 
$$\mu_{i}  :X\mapsto
\Delta _0(X+X^i)\text{ where } X^i=X_0+X_2+\cdots +X_{i-1}+X_{i+1}+\dots+X_{k}\ .$$
Let   $L_i$ be the the group similar to  $G$ for the graded algebra  ${\widetilde{ {\go l}}_i}$, that is  $L_{i}= {\cal Z}_{\text{Aut}_{0}({\widetilde{ {\go l}}_i})}(H_{\lambda_{i}})$.  By Corollary \ref{cor-decomp-j}   one gets that   ${\widetilde{ {\go l}}_k}={\cal Z}_{\widetilde{\go g}}(\widetilde{ {\go l}}_0\oplus  \dots\oplus  \widetilde{ {\go l}}_{k-1}$). As all the algebras   $\widetilde{ {\go l}}_{j}$ are conjugated   (Proposition \ref{propWconjugues}) one has also  ${\widetilde{ {\go l}}_i}={\cal Z}_{\widetilde{\go g}}(\widetilde{ {\go l}}_0\oplus \dots \oplus \widetilde{ {\go l}}_{i-1}\oplus \widetilde{ {\go l}}_{i+1}\oplus \dots \oplus \widetilde{ {\go l}}_k$). Therefore the elements of  $L_{i}$, which are products of exponentials (of adjoints)  of nilpotent elements of  ${\widetilde{ {\go l}}_i}\otimes \overline{F}$, fix all the element  $X^i$. 

  Since $L_i\subset \overline{G}=G(\overline{F})$ and since  $\Delta_0$ is,  by construction, the restriction to $V^+$ of a relative invariant polynomial of $(\overline{G}, \overline{V^+})$, we deduce that
 for  $g\in L_{i}$ and  $X\in \widetilde{ {\go g}}^{\lambda _i} $ one has :
$$\mu_{i}(g.X)=\Delta _0(g.X+X^i)=\Delta _0(g.(X+X^i))=\chi_{0}(g)\Delta _0(X+X^i)=\chi_{0}(g)\mu_{i}(X).$$

Hence  $\mu_{i}$ is a relative invariant for the prehomogeneous space  $(L_{i},\widetilde{ {\go g}}^{\lambda _i})$, with character  ${\chi_{0}}_{|_{L_{i}}}$.  This invariant is non zero, as  $\mu_{i}(X_{i})=\Delta_{0}(X_{0}+\dots+X_{k})\neq0$. For $t\in F$, let  $g_{0}(t)$ be an element of  $L_{0}$ such that $g_{0}(t)_{|_{\widetilde{ {\go g}}^{\lambda _0}}}=t\text{Id}_{\widetilde{ {\go g}}^{\lambda _0}}$ (Lemme \ref{lem-tId-dansG}). If  $g\in G$ is such that $g.\widetilde{ {\go l}}_0=\widetilde{ {\go l}}_i$,  then the element  $g_{i}(t)=gg_{0}(t)g^{-1}\in L_{i}$ satisfies
$g_{i}(t)_{|_{\widetilde{ {\go g}}^{\lambda _i}}}=t\text{Id}_{\widetilde{ {\go g}}^{\lambda _i}}$ and  $\chi_{0}(g_{i}(t))=\chi_{0}(g_{0}(t))$. Therefore the polynomials $\mu_{i}$ have the same homogeneous degree, say $p$. Then 

$\Delta_{0}(g_{0}(t)\dots g_{k}(t).(X_{0}+\dots+X_{k}))=\Delta_{0}(t(X_{0}+\dots+X_{k}))=t^{\kappa(k+1)}\Delta_{0}(X_{0}+\dots+X_{k})$

$=\chi_{0}(g_{0}(t))\Delta_{0}(X_{0}+g_{1}(t).X_{1}+g_{2}(t).X_{2}+\dots+g_{k}(t).X_{k})$

$=....$

$=\chi_{0}(g_{0}(t))\chi_{0}(g_{1}(t))\dots\chi_{0}(g_{k}(t))\Delta_{0}(X_{0}+\dots+X_{k})$

$=t^{p(k+1)}\Delta_{0}(X_{0}+\dots+X_{k})$

Hence $\kappa=p$ is the common degree of the $\mu_{i}$'s, and  $\mu_{i}=c_{i}\delta _{i}$, with  $c_{i}\in F^*$ (remind that  $\delta_{i}$ is the fundamental relative invariant of  $(L_{i},\widetilde{ {\go g}}^{\lambda _i})$).

Also for  $(x_{0},x_{1},\dots,x_{k})\in F^{k+1}$:

$\Delta_{0}(x_{0}X_{0}+\dots+x_{k}X_{k})=\prod _{j=0}^k x_j^\kappa\; . \Delta
          _0\ (X_{0}+\dots+X_{k})$

\end{proof}

    \vskip 20pt
 \subsection{The polynomials   $\Delta_{j}$}\label{sec:sectiondeltaj}\hfill
  \vskip 10pt
  
  Let  $j\in \{1,2,\dots,k\}$. By \ref{thexistedelta_0} applied to the graded regular algebra $(\widetilde{g}_{j}, H_{\lambda_{j}}+\dots+H_{\lambda_{k}})$,  there exists an absolutely irreducible polynomial  $P_{j}$ on $V^+_{j}$ which is relatively invariant under the action of  $G_{j}={\cal Z}_{\text{Aut}_{0}(\widetilde{\go{g}}_{j})}(H_{\lambda_{j}}+\dots+H_{\lambda_{k}})\subset  \overline{G}$.   By Corollary \ref{cor-structureG} applied to  $(\widetilde{g}_{j}, H_{\lambda_{j}}+\dots+H_{\lambda_{k}})$, one has $G_j=\textrm{Aut}_e(\go g_j).(\cap_{ s=j}^k (G_j)_{H_{\lambda_s}})$.
  
  Let $\chi _j$ \label{chij} \index{khij@$\chi _j$} be the corresponding character of $G_j$. We will define extensions of these polynomials to   $V^+$, using the following decomposition:
$$V^+=V^+_j\oplus V^\perp_j \hbox{  where }
V^\perp_j=\bigl(\plus _{r<s,r<j} E_{r,s}(1,1)\bigr)\oplus \bigl(\plus  _{r<j}\widetilde{ {\go
g}}^{\lambda _r}\bigr)\ .$$

\begin{definition} \label{defdeltaj} We denote by  $\Delta _j$ \index{Deltaj@$\Delta _j$}  the unique polynomial (up to scalar multiplication)    such that  
$$\Delta _j(X+Y)=P_j(X)\hbox{  for  }X\in V^+_j, Y\in V^\perp_j\ ,$$
where  $P_j$  is an absolutely irreducible polynomial on $V^+_j$,  relatively invariant  under the action of  $G_j$.
\end{definition}
\vskip 5pt

It may be noticed that, as $P_{j}$ is the restriction to $V^+_{j}$ of an irreducible polynomial on $\overline {V_{j}^+}$, which is relatively invariant under the action of $\overline{G_j}$, the polynomial $\Delta_{j}$ is   the restriction to $V^+$ of a polynomial defined on $\overline{V^+}$.\vskip 5pt 
\begin{theorem}\label{thproprideltaj} Let  $j,s\in \{0,1,\ldots ,k\}$ and let $X_s\in  \widetilde{ {\go g}}^{\lambda _s}\setminus \{0\}$.

\noindent {\rm (1)}  $\Delta _j$ is an absolutely irreducible polynomial of degree $\kappa (k+1-j)$.

\noindent {\rm (2)} For  $X\in V^+$ one has
 \begin{align*}
      \Delta _j(g.X)&=\chi _j(g)\Delta _j(X) \hbox{  for  }g\in G_j \ ;\cr
       \Delta _j(g.X)& =\Delta _j(X) \hbox{  for }g\in \hbox{Aut}_e(\go g_j) \ ;\cr
        \Delta _j(g.X)&=\Delta _j(X) \hbox{  for }g= \exp(\ad Z) \hbox{  where }Z\in
               \plus_{r<s}E_{r,s}(1,-1)\ .
\end{align*}
\noindent {\rm (3)}    For $s=0,\ldots ,k$,  let  $x_s\in F$.  Then, for $j=0,\ldots ,k$, 

$$\Delta _j\Bigl(\sum _{s=0}^k x_sX_s\Bigr)=\prod _{s=j}^k x_s^\kappa \; . \Delta
_j\Bigl(\sum _{s=j}^k X_s\Bigr)\ .$$


\noindent {\rm (4)}  The polynomial   $X\in
V^+_j\mapsto \Delta _0(X_0+X_1+\cdots +X_{j-1}+X)$ is non zero and  equal  (up to scalar multiplication) to the restriction of 
$\Delta _j$ to $V^+_j$.
\end{theorem}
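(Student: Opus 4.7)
Parts (1) and (3) are immediate reductions to the statements already proved for $\Delta_0$. Since $\Delta_j = P_j\circ\pi_j$ where $\pi_j:V^+\to V_j^+$ is the linear projection along $V_j^\perp$, $\Delta_j$ is absolutely irreducible iff $P_j$ is, with the same degree; applying Theorem~\ref{thpropridelta_0}(1) inside the regular graded algebra $(\widetilde{\go g}_j,H_{\lambda_j}+\cdots+H_{\lambda_k})$, which by Theorem~\ref{th-descente} has rank $k+1-j$, yields $\deg P_j = \kappa(k+1-j)$. For (3) one notes that $X_s\in V_j^+$ for $s\geq j$ and $X_s\in V_j^\perp$ for $s<j$, whence by definition
$$\Delta_j\Bigl(\sum_{s=0}^k x_sX_s\Bigr)=P_j\Bigl(\sum_{s=j}^k x_sX_s\Bigr);$$
Theorem~\ref{thpropridelta_0}(2) applied inside $\widetilde{\go g}_j$ then gives the product formula.

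For Part (2) the key tool is the embedding $G_j\hookrightarrow\overline{G}$ of Remark~\ref{rem-inclusion-groupes}: any $g\in G_j$ extends to $ext(g)\in\overline{G}$ which acts trivially on $\widetilde{\go l}_0\oplus\cdots\oplus\widetilde{\go l}_{j-1}$, and in particular commutes with $\ad H_{\lambda_r}$ for each $r<j$. Since $V_j^+$ is the joint kernel of this commuting family on $V^+$ and $V_j^\perp$ is the direct sum of its other joint eigenspaces, $ext(g)$ preserves both subspaces, so $\pi_j$ intertwines the actions and the first identity follows from the relative invariance of $P_j$. The second identity is then a specialization, since $\chi_j$ is trivial on $\text{Aut}_e(\go g_j)\subset G_j$ (unipotent generators). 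The third identity is proved by a direct weight-space analysis: for $Z\in E_{r,s}(1,-1)$ with $r<s$, a case inspection of the weights $(p_0,\ldots,p_k)$ occurring in $V^+$ shows that (i) if $j\leq r<s$ then $Z\in\go g_j$ and one is reduced to the second identity; (ii) if $r<s<j$ then $\ad Z$ annihilates $V_j^+$ and preserves $V_j^\perp$; (iii) if $r<j\leq s$ then $\ad Z$ sends $V_j^+$ into $V_j^\perp$ and preserves $V_j^\perp$, with $(\ad Z)^2$ vanishing on $V_j^+$. In each case the $V_j^+$-component of $\exp(\ad Z)\cdot X$ coincides with $\pi_j(X)$, so $\Delta_j(\exp(\ad Z)\cdot X)=\Delta_j(X)$. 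The extension from a single summand to an arbitrary $Z\in\oplus_{r<s}E_{r,s}(1,-1)$ follows from the linearity of the infinitesimal condition $d\Delta_j|_X\cdot[Z,X]=0$.

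For Part (4), set $Q(X)=\Delta_0(X_0+\cdots+X_{j-1}+X)$ for $X\in V_j^+$. By Proposition~\ref{X0+...+Xkgenerique}, $Q(X_j+\cdots+X_k)=\Delta_0(X_0+\cdots+X_k)\neq 0$, so $Q$ is nonzero. For $g\in G_j$, the automorphism $ext(g)$ fixes $X_0+\cdots+X_{j-1}$ pointwise (acting trivially on $\oplus_{r<j}\widetilde{\go l}_r$) and stabilizes $V_j^+$, hence
$$Q(g\cdot X)=\Delta_0\bigl(ext(g)\cdot(X_0+\cdots+X_{j-1}+X)\bigr)=\chi_0(ext(g))\,Q(X),$$
so $Q$ is a nontrivial polynomial relative invariant of $(G_j,V_j^+)$. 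Theorem~\ref{thexistedelta_0}(2) applied inside $\widetilde{\go g}_j$ then gives $Q=cP_j^m$ for some $c\in F^*$ and $m\geq 1$. A final degree count along the line $X=t(X_j+\cdots+X_k)$, using Theorem~\ref{thpropridelta_0}(2) to evaluate both $Q(t(X_j+\cdots+X_k))=t^{\kappa(k+1-j)}\Delta_0(X_0+\cdots+X_k)$ and $P_j(t(X_j+\cdots+X_k))=t^{\kappa(k+1-j)}P_j(X_j+\cdots+X_k)$, shows both polynomials attain degree exactly $\kappa(k+1-j)$ on this line, forcing $m=1$. The main difficulty in the whole program is the weight analysis for case~(iii) of the third invariance in Part~(2): the $G_j$-intertwining argument breaks down when $Z$ straddles the index $j$, and one has to check by hand that the first-order correction $\ad Z\cdot V_j^+$ falls entirely into $V_j^\perp$ while $(\ad Z)^2$ kills $V_j^+$, so that $\pi_j$ is preserved after exponentiation.
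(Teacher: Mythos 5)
Your proof follows essentially the same route as the paper's: (1) and (3) by reduction to Theorem \ref{thpropridelta_0} inside $\widetilde{\go g}_j$; the first two identities of (2) from the stability of $V_j^\perp$ under $\go g_j$ and the triviality of $\chi_j$ on $\hbox{Aut}_e(\go g_j)$; the third invariance in (2) by the same splitting of $\plus_{r<s}E_{r,s}(1,-1)$ into the pieces with $j\leq r<s$, $r<s<j$ and $r<j\leq s$ (the paper's $\go n_j$, $K(0)$ and $K(1)$); and (4) by exhibiting $Q_j$ as a nonzero relative invariant of $(G_j,V_j^+)$ and matching degrees. One small inaccuracy in your case (iii): for $Z\in E_{r,s}(1,-1)$ with $r<j\leq s$, the operator $(\ad Z)^2$ does \emph{not} in general vanish on $V_j^+$ (it carries $\widetilde{\go g}^{\lambda_s}$ onto $\widetilde{\go g}^{\lambda_r}$ when $Z$ belongs to an $\go{sl}_2$-triple for $H_{\lambda_r}-H_{\lambda_s}$); what is true, and is all your argument needs, is that $(\ad Z)^2V_j^+$ lands in the weight-$0$ eigenspace of $\ad(H_{\lambda_j}+\cdots+H_{\lambda_k})$ in $V^+$, hence in $V_j^\perp$, so the $V_j^+$-component of $\exp(\ad Z)X$ is still $\pi_j(X)$.
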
  

\begin{proof}\hfill

Statements $(1)$ and $(3)$ are just  Theorem  \ref{thpropridelta_0} applied to the graded algebra  $\widetilde{\go{g}}_{j}$.

As  ${\go{g}}_{j}={\cal Z}_{{\go{g}}}(\widetilde{ {\go l}}_0\oplus \widetilde{ {\go l}}_1\oplus \dots\oplus \widetilde{ {\go l}}_{j-1})$, it is easy to see that  $V^\perp_j$ is stable under  $\ad(\go{g}_{j})$. 
  Hence $\overline{G}_{j}.\overline{V}^\perp_j\subset \overline{V}^\perp_j$.  
  
  The first assertion in  $(2)$ is a consequence of the definition of  $\Delta_{j}$.

We know from  \textsection 1.7, that the groups ${\rm Aut}_e(\go g_j)$ and  ${\rm Aut}_0(\go g_j)$ are respectively isomorphic to   ${\rm Aut}_e([\go g_j, \go g_j])$ and  ${\rm Aut}_0([\go g_j,\go g_j])$. Then, from   (\cite{Bou2} Chap VIII \textsection $11$ $n^\circ 2$, Proposition 3 page 163), the group  ${\rm Aut}_e(\go g_j)$ is the derived group of  ${\rm Aut}_0(\go g_j)$. It follows that the character  $\chi_j$ is trivial on  ${\rm Aut}_e(\go g_j)$, this is the second assertion of  (2).\\

As any element of  $${\go n}_j\index{ngothiquej@${\go n}_j$}=\plus _{j\leq r<s} E_{r,s}(1,-1)\subset \go g_j $$ is nilpotent, we obtain that $\Delta _j$ is invariant under the action of the group generated by the elements    $\exp(\ad {\go n}_j)$.\\
 One has the decomposition:
$${\go n}_{0}=\plus _{r<s} E_{r,s}(1,-1)={\go n}_j\oplus K(0)\index{K0@$K(0)$}\oplus K(1)\index{K1@$K(1)$}$$

where 
$$K(0)=\plus_{r<s\leq j-1} E_{r,s} (1,-1) \text{ and  }K(1)=\plus_{  r \leq j-1,\,\, j\leq s} E_{r,s} (1,-1).$$

Note that  $$K(1)=\{X\in \go{n}, [H_{\lambda_{0}}+\ldots+H_{\lambda_{j-1}},X]=X\}$$
and 
$$\go{n}_{j}\oplus K(0)=\{X\in \go{n}, [H_{\lambda_{0}}+\ldots+H_{\lambda_{j-1}},X]=0\}.$$
Therefore, as  $[\go{n}_{j}, K(0)]=0$, we get   $[\go{n}_{j},K(0)\oplus K(1)]\subset K(1)$.

Hence any element of  $\exp(\ad(\go{n}))$ can be written $\exp(\ad Z')\exp(\ad Z)$ with $Z'\in \go{n}_{j}$ and  $Z\in K(0)\oplus K(1)$.  Therefore it is enough to show that  $\Delta_{j}$ is invariant by  $\exp(\ad Z)$.

But $[K(0),V_{j}^+]=[\plus_{r<s\leq j-1} E_{r,s} (1,-1), (\plus_{j\leq r'<s'} E_{r',s'} (1,1))\plus(\plus_{\ell=j}^k\widetilde{\go{g}}^{\lambda_{\ell}})]=\{0\}$. One has also  $[K(1),V^{+}_{j}]\subset V_{j}^{\perp}$ as $V^{+}_{j}$ is the eigenspace of $\ad(H_{\lambda_{j}}+\ldots+H_{\lambda_{k}})$ for the eigenvalue  $2$ and as  $K(1)$ is the eigenspace of  $\ad(H_{\lambda_{j}}+\ldots+H_{\lambda_{k}})$ for the eigenvalue $-1$. Hence for  $X\in V^{+}_{j}$ and for  $Z\in K(0)\plus K(1)$ one has $\exp(\ad Z).X=X+X'$, with $X'\in V_{j}^{\perp}$. This implies that  $\Delta_{j}(\exp (\ad Z).X=\Delta_{j}(X)$.  

It remains to prove $(4)$. Let $Q_{j}$ be the polynomial on  $V^+_{j}$ defined by 
$$Q_{j}(X)= \Delta_{0}(X_{0}+\ldots+X_{j-1}+X).$$

The polynomial $Q_{j}$ is non zero because $Q_{j}(X_{j}+\ldots+X_{k})= \Delta_{0}(X_{0}+\ldots+X_{k})\neq 0$ (as $X_{0}+\ldots+X_{k}$ is  generic by the criterion of Proposition \ref{prop-generiques-cas-regulier}).

 This polynomial  $Q_{j}$ is also relatively invariant under $G_{j}$ (because  $G_{j} \subset \bar{G}$ centralizes the elements  $X_{0},\ldots,X_{j-1}$). For $t\in F$, let $g_{t}$ be the element of  $G_{j}$ whose action on  $ V^+_{j}$ is $t.\text{Id}_{ {V}^+_{j}}$ (Lemma \ref{lem-tId-dansG}). By Theorem  \ref{thpropridelta_0} one has:
 $$Q_{j}(g_{t}X)=\Delta_{0}(X_{0}+\ldots+X_{j-1}+tX)=t^{\kappa(k-j+1)}Q_{j}(X).$$
 
 Hence $Q_{j}$ is a relative invariant of the same degree as $\Delta_{j}$. Therefore  $Q_{j}=\alpha \Delta_{j}$, with  $\alpha\in F^*$.
 
 \end{proof}

\vskip 30pt
  
 
 \section{Classification of regular graded Lie algebras}\label{section-classification}
 \vskip 10pt
 \subsection{General principles for the classification}\label{section2-1-principes}\hfill
 
 \vskip 5pt
 
 Our aim is to classify the regular graded Lie algebras defined in  Definition \ref{def-regulier}. The notations are those of section \ref{section-PH}. We have seen in section   \ref{section1-extension} that the graded algebra
  $$\overline{\widetilde {\go g}}=\overline{V^-}\oplus \overline{\go g}\oplus \overline{V^+}$$
  
  is a regular prehomogeneous space of commutative type, over the algebraically closed field  $\overline{F}$, defined by the data $(\widetilde{\Psi}, \alpha_{0})$. We associate to such an object the Dynkin diagram  $\widetilde{\Psi}$, on which the vertex corresponding to  $\alpha_{0}$ is circled. Such a diagram is called the weighted   Dynkin diagram  of the graded algebra  $\overline{\widetilde {\go g}}$. The classification is the same as over  $\C$. It was given in  \cite{M-R-S}. This is the list:
  
  \vskip 30pt
  
  \hskip 150pt{$A_{2n-1}$ \hskip 20pt  \hbox{\unitlength=0.5pt
\begin{picture}(250,30)
\put(10,10){\circle*{10}}
\put(15,10){\line (1,0){30}}
\put(50,10){\circle*{10}}
\put(60,10){\circle*{1}}
\put(65,10){\circle*{1}}
\put(70,10){\circle*{1}}
\put(45,-20){$n-1$}
\put(75,10){\circle*{1}}
\put(80,10){\circle*{1}}
\put(90,10){\circle*{10}}
\put(95,10){\line (1,0){30}}
\put(130,10){\circle*{10}}
\put(130,10){\circle{16}}
\put(135,10){\line (1,0){30}}
\put(170,10){\circle*{10}}
\put(180,10){\circle*{1}}
\put(185,10){\circle*{1}}
\put(190,10){\circle*{1}}
\put(170,-20){$n-1$}
\put(195,10){\circle*{1}}
\put(200,10){\circle*{1}}
\put(210,10){\circle*{10}}
\put(215,10){\line (1,0){30}}
\put(250,10){\circle*{10}}
\end{picture}
}
  
}
  \vskip 30pt
  
  \hskip 150pt{$B_{n}$ \hskip 20pt  \hbox{\unitlength=0.5pt
\begin{picture}(250,30)(-10,0)
\put(10,10){\circle*{10}}
\put(10,10){\circle{16}}
\put(15,10){\line (1,0){30}}
\put(50,10){\circle*{10}}
\put(55,10){\line (1,0){30}}
\put(90,10){\circle*{10}}
\put(95,10){\line (1,0){30}}
\put(130,10){\circle*{10}}
\put(135,10){\circle*{1}}
\put(140,10){\circle*{1}}
\put(145,10){\circle*{1}}
\put(150,10){\circle*{1}}
\put(155,10){\circle*{1}}
\put(160,10){\circle*{1}}
\put(165,10){\circle*{1}}
\put(170,10){\circle*{10}}
\put(174,12){\line (1,0){41}}
\put(174,8){\line (1,0){41}}
\put(190,5.5){$>$}
\put(220,10){\circle*{10}}
\end{picture}
}}
 \vskip 30pt
 
\hskip 150pt{$C_{n}$ \hskip 20pt  \hbox{\unitlength=0.5pt
\begin{picture}(250,30)
\put(10,10){\circle*{10}}
\put(15,10){\line (1,0){30}}
\put(50,10){\circle*{10}}
\put(55,10){\line (1,0){30}}
\put(90,10){\circle*{10}}
\put(95,10){\circle*{1}}
\put(100,10){\circle*{1}}
\put(105,10){\circle*{1}}
\put(110,10){\circle*{1}}
\put(115,10){\circle*{1}}
\put(120,10){\circle*{1}}
\put(125,10){\circle*{1}}
\put(130,10){\circle*{1}}
\put(135,10){\circle*{1}}
\put(140,10){\circle*{10}}
\put(145,10){\line (1,0){30}}
\put(180,10){\circle*{10}}
\put(184,12){\line (1,0){41}}
\put(184,8){\line(1,0){41}}
\put(200,5.5){$<$}
\put(230,10){\circle*{10}}
\put(230,10){\circle{16}}
\end{picture}
}}

 \vskip 30pt
 \hskip 150pt{$D_{n,1}$ \hskip 20pt \hbox{\unitlength=0.5pt
\begin{picture}(240,40)(0,0)
\put(10,10){\circle*{10}}
\put(10,10){\circle{16}}
\put(15,10){\line (1,0){30}}
\put(50,10){\circle*{10}}
\put(55,10){\line (1,0){30}}
\put(90,10){\circle*{10}}
\put(95,10){\line (1,0){30}}
\put(130,10){\circle*{10}}
\put(140,10){\circle*{1}}
\put(145,10){\circle*{1}}
\put(150,10){\circle*{1}}
\put(155,10){\circle*{1}}
\put(160,10){\circle*{1}}
\put(170,10){\circle*{10}}
\put(175,10){\line (1,0){30}}
\put(210,10){\circle*{10}}
\put(215,14){\line (1,1){20}}
\put(240,36){\circle*{10}}
\put(215,6){\line(1,-1){20}}
\put(240,-16){\circle*{10}}
\end{picture}
}}

 \vskip 30pt
 \hskip 150pt{$D_{2n,2}$ \hskip 20pt \hbox{\unitlength=0.5pt
\begin{picture}(240,40)(0,0)
\put(10,10){\circle*{10}}
\put(15,10){\line (1,0){30}}
\put(50,10){\circle*{10}}
\put(55,10){\line (1,0){30}}
\put(90,10){\circle*{10}}
\put(95,10){\line (1,0){30}}
\put(130,10){\circle*{10}}
\put(140,10){\circle*{1}}
\put(145,10){\circle*{1}}
\put(150,10){\circle*{1}}
\put(155,10){\circle*{1}}
\put(160,10){\circle*{1}}
\put(170,10){\circle*{10}}
\put(175,10){\line (1,0){30}}
\put(210,10){\circle*{10}}
\put(215,14){\line (1,1){20}}
\put(240,36){\circle*{10}}
\put(240,36){\circle{16}}
\put(215,6){\line (1,-1){20}}
\put(240,-16){\circle*{10}}
\end{picture}
}
}\hskip 15pt  ($2n$ vertices)

 \vskip 30pt
 \hskip 150pt{$E_{7}$ \hskip 25pt \hbox{\unitlength=0.5pt
\begin{picture}(240,40)(-10,0)
\put(10,10){\circle*{10}}
\put(15,10){\line (1,0){30}}
\put(50,10){\circle*{10}}
\put(55,10){\line (1,0){30}}
\put(90,10){\circle*{10}}
\put(90,5){\line (0,-1){30}}
\put(90,-30){\circle*{10}}
\put(95,10){\line (1,0){30}}
\put(130,10){\circle*{10}}
\put(135,10){\line (1,0){30}}
\put(170,10){\circle*{10}}
\put(175,10){\line (1,0){30}}
\put(210,10){\circle*{10}}
\put(210,10){\circle{16}}
\end{picture}
}
} \vskip 40pt
 
  Remind that the circled root    $\alpha_{0}$ is the unique root in $\widetilde{\Psi}$ whose restriction to  $\go{a}$ is the root  $\lambda_{0}$.
 
 Therefore the  Satake-Tits diagram of $\widetilde{\go{g}}$ is such that $\alpha_{0}$ is a white root  which is not connected by an arrow to another white root. We associate to the regular graded Lie algebra $(\widetilde{\go{g}},H_{0})$, the  Satake-Tits diagram of $\widetilde{\go{g}}$ where the white root  $\alpha_{0}$ is circled. Such a diagram will be called the weighted  Satake-Tits diagram of $(\widetilde{\go{g}},H_{0})$.  As the list of the allowed  Satake-Tits diagrams over $F$ are well known (\cite {Schoeneberg}, \cite {Tits})  it is easy to give the list of the weighted Satake-Tits Diagrams.
 \vskip 10pt
Conversely suppose we are given  a  weighted Satake-Tits diagram $\bb D$ such that   the underlying  weighted Dynkin diagram is in the list above. Take  $\widetilde{\go{g}}$  any Lie algebra over $F$ whose Satake-Tits diagram is the underlying diagram of $\bb D$. Define $H_{0}$ to be the unique element in $ \go{a}$ satisfying the equations  $\alpha_{0}(H_{0})=2$ ($\alpha_{0}$ being the circled root) and $\beta(H_{0})= 0$ if  $\beta$ is one of the other simple roots. Then $(\widetilde{\go{g}},H_{0})$ is a graded Lie algebra satisfying the conditions in Definition \ref {def-alg-grad-1} and its weighted Satake-Tits diagram is $\bb D$.
  
The list of the   weighted Satake-Tits diagrams is  part of the Table 1 below. At this place we must keep in mind that unlike  the real case, if $F$ is a p-adic local field of characteristic $0$, the Satake-Tits diagram does not completely determine the isomorphism class of $\widetilde{\go{g}}$ (see \cite{Schoeneberg} Chap. 3,4). The classification of the (absolutely simple) graded Lie algebras will then be complete if we are able to give the description, for each weighted Satake-Tits diagram, of the possible  Lie algebras.
    
Beside the Satake-Tits diagram, two other ingredients are needed for the classification of the simple algebras over, namely the so-called {\it anisotropic kernel} which is the anisotropic subalgebra corresponding to the black roots and the action of the Galois group Gal($\overline F;F)$ on the root system, which is partly encoded by the arrows in the diagram (\cite{Schoeneberg} Chap. 3, 4).
     
If we consider the diagrams arising in Table 1 we split them into four subsets:
     \vskip 10pt
     
$a)$ The diagrams with only white roots and no arrows (case (3), (6), (8), (11) (13) in Table 1). According to the discussion above, each of  these cases corresponds to only one isomorphism class of simple algebras. The corresponding Lie algebras are described in Section \ref{sec:section-Table} and in Table 1. 
  \vskip 10pt
$b)$ The diagrams where the black roots are isolated in the sense that they are only connected  to white roots (cases (4), (5), (7), (10), (12), we note also that in these cases there are no arrows in the diagram). This means that the anisotropic kernel is a direct product of algebras corresponding to the diagram ``$\bullet$''. It is known (see for example \cite {Schoeneberg}, section 2.2 and Example 3.2.5) that such an algebra is the derived Lie algebra of the unique quaternion division algebra over $F$. Hence, again, the corresponding Satake-Tits diagrams determines only one isomorphism class of simple algebras. See Section \ref{sec:section-Table} and   Table 1.
  \vskip 10pt
$c)$ The case (1):

\hbox{\unitlength=0.4pt
\begin{picture}(310,00)
\put(10,10){${\linethickness{1mm}\line(1,0){20}}$}
 \put(30,10){\line (1,0){17}}
 \put(52,10){\circle{10}}
 \put(57,10){\line (1,0){17}}
 \put(74,10){${\linethickness{1mm}\line(1,0){20}}$}
 \put(99,10){\circle*{1}}
\put(104,10){\circle*{1}}
\put(109,10){\circle*{1}}
\put(114,10){\circle*{1}}
\put(119,10){\circle*{1}}
\put(122,10){${\linethickness{1mm}\line(1,0){20}}$}
 \put(142,10){\line (1,0){15}}
 \put(164,10){\circle{10}}
 \put(164,10){\circle{16}}
 \put(171,10){\line (1,0){15}}
 \put(186,10){${\linethickness{1mm}\line(1,0){20}}$}
 \put(211,10){\circle*{1}}
\put(216,10){\circle*{1}}
\put(221,10){\circle*{1}}
\put(226,10){\circle*{1}}
\put(231,10){\circle*{1}}
\put(233,10){${\linethickness{1mm}\line(1,0){20}}$}
 \put(253,10){\line (1,0){17}}
 \put(275,10){\circle{10}}
  \put(280,10){\line (1,0){17}}
 \put(297,10){${\linethickness{1mm}\line(1,0){20}}$}
 \end{picture}
}  (2k+1 white vertices),
\text{where} \hbox{\unitlength=0.4pt
\begin{picture}(30,30)
\put(10,10){${\linethickness{1mm}\line(1,0){20}}$}\end{picture}} = \hbox{\unitlength=0.4pt
\begin{picture}(200,00)
\put(10,10){\circle*{10}} \put(15,10){\line(1,0){20}}  \put(40,10){\circle*{10}}\put(50,10){\circle*{1}}\put(55,10){\circle*{1}}\put(60,10){\circle*{1}}\put(65,10){\circle*{1}}\put(70,10){\circle*{1}}\put(75,10){\circle*{10}}\put(80,10){\line(1,0){20}}\put(105,10){\circle*{10}} \put(30,-15){$\delta-1$} \put(120,10){ ($\delta\in \N^*)$.} \end{picture}}

This case corresponds to $\widetilde{\go{g}}=\go{sl}(2(k+1),D)$, where $D$ is a central division algebra over $F$ of dimension $\delta^2$. If $\varphi$  denotes the Euler function, it is known that there are $\varphi(\delta)$ non isomorphic central division algebras over $F$ of dimension $\delta^2$ (\cite{Blanchard}, Corollaire 5.2)  and hence there are $\varphi(\delta)$ non isomorphic graded Lie algebras corresponding to this weighted Satake-Tits diagram. 
  \vskip 10pt
d) The quasi-split (non-split) Satake-Tits diagrams (i.e. there are only white roots in the diagram, but there are arrows). This corresponds to the following two cases.

$\bullet$ case (2):

\vskip10pt
$A_{2n-1}$ \hbox{\unitlength=0.5pt
\begin{picture}(250,30)(-50,0)
\put(10,40){\circle{10}}
\put(15,40){\line (1,0){30}}
\put(50,40){\circle{10}}
\put(60,40){\circle*{1}}
\put(65,40){\circle*{1}}
\put(70,40){\circle*{1}}
\put(75,40){\circle*{1}}
\put(80,40){\circle*{1}}
\put(85,40){\circle*{1}}
\put(90,40){\circle*{1}}
\put(95,40){\circle*{1}}
\put(100,40){\circle*{1}}
\put(110,40){\circle{10}}
\put(115,40){\line (1,-1){21}}
\put(140,15){\circle{10}}
\put(140,15){\circle{16}}
\put(115,-10){\line (1,1){21.5}}
\put(110,-10){\circle{10}}
\put(110,8){\vector(0,1){23}}
\put(110,22){\vector(0,-1){23}}
\put(100,-10){\circle*{1}}
\put(95,-10){\circle*{1}}
\put(90,-10){\circle*{1}}
\put(85,-10){\circle*{1}}
\put(80,-10){\circle*{1}}
\put(75,-10){\circle*{1}}
\put(70,-10){\circle*{1}}
\put(65,-10){\circle*{1}}
\put(60,-10){\circle*{1}}
\put(50,-10){\circle{10}}
\put(50,8){\vector(0,1){23}}
\put(50,22){\vector(0,-1){23}}
\put(15,-10){\line (1,0){30}}
\put(10,-10){\circle{10}}
\put(10,8){\vector(0,1){23}}
\put(10,22){\vector(0,-1){23}}
\end{picture}
}
 
\vskip 5pt 
 According to \cite{Schoeneberg} (Remark 3.2.10. and Remark 3.2.11.) this  diagram corresponds to special unitary Lie algebras on $E^{2n}$, where $E$ is any quadratic extension of the base field $F$. See Section \ref  {sec:section-Table} .

\vskip 10pt

$\bullet$ Case (9):

\vskip10pt
$D_{m}$ \hskip 30pt 
\hbox{\unitlength=0.5pt
\begin{picture}(240,40)(0,-10)
\put(10,10){\circle{10}}
\put(10,10){\circle{16}}
\put(15,10){\line (1,0){30}}
\put(50,10){\circle{10}}
\put(55,10){\line (1,0){30}}
\put(90,10){\circle{10}}
\put(95,10){\line (1,0){30}}
\put(130,10){\circle{10}}
\put(140,10){\circle*{1}}
\put(145,10){\circle*{1}}
\put(150,10){\circle*{1}}
\put(155,10){\circle*{1}}
\put(160,10){\circle*{1}}
\put(170,10){\circle{10}}
\put(175,10){\line (1,0){30}}
\put(210,10){\circle{10}}
\put(215,14){\line (1,1){20}}
\put(240,36){\circle{10}}
\put(215,6){\line (1,-1){20}}
\put(240,-16){\circle{10}}
\put(240,2){\vector(0,1){23}}
\put(240,16){\vector(0,-1){23}}
\end{picture}
}
\vskip 5 pt
According to \cite {Schoeneberg} (section 4.5.1.) these cases correspond to orthogonal Lie algebras for quadratic forms of Witt index $m+1$ in $F^{2m}$ (see Section \ref{sec:section-Table}).

 
 

\vskip 10 pt
 
We will now give a simple ``diagrammatic''   or ``combinatorial`` algorithm which allows to determine the weighted Satake-Tits diagram of $\widetilde{\go{g}}_{1}$ (section \ref{section-descente1}) from the diagram of  $\widetilde{\go{g}}$. By induction this algorithm will give    the 1-type (Definition \ref{def-1-type}). This algorithm allows also to determine easily the rank of the graded algebra.  (Remark  \ref{rem-combinatoire} b) and c) below).

For the definition of the extended Dynkin diagram see \cite{Bou1}, Chap. VI \S4 $n^\circ 3$ p.198.

 \begin{prop}\label{prop-diagg1}\hfill

   Let us make the following  operations on the Satake-Tits diagram of the regular graded lie algebra $\widetilde{\go{g}}$:
 
 $1)$ One extends  the  Satake-Tits diagram by considering the underlying extended Dynkin diagram where the additional root  $-\omega$  is white ($\omega$ being the greatest root of $ \widetilde{ {\cal R}}^+$).
 
$ 2)$ One removes the vertex  $\alpha_{0}$ (the circled root), as well as all the white vertices  which are connected to  $\alpha_{0}$ through a chain of black vertices, and one removes also these  black vertices.
 
$ 3)$ One circles the vertex  $-\omega$ which has been added.
 
The diagram which is obtained after these three operations   is the weighted Satake-Tits diagram of  $\widetilde{\go{g}}_{1}$.

 \end{prop}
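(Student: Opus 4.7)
The plan is to first reduce to the case where $\widetilde{\go g}$ is absolutely simple (invoking Remark \ref{rem-simple}, so that $\widetilde{\cal R}$ is irreducible and has a unique highest root $\omega$), then to identify the simple roots of $\widetilde{\cal R}_1$ and match them, one by one, with the vertices produced by the combinatorial procedure. The starting point is Proposition \ref{prop-gtilde(1)} together with its proof, which gives
$$\widetilde{\cal R}_1=\{\beta\in\widetilde{\cal R}\mid\beta\sorth\alpha,\ \forall\alpha\in S_{\lambda_0}\},$$
where $S_{\lambda_0}=\{\alpha\in\widetilde{\cal R}\mid\rho(\alpha)=\lambda_0\}$. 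The positive system I would use on $\widetilde{\cal R}_1$ is $\widetilde{\cal R}_1^+=\widetilde{\cal R}_1\cap\widetilde{\cal R}^+$, so that $\widetilde{\cal R}_1^+$ arises naturally from the ambient one.

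The first claim is that a basis of simple roots of $\widetilde{\cal R}_1$ (for the above positive system, suitably re-oriented for the affine node) is exactly
$$\widetilde{\Psi}_1\;=\;(\widetilde{\Psi}\cap\widetilde{\cal R}_1)\;\cup\;\{-\omega\}.$$
The containment of $\widetilde{\Psi}\cap\widetilde{\cal R}_1$ in the simple roots of $\widetilde{\cal R}_1$ follows because any such $\beta$, being simple in the ambient system, cannot be written as a sum of two positive roots and a fortiori not as a sum of two elements of $\widetilde{\cal R}_1^+$. The extra vertex $-\omega$ plays the role of the affine node in the classical Borel--de Siebenthal picture: since $\omega$ is strongly orthogonal to every $\alpha\in S_{\lambda_0}$ (use $\omega+\alpha\notin\widetilde{\cal R}$ because $\omega$ is maximal, and $\omega-\alpha\notin\widetilde{\cal R}$ because $\rho(\omega-\alpha)=\lambda^0-\lambda_0$ is not a restricted root, by the characterisation of $\lambda^0$ in Proposition \ref{prop.plusgranderacine}), one has $-\omega\in\widetilde{\cal R}_1$, and a dimension/rank count shows that $\widetilde{\Psi}_1$ has the right cardinality.

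The key combinatorial step is the identification $\widetilde{\Psi}\setminus\widetilde{\cal R}_1=R$, where $R$ is the set of vertices removed by step (2) of the algorithm, i.e. $\{\alpha_0\}$ together with every simple root reached from $\alpha_0$ by a path whose intermediate vertices are all black (the endpoint of such a path being either a black vertex on the chain or a white vertex terminating it). The inclusion $R\subseteq\widetilde{\Psi}\setminus\widetilde{\cal R}_1$ is proved by the following construction: if $\alpha_0-b_1-b_2-\cdots-b_m$ is such a black chain (possibly $m=0$) and $\alpha$ is the next vertex (either $b_m$ itself or a white neighbour of it), then the partial sum $\gamma=\alpha_0+b_1+\cdots+b_m$ is a root (each intermediate partial sum is a root by connectedness of the support in the Dynkin diagram, cf.\ \cite{Bou1}), $\rho(\gamma)=\lambda_0$ so $\gamma\in S_{\lambda_0}$, and $\alpha+\gamma$ is again a root because its support is still a connected subdiagram; hence $\alpha\not\sorth\gamma$ and $\alpha\notin\widetilde{\cal R}_1$. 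Conversely, for the inclusion $\widetilde{\Psi}\setminus\widetilde{\cal R}_1\subseteq R$, one observes that every $\gamma\in S_{\lambda_0}$ has the form $\gamma=\alpha_0'+(\text{sum of black simple roots})$ with $\alpha_0'$ in the Satake orbit (Galois orbit) of $\alpha_0$, and whose support is a connected subdiagram containing $\alpha_0'$; if a simple root $\alpha\notin R$, then either $\alpha$ lies in a different irreducible component from $\alpha_0$ (where strong orthogonality is automatic), or $\alpha$ is separated from every $\alpha_0'$ by at least one white vertex outside $R$, and then neither $\alpha+\gamma$ nor $\alpha-\gamma$ has connected support, so neither is a root. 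The main obstacle of the proof lies precisely in this converse direction, because one must control the Galois orbit of $\alpha_0$ inside $\widetilde{\Psi}$ (handling the arrows in the Satake--Tits diagram, the cases of case (2) and case (9) in the classification), and one must deal with the non--simply--laced situation where strong orthogonality is a stronger condition than orthogonality.

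Finally, once the underlying Dynkin diagram is identified, the Satake--Tits decoration transfers from the ambient diagram to the descended diagram: the colour (black or white) of a vertex of $\widetilde{\Psi}\cap\widetilde{\cal R}_1$ is unchanged because the restriction $\rho_1:\overline{\go j_1}^*\to\go a_1^*$ factors through $\rho$ on these roots; arrows between two surviving vertices are preserved, because the Galois action $\sigma\mapsto w_\sigma\circ\sigma$ stabilises $\widetilde{\cal R}_1$ (it commutes with strong orthogonality to $S_{\lambda_0}$ since $S_{\lambda_0}$ is itself Galois-stable); the vertex $-\omega$ is fixed by Galois (as $\omega$ is intrinsically characterised), so it carries no new arrow and is white ($\rho(\omega)=\lambda^0\neq 0$). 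It remains to check that $-\omega$ is the correct vertex to circle. By Proposition \ref{racines-g1}(3), the circled root of the descended diagram is the unique $\lambda_1\in\widetilde{\Pi}_1$ with $\lambda_1(H_1)=2$, and by the first part of Proposition \ref{propWconjugues} applied to the highest-root characterisation of $\lambda^0$ in Proposition \ref{prop.plusgranderacine}, this root $\lambda_1$ is exactly $\rho(\omega)=\lambda^0$ (up to the sign convention fixing the new positive system), matching the algorithm's prescription.
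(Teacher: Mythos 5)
Your overall strategy matches the paper's: identify the ambient simple roots surviving into $\widetilde{\cal R}_1$ as those not reachable from $\alpha_{0}$ by a black chain (your two inclusions for $R$ are essentially the paper's Lemma \ref{lemme-base-R1}, argued via $\langle\beta_i,\gamma_j\rangle\le 0$ and the fact that every $\gamma\in S_{\lambda_0}$ is $\alpha_0$ plus black simple roots), and then bring in $-\omega$ as the new node. The gap is in the second half. First, you never actually prove that $(\widetilde{\Psi}\cap\widetilde{\cal R}_1)\cup\{-\omega\}$ is a basis of $\widetilde{\cal R}_1$: your indecomposability argument is carried out in the positive system $\widetilde{\cal R}_1\cap\widetilde{\cal R}^+$, in which $-\omega$ is a \emph{negative} root, so the set you exhibit cannot be the set of simple roots of that order; and "right cardinality" plus indecomposability for a different order does not make a basis. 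The genuine basis of $\widetilde{\cal R}_1\cap\widetilde{\cal R}^+$ is $\Psi_1\cup\{\alpha_1\}$ with $\alpha_1\notin\widetilde{\Psi}$ (compare Remark \ref{rem-unicite-lambda0}); the paper relates it to $\Psi_1\cup\{-\omega\}$ by applying the longest element $w_1$ of $W({\cal R}_1)$ and negating, which is exactly the content hidden in your phrase "suitably re-oriented."

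Second, and more seriously, the Satake--Tits decoration is not an invariant of an arbitrary basis: colors and arrows are only meaningful for a basis adapted to an order compatible with the $F$-structure (a $\Gamma$-order in Schoeneberg's sense). Your justifications ("$\rho_1$ factors through $\rho$," "$-\omega$ is Galois-fixed") presuppose that the re-oriented basis is so adapted, which is precisely what has to be proved. The paper's proof devotes its entire final part to this: it constructs $\Gamma$-orders on the ambient and descended root lattices, shows that $w_1$ belongs to $(W_1)_\Gamma$, and invokes Schoeneberg's Lemma 4.3.1 to conclude that $w_1$ carries black roots to black roots and white to white, so that the decoration read off from $\Psi_1\cup\{-\omega\}$ coincides with the intrinsic Satake--Tits diagram of $\widetilde{\go g}_1$. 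The same issue reappears in your last sentence: $\lambda_1\neq\lambda^0=\rho(\omega)$ in general; they are conjugate by $w_1$ (Proposition \ref{propWconjugues} applied to $\widetilde{\go g}_1$), and "up to the sign convention" again conceals the step that needs the $\Gamma$-order argument. Without it the proof of the decorated statement is incomplete, even though the underlying Dynkin diagram is correctly identified.
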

 
 \begin{proof}
 
 Remind that (Proposition \ref{prop-gtilde(1)}) 
 $$ \widetilde{ {\cal R}}_1=\{\beta \in \widetilde{ {\cal R}}\mid  \ \beta \sorth
\alpha, \, \forall \alpha \in S_{\lambda_{0}} \}.$$

Hence 
$${\cal R}_1= \widetilde{{\cal R}}_1\cap{\cal R}=\{\beta \in  { {\cal R}}\mid  \ \beta \sorth
\alpha, \, \forall \alpha \in S_{\lambda_{0}} \}.$$

Remind also that we have defined the following sets of positive roots:
$$\widetilde{ {\cal R}}_1^+=\widetilde{ {\cal R}}_1\cap \widetilde{ {\cal R}}^+,\,\, {\cal R}_1^+={\cal R}_{1}\cap  \widetilde{ {\cal R}}^+=\widetilde{ {\cal R}}_1\cap {\cal R}^+,$$
which correspond to the basis  $\widetilde{\Psi}_{1}$ and  $\Psi_{1}$ of  $ \widetilde{ {\cal R}}_1$ and  ${\cal R}_1$ respectively.

Let us first prove a Lemma.
 
 \begin{lemme}\label{lemme-base-R1}\hfill
 
 One has  $\Psi_{1}= {\cal R}_{1}\cap \Psi$.

 \end{lemme}

Proof of the Lemma: Let  $\langle {\cal R}_{1}\cap \Psi \rangle ^+$ be the set of positive linear combinations of elements in  ${\cal R}_{1}\cap \Psi$.
It is enough to show that  $\langle {\cal R}_{1}\cap \Psi \rangle ^+= {\cal R}_{1}^+$.

The inclusion  $\langle {\cal R}_{1}\cap \Psi \rangle ^+\subset {\cal R}_{1}^+$ is obvious. Conversely let  $\beta\in {\cal R}_{1}^+$. Let us write  $\beta$ as a positive linear combination of elements of  $\Psi$:
$$\beta=\sum_{\beta_{i}\in \Psi}x_{i}\beta_{i},\hskip 20pt x_{i}\in \N.$$
As  $\beta$ is orthogonal  to any root which restricts to  $\lambda_{0}$, one has  
$$\langle \beta, \alpha_{0}\rangle=0=\sum_{\beta_{i}\in \Psi}x_{i}\langle \beta_{i}, \alpha_{0}\rangle.$$
As $\langle \beta_{i}, \alpha_{0}\rangle \leq 0$ (scalar product of two roots in the same base), one gets
$$\beta=\sum_{\beta_{i}\in \Psi, \beta_{i}\perp \alpha_{0}}x_{i}\beta_{i}.$$
Let now  $\{\gamma_{1},\gamma_{2},\ldots,\gamma_{m}\}$ be the set of black roots which are connected to  $\alpha_{0}$ through a chain of black roots  (in the   Satake-Tits diagram). Moreover we suppose that this set of roots is ordered  in such a way that $\alpha_{0}+\gamma_{1}+\ldots+\gamma_{p}\in \widetilde{\cal R}^+$ for all $p=1,\ldots,m$. This is always possible. 

We will now show by induction on $j$ that if  $\beta_{i}$ belongs to the support of $\beta$ then  $\beta_{i}\perp \gamma_{j}$ for $j=0,\ldots,m$ (where we have set  $\gamma_{0}= \alpha_{0}$) . What we have done before is the first step of the induction.
Suppose that  $\beta_{i}\perp \gamma_{j}$, for all  $j\leq p$ ($p\in \{ 0,\ldots, m-1\}$). As $\beta\in {\cal R}_{1}^+$, using the induction hypothesis, one gets:
$$\langle \beta, \alpha_{0}+\gamma_{1}+\ldots+\gamma_{p+1}\rangle=0=\sum_{\beta_{i}\in \Psi, \beta_{i}\perp \gamma_{j}\,(j=0,\ldots,m-1)}x_{i}\langle \beta_{i}, \gamma_{p+1}\rangle.$$
Again, as  $\langle \beta, \gamma_{p+1}\rangle \leq 0$, one obtains 
$$\beta=\sum_{\beta_{i}\in \Psi, \beta_{i}\perp \gamma_{j}\,(j=0,\ldots,m)}x_{i} \beta_{i}.$$
\vskip 5 pt
End of the proof of the Lemma.
\vskip 5pt

Hence  $\widetilde{\Psi}_{1}=\Psi_{1}\cup \{\alpha_{1}\}$ where  $\alpha_{1}$ is the unique root of   $\widetilde{\Psi}_{1}$ such that  $\alpha_{1}(H_{1})=2$ (Proposition  \ref{prop.alpha0}). Let $W_{1}$ be the Weyl group of  ${\cal R}_{1}$, and let $w_{1}$ be the unique element in  $W_{1}$ such that  $w_{1}(\Psi_{1})=-\Psi_{1}$. Then  Proposition \ref{propWconjugues} (in its ``absolute`` version over $\overline{F}$, whose proof is exactly the same) implies that $w_{1}(\alpha_{1})=\omega$ where  $\omega$ is the greatest root of $\widetilde{ {\cal R}}_1$, which is also the greatest root of $\widetilde{ {\cal R}}$.

Let us denote  by $\rm{Dyn}(\,.\,)$ the Dynkin diagram of the basis ''$. $'' of a root system. One has:
$${\rm Dyn}(\widetilde{\Psi}_{1})={\rm Dyn}(\Psi_{1}\cup \{\alpha_{1}\})={\rm Dyn}(w_{1}(\Psi_{1}\cup \{\alpha_{1}\})={\rm Dyn}(-\Psi_{1}\cup \{\omega\})={\rm Dyn}(\Psi_{1}\cup \{-\omega\}).$$

The preceding Lemma implies that the Dynkin diagram of  $\widetilde{\go{g}}_{1}$ is the underlying Dynkin diagram  of the Satake-Tits diagram described in the statement. It remains to show that the  ``{\it colors }'' (black or white) are the right one.  

For this,   let  $X$ be the root lattice of  $\widetilde{\cal R}$ (that is the $\Z$-module generated by  $\widetilde{\cal R}$). The  restriction morphism  $\rho$ extends to a surjective morphism:
$$\rho:X\longrightarrow \overline{X},$$
where  $ \overline{X}$ is the root lattice of $\Sigma$ (in  $\go{a}^*$).  Consider also the sublattice 
$$X_{0}=\{\chi \in X\,|\, \rho(\chi)=0\}.$$
The map $\rho$ induces a bijective morphism: $\overline{\rho}: X/X_{0}\longrightarrow \overline{X}$. Set
 $${\cal R}_{0}=\{\alpha\in {\cal R}\,|\, \rho(\alpha)=0\}=\{\alpha\in \widetilde{\cal R}\,|\, \rho(\alpha)=0\}.$$

The order on  ${\cal R}$ induces an order on  ${\cal R}_{0}$  by setting  ${\cal R}_{0}^+={\cal R}^+\cap {\cal R}_{0}$. We choose an additive order   $\leq$ on $X_{0}$ in such a way that  ${\cal R}_{0}^+\subset X_{0}^+$ (for the definition of an additive order on a lattice and for the notation we refer to the paper by Schoeneberg (\cite{Schoeneberg}, p.37), who call it  ``group linear order''). For this it is enough to consider an hyperplane in the vector space generated by $X_{0}$, whose intersection with  $X_{0}$ is reduced to $\{0\}$ and such that ${\cal R}_{0}^+$ is contained in one of the half-spaces defined by this hyperplane. $X_{0}^+$ is then defined  as the intersection of  $X_{0}$ with this half-space.

Similarly we choose an additive order on the lattice  $\overline{X}$ such that  $\Sigma^+\subset \overline{X}^+$ and we set :
$$(X/X_{0})^+=\{\overline{\chi}\in X/X_{0}\,|\,\overline{\rho}(\overline{\chi})\in \overline{X}^+\}$$.

Let  $\Gamma$ be the Galois group of the finite Galois extension  on which  $\widetilde{\go{g}}$ splits (\cite{Schoeneberg}, p. 29). The data  $(X/X_{0})^+$ and $X_{0}^+$ define what is called by Schoeneberg a $\Gamma$-order on $X$ (\cite{Schoeneberg}, Definitions 3.1.37 and 3.1.38 p. 37). This additive order is defined by 
$$X^+= (X/X_{0})^+ \cup X_{0}^+$$
where  $(X/X_{0})^+$ stands here for  the set of elements which are in a strictly positive class. If $\alpha\in \widetilde{\cal R}^+$, then either $\rho(\alpha)=0$, and in this case    $\alpha\in X_{0}^+\subset X^+$, or  $\rho(\alpha)\in \Sigma^+$ and then   $\alpha\in X^+$. This shows that the order chosen at the beginning of this paper  and which was defined by $\widetilde{\cal R}^+$ comes from a  $\Gamma$-order in the sense of  Schoeneberg on the corresponding root lattices.  Similarly, at step $1$ of the descent,  the  order defined by $\widetilde{\cal R}_{1}^+$ comes from a  $\Gamma$-order. Then from the proof of  Lemma 4.3.1 p. 72 of  \cite{Schoeneberg}, we obtain that  $w_{1}\in (W_{1})_{\Gamma}=\{w\in W({\cal R}_{1}), w(X^1_{0})=X^1_{0}\}$ (where $X^1$ is the root lattice of  ${\cal R}_{1}$,  and  $X^1_{0}$ is the subset of  $X^1$ which vanish on $\go{a}_{1}$), and that $w_{1}$ sends a black root on a  black  root  and a white root  on a white root.
 This ends the proof.

 \end{proof}
 \begin{rem}\label{rem-combinatoire}\hfill
 
 a) As expected, if one applies the procedure of  Proposition \ref{prop-diagg1} to the Satake-Tits diagrams of  Lemma \ref {lemme-diagrammes-k=0}, one  obtains the empty diagram.
 
 b) It is worth noting that the rank of  $\widetilde{\go{g}}$ (cf. Definition \ref{def-rang}) is the number of times one must apply the procedure of  Proposition \ref{prop-diagg1} until one obtains the empty diagram.
 
 c) The last diagram,    obtained before the empty diagram, when one iterates this procedure, is necessarily one of the two diagrams of Lemma \ref{lemme-diagrammes-k=0}. It defines therefore the $1$-type de $\widetilde{\go{g}}$.
 
 d) It may happen that the iteration of the procedure of  Proposition \ref{prop-diagg1}  gives a non-connected  Satake-Tits diagram   (see example below). In that case the next iteration  is made only on the connected component containing the circled root.  
 \end{rem}
 
 \begin{example}
 
 \end{example} The following split diagram corresponds to a graded algebra  $\widetilde{\go{g}}$ verifying the hypothesis  (${\bf H}_{1}$), (${\bf H}_{2}$), (${\bf H}_{3}$):

 \centerline{ \hbox{\unitlength=0.5pt
\begin{picture}(250,30)(-10,0)
\put(10,10){\circle{10}}
\put(10,10){\circle{16}}
\put(15,10){\line (1,0){30}}
\put(50,10){\circle{10}}
\put(60,10){\circle*{1}}
\put(65,10){\circle*{1}}
\put(70,10){\circle*{1}}
\put(75,10){\circle*{1}}
\put(80,10){\circle*{1}}
\put(85,10){\circle*{1}}
\put(95,10){\circle{10}}
\put(100,10){\line (1,0){30}}
\put(135,10){\circle{10}}
\put(140,10){\circle*{1}}
\put(145,10){\circle*{1}}
\put(150,10){\circle*{1}}
\put(155,10){\circle*{1}}
\put(160,10){\circle*{1}}
\put(165,10){\circle*{1}}
\put(170,10){\circle*{1}}
\put(175,10){\circle{10}}
\put(179,12){\line (1,0){41}}
\put(179,8){\line (1,0){41}}
\put(195,5.5){$>$}
\put(225,10){\circle{10}}
\end{picture} $B_{n}$
}
}

 \vskip 20pt
 The extended diagram is:
 
 \vskip 10pt
\centerline{ \hbox{\unitlength=0.5pt
\begin{picture}(250,30)(-10,0)
\put(10,10){\circle{10}}
\put(10,10){\circle{16}}
\put(15,10){\line (1,0){30}}
\put(50,10){\circle{10}}
\put(50,5){\line(0,-1){31}}
\put(50,-30){\circle{10}}
\put(60,10){\circle*{1}}
\put(65,10){\circle*{1}}
\put(70,10){\circle*{1}}
\put(75,10){\circle*{1}}
\put(80,10){\circle*{1}}
\put(85,10){\circle*{1}}
\put(95,10){\circle{10}}
\put(100,10){\line (1,0){30}}
\put(135,10){\circle{10}}
\put(140,10){\circle*{1}}
\put(145,10){\circle*{1}}
\put(150,10){\circle*{1}}
\put(155,10){\circle*{1}}
\put(160,10){\circle*{1}}
\put(165,10){\circle*{1}}
\put(170,10){\circle*{1}}
\put(175,10){\circle{10}}
\put(179,12){\line (1,0){41}}
\put(179,8){\line (1,0){41}}
\put(195,5.5){$>$}
\put(225,10){\circle{10}}
\end{picture}
}
}  

\vskip 20pt

The diagram obtained by applying the procedure of Proposition  \ref{prop-diagg1} is : 
\vskip 20pt

 \centerline{ \hbox{\unitlength=0.5pt
\begin{picture}(30,30)(-10,0)
\put(10,10){\circle{10}}
\put(10,10){\circle{16}}
\end{picture} $A_{1}$
}}

\hbox{\unitlength=0.5pt
\begin{picture}(250,30)(-10,0)
\put(50,10){\circle{10}}
\put(60,10){\circle*{1}}
\put(65,10){\circle*{1}}
\put(70,10){\circle*{1}}
\put(75,10){\circle*{1}}
\put(80,10){\circle*{1}}
\put(85,10){\circle*{1}}
\put(95,10){\circle{10}}
\put(100,10){\line (1,0){30}}
\put(135,10){\circle{10}}
\put(140,10){\circle*{1}}
\put(145,10){\circle*{1}}
\put(150,10){\circle*{1}}
\put(155,10){\circle*{1}}
\put(160,10){\circle*{1}}
\put(165,10){\circle*{1}}
\put(170,10){\circle*{1}}
\put(175,10){\circle{10}}
\put(179,12){\line (1,0){41}}
\put(179,8){\line (1,0){41}}
\put(195,5.5){$>$}
\put(225,10){\circle{10}}
\end{picture} $B_{n-2}$
}

\vskip 20pt

When one applies again  the procedure  to the diagram   \hbox{\unitlength=0.5pt
\begin{picture}(30,30)(-10,0)
\put(10,10){\circle{10}}
\put(10,10){\circle{16}}
\end{picture}  
}, one obtains the empty diagram. Hence the rank is $2$. 
 \subsection{Notations and  Table }\label{sec:section-Table}\hfill
 
 \vskip 5pt

\begin{notation}\label{notations-tables} {\bf (Notations for Table 1)}\hfill
 
 \vskip 5pt

We first  define the {\it type} \index{type}of the graded Lie algebra $\tilde{\go g}$ according to possible values which can be taken by $e$ and $\ell$. This notion of type allows to split the classification of graded Lie algebras according to the number of $G$-orbits in $V^+$.

\begin{definition}\label{def-type} \footnote {We caution the reader that our definition of type in the non archimedean case is not related to the definition of type in the archimedean($F=\R)$  case given  in \cite{B-R}. Our definition of type is related to the structure of the  open $G$-orbits (see Theorem \ref{thm-orbouverte} below).} .\hfill 

$\bullet$ $\tilde{\go g}$ is said to be of type I if  $\ell=\delta^2,\;\delta\in\N^*$  and   $e=0$ or $4$. \\
$\bullet$ $\tilde{\go g}$  is said to be of type II if $\ell=1$ and  $e=1$, $2$ or $3$,\\
$\bullet$ $\tilde{\go g}$  is said to be of type III if $\ell=3$. 

 \vskip 5pt

  We say that  $\tilde{\go g}$ is  of    $1$-type $(A,1)$ if $\tilde{\go l}_0$ is isomorphic to $\go sl_2(D)$, where $D$ is a central division algebra over $F$, and of $1$-type $B$, if  $\tilde{\go l}_0$ is isomorphic to ${\go o}(q_{(4,1)})$.

\end{definition}

\vskip 10pt
The following notations are used in Table 1.

\vskip 5pt
- We denote always by $D$ a central division algebra over $F$. Its degree is denoted by  $\delta$ (remind that this means that the dimension is $\delta^2$). If its degree is  $2$, then  $D$ is necessarily the unique quaternion  division algebra over $F$.
\vskip 5pt

- $M(m,D)=M_{m}(D)$ is the algebra    of $m\times m$ matrices with coefficients in $D$.
\vskip 5pt
- $\go{sl}(m,D)$ is the derived Lie algebra of $M(m,D)$. It is also the space of matrices in  $M(m,D)$ whose reduced trace is zero. (Recall that if  $x=(x_{i,j})\in M(m,D)$, $Tr_{red}(x)=\sum \tau(x_{i,i})$ where  $\tau$ is the reduced trace of  $D$. (\cite{Weil}, IX,\S2, Corollaire 2 p.169). Recall also that the reduced trace of the quaternion division algebra is  $\tau(x)=x+\overline{x}$.
\vskip 5pt
- $E=F(y)$ is a quadratic extension of  $F$ and  the canonical conjugation $\sigma$ on $E$ is given by  $E$: $\sigma(a+by)=a-by.$

- $H_{2n}$ is the hermitian form on  $E^{2n}$ defined by  
$H_{2n}(u,v)={^tu}S_{n}\sigma(v) $ where $u,v$ are columns vectors of  $E^{2n}$ and where  $S_n=\left(\begin{array}{cc}0 & I_n\\ I_n & 0\end{array}\right)$.  

\vskip 5pt

- $\go{su}(2n,E,H_{2n})$$=\{X\in \go{sl}(2n,E), XS_{n}+S_{n}{^t(\sigma(X))}=0\}$ (this is the so-called unitary algebra of the form $H_{2n}$)
\vskip 5pt
- ${\rm Herm}_{\sigma}(n,E)$ is defined as follows:
$${\rm Herm}_{\sigma}(n,E)=\{U\in M(n,E), \, {^t\sigma(U)}=U\}$$
\vskip 5pt

- $q_{(p,q)}$, with  $p\geq q$ is a quadratic form of  Witt index $q$ on $F^{p+q}$. $\go{o}(q_{(p,q)})$ is the corresponding orthogonal algebra.

\vskip 5pt

- $\mathrm{Skew}(2n,F)$ is the space of skew-symmetric matrices of type $2n\times 2n$  with coefficients in $F$.

\vskip 5pt

- $\go{sp}(2n,F)$ is the usual symplectic Lie algebra (the matrices in it being of type $2n\times 2n$, with coefficients in  $F$). 
\vskip 5pt

 - ${\rm Sym}(n,F)$ is the space of symmetric matrices of type $n\times n$ with coefficients in  $F$.
\vskip 5pt

- The  quaternion division algebra over $F$ is denoted by  $\H$   and its canonical anti-involution is denoted by $\gamma:x\mapsto \bar{x}$.
\vskip 5pt
  - On  $\H^{2n}$ we denote also by $H_{2n}$ the hermitian form defined by  
$H_{2n}(u,v)={^t\gamma(u)}S_{n}v $ where $u,v$ are columns vectors of  $\H^{2n}$, and where, as above, $S_n=\left(\begin{array}{cc}0 & I_n\\ I_n & 0\end{array}\right)$.

  - $\go{u}(2n, \H, H_{2n})=\{A\in M(2n, \H), \, {^t\ga(A)}S_{n}+S_{n}A=0\}$
\vskip 5pt
  - ${\rm SkewHerm}(n,\mathbb H)=\{A\in M(n, \H), \, {^t\ga(A)}+A=0\}$

\vskip 5pt
  - ${\rm Herm}(n,\mathbb H)=\{A\in M(n, \H), \, {^t\ga(A)}=A\}$

\vskip 5pt

  - $K_{2n}$ is the $\gamma$-skewhermitian form on  $\H^{2n}$ defined by  
$K_{2n}(u,v)={^t\gamma(u)}K_{2n}v $ where $u,v$ are columns vectors of  $\H^{2n}$ and where, by abuse of notation,we also set  $K_{2n}=\left(\begin{array}{cc} 0 & I_n\\ -I_n & 0\end{array}\right)$.
 \vskip 5pt - $\go{u}(2n, \H, K_{2n})=\{A\in M(2n, \H), \, {^t\gamma(A)}K_{2n}+K_{2n}A=0\}$.
\end{notation}

\def\AI{
\hbox{\unitlength=0.5pt
\begin{picture}(250,30)(-50,0)
\put(10,40){\circle{10}}
\put(15,40){\line (1,0){30}}
\put(50,40){\circle{10}}
\put(60,40){\circle*{1}}
\put(65,40){\circle*{1}}
\put(70,40){\circle*{1}}
\put(75,40){\circle*{1}}
\put(80,40){\circle*{1}}
\put(85,40){\circle*{1}}
\put(90,40){\circle*{1}}
\put(95,40){\circle*{1}}
\put(100,40){\circle*{1}}
\put(110,40){\circle{10}}
\put(115,40){\line (1,-1){21}}
\put(140,15){\circle{10}}
\put(140,15){\circle{16}}
\put(115,-10){\line (1,1){21.5}}
\put(110,-10){\circle{10}}
\put(110,8){\vector(0,1){23}}
\put(110,22){\vector(0,-1){23}}
\put(100,-10){\circle*{1}}
\put(95,-10){\circle*{1}}
\put(90,-10){\circle*{1}}
\put(85,-10){\circle*{1}}
\put(80,-10){\circle*{1}}
\put(75,-10){\circle*{1}}
\put(70,-10){\circle*{1}}
\put(65,-10){\circle*{1}}
\put(60,-10){\circle*{1}}
\put(50,-10){\circle{10}}
\put(50,8){\vector(0,1){23}}
\put(50,22){\vector(0,-1){23}}
\put(15,-10){\line (1,0){30}}
\put(10,-10){\circle{10}}
\put(10,8){\vector(0,1){23}}
\put(10,22){\vector(0,-1){23}}
\end{picture}
}}

\def\AII{
\hbox{\unitlength=0.4pt
\begin{picture}(310,00)
\put(10,10){${\linethickness{1mm}\line(1,0){20}}$}
 \put(30,10){\line (1,0){17}}
 \put(52,10){\circle{10}}
 \put(57,10){\line (1,0){17}}
 \put(74,10){${\linethickness{1mm}\line(1,0){20}}$}
 \put(99,10){\circle*{1}}
\put(104,10){\circle*{1}}
\put(109,10){\circle*{1}}
\put(114,10){\circle*{1}}
\put(119,10){\circle*{1}}
\put(122,10){${\linethickness{1mm}\line(1,0){20}}$}
 \put(142,10){\line (1,0){15}}
 \put(164,10){\circle{10}}
 \put(164,10){\circle{16}}
 \put(171,10){\line (1,0){15}}
 \put(186,10){${\linethickness{1mm}\line(1,0){20}}$}
 \put(211,10){\circle*{1}}
\put(216,10){\circle*{1}}
\put(221,10){\circle*{1}}
\put(226,10){\circle*{1}}
\put(231,10){\circle*{1}}
\put(233,10){${\linethickness{1mm}\line(1,0){20}}$}
 \put(253,10){\line (1,0){17}}
 \put(275,10){\circle{10}}
  \put(280,10){\line (1,0){17}}
 \put(297,10){${\linethickness{1mm}\line(1,0){20}}$}
 \end{picture}
}
}

\def\AIII{
\hbox{\unitlength=0.5pt
\begin{picture}(250,30)
\put(10,10){\circle*{10}}
\put(15,10){\line (1,0){30}}
\put(50,10){\circle{10}}
\put(60,10){\circle*{1}}
\put(65,10){\circle*{1}}
\put(70,10){\circle*{1}}
\put(75,10){\circle*{1}}
\put(80,10){\circle*{1}}
\put(90,10){\circle*{10}}
\put(95,10){\line (1,0){30}}
\put(130,10){\circle{10}}
\put(130,10){\circle{16}}
\put(135,10){\line (1,0){30}}
\put(170,10){\circle*{10}}
\put(180,10){\circle*{1}}
\put(185,10){\circle*{1}}
\put(190,10){\circle*{1}}
\put(195,10){\circle*{1}}
\put(200,10){\circle*{1}}
\put(210,10){\circle{10}}
\put(215,10){\line (1,0){30}}
\put(250,10){\circle*{10}}
\end{picture}
}}

\def\BI{
\hbox{\unitlength=0.5pt
\begin{picture}(250,30)(-10,0)
\put(10,10){\circle{10}}
\put(10,10){\circle{16}}
\put(15,10){\line (1,0){30}}
\put(50,10){\circle{10}}
\put(55,10){\line (1,0){30}}
\put(90,10){\circle{10}}
\put(95,10){\line (1,0){30}}
\put(130,10){\circle{10}}
\put(135,10){\circle*{1}}
\put(140,10){\circle*{1}}
\put(145,10){\circle*{1}}
\put(150,10){\circle*{1}}
\put(155,10){\circle*{1}}
\put(160,10){\circle*{1}}
\put(165,10){\circle*{1}}
\put(170,10){\circle{10}}
\put(174,12){\line (1,0){41}}
\put(174,8){\line (1,0){41}}
\put(190,5.5){$>$}
\put(220,10){\circle*{10}}
\end{picture}
}}

\def\BII{
\hbox{\unitlength=0.5pt
\begin{picture}(250,30)(-10,0)
\put(10,10){\circle{10}}
\put(10,10){\circle{16}}
\put(15,10){\line (1,0){30}}
\put(50,10){\circle{10}}
\put(60,10){\circle*{1}}
\put(65,10){\circle*{1}}
\put(70,10){\circle*{1}}
\put(75,10){\circle*{1}}
\put(80,10){\circle*{1}}
\put(85,10){\circle*{1}}
\put(95,10){\circle{10}}
\put(100,10){\line (1,0){30}}
\put(135,10){\circle{10}}
\put(140,10){\circle*{1}}
\put(145,10){\circle*{1}}
\put(150,10){\circle*{1}}
\put(155,10){\circle*{1}}
\put(160,10){\circle*{1}}
\put(165,10){\circle*{1}}
\put(170,10){\circle*{1}}
\put(175,10){\circle{10}}
\put(179,12){\line (1,0){41}}
\put(179,8){\line (1,0){41}}
\put(195,5.5){$>$}
\put(225,10){\circle{10}}
\end{picture}
}}

\def\BIII{
\hbox{\unitlength=0.5pt
\begin{picture}(250,30)(-10,0)
\put(130,10){\circle{10}}
\put(130,10){\circle{16}}
\put(134,12){\line (1,0){41}}
\put(134,8){\line (1,0){41}}
\put(150,5.5){$>$}
\put(180,10){\circle*{10}}
\end{picture}
}}

\def\CI{
\hbox{\unitlength=0.5pt
\begin{picture}(250,30)
\put(10,10){\circle{10}}
\put(15,10){\line (1,0){30}}
\put(50,10){\circle{10}}
\put(55,10){\line (1,0){30}}
\put(90,10){\circle{10}}
\put(95,10){\circle*{1}}
\put(100,10){\circle*{1}}
\put(105,10){\circle*{1}}
\put(110,10){\circle*{1}}
\put(115,10){\circle*{1}}
\put(120,10){\circle*{1}}
\put(125,10){\circle*{1}}
\put(130,10){\circle*{1}}
\put(135,10){\circle*{1}}
\put(140,10){\circle{10}}
\put(145,10){\line (1,0){30}}
\put(180,10){\circle{10}}
\put(184,12){\line (1,0){41}}
\put(184,8){\line(1,0){41}}
\put(200,5.5){$<$}
\put(230,10){\circle{10}}
\put(230,10){\circle{16}}
\end{picture}
}}

\def\CIII{
\hbox{\unitlength=0.5pt
\begin{picture}(250,30)
\put(10,10){\circle*{10}}
\put(15,10){\line (1,0){30}}
\put(50,10){\circle{10}}
\put(55,10){\line (1,0){30}}
\put(90,10){\circle*{10}}
\put(95,10){\circle*{1}}
\put(100,10){\circle*{1}}
\put(105,10){\circle*{1}}
\put(110,10){\circle*{1}}
\put(115,10){\circle*{1}}
\put(120,10){\circle*{1}}
\put(125,10){\circle*{1}}
\put(130,10){\circle*{1}}
\put(135,10){\circle*{1}}
\put(140,10){\circle{10}}
\put(145,10){\line (1,0){30}}
\put(180,10){\circle*{10}}
\put(184,12){\line (1,0){41}}
\put(184,8){\line (1,0){41}}
\put(200,5.5){$<$}
\put(230,10){\circle{10}}
\put(230,10){\circle{16}}
\end{picture}
}}

\def\DI{
\hbox{\unitlength=0.5pt
\begin{picture}(240,40)(0,-10)
\put(10,10){\circle{10}}
\put(10,10){\circle{16}}
\put(15,10){\line (1,0){30}}
\put(50,10){\circle{10}}
\put(55,10){\line (1,0){30}}
\put(90,10){\circle*{10}}
\put(95,10){\line (1,0){30}}
\put(130,10){\circle*{10}}
\put(140,10){\circle*{1}}
\put(145,10){\circle*{1}}
\put(150,10){\circle*{1}}
\put(155,10){\circle*{1}}
\put(160,10){\circle*{1}}
\put(170,10){\circle*{10}}
\put(175,10){\line (1,0){30}}
\put(210,10){\circle*{10}}
\put(215,14){\line (1,1){20}}
\put(240,36){\circle*{10}}
\put(215,6){\line(1,-1){20}}
\put(240,-16){\circle*{10}}
\end{picture}
}}

\def\DIIA{
\hbox{\unitlength=0.5pt
\begin{picture}(240,40)(0,-10)
\put(10,10){\circle{10}}
\put(10,10){\circle{16}}
\put(15,10){\line (1,0){30}}
\put(50,10){\circle{10}}
\put(55,10){\line (1,0){30}}
\put(90,10){\circle{10}}
\put(95,10){\line (1,0){30}}
\put(130,10){\circle{10}}
\put(140,10){\circle*{1}}
\put(145,10){\circle*{1}}
\put(150,10){\circle*{1}}
\put(155,10){\circle*{1}}
\put(160,10){\circle*{1}}
\put(170,10){\circle{10}}
\put(175,10){\line (1,0){30}}
\put(210,10){\circle{10}}
\put(215,14){\line (1,1){20}}
\put(240,36){\circle{10}}
\put(215,6){\line (1,-1){20}}
\put(240,-16){\circle{10}}
\end{picture}
}}

\def\DIIB{
\hbox{\unitlength=0.5pt
\begin{picture}(240,40)(0,-10)
\put(10,10){\circle{10}}
\put(10,10){\circle{16}}
\put(15,10){\line (1,0){30}}
\put(50,10){\circle{10}}
\put(55,10){\line (1,0){30}}
\put(90,10){\circle{10}}
\put(95,10){\line (1,0){30}}
\put(130,10){\circle{10}}
\put(140,10){\circle*{1}}
\put(145,10){\circle*{1}}
\put(150,10){\circle*{1}}
\put(155,10){\circle*{1}}
\put(160,10){\circle*{1}}
\put(170,10){\circle{10}}
\put(175,10){\line (1,0){30}}
\put(210,10){\circle{10}}
\put(215,14){\line (1,1){20}}
\put(240,36){\circle{10}}
\put(215,6){\line (1,-1){20}}
\put(240,-16){\circle{10}}
\put(240,2){\vector(0,1){23}}
\put(240,16){\vector(0,-1){23}}
\end{picture}
}}

\def\DIIC{
\hbox{\unitlength=0.5pt
\begin{picture}(240,40)(0,-10)
\put(10,10){\circle{10}}
\put(10,10){\circle{16}}
\put(15,10){\line  (1,0){30}}
\put(50,10){\circle{10}}
\put(60,10){\circle*{1}}
\put(65,10){\circle*{1}}
\put(70,10){\circle*{1}}
\put(75,10){\circle*{1}}
\put(80,10){\circle*{1}}
\put(85,10){\circle*{1}}
\put(95,10){\circle{10}}
\put(100,10){\line  (1,0){30}}
\put(135,10){\circle{10}}
\put(140,10){\circle*{1}}
\put(145,10){\circle*{1}}
\put(150,10){\circle*{1}}
\put(155,10){\circle*{1}}
\put(160,10){\circle*{1}}
\put(165,10){\circle*{1}}
\put(170,10){\circle*{1}}
\put(175,10){\circle{10}}
\put(180,10){\line  (1,0){30}}
\put(215,10){\circle{10}}
\put(215,14){\line  (1,1){20}}
\put(240,36){\circle*{10}}
\put(215,6){\line  (1,-1){20}}
\put(240,-16){\circle*{10}}
\end{picture}
}}

\def\DIII{
\hbox{\unitlength=0.5pt
\begin{picture}(240,40)(0,-10)
\put(10,10){\circle{10}}
\put(10,10){\circle{16}}
\put(15,10){\line (1,0){30}}
\put(50,10){\circle*{10}}
\put(55,10){\line (1,0){30}}
\put(90,10){\circle*{10}}
\put(95,10){\line (1,0){30}}
\put(130,10){\circle*{10}}
\put(140,10){\circle*{1}}
\put(145,10){\circle*{1}}
\put(150,10){\circle*{1}}
\put(155,10){\circle*{1}}
\put(160,10){\circle*{1}}
\put(170,10){\circle*{10}}
\put(175,10){\line (1,0){30}}
\put(210,10){\circle*{10}}
\put(215,14){\line (1,1){20}}
\put(240,36){\circle*{10}}
\put(215,6){\line (1,-1){20}}
\put(240,-16){\circle*{10}}
\end{picture}
}}

\def\DIde{
\hbox{\unitlength=0.5pt
\begin{picture}(240,35)(0,-15)
\put(10,0){\circle*{10}}
\put(15,0){\line (1,0){30}}
\put(50,0){\circle{10}}
\put(55,0){\line (1,0){30}}
\put(90,0){\circle*{10}}
\put(95,0){\line (1,0){30}}
\put(130,0){\circle{10}}
\put(140,0){\circle*{1}}
\put(145,0){\circle*{1}}
\put(150,0){\circle*{1}}
\put(155,0){\circle*{1}}
\put(160,0){\circle*{1}}
\put(170,0){\circle*{10}}
\put(175,0){\line (1,0){30}}
\put(210,0){\circle{10}}
\put(215,4){\line (1,1){20}}
\put(240,26){\circle{10}}
\put(240,26){\circle{16}}
\put(215,-4){\line (1,-1){20}}
\put(240,-26){\circle*{10}}
\end{picture}
}}

\def\DIIde{
\hbox{\unitlength=0.5pt
\begin{picture}(240,35)(0,-15)
\put(10,0){\circle{10}}
\put(15,0){\line (1,0){30}}
\put(50,0){\circle{10}}
\put(55,0){\line (1,0){30}}
\put(90,0){\circle{10}}
\put(95,0){\line (1,0){30}}
\put(130,0){\circle{10}}
\put(140,0){\circle*{1}}
\put(145,0){\circle*{1}}
\put(150,0){\circle*{1}}
\put(155,0){\circle*{1}}
\put(160,0){\circle*{1}}
\put(170,0){\circle{10}}
\put(175,0){\line (1,0){30}}
\put(210,0){\circle{10}}
\put(215,4){\line (1,1){20}}
\put(240,26){\circle{10}}
\put(240,26){\circle{16}}
\put(215,-4){\line (1,-1){20}}
\put(240,-26){\circle{10}}
\end{picture}
}}

\def\EI{
\hbox{\unitlength=0.5pt
\begin{picture}(240,40)(-10,-45)
\put(10,0){\circle{10}}
\put(15,0){\line  (1,0){30}}
\put(50,0){\circle*{10}}
\put(55,0){\line  (1,0){30}}
\put(90,0){\circle*{10}}
\put(90,-5){\line  (0,-1){30}}
\put(90,-40){\circle*{10}}
\put(95,0){\line  (1,0){30}}
\put(130,0){\circle*{10}}
\put(135,0){\line  (1,0){30}}
\put(170,0){\circle{10}}
\put(175,0){\line (1,0){30}}
\put(210,0){\circle{10}}
\put(210,0){\circle{16}}
\end{picture}
}}

\def\EII{
\hbox{\unitlength=0.5pt
\begin{picture}(240,40)(-10,-35)
\put(10,10){\circle{10}}
\put(15,10){\line (1,0){30}}
\put(50,10){\circle{10}}
\put(55,10){\line (1,0){30}}
\put(90,10){\circle{10}}
\put(90,5){\line (0,-1){30}}
\put(90,-30){\circle{10}}
\put(95,10){\line (1,0){30}}
\put(130,10){\circle{10}}
\put(135,10){\line (1,0){30}}
\put(170,10){\circle{10}}
\put(175,10){\line (1,0){30}}
\put(210,10){\circle{10}}
\put(210,10){\circle{16}}
\end{picture}
}}


\begin{landscape}
\vskip 20pt
\centerline{{\bf Table 1}\hskip 20pt Simple Regular Graded Lie Algebras over a $p$-adic field} 

\vskip 20pt
{\scriptsize
\label{tables1}
\begin{tabular}{|c|c|c|c|c|c|c|c|c|c|c|c|c|}
\hline
&&&&&&&&&&&&\\
&$\widetilde{\go g}$
&${\go g}'$
 &$V^+$
&$\widetilde {\mathcal R}$
&$\widetilde{\Sigma}$
  &{Satake-Tits diagram}
  &$ rank (=k+1)$ 
& $ \ell$
& $d$
&$e$
&Type 
&1-type\\
\hline
\hline 
(1)& $\go {sl}(2(k+1),D)$
& $\begin{matrix}\go {sl}(k+1,D)\\ \oplus \\ \go {sl}(k+1,D) \end{matrix}$ 
&  $ {\mathrm M}(k+1,D) $
& $\underset{n=(k+1)\delta}{ A_{2n-1}}$
& $A_{2k+1}$ 
& $\begin{matrix}\AII \\ \text{where} \hbox{\unitlength=0.4pt
\begin{picture}(30,30)
\put(10,10){${\linethickness{1mm}\line(1,0){20}}$}\end{picture}} = \hbox{\unitlength=0.4pt
\begin{picture}(200,00)
\put(10,10){\circle*{10}} \put(15,10){\line(1,0){20}}  \put(40,10){\circle*{10}}\put(50,10){\circle*{1}}\put(55,10){\circle*{1}}\put(60,10){\circle*{1}}\put(65,10){\circle*{1}}\put(70,10){\circle*{1}}\put(75,10){\circle*{10}}\put(80,10){\line(1,0){20}}\put(105,10){\circle*{10}} \put(30,-15){$\delta-1$} \put(120,10){ ($\delta\in \N^*)$} \end{picture}}   \end{matrix}$ 
&$\underset{( k+1=1)}{k+1\geq 2}$
&$\delta^2$
&$\underset{(--)}{2\delta^2}$
&$\underset{(--)}{0}$
&  I
&$(A,\delta)$\\
\hline
&&&&&&&&&&&&\\
(2)& $\go {su}(2n,E,H_{2n})$
& $\go {sl}(n,E)$ 
&  $ \mathrm {Herm}_{\sigma}(n,E) $
& $\underset{n\geqslant 1}{ A_{2n-1}}$
& $C_{n}$ 
&\AI 
&$n$
&$1$
&$2$
&$ 2$
&  II
&$(A,1)$\\ 
\hline
\hline
&&&&&&&&&&&&\\
(3)&$\!\!\go {o}(q_{(n+1,n)})$\!\!
&\!\! $\go {o}(q_{(n,n-1)})$\!\! 
&  $  F^{2n-1} $
& $\begin{matrix} \underset{n\geq 3}{ B_n}\\  
 \end{matrix}$
&\!  ${B_{n}}$ 
&\BII 
&$2$
&$1$
&$2n\!-\!3$
&$ 1$
& II
&$(A,1)$\\ 
\hline
&&&&&&&&&&&&\\
(4)&$\go {o}(q_{(n+2,n-1)})$
& $\go {o}(q_{(n+1,n-2)} )$ 
&  $  F^{2n-1} $
& $\underset{n\geqslant 3}{ B_n}$
&  $B_{n-1}$ 
&\BI 
&$2$
&$1$
&$2n\!-\!3$
&$  3$
& II
&$(A,1)$\\ 
\hline
&&&&&&&&&&&&\\
(5)&$\go {o}(q_{(4,1)})$
& $\go {o}(3)$ 
&  $  F^{3} $
& $ { B_2}$
&  $B_1=A_1$ 
&\BIII 
&$1$
&$3$
&$--$
&$ --$
&III
&$B$\\ 
\hline
\hline
&&&&&&&&&&&&\\
(6)&$\go {sp}(2n, F)$
& $\go {sl}(n,F)$ 
&  $ \mathrm {Sym}(n, F) $
& $\underset{n\geqslant 2}{ C_{n}}$
&  $C_{n}$ 
&\CI 
&$n$
&$1$
&$1$
&$ 1$
& II
&$(A,1)$\\ 
\hline
&&&&&&&&&&&&\\
(7)&$ \go{u}(2n, \H, H_{2n})$
& $\go {sl}(n,\bb H)$ 
& \!\!  
 $  \mathrm{SkewHerm (n,\H)}$
& ${ C_{2n}}$
&  $C_{n}$ 
&\CIII 
&$n$
&$3$
&$ 4$
&$ 4$
&  III
&$B$\\ 
\hline
 
\end{tabular}
\vskip 5pt 
\hskip 450pt {(continued next page)}

\pagebreak
\vskip 20pt
\centerline{{\bf Table 1}\,(continued)\hskip 20pt Simple Regular Graded Lie Algebras over a $p$-adic field} 

\vskip 20pt

\label{tables2}

\begin{tabular}{|c|c|c|c|c|c|c|c|c|c|c|c|c|}
\hline
&&&&&&&&&&&&\\
&$\widetilde{\go g}$
&${\go g}'$
 &$V^+$
&$\widetilde {\mathcal R}$
&$\widetilde{\Sigma}$
  &{Satake-Tits diagram}
  &$  rank (=k+1)$ 
& $ \ell$
& $d$
&$e$
&Type
&1-type \\
\hline
\hline 
&&&&&&&&&&&&\\
(8)&$\go {o}(q_{(m,m)})$
& $\go {o}(q_{(m\!-\!1,m-1)})$ 
&  $  F^{2m\!-\!2} $
& $\underset{m\geqslant 4}{ D_m}$
&  $ D_m$ 
&\DIIA 
&$2$
&$1$
&$2m\!-\!4$
&$0$
&  I
&$(A,1)$\\ 
\hline
&&&&&&&&&&&&\\
(9)&$\go {o}(q_{(m\!+1,m\!-\!1)})$
& $\go {o}(q_{(m,m\!-\!2)}$ 
&  $  F^{2m\!-\!2} $
& $\underset{m\geqslant 4}{ D_m}$
&  $ B_{m-1}$ 
&\DIIB 
&$2$
&$1$
&$2m\!-\!4$
&$2$
& II
&$(A,1)$\\ 
\hline
&&&&&&&&&&&&\\
(10)&$\go {o}(q_{(m+2,m-2)})$
& \!\!\!$\go {o}(q_{(m+ 1,m-3)})$\!\!\! 
&  $  F^{2m-2} $
& $\underset{m\geqslant 4}{ D_m}$
&  $ { B_{m-2}}$ 
&\DIIC 
&$2$
&$1$
&$2m\!-\!4$
&$4$
& I
&$(A,1)$\\ 
\hline
\hline
&&&&&&&&&&&&\\
(11)&\!\!\!$\go {o}(q_{(2n,2n)})$\!\!\!
& $\go {sl}(2n, F)$ 
& \!\! $\mathrm{Skew}(2n,F)\!\! $
& $\underset{n\geqslant 3}{ D_{2n}}$
&  $D_{2n}$ 
&\DIIde 
&$n$
&$1$
&$4$
&$0$
& I
&$(A,1)$\\ 
\hline
&&&&&&&&&&&&\\
(12)&$\go{u}(2n, \H, K_{2n})$
& $\go {sl}(n,\bb H)$ 
& $  \mathrm{Herm}(n,\bb H)$

& $\underset{n\geqslant 3}{ D_{2n}}$
&  $C_{n}$ 
&\DIde 
&$n$
&$1$
&$ 4$
&$ \bf 4$
&  I
&$(A,1)$\\ 
\hline
\hline
&&&&&&&&&&&&\\
(13)&$\text{split  } E_{7}$
& 
$\text{split  } E_{6}$

&  $ \mathrm{Herm}(3,\bb O_s) $
& $E_7$
&  $E_7$ 
&\EII 
&$3$
&$1$
&$8$
&$0$
&  I
&(A,1)\\ 
\hline

\end{tabular}

}
\end{landscape}

 \newpage
  \section {The  $G$-orbits  in $V^+$}\label{sec:G-orbites}
\subsection{Representations of  $\go {sl}(2,F)$}\label{sl2module}\hfill
 \vskip 10pt

(For the convenience of the reader we recall here some classical facts about $\go {sl}_2$ modules, see \cite{Bou2}, chap.VIII, \textsection 1, Proposition and Corollaire). \medskip

Let  $(Y,H,X)$ be an  $\go {sl}_2$-triple 
 in $ \widetilde{\go g}$ and let  $E$ be a subspace of $ \widetilde{\go g}$ which is invariant under this  $\go {sl}_2$-triple. \\
Let  $M$ be an irreducible submodule of dimension  $m+1$ of  $E$, then $M$ is generated by a primitive element $e_0$ (ie. $X.e_0=0$) of weight  $m$ under the action of  $H$ and the set of elements  $e_p:=\dfrac{(-1)^p}{p!} (Y)^p.e_0$, of weight $m-2p$ under the action of  $H$,  is a basis  of$M$.

We have also the   weight decomposition  $\displaystyle E=\oplus_{p\in\Z} E_p$ into weight spaces of weight  $p$ under the action of  $H$. The following properties are classical:

\begin{enumerate}\item  The non trivial element of the Weyl group of $SL(2,F)$ acts on  $E$ by 
$$w=e^Xe^Ye^X,$$
\item If, as before,  $M$ is an irreducible submodule of $E$ of dimension  $m+1$, then \\
$w:M_{m-2p}\to M_{2p-m}$ is an isomorphism. More precisely, on the base $(e_p)$ defined above, one has 
$$w.e_p=(-1)^{m-p}e_{m-p}=\frac{1}{(m-p)!} (Y)^{m-p}.e_0.$$
\item For  $Z\in E_p$, one has  $w^2 Z=(-1)^p Z$.
\item $X^j: E_p\to E_{p+2j}$ is $\left\{\begin{array}{cc} \textrm{ injective for } & j\leq -p\\
\textrm{ bijective for } & j=-p\\
\textrm{ surjective for  } & j\geq -p\end{array}\right.$

$Y^j: E_p\to E_{p-2j}$ is $\left\{\begin{array}{cc} \textrm{ injective for } & j\leq p\\
\textrm{ bijective for } & j=p\\
\textrm{ surjective for  } & j\geq p\end{array}\right.$
\end{enumerate}
\subsection{First reduction}\hfill
 \vskip 10pt

Remind the definition of  $V_1^+$ (cf. Corollaire  \ref{cor-decomp-j}):
$$V_1^+= \widetilde{\go g}_1\cap V^+=\{ X\in V^+| [H_{\lambda_0}, X]=0\}.$$
The decomposition of  $V^+$ into eigenspaces of  $H_{\lambda_0}$ is then given by
$$V^+=V_1^+\oplus  \widetilde{\go g}^{\lambda_0}\oplus W_1^+, \textrm{ where } W_1^+=\{ X\in V^+| [H_{\lambda_0}, X]=X\}=\{ X\in V^+| [H_{\lambda_1}+\ldots +H_{\lambda_k},X]=X\}.$$

\begin{prop}\label{prop.prem-reduc} \hfill

 Let  $X\in V^+$.\\
\begin{enumerate} \item If  $\Delta_0(X)=0$ then  $X$ is conjugated under   ${\rm Aut}_e(\go g)\subset G$ to an element of  $V_1^+$.
\item if $\Delta_1(X)\neq 0$ then $X$ is conjugated under  ${\rm Aut}_e(\go g)\subset G$  to an element of  $  \widetilde{\go g}^{\lambda_0}\oplus V_1^+$.
\end{enumerate}
\end{prop}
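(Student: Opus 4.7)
The strategy is to exploit the $H_{\lambda_0}$-eigenspace decomposition $V^+ = V_1^+ \oplus W_1^+ \oplus \widetilde{\go g}^{\lambda_0}$ (eigenvalues $0,1,2$), writing $X = X_1 + X_W + X_0$, and to reduce the off-diagonal components by conjugating with $\exp(\ad U)$ for $U$ a nilpotent element in the $\pm 1$-eigenspace of $H_{\lambda_0}$ in $\go g$. The feasibility of each reduction comes from $\go{sl}_2$-representation theory (item \ref{sl2module}) applied to an $\go{sl}_2$-triple attached to a generic component of $X$.

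For Part (2), the hypothesis $\Delta_1(X) \neq 0$ means $X_1$ is generic in $V_1^+$, so by Proposition \ref{prop-generiques-cas-regulier} applied to the regular graded algebra $\widetilde{\go g}_1$ there exists $Y_1 \in V_1^-$ making $(Y_1, H', X_1)$ an $\go{sl}_2$-triple, with $H' = H_{\lambda_1}+\cdots+H_{\lambda_k}$. Since $\widetilde{\go g}_1 = {\cal Z}_{\widetilde{\go g}}(\widetilde{\go l}_0)$, the element $H_{\lambda_0}$ commutes with this triple, so the $\go{sl}_2$-module structure on $\widetilde{\go g}$ is compatible with the $H_{\lambda_0}$-grading. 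The standard bijection $\ad X_1$ from the $H'$-weight-$(-1)$ to the $H'$-weight-$(+1)$ subspace of $\widetilde{\go g}$ then restricts, by weight considerations and the splitting $\widetilde{\go g} = V^- \oplus \go g \oplus V^+$, to a bijection
$$\ad X_1 : \bigoplus_{j>0} E_{0,j}(1,-1) \;\longrightarrow\; W_1^+.$$
Choosing $U$ in the source with $[U, X_1] = -X_W$, one has $\ad U$ nilpotent (since $U$ lies in a strictly graded piece for $H_{\lambda_0}$), so $\exp(\ad U) \in \text{Aut}_e(\go g)$; the computation of $\exp(\ad U).X$ then uses $[U, V_1^+] = 0$ and $[U, \widetilde{\go g}^{\lambda_0}] = 0$ (weight reasons) to conclude that only the $V_1^+$- and $\widetilde{\go g}^{\lambda_0}$-components are affected, so $\exp(\ad U).X \in V_1^+ \oplus \widetilde{\go g}^{\lambda_0}$.

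For Part (1), I split on $\Delta_1(X)$. If $\Delta_1(X) \neq 0$, Part (2) brings $X$ to $X_1 + X_0'$; the chosen $U$ leaves $\Delta_0$ invariant by Theorem \ref{thproprideltaj}(2), so $\Delta_0(X_1 + X_0') = 0$. Theorem \ref{thproprideltaj}(4) combined with an $L_0$-invariance argument (the function $X_0 \mapsto \Delta_0(X_0 + X_1)$ is a relative $L_0$-invariant on $\widetilde{\go g}^{\lambda_0}$ of degree $\kappa = \deg \delta_0$, hence proportional to $\delta_0$) yields the identity
$$\Delta_0(X_1 + X_0) \;=\; \alpha \, \delta_0(X_0) \, \Delta_1(X_1), \quad \alpha \in F^*.$$
Since $\Delta_1(X_1) \neq 0$, this forces $\delta_0(X_0') = 0$. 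By Corollary \ref{cor-classification-k=0}, $\delta_0$ is either the reduced norm of a central division algebra over $F$ or an anisotropic quadratic form on $F^3$ (the latter because $q$ in case 2 has anisotropic part of dimension $3$); both vanish only at $0$, so $X_0' = 0$ and $X$ is conjugate to $X_1 \in V_1^+$.

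The case $\Delta_1(X) = 0$ is the main obstacle, since $X_1$ is not generic in $V_1^+$ and no $\go{sl}_2$-triple is available on the $V_1^+$-side for the argument of Part (2). The plan is to first use the Weyl-group elements of $W \subset \text{Aut}_e(\go g)$ from Proposition \ref{propWconjugues}, which permute the strongly orthogonal roots $\lambda_i$ and in particular allow one to swap a nonzero $X_0$-component with a component in some $\widetilde{\go g}^{\lambda_i} \subset V_1^+$, together with an induction on the rank $k+1$ applied to $\widetilde{\go g}_1$: conjugating by elements of $\text{Aut}_e(\go g_1) \subset \text{Aut}_e(\go g)$ centralizes $\widetilde{\go l}_0$ and hence fixes $X_0$. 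The delicate point is to perform these further reductions without reintroducing a $W_1^+$-component, and this is where the detailed case analysis will be required.
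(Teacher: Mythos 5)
Your Part (2) is essentially the paper's own argument: decompose $X$ along the eigenspaces of $H_{\lambda_1}+\cdots+H_{\lambda_k}$, use the genericity of the $V_1^+$-component (guaranteed by $\Delta_1(X)\neq 0$) to produce an $\go{sl}_2$-triple in $\widetilde{\go g}_1$, and pick $U$ in the $(-1)$-eigenspace of $\go g$ with $[U,X_1]=-X_W$. One slip: you assert $[U,V_1^+]=0$, which contradicts your own choice $[U,X_1]=-X_W$ with $X_1\in V_1^+$; what the computation actually uses is that $X_W+[U,X_1]=0$ and that the remaining corrections $[U,X_W]$ and $\tfrac12(\ad U)^2X_1$ have $H_{\lambda_0}$-weight $2$, hence land in $\widetilde{\go g}^{\lambda_0}$. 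Your treatment of Part (1) in the sub-case $\Delta_1(X)\neq 0$ is a correct alternative to the paper: the factorization $\Delta_0(X_0+Z)=\alpha\,\delta_0(X_0)\Delta_1(Z)$ does hold (both sides are relative $L_0$-invariants of degree $\kappa$ in $X_0$, and $L_0$ fixes $V_1^+\subset{\cal Z}_{\widetilde{\go g}}(\widetilde{\go l}_0)$ pointwise), and $\delta_0$ is anisotropic by Theorem \ref{th-k=0}, so the $\widetilde{\go g}^{\lambda_0}$-component must vanish.

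The genuine gap is Part (1) in the case $\Delta_1(X)=0$, which you only outline. This case is substantial (already $X=X_2+\cdots+X_k$ falls into it), and your proposed tools do not reach it. The Weyl elements of Proposition \ref{propWconjugues} permute the diagonal root spaces but act in an uncontrolled way on $W_1^+$ and on the spaces $E_{r,s}(1,1)$, and you cannot first put $X$ on the diagonal because that reduction (Theorem \ref{th-V+caplambda}) is itself deduced from the present proposition. Induction inside $\widetilde{\go g}_1$ via $\text{Aut}_e(\go g_1)$ only becomes available after the $W_1^+$- and $\widetilde{\go g}^{\lambda_0}$-components have been removed, which is precisely the point at issue; and when $\Delta_0(X)=\Delta_1(X)=0$ there is no generic component on either side from which to build the $\go{sl}_2$-triple your cancellation mechanism requires. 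The paper's proof of (1) avoids the dichotomy entirely: since $\Delta_0(X)=0$, $X$ is non-generic and nilpotent, so Jacobson--Morozov gives an $\go{sl}_2$-triple $(Y,H,X)$ with $Y\in V^-$, $H\in\go g$; after conjugating $H$ by $\text{Aut}_e(\go g)$ into the closed dominant chamber of $\go a$, one shows $\lambda_0(H)=0$ (otherwise every $\lambda\in\widetilde\Sigma^+\setminus\Sigma$ would satisfy $\lambda(H)>0$ and $\ad X:\go g\to V^+$ would be surjective, contradicting non-genericity), whence the $\go{sl}_2$-module generated by $\widetilde{\go g}^{-\lambda_0}$ is trivial, $X$ centralizes $\widetilde{\go l}_0$, and $X\in V_1^+$. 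You need this (or an equivalently uniform) argument to close Part (1).
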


\begin{proof} (we give the proof for the convenience of the reader although it is the same as in the real case, see Prop 2.1 page 38 of [BR]),  

 (1) Let  $X$ be a non generic element of $V^+$. Then $X$ belongs to the semi-simple part of  $ \widetilde{\go g}$ and satisfies $(\ad X)^3=0$. By the Jacobson-Morozov Theorem  (\cite{Bou2} chap. VIII, \textsection 11 Corollaire of  Lemme 6), there exists an   $\go {sl}_2$-triple $(Y,H,X)$. Decomposing these elements  according to the decomposition  $ \widetilde{\go g}=V^-\oplus \go g\oplus V^+$, one sees easily that one can suppose that  $Y\in V^-$ and  $H\in\go g$. \medskip

The eigenvalue of  $\ad H$ are the weights of the representation of this  ${\go sl}_2$-triple in  $ \widetilde{\go g}$, they are therefore integers. Hence $H$ belongs to a maximal abelian split subalgebra of   $ \go g $. By (\cite{Schoeneberg} Theorem 3.1.16 or \cite{Seligman}, I.3), there exists $g\in {\rm Aut}_e(\go g)\subset G$ such that  $g.H\in\go a$. One can choose  $w\in N_{{\rm Aut}_e(\go g)}(\go a)$ such that  $wg.H$ belongs to the Weyl  chamber $\overline{\go a}^+:=\{ Z\in \go a; \lambda(Z)\geq 0 \,{\rm for  } \,\lambda\in \Sigma^+\}$.  Let 
$(Y',H',X')=wg(Y,H,X)$. We show first that  
$$\lambda_0(H')=0.$$
Any element in  $ \widetilde{\go g}^{\lambda_0}\backslash \{0\}$ is primitive with weight  $\lambda_0(H')$ under the action of this  ${\go sl}_2$-triple, hence $\lambda_0(H')\geq 0$.\medskip

Suppose that  $\lambda_0(H')>0$. Let $\lambda\in \widetilde{\Sigma}^+\backslash \Sigma$. By Theorem  \ref{thbasepi} we obtain $\lambda=\lambda_0+\sum_i m_i\lambda_i$ with  $m_i\in\N$ and  $\lambda_i\in \Sigma^+$ and hence $\lambda(H')>0$. Therefore, for all $Z\in \widetilde{\go g}^\lambda\subset V^+$, we get 
$$Z=-\frac{1}{\lambda(H')} \ad X' (\ad Y'( Z)).$$
It follows that  $\ad X': \go g\to V^+$ is surjective, and this is not possible because $X'$ is not generic. Therefore $\lambda_0(H')=0$.\medskip

Let us show that  $X'\in V_1^+$. Let  $Y_0\in \widetilde{\go g}^{-\lambda_0}\backslash \{0\}$. The  ${\go sl}_2$-module generated by $Y_0$ under the action of $(Y',H',X')$ has lowest weight $0$ ($\ad Y'. Y_0=0$ and $\ad H'.Y_0=0$)  and hence it is the trivial module. It follows that  $\ad X'.Y_0=0$, and then $X'$ commutes with  $ \widetilde{\go g}^{\lambda_0}$ and with  $ \widetilde{\go g}^{-\lambda_0}$. Hence $X'$ commutes with  $H_{\lambda_0}$ and therefore $X'\in V_1^+$.    Since $wg\in{\rm Aut}_e(\go g)$, statement (1) is proved.\medskip

(2) Let  $X\in V^+$ such that  $\Delta_1(X)\neq 0$. This element decomposes as follows
$$X=X_0+X_1+X_2,\quad {\rm with }\, X_0\in  \widetilde{\go g}^{\lambda_0}, X_1\in W_1^+, X_2\in V_1^+.$$
From the definition of  $\Delta_1$ (cf. Definition \ref{defdeltaj}), one has 
$$\Delta_1(X)=\Delta_1(X_2).$$
Therefore $X_2$ is generic in  $V_1^+$. By  Proposition \ref{prop-generiques-cas-regulier} applied to  $ \widetilde{\go g}_1$, there exists  $Y_2\in V_1^-$ such that  $(Y_2, H_{\lambda_1}+\ldots +H_{\lambda_k}, X_2)$ is a  ${\go sl}_2$-triple. The weights of this triple  on $V^+$ are $2$ on $V_1^+$, $1$ on $W_1^+$ and  $0$ on  $ \widetilde{\go g}^{\lambda_0}$. Let us note:
$$\go g_{-1}:=\{ X\in\go g| [H_{\lambda_1}+\ldots +H_{\lambda_k},X]=-X\}.$$
The map  $\ad X_2$ is a bijection from  $\go g_{-1}$ onto  $W_1^+$, hence there exists $Z\in \go g_{-1}$ such that  $[X_2,Z]=X_1$. If we write the decomposition of $e^{\ad Z}X$ according to the weight spaces of $V^+$, we obtain 
$$e^{\ad Z}X=X_0+[Z,X_1]+\frac{1}{2} [Z,[Z,X_2]]+X_1+[Z,X_2]+X_2$$
$$=X_0+[Z,X_1]+\frac{1}{2} (\ad Z)^2. X_2+X_2.$$
Hence $e^{\ad Z}X$ belongs to  $X_2\oplus  \widetilde{\go g}^{\lambda_0}\subset V_1^+\oplus   \widetilde{\go g}^{\lambda_0}$ and this gives (2).
\end{proof}
\begin{theorem}\label{th-V+caplambda} Any element of  $V^+$ is  conjugated under ${\rm Aut}_e(\go g)$ (and hence under $G$) to an element of  $  \widetilde{\go g}^{\lambda_0}+\ldots\oplus   \widetilde{\go g}^{\lambda_k}$.
\end{theorem}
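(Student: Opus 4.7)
The plan is to proceed by induction on the rank $k+1$ of $\widetilde{\go g}$. The base case $k=0$ is immediate since $V^+ = \widetilde{\go g}^{\lambda_0}$. For the inductive step, assume the statement for graded Lie algebras of smaller rank; in particular it holds for $\widetilde{\go g}_1$, whose descent produces the strongly orthogonal roots $\lambda_1, \ldots, \lambda_k$.

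Given $X \in V^+$, first consider the case $\Delta_0(X) = 0$: Proposition \ref{prop.prem-reduc}(1) conjugates $X$ under ${\rm Aut}_e(\go g)$ into $V_1^+$, and the inductive hypothesis applied to $\widetilde{\go g}_1$ finishes the job. (Nilpotent elements of $\go g_1$ remain nilpotent in $\go g$ by uniqueness of the Jordan decomposition, so ${\rm Aut}_e(\go g_1) \subset {\rm Aut}_e(\go g)$ and the conjugation lives in the correct group.)

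The harder case is $\Delta_0(X) \neq 0$: then $X$ is generic by Theorem \ref{thexistedelta_0}(3), but $\Delta_1(X)$ may still vanish, preventing a direct use of Proposition \ref{prop.prem-reduc}(2). The idea is to first produce a conjugate $g \cdot X$ with $\Delta_1(g \cdot X) \neq 0$, by a density argument. Since $X$ is generic, the map $\ad X \colon \go g \to V^+$ is surjective, hence by the $p$-adic implicit function theorem the orbit $G \cdot X$ is $p$-adically open in $V^+$. No non-empty $p$-adic open subset of the affine space $V^+(F)$ can be contained in a proper Zariski closed subset (the latter has strictly smaller $p$-adic dimension, hence empty interior), so $G \cdot X$ meets the Zariski open dense locus $\{\Delta_1 \neq 0\}$. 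Picking $g \in G$ with $\Delta_1(g \cdot X) \neq 0$, Proposition \ref{prop.prem-reduc}(2) produces a further conjugate $Y = Y_0 + Y_1 \in \widetilde{\go g}^{\lambda_0} \oplus V_1^+$. Applying the inductive hypothesis to $Y_1 \in V_1^+$ inside $\widetilde{\go g}_1$ conjugates $Y_1$ into $\bigoplus_{j\geq 1}\widetilde{\go g}^{\lambda_j}$; since $\go g_1$ centralizes $\widetilde{\go l}_0$, the elements of ${\rm Aut}_e(\go g_1)$ fix the component $Y_0 \in \widetilde{\go g}^{\lambda_0}$. In total, $X$ becomes $G$-conjugate to some $Z \in \bigoplus_{j=0}^k \widetilde{\go g}^{\lambda_j}$.

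Finally, to promote this $G$-conjugation to an ${\rm Aut}_e(\go g)$-conjugation, invoke the decomposition $G = {\rm Aut}_e(\go g) \cdot L$ of Corollary \ref{cor-structureG}: the factor $L$ centralizes every $H_{\lambda_i}$, hence preserves each root space $\widetilde{\go g}^{\lambda_j}$ (these are joint eigenspaces of the $\ad H_{\lambda_i}$ by Theorem \ref{th-decomp-Eij}). Writing the conjugating element as $\beta \ell$ with $\beta \in {\rm Aut}_e(\go g)$ and $\ell \in L$, one has $X = \beta \cdot (\ell \cdot Z)$ with $\ell \cdot Z$ still in $\bigoplus \widetilde{\go g}^{\lambda_j}$. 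The main obstacle is the density step in the generic case, which hinges on the interplay between the $p$-adic and Zariski topologies on $V^+(F)$.
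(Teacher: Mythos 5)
Your proof is correct and follows essentially the same route as the paper's: reduce to $V_1^+$ via Proposition \ref{prop.prem-reduc}(1) in the non-generic case, use a density argument (an open orbit cannot lie in the proper Zariski-closed set $\{\Delta_1=0\}$) followed by Proposition \ref{prop.prem-reduc}(2) in the generic case, and then induct down the descent. The only cosmetic difference is that the paper keeps the entire argument inside ${\rm Aut}_e(\go g)$ by working with the $F^*\cdot{\rm Aut}_e(\go g)$-orbit and the fact that $\{\Delta_1=0\}$ is a cone, which renders your final fix-up via $G={\rm Aut}_e(\go g)\cdot L$ unnecessary.
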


\begin{proof} Let us show that any element of  $V^+$ is conjugated under  ${\rm Aut}_e(\go g)$ to an element of  $V_1^+\oplus   \widetilde{\go g}^{\lambda_0}$.\\
Let $X\in V^+$. If  $X$ is not generic, then  $X$ is ${\rm Aut}_e(\go g)$-conjugated  to an element of $V_1^+$ (Proposition \ref{prop.prem-reduc}, (1)). 

Suppose now that  $X$ is generic.  As  the Lie algebra of ${\rm Aut}_e(\go g)$ is equal to $[\go g,\go g]$,  the Lie algebra of $F^*\times {\rm Aut}_e(\go g)$ is $F\times [\go g,\go g]$,  which is the Lie algebra of $\go g$,  (see Remark \ref{rem-simple}, remember also that, as $\go g\oplus V^+$ is a maximal parabolic subalgebra, the center of $\go g $ has dimension one). Therefore the orbit of $X$ under the group $F^*.{\rm Aut}_e(\go g)$ is open. Suppose that ${\rm Aut}_e(\go g).X\cap \{Y\in V^+,\Delta_{1}(Y)\neq 0\}=\emptyset$. Then, as $\{Y\in V^+,\Delta_{1}(Y)=0\}$ is a cone, we would have $F^*.{\rm Aut}_e(\go g).X\subset \{Y\in V^+,\Delta_{1}(Y)=0\}$. This is impossible, as a Zariski open set is never a subset of a closed one. Hence ${\rm Aut}_e(\go g).X\cap \{Y\in V^+,\Delta_{1}(Y)\neq 0\}\neq\emptyset$. From Proposition \ref{prop.prem-reduc} (2), we obtain that $X$ is conjugated under  ${\rm Aut}_e(\go g)$ to an element of  $  \widetilde{\go g}^{\lambda_0}\oplus V_1^+$.

The same argument applied to  $ \widetilde{\go g}_j$ ($j=1,\ldots, k$)  shows that if $X\in V_j^+$ then the   ${\rm Aut}_e(\go g_j)$-orbit of  $X_j$ meets  $   \widetilde{\go g}^{\lambda_j} \oplus V_{j+1}^+$.  Any element of  ${\rm Aut}_e(\go g_j)$ stabilizes  $  \widetilde{\go g}^{\lambda_0}+\ldots\oplus   \widetilde{\go g}^{\lambda_{j-1}}$ and, by Corollary \ref{cor-decomp-j}, one has  $V_k^+= \widetilde{\go g}^{\lambda_k}$. The result is then obtained by induction. 

\end{proof}
\subsection{An involution which permutes the roots in  $E_{i,j}(\pm 1, \pm 1)$}\hfill
 \vskip 10pt

Recall that  $ \widetilde{G}={\rm Aut}_0( \widetilde{\go g}).$ 

For  $i=0,\ldots, k$, we fix  $X_i\in \widetilde{\go g}^{\lambda_i}$. There exist then  $Y_i\in  \widetilde{\go g}^{-\lambda_i}$ such that $(Y_i,H_{\lambda_i}, X_i)$ is an $\go sl_2$-triple. The action of the non trivial  Weyl group element of this triple is given by 
$$w_i:=e^{\ad X_i}e^{\ad Y_i}e^{\ad X_i}=e^{\ad Y_i}e^{\ad X_i}e^{\ad Y_i}\in N_{ \widetilde{G}}(\go a).$$
\begin{lemme}\label{actionwi} Let  $j\neq i$ and $p=\pm1$. One has
\begin{enumerate}\item If  $H\in \go a$ then  $w_i.H=H-\lambda_i(H)H_{\lambda_i};$
\item If  $X\in E_{i,j}(1,p)$ then  $w_i.X= \ad Y_i.X\in  E_{i,j}(-1,p)$;
\item  If  $X\in E_{i,j}(-1,p)$ then  $w_i.X= \ad X_i.X\in  E_{i,j}(1,p)$;
\item If  $X\in E_{i,j}(\pm1,p)$ then  $w_i^2.X=-X$.
\end{enumerate}
\end{lemme}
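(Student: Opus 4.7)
The proof is a direct computation with the exponential series. The crucial simplification is that, by Theorem~\ref{th-decomp-Eij}, the eigenvalues of $\ad H_{\lambda_i}$ on $\widetilde{\go g}$ all lie in $\{-2,-1,0,1,2\}$. Consequently, when $\ad X_i$ (which raises the $H_{\lambda_i}$-weight by $2$) or $\ad Y_i$ (which lowers it by $2$) is applied more than once to an element of weight $\pm 1$, the result lies in a nonexistent weight space and therefore vanishes. Each exponential $e^{\ad X_i}$, $e^{\ad Y_i}$ acting on an element of weight $\pm 1$ reduces to at most two terms, and acting on an element of $\go a$ (weight $0$) to at most three terms.

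For (1), the plan is to decompose $H = (H-\tfrac12\lambda_i(H)H_{\lambda_i}) + \tfrac12\lambda_i(H)H_{\lambda_i}$. The first summand is annihilated by $\lambda_i$, so it commutes with $X_i$ and $Y_i$ and is fixed by $w_i$. A short explicit calculation on the $\go{sl}_2$-subalgebra spanned by $X_i, H_{\lambda_i}, Y_i$, using $[Y_i,X_i]=H_{\lambda_i}$, gives $w_i.H_{\lambda_i}=-H_{\lambda_i}$, and (1) follows.

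For (2), let $X\in E_{i,j}(1,p)$. The weight constraint above forces $[X_i,X]=0$ and $[Y_i,[Y_i,X]]=0$. Then
\begin{align*}
w_i.X &= e^{\ad X_i}e^{\ad Y_i}e^{\ad X_i}X = e^{\ad X_i}\bigl(X+[Y_i,X]\bigr)\\
&= X + [Y_i,X] + [X_i,[Y_i,X]].
\end{align*}
Jacobi, combined with $[X_i,Y_i]=-H_{\lambda_i}$ and $[X_i,X]=0$, gives $[X_i,[Y_i,X]]=-[H_{\lambda_i},X]=-X$, so $w_i.X=[Y_i,X]=\ad Y_i.X$. That $[Y_i,X]$ lies in $E_{i,j}(-1,p)$ is obtained by applying $\ad H_{\lambda_\ell}$ and using Jacobi, since the $H_{\lambda_i}$-weight drops from $1$ to $-1$ while the $H_{\lambda_\ell}$-eigenvalues ($\ell\ne i$) are preserved. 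Statement (3) is proved symmetrically, interchanging the roles of $X_i$ and $Y_i$ (with weights $-1$ and $3$ being the ones ruled out).

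For (4), I would simply chain (2) and (3): if $X\in E_{i,j}(1,p)$ then $w_i^2.X=w_i.[Y_i,X]=[X_i,[Y_i,X]]=-X$ by the Jacobi identity already computed, and the case $X\in E_{i,j}(-1,p)$ is entirely analogous. No real obstacle arises in any of the four parts; the only point requiring attention throughout is the paper's sign convention $[Y_i,X_i]=H_{\lambda_i}$ (so $[X_i,Y_i]=-H_{\lambda_i}$), opposite to Bourbaki's, which is precisely what makes the element $w_i=e^{\ad X_i}e^{\ad Y_i}e^{\ad X_i}$ (with a $+$ sign in the middle factor) lie in $N_{\widetilde G}(\go a)$ and act as the reflection $s_{\lambda_i}$ on $\go a$.
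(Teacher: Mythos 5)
Your proof is correct and follows essentially the same route as the paper: the paper simply cites the general $\go{sl}_2$-module facts recalled in Section \ref{sl2module} (the Weyl element maps the weight-$(m-2p)$ line of an irreducible module to the weight-$(2p-m)$ line via $\frac{1}{(m-p)!}Y^{m-p}$, and $w^2$ acts by $(-1)^p$ on the weight-$p$ space), whereas you unpack these into the explicit two-term exponential computation, using the same weight bound $|{\rm weight}|\le 2$ coming from Theorem \ref{th-decomp-Eij} and the correct sign convention $[Y_i,X_i]=H_{\lambda_i}$. The only slip is the parenthetical in your treatment of part (3): the weights ruled out there are $-3$ and $3$, not $-1$ and $3$.
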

\begin{proof} The first statement is obvious as     $w_i$ acts as the reflection on  $\go a$ associated to $\lambda_i$.\\
As each  $E_{i,j}(\pm1,p)$ is included in a weight space for the action of the  $\go sl_2$-triple $(Y_i,H_{\lambda_i}, X_i)$, the other statements are immediate consequences of the properties of  $\go sl_2$-modules given in  section \ref{sl2module}.
\end{proof}
For  $i\neq j$, we set 
$$w_{i,j}:=w_iw_j=w_jw_i.$$\index{wij@$w_{i,j}$}
The preceding Lemma implies that  $w_{i,j}$  satisfies the following properties.
\begin{cor}\label{actionwij} For $i\neq j$ and $p,q\in\{\pm1\}$, one has
\begin{enumerate}\item $w_{i,j}$ is an isomorphism from  $E_{i,j}(p,q)$ onto $E_{i,j}(-p,-q)$
\item The restriction of  $w_{i,j}^2$ to  $E_{i,j}(p,q)$ is the identity.
\end{enumerate}
\end{cor}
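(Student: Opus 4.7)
The plan is to deduce the corollary directly from Lemma \ref{actionwi}, using the fact that $w_i$ and $w_j$ commute. The commutation is actually recorded in the definition of $w_{i,j}$ itself, but let me note its source: since $\lambda_i$ and $\lambda_j$ are strongly orthogonal (they are two distinct roots in the maximal strongly orthogonal sequence $\lambda_0,\dots,\lambda_k$), the two $\go{sl}_2$-triples $(Y_i,H_{\lambda_i},X_i)$ and $(Y_j,H_{\lambda_j},X_j)$ centralize each other (their elements live in subspaces which bracket to zero). Hence the automorphisms $e^{\ad X_i}, e^{\ad Y_i}$ commute with $e^{\ad X_j}, e^{\ad Y_j}$, whence $w_iw_j=w_jw_i$.

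For assertion (1), I would proceed by chasing the pair of indices through Lemma \ref{actionwi}. Let $X\in E_{i,j}(p,q)$. Applying $w_j$ first: by parts (2) and (3) of the lemma (with the roles of $i$ and $j$ exchanged), $w_j.X$ lies in $E_{i,j}(p,-q)$. Then applying $w_i$ to this element, using parts (2) and (3) of the lemma, yields $w_iw_j.X \in E_{i,j}(-p,-q)$. Thus $w_{i,j}$ carries $E_{i,j}(p,q)$ into $E_{i,j}(-p,-q)$. The same argument applied to $(-p,-q)$ shows $w_{i,j}$ sends $E_{i,j}(-p,-q)$ into $E_{i,j}(p,q)$, and bijectivity follows either from part (2) below or by invoking that each of the four linear maps $\ad X_i$, $\ad Y_i$ (and with $j$) restricts, on the relevant eigenspaces, to an isomorphism (this is built into parts (2)--(3) of Lemma \ref{actionwi}).

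For assertion (2), I would simply write
\[
w_{i,j}^2 = (w_iw_j)^2 = w_i w_j w_i w_j = w_i^2 w_j^2,
\]
using the commutation $w_iw_j=w_jw_i$. By part (4) of Lemma \ref{actionwi}, on $E_{i,j}(p,q)$ one has $w_i^2=-\mathrm{Id}$; and similarly $w_j^2=-\mathrm{Id}$ on every $E_{i,j}(p',q')$. Since assertion (1) ensures all intermediate spaces visited are of the form $E_{i,j}(\pm 1,\pm 1)$, both scalars apply, giving $w_{i,j}^2 = (-\mathrm{Id})(-\mathrm{Id}) = \mathrm{Id}$ on $E_{i,j}(p,q)$.

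I do not anticipate a genuine obstacle: both statements are book-keeping consequences of Lemma \ref{actionwi} once the commutativity $w_iw_j=w_jw_i$ is in hand, and that commutativity is itself a direct consequence of the strong orthogonality of $\lambda_i$ and $\lambda_j$. The only point worth stating carefully in the writeup is that one must verify the intermediate image after applying $w_j$ lies in the correct eigenspace before invoking Lemma \ref{actionwi} for the second step with $w_i$, which is what the case-by-case use of parts (2) and (3) accomplishes.
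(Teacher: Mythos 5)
Your proof is correct and follows exactly the route the paper intends: the paper states the corollary as an immediate consequence of Lemma \ref{actionwi} without writing out the details, and your index-chasing via parts (2)--(3) together with $w_{i,j}^2=w_i^2w_j^2$ and part (4) is precisely that deduction, with the commutation $w_iw_j=w_jw_i$ correctly traced back to the strong orthogonality of $\lambda_i$ and $\lambda_j$.
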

\begin{rem}\label{rem-actionwij}
The involution  $w_{i,j}$ permutes the roots $\lambda$ such that $ \widetilde{\go g}^{\lambda}\subset E_{i,j}(\pm 1,\pm 1)$. More precisely one has 
$$ \widetilde{\go g}^{\lambda}\subset E_{i,j}(  1, -1)\Longrightarrow w_{i,j}(\lambda)=\lambda-\lambda_i+\lambda_j,$$
$$ \widetilde{\go g}^{\lambda}\subset E_{i,j}(  1,  1)\Longrightarrow w_{i,j}(\lambda)=\lambda-\lambda_i-\lambda_j,$$
$$ \widetilde{\go g}^{\lambda}\subset E_{i,j}(  -1, -1)\Longrightarrow w_{i,j}(\lambda)=\lambda+\lambda_i+\lambda_j,$$
\end{rem}


\subsection{Construction of automorphisms interchanging  $\lambda_i$ and $\lambda_j$}\hfill
 \vskip 10pt

Let  $i$ and  $j$ be two distinct elements of  $\{0,\ldots, k\}$. By Proposition \ref{propWconjugues},  the roots  $\lambda_i$ and $\lambda_j$ are conjugated by the  Weyl  group $W$. The aim of this section is to construct explicitly an element of $G$ which exchanges  $\lambda_i$ and  $\lambda_j$.

\begin{lemme}\label{lem-mu+wijmu} Let   $\lambda$ be a root such that  $ \widetilde{\go g}^\lambda\subset E_{i,j}(1,-1)$.  Then  $\lambda+w_{i,j}(\lambda)$ is not a root.
\end{lemme}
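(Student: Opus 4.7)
The plan is to compute $\mu:=\lambda+w_{i,j}(\lambda)$ explicitly using Remark~\ref{rem-actionwij}: since $\widetilde{\go g}^\lambda\subset E_{i,j}(1,-1)$, one has $w_{i,j}(\lambda)=\lambda-\lambda_i+\lambda_j$, and therefore
$$\mu=2\lambda-\lambda_i+\lambda_j.$$
I would argue by contradiction, assuming $\mu\in\widetilde{\Sigma}$, and split the analysis according to the length of $\lambda$ relative to~$\lambda_j$.

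First I would set up the relevant numerical data on a Weyl-invariant inner product $(\cdot,\cdot)$ on $\go a^*$. Strong orthogonality of $\lambda_i$ and $\lambda_j$ gives $(\lambda_i,\lambda_j)=0$; Proposition~\ref{prop-conjdeslambda(i)} gives $|\lambda_i|=|\lambda_j|$; and the relations $\lambda(H_{\lambda_i})=1$, $\lambda(H_{\lambda_j})=-1$ translate into $(\lambda,\lambda_i)=|\lambda_j|^2/2$ and $(\lambda,\lambda_j)=-|\lambda_j|^2/2$. Since $n(\lambda_j,\lambda)=-|\lambda_j|^2/|\lambda|^2$ is a nonzero integer of absolute value at most~$3$, we get $|\lambda_j|^2/|\lambda|^2\in\{1,2,3\}$.

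A direct expansion of $|\mu|^2$ using this data yields $|\mu|^2=4|\lambda|^2-2|\lambda_j|^2$. If $|\lambda_j|^2=2|\lambda|^2$ this is $0$, forcing $\mu=0\notin\widetilde{\Sigma}$; if $|\lambda_j|^2=3|\lambda|^2$ it is negative, so this subcase simply cannot occur. It remains to treat the generic subcase $|\lambda|=|\lambda_j|$, where the length argument is inconclusive. There I would instead apply the reflection $s_\lambda\in W$, which preserves $\widetilde{\Sigma}$. A Cartan-integer computation gives $n(\mu,\lambda)=\mu(H_\lambda)=2\cdot 2-1-1=2$, hence
$$s_\lambda(\mu)=\mu-2\lambda=\lambda_j-\lambda_i;$$
but strong orthogonality of $\lambda_i$ and $\lambda_j$ forbids $\lambda_j-\lambda_i$ from being a root, contradicting $\mu\in\widetilde{\Sigma}$.

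The main (and essentially only) obstacle is that the clean reflection trick degenerates precisely when $\lambda$ is short, because then $s_\lambda$ fixes $\mu$ and furnishes no contradiction on its own. The length computation above is exactly what handles that degenerate subcase, and together the two arguments exhaust all possibilities consistent with the Cartan-integer bound.
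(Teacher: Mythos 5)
Your proof is correct, and it takes a genuinely different route from the paper's. You reduce everything to Euclidean data on $\go a^*$: the identity $|\mu|^2=4|\lambda|^2-2|\lambda_j|^2$ disposes of the degenerate cases (the subcase $|\lambda_j|^2=2|\lambda|^2$ is exactly the case $\lambda=\tfrac{\lambda_i-\lambda_j}{2}$, where $\mu=0$, which the paper also treats separately at the outset; the subcase $|\lambda_j|^2=3|\lambda|^2$ is excluded by positive definiteness), and in the remaining equal-length case the single reflection $s_\lambda(\mu)=\mu-2\lambda=\lambda_j-\lambda_i$ lands on a non-root by strong orthogonality of $\lambda_i$ and $\lambda_j$. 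The paper instead observes that $\mu$ would be strongly orthogonal to all the $\lambda_s$, extracts $n(w_{i,j}(\lambda),\lambda)=-1$ from a root-string argument, converts this into $n(\lambda_i,\delta)+n(\lambda_j,\delta)=3$ for $\delta=\lambda+\lambda_j$, and then derives a contradiction from the $\delta$-string through $-\lambda_i$ using the grading (an $H_0$-eigenvalue $4$ in $V^+$ being impossible). Your argument is more elementary and self-contained --- it uses only the inner-product data and one Weyl reflection, with no appeal to the grading or to root strings --- whereas the paper's argument exploits the structural facts about the graded algebra that it has already established. The one point worth making explicit in your write-up is that $\lambda$ and $\lambda_j$ are non-proportional (which follows from $\lambda(H_{\lambda_i})=1$), so that the Cartan-integer bound $|n(\lambda_j,\lambda)|\le 3$ indeed applies.
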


\begin{proof} If $\lambda=\dfrac{\lambda_i-\lambda_j}{2}$ then  $w_{i,j}(\lambda)=-\lambda$,   this implies the statement. \\
Let  now   $\lambda\neq\dfrac{\lambda_i-\lambda_j}{2}$. Suppose that  $\mu=\lambda+w_{i,j}(\lambda)$  is a root.

 As $ \widetilde{\go g}^\lambda\subset E_{i,j}(1,-1)$, one has  
$w_{i,j}(\lambda)=\lambda-\lambda_i+\lambda_j$  (Remark \ref{rem-actionwij}) and    $\mu=2\lambda-\lambda_i+\lambda_j$. It follows that for  $s\in\{0,\ldots, k\},$ the root   $\mu$ is orthogonal to   $\lambda_s$, and hence strongly orthogonal to $ \lambda_s$ (Corollaire \ref{cor-orth=fortementorth}). Let us write
$$\lambda=\frac{\mu}{2}+\frac{\lambda_i-\lambda_j}{2},  \quad\textrm{and }\quad w_{i,j}(\lambda)=\lambda-\lambda_i+\lambda_j=\frac{\mu}{2}-\frac{\lambda_i-\lambda_j}{2}.$$\\
If  $\alpha$ and  $\beta$ are two roots, we set, as usually
$n(\alpha,\beta)=\alpha(H_\beta)=2\frac{\langle \alpha,\beta\rangle}{\langle \beta,\beta\rangle}.$
As  $\lambda- w_{i,j}(\lambda)=\lambda_i-\lambda_j$ is not a root, and as  $\lambda \neq w_{i,j}(\lambda)$,   we obtain that $n(\lambda,w_{i,j}(\lambda))\leq 0$. Remind that we have supposed that  $\mu=\lambda+w_{i,j}(\lambda)$  is a root. As $w_{i,j}(\lambda)-\lambda$ is not a root,  the  $\lambda$-chain through $w_{i,j}(\lambda)$ cannot be symmetric with respect to  $w_{i,j}(\lambda)$. This implies that $n(\lambda,w_{i,j}(\lambda))<0$. As $\lambda$ and  $w_{i,j}(\lambda)$ have the same length (and hence $n(\lambda,w_{i,j}(\lambda))=n(w_{i,j}(\lambda),\lambda)=-1)$, we get
$$ -1=n(w_{i,j}(\lambda),\lambda)=2-n(\lambda_i,\lambda)+n(\lambda_j,\lambda).$$
Consider the root  $\delta=\lambda+\lambda_{j}=w_j(\lambda)=\dfrac{\mu}{2}+\dfrac{\lambda_i+\lambda_j}{2}$. Then   $n(\delta,\lambda_i)=n(\delta,\lambda_j)=1$ and the preceding relation gives 
$$3=n(\lambda_i,\delta)+n(\lambda_j,\delta).$$
It follows that either  $n(\lambda_i,\delta)$ or  $n(\lambda_j,\delta)$ is  $\geq 2$. Suppose for example that  $n(\lambda_i,\delta)\geq 2$. Consider the  $\delta$-chain through $-\lambda_i$. As $(-\lambda_i-\delta)(H_0)=-4$,     $-\lambda_i-\delta$ is not a root.  It follows that  $-\lambda_i+2\delta=\mu+\lambda_j$ is a root. This is  impossible as  $\mu$ is strongly orthogonal to  $\lambda_j$. \medskip

Hence  $\mu=\lambda+w_{i,j}(\lambda)$ is not a root. 

\end{proof}

\begin{lemme}\label{lem-sl2ij} There exist  $X\in E_{i,j}(1,-1)$ and  $Y\in E_{i,j}(-1,1)$ such that $(Y,H_{\lambda_i}-H_{\lambda_j}, X)$ is an  $\go sl_2$-triple.
\end{lemme}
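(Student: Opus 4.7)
The plan is to construct the required triple $(Y,H_{\lambda_i}-H_{\lambda_j},X)$ by summing standard $\mathfrak{sl}_2$-triples attached to the individual (restricted) root spaces inside $E_{i,j}(1,-1)$. First I would reduce to the rank-$2$ situation: by Corollary \ref{cor-gA} applied with $A=\{i,j\}$, the subalgebra $\widetilde{\mathfrak g}_A$ is again a regular graded Lie algebra (of rank $2$) containing $E_{i,j}(\pm 1,\pm 1)$, the coroots $H_{\lambda_i},H_{\lambda_j}$, and the auxiliary $\mathfrak{sl}_2$-triples $(Y_s,H_{\lambda_s},X_s)$ for $s\in\{i,j\}$; it therefore suffices to work inside $\widetilde{\mathfrak g}_A$.

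Inside $\widetilde{\mathfrak g}_A$, I would decompose $E_{i,j}(1,-1)=\bigoplus_\alpha \widetilde{\mathfrak g}^\alpha$ into the root spaces of those $\alpha\in\widetilde\Sigma$ with $\alpha(H_{\lambda_i})=1$, $\alpha(H_{\lambda_j})=-1$. For each such $\alpha$, root theory in the reductive algebra $\widetilde{\mathfrak g}_A$ furnishes an $\mathfrak{sl}_2$-triple $(Y_\alpha,H_\alpha,X_\alpha)$ with $X_\alpha\in\widetilde{\mathfrak g}^\alpha$, $Y_\alpha\in\widetilde{\mathfrak g}^{-\alpha}\subset E_{i,j}(-1,1)$ and $H_\alpha\in\mathfrak a$ the restricted coroot. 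Choosing the $X_\alpha$'s Galois-compatibly so that the sums $X:=\sum_\alpha X_\alpha$ and $Y:=\sum_\alpha Y_\alpha$ descend to $F$, the Jacobi expansion gives $[X,Y]=\sum_\alpha H_\alpha+\sum_{\alpha\neq\beta}[X_\alpha,Y_\beta]$. Each cross bracket lies in $\widetilde{\mathfrak g}^{\alpha-\beta}$, and since $\alpha,\beta$ share the same values on $\mathfrak a^0$, the difference $\alpha-\beta$ automatically vanishes on $\mathfrak a^0$.

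The main obstacle is twofold. First, one must argue that all the cross brackets vanish: the key case $\beta=-w_{i,j}(-\alpha)$ is handled directly by Lemma \ref{lem-mu+wijmu}, while the remaining configurations must be ruled out using Corollary \ref{cor-orth=fortementorth} and a case analysis of roots of $\widetilde\Sigma$ that are strongly orthogonal to every $\lambda_s$. Second, one must verify that after a suitable uniform rescaling of the $X_\alpha$'s, $\sum_\alpha H_\alpha$ equals \emph{exactly} $H_{\lambda_i}-H_{\lambda_j}$ (and not some other scalar multiple or an element of $\mathfrak a$ with a nontrivial anisotropic component). For this I would argue that each coroot $H_\alpha$ projects onto $\mathfrak a^0$ as a fixed rational multiple of $H_{\lambda_i}-H_{\lambda_j}$, thanks to the equality of the lengths of the long roots $\lambda_i,\lambda_j$ (Proposition \ref{prop-conjdeslambda(i)}) and the normalization $b(H_{\lambda_s},H_{\lambda_s})=-2$ of Lemma \ref{lemmeb}; the anisotropic components cancel in the sum by a symmetry argument using the Weyl group element of Proposition \ref{prop-conjdeslambda(i)} that interchanges $\lambda_i$ and $\lambda_j$, combined with the absolute simplicity of $\widetilde{\mathfrak l}_i\simeq\widetilde{\mathfrak l}_j$ (Proposition \ref{ell-0-simple}, Proposition \ref{prop-memedimension}(2)).
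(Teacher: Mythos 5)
Your construction has a genuine gap at the step "one must argue that all the cross brackets vanish": they do not, and no case analysis will rule them out. For two distinct restricted roots $\alpha,\beta$ with $\tilde{\go g}^\alpha,\tilde{\go g}^\beta\subset E_{i,j}(1,-1)$, the bracket $[X_\alpha,Y_\beta]$ lies in $\tilde{\go g}^{\alpha-\beta}$, and $\alpha-\beta$ vanishes on $\go a^0$ but need not vanish on $\go a$; it can be a genuine root whose root space sits in ${\cal Z}_{\go g}(\go a^0)$ outside $\go a$. This already happens in the split case (3) of Table 1: with $\widetilde\Sigma=B_n$, $\lambda_0=\ep_1-\ep_2$, $\lambda_1=\ep_1+\ep_2$, the space $E_{0,1}(1,-1)$ contains the root spaces of $-\ep_2$ and $-\ep_2\pm\ep_k$ ($k\ge 3$), and for $\alpha=-\ep_2+\ep_3$, $\beta=-\ep_2$ one gets $\alpha-\beta=\ep_3\in\widetilde\Sigma$, so $[X_\alpha,Y_\beta]$ is a nonzero element of $\go g^{\ep_3}\not\subset\go a$. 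Lemma \ref{lem-mu+wijmu} only disposes of the single pair $\beta=-w_{i,j}(\alpha)$, and Corollary \ref{cor-orth=fortementorth} is of no help since $\alpha-\beta$ here is not orthogonal to anything relevant. Arranging for these unwanted terms to cancel in each root space $\go g^{\alpha-\beta}$ would require a delicate choice of the $X_\alpha$'s that your proposal does not provide and that is not obviously possible.

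The paper's proof sidesteps this entirely by not summing over all roots: it fixes a \emph{single} root $\lambda$ with $\tilde{\go g}^\lambda\subset E_{i,j}(1,-1)$ and builds $X=X_0+w_{i,j}(Y_0)$, $Y=Y_0+w_{i,j}(X_0)$ from the one pair $(\lambda,\,\lambda'=-w_{i,j}(\lambda))$ for which Lemma \ref{lem-mu+wijmu} guarantees the cross brackets vanish (and simply takes one root vector when $(\lambda_i-\lambda_j)/2$ is itself a root). The element $Z=[Y,X]=H_\lambda-w_{i,j}(H_\lambda)$ then lies in $\go a$ automatically, is shown to lie in $\go a^0$ because $w_{i,j}(Z)=-Z$, and is identified as a nonzero multiple of $H_{\lambda_i}-H_{\lambda_j}$ by pairing against $\go a^0$ with the Killing form — no appeal to simplicity of $\widetilde{\go l}_i$ or to a sum over a Weyl orbit is needed. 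If you want to salvage your approach you would have to restrict the sum to a subset of roots closed under $\alpha\mapsto -w_{i,j}(\alpha)$ and pairwise "strongly non-interacting", which in effect collapses back to the paper's two-term construction.
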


\begin{proof}  By definition, for a root $\lambda$, the element $H_{\lambda}$ is the unique element such that $\beta(H_{\lambda})=n(\beta,\lambda)$, for all $\beta\in \widetilde \Sigma$. If  $\lambda=(\lambda_i-\lambda_j)/2$ is a root it is then easy to see that $H_\lambda=H_{\lambda_i}-H_{\lambda_j}$.  Any  non zero element    $X\in   \widetilde{\go g}^\lambda\subset E_{i,j}(1,-1)$ can be completed in an  ${\go sl}_2$-triple $(Y,H_{\lambda_i}-H_{\lambda_j}, X)$ where  $Y\in  \widetilde{\go g}^{-\lambda}\subset E_{i,j}( -1,1)$. \\
 If $(\lambda_i-\lambda_j)/2$ is not a root, let us fix a root  $\lambda$   such that $ \widetilde{\go g}^\lambda\subset E_{i,j}(1,-1)$ and set  $\lambda'=-w_{i,j}(\lambda)$.    One has  $ \widetilde{\go g}^{\lambda'}\subset E_{i,j}(1,-1)$.   Also  $\lambda' \neq \lambda$  ($\lambda'=-w_{i,j}(\lambda)=-\lambda+\lambda_{i}-\lambda_{j}=\lambda$ would imply that $\lambda=\frac{\lambda_{i}-\lambda_{j}}{2}$) . Let  $X_0\in  \widetilde{\go g}^\lambda\setminus \{0\}$. Choose $Y_0\in  \widetilde{\go g}^{-\lambda}\subset E_{i,j}(-1,1)$ such that  $(Y_0, H_\lambda, X_0)$ is an  ${\go sl}_2$-triple. Then  $(w_{i,j}(X_0), H_{\lambda'},w_{i,j}(Y_0))$ is an  ${\go sl}_2$-triple as  $w_{i,j}(H_\lambda)=-H_{\lambda'}$ and as 
$w_{i,j}(X_0)\in \widetilde{\go g}^{-\lambda'}$ and  $w_{i,j}(Y_0)\in \widetilde{\go g}^{\lambda'}$ . Define  
$$X=X_0+w_{i,j}(Y_0)\quad\textrm{and}\quad  Y=Y_0+w_{i,j}(X_0)=w_{i,j}(X).$$ \\
Then one has 
$$[H_{\lambda_i}-H_{\lambda_j}, X]=2X \quad\textrm{and}\quad [H_{\lambda_i}-H_{\lambda_j}, Y]=2Y.$$\\
It remains to prove that $[Y,X]=H_{\lambda_i}-H_{\lambda_j}$. 
Let  $Z=[Y,X]$. By the preceding lemma,   $\lambda-\lambda'$ is not a root, hence $[Y_0, w_{i,j}(Y_0)]=0$ and  $[w_{i,j}(X_0), X_0]=0$ and this implies
$$Z=[Y_0,X_0]-w_{i,j}([Y_0,X_0])=H_\lambda-w_{i,j}(H_{\lambda})\in\go a.$$

This shows that $Z\neq 0$ ($Z=0\Leftrightarrow H_{\lambda}=w_{i,j}(H_\lambda)\Leftrightarrow \lambda=w_{i,j}(\lambda)=\lambda-\lambda_{i}+\lambda_{j}\Leftrightarrow \lambda_{i}=\lambda_{j}$).

 Lemma  \ref{actionwi} implies that  $w_{i,j}(Z)=Z-\lambda_i(Z)H_{\lambda_i}-\lambda_j(Z)H_{\lambda_j}$. As  $w_{i,j}Z=-Z$, we obtain
 $$Z=\frac{\lambda_i(Z)H_{\lambda_i}+\lambda_j(Z)H_{\lambda_j}}{2}.$$
Therefore $Z\in  \go a^0=\oplus_{i=0}^kF\; H_{\lambda_{i}}$. 

Let $H\in \go a^0$. Then $H=\sum _{i=0}^k\frac{\lambda_{i}(H)}{2}H_{\lambda_{i}}$ and an easy calculation shows that $[H,Y]= \frac{\lambda_{j}(H)-\lambda_{i}(H)}{2}Y$. Therefore 
$$ \widetilde{B}(H,Z)=\frac{\lambda_j(H)-\lambda_i(H)}{2}  \widetilde{B}(Y,X).$$
On the other hand, the roots  $\lambda_i$ and $\lambda_j$ are $W$-conjugate (Proposition \ref{propWconjugues}), hence $ \widetilde{B}(H_{\lambda_i},H_{\lambda_i})= \widetilde{B}(H_{\lambda_j},H_{\lambda_j})$ for all $i,j$. Define  $C_1:=  \widetilde{B}(H_{\lambda_i},H_{\lambda_i})\in F^*$. Then
$$ \widetilde{B}(H,H_{\lambda_i}-H_{\lambda_j})=C_1\frac{\lambda_i(H)-\lambda_j(H)}{2},\quad {\rm for}\, H\in\go a^0.$$
As $ \widetilde{B}$ is nondegenerate on  $\go a^0$, if we set  $C_2:=- \widetilde{B}(X,Y)\in F^*$, we obtain
$$Z=\frac{C_2}{C_1}(H_{\lambda_i}-H_{\lambda_j}).$$
If we replace  $Y$ by $\dfrac{C_1}{C_2}Y$,  then $(Y,H_{\lambda_i}-H_{\lambda_j}, X)$ is an $\go sl_2$-triple.

\end{proof}

\vskip 5pt

Let  $(Y,H_{\lambda_i}-H_{\lambda_j}, X)$ be the $\go sl_2$-triple obtained in the preceding Lemma. The action of the non trivial element of the Weyl group  of this $\go sl_2$-triple  is given by 
$$\gamma_{i,j}\index{gammaij@$\gamma_{i,j}$}=e^{\ad X}e^{\ad Y}e^{\ad X}=e^{\ad Y}e^{\ad X}e^{\ad Y} \in {\rm Aut}_e(\go g).$$

\begin{prop}\label{prop-gammaij} For  $i\neq j\in\{0,\ldots ,k\}$, the elements $\gamma_{i,j}$ belong to $ N_{{\rm Aut}_e(\go g)}(\go a^0)$ and 
\begin{enumerate}\item $$\gamma_{i,j}(H_{\lambda_s})=\left\{\begin{array}{ll} H_{\lambda_i} & {\rm for }\, s=j\\H_{\lambda_j} & {\rm for }\, s=i\\H_{\lambda_s} & {\rm for }\, s\notin\{i,j\}\end{array}\right.$$
\item The  action of  $\gamma_{i,j}$ is trivial on each root space $ \widetilde{\go g}^{\lambda_s}$ for  $s\notin\{i,j\}$ and it is a bijective involution from   $ \widetilde{\go g}^{\lambda_i}$ onto   $ \widetilde{\go g}^{\lambda_j}$.
\end{enumerate}
\end{prop}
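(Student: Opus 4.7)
The element $\gamma_{i,j}$ is by construction the nontrivial Weyl element of the $\mathfrak{sl}_2$-triple $(Y,H,X)$ from Lemma \ref{lem-sl2ij}, where $H=H_{\lambda_i}-H_{\lambda_j}$, $X\in E_{i,j}(1,-1)$ and $Y\in E_{i,j}(-1,1)$. The whole proposition will follow by combining the general $\mathfrak{sl}_2$-module facts recalled in Section \ref{sl2module} with the weight decomposition of Theorem \ref{th-decomp-Eij}, which severely restricts the possible $(H_{\lambda_0},\dots,H_{\lambda_k})$-weight vectors occurring in $\widetilde{\go g}$ and will force several commutators to vanish.

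For part (1), when $s\notin\{i,j\}$ the definition of $E_{i,j}(\pm 1,\mp 1)$ gives $[H_{\lambda_s},X]=[H_{\lambda_s},Y]=0$, so $\gamma_{i,j}(H_{\lambda_s})=H_{\lambda_s}$. For $s\in\{i,j\}$ I would decompose
\[
H_{\lambda_i}=\tfrac{1}{2}(H_{\lambda_i}+H_{\lambda_j})+\tfrac{1}{2}H,\qquad H_{\lambda_j}=\tfrac{1}{2}(H_{\lambda_i}+H_{\lambda_j})-\tfrac{1}{2}H.
\]
A direct check gives $[H_{\lambda_i}+H_{\lambda_j},X]=(1-1)X=0$ and similarly for $Y$, so $H_{\lambda_i}+H_{\lambda_j}$ is fixed by $\gamma_{i,j}$, while $H$ is the semisimple element of the triple, on which the Weyl element acts by $-1$. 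Summing the two contributions yields $\gamma_{i,j}(H_{\lambda_i})=H_{\lambda_j}$ and $\gamma_{i,j}(H_{\lambda_j})=H_{\lambda_i}$; thus $\gamma_{i,j}$ normalises $\go a^0$, and it belongs to $\mathrm{Aut}_e(\go g)$ because $X,Y\in\go g$ are nilpotent.

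For part (2), take first $s\notin\{i,j\}$ and $Z\in\widetilde{\go g}^{\lambda_s}$: if nonzero, $[X,Z]$ would lie in a weight space of $V^+$ with $(H_{\lambda_i},H_{\lambda_j},H_{\lambda_s})$-weights $(1,-1,2)$ and the remaining weights zero; this is forbidden by Theorem \ref{th-decomp-Eij}, according to which every weight of $V^+$ has all its $H_{\lambda_r}$-weights in $\{0,1,2\}$. Hence $[X,Z]=0$, and symmetrically $[Y,Z]=0$, so $\gamma_{i,j}(Z)=Z$. Next, for $Z\in\widetilde{\go g}^{\lambda_i}$ the bracket $[X,Z]$ would carry $H_{\lambda_i}$-weight $3$, impossible by Theorem \ref{th-decomp-Eij} which bounds every such weight by $2$; so $[X,Z]=0$ and $Z$ is primitive of weight $2$ for the triple. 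Applying the explicit formula of item (2) of Section \ref{sl2module} then yields
\[
\gamma_{i,j}(Z)=\tfrac{1}{2}\bigl[Y,[Y,Z]\bigr],
\]
whose $(H_{\lambda_0},\dots,H_{\lambda_k})$-weight pattern is $2$ in position $j$ and $0$ elsewhere, so that $\gamma_{i,j}(Z)\in\widetilde{\go g}^{\lambda_j}$. Bijectivity and involutivity then follow from property (3) of Section \ref{sl2module}: on a weight-$p$ space one has $w^2=(-1)^p$, which equals $+1$ since $p=2$, so $\gamma_{i,j}^2|_{\widetilde{\go g}^{\lambda_i}}=\mathrm{Id}$ and $\gamma_{i,j}$ is a bijective involution between $\widetilde{\go g}^{\lambda_i}$ and $\widetilde{\go g}^{\lambda_j}$.

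\textbf{Main obstacle.} No step is genuinely hard; the entire argument is a systematic application of $\mathfrak{sl}_2$-module theory to the triple of Lemma \ref{lem-sl2ij}. The one technical subtlety used repeatedly is the weight-exclusion argument, which rests on carefully reading off Theorem \ref{th-decomp-Eij} that the possible combinations of $H_{\lambda_r}$-weights in $\widetilde{\go g}$ are extremely restricted.
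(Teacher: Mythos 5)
Your proof is correct and follows essentially the same route as the paper: both rest on the $\go{sl}_2$-module facts of Section \ref{sl2module} applied to the triple of Lemma \ref{lem-sl2ij}, with the weight restrictions of Theorem \ref{th-decomp-Eij} forcing the relevant brackets to vanish. Your splitting of $H_{\lambda_i}$ into its $\gamma_{i,j}$-fixed and anti-fixed parts is just an unpacked form of the reflection formula $\gamma_{i,j}(H)=H-\tfrac{\lambda_i(H)-\lambda_j(H)}{2}(H_{\lambda_i}-H_{\lambda_j})$ used in the paper, and your explicit computation $\gamma_{i,j}(Z)=\tfrac12(\ad Y)^2Z$ makes concrete what the paper deduces directly from the relations of part (1).
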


\begin{proof}  
As  $X\in E_{i,j}(1,-1)$, for $H\in\go a^0$ one has 
$$\gamma_{i,j}(H)=H-\frac{\lambda_i(H)-\lambda_j(H)}{2} (H_{\lambda_i}-H_{\lambda_j}),$$
and this gives the relations $(1)$.

For  $\{s,l\}\cap\{i,j\}=\emptyset$ and $p,q\in\{\pm1\}$, one has $[E_{s,l}(p,q),E_{i,j}(p,q)]=\{0\}$, therefore  the action  $\gamma_{i,j}$ is trivial on the spaces $E_{s,l}(p,q)$, and in particuliar of  $ \widetilde{\go g}^{\lambda_s}$ for  $s\notin\{i,j\}$. The relations   (1) imply  that $\gamma_{i,j}$ is an isomorphism from $ \widetilde{\go g}^{\lambda_i}$ onto $ \widetilde{\go g}^{\lambda_j}$. It is an involution because the action of  $\gamma_{i,j}^2$ on the even weight spaces of  $H_{\lambda_i}-H_{\lambda_j}$ is trivial (section \ref{sl2module}).\\
\end{proof}
It is worth noting that the action of $\gamma_{i,j}^2$ on  $ \widetilde{\go g}$ is not trivial . Indeed, if  $X$ is in an odd weight space for  $H_{\lambda_i}-H_{\lambda_j}$ then  $\gamma_{i,j}^2(X)=-X$. Therefore, on order to obtain an involution, we will modify $\gamma_{i,j}$. This is the purpose of the next proposition.

\begin{prop}\label{prop-gamijtilde} For  $i\neq j\in\{0,\ldots, k\}$, the element   $\widetilde{\gamma_{i,j}}=\gamma_{i,j}\circ w_i^2$ \index{gammaijtilde@$\widetilde{\gamma_{i,j}}$}  belongs to $N_{ \widetilde{G}}(\go a^0)$ and it verifies the following relations: 
$$\widetilde{\gamma_{i,j}}(H_{\lambda_s})=\left\{\begin{array}{ll} H_{\lambda_i} & {\rm for }\, s=j\\H_{\lambda_j} & {\rm for }\, s=i\\H_{\lambda_s} & {\rm for }\, s\notin\{i,j\}\end{array}\right.$$
and  $$\widetilde{\gamma_{i,j}}^2={\rm Id}_{ \widetilde{\go g}}.$$\end{prop}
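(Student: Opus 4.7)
The plan is to leverage two facts from \S \ref{sl2module} that control the square of a Weyl-group element of an $\go{sl}_2$-triple: if $w$ is the Weyl element of an $\go{sl}_2$-triple with semisimple element $h$, then $w^2$ acts on any $\ad(h)$-weight $p$ subspace by $(-1)^p$. I will apply this to both $w_i^2$ (associated to $H_{\lambda_i}$) and $\gamma_{i,j}^2$ (associated to $H_{\lambda_i}-H_{\lambda_j}$, by Lemma \ref{lem-sl2ij}).

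First I would verify that $\widetilde{\gamma_{i,j}} \in N_{\widetilde{G}}(\go a^0)$. Both $\gamma_{i,j}$ and $w_i$ lie in $\widetilde{G}=\text{Aut}_0(\widetilde{\go g})$. Since $w_i$ acts on $\go a$ as the reflection $s_{\lambda_i}$, the square $w_i^2$ acts trivially on $\go a$, hence stabilizes $\go a^0$; $\gamma_{i,j}$ already normalizes $\go a^0$ by Proposition \ref{prop-gammaij}(1). Consequently the action of $\widetilde{\gamma_{i,j}}$ on $\go a^0$ coincides with that of $\gamma_{i,j}$, which immediately gives the claimed relations on $H_{\lambda_s}$ via Proposition \ref{prop-gammaij}(1).

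The heart of the proof is the involutivity. The idea is to decompose $\widetilde{\go g}$ into simultaneous eigenspaces $\widetilde{\go g}(p_i,p_j)$ for the commuting operators $\ad(H_{\lambda_i})$ and $\ad(H_{\lambda_j})$ (this covers all of $\widetilde{\go g}$ since these operators are semisimple). On such a space I will track three actions: $w_i^2$ acts by $(-1)^{p_i}$; the Weyl square $\gamma_{i,j}^2$ acts by $(-1)^{p_i-p_j}$ (weight for $H_{\lambda_i}-H_{\lambda_j}$); and $\gamma_{i,j}$ sends $\widetilde{\go g}(p_i,p_j)$ to $\widetilde{\go g}(p_j,p_i)$ because it swaps $H_{\lambda_i}\leftrightarrow H_{\lambda_j}$ and fixes the remaining $H_{\lambda_s}$'s (Proposition \ref{prop-gammaij}(1)).

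Putting these together, for $Z\in \widetilde{\go g}(p_i,p_j)$ I compute step by step:
\begin{align*}
\widetilde{\gamma_{i,j}}^2(Z)
&=\gamma_{i,j}\,w_i^2\,\gamma_{i,j}\,w_i^2(Z)
=(-1)^{p_i}\,\gamma_{i,j}\,w_i^2\,\gamma_{i,j}(Z).
\end{align*}
Since $\gamma_{i,j}(Z)\in \widetilde{\go g}(p_j,p_i)$, applying $w_i^2$ gives a factor $(-1)^{p_j}$, and a final $\gamma_{i,j}$ yields $(-1)^{p_i+p_j}\gamma_{i,j}^2(Z)=(-1)^{p_i+p_j}(-1)^{p_i-p_j}Z=(-1)^{2p_i}Z=Z$. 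Hence $\widetilde{\gamma_{i,j}}^2 = \text{Id}_{\widetilde{\go g}}$.

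The only delicate point I expect is justifying that $\gamma_{i,j}$ really does swap the bigraded pieces $\widetilde{\go g}(p_i,p_j)$ and $\widetilde{\go g}(p_j,p_i)$; this is a direct consequence of $\gamma_{i,j}(H_{\lambda_i})=H_{\lambda_j}$, $\gamma_{i,j}(H_{\lambda_j})=H_{\lambda_i}$, obtained by applying $\gamma_{i,j}$ inside the bracket $[H_{\lambda_s},\gamma_{i,j}(Z)]=\gamma_{i,j}[\gamma_{i,j}^{-1}(H_{\lambda_s}),Z]$. Once this is in hand everything else is a bookkeeping of signs, and there is no need to treat $\go m$ separately because elements of $\go m$ sit in the $(0,0)$-weight space, where all three sign factors equal $1$.
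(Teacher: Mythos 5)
Your proof is correct and follows essentially the same route as the paper: both rest on the fact from \S\ref{sl2module} that the square of the Weyl element of an $\go{sl}_2$-triple acts by $(-1)^p$ on the weight-$p$ space of its semisimple element (applied to $w_i$ for $H_{\lambda_i}$ and to $\gamma_{i,j}$ for $H_{\lambda_i}-H_{\lambda_j}$), together with the fact that $\gamma_{i,j}$ interchanges the $H_{\lambda_i}$- and $H_{\lambda_j}$-weights. Your uniform bookkeeping over the simultaneous eigenspaces $\widetilde{\go g}(p_i,p_j)$ is a clean repackaging of the paper's case-by-case verification over the spaces $E_{r,s}(\pm1,\pm1)$ and even covers uniformly the pieces (such as $E_{i,j}(\pm1,\pm1)$ itself and the zero-weight part) that the paper's displayed formula passes over in silence.
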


\begin{proof}   By section \ref{sl2module}, the action of  $w_i^2$ on the spaces $E_{i,s}(\pm1,\pm1)$ (for $s\neq i$) is the scalar multiplication by  $-1$, and is trivial on  the sum of the other spaces. Moreover, the action of $\gamma_{i,j}$ on the spaces  $E_{s,l}(\pm1,\pm1)$ with  $\{s,l\}\cap\{i,j\}=\emptyset$ is trivial, and is  an isomorphism from $E_{i,s}(\pm1,\pm1)$ onto  $E_{j,s}(\pm1,\pm1)$. Therefore one obtains 
$$\widetilde{\gamma_{i,j}}^2(X)=\left\{\begin{array}{ll} -{\gamma_{i,j}}^2(X) & {\rm for }\, X\in\oplus_{s\notin\{i,j\}}E_{i,s}(\pm1,\pm1)\oplus E_{j,s}(\pm1,\pm1)\\
{\gamma_{i,j}}^2(X)&  {\rm for  }\, X\in\oplus_{\{s,l\}\cap\{i,j\}=\emptyset}E_{s,l}(\pm1,\pm1).\end{array}\right.$$
As the subspace $\oplus_{s\notin\{i,j\}}E_{i,s}(\pm1,\pm1)\oplus E_{j,s}(\pm1,\pm1)$ of the odd weightspaces for  $H_{\lambda_i}-H_{\lambda_j}$, the element  $\gamma_{i,j}^2$ acts by  $-1$ on it, and this   ends the proof.
\end{proof}

\subsection{Quadratic forms}\label{sec:section-qXiXj}\hfill
 \vskip 10pt
 Remind that the quadratic form  $b$ is a normalization of the Killing form (Definition \ref{defb(X,Y)}).
 For $X\in V^+$, let  $Q_X$  \index{QX@$Q_X$}be the quadratic form on $V^-$ defined by 
 $$Q_X(Y)=b(e^{\ad X}Y,Y),\quad Y\in V^-.$$

 If  $g\in G$, the quadratic forms $Q_X$ and $Q_{g.X}$ are equivalent.\\
 Therefore we will study the quadratic form $Q_X$ for  $X\in \oplus_{j=0}^k \widetilde{\go g}^{\lambda_j}.$ 
 
  The grading of  $ \widetilde{\go g}$ is orthogonal for $ \widetilde{B}$ and hence also for $b$. One obtains
 $$Q_X(Y)=\frac{1}{2} b((\ad X)^2Y,Y)=-\frac{1}{2} b([X,Y],[X,Y])$$

Let 
$$X=\sum_{j=0}^kX_j,\quad X_j\in \widetilde{\go g}^{\lambda_j}.$$

Let  $E_{s,l}(-1,-1)\subset V^-$. The action of  $\ad X_i \ad X_j$ on  $E_{s,l}(-1,-1)$ is non zero if and only if $(s,l)=(i,j)$ where  $i\neq j$ or  $s=l=i=j$. The quadratic forms  $q_{X_i,X_j}$ \index{qxixj@$q_{X_i,X_j}$}on  $E_{i,j}(-1,-1)$ (\resp   $ \widetilde{\go g}^{\lambda_j}$) for $i\neq j$ (\resp $i=j$) are defined by
$$q_{X_i,X_j}(Y)=-\frac{1}{2} b([X_i,Y],[X_j,Y]),\quad {\rm for  }\, Y\in E_{i,j}(-1,-1) ({\rm \resp } \widetilde{\go g}^{\lambda_j}).$$

The decomposition of  $V^-$ implies  that the quadratic form $Q_X$ is  equal to 
$$(\oplus_{j=0}^k q_{X_j,X_j})\oplus (\oplus_{i<j}^k 2\, q_{X_i,X_j}).$$

 \begin{theorem}\label{th-qnondeg} Let
$X=\sum_{j=0}^kX_j$
 where $ X_j\in \widetilde{\go g}^{\lambda_j}.$ Let $i,j\in\{0,\ldots, k\}$. 
 \begin{enumerate} \item  If $X_{i}\neq0$  and  $X_{j}\neq0$, then $q_{X_i,X_j}$ is non degenerate.
 \item Let $m$ be the number of indices $i$ such that  $X_i\neq 0$. Then
 $${\rm rank }\, Q_X =m\ell+\frac{m(m-1)}{2}d,$$
 where  $\ell={\rm dim}\,  \widetilde{\go g}^{\lambda_i}$ and  $d={\rm dim} \,E_{i,j}(-1,1)$ for $i\neq j$.
 \end{enumerate}
 \end{theorem}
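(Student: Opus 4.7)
The plan is to prove part (1) first by analysing $\ad X_i$ as a weight-space map for the $\go{sl}_2$-triple containing $\lambda_i$, and then derive part (2) as an immediate dimension count from the orthogonal decomposition of $Q_X$ already provided in the text.

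For part (1), I would split into the diagonal case $i=j$ and the off-diagonal case $i\ne j$. When $i=j$ and $X_j\ne0$, the element $X_j$ lies in $\widetilde{\go g}^{\lambda_j}\setminus\{0\}$, and by the rank-one classification (Proposition~\ref{ell-0-simple}, Theorem~\ref{th-k=0} and Remark~\ref{0=j}) applied to $\widetilde{\go l}_j$, every nonzero element of $\widetilde{\go g}^{\lambda_j}$ is generic, so there exists $Y_j\in\widetilde{\go g}^{-\lambda_j}$ completing $(Y_j,H_{\lambda_j},X_j)$ into an $\go{sl}_2$-triple and $(\ad X_j)^2\colon \widetilde{\go g}^{-\lambda_j}\to\widetilde{\go g}^{\lambda_j}$ is bijective. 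Since $q_{X_j,X_j}(Y)=\tfrac12 b((\ad X_j)^2 Y,Y)$ and $b$ pairs $\widetilde{\go g}^{-\lambda_j}$ with $\widetilde{\go g}^{\lambda_j}$ non-degenerately (Definition~\ref{defb(X,Y)} and Lemma~\ref{lemmeb}), the form $q_{X_j,X_j}$ is non-degenerate.

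For $i\ne j$, fix $Y_i\in\widetilde{\go g}^{-\lambda_i}$ with $(Y_i,H_{\lambda_i},X_i)$ an $\go{sl}_2$-triple, and similarly for $j$. The key step is a weight argument: under $\ad H_{\lambda_i}$, the space $E_{i,j}(-1,-1)$ sits in weight $-1$, and since $\widetilde{\go g}$ only carries weights in $\{-2,-1,0,1,2\}$ for this $\go{sl}_2$, any weight $-1$ vector must belong to a $2$-dimensional irreducible component with weights $\{-1,+1\}$. Consequently $\ad Y_i$ annihilates $E_{i,j}(-1,-1)$ and $\ad X_i$ injects it into $E_{i,j}(1,-1)$; Proposition~\ref{prop-memedimension} then forces this to be a bijection $E_{i,j}(-1,-1)\xrightarrow{\sim} E_{i,j}(1,-1)$, and symmetrically $\ad X_j$ is a bijection $E_{i,j}(-1,-1)\xrightarrow{\sim} E_{i,j}(-1,1)$. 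The associated bilinear form of $q_{X_i,X_j}$ is $(Y,Z)\mapsto -\tfrac12 b([X_i,Y],[X_j,Z])$; if $Y$ is in the radical, then $b([X_i,Y],w)=0$ for every $w\in E_{i,j}(-1,1)$ by surjectivity of $\ad X_j$, and since $b$ pairs $E_{i,j}(1,-1)$ with $E_{i,j}(-1,1)$ non-degeneratively (the weight grading under $\go a^0$ is orthogonal for $b$), we get $[X_i,Y]=0$ whence $Y=0$ by injectivity of $\ad X_i$. Thus $q_{X_i,X_j}$ is non-degenerate.

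For part (2), the excerpt already gives the orthogonal splitting
\[
Q_X \;=\; \Bigl(\bigoplus_{j=0}^k q_{X_j,X_j}\Bigr)\;\oplus\;\Bigl(\bigoplus_{i<j} 2\,q_{X_i,X_j}\Bigr),
\]
with $q_{X_i,X_j}\equiv 0$ whenever $X_i=0$ or $X_j=0$. By part~(1), each surviving diagonal term contributes a non-degenerate form on a space of dimension $\ell$ and each surviving off-diagonal term contributes a non-degenerate form on a space of dimension $d$. Summing gives $\mathrm{rank}(Q_X)=m\ell+\binom{m}{2}d$. The only real obstacle is the off-diagonal case, and specifically the clean identification via the $\go{sl}_2$-weight argument that $\ad X_i$ and $\ad X_j$ are isomorphisms on the three relevant pieces of $E_{i,j}(\pm1,\pm1)$; once this is in hand the non-degeneracy reduces to the duality provided by $b$.
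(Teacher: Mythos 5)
Your proposal is correct and follows essentially the same route as the paper: non-degeneracy of $q_{X_i,X_j}$ is reduced, via the bijections $\ad X_i\colon E_{i,j}(-1,-1)\to E_{i,j}(1,-1)$ and $\ad X_j\colon E_{i,j}(-1,-1)\to E_{i,j}(-1,1)$ furnished by $\go{sl}_2$-theory, to the non-degenerate pairing that $b$ induces between opposite $\go a^0$-weight spaces, and part (2) then follows by counting dimensions in the orthogonal decomposition of $Q_X$. The only differences are organizational — you argue on the whole spaces $E_{i,j}(\pm1,\pm1)$ where the paper decomposes into root spaces and matches them via $w_{i,j}$, and you spell out (via Theorem~\ref{th-k=0}) why a nonzero $X_j$ is automatically generic in $\widetilde{\go l}_j$, a point the paper leaves implicit.
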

 \begin{proof}  The bilinear form associated to  $q_{X_i,X_j}$ is given by 
 $$L_{X_i,X_j}(u,v)=-\frac{1}{2} b([X_i,u], [X_j,v]),\quad u,v\in E_{i,j}(-1,-1) ({\rm \resp } \widetilde{\go g}^{\lambda_j})\, {\rm for  }\, i\neq j  ({\rm \resp }i=j).$$
 
If  $i=j$,   $(\ad X_i)^2$ is an isomorphism from  $ \widetilde{\go g}^{-\lambda_i}$ onto  $ \widetilde{\go g}^{\lambda_i}$. As the form  $b$ (proportional to  $ \widetilde{B}$) is non degenerate on  $ \widetilde{\go g}^{-\lambda_i}\times  \widetilde{\go g}^{\lambda_i}$, the form $L_{X_i,X_i}$ is non degenerate on  $ \widetilde{\go g}^{\lambda_i}$.\medskip

If $i\neq j$, let us consider two roots  $\lambda$ and  $\mu$ in the decomposition of $E_{i,j}(-1,-1)$. As   $\ad X_j( \widetilde{\go g}^{\mu})\subset  \widetilde{\go g}^{\mu+\lambda_j}$ and   $\ad X_i( \widetilde{\go g}^{\lambda})\subset  \widetilde{\go g}^{\lambda+\lambda_i}$, the restriction of $L_{X_i, X_j}$ to  $ \widetilde{\go g}^\lambda\times \widetilde{\go g}^\mu$ is non zero if and only if  $\lambda+\lambda_i=-(\mu+\lambda_j)$, that is if and only if  $\mu=-w_{i,j}(\lambda)$.\\
Let  $u\in  \widetilde{\go g}^\lambda$ such that, for all $v\in \widetilde{\go g}^{-w_{i,j}(\lambda)}$, one has $L_{X_i,X_j}(u,v)=0$. 
By section  \ref{sl2module},  $\ad X_i$ is an isomorphism from  $ \widetilde{\go g}^{\lambda}\subset E_{i,j}(-1, -1)$ onto $  \widetilde{\go g}^{\lambda+\lambda_i}\subset E_{i,j}(1,-1)$ and   $\ad X_j$ is an isomorphism from  $ \widetilde{\go g}^{-w_{i,j}(\lambda)}\subset E_{i,j}(-1, -1)$ onto  $ \widetilde{\go g}^{-(\lambda+\lambda_i)}\subset E_{i,j}(-1,1)$.  As the restriction of  $b$ to $ \widetilde{\go g}^{\lambda+\lambda_i}\times \widetilde{\go g}^{-(\lambda+\lambda_i)}$ is non degenerate, we get $\ad X_i (u)=0$ and hence  $u=0$.  This proves the first statement

The second statement is an immediate consequence of the formula $Q_{X}= (\oplus_{j=0}^k q_{X_j,X_j})\oplus (\oplus_{i<j}^k 2\, q_{X_i,X_j})$ seen before.

  \end{proof}

 \begin{prop}\label{prop-equivalenceqXiXj}  There exist $\go{sl}_2$-triples $(Y_s,H_{\lambda_s}, X_s)$, $s\in\{0,\dots, k\}$,  such that,  for $i\neq j$, the quadratic forms  $q_{X_i,X_j}$ are all $G$-equivalent (this means that there exists $g\in G$ such that $q_{X_{0},X_{1}}=q_{X_{i},X_{j}}\circ g)$.\\
 Moreover, if $\ell =1$, these forms satisfy the following conditions  (remember that  $e={\rm dim}\; \tilde{\go g}^{(\lambda_i+\lambda_j)/2}$ $ \leq$ $ d={\rm dim} \,E_{i,j}(-1,1)$ for $i\neq j$):
  \begin{enumerate}
 \item If   $e\neq 0$, then the restriction of  $q_{X_i,X_j}$ to  $\tilde{\go g}^{- (\lambda_i+\lambda_j)/2}$  is anisotropic of rank $e$ and represents $1$ (i.e. there exists   $u\in \tilde{\go g}^{- (\lambda_i+\lambda_j)/2}$ such that  $q_{X_i,X_j}(u)=1$).
 \item If  $d-e\neq 0$, and if $W_{i,j}(-1,-1)$ denotes the direct sum of the spaces  $\tilde{\go g}^{-\mu}\subset E_{i,j}(-1,-1)$ where  $\mu\in\tilde{\Sigma}$ and $\mu\neq (\lambda_i+\lambda_j)/2$, then the restriction of  $q_{X_i,X_j}$ to $W_{i,j}(-1,-1)$ is hyperbolic of rank  $d-e$ (and therefore $d-e$ is even).\end{enumerate}
 \end{prop}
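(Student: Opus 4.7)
\textit{Step 1: Compatible triples and the $G$-equivalence.} The idea is to exploit the involutions $\widetilde{\gamma_{i,j}}$ of Proposition \ref{prop-gamijtilde}, which belong to $N_{\widetilde G}(\go a^0)$ and swap $H_{\lambda_i}$ with $H_{\lambda_j}$ while fixing the remaining $H_{\lambda_s}$. Since they fix $H_0=\sum_s H_{\lambda_s}$, they in fact lie in $G$. First I would pick any $\mathfrak{sl}_2$-triple $(Y_0,H_{\lambda_0},X_0)$ with $X_0\in\widetilde{\go g}^{\lambda_0}$, and set $X_s:=\widetilde{\gamma_{0,s}}(X_0)\in \widetilde{\go g}^{\lambda_s}$ and $Y_s:=\widetilde{\gamma_{0,s}}(Y_0)$ for $s\geq 1$; these are $\mathfrak{sl}_2$-triples by the conjugation relations of $\widetilde{\gamma_{0,s}}$. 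Given any pair $i\neq j$, I would then construct an element $g_{i,j}\in G$ that sends $(X_0,X_1)$ to $(X_i,X_j)$ (e.g.\ combining suitable $\widetilde{\gamma_{a,b}}$'s, handling the overlap cases $\{i,j\}\cap\{0,1\}\neq\emptyset$ separately). Because $g_{i,j}$ is an automorphism of $\widetilde{\go g}$ preserving $b$ (which is proportional to the Killing form on the derived algebra) and maps $E_{0,1}(-1,-1)$ onto $E_{i,j}(-1,-1)$ (it permutes the $E_{a,b}(\pm1,\pm1)$ according to the induced permutation of indices), the identity
$$q_{X_i,X_j}(g_{i,j}\!\cdot\! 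Y)=-\tfrac12 b\bigl(g_{i,j}[X_0,Y],g_{i,j}[X_1,Y]\bigr)=q_{X_0,X_1}(Y)$$
gives the $G$-equivalence for all pairs.

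\textit{Step 2: Orthogonality of the two summands when $\ell=1$.} From now on fix $(i,j)=(0,1)$. Decomposing $E_{0,1}(-1,-1)$ into root spaces, the associated bilinear form $L_{X_0,X_1}(u,v)=-\tfrac12 b([X_0,u],[X_1,v])$ satisfies, for $u\in\widetilde{\go g}^{-\mu}$ and $v\in\widetilde{\go g}^{-\nu}$ (both inside $E_{0,1}(-1,-1)$), that $L_{X_0,X_1}(u,v)\neq 0$ forces $\lambda_0-\mu+\lambda_1-\nu=0$, i.e.\ $\nu=w_{0,1}(\mu)$ by Remark \ref{rem-actionwij}. Thus the root space $\widetilde{\go g}^{-(\lambda_0+\lambda_1)/2}$ (which is fixed by $w_{0,1}$) is orthogonal to $W_{0,1}(-1,-1)$, giving the orthogonal splitting $E_{0,1}(-1,-1)=\widetilde{\go g}^{-(\lambda_0+\lambda_1)/2}\perp W_{0,1}(-1,-1)$.

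\textit{Step 3: Hyperbolicity of the $W_{0,1}(-1,-1)$-part.} The roots $-\mu\neq -(\lambda_0+\lambda_1)/2$ with $\widetilde{\go g}^{-\mu}\subset E_{0,1}(-1,-1)$ come in genuine $w_{0,1}$-pairs $\{-\mu,-w_{0,1}(\mu)\}$, with $w_{0,1}$ preserving dimensions. On each such pair the form vanishes on each summand (self-pairing requires $2\mu=\lambda_0+\lambda_1$, excluded here) and, by the non-degeneracy result of Theorem \ref{th-qnondeg}, the cross-pairing $\widetilde{\go g}^{-\mu}\times\widetilde{\go g}^{-w_{0,1}(\mu)}\to F$ is non-degenerate. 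This is exactly a direct sum of hyperbolic planes, so $q_{X_0,X_1}|_{W_{0,1}(-1,-1)}$ is hyperbolic of rank $d-e$, and in particular $d-e$ is even.

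\textit{Step 4: Anisotropy on $\widetilde{\go g}^{-(\lambda_0+\lambda_1)/2}$ and the value $1$.} This will be the main difficulty. Non-degeneracy of this restriction is already known from Theorem \ref{th-qnondeg}; the point is \emph{anisotropy}. My plan is to identify this piece with a root space of the rank-two regular graded subalgebra $\widetilde{\go g}_{\{0,1\}}$ of Corollary \ref{cor-gA}, and to recognize the resulting form, via the classification of Table~1 restricted to rank two and $\ell=1$ (cases $(2),(3),(4),(6),(9),(10),(12)$ when $e\neq 0$), as the anisotropic kernel form of the corresponding Satake--Tits diagram: it is the norm on a division-algebra summand or an anisotropic subform of the underlying quadratic/Hermitian form; in each case one checks directly that this form is anisotropic. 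To obtain that it represents $1$ with the chosen triples, the strategy is to first observe (using Lemma \ref{lemmeb}) that $q_{X_0,X_0}(Y_0)=-\tfrac12 b(H_{\lambda_0},H_{\lambda_0})=1$, and then to transport this normalization to $q_{X_0,X_1}$ via a carefully chosen element $U\in\widetilde{\go g}^{-(\lambda_0+\lambda_1)/2}$ obtained by applying $\operatorname{ad} Y_1$ (or its inverse on the relevant root string) to $Y_0$; a Jacobi-identity computation, using that $\lambda_0,\lambda_1$ are strongly orthogonal so that $[Y_1,X_0]=0$ and $[X_1,Y_0]=0$, reduces $q_{X_0,X_1}(U)$ to $q_{X_0,X_0}(Y_0)=1$. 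The main obstacle will be isolating the right scalar and choosing the normalization of $U$ that makes this identity exact in each type of Table~1 (particularly in types $(10)$ and $(12)$ where $e=4$ and the anisotropic kernel is genuinely non-trivial).
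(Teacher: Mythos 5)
Your Steps 1--3 coincide with the paper's own argument: the paper likewise fixes one triple $(Y_0,H_{\lambda_0},X_0)$, defines $X_s=\gamma_{0,s}(X_0)$, $Y_s=\gamma_{0,s}(Y_0)$, and reduces every $q_{X_i,X_j}$ to $q_{X_0,X_1}$ (it does so in two moves, $q_{X_i,X_j}\sim q_{X_0,X_j}$ via $\gamma_{0,i}$ and then $q_{X_0,X_j}\sim q_{X_0,X_1}$ via $\gamma_{0,1}\gamma_{0,j}\gamma_{0,1}$, which disposes of the compatibility you leave to "overlap cases"); the root-space orthogonality and the $w_{0,1}$-pairing argument for the hyperbolic part of rank $d-e$ are also exactly the paper's.

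The genuine gap is in Step 4. For anisotropy you defer to an unexecuted case-by-case identification through Table 1 (and the heuristic "anisotropic kernel form of the diagram" is already off in the quasi-split case (9), where the anisotropic kernel is trivial yet $e=2$); this detour is unnecessary because there is a uniform two-line argument you are missing: given $0\neq Y\in\tilde{\go g}^{-(\lambda_0+\lambda_1)/2}$, complete it to an $\go{sl}_2$-triple $(Y,H_{\lambda_0}+H_{\lambda_1},X)$ and let $w=e^{\ad X}e^{\ad Y}e^{\ad X}$; since $X_0$ is primitive of weight $2$ for this triple, $wX_0=\tfrac12(\ad Y)^2X_0$ is a nonzero element of $\tilde{\go g}^{-\lambda_1}$, hence equals $aY_1$ with $a\in F^*$ because $\ell=1$, and then $q_{X_0,X_1}(Y)=\tfrac12 b((\ad Y)^2X_0,X_1)=a\,b(Y_1,X_1)=a\neq 0$. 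More seriously, the element you propose for representing $1$ does not exist: $\ad(Y_1)Y_0=[Y_1,Y_0]=0$ since $V^-$ is abelian, and the weights do not match either ($Y_0$ has weight $(-2,0)$ under $(H_{\lambda_0},H_{\lambda_1})$ while the target space has weight $(-1,-1)$, so one must apply an element of weight $(1,-1)$, i.e.\ of $\tilde{\go g}^{-(\lambda_0-\lambda_1)/2}\subset\go g$, not $\ad Y_1$). The correct choice is $Z=[Y_1,Y']$ where $(Y',H_{\lambda_0}-H_{\lambda_1},X')$ is the triple with $Y'\in\tilde{\go g}^{-(\lambda_0-\lambda_1)/2}$ realizing $\gamma_{0,1}=e^{\ad X'}e^{\ad Y'}e^{\ad X'}$; the Jacobi computation then reduces $q_{X_0,X_1}(Z)$ not to $q_{X_0,X_0}(Y_0)$ but to $-\tfrac12 b([Y',X_0],[Y',Y_1])=b(\gamma_{0,1}(X_0),Y_1)=b(X_1,Y_1)=1$, which is exactly where the compatible normalization $X_1=\gamma_{0,1}(X_0)$, $Y_1=\gamma_{0,1}(Y_0)$ and Lemma \ref{lemmeb} enter. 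As written, your Step 4 therefore does not yield either assertion of part (1).
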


 \begin{proof} Let $X_0\in \widetilde{\go g}^{\lambda_0}$. We fix a $\go sl_2$-triple $(Y_0,H_{\lambda_0}, X_0)$. Let  $j\in\{1,\ldots, k\}$. We choose the maps  $\gamma_{0,j}$ such that Proposition \ref{prop-gammaij} is satisfied and we set  $Y_j=\gamma_{0,j} Y_0$ and $X_j=\gamma_{0,j}X_0$. Then $(Y_j,H_{\lambda_j}, X_j)$ is an  $\go sl_2$- triple  and for  $i\neq j$, we have $\gamma_{0,i}(X_j)=X_j$. Then, for  $Y\in E_{i,j}(-1,-1)$, we obtain 
 $$q_{X_i,X_j}(Y)=-\frac{1}{2} b([\gamma_{0,i}(X_0), Y], [X_j,Y])$$
 $$=-\frac{1}{2} b([X_0, \gamma_{0,i}^{-1}(Y)], [X_j,\gamma_{0,i}^{-1}(Y)])=q_{X_0,X_j}(\gamma_{0,i}^{-1}(Y)).$$
Therefore  $q_{X_i,X_j}$ is equivalent to $q_{X_0,X_j}$for all $j\neq 0$ and  $j\neq i$. \medskip

Let $j\geq 2$. One has  $X_j=\gamma_{0,1}(X_j)=\gamma_{0,1}\gamma_{0,j}(X_0)$. As the restriction of  $\gamma_{0,1}^2$ to  $ \widetilde{\go g}^{\lambda_0}$ is the identity, one has 
$X_0=\gamma_{0,1}^2 (X_0)=\gamma_{0,1}(X_1)$ and hence $X_j=\gamma_{0,1}\gamma_{0,j}\gamma_{0,1}(X_1)$. As $g=\gamma_{0,1}\gamma_{0,j}\gamma_{0,1}$ fixes $X_0$, one obtains, for $Y\in E_{0,j}(-1,-1)$,
$$q_{X_0, X_j}(Y)=-\frac{1}{2} b([X_0, g^{-1}Y], [X_1,g^{-1}Y])= q_{X_0, X_1}(g^{-1}Y).$$
Therefore $q_{X_0,X_j}$ is equivalent to  $q_{X_0,X_1}$.\medskip

 Suppose now that $\ell=1$. As  the form $q_{X_i,X_j}$ for $i\neq j$ is equivalent to $q_{X_0,X_1}$, it is sufficient to prove the assertions for $i=0$ and $j=1$.

 If $e\neq 0$ then $(\lambda_0+\lambda_1)/2$ is a root,  and its coroot is  $H_{\lambda_0}+H_{\lambda_1}$. Let  $Y$ be   a non zero element in $\tilde{\go g}^{-(\lambda_0+\lambda_1)/2}$. Let  $(Y,H_{\lambda_0}+H_{\lambda_1}, X)$ be an $\go sl_2$-triple with $X\in \tilde{\go g}^{(\lambda_0+\lambda_1)/2}$ and 
denote by   $w=e^{\ad X}e^{\ad Y} e^{\ad X}$ the non trivial Weyl group element associated to this  $\go sl_2$-triple .  As $X_0$ is if weight $2$ for the action of this  $\go sl_2$-triple, one has  $wX_0= \dfrac{({\rm ad}\;Y)^2}{2}X_0$ and  $wX_0$ is a non zero element  of  $\tilde{\go g}^{-\lambda_1}$. As $\ell=1$, there exists $a\in F^*$ such that $wX_0=aY_1$. Therefore, using the normalization of  $b$ (Lemme \ref{lemmeb}), we get
$$q_{X_0,X_1}(Y)=\dfrac{1}{2}b({\rm ad}(Y))^2X_0,X_1)=b(w X_0,X_1)=a\; b(Y_1,X_1)=a\neq 0.$$
Hence the  restriction of  $q_{X_0,X_1}$ to  $\tilde{\go g}^{-(\lambda_0+\lambda_1)/2}$ is anisotropic.

 Let us now  prove that this restriction represents $1$. 
As $\dfrac{\la_0-\la_1}{2}$ is a root and its coroot is $H_{\la_0}-H_{\la_1}$,  we can fix an  $\go sl_2$-triple $(Y', H_{\lambda_0}-H_{\lambda_1},X')$ with $Y'\in \tilde{\go g}^{-(\la_0-\la_1)/2}$, $X'\in \tilde{\go g}^{(\la_0-\la_1)/2}$   such that  
$\gamma_{0,1}=e^{\ad X'}e^{\ad Y'}e^{\ad X'}$. As $X_0$ is of weight  $2$ for the action of this  $\go sl_2$-triple , one has  $\gamma_{0,1}(X_0)= \dfrac{1}{2} (\ad Y')^2(X_0)$.
From the normalization of  $b$ (Lemme \ref{lemmeb}), we get 
  $$1=b(X_1,Y_1)=b(\gamma_{0,1}(X_0), Y_1)=-\frac{1}{2}b([Y',X_0],[Y',Y_1]).$$
Set $Z=[Y_1,Y']\in \tilde{\go g}^{-(\la_0+\la_1)/2}$. 
 Using the Jacobi identity, one has 
$[X_0,Z]=\ad(Y_1)([X_0,Y'])$ and  $[X_1,Z]=-[Y_1,[Y',X_1]]-[Y',[X_1,Y_1]]=-Y'$. Therefore
$$q_{X_0,X_1}(Z)=-\frac{1}{2} b([X_0,Z],[X_1,Z])=-\frac{1}{2}b(\ad(Y_1)([X_0,Y']), -Y')$$
$$=-\frac{1}{2}b([X_0,Y'], [Y_1Y'])=1. $$

We prove now the last assertion. We have $d> e$. For  $s=0$ or $1$, we note $w_s=e^{\ad X_s}e^{\ad Y_s} e^{\ad X_s}$ and  $w_{0,1}=w_0w_1=w_1w_0$. Let  $\mu\in\tilde{\Sigma}$ such that  $\mu\neq(\lambda_0+\lambda_1)/2$ and such that  $\tilde{\go g}^{\mu}\subset E_{0,1}(1,1)$. From Remark  \ref{rem-actionwij}, we have $w_{0,1} \mu=\mu-\lambda_0-\lambda_1$. Hence $\mu'=-w_{0,1} \mu$ is a root, distinct from $\mu$ (because  $\mu'=\mu$ would imply  $\mu=(\lambda_0+\lambda_1)/2$ and this is not the case) such  that ${\go g}^{-\mu'}\subset E_{0,1}(-1,-1)$. Let us fix an $\go sl_2$-triple $(X_{-\mu}, H_\mu, X_\mu)$ where $X_{\pm\mu}\in\tilde{\go g}^{\pm\mu}$. Applying  $w_{0,1}$ we obtain the $\go sl_2$-triple   $(w_{0,1}X_{\mu}, H_{\mu'}, w_{0,1}X_{-\mu})$ where $w_{0,1}X_\mu\in\tilde{\go g}^{-\mu'}$. Using Lemma \ref{actionwi}, we  obtain
$$q_{X_0,X_1}(X_{-\mu})= -\dfrac{1}{2} b([X_0,X_{-\mu}], [X_1,X_{-\mu}]) =-\dfrac{1}{2} b(w_0X_{-\mu}, w_1X_{-\mu})=-\dfrac{1}{2} b(X_{-\mu},w_{0,1}X_{-\mu})=0$$
and 
\begin{align*}q_{X_0,X_1}(w_{0,1}X_{\mu})&=-\dfrac{1}{2} b([X_{0},w_0w_{1}X_{\mu}], [X_{1}, w_{0}w_1X_{\mu}])=-\dfrac{1}{2} b(w_1X_{\mu}, w_0X_{\mu})\\
&=-\dfrac{1}{2} b(X_{\mu},w_{0,1}X_{\mu})=0.\end{align*}
This implies that the restriction of  $q_{X_0,X_1}$ to the vector space generated by $X_{-\mu}$ and $w_{0,1}X_\mu$ is a hyperbolic plane. This ends the proof.

\end{proof}


\begin{rem}\label{rem-lienmuller} Rather than the forms $q_{X_i, X_j}$ which were already used in the real case by N.~Bopp and H. Rubenthaler (\cite{B-R}), I . Muller  (\cite{Mu98},\cite{Mu}) introduced the quadratic form 
$f_{\lambda_i,\lambda_j}$ on $E_{i,j}(-1,1)$ defined as follows: if   $(Y_i,H_{\lambda_i}, X_i)$ is an $\go sl_2$-triple then
$$f_{\lambda_i,\lambda_j}(u)=-\frac{1}{2}b([u, X_i], [u, Y_j]).$$
From the preceding proof we see that
 $$f_{\lambda_i,\lambda_j}(u)=q_{X_i, X_j}([Y_j,u]),\quad u\in E_{i,j}(-1,1).$$
\end{rem}
 \subsection{Reduction to the diagonal. Rank of an element}
 \hfill

\begin{prop}\label{prop G-diag} For any element  $Z$ in  $V^+$, there exists a unique  $m\in\N$, and for  $j=0,\ldots, m-1$, there exist non zero elements   $Z_j\in\tilde{\go g}^{\lambda_j}$   (non unique) such that $Z$ is  $\text{Aut}_e(\go g)$-conjugated  (and hence $G$-conjugated) to  $Z_0+Z_1+\ldots+Z_{m-1}$, or, equivalently, $Z$ is  $\text{Aut}_e(\go g)$-conjugated to a generic element of $V^+_{k-m+1}$. 
\end{prop}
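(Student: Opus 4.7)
The plan is to combine the reduction result of Theorem~\ref{th-V+caplambda} with the permutation automorphisms $\gamma_{i,j}$ of Proposition~\ref{prop-gammaij}, and to extract the uniqueness of $m$ from the rank formula for $Q_Z$ given in Theorem~\ref{th-qnondeg}.

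First, by Theorem~\ref{th-V+caplambda}, I would fix $g_0\in{\rm Aut}_e(\go g)$ so that
$$Z':=g_0.Z=\sum_{i=0}^{k}Z_i',\qquad Z_i'\in\widetilde{\go g}^{\lambda_i}.$$
Let $m$ be the number of indices $i$ for which $Z_i'\neq 0$ and let $J=\{i_1<i_2<\cdots <i_m\}$ be the set of these indices. Recall from Proposition~\ref{prop-gammaij} that $\gamma_{i,j}\in{\rm Aut}_e(\go g)$ induces a bijection $\widetilde{\go g}^{\lambda_i}\to\widetilde{\go g}^{\lambda_j}$ and acts trivially on $\widetilde{\go g}^{\lambda_s}$ for $s\notin\{i,j\}$; in particular it sends nonzero components to nonzero components and realizes on the indexing set $\{0,\dots,k\}$ the transposition $(i,j)$. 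Composing the transpositions $\gamma_{i_1,0},\gamma_{i_2,1},\dots,\gamma_{i_m,m-1}$ (taken in a suitable order so that at each step the relevant index has not already been moved), I obtain an element of ${\rm Aut}_e(\go g)$ that carries $Z'$ to an element $Z_0+Z_1+\cdots+Z_{m-1}$ with every $Z_j\in\widetilde{\go g}^{\lambda_j}\setminus\{0\}$. This settles existence in the first form. For the equivalent form, I would instead compose $\gamma_{i_1,k-m+1},\gamma_{i_2,k-m+2},\dots,\gamma_{i_m,k}$ to concentrate the nonzero components in the \emph{last} $m$ slots $\{k-m+1,\dots,k\}$; the resulting element lies in $V^+_{k-m+1}=\bigoplus_{s=k-m+1}^{k}\widetilde{\go g}^{\lambda_s}\oplus\bigoplus_{k-m+1\leq r<s}E_{r,s}(1,1)$ (Corollary~\ref{cor-decomp-j}) with \emph{all} its $\widetilde{\go g}^{\lambda_s}$-components nonzero, hence it is generic in $V^+_{k-m+1}$ by Proposition~\ref{X0+...+Xkgenerique} applied to the graded algebra $\widetilde{\go g}_{k-m+1}$.

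For uniqueness of $m$ I would use the quadratic form $Q_Z$ on $V^-$ introduced in Section~\ref{sec:section-qXiXj}. Since $b$ is (proportional to) the Killing form, it is invariant under any automorphism of $\widetilde{\go g}$, and from the definition $Q_{g.X}(Y)=-\frac12 b([g.X,Y],[g.X,Y])$ one checks immediately that $Q_X$ and $Q_{g.X}$ are $F$-equivalent quadratic forms for every $g\in{\rm Aut}_e(\go g)$; in particular they have the same rank. By Theorem~\ref{th-qnondeg}(2), if $Z$ is $\text{Aut}_e(\go g)$-conjugate to $\sum_{j=0}^{m-1}Z_j$ with $m$ nonzero components, then
$${\rm rank}\,Q_Z=m\ell+\tfrac{m(m-1)}{2}d.$$
The function $m\mapsto m\ell+\tfrac{m(m-1)}{2}d$ is strictly increasing on $\N$ (note $\ell\geq 1$ always, and $d\geq 1$ as soon as $k\geq 1$ by Proposition~\ref{prop-memedimension}; the case $k=0$ is trivial as then $m\in\{0,1\}$). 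Therefore $m$ is uniquely determined by $Z$, which finishes the proof.

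The non-uniqueness of the $Z_j$ themselves is immediate: the element $h_{X_j}(\sqrt t)\in L\subset G$ of Lemma~\ref{lem-tId-dansG} scales each $Z_j$ independently by any $t\in F^*$. The only delicate point I anticipate is the bookkeeping in the composition of the $\gamma_{i,j}$'s in the existence step, because $\gamma_{i,j}$ is only guaranteed to be trivial on $\widetilde{\go g}^{\lambda_s}$ for $s\notin\{i,j\}$ (its action on the off-diagonal spaces $E_{r,s}(1,1)$ is nontrivial); but since at each stage we only track the components in $\bigoplus_s\widetilde{\go g}^{\lambda_s}$, and since transpositions generate the full symmetric group on $\{0,\dots,k\}$, choosing the order $\gamma_{i_1,0},\gamma_{i_2,1},\dots$ carefully (so that each newly inserted transposition only touches indices that are not already ``in place'') suffices.
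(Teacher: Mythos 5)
Your proposal is correct and follows essentially the same route as the paper: reduction to the diagonal via Theorem~\ref{th-V+caplambda}, permutation of the nonzero components by the $\gamma_{i,j}$ of Proposition~\ref{prop-gammaij}, and uniqueness of $m$ read off from the rank formula of Theorem~\ref{th-qnondeg}, the paper writing the rank difference as $(m-m')\bigl(\ell+\frac{m+m'-1}{2}d\bigr)$ where you invoke strict monotonicity of $m\mapsto m\ell+\frac{m(m-1)}{2}d$. The only nitpick is in your closing aside: $h_{X_j}(t)$ relative to the triple $(Y_j,H_{\lambda_j},X_j)$ scales $\widetilde{\go g}^{\lambda_j}$ by $t^2$ (a square), not by an arbitrary scalar, though this still suffices for the (unclaimed) non-uniqueness remark.
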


 \begin{proof} By Theorem  \ref{th-V+caplambda}, any non zero element of  $V^+$ is   $\text{Aut}_e(\go g)$-conjugated to an element of the form
$$Y=Y_0+Y_1+\ldots+Y_k,\quad{\rm with }\, Y_j\in \tilde{\go g}^{\lambda_j}.$$
Let  $m$ be the number of indices  $j$ such that $Y_j\neq 0$. For  $i\neq j$, the element   $\gamma_{i,j}\in N_{{\rm Aut}_e(\go g)}(\go a^0)$ obtained in Proposition \ref{prop-gammaij} exchanges  $ \tilde{\go g}^{\lambda_i}$ and  $ \tilde{\go g}^{\lambda_j}$ and fixes   $ \tilde{\go g}^{\lambda_s}$ for  $s\notin\{i,j\}$. Therefore $Y$ is $\text{Aut}_e(\go g)$-conjugated either to an element of the form
$$Z_0+Z_1+\ldots+Z_{m-1},\quad{\rm where }\, Z_j\in \tilde{\go g}^{\lambda_j}\backslash\{0\},$$
or, equivalently, to an element of the form $Y_{k-m+1}+\ldots +Y_k$ where $Y_j\in  \tilde{\go g}^{\lambda_j}\backslash\{0\}$, that is to a generic element of  $V_{k-m+1}^+$.\medskip

Let us now show that for  $m\neq m'$, the elements $Z=Z_0+Z_1+\ldots+Z_m$ and $Z'=Z'_0+Z'_1+\ldots+Z'_{m'}$  where the  $Z_j$'s  and  $Z'_j$'s  are non zero in  $\tilde{\go g}^{\lambda_j}$,  are not  $G$-conjugate. If they were, the quadratic forms $Q_Z$ and  $Q_{Z'}$ would   have the same rank. But, according to Theorem  \ref{th-qnondeg}, one has

$${\rm rank}\, Q_Z-{\rm rank}\, Q_{Z'} =(m-m')\Big(\ell+\frac{m+m'-1}{2}d\Big).$$
Hence ${\rm rank}\, Q_Z\neq {\rm rank}\, Q_{Z'}$ if $m\neq m'$.  
 \end{proof}

\begin{definition}\label{def-rang-element} For  $Z\in V^+$, the rank of $Z$ \index{rank of an element} is  defined to be the integer   $m$ appearing in the preceding Lemma.
\end{definition}

Remember from Notation \ref{notdle} that $\ell$ is the common dimension of the  spaces  $\tilde{\go g}^{\lambda_i}$ and that $e$ is the common dimension of the  spaces  $\tilde{\go g}^{(\lambda_i+\lambda_j)/2}$. 
The  structure of the $G$-orbits in  $V^+$ depends on the integers $(\ell, e)$.    

In the next (sub)sections, we will completely describe the  $G$-orbits  in  $V^+$, not only the open one, see Theorem \ref{thm-delta2}, Theorem \ref{thm-orbites-e04}, Theorem \ref{thm-orbites-e1},  Theorem \ref{thm-orbites-e2}  and Theorem \ref{th-d=3} below.

As the open orbits will be of particular interest for our purpose, let us summarize here our results concerning the number of these orbits (this will be  a consequence of the results obtained in the next three  sections):

\begin{theorem}\label{thm-orbouverte}\- 

\noindent {\rm (1)} If  $\ell=\delta^2,\; \delta\in\N^*$ and  $e=0$ or $4$ (i.e. if $\tilde{\go g}$ is of type I), the group  $G$ has a unique open orbit in  $V^+$,\\
{\rm (2)}  if  $\ell=1$ and   $e\in\{1,2,3\}$ (i.e. if $\tilde{\go g}$ is of type II), the number of open  $G$-orbits in  $V^+$ depends on $e$ and on the parity of  $k$:

{\rm (a)} if  $e=2$ then $G$ has a unique open orbit in $V^+$ if $k$ is even and  $2$ open orbits if $k$ is odd,

{\rm (b)} if  $e=1$ then  $G$ has a unique open orbit in  $V^+$ if $k=0$, it has $4$ open orbits if $k=1$, it has  $2$ open orbits if $k\geq 2$ is even , and  $5$ open orbits if $k\geq 2$ is odd .\\
{\rm (c)}  if  $e=3$, then $G$ has $4$ open orbits.

{\rm (3)} If  $\ell=3$ (i.e. if $\tilde{\go g}$ is of type III, in that case $e=d= 4$), the group $G$ has $3$ open orbits in  $V^+$ if  $k=0$ and  $4$ open orbits if  $k\geq 1$.
 
\vskip20pt
 \end{theorem}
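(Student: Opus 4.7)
The statement is explicitly labelled as a consequence of the orbit classifications carried out in subsequent sections (Theorems \ref{thm-orbites-e04}, \ref{thm-orbites-e1}, \ref{thm-orbites-e2}, \ref{th-d=3}). So my plan is not to reprove these, but to sketch how one extracts the count of open orbits once one has reduced to the principal diagonal and classified the action of $G$ on it.

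\textbf{Step 1: Reduction to the diagonal.} First I would invoke Proposition~\ref{prop G-diag}: any element of an open orbit must be of maximal rank $k+1$, hence is $G$-conjugate to some $X=X_0+X_1+\cdots +X_k$ with $X_j\in \widetilde{\go g}^{\lambda_j}\setminus\{0\}$. Thus counting open $G$-orbits in $V^+$ amounts to counting $G$-equivalence classes of such "full" diagonal elements. Two further reductions are immediate: by Proposition~\ref{prop-gamijtilde} the involutions $\widetilde{\gamma_{i,j}}\in G$ permute the indices $\{0,\ldots,k\}$ acting as the full symmetric group on them, so only the multiset $\{X_0,\ldots,X_k\}$ matters; and within a single $\widetilde{\go g}^{\lambda_j}$ one may replace $X_j$ by any element in the same $L_j$-orbit, which by the rank-one classification (Theorem~\ref{th-k=0}, Corollary~\ref{cor-classification-k=0}) means: by any element of $D^*$ (in 1-type $A$, $\ell=\delta^2$) or by any representative of one of the three classes of $Q$ in $F^3\setminus\{0\}$ modulo $F^{*2}$ (in 1-type $B$, $\ell=3$).

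\textbf{Step 2: Separating invariants.} Next I would identify the numerical invariants that distinguish orbits. The main tools are the fundamental relative invariants $\Delta_0,\ldots,\Delta_k$ of Section~\ref{sec:sectiondeltaj}: by Theorem~\ref{thproprideltaj}(3), their values on $X_0+\cdots +X_k$ are products of the rank-one invariants $\delta_j(X_j)$, so they yield orbit invariants in $F^*/\chi_j(G)$. In Type~I with 1-type $A$, $\delta_j$ is the reduced norm $\nu_{D/F}$ which is surjective onto $F^*$, and the characters $\chi_0,\ldots,\chi_k$ already trivialize the differences; together with the scalar action of Lemma~\ref{lem-tId-dansG} this forces a single open orbit. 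In Types~II/III one has to complement the $\Delta_j$-invariants with the equivalence classes of the quadratic forms $q_{X_i,X_j}$ on $E_{i,j}(-1,-1)$ from Section~\ref{sec:section-qXiXj}: by Proposition~\ref{prop-equivalenceqXiXj}, after normalization these forms have a fixed hyperbolic part of dimension $d-e$, so the orbit information is concentrated in the anisotropic $e$-dimensional part, which sits in $F^*/F^{*2}$ (itself of order $4$ in the non-dyadic case).

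\textbf{Step 3: Case analysis.} I would then split according to type. For \emph{Type I} ($e=0$ or $4$), the quadratic contribution is either absent or hyperbolic (once $\ell=\delta^2$ has absorbed the arithmetic), so the $\Delta_j$-invariants alone give a single open orbit, matching Theorem~\ref{thm-orbites-e04}. For \emph{Type II} ($\ell=1$), the anisotropic dimension $e\in\{1,2,3\}$ produces genuine $F^*/F^{*2}$-invariants attached to each pair $\{i,j\}$, but these are constrained by the symmetric-group action on indices and by multiplicative relations coming from the $\Delta_j$; one shows case-by-case that the resulting quotient has the orders announced in (2), the parity of $k$ entering because an even number of indices allows more cancellations in the product of pairwise invariants than an odd number does. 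The cases $k=0$ and $k=1$ for $e=1$ are degenerate and must be checked separately against Theorem~\ref{th-k=0}. For \emph{Type III} ($\ell=3$), the three nonzero $L$-orbits in each $\widetilde{\go g}^{\lambda_j}$ already give three orbits when $k=0$; for $k\geq 1$ one additional orbit is produced because the symmetric-group identification on indices cuts the naive count while the compatibility conditions on the $q_{X_i,X_j}$ add back exactly one class, yielding $4$, in agreement with Theorem~\ref{th-d=3}.

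\textbf{Main obstacle.} The real difficulty, which is where Theorems~\ref{thm-orbites-e1}--\ref{th-d=3} concentrate their technical work, is the Type~II case with $e=1$: the parity phenomenon in $k$ and the jumps at $k=0,1$ are not visible from the $\Delta_j$'s alone and require one to track the $F^*/F^{*2}$-valued invariants coming from the $q_{X_i,X_j}$ together with the combinatorial action of $\mathfrak{S}_{k+1}$ on indices, using the explicit form of $\widetilde{\gamma_{i,j}}$ and of the scalar elements $h_X(\sqrt{t})$ from Lemma~\ref{lem-tId-dansG} to show which products of these local invariants are already trivialized by $G$. This is essentially an arithmetic computation in $F^*/F^{*2}$ that is handled diagram-by-diagram using Table~1.
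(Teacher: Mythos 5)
Your proposal takes the same route as the paper: Theorem \ref{thm-orbouverte} is stated there precisely as a summary whose proof is the content of Theorems \ref{thm-delta2}, \ref{thm-orbites-e04}, \ref{thm-orbites-e1}, \ref{thm-orbites-e2} and \ref{th-d=3}, and your reduction to the diagonal via Proposition \ref{prop G-diag}, the use of the $\gamma_{i,j}$ to permute indices, and the separation of cases by the arithmetic of $S_e$ and the forms $q_{X_i,X_j}$ is exactly how those theorems proceed. The only caveat is that your heuristics for the parity phenomenon in Type II and for the count of $4$ in Type III are looser than the paper's actual mechanisms (similarity classes of quadratic forms of rank $k+1$, and classes mod $2$ of triples $(n_1,n_2,n_3)$ with $n_1+n_2+n_3=k+1$, respectively), but since you explicitly defer to the cited theorems for these counts, this is not a gap.
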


 { \bf We know from Remark \ref{rem-simple} that we can always assume that $\tilde{\go g}$ is simple. This will be the case in the sequel of this section}.

\subsection{ $G$-orbits in the case where $(\ell, d, e)=(\delta^2,  2\delta^2,0)$ (Case ($1$) in Table 1)}\hfill
\vskip 10pt

\begin{theorem}\label{thm-delta2}  Suppose that either  $k=0$ and $\ell=\de^2$ or  $(\ell, d,e)=(\delta^2,  2\delta^2, 0)$, then       $\chi_0(G)=F^*$    and the group
 $G$ has exactly $k+1= \text{ rank}(\tilde{\go{g}})$ non zero orbits in $V^+$.  These orbits are characterized by the rank of their elements, and a set of representatives is given by  the elements $X_0+\ldots +X_j$ ($j=0,\ldots,k$) where the  $X_j$'s are non zero elements of  $\tilde{\go g}^{\lambda_j}$.  
  Any two generic elements of $\oplus_{j=0}^k\tilde{\go g}^{\lambda_j}$ are conjugated by the subgroup  $L=Z_G(\go a^0)$.

\end{theorem}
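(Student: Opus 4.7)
The case $k=0$ reduces directly to Theorem~\ref{th-k=0}(1), which together with the classical surjectivity of the reduced norm $\nu_{D/F}:D^*\to F^*$ over a $p$-adic field yields both the orbit count ($\{0\}$ and $\widetilde{\go g}^{\lambda_0}\setminus\{0\}$) and $\chi_0(G)=F^*$. I will therefore assume $k\geq 1$, so we are in case~(1) of Table~1 with $\widetilde{\go g}=\go{sl}(2(k+1),D)$. By Proposition~\ref{prop G-diag}, every non-zero $Z\in V^+$ is already $G$-conjugate to some $Z_0+\ldots+Z_{m-1}$ with $Z_j\in\widetilde{\go g}^{\lambda_j}\setminus\{0\}$ for a unique integer $m\in\{1,\ldots,k+1\}$ (its rank), which is a complete invariant separating any two orbits of distinct rank. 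Hence the proof reduces to showing (a) all configurations of a given rank are $L$-conjugate, and (b) $\chi_0(G)=F^*$.

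For (a), the key ingredient is the rank-one subgroup $L_i=Z_{\text{Aut}_0(\widetilde{\go l}_i)}(H_{\lambda_i})$ attached to each $\widetilde{\go l}_i$. Since the $\widetilde{\go l}_j$ commute pairwise (by Theorem~\ref{th-descente}(1) applied iteratively, each $\widetilde{\go l}_j$ lies in the centraliser of every other $\widetilde{\go l}_i$), the exponentials of nilpotents of $\widetilde{\go l}_i$ act trivially on $\widetilde{\go g}^{\lambda_j}$ for $j\neq i$ and centralise $\go a^0$, embedding $L_i$ as a subgroup of $L=Z_G(\go a^0)$. In case~(1), Theorem~\ref{th-k=0}(1) realises $L_i$ concretely as $(D^*\times D^*)/H$ acting on $\widetilde{\go g}^{\lambda_i}\cong D$ transitively on $D\setminus\{0\}$. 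Given two configurations $(Z_j)_{j<m}$ and $(Z'_j)_{j<m}$ with non-zero components, I pick $g_j\in L_j\subset L$ with $g_j\cdot Z_j=Z'_j$; since each $g_j$ fixes $\widetilde{\go g}^{\lambda_i}$ pointwise for $i\neq j$, the product $g=g_0g_1\cdots g_{m-1}\in L$ sends the first configuration to the second. Specialised to $m=k+1$, this yields the final assertion on generic elements of $\oplus_j\widetilde{\go g}^{\lambda_j}$.

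For (b), fix $X^i=\sum_{j\neq i}X_j$. By Theorem~\ref{thpropridelta_0}(2), the polynomial $X\mapsto\Delta_0(X^i+X)$ on $\widetilde{\go g}^{\lambda_i}$ is a non-zero scalar multiple of the fundamental invariant $\delta_i$ of $(L_i,\widetilde{\go g}^{\lambda_i})$; by Theorem~\ref{th-k=0}(1), $\delta_i=\nu_{D/F}$, and the associated character $(u,v)\mapsto\nu_{D/F}(uv^{-1})$ on $(D^*\times D^*)/H$ is surjective onto $F^*$. Therefore $\chi_0(L_i)=F^*$, and a fortiori $\chi_0(G)=F^*$. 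The main technical point to verify carefully will be the embedding $L_i\hookrightarrow L$ at the level of $F$-rational automorphisms of $\widetilde{\go g}$: Remark~\ref{rem-inclusion-groupes} warns that analogous extensions of subgroups of $\overline G$ need not stabilise the $F$-structure in general, which is why the argument leverages the explicit matrix realisation of case~(1), in which elements of $L_i$ arise as inner conjugations by block-diagonal matrices in $GL(2(k+1),D)$ and hence manifestly preserve $\widetilde{\go g}$.
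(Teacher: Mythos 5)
Your proof is correct and follows essentially the same route as the paper: reduction to the anti-diagonal via Proposition~\ref{prop G-diag}, the explicit realisation $\widetilde{\go g}=\go{sl}(2(k+1),D)$, transitivity of diagonal $D^*$-blocks on each $\widetilde{\go g}^{\lambda_j}\setminus\{0\}$, and surjectivity of the reduced norm for $\chi_0(G)=F^*$. The only (cosmetic) difference is that the paper determines all of $G$ at once by Skolem--Noether and reads off $L$ as the block-diagonal subgroup, whereas you assemble $L$-conjugations from the embedded rank-one groups $L_i$ --- and you correctly flag and resolve the $F$-rationality issue of that embedding via the matrix model.
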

\begin{proof}    From the classification (cf. Table 1 - (1)), we can suppose that   $\tilde{\go g}= {\go sl}(2(k+1), D)$ 
 where  $D$ is a central division algebra of degree $\delta$ over$F$, graded by the element $H_0= \left(\begin{array}{cc}I_{k+1} & 0\\ 0 & -I_{k+1}\end{array} \right)$.  Then  $V^+$ is isomorphic to the matrix space $M(k+1,D)$ through the map  $$B\mapsto X(B)=\left(\begin{array}{cc}0& B\\ 0 & 0\end{array} \right).$$
 
 The maximal split abelian subalgebra  $\go a$ is the the set $H(\phi_0,\ldots, \phi_{2k+1})=diag(\phi_0,\ldots, \phi_{2k+1})$ where the $\phi_{j}$'s belong to $F$ and the maximal set of strongly orthogonal roots associated to this grading is given by $\lambda_j(H(\phi_0,\ldots, \phi_{2k+2}))=\phi_{k+1-j}-\phi_{k+2+j}$ for  $j\in\{0,\ldots k+1\}$. \medskip


 Let us denote by $\nu$ the reduced norm of the simple central algebra $M(k+1,D)$. Remember that if $E$ is a splitting field for $M(k+1,D)$, then $M(k+1,D)\otimes E\simeq M((k+1)\delta,E)$, and if $\varphi:M(k+1,D)\longrightarrow M((k+1)\delta,E)$ is the canonical embedding, then $\nu(x)=\det(\varphi(x))$, for $x\in M(k+1,D)$ (see for example, Proposition IX.6 p.168 in \cite{Weil}). Also if  $x=(x_{i,j})$ is a triangular matrix in $M(k+1,D)$ and if $\nu_{0}$ denotes the reduced norm of $D$, then $\nu(x)=\prod_{i=1}^{i=k+1}\nu_{0}(x_{i,i})$ (\cite {Weil}, Corollary IX.2 p.169).
\medskip
 
 Let us now describe the group  $G={\mathcal Z}_{{\rm Aut}_0({\go sl}(2(k+1), D))}(H_0)$. 
 
 Consider  first an element $g\in {\rm Aut}_0({\go sl}(2(k+1), D))$ and denote by  $g_E$ the natural extension of $g$ to  ${\go sl}(2(k+1), D)\otimes E={\go sl}(2(k+1)\delta, E)$. We know from  \cite{Bou2} (Chap. VIII, \S 13, $n^\circ 1$, (VII), p.189), that there exists $U\in GL(2(k+1)\delta, E)$ such that  $g_E.x=UxU^{-1}$ for all $x\in {\go sl}(2(k+1)\delta, E)$. Let us write $U$ in the form
 $$U=\left(\begin{array}{cc} u_1 & u_3\\ u_4 & u_2\end{array}\right),\quad u_j\in M((k+1)\delta, E).$$
 Let now $g\in G$. As $g.H_0=H_0$, we have  $g.V^-\subset V^-$,  $g.V^+\subset V^+$and  $g.\go g\subset\go g$. Then  $g_E$ stabilizes $V^+\otimes E$, $V^-\otimes E$ and  $\go g\otimes E$, and a simple computation shows that $u_3=u_4=0$. On the other hand, the algebra   $\go g$ is the set of matrices $\left(\begin{array}{cc} X & 0\\ 0 & Y\end{array}\right)\in{\go sl}(2(k+1), D)$ with $X,Y\in M( k+1, D)$. As  $g_E$ also stabilizes  $\go g$, the maps  $X\mapsto u_1 Xu_1^{-1}$ and $Y\mapsto u_2 Yu_2^{-1}$ are automorphisms of $M( k+1, D)$, for the ordinary associative product.

 By the   Skolem-Noether Theorem, any automorphism of  $M( k+1 , D)$ is inner, hence it exists  $v_1$ and  $v_2$ in  $GL(k+1,D)$ such that  $u_1Xu_1^{-1}=v_1X v_1^{-1}$ and $u_2Xu_2^{-1}=v_2X v_2^{-1}$ for all $X\in M(k+1,D)$. Therefore $v_1^{-1}u_1$ and  $v_2^{-1}u_2$  belong to the center of  $M((k+1)\delta, E)$. It follows that there exist  $\lambda_1$ and $\lambda_2$ in  $E^*$ such that $u_1=\lambda_1 v_1$  and  $u_2=\lambda_2 v_2$. Hence the automorphism  $g$ is given by the conjugation by  $U=\left(\begin{array}{cc} \lambda_1 v_1 & 0\\ 0 & \lambda_2 v_2\end{array}\right)$  and its action on $V^+$ is given by $g.X(Z)=X(\lambda_1\lambda_2^{-1} v_1 Z v_2^{-1})$ for   $Z\in M(k+1,D)$.   As $g$ stabilizes $V^+$  this implies that $\lambda_1\lambda_2^{-1}\in F^*$ and $g$ is given by the conjugation by $diag(g_1,g_2)=\left(\begin{array}{cc} g_1 & 0\\ 0 & g_2\end{array}\right)$ where $g_1=\lambda_1\lambda_2^{-1}v_1\in GL(k+1,D)$ and $g_2=v_2\in GL(k+1,D)$. \\

The polynomial  $\Delta_0$  defined by  $\Delta_0(X(Z))=  \nu(Z)$  is then relatively invariant under $G$  and it  character is $\chi_0(g)=\Delta_0 (g_1g_2^{-1})$ for $g=diag(g_1,g_2)$ . Therefore $\chi_0(G)=F^*$. \medskip

The group $L=\cap_{j=0}^k G_{H_{\lambda_j}}$ corresponds to the action of the elements $ diag(g_1,g_2)$ where $g_{1}$ and $g_{2}$ are diagonal matrices with coefficients in $D^*$.\medskip

From Proposition  \ref{prop G-diag}, any element in  $V^+$ is conjugated to an non zero element $Z\in\oplus_{j=0}^k\tilde{\go g}^{\lambda_j}$. Such an element corresponds to a  matrix of $M(k+1, D)$ whose coefficients are zero except those on the 2nd diagonal. This shows that the group  $L$ acts transitively on $\oplus_{j=0}^k(\tilde{\go g}^{\lambda_j}\setminus \{0\})$.  \end{proof}

 \subsection{$G$-orbits  in the case  $\ell=1$}\hfill
\vskip 10pt
{\bf In this section, we will always   assume that $\ell=1$.}

{\bf As for $k=0$   the  $G$-orbits in  $V^+$ were already described in Theorem \ref{th-k=0}, we   also suppose in this section that $k\geq 1$.} \medskip

If $\ell=1$, the $G$-orbits in  $V^+$ were studied by I. Muller in a more general context (the so-called quasi-commutative prehomogeneous vector spaces), see \cite{Mu98}. Our results,   are more precise than hers in the sense that we obtain  explicit representatives for the orbits, and also we give detailed proofs.

 \begin{definition}\label{def-qe} According to Proposition \ref{prop-equivalenceqXiXj}, we fix $\go{sl}_2$-triples $(Y_s,H_{\lambda_s}, X_s)$, $s\in\{0,\dots, k\}$,  with $Y_s\in\tilde{\go g}^{-\la_s}$ and $X_s\in \tilde{\go g}^{\la_s}$ such that,  for $i\neq j$, the quadratic forms  $q_{X_i,X_j}$ are all $G$-equivalent (this means that there exists $g\in G$ such that $q_{X_{0},X_{1}}=q_{X_{i},X_{j}}\circ g)$.  We set 
$$q_e:=q_{X_0,X_1}=q_{an,e}+q_{hyp,d-e},$$\index{qaae@$q_e$}\index{qane@$q_{an,e}$}\index{qape@$q_{hyp,d-e}$}
where  $q_{hyp,d-e}$ is an hyperbolic quadratic form of rank $d-e$,  $q_{an,0}=0$ and for $e\neq 0$,  $q_{an,e}$ is an anisotropic quadratic form of rank $e$ which represents $1$. 
\end{definition}

We set  $Im(q_e)^*= Im(q_e)\cap F^*$. Let $a\in F^*$. As $q_e$ represents $1$, if  $q_e$ is equivalent to  $a q_e$, (which will be denoted  $aq_e\sim q_e$), then $a\in Im(q_e)^*$.   \medskip

Remember (Corollary \ref{cor-structureG}) that
 
 $$G={\mathcal Z}_{{\rm Aut}_0(\tilde{\go g})}(H_0)={\rm Aut}_e(\go g).L\quad{\rm where }\quad L=Z_G(\go a^0).$$

\begin{lemme}\label{chiG}(\cite{Mu98}, Proposition 3.2 page 175). 
\begin{enumerate}\item $F^{*2}\subset \chi_0(G)=\chi_0(L)$.
\item If $k+1$ is odd then $\chi_0(G)=F^*$. 
\item If $k+1$ is even  then  $\chi_0(G)\subset\{a\in F^*; aq_e\sim q_e\}.$
\end{enumerate}
\end{lemme}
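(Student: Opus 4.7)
My plan is to reduce $\chi_0(G)$ to $\chi_0(L)$, produce enough explicit elements of $L$ to obtain (1) and (2), and then use the interaction of $L$ with the forms $q_{X_i,X_j}$ to obtain (3).

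First, since $\chi_0$ is trivial on $\mathrm{Aut}_e(\go g)$ (Theorem \ref{thproprideltaj}(2) for $j=0$) and $G=\mathrm{Aut}_e(\go g)\cdot L$ (Corollary \ref{cor-structureG}), one has $\chi_0(G)=\chi_0(L)$. Any $g\in L$ preserves each one-dimensional space $\widetilde{\go g}^{\lambda_j}$ (using $\ell=1$ and $g\in G_{H_{\lambda_j}}$) and acts on it by a scalar $t_j(g)\in F^*$. Comparing $\Delta_0(g\cdot\sum X_j)=\chi_0(g)\Delta_0(\sum X_j)$ with Theorem \ref{thpropridelta_0}(2), and recalling $\kappa=1$ when $\ell=1$, gives $\chi_0(g)=\prod_{j=0}^k t_j(g)$.

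For (1), fix an $\go{sl}_2$-triple $(Y_0,H_{\lambda_0},X_0)$ in $\widetilde{\go l}_0$ and, for $s\in F^*$, consider $h_{X_0}(s)$ from Definition \ref{def-theta-h}. Since $s\in F^*$, $h_{X_0}(s)\in\mathrm{Aut}_e(\widetilde{\go g})$; it lies in $L$ because it centralizes $H_{\lambda_0}$ by $\go{sl}_2$-theory and centralizes each $H_{\lambda_j}$ ($j\geq 1$) since $\widetilde{\go g}_1$ commutes with $\widetilde{\go l}_0$. On the $H_{\lambda_0}$-weight space of weight $n$ it acts by $s^n$; hence it acts as $s^2$ on $\widetilde{\go g}^{\lambda_0}$ and trivially on $\widetilde{\go g}^{\lambda_j}\subset\widetilde{\go g}_1$ for $j\geq 1$ (strong orthogonality of $\lambda_0$ and $\lambda_j$). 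Therefore $\chi_0(h_{X_0}(s))=s^2$ and $F^{*2}\subset\chi_0(L)$. For (2), Lemma \ref{lem-tId-dansG} provides, for generic $X\in V^+$ and $t\in F^*$, an element $h_X(\sqrt t)\in L$ acting as $t\,\mathrm{Id}_{V^+}$, so $\chi_0(h_X(\sqrt t))=t^{k+1}$; if $k+1$ is odd then $k$ is even and the factorization $t=t^{k+1}\cdot(t^{-k/2})^2$ writes $t$ as a product of an element of $\chi_0(L)$ and an element of $F^{*2}\subset\chi_0(L)$, so $\chi_0(G)=F^*$.

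For (3), fix $g\in L$, $i\neq j$ and $Y\in E_{i,j}(-1,-1)$. Using the $G$-invariance of $b$ and the identity $Q_X(Y)=-\frac12 b([X,Y],[X,Y])$ (section \ref{sec:section-qXiXj}), the orthogonality of the root space decomposition kills the $X_i$-$X_i$ and $X_j$-$X_j$ terms in the binomial expansion, yielding
\[Q_{g(X_i+X_j)}(Y)=2\,t_i(g)t_j(g)\,q_{X_i,X_j}(Y)=Q_{X_i+X_j}(g^{-1}Y)=2\,q_{X_i,X_j}(g^{-1}Y),\]
where we used that $g^{-1}Y\in E_{i,j}(-1,-1)$ since $g\in L$. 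Hence $q_{X_i,X_j}$ and $t_i(g)t_j(g)\,q_{X_i,X_j}$ are equivalent, and Proposition \ref{prop-equivalenceqXiXj} ($q_{X_i,X_j}\sim q_e$) then gives $t_i(g)t_j(g)\in\Theta:=\{a\in F^*: aq_e\sim q_e\}$. Writing $\bar t_j$ for the class of $t_j(g)$ in $F^*/\Theta$, the relations $\bar t_i\bar t_j=1$ for all $i\neq j$ force, when $k\geq 2$, $\bar t_j=\bar t_0^{-1}$ for $j\geq 1$ (taking $i=0$) and then $\bar t_0^{-2}=\bar t_1\bar t_2=1$. Therefore
\[\overline{\chi_0(g)}=\bar t_0\prod_{j=1}^k\bar t_j=\bar t_0^{\,1-k}=(\bar t_0^{-2})^{(k-1)/2}=1,\]
since $k-1$ is even when $k+1$ is even; the case $k=1$ is immediate as $\overline{\chi_0(g)}=\bar t_0\bar t_1=1$. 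This proves $\chi_0(g)\in\Theta$.

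The delicate step is the passage, via the Killing-form calculation, from the scalar action of $g$ on the principal diagonal (which trivially controls $\chi_0(g)$) to the genuinely quadratic constraint $t_i(g)t_j(g)\in\Theta$; once that link is made, the argument in the finite abelian group $F^*/\Theta$ is purely combinatorial and makes transparent why the conclusion of (3) depends on the parity of $k+1$.
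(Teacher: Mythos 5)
Your proof is correct and follows essentially the same route as the paper's: triviality of $\chi_0$ on $\mathrm{Aut}_e(\go g)$ plus $G=\mathrm{Aut}_e(\go g)\cdot L$ for the reduction to $L$, the elements $h_{X_0}(t)$ and $h_{I^+}(\sqrt t)$ for (1) and (2), and the fact that $l\in L$ rescales $q_{X_i,X_j}$ by $x_i(l)x_j(l)$ while preserving its equivalence class for (3). The only (cosmetic) difference is in the endgame of (3): the paper pairs the indices as $(j,k-j)$ and writes $\chi_0(l)$ directly as a product of terms $x_j(l)x_{k-j}(l)\in S_e$, whereas you exploit the relation $\bar t_i\bar t_j=1$ for all pairs and conclude by a short computation in the group $F^*/S_e$.
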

 
\begin{proof}\hfill

\noindent { (1)} We know from Theorem \ref{thproprideltaj} (2), that the character $\chi_0$ is trivial on  ${\rm Aut}_e(\go g)$ and hence  $\chi_0(G)=\chi_0(L)$. \medskip

Let  $g=h_{X_0}(t)$ (Definition \ref{def-theta-h}). From  \cite{Bou2} (Chap VIII \textsection $1$ $n^\circ 1$, Proposition 6 p. 75), we have $g(X_0+\ldots+X_k)=t^2X_0+X_1+\ldots +X_k$. As $\ell =1$, we obtain by Theorem \ref{thpropridelta_0} that   $\Delta_0(t^2X_0+X_1+\ldots +X_k)=t^{2}\Delta_0(X_0+X_1+\ldots +X_k)$. Therefore  $\chi_0(g)=t^{2}\in F^{*2}$, and this proves the first assertion.\medskip

\noindent { (2)} Consider the generic element  $X=X_0+\ldots +X_k\in V^+$ and  $g=h_X(\sqrt{t})$. Theorem  \ref{thpropridelta_0}   implies  $\chi_0(g)=t^{k+1}$ and this proves  the second assertion.\medskip

\noindent { (3)} Let $l\in L$. Then  $l$ stabilizes each of the spaces  $\tilde{\go g}^{\lambda_j}$ and  $E_{i,j}(\pm 1,\pm 1)$. As $\ell=1$, there exist scalars $x_j(l)\in F^*$ such that  $l.X_j=x_j(l) X_j$. Theorem \ref{thpropridelta_0} implies then $\chi_0(l)=\prod_{j=0}^k x_j(l)$.

On the other hand one gets easily that  $q_{l.X_j,l.X_{k-j}}=x_j(l)x_{k-j}(l)q_{X_j,X_{k-j}}$. As  $q_{l.X_j,l.X_{k-j}}$ and  $q_{X_j,X_{k-j}}$ are $G$ - equivalent  to $q_e$, we get 
$x_j(l)x_{k-j}(l) q_e\sim q_e$. As  $k+1$ is even, the scalar  $\chi_0(l)=\prod_{j=0}^{(k-1)/2} x_j(l)x_{k-j}(l)$ is such that  $\chi_0(l) q_e\sim q_e$.  This gives the third  assertion.\end{proof}

\begin{lemme} (compare with \cite{Mu98}, lemme 2.2.2 page 168).
Let  $A\in E_{i,j}(-1,1)$. Then there exists  $B\in E_{i,j}(1,-1)$ such that  $(B,H_{\lambda_j}-H_{\lambda_i}, A)$ is an ${\go sl}_2$-triple   if and only if the map ${\rm ad}(A)^2:\tilde{\go g}^{\lambda_i}\to \tilde{\go g}^{\lambda_j}$ is injective (remember from Lemma \ref {lem-sl2ij} that such an ${\go sl}_2$-triple always exists).

In that case, the element $\theta_A=\theta_{A}(1)= e^{\ad A} e^{\ad B} e^{\ad A}$ of  ${\rm Aut}_e(\go g)\subset G$ satisfies  $\theta_A(X_s)=X_s$ for $s\neq i,j$, $\theta_A(X_i)= a X_j$ and  $\theta_A(X_j)=a^{-1} X_i$ where $a=q_{X_i,X_j}([Y_j,A])$.

\end{lemme}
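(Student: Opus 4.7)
The plan is to prove the equivalence in two directions and then to compute the action of $\theta_A$.

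\smallskip

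\emph{Forward direction ($\Rightarrow$).} Assume $(B, H_{\lambda_j}-H_{\lambda_i}, A)$ is an $\go{sl}_2$-triple, and set $H := H_{\lambda_j}-H_{\lambda_i}$. By Theorem~\ref{th-decomp-Eij} every $H_{\lambda_r}$-weight pattern on $\widetilde{\go g}$ satisfies $|p_0|+\cdots+|p_k| \leq 2$, so the $H$-weights on $\widetilde{\go g}$ lie in $\{-2,-1,0,1,2\}$. The space $\widetilde{\go g}^{\lambda_i}$ has $H$-weight $-2$, which is minimal, so each nonzero element of $\widetilde{\go g}^{\lambda_i}$ is a lowest-weight vector and generates an irreducible $3$-dimensional $\go{sl}_2(B,H,A)$-submodule, in which $\ad(A)^2$ is an isomorphism from the weight $-2$ piece onto the weight $+2$ piece. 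A weight count using $A \in E_{i,j}(-1,1)$ shows $\ad(A)^2(\widetilde{\go g}^{\lambda_i}) \subseteq \widetilde{\go g}^{\lambda_j}$, so the restriction is injective.

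\smallskip

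\emph{Converse direction ($\Leftarrow$).} Assume the injectivity; since $\ell=1$, this amounts to $\ad(A)^2(X_i) \neq 0$. Let $Y' := [Y_j, A] \in E_{i,j}(-1,-1)$ and $a := q_{X_i,X_j}(Y') = f_{\lambda_i,\lambda_j}(A)$. First establish the identity $\ad(A)^2(X_i) = 2a\, X_j$: write $\ad(A)^2(X_i) = \kappa X_j$ and compute $\kappa = b(Y_j, \ad(A)^2 X_i)$ using Lemma~\ref{lemmeb} (so that $b(Y_j,X_j)=1$) together with the invariance of $b$ and $[X_j,Y'] = -A$ (obtained by Jacobi from $[Y_j,X_j] = H_{\lambda_j}$ and $[X_j,A]=0$):
\[
\kappa = b(Y_j,[A,[A,X_i]]) = -b([A,Y_j],[A,X_i]) = -b(Y',[X_i,A]) = b([X_i,Y'],A) = 2a,
\]
where the last equality unpacks $q_{X_i,X_j}(Y') = -\tfrac12 b([X_i,Y'],[X_j,Y']) = \tfrac12 b([X_i,Y'],A)$. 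In particular $a \neq 0$. Now set
\[
B := a^{-1}[X_i, Y'].
\]
By weight analysis $[X_i,Y'] \in E_{i,j}(1,-1)$, so $B \in E_{i,j}(1,-1)$ and $[H,B] = -2B$ holds automatically. To prove $[B,A] = H$, pair $[B_0,A]$ (with $B_0 := [X_i,Y']$) with each coroot $H_{\lambda_s}$ using invariance of $b$ and the identities $[A,H_{\lambda_i}] = A$, $[A,H_{\lambda_j}] = -A$, $[A,H_{\lambda_s}] = 0$ for $s \notin \{i,j\}$:
\[
b([B_0,A], H_{\lambda_s}) = b(B_0, [A,H_{\lambda_s}]) = \begin{cases} \phantom{-}2a & s=i, \\ -2a & s=j, \\ \phantom{-}0 & s \notin \{i,j\}.\end{cases}
\]
Combined with $b(H_{\lambda_r},H_{\lambda_s}) = -2\delta_{r,s}$ (Lemma~\ref{lemmeb}), this pins down the $\go a^0$-component of $[B_0,A]$ to be $a(H_{\lambda_j}-H_{\lambda_i})$. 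A further Killing-form pairing against the remaining part of $\mathcal{Z}_{\go g}(\go a^0)$ (using $[A,X]=0$ there by the same weight-pattern bound from Theorem~\ref{th-decomp-Eij}) shows that the residual non-$\go a^0$ component vanishes, so $[B,A] = a^{-1}[B_0,A] = H_{\lambda_j}-H_{\lambda_i}$.

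\smallskip

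\emph{Action of $\theta_A$.} For $s \notin \{i,j\}$, both $[A,X_s]$ and $[B,X_s]$ vanish (the sum $H_{\lambda_r}$-weights would have three nonzero components, violating $\sum_r|p_r| \leq 2$), so $\theta_A(X_s) = X_s$. For $s = i$, $X_i$ is the lowest weight vector of the $3$-dimensional irreducible $\go{sl}_2(B,H,A)$-submodule it generates, and the relation $\ad(A)^2(X_i) = 2a X_j$ identifies its primitive (highest weight) vector as $a X_j$. Writing $X_i = a\, e_2$ with $e_0 = X_j$ in the notation of Section~\ref{sl2module} and applying $w\cdot e_p = (-1)^{m-p} e_{m-p}$ with $(m,p) = (2,2)$ gives $\theta_A(X_i) = a\, e_0 = a X_j$. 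Since $X_j = e_0$ in the same module, the formula with $(m,p) = (2,0)$ yields $\theta_A(X_j) = e_2 = a^{-1} X_i$.

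\smallskip

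\emph{Main obstacle.} The subtlest step is verifying that the non-$\go a^0$ contribution to $[B_0,A]$ vanishes in the converse direction. The $\go a^0$-projection is pinned down cleanly by Killing-form pairings against the $H_{\lambda_s}$, but $[B_0,A]$ lives a priori in all of $\mathcal{Z}_{\go g}(\go a^0)$, which may contain parts of $\go m$ and of root spaces vanishing on $\go a^0$. Controlling these contributions requires either detailed Jacobi computations inside the rank-$2$ graded subalgebra $\widetilde{\go g}_{\{i,j\}}$ furnished by Corollary~\ref{cor-gA}, or an appeal to the fact that at least one candidate $\go{sl}_2$-triple of this form already exists (Lemma~\ref{lem-sl2ij}), combined with the rigidity provided by Lemma~\ref{lemmeb}.
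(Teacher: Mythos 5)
Your forward direction and your computation of the action of $\theta_A$ on the $X_s$ are correct and agree with the paper's. The converse is where the approaches diverge and where your argument has a genuine gap. You construct $B=a^{-1}[X_i,[Y_j,A]]$ explicitly and must then verify $[B,A]=H_{\lambda_j}-H_{\lambda_i}$ exactly. Pairing against the $H_{\lambda_s}$ correctly pins down the $\go a^0$-component of $[B_0,A]$ as $a(H_{\lambda_j}-H_{\lambda_i})$, but $[B_0,A]$ lives in all of ${\mathcal Z}_{\go g}(\go a^0)$, and your justification for killing the residual component --- ``$[A,X]=0$ there by the weight-pattern bound'' --- is false: for $X\in{\mathcal Z}_{\go g}(\go a^0)$ the bracket $[A,X]$ lands in $E_{i,j}(-1,1)$, which is perfectly allowed by the constraint $\sum_r|p_r|\le 2$ of Theorem \ref{th-decomp-Eij} (already $[A,H_{\lambda_i}]=A\neq 0$, and for $d>1$ the root spaces $\go g^\la$ with $\la|_{\go a^0}=0$ need not commute with $A$ either). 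So the pairing $b([B_0,A],X)=b(B_0,[A,X])$ has no a priori reason to vanish on the orthocomplement of $\go a^0$, and you acknowledge yourself that you have not closed this step. Without it, $(a^{-1}B_0,\,H_{\la_j}-H_{\la_i},\,A)$ is not known to be an $\go{sl}_2$-triple.

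The paper avoids this issue entirely: it invokes Jacobson--Morozov to produce some triple $(B,u,A)$, projects $B$ and $u$ onto the weight spaces $E_{i,j}(1,-1)$ and ${\mathcal Z}_{\go g}(\go a^0)$ (a standard reduction), and then identifies $u=H_{\lambda_j}-H_{\lambda_i}$ by computing $\theta_A(H_{\lambda_i})$ two ways: on one hand $\theta_A(H_{\lambda_i})=H_{\lambda_i}+u$ by a direct expansion of $e^{\ad A}e^{\ad B}e^{\ad A}$, and on the other hand $\theta_A(H_{\lambda_i})=H_{\lambda_j}$ because $\theta_A$ carries the triple $(Y_i,H_{\lambda_i},X_i)$ to another $\go{sl}_2$-triple with nilpositive part $aX_j$. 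If you want to keep your explicit $B$, you would need to supply the missing verification (e.g.\ by working inside the rank-two graded subalgebra $\widetilde{\go g}_{\{i,j\}}$, or by first establishing existence as the paper does and then appealing to the uniqueness of the nilnegative element completing $(H_{\lambda_j}-H_{\lambda_i},A)$); as written, the existence claim is not proved.
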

\begin{proof} It is a well known property of  $\go sl_2$-triples, that if such a triple exists the map  ${\rm ad}(A)^2:\tilde{\go g}^{\lambda_i}\to \tilde{\go g}^{\lambda_j}$ is injective. \medskip

Conversely, suppose that the map  ${\rm ad}(A)^2:\tilde{\go g}^{\lambda_i}\to \tilde{\go g}^{\lambda_j}$ is injective, and hence  bijective.  As  $A$ is nilpotent in $\go{g}$ (more precisely  $(\ad A)^3=0$), the  Jacobson-Morosov Theorem gives the existence of an  $\go sl_2$-triple $(B,u,A)$ in $\go g$. If one decomposes the element $B$ and $u$ according to $\go g={\mathcal Z}_{\go g}(\go a^0) \oplus(\oplus_{r\neq s} E_{r,s}(1,-1))$, we easily see that one can suppose that  $B\in E_{i,j}(1,-1)$ and  $u\in {\mathcal Z}_{\go g}(\go a^0)$. 

We will show that  $u=H_{\lambda_j}-H_{\lambda_i}$. 

As $u$ commutes with the elements  $H_{\lambda_s}$, the endomorphism   $\ad  u$  stabilizes all eigenspaces of these elements. Therefore, as $\ell=1$,  there exists  $\alpha\in F$ such that  $[u,X_i]=\alpha X_i$. \medskip

 As $[B,X_i]=0=(\ad\; A)^3 X_i $ and  as by hypothesis ${\rm ad}(A)^2 X_i\neq 0$, the  $\go sl_2$ -  module generated by  $X_i$  under the action of  $(B,u,A)$  is irreducible of dimension $3$ and a base of this module is  $(X_i, [A,X_i], {\rm ad}(A)^2 X_i)$. This implies that  $\alpha=-2$.

Let   $\theta_A=e^{\ad A} e^{\ad B} e^{\ad A}$ be the non trivial element of the Weyl group of  the  $\go sl_2$-triple $\{B,u,A\}$. Then (cf. \textsection \ref{sl2module}): $$\theta_A(X_i)=\dfrac{1}{2} {\rm ad}(A)^2 X_i\in \tilde{\go g}^{\lambda_j}.$$ 
Hence, there exists $a\in F^*$ such that  $\theta_{A}(X_i)=a X_j$. As $0\neq B(X_i,Y_i)=B(\theta_A(X_i),\theta_A(Y_i))$ , we have  $\theta_A(Y_i)=a^{-1} Y_j$. And as  $(\theta_A(Y_i),\theta_A(H_{\lambda_i}),\theta_A(X_i))$ is again an  $\go sl_2$-triple, we obtain that  $\theta_A(H_{\lambda_i})=H_{\lambda_j}$.\medskip

On the other hand, a simple computation shows that 
$ \theta_A(H_{\lambda_s})=H_{\lambda_s} $ if $s\neq i,j$ and 
$  \theta_A(H_{\lambda_i})=H_{\lambda_i}+u $.
Hence   $u=H_{\lambda_j}-H_{\lambda_i}$, which gives the first assertion of the Lemma.
\medskip

By Remark \ref{rem-lienmuller}  and the normalization of $b$ (Lemma \ref{lemmeb}), we get $$q_{X_i,X_j}([Y_j,A])=-\dfrac{1}{2}b([A,X_i], [A,Y_j])=b(\dfrac{1}{2}(\ad A)^2 X_i,Y_j)=b(\theta_A(X_i), Y_j)=a\ b(X_j,Y_j)=a.$$
This ends the proof.\end{proof}


\begin{cor}\label{gia}(Recall that $\ell=1$. Compare with \cite{Mu98}   Corollaire 4.2.2 and  Remarques 4.1.6)\hfill

  Let $a\in F^*$ such that $aq_e\sim q_e$ (hence $a\in Im(q_e)^*$). Let  $i\neq j\in\{0,\ldots k\}$.
\begin{enumerate} \item There exists $g_{i,j}^a\in L\cap {\rm Aut}_e(\go g)$ such that  $g_{i,j}^a(X_i)=aX_i$, $g_{i,j}^a(X_j)=a^{-1} X_j$ and  $g_{i,j}^a(X_s)=X_s$ for $s\neq i,j$. 

\item  If either
\begin{enumerate}\item $k+1$ $= \text{rank}(\tilde{\go{g}})$ is odd,\\
or 
\item $k+1$ $= \text{rank}(\tilde{\go{g}})$ is  even  and  if there exists  a regular graded Lie algebra   $(\tilde{\go r}, \tilde{H}_0)$ satisfying the hypothesis  $({\mathbf H}_1), ({\mathbf H}_2)$ and  $({\mathbf H}_3)$   such that the algebra  $\tilde{\go r}_1$ obtained at the first step of the descent   (cf. Theorem \ref{th-descente}) is equal to  $\tilde{\go g}$ (in other words, the algebra $\tilde{\go g}$ is the first step in the descent from a bigger graded Lie algebra),
\end{enumerate}
\medskip
then there exists  $g_i^a\in L$ such  that $g_i^a(X_i)=a X_i$ and  $g_i^a(X_s)=X_s$ for  $s\neq i$. In particular we have  $\{a\in F^*; aq_e\sim q_e\}\subset \chi_0(G)$.
\end{enumerate}
\end{cor}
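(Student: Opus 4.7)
The plan is to build $g_{i,j}^a$ in part (1) by composing two suitably-chosen elements of the form $\theta_A$ coming from the preceding Lemma, and then to build $g_i^a$ in part (2) as a product of such $g_{i,j}^a$'s, the global scalar element $h_X(\sqrt{\cdot})$ from Lemma \ref{lem-tId-dansG}, and the partial scalar element $h_{X_i}(\cdot)$.

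For (1) I would first note that $\ad(Y_j)\colon E_{i,j}(-1,1)\to E_{i,j}(-1,-1)$ is a linear bijection (same dimension, isomorphism via the Weyl element of the $\go{sl}_2$-triple $(Y_j,H_{\lambda_j},X_j)$), so the map $A\mapsto q_{X_i,X_j}([Y_j,A])$ realises every value of $q_{X_i,X_j}$. Since $q_e$ represents $1$ and $aq_e\sim q_e$, both $1$ and $a$ lie in $\mathrm{Im}(q_{X_i,X_j})^*$; choose $A,A'\in E_{i,j}(-1,1)$ mapping to $a$ and $1$ respectively. Because $\ell=1$, these non-zero values force $(\ad A)^2$ and $(\ad A')^2$ to be bijective $\widetilde{\go g}^{\lambda_i}\to\widetilde{\go g}^{\lambda_j}$, so the preceding Lemma yields $\theta_A,\theta_{A'}\in\mathrm{Aut}_e(\go g)$ with $\theta_A(X_i)=aX_j,\ \theta_A(X_j)=a^{-1}X_i$ and with $\theta_{A'}$ swapping $X_i\leftrightarrow X_j$ without scaling, both fixing $X_s$ for $s\notin\{i,j\}$. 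Setting $g_{i,j}^a:=\theta_{A'}\circ\theta_A$ gives the required scalings on the $X_s$. To see $g_{i,j}^a\in L$, one checks that each $\theta_\bullet$ swaps $H_{\lambda_i}$ and $H_{\lambda_j}$ and fixes every other $H_{\lambda_s}$: the element $H_{\lambda_i}+H_{\lambda_j}$ annihilates the roots appearing in $E_{i,j}(\pm1,\mp1)$ and hence commutes with the $\go{sl}_2$-triple $(\,\cdot\,,H_{\lambda_j}-H_{\lambda_i},A)$ (so is fixed by $\theta_\bullet$), whereas $H_{\lambda_j}-H_{\lambda_i}$ is the semisimple element of that triple and is sent to its negative; combining these via $H_{\lambda_i}=\tfrac12((H_{\lambda_i}+H_{\lambda_j})-(H_{\lambda_j}-H_{\lambda_i}))$ yields the swap, and commutation of $H_{\lambda_s}$ ($s\notin\{i,j\}$) with the whole triple gives the fixing. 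Hence $\theta_{A'}\theta_A\in L$.

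For (2)(a), with $k+1$ odd write $k=2m$ and let $\{j_1,\ldots,j_k\}=\{0,\ldots,k\}\setminus\{i\}$. I propose
\[
g_i^a \;:=\; h_X(\sqrt a)\,\Bigl(\prod_{r=1}^{k}g_{i,j_r}^a\Bigr)\,h_{X_i}(a^{-m}),
\]
with $X=X_0+\cdots+X_k$. All factors lie in $L$: Lemma \ref{lem-tId-dansG} gives $h_X(\sqrt a)\in L$ acting as $a\cdot\mathrm{Id}$ on $V^+$; part (1) gives each $g_{i,j_r}^a\in L$; and $h_{X_i}(a^{-m})\in L$ is defined over $F$ since $a^{-m}\in F^*$, scaling $\widetilde{\go g}^{\lambda_i}$ by $a^{-2m}=a^{-k}$ and fixing $\widetilde{\go g}^{\lambda_s}$ for $s\neq i$. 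Reading right-to-left one finds $X_i\mapsto a^{-k}\cdot a^k\cdot a=aX_i$ and $X_{j_r}\mapsto 1\cdot a^{-1}\cdot a=X_{j_r}$, as required. For (2)(b), with $k+1$ even and ambient graded algebra $\widetilde{\go r}$ of (odd) rank $k+2$, I would apply (2)(a) inside $\widetilde{\go r}$ to obtain $\tilde g\in L^{\widetilde{\go r}}$ scaling $X_i$ (viewed as $X_{i+1}^{\widetilde{\go r}}$) by $a$ and fixing every other $X_s^{\widetilde{\go r}}$, in particular the new generic element $X_0^{\widetilde{\go r}}$. Since $\tilde g$ fixes $X_0^{\widetilde{\go r}}$ and centralises $H_{\lambda_0^{\widetilde{\go r}}}$, it centralises $\widetilde{\go l}_0^{\widetilde{\go r}}$ and therefore stabilises $\widetilde{\go g}=\widetilde{\go r}_1={\cal Z}_{\widetilde{\go r}}(\widetilde{\go l}_0^{\widetilde{\go r}})$; its restriction to $\widetilde{\go g}$ is the required $g_i^a\in L\subset G$.

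The inclusion $\{a\in F^*;\,aq_e\sim q_e\}\subset\chi_0(G)$ follows at once from $\chi_0(g_i^a)=a$, which comes from Theorem \ref{thpropridelta_0}(2) with $\kappa=1$ (the $\ell=1$ case). The most delicate point, I expect, will be verifying in (2)(b) that the restriction of $\tilde g$ to $\widetilde{\go g}$ actually lies in $\mathrm{Aut}_0(\widetilde{\go g})$ (and not merely in $\mathrm{Aut}(\widetilde{\go g})$): this should follow by checking that each factor of the (2)(a)-product applied inside $\widetilde{\go r}$ is already built from root vectors and $\go{sl}_2$-triples living in $\widetilde{\go g}\subset\widetilde{\go r}$, so that restriction commutes with the construction.
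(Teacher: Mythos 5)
Your parts (1) and (2)(a) are essentially the paper's own argument. In (1) you replace the paper's swap element $f_{i,j}=\gamma_{0,i}\gamma_{0,j}\gamma_{0,i}$ by a second Weyl-type element $\theta_{A'}$ with $q_{X_i,X_j}([Y_j,A'])=1$; this is a harmless variant (the existence of $A'$ uses that $q_e$ represents $1$, which holds by Definition \ref{def-qe}), and your verification that the composite fixes every $H_{\lambda_s}$, hence lies in $L$, is correct. Your (2)(a) is the same telescoping product as the paper's, and the deduction $\chi_0(g_i^a)=a$ via Theorem \ref{thpropridelta_0} is fine.

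The gap is in (2)(b), and it is exactly at the point you flag as ``delicate'' — but your proposed fix does not work. After restricting $\tilde g$ from $\tilde{\go r}$ to $\tilde{\go g}=\tilde{\go r}_1$, you obtain an element of $\mathrm{Aut}(\tilde{\go g})$ centralizing $\go a^0$, but membership in $G$ requires it to lie in $\mathrm{Aut}_0(\tilde{\go g})=\mathrm{Aut}(\tilde{\go g})\cap\mathrm{Aut}_e(\tilde{\go g}\otimes\overline F)$, and this does not follow from the construction. Your suggested justification — that every factor of the $(2)(a)$-product in $\tilde{\go r}$ is built from root vectors and $\go{sl}_2$-triples living inside $\tilde{\go g}$ — is false: the whole point of passing to $\tilde{\go r}$ is to exploit the extra index $0$, so the product necessarily contains the factor $g^a_{i+1,0}$ (built from $E^{\tilde{\go r}}_{i+1,0}(-1,1)$, which does not lie in $\tilde{\go g}$), and also $h_{X^{\tilde{\go r}}}(\sqrt a)$ with $X^{\tilde{\go r}}=X_0^{\tilde{\go r}}+\cdots+X_{k+1}^{\tilde{\go r}}\notin\tilde{\go g}$. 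If you discard these index-$0$ factors you are back inside $\tilde{\go g}$ of even rank $k+1$ and the parity obstruction ($a^{-k/2}\notin F$) reappears. That the restriction problem is genuine, and not a formality, is precisely the content of Remark \ref{rem-inclusion-groupes}, which exhibits (for the symplectic algebra, i.e.\ one of the cases relevant here) automorphisms that do not pass correctly between $\tilde{\go g}$ and its descent subalgebras. The paper closes this gap by a classification argument: since $k+2\geq 3$, the ambient $\tilde{\go r}$ is of type $A_{2n-1}$, $C_n$ or $D_{2n,2}$, and in each case one verifies concretely (diagonal conjugations, $\mathrm{Aut}_e=\mathrm{Aut}$ for type $C$, direct similarities for type $D$) that elements of $\mathcal Z_{\mathrm{Aut}_e(\tilde{\go r}\otimes\overline F)}(\tilde{\go a}^0)$ restrict to elements of $\mathrm{Aut}_e(\tilde{\go r}_1\otimes\overline F)$. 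Some such case-by-case verification (or an equivalent structural argument) must be supplied; without it the proof of (2)(b) is incomplete.
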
 
\begin{rem} From the classification (cf. Table 1) and from Proposition \ref{prop-diagg1}, the condition on the descent in case {\it 2 (b)} occurs if $e=d\in\{1,2\}$ (Table 1,   (2) and  (6))   or $(d,e)=(2,0)$ (Table 1,  (1)) or   $e=0, 4$, and    $k+1\geq 4$ (Table 1,   (11) and  (12)).
\end{rem}
\begin{proof}\hfill

 (1) By hypothesis, the quadratic form $q_{X_i,X_j}$ is equivalent to $q_e$ and $a\in Im(q_e)^*$, thus there exists $Z\in E_{i,j}(-1,-1)$ such that  $q_{X_i, X_j}(Z)=a$. As ${\rm ad}(Y_j): E_{i,j}(-1,1)\to E_{i,j}(-1,-1)$ is an isomorphism, there exists $A\in E_{i,j}(-1,1)$ such that  $[Y_j,A]=Z$. Then, as we have already seen at the end of the preceding proof, one has
$$a=q_{X_i,X_j}([Y_j,A])= \dfrac{1}{2} b({\rm ad}(A)^2 X_i, Y_j).$$
As $a\neq 0$,  the map  ${\rm ad}(A)^2:\tilde{\go g}^{\lambda_i}\to \tilde{\go g}^{\lambda_j}$ is non zero, and hence injective because $\ell=1$ and the preceding Lemma says that there exists an  $\go sl_2$-triple $(B,H_{\lambda_j}-H_{\lambda_i}, A)$ with  $B\in E_{i,j}(1,-1)$. 

Moreover the element  $\theta_A\in {\rm Aut}_e(\go g)\subset G$ fixes each  $X_s$ and each $H_{\lambda_s}$ for  $s\neq i,j$ and  we have $\theta_A(H_{\lambda_i})=H_{\lambda_j}$,  $\theta_A(X_i)=aX_j$ and  $\theta_A(X_j)=a^{-1}X_i$.  From the proof of Proposition \ref{prop-equivalenceqXiXj}, let us set  $f_{i,j}=\gamma_{0,i}\gamma_{0,j}\gamma_{0,i}$. Then the automorphisms $f_{i,j}\in {\rm Aut}_e(\go g)\subset G$ satisfy  the properties of  Proposition \ref{prop-gammaij}    and we have  $f_{i,j}(X_i)=X_j$ and  $f_{i,j}(X_s)=X_s$ for $s\neq i,j$. Then, from the preceding Lemma, the  automorphism $g_{i,j}^a=f_{i,j}\circ \theta_A\in  L\cap {\rm Aut}_e(\go g) $ and  has the other required properties.\medskip

  (2) As the involutions  $f_{0,i}$ exchange  $X_0$ and $X_i$ and fix $X_s$ if  $s\neq i$, we can suppose that  $i=0$.\medskip

Let  $k+1$ be odd. The element $g=  g_{0,1}^a\ldots g_{0,k}^a\in L$ satisfies $g(X_0)=a^k X_0$ and $g(X_s)=a^{-1}X_s$ if  $s> 0$. From Lemma \ref{lem-tId-dansG}, the element  $h_{X_0+\ldots+X_k}(\sqrt{a})\in L$ acts by multiplication by $a$ on  $V^+$ and  $h_{X_0}(a^{-k/2})$ fixes  $X_s$ if  $s\neq 0$, and   also $h_{X_0}(a^{-k/2})(X_0)=a^{-k}X_0$. It follows that the element $g_0^a=h_{X_0+\ldots+X_k}(\sqrt{a})\circ h_{X_0}(a^{-k/2})\circ g$  has the required properties.\medskip

Let now  $k+1$ be even and suppose that there exists a regular graded Lie algebra  $(\tilde{\go r}, \tilde{H}_0)$,  such that the algebra  $\tilde{\go r}_1$ obtained by performing one step in the descent  (cf. Theorem \ref{th-descente}.) is equal to  $\tilde{\go g}$. As we are only concerned by the action of an element of $G$ on $V^+$, we can always suppose that $\tilde{\go r}$ is  simple  (cf. Remark  \ref{rem-simple}).

Let    $(\tilde{\lambda}_0,\ldots \tilde{\lambda}_{k+1})$ the maximal set of stronly orthogonal roots associated to $\tilde{\go r}$.   Then   $(\tilde{\lambda}_1,\ldots \tilde{\lambda}_{k+1})=(\lambda_0,\ldots \lambda_k)$. Let us fix the elements    $\tilde{X}_i\in\tilde{\go r}^{\tilde{\lambda}_i}$ satisfying the conditions of Proposition  \ref{prop-equivalenceqXiXj} in such a way that $(\tilde{X}_1,\ldots \tilde{X}_{k+1})=(X_0,\ldots, X_k)$.\medskip

Let  $R$ be the analogue of the group  $G$ for the algebra $\tilde{\go r}$. Set  $\go r={\mathcal Z}_{\tilde {\go r}}(\tilde H_{0})$   and  $\tilde{\go a}^0=\oplus_{i=0}^{k+1} F H_{\tilde{\lambda}_i}$. Therefore $R={\rm Aut}_e(\go r) {\mathcal Z}_R(\tilde{\go a}^0)$. The first assertion of the Corollary  gives the existence of an element   $r\in {\mathcal Z}_R(\tilde{\go a}^0)\cap{\rm Aut}_e(\go r)$ such that  $r.\tilde{X_0}=a \tilde{X_0}$, $r.\tilde{X_1}=a^{-1} \tilde{X_1}$  and  $r.\tilde{X_s}= \tilde{X_s}$ for  $s\neq 0$. 

As the automorphism $r$ also  fixes   $H_{\lambda_0}$, it stabilizes $\tilde{\go r}_1=\tilde{\go g}$. Let   $r_1\in{\rm Aut}(\tilde{\go g})$ be the  restriction  of  $r$ to  $\tilde{\go g}$. As $r$ centralizes  $\tilde{\go a}^0$, it is clear that   $r_1$ centralizes $\go a^0$. Moreover, one  has   $r_1.X_0=a^{-1}X_0$ and  $r_1.X_s=X_s$ if  $s\neq 0$,  from our choice of the elements  $\tilde{X_j}$.  Then,  if we set   $g=r_1\circ h_{X_0}(a)$, we have    $g.X_0=aX_0, g.X_s=X_s$ for  $s\neq 0$  and also $\chi_0(g)=a$. \\

It remains to prove that  $r_1\in{\rm Aut}_0({\tilde{\go r}_1})$. But as  $r_1\in{\rm Aut}(\tilde{\go r}_1)$, it suffices to prove that  $r_1$ belongs to ${\rm Aut}_e(\tilde{\go r}_1\otimes \bar{F})$.  As we have supposed that  $k+1$ is even, the rank $k+2$ of  $\tilde{\go r}$   is $\geq   3$.  Using the classification  (Proposition \ref{prop-diagg1} and Table 1), one sees easily that the weighted Dynkin diagram    of  $\tilde{\go r}$ is of type   $A_{2n-1}$ (corresponding to the cases $(1)$ (with $\delta=1)$ and $(2)$ in Table 1), $C_n $ (corresponding to the case $(6)$ in Table 1) or $D_{2n,2}$   (corresponding to the cases   $(11)$ and  $(12)$ in Table 1). Here in all cases   $n=k+2$. \\

- If   $\tilde{\go{r}}$ is of type $A_{2n-1}$, then    $\tilde{\go{r}}\simeq \go{sl}(2(k+2),F)$. From the description of the roots $\lambda_{j}$ given in the proof of Theorem  \ref{thm-delta2}  and also from  \cite{Bou2} (Chap. VIII, \S 13, $n^\circ 1$, (VII), p.189), it is easy to see that the group ${\mathcal Z}_{{\rm Aut}_e(\tilde{\go r}\otimes \bar{F})}(\tilde{\go a}^0)$  is the group of conjugations by invertible diagonal matrices Therefore the restriction $r_{1}$ of $r$ to $\tilde{\go{g}}$ belongs effectively to ${\rm Aut}_e(\tilde{\go r}_1\otimes \bar{F})$.\\

- If   $\tilde{\go{r}}$ is of type   $C_n$, then, by   \cite{Bou2}(Chap. VIII \textsection 13    $n^\circ 3$  (VII) page 205.), one has  ${\rm Aut}_e(\tilde{\go g}\otimes \bar{F})={\rm Aut}(\tilde{\go g}\otimes \bar{F})$, 	and this implies the result. \\

- If  $\Psi=D_{2n,2}$ then the algebra $\tilde{\go r}\otimes \bar{F}$ is isomorphic to the orthogonal algebra ${\go o}(q_{(2n,2n)},\bar{F})$. We realize it as in  (\cite{Bou2} Chap VIII \textsection 13 $n^\circ 4$ page 207). One fixes a  Witt bases in which the matrix of  $q_{(2n,2n)}$ is the square matrix  $s_{4n}$  of size  $4n$ whose coefficients  are all zero except those of the second diagonal which are equal to $1$. The algebra  $\tilde{\go r}\otimes \bar{F} $ is then the set of matrices 
$Z=\left(\begin{array}{cc} A&B \\  C&-s_{2n}\;^{t}A s_{2n}\end{array}\right)$ with  $B=-s_{2n}\;^{t}B s_{2n}$ and  $C=-s_{2n}\;^{t}Cs_{2n}$. This algebra is then graded by the element $H_0=\left(\begin{array}{cc}  I_{2n}&0 \\  0&- I_{2n} \end{array}\right) $ where $I_{2n}$ is the identity matrix of size $2n$ and one can choose  $\lambda_0$ in such a way that      $H_{\lambda_0}=\left(\begin{array}{c|c}\begin{array}{cc}  0_{2n-2}& 0 \\0&   I_{2} \end{array}&0 \\\hline 0 &   \begin{array}{cc} -I_2 &0 \\ 0 & 0_{2n-2}  \end{array}\end{array}\right). $

 Recall that a {\it similarity} of a quadratic form $Q$ is a linear isomorphism $g$ of the underlying space $E$ such that $Q(gX)=\lambda(g)Q(X)$, where the scalar $\lambda(g)$ is called the ratio of $g$. If $\dim E= 2l$, then a similarity $g$ is said to be {\it direct} if $\det(g)=\lambda(g)^l$.

 But we know from    \cite{Bou2} (Chap VIII \textsection 13 $n^\circ 4$ page 211), that the group ${\rm Aut}_e(\tilde{\go r}\otimes \bar{F})$ is the group of automorphisms of the form  $\varphi_s: Z\mapsto sZs^{-1}$ where $s$ is a direct similarity of   $q_{(2n,2n)}$. It is easy to see that a similarity $s$ commutes with  $H_0$ if and only if  $s=\left(\begin{array}{cc}  g & 0\\ 0 & \mu s_{2n} \;^{t}g^{-1} s_{2n} \end{array}\right) $ with  $\mu\in \bar{F}^*$ and    $g\in GL(2n,\bar{F})$. Moreover, if  $s$ commutes with    $H_{\lambda_0}$, then    $g$ is of the form  $g=\left(\begin{array}{cc}  g_1 & 0\\ 0 &g_2 \end{array}\right) $ with $g_1\in GL(2n-2,\bar{F})$ and  $g_2\in GL(2,\bar{F})$. It is then clear that  $\varphi_s$ restricts to a direct similarity  of $q_{(2n-2,2n-2)}$ and hence the restriction belongs to  ${\rm Aut}_e(\tilde{\go r}_1\otimes   \bar{F})$.  
 
 \end{proof}

 The following result on anisotropic quadratic forms of rank 2 will be used later.
 
 \begin{lemme}\label{lem-q2} Let   $Q$ be  an anisotropic quadratic form of rank $2$. Let $Im(Q)^*$ be the set of non zero scalars which are represented by  $Q$. Then\\
$(1)$     $Im(Q)^*$ is the union of exactly  $2$ classes  $a$ and  $b$ in $F^*/F^{*2}$.  \\
$(2)$  Let $Q'$ be another anisotropic quadratic form of rank 2. Then $Q'\sim Q$ if and only if  $Im(Q')^*=Im(Q)^*$. In particular, one has   $\mu Q\sim Q$ if and only if  $\mu=1$ or  $\mu=ab$ modulo $F^{*2}$.
\end{lemme}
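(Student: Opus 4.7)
The plan is to use the classical theory of binary quadratic forms over a non-dyadic local field, exploiting the fact that $F^*/F^{*2}$ has order $4$.

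First I would diagonalize $Q\sim \langle \alpha, \beta\rangle$ with $\alpha, \beta \in F^*$. Anisotropy of $Q$ is equivalent to $-\alpha\beta\notin F^{*2}$, so $E = F(\sqrt{-\alpha\beta})$ is a genuine quadratic extension of $F$. A scalar $c\in F^*$ is represented by $Q$ iff $c/\alpha$ is represented by $\langle 1, \alpha\beta\rangle$, and a direct computation shows that this happens iff $c/\alpha$ is a norm from $E$. Hence $Im(Q)^* = \alpha\cdot N_{E/F}(E^*)$. By local class field theory (here we use that $F$ is a non-dyadic $p$-adic field) one has $[F^* : N_{E/F}(E^*)] = 2$, so $Im(Q)^*$ is a single coset of an index-$2$ subgroup of $F^*$. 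Since $F^{*2} \subset N_{E/F}(E^*)$, passing to $F^*/F^{*2}\simeq (\Z/2)^2$ gives a coset consisting of exactly two square classes, proving $(1)$.

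Next, for $(2)$, the ``only if'' direction is trivial. For the converse, I would argue that the data $Im(Q)^* = \{a,b\}\subset F^*/F^{*2}$ actually determines both the extension $E$ and the coset representative. Indeed, knowing a coset $\{a,b\}$ of some index-$2$ subgroup $H$ in $F^*/F^{*2}$ automatically determines $H = \{1, ab \bmod F^{*2}\}$, and via the bijection between index-$2$ subgroups of $F^*/F^{*2}$ and the three quadratic extensions of $F$, one recovers $E = F(\sqrt{-\alpha\beta})$, hence the discriminant $\alpha\beta$ modulo squares. Thus two anisotropic binary forms $Q,Q'$ with $Im(Q)^* = Im(Q')^*$ have the same discriminant and represent a common nonzero value $c$; diagonalizing both as $\langle c, \cdot\rangle$ and applying Witt cancellation (valid since the residual characteristic is odd), they must be equivalent.

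Finally, the ``in particular'' statement follows by observing that $Im(\mu Q)^* = \mu\cdot Im(Q)^*$, so $\mu Q \sim Q$ reduces via $(2)$ to the equation $\mu\{a,b\} = \{a,b\}$ in $F^*/F^{*2}$, whose only solutions are $\mu \equiv 1$ or $\mu \equiv ab \pmod{F^{*2}}$. The only real subtlety is the passage from ``same set of represented values'' to ``same discriminant'' for binary forms; this is not a general phenomenon over arbitrary fields, and I expect this is the step that requires the most care, which is why the argument relies essentially on the very special structure of $F^*/F^{*2}$ for a non-dyadic local field (namely that it is a Klein four-group in bijection with its index-$2$ subgroups and with the quadratic extensions of $F$).
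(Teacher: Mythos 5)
Your proof is correct, but it travels a genuinely different road from the paper's, at least for part $(1)$. You identify $Im(Q)^*$ as the coset $\alpha\,N_{E/F}(E^*)$ of the norm group of $E=F(\sqrt{-\alpha\beta})$ and then invoke the norm index theorem $[F^*:N_{E/F}(E^*)]=2$ from local class field theory; the paper instead reduces to $Q_v=x^2+vy^2$, tests representability of each square class via the isotropy of $x^2+vy^2-\mu z^2$, and rules out the forbidden classes by comparison with the unique anisotropic form of rank $4$, with a case split on whether $-1\in F^{*2}$. Your route is cleaner and avoids the case analysis, at the cost of importing a (standard but heavier) input; the paper's stays entirely inside elementary quadratic form theory over non-dyadic local fields, which is consistent with the toolbox used elsewhere in the text. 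For part $(2)$ the two arguments converge: the paper quotes directly the fact that two binary forms are equivalent iff they have the same discriminant and represent a common value (Lam, Prop.\ I.5.1), whereas you rederive the equality of discriminants from the bijection between norm groups and quadratic extensions and then conclude by the same classification fact. One small remark: your appeal to Witt cancellation in the last step is unnecessary (and slightly misdirected, since cancellation goes from an equivalence to an equivalence, not the other way) --- once both forms are written as $\langle c,d\rangle$ and $\langle c,d'\rangle$ with equal discriminants, $d\equiv d'\pmod{F^{*2}}$ already gives the equivalence. The "in particular" clause is handled identically in both arguments.
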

\begin{proof}  Let  $\pi$ be a uniformizer of  $F$ and let   $\varepsilon$ be a unit of  $F^*$  which is not a square. According to \cite{Lam} (Theorem VI. 2.2, p.152), the set $\{1, \varepsilon,\pi, \varepsilon\pi\}$ is a set of representatives of $F^*/F^{*2}$.   If $-1\notin F^{*2}$ we suppose  moreover that  $\ep=-1$ . \\

$(1)$ As  $Q$ is anisotropic of rank $2$, there exist $v,w\in F^*/F^{*2}$ with $v\neq -1$ such that  $Q\sim w(x^2+vy^2)$. Therefore it is enough to prove the assertion for  $Q_v=x^2+vy^2$.

From  \cite{Lam} (Chapter I, Corollary 3.5 page 11), we know that $\mu\in Im(Q_v)^*$ if and only if the quadratic form $x^2+vy^2-\mu z^2$ is isotropic. On the other hand the quadratic form  $Q_{an}=x^2-\varepsilon y^2-\pi z^2+\varepsilon\pi t^2$ is the unique anisotropic form of rank $4$, up to equivalence (see \cite{Lam}   Chapter VI, Theorem 2.2 (3) page 152).

If  $-1\in F^{*2}$ then  $v\in\{\varepsilon,\pi, \varepsilon\pi\}$. Suppose that for example that $v=\varepsilon$. Then $Q_{v}$ cannot represent $\pi$  or $\varepsilon\pi$, because in that case the forms $x^2+vy^2-\pi z^2$ or $x^2+vy^2-\varepsilon\pi z^2$ would be isotropic,   and then $Q_{an}=x^2-\varepsilon y^2-\pi z^2+\varepsilon\pi t^2$ would be isotropic too.  Finally $Q_v$  represents  exactly the classes of  $1$ and $v$. The same argument works for the other possible values of $v$.

  If $\varepsilon=-1\notin F^{*2}$    then  $-1$ is sum of two squares   (\cite{Lam}  Chapter VI, Corollary 2.6 page 154) and   $v\in\{1,\pi,-\pi\}$ (as the form $Q_{v}$ is anisotropic, $v$ cannot be equal to $\varepsilon=-1$). The forms     $x^2+y^2$ and $ -(x^2+y^2)$ have the same discriminant and represent both the element $-1$. They are therefore equivalent   (\cite{Lam} Chapter I, Proposition 5.1 page 15). Then, as $Q_{an}=x^2+y^2-\pi z^2-\pi t^2\sim -(x^2+y^2)-\pi z^2-\pi t^2\sim x^2+y^2+\pi z^2+\pi t^2$, the same argument as above shows that the form  $Q_1$ represents exactly the classes of $1$ and  $-1$. By the same way, if  $v=\pm \pi$  one shows that  $Q_v$ represents exactly the classes of $1$ and $v$. \\

$(2)$   From above we know that an anisotropic quadratic form   $Q=ax^2+by^2$ with  $a,b\in F^*/F^{*2}$  represents  $a$ and  $b$ if $a\neq b$ and  it represents $\pm a$ if $a=b$ and if $-1\notin F^{*2}$ (because then $-1$  is a sum of two squares, the case where $-1\in F^{*2}$ has not to be considered because the form $Q$ would be isotropic). From  \cite{Lam} (Chapter I, Proposition 5.1 page 15),  we know that $Q'=cx^2+dy^2\sim Q$  if and only if  $ab=cd$ modulo $F^{*2}$ and  $\{a,b\}\cap \{c,d\}\neq \emptyset$.  This implies the second assertion.  

 \end{proof}

 \begin{lemme}\label{lem-ensembleSe} 
Define  $S_e=\{a\in F^*, aq_{e}\sim q_{e}\}$\index{Se@$S_e$}. Then   
$$S_e=\{a\in F^*, aq_{an,e}\sim q_{an,e}\}=\left\{\begin{array}{ccc} F^* & \text{for} & e=0\;\text{or}\; 4\\
F^{*2} & \text{for} & e=1\;\text{or}\; 3\\
N_{E/F}(E^*)=\text{Im}(q_{an,e})^*& \text{for} & e=2,\end{array}\right.$$
where $E$ is a quadratic extension of $F$, uniquely determined  by the class of conjugation of 
$q_{an,2}$ and $N_{E/F}$ is the norm associated to  $E$.\end{lemme}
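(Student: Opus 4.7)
The first step is to reduce the question from $q_e$ to $q_{an,e}$. I would observe that for any $a\in F^*$ the form $aq_{hyp,d-e}$ is again hyperbolic of rank $d-e$, since a hyperbolic plane $\langle 1,-1\rangle$ scaled by $a$ becomes $\langle a,-a\rangle$, which is still isotropic and of the same rank. Applying Witt cancellation to the decomposition $q_e=q_{an,e}\oplus q_{hyp,d-e}$ then gives the equivalence $aq_e\sim q_e\Longleftrightarrow aq_{an,e}\sim q_{an,e}$, proving the first equality of the statement. From now on I can work solely with the anisotropic part.

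Next I would dispose of the extreme cases $e=0$ and $e=4$. For $e=0$ the condition is vacuous, so $S_0=F^*$. For $e=4$ I would invoke the classical fact (used already in the proof of Lemma~\ref{lem-q2}) that over a non-dyadic local field there exists, up to equivalence, a unique anisotropic quadratic form of rank $4$; therefore $aq_{an,4}\sim q_{an,4}$ automatically for every $a\in F^*$, giving $S_4=F^*$.

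The cases $e=1$ and $e=3$ are handled by comparing discriminants. For $e=1$, since $q_{an,1}$ represents $1$ one has $q_{an,1}\sim\langle 1\rangle$, and $aq_{an,1}\sim\langle a\rangle$ is equivalent to $\langle 1\rangle$ precisely when $a\in F^{*2}$. For $e=3$, any $a\in F^{*2}$ certainly lies in $S_3$ (change variable by $\sqrt a$); conversely, the discriminant satisfies $d(aq_{an,3})=a^{3}d(q_{an,3})=a\cdot d(q_{an,3})\pmod{F^{*2}}$, so $aq_{an,3}\sim q_{an,3}$ forces $a\in F^{*2}$. Hence $S_1=S_3=F^{*2}$.

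The case $e=2$ is the only one requiring a substantial argument, and it will be the main step. Since $q_{an,2}$ represents $1$, I may write $q_{an,2}\sim\langle 1,c\rangle$ with $-c\notin F^{*2}$ (anisotropy). By Lemma~\ref{lem-q2} the image set $\mathrm{Im}(q_{an,2})^{*}$ consists of exactly two classes modulo $F^{*2}$, namely $1$ and $c$; and part~(2) of the same lemma tells us that $aq_{an,2}=\langle a,ac\rangle$ is equivalent to $q_{an,2}$ if and only if these two image sets agree, i.e.\ if and only if $a\in\{1,c\}\pmod{F^{*2}}=\mathrm{Im}(q_{an,2})^{*}$. This gives $S_2=\mathrm{Im}(q_{an,2})^{*}$. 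It remains to identify this set with a norm group: setting $E=F(\sqrt{-c})$ (a quadratic extension because $-c\notin F^{*2}$), the norm is $N_{E/F}(x+y\sqrt{-c})=x^{2}+cy^{2}$, so $N_{E/F}(E^{*})=\mathrm{Im}(\langle 1,c\rangle)^{*}=\mathrm{Im}(q_{an,2})^{*}$. Finally, the class of $c$ modulo $F^{*2}$ is the discriminant of $q_{an,2}$ and is an invariant of the equivalence class, which gives the uniqueness of $E$.
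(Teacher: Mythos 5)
Your argument follows the paper's proof essentially step for step: Witt cancellation (via the fact that scaled hyperbolic forms stay hyperbolic) reduces to $q_{an,e}$; uniqueness of the anisotropic form in ranks $0$ and $4$ gives $S_0=S_4=F^*$; the discriminant handles $e=1,3$; and Lemma~\ref{lem-q2} together with the identification of $\mathrm{Im}(q_{an,2})^*$ with a norm group handles $e=2$. So the approach is the same and the conclusion is correct.

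One small imprecision in your $e=2$ step: after writing $q_{an,2}\sim\langle 1,c\rangle$ you assert that the two represented square classes are $1$ and $c$. This fails exactly when $c\in F^{*2}$, i.e.\ $q_{an,2}\simeq\langle 1,1\rangle$ (which forces $-1\notin F^{*2}$): there the two represented classes are $1$ and $-1$, not ``$1$ and $c$'', so your intermediate identity $S_2=\{1,c\}$ would wrongly collapse to $F^{*2}$. The repair is immediate -- Lemma~\ref{lem-q2}(2) gives $aQ\sim Q$ iff $a\in\{1,ab\}$ where $a,b$ are the two represented classes, and since one of them is $1$ this set \emph{is} $\mathrm{Im}(q_{an,2})^*$, which in turn equals $N_{E/F}(E^*)$ for $E=F(\sqrt{-c})$ in all cases -- but you should make the degenerate case explicit, as the paper does by splitting on whether the second represented class is $-\alpha$ with $\alpha$ a square.
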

 \begin{proof} By Definition \ref{def-qe}, we have $q_e=q_{an,e}+q_{hyp, d-e}$ where $q_{an,0}=0$ and for $e\neq 0$, $q_{an,e}$ is an anisotropic quadratic form of rank $e$ which represents $1$ and $q_{hyp, d-e}$ is a hyperbolic quadratic form of rank $d-e$. \\
As  $\mu q_{hyp, d-e}\sim q_{hyp,d-e}$ for all $\mu\in F^*$ (\cite{Lam}  Chapter I, Theorem 3.2 page 9),  Witt's decomposition Theorem (\cite{Lam}  Chapter I, Theorem 4.2 page 12), implies that  $aq_e\sim q_e$ if and only if  $aq_{an,e}\sim q_{an,e}$. Thus $S_e=\{a\in F^*, aq_{an,e}\sim q_{an,e}\}$.
\vskip 5pt
It is clear that if $e=0$    then  $S_e=F^*$, and if $e=1$, then   $S_e=F^{*2}$.\\
By (\cite{Lam} Chapter VI, Theorem 2.2 page 152), all the anisotropic quadratic forms of rank $4$ are equivalent. Thus for $e=4$, we have $S_e=F^*$.\\
Suppose that $e=3$. From (\cite{Lam} Chapter VI, Corollary 2.5 page 153-154), we know that if $a\;q_{an,e}\sim q_{an,e}$ then $-disc(a\;q_{an,e})=-a\;disc(q_{an,e})=-disc(q_{an,e})$  (here $disc$ means the discriminant defined modulo $F^{*2}$). Hence $a\in F^{*2}$. Conversely if $a\in F^{*2}$ we have of course that $a\;q_{an,e}\sim q_{an,e}$. Therefore we have $S_e=F^{*2}$.

Let $e=2$. 
From Lemma \ref{lem-q2}, the form   $q_{an,2}$ represents exactly two classes of squares. As $1$ is represented, there exists another class of squares, say $-\alpha.(F^*)^2$ which is represented (here $-\alpha\notin (F^*)^2$ of course). \\
- Suppose first that $\alpha\notin (F^*)^2$.  Then the quadratic  extension $E=F[\sqrt{\alpha}]$ of $F$   is such that $N_{E/F}(E^*)=\text{Im}(q_{an,2})^*$, where   $N_{E/F}$ is the norm associated to  $E$.  \\
- If $\alpha \in (F^*)^2$. Then we are in the case where $-1 \notin (F^*)^2$ and $q_{an,2}$ represents the two classes $1$ and $-1$. In that case $q_{an,2}\simeq x^2+y^2$  because two anisotropic forms of rank $2$  are equivalent if and only if they  represent the same two classes of squares (Lemma \ref{lem-q2}) and because $-1$ is the sum of two squares (\cite{Lam} Chapter VI, Corollary 2.6. p.154). In that case if we set $E=F[\sqrt{-1}]$, we have $N_{E/F}(E^*)=\text{Im}(q_{an,2})^*$.
Hence we have shown that  there exists a quadratic extension $E=F[\xi]$ of  $F$ (unique up to isomorphism) such that  $N_{E/F}(E^*)=\text{Im}(q_{an,2})^*$. 
\vskip 5pt

Suppose for both cases in the discussion above that  $E' =F[\sqrt{\beta}]$ is another quadratic extension such that $N_{E'/F}(E'^*)=\text{Im}(q_{an,2})^*$  ($\beta=-1$ in the second case). Then $N_{E'/F}(x+\sqrt{\beta}y)=x^2-\beta y^2$ is a quadratic form which represents the two same classes of squares as  $N_{E'/F}(x+\sqrt{\alpha}y)=x^2-\alpha y^2$. Therefore, by Lemma   \ref{lem-q2} these two forms are equivalent and hence have the same discriminant. That is $\alpha\in \beta {F^*}^2$.This implies that $E=F[\sqrt{\alpha}]=F[\sqrt{\beta}]=E'$.

\end{proof}
 
\begin{prop}\label{prop-k=1} Recall that $\ell=1$. Suppose   $k=1$ (i.e. $\text{\rm rank}(\tilde{\go g})=2)$. Let us denote by $[F^*:\chi_0(G)]$ the index of  $\chi_0(G)$ in  $F^*$ (equal to  $1,2$ or $4$ according to Lemma \ref{chiG} (1)).  Then
\begin{enumerate}
\item  The group $G$ has $1+[F^*:\chi_0(G)]$ non zero orbits in $V^+$: the non open orbit of    $X_0$ and the open orbits of  $X_0+vX_1$ where  $v\in F^*/\chi_0(G)$.

\item Two generic elements in  $\  \tilde{\go g}^{\lambda_0}\oplus \tilde{\go g}^{\lambda_1}$ are  $G$-conjugated if and only if they are  $L$-conjugated.
\item  We have $\chi_0(G)=S_e$.
\end{enumerate}
\end{prop}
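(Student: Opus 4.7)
My plan is to reduce the orbit classification to combinatorics on the ``diagonal'' $\tilde{\go g}^{\lambda_0}\oplus\tilde{\go g}^{\lambda_1}$. Since $\ell=1$ forces $\tilde{\go g}$ to be of $1$-type $(A,1)$ (Definition~\ref{def-1-type}, Table~1), one has $\kappa=1$ in Notation~\ref{notation-kappa}; I shall use this repeatedly. First I would apply Proposition~\ref{prop G-diag}: any non-zero element of $V^+$ is $\text{Aut}_e(\go g)$-conjugate to a non-zero element of $\tilde{\go g}^{\lambda_0}\oplus\tilde{\go g}^{\lambda_1}$. Using the involution $f_{0,1}\in\text{Aut}_e(\go g)$ from the proof of Proposition~\ref{prop-equivalenceqXiXj} (which swaps the two lines) and the scalar multiplications $t\,\text{Id}_{V^+}\in L$ from Lemma~\ref{lem-tId-dansG}, I can normalize any rank-one element to $X_0$ and any rank-two element to $X_0+cX_1$ with $c\in F^*$. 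Lemma~\ref{lemmecodim} then shows the orbit of $X_0$ has codimension $\ell=1$ (not open), while $X_0+cX_1$ is generic by Proposition~\ref{X0+...+Xkgenerique} and so lies in an open orbit.

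Next I would establish the parameterization in (1) and assertion (2) simultaneously. If $g\in G$ sends $X_0+cX_1$ to $X_0+c'X_1$, Theorem~\ref{thpropridelta_0}(2) together with $\kappa=1$ forces $\chi_0(g)\,c=c'$, whence $c'/c\in\chi_0(G)$. For the converse, every $l\in L$ stabilizes each line $\tilde{\go g}^{\lambda_i}$ and acts on it by a scalar $x_i(l)\in F^*$ with $\chi_0(l)=x_0(l)x_1(l)$; normalizing by $x_0(l)^{-1}\,\text{Id}_{V^+}\in L$, $l\cdot(X_0+cX_1)$ becomes $X_0+c\,(x_1(l)/x_0(l))\,X_1$. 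I would then observe that $h_{X_0}(t)$ and $h_{X_1}(t)$ lie in $L$ (they centralize $\go a^0$ because $\lambda_0\perp\lambda_1$) and give $x_1(l)/x_0(l)=t^{\mp 2}$, so $\{x_1(l)/x_0(l):l\in L\}$ is a subgroup of $F^*$ containing $F^{*2}$; and since the identity $x_1(l)/x_0(l)\equiv\chi_0(l)\pmod{F^{*2}}$ is immediate, this subgroup has the same image as $\chi_0(L)$ in $F^*/F^{*2}$. As both subgroups contain $F^{*2}$, they coincide, so the $L$-orbit of $X_0+cX_1$ on the diagonal is exactly $\{X_0+c'X_1:c'/c\in\chi_0(G)\}$. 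This proves (2) and the parameterization in (1), yielding $1+[F^*:\chi_0(G)]$ non-zero $G$-orbits.

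For (3), Lemma~\ref{chiG} already gives $F^{*2}\subset\chi_0(G)\subset S_e$. The easy cases are $e\in\{1,3\}$, where Lemma~\ref{lem-ensembleSe} gives $S_e=F^{*2}$, and case~(2) of Table~1 with $e=2$ (where $d=e=2$), for which the descent condition of Corollary~\ref{gia}(2b) holds and delivers $S_e\subset\chi_0(G)$. The hard part --- and the main obstacle --- is the remaining range $e\in\{0,2,4\}$ outside case~(2), namely cases~(8), (9), (10) of Table~1 with $k=1$. Here Corollary~\ref{gia}(2b) does not apply because the hypothesis $k+1\geq 4$ (which is what makes it work for cases~(11), (12) in the remark following that Corollary) fails, and no analogous general criterion is available. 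My plan for these cases is to fall back on the explicit realizations of Notation~\ref{notations-tables}: in each $\tilde{\go g}=\go o(q)$ for a specific quadratic form $q$, $V^+$ identifies with an $F$-vector space on which $\Delta_0$ reduces to (a form equivalent to) the restriction of $q$, and the subgroup of orthogonal similarities of $q$ compatible with the grading sits inside $G$ and realizes exactly the similarity ratios comprising $S_e$. A direct case-by-case computation in each of (8), (9), (10) then produces the missing elements of $L$ with prescribed $\chi_0$-values and closes the proof.
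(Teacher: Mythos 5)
Your treatment of parts (1) and (2) is correct and is essentially the paper's own argument: reduction to the diagonal via Proposition \ref{prop G-diag}, necessity of $c'/c\in\chi_0(G)$ from the homogeneity relation of Theorem \ref{thpropridelta_0} with $\kappa=1$, and realization of every such ratio inside $L$. Your packaging of the sufficiency step through the subgroup $\{x_1(l)/x_0(l)\colon l\in L\}$ --- which contains $F^{*2}$ thanks to $h_{X_0}(t),h_{X_1}(t)\in L$ and has the same image as $\chi_0(L)$ in $F^*/F^{*2}$ because $x_1(l)/x_0(l)=\chi_0(l)x_0(l)^{-2}$, hence equals $\chi_0(L)=\chi_0(G)$ --- is a clean variant of the paper's explicit construction with $h_{X_0}(\alpha)$ and $h_{X_0+X_1}(\sqrt{\alpha}^{-1})$, and it works. (Minor point: the element swapping the two root lines is $\gamma_{0,1}$ of Proposition \ref{prop-gammaij}, not the $f_{i,j}$ of a later proof, but this is cosmetic.)

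For part (3) your strategy again coincides with the paper's (descent criterion where available, explicit similarity groups for the orthogonal cases), but your case enumeration has one genuine hole: among the $\ell=1$, $k=1$ algebras with $e$ even you list only case (2) and cases (8)--(10), omitting case (1) of Table 1 with $\delta=1$, i.e. $\tilde{\go g}=\go{sl}(4,F)$ with $(d,e)=(2,0)$. This algebra satisfies the hypotheses of the Proposition and is covered by neither branch of your argument as written, so as stated the proof of $\chi_0(G)=S_e=F^*$ is incomplete for it. The repair is immediate: the Remark following Corollary \ref{gia} records that the descent hypothesis of Corollary \ref{gia} (2)(b) holds precisely for $(d,e)=(2,0)$ (case (1)) as well as for $e=d\in\{1,2\}$, so case (1) can be absorbed into the same branch as case (2); alternatively it is already settled by Theorem \ref{thm-delta2}. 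With that addition, and granting that your announced ``direct case-by-case computation'' for (8), (9), (10) is carried out as in the paper --- realizing $G$ as the group of direct similarities of $q_{(m+r-1,m-1-r)}$, identifying $\chi_0$ with the similarity ratio, and using that the achievable ratios are exactly $\{\mu\in F^*;\ \mu q\sim q\}=S_e$ --- the proof closes.
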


\begin{proof}    
By Proposition \ref{prop G-diag}, any non zero element in  $V^+$ is $G$-conjugated to an element  $Z=x_0X_0+x_1X_1$ in  $V^+$ with  $(x_0,x_1)\neq (0,0)$. 
\medskip

If rank($Z$)$=1$  (i.e.   $x_0x_1=0$), as  we can use the element $\gamma_{0,1}$ of Proposition \ref{prop-gammaij} which exchanges  $\tilde{\go g}^{\lambda_0}$ and  $\tilde{\go g}^{\lambda_1}$, we can suppose  $x_1=0$ and  $x_0\neq 0$. The element  $h_{X_0+X_1}(\sqrt{x_0}^{-1})$ associated to $X_0+X_1$   belongs to  $L$ (see Lemma \ref{lem-tId-dansG}) and acts by  $x_0^{-1}Id_{V^+}$ on  $V^+$. It follows that  $Z$ is  $L$-conjugated  to  $X_0$.\medskip

Suppose now that rank($Z$)$=2$  (i.e.   $x_0x_1\neq 0$). Then, as above, we obtain that  $Z$ is  $L$-conjugated  to  $Z_v=X_0+vX_1$ with  $v=x_0^{-1} x_1\neq 0$. \medskip

By  Theorem \ref{thpropridelta_0}, one has  $\Delta_0(Z_v)=v\Delta_0(Z_1)$. It follows easily that if  $v\notin w\chi_0(G)$, then the elements  $Z_v$ and  $Z_w$ are not  $G$-conjugated. \medskip

Suppose that   $v=w\mu $ with  $\mu\in \chi_0(G)=\chi_0(L)$ (cf. Lemma \ref{chiG}). Let   $g\in L$ such that $\chi_0(g)=\mu$. As $g\in L$, it  stabilizes the spaces  $\tilde{\go g}^{\lambda_j}$ for  $j=0$ or  $1$. As  ${\rm dim} \tilde{\go g}^{\lambda_j}=\ell=1$,   there exist  $\alpha$ and $\beta$ in  $F^*$  such  that $g.X_0=\alpha X_0$ and  $g.X_1=\beta X_1$. Therefore   $\Delta_0(g.(X_0+X_1))=\Delta_0(\alpha X_0+\beta X_1)=\alpha\beta\Delta_0(X_0+X_1)$, and hence  $\alpha\beta=\chi_0(g)=\mu$.  \medskip

Then  $g.(X_0+wX_1)=\alpha X_0+\beta wX_1=\alpha^{-1}(\alpha^2 X_0+vX_1)$.  The element  $h_{X_0}(\alpha)$ associated to  $X_0$ belongs to $L$ and  satisfies $ h_{X_0}(\alpha)X_0=\alpha^2 X_0$ and  $h_{X_0}(\alpha)X_1=X_1$. The element  $h_{X_0+X_1}(\sqrt{\alpha}^{-1})$  also belongs to  $L$ and acts by multiplication by  $\alpha^{-1}$ on  $V^+$. As  $g.(X_0+wX_1)= h_{X_0+X_1}(\sqrt{\alpha}^{-1})\circ h_{X_0}(\alpha)(X_0+vX_1)$, the elements  $X_0+wX_1$ and $X_0+vX_1$ are  $L$-conjugated. This ends the proof of $(1)$ and $(2)$.
\medskip

\noindent It remains to prove the  last assertion.  

Lemma  \ref{chiG} and  Lemma \ref{lem-ensembleSe} imply that $F^{*2}\subset \chi_0(G)\subset\{a\in F^*; aq_e\sim q_e\}=S_e$.\\
Therefore,  if  $e=1$ or $e=3$,  we have $S_e=F^{*2}=\chi_0(G)$. 
 
Let us suppose now that  $e$ even  and hence $e\in\{0,2,4\}$. The proofs will depend on the values of  $e$ and $d$. \\
If   $(d,e)=(2,0)$ or  $(2,2)$ (cases (1) and (2) in Table 1) then the algebra $\tilde{\go g}$ satisfies the condition   {\it 2 (b)} of corollary \ref{gia} and hence $\chi_0(G)=S_e$.

It remains to study the cases where  $e\in\{0,2,4\}$ and  $d\geq 4$, corresponding to the cases  (8), (9) and (10) of  Table 1. From Remark  \ref{rem-simple}, one can suppose that     $\tilde{\go g}={\go o}(q_{(m+r,m-r)})$ with  $2r=e$ and  $m\geq 4$. We will describe precisely the  group  $G$ is this case.

The quadratic form  $q_{(m+r,m-r)}$ is the sum of  $m-r$ hyperbolic planes  and of an anisotropic quadratic form  $q_{an}^{2r}$ of rank $2r$, where  $q_{an}^{0}=0$. There  exists a basis   $(e_1,\ldots , e_{m},  e_{-m}, \ldots e_{-1})$ in which the matrix of $q_{(m+r,m-r)}$  is given by $$S_{2m,2r}=\left(\begin{array}{ccc} 0 & 0 & s_{m-r}\\ 0 & J_{an,2r} & 0\\ s_{m-r}  &0 & 0\end{array}\right),$$
where  $s_n$ stands for the square matrix of size  $n$ whose coefficients are all zero  except those on the second diagonal  which are equal to  $1$ and where   $J_{an,2r}$ is the  square matrix of size $2r $    of the form  $q_{an}^{2r}$ which is supposed to be diagonal for $r\neq 0$ and equal to the empty block for $r=0$. \medskip
Hence
$$\tilde{\go g}=\{X\in M_{2m}(F),\, ^tXS_{2m,2r}+S_{2m,2r}X=0\}$$

  Then the maximal split abelian subalgebra $\go{a}$  of $\tilde{\go g}$ is the set of diagonal elements  $\sum_{i=1}^{m-r}\varphi_{i} (E_{i,i}-E_{2m-i+1,2m-i+1})$ with $\varphi_i\in F$ (as usual   $E_{i,j}$ is  the matrix  whose coefficients are all zero except   the coefficient of index $(i,j) $ which is equal to $1$) and the algebra   $\tilde{\go g}$ is graded by the element $H_0=2E_{1,1}-2E_{2m,2m}$ . It is then easy to see that   $V^+$ is isomorphic  $F^{2m-2}$ by the map

   $$y\in F^{2m-2}\mapsto X(y)=\left(\begin{array}{ccc} 0 & y& 0\\ 
0&0&-S_{2(m-1),2r}^{-1}\;^{t}y\\
0&0&0\end{array}\right)\in V^+.$$

\noindent Set  $X_0=X(1,0,\ldots,0)\in \tilde{\go g}^{\lambda_0}$ and  $X_1=X(0,0,\ldots,1)\in \tilde{\go g}^{\lambda_1}$.
\medskip

We denote by  ${\mathcal Sim}_0(q_{(m+r,m-r)})$ the group of direct similarities of  $q_{(m+r,m-r)}$, that is the group of elements  $A\in GL(2m,F)$ such there exists $\mu\in F^*$ satisfying  $^{t}AS_{m,r} A=\mu S_{m,r}$ and such that  ${\rm det}(A)=\mu^m$. We will denote by  $\mu(A)=\mu$ the ratio of  $A$.\medskip

If $e=r=0$, the algebra $\tilde{\go g}$ is split and from   ([Bou] Chap VIII \textsection 13 $n^\circ 4$ page 211), we know that the group   ${\rm Aut}_0(\tilde{\go g})$ is the group of automorphisms of the form  $Z\mapsto AZA^{-1}$ where $A\in{\mathcal Sim}_0(q_{(m,m)})$. It is easy to see that an  element  $A\in {\mathcal Sim}_0(q_{(m,m)})$ commutes with  $H_0$ if and only if there  exists  $b\in F^*$ and  $g_1\in {\mathcal Sim}_0(q_{(m-1,m-1)})$ of ratio  $ \mu(g_1)$ such that $A= \left(\begin{array}{ccc} \mu(g_1) b^{-1} & 0 & 0  \\
0 & g_1 & 0 \\ 0 &0 &b\end{array}\right)=b\left(\begin{array}{ccc} \mu(g_1) b^{-2} & 0 & 0  \\
0 & b^{-1}g_1 & 0 \\ 0 &0 &1\end{array}\right)$. As $b^{-1}g_1$ is a direct similarity of ratio  $\mu(g_1) b^{-2}$, we obtain that the group  $G$ is isomorphic to the group of direct similarities  ${\mathcal Sim}_0(q_{(m-1,m-1)})$ of the quadratic form $q_{(m-1,m-1)}$. Its action   on $V^+$ is given by  $g.X(y)=X(\mu(g)\; yg^{-1})$ for $g\in  {\mathcal Sim}_0(q_{(m-1,m-1)})$ with ratio $\mu(g)$.\medskip

\noindent Suppose now that   $r\neq 0$. Let  $g\in G$ and denote by $\bar{g}$ its natural extension to  $\tilde{\go g}\otimes \bar{F}$. As  $g$ commutes with  $H_0$, the same is true for  $\bar{g}$ and therefore , $\bar{g}$ stabilizes  $\bar{V}^+$ and  $\bar{V}^-$ (ie. $\bar{g}\bar{V}^+\subset \bar{V}^+$ and $\bar{g}\bar{V}^-\subset \bar{V}^-$). From the split case above, the action of $\bar{g}$ on  $\tilde{\go g}\otimes \bar{F}$ is given by the conjugation of an element  $\bar{A}=\left(\begin{array}{ccc} \mu & 0 & 0  \\
0 & \bar{g}_1 & 0 \\ 0 &0 &1 \end{array}\right)$with  $\mu\in \bar{F}^*$,  $^{t}\bar{g}_1S_{2(m-1),2r}\bar{g}_1=\mu S_{2(m-1),2r}$ and   ${\rm det}(\bar{g}_1)=\mu^{m-1}$. \medskip

\noindent Such an element acts on  $V^+$   by 
$$\bar{A}X(y)\bar{A}^{-1}=X(\mu  \;y \bar{g}_1^{-1})=\left(\begin{array}{ccc} 0 & \mu  \;y \bar{g}_1^{-1}& 0\\
0&0&-\bar{g}_1 S_{m-1,r}^{-1}\;^{t}y\\
0&0&0\end{array}\right),\quad  y\in F^{2m-2}.$$

\noindent As  $g\in G$ stabilizes $V^+$ and  $V^-$, we see that the coefficients of   $\bar{g}_1$ are in $F$ and   that  $\mu\in F$. Moreover the restriction of  $\bar{g}_1$ to  $F^{2m-2}$, which we denote by  $g_1$, is a direct similarity of  $q_{(m+r-1,m-r-1)}$ with ratio  $\mu$, i.e.  $g_1\in {\mathcal Sim}_0(q_{(m-1+r,m-1-r)})$. \medskip

Hence, in each case, the group  $G$ is isomorphic to the group  ${\mathcal Sim}_0(q_{(m+r-1,m-1-r)})$ of direct similarities of  $q_{(m+r-1,m-1-r)}$. Its action on $V^+$ is given by  $g.X(y)=\mu(g) X(yg^{-1})$ where $\mu(g)$ is the ratio of  $g$. Then the polynomial  $\Delta_0(X(y))=q_{(m+r-1,m-1-r)}(y)$ is relatively invariant under the action of  $G$ and its character is given by $\chi_0(g)=\mu(g)$.\medskip

All the anisotropic quadratic forms of rank  $4$ are equivalent (\cite{Lam}   Chapter VI, Theorem 2.2 page 152) and all the hyperbolic quadratic forms are equivalent   (\cite{Lam}   Chapter I, Theorem 3.2 page 9). Therefore, if $e=r=0$ or $e=4=2r$,   for any  $\mu\in F^*$, there exists $g\in {\mathcal Sim}_0(q_{(m+r-1,m-1-r)})$ whose ratio is  $\mu$. Hence $\chi_0(G)=F^*$.\medskip

If  $e=2=2r$, then by Lemma  \ref{lem-q2},  the anisotropic form  $q_{an}^{2}$ represents exactly $2$ classes   $a$ and $b$  in  $F^*/F^{*2}$ and    $\mu q_{an}^{2}\sim q_{an}^{2}$ if and only if    $\mu =1$ or $\mu=ab$ modulo $F^{*2}$. It follows that  if    $g\in {\mathcal Sim}_0(q_{(m+r-1,m-1-r)})$ then  $\mu(g)\in \{1,ab\}$ modulo $F^{*2}$. Hence the subgroup $\chi_0(G)$ is of index $2$ in $F^*$.  From Lemma \ref{chiG} and Lemma \ref{lem-ensembleSe}, we have $\chi_0(G)\subset S_e$ and $S_e$  is of index $2$ in $F^*$, hence $\chi_0(G)=S_e$.\medskip

This ends the proof of Proposition \ref{prop-k=1}.

\end{proof}

\begin{theorem}\label{thm-orbites-e04} Recall that $\ell =1$. In the case where  $e=0$ or $4$ we have    $\chi_0(G)=F^*$    and the group  
 $G$ has exactly  $k+1= \text{ rank}(\tilde{\go{g}})$  non zero orbits in $V^+$. These orbits are characterized by their rank and a representative of the unique open orbit is $X_0+\ldots +X_k$.  
 
 Moreover two generic elements in $\oplus_{j=0}^k\tilde{\go g}^{\lambda_j}$ are conjugated under the subgroup $L=Z_G(\go a^0)$.

\end{theorem}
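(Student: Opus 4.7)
The plan is to combine Proposition \ref{prop G-diag}, which says every nonzero orbit has a representative of the form $Z_0+\cdots+Z_{m-1}$ with $Z_j\in\tilde{\go g}^{\lambda_j}\setminus\{0\}$ and $m$ uniquely determined, with the existence of enough elements in $L$ to transform any such representative into the normalized form $X_0+\cdots+X_{m-1}$. Since $\ell=1$ and $e\in\{0,4\}$, Lemma \ref{lem-ensembleSe} gives $S_e=F^*$, and the inclusions $F^{*2}\subset \chi_0(G)\subset S_e$ from Lemma \ref{chiG} reduce the statement $\chi_0(G)=F^*$ to the reverse inclusion $S_e\subset \chi_0(G)$.

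For this I would apply Corollary \ref{gia}(2) by splitting on the parity of $k+1$. If $k+1$ is odd, case (a) directly yields, for each $a\in S_e=F^*$, an element $g_0^a\in L$ with $\chi_0(g_0^a)=a$. If $k+1\geq 4$ is even, the Remark following Corollary \ref{gia} tells us that the descent hypothesis of case (b) holds whenever $e\in\{0,4\}$ and $k+1\geq 4$, so case (b) gives the same conclusion. The remaining case $k+1=2$ falls outside Corollary \ref{gia}(2) (both the parity and descent hypotheses fail), but Proposition \ref{prop-k=1}(3) was written precisely for this situation and yields $\chi_0(G)=S_e=F^*$ directly.

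Once $\chi_0(G)=F^*$ is known, the same case distinction applied to Corollary \ref{gia}(2) produces, for every $j\in\{0,\ldots,k\}$ and every $a\in F^*$, an element $g_j^a\in L$ which multiplies $X_j$ by $a$ and fixes each $X_s$ for $s\neq j$; for $k+1=2$ the analogous $L$-conjugacy of any two generic elements of $\tilde{\go g}^{\lambda_0}\oplus\tilde{\go g}^{\lambda_1}$ is supplied by Proposition \ref{prop-k=1}(2). Given two elements $Z=\sum_{j=0}^{m-1}z_jX_j$ and $Z'=\sum_{j=0}^{m-1}z'_jX_j$ with all $z_j,z'_j\in F^*$, the composition $\prod_{j=0}^{m-1}g_j^{z'_jz_j^{-1}}\in L$ sends $Z$ to $Z'$. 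Combined with Proposition \ref{prop G-diag}, the rank is a complete $G$-invariant of nonzero elements, giving exactly $k+1$ nonzero orbits with representatives $X_0,\ X_0+X_1,\ \ldots,\ X_0+\cdots+X_k$; the last one is the unique open orbit because it is generic by Proposition \ref{X0+...+Xkgenerique}. The ``moreover'' statement is precisely the case $m=k+1$ of the construction above, in which all conjugating elements lie in $L$.

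The main obstacle is purely bookkeeping: one must verify that the hypotheses of Corollary \ref{gia}(2) cover all pairs $(k+1,e)$ with $e\in\{0,4\}$ except for $k+1=2$, and then check that Proposition \ref{prop-k=1} cleanly disposes of that missing case (including the refined $L$-conjugacy needed for the ``moreover'' clause). Once this case analysis is arranged, the remainder of the argument is a direct assembly of previously established lemmas.
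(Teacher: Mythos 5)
Your proposal is correct and follows essentially the same route as the paper's proof: reduce to diagonal representatives via Proposition \ref{prop G-diag}, then obtain $L$-conjugacy of generic diagonal elements from Corollary \ref{gia}(2) (invoking hypothesis (a) or (b) according to the case) together with Proposition \ref{prop-k=1} for rank $2$. The paper organizes the case analysis by the value of $d$ rather than by parity, but the underlying lemmas and the conclusion are identical.
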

\begin{proof} From Proposition \ref{prop G-diag}, any non zero element in  $V^+$ is  $G$-conjugated to an element  $Z=x_0X_0+\ldots +x_{m-1}X_{m-1}$ of  $V^+$ such that $\prod_{s=0}^{m-1} x_s\neq 0$. It suffices to prove that  $Z$ is  $L$-conjugated to  $X_0+\ldots X_{m-1}$.

If  $k+1=2$, the result is a consequence of    proposition \ref{prop-k=1}. 
   
Suppose now that  $k+1\geq 3$.  

 If  $d\leq 4$ (as  the conditions are then $\ell=1$, $e=0 \text{ or } 4$,  and $k+1\geq 3$, this corresponds to the   cases   1 (with $\delta=1$), (11) and   (12 ) in  the Table 1)  then the algebra  $\tilde{\go g}$ satisfies one of the two hypothesis in Corollary \ref{gia} (2). Hence,  for  $i\in\{0,,\ldots m-1\}$,  there exists $g_i^{x_i}\in L$  such that  $g_i^{x_i}X_i=x_iX_i$ and $g_i^{x_i}X_s=X_s$ for  $s\neq i$. It follows that if $g=\prod_{i=0}^{m-1} g_i^{x_i}$, then $g.(X_0+\ldots X_{m-1})=Z$, and this proves the required result.\medskip
 
In the case where  $e\in\{0,4\}$,  $k+1\geq 3$ and $d>4$ (and $\ell=1$ of course), the classification shows that this corresponds to the only case (13) in   Table 1, and that $k+1=3$. Again  Corollary \ref{gia} gives the result. 

\end{proof}

The next Theorem gives the $G$-orbits in $V^+$ in the case where  $e=1$ or $e=3$. If $e=d=1$ (case (6) in Table 1) this classification coincides with the classification of similarity classes of quadratic forms over  $F$: a quadratic form  $Q$ is {\it similar } to a quadratic form $Q'$ if and only if there exists  $a\in F^*$ such that   $Q$ is equivalent to $aQ'$.
\begin{theorem}\label{thm-orbites-e1}  Recall that $\ell=1$. Suppose that  $e=1$ or $e=3$. 
\begin{enumerate}\item If   $k+1=\text{ rank}(\tilde{\go{g}})=2$ (this corresponds to the cases (3), (4)  and  (6) with $n=2$ in Table 1), then  $\chi_0(G)=F^{*2}$. The group  $G$ has  $5$ non zero orbits, for which  $4$ are open.   A set of representatives of the open orbits is given by the elements $X_0+vX_1$ for $v\in F^*/F^{*2}$. \\Two generic elements of $ \tilde{\go g}^{\lambda_0}\oplus \tilde{\go g}^{\lambda_1}$ are  $L$-conjugated if and only if   they are  $G$-conjugated.
\item  If   $k+1=\text{ rank}(\tilde{\go{g}})\geq 3$ then   $e=d=1$. This corresponds to the case (6) in Table 1, namely the symplectic algebra. Remember also that  $F^*/F^{*2}=\{1,\varepsilon,\pi,\varepsilon\pi\}$ where  $\pi$ is a uniformizer  of $F$ and where  $\ep$ is a unit which is not a square.
\begin{enumerate}
\item If  $k+1=\text{ rank}(\tilde{\go{g}})\geq 3$ is odd  then $\chi_0(G)=F^{*}$ and the group  $G$ has $2$ open orbits with representatives given by 
$$X_0+\sum_{j=1}^{k/2} X_{2j-1}-X_{2j}$$ and  
$$ X_0-\varepsilon X_1-\pi X_2+\sum_{j=2}^{k/2} X_{2j-1}-X_{2j} .$$

\item If  $k+1=\text{ rank}(\tilde{\go{g}})\geq 3$ is even, then  $\chi_0(G)=F^{*2}$ and the group   $G$ has  $5$ open orbits in $V^+$, with representatives  given by
$$\sum_{j=0}^{(k-1)/2} X_{2j}-X_{2j+1},$$ 
$$X_0+vX_1+\sum_{j=1}^{(k-1)/2} X_{2j}-X_{2j+1},\;{\rm with }\;  v\in \{-\varepsilon,-\pi, \varepsilon\pi\}.$$

 and  $$X_0-\varepsilon X_1-\pi  X_2+\varepsilon\pi X_3+\sum_{j=2}^{(k-1)/2} X_{2j}-X_{2j+1}\;.$$

\item For $m\in\{0,\ldots, k\}$, two generic elements of  $V_m^+$ are $G$-conjugated  if and only if they are $G_m$-conjugated. 

If  $k+1=\text{ rank}(\tilde{\go{g}})=2p+1$ with $p\geq 1$ then the group  $G$ has  $7p$ non zero orbits  and if  $k+1=\text{ rank}(\tilde{\go{g}})=2p+2$ with  $p\geq 1$ then  $G$ has  $7p+5$ non zero orbits. The representatives of these orbits  are the representatives  of the $G_m$-orbits  of the generic elements of  $V_m^+$  where $m\in \{0,\ldots, k\}$.
\item Let  $X=x_0X_0+\dots +x_kX_k$ and  $X'=x'_0X_0+\dots +x'_kX_k$ be two generic elements in  $V^+$. Then  $X$and  $X'$ are  $L$ -conjugated  if and only if there exists $\mu\in F^*$ such that  $\mu x_ix'_i\in F^{*2}$ for all  $i\in\{0,\ldots , k\}$.

\end{enumerate}
\end{enumerate}
\end{theorem}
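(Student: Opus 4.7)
The plan is to split into the rank $2$ case (Part (1)) and the rank $\geq 3$ case (Part (2)), the latter requiring the explicit symplectic realization. Part~(1) follows immediately from the combination of Proposition~\ref{prop-k=1} and Lemma~\ref{lem-ensembleSe}: item~(3) of the proposition identifies $\chi_0(G)=S_e$, and the lemma evaluates $S_e=F^{*2}$ for $e\in\{1,3\}$. Since $[F^*:F^{*2}]=4$ in a non-dyadic $p$-adic field, item~(1) of the proposition immediately yields $1+4=5$ non-zero $G$-orbits with representatives $X_0$ and $X_0+vX_1$ ($v$ running over $F^*/F^{*2}$), while item~(2) gives the $L$-conjugation statement.

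For Part~(2), the classification (Table~1 together with Proposition~\ref{prop-diagg1}) forces $e=d=1$ and $\widetilde{\go g}\simeq\go{sp}(2n,F)$ with $n=k+1$. I will work in the realization of Remark~\ref{rem-inclusion-groupes}: with $H_0=\mathrm{diag}(I_n,-I_n)$, the space $V^+$ identifies with $\mathrm{Sym}(n,F)$ so that $\sum_j x_jX_j$ corresponds to the antidiagonal symmetric matrix with entries $(x_0,\ldots,x_k)$, and the group $G$ acts on $\mathrm{Sym}(n,F)$ by $B\mapsto\mu^{-1}\mathbf{g}B\mathbf{g}^T$ for $\mathbf{g}\in GL(n,F)$ and $\mu\in F^*$. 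Consequently, open $G$-orbits on invertible symmetric matrices are precisely similarity classes of nondegenerate $n$-ary quadratic forms over $F$, and $\Delta_0(B)=\det B$ yields $\chi_0(\mathbf{g},\mu)=\det(\mathbf{g})^2\mu^{-n}$, giving $\chi_0(G)=F^*$ for $n$ odd, and $\chi_0(G)\subset F^{*2}$ for $n$ even with the reverse inclusion from Lemma~\ref{chiG}(1).

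Parts~(2a) and~(2b) then reduce to counting similarity classes of nondegenerate $n$-ary quadratic forms over $F$. By Witt decomposition and the classification over non-dyadic $p$-adic fields (\cite{Lam}~Ch.~VI), the anisotropic kernel has rank $0$, $1$, $2$, $3$ or $4$, and is uniquely determined up to equivalence except in rank $2$ where there are three similarity classes (Lemma~\ref{lem-q2}). For $n\geq 3$ odd the kernel has rank $1$ or $3$, giving exactly two similarity classes, realized by the two stated representatives with anisotropic kernels $\langle 1\rangle$ and $\langle 1,-\varepsilon,-\pi\rangle$. For $n\geq 4$ even the kernel has rank $0$, $2$ or $4$, giving $1+3+1=5$ similarity classes, realized by the five stated diagonal forms (the hyperbolic one; three with anisotropic binary kernels $\langle 1,v\rangle$, $v\in\{-\varepsilon,-\pi,\varepsilon\pi\}$; and the one with anisotropic quaternary kernel $\langle 1,-\varepsilon,-\pi,\varepsilon\pi\rangle$).

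For Part~(2c), Proposition~\ref{prop G-diag} shows every element of $V^+$ is $G$-conjugate to a generic element of some $V_m^+$ of rank $r=k+1-m$, and Theorem~\ref{th-qnondeg} shows that $r$ is a $G$-invariant. To establish that $G$-conjugate generic elements of $V_m^+$ are $G_m$-conjugate I will combine the $G$-invariance of the equivalence class of the form $Q_X$ on $V^-$ with the fact that on generic elements of $V_m^+$ this invariant is governed by the analogous invariant for $G_m$ on $V_m^+$; summing open-orbit counts over $r\in\{1,\ldots,n\}$ (contributing $1, 4, 2, 5, 2, 5,\ldots$ by Theorem~\ref{th-k=0}, Part~(1), Part~(2a) and Part~(2b) respectively) yields the totals $7p$ for $n=2p+1$ and $7p+5$ for $n=2p+2$. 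Part~(2d) is a direct matrix calculation: an element of $L$ acts on $B=\mathrm{diag}(x_0,\ldots,x_k)$ by $x_i\mapsto\mu^{-1}g_i^2 x_i$ for $(g_0,\ldots,g_k,\mu)\in(F^*)^{k+2}$, hence two diagonals are $L$-conjugate iff there exists $\mu\in F^*$ with $\mu x_ix'_i\in F^{*2}$ for every $i$. The hard part will be checking that the specific diagonal forms in~(2a) and~(2b) lie in pairwise distinct similarity classes, which requires carefully tracking the discriminant (a similarity invariant in even rank) and the Hasse-Witt invariant under scalar multiplication; a secondary subtlety is that in~(2c) the group $G_m$ is not naturally contained in $G$ (Remark~\ref{rem-inclusion-groupes}), so the comparison must be carried out inside the ambient symplectic group.
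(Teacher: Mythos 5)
Your proposal is correct and follows essentially the same route as the paper: Part (1) from Proposition \ref{prop-k=1} together with $S_e=F^{*2}$, and Part (2) via the explicit realization $\widetilde{\go g}\simeq\go{sp}(2n,F)$, $V^+\simeq \mathrm{Sym}(n,F)$, reducing everything to the classification of similarity classes of quadratic forms over $F$ and to the same orbit counts $S_1=1$, $S_2=4$, $S_{2j+1}=2$, $S_{2j}=5$. The only divergence is in (2c), where the paper replaces your appeal to the invariant $Q_X$ by a direct block-matrix computation (if ${\mathbf g}\,\iota(M)\,{}^{t}{\mathbf g}=\mu\,\iota(M')$ with $M,M'$ invertible, then the upper-left block ${\mathbf g}_1$ satisfies ${\mathbf g}_1 M\,{}^{t}{\mathbf g}_1=\mu M'$ and is invertible), which is exactly the ``comparison inside the ambient symplectic group'' you anticipate and is simpler than tracking $Q_X$.
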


\begin{proof} If  $k+1=2$   the result is a consequence of  Proposition \ref{prop-k=1}.

If  $k+1\geq 3$ then the classification in Table 1 implies that   $d=e=1$. As we have noticed in Remark \ref{rem-simple}, one can suppose that   $\tilde{\go g}$ is the symplectic algebra $${\go sp}(2n,F)=\left\{\left(\begin{array}{cc} A & B\\ C & -^{t}A\end{array}\right);\; A, B, C\in M(k+1,F), ^{t}B=B, ^{t}C=C\right\}, \text{ with }k+1=n,$$
which is graded by the element  $H_0=\left(\begin{array}{cc} I_{k+1} & 0\\ 0 & -I_{k+1} \end{array}\right)$. It follows that,  $V^+$ (respectively $V^-$) can be identified with the space $Sym(k+1,F)$ of symmetric matrices of size  $k+1$ on $F$ through the map  $$B\in Sym(k+1,F)\mapsto X(B)=\left(\begin{array}{cc} 0 & B\\ 0 &0\end{array}\right)$$ (respectively through the map $$B \in Sym(k+1,F)\mapsto Y(B)=\left(\begin{array}{cc} 0 & 0\\ B &0\end{array}\right))$$ .   \medskip

The algebra  $\go a$ is then the set of  matrices $H (t_k,\ldots ,t_0) =\left(\begin{array}{cc} diag(t_k,\ldots ,t_0) & 0\\ 0& -diag(t_k,\ldots ,t_0) \end{array}\right)$ where  $diag(t_k,\ldots ,t_0) $ stands for the diagonal matrix whose diagonal elements are respectively  $ t_k,\ldots ,t_0$. The set of strongly orthogonal roots  associated to these choices is given by  $\lambda_j(H(t_k,\ldots ,t_0) )=2t_j$ for  $j=0,\ldots , k+1$ and the space   $\tilde{\go g}^{\lambda_j}$ is the space of matrices  $X(x_j {\mathbf E}_{k+1-j,k+1-j})$ with $x_j\in F$, where  ${\mathbf E}_{i,j}$ is the square matrix of size $k+1$ whose coefficients are zero except the coefficient of index $(i,j)$ which is equal to $1$.\medskip

 By  (\cite{Bou2}  Chap VIII, \textsection  13, $n^0 3$), we know that the group  ${\rm Aut}_0(\tilde{\go g})$ is the group  of conjugations by the similarities of the symplectic form defining  $\tilde{\go g}$. This implies that  $G$ is the group of elements  $[{\mathbf g},\mu]={\rm Ad}\left(\begin{array}{cc} {\mathbf g} & 0\\ 0 &\mu\;^{t}{\mathbf g}^{-1}\end{array}\right)$ where  ${\mathbf g}\in GL(k+1,F)$ and $\mu\in F^*$. Let us denote by $g=[{\mathbf g},\mu]$ such an element of  $G$. Its action on  $V^+$ is given by  $[{\mathbf g},\mu]B=\mu^{-1}{\mathbf g}B\;^{t}{\mathbf g}$ for  $B\in Sym(k+1,F)$.  We normalize the relatively invariant polynomial  $\Delta_0$ by setting $\Delta_0(X(B))={\rm det}(B)$ for $B\in Sym(k+1,F)$ and from above we see that  $$\chi_0([{\mathbf g},\mu])=\mu^{-(k+1)}{\rm det}({\mathbf g})^2.$$
 In particular,  one has  $\chi_0(G)=F^*$  if $k+1$ is odd and  $\chi_0(G)=F^{*2}$ if $k+1$ is even.\medskip

\noindent Hence the orbits of $G$ in  $V^+$ are the classes of similar quadratic forms. 

In order to give a set of representatives of these  $G$-orbits, we will first normalize the elements $X_j$. Remember that the  $ X_j,\; j=0,\ldots, k$ satisfy the conditions of Proposition \ref{prop-equivalenceqXiXj}. This means that  for $i\neq j$, the quadratic form  $q_{X_i,X_j}$ represents  $1$.  This quadratic form is defined on $E_{i,j}(-1,-1)$ by $q_{X_i,X_j}(Y)= -\dfrac{1}{2}b([X_i,Y],[X_j,Y])$.  As ${\rm dim} V^+=\dfrac{(k+1)(k+2)}{2}$, the normalization of the Killing form given in  Definition \ref{defb(X,Y)}  is 
$$b(X,Y)=-\dfrac{k+1}{2(k+1)(k+2)} \tilde{B}(X,Y)={\rm Tr}(XY),\quad X,Y\in\tilde{\go g}.$$
Then, if we set  $X_j=X(v_j{\mathbf E}_{k+1-j,k+1-j})$ with  $v_j\in F^*$, a simple computation shows that for  $Y=Y(y ({\mathbf E}_{k+1-i,k+1-j}+{\mathbf E}_{k+1-j,k+1-j}))\in E_{i,j}(-1,-1)$,  we have
$$q_{X_i,X_j}(Y)= v_iv_j y^2.$$
Hence  $q_{X_i,X_j}$ represents  $1$ if and only if  $v_iv_j\in F^{*2}$. Therefore for all $j\in\{0,\ldots k\}$, there exists $a_j\in F^*$ such that  $v_j=a_j^2 v_0$. Any element   $X=\sum_{j=0}^k x_j X_j$ is then conjugated to   $X(\sum_{j=0}^k x_j  {\mathbf E}_{k+1-j,k+1-j})$ by the element $g=[\mathbf g, v_0^{-1}]$ where  $\mathbf g$ is the diagonal matrix $diag(a_k,\ldots ,a_0)$.\medskip

For  $X=X(B)\in V^+$ with  $B\in Sym(k+1,F)$, let us denote by $f_X$  the quadratic form on $F^{k+1}$ defined by  $B$ (ie. $ f_X(z)=\;^{t}zB z$).  From above we obtain that for $X=\sum_{j=0}^k x_j X_j$, the quadratic form  $f_X$ is similar to the form  $T\in F^{k+1}\mapsto \sum_{j=0}^k x_j T_j^2$.\medskip

We describe now the similarity classes of quadratic forms.  From  Witt's Theorems (\cite{Lam}  Theorem I.4.1 and  Theorem I.4.2, p.12), any quadratic form  $Q$ of rank  $r$ is the orthogonal sum of an unique (up to equivalence) anisotropic form $Q_{an}$ of rank  $r_{an}$,   and a hyperbolic form $Q_{(m,m)}$ which is the sum of $m$ hyperbolic planes   with  $2m+r_{an}=r$ ($m$ is the so-called {\it Witt index} of $Q$). Moreover,  two quadratic forms are similar  if and only if  they have the same  Witt index and if their anisotropic parts  are  similar. \medskip
 
 We recall the following classical results  (\cite{Lam}   Chapter VI, Theorem 2.2 page 152, and Corollary 2.5 p. 153-154): \\
 $(1)$  Every quadratic form of rank $\geq 5$ is isotropic.\\
 $(2)$  Up to equivalence, there exists a unique anisotropic form of rank $4$, given by  $x^2-\varepsilon y^2-\pi z^2+\varepsilon\pi  t^2$.\\
  $(3)$  If  $Q$ is an anisotropic form of rank  $3$, then $Q$ represents every class modulo $F^{*2}$ except $-disc(Q)$ where $disc(Q)$ is the discriminant of  $Q$.\vskip 0,5cm
 
 If  $Q'$ is anisotropic of rank  $3$ with the same discriminant as $Q$ then $Q+disc(Q)t^2$ and $Q'+disc(Q) t^2$ are anisotropic of rank  $4$, hence they are equivalent. Witt's cancellation Theorem  (\cite{Lam}   Chapter I, Theorem 4.2, p.12) implies then that $Q\sim Q'$. Therefore there exist  $4$ equivalence classes of anisotropic quadratic forms  of rank $3$ characterized by the discriminant. Hence all the anisotropic quadratic forms of rank 3 are similar. Such a form is given by  $x^2-\varepsilon y^2-\pi z^2$.\vskip 0,5cm

We describe first the similarity classes of anisotropic quadratic forms of rank $2$. We know from Lemma  \ref{lem-q2}, that an anisotropic quadratic form $Q$ of rank  $2$ represents exactly  two classes of squares  $a$ and  $b$  in $F^*/F^{*2}$ which characterize the equivalence class of $Q$. Moreover  $\mu Q$ is equivalent to $Q$ if and only if  $\mu=1$ or $ab$ modulo $F^{*2}$.  \\
Let  $a\neq b$  be two elements of  $F^*/F^{*2}$. If  $ab\neq -1$ the form $ax^2+by^2$ is anisotropic  and represents $a$ and $b$ and if  $a=-b$ with  $-1\notin F^{*2}$ then  $-1$ the sum of  two squares (\cite{Lam}   Chapter VI, Corollary 2.6 page 154)  and then $ax^2+ay^2$ is anisotropic and represents $\pm a$. As there are four classes modulo $F^{*2}$, there are $\binom{4}{2}=6$   equivalence classes of anisotropic quadratic forms of rank $2$. \\
Let $a\neq b\in F^*/F^{*2}$ defining the equivalence class of an anisotropic form $Q$ of rank  $2$. Then, for  $w\neq 1, ab$ modulo $F^{*2}$, one has  $F^*/F^{*2}=\{1, ab, w, w ab\}$.  As $ab\,Q$ is equivalent to $ Q$, and as  $wQ$ is not  equivalent to $Q$,   a form which is similar to $Q$ is equivalent either to $Q$ or to  $ab\,Q$. Hence there are  $3$ similarity classes  of anisotropic quadratic forms of rank  $2$.
A set  of representatives of these classes are given by  $x^2+vy^2$ with  $v\in\{-\varepsilon, -\pi, \varepsilon\pi\}$. \medskip

Let  $Q$ be a quadratic form of rank  $k+1\geq 3$  and Witt index  $m$:  $Q=Q_{an}+Q_{(m,m)}$. Hence  ${\rm rank} ( Q_{an}) \leq 4$ and  $  k+1={\rm rank}(Q_{an})+2m$. By the classical results we recalled above, we get:\\
  
  - If $k+1$ is odd, then   ${\rm rank}(Q_{an})=1$ or  $3$, and  hence there are two similarity classes of quadratic forms, 
  
  -  and if  $k+1$ is even, then ${\rm rank}(Q_{an})=0, 2$ or  $4$ 	and hence there are  $5$ similarity classes of quadratic forms. \medskip

\noindent   The statements  $2(a)$ and  $2(b)$ are consequences of the  description of the anisotropic quadratic forms of rank $\leq 4$ given above.\medskip
  
\noindent Let us prove statement  {\it 2.(c)}.  
We denote by  $\iota$ the natural injection from  $ Sym(k+1-m,F)$ into  $ Sym(k+1,F)$ given by  $M\mapsto \iota(M)=\left(\begin{array}{c|c}  \begin{array}{ccc}  & & \\
& M &\\
 & & \end{array}&   \begin{array}{c} \\ 0\\ \\ \end{array}\\
\hline  
0 &0_m\end{array}\right)\in Sym(k+1,F)$. Therefore the space  $V_m^+$ is identified to the space  $ Sym(k+1-m,F)$ by the map  $M\mapsto X(\iota(M))$. An element $X(\iota(M))\in V_m^+$ is generic in  $V_m^+$ if and only if  ${\rm det}(M)\neq 0$. The group  $G_m$ is the group of elements  $g_1=[\mathbf g_1,\mu]$ with $\mathbf g_1\in GL(k+1-m,F)$ and  $\mu\in F^*$ acting on  $ Sym(k+1-m,F)$ by $[\mathbf g_1,\mu].M=\mu^{-1} {\mathbf g}_1 M\;^{t} {\mathbf g}_1$.\medskip

Let  $Z=X(\iota(M))$ and  $Z'=X(\iota(M'))$ be two generic elements in  $V_m^+$. If $Z$ and $Z'$ are $G$-conjugated  then there exist $\mathbf g\in GL(k+1,F)$ and  $\mu\in F^*$ such that  ${\mathbf g}\iota(M)\;^{t} {\mathbf g}=\mu \; \iota(M')$. Let  ${\mathbf g}_1\in GL(k+1-m,F)$ be the submatrix of ${\mathbf g}$ of the  $k+1-m$ first rows and the $k+1-m$ first columns of  ${\mathbf g}$. Then one sees easily that  ${\mathbf g}_1 M\;^{t} {\mathbf g}_1=\mu\; M'$. As $Z$ and  $Z'$ are generic in  $V_m^+$, the matrices $M$ and $M'$ are invertible  and hence  $ {\mathbf g}_1\in GL(k+1-m,F)$. This shows that  $Z$ and  $Z'$ are $G_m$-conjugated.\\
Conversely, if  $Z$ and  $Z'$ are  $G_m$-conjugated then there exist ${\mathbf g}_1 \in GL(k+1-m,F)$ and $\mu\in F^*$ such that ${\mathbf g}_1 M\;^{t}{\mathbf g}_1=\mu \; M'$. We set ${\mathbf g}=\left(\begin{array}{cc} {\mathbf g}_1 & 0\\ 0 & I_m\end{array}\right)$. The element  $[{\mathbf g},\mu]$ belongs to  $G$ and satisfies $Z'=[{\mathbf g},\mu].Z$. Hence $Z$ and  $Z'$ are $G$-conjugated. 

This proves that two generic elements in  $V_m^+$ are $G$-conjugated if and only if they are $G_m$-conjugated. \\
Let $S$ be the number of  non zero $G$-orbits in $V^+$ and let  $S_r$ be the number of $G$-orbits of rank $r$ in  $V^+$. 
By Proposition \ref{prop G-diag}, we have   $S=\sum_{r=1}^{k+1} S_r$. From above, $S_r$ is exactly the number of  open $G_{k+1-r}$- orbits in $V_{k+1-r}$.  Using the statements $1$, $2(a)$ and  $2(b)$ we see that $S_1=1$, $S_2=4$ and $S_{2p+1}=2$ for  $p\geq 1$,  $S_{2p}=5$  for  $p\geq 2$ . For $k=2$, we therefore have  $S=S_1+S_2+S_3=7$. If  $k=2p\geq 4$ is even, we get  $S=S_1+S_2+\sum_{j=1}^{p} S_{2j+1}+\sum_{j=2}^p S_{2j}= 5+2p+5(p-1)=7p$ and for  $k=2p+1\geq 3$ odd, we get $S=S_1+S_2+\sum_{j=1}^{p} S_{2j+1}+\sum_{j=2}^{p+1} S_{2j}=5+2p+5p=7p+5$.  This ends the proof of    {\it 2.(c)}.
\medskip

 Here the subalgebra  $\go a^0$ is equal to  $\go a$.  Therefore the group $L$ is the group of elements  $[{\mathbf g},\mu]$ where $\mu\in F^*$ and where ${\mathbf g}$ is a diagonal matrix of $GL((k+1),F)$. If  ${\mathbf g}$ is the diagonal matrix in  $GL(k+1,F)$ whose diagonal elements  are $(a_k,\ldots a_0)$, then we have    $[{\mathbf g},\mu].\sum_{j=0}^k x_j X_j=\mu^{-1}(a_0^2 x_0X_0+\ldots +a_k^2 x_k X_k)$. This proves  $2(d)$ and ends the proof of the Theorem.
 
 \end{proof}

\begin{theorem}\label{thm-orbites-e2}   Recall that $\ell=1$. Suppose that  $e=2$.
\begin{enumerate}\item If  $k+1=\text{ rank}(\tilde{\go{g}})=2$,  then  $[F^*: \chi_0(G)]=2$. This case is case $(9)$ and case (2) with $n=2$  in Table 1. The group  $G$ has  $3$ non zero orbits, for which  $2$ are open. A set of representatives of the open orbits is given by  $X_0+vX_1$ where $v\in F^*/\chi_0(G)$. Two generic elements of $ \tilde{\go g}^{\lambda_0}\oplus \tilde{\go g}^{\lambda_1}$ are conjugated under the group $L=Z_G(\go a^0)$ if and only if they are $G$-conjugated.

\item If $k+1=\text{ rank}(\tilde{\go{g}})\geq 3$ then  $e=d=2$. This case corresponds to  case $(2)$ in Table 1, namely  $\tilde{\go g}={\go u}(2n, E, H_n)$ where $E$ is a quadratic extension of $F$.
   
    \begin{enumerate}\item 
 If $k+1=\text{ rank}(\tilde{\go{g}})$ is odd then   $\chi_0(G)=F^*$ and the group   $G$  has a unique open orbit in  $V^+$.
 
\item If  $k+1=\text{ rank}(\tilde{\go{g}})$  is even then  $ \chi_0(G) =N_{E/F}(E^*)$    and the group  $G$ has two open orbits given by  ${\mathcal O}_1=\{X\in V^+; \Delta_0(X)=1\;{\rm mod}\; N_{E/F}(E^*)\}$ and   ${\mathcal O}_\eta=\{X\in V^+; \Delta_0(X)=\eta\;{\rm mod}\; N_{E/F}(E^*)\}$, where  $\eta\in F^*$ is such that $\{1,\eta\}$ is a system of representatives of $F^*/N_{E/F}(E^*)$.
\item  For $m\in\{0,\ldots, k\}$, two generic elements in  $V_m^+$ are  $G$-conjugated if and only if  they are  $G_m$-conjugated. If $k+1=2p$ is even then the group  $G$ has $3p$ non zero orbits and  if $k+1=2p+1$ is odd, the group  $G$ has  $3p+1$ non zero orbits. The   representatives  of these orbits are  the representatives of the orbits under  $G_m$ of the generic elements of  $V_m^+$  where $m\in \{0,\ldots, k\}$.
\item Let $X=x_0X_0+\dots +x_kX_k$ and  $X'=x'_0X_0+\dots +x'_kX_k$ be two generic elements of  $V^+$. Then $X$ and  $X'$ are $L$-conjugated if and only if there exists $\mu\in F^*$ such  $\mu x_ix'_i\in N_{E/F}(E^*)$ for all $i\in\{0,\ldots , k\}$.

 \end{enumerate} 

 \end{enumerate} 
\end{theorem}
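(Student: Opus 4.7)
The proof follows the strategy of Theorem \ref{thm-orbites-e1}. Part (1), the rank-$2$ case, is an immediate consequence of Proposition \ref{prop-k=1}: Lemma \ref{lem-ensembleSe} gives $S_e=N_{E/F}(E^*)$, which has index $2$ in $F^*$, so $\chi_0(G)=N_{E/F}(E^*)$ has index $2$, producing $1+2=3$ non-zero orbits, of which $2$ are open. The $L$-conjugation statement is exactly part (2) of that proposition.

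For the higher-rank case, the classification (Table 1) forces $\tilde{\go g}=\go u(2n,E,H_{2n})$ with $n=k+1$. I would fix the standard model and determine $G$ by the same kind of Skolem--Noether argument used in the symplectic case of Theorem \ref{thm-orbites-e1}: passing to $\tilde{\go g}\otimes\overline F\simeq\go{sl}(2n,\overline F)$ and descending via the condition that $G$-elements preserve the unitary structure identifies $G$ with the group of transformations $[g,\mu]$, $g\in GL(n,E)$, $\mu\in F^*$, acting on $V^+\simeq{\rm Herm}_\sigma(n,E)$ by $[g,\mu].H=\mu^{-1}g\,H\,\sigma({}^{t}g)$, with $\Delta_0(X(H))=\det H$ and character $\chi_0([g,\mu])=\mu^{-n}N_{E/F}(\det g)$. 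From this formula one reads off $\chi_0(G)$: when $n$ is odd, $\mu\mapsto\mu^{-n}$ is surjective onto $F^*$ so $\chi_0(G)=F^*$; when $n$ is even, $\mu^{-n}\in F^{*2}\subset N_{E/F}(E^*)$ and $N_{E/F}(\det g)\in N_{E/F}(E^*)$, giving $\chi_0(G)=N_{E/F}(E^*)$. Open orbits correspond to similarity classes of nondegenerate Hermitian $n\times n$ matrices under $H\sim\mu^{-1}gH\sigma({}^tg)$; by the classical Landherr--Jacobson classification two nondegenerate Hermitian forms of the same rank are $GL(n,E)$-equivalent iff their discriminants coincide mod $N_{E/F}(E^*)$, and since ${\rm disc}(\mu^{-1}gH\sigma({}^{t}g))\equiv \mu^{-n}\,{\rm disc}(H)\pmod{N_{E/F}(E^*)}$, an odd exponent $n$ makes the discriminant class unconstrained (one open orbit), while an even $n$ preserves it (two open orbits given by $\mathcal O_1$ and $\mathcal O_\eta$).

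For part (2)(c), the same block-matrix argument as in the symplectic case shows $G$-conjugacy of generic elements of $V_m^+$ reduces to $G_m$-conjugacy: if $[g,\mu]\in G$ sends $X(\iota(M))$ to $X(\iota(M'))$ with $M,M'$ invertible Hermitian of size $n-m$, the upper-left $(n-m)\times(n-m)$ block $g_1$ of $g$ satisfies $g_1 M\sigma({}^tg_1)=\mu M'$ and is automatically invertible. The orbit count is then obtained by summing, over rank $r=k+1-m\in\{1,\dots,k+1\}$, the number of open $G_{k+1-r}$-orbits in $V_{k+1-r}^+$: Theorem \ref{th-k=0} gives $1$ for $r=1$, part (1) of the present theorem gives $2$ for $r=2$, and parts (2)(a)--(b) give $1$ for odd $r\geq 3$ and $2$ for even $r\geq 4$; the arithmetic then yields $3p$ when $k+1=2p$ and $3p+1$ when $k+1=2p+1$.

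Finally, (2)(d) follows from the explicit description of $L$ as the subgroup of diagonal elements $[{\rm diag}(a_0,\dots,a_{k}),\mu]$ with $a_j\in E^*$, $\mu\in F^*$. Under a suitable normalization of the $X_j$, such an element sends $\sum_jx_jX_j$ to $\sum_j\mu^{-1}N_{E/F}(a_j)x_jX_j$, so two generic diagonal elements are $L$-conjugated iff there exists $\mu\in F^*$ with $\mu^{-1}x_j^{-1}x'_j\in N_{E/F}(E^*)$ for every $j$; using that $x_j^2\in F^{*2}\subset N_{E/F}(E^*)$, this is equivalent to $\mu x_jx'_j\in N_{E/F}(E^*)$ for all $j$. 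The main technical obstacle is the precise realization of $G$ and the computation of $\chi_0$ for the unitary model, together with the invocation of the classification of Hermitian forms over the $p$-adic quadratic extension $E/F$; the remaining steps are structural and parallel the symplectic argument of Theorem \ref{thm-orbites-e1}.
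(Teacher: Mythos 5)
Your proposal is correct, and its overall architecture (rank $2$ via Proposition \ref{prop-k=1}, explicit realization of $G$ for $\go u(2n,E,H_{2n})$, the formula $\chi_0([\mathbf g,\mu])=\mu^{-n}N_{E/F}(\det\mathbf g)$, the block-matrix reduction for (2)(c), and the diagonal description of $L$ for (2)(d)) coincides with the paper's. The one genuine divergence is in the heart of (2)(a)--(b): you invoke the classical classification of nondegenerate Hermitian forms over a $p$-adic quadratic extension (equivalence $\Leftrightarrow$ equal rank and equal discriminant mod $N_{E/F}(E^*)$) and then track how the similarity factor $\mu^{-n}$ moves the discriminant class, whereas the paper does not cite this result but reproves the relevant special case from scratch: after reducing to diagonal Hermitian matrices it runs an induction on $n$ controlled by the parity of $n(X)=\#\{j:x_j\in\eta N_{E/F}(E^*)\}$, with the key computational input being the identity $(**)$ that $\eta I_2=\mathbf g\,{}^{t}\bar{\mathbf g}$ for some $\mathbf g\in GL(2,E)$, obtained from the universality of the rank-$4$ form $x^2-\xi y^2+z^2-\xi t^2$. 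Your route is shorter and conceptually transparent but rests on an external classification theorem; the paper's route is longer but self-contained and produces the explicit conjugating elements, which is in keeping with its stated aim of giving explicit orbit representatives. Both yield the same conclusions, and your orbit count in (2)(c) and the $L$-conjugacy criterion in (2)(d) agree with the paper's.
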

\begin{proof}  The case  $k+1=2$ is a consequence of  Proposition \ref{prop-k=1}. 

If  $k+1\geq 3$, then from Table 1,  we have   $d=e=2$. Using Remark  \ref{rem-simple}, we can suppose that  $\tilde{\go g}$ is  ${\go u}(2n,E,H_n)$  where $E=F[\sqrt{\xi}]$ is a quadratic extension of  $F$. We denote by $x\mapsto \bar{x}$ the natural conjugation of $E$ and by    $N_{E/F}$ the norm on  $E$ defined by associated to this extension  $N_{E/F}(x)=x\bar{x}$. We also set $n=k+1$.

Define $S_n=\left(\begin{array}{cc}0 & I_n\\ I_n & 0\end{array}\right)$. Consider the Lie algebra
$$\tilde{\go g}=\{ Z\in {\go sl}(2n, E); ZS_n+ S_n\; ^{t}\bar{Z}=0\},$$ 
which is graded by the element $H_0=\left(\begin{array}{cc} I_n& 0\\ 0 & -I_n\end{array}\right)$. This implies that  
$$\tilde{\go g}=\left\{ Z=\left(\begin{array}{cc} A & B\\ C & -\;^{t}\bar{A}\end{array}\right); A,B,C\in M(n ,E)\;\; Tr(A -\;^{t}\bar{A})=0,\;^{t}\bar{B}=-B \;{\rm and }\;  ^{t}\bar{C}=-C\right\}.$$
The subspace  $V^+$ is then isomorphic to the space  ${\rm Herm}(n ,E)$ of hermitian matrices in  $M(n,E)$    (ie. matrices  $B$ such that  $^{t}\bar{B}=B$) through the map   $$B\in {\rm Herm}(n,E)\mapsto X(B)=\left(\begin{array}{cc}0 & \sqrt{\xi}B\\0 & 0\end{array}\right)\in V^+.$$\medskip

The space  $\go a$ is then the space of matrices   
$$H(t_0,\ldots,t_{n-1})=\left(\begin{array}{cc} diag(t_{n-1},\ldots,t_0)& 0\\ 0 & diag(-t_{n-1},\ldots,-t_0)\end{array}\right),$$ with  $(t_0,\ldots,t_{n-1})\in (F^*)^n$ and the roots  $\lambda_j$ are given by $\lambda_j(H(t_0,\ldots,t_{n-1}))=2t_j$. 

 We fix a basis of  $\tilde{\go g}^{\lambda_j}$ by setting $X_j=X(B_j)$ where $B_j\in {\rm Herm}(n,E)$ is a diagonal matrix whose coefficient are zero  except the coefficient of index  $(n-j,n-j)$ which is equal to $1$.  As in the proof of Theorem \ref{thm-orbites-e1}, we see easily that $X_0,\ldots ,X_k$ satisfy conditions of Proposition \ref{prop-equivalenceqXiXj}.\medskip

We will now describe the group  $G$.

As $\tilde{\go{g}}\otimes_{F}E=\go{sl}(n,E)$, the group  ${\rm Aut}_0(\tilde{\go g})$ is the subgroup of  ${\rm Aut}_0({\go {sl}}(2n,E))$ which stabilizes $\tilde{\go g}$, and hence (using \cite{Bou2}  Chap VIII, \textsection  13, $n^0 1$, VII, p.189) it is the group of the automorphisms  ${\rm Ad}(g)$ (conjugation by $g$) where  $g\in GL(2n,E)$ is such that there exists $\mu\in E^*$ satisfying  $^{t}\bar{g} S_ng=\mu S_n$. The group  $G$ is the subgroup of  ${\rm Aut}_0(\tilde{\go g})$  of elements which commute with $H_0$. Therefore an element of  $G$ corresponds to the action of ${\rm Ad}(g)$ where  $g=\left(\begin{array}{cc} {\mathbf g}_1 & 0 \\ 0 & {\mathbf g}_2\end{array}\right)\in GL(2n,E)$ is such that there exists  $\mu\in E^*$ satisfying  ${\mathbf g}_2=\mu\;^{t}\bar{{\mathbf g}_1}^{-1}$ and  ${\mathbf g}_1=\mu ^{t}\bar{{\mathbf g}_2}^{-1}$.  This  implies that  $\bar{\mu}=\mu$, hence $\mu\in F^*$, and ${\mathbf g}_2=\mu\;^{t}\bar{{\mathbf g}_1}^{-1}$. Finally:
$$G=\left\{ {\rm Ad}(g);\; g=\left(\begin{array}{cc} {\mathbf g} & 0 \\ 0 & \mu\;^{t}\bar{{\mathbf g} }^{-1}\end{array}\right), \mu\in F^*\; {\mathbf g}\in GL(n,E)\right\}.$$
We denote by $[{\mathbf g},\mu]$ such an element of  $G$. The action of  $[{\mathbf g},\mu]$ on  $V^+$ corresponds to the action  
$[{\mathbf g} ,\mu]. B=\mu^{-1} {\mathbf g} B\;^{t}\bar{\mathbf g}$ on  ${\rm Herm}(n,E)$.\medskip

The polynomial  $B\in {\rm Herm}(n,E)\mapsto {\rm det}(B)\in F^*$ is relatively invariant under the action of  $G$ and we normalize the polynomial  $\Delta_0$ on $V^+$ by setting $\Delta_0(X(B))={\rm det}(B),\; B\in {\rm Herm}(n,E)$. This implies that 
$$\Delta_0(x_0X_0+\ldots x_{n-1} X_{n-1})=x_0\ldots x_{n-1}.$$

\noindent Therefore  $$\chi_0([{\mathbf g}, \mu])=\mu^{-n} N_{E/F}({\rm det}({\mathbf g}))\qquad\qquad (*)$$
and hence if $n$ is even, we have  $\chi_0(G)= N_{E/F}(E^*)$ and if  $n$ is odd , we have $\chi_0(G)=F^*$. This is a part of statements $(2)(a)$ and $(2)(b)$\medskip

\noindent We describe now the $G$-orbits in  $V^+$. 
We will prove the results by induction on  $n= \text{ rank}(\tilde{\go{g}})$. \medskip

In what follows, we identify $V^+$ with $Herm(n,E)$ and we recall that the action of  $G$ is given by  $[{\mathbf g},\mu].B=\mu^{-1} {\mathbf g}B\;^{t}\bar{{\mathbf g}}$.   By Proposition \ref{prop  G-diag}, any generic element of $V^+$ is $G$-conjugated to  an element of the form $x_0X_0+\ldots+x_{n-1}X_{n-1}$. Therefore it suffices to study the $G$-orbits in the space of diagonal matrices (with coefficients in $F)$ under this action.
We set $I_\eta=\left(\begin{array}{cc} I_{n-1} & 0\\ 0 &\eta \end{array}\right).$

 If $n=2$ (ie. $k=1$), one has ${\rm det}(I_\eta)=\eta$  and ${\rm det}(I_2)=1$. As $\chi_0(G)=N_{E/F}(E^*)$, we see  from relation  $(*)$, that the elements  $I_\eta$ and $I_2$ 	are not conjugated.  \medskip

Let $X= \left(\begin{array}{cc}x_1& 0\\ 0 &x_0\end{array}\right)$ with  $x_0x_1\neq 0$.   As  $F^*/N_{E/F}(E^*)=\{1,\eta\}$, we obtain that if  $x_0x_1\notin N_{E/F}(E^*)$ then  $x_0=\eta a\bar{a}x_1$ for an element  $a\in E^*$ and hence $X= x_1\left(\begin{array}{cc}1& 0\\ 0 &a\end{array}\right)I_\eta\left(\begin{array}{cc}1& 0\\ 0 &\bar{a}\end{array}\right)=[{\mathbf g}, x_1^{-1}].I_\eta$ where ${\mathbf g}=\left(\begin{array}{cc}1& 0\\ 0 &a\end{array}\right)$, and therefore  $X$  is $G$-conjugated to  $I_\eta$.\medskip

If  $x_0x_1\in N_{E/F}(E^*)$, then:

 -  either  $x_0$ and $x_1$ belong to  $N_{E/F}(E^*)$, then  $x_0=a\bar{a}$ and $x_1=b\bar{b}$ and hence  $X$ is conjugated to  $I_2$ by the matrix  ${\mathbf g}=\left(\begin{array}{cc}b& 0\\ 0 &a\end{array}\right)$.

 - or $x_0$ and  $x_1$ belong to  $\eta N_{E/F}(E^*)$, then $x_0=\eta a\bar{a}$ and  $x_1=\eta b\bar{b}$ and then  $X=[{\mathbf g},\eta^{-1}].I_2$. \\
This ends the proof for  $n=2$.\medskip

We will need the following result for the induction: 
$$\textrm{ there exists } \; {\mathbf g}\in GL(2,E)\;\textrm{ such that }\; \eta I_2={\mathbf g}\;^{t}\bar{{\mathbf g}}.\qquad\qquad (**)$$
(Remember that from above $\eta I_2$ is either conjugated to $I_{2}$ or to $I_{\eta}$, this proves that it is actually conjugated to $I_{2}$).

The quadratic form on $F^4$ defined by $q(x,y,z,t)=x^2-\xi y^2+ z^2-\xi t^2$ is either isotropic or anisotropic of rank $4$,  hence it represents any $a\in F^*$. Thus  there exist $a,b,c,d\in F$ such that  $\eta=a^2-\xi b^2+c^2-\xi d^2$. We set $\alpha=a+\sqrt{\xi} b$, $\beta=c+\sqrt{\xi} d$ and  
${\mathbf g}=\left(\begin{array}{cc}\alpha & \beta\\ -\bar{\beta} &\bar{\alpha}\end{array}\right)$. One has  ${\rm det}({\mathbf g})=|\alpha|^2+|\beta|^2=a^2-\xi b^2+c^2-\xi d^2=\eta$. Finally  ${\mathbf g}\in GL(2,E)$ and  ${\mathbf g}\;^{t}\bar{\mathbf g}=(|\alpha|^2|+|\beta|^2)I_2=\eta I_2$.\medskip

This ends the proof of  $(**)$.\medskip

\noindent We suppose now that the statements  { 2.(a)}  and  { 2.(b)}  in the Theorem  are true if   $\text{ rank}({\tilde{\go{g}}})=p\leq n$ and we will prove that they remain true for $n+1$.

Let  $X=\left(\begin{array}{ccc} x_n & 0 & 0 \\ 0 & \ddots & 0\\ 0 & 0 & x_0\end{array}\right)$ with $x_j\in F^*$. We denote by  $n(X)$ the cardinality of set  $\{j\in\{0,\ldots, ,n\}, x_j\in \eta N_{E/F}(E^*)\}$.\medskip

\noindent Suppose first that  $n(X)$ is even.

As there exist automorphisms  $\gamma_{i,j}$ interverting  $X_i$ and  $X_j$ (see Proposition \ref{prop-gammaij}), we can suppose that  $x_j\in \eta N_{E/F}(E^*)$ if and only if  $j=0,\ldots, n(X)-1$ and then  $x_j=\eta a_j\bar{a}_j$ with $a_j\in E^*$, and if $j\geq n(X)$, we have  $x_j\in N_{E/F}(E^*)$ and hence  $x_j= a_j\bar{a}_j$ with $a_j\in E^*$. We denote by ${\bf g}_1$ a matrix in $GL(2,E)$ satisfying  $(**)$.

Let   ${\mathbf g}_\eta$ the block diagonal matrix      whose  $n-n(X)$ first blocks are just scalars equal to 1 and whose $\frac{1}{2}n(X)$ last blocks are equal to  ${\bf g}_1$. Let  $diag(a_n,\ldots , a_0)$ be the diagonal matrix whose diagonal coefficients are $a_n,\ldots, a_0$. Then  ${\mathbf g}=diag(a_n,\ldots , a_0) {\mathbf g}_\eta$ satisfies  ${\mathbf g}\;^{t}\bar{\mathbf g}=X$, in other words   $X$ is  $G$-conjugated  to  $I_{n+1}$.\medskip

\noindent Suppose now that  $n(X)$ is odd. \medskip

If  $n$ is even then   $n+1$ is odd, and hence  $n(\eta X)$ is even. From above, there exists ${\mathbf g}\in GL(n+1,E)$ such that   $\eta X={\mathbf g}\;^{t}\bar{\mathbf g}$. This means that    $X$ is $G$-conjugated to  $I_{n+1}$ by the element  $[{\mathbf g},\eta]$.\medskip

 We have proved that if  $n$ is even, any generic element is  $G$-conjugated to $I_{n+1}$.\medskip

If  $n$ is odd,  as ${\rm det}(I_{n+1})=1$,  ${\rm det}( I_\eta)=\eta$ and  $\chi_0(G)\subset N_{E/F}(E^*)$, the relation   $(*)$ implies that  $I_{n+1}$ and $I_\eta$ are not  $G$-conjugated.\medskip

 As   $n(X)$ is   odd,  using again the automorphisms $\gamma_{i,j}$, we can suppose that $x_0=\eta a\bar{a}$ with  $a\in E^*$. We can write $X=\left(\begin{array}{cc} X_1 & 0\\ 0 & x_0\end{array}\right)$ where  $X_1\in M(n,E)$ is the diagonal matrix  $diag(x_n,\ldots, x_1)$.
Then  $n(X_1)$ is even, and hence there exists ${\mathbf g}_1\in GL(n,E)$ tel que $X_1=  {\mathbf g}_1\;^{t}\bar{{\mathbf g}_1}$. If we set     ${\mathbf g}=\left(\begin{array}{cc} {\mathbf g}_1& 0\\ 0 & a\end{array}\right)\in GL(n,E)$, we get   $X={\mathbf g} I_\eta\;^{t}\bar{\mathbf g}$ and hence  $X$ is $G$-conjugated  to  $I_\eta$. \medskip

This ends the prove of statements  { (2) (a)}  and   {(2)  (b)} of the Theorem.\medskip

\noindent Let us now prove the statement 2 (c). Let $\iota$ be the natural injection of ${\rm Herm}(n-m,E)$ into ${\rm Herm}(n,E)$ which associates to  $M\in {\rm Herm}(n-m, E)$ the matrix  $\iota(M)=\left(\begin{array}{c|c}  \begin{array}{ccc}  & & \\
& M &\\
 & & \end{array}&   \begin{array}{c} \\ 0\\ \\ \end{array}\\
\hline  
0 &0_m\end{array}\right)\in M(n,E)$. This way we identify  the  $V_m^+$ to  $ {\rm Herm}(n-m,E)$ by the map  $M\mapsto X(\iota(M))$. An  element $X(\iota(M))\in V_m^+$  is generic in  $V_m^+$ if and only if  ${\rm det}(M)\neq 0$. The group  $G_m$ is the group of elements  $g_1=[\mathbf g_1,\mu]$ where  $\mathbf g_1\in GL(n-m,E)$ and  $\mu\in F^*$,  acting on $ {\rm Herm}(n-m,E)$ by  $[\mathbf g_1,\mu].M=\mu^{-1} {\mathbf g}_1 M\;^{t}\bar{\mathbf g}_1$.\medskip

Let $Z=X(\iota(M))$ and  $Z'=X(\iota(M'))$ be two generic elements in  $V_m^+$. If $Z$ 	and $Z'$ are  $G$-conjugated  then there exist $\mathbf g\in GL(n,E)$ and  $\mu\in F^*$ such that  ${\mathbf g}\iota(M)\;^{t}\bar{\mathbf g}=\mu\, \iota(M')$. Let ${\mathbf g}_1\in GL(n-m,E)$ be the submatrix  of ${\mathbf g}$ given by the first  $m-n$ rows and the first  $m-n$ first columns of  ${\mathbf g}$. An easy computations shows that  ${\mathbf g}_1 M\;^{t}\bar{\mathbf g}_1=\mu M'$. As $Z$ and $Z'$ are generic in  $V_m^+$, the matrices $M$ and  $M'$ are invertible  and therefore  $ {\mathbf g}_1\in Gl(n-m,E)$. The preceding relation shows then that $Z$ and  $Z'$ are  $G_m$-conjugated.\\
Conversely, if $Z$ and $Z'$ are $G_m$-conjugated then there exist ${\mathbf g}_1 \in GL(n-m,E)$ and $\mu\in F^*$ such that  ${\mathbf g}_1 M\;^{t}\bar{\mathbf g}_1=\mu M'$. We set  ${\mathbf g}=\left(\begin{array}{cc} {\mathbf g}_1 & 0\\ 0 & I_m\end{array}\right)$. The element $[{\mathbf g},\mu]$ belongs to  $G$ and we have  $Z'=[{\mathbf g},\mu].Z$. Hence   $Z$ and  $Z'$ are $G$-conjugated. The assertion concerning the number of orbits can be easily proved inthe same way as the corresponding assertion in Theorem \ref {thm-orbites-e1}. The statement   (2) (c)   is now proved.
\medskip

\noindent The group $L$ is the group of elements $l=[{\mathbf g},\mu]$ where  ${\mathbf g}=diag(a_{n-1},\ldots , a_0)$ is a diagonal matrix  and where $\mu\in F^*$. For such  an element  $l$, we have  $l.\sum_{j} x_jX_j=\sum_j (\mu^{-1} x_j a_j\bar{a}_j) X_j$. The last statement is then easy.

 This ends the proof. 
 
 \end{proof} 
 
  \begin{cor}\label{cor-Lorbits} Recall that $\ell=1$ and $S_e$ is defined in Lemma \ref{lem-ensembleSe}. Let $X=x_0X_0+\ldots +x_kX_k$ and $X'=x'_0X_0+\ldots +x'_kX_k$ be two generic elements of $\oplus_{j=0}^k\tilde{\go g}^{\la_j}$. Then $X$ and $X'$ are $L$-conjugate if and only if there exists $\eta\in F^*$ such that $\eta x_jx'_j\in S_e$ for $j=0,\ldots, k$.
 \end{cor}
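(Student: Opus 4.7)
The essential observation is that, since $\ell = 1$, the one-dimensional spaces $\tilde{\go g}^{\lambda_j}$ are each stable under $L = Z_G(\go a^0)$, so every $l \in L$ acts on $\tilde{\go g}^{\lambda_j}$ by a scalar $t_j(l) \in F^*$. Hence $X$ and $X'$ are $L$-conjugate if and only if there exists $l \in L$ with $x'_j = t_j(l) x_j$ for $j = 0, \ldots, k$, i.e.\ if and only if the tuple $(x'_0/x_0, \ldots, x'_k/x_k)$ lies in the image subgroup $L^0 = \{(t_0(l),\ldots,t_k(l)) : l \in L\} \subset (F^*)^{k+1}$. On the other hand, since $S_e \supset F^{*2}$ and $x_j, x'_j \neq 0$, the right-hand condition ``$\exists \eta \in F^*,\ \eta x_j x'_j \in S_e$ for all $j$'' is equivalent to requiring that all ratios $x'_j/x_j$ lie in a single coset of $S_e$ in $F^*$. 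Thus the corollary amounts to the identification
\[
L^0 = \{(t_0,\ldots,t_k) \in (F^*)^{k+1} : t_i t_j^{-1} \in S_e \text{ for all } i,j\}.
\]

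The plan is to verify this equality by invoking the case-by-case classification of $G$-orbits (and the explicit form of $L$-conjugacy) already carried out in Theorems \ref{thm-orbites-e04}, \ref{thm-orbites-e1}, \ref{thm-orbites-e2} and Proposition \ref{prop-k=1}, combined with the computation of $S_e$ given by Lemma \ref{lem-ensembleSe}. Specifically: if $e \in \{0,4\}$, then $S_e = F^*$ so the right-hand condition is automatically satisfied, while Theorem \ref{thm-orbites-e04} states that any two generic elements of $\oplus_j \tilde{\go g}^{\lambda_j}$ are $L$-conjugate --- both sides always hold. If $e \in \{1,3\}$, then $S_e = F^{*2}$, and for $k \geq 2$ (necessarily the symplectic case (6) in Table 1 with $d=e=1$) Theorem \ref{thm-orbites-e1}(2)(d) is literally the statement of the corollary. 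If $e = 2$, then $S_e = N_{E/F}(E^*)$, and for $k \geq 2$ (unitary case (2) in Table 1) Theorem \ref{thm-orbites-e2}(2)(d) is again exactly the corollary.

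The remaining verifications are in low rank. For $k = 0$, the statement is trivial: both the $L$-orbit description and the condition ``$\exists \eta,\ \eta x_0 x'_0 \in S_e$'' reduce to the assertion that any two nonzero elements of $\tilde{\go g}^{\lambda_0}$ are $L$-conjugate, which follows from Lemma \ref{lem-tId-dansG} applied to $X_0$. For $k = 1$, Proposition \ref{prop-k=1} says two generic elements are $L$-conjugate iff they are $G$-conjugate iff the quotient of their invariants $\Delta_0$ lies in $\chi_0(G)$, and identifies $\chi_0(G) = S_e$; unpacking the condition on $\Delta_0(X)/\Delta_0(X') = (x_0 x_1)/(x'_0 x'_1)$ and using that $F^{*2} \subset S_e$ yields the symmetric form of the corollary.

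I expect the main obstacle to be purely bookkeeping: verifying that the condition ``$\exists \eta,\ \eta x_j x'_j \in S_e$ for all $j$'' is indeed equivalent to ``all $x'_j/x_j$ belong to a single coset of $S_e$'', and matching this with the slightly differently-phrased criteria in the preceding theorems (for instance, in the case $k=1$ the theorem is stated in terms of the ratio $\Delta_0(X)/\Delta_0(X') = (x_0 x_1)(x'_0 x'_1)^{-1}$, which belongs to $S_e$ iff the corollary's condition holds, as $S_e$ is a subgroup of $F^*$ containing $F^{*2}$). No new orbit computation is required; the work is strictly in re-expressing already-proven statements in the unified language of $S_e$.
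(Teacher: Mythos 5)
Your proposal is correct and follows essentially the same route as the paper: reduce to Theorem \ref{thm-orbites-e04} for $e\in\{0,4\}$, to Theorem \ref{thm-orbites-e1}(2)(d) resp.\ Theorem \ref{thm-orbites-e2}(2)(d) for $e\in\{1,3\}$ resp.\ $e=2$ with $k\geq 2$, and to Proposition \ref{prop-k=1} (after normalizing by the scalar elements of $L$) for $k=1$, together with the observation that $F^{*2}\subset S_e$ makes the condition ``$\eta x_jx'_j\in S_e$ for all $j$'' equivalent to all ratios $x'_j/x_j$ lying in one coset of $S_e$. The bookkeeping you flag is exactly what the paper's proof consists of, so no gap remains.
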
 
\begin{proof} \hfill

- Suppose first that $e=0 $ or $e=4$. Then $S_{e}=F^{*}$ and the result is an immediate consequence of Theorem \ref{thm-orbites-e04}.

- Let  $e\in\{1,2,3\}$.

Suppose $k+1=2$ and take $X=x_0X_0+x_{1}X_{1}$ and  $X'=x_0'X_0+x_{1}'X_{1}$ two generic elements. As the scalar multiplications belong to $L$, these elements are $L$-conjugate to $Y= X_{0}+\frac{x_{1}}{x_{0}}X_{1}$ and $Y'= X_{0}+\frac{x'_{1}}{x'_{0}}X_{1}$ respectively. From  Proposition \ref{prop-k=1} we know that $Y$ and $Y'$ are $L$-conjugate if and only if there exists $u\in S_e$ such that $\frac{x_{1}}{x_{0}}=u\frac{x'_{1}}{x'_{0}}$. As $F^{*2}\subset S_e$, we deduce that   $\frac{x_{1}x'_{1}}{x_{0}x'_{0}}\in S_e$. This implies the result in this case.

If $k+1\geq 3$, the result is exactly the assertion of Theorem \ref{thm-orbites-e1} (2)(d) for $e=1$ or $3$, and  the assertion of Theorem \ref{thm-orbites-e2}  (2)(d) for $e=2$.

 \end{proof}
 
   \subsection{ $G$-orbits  in the case  $\ell=3$}\label{subsection(l=3)}\hfill

In this subsection we suppose  $\ell=3$. \medskip

By Remark \ref{rem-simple}, we can assume that  $ \widetilde{\go g}$ is simple.
 If    $\ell=3$ (and rank($ \widetilde{\go g}$)$=k+1 > 1$) then it corresponds to case $(7)$ in Table 1 and its Satake-Tits diagram  is of type  $C_{2(k+1)}$, ($k\in \N$) and is given by

\hskip 150pt \hbox{\unitlength=0.5pt\begin{picture}(280,30)

  \put(55,10){\circle*{10}}
\put(60,10){\line (1,0){30}}
\put(95,10){\circle{10}}
\put(100,10){\circle*{1}}
\put(105,10){\circle*{1}}
\put(110,10){\circle*{1}}
\put(115,10){\circle*{1}}
\put(120,10){\circle*{1}}
 \put(130,10){\circle*{10}} 
\put(135,10){\line (1,0){30}}
\put(170,10){\circle{10}}
\put(175,10){\line (1,0){30}}
 \put(210,10){\circle*{10}}
\put(213,12){\line (1,0){41}}
\put(213,8){\line (1,0){41}}
\put(225,5){$<$}
\put(255,10){\circle{10}}

\end{picture}}

Note that case $(5)$ in Table 1 is a particular case of case $(7)$.

By  (\cite{Schoeneberg} Proposition 5.4.5.), $ \widetilde{\go g}$ splits  over any quadratic extension  $E$ of $F$, and (up to isomorphism) $ \widetilde{\go g}\otimes_F E\simeq {\go sp}(4(k+1),E)$.  We will use the following classical realization of $ \widetilde{\go g}$ (also used in  \cite{Mu}):\medskip

Let $F^{*2}$ the set of squares in  $F^*$. Let  $\varepsilon$ be a unit of  $F$ which is not a square and let  $\pi$ be a uniformizer of  $F$.   Recall that  the set 
$$ \{ {1},  {\varepsilon},  {\pi},  {\varepsilon\pi}\}$$
is a set of representatives of $F^*/F^{*2}$.
 We set  $E=F[\sqrt{\varepsilon}]$ and we note  $\mapsto \bar{x}$ the conjugation in $E$. For  $n\in\N$, we denote by  $I_n$ the identity matrix of size $n$. Consider the symplectic form  $\Psi$ on  $E^{4(k+1)}$ defined by 
$$\Psi(X,Y)={^{t}X} K_{2(k+1)} Y,\quad\textrm{where}\quad K_{2(k+1)}=\left(\begin{array}{cc} 0 & I_{2(k+1)}\\ -I_{2(k+1)} & 0\end{array}\right).$$

Then
$$ \widetilde{\go g}\otimes_F E={\go sp}(4(k+1),E)=\left\{\left(\begin{array}{cc} A & B\\ C & -^{t}A\end{array}\right);\; A, B, C\in M(2(k+1),E), {^{t}B}=B, {^{t}C}=C\right\}.$$
We set:
$$J_\pi=\begin{pmatrix} 0&\pi\\ 1&0\end{pmatrix}\in M(2,E),\,J=\begin{pmatrix} J_\pi &0&0\\ 0 & \ddots&0 \\ 0&0 &J_\pi \end{pmatrix} \in M(2(k+1),E),\, T= \begin{pmatrix} J & 0\\ 0 & ^{t}J\end{pmatrix} \in M(4(k+1),E).$$

\noindent The subalgebra  $ \widetilde{\go g}$ is then the subalgebra of  $ \widetilde{\go g}\otimes_F E$ whose elements are the matrices $X$ satisfying  $T\overline{X}=XT$. \medskip

Let us make precise the different objects which have been introduced  in earlier section in relation with the Lie algebra $ \widetilde{\go g}$.\\

\noindent The algebra $ \widetilde{\go g}$  is graded by the element $H_0= \begin{pmatrix} I_{2(k+1)} & 0\\ 0 & -I_{2(k+1)}\end{pmatrix} $.  More precisely we have
$$V^-=\left\{ \begin{pmatrix} 0 & 0\\ C &0\end{pmatrix} ;\;  C\in M(2(k+1),E),  ^{t}C=C, \;\;^{t}J\overline{C}=CJ\right\},$$
$$  V^+=\left\{\begin{pmatrix}  0 & B\\ 0 &0\end{pmatrix} ;\;   B \in M(2(k+1),E), ^{t}B=B, J\overline{B}= B\; ^{t}J \right\},$$
and 
$$\quad {\go g}=\left\{ \begin{pmatrix} A & 0\\ 0 & -{^{t}A}\end{pmatrix} ;\; A \in M(2(k+1),E), J\overline{A}=A J\right\}.$$

The subspace  $\go a$ defined by
$$\go a=\left\{H(t_0,\ldots, t_k)= \begin{pmatrix} \mathbf{H} & 0\\ 0 & -\mathbf{H}\end{pmatrix} ; \textrm{ with } \mathbf{H}= \begin{pmatrix}  t_kI_2 & & & \\  & t_{k-1} I_2  & & \\ & & \ddots & \\  & & & t_0 I_2\end{pmatrix} \in M(2(k+1),F)\right\}$$
is a maximal split abelian subalgebra of  $ \widetilde{\go g}$. If the linear forms  $\eta_j$ on  $\go a$ are defined by 
$$\eta_j(H(t_0,\ldots, t_k))= t_j,$$
then the root system of the pair  $( \widetilde{\go g},\go a)$ is given by 
$$ \widetilde{\Sigma}=\{ \pm \eta_i\pm \eta_j, \textrm{ for } \;\; 0\leq i<j\leq k\}\cup\{ 2\eta_j;\textrm{ for } \;\; 0\leq  j\leq k\}$$
and the set of strongly orthogonal roots given in  Theorem \ref{th-descente} is the set  
$$\{\lambda_0,\ldots,\lambda_k\}\quad\textrm{ where  }\quad \lambda_j= 2\eta_j.$$
We set also
$${\mathbb S}^+=\left\{X\in M(2,E); \; ^{t}X=X, J_\pi\overline{X}=X\; {^{t}J}_\pi\right\}=\left\{ \begin{pmatrix}  \pi \bar{x} &\mu\\ \mu& x\end{pmatrix} ; x\in E, \mu\in F\right\},$$
$$ {\mathbb S}^-=\left\{Y\in M(2,E); \; ^{t}Y=Y, \; ^{t}J_\pi\overline{Y}=YJ_\pi\right\}=\left\{ \begin{pmatrix} y &\mu\\ \mu&\pi \bar{y}\end{pmatrix} ; y\in E, \mu\in F\right\},$$ 
and 
$${\mathbb L}=\left\{ A\in M(2,E), J_\pi\overline{A}=AJ_\pi\right\}=\left\{ \begin{pmatrix} x &\pi y\\ \bar{y}&  \bar{x}\end{pmatrix} ; x,y\in E\right\}.$$
For  $M\in M(2,E)$, the matrix $E_{i,i}(M)\in M(2(k+1),E)$ is block diagonal,   the $(k+1-i)$-th  block being equal to  $M$  and all other $2\times 2$ blocks being equal to $0$. In other words:
$$E_{i,i}(M)= \begin{pmatrix}  X_k &  &\\ & \ddots & \\ & & X_0\end{pmatrix} \;{\rm where }\; X_j=0 \textrm{ if }\; i\neq j\;{\rm and } \; X_i=M.$$
All the algebras $ \widetilde{l}_j$ are isomorphic and given by 
 $$ \widetilde{l}_j=\left\{ \begin{pmatrix}  E_{j,j}(A) &E_{j,j}(X)\\ E_{j,j}(Y)&E_{j,j}( -\;^{t} A)\end{pmatrix}, A\in {\mathbb L}, X\in {\mathbb S}^+, Y\in   {\mathbb S}^-  \right\}.$$\\
The algebra  $ \widetilde{\go g}_j$ is the centralizer of  $ \widetilde{\go l}_0\oplus\ldots\oplus  \widetilde{\go l}_{j-1}$. It is given by 
$$ \widetilde{\go g}_j=\left\{ \begin{pmatrix}  {\mathbf A}_j & 0 & {\mathbf X}_j  & 0\\ 0 & 0 & 0 & 0\\ {\mathbf Y}_j  & 0 &- ^{t} {\mathbf A}_j  & 0\\ 0 & 0 & 0 & 0\end{pmatrix}  \in  \widetilde{\go g}\right\},$$
where the  matrices $ {\mathbf A}_j,  {\mathbf X}_j$ and  $ {\mathbf Y}_j$ are square matrices of size $2(k+1-j)$.\medskip 


Let us now describe the group  $G={\mathcal Z}_{{\rm Aut}_0( \widetilde{\go g)}}(H_0)$. \\
By (\cite{Bou2} Chap VIII, \textsection  13, $n^0 3$), we know that the group  ${\rm Aut}_0( \widetilde{\go g}\otimes_F E)={\rm Aut}( \widetilde{\go g}\otimes_F E)$ is the group of automorphisms of the form   $\varphi_s: X\mapsto sXs^{-1}$ where  $s$ is a symplectic similarity of the form  $\Psi$. This means that  $s\in GL(4(k+1),E)$ and there exists a scalar $\mu(s)\in E^*$ such that  $^{t}sK_{2(k+1)}s=\mu(s) K_{2(k+1)}$.  The scalar $\mu(s)$ is called the ratio  of the similarity $s$. It is easily seen that any similarity  $s$   with ratio $\mu$ such that  $\varphi_s(H_0)=H_0$ can be written $s=s({\mathbf g},\mu):=\left(\begin{array}{cc} {\mathbf g}& 0\\ 0 &\mu ^{t}{\mathbf g}^{-1}\end{array}\right)$ with   ${\mathbf g}\in GL(2(k+1), E)$   and $\mu\in E^*$.  As $s(\mu I_{2(k+1)},\mu^2)=\mu I_{4(k+1)}$, the elements  $s(\mu I_{2(k+1)},\mu^2)$ with  $\mu\in E^*$ act trivially on  $ \widetilde{\go g}\otimes_F E$. More precisely one can easily show that $\varphi_{s(g,\lambda)}=\text{Id}$ if and only if there exists  $\mu\in E^*$  such that $g=\mu  I_{2(k+1)}$ and $\lambda=\mu^2$.

Let  $H_E$ be the subgroup of  $GL(2(k+1), E)\times E^* $ whose elements are of the form  $(\mu I_{2(k+1)}, \mu^2)$ with  $\mu\in E^* $. Then, from above, the map 
$({\mathbf g}, \lambda)\mapsto \varphi_{s({\mathbf g},\lambda)}$ induces an isomorphism from  $GL(2(k+1),E) \times E^* /H_E$ onto the centralizer of  $H_0$ in ${\rm Aut}_0( \widetilde{\go g}\otimes_F E)$, which we will denote by  $G_E$. If  $({\mathbf g},\mu)\in GL(2(k+1),E) \times E^*$, we will denote by $[{\mathbf g},\mu]$ its class in  $G_E$. The action of   $G_E$ on  $ \widetilde{\go g}\otimes_F E$ stabilizes  $V^+\otimes_FE$ and    $V^-\otimes_FE$. More precisely the action on $V^+\otimes_FE$ is as follows:$$[{\mathbf g},\mu]\left(\begin{array}{cc} 0 &\mathbf X\\ 0 &0\end{array}\right)=\left(\begin{array}{cc} 0 &\mu^{-1} {\mathbf g}{\mathbf X}\; ^{t}{\mathbf g}\\ 0 &0\end{array}\right).$$

The group we are interested in, namely  $G={\mathcal Z}_{{\rm Aut}_0(\widetilde{\go g})}(H_0)$, is the subgroup of $G_E$ of all elements which normalize $ \widetilde{\go g}$. As  $V^+$ and $V^-$ generate  $ \widetilde{\go g}$,    $G$ is also the subgroup of  $G_E$ of all elements normalizing  $V^+$ and  $V^-$.\medskip

A result  in the spirit of the  following is indicated without proof in \cite{Mu} (p. 112).

\begin{prop}\label{prop-G} Let  $G^0(2(k+1))$ be the subgroup of elements  ${\mathbf g}\in GL(2(k+1), E)$ such that $J\bar{{\mathbf g}}=  {\mathbf g}J$. Then
$$G= \{g=[{\mathbf g},\mu], {\mathbf g}\in G^0(2(k+1))\cup\sqrt{\varepsilon} G^0(2(k+1)), \mu\in F^*\}$$\end{prop}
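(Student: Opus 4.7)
The strategy is to characterize when an element $[\mathbf g,\mu]\in G_E$ stabilizes $\widetilde{\go g}$; equivalently, when it stabilizes $V^+$ (preservation of $V^-$ then follows from the grading and the non‑degeneracy of the Killing pairing, but is also directly checked by a parallel argument). Recall that $\widetilde{\go g}$ is cut out of $\widetilde{\go g}\otimes_F E$ by the involution $X\mapsto T^{-1}\bar X T$, where $T^2=\pi I$ (because $J_\pi^2=\pi I_2$); on $V^+$ this condition reads $J\bar{\mathbf X}=\mathbf X\,{}^tJ$, and the $F$-subspace $V^+$ generates the $E$-space $\mathrm{Sym}(2(k+1),E)=V^+\otimes_F E$.

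First I would translate the stabilization of $V^+$ into an equation on $\mathbf g$. The action is $\mathbf X\mapsto\mu^{-1}\mathbf g\mathbf X\,{}^t\mathbf g$, and using $\bar{\mathbf X}=J^{-1}\mathbf X\,{}^tJ$ together with the identities ${}^tJ\,{}^tJ=\pi I$ (so that ${}^tJ\,{}^t\bar{\mathbf g}={}^tA\,{}^tJ$ where $A:=J\bar{\mathbf g}J^{-1}$), the invariance condition simplifies to
\[ A\mathbf X\,{}^tA=\frac{\bar\mu}{\mu}\,\mathbf g\mathbf X\,{}^t\mathbf g\qquad\text{for every }\mathbf X\in V^+,\]
hence for every symmetric $\mathbf X\in\mathrm{Sym}(2(k+1),E)$ by $E$-linearity. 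Setting $B=\mathbf g^{-1}A$ I would obtain $B\mathbf X\,{}^tB=(\bar\mu/\mu)\mathbf X$ for all symmetric $\mathbf X$. Taking $\mathbf X=I$ gives $B\,{}^tB=(\bar\mu/\mu)I$; combining the two relations forces ${}^tB$ to commute with every symmetric matrix, so ${}^tB=\lambda I$ for some $\lambda\in E^*$. This yields the pair of relations
\[ J\bar{\mathbf g}=\lambda\,\mathbf g J,\qquad \lambda^2=\bar\mu/\mu,\]
and conjugating the first (using $\bar J=J$ and $J^2=\pi I$) gives $\lambda\bar\lambda=1$.

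Next I would exploit the ambiguity $[\mathbf g,\mu]=[\alpha\mathbf g,\alpha^2\mu]$, $\alpha\in E^*$, to arrange $\mu\in F^*$. The requirement $\alpha^2\mu\in F^*$ reads $(\alpha/\bar\alpha)^2=\lambda^2$, i.e.\ $\alpha/\bar\alpha=\pm\lambda$; since $N_{E/F}(\pm\lambda)=1$, Hilbert~90 supplies such an $\alpha$. After this normalization $\bar\mu/\mu=1$, so $\lambda=\pm1$. If $\lambda=1$ then $J\bar{\mathbf g}=\mathbf g J$, giving $\mathbf g\in G^0(2(k+1))$; if $\lambda=-1$ then writing $\mathbf g=\sqrt\varepsilon\,\mathbf g_0$ and using $\overline{\sqrt\varepsilon}=-\sqrt\varepsilon$ one checks $J\overline{\mathbf g_0}=\mathbf g_0 J$, so $\mathbf g\in\sqrt\varepsilon\,G^0(2(k+1))$. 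This proves the inclusion $\subset$. The reverse inclusion is immediate: for any $\mathbf g$ in either coset and any $\mu\in F^*$, the two boxed equations hold with $\lambda=\pm1$, and running the computation backwards shows that $V^+$ (and, by an entirely symmetric argument, $V^-$) is stabilized, whence $\varphi_{s(\mathbf g,\mu)}$ preserves the Lie algebra generated by $V^+\cup V^-$, namely $\widetilde{\go g}$.

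The main obstacle is the first, bookkeeping step: one must carefully propagate factors of $J$ and ${}^tJ$ through the twisted symmetry $J\bar{\mathbf X}=\mathbf X\,{}^tJ$, using $J^2=({}^tJ)^2=\pi I$ to reduce the output to the clean equation $B\mathbf X\,{}^tB=(\bar\mu/\mu)\mathbf X$. Once that identity is in hand, the remaining arguments (extraction of the scalar $\lambda$, use of Hilbert~90, and case split $\lambda=\pm1$) are routine.
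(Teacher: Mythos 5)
Your proof is correct, but it follows a genuinely different route from the paper's in two places. First, where you reduce to the scalar equation $B\mathbf X\,{}^t\!B=(\bar\mu/\mu)\mathbf X$: the paper keeps this identity only on $Sym_J(2(k+1))$ and proves a standalone Lemma \ref{lemma-G} by induction on $k$ (block decomposition of $M$, base case a direct $2\times 2$ computation) to conclude $M=aI$ and $\nu=a^2$. You instead observe that $V^+$ is the fixed space of the semilinear involution $\mathbf X\mapsto J\bar{\mathbf X}({}^tJ)^{-1}$ on $Sym(2(k+1),E)$, so by Galois descent it $E$-spans all symmetric matrices; the identity then extends by $E$-linearity, and setting $\mathbf X=I$ shows ${}^tB$ centralizes $Sym(2(k+1),E)$, hence is scalar. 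This is shorter and cleaner; the only thing lost is the paper's Lemma \ref{lemma-G} as a reusable statement (it is invoked again later, in the proof of Lemma \ref{lem-G0-orbites}), but that is not needed for the proposition itself. Second, for the constraint on $\mu$: the paper fixes representatives $\{1,e,\pi,e\pi\}$ of $E^*/E^{*2}$ (which requires a case discussion on whether $-1\in F^{*2}$ to define the unit $e$) and rules out $\mu=e,\,e\pi$ by deriving the contradiction $\bar{\mathbf g}=-\bar{\mathbf g}$. You instead extract $\lambda\bar\lambda=1$ from the relation $J\bar{\mathbf g}=\lambda\mathbf g J$ and apply Hilbert 90 to renormalize the class $[\mathbf g,\mu]=[\alpha\mathbf g,\alpha^2\mu]$ so that $\mu\in F^*$ and $\lambda=\pm1$; this avoids the square-class bookkeeping entirely and makes the appearance of the two cosets $G^0$ and $\sqrt\varepsilon\,G^0$ transparent. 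All the intermediate computations you sketch (the propagation of $J$ and ${}^tJ$ through the twisted symmetry, the identity $\lambda\bar\lambda=1$, and the reverse inclusion) check out.
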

	
\begin{proof}

 We identify  $V^+$ with the space  $Sym_J(2(k+1))$ of symmetric matrices   $\mathbf X\in M(2(k+1),E)$ such that  $J\overline{\mathbf X}={\mathbf X}\;^{t}J$  through the map  ${\mathbf X}\mapsto  X=\left(\begin{array}{cc} 0 & {\mathbf X}\\ 0 & 0\end{array}\right)$.  
 
 As the extension $E=F(\sqrt{\varepsilon})$ is unramified (\cite {Lam}, Chap. VI, Remark 2.7 p. 154), the uniformizer $\pi$ of $F$ is still a uniformizer in $E$. We will now define a unit $e$ of $E$ which is not a square (and hence, as before for $F$, the  set   $\{1, e, \pi, e\pi\}$ will be a set of representatives of the classes  in  $E^*/E^{*2}$).  
\\
If $-1\in F^{*2}$ then one can easily see that   $\sqrt{\varepsilon}$ is not a square in  $E^*$. In this case we set  $e=\sqrt{\varepsilon}$ (of course $e$ is still a unit in $E$). If $-1\notin F^{*2}$, we can  suppose    that $\varepsilon=-1$. Then by    (\cite{Lam} Corollary V.2.6, p.154) $-1$  is a sum of two squares in $F^*$. That is  $-1=x_0^2+y_0^2$ ($x_0,y_0 \in F^*$) and in this case we set $e=x_0+y_0\sqrt{\varepsilon}\notin E^{*2}$ (again one verifies easily that $e$ is not a square in $E^*$).\medskip

Let   $g\in G_E$. From the definition of the group  $G_E$, one sees that there exist ${\mathbf g}\in GL(2(k+1), E)$ and $\mu\in\{1,e, \pi, e\pi\}$ such that  $g=[{\mathbf g},\mu]$.  We will now fix such a pair  $({\mathbf g},\mu)$. If  $g\in G$  then  $[{\mathbf g},\mu]V^+\subset V^+$, and this implies that for all  $\mathbf X\in Sym_J(2(k+1))$, one has
$$  J\overline{\mu^{-1} {\mathbf g}{\mathbf X}\; ^{t}{\mathbf g}}=\mu^{-1}  {\mathbf g}{\mathbf X}\; ^{t}{\mathbf g}\;^{t}J.$$
As  $J^2=\pi I_{4(k+1)}$, we obtain that for all ${\mathbf X}\in Sym_J(2(k+1))$, one  has
$$({\mathbf g}^{-1} J\overline{{\mathbf g}} J^{-1}) {\mathbf X} \; ^{t}({\mathbf g}^{-1} J\overline{{\mathbf g}} J^{-1}) =\mu^{-1} \overline{\mu}\; {\mathbf X}.\qquad\qquad \qquad\qquad (*)$$

The proposition will then be a consequence of the following Lemma:
\begin{lemme}\label{lemma-G} Let  $M\in M(2(k+1), E)$ and  $\nu\in E^*$ such that  $ M  {\mathbf X}\; ^{t}M= \nu{\mathbf X}$ for all  $\mathbf X\in Sym_J(2(k+1))$,  then there exists  $a\in E^*$ such that  $\nu=a^2$ and $M=a I_{2(k+1)}.$
\end{lemme}

 {\it Proof of Proposition \ref{prop-G} (Lemma \ref{lemma-G} being assumed)}.

Remember that we have fixed   a pair $({\mathbf g},\mu)$ in $GL(2(k+1), E)\times\{1,e,\pi, e\pi\}$ such that $g=[{\mathbf g},\mu]$.

$\bullet$ If  $\mu=1$ or  $\mu=\pi$ then $  \mu^{-1} \overline{\mu}=1$. The preceding Lemma and the relation $(*)$ imply ${\mathbf g}^{-1} J\overline{{\mathbf g}} J^{-1}=\pm I_{2(k+1)}$  and hence  $ J\overline{{\mathbf g}}=\pm {\mathbf g}J$. If    $J\overline{{\mathbf g}}= {\mathbf g}J$ then ${\mathbf g}\in   G^0(2(k+1))$. If     $J\overline{{\mathbf g}}= -{\mathbf g}J$  then $\sqrt{\varepsilon}{\mathbf g}$ satisfies  $J \overline{\sqrt{\varepsilon}{\mathbf g}}=\sqrt{\varepsilon} {\mathbf g}J$ and hence  ${\mathbf g}\in \sqrt{\varepsilon} G^0(2(k+1))$. Conversely, it is easy to see that if ${\mathbf g}$ verifies  $ J\overline{{\mathbf g}}=\pm {\mathbf g}J$ and if  $\mu$ belongs to $F^*$ then   $[{\mathbf g},\mu]$ stabilizes  $V^+$ and $V^-$.  \medskip

$\bullet \bullet$ We show now that if    $\mu=e$ or $\mu= e\pi$ then the element  $[{\mathbf g},\mu]$   of  $G_E$ does not belong to $G$.
 Suppose that   $\mu=e$ or  $\mu= e\pi$ and $[{\mathbf g},\mu]\in G$. \\
If $-1\in F^{*2}$ then  $-1=\alpha_0^2$ with  $\alpha_0\in F^{*}$ and we have set $e=\sqrt{\varepsilon}\notin E^{*2}$. Then  $ \mu^{-1} \overline{\mu}=-1=\alpha_0^2$. The preceding lemma implies   $J\overline{{\mathbf g}}=\epsilon \alpha_0 {\mathbf g}J$ with  $\epsilon=\pm1$. Taking the conjugate of this equality, one obtains $J{\mathbf g}=\epsilon \alpha_0 \overline{\mathbf g}J$. And therefore 
 $$\overline{\mathbf{g}}=\epsilon \alpha_0J^{-1}\mathbf{g}J=\alpha_0^2 J^{-2}\overline{\mathbf{g}}J^2=-\overline{\mathbf{g}},$$
and this is impossible as $\mathbf{g}\neq 0$.

\noindent If $-1\notin F^{*2}$, we have set $\varepsilon=-1 = x_0^2+y_0^2$, with  $x_0,y_0$ in  $F^*$, and $e=x_0+y_0\sqrt{\varepsilon}$.   Then  $ \mu^{-1} \overline{\mu}=\dfrac{\bar{e}^2}{e\bar{e}}=-\bar{e}^2=(\bar{e}\sqrt{\varepsilon})^2$. Hence the preceding Lemma gives $J\overline{{\mathbf g}}=\epsilon \bar{e}\sqrt{\varepsilon} {\mathbf g}J$ with $\epsilon^2=1$, and this implies that   $J{\mathbf g}=-\epsilon e\sqrt{\varepsilon}\overline{\mathbf g}J$ and therefore $$\overline{\mathbf{g}}=\epsilon \bar{e}\sqrt{\varepsilon}J^{-1}\mathbf{g}J=-\varepsilon\bar{e} e J^{-2}\overline{\mathbf{g}}J^2=-\overline{\mathbf{g}}.$$
Again this is impossible as $\mathbf{g}\neq 0$.

This proves the proposition.
\end{proof}

\noindent{\it Proof of Lemma} \ref{lemma-G}:

 For  $k=0$, we have  $Sym_J(2,E)=\mathbb S^+=\left\{\left(\begin{array}{cc} \pi \bar{x} &\lambda\\ \lambda& x\end{array}\right); x\in E,\lambda\in F\right\}$. Let us set  $M=\left(\begin{array}{cc} a & b\\ c& d\end{array}\right)\in GL(2,E)$. The relation which is satisfied by  $M$ and $\nu$ implies that for all $x\in E$ and  $\lambda\in F$, one has
$$\left\{\begin{array}{ccc} a^2\pi \overline{x}+b^2x+ 2 ab \lambda&=&\nu \pi\overline{x}\\
c^2 \pi\overline{x}+d^2 x+2dc\lambda&=& \nu x\\
ac\pi\overline{x}+bdx +(ad+bc)\lambda&=&\nu\lambda\end{array}\right.$$
The first two equations   imply   $b=c=0$ and $a^2=d^2=\nu$ and the third  equation  implies then $a=d$. Therefore it exists $a\in E^*$ such that 
$$M= a I_2,\quad{\rm and }\quad \nu=a^2\in E^{*2}.$$

On the other hand, a similar computation shows that if  $M\in M(2,E)$ satisfies $M{\mathbf X} \;^{t}M=0$ for all  ${\mathbf X}\in{\mathbb S}^+$ then $M=0$.\medskip

By induction we suppose that the expected result is true up to the rank  $k-1$. Let  $(M,\nu)\in M(2(k+1),E)\times E^*$ satisfying the hypothesis of  Lemma  \ref{lemma-G}. Let us write
$$M=\left(\begin{array}{cc} M' &  \begin{array}{c} M_k\\ \vdots \end{array}\\ \begin{array}{ccc} L_k &\ldots & L_{1}\end{array} & M_0\end{array}\right),$$
with  $M'\in M(2k,E)$ and  $M_j, L_j\in M_2(E)$. The equality  $M{\mathbf X}\; ^{t}M=\nu \mathbf X$ for all matrices  ${\mathbf X}\in Sym_J(2(k+1), E)$ of the form 
$${\mathbf X}=\left(\begin{array}{cc} {\mathbf X}' &  \begin{array}{c} 0\\ \vdots\\0 \end{array}\\ \begin{array}{ccc} 0 &\ldots & 0\end{array} & {\mathbf X}_0\end{array}\right),$$
(with  ${\mathbf X}'\in Sym_J(2k,E)$ and ${\mathbf X}_0\in Sym_J(2,E)$) implies then that for all ${\mathbf X}'\in Sym_J(2k,E)$ and all  ${\mathbf X_0}\in Sym_J(2,E)$, one has

  $$\left\{\begin{array}{cc } M' {\mathbf X}'\; ^{t}M'=\nu{\mathbf X}' & \\
  M_0{\mathbf X_0}\; ^{t}M_0=\nu {\mathbf X}_0&\\
 M_j {\mathbf X_0}\; ^{t}M_j=0& {\rm for }\quad j=1,\ldots k.\end{array}\right.$$
  By induction , there exists  $a\in E^*$ such that  $\nu=a^2$, $M_k=\ldots =M_1=0$ and  $M'=\epsilon' a I_{2k}, M_0=\epsilon_0 a I_2$ with $\epsilon',\epsilon_0\in\{\pm 1\}$. \\
  Taking  ${\mathbf X_0}=0$ and  $\mathbf X'$ diagonal by blocks, each block being equal to  ${\mathbf Y}\in Sym_J(2,E)$, one obtains 
  $$   L_j{\mathbf Y}\;^{t}L_j=0 \quad{\rm for } \quad  j=1,\ldots k,\quad \textrm{ for all } \; {\mathbf Y}\in Sym_J(2,E)
$$ and hence  $L_k=\ldots =L_1=0$ from the case  $k=0$.\medskip
 
 \noindent  The equality  $M{\mathbf X}\; ^{t}M=a^2 \mathbf X$ for all matrices  ${\mathbf X}\in Sym_J(2(k+1), E)$ of the form
$${\mathbf X}=\left(\begin{array}{cc}0&  \begin{array}{c} {\mathbf Y}\\0\\ \vdots\end{array}\\ \begin{array}{ccc} ^{t}{\mathbf Y} & 0&\ldots \end{array} & 0\end{array}\right),$$
where   ${\mathbf Y}\in M(2,E)$ and $J_\pi \overline{{\mathbf Y}}={\mathbf Y} \;{^{t}J_\pi}$ implies then that  $\epsilon'\epsilon_0=1$.

{\it This ends   the proof of Lemma  \ref{lemma-G}}.  
\cqfd
\medskip

  \begin{definition}\label{def-G0} The subgroup  $G^0$ of  $G$ is defined to be the subgroup of elements  $[\mathbf g,1]$ of  $G$, with  $\mathbf g\in G^0(2(k+1))$. Hence we have: 
$${\rm Aut}_e(\go g)\subset G^0\subset G.$$
\end{definition}
\begin{rem}\label{rem-inclusion-groupes} Recall that for  $j\in\{0,\ldots, k\}$, the group $G_j$ (with $G_0=G$) is the analogue of the group $G$ for the Lie algebra $\tilde{\go g}_j$ and that $G_j\subset \overline{G}$. From the preceding Definition, one has an injection  $G_j^0\hookrightarrow G^0$ given by  $[\mathbf g_j, 1]  \mapsto [\mathbf g, 1]$ 
 where $\mathbf g_j\in G^0(2(k+1-j))$ and where 
  $\mathbf g=\left(\begin{array}{cc} {\mathbf g_j}& 0\\ 0 &I_j\end{array}\right)$. In what follows, we will identify  $G_j^0$ with a subgroup of  $G^0$ under this injection. In particular, an element  $g_j\in G_j^0$ will have a trivial action on   $\oplus_{s=0}^{j-1} \tilde{\go g}^{\lambda_j}$.\\
  In the same manner, we will denote by  $L_i^0$ the corresponding subgroup  for the Lie algebra $\tilde{\go l}_i$.
  \end{rem}

\noindent We will now normalize the relative invariants  $\Delta_j$ on $V_j^+$ for  $j=0,\ldots, k$. The determinant is an irreducible polynomial on the space of symmetric matrices  which satisfies: 
$$\textrm{det}( \mu^{-1} {\mathbf g} {\mathbf X}\; ^{t}{\mathbf g})=\mu^{-2(k+1)}\textrm{det}({\mathbf g})^2 \textrm{det}( {\mathbf X}),\quad {\mathbf X}\in Sym_J(2(k+1), E), [{\mathbf g},\mu]\in G.$$
Therefore one can normalize the fundamental relative invariant  $\Delta_0$  by setting 
$$\Delta_0\left(\begin{array}{cc}0&  {\mathbf X}\\ 0& 0\end{array}\right)=(-1)^{k+1}\textrm{det}( {\mathbf X}).$$
The character $\chi_0$ is given by  
$$\chi_0(g) =\mu^{-2(k+1)} \textrm{det}({\mathbf g})^2$$ for  $g=[{\mathbf g},\mu]\in G$. This implies that
$$\chi_0(G)\subset F^{*2}.$$

\noindent Similarly, the fundamental relative invariant  $\delta_0$   of  $ \widetilde{\go g}^{\lambda_0}$ (cf. Definition \ref{defdeltaj}) is given by 
$$\delta_0(X)=-{\rm det}( {\mathbf X}); \quad {\rm for }\; X=\left(\begin{array}{cc} 0 &E_{0,0}({\mathbf X})\\0 & 0\end{array}\right),\;  {\mathbf X}\in{\mathbb S}^+.
$$\medskip

\noindent Let us now fix   elements     $\gamma_{i,j}\in{\rm Aut}_e(\go g)\subset G^0$  satisfying the properties of Proposition  \ref{prop-gammaij}. Then
$$\gamma_{i,j} \left(\begin{array}{cc} 0 &E_{i,i}({\mathbf X})\\0 & 0\end{array}\right) =\left(\begin{array}{cc} 0 &E_{j,j}({\mathbf X})\\0 & 0\end{array}\right), \;{\rm for }\;  {\mathbf X}\in{\mathbb S}^+.$$\\
 We normalize   the relative invariant polynomials  $\delta_j$ of   $\widetilde{\go g}^{\lambda_j}$ by setting
$$\delta_j(X)=\delta_0(\gamma_{j,0}(X)),\quad X\in \widetilde{\go g}^{\lambda_j}.$$
From Theorem \ref{th-k=0}, the  $L_j$-orbits de $\widetilde{\go g}^{\lambda_j}\backslash\{0\}$ are given by 
$$\{X\in   \widetilde{\go g}^{\lambda_j}; \delta_j(X)\equiv v\;{\rm mod} F^{*2}\},\quad v\in (F^*/F^{*2})\backslash\{-disc(\delta_j)\}\}.$$
Then, we normalize the polynomials  $ \Delta_i$ (defined by their restriction to $V_i^+$) (cf. Definition \ref{defdeltaj}) by setting
$$\Delta_{i}(X_i+\ldots +X_k)=\prod_{i=j}^k \delta_j(X_j),\quad X_i\in \widetilde{\go g}^{\lambda_i}.$$
\begin{lemme}\label{lem-G0-orbites}  Let  $\mathbf X$ and  $\mathbf X'$ be two elements of  $\mathbb S^+$ such that ${\rm det}(\mathbf X)\equiv {\rm det}(\mathbf X')\;{\rm mod}\; F^{*2}$. Then there exists $\mathbf g\in G^0(2)$ such that ${\mathbf X}= {\mathbf g}{\mathbf X}'\;^{t}{\mathbf g}$.
\end{lemme}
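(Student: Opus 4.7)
The strategy is to reduce to the rank-one case. Since $\ell = 3$, the $1$-type of $\widetilde{\go g}$ is $B$ (see Definition \ref{def-1-type}), so by Corollary \ref{cor-classification-k=0} the subalgebra $\widetilde{\go l}_0 = \widetilde{\go g}^{-\lambda_0} \oplus [\widetilde{\go g}^{-\lambda_0}, \widetilde{\go g}^{\lambda_0}] \oplus \widetilde{\go g}^{\lambda_0}$ is isomorphic to $\go o(q_{(4,1)})$, and the orbit classification of Theorem \ref{th-k=0}(2) applies to it. The assignment $\mathbf{X} \mapsto X = \bigl(\begin{smallmatrix} 0 & E_{0,0}(\mathbf{X}) \\ 0 & 0 \end{smallmatrix}\bigr)$ identifies $\mathbb{S}^+$ with $\widetilde{\go l}_0^{\lambda_0}$, and with the normalization of the fundamental relative invariant $\delta_0$ fixed just before the lemma we have $\delta_0(X) = -\det(\mathbf{X})$. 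Consequently Theorem \ref{th-k=0}(2) says that the orbits of the full group associated to $\widetilde{\go l}_0$ on $\widetilde{\go l}_0^{\lambda_0}\setminus\{0\}$ are exactly the fibres of $\det$ modulo $F^{*2}$.

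The next step is to embed $G^0(2)$ into this rank-one picture. I would define $\iota : G^0(2) \hookrightarrow G^0$ by $\mathbf{g} \mapsto [\operatorname{diag}(\mathbf{g}, I_{2k}), 1]$. A direct block computation, using that $J$ is block-diagonal with $J_\pi$-blocks, shows that $\operatorname{diag}(\mathbf{g}, I_{2k})$ lies in $G^0(2(k+1))$ whenever $\mathbf{g} \in G^0(2)$, and that $\iota(\mathbf{g})$ centralizes each $H_{\lambda_j}$ for $j \geq 1$; hence $\iota(G^0(2)) \subset L^0$, stabilizes $\widetilde{\go l}_0$, and under the previous identification acts on $\mathbb{S}^+ \cong \widetilde{\go g}^{\lambda_0}$ exactly by the formula $\mathbf{X} \mapsto \mathbf{g}\mathbf{X}\,^{t}\mathbf{g}$. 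The lemma thus reduces to showing that the image $\iota(G^0(2))$ induces the same orbits on $\widetilde{\go l}_0^{\lambda_0}$ as the full group.

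For the reduction proper, I would first observe that for any $\lambda \in F^*$ the scalar matrix $\lambda I_2$ belongs to $G^0(2)$ and acts on $\mathbb{S}^+$ by multiplication by $\lambda^2$, hence scales $\det$ by $\lambda^2$; combined with the hypothesis, this lets me assume $\det(\mathbf{X}) = \det(\mathbf{X}')$. It then remains to check that, for each $c \in F^*$, the subgroup $\{\mathbf{g} \in G^0(2) : \det(\mathbf{g})^2 = 1\}$ acts transitively on the level set $\{\mathbf{X} \in \mathbb{S}^+ : \det(\mathbf{X}) = c\}$. The form $Q(\mathbf{X}) = \det(\mathbf{X}) = \pi N_{E/F}(x) - \mu^2$ is anisotropic of rank three on $\mathbb{S}^+$ (the anisotropy comes from $\pi \notin N_{E/F}(E^*)$, since $E/F$ is unramified), so by Witt's extension theorem $SO(Q)$ already acts transitively on every nonempty level set.

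I expect the main obstacle to be the final surjectivity of the natural map $\{\mathbf{g} \in G^0(2) : \det(\mathbf{g})^2 = 1\} \to SO(Q)$. This is a $p$-adic incarnation of the classical double cover $\operatorname{Spin}(3) \twoheadrightarrow SO(3)$: the algebra $\mathbb{L}$ is the unique quaternion division algebra over $F$ (since $E/F$ is unramified and $\pi$ is a uniformizer), $G^0(2)$ is canonically identified with $\mathbb{L}^*$, and the representation $\mathbf{g} \mapsto (\mathbf{X} \mapsto \mathbf{g}\mathbf{X}\,^{t}\mathbf{g})$ is, up to an explicit linear change of variables, the twisted adjoint action of $\mathbb{L}^*$ on its space of pure quaternions; that representation is known to surject onto the special orthogonal group of the reduced norm form. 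An alternative, more hands-on route is to bypass the quaternionic reformulation entirely and to exhibit a small family of explicit elements $\mathbf{g} \in G^0(2)$ whose action on $\mathbb{S}^+$ generates $SO(Q)$, using the surjectivity of the reduced norm $\det : \mathbb{L}^* \to F^*$ (standard for a quaternion division algebra over a non-archimedean local field) to match a prescribed ratio.
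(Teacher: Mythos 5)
Your proposal follows the same broad route as the paper (pass to the quaternionic picture, match determinants by a scaling, then invoke Witt's theorem for the anisotropic ternary form $Q=-\det$), but two of the steps do not hold as stated. First, the determinant-matching step: $\lambda I_2$ acts on $\mathbb{S}^+$ by $\mathbf X\mapsto\lambda^2\mathbf X$ and therefore multiplies $\det$ by $\lambda^4$, not $\lambda^2$. The hypothesis only gives $\det(\mathbf X)=s^2\det(\mathbf X')$ with $s\in F^*$ arbitrary, so scalars let you adjust the determinant only by fourth powers and you cannot in general reach $\det(\mathbf X)=\det(\mathbf X')$ (take $s=\pi$). What is needed is an element $\mathbf h\in G^0(2)$ with $\det(\mathbf h)=s$; this exists because $\det$ on $G^0(2)=\mathbb{L}\setminus\{0\}$ is the reduced norm of the quaternion division algebra $\mathbb{L}$, which is surjective onto $F^*$ — the tool you mention only at the very end and for a different purpose.

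Second, and more seriously, the surjectivity you describe as ``known'' is false. The subgroup $\{\mathbf g\in G^0(2):\det(\mathbf g)=1\}$ is the norm-one group $SL_1(\mathbb{L})$, and its image in $SO(Q)\cong \mathbb{L}^*/F^*$ is the kernel of the spinor norm, a subgroup of index $|F^*/F^{*2}|=4$: the classical surjectivity is that of the conjugation action of the \emph{full} unit group $\mathbb{L}^*$ (i.e.\ of $PGL_1(\mathbb{L})$) onto $SO_3$, and it does not descend to $SL_1$ on $F$-points (for instance the class of $j$ with $j^2=\pi$ is not represented by a norm-one element since $-\pi\notin F^{*2}$). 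The elements with $\det(\mathbf g)=-1$ act on $\mathbb{S}^+$ with determinant $\det(\mathbf g)^3=-1$, hence land in $O(Q)\setminus SO(Q)$ and cannot fill the missing part of $SO(Q)$; moreover the kernel of the spinor norm does not act transitively on a level set $\{\det=c\}$, because its product with the point stabilizer (a form of $SO_2$, whose spinor norm image has order at most $2$) cannot exhaust $SO(Q)$. So ``Witt plus surjectivity onto $SO(Q)$'' does not close the argument. The transitivity of $\{\det(\mathbf g)=\pm1\}$ on level sets is in fact true, but it genuinely uses the improper similitudes: one either computes the orbit directly in the model $\mathbb{L}=E_0\oplus E_0\mathbf w$ (where $\mathbf g r_0\bar{\mathbf g}=(N(a_1)+bN(a_2))r_0-2a_1a_2r_0\mathbf w$, and the two roots $(\lambda\pm t)/2$ always distribute between $N_{E_0/F}(E_0^*)$ and $bN_{E_0/F}(E_0^*)$), or one follows the paper and exhibits explicit elements of $G^0(2)$ of reduced norm $\pm\pi$ and $\pm\varepsilon$ conjugating $\mathbf Y$ to $\pi\mathbf Y$ and $\varepsilon\mathbf Y$; your proposal contains no substitute for this step.
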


\begin{proof} We consider here the case  $k=0$, that is the algebra $\widetilde{\go l}_0=\widetilde{\go g}^{-\lambda_0}\oplus \go l_0\oplus \widetilde{\go g}^{\lambda_0}$ with $\go l_0=[\widetilde{\go g}^{-\lambda_0},\widetilde{\go g}^{\lambda_0}]\simeq \mathbb L$,  $\widetilde{\go g}^{\lambda_0}\simeq \mathbb S^+\simeq F^3$ and $\widetilde{\go g}^{-\lambda_0}\simeq \mathbb S^-$. Let  $Q$ be the quadratic form on  $\mathbb S^+$ defined by   $Q(\mathbf Y)=-{\rm det}( \mathbf Y)=-\pi a^2+\varepsilon\pi b^2+c^2$ for $\mathbf Y=\left(\begin{array}{cc} \pi(a+\sqrt{\varepsilon}b) &c\\ c & a-\sqrt{\varepsilon}b\end{array}\right)\in\mathbb S^+$ (where $a,b,c\in F$). Remember also that this form is anisotropic. \medskip

\noindent     Let $U$ be the connected algebraic subgroup of  $G^0(2)\subset GL(2,E)$ whose Lie algebra is $\go{u}=[\mathbb L, \mathbb L]\subset \go{sl}(2,E)$.  We will first give a surjection from $U$ onto $SO(Q)$. By \cite{Tauvel-Yu} (Corollary 24.4.5 and Remark 24.2.6) the group $U$ is the intersection of the algebraic subgroups of $GL(2,E)$ whose Lie algebra contains $\go{u}.$ As the Lie algebra of $SL(2,E)$ is $\go{sl}(2,E)$ (see for example \cite{Borel}, Chap. I, 3.9 (d)), we get that $U\subset SL(2,E)$ and hence the elements in $U$ have determinant $1$.

 We denote by  $\Psi_{\mathbf g}$ the action of an element    ${\mathbf g}$ of  $ U$   on $\mathbb S^+$, in other words  $\Psi_{\mathbf g}(\mathbf Y)={\mathbf g}{\mathbf Y}\; ^{t}{\mathbf g}$. As $U\subset SL(2,E)$ one has $\Psi_{\mathbf g}\in O(Q)$. From Lemma \ref{lemma-G} one has  $\Psi_{\mathbf g}=Id_{{\bb S}^+}$ if and only if ${\bf g}=\pm I_{2}$. Therefore the map 
 $$\begin{array}{rll}
\Psi:  U_{1}=U/\{\pm I_{2}\}&\longrightarrow&O(Q)\\
 {\bf g}&\longmapsto& \Psi_{\mathbf g}
 \end{array}$$
 
 (defined up to a abuse of notation) is injective.

Let   $\overline{F}$ be an algebraic closure of  $F$.  It is well known that  $SO(Q,\overline{F})$ is connected. As $U$ is connected, the same is true for $U_{1}$. Therefore $\Psi(U_{1}({\overline F}))\subset SO(Q, \overline{F})$, and $\Psi$ is an isomorphism from $U_{1}$ on its image.

From \cite{Tauvel-Yu} (Theorem 24.4.1), the differential of $\Psi$ is injective. As the Lie algebra of $U_{1}$ (which is equal to $\go{u}$) has the same dimension ($3$) as $\go{o}(Q)$, the map $\Psi$ is also a submersion. Hence $\Psi(U_{1}({\overline F}))$ is open in $SO(Q,\overline{F})$. By \cite{Borel} Chap. I, Corollary 1.4, the group $\Psi(U_{1}({\overline F}))$ is also closed in $SO(Q,\overline{F})$. Therefore $\Psi(U_{1}({\overline F}))= SO(Q,\overline{F})$. 

\vskip 30pt

 Let  $\varphi\in SO(Q)$ and  $ \widetilde{\mathbf g}\in U(\overline{F})$ be such that $\Psi_{ \widetilde{\mathbf g}}=\varphi$. The element $g=\left(\begin{array}{cc}  \widetilde{\mathbf g} & 0\\ 0 & \;^{t} \widetilde{\mathbf g}^{-1}\end{array}\right)$ normalizes   $\widetilde{\go g}^{\lambda_0}$, hence by duality $g$  normalizes   $\widetilde{\go g}^{-\lambda_0}$  and therefore it normalizes   $\go l_0$ and $[\go l_0,\go l_0]$. This implies that $ \widetilde{\mathbf g}\in U$. Finally the map $\mathbf g\mapsto \Psi_\mathbf g$ is surjective from  $U$ onto  $SO(Q)$.\medskip

Let us now prove the Lemma. From the assumption, there exists  $x\in F^*$ such that  $Q(\mathbf X)=Q(x \mathbf X')$. From Witt's Theorem $SO(Q)$ acts transitively on the  set $\{{\mathbf Y}\in\mathbb S^+; Q({\mathbf Y})=t\}$, hence there exists  $\mathbf g\in U\subset G^0(2)$ such that $\mathbf X=x {\mathbf g}\mathbf X'\; ^{t}{\mathbf g}$. 

In order to prove the Lemma, it is now enough to prove that for all ${\mathbf Y}\in \mathbb S^+\setminus\{0\}$, the elements   $\varepsilon\mathbf Y$, $\pi \mathbf Y$ and  $\mathbf Y$ are conjugated under $G^0(2)$, as $F^*/F^{*2}=\{1,\varepsilon,\pi,\varepsilon\pi\}$.

Let ${\mathbf Y}\in \mathbb S^+\setminus\{0\}$. As  $J_\pi=\pi J_\pi^{-1}$, one has  $\pi {\mathbf Y}=J_\pi \overline{\mathbf Y}\; ^{t}J_\pi$. And as  ${\rm det}({\mathbf Y})={\rm det}(\overline{\mathbf Y})$, the preceding discussion implies that there exists ${\mathbf g}\in  U\subset G^0(2)$ such that  ${\mathbf g}{\mathbf Y}\;^{t}{\mathbf g}=\overline{\mathbf Y}$.  As  $J_\pi\in G^0(2)$, it follows that  $\pi{\mathbf Y}$ and ${\mathbf Y}$ are conjugated by the element $J_\pi{\mathbf g}$ of $G^0(2)$.\\

  Let us  prove that $ \varepsilon\mathbf Y$ is $G^0(2)$-conjugate to $\mathbf Y$. As $-disc(Q)= {\varepsilon}$,  a system of representatives of the $L_0$ - orbits in $\tilde{\go g}^{\lambda_0}\simeq \mathbb S^+$  is given by   
$$X_{1,0}=\left(\begin{array}{cc} \pi &0\\ 0 & 1\end{array}\right),\quad X_{ 0,1}=\left(\begin{array}{cc} 0 &1\\ 1 & 0\end{array}\right),\;\;{\rm and }\;\; X_{\alpha,0}=\left(\begin{array}{cc} \pi\overline{\alpha} &0\\ 0 & \alpha\end{array}\right),$$
where $\alpha\overline{\alpha}= {\varepsilon}$ (if  $-1=\alpha_0^2\in F^{*2}$ then $\alpha=\alpha_0\sqrt{ {\varepsilon}}$, and if  $-1\notin F^{*2}$ then $ {\varepsilon}=-1=x_0^2+y_0^2$ with $x_0,y_0\in F^*$  and  $\alpha= x_0+y_0\sqrt{ {\varepsilon}}$) (see Theorem \ref{th-k=0} 2)).\\
 By proposition \ref{prop-G}, $L_0$ is the group of elements $[\mathbf g,\mu]$ with $\mathbf g\in G^0(2)\cup\sqrt{ {\varepsilon}} G^0(2)$ and $\mu\in F^*$. Hence, there exist $y\in F^*$ and $\mathbf g\in G^0(2)$ such that $y {\mathbf g} \mathbf Y\;^{t}{\mathbf  g}$ equals to $X_{1,0}, X_{0,1}$ or $X_{\alpha,0}$. Thus it is enough to prove the result for this set of representatives.\\
  
Let  $\mathbf g_ {\varepsilon}=\left(\begin{array}{cc} \sqrt{ {\varepsilon}}&0\\ 0 & -\sqrt{ {\varepsilon}}\end{array}\right)\in G^0(2)$,  then  $\mathbf{g}_{\ \varepsilon}X_{1,0}\;^{t}\mathbf{g}_{ {\varepsilon}}= {\varepsilon}X_{1,0}$ and  $\mathbf{g}_{ {\varepsilon}}X_{\alpha,0}\;^{t}\mathbf{g}_{ {\varepsilon}}= {\varepsilon}X_{\alpha,0}$. If $\mathbf g_\alpha=\left(\begin{array}{cc} \alpha&0\\ 0 & \overline{\alpha}\end{array}\right)\in G^0(2)$,  then $\mathbf{g}_\alpha X_{ 0,1}\;^{t}\mathbf{g}_\alpha= {\varepsilon}X_{0,1}$. \\
The Lemma is proved.

\end{proof}

  \begin{cor}\label{cor-conjLj} Let  $k\in\N$. Let    $X=X_0+\ldots +X_k$ be a generic element of  $\oplus_{i=0}^k\tilde{\go g}^{\lambda_i}$. Let  $j\in\{0,\ldots, ,k\}$ and $X'_j\in \tilde{\go g}^{\lambda_j}$ such that   $\delta_j(X_j)=\delta_j(X'_j)$ mod $F^{*2}$. Then $X_0+\ldots X_{j-1}+X'_j+X_{j+1}+\ldots X_k$ is $L^0_j$-conjugated to    $X$.
 \end{cor}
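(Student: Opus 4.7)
The strategy is to transport the problem to a single slot of the principal diagonal, apply Lemma~\ref{lem-G0-orbites}, and then lift the resulting element of $G^0(2)$ back to an element of $L_j^0 \subset G^0$ that acts only on the $j$-th block.

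First, I would make the matrix realization explicit. Writing $X_j = \left(\begin{smallmatrix}0 & E_{j,j}(\mathbf X_j)\\ 0 & 0\end{smallmatrix}\right)$ with $\mathbf X_j \in \mathbb S^+$, and similarly $X'_j \leftrightarrow \mathbf X'_j \in \mathbb S^+$, the normalization $\delta_j(X_j) = \delta_0(\gamma_{j,0}(X_j))$ together with $\delta_0\!\left(\begin{smallmatrix}0 & E_{0,0}(\mathbf X)\\ 0 & 0\end{smallmatrix}\right) = -\det(\mathbf X)$ and the fact that $\gamma_{j,0}$ moves the $j$-th block to the $0$-th block (Proposition~\ref{prop-gammaij}) gives $\delta_j(X_j) = -\det(\mathbf X_j)$. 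The hypothesis $\delta_j(X_j) \equiv \delta_j(X'_j) \bmod F^{*2}$ thus becomes $\det(\mathbf X_j) \equiv \det(\mathbf X'_j) \bmod F^{*2}$, so Lemma~\ref{lem-G0-orbites} furnishes some $\mathbf g \in G^0(2)$ with $\mathbf X_j = \mathbf g\, \mathbf X'_j\, {}^t\mathbf g$.

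Next I would lift $\mathbf g$ to the full group. Let $\tilde{\mathbf g} \in GL(2(k+1),E)$ be the block-diagonal matrix with $\mathbf g$ in the $(k+1-j)$-th $2\times 2$ block and $I_2$ in every other block. Since $J$ is block-diagonal with $J_\pi$ on each block and $\mathbf g \in G^0(2)$ satisfies $J_\pi \bar{\mathbf g} = \mathbf g J_\pi$, we get $J\bar{\tilde{\mathbf g}} = \tilde{\mathbf g} J$, so $\tilde{\mathbf g} \in G^0(2(k+1))$ and hence $g := [\tilde{\mathbf g}, 1] \in G^0$ by Proposition~\ref{prop-G}. A direct computation using the block structure of $X_i = \left(\begin{smallmatrix}0 & E_{i,i}(\mathbf X_i)\\ 0 & 0\end{smallmatrix}\right)$ shows $g\cdot X_i = X_i$ for every $i \neq j$ (since $\mathbf g$ acts on a block where $X_i$ has zero entry) and $g\cdot X'_j = X_j$. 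Therefore $g$ conjugates $X_0 + \cdots + X_{j-1} + X'_j + X_{j+1} + \cdots + X_k$ to $X$.

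It remains to check that $g$ actually lies in the subgroup $L_j^0$ of $G^0$. This follows from the identification of Remark~\ref{rem-inclusion-groupes}: the Lie algebra $\tilde{\go l}_j$ sits inside $\tilde{\go g}$ as the subalgebra supported in the $(k+1-j)$-th pair of blocks (in both the top-left and bottom-right $2(k+1)\times 2(k+1)$ quadrants), and the group analogous to $G^0$ for $\tilde{\go l}_j$ consists precisely of the elements $[\tilde{\mathbf g},1]$ where $\tilde{\mathbf g}$ is block-diagonal with a single nontrivial $G^0(2)$-block at position $j$ and identities elsewhere; by construction our $g$ has exactly this form.

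\textbf{Main obstacle.} The routine part is Lemma~\ref{lem-G0-orbites}, which is already proved. The only delicate point is the bookkeeping around the embedding $L_j^0 \hookrightarrow G^0$ together with the sign/constant conventions fixing $\delta_j$ via $\gamma_{j,0}$; once one checks that $\gamma_{j,0}$ conjugates the block at position $j$ to the block at position $0$ (as asserted in the paragraph following Lemma~\ref{lem-G0-orbites}), the chain $\delta_j(X_j) \equiv \delta_j(X'_j) \Longrightarrow \det(\mathbf X_j) \equiv \det(\mathbf X'_j)$ is immediate and the rest is linear-algebraic block manipulation.
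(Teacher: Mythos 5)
Your proposal is correct and follows essentially the same route as the paper: reduce to the matrix condition $\det(\mathbf X_j)\equiv\det(\mathbf X'_j) \bmod F^{*2}$, invoke Lemma~\ref{lem-G0-orbites} to get a $G^0(2)$-conjugating element, and lift it block-diagonally into $L_j^0$ via Remark~\ref{rem-inclusion-groupes}. The paper's proof is just a terser version of yours, leaving the block-diagonal lifting and the verification that the lift fixes the other $X_i$ implicit as "a consequence of the definition of $L_j^0$".
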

  \begin{proof} Let  $X_j= \left(\begin{array}{cc} 0 &E_{j,j}({\mathbf X})\\0 & 0\end{array}\right)$ and $X'_j= \left(\begin{array}{cc} 0 &E_{j,j}({\mathbf X'})\\0 & 0\end{array}\right)$, where   ${\mathbf X}$ and  $\mathbf X'$ belong to  $\mathbb S^+$. Then  $\delta_j(X_j)=\delta_j(X'_j)$ mod $F^{*2}$ if and only if   ${\rm det}({\mathbf X})={\rm det}({\mathbf X}')$ mod $F^{*2}$. By Lemma  \ref{lem-G0-orbites}, the elements   ${\mathbf X}$ and  ${\mathbf X'}$ are  $G^0(2)$-conjugated. The result is then a consequence of the definition of  $L_j^0$ (see Remark \ref{rem-inclusion-groupes}).
  
  \end{proof}
\vskip 3pt
\begin{lemme}\label{lem-k=1} Suppose that  $k=1$. Let  $X=X_0+X_1$ and $Y=Y_0+Y_1$  be two elements of  $V^+$ such that   $X_j,Y_j\in\widetilde{\go g}^{\lambda_j}\backslash\{0\}$ and  $\delta_0(X_0)\equiv \delta_1(X_1)\;{\rm mod}\; F^{*2}$ and  $\delta_0(Y_0)\equiv \delta_1(Y_1)\;{\rm mod}\; F^{*2}$. Then $X$ and $Y$  are in the same     $G^0$-orbit.

\end{lemme}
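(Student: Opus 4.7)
The plan is to reduce both $X$ and $Y$ to a common ``diagonal'' normal form via $G^{0}$-conjugation, and then to connect different such forms using appropriate non-block-diagonal elements of $G^{0}(4)$.

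\textbf{Step 1 (Equalizing the $\delta$-values using $L_{j}^{0}$).} I would first apply Corollary \ref{cor-conjLj} with $j=1$. Since $\delta_{0}(X_{0})\equiv\delta_{1}(X_{1})\pmod{F^{*2}}$, the scalar $\delta_{0}(X_{0})$ lies in the $F^{*2}$-class of $\delta_{1}(X_{1})$, so one can find $\tilde X_{1}\in\widetilde{\go g}^{\lambda_{1}}$ with $\delta_{1}(\tilde X_{1})=\delta_{0}(X_{0})=:c$ such that $X_{0}+\tilde X_{1}$ is $L_{1}^{0}$-conjugate to $X_{0}+X_{1}$. Hence, up to $G^{0}$-conjugation, I can assume $\delta_{0}(X_{0})=\delta_{1}(X_{1})=c$; the same reduction applied to $Y$ yields $\delta_{0}(Y_{0})=\delta_{1}(Y_{1})=c'$.

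\textbf{Step 2 (Reduction to the diagonal form $\mathrm{diag}(\mathbf Z,\mathbf Z)$).} In the matrix realization fixed before Proposition \ref{prop-G}, $X$ corresponds to $\mathrm{diag}(\mathbf X_{1},\mathbf X_{0})$ with $\mathbf X_{0},\mathbf X_{1}\in\mathbb S^{+}$ and $\det(\mathbf X_{0})=\det(\mathbf X_{1})=-c$. By Lemma \ref{lem-G0-orbites} applied to $\mathbb S^{+}$, there exists $\mathbf h\in G^{0}(2)$ with $\mathbf h\mathbf X_{0}\,{}^{t}\mathbf h=\mathbf X_{1}$. The block-diagonal element $[\mathrm{diag}(I_{2},\mathbf h),1]\in G^{0}$ then sends $\mathrm{diag}(\mathbf X_{1},\mathbf X_{0})$ to $\mathrm{diag}(\mathbf Z,\mathbf Z)$ where $\mathbf Z:=\mathbf X_{1}$. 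The same reduction applied to $Y$ yields $\mathrm{diag}(\mathbf W,\mathbf W)$ with $\det(\mathbf W)=-c'$. Thus it remains to prove $\mathrm{diag}(\mathbf Z,\mathbf Z)\sim_{G^{0}}\mathrm{diag}(\mathbf W,\mathbf W)$ for arbitrary $\mathbf Z,\mathbf W\in\mathbb S^{+}$ that are nonzero.

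\textbf{Step 3 (Connecting different diagonal forms).} The key is to exhibit, for any prescribed ratio $\det(\mathbf W)/\det(\mathbf Z)\in F^{*}$, an element $\mathbf g\in G^{0}(4)$ realizing the conjugacy. The guiding family is the ``rotation''
$$\mathbf g_{A,B}=\begin{pmatrix} A & B\\ -B & A\end{pmatrix},\qquad A,B\in\mathbb L,$$
which belongs to $G^{0}(4)$ and satisfies
$$\mathbf g_{A,B}\begin{pmatrix}\mathbf Z&0\\ 0&\mathbf Z\end{pmatrix}{}^{t}\mathbf g_{A,B}=\begin{pmatrix} A\mathbf Z\,{}^{t}A+B\mathbf Z\,{}^{t}B & B\mathbf Z\,{}^{t}A-A\mathbf Z\,{}^{t}B\\ \ast &\ast\end{pmatrix}.$$
Specializing to $A=\alpha I,\,B=\beta I$ with $\alpha,\beta\in F$ kills the off-diagonal blocks and scales $\mathbf Z$ by $\alpha^{2}+\beta^{2}$. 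Combined with block-diagonal $G^{0}(2)\times G^{0}(2)$-actions (which act by $G^{0}(2)$-conjugation on $\mathbf Z$, hence modify $\det(\mathbf Z)$ by $F^{*2}$), and with analogous rotations built from $J_{\pi}\in\mathbb L$ (which provide rescalings by factors in $\pi F^{*2}$ through similar computations), one can reach any class of $F^{*}/F^{*2}$.

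\textbf{Main obstacle.} The principal technical difficulty lies entirely in Step~3: verifying that the subgroup of $F^{*}/F^{*2}$ generated by the scalings $\{\alpha^{2}+\beta^{2}:\alpha,\beta\in F\}$ arising from scalar rotations, together with the contributions from rotations involving $J_{\pi}$ and the $G^{0}(2)$-action, equals the whole of $F^{*}/F^{*2}$. When $-1\in F^{*2}$ the form $\alpha^{2}+\beta^{2}$ is universal and the scalar rotations alone suffice; when $-1\notin F^{*2}$, the sums of two squares only cover an index-$2$ subgroup, and the remaining class must be reached by rotations with $A,B\in\mathbb L\setminus F\cdot I$ (e.g.\ involving $J_{\pi}$, which introduces scalings in the coset $\pi F^{*2}$). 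The proof therefore reduces to an explicit case analysis of the reachable scalings, using the quaternion-algebra structure $\mathbb L\simeq\mathbb H$ to describe $G^{0}(4)\simeq GL(2,\mathbb H)$ and its action on the diagonal embedding $\mathbb S^{+}\hookrightarrow V^{+}$.
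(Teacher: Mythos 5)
Your Steps 1 and 2 coincide with the first reduction in the paper's proof: using Corollary \ref{cor-conjLj} (and the normalization $\delta_1=\delta_0\circ\gamma_{1,0}$) one may assume $X=\mathrm{diag}(\mathbf Z,\mathbf Z)$ and $Y=\mathrm{diag}(\mathbf W,\mathbf W)$ with $\mathbf Z,\mathbf W\in\mathbb S^+\setminus\{0\}$, and the case $\det(\mathbf Z)\equiv\det(\mathbf W)\pmod{F^{*2}}$ is immediate. The whole content of the lemma is the remaining case $\det(\mathbf Z)\not\equiv\det(\mathbf W)\pmod{F^{*2}}$, and there your Step 3 has a genuine gap: every operation in your toolkit sends the common diagonal block $\mathbf Z$ to something of the form $\lambda\,\mathbf g_1\mathbf Z\,{}^t\mathbf g_1$ with $\lambda\in F^*$ and $\mathbf g_1\in G^0(2)$, hence multiplies $\det(\mathbf Z)$ by $\lambda^2\det(\mathbf g_1)^2\in F^{*2}$. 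Indeed, the scalar rotation with $A=\alpha I$, $B=\beta I$ gives $\mathbf Z\mapsto(\alpha^2+\beta^2)\mathbf Z$ and $\det$ changes by $(\alpha^2+\beta^2)^2$; the $J_\pi$-rotation gives $\mathbf Z\mapsto J_\pi\mathbf Z\,{}^tJ_\pi=\pi\overline{\mathbf Z}$ and $\det$ changes by $\pi^2$; and block-diagonal conjugation changes $\det$ by $\det(\mathbf g_i)^2\in F^{*2}$ (by Lemma \ref{lem-G0-orbites} together with Theorem \ref{th-d=3}(1), $G^0(2)$-conjugacy in $\mathbb S^+$ preserves, and is detected by, $\det$ mod $F^{*2}$). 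So even if the set of reachable scalings $\alpha^2+\beta^2$ were all of $F^*$ — the question you isolate as the ``main obstacle'' — it would not help: rescaling $\mathbf Z$ never changes the class of $\det(\mathbf Z)$ modulo squares, so none of these elements can carry $\mathrm{diag}(\mathbf Z,\mathbf Z)$ to $\mathrm{diag}(\mathbf W,\mathbf W)$ when the two determinant classes differ. The obstacle you pose is therefore not the right one, and resolving it as stated would still not prove the lemma.

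The missing idea is an element of $G^0(4)$ whose off-diagonal blocks genuinely mix the two copies of $\mathbb S^+$ in a non-scalar way. The paper produces it as follows: one shows, using the isotropy (hence universality) of the binary form $A^2-B^2$, that the equation $\det(a\mathbf Z-\mathbf W)=b^2\det(\mathbf Z)$ has a solution with $a_0b_0\neq 0$; since $\det(a_0\mathbf Z-\mathbf W)\equiv\det(-a_0\mathbf Z)\pmod{F^{*2}}$, Lemma \ref{lem-G0-orbites} yields $\mathbf g_1\in G^0(2)$ with $\mathbf W=a_0\mathbf Z+a_0\mathbf g_1\mathbf Z\,{}^t\mathbf g_1$, and then the matrix $\left(\begin{smallmatrix}I_2&\mathbf g_1\\ -\mathbf Z({}^t\mathbf g_1)\mathbf Z^{-1}&I_2\end{smallmatrix}\right)$ lies in $G^0(4)$ and conjugates $\mathrm{diag}(a_0\mathbf Z,a_0\mathbf Z)$ to $\mathrm{diag}(\mathbf W,\mathbf W')$ with $\det(\mathbf W')\equiv\det(\mathbf W)$; one finishes with block-diagonal adjustments. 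Your general rotation $\left(\begin{smallmatrix}A&B\\-B&A\end{smallmatrix}\right)$ with non-scalar $A,B$ subject to $B\mathbf Z\,{}^tA=A\mathbf Z\,{}^tB$ is in the right spirit (the new block $A\mathbf Z\,{}^tA+B\mathbf Z\,{}^tB$ is a sum whose determinant can change class), but you give no argument that the required class is attained, and the only cases you compute are exactly the ones that cannot work.
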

\begin{proof} The normalization of  $\delta_1$, the hypothesis,   and Corollary \ref{cor-conjLj}  imply  that the elements  $X_1$ and  $\gamma_{0,1}(X_0)$(respectively  $Y_1$ and  $\gamma_{0,1}(Y_0)$) are in the same $L_1^0$-orbit. As  $L_1^0$ acts trivially on  $\widetilde{\go g}^{\lambda_0}$, one can suppose that $X_1=\gamma_{0,1}(X_0)$  and $Y_1=\gamma_{0,1}(Y_0)$.\medskip 

If   $\delta_0(X_0)\equiv \delta_0(Y_0)\;{\rm mod}\; F^{*2}$ then Corollary  \ref{cor-conjLj}   implies that  $X_j$ and $Y_j$ are conjugated by an element   $l_j$ in $L_j^0$ for  $j=0,1$ and then $X$ and $Y$ are conjugated by   $l_0l_1\in G^0$.\medskip

\noindent  We assume now that   $\delta_0(X_0)\not\equiv \delta_0(Y_0)\;{\rm mod}\; F^{*2}$. We can then write 
$$X=\left(\begin{array}{cc} 0 & \begin{array}{cc} {\mathbf X} & 0\\ 0 & {\mathbf X} \end{array}\\ 0 &0\end{array}\right)\quad {\rm and } \quad Y=\left(\begin{array}{cc} 0 & \begin{array}{cc} {\mathbf Y} & 0\\ 0 & {\mathbf Y} \end{array}\\ 0 &0\end{array}\right),$$
with  ${\mathbf X},{\mathbf Y}\in\mathbb S^+\backslash\{0\}$, and $\det({\mathbf X}) \not\equiv  \det({\mathbf Y}) \;{\rm mod}\; F^{*2}$.\\
 Consider the polynomial  $Q(a,b)$ defined on  $F^2$ by 
 
$$Q(a,b)= {\rm det}(a{\mathbf X}-{\mathbf Y})-b^2{\rm det}({\mathbf X}).$$

Then   $Q(a,b)= {\rm det}({\mathbf X}) (a^2-b^2)+2a\; C_0({\mathbf X},{\mathbf Y})+{\rm det}({\mathbf Y})$ where  $C_0$ is a polynomial function in the variables $({\mathbf X},{\mathbf Y})$. We set 
$C_1({\mathbf X},{\mathbf Y})=({\rm det}({\mathbf X}))^{-1}C_0({\mathbf X},{\mathbf Y})$, and hence
$$Q(a,b)= {\rm det}({\mathbf X}) (a^2-b^2)+2a\;{\rm det}({\mathbf X}) C_1({\mathbf X},{\mathbf Y})+{\rm det}({\mathbf Y}).$$
We obtain then  
$$Q(a,b)=  {\rm det}({\mathbf X})\Big[ \big(a+C_1({\mathbf X},{\mathbf Y})\big)^2-b^2\Big]+ {\rm det}({\mathbf Y})- {\rm det}({\mathbf X})C_1({\mathbf X},{\mathbf Y})^2.$$
As the quadratic form  $(A,B)\mapsto  A^{2}-B^{2}$ is isotropic, it represents all elements of  $F$ (\cite{Lam} Theorem I.3.4 p.10) and hence it exists  $(A_0,B_0)$ such that $$ {\rm det}({\mathbf X})(A_0^2-B_0^2)+ {\rm det}({\mathbf Y})- {\rm det}({\mathbf X})C_1({\mathbf X},{\mathbf Y})^2=0.$$
 It follows that the pair  $(a_0,b_0)=(A_0-C_1({\mathbf X},{\mathbf Y}),B_0)$ satisfies $Q(a_0,b_0)=0$ or equivalently,  ${\rm det} (a_0 {\mathbf X}-{\mathbf Y})=b_0^2{\rm det}({\mathbf X})$.  \\
  As $\mathbf X$ and $\mathbf Y$ belong to $\mathbb S^+$, we have ${\rm det} (a_0 {\mathbf X}-{\mathbf Y})=0$ if and only if $ a_0 {\mathbf X}-{\mathbf Y} =0$. As $\det({\mathbf X}) \not\equiv  \det({\mathbf Y}) \;{\rm mod}\; F^{*2}$,   we deduce that  $a_0\neq 0$ and $b_0\neq 0$. Therefore  $a_0 {\mathbf X}-{\mathbf Y}$ and   $-a_0{\mathbf X}$ are two non zero elements of  ${\mathbb S}^+$ such that  ${\rm det}(a_0{\mathbf X}-{\mathbf Y})\equiv{\rm det}(-a_0{\mathbf X})\;{\rm mod}\;F^{*2}$.  By Lemma \ref{lem-G0-orbites}, it exists  $ {\mathbf g}_1 \in  G^0(2)$ such that  $a_0{\mathbf X}-{\mathbf Y}=-   a_0 {\mathbf g}_1{\mathbf X}\;^{t}{\mathbf g}_1$, which is equivalent to  
$${\mathbf Y}=a_0{\mathbf X}+a_0 {\mathbf g}_1 {\mathbf X}(^{t}{\mathbf g}_1).$$
Let us set  $${\mathbf g}=\left(\begin{array}{cc} I_2 & {\mathbf g}_1\\ - {\mathbf X}(^{t}{\mathbf g}_1) {\mathbf X}^{-1} & I_2\end{array}\right).$$
 As $^{t}\mathbf X=\mathbf X$,   an easy computation shows that:
$$ {\mathbf g}\left(\begin{array}{cc} a_0{\mathbf X} & 0\\ 0 &a_0{\mathbf X}\end{array}\right)(^{t}{\mathbf g})=\left(\begin{array}{cc} {\mathbf Y} & 0\\ 0 & {\mathbf Y}'\end{array}\right),\;{\rm with  }\;\;{\mathbf Y}'=a_0{\mathbf X}+a_0{\mathbf X} (^{t}{\mathbf g}_1){\mathbf X}^{-1} {\mathbf g}_1{\mathbf X}\in \bb S^+.\eqno (*)$$

As ${\mathbf Y}=a_0{\mathbf X}+a_0 {\mathbf g}_1 {\mathbf X}(^{t}{\mathbf g}_1)\neq0$, one has  $ {\mathbf X}\neq - {\mathbf g}_1 {\mathbf X}(^{t}{\mathbf g}_1)$ and this is equivalent to   ${\mathbf X}(^{t}{\mathbf g}_1) {\mathbf X}^{-1} {\mathbf g}_1 \neq -I_2$. This implies that    ${\mathbf Y}'\neq 0$. Then,  by computing the determinant in $(*)$ above, we get  ${\rm det}({\mathbf g})\neq 0$ and  ${\rm det}({\mathbf Y})={\rm det}({\mathbf Y}')$ modulo $F^{*2}$. 

As   $J_\pi\overline{{\mathbf g}_1}={\mathbf g}_1J_\pi$ and   $J_\pi\overline{\mathbf X}={\mathbf X}\; ^{t}J_\pi$, a simple computation shows that $J\overline{{\mathbf g}}={\mathbf g}J$ and hence  ${\mathbf g}\in G^0(4)$. 

The relation ${\rm det}({\mathbf Y})={\rm det}({\mathbf Y}')$ modulo $F^{*2}$ and Lemma  \ref{lem-G0-orbites} imply  that there exists  ${\mathbf g}_0\in G^0(2)$ such that $$ {\mathbf g}_0  {\mathbf Y}'\; ^{t}{\mathbf g}_0=  {\mathbf Y} .$$
We set
$${\mathbf l}_0:=\left(\begin{array}{cc}  I_2 & 0\\ 0 & {\mathbf g}_0\end{array} \right)\in G^0(4).$$
Then    $g=[{\mathbf g}, 1]$ and  $l_0=[{\mathbf l}_0, 1]$ belong to $G^0$ and satisfy
 $ l_0g( a_0X)=Y$. Using Lemma \ref{lem-G0-orbites} again, there exists $\mathbf g_2\in G^0(2)$ such that $a_0\mathbf X= {\mathbf g}_2 {\mathbf X}\;^{t}{\mathbf g}_2$. Taking ${\mathbf g}'_2=\left(\begin{array}{cc}\mathbf g_2 & 0\\ 0 & \mathbf g_2\end{array}\right)\in G^0(4)$ and $g_2=[\mathbf g'_2,1]\in G^0$, we deduce that $l_0g g_2 X=Y$  and $l_0g g_2\in G^0$. The Lemma is proved.

\end{proof}


We are now able to describe the $G$-orbits in $V^+$ in the case $\ell=3$.

Let us set:
$$e_1=1,\,e_2=\pi,\,e_3= \varepsilon\pi $$

and remember that $\{1,\pi, \varepsilon\pi\}=F^*/F^{*2} \backslash\{ -disc(\delta_0)\}$.\\

For  $l\in\{1,2,3\}$ and  $X=X_m+\ldots +X_k\in V^+$ with $X_j\in \widetilde{\go g}^{\lambda_j}\backslash\{ 0\}$, we define 
$$n_l(X)= \#\{  j\in\{m,\ldots, k\} \text{  such that }\delta_j(X_j)=e_l\text{ mod } F^{*2}\}$$. 
\begin{theorem}\label{th-d=3} \hfill

Recall that $\ell=3$.

$1)$   Let $X=X_0+\ldots +X_k$ and $X'=X'_0+\ldots +X'_k$ be two elements of $V^+$ such that   $X_i \in  \widetilde{\go g}^{\lambda_i}\backslash\{ 0\}$ $($resp.  $X_j' \in  \widetilde{\go g}^{\lambda_j}\backslash\{ 0\}$$)$.  Then   the following assertions are equivalent:\\
$(a)$ $X$ and $X'$ are in the same $G$-orbit,\\
$(b)$  $n_l(X)\equiv n_l(X')$ mod $2$ for  $l=1, 2$ and $3$,\\
$(c)$ $X$ and $X'$ are in the same $G^0$-orbit. \medskip

$2)$ Suppose that the rank of $\widetilde{ \go{g}}$ is $k+1$. Then the number of $G$-orbits in  $V^+$ is  $4(k+1)$ with   $3$ open orbits if   the rank is $1$ $($i.e. $k=0$$)$ and  $4$ open orbits if  the rank is $\geq 2$ $($i.e. $k\geq 1$$)$. 
\medskip
 
$3)$ For  $v\in \{1,\pi, \varepsilon\pi\}$, let us fix a representative $X_0(v)\in\widetilde{\go g}^{\lambda_0}$ of the orbit  $\{Y\in  \widetilde{\go g}^{\lambda_0}; \delta_0(Y)\equiv v\;{\rm mod}\; F^{*2}\}$ and, if  $k\geq 1$,  we set  $X_j(v)=\gamma_{0,j}(X_0(v))$, for $j=1,\ldots k$. A set of representatives of the non zero orbits is then: 
$$X_0(1),\quad X_0(\pi),\quad X_0( \varepsilon\pi),$$
and,  if  $k\geq 1$, for  $m\in\{ 0,\ldots, k\}$,
$$\begin{array}{l}X_m(1)+\ldots+X_{k-1}(1) +X_k(1),\\
 X_m(1)+\ldots +X_{k-1}(1)+X_k(\pi),\\
 X_m(1)+\ldots+X_{k-1}(1) +X_k(\varepsilon\pi),\\
 X_m(1)+\ldots+X_{k-2}(1)+X_{k-1}(\pi)+X_k(\varepsilon\pi).\end{array}.$$
$($where we assume that if  $k=1$ then $X_{k-2}(1)=0$$)$. \\
For $k\geq 1$, the $4$ open orbits  are those of the preceding elements where $m=0$. For $k=0$, the $3$  open orbits are those of the elements $X_0(1), X_0(\pi)$ and $ X_0(  \varepsilon\pi)$.\end{theorem}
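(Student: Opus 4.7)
The strategy is to reduce any generic element to a diagonal canonical form $X_0(v_0) + \dots + X_k(v_k)$ with $v_j \in \{1, \pi, \varepsilon\pi\}$, then use Lemma~\ref{lem-k=1} applied to rank~$2$ subalgebras $\tilde{\go g}_{\{i,j\}}$ from Corollary~\ref{cor-gA} to transform pairs of components whose discriminants lie in the same square class. This will show that the parity triple $(n_1(X),n_2(X),n_3(X))\bmod 2$ is a complete set of invariants, classifying both $G^0$ and $G$ orbits simultaneously.

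For the chain (b)$\Rightarrow$(c)$\Rightarrow$(a) of assertion~(1): Proposition~\ref{prop G-diag} combined with the inclusion $\text{Aut}_e(\go g)\subset G^0$ reduces any generic element to diagonal form, and Corollary~\ref{cor-conjLj} applied componentwise allows us to further assume $X_i = X_i(v_i)$ with $v_i\in\{1,\pi,\varepsilon\pi\}$. The key step is then: whenever two components share a discriminant class $v$, we can freely change that common value. To see this, we apply Lemma~\ref{lem-k=1} to the restriction of $X$ to the rank~$2$ subalgebra $\tilde{\go g}_{\{i,j\}}$ (using the permutations $\gamma_{i,j}$ of Proposition~\ref{prop-gammaij} to adjust indices and the embedding of the corresponding $G^0_{\{i,j\}}$ into $G^0$ as in Remark~\ref{rem-inclusion-groupes}), which yields a $G^0$-conjugation sending $X_i(v)+X_j(v)$ to $X_i(v')+X_j(v')$ for any $v'\in\{1,\pi,\varepsilon\pi\}$. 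This elementary move alters the multi-set of discriminant classes exactly by swapping two copies of $v$ for two copies of $v'$, hence preserves each $n_l(X)\bmod 2$. Iterating, any canonical form reduces to one of the explicit representatives of assertion~(3) determined by its parity triple, establishing (b)$\Rightarrow$(c). The implication (c)$\Rightarrow$(a) is trivial.

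For (a)$\Rightarrow$(b), the point is that $\chi_0(G)\subset F^{*2}$ (established in the proof of Proposition~\ref{prop-G}), so $\Delta_0(X)\bmod F^{*2}$ is a $G$-invariant. For a diagonal generic element the normalization gives
$$\Delta_0(X)\;\equiv\;\prod_{j=0}^k \delta_j(X_j)\;\equiv\;1^{n_1}\pi^{n_2}(\varepsilon\pi)^{n_3}\;\equiv\;\varepsilon^{n_3(X)}\pi^{n_2(X)+n_3(X)}\pmod{F^{*2}},$$
which determines $n_3\bmod 2$ and $n_2\bmod 2$, while the relation $n_1+n_2+n_3=k+1$ (also $G$-invariant as the rank, by Theorem~\ref{th-qnondeg}) fixes $n_1\bmod 2$. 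Hence $G$-equivalent elements share the same parity triple.

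For assertions (2) and (3), one enumerates for each rank $r=k+1-m\in\{1,\dots,k+1\}$ the parity triples $(\epsilon_1,\epsilon_2,\epsilon_3)\in\{0,1\}^3$ with $\epsilon_1+\epsilon_2+\epsilon_3\equiv r\pmod 2$; there are $4$ such triples when $r\geq 2$ and only $3$ when $r=1$ (since a single component has a single discriminant class). Choosing $X_j(v)=\gamma_{0,j}(X_0(v))$ one exhibits an explicit representative realizing each triple, yielding the list in~(3). The total count is $1+3+4k = 4(k+1)$, with $3$ open orbits when $k=0$ and $4$ open orbits (corresponding to $m=0$) when $k\geq 1$. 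The main obstacle is the rigorous implementation of Step~3, namely verifying that Lemma~\ref{lem-k=1} transfers to the subalgebra $\tilde{\go g}_{\{i,j\}}$ and that the corresponding $G^0_{\{i,j\}}$-action is realised inside $G^0$; this rests on Corollary~\ref{cor-gA} together with the explicit matrix description used in Proposition~\ref{prop-G}.
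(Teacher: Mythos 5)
Your treatment of assertion (1) is sound and uses the same ingredients as the paper (the inclusion $\chi_0(G)\subset F^{*2}$ for (a)$\Rightarrow$(b), Corollary \ref{cor-conjLj} for componentwise adjustment, and Lemma \ref{lem-k=1} for the pair move), merely reorganized: where the paper runs an induction on $k$ split according to whether some $n_l$ is odd, you reduce both elements directly to a canonical representative by iterating the move $X_i(v)+X_j(v)\mapsto X_i(v')+X_j(v')$. The combinatorics behind this (two multisets $(n_1,n_2,n_3)$ with equal sum and equal parities are connected by such swaps) does check out, and the embedding of the rank-two group into $G^0$ that you invoke is exactly what the paper uses via $\gamma_{i,j}$ and Remark \ref{rem-inclusion-groupes}.

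There is, however, a genuine gap in your assertions (2) and (3): you never prove that two elements of the \emph{same} rank $r<k+1$ but \emph{different} parity triples lie in different $G$-orbits. Your separating invariant is $\Delta_0(X)\bmod F^{*2}$, which vanishes identically on non-generic elements, so it says nothing about, e.g., whether $X_k(1)$ and $X_k(\pi)$ are $G$-conjugate when $k\geq 1$ (they are $L_k$-inequivalent by Theorem \ref{th-k=0}, but the $G$-orbit could a priori be larger). Without this, the orbit count $4(k+1)$ is not established, since orbits of equal rank could coalesce. The paper closes this by proving, via the explicit realization of $G$ as conjugations by block matrices $[\mathbf g,\mu]$ from Proposition \ref{prop-G}, that two generic elements of $V_m^+$ are $G$-conjugate if and only if they are $G_m$-conjugate (one extracts the upper-left block $\mathbf g_1$ of the conjugating matrix and checks via the determinant relation that it is invertible); one then applies part (1) inside the rank-$(k+1-m)$ graded algebra $\widetilde{\go g}_m$, whose own relative invariant does not vanish on these elements. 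You would need to supply this reduction, or some substitute invariant defined on degenerate elements, to complete (2) and (3).
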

\begin{proof}\hfill 

 $ 1)$ 
  Clearly, $(c)$ implies $(a)$.
 
If  $X$ and  $X'$ are in the same  $G$-orbit then  $\Delta_0(X)=\Delta_0(X')$ mod $F^{*2}$ (because $\chi_0(G)\subset F^{*2}$), and hence 
$$\prod_{j=0}^k \delta_j(X_j)\equiv \prod_{j=0}^k \delta_j(X'_j)\;{\rm mod }\; F^{*2}.$$

This implies that 
$$\pi^{n_{2}(X)}(\varepsilon\pi)^{n_{3}(X)}=\pi^{n_{2}(X')}(\varepsilon\pi)^{n_{3}(X')} \;{\rm mod }\; F^{*2}.$$
Which is the same as:

$$\pi^{n_{2}(X)+n_{3}(X)}(\varepsilon)^{n_{3}(X)}=\pi^{n_{2}(X')+n_{3}(X')}(\varepsilon)^{n_{3}(X')} \;{\rm mod }\; F^{*2}.$$

And as $F^*/{F^*}^2\simeq (\Z/2\Z)^2$ this last equality implies that

$$n_{2}(X)+n_{3}(X)=n_{2}(X')+n_{3}(X')\,\text{ mod } 2 \, \text{ and } \,n_{3}(X)=n_{3}(X') \,\text{ mod } 2, $$

and therefore $n_{2}(X)=n_{2}(X')\,\text{ mod } 2$.

As  $n_1(X)+n_2(X)+n_3(X)= n_1(X')+n_2(X')+n_3(X')=k+1$, we obtain also   $n_1(X)\equiv n_1(X')$ mod $2$.  Finally
$$n_{\ell}(X)=n_{\ell}(X') \, \text { mod } 2, \text {for } \ell =1,2,3.$$
 Thus $(a)$ implies $(b)$.
 \medskip

Suppose that    $n_l(X)=n_l(X')$ mod $2$ for $l=1, 2$ and $3$. We will show by induction on $k$ that $X$ and  $X'$ are  $G^0$-conjugated. By Corollary \ref{cor-conjLj} the result is true for  $k=0$. \medskip

 Suppose now that   $k\geq 1$.\\
$\bullet$ If there exists an  $l\in\{1,2,3\}$ such that  $n_l(X)\equiv n_l(X')\equiv1$ mod $2$. Then, applying eventually the elements   $\gamma_{i,j}\in{\rm Aut}_e(\go g)\subset G^0$, we can suppose that  $\delta_0(X_0)=\delta_0(X'_0)=e_l$. From the case $k=0$ there exists  $g_0\in L_0^0$ such that  $g_0X'_0=X_0$. As   $L_0^0$ stabilizes the space $\oplus_{j=1}^k \widetilde{\go g}^{\lambda_j}$, we get  $g_0X'=X_0+X'_1+\ldots X'_k$. \\
By induction on the elements  $X_1+\ldots +X_k$ and  $X_1'\ldots +X'_k$ of $V_1^+$ there exists    $g_1\in G_1^0$ such that  $g_1(X_1'+\ldots +X'_k)=X_1+\ldots +X_k$. As $g_1$ stabilizes $ \widetilde{\go g}^{\lambda_0}$, we obtain  $g_1g_0 X'=X$ and hence  $X$ and $X'$ are  $G^0$-conjugated.\medskip

$\bullet$ Suppose that  $n_l(X)\equiv n_l(X')\equiv 0$ mod $2$ for all $l\in\{1,2,3\}$.  This implies that $k\geq 1$ is odd. The case  $k=1$ is a consequence of  Lemma \ref{lem-k=1}. Hence we assume $k\geq 3$.\\
 As  $n_1(X)+n_2(X)+n_3(X)= k+1 $, we cannot have $n_{\ell}(X)= 0$ (or  $n_{\ell}(X')= 0$) for all $\ell= 1,2,3$. Therefore there exist   $r\neq s$ and $r'\neq s'$ such that    $\delta_r(X_r)\equiv \delta_s(X_s)\;{\rm mod}\; F^{*2}$ and  $\delta_{r'}(X'_{r'})=\delta_{s'}(X'_{s'})\;{\rm mod}\; F^{*2}$ . Using the elements   $\gamma_{i,j}\in G^0$ if necessary, we can suppose that $r=r'=k-1$ and $s=s'=k$. 
 
 Then  $X_{k-1}+X_k$ and $X'_{k-1}+X'_k$ are two elements of  $V_{k-1}^+$ such that 
  $$\delta_{k-1}(X_{k-1})= \delta(X_k) )\;{\rm mod}\; F^{*2} \text { and }\delta_{k-1}(X'_{k-1})=\delta(X'_k)\;{\rm mod}\; F^{*2}.$$
  
  Then by  Lemma \ref{lem-k=1} there exists   $g_{k-1}\in G_{k-1}^0$  such that  $g_{k-1}(X'_{k-1}+X'_k)=(X_{k-1}+X_k)$ and hence  $g_{k-1}(X')= X'_0+\ldots +X'_{k-2}+X_{k-1}+X_k$.
  
The elements $ \widetilde{X}'=\gamma_{0,k-1}\gamma_{1,k}(X'_0+ \ldots +X'_{k-2})$ and   $ \widetilde{X}=\gamma_{0,k-1}\gamma_{1,k}(X_0+ \ldots +X_{k-2})$ of $V_{2}^+$ satisfy the condition  $n_l( \widetilde{X})= n_l( \widetilde{X}')\equiv 0\;{\rm mod}\; 2$ for all $l\in\{1,2,3\}$. By induction (applied to $V_2^+$) there exists   $g'_2\in G_2^0$ such that  $g'_2  \widetilde{X}'= \widetilde{X}$.  \medskip
   
  \noindent  The element   $\gamma_{0,k-1}\gamma_{1,k}(X_{k-1}+X_k)\in \widetilde{\go g}^{\lambda_0}+ \widetilde{\go g}^{\lambda_1}$ is fixed by $g'_2$. We obtain:  
 
  $$g'_2\gamma_{0,k-1}\gamma_{1,k}g_{k-1}(X')=g'_2\gamma_{0,k-1}\gamma_{1,k}( X'_0+\ldots +X'_{k-2}+X_{k-1}+X_k)$$
$$ =g'_2( \widetilde{X}')+\gamma_{0,k-1}\gamma_{1,k}(X_{k-1}+X_k)=  \widetilde{X}+\gamma_{0,k-1}\gamma_{1,k}(X_{k-1}+X_k)=\gamma_{0,k-1}\gamma_{1,k}(X),$$
 and this proves the first assertion.\medskip
 
 \noindent{ 2)}  Let $Z\in V^+\setminus \{0\}$ . We know from Theorem \ref{th-V+caplambda} that the element  $Z$ is $G$-conjugated to an element of the form  $Z_0+\ldots +Z_k$ with $Z_j\in \widetilde{\go g}^{\lambda_j}$. Let  $m$ be the number of indices  $j$ such that  $Z_j=0$. Using the elements  $\gamma_{i,j}\in G$ of Proposition \ref{prop-gammaij},  we see that  $Z$  is  $G$-conjugated  to an element of the form $X=X_m+\ldots +X_k$ with $X_j\in \widetilde{\go g}^{\lambda_j}\backslash\{0\}$ for $j=m,\ldots, k$.\\
 
  $Z$ belongs to an open orbit if and only if $m=0$ (if not we would have $\Delta_{0}(Z)=0$).
 
 If $k=0$ we have already seen (in Theorem \ref{th-k=0} 2)) that the number of open orbits is $3$.
 If $k\geq 1$,  the number of open orbits is equal, according to the first assertion, to the number of classes modulo $2$ of triples $(n_{1}(X),n_{2}(X),n_{3}(X)$) such that $n_{1}(X)+n_{2}(X)+n_{3}(X)=k+1$. This number of classes is $4$. \\

$3)$ Suppose $m\neq m'$. Then, according to Theorem \ref{th-qnondeg}, two elements  $X=X_m+\ldots +X_k$ with  $X_j\in \widetilde{\go g}^{\lambda_j}\backslash\{0\}$ and  $Y=Y_{m'}+\ldots +Y_k$ with  $Y_i\in \widetilde{\go g}^{\lambda_i}\backslash\{0\}$ are not in the same $G$-orbit because  ${\rm rang}\; Q_X\neq {\rm rang}\; Q_Y$.

 Finally, to conclude the proof, it will be enough to show that  two generic elements  $Y=Y_m+\ldots Y_k$ and $Y'=Y'_m+\ldots Y'_k$ of  $V_m^+$  are  $G$-conjugated if and only if they are  $G_m$-conjugated. Let  $g=[\mathbf g,\mu]\in G$ such that $gY=Y'$. Denote by  $\mathbf g_1 \in M(2(k-m+1),E)$ the submatrix of  $\mathbf g$ of the coefficients in the first  $k-m+1$ rows and columns. Set $${\mathbf g}'=\left(\begin{array}{cc} {\mathbf g_1} & 0\\ 0 & I_m\end{array}\right)$$\\
 A simple block by block computation shows that $[{\mathbf g}',\mu]\cdot Y=Y'$. This implies that $\Delta_{m}(Y)=\mu^{-2}\det({\mathbf g}_{1})^2\Delta_{m}(Y')$. As $Y$ and $Y'$ are generic in $V_{m}^+$, it follows that $\det({\mathbf g}_{1})\neq0$. Hence
  the element   $[{\mathbf g}_1,\mu]$ belongs to $G_m$ (see Proposition \ref{prop-G}).  \\
   Conversely, if $Y$ and $Y'$ generic in $V_m^+$ are $G_m$ - conjugate then, by the first assertion,  they are $G_m^0$-conjugate. Since $G_m^0\subset G^0$, this achieves the proof of the Theorem.\\
 
 \end{proof}

 \newpage
  \section{The symmetric spaces $G/H$}\label{section-G/H}

\subsection{The involutions}\label{section-involutions}\hfill

Let  $I^+$ \index{I+-@$I^{\pm}$}be a generic element of $V^+$. By Proposition \ref{prop-generiques-cas-regulier}, there exists $I^-\in V^-$ such that  $(I^-,H_0,I^+)$ is an  $\go sl_2$-triple. The action on $ \widetilde{\go g}$ of the non trivial element of the Weyl group of this $\go sl_2$-triple     is given by  the element  $w\in \widetilde{G}$ defined by  
$$w=e^{{\rm ad} \; I^+}e^{{\rm ad} \; I^-}e^{{\rm ad} \; I^+}=e^{{\rm ad} \; I^-}e^{{\rm ad} \; I^+}e^{{\rm ad} \; I^-}.$$
We denote by  $\sigma$ \index{sigma@$\sigma$} the corresponding isomorphism of $ \widetilde{\go g}$: $$\sigma(X)=w.X,\quad X\in  \widetilde{\go g}.$$
We denote also by  $\sigma$ the automorphism of $ \widetilde{G}$ induced by  $\sigma$:
$$\sigma(g)=w g w^{-1},\quad \textrm{ for } g\in \widetilde{G}.$$

If  $X\in \widetilde{\go g}$ is nilpotent, then  $\sigma(e^{{\rm ad}\; X})=e^{{\rm ad}\; \sigma(X)}.$

\begin{theorem}\label{th-invol}\hfill

The automorphism  $\sigma$ is an involution of  $ \widetilde{\go g}$ which satisfies the following properties:
\begin{enumerate}\item Define $\go h \index{hgothique@$\go h$}=\{ X\in\go g;  \sigma(X)=X\}$. Then $\go h={\go z}_{\go g}(I^+)= {\go z}_{\go g}(I^-)$.
\item Define $\go q=\index{qgothique@$\go q$}\{ X\in\go g;  \sigma(X)=-X\}$.Then ${\rm ad}\; I^+$ is an isomorphism from  $\go q$ onto $V^+$ and   ${\rm ad}\; I^-$ is an isomorphism from $\go q$ onto  $V^-$.
\item $\sigma(I^+)=I^-$ and  $\sigma(V^+)=V^-$. Moreover one has  $\sigma(X)=\dfrac{1}{2} \big({\rm ad }\; I^-\big)^2 X,\;\textrm{ for  } X\in V^+$ and   $\sigma(X)=\dfrac{1}{2} \big({\rm ad }\; I^+\big)^2 X,\;\textrm{ for } X\in V^-$.

\item $\sigma(H_0)=-H_0$ and  $\sigma(\go g)=\go g$. Moreover, for  $X\in\go g$, one has  $\sigma(X)=X+({\rm ad}\; I^-\; {\rm ad}\; I^+) X$.
\end{enumerate}

\end{theorem}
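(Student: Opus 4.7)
The plan is to exploit the $\go{sl}_2$-representation theory recalled in section \ref{sl2module}, applied to the adjoint action of the triple $(I^-, H_0, I^+)$ on $\widetilde{\go g}$. The key observation is that, since $H_0$ is the grading element with eigenvalues $-2,0,2$ on $\widetilde{\go g}$, every irreducible $\go{sl}_2$-submodule of $\widetilde{\go g}$ is either the trivial module or the standard $3$-dimensional module with weights $(-2,0,2)$. I would write
$$\widetilde{\go g} = E \oplus F, \qquad F = F^- \oplus F^0 \oplus F^+,$$
where $E$ is the sum of trivial isotypic components (necessarily contained in $\go g$), $F$ is the sum of $3$-dimensional components, and the superscripts denote $\ad H_0$-weight spaces. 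This decomposition gives $V^+ = F^+$, $V^- = F^-$, and $\go g = E \oplus F^0$.

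Next I would deduce the involution property and the action on $H_0$. From item (3) of section \ref{sl2module}, $w^2$ acts as $(-1)^p$ on weight-$p$ vectors, so $w^2 = \mathrm{Id}$ since all weights are even. Applying item (2) of section \ref{sl2module} to the $3$-dimensional component generated by $I^+$ (in which, under the convention $[I^-, I^+] = H_0$, the basis is $e_0 = I^+$, $e_1 = -H_0$, $e_2 = I^-$) yields $\sigma(I^+) = I^-$ and $\sigma(H_0) = -H_0$. From $\sigma \circ \ad H_0 \circ \sigma^{-1} = \ad(\sigma H_0) = -\ad H_0$ it follows that $\sigma$ exchanges opposite $\ad H_0$-weight spaces, giving $\sigma(V^+) = V^-$ and $\sigma(\go g) = \go g$.

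For the explicit formula in (3), every $X \in V^+$ is a primitive vector of weight $2$ in its isotypic component, so item (2) of section \ref{sl2module} applied with $m=2$, $p=0$ gives directly $\sigma(X) = w\cdot X = \frac{1}{2}(\ad I^-)^2 X$, and the symmetric argument handles $V^-$. For (4), I would decompose $X \in \go g$ as $X = X_E + X_F$ with $X_E \in E$ and $X_F \in F^0$. On $E$, both $\ad I^+$ and $\ad I^-$ vanish and $\sigma$ acts as the identity, so the formula is immediate. On $F^0$, each component is spanned by a vector proportional to $e_1$, and a short $\go{sl}_2$-bracket computation using $[I^-, I^+] = H_0$ yields $(\ad I^-)(\ad I^+)e_1 = -2 e_1$; combined with $\sigma(e_1) = -e_1$ (item (2) of section \ref{sl2module} with $p=1$), this gives exactly $\sigma(X) = X + (\ad I^-)(\ad I^+) X$.

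Finally, for (1) and (2), I would read off $\go h$ and $\go q$ from the isotypic decomposition. An element of $\go g = E \oplus F^0$ is $\sigma$-fixed iff it lies in $E$; and $E$ coincides with each of ${\go z}_{\go g}(I^+)$ and ${\go z}_{\go g}(I^-)$, because in every $3$-dimensional component the weight-$0$ vector $e_1$ satisfies $[I^+, e_1] \neq 0$ and $[I^-, e_1] \neq 0$. Hence $\go h = E = {\go z}_{\go g}(I^+) = {\go z}_{\go g}(I^-)$. Then $\go q = F^0$, and in each $3$-dimensional component the standard $\go{sl}_2$-action shows that $\ad I^+$ and $\ad I^-$ map $F e_1$ bijectively onto $F e_0 \subset V^+$ and $F e_2 \subset V^-$ respectively; summing over components yields the two isomorphisms $\ad I^\pm : \go q \to V^\pm$. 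The only real difficulty in the whole argument is keeping track of signs consistently: once the convention $[I^-, I^+] = H_0$ is fixed, every assertion follows by a direct reading of the recalled $\go{sl}_2$-facts.
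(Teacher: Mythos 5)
Your proof is correct and follows essentially the same route as the paper: decompose $\widetilde{\go g}$ into $\go{sl}_2$-isotypic components for the triple $(I^-,H_0,I^+)$, note that only the $1$- and $3$-dimensional modules occur because $\ad H_0$ has eigenvalues $-2,0,2$, and read off each assertion from the recalled facts of section \ref{sl2module}. The only (immaterial) difference is that you verify the formula on $\go q$ by an explicit bracket computation in the basis $(e_0,e_1,e_2)$, where the paper instead uses the injectivity of $\ad I^+$ on $\go q$ together with $\ad I^+\,\ad I^-\,\ad I^+X=-2\,\ad I^+X$.
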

\begin{proof} For the convenience of the reader we give the proof although it is the same as for the real case (See \cite{B-R}). It is just elementary representation theory of the $\go sl_2$-triple $(I^-,H_0,I^+)$.

  The irreducible components of  $ \widetilde{\go g}$ under the action of this $\go sl_2$-triple  are of dimension  $1$ or  $3$  (because the weights of the primitive elements are $0$ or $2$). The action of $w^2$ is trivial on each of these components as they have odd dimension (see section \ref{sl2module}).  Hence  $\sigma$ is an involution of  $ \widetilde{\go g}$.\medskip

The subalgebra  $\go g$ is the sum of the  $0$-weight spaces  of theses irreducible components. If the dimension of the component is  $1$ (respectively  $3$) then the action of  $w$ is trivial (respectively multiplication by  $-1$). Therefore $\go h$ is the sum of the irreducible components of dimension $1$, and this proves the assertion (1), and  $\go q$ is the sum of the $0$-weight spaces of the irreducible components of dimension  $3$, and this proves the assertion  (2).\medskip

The space $V^+$ is the sum  of the subspaces of primitive elements of the irreducible components of dimension $3$. Hence the action of $w$ on $V^+$ is given by $\frac{1}{2}({\rm ad}\; I^-)^2$ (see section \ref{sl2module}). This implies that  $\sigma(I^+)=I^-$ and  $\sigma(V^+)=V^-$. \\
If  $X\in V^-$ then  $Y=({\rm ad }\; I^+)^2 X$ belongs to  $V^+$ and  $\sigma(Y)=({\rm ad } \;\sigma(I^+))^2 \sigma(X)=({\rm ad } \; I^-)^2 \sigma(X)$. From the preceding discussion, we obtain  $\sigma(Y)=\frac{1}{2}({\rm ad }\;  I^-)^2(Y)$. As $({\rm ad } \; I^-)^2$ is injective on  $V^+$, we get 
$$\sigma(X)= \frac{1}{2} ({\rm ad } \; I^+)^2 X.$$
The assertion  (3) is now proved.\medskip

As $H_0=[I^-, I^+]$, we have  $\sigma(H_0)=-H_0$ and therefore  $\sigma(\go g)=\go g$. If $X\in\go h$, we have ${\rm ad}\; I^-\;{\rm ad}\; I^+\; X=0$ and this means that $ \sigma(X)=X=X+ {\rm ad}\; I^-\;{\rm ad}\; I^+\; X$. \\
If  $X\in\go q\setminus\{0\}$, then  ${\rm ad}\; I^+ X$ is a non zero element of $V^+$.  Hence it is a primitive element of an irreducible component of dimension $3$. Therefore  ${\rm ad}\; I^+ {\rm ad}\; I^-\;{\rm ad}\; I^+X= -2\; {\rm ad}\; I^+ X$. As  ${\rm ad}\; I^+$ is injective on  $\go q$, we obtain ${\rm ad}\; I^-\;{\rm ad}\; I^+X=-2 X$. And hence 
$$w.X=-X=X+{\rm ad}\; I^-\;{\rm ad}\; I^+X.$$
This proves  (4).
\end{proof}

\begin{definition}\label{def-conditionC} An $\go{sl}_2$-triple $( I^-,H_0, I^+)$   is called a diagonal $\go{sl}_2$-triple   if  $I^+=X_0+\ldots X_k$ is a generic element of  $V^+$ such that  $X_j\in  \widetilde {\go g}^{\lambda_j}\setminus \{0\}$ and if  $I^-=Y_0+\ldots +Y_k$ where  $Y_j\in  \widetilde {\go g}^{-\lambda_j}\setminus \{0\}$  and   $(Y_j, H_{\lambda_j} X_j)$ is an ${\go sl}_2$-triple for all $j\in\{0,\ldots, k\}$. \end{definition}

\begin{rem} Any open $G$-orbit in $V^+$ contains an element  $I^+$ which can be put in a  diagonal $\go{sl}_2$-triple $( I^-,H_0, I^+)$ (this is a consequence of Theorem \ref{th-V+caplambda}).
\end{rem}

For the rest of this section, we fix  a  diagonal $\go{sl}_2$-triple $( I^-,H_0, I^+)$ and we will denote by  $\sigma$ the corresponding involution of  $ \widetilde{\go g}$.

Recall also that  $\go a$ is a maximal split abelian subalgebra of $\go g$ containing $H_0$ and that  $\go a^0$ is the subspace of  $\go a$ defined by 
$$  \go a^0=   \oplus_{j=0}^k FH_{\lambda_j}.$$

\begin{definition} \label{def-SECq}A maximal split abelian subalgebra of $\go q$ is called a Cartan subspace  of  $\go q$.
\end{definition}

\begin{lemme}\label{lem-SECq}\hfill

The maximal split abelian subalgebra  $\go a$ of $\go g$ satisfies $\go a=\go a\cap \go h\oplus {\go a}^0$. Moreover $\go a\cap \go h=\{H\in \go a; \lambda_j(H)=0\;{\rm for }\; j=0,\ldots , k\}$.\\
 The subalgebra   $\go a^0$ is a Cartan subspace of  $\go q$. 
\end{lemme}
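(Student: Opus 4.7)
The plan is to compute the involution $\sigma$ explicitly on $\go{a}$ and deduce everything from the resulting formula.

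First, I would apply part $(4)$ of Theorem \ref{th-invol} to $H\in\go{a}$:
$$\sigma(H) = H + \ad(I^-)\ad(I^+)\,H.$$
Writing $I^+=\sum_j X_j$ and $I^-=\sum_i Y_i$ with $(Y_j,H_{\lambda_j},X_j)$ a diagonal $\go{sl}_2$-triple, I compute $\ad(I^+)H = -\sum_j\lambda_j(H)X_j$. Then $\ad(I^-)\ad(I^+)H = -\sum_j\lambda_j(H)\sum_i[Y_i,X_j]$, and because the $\lambda_j$ are pairwise strongly orthogonal, $[Y_i,X_j]=0$ for $i\neq j$ (differences of strongly orthogonal roots are not roots). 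Using the $\go{sl}_2$-relation for the diagonal triples one has $[Y_j,X_j]=H_{\lambda_j}$, so
$$\sigma(H) = H - \sum_{j=0}^{k}\lambda_j(H)\,H_{\lambda_j}, \qquad H\in\go{a}.$$
In particular $\sigma(\go{a})\subset\go{a}$, and the identity $\sigma(H_{\lambda_i})=-H_{\lambda_i}$ (using $\lambda_j(H_{\lambda_i})=2\delta_{ij}$, valid since the $\lambda_j$ are strongly orthogonal) shows that $\go{a}^0\subset\go{q}$.

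Second, I would read off the two descriptions. The condition $\sigma(H)=H$ means $\sum_j\lambda_j(H)H_{\lambda_j}=0$, which by linear independence of the coroots $H_{\lambda_j}$ is equivalent to $\lambda_j(H)=0$ for all $j$. This proves $\go{a}\cap\go{h}=\{H\in\go{a}:\lambda_j(H)=0\ \forall j\}$. For the direct sum decomposition, given $H\in\go{a}$ set $H''=\tfrac{1}{2}\sum_j\lambda_j(H)H_{\lambda_j}\in\go{a}^0$ and $H'=H-H''$; a direct check gives $\lambda_i(H')=0$, hence $H'\in\go{a}\cap\go{h}$. The sum is direct because $\sigma$ restricts to $+\mathrm{Id}$ on $\go{a}\cap\go{h}$ and to $-\mathrm{Id}$ on $\go{a}^0$ (and $\mathrm{char}(F)=0$), which also shows $\go{a}^0=\go{a}\cap\go{q}$.

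Third, I would prove that $\go{a}^0$ is a Cartan subspace of $\go{q}$. It is split abelian and lies in $\go{q}$; it remains to prove maximality. Let $\go{a}'\subset\go{q}$ be split abelian with $\go{a}'\supset\go{a}^0$. Every element of $\go{a}'$ commutes with $\go{a}^0$, so $\go{a}'\subset Z_{\go g}(\go{a}^0)$. But $Z_{\go g}(\go g)(\go{a}^0)$ is reductive, contains $\go{a}$, and $\go{a}$ is maximal split abelian in it (since it is maximal split abelian in the ambient $\go g$). By conjugacy of maximal split abelian subalgebras inside the reductive algebra $Z_{\go g}(\go{a}^0)$, one may --- after conjugation by an element of the isotropy group of $\go{a}^0$ that preserves $\sigma$ --- assume $\go{a}'\subset\go{a}$, whence $\go{a}'\subset\go{a}\cap\go{q}=\go{a}^0$.

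\textbf{Main obstacle.} The delicate point is the last step: one must produce the conjugating element inside the subgroup fixing $\sigma$ (equivalently inside the stabilizer of $\go{a}^0$ within the $\sigma$-fixed group) rather than merely inside $Z_G(\go{a}^0)$, for otherwise the inclusion $\go{a}'\subset\go{q}$ could be lost. The cleanest way is to invoke the Cartan-type decomposition available for the reductive $\sigma$-pair $(Z_{\go g}(\go a^0),\,Z_{\go g}(\go a^0)\cap\go h)$, combined with the explicit description $\sigma(H)=H-\sum_j\lambda_j(H)H_{\lambda_j}$ restricted to the subalgebra $\go a$, which already shows that within $\go a\subset Z_{\go g}(\go a^0)$ no larger subspace than $\go a^0$ lands in $\go q$. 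The rest is a standard conjugacy-of-Cartan-subspaces argument in the reductive symmetric pair.
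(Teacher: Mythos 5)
Your first two steps are correct and coincide with the paper's: the paper likewise obtains $\sigma(H)=H-\sum_{j=0}^k\lambda_j(H)H_{\lambda_j}$ for $H\in\go a$ from Theorem \ref{th-invol} (4) and reads off both the description of $\go a\cap\go h$ and the decomposition $\go a=(\go a\cap\go h)\oplus\go a^0$ exactly as you do.

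The maximality of $\go a^0$ in $\go q$ is where your argument has a genuine gap, and you have located it yourself. You need to move a split abelian subalgebra $\go a'\subset\go q$ containing $\go a^0$ into $\go a$ by an element that \emph{preserves the decomposition} $\go g=\go h\oplus\go q$; conjugacy of maximal split abelian subalgebras of the reductive algebra ${\go z}_{\go g}(\go a^0)$ only supplies an element of ${\rm Aut}_e$ of that algebra, with no control on its interaction with $\sigma$. The ``standard conjugacy of Cartan subspaces in a reductive symmetric pair'' you appeal to is of the same depth as what you are trying to prove, and over a $p$-adic field the Cartan subspaces of $\go q$ need not even form a single conjugacy class under the fixed-point group, so one cannot wave at it. As written, the third step is not a proof.

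The paper replaces this by an elementary direct argument which you could substitute. Let $X\in\go q$ be such that $\go a^0+FX$ is split abelian, and decompose $X=U+\sum_{\lambda\in\Sigma}X_\lambda$ with $U\in{\go z}_{\go g}(\go a)$ and $X_\lambda\in\go g^\lambda$. Since $X$ centralizes $\go a^0$, any $\lambda$ with $X_\lambda\neq0$ vanishes on $\go a^0$, hence is orthogonal, and therefore by Corollary \ref{cor-orth=fortementorth} strongly orthogonal, to all the $\lambda_j$; consequently $\go g^\lambda$ commutes with $I^+$ and $I^-$, so $X_\lambda\in\go h$ and $\sigma(X_\lambda)=X_\lambda$. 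Comparing the root-space components in $\sigma(X)=-X$ (note that $\sigma$ preserves ${\go z}_{\go g}(\go a)$) forces $X_\lambda=-X_\lambda=0$ for every $\lambda$, so $X=U\in{\go z}_{\go g}(\go a)$. Since $X$ is split and commutes with $\go a$, the maximality of $\go a$ in $\go g$ gives $X\in\go a$, hence $X\in\go a\cap\go q=\go a^0$.
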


\begin{proof} From Theorem  \ref{th-invol} (4), for $H\in \go a$, we get  $\sigma(H)= H +({\rm ad} I^-{\rm ad} I^+) H=H-\sum_{j=0}^k \lambda_j(H)H_{\lambda_j}$. This proves that   $\go a$ is  $\sigma$-stable and also the given decomposition of $\go a$.\medskip

Of course  $\go a^0$ is a split abelian subalgebra of  $\go q$.  It remains to show that  $\go a^0$ is maximal among such subalgebras. Let $X$ be an element  of  $ \go q$ such that   $\go a^0+F X$ is split abelian in  $\go q$. From the root space decomposition of  $\go g$ relatively to  $\Sigma$, we get  
$$X=U+\sum_{\lambda\in\Sigma} X_\lambda,\quad \textrm{ where  }\; U\in {\go z}_{\go g}(\go a)\; {\rm and }\; X_\lambda\in {\go g}^\lambda.$$
As  $X$ centralizes $\go a^0$,   if  $X_\lambda\neq 0$ for $\lambda\in\Sigma$,  we obtain that  $\lambda_{|_{\go {a}^0}}=0$. Corollary \ref{cor-orth=fortementorth} implies now that  $\lambda$ is strongly orthogonal to all roots  $\lambda_j$ and hence  ${\rm ad} I^+ X_\lambda=0$. Therefore $X_\lambda \in \go h$. As  $\sigma(U)$ belongs to  ${\go z}_{\go g}(\go a)$ and  $\sigma(X)=-X$, we  have  $X=U$. This implies  (maximality of $\go{a}$)   that $X\in \go a$ and hence $X\in \go a\cap\go q=\go a^0$. This proves that $\go a^0$ is a Cartan subspace of 
$\go q$.

\end{proof}

 \begin{lemme}\label{lemme-involution-modif} Let  $\underline{\sigma}$ \index{sigmasousligne@$\underline{\sigma}$} be the involution of  $ \widetilde{\go g}$ defined by  $\underline{\sigma}(X)=\sigma(X)$ for $X\in\go g$ and by  $\underline{\sigma}(X)=-\sigma(X)$ for $X\in V^-\oplus V^+$. Let  $ \widetilde{\go q}= \{X\in  \widetilde{\go g}, \underline{\sigma}(X)=-X\}$. Then  $\go a^0$ is a Cartan subspace of  $ \widetilde{\go q}$.
\end{lemme}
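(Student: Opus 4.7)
The plan is to proceed in three steps: check that $\underline{\sigma}$ is a Lie algebra involution of $\widetilde{\go g}$, verify the inclusion $\go a^0\subset\widetilde{\go q}$, and establish maximality by reducing to the already-proved maximality of $\go a^0$ in $\go q$ (Lemma \ref{lem-SECq}).

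First I would check that $\underline{\sigma}$ is a Lie algebra automorphism using the $\Z/2$-grading $\widetilde{\go g}=\go g\,\oplus\,(V^-\oplus V^+)$: on brackets that stay within one summand (namely $[\go g,\go g]\subset\go g$, $[V^+,V^-]\subset\go g$, and trivially $[V^\pm,V^\pm]=0$) the two sign changes on the factors multiply to $+1$, matching the absence of a sign on the target; on cross brackets $[\go g,V^\pm]\subset V^\pm$ the single sign on the $V^\pm$-factor matches the sign on the target. Combined with $\sigma$ being an involution, one gets $\underline{\sigma}^2=\mathrm{Id}$.

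For the inclusion, Lemma \ref{lem-SECq} states that $\go a^0$ is a Cartan subspace of $\go q$, hence $\sigma$ acts as $-\mathrm{Id}$ on $\go a^0$. Since $\go a^0\subset\go g$, the definition of $\underline{\sigma}$ gives $\underline{\sigma}|_{\go a^0}=\sigma|_{\go a^0}=-\mathrm{Id}$, so $\go a^0\subset\widetilde{\go q}$. The space $\go a^0$ is split abelian in $\widetilde{\go g}$ by construction.

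For maximality, I would take $X\in\widetilde{\go q}$ such that $\go a^0\oplus F X$ is split abelian and decompose $X=X^-+X_0+X^+$ according to $\widetilde{\go g}=V^-\oplus\go g\oplus V^+$. Since $X$ commutes with $H_0\in\go a^0$, the relation $[H_0,X]=2X^+-2X^-=0$ together with $V^+\cap V^-=\{0\}$ forces $X^+=X^-=0$, so $X=X_0\in\go g$. Then $\underline{\sigma}(X)=-X$ reduces to $\sigma(X_0)=-X_0$, so $X_0\in\go q$. Hence $\go a^0\oplus FX$ is split abelian inside $\go q$, and the maximality of $\go a^0$ in $\go q$ (Lemma \ref{lem-SECq}) yields $X\in\go a^0$. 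The main bookkeeping is the verification that $\underline{\sigma}$ respects brackets; once that is in hand, the fact that the $H_0$-eigenspace decomposition separates $V^\pm$ from $\go g$ makes the maximality step essentially immediate.
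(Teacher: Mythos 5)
Your proof is correct and follows the same route as the paper: the inclusion $\go a^0\subset\widetilde{\go q}$ comes from $\underline{\sigma}=\sigma$ on $\go g$, and maximality is obtained by noting that an element of $\widetilde{\go q}$ commuting with $\go a^0$ commutes with $H_0$, hence lies in $\go g$ and therefore in $\go q$, after which Lemma \ref{lem-SECq} applies. The extra verification that $\underline{\sigma}$ respects brackets is a correct (and welcome) detail that the paper leaves implicit.
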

\begin{proof} As $\sigma=\underline{\sigma}$ on $\go g$, the space $\go a^0$ is a split abelian subspace of  $ \widetilde{\go q}$. It remains to prove the maximality. Let $X\in \widetilde{\go q}$ such that  $\go a^0+FX$ is abelian split. Then  $X$ commutes with $\go a^0$ and hence with $H_0$. Therefore  $X\in \go g$.  Then  $X\in \go a^0$ by Lemma \ref{lem-SECq}.  
\end{proof}

\begin{rem}\label{rem-decomp-racines} From  \cite{HW}  (Proposition 5.9), the set of roots $\Sigma( \widetilde{\go g}, \go a^0)$   of  $ \widetilde{\go g}$  with respect to $\go a^0$ is a root system which will be denoted by  $ \widetilde{\Sigma}^0$. The decomposition  $ \widetilde{\go g}$ given in Theorem  \ref{th-decomp-Eij}:
 $$ \widetilde{\go g}={\go z}_{ \widetilde{\go g}}(\go a^0)\oplus\Big( \oplus_{0\leq i<j\leq k} E_{i,j}(\pm1,\pm1)\Big)\oplus\Big(\oplus_{j=0}^k \widetilde{\go g}^{\lambda_j}\Big),$$
 is in fact the root space decomposition associated to the root system $ \widetilde{\Sigma}^0$. Setting
 $$\eta_j(H_{\lambda_i})=\delta_{i,j}=\left\{\begin{array}{ll} 1 & {\rm if}\; i=j\\ 0 &{\rm if }\; i\neq j\end{array}\right.,$$
we obtain 
 $$ \widetilde{\Sigma}^0=\{ \pm\eta_i\pm\eta_j (0\leq i<j\leq k), \pm 2 \eta_j (0\leq j\leq k)\}.$$
 And this shows that  $ \widetilde{\Sigma}^0$ is a root system of type $C_{k+1}$. In fact, as seen in the next Proposition, this root system   is the root system of a subalgebra $\tilde{\go{g}}_{C_{k+1}}\subset \tilde{\go{g}}$, which is isomorphic to $\go{sp}(2(k+1),F)$ and which contains $ \go a^0$ as a maximal split abelian subalgebra. \end{rem}
 
 \begin{prop}\label{prop-inclusionSP}\hfill\\
 For $0\leq i\leq k-1$, consider a family of  $\go{sl}_{2}$-triples $(B_{i},H_{\lambda_{i}}-H_{\lambda_{i+1}},A_{i})$ where $B_{i}\in E_{i,i+1}(-1,1)$ and $A\in E_{i,i+1}(1,-1)$ (such triples exist by Lemma \ref{lem-sl2ij}). Consider also an  $\go{sl}_{2}$-triple of the form $(B_{k},H_{\lambda_{k}},A_{k})$, where $B_{k}\in \tilde{\go g}^{-\lambda_{k}}$ and $A_{k}\in \tilde{\go g}^{\lambda_{k}}$. Then these $k+1$ $\go{sl}_{2}$-triples generate a subalgebra $\tilde{\go{g}}_{C_{k+1}}\subset \tilde{\go{g}}$  which is isomorphic to $\go{sp}(2(k+1),F)$ and which contains $ \go a^0$ as a maximal split abelian subalgebra.
  \end{prop}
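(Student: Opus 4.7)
My plan is to interpret the given $\go{sl}_2$-triples as the Chevalley generators of a split semisimple Lie subalgebra of $\tilde{\go g}$ whose root system (with respect to $\go a^0$) is of type $C_{k+1}$. Since by Remark~\ref{rem-decomp-racines} the restricted root system $\tilde{\Sigma}^0$ of $(\tilde{\go g},\go a^0)$ is itself of type $C_{k+1}$, a natural candidate simple basis is $\alpha_i=\eta_i-\eta_{i+1}$ for $0\le i\le k-1$ together with $\alpha_k=2\eta_k$ (where $\eta_j$ is dual to $H_{\lambda_j}$). The corresponding coroots are exactly $H_{\alpha_i}=H_{\lambda_i}-H_{\lambda_{i+1}}$ ($i<k$) and $H_{\alpha_k}=H_{\lambda_k}$, which are precisely the semisimple elements of the given triples. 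Thus the triples $(B_i,H_{\alpha_i},A_i)$ are candidates for Chevalley generators associated to $\alpha_0,\ldots,\alpha_k$.

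\textbf{Verifying the Chevalley--Serre relations.} The key step is to check that the elements $A_i,B_i,H_{\alpha_i}$ satisfy the full set of Chevalley--Serre relations for the Cartan matrix of $C_{k+1}$: commutation of the $H_{\alpha_i}$ is clear; the weight relations $[H_{\alpha_i},A_j]=\alpha_j(H_{\alpha_i})A_j$ and $[H_{\alpha_i},B_j]=-\alpha_j(H_{\alpha_i})B_j$ follow from the fact that $A_j$ (resp.\ $B_j$) is a weight vector of weight $\alpha_j$ (resp.\ $-\alpha_j$) for $\go a^0$; the relation $[B_i,A_i]=H_{\alpha_i}$ is the defining property of each $\go{sl}_2$-triple. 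For the remaining relations, a systematic weight argument suffices: for every pair $i\ne j$ one checks that the weight $-\alpha_i+\alpha_j$ of $[B_i,A_j]$ is neither zero nor a root of $\tilde{\Sigma}^0$ (since it always has either a coefficient of absolute value $\ge 3$ or more than two nonzero coefficients in the $\eta$-basis), hence $[B_i,A_j]=0$. Likewise, the Serre relations $(\text{\rm ad}\,A_i)^{1-a_{ij}}A_j=0$ and $(\text{\rm ad}\,B_i)^{1-a_{ij}}B_j=0$ reduce to observing that the weights $(1-a_{ij})\alpha_i+\alpha_j$ fail to lie in $\tilde{\Sigma}^0$: for adjacent short roots ($|i-j|=1$, both $<k$) one gets weights like $2\eta_i-\eta_{i+1}-\eta_{i+2}$; for the short--long pair $(\alpha_{k-1},\alpha_k)$ one gets $3\eta_{k-1}-\eta_k$ and $\eta_{k-1}+3\eta_k$; and for $|i-j|\ge 2$ the weight $\alpha_i+\alpha_j$ already fails to be a root. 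In every case the relevant weight space of $\tilde{\go g}$ is zero.

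\textbf{Conclusion via Serre's theorem.} Once the Chevalley--Serre relations are verified, Serre's presentation theorem provides a surjective Lie algebra homomorphism from the split finite-dimensional simple Lie algebra of type $C_{k+1}$ over $F$---which is $\go{sp}(2(k+1),F)$---onto the subalgebra $\tilde{\go g}_{C_{k+1}}$ generated by the $A_i,B_i$. Since $\go{sp}(2(k+1),F)$ is simple and the images $H_{\alpha_0},\ldots,H_{\alpha_k}$ are linearly independent in $\go a^0$ (so the homomorphism is nonzero), the kernel is trivial and the homomorphism is an isomorphism. Finally, the coroots $H_{\alpha_i}$ span $\go a^0$ as an $F$-vector space, so $\go a^0\subset\tilde{\go g}_{C_{k+1}}$; and since $\go a^0$ is a split abelian subalgebra of dimension $k+1$ equal to the rank of $\tilde{\go g}_{C_{k+1}}\simeq\go{sp}(2(k+1),F)$, it is a maximal split abelian subalgebra.

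\textbf{Main obstacle.} The only substantive verification is the Serre relations for the short--long pair $(\alpha_{k-1},\alpha_k)$, where a priori the weight spaces of $\tilde{\go g}$ could intervene nontrivially. The saving observation---that $3\eta_{k-1}-\eta_k$ and $\eta_{k-1}+3\eta_k$ are not elements of $\tilde{\Sigma}^0$ because $\tilde{\Sigma}^0$ only contains vectors of the form $\pm\eta_a\pm\eta_b$ or $\pm 2\eta_a$---is the decisive structural input coming from the $C_{k+1}$-shape of the restricted root system established in Remark~\ref{rem-decomp-racines}.
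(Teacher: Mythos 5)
Your proof is correct and follows essentially the same route as the paper: both recognize the given triples as Chevalley generators attached to the simple basis $\eta_0-\eta_1,\ldots,\eta_{k-1}-\eta_k,2\eta_k$ of the $C_{k+1}$ root system $\widetilde{\Sigma}^0$, verify the Chevalley--Serre relations, and conclude via Serre's presentation theorem. The only cosmetic difference is in checking the higher Serre relations: the paper observes that $B_i$ is a primitive vector of weight $-n(\alpha_i,\alpha_j)$ for the $j$-th triple and reads off the nilpotency degree from the dimension of the resulting $\go{sl}_2$-module, whereas you check directly that the weights $(1-a_{ij})\alpha_i+\alpha_j$ do not lie in $\widetilde{\Sigma}^0$ --- two equivalent ways of exploiting the same structural input.
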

  
  \begin{proof} The linear forms $\eta_{0}-\eta_{1},\eta_{1}-\eta_{2},\ldots,\eta_{k-1}-\eta_{k}, 2\eta_{k}$ form a basis of the root system $\widetilde{\Sigma}^0$ which is of type $C_{k+1}$ as seen in the preceding Remark. As the $\eta_{i}$'s form the dual basis of the $H_{\lambda_{i}}$'s, it is well known that the set of elements $\{H_{\lambda_{0}}-H_{\lambda_{1}}, H_{\lambda_{1}}-H_{\lambda_{2}}, \ldots, H_{\lambda_{k-1}}-H_{\lambda_{k}}, H_{\lambda_{k}}\}$ is a basis of the dual root system in $\go{a}^0$, which is of course of type $B_{k+1}$. Define $H_{i}=H_{\lambda_{i}}-H_{\lambda_{i+1}}$ with $0\leq i\leq k-1$, and $H_{k}=H_{\lambda_{k}}$. As usual we define also $n(\alpha,\beta)=\alpha(H_{\beta})$, for $\alpha,  \beta \in  \widetilde{\Sigma}^0$ 	and where $H_{\beta}\in \go{a}^0$ is the coroot of $\beta$. It is also convenient to set $\alpha_{i}=\lambda_{i}-\lambda_{i+1}$ for $i=0,\ldots,k-1$, and $\alpha_{k}=\lambda_{k}$. 
  
  Then the generators satisfy the following relations,  for $i,j\in \{0,\ldots,k\}$:
  
  \hskip 10pt $(1)$ $[H_{i},H_{j}]=0$,
  
  \hskip 10pt $(2)$ $[B_{i},A_{j}]=\delta_{i,j}H_{i}$, 
  
  \hskip 10pt $(3)$ $[H_{i}, A_{j}]= n(\alpha_{j},\alpha_{i})A_{j}$,
  
  \hskip 10pt $(3')$ $[H_{i}, B_{j}]= - n(\alpha_{j},\alpha_{i})B_{j}$,
  
   \hskip 10pt $(4)$ $(\ad B_{j})^{-n(\alpha_{i},\alpha_{j})+1}B_{i}=0 \text{ if }i\neq j$,
   
    \hskip 10pt $(5)$ $(\ad A_{j})^{-n(\alpha_{i},\alpha_{j})+1}A_{i}=0 \text{ if }i\neq j$.
    
    The relations $(1),(2),(3),(3')$ are obvious. Let us show relation $(4)$. The $\go{sl}_{2}$-triple $(B_{j},H_{j},A_{j})$ defines a structure of finite dimensional $\go{sl}_{2}$-module on $\tilde{\go g}$. We have $[A_{j},B_{i}]=0$ and $[H_{j}, B_{i}]= -n(\alpha_{i},\alpha_{j})B_{i}$ by relations $(2)$ and $(3')$. This means that $B_{i}$ is a primitive vector of weight $-n(\alpha_{i},\alpha_{j})$. Therefore  $B_{i}$ generates an $\go{sl}_{2}$-module of dimension $-n(\alpha_{i},\alpha_{j})+1$. And this implies $(4)$. The same argument proves $(5)$.
    
    The above relations are the well known Serre relations for $\go{sp}(2(k+1),F)$. Hence the algebra generated by these elements is isomorphic to $\go{sp}(2(k+1),F)$.
    
     \end{proof}
    
     \begin{rem}\label{rem-sp=admissible}
    The preceding construction of the   subalgebra $\tilde{\go{g}}_{C_{k+1}}$ uses  the same argument as the construction of the so-called ``admissible'' subalgebras (see \cite{Rubenthaler2}, Th\'eor\`eme 3.1 p.273).
 \end{rem}

\subsection{The minimal $\sigma$-split parabolic subgroup $P$ of  $G$.}\label{sec:sigma-parabolic}
\vskip 10pt
\begin{definition}\label{def-sigma-parabolique} (\cite{HW} \textsection 4.) A parabolic subgroup  $R$  of $G$ (resp. a parabolic subalgebra $\go r$ of $\go g$) is called a $\sigma$-split parabolic \index{sigmasplitparabolic@$\sigma$-split parabolic} subgroup of $G$ (resp. a $\sigma$-split parabolic subalgebra of $\go g$) if  $\sigma(R)$ (resp. $\sigma(\go r)$) is the opposite parabolic subgroup  (resp. parabolic subalgebra) of  $R$ (resp. of $\go r$).
\end{definition}

Let   $( I^-,H_0, I^+)$ be  a   diagonal $\go sl_2$-triple (see Definition \ref{def-conditionC}). As above we denote by   $\sigma$ the involution of  $ \widetilde{\go g}$ associated to this triple. Hence we have the decomposition   $\go g=\go h\oplus \go q$ where $\go h={\go z}_{\go g}(I^+)={\go z}_{\go g}(I^-)$.  We also denote by  $\sigma$ the involution of  $ \widetilde{G}={\rm Aut}_0( \widetilde{\go g})$  given by the conjugation by the element 
$$w=e^{{\rm ad} \; I^+}e^{{\rm ad} \; I^-}e^{{\rm ad} \; I^+}=e^{{\rm ad} \; I^-}e^{{\rm ad} \; I^+}e^{{\rm ad} \; I^-}.$$

As  $\sigma(H_0)=-H_0$, the group  $G$ is invariant under the action of  $\sigma$. Let  $G^{\sigma}\subset G$  be the fixed point group under  $\sigma$. The Lie algebra of  $G^\sigma$ is equal to $\go h$. 

Define  now:

\centerline{$H=Z_G(I^+)$}
\vskip 5pt
 
  Then $H=Z_G(I^-)$ and the Lie algebra of  $H$ is $\go h$. Therefore $H$ is an open subgroup of  $G^\sigma$ and $G/H$, which is identified with the (open) orbit of $I^+$, is a symmetric space.

 \medskip

Consider the subalgebra $\go p$ of  $\go g$ defined by 
$$\go p\index{pgothique@$\go p$}={\go z}_{\go g}(\go a^0)\oplus\Big(\oplus_{0\leq i<j\leq k} E_{i,j}(1,-1)\Big).$$

\begin{prop}\label{prop-siP}\hfill

 The subalgebra $\go p$ is a minimal  $\sigma$-split parabolic subalgebra of   $\go g$. Its Langlands decomposition is given by  $\go p=\go{l}+\go{n}=\go m_1\oplus \go a_{\go p}\oplus \go n$ where 
$$\left\{ \begin{array}{l} \go n=\oplus_{0\leq i<j\leq k}E_{i,j}(1,-1)\\
\go{l}=\go m_1\oplus \go a_{\go p}= {\go z}_{\go g}(\go a^0)\\
\go a_{\go p}=\{H\in \go a; (\lambda\in \Sigma, \lambda(\go a^0)=0)\Longrightarrow \lambda(H)=0\}\\
{\go m}_1\;\textrm{is the orthogonal of  }\; {\go a}_{\go p}\;{\rm in}\; {\go z}_{\go g}(\go a^0)={\go m}_{1}\oplus \go a_{\go p}\end{array}\right..$$
\end{prop}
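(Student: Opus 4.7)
The plan is to realize $\go p$ as the standard minimal parabolic of $\go g$ attached to the split torus $\go a^0$, to read off $\sigma$-splitness from the fact that $\sigma$ acts as $-\mathrm{Id}$ on $\go a^0$, and to deduce minimality from the fact that $\go a^0$ is already a Cartan subspace of $\go q$.

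First, combining Theorem~\ref{th-decomp-Eij}\,(1) with Remark~\ref{rem-decomp-racines}, the $\go a^0$-weight space decomposition of $\go g$ reads
\[
\go g \;=\; \go z_{\go g}(\go a^0) \;\oplus\; \bigoplus_{0\le i\ne j\le k} E_{i,j}(1,-1),
\]
where $E_{i,j}(1,-1)$ is the root space for the root $\eta_i-\eta_j$; the resulting root system $\Sigma(\go g,\go a^0)=\{\eta_i-\eta_j:i\ne j\}$ is of type $A_k$. The lexicographic order $\eta_0>\eta_1>\cdots>\eta_k$ selects the positive system $\{\eta_i-\eta_j:i<j\}$, so $\go p=\go z_{\go g}(\go a^0)\oplus\go n$ is by construction the associated standard minimal parabolic subalgebra of $\go g$, with Levi $\go l=\go z_{\go g}(\go a^0)$ and unipotent radical $\go n$. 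The finer decomposition $\go l=\go m_1\oplus\go a_{\go p}$ follows from standard facts on reductive Levi subalgebras: the $\go a$-roots of $\go l$ are exactly those $\lambda\in\Sigma$ with $\lambda|_{\go a^0}=0$, so the maximal split torus in the center of $\go l$ is the announced $\go a_{\go p}=\{H\in\go a:\lambda(H)=0\text{ for every such }\lambda\}$, and $\go m_1$ is defined as its $b$-orthogonal complement in $\go l$.

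Next, I would verify $\sigma$-splitness. A short calculation based on Theorem~\ref{th-invol}\,(4) shows $\sigma|_{\go a^0}=-\mathrm{Id}$: since the $\lambda_j$'s are mutually strongly orthogonal, one has $[H_{\lambda_i},I^+]=2X_i$ and $[I^-,X_i]=-H_{\lambda_i}$ for every $i$, whence $\sigma(H_{\lambda_i})=H_{\lambda_i}+\ad(I^-)\ad(I^+)H_{\lambda_i}=-H_{\lambda_i}$. As $\sigma$ is a Lie algebra automorphism, it sends the $\go a^0$-weight space for $\alpha$ onto that for $-\alpha$; thus $\sigma(E_{i,j}(1,-1))=E_{j,i}(1,-1)$ and $\sigma(\go l)=\go l$, so $\sigma(\go n)=\bigoplus_{i<j}E_{j,i}(1,-1)$ is the nilradical of the opposite parabolic and $\sigma(\go p)$ is that opposite parabolic.

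Finally, minimality is deduced from the general structure theory of $\sigma$-split parabolic subalgebras (see \cite{HW}): the minimal ones are precisely the standard parabolics of the form $\go z_{\go g}(\go s)\oplus\go u$, where $\go s$ runs over Cartan subspaces of $\go q$ and $\go u$ is the nilradical attached to a positive system in $\Sigma(\go g,\go s)$. By Lemma~\ref{lem-SECq}, $\go a^0$ is such a Cartan subspace, so $\go p$ is minimal $\sigma$-split. The main obstacle, if one wants a self-contained proof, lies in this last step: one must rule out the existence of a strictly smaller $\sigma$-split parabolic $\go p'\subsetneq\go p$ by showing that its Levi $\go l'$ would carry a split torus on which $\sigma$ acts as $-\mathrm{Id}$ of dimension strictly greater than $\dim\go a^0=k+1$, contradicting the maximality of $\go a^0$ inside $\go q$ established in Lemma~\ref{lem-SECq}.
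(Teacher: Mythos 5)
Your proof is correct and follows essentially the same route as the paper's: exhibit the Langlands decomposition, check that $\sigma$ acts by $-\mathrm{Id}$ on $\go a^0$ (so that $\sigma(\go p)$ is the opposite parabolic and $\sigma(\go p)\cap\go p={\go z}_{\go g}(\go a^0)$), and conclude minimality from the Helminck--Wang criterion together with Lemma~\ref{lem-SECq}, exactly as the paper does by citing Proposition~4.7(iv) of \cite{HW}; the only cosmetic difference is that you obtain parabolicity from the $\go a^0$-weight decomposition, whereas the paper observes via Proposition~\ref{proplambda>0} that $\go p$ contains every negative root space of $\Sigma(\go g,\go a)$. Two small slips worth fixing: $\go p$ is in general not a minimal parabolic subalgebra of $\go g$ (only a minimal $\sigma$-split one), and with the paper's convention one has $[I^-,X_i]=+H_{\lambda_i}$, so the two brackets you display would as written give $\sigma(H_{\lambda_i})=3H_{\lambda_i}$ --- the intended (and correct) identity $\sigma|_{\go a^0}=-\mathrm{Id}$ is already contained in the proof of Lemma~\ref{lem-SECq}.
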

\begin{proof} The Theorem \ref{th-decomp-Eij} and the  Proposition  \ref{proplambda>0}   imply that  $\go p$ contains all  root spaces corresponding to the negative roots in $\Sigma$. Hence  $\go p$ contains a minimal parabolic subalgebra of $\go g$. Therefore  $\go p$ is a parabolic subalgebra of  $\go g$.\medskip

Let  $\Gamma$ be the set of roots $\lambda\in\Sigma$ such that  $\go g^\lambda\subset \go p$. From the definition of  $\go p$, one has 
$$\Gamma=\Sigma^-\cup\{\alpha\in \Sigma^+; \lambda(\go a^0)=0\} \; {\rm and }\; \go p={\go z}_{\go g}(\go a)\oplus\big(\oplus_{\lambda\in \Gamma} \go g^\lambda\big).$$
It follows that $\Gamma\cap -\Gamma=\{\lambda\in \Sigma; \lambda(\go a^0)=0\}$. And then  $\go n= \oplus_{\lambda\in \Gamma\setminus (\Gamma\cap -\Gamma)} \go g^\lambda= \oplus_{0\leq i<j\leq k}E_{i,j}(1,-1)$ is the nilradical of $\go p$.  If $H\in \go a^0$, then $\sigma(H)=-H$. Therefore if $X\in E_{i,j}(1,-1)$ then  $\sigma(X)\in E_{i,j}(-1,1)$. Then  $\go l=\sigma(\go p)\cap \go p={\go z}_{\go g}(\go a^0)$  is  a $\sigma$-stable Levi component of  $\go p$ and   $\go p$ is a  $\sigma$-split parabolic subalgebra. 
  Let $\go a_p$ be the maximal split abelian subalgebra of the center of $\go l$. Then $\go a_p=\cap_{\lambda\in \Gamma\cap -\Gamma} \ker(\lambda)$. Hence the Langlands decomposition is given by $\go p=\go m_1\oplus\go a_p\oplus \go n$ where  $\go m_1$ is the orthogonal of  $\go a_p$ in  $\go l$ for the Killing form.

As $\go a^0$ is a Cartan subspace of  $\go q$ and  as $\sigma(\go p)\cap \go p={\go z}_{\go g}(\go a^0)$, Proposition  4.7  (iv) of  \cite{HW} (see also Proposition 1.13 of  \cite{HH}) implies that $\go p$ is a minimal $\sigma$-split parabolic subalgebra of $\go{g}$.

 \end{proof}

Let  $N \index{N@$N$}=\exp^{{\rm ad}\;\go n}\subset G$ and   $L=Z_G(\go a^0)$. Then  $P\index{P@$P$}=LN$ is a parabolic subgroup of $G$. From the above discussion  $P$ is in fact a  minimal $\sigma$-split parabolic subgroup of  $G$ with  $\sigma$-stable Levi component $L=P\cap \sigma(P)$  and with nilradical  $N$. 
\medskip

 We denote by  $A$ and  $A^0$ the split tori of $G$ whose Lie algebras are respectively $\go a$ and  $\go a^0$. Then  $A$ is a maximal split torus of  $G$ and the set of weights of  $A$ in  $\go g$ is a root system  $\Phi(G,A)$ isomorphic to  $\Sigma$.  More precisely, any root in $ \Sigma$ is the differential of a unique root in  $\Phi(G,A)$. In the sequel of the paper we will always identify these two root systems. For  $\lambda\in\Sigma$ and   $a\in A$, we will denote by $ a^\lambda$ the eigenvalue of the action of  $a$ on  $\go g^\lambda$. 

 \vskip 20pt 
 
 \subsection{ The prehomogeneous vector space  $(P,V^+)$}\hfill
 
 \vskip 5pt
 For the convenience of the reader, and although it  will be a consequence of the proof of Theorem \ref{th-delta_{j}invariants}, let us give a simple proof of the prehomogeneity of $(P,V^+)$.
 
 \begin{prop}\label{(P,V)PH}\hfill
 
 The representation $(P,V^+)$ is prehomogeneous.
  \end{prop}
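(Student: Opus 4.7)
The plan is to show that the $P$-orbit of a suitable generic element $I^+ \in V^+$ is Zariski-open, by checking the infinitesimal condition $[\go p, I^+] = V^+$. I will take $I^+ = X_0 + X_1 + \cdots + X_k$ with $X_j \in \widetilde{\go g}^{\lambda_j}\setminus\{0\}$, which is generic in $V^+$ by Proposition \ref{X0+...+Xkgenerique}. Combining Theorem \ref{th-decomp-Eij} and Proposition \ref{prop-siP}, one has
$$V^+ = \Big(\bigoplus_{j=0}^k \widetilde{\go g}^{\lambda_j}\Big) \oplus \Big(\bigoplus_{0 \leq i < j \leq k} E_{i,j}(1,1)\Big), \qquad \go p = \go l \oplus \go n,$$
where $\go l = {\go z}_{\go g}(\go a^0)$ and $\go n = \bigoplus_{0 \leq i < j \leq k} E_{i,j}(1,-1)$. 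It therefore suffices to verify separately that $\ad(I^+)(\go n) \supseteq \bigoplus_{i<j} E_{i,j}(1,1)$ and $\ad(I^+)(\go l) \supseteq \bigoplus_j \widetilde{\go g}^{\lambda_j}$.

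For the nilpotent part, I will fix $i < j$ and $A \in E_{i,j}(1,1)$, and pick an $\go{sl}_2$-triple $(Y_j, H_{\lambda_j}, X_j)$ with $Y_j \in \widetilde{\go g}^{-\lambda_j}$. Setting $Z = [Y_j, A]$, a direct weight computation shows that $Z \in E_{i,j}(1,-1) \subset \go n$. Using the Jacobi identity together with the commutativity $[A, X_j] = 0$ of $V^+$ and the $\go{sl}_2$-relation $[Y_j, X_j] = -H_{\lambda_j}$, one obtains $[Z, X_j] = -[A, [Y_j, X_j]] = -A$. For $s \neq j$ the bracket $[Z, X_s]$ lies in $V^+$, but its would-be weight (negative at position $j$, or equal to $3$ at position $i$ when $s = i$) is not a weight of $V^+$, so $[Z, X_s] = 0$. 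Hence $[Z, I^+] = -A$, proving the required surjectivity (actually a bijectivity, since both sides have dimension $\binom{k+1}{2}d$ by Proposition \ref{prop-memedimension}).

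For the Levi part, the key input is the descent (Theorem \ref{th-descente}), which forces the rank-one subalgebras $\widetilde{\go l}_0, \ldots, \widetilde{\go l}_k$ to pairwise centralize one another. Consequently $[\widetilde{\go g}^{-\lambda_j}, \widetilde{\go g}^{\lambda_j}] \subset \widetilde{\go l}_j$ is contained in ${\go z}_{\go g}(\go a^0) = \go l$ and satisfies $[[\widetilde{\go g}^{-\lambda_j}, \widetilde{\go g}^{\lambda_j}], X_s] = 0$ for $s \neq j$, so its contribution to $[\go l, I^+]$ reduces to $[[\widetilde{\go g}^{-\lambda_j}, \widetilde{\go g}^{\lambda_j}], X_j]$. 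By Theorem \ref{th-k=0} applied to the rank-one graded algebra $\widetilde{\go l}_j$, the group $L_j = {\cal Z}_{\text{Aut}_0(\widetilde{\go l}_j)}(H_{\lambda_j})$ has Lie algebra $[\widetilde{\go g}^{-\lambda_j}, \widetilde{\go g}^{\lambda_j}]$ and acts on $\widetilde{\go g}^{\lambda_j}$ with $X_j$ in an open orbit; differentiating the orbit map at $X_j$ yields $[[\widetilde{\go g}^{-\lambda_j}, \widetilde{\go g}^{\lambda_j}], X_j] = \widetilde{\go g}^{\lambda_j}$. Summing over $j$ gives $[\go l, I^+] \supseteq \bigoplus_j \widetilde{\go g}^{\lambda_j}$, and combined with the nilpotent part this produces $[\go p, I^+] = V^+$, so $P \cdot I^+$ is Zariski-open in $V^+$.

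The main hurdle is the weight bookkeeping in the nilpotent part, namely confirming that $Z = [Y_j, A]$ lies in $\go n$ and that every extraneous bracket $[Z, X_s]$ with $s \neq j$ vanishes; once this is settled, the remaining Jacobi computation and the rank-one input from Theorem \ref{th-k=0} are routine.
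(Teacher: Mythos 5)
Your proof is correct, but it takes a different route from the paper. The paper's argument is a two-line reduction: since prehomogeneity is an infinitesimal condition it suffices to treat $(\overline{\go p},\overline{V^+})$, and since the parabolic $\overline{\go p}$ contains a Borel subalgebra $\overline{\go b}$ of $\overline{\go g}$, one simply cites \cite{M-R-S}, Prop.~3.8, which asserts that $(\overline{\go b},\overline{V^+})$ is already prehomogeneous. You instead verify the infinitesimal criterion $[\go p, I^+]=V^+$ directly at the diagonal element $I^+=X_0+\cdots+X_k$, splitting $V^+$ according to Theorem \ref{th-decomp-Eij} and matching $\go n$ against $\oplus_{i<j}E_{i,j}(1,1)$ and $\go l$ against $\oplus_j\widetilde{\go g}^{\lambda_j}$. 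Your weight bookkeeping is sound: $Z=[Y_j,A]\in E_{i,j}(1,-1)\subset\go n$, the extraneous brackets $[Z,X_s]$ ($s\neq j$) vanish because their would-be weights violate the constraint $\sum|p_i|\le 2$, and $[[\widetilde{\go g}^{-\lambda_j},\widetilde{\go g}^{\lambda_j}],X_j]=\widetilde{\go g}^{\lambda_j}$ follows from the genericity of $X_j$ in the rank-one algebra $\widetilde{\go l}_j$ (Lemma \ref{lem-sl2-generique} gives this a bit more directly than differentiating the orbit map of $L_j$). In fact your nilpotent-part computation is essentially the one the paper performs later in Lemma \ref{lemmecodim}, and your conclusion is finer than the Proposition itself: you exhibit an explicit open $P$-orbit, namely $P\cdot I^+$, which is what the paper only establishes afterwards via Lemma \ref{lem-NO+} and Theorem \ref{th-Porbites}. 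The trade-off is self-containedness versus brevity: your argument avoids the external citation to \cite{M-R-S} at the cost of a page of bracket computations, and since surjectivity of the $F$-linear map $\ad(I^+)$ on $\go p$ persists under extension of scalars to $\overline F$, it does yield the Zariski-open orbit required by the definition of prehomogeneity.
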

  
  \begin{proof} Remember that prehomegeneity is an infinitesimal condition. Therefore it is enough to prove that $(\overline{\go{p}}, \overline{V}^+)$ is prehomogeneous. But the parabolic subalgebra $\overline{\go{p}}$ of $\overline{\go{g}}$ will contain a Borel subalgebra $\overline{\go{b}}$.  The space $(\overline{\go{b}}, \overline{V}^+)$ is   prehomogeneous by \cite{M-R-S}, Prop. 3.8 p. 112 (the proof is the same over $\overline{F}$ as over $\C$).
  
  \end{proof}

We will now  show that the polynomial  $\Delta_j$ (see  Definition \ref{defdeltaj}) are the fundamental relative invariants of this prehomogeneous  vector space .\medskip


  Recall  $\ell$ is the common dimension of the  $ \widetilde{\go g}^{\lambda_j}$'s and that $\ell$ is either a square  or equal to  $3$ (cf. Theorem \ref{th-k=0}). 
  
  \medskip

   \begin{theorem}\label{th-delta_{j}invariants}\hfill
   
   The polynomials  $\Delta_j$ are irreducible, and relatively invariant under the action of  $P$: there exists a rational character ${\chi}_j$ of  $P$ such that 
  $$\Delta_j(p.X)= {\chi}_j(p)\Delta_j(X),\quad X\in V^+, p\in P$$
  
  More precisely: 
\begin{itemize}  \item $ {\chi}_j(n)=1,\quad n\in N,$
\item $ {\chi}_j(a)=a^{\kappa(\lambda_j+\ldots +\lambda_k)}, \quad a\in A,$
\item $ {\chi}_j(m)=1,\quad m\in L\cap H.$ \hskip 6pt $(H=Z_G(I^+))$
\end{itemize}
  \end{theorem}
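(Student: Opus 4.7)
The plan is to deduce irreducibility from Theorem \ref{thproprideltaj}(1), which already asserts it, so the real content is the factorization of the character $\chi_j$ on $P = LN$ into the three pieces listed. I would establish this in the order: invariance under $N$, relative invariance under $L$ with the required value on the split torus $A$, and then triviality on $L\cap H$.

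First, for the $N$-invariance, I would simply invoke Theorem \ref{thproprideltaj}(2): since $N = \exp(\operatorname{ad}\go n)$ with $\go n = \oplus_{r<s} E_{r,s}(1,-1)$, that theorem gives $\Delta_j(\exp(\operatorname{ad} Z).X) = \Delta_j(X)$ for every $Z\in\go n$, hence $\chi_j(n)=1$ on $N$. Next, to obtain relative invariance under $L = Z_G(\go a^0)$, I would use that any $l\in L$ stabilizes every root space $\widetilde{\go g}^{\lambda_s}$ and every $E_{r,s}(\pm 1,\pm 1)$, and hence preserves the splitting $V^+ = V_j^+ \oplus V_j^\perp$ used to define $\Delta_j$. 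Since $l$ centralizes $H_{\lambda_0},\ldots,H_{\lambda_{j-1}}$, it stabilizes $\overline{\widetilde{\go l}_0\oplus\cdots\oplus \widetilde{\go l}_{j-1}}$ and its centralizer $\overline{\widetilde{\go g}_j}$; the restriction $l|_{\overline{\widetilde{\go g}_j}}$ still centralizes $H_{\lambda_j}+\cdots+H_{\lambda_k}$ and lies in $\operatorname{Aut}_e(\overline{\widetilde{\go g}_j})$, hence belongs to $\overline{G_j}$. Thus the relative invariance of $P_j$ under $\overline{G_j}$ propagates: $\Delta_j(l.X) = \chi_j(l)\,\Delta_j(X)$ for a well-defined character $\chi_j$ of $L$.

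To identify $\chi_j$ on $A$, I would evaluate on the principal diagonal. Fix non-zero $X_s\in\widetilde{\go g}^{\lambda_s}$ so that $I^+ = X_0+\cdots+X_k$ is generic (Proposition \ref{X0+...+Xkgenerique}). For $a\in A$, the element $a$ acts on $\widetilde{\go g}^{\lambda_s}$ as the scalar $a^{\lambda_s}$, so $a.I^+ = \sum_s a^{\lambda_s} X_s$. Applying Theorem \ref{thproprideltaj}(3) (with $x_s = a^{\lambda_s}$) and the fact that the terms $X_s$ for $s<j$ lie in $V_j^\perp$, we get
$$\Delta_j(a.I^+) = \prod_{s=j}^k (a^{\lambda_s})^{\kappa}\cdot \Delta_j(X_j+\cdots+X_k) = a^{\kappa(\lambda_j+\cdots+\lambda_k)}\,\Delta_j(I^+),$$
which pins down $\chi_j(a) = a^{\kappa(\lambda_j+\cdots+\lambda_k)}$. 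Finally, for $m\in L\cap H$, the condition $m.I^+ = I^+$ combined with the fact that $m\in L$ preserves each summand $\widetilde{\go g}^{\lambda_s}$ forces $m.X_s = X_s$ for every $s$ by uniqueness of the decomposition; evaluating $\Delta_j(m.I^+)=\Delta_j(I^+)\neq 0$ then yields $\chi_j(m)=1$.

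The main technical obstacle I anticipate is in step two: justifying that the restriction of $l\in L$ to $\overline{\widetilde{\go g}_j}$ actually belongs to $\overline{G_j}$ (especially the elementary-automorphism aspect), given that the subgroup $G_j$ is not embedded in $G$ via inclusion in general (cf. Remark \ref{rem-inclusion-groupes}). One has to carefully separate what happens over $F$ and over $\overline{F}$, using $\overline{G} = Z_{\operatorname{Aut}_e(\overline{\widetilde{\go g}})}(H_0)$ and the fact that for elements centralizing $\go a^0$ the restriction to $\overline{\widetilde{\go g}_j}$ still is a product of exponentials of $\operatorname{ad}$-nilpotent elements of $\overline{\widetilde{\go g}_j}$. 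Once this identification is secured, the rest of the argument is book-keeping using Theorem \ref{thproprideltaj}.
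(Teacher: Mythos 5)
Your outline matches the paper's for three of the four assertions: irreducibility and $N$-invariance are read off from Theorem \ref{thproprideltaj}(1),(2), the value of $\chi_j$ on $A$ is obtained by evaluating on the diagonal via Theorem \ref{thproprideltaj}(3), and triviality on $L\cap H$ follows from $\Delta_j(m.I^+)=\Delta_j(I^+)\neq 0$. The divergence — and the problem — is in the step you yourself flag as the main obstacle: relative invariance under $L$. You reduce it to the claim that for $l\in L$ the restriction $l|_{\overline{\widetilde{\go g}_j}}$ lies in $\overline{G_j}$, justified by the assertion that ``for elements centralizing $\go a^0$ the restriction to $\overline{\widetilde{\go g}_j}$ still is a product of exponentials of $\ad$-nilpotent elements of $\overline{\widetilde{\go g}_j}$.'' That assertion does not follow by restricting the exponential factors of $l$ one by one (the nilpotent elements occurring in a presentation of $l$ as an element of $\operatorname{Aut}_e(\overline{\widetilde{\go g}})$ have no reason to lie in $\overline{\widetilde{\go g}_j}$), and you give no other argument; the paper explicitly warns that ``$L$ is in general not a subgroup of $G_j$,'' so this is exactly the point where a proof is owed and your proposal leaves it open.

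The paper sidesteps the issue entirely. It writes $\Delta_j(X+Y)=c\,\Delta_0(X_0+\cdots+X_{j-1}+X)$ for $X\in V_j^+$, $Y\in V_j^\perp$ (Theorem \ref{thproprideltaj}(4)), uses the relative invariance of $\Delta_0$ under $\overline{G}\supset L$ to get $\Delta_j(m(X+Y))=c\,\chi_0(m)\,\Delta_0(m^{-1}X_0+\cdots+m^{-1}X_{j-1}+X)$, and then applies Theorem \ref{thproprideltaj}(4) a second time — with the new nonzero diagonal elements $m^{-1}X_s\in\widetilde{\go g}^{\lambda_s}$ — to convert the right-hand side back into $c_j(m)\,\Delta_j(X+Y)$. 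No statement about $G_j$ is needed. If you want to keep your route, the missing step can be repaired, but by a different mechanism than the one you invoke: $\overline{G}$ is connected (as shown in section \ref{sub-section-generic-strongly-orth}), hence so is the centralizer of the torus $A^0$ in $\overline{G}$; the restriction to $\overline{\widetilde{\go g}_j}$ is a morphism of algebraic groups, so its image is connected and therefore contained in the identity component $\operatorname{Aut}_e(\overline{\widetilde{\go g}_j})=\operatorname{Aut}_0(\overline{\widetilde{\go g}_j})$, and it visibly centralizes $H_{\lambda_j}+\cdots+H_{\lambda_k}$, whence it lands in $\overline{G_j}$. Either fix is acceptable, but as written your proof has a genuine hole at its central step.
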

   
 \begin{proof}  The fact that the $\Delta_{j}$'s are  irreducible, and their  invariance   under $N$, have already been obtained in  Theorem  \ref{thproprideltaj} and its proof.
 
 We have   $A\subset G_j$ and  $a\in A$ acts by  $a^{\lambda_j}$ on $ \widetilde{\go g}^{\lambda_j}$. Therefore by   Theorem \ref{thproprideltaj}  (3)   we get  $$\Delta_j(a(X_0+\ldots +X_k))=\Delta_j(\sum_{s=0}^k a^{\lambda_s} X_s)=\prod_{s=j}^k a^{\kappa\lambda_s} \Delta_j(X_0+\ldots +X_k),$$  
  for $X_s\in\tilde{\go g}^{\lambda_s}\setminus\{0\}$. (This gives the value of $\chi_j$ on $A$).
 
Let us show now that  $\Delta_j$ is relatively invariant under  $L$ (this is  not given by Theorem   \ref{thproprideltaj}  because  $L$   is    in general not  a subgroup of $G_j$).
  
 Let  $Z=X+Y\in V^+$ where  $X\in V_j^+$ and  $Y\in V_j^\perp$ ($V_j^\perp$ was defined at the beginning of section \ref{sec:sectiondeltaj}). By Theorem   \ref{thproprideltaj}   (4), there exists a constant $c$ such that 
 $ \Delta_j(X+Y)=\Delta_j(X)= c \Delta_0(X_0+\ldots +X_{j-1}+X)$. 
 An element of  $L$ normalizes  $V_j^+$, $V_j^\perp\otimes \overline{F}$ and each root space $ \widetilde{\go g}^{\lambda_j}$. As $\Delta_0$ is relatively invariant under $\overline{G}$ and  $L\subset\overline{G}$, we get for $m\in L$:
 $$\Delta_j(m(X+Y))=c\Delta_0(X_0+\ldots +X_{j-1}+mX)=c\chi_0(m) \Delta_0(m^{-1}X_0+\ldots +m^{-1}X_{j-1}+ X).$$ 
Again  by Theorem  \ref{thproprideltaj}   (4), there exists a constant   $ c_j(m)$ such that  $ \Delta_0(m^{-1}X_0+\ldots +m^{-1}X_{j-1}+ X)=c_j(m) \Delta_j(X)=c_j(m) \Delta_j( X+Y)$. Therefore   $\Delta_j(m(X+Y))=c\,c_j(m)\chi_0(m)  \Delta_j(X+Y)$, and hence  $\Delta_j$ is relatively invariant under $L$.

This proves that the $\Delta_{j}$'s are relatively invariant under the parabolic subgroup $P$.

Let $m\in L\cap H$. Then $\Delta_{j}(mI^+)=\Delta_{j}(I^+)=\chi_{j}(m)\Delta_{j}(I^+)$.
Hence $\chi_{j}(m)=1$.

 \end{proof}

 We define the dense open subset of  $V^+$  as follows: 
 $${\mathcal O}^+\index{O+@${\mathcal O}^+$}:=\{ X\in V^+; \Delta_0(X)\Delta_1(X)\ldots \Delta_k(X)\neq 0\}.$$
 We will now prove that  ${\mathcal O}^+$ is the union of the open $P$-orbits of $V^+$. 
 
  \begin{lemme}\label{lem-NO+} Any element of  ${\mathcal O}^+$ is  conjugated under $N$ to an element of  $\oplus_{j=0}^k ( \widetilde{\go g}^{\lambda_j}\setminus \{0\})$.
\end{lemme}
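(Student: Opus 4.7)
The plan is to proceed by induction on $k$. For $k=0$ we have $V^+=\widetilde{\go g}^{\lambda_0}$ and ${\mathcal O}^+\subset \widetilde{\go g}^{\lambda_0}\setminus\{0\}$, so any $X\in{\mathcal O}^+$ already lies in $\widetilde{\go g}^{\lambda_0}\setminus\{0\}$ and there is nothing to do.

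For the inductive step, given $X\in{\mathcal O}^+$, the first move is to invoke the proof (not merely the statement) of Proposition~\ref{prop.prem-reduc}(2): since $\Delta_1(X)\neq 0$, that proof exhibits an element of the form $e^{\ad Z}$ with $Z\in \go g_{-1}:=\{U\in\go g \mid [H_{\lambda_1}+\cdots+H_{\lambda_k}, U] = -U\}$ conjugating $X$ into $\widetilde{\go g}^{\lambda_0}\oplus V_1^+$. Decomposing $\go g = \go z_{\go g}(\go a^0)\oplus\bigl(\oplus_{i\neq j}E_{i,j}(1,-1)\bigr)$ and reading off weights for $H_{\lambda_1}+\cdots+H_{\lambda_k}$ gives
$$\go g_{-1} = \bigoplus_{s=1}^{k} E_{0,s}(1,-1) \subset \bigoplus_{0\leq i<j\leq k}E_{i,j}(1,-1) = \go n,$$
so the reducing element lies in $N$. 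Write $n_1 X = Y_0+Y^{(1)}$ with $Y_0\in\widetilde{\go g}^{\lambda_0}$ and $Y^{(1)}\in V_1^+$.

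Next one checks that $Y_0\neq 0$. By the $N$-invariance of $\Delta_0$ we have $\Delta_0(Y_0+Y^{(1)}) = \Delta_0(X)\neq 0$, so it suffices to show $\Delta_0|_{V_1^+}\equiv 0$. But by Theorem~\ref{th-delta_{j}invariants} the $A^0$-weight of $\Delta_0$ is $\kappa(\lambda_0+\cdots+\lambda_k)$, whose $H_{\lambda_0}$-component equals $2\kappa>0$, while every coordinate function on $V_1^+$ has $H_{\lambda_0}$-weight zero by definition of $V_1^+$; hence any monomial on $V_1^+$ has vanishing $H_{\lambda_0}$-weight, forcing $\Delta_0|_{V_1^+}\equiv 0$. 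Moreover, $\Delta_j$ depends only on the $V_j^+\subset V_1^+$ component for $j\geq 1$, so $\Delta_j(Y^{(1)}) = \Delta_j(n_1X) = \Delta_j(X)\neq 0$. Thus $Y^{(1)}$ lies in the analogous open set inside $V_1^+$ attached to the graded algebra $\widetilde{\go g}_1$.

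Finally, the induction hypothesis applied inside $\widetilde{\go g}_1$ yields $n_2\in N_1:=\exp(\ad \go n_1)$, with $\go n_1=\oplus_{1\leq i<j\leq k}E_{i,j}(1,-1)$, such that $n_2 Y^{(1)} = Y_1+\cdots+Y_k$ with $Y_j\in\widetilde{\go g}^{\lambda_j}\setminus\{0\}$. The inclusion $\go n_1\subset \go n$ gives $N_1\subset N$, and a quick weight check shows $[\go n_1,\widetilde{\go g}^{\lambda_0}] = 0$ (for $1\leq i<j$ any bracket $[E_{i,j}(1,-1),\widetilde{\go g}^{\lambda_0}]$ would land in an $H_{\lambda_j}$-eigenspace of $V^+$ of eigenvalue $-1$, which does not exist). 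Therefore $n_2$ fixes $Y_0$, and $n_2 n_1\in N$ carries $X$ to $Y_0+Y_1+\cdots+Y_k$ with every summand non-zero.

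The one genuinely non-formal point is the opening step: Proposition~\ref{prop.prem-reduc}(2) as stated only provides conjugation by ${\rm Aut}_e(\go g)$, so one must inspect its proof to see that the conjugator actually lies in the specific subgroup $N$. Once this is granted, everything else reduces to elementary weight book-keeping ($\Delta_0|_{V_1^+}\equiv 0$, $N_1\subset N$, and $[\go n_1,\widetilde{\go g}^{\lambda_0}] = 0$), with no further genuine obstacle.
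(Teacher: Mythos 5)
Your proof is correct and follows essentially the same route as the paper's: both invoke the proof of Proposition~\ref{prop.prem-reduc}(2) to see that the conjugating element lies in $\oplus_{j=1}^{k}E_{0,j}(1,-1)\subset\go n$, note that $\Delta_j$ ($j\geq 1$) is unchanged because the $\widetilde{\go g}^{\lambda_0}$-component sits in $V_j^{\perp}$, and then induct on the rank inside $\widetilde{\go g}_1$. The only cosmetic difference is that you verify non-vanishing of the components along the way (via $\Delta_0|_{V_1^+}\equiv 0$ and the inner induction), whereas the paper defers this to a single genericity argument at the end; both are valid.
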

\begin{proof}
  Let  $X\in {\mathcal O}^+$. As  $\Delta_1(X)\neq 0$, we know from the proof of Proposition \ref{prop.prem-reduc}, that there exists  $Z\in \go g$ such that   $[H_{\lambda_1}+\ldots H_{\lambda_k}, Z]=-Z$ (and hence  $[H_{\lambda_0}, Z]=Z$,) and  ${\rm e}^{{\rm ad}\; Z} X\in V_1^+\oplus  \widetilde{\go g}^{\lambda_0}$. 
  
Therefore    $Z\in \oplus_{j=1}^k E_{0,j}(1,-1)\subset \go n$. Let $X^1\in V_1^+$ and $X^0\in  \widetilde{\go g}^{\lambda_0}$ such that  ${\rm e}^{{\rm ad}\; Z} X=  X^0+X^1$. Then, for $j\geq 1$, 
  $\Delta_j(X)=\Delta_j(X^0+X^1 )=\Delta_j(X^1)$ as  $X^0\in V_1^\perp$. 
  
 As  $X\in {\mathcal O}^+$, we have $\Delta_j(X^1)\neq 0$ for  $j\geq 1$. Then, by induction,  we  obtain that $X$ is $N$-conjugated to an element  of $\oplus_{j=0}^k ( \widetilde{\go g}^{\lambda_j})$. And as $X$ is generic for the $G$  action, we see that in fact  $X$ in $N$-conjugated to an element of  $\oplus_{j=0}^k ( \widetilde{\go g}^{\lambda_j}\setminus \{0\})$.
 
 \end{proof}
  
  \noindent\begin{rem}\label {rem-Vj-V(j+1)} Applying this Lemma to   $ \widetilde{\go g}_j$,  we see that if  $X\in V_j^+$ such  $\Delta_s(X)\neq 0$ for  $s\geq j$,  then  $X$ is $N$-conjugated to an element of the form $Y_j+X^{j+1} $ with $X^{j+1}\in V_{j+1}^+$ and  $Y_j\in  \widetilde{\go g}^{\lambda_j}$. 
  \end{rem}\medskip
  
   \noindent\begin{rem}\label {rem-Delta-normalisation} {\bf (Normalization)}
  
 Suppose $\ell =3$. Recall that  $L_j$ is the analogue  of the group  $G$ for the graded Lie algebra  $ \widetilde{\go l}_j$, that is   $L_j={\mathcal Z}_{{\rm Aut}_0(  \widetilde{\go l}_j)}(H_{\lambda_j})$. In the following Theorem we denote by $\delta_{j}$ a choice of a relative invariant of the prehomogeneous space $(L_{j},  \widetilde{\go g}^{\lambda_j})$. And then we normalize the $\Delta_{j}$'s in such a way that
  $$\Delta_{j}(X_{j}+X_{j+1}+\ldots+X_{k})=\delta_{j}(X_{j})\Delta_{j+1}(X_{j+1}+\ldots+X_{k})=\delta_{j}(X_{j})\ldots\delta_{k}(X_{k}) $$
  for  $X_{j}\in   \widetilde{\go g}^{\lambda_j}\setminus \{0\}$ and $ \text{ for }j=0,\ldots,k$. Conversely one could  also choose arbitrarily the $\Delta_{j}$'s, and this choice defines uniquely the $\delta_{j}$'s, according to the above formulas.  
  
  \end{rem}

 \begin{theorem}\label{th-Porbites} 
 \hfill
  
 In all cases, the dense open set ${\mathcal O}^+$ is the union of the open $P$-orbits in $V^+$.

 \begin{enumerate}\item  If $\tilde{\go g}$ is of Type $I$ (that is, if   $\ell$ is a square and $e=0$ or $4$),  then ${\mathcal O}^+$ is the unique open  $P$-orbit in  $V^+$.
 \item   Let $\tilde{\go g}$ be of  of Type  $II$ (that is  $\ell=1$ and $e\in\{1,2,3\}$)   and  let $S_e=\{a\in F^*; aq_e\sim q_e\}$ (see Lemma \ref{lem-ensembleSe}). Then the  subgroup   $P$ has  $|F^*/S_e|^{k}$ open orbits in   $V^+$ given for  $k\geq 1$ by
 \begin{align*} {\mathcal O}_u^+\index{O-u@${\mathcal O}_u^+$}&=\{ X\in V^+;\dfrac{ \Delta_j(X)}{\Delta_k(X)^{k+1-j}} u_j\ldots u_{k-1}\in S_e \;{\rm for }\; j=0,\ldots k-1\}\\
  {}&= P.(u_{0}X_{0}+\ldots+u_{k-1}X_{k-1}+ X_{k})
  \end{align*}
  
where   $u=(u_0,\ldots, u_{k-1})\in (F^*/S_e)^{k}$. (i.e. $P$ has $4^k $ open orbits in $V^+$ if $e=1$ or $3$, and $2^k$ open orbits if $e=2$).
 
\item  If $\tilde{\go g}$ is of Type $III$ (that is if  $\ell=3$), then the subgroup  $P$ has  $3^{k+1}$ open orbits in $V^+$ given by

  $$ {\mathcal O}_u^+=\{ X\in V^+; \Delta_j(X) u_j\ldots u_k\in F^{*2} \;{\rm for  }\; j=0,\ldots k\},$$
 where $u=(u_0,\ldots, u_k)\in \prod_{i=0}^k\Big(\delta_i( \widetilde{\go g}^{\lambda_i}\setminus\{0\})/F^{*2}\Big).$
  \end{enumerate}

  \end{theorem}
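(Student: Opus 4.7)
The plan is to reduce the classification of open $P$-orbits in $V^+$ to the classification of $L$-orbits on the ``diagonal'' subspace $\oplus_{j=0}^k \widetilde{\go g}^{\lambda_j}$, for which we already know the answer from Theorems \ref{thm-delta2}, \ref{thm-orbites-e04}, \ref{thm-orbites-e1}, \ref{thm-orbites-e2} and \ref{th-d=3}, and then to express this $L$-classification in terms of the values of the relative invariants $\Delta_j$.

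First I would prove that ${\mathcal O}^+$ is exactly the union of the open $P$-orbits. Since each $\Delta_j$ is a non-trivial relative invariant of $(P,V^+)$ by Theorem \ref{th-delta_{j}invariants}, any open $P$-orbit avoids $\{\Delta_j=0\}$ and so is contained in ${\mathcal O}^+$. Conversely, let $X\in {\mathcal O}^+$; by Lemma \ref{lem-NO+}, $X$ is $N$-conjugate to $X^0=\sum_{j=0}^k x_j X_j$ with all $x_j\neq 0$. I would then show $[\go p, X^0]=V^+$. Using the eigenvalue analysis of Theorem \ref{th-decomp-Eij}, the bracket $[Z,X^0]$ for $Z\in E_{i,j}(1,-1)\subset \go n$ lies in $E_{i,j}(1,1)$, and by the $\go{sl}_2$-theory of section \ref{sl2module} applied to $(Y_j,H_{\lambda_j},X_j)$, the map $\ad(x_j X_j):E_{i,j}(1,-1)\to E_{i,j}(1,1)$ is an isomorphism; hence $[\go n, X^0]=\oplus_{i<j}E_{i,j}(1,1)$. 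On the other hand $[\widetilde{\go g}^{-\lambda_j},\widetilde{\go g}^{\lambda_j}]\subset {\mathcal Z}_\go g({\go a}^0)=\go l$ acts irreducibly on $\widetilde{\go g}^{\lambda_j}$ by Proposition \ref{ell-0-simple}, so $[\go l, X^0]\supset \oplus_j \widetilde{\go g}^{\lambda_j}$. Combining, $[\go p, X^0]=V^+$ and $P\cdot X^0$ is open.

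Second, I would establish the bijection ``open $P$-orbits in ${\mathcal O}^+$'' $\longleftrightarrow$ ``$L$-orbits on generic diagonals.'' The surjectivity follows from the first step. For injectivity, suppose $X^0=\sum x_j X_j$ and $Y^0=\sum y_j X_j$ are generic diagonals with $p\cdot X^0=Y^0$ for $p=ln\in P$. Writing $n=\exp Z$ with $Z\in \go n$, the weight analysis shows $[Z,\widetilde{\go g}^{\lambda_s}]=0$ unless $s$ is one of the two indices attached to the summand of $Z$, in which case the result lies in $\oplus_{i<j}E_{i,j}(1,1)$; iterating, $[Z,[Z,X^0]]=0$. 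Hence $n\cdot X^0=X^0+[Z,X^0]$ has diagonal component exactly $X^0$. Since $L$ centralizes $\go a^0$ it preserves $\oplus_j\widetilde{\go g}^{\lambda_j}$ and $\oplus_{i<j}E_{i,j}(1,1)$ separately; applying the $L$-equivariant projection onto $\oplus_j\widetilde{\go g}^{\lambda_j}$ to $Y^0=l(n\cdot X^0)$ gives $Y^0=l\cdot X^0$.

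Third, I would import the $L$-orbit count and express each orbit by the formula in the statement. For Type I, Theorems \ref{thm-delta2} and \ref{thm-orbites-e04} give a single $L$-orbit on generic diagonals, so ${\mathcal O}^+$ is the unique open $P$-orbit. For Type II, Corollary \ref{cor-Lorbits} says two generic diagonals $\sum x_jX_j$ and $\sum x'_jX_j$ are $L$-conjugate iff there is $\eta\in F^*$ with $\eta x_jx'_j\in S_e$ for all $j$, so the quotient has cardinality $|F^*/S_e|^k$ with representatives $X^0_u=\sum_{j<k}u_jX_j+X_k$, $u\in (F^*/S_e)^k$. A direct computation using Theorem \ref{thpropridelta_0} gives $\Delta_j(X^0_u)=u_j u_{j+1}\cdots u_{k-1}$ (with $u_k=1$). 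To translate this into the form of ${\mathcal O}_u^+$, one must check that $\Delta_j/\Delta_k^{k+1-j}$ is $P$-invariant modulo $S_e$: the $N$-invariance is automatic, and for $l\in L$ acting as $\mu_s\in F^*$ on each $\widetilde{\go g}^{\lambda_s}$, the $L$-invariance of the equivalence class of $q_{X_i,X_j}\sim q_e$ (Proposition \ref{prop-equivalenceqXiXj}) forces $\mu_i\mu_j\in S_e$ for $i\neq j$, so all $\mu_s$ lie in a common coset $\mu S_e$ and $\chi_j(l)/\chi_k(l)^{k+1-j}=\prod_{s=j}^k \mu_s\mu_k^{-(k+1-j)}\in S_e$. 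For Type III, Corollary \ref{cor-conjLj} together with the normalization of Remark \ref{rem-Delta-normalisation} shows that the $L$-orbit of a generic diagonal is determined by the classes $\delta_j(X_j)\bmod F^{*2}$, giving $3^{k+1}$ open $P$-orbits ${\mathcal O}_u^+$ parametrized by $u\in \prod_j(\delta_j(\widetilde{\go g}^{\lambda_j}\setminus\{0\})/F^{*2})$, and $\chi_0(G)\subset F^{*2}$ ensures the well-definedness of the invariants $\Delta_j\bmod F^{*2}$ on $P$-orbits.

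The main obstacle is the well-definedness check in the third step, particularly the verification that the compound invariant $\Delta_j(X)/\Delta_k(X)^{k+1-j}$ descends to an $S_e$-valued invariant on $P$-orbits in the Type II case; this rests on the subtle interaction between the characters $\chi_j$ of $L$ and the quadratic constraints coming from the forms $q_{X_i,X_j}$. The remaining bookkeeping (choice of representatives, passage from generic diagonals to all of ${\mathcal O}^+$ via Lemma \ref{lem-NO+}) is routine.
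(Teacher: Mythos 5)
Your overall strategy (reduce to the diagonal via Lemma \ref{lem-NO+}, classify generic diagonal elements under $L$, and translate the answer into conditions on the $\Delta_j$'s) is the same as the paper's, and your first and third steps are essentially sound. But your second step contains a genuine error. You claim that for $Z\in\go n$ one has $[Z,[Z,X^0]]=0$, so that $n\cdot X^0=X^0+[Z,X^0]$ has diagonal component exactly $X^0$. This is false. Take $Z\in E_{i,j}(1,-1)$ with $i<j$: then $[Z,X_j]\in E_{i,j}(1,1)$, but $[Z,[Z,X_j]]$ has $H_{\lambda_i}$-eigenvalue $2$ and $H_{\lambda_j}$-eigenvalue $0$, hence lies in $\widetilde{\go g}^{\lambda_i}$ — i.e.\ back in the diagonal — and is nonzero in general (indeed $\tfrac12(\ad Z)^2$ restricted to $\widetilde{\go g}^{\lambda_j}$ is essentially the map whose surjectivity onto $\widetilde{\go g}^{\lambda_i}$ drives Proposition \ref{prop.prem-reduc}(2) and Lemma \ref{lem-NO+}: it is exactly the mechanism by which one \emph{modifies} diagonal components by conjugating with $N$). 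So the $L$-equivariant projection of $n\cdot X^0$ onto $\oplus_j\widetilde{\go g}^{\lambda_j}$ is $X^0+\tfrac12(\ad Z)^2X^0|_{\mathrm{diag}}\neq X^0$, and your injectivity argument collapses.

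The correct way to separate the candidate orbits — and the route the paper takes — is through the relative invariants rather than through a diagonal projection: since $\Delta_j(p\cdot X)=\chi_j(p)\Delta_j(X)$ and (as you verify in your third step) $\chi_j(l)/\chi_k(l)^{k+1-j}\in S_e$ for $l\in L$ in Type II (resp.\ $\chi_j(P)\subset F^{*2}$ in Type III), the class of $\Delta_j(X)/\Delta_k(X)^{k+1-j}$ mod $S_e$ (resp.\ of $\Delta_j(X)$ mod $F^{*2}$) is constant on $P$-orbits, and computing it on the diagonal representatives via Theorem \ref{thpropridelta_0} shows the sets ${\mathcal O}_u^+$ are pairwise disjoint. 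Combined with surjectivity (Lemma \ref{lem-NO+}) and the $L$-conjugacy criterion of Corollary \ref{cor-Lorbits} (resp.\ Corollary \ref{cor-conjLj}), this yields both that each ${\mathcal O}_u^+$ is a single $P$-orbit and the orbit count — without ever needing the false claim that $N$ fixes the diagonal component. So your proof is repairable with material you already have in step 3, but as written the bijection in step 2 is not established.
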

  \begin{proof} As the  $\Delta_j$'s are relatively invariant under  $P$, the  union of the open  $P$-orbits is a subset of  ${\mathcal O}^+$.

  \noindent{  $(1)$} Suppose first that  $\ell$ is a square   and $e=0$ or $4$ (in other words $\tilde{\go{g}}$ is of Type I). Let $X\in {\mathcal O}^+$.  By Lemma  \ref{lem-NO+}, $X$ is  $N$-conjugated  to an element  $\sum_{j=0}^k Z_j$ with $Z_j\in  \widetilde{\go g}^{\lambda_j}\setminus  \{0\}$.  This element is of course generic for $G$.\medskip 
   
  From Theorem  \ref{thm-delta2} and  \ref{thm-orbites-e04}, two generic elements of the"diagonal" $\oplus_{j=0}^k\tilde{\go g}^{\lambda_j}$ are  $L$-conjugated. Hence all the  elements of  $\mathcal O^+$are $P$-conjugated.
    \medskip

      \noindent{  $(2)$}  We suppose now that  $\ell=1$ and  $e=1$, $2$ or $3$. 
   
    Let  $k\geq 1$.  For $j\in\{0,\ldots, ,k\}$ we fix a non zero element  $X_j$ of $ \tilde{\go g}^{\lambda_j}$ such that for  $I^+=X_0+\ldots +X_k$ one has  $\Delta_j(I^+)=1$ for all  $j$.

   Let  $Z\in {\mathcal O}^+$. As before, $Z$ is $N$-conjugated  to an element   $X\in \oplus_{j=0}^k({\widetilde{\go g}^{\lambda_j}\setminus  \{0\}})$. As $\ell=1$, we can write $X=\sum_{j=0}^k x_jX_j$ with $x_j\neq 0$.

 By Theorem \ref{thproprideltaj}, the polynomials  $\Delta_j$ are $N$-invariant. If we set  $u_s=\dfrac{x_s}{x_k}$ modulo $S_e$ for  $s=0,\ldots , k-1$, we get 
$$\frac{\Delta_j(Z)}{\Delta_k(Z)^{k+1-j}}=\frac{\Delta_j(X)}{\Delta_k(X)^{k+1-j}}= \prod_{s=j}^{k-1} u_j\;{\rm modulo }\; S_e,$$
and this implies  $Z\in \mathcal O_u^+$.

 Conversely, let $u=(u_0,\ldots, u_{k-1})\in (F^*/S_e)^k$ and let   $Z$ and  $Z'$ be two elements of  $ \mathcal O_u^+$. These elements are  respectively $N$-conjugated to diagonal elements    $X=\sum_{j=0}^k x_jX_j$  and $X'=\sum_{j=0}^k x'_jX_j$.  From the definition of  $ \mathcal O_u^+$, we have  $\dfrac{x_j}{x_k}\ldots \dfrac{x_{k-1}}{x_k}=\dfrac{x'_j}{x'_k}\ldots \dfrac{x'_{k-1}}{x'_k}$ modulo $S_e$ for all $j\in\{0,\ldots k-1\}$. Therefore   $\dfrac{x_j}{x_k} =\dfrac{x'_j}{x'_k} $ modulo $S_e$ for all  $j\in\{0,\ldots k-1\}$. This implies that $\frac{1}{x_{k}x'_{k}}x_{i}x'_{i}\in S_e$  for all $i\in \{0,1,\ldots,k\}$ (because $F^{*2}\subset S_e$ by Lemma \ref{chiG}). By Corollary \ref{cor-Lorbits}, the elements $ X$ and $X'$  are  $L$-conjugated. 
 It follows that two elements in $ \mathcal O_u^+ $ are $P$-conjugated.\medskip

 If $u$ and  $v$ are two elements in  $(F^*/S_e)^{k-1}$ such that  $ \mathcal O_u^+\cap  \mathcal O_v^+\neq \emptyset$, then  $u_j\ldots u_{k-1}=v_j\ldots v_{k-1}$ modulo $S_e$ for  $j=0,\ldots k-1$. Therefore  $u_j=v_j$ modulo $S_e$ for all $j$ and hence  $ \mathcal O_u^+= \mathcal O_v^+$.
  The statement $(2)$ is now proved.
\vskip0,5cm

  \noindent{$ (3)$} Consider finally the case  $\ell=3$.           Remember  (see Theorem \ref{th-k=0}, 2)) that in this case,     $\delta_j$ is a quadratic form which represents three classes modulo $F^{*2}$ (all classes  in  $(F^*/{F^*)^2}$ distinct from  $-disc(\delta_j)$). Let  $X\in V^+$ be a generic element of    $(P,V^+)$. We will show that $X$  belongs to   $  \mathcal O_u^+$ for some  $u=(u_0,\ldots, u_k)\in \prod_{i=0}^k\big(\delta_i( \widetilde{\go g}^{\lambda_i}\setminus\{0\})/F^{*2}\big) $.\medskip

   As before, the element  $X$ is  $N$-conjugated to an element  $Z=\sum_{j=0}^k Z_j$ where $Z_j\in \widetilde{\go g}^{\lambda_j}\setminus \{0\}$.

 From the normalization made in Remark \ref{rem-Delta-normalisation},  we have $\Delta_j(Z)=\prod_{s=j}^k \delta_s(Z_s)$ where  $\delta_s$ is a  fundamental relative invariant of  $(L_s, \widetilde{\go g}^{\lambda_{s}})$. 
  
  If we define  $u_s=\delta_s(Z_s)\;{\rm modulo}\; F^{*2}$, we get  $ \Delta_j(X)u_j\ldots u_k=\Delta_j(Z)u_j\ldots u_k=\prod_{s=j}^k \delta_s(Z_s) u_s\in F^{*2}$, and therefore    $X$  belong to $ \mathcal O_u^+$  with  $u=(u_0,\ldots, u_k)$.\medskip

Conversely,  let $X,X'\in  \mathcal O_u^+$  These elements are $N$-conjugated to (respectively) two ``diagonal'' elements $Z=Z_0+\ldots +Z_k$ and  $Z'=Z'_0+\ldots +Z'_k$ (Lemma \ref{lem-NO+}). From the definitions we have  $\delta_j(Z_j)=\delta_j(Z'_j)$ modulo $F^{*2}$ for $j=0,\ldots,k$. By  Corollary \ref{cor-conjLj}, there exists  $l_j\in L^0_j$ such that  $l_jZ_j=Z'_j$ (Recall that $L^0_i$ is the subgroup of $L_i$ defined in Definition \ref{def-G0}). As $L_j^0$ centralizes   $\oplus_{s\neq j}  \widetilde{\go g}^{\lambda_s}$, we get  $l_0\ldots l_k. Z=Z'$ . Moreover   $l_0\ldots l_k\in P$. Hence two elements in $ \mathcal O_u^+$ are $P$-conjugated. \medskip

  If $u$ and  $v$ are two elements of $\big(\Delta_k( \widetilde{\go g}^{\lambda_k}\setminus\{0\})/F^{*2}\big)^{k+1}$ such that  $ \mathcal O_u^+\cap  \mathcal O_v^+\neq \emptyset$ then $u_j\ldots u_k=v_j\ldots v_k\;{\rm modulo}\; F^{*2}$ for  $j=0,\ldots k$ . And hence $v_j=u_j$ modulo $F^{*2}$ for all $j$, 	and therefore $ \mathcal O_u^+=  \mathcal O_v^+$.  
  
  Assertion $(3)$ is proved.

  The fact that ${\cal O}^+$   is the union of the open $P$-orbits is now clear.

  \end{proof}
   \subsection{ The involution $\gamma$}\index{gamma@$\gamma$}
 \hfill

From  Remark \ref{rem-decomp-racines}  we know that the root system of   $( \widetilde{\go g},\go a^0)$ is always of type $C_{k+1}$ and consists of the linear forms  $\pm \eta_j\pm \eta_i$ for  $i\neq j$ and $\pm 2\eta_j$, $1\leq i,j\leq k$ where  

$$\eta_j(H_{\lambda_i})=\delta_{ij}.$$

 We know also (\cite{Bou1}) that then, there exists an element $w$ of the Weyl group of $C_{k+1}$ such that 
$$(*)\qquad w.\eta_i=-\eta_{k-i}\;{\rm for  }\; i=0,\ldots,k.$$
As this Weyl group is isomorphic to  $N_{ \widetilde{G}}(\go a^0)/Z_{ \widetilde{G}}(\go a^0)$, there exists an element  $\gamma\in N_{ \widetilde{G}}(\go a^0)$ such that  $w={\rm Ad}(\gamma)_{|_{\go{a}^0}}$. The property $(*)$ implies that  $\gamma$ normalizes  $\go g$, exchanges  $V^+$ and $V^-$ 	and normalizes also  $P$ (ie. $\gamma P\gamma^{-1}=P$). \medskip

In the Theorem below, we will give explicitly such an element  $\gamma$,  which moreover, will be an involution of  $ \widetilde{\go g}$.
\vskip 10pt

We choose   a diagonal   $\go sl_2$-triple $(I^-,H_0, I^+)$   that is such that  $I^+=X_0+\ldots +X_k$ ($X_{j}\in \widetilde{\go{g}}^{\lambda_{j}}\setminus\{0\}$), $I^-=Y_0+\ldots +Y_k$ ($Y_{j}\in \widetilde{\go{g}}^{-\lambda_{j}}\setminus\{0\}$), where each $(Y_{j}, H_{\lambda_{j}}, X_{j})$ is an  $\go sl_2$-triple. 

 Let  $(Y,H_{\lambda_i}-H_{\lambda_j}, X)$ be an  $\go sl_2$-triple such that  $X\in E_{i,j}(1,-1)$ and  $Y\in E_{i,j}(-1,1)$ (see Lemma \ref{lem-sl2ij}). Remember  the elements  $$\gamma_{i,j}=e^{\ad X}e^{\ad Y}e^{\ad X}=e^{\ad Y}e^{\ad X}e^{\ad Y}.$$ which have been introduced in Proposition \ref{prop-gammaij}. We suppose moreover that the sequence $(X_{i})_i$ is such that $\gamma_{i,k-i}(X_{i})=X_{k-i}$, for $0\leq i\leq n$, where $n$ is  the integer defined by  $k=2n+2$ if $k$ is even and  $k=2n+1$ if $k$ is odd. It is always possible to choose such a sequence.

Once we have chosen such an $\go sl_2$-triple, we normalize the polynomials $\Delta_{j}$ by the condition:
 $$\Delta_j(I^+)=1,\quad {\rm for  }\; j=0,\ldots, k.$$

\begin{theorem}\label{th-involution-gamma} \hfill

Suppose that $(I^-,H_0, I^+)$ is a diagonal  $\go sl_2$-triple satisfying the preceding conditions. 

There exists an element  $\gamma\in N_{ \widetilde{G}}(\go a^0)$such that 
\begin{enumerate}\item $  \gamma.H_{\lambda_j}=-H_{\lambda_{k-j}}\;{\rm for  }\; j=0,\ldots, k$;
\item $\gamma.X_j=Y_{k-j}\;{\rm for}\; j=0,\ldots, k$;
\item $\gamma^2={\rm Id}_{ \widetilde{\go g}}.$
\end{enumerate}
Such an element normalizes $\go g$, exchanges  $V^+$ and $V^-$ and normalizes $G$, $P$, $L$, $A_0$ and  $N$.

\end{theorem}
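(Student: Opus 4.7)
The plan is to construct $\gamma$ as a product of two involutions of $\widetilde{\go g}$: one realizing the sign-flip $\eta_j \mapsto -\eta_j$ on $\go a^0$, and one realizing the transposition $\eta_j \mapsto \eta_{k-j}$. Concretely, I would set
\[
W = w_0 w_1 \cdots w_k, \qquad \Gamma = \prod_{0 \leq i < k-i} \widetilde{\gamma_{i,k-i}},
\]
where $w_j = e^{\ad X_j}e^{\ad Y_j}e^{\ad X_j}$ is the Weyl element of the triple $(Y_j, H_{\lambda_j}, X_j)$ and $\widetilde{\gamma_{i,k-i}}$ is the involution of Proposition \ref{prop-gamijtilde}, and then define $\gamma = \Gamma W$. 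When $k$ is even, the middle index $k/2$ is not covered by any pair and is handled directly by the factor $w_{k/2}$ inside $W$.

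For properties (1) and (2), the $w_j$ pairwise commute by strong orthogonality of the $\lambda_j$, and each satisfies $w_j(H_{\lambda_j})=-H_{\lambda_j}$, $w_j(X_j)=Y_j$, while fixing $H_{\lambda_s}, X_s, Y_s$ for $s\neq j$; hence $W(H_{\lambda_j})=-H_{\lambda_j}$ and $W(X_j)=Y_j$. The chosen normalization $\gamma_{i,k-i}(X_i)=X_{k-i}$, combined with the $\go{sl}_2$-equivariance of $\gamma_{i,k-i}$, yields $\gamma_{i,k-i}(Y_i)=Y_{k-i}$ and $\gamma_{i,k-i}(H_{\lambda_i})=H_{\lambda_{k-i}}$; since $w_i^2$ acts trivially on the weight $\pm 2$ spaces, $\widetilde{\gamma_{i,k-i}}$ swaps $X_i\leftrightarrow X_{k-i}$, $Y_i\leftrightarrow Y_{k-i}$, $H_{\lambda_i}\leftrightarrow H_{\lambda_{k-i}}$. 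Composing yields $\gamma(H_{\lambda_j})=-H_{\lambda_{k-j}}$ and $\gamma(X_j)=Y_{k-j}$.

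For property (3), I would establish $W^2=\Gamma^2=\operatorname{Id}$ and $[W,\Gamma]=1$. First, since by Remark \ref{rem-decomp-racines} the root system $\widetilde{\Sigma}^0$ of $(\widetilde{\go g},\go a^0)$ is of type $C_{k+1}$, every root has the form $\pm 2\eta_s$ or $\pm\eta_i\pm\eta_j$, so the integers $\mu(H_{\lambda_0}),\ldots,\mu(H_{\lambda_k})$ sum to an even number; as $w_j^2$ acts on a weight space $\widetilde{\go g}^\mu$ by $(-1)^{\mu(H_{\lambda_j})}$ and the $w_j^2$ commute, $W^2 = \prod_j w_j^2$ acts by $(-1)^{\sum_j \mu(H_{\lambda_j})} = +1$ on every root space, and trivially on $\go z_{\widetilde{\go g}}(\go a^0)$, giving $W^2=\operatorname{Id}$. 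Next, $\Gamma^2=\operatorname{Id}$ because each factor is an involution by Proposition \ref{prop-gamijtilde} and any two $\widetilde{\gamma_{i,k-i}}, \widetilde{\gamma_{j,k-j}}$ for distinct pairs commute: their defining $\go{sl}_2$-triples live in commuting subalgebras (the bracket of $E_{i,k-i}(\pm1,\mp1)$ with $E_{j,k-j}(\pm1,\mp1)$ would lie in a weight space with four nonzero entries, which is absent from the decomposition of Theorem \ref{th-decomp-Eij}), and $w_i^2$ commutes with $\gamma_{j,k-j}$ for $\{i\}\cap\{j,k-j\}=\emptyset$ by comparing signs on $E_{r,s}(\pm1,\pm1)$. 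For $[W,\Gamma]=1$: conjugation by the automorphism $\widetilde{\gamma_{i,k-i}}$ sends the triple $(Y_s,H_{\lambda_s},X_s)$ to the triple $(Y_{\tau(s)},H_{\lambda_{\tau(s)}},X_{\tau(s)})$ with $\tau=(i,k-i)$, so it conjugates $w_s$ to $w_{\tau(s)}$; since the $w_s$ commute, the product $W$ is preserved. Hence $\gamma^2 = \Gamma W\Gamma W = \Gamma^2 W^2 = \operatorname{Id}$.

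The closing structural statements follow from (1)--(3). By (1), $\gamma(H_0)=\gamma(\sum_j H_{\lambda_j})=-\sum_j H_{\lambda_{k-j}}=-H_0$, so $\gamma$ exchanges the $\pm 2$-eigenspaces $V^\pm$ of $\ad H_0$ and preserves $\go g$. For $g\in G$, a direct computation shows $(\gamma g\gamma^{-1})(H_0)=\gamma(g(-H_0))=-\gamma(-H_0)=H_0$, so $\gamma$ normalizes $G$, and by construction $A^0$, hence $L=Z_G(\go a^0)$. Finally, tracking weights shows $\gamma(E_{i,j}(1,-1))=E_{k-j,k-i}(1,-1)$, and since $i<j \Leftrightarrow k-j<k-i$, the nilradical $\go n=\oplus_{i<j}E_{i,j}(1,-1)$ is preserved, so $\gamma$ normalizes $N$ and thus $P=LN$. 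The main obstacle is the clean verification of $[W,\Gamma]=1$, which hinges on the fact that each $\widetilde{\gamma_{i,k-i}}=\gamma_{i,k-i}\cdot w_i^2$, being a Lie algebra automorphism intertwining compatible $\go{sl}_2$-triples, conjugates the Weyl-type lifts $w_s$ to the expected $w_{\tau(s)}$ without spurious scalars.
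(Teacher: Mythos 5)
Your proposal is correct and follows essentially the same route as the paper: since the $\go{sl}_2$-triples $(Y_j,H_{\lambda_j},X_j)$ pairwise commute, your $W=w_0\cdots w_k$ coincides with the paper's $w=e^{\ad I^+}e^{\ad I^-}e^{\ad I^+}$, and your $\Gamma$ is exactly the paper's $\widetilde\gamma=\prod\widetilde{\gamma_{i,k-i}}$, so you construct the same element $\gamma=\Gamma W$. The only cosmetic differences are that you check $W^2=\mathrm{Id}$ via the even-weight structure of the $C_{k+1}$ restricted root system and $[W,\Gamma]=1$ by conjugating the individual Weyl lifts, where the paper instead invokes Theorem \ref{th-invol} and the fact that $\widetilde\gamma$ fixes $I^{\pm}$.
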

 \begin{proof} We will first show the existence  of an involution $ \widetilde{\gamma}$ of  $ \widetilde{G}$ such that  $ \widetilde{\gamma}(H_{\lambda_j})=H_{\lambda_{k-j}}$ and  $ \widetilde{\gamma}(X_j)=X_{k-j}$ for $j=0,\ldots, k$. For $k=0$, then the trivial involution satisfies this property. \medskip

We suppose that $k>0$. Let  $w_i$ be the non trivial element of the  Weyl group   associated to the   ${\go sl}_2$-triple $(Y_i ,  H_{\lambda_i}, X_i)$.
Recall  (Proposition \ref{prop-gammaij} and Proposition \ref{prop-gamijtilde})   that    the elements  $\widetilde{\gamma}_{i,j}=\gamma_{i,j}\circ w_i^2\in N_{\widetilde{G}}(\go a^0)$ satisfy the following properties:
$$\widetilde{\gamma}_{i,j}^2={\rm Id}_{\widetilde{\go g}}\quad {\rm and }\quad \widetilde{\gamma}_{i,j}(H_{\lambda_s})=\left\{\begin{array}{ccc} H_{\lambda_i} & {\rm for } & s=j\\
H_{\lambda_j} & {\rm for } & s=i\\
H_{\lambda_s} & {\rm for } & s\notin\{i,j\}\end{array}\right.$$

Consider again the integer   $n$   defined by  $k=2n+2$ if $k$ is even and  $k=2n+1$ if $k$ is odd and set:
$$\widetilde{\gamma}:=\widetilde{\gamma_{0,k}}\circ \widetilde{\gamma_{1,k-1}}\circ\ldots \circ \widetilde{\gamma_{n, k-n}}.$$

Then $$ \widetilde{\gamma}(H_{\lambda_j})=H_{\lambda_{k-j}}\;{\rm for }\; j=0,\ldots, k$$

As the pairs of roots $(\lambda_{i},\lambda_{k-i})$  are mutually stronly orthogonal, the involutions $\widetilde{\gamma_{i,k-i}}$ (see  Proposition \ref{prop-gamijtilde}) commute, and hence  $\widetilde{\gamma}$ is an involution of $\widetilde{\go g}$.\medskip

From our choice of the sequence $X_{j}$, and as the   action of $w_i^2$ on  $\oplus_{s=0}^k\widetilde{\go g}^{\lambda_s}$ is trivial, we obtain that   $\widetilde{\gamma} (X_j)= X_{k-j}$.\medskip

The involution  $\widetilde{\gamma}$ centralizes $I^+$ and $H_{0}$, and hence it centralizes  $I^-$. Therefore  $\widetilde{\gamma}$ commutes with   $w= e^{{\rm ad}\; I^+}e^{{\rm ad}\; I^-}e^{{\rm ad}\; I^+}$ which is the element of  $\widetilde{G}$ which defines the involution $\sigma$ associated to the  ${\go sl}_2$-triple $(I^-, H_0, I^+)$. Define 
$$\gamma=\widetilde{\gamma}w=w\widetilde{\gamma}.$$
The automorphism  $\gamma$ commutes with  $w$, and hence  $\gamma$ is an involution of  $\widetilde{\go g}$. Moreover, using Theorem  \ref{th-invol}, we get 
$$\begin{array}{lll} \gamma(H_{\lambda_j})=\sigma(H_{\lambda_{k-j}})=-H_{\lambda_{k-j}}& {\rm for }& j=0,\ldots, k\\
\gamma(X_j)=\sigma(X_{k-j})=Y_{k-j} & {\rm for }& j=0,\ldots, k\end{array}$$
This implies that  $\gamma(H_0)=-H_0$ and hence  $\gamma$ stabilizes  $\go g$, normalizes  $G$ and exchanges $V^+$ and  $V^-$.\medskip

As  $ \gamma(H_{\lambda_j})=-H_{\lambda_{k-j}}$ for  $ j=0,\ldots, k$, the element $\gamma$ stabilizes $\go a^0$ and exchanges  $E_{i,j}(1,-1)$ and  $E_{k-i, k-j}(-1,1)$. Therefore $\gamma$ stabilizes $\go n$ and  $\go{l}={\go z}_{\go g}(\go a^0)$, and hence it stabilizes $\go p$. It follows that $\gamma$ normalizes  $A^0$,  $L$, $N$ and $P$.

 \end{proof}
\subsection{ The  open $P$-orbits in  $V^-$ and the polynomials  $\nabla_j$ \index{nablaj@$\nabla_j$}}\label{sec:PorbitesV-}\hfill

\vskip 10pt

In this section we fix an $\go{sl}_{2}$-triple $(I^-,H_{0},I^+)$ satisfying the same  conditions as for  Theorem \ref{th-involution-gamma} where the involution $\gamma$ is defined. We set $H=Z_{G}(I^+)$

\begin{definition}\label{def-nabla} For  $j=0,\ldots, k$, we denote by  $\nabla_j$ the polynomial on  $V^-$ defined by 
$$\nabla_j(Y)=\Delta_j(\gamma(Y)),\quad {\rm for }\; Y\in V^-.$$
\end{definition}

 \begin{theorem}\label{th-nabla}\hfill
 
  The polynomials  $\nabla_j$ are irreducible of degree $\kappa(k+1-j)$.
 \begin{enumerate}\item $\nabla_0$ is the  relative invariant of  $V^-$ under the action of $G$.
 \item For  $j=0,\ldots ,k$, the polynomial  $\nabla_j$ is a relatively invariant polynomial on $V^-$ under the action of the parabolic subgroup  $P$. More precisely we have 
 $$\nabla_j(p.Y)=\chi_j^-(p) \nabla_j(Y),\quad{\rm for }\; p\in P,$$
 where  $\chi_j^-$ \index{khij-@$\chi_j^-$} is a character of  $P$ with the following properties:
 
 $\bullet \chi_j^-(n) =1, \quad{\rm for }\; n\in N, $
 
 $\bullet \chi^-_j(a)=a^{-\kappa(\lambda_0+\ldots +\lambda_{k-j})},  \quad{\rm for }\; a\in A,$
 
 $\bullet \chi_j^-(l)=1,{\rm for }\; l\in L\cap H.$
 
 \end{enumerate}
 \end{theorem}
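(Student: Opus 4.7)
The plan is to derive everything in Theorem~\ref{th-nabla} by transporting the properties of $\Delta_j$ across the involution $\gamma$ of Theorem~\ref{th-involution-gamma}. Since $\gamma$ is an $F$-linear automorphism of $\widetilde{\go g}$ exchanging $V^+$ and $V^-$, and since $\nabla_j = \Delta_j\circ\gamma_{|V^-}$, irreducibility and degree $\kappa(k+1-j)$ are immediate from the corresponding statement for $\Delta_j$ in Theorem~\ref{thproprideltaj}. For the $G$-invariance of $\nabla_0$, I would use that $\gamma$ normalizes $G$: for $g\in G$, the element $g':=\gamma g\gamma^{-1}$ lies in $G$, and
$$\nabla_0(g\cdot Y)=\Delta_0(\gamma(g\cdot Y))=\Delta_0(g'\cdot\gamma(Y))=\chi_0(g')\nabla_0(Y).$$
The same formula, with $\Delta_j$ and $\chi_j$ in place of $\Delta_0$ and $\chi_0$, defines $\chi_j^-(p):=\chi_j(\gamma p\gamma^{-1})$ for $p\in P$, since $\gamma$ also normalizes $P$.

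To identify $\chi_j^-$ on each piece of the Langlands-type decomposition of $P$, I would treat the three factors separately. For $n\in N$, the element $\gamma n\gamma^{-1}$ lies in $N$ (because $\gamma$ normalizes $N$), and Theorem~\ref{th-delta_{j}invariants} gives $\chi_j(\gamma n\gamma^{-1})=1$. For $l\in L\cap H$, first note that by construction $\gamma(I^+)=I^-$, so $\gamma l\gamma^{-1}$ centralizes $I^-$; using $Z_G(I^+)=Z_G(I^-)=H$ (Theorem~\ref{th-invol}(1)) together with the fact that $\gamma$ normalizes $L$, we get $\gamma l\gamma^{-1}\in L\cap H$, whence $\chi_j^-(l)=\chi_j(\gamma l\gamma^{-1})=1$ again by Theorem~\ref{th-delta_{j}invariants}.

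The remaining and most substantial step is the computation of $\chi_j^-$ on $A$, which I would perform on a diagonal test vector. Write $Y=\sum_{s=0}^{k} y_s Y_s\in V^-$. Since $\gamma(Y_s)=X_{k-s}$, one has
$$\gamma(Y)=\sum_{s=0}^k y_s X_{k-s}=\sum_{t=0}^k y_{k-t} X_t.$$
By Theorem~\ref{thproprideltaj}(3) applied to $\Delta_j$ and by the normalization $\Delta_j(I^+)=1$, this yields
$$\nabla_j(Y)=\Delta_j(\gamma(Y))=\prod_{t=j}^k y_{k-t}^{\kappa}=\prod_{s=0}^{k-j} y_s^{\kappa}.$$
Now for $a\in A\subset L$ (which is contained in $P$ because $\go a\subset\go z_{\go g}(\go a^0)=\go l$), the element $a$ acts on $\widetilde{\go g}^{-\lambda_s}$ by the scalar $a^{-\lambda_s}$, so
$$\nabla_j(a\cdot Y)=\prod_{s=0}^{k-j}(a^{-\lambda_s}y_s)^{\kappa}=a^{-\kappa(\lambda_0+\cdots+\lambda_{k-j})}\nabla_j(Y),$$
giving the announced formula. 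Density of the set of diagonal generic elements in $V^-$ (combined with $L$-translations via the elements $\gamma_{i,j}$ and Lemma~\ref{lem-NO+} applied to $V^-$ through $\gamma$) propagates this identity to all of $V^-$.

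I do not expect any serious obstacle: everything reduces, through the involution $\gamma$, to facts already established for $\Delta_j$. The one mildly delicate point is handling $L\cap H$, and that is resolved by the key observation $\gamma(I^+)=I^-$ together with $Z_G(I^+)=Z_G(I^-)$ from Theorem~\ref{th-invol}.
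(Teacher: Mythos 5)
Your proposal is correct and follows essentially the same route as the paper: everything is transported through the involution $\gamma$ of Theorem~\ref{th-involution-gamma}, with $\chi_j^-(p)=\chi_j(\gamma p\gamma^{-1})$ and the values on $N$, $A$, $L\cap H$ deduced from Theorem~\ref{th-delta_{j}invariants} (the paper reads off the value on $A$ from the action of $\gamma$ on $\go a^0$, whereas you recompute it on a diagonal test vector — an equivalent calculation). One small remark: your closing "density of diagonal generic elements in $V^-$" is false as stated (the diagonal is a proper subspace), but it is also unneeded, since once relative invariance under $P$ is established the character $\chi_j^-(a)$ is pinned down by its value on a single $Y$ with $\nabla_j(Y)\neq 0$.
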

 \begin{proof}  As $\gamma$ is linear, $\nabla_{j}$ is effectively an irreducible polynomial of the same degree as $\Delta_{j}$, that is  $\kappa(k+1-j)$ ($\nabla_{j}$ is  non zero  since  $\gamma(I^-)=I^+$). As  $\gamma$ normalizes  $G$ and $P$, the fact that the  $\nabla_j$'s are relatively invariant  is  direct consequence of the same property for the  $\Delta_j$'s (Theorem \ref{thproprideltaj}).\medskip

  For $p\in P$ we have:
 $$ \nabla_j(p.I^-)=\chi_j^-(p)\nabla_{j}(I^-) =\Delta_j(\gamma p\gamma^{-1}\gamma I^+)=\chi_j(\gamma p\gamma^{-1})\nabla_{j}(I^-),$$
 and therefore
 $$\chi_j^-(p) =\chi_j(\gamma p\gamma^{-1}).$$
As  $\gamma$ normalizes $N$, $L$ and $A$ and commutes with  $\sigma$, the assertion concerning the values of $\chi_{j}^-$ on $N$, $A$, and $L\cap H$ is a consequence of the same properties for the  $\chi_j$'s (cf. Theorem  \ref{th-delta_{j}invariants}).

 \end{proof}
 \vskip 5pt
 Let $\mathcal O^-$ be the dense  open subset of  $V^-$ defined by 
 $${\mathcal O}^-\index{Ocal-@${\mathcal O}^-$}=\{ Y\in V^-; \nabla_0(Y)\nabla_1(Y)\ldots \nabla_k(Y)\neq 0\}.$$
 
  Suppose $\ell =3$. Recall that  $L_j$ is the analogue  of the group  $G$ for the graded Lie algebra  $ \widetilde{\go l}_j$, that is   $L_j={\mathcal Z}_{{\rm Aut}_0(  \widetilde{\go l}_j)}(H_{\lambda_j})$. In the following Theorem we denote by $\delta_{j}^-$ the  relative invariant of the prehomogeneous space $(L_{j},  \widetilde{\go g}^{-\lambda_j})$ defined by the identity
  
    $$\nabla_{j}(Y_{j}+Y_{j+1}+\ldots+Y_{k})=\delta_{j}^-(Y_{j})\nabla_{j+1}(Y_{j+1}+\ldots+Y_{k})=\delta_{j}^-(Y_{j})\ldots\delta_{k}^-(Y_{k}) $$
  for  $Y_{j}\in   \widetilde{\go g}^{\lambda_j}\setminus \{0\}$ and $ \text{ for }j=0,\ldots,k$. 
   
Using the involution  $\gamma$, the following description of the open $P$-orbits in $V^-$ is an easy consequence of Theorem \ref{th-Porbites}.

 \begin{theorem}\label{th-PorbitesV-} \hfill
 
 The dense open subset  ${\mathcal O}^-$ is the union of the open   $P$-orbits  in $V^-$.
  \begin{enumerate}\item  If $\tilde{\go g}$ is of Type $I$ (that is if $\ell$ is a square and $e\in \{0,4\}$),  then ${\mathcal O}^-$ is the unique open  $P$-orbit in  $V^-$.
 \item  Let $\tilde{\go g}$ be  of Type $II$ (that is if  $\ell=1$ and $e\in\{1,2,3\}$) and let  $S_e=\{a\in F^*; aq_e\sim q_e\}$ as in Lemma \ref{lem-ensembleSe}. Then the  subgroup   $P$ has  $|F^*/S_e|^{k}$ open orbits in   $V^-$ given for  $k\geq 1$ by
  $${\mathcal O}_u^-=\{ Y\in V^-;\dfrac{ \nabla_j(Y)}{\nabla_k(Y)^{k+1-j}} u_j\ldots u_{k-1}\in S_e, \;{\rm for }\; j=0,\ldots k-1\},$$
where  $u=(u_0,\ldots, u_{k-1})\in (F^*/S_e)^{k}$. (i.e. $P$ has $4^k $ open orbits in $V^-$ if $e=1$ or $3$, and $2^k$ open orbits if $e=2$)

\item If $\tilde{\go g}$ is of Type $III$ (that is if  $\ell=3$),  then   $P$ has  $3^{k+1}$open orbits in $V^-$ given by

  $${\mathcal O}^-_u=\{ Y\in V^-; \nabla_j(Y) u_j\ldots u_k\in F^{*2} \;{\rm for  }\; j=0,\ldots k\},$$
 where $u=(u_0,\ldots, u_k)\in \prod_{i=0}^k\big(\delta_i^-( \widetilde{\go g}^{-\lambda_i}\setminus\{0\})/F^{*2}\big).$

  \end{enumerate}

 \end{theorem}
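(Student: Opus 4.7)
The strategy is to transport Theorem \ref{th-Porbites} from $V^+$ to $V^-$ via the involution $\gamma$ introduced in Theorem \ref{th-involution-gamma}, exploiting the fact that $\gamma$ exchanges $V^+$ and $V^-$, normalizes $P$, and relates the two families of fundamental relative invariants through the defining identity $\nabla_j(Y)=\Delta_j(\gamma(Y))$.

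First I would observe that, since $\gamma$ normalizes $P$ and satisfies $\gamma(V^+)=V^-$ (with $\gamma^2=\mathrm{Id}$), the map $X\mapsto \gamma(X)$ induces a $P$-equivariant bijection between the $P$-orbits in $V^+$ and those in $V^-$: concretely, for any subset $\Omega\subset V^+$ we have $P\cdot \gamma(\Omega)=\gamma(P\cdot \Omega)$, and an orbit is open in $V^+$ iff its image under $\gamma$ is open in $V^-$. Using the definition $\nabla_j=\Delta_j\circ\gamma$, we obtain immediately
\[
\mathcal{O}^-=\{Y\in V^-;\ \nabla_0(Y)\cdots\nabla_k(Y)\neq 0\}=\gamma(\mathcal{O}^+),
\]
so by Theorem \ref{th-Porbites} the set $\mathcal{O}^-$ is precisely the union of the open $P$-orbits in $V^-$.

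Next, for each of the three types, I would define $\mathcal{O}_u^-:=\gamma(\mathcal{O}_u^+)$ where $\mathcal{O}_u^+$ runs through the open $P$-orbits in $V^+$ described in Theorem \ref{th-Porbites}, and then verify that the explicit characterization stated in the theorem is satisfied. The verification is immediate in all three cases: for instance in Type II with $k\geq 1$,
\[
Y\in\mathcal{O}_u^-\iff \gamma(Y)\in \mathcal{O}_u^+\iff \frac{\Delta_j(\gamma(Y))}{\Delta_k(\gamma(Y))^{k+1-j}}u_j\cdots u_{k-1}\in S_e,
\]
which is the announced condition once $\Delta_j\circ\gamma$ is replaced by $\nabla_j$; the same substitution works verbatim in Type I (where there is a single open orbit, namely $\gamma(\mathcal{O}^+)=\mathcal{O}^-$) and in Type III (where the conditions involve $\Delta_j(X)$ modulo $F^{*2}$ and transfer without change). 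The indexing set for $u$ is unchanged because $\gamma$ sends $\widetilde{\go g}^{\lambda_i}\setminus\{0\}$ to $\widetilde{\go g}^{-\lambda_{k-i}}\setminus\{0\}$ and the $\delta_i^-$ were normalized in exactly the way (Section \ref{sec:PorbitesV-}) that makes $\delta_i^-(\widetilde{\go g}^{-\lambda_i}\setminus\{0\})/F^{*2}$ match $\delta_{k-i}(\widetilde{\go g}^{\lambda_{k-i}}\setminus\{0\})/F^{*2}$ up to a relabeling of the index.

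There is no serious obstacle: once Theorem \ref{th-involution-gamma} provides an involution $\gamma\in N_{\widetilde{G}}(\go a^0)$ exchanging $V^\pm$ and normalizing $P$, and once $\nabla_j$ is defined as $\Delta_j\circ\gamma$, the whole statement is a formal pullback of Theorem \ref{th-Porbites}. The only point that would deserve a careful line is to check that the bijection $\mathcal{O}_u^+\leftrightarrow \mathcal{O}_u^-$ is compatible with the parametrization by $u$; this amounts to tracking how $\gamma$ permutes the indices $0,\dots,k$ (it sends $\lambda_j$ to $-\lambda_{k-j}$) and reindexing accordingly, which leaves the quotient set $(F^*/S_e)^k$ or $\prod_i(\delta_i^-(\widetilde{\go g}^{-\lambda_i}\setminus\{0\})/F^{*2})$ invariant.
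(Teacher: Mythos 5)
Your proof is correct and is exactly the paper's argument: the paper's own justification consists of the single remark that the description of the open $P$-orbits in $V^-$ follows from Theorem \ref{th-Porbites} via the involution $\gamma$, and you have simply supplied the routine details (the $P$-equivariance of $\gamma$, the identity $\nabla_j=\Delta_j\circ\gamma$, and the bookkeeping on the index sets). Nothing further is needed.
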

  \vskip 15pt 
  The following Lemma gives the relationship between the characters of the $\nabla_{j}$'s and those of the $\Delta_{j}$'s.
  \begin{lemme}\label {lem-propchij} For $g\in G$ and $p\in P$, we have:
  
  $$\chi_0^-(g)=\frac{1}{\chi_0(g)}$$

and  
$$\chi_j^-(p)=\frac{\chi_{k-j+1}(p)}{\chi_0(p)},\hskip 15pt  j\in\{1,\ldots k\}.$$
\end{lemme}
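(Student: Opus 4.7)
The plan is to reduce both identities to a computation on the Levi $L = Z_G(\go a^0)$, and there to express the characters as products coming from the rank-one blocks $(L_s, \tilde{\go g}^{\pm \lambda_s})$. The starting point is the identity $\chi_j^-(p) = \chi_j(\gamma p \gamma^{-1})$, which is immediate from Definition \ref{def-nabla} together with the fact that $\gamma$ normalizes $G$ (for $j=0$) and $P$ (for all $j$), by Theorem \ref{th-involution-gamma}.

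For the first identity, I will use that $\chi_0$ and $\chi_0^-$ are rational characters of $G$, hence vanish on exponentials of $\text{ad}$-nilpotent elements and therefore on $\text{Aut}_e(\go g)$ (the argument used in the proof of Theorem \ref{thproprideltaj} for $\chi_j$). Since $G = L \cdot \text{Aut}_e(\go g)$ by Corollary \ref{cor-structureG}, it suffices to prove $\chi_0^- = \chi_0^{-1}$ on $L$. For the second identity, $\chi_j^-$ and $\chi_{k-j+1}/\chi_0$ are both trivial on $N$ by Theorems \ref{th-delta_{j}invariants} and \ref{th-nabla}, so it again suffices to check on $L$.

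On $L$, I will establish the product formulas $\chi_j|_L = \prod_{s=j}^{k}\chi_{\delta_s}|_L$ and $\chi_j^-|_L = \prod_{s=0}^{k-j}\chi_{\delta_s^-}|_L$, where $\chi_{\delta_s}$ (resp. $\chi_{\delta_s^-}$) is the character of the fundamental relative invariant $\delta_s$ of $(L_s,\tilde{\go g}^{\lambda_s})$ (resp. $\delta_s^-$ of $(L_s,\tilde{\go g}^{-\lambda_s})$). The key point is that the restriction of $\Delta_j$ to the diagonal $\bigoplus_{s=j}^{k}\tilde{\go g}^{\lambda_s}$ is proportional to $\prod_{s=j}^{k}\delta_s$: by Theorem \ref{thpropridelta_0}(2) it is homogeneous of degree $\kappa$ in each factor, and by the argument in the proof of Theorem \ref{thpropridelta_0}(2) this restriction is a nonzero relative invariant of each $(L_s,\tilde{\go g}^{\lambda_s})$ with the other variables fixed, hence a scalar multiple of $\delta_s$. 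After normalizing $\Delta_j(I^+) = 1$ and $\delta_s(X_s) = 1$, the constant is $1$. Applying $\gamma$, which sends $Y_s$ to $X_{k-s}$ by Theorem \ref{th-involution-gamma}(2), yields $\nabla_j = \prod_{s=0}^{k-j}\delta_s^-$ on the corresponding diagonal in $V^-$. Since every $l \in L$ preserves each root space $\tilde{\go g}^{\pm\lambda_s}$, evaluating at $l\cdot I^\pm$ produces the claimed product formulas for the characters.

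The final ingredient is the duality $\chi_{\delta_s^-}|_L = \chi_{\delta_s}|_L^{-1}$, which is the main technical obstacle. It follows from the nondegeneracy and $L$-invariance of the restriction of the bilinear form $b$ to $\tilde{\go g}^{-\lambda_s}\times\tilde{\go g}^{\lambda_s}$: the representations of $L$ on $\tilde{\go g}^{\pm\lambda_s}$ are then contragredient, and by the general theory of regular prehomogeneous vector spaces (or by direct inspection of the two rank-one cases classified in Theorem \ref{th-k=0}) the characters of their fundamental relative invariants are inverse. Combining the product formulas with this duality gives
\[
\chi_0^-(l) = \prod_{s=0}^{k}\chi_{\delta_s}(l)^{-1} = \chi_0(l)^{-1}, \qquad \chi_j^-(l) = \prod_{s=0}^{k-j}\chi_{\delta_s}(l)^{-1} = \frac{\prod_{s=k-j+1}^{k}\chi_{\delta_s}(l)}{\prod_{s=0}^{k}\chi_{\delta_s}(l)} = \frac{\chi_{k-j+1}(l)}{\chi_0(l)},
\]
from which the two identities follow on $G$ and on $P$ respectively via the reductions above.
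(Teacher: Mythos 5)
Your argument is correct, but it is organized quite differently from the paper's. For the identity $\chi_0^-=\chi_0^{-1}$ the paper never reduces to $L$: it observes that $\det_{V^+}\mathrm{Ad}(g)=\det_{V^-}\mathrm{Ad}(g^{-1})$ by the invariance of the Killing form, that $(\det_{V^\pm}\mathrm{Ad}(g))^{2}$ is the character of the relative invariant $\det(\mathrm{ad}\,X)^{2}$, hence a fixed positive power of $\chi_0^{\pm1}$, and concludes by torsion-freeness of the character lattice — a short global argument with no case analysis. For the second identity the paper reduces to $L$ as you do, but then passes to the subgroup $L'=L_0\cdots L_k\subset\overline G$ by proving $L\subset L'(\overline L\cap\overline H)$ (a case-by-case structural fact for $\ell$ a square and for $\ell=3$), splits $l=l_1l_2$ according to the weight decomposition under $H_{\lambda_j}+\cdots+H_{\lambda_k}$, and combines $\chi_0(l_1)=\chi_j(l)$ with the first identity applied to $\gamma l\gamma^{-1}$. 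Your route instead factors both $\chi_j$ and $\chi_j^-$ over the diagonal rank-one blocks and invokes the rank-one duality $\chi_{\delta_s^-}=\chi_{\delta_s}^{-1}$; this treats both identities uniformly and avoids the decomposition $L=L'(\overline L\cap\overline H)$, at the price of (i) justifying that each $l\mapsto\delta_s(lX_s)/\delta_s(X_s)$ is genuinely a character of $L$ (this does follow from the product formula for $\Delta_0$, but should be said), and (ii) establishing $\delta_s^-\circ\iota_s=1/\delta_s$ in rank one. On point (ii) you are right to appeal to Theorem \ref{th-k=0} rather than to Proposition \ref{prop-delta-iota}: the latter is proved in the paper \emph{using} the present lemma, so invoking it here would be circular, whereas the two rank-one cases (reduced norm of $D$; anisotropic ternary form) can be checked by hand. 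With those two points made explicit, your proof is a valid alternative.
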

\begin{proof} As the  Killing form induces a $G$-invariant duality between $V^+$ and  $V^-$, we have: 
$${\rm det}_{V^+} {\rm Ad}(g)={\rm det}_{V^-} {\rm Ad}(g^{-1}),\quad g\in G.$$
On the other hand, for $X\in V^+$, let us consider the determinant $P(X)={\rm det}_{(V^-,V^+)}({\rm ad}\; X)^2$ of the map $({\rm ad}\; X)^2: V^-\to V^+$ (for any choice of basis). This polynomial   $P$ is relatively invariant under the action of $G$ because $P(g.X)= {\rm det}_{(V^-,V^+)}(g ({\rm ad}\; X)^2 g^{-1})=({\rm det}_{V^+} {\rm Ad}(g))^2P(X)$. Hence it is, up to a multiplicative constant,  a power of $\Delta_0$. Therefore there exists $\al\in \N$ such that

$$({\rm det}_{V^+} {\rm Ad}(g))^2=\chi_0(g)^{\alpha}.$$
If we take  $g\in G$ such that $g_{|_{V^+}}= t \; Id_{V^+}\in G_{|_{V^+}},\;  t\in F^*$ (cf. Lemma \ref{lem-tId-dansG}), then Theorem \ref{thpropridelta_0} implies that $\alpha=\dfrac{2 {\rm dim} V^+}{\kappa (k+1)}$.

\medskip

The same argument for the dual space $(G,V^-)$ implies that  $({\rm det}_{V^-} {\rm Ad}(g))^2=\chi_0^-(g)^{\alpha}.$ Therefore we get  $\chi_0^-(g)^{\alpha}=\chi_0(g)^{-{\alpha}}$ for all  $g\in G$. As the group $X^*(G)$ of rational characters of $G$ is a lattice (see for example \cite{Re}, p.121), it has no torsion. Therefore $\chi_0^-(g)=\dfrac{1}{\chi_0(g)}$, and the first assertion is proved.
\medskip

Let us show the second assertion. As all the characters we consider are trivial on $N$, it is enough to prove the relation for  $m\in L$.  We consider the subgroup   $L'=L_0\ldots L_k$ of $\bar{L}=L(\overline{F})$ (keep in mind that the groups $L_{j}$ are in general not included in $G$). The polynomials  $\Delta_j$ are the restrictions to  $V^+$ of polynomials defined on  $\overline{V}^+$, which are  relatively invariant under the action of  $\bar{G}$. Therefore  the polynomials  $\nabla_j$ also are  restrictions to  $V^-$ of polynomials of  $\overline{V}^-$ which are relatively invariant under $\bar{G}$.
\medskip

 We  first show that for $m\in L$, there exists $l\in L'$ such that  $l^{-1}m\in \overline L\cap \overline{H}$. \hskip 50pt (*)\medskip

 (Here $\overline L= L(\overline{F})=Z_{\overline G}({\go{a}}^0)$ and $\overline H=H(\overline{F})= Z_{\overline{G}}(I^+)$).

 If  $\ell$ is a square, as $m.X_{j}\in \tilde{\go{g}}^{\lambda_{j}}$, it follows from Theorem  \ref{th-k=0} that  $m.X_j$ is  $L_j$-conjugated to  $X_j$ and hence there exists  $l_j\in L_j$ such that  $m.X_j=l_j.X_j$. Then the element  $l=l_0\ldots l_k\in L'$ is such that  $l^{-1}m.I^+=I^+$ and therefore $l^{-1}m\in  \overline{L}\cap \overline{H}$. 
 
 If $\ell=3$, we will use the  description of  $G$ given in Proposition  \ref{prop-G}. Let  $E=F[\sqrt{\ep}]$ be  a   quadratic extension of  $F$ where $\ep$ is a unit which is not a square, and let $\pi$ be a uniformizer of  $F$.  The group  $G$ is the group of automorphisms of $\tilde{\go g}$ given by conjugation by matrices of the form $\left(\begin{array}{cc} \mathbf g & 0\\ 0 & \mu\; ^{t}\mathbf g^{-1}\end{array}\right)$ where $\mathbf g\in G^0(2(k+1))\cup \sqrt{\ep} G^0(2(k+1))$ and  $\mu\in F^*$. We denote by $[\mathbf g,\mu]$ such an element of  $G$. 
  
The space $\go a^0$ is the space of matrices  $$\left(\begin{array}{cc} {\mathbf H}(t_k,\ldots, t_0) & 0\\ 0 & -{\mathbf H}(t_k,\ldots, t_0)  \end{array}\right),\; \textrm{where  }
\;{\mathbf H}(t_k,\ldots, t_0) =\left(\begin{array}{ccc} t_k I_2 & 0 & 0\\ 0 & \ddots & 0\\ 0 & 0 & t_0 I_2\end{array}\right),$$
and  $(t_k,\ldots , t_0)\in F^{k+1}$.  Therefore the centralizer of  $\go a^0$ in $G$, that is  $L$,   is the subgroup of elements  $m=[\mathbf g, \mu]$ where  ${\mathbf g}=diag(\mathbf g_k,\ldots,\mathbf g_0)$  is a  $2\times 2$ block diagonal matrix  whose diagonal elements  $\mathbf g_j$ belong to  $G^0(2)\cup \sqrt{\ep} G^0(2)$. Let $  l_j\in GL(2(k+1),E)$ be the $2\times 2$ block diagonal matrix whose all diagonal blocks are the identity in  $GL(2,E)$ except the $(k-j+1)$-th bock which is equal to  $\mathbf g_j$ and the $(2(k+1)-j)$-th block  which is equal to  $\mu\;^{t}\mathbf g_j^{-1}$. Then from the definition  of $L_j$, we have ${\rm Ad}l_j\in L_j$ and $m={\rm Ad}(l_k\ldots l_0)$ belongs to  $L'$.
\medskip

Hence we have proved that in all cases, for all  $m\in L$, there exists  $l\in L'$ such that $l^{-1}m\in \overline{L}\cap \overline{H}$. As the characters  $\chi_j$ and  $\chi_j^-$ are trivial on  $\overline{L}\cap \overline{H}$, we have     $\chi_j(m)=\chi_j(l)$ and  $\chi_j^-(m)=\chi_j^-(l)$ for all  $j\in\{0,\ldots, k\}$. It suffices therefore to prove the result for $l\in L'=L_0L_1\ldots L_k\subset\overline{G}$.\medskip

Let  $l\in L'$ and  $j\in\{1,\ldots, k\}$. Consider the decomposition of  $V^+$ into weight spaces under the action of  $ H_{\lambda_j}+\ldots H_{\lambda_k} $:
  $$V^+=V_j^+\oplus U_j^+\oplus W_j^+,$$
  where  $V_j^+$, $U_j^+$ and  $W_j^+$ are the spaces of weight  $2$, $1$ and $0$ under  $ H_{\lambda_j}+\ldots H_{\lambda_k} $, respectively. More precisely:
  $$\begin{array}{ccl} V_j^+ &= &\oplus_{s=j}^k \tilde{\go g}^{\lambda_s}\oplus(\oplus_{j\leq r<s} E_{r,s}(1,1))\\
  {}&{}\\
  U_j^+ & = & \oplus_{r<j\leq s} E_{r,s}(1,1)\\
  {}&{}\\
W_j^+ &= &\oplus_{s=0}^{j-1} \tilde{\go g}^{\lambda_s}\oplus(\oplus_{r<s\leq j-1} E_{r,s}(1,1)).\end{array}$$
\medskip

Similarly we denote by  $V^-=V_j^-\oplus U_j^-\oplus W_j^-$ the decomposition  of $V^-$ into weight spaces of weight  $-2,-1$ and  $0$ under  $ H_{\lambda_j}+\ldots H_{\lambda_k} $.
 As the eigenspace, in $V^+$, for the eigenvalue $r$ of $ H_{\lambda_j}+\ldots H_{\lambda_k} $, is the same as the eigenspace, for the eigenvalue $2-r$ of $ H_{\lambda_1}+\ldots H_{\lambda_{j-1}} $, it is easy to see that 
 $W_j^-=\gamma (V_{k+1-j}^+)$. Therefore  $(\gamma (G_{k+1-j}), W_j^-)$ is a regular irreducible prehomogeneous vector space  whose fundamental relative invariant is the restriction of  $\nabla_{k-j+1}$ to $W_j^-$.

Let us write  $l=l_1l_2$ where  $l_1\in L_j\ldots L_k\subset \overline{G_j}$ and  $l_2\in L_0\ldots L_{j-1}=\gamma (L_{k+1-j}\ldots L_k)\subset \gamma (\overline{G_{k+1-j}})$.

As $l_1$ acts trivially on  $\overline{W_j}^+$, and as $l_{2}$  acts trivially on $\overline{V_{j}^+}$, we have 
$$\chi_0(l_1)=\Delta_0(X_0+\ldots+X_{j-1}+l_1(X_j+\ldots +X_k))$$
$$= \Delta_j(l_1(X_j+\ldots +X_k))= \Delta_j(l(X_j+\ldots +X_k))=\chi_j(l).$$

Define $l'_2:=\gamma l_2\gamma^{-1}\in L_{k+1-j}\ldots L_k\subset \overline{G_{k+1-j}}$.

If we consider the decomposition $V^+=V_{k+1-j}^+\oplus U_{k+1-j}^+\oplus W_{k+1-j}^+$ and if we apply the same argument as before to  $\gamma l\gamma^{-1}$ we get
$\chi_0(l'_2)=\chi_{k+1-j}(\gamma l\gamma^{-1})$, and this  is equivalent to 
$$\chi^-_0(l_2)=\chi_{k+1-j}^-(l).$$
Applying the first assertion of the Lemma, we obtain  $\chi_0(l_2)=\chi_{k+1-j}^-(l)^{-1}$, and hence finally
$$\chi_0(l)=\chi_0(l_1)\chi_0(l_2)=\chi_j(l) \chi_{k+1-j}^-(l)^{-1},$$
and this proves the second assertion.

\end{proof}

Let  $\Omega^+$ and  $\Omega^-$ \index{Omega+-@$\Omega^{\pm}$}be  the set of generic elements in  $V^+$ and  $V^-$, respectively. In other words:
$$\Omega^+=\{x\in V^+, \Delta_0(X)\neq 0\}, \quad \Omega^-=\{x\in V^-, \nabla_0(X)\neq 0\}.$$

\begin{definition} \label{iota}Let $\i  :  \Omega^+\longrightarrow \Omega^-$\index{iota@$\iota$} be the map which sends $X\in \Omega^+$ to the unique element  $Y\in \Omega^-$ such that  $(Y, H_0, X)$ is an ${\go sl}_2$-triple.\end{definition}

If  $(Y,H_0, X)$ is an  ${\go sl}_2$-triple, then for each $g\in G$, $(g.Y,H_0, g.X)$ is again an  ${\go sl}_2$-triple. Therefore the map $\iota$ is    $G$-equivariant.

\begin{prop}\label{prop-delta-iota} For  $X\in \Omega^+$, we have
$$\nabla_0(\i(X))=\frac{1}{\Delta_0(X)},\quad{\rm and  }\quad\nabla_{j}( \i(X))=\frac{\Delta_{k+1-j}(X)}{\Delta_0(X)},\quad j=1,\ldots ,k.$$

\end{prop}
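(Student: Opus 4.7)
The strategy is, for each identity, to form a suitable rational function on $V^+$ whose transformation law under the relevant group (either $G$ or $P$) is trivial, to deduce from prehomogeneity that it is constant, and then to identify the constant by evaluating at $I^+$. At $I^+$, the computation is immediate: by definition of $\iota$ we have $\iota(I^+)=I^-$, and the diagonal character of $\gamma$ fixed in Theorem \ref{th-involution-gamma} gives $\gamma(I^-)=\sum_j \gamma(Y_j)=\sum_j X_{k-j}=I^+$, so that by the normalization $\Delta_s(I^+)=1$ we obtain $\nabla_j(I^-)=\Delta_j(\gamma(I^-))=\Delta_j(I^+)=1$ for every $j$.

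For the first identity, put $f_0(X)=\Delta_0(X)\,\nabla_0(\iota(X))$; this is rational on $V^+$ and regular on $\Omega^+$. Since $\iota$ is $G$-equivariant and $\Delta_0$, $\nabla_0$ are $G$-relative invariants with characters $\chi_0$ and $\chi_0^-$, the function $f_0$ transforms under $G$ by $\chi_0\chi_0^-$, which is trivial by Lemma \ref{lem-propchij}. Passing to $\overline F$, $f_0$ extends to a $\overline G$-invariant rational function on $\overline{\Omega^+}$; since $(\overline G,\overline{V^+})$ has a Zariski-dense orbit, $f_0$ is constant, and its value at $I^+$ is $1$. For $j\geq 1$, one argues in exactly the same way with the function $g_j(X)=\Delta_0(X)\nabla_j(\iota(X))/\Delta_{k+1-j}(X)$, which is $P$-invariant because of the relation $\chi_j^-=\chi_{k+1-j}/\chi_0$ provided by Lemma \ref{lem-propchij} and the $P$-equivariance of $\iota$; prehomogeneity of $(P,V^+)$ (Proposition \ref{(P,V)PH}), transferred to $\overline F$, gives a unique Zariski-dense $\overline P$-orbit on which $g_j$ must be constant, and evaluation at $I^+$ yields $g_j(I^+)=1$.

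The only non-formal step is the passage from ``transforms by the trivial character'' to ``is constant''. Over $F$ itself this could fail because $G$ (and a fortiori $P$) may have several open orbits in $V^+$; this is precisely why the base change to $\overline F$ is needed, for over $\overline F$ both $(\overline G,\overline{V^+})$ and $(\overline P,\overline{V^+})$ are honest prehomogeneous vector spaces with a single dense orbit. Once this point is granted, everything else is formal and uses only the character identities of Lemma \ref{lem-propchij} and the normalizations fixed in Section \ref{sec:PorbitesV-}.
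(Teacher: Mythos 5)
Your proof is correct, but it follows a genuinely different route from the paper's. The paper first reduces to $\cO^+$ by density and continuity and then argues case by case according to the Type of $\widetilde{\go g}$: for Type I it uses that $\cO^+$ is a single $P$-orbit together with the character identities of Lemma \ref{lem-propchij} (essentially your argument, but with no need for base change); for Type II it conjugates $X$ into the diagonal by $N$ and computes $\i$ and $\gamma$ explicitly there; for Type III it carries out an explicit matrix computation in the realization inside $\go{sp}(4(k+1),E)$, where $\i(X)$ is essentially $-\mathbf X^{-1}$. You instead obtain a uniform proof: the functions $\Delta_0\cdot(\nabla_0\circ\i)$ and $\Delta_0\cdot(\nabla_j\circ\i)/\Delta_{k+1-j}$ are rational (since $\i(X)=2\bigl((\ad X)^2\bigr)^{-1}X$ is rational in $X$), transform trivially under $G$, resp.\ $P$, by Lemma \ref{lem-propchij}, and become constant after extension to $\overline F$ because $(\overline G,\overline{V^+})$ and $(\overline P,\overline{V^+})$ each have a single dense orbit (Proposition \ref{(P,V)PH}); the value $1$ then comes from $\i(I^+)=I^-$, $\gamma(I^-)=I^+$ and the normalizations of Theorem \ref{th-involution-gamma}. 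What your approach buys is the elimination of the three-case analysis and of the explicit Type II/III computations; what it costs is two routine density arguments that you leave implicit: the character identities and the relative invariance must be transferred from the $F$-points to $\overline G$ and $\overline P$ (Zariski density of rational points in these connected groups), and for $j\ge 1$ the identity is first obtained only on the dense orbit $\overline{\cO^+}$, so one still needs the same continuity/density step as in the paper to extend it from $\cO^+$ to all of $\Omega^+$. Neither point is a real obstacle.
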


\begin{proof} Fix a diagonal   ${\go sl}_2$-triple $(I^-, H_0, I^+)$ which satisfies the condition of Theorem  \ref{th-involution-gamma}.  Then  $I^+=X_0+\ldots +X_k$ and $I^-=Y_0+\ldots +Y_k$ where  $(Y_i, H_{\lambda_i},X_i)$ are  $\go sl_2$-triples. 

As  $\mathcal O^+$ is open dense in  $\Omega^+$ and as the function we consider here are continuous on  $\Omega^+$, it suffices to prove the result for $ X\in \mathcal O^+$. The proof depends on the Type of  $\tilde{\go g}$. \medskip

\noindent If  $\tilde{\go g}$ is of Type  $I$ (ie. $\ell$ is a square and  $e=0$ or  $4$) then any  $X\in \mathcal O^+$ is  $P$-conjugated  to  $I^+$ (see   Theorem \ref{th-Porbites}). Therefore it suffices to prove that for all $p\in P$, one has 
$$\nabla_j(p.I^-)=\frac{\Delta_{k+1-j}(p.I^+)}{\Delta_0(p.I^+)}.$$

From the normalization of the polynomials  $\nabla_j$ and  $\Delta_j$, the Lemma \ref{lem-propchij} implies 
$$\nabla_j(p.I^-)=\chi_j^-(p) =\dfrac{\chi_{k+1-j}(p)}{\chi_0(p)}=\frac{\Delta_{k+1-j}(p.I^+)}{\Delta_0(p.I^+)},$$
and this proves the statement in this case. \medskip

\noindent Suppose now that  $\tilde{\go g}$ is of Type $II$, that is that $\ell=1$ and  $e=1$ or  $2$. By Lemma \ref{lem-NO+},  $X$ is  $N$-conjugated to an element  $Z=\sum_{j=0}^k z_j X_j$ with  $z_0,\ldots, z_k \in F^*$. By the hypothesis on the  $X_j$'s and from the definition of the involution  $\gamma$, we have 
$\i(Z) =z_0^{-1} Y_0+\ldots +z_k^{-1} Y_k$ and  $\gamma (\i(Z))=z_k^{-1} X_0+\ldots +z_0^{-1} X_k$. By Theorem  \ref{th-nabla} and \ref{th-delta_{j}invariants}, the polynomials  $\nabla_j$ and $\Delta_{j}$ are $N$-invariant and hence we have 
$$\nabla_0(\i(X))=\nabla_0(\i(Z))=\Delta_0(\gamma.\i(Z))=\prod_{s=0}^k z_s^{-1}=\frac{1}{\Delta_0(Z)}=\frac{1}{\Delta_0(X)},$$
and  
$$\nabla_j(\i(X))=\nabla_j(\i(Z))=\Delta_j(\gamma.\i(Z))= \prod_{s=0}^{k-j} z_s^{-1}=\frac{\Delta_{k+1-j}(Z)}{\Delta_0(Z)}=\frac{\Delta_{k+1-j}(X)}{\Delta_0(X)},$$
and this again proves the statement.  \medskip

\noindent Suppose now that  $\tilde{\go g}$ is of Type  $III$, this means that  $\ell=3$.  We use the notations and the material developed   in  \textsection \ref{subsection(l=3)}. In particuliar  we realize the algebra  $\tilde{\go g}$ as a subalgebra of  ${\go sp}(4(k+1), E)$ where $E=F[\sqrt{\ep}]$ and where  $\ep\in F^*\setminus F^{*2}$ is a unit. We will first describe precisely the involution  $\gamma$ and the polynomials  $\nabla_j$.

We define   $I^+=\left(\begin{array}{cc} 0 & {\mathbf I}^+\\ 0 & 0\end{array}\right)$
where  ${\mathbf I}^+\in M(2(k+1), E)$ is the  $2\times2$ block diagonal matrix whose diagonal blocks are all equal to 
$J_1=\left(\begin{array}{cc} 0 & 1\\ 1 & 0\end{array}\right).$ Then  $I^-=\left(\begin{array}{cc} 0 &0\\ - {\mathbf I}^+& 0\end{array}\right).$
We normalize the polynomials  $\Delta_j$ in such a way that    $\Delta_j(I^+)=1$ for all  $j\in\{0,\ldots, k\}$.
  
 We set $\Gamma=\left(\begin{array}{ccc} 0 & & 1\\ & \adots & \\ 1 & &0\end{array}\right)$. We show now that $ \gamma=\left(\begin{array}{cc} 0 & \Gamma\\ -\Gamma & 0\end{array}\right)$ satisfies the properties of  \ref{th-involution-gamma}. As  $\gamma$ centralizes the matrix $K_{2(k+1)}=\left(\begin{array}{cc} 0 &I_{2(k+1)}\\ -I_{2(k+1)} & 0\end{array}\right)$, the element 
 $\gamma$ (or to be more precise, the conjugation by $\gamma$) belongs to  ${\rm Aut}_0(\tilde{\go g}\otimes_F E)$. In order to verify that  $\gamma \in\tilde{G}$, it suffices to verify that  $\gamma$ normalizes $\tilde{\go g}$.   Recall that  $\tilde{\go g}$ is the set of matrices $Z\in \tilde{\go g}\otimes_F E$ such that   $T\overline{Z}=Z T$ where  $ T=\left(\begin{array}{cc} J &  0\\ 0 & ^{t}J\end{array}\right)$ with  $J=\left(\begin{array}{ccc} J_\pi &0&0\\ 0& \ddots& 0\\ 0&0& J_\pi\end{array}\right)$
 and  $J_\pi= \left(\begin{array}{cc}0 & \pi\\ 1& 0\end{array}\right)$.
As
$\gamma^{-1}T\gamma=\left(\begin{array}{cc}  \Gamma \;^{t}J \Gamma & 0\\0 & \Gamma J \Gamma \end{array}\right)$
and  $ \Gamma J \Gamma=\;^{t}J$, we obtain  $\gamma^{-1}T\gamma=T$. It follows that $\gamma$ normalizes  $\tilde{\go g}$ and hence  $\gamma\in \tilde{G}$.

For  $Z= \left(\begin{array}{cc} {\mathbf A} &   {\mathbf X}\\  {\mathbf Y} & -^{t} {\mathbf A}\end{array}\right)\in\tilde{\go g}
$, we have
$$\gamma.Z=\left(\begin{array}{cc}- \Gamma\; ^{t} {\mathbf A}  \Gamma& - \Gamma  {\mathbf Y} \Gamma\\- \Gamma  {\mathbf X} \Gamma &  \Gamma  {\mathbf A} \Gamma \end{array}\right).$$

If  $ \mathbf Z\in M(2(k+1,E)$ can be written as $ \mathbf  Z= \left(\begin{array}{ccc} {\mathbf Z}_{0,0} & \ldots & {\mathbf Z}_{0,k} \\ \vdots & & \vdots \\ {\mathbf Z}_{k,0}  & \ldots & {\mathbf Z}_{k,k} \end{array}\right)$ with ${\mathbf Z}_{r,s}\in M(2,E)$, then 
$$\Gamma {\mathbf Z}\Gamma=\left(\begin{array}{ccc} J_1 {\mathbf Z}_{k,k}J_1 & \ldots &J_1 {\mathbf Z}_{k,0}J_1 \\ \vdots & & \vdots \\J_1 {\mathbf Z}_{0,k}  J_1& \ldots & J_1{\mathbf Z}_{k,0}J_1 \end{array}\right).$$ 
 
It is now easy to verify that  $\gamma$ normalizes $\go a^0$ and satisfies the properties of Theorem \ref{th-involution-gamma} . 

From the normalization made in section \ref{subsection(l=3)}, we have  
 $$\Delta_j\left(\begin{array}{cc} 0 & \mathbf X\\ 0 & 0\end{array}\right)=(-1)^{k-j+1} {\rm det}(\tilde{\mathbf X}_j)$$
 where $\tilde{\mathbf X}_j$ is the square matrix of size $2(k+1-j)$ defined by the $2(k+1-j)$ first rows and columns of  $\mathbf X$.  Explicitly, if 
 $\mathbf X=({\mathbf X}_{r,s})_{r,s=0,\ldots, k}$ where  ${\mathbf X}_{r,s}\in M(2,E)$, we have  
 $$\Delta_j(  X)=(-1)^{k-j+1} \left|\begin{array}{ccc} {\mathbf X}_{0,0} & \ldots & {\mathbf X}_{0,k-j} \\ \vdots & & \vdots \\ {\mathbf X}_{k-j,0}  & \ldots & {\mathbf X}_{k-j,k-j} \end{array}\right|$$

From the definition of $\nabla_j$, we get 
 $$\nabla_j\left(\begin{array}{cc} 0 &0\\  \mathbf Y & 0\end{array}\right)=\Delta_j\left(\begin{array}{cc} 0 & -\Gamma \mathbf Y \Gamma\\ 0& 0\end{array}\right).$$
Therefore if 
 $ \mathbf Y= \left(\begin{array}{ccc} {\mathbf Y}_{0,0} & \ldots & {\mathbf Y}_{0,k} \\ \vdots & & \vdots \\ {\mathbf Y}_{k,0}  & \ldots & {\mathbf Y}_{k,k} \end{array}\right),$
we have 
$$\nabla_j(Y)=(-1)^{k+1-j} \left|\begin{array}{ccc}-J_1 {\mathbf Y}_{k,k}J_1 & \ldots &-J_1 {\mathbf Y}_{k,j}J_1 \\ \vdots & & \vdots \\-J_1 {\mathbf Y}_{j,k}  J_1& \ldots & -J_1{\mathbf Y}_{j,j}J_1 \end{array}\right|= \left|\begin{array}{ccc}J_1 {\mathbf Y}_{k,k}J_1 & \ldots &J_1 {\mathbf Y}_{k,j}J_1 \\ \vdots & & \vdots \\J_1 {\mathbf Y}_{j,k}  J_1& \ldots & J_1{\mathbf Y}_{j,j}J_1 \end{array}\right|.$$
As ${\rm det}(J_1)^2=1$, we obtain:  
$$\nabla_j(Y)= \left|\begin{array}{ccc}   {\mathbf Y}_{k,k}  & \ldots &  {\mathbf Y}_{k,j}  \\ \vdots & & \vdots \\  {\mathbf Y}_{j,k}  & \ldots &  {\mathbf Y}_{j,j}  \end{array}\right|=\left|\begin{array}{ccc}   {\mathbf Y}_{j,j}  & \ldots &  {\mathbf Y}_{j,k}  \\ \vdots & & \vdots \\  {\mathbf Y}_{k,j}  & \ldots &  {\mathbf Y}_{k,k}  \end{array}\right|.$$

Let  $X=\left(\begin{array}{cc} 0 & \mathbf X\\ 0 & 0\end{array}\right)\in \Omega^+$. A simple computation shows that  $\i(X)=\left(\begin{array}{cc} 0 &0\\  -\mathbf X^{-1} & 0\end{array}\right)$. 
If  $X\in \mathcal O^+$, then by Lemma \ref{lem-NO+} , there exists $n\in N$ such that $n.X=\left(\begin{array}{cc} 0 & \mathbf Z\\ 0 & 0\end{array}\right)$, where 
  $\mathbf Z=\left(\begin{array}{ccc} {\mathbf Z}_k &0 & 0\\
0 & \ddots &0\\ 0 & 0 &  {\mathbf Z}_0\end{array}\right)$ and  ${\mathbf Z}_j\in M(2,E)$. We set $Z=n.X$. From above we get:
$$\nabla_j(\i(Z))=(-1)^{k-j+1}\prod_{s=0}^{k-j} \frac{1}{{\rm det}({\mathbf Z}_s)}=(-1)^{k-j+1}\frac{\prod_{s=k-j+1}^{k}{\rm det}({\mathbf Z}_s)}{\prod_{s=0}^{k}{\rm det}({\mathbf Z}_s)}=\frac{\Delta_{k+1-j}(Z)}{\Delta_0(Z)}.$$ 
As the $\Delta_{j}$'s and the $\nabla_j$'s are invariant under $N$, we have $$\nabla_j(\i(X))=\frac{\Delta_{k+1-j}(X)}{\Delta_0(X)}$$ for all  $X\in \mathcal O^+$and hence for all $X\in \Omega^+$.\end{proof}

\begin{definition} Let  $s=(s_0,\ldots, s_k)\in \mathbb C^{k+1}$. We denote by $|\nabla|^s$ \index{nablas@$|\nabla|^s$}and  $|\Delta^s|$ \index{Deltas@$|\Delta^s|$} the functions  respectively defined on  $\mathcal O^+$ and $\mathcal O^-$ by 
$$|\Delta|^s(X)=|\Delta_0(X)|^{s_0}\ldots |\Delta_k(X)|^{s_k},\quad{\rm for } \; X\in \mathcal O^+,$$
$$|\nabla|^s(Y)=|\nabla_0(Y)|^{s_0}\ldots |\nabla_k(Y)|^{s_k},\quad{\rm for }\;  Y\in \mathcal O^-.$$
\end{definition}
\begin{definition} We denote by  $t$  the involution on $\mathbb C^{k+1}$ defined  by
$$t(s)\index{ts@$t(s)$}=(-s_0-s_1-\ldots - s_k, s_k, s_{k-1},\ldots, s_1),$$
for  $s=(s_0,\ldots, s_k)\in \mathbb C^{k+1}$.
\end{definition} 
\begin{cor}\label{cor-nabla-psi} Let $X\in \Omega^+$. For $s\in  \mathbb C^{k+1}$, we have $$|\nabla|^s(\i(X))=|\Delta|^{t(s)}(X).$$
In particular, the polynomials $|\nabla|^s$ and  $|\Delta|^{s'}$ have the same  $A^0$-character if and only if  $s'=t(s)$.
\end{cor}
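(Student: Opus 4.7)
The plan is that this statement is a direct consequence of Proposition \ref{prop-delta-iota}, so only a short calculation is required; the main task is to book-keep indices correctly.

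First I would prove the identity $|\nabla|^s(\iota(X))=|\Delta|^{t(s)}(X)$ by substitution. Writing out the definition of $|\nabla|^s$ and plugging in $\nabla_0(\iota(X))=\Delta_0(X)^{-1}$ and $\nabla_j(\iota(X))=\Delta_{k+1-j}(X)/\Delta_0(X)$ for $j=1,\ldots,k$ from Proposition \ref{prop-delta-iota}, I get
$$|\nabla|^s(\iota(X))=|\Delta_0(X)|^{-s_0}\prod_{j=1}^{k}\frac{|\Delta_{k+1-j}(X)|^{s_j}}{|\Delta_0(X)|^{s_j}}=|\Delta_0(X)|^{-(s_0+s_1+\cdots+s_k)}\prod_{i=1}^{k}|\Delta_i(X)|^{s_{k+1-i}},$$
after the change of index $i=k+1-j$. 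Matching coefficients with $t(s)=(-s_0-s_1-\cdots-s_k,\,s_k,s_{k-1},\ldots,s_1)$ yields exactly $|\Delta|^{t(s)}(X)$.

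For the second assertion, one direction is immediate from what has just been proven: since $\iota$ is $G$-equivariant (Definition \ref{iota}), it is in particular $A^0$-equivariant, so evaluating the identity at $a\cdot X$ with $a\in A^0$ shows that the $A^0$-characters of $|\nabla|^s$ and $|\Delta|^{t(s)}$ agree. For the converse, I would use Theorem \ref{th-delta_{j}invariants}, which gives $\chi_j(a)=a^{\kappa(\lambda_j+\cdots+\lambda_k)}$ for $a\in A$. Hence the $A^0$-character of $|\Delta|^{s'}$ is
$$a\longmapsto a^{\kappa\sum_{j=0}^{k}s'_j(\lambda_j+\cdots+\lambda_k)}=a^{\kappa\sum_{i=0}^{k}(\sum_{j=0}^{i}s'_j)\lambda_i}.$$
Because the roots $\lambda_0,\ldots,\lambda_k$ are mutually strongly orthogonal long roots with coroots $H_{\lambda_0},\ldots,H_{\lambda_k}$ forming a basis of $\go a^0$ (by Corollary \ref{cor-orth=fortementorth} and Definition \ref{def-a^0-L}, one has $\lambda_i(H_{\lambda_j})=2\delta_{ij}$), their restrictions to $\go a^0$ form a basis of $(\go a^0)^*$. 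Consequently the partial sums $\sum_{j=0}^{i}s'_j$ are uniquely determined by the character, and hence so is $s'$.

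There is no real obstacle here: the whole content of the corollary is Proposition \ref{prop-delta-iota} together with the $G$-equivariance of $\iota$. The only mildly delicate point is the injectivity argument in the second part, which rests on the linear independence of the $\lambda_j$'s on $\go a^0$ — a fact that is built into the structure of the diagonal $\go a^0$.
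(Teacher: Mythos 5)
Your proof is correct and follows essentially the same route as the paper: the identity is the index bookkeeping from Proposition \ref{prop-delta-iota}, and the character comparison rests on the explicit $A$-characters $\chi_j$ (resp. $\chi_j^-$) from Theorems \ref{th-delta_{j}invariants} and \ref{th-nabla} together with the linear independence of the $\lambda_j$ on $\go a^0$. The only cosmetic remark is that your converse argument deduces the character of $|\nabla|^s$ via the identity and the equivariance of $\iota$ rather than quoting Theorem \ref{th-nabla} directly, which is an equivalent and equally valid bookkeeping.
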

\begin{proof} The first statement is a straightforward consequence of Proposition  \ref{prop-delta-iota}. The second assertion  follows then from Theorem \ref {th-delta_{j}invariants} and \ref{th-nabla} \end{proof}
\newpage
{\,}
 \vskip 200pt
 
  \part{}{\centering{\Huge {\bf Local Zeta Functions}}}
 
\newpage
\section{Preliminaries }\label{sec:Preliminaries}
 \subsection{Summary of some results and notations from Part I}\label{section-notation}
\hfill

For the convenience of the reader, we summarize briefly here the basic definitions and results from Part I which are needed for the study of the local zeta functions.
\me 
Let $F$ be a p-adic field of characteristic $0$, i.e. a finite extension of $\Q_{p}$.

Let  $\Or_F$ be the ring of integers of  $F$ and denote by $\Or_F^*$ the subgroup of units of  $F^*$. 
 We will always suppose that the residue class field $\bf k=\Or_F/\Or_F^*$ has characteristic $\neq 2$ (non dyadic case). Let $q$ denote the cardinal of $\bf k$.

We fix a set of representatives $\{1,\ep,\pi,\ep\pi\}$ of $\Cr=F^*/F^{*2}$ , where $\ep$ is a unit which is not a square in  $F^*$ and $\pi$ is a uniformizer of  $F$.\\

Let  $\overline{F}$ be an algebraic closure of $F$. In the sequel, if $U$ is a $F$-vector space, we will set 
 $$\overline U=U\otimes_{F}\overline{F}.$$
 
The regular graded Lie algebra which are our basic objects were defined as follows in  Part I.

 \begin{definition}\label{def-alg-grad}
 
 A reductive  Lie algebra $\widetilde{ {\mathfrak g}}$ over $F$   satisfying the following three  hypothesis will be called {\it a regular graded Lie algebra}:
 
 \begin{itemize}
 \item[$(\bf H_1)$]\label{H1}{\it  There exists an  element $H_0\in \widetilde{ {\mathfrak g}}$ such that 
$\ad H_0$ defines a   ${\mathbb  Z}$-grading  of the form }
$$\widetilde{ {\mathfrak g}}=V^-\oplus {\mathfrak g}\oplus V^+\quad (V^+\not = \{0\})\ ,$$
{\it where } $[H_0,X]=\begin{cases}
 0&\hbox{  for }X\in {\mathfrak g}\ ;\\
         2X&\hbox{  for }X\in V^+\ ;\\
        -2X&\hbox{  for }X\in V^-\ .
\end{cases}$

 ({\it Therefore, in fact,  } $H_{0}\in \go{g}$)
\item[$(\bf H_2)$]\label{H2}{\it  The $($bracket$)$ representation  of ${\mathfrak g}$ on  $\overline {V^+}$ is irreducible. $($In other words, the representation $({\mathfrak g},V^+)$ is absolutely irreducible$)$}
\item[$(\bf H_3)$]\label{H3} {\it There exist $I^+\in V^+$ and $I^-\in V^-$ such that $\{I^-,H_{0},I^+\}$ is an $\go{sl}_{2}$-triple. }
\end{itemize} 
\end{definition}

\vskip 5pt

 
We  fix a maximal split abelian Lie subalgebra  $\go{a}$ of $\go{g}$ containing $H_{0}$. Then $\go{a}$ is also maximal split abelian in $\widetilde{\go g}$. Let   $\widetilde \Sigma$  and $\Sigma$  be the roots system of $(\widetilde{\go g},\go{a})$ and  $(\go{g},\go{a})$ respectively.\me

Let   $H_\la$ be the coroot of  $\la\in \widetilde \Sigma$. For  $\mu\in \go a^*$, we denote by  $\widetilde{\go g}^{\mu}$ the subspace of weight  $\mu$ of
 $\widetilde{\go g}$.\me

From   Theorem \ref{th-descente}, Proposition \ref{prop-systememax} and   Theorem \ref{th-decomp-Eij}, we know that there exists a unique  $k\in \N$   and a family of $2$ by $2$ strongly orthogonal roots  $\la_0,\ldots, \la_k$ in  $\widetilde \Sigma^+ \setminus \Sigma$, unique modulo the action of the Weyl group  of $\Sigma$, such that:
 \begin{itemize}
 \item[$(\bf S1)$] $H_{0}=H_{\lambda_{0}}+H_{\lambda_{1}}+\dots+H_{\lambda_{k}},$
 \item[$(\bf S2)$] If we set 
 $\go a^0=\oplus_{j=0}^k FH_{\la_j}$  and if for $p,q\in\Z$, we define $ E_{i,j}(p,q)$ to be the space of $ X\in \widetilde{ {\go g}}$ such that 
 $[H_{\lambda
_s},X]=
\begin{cases}
pX&\hbox{  if }s=i\ ;\cr
qX&\hbox{ if }s=j\ ;\cr
0&\hbox{ if }s\notin \{i,j\}\ .
\end{cases}$. \\
Then we have the following decompositions
\begin{align*}
(a) \hskip 25pt {\go g}&={\mathcal Z}_{\go g}({\go a}^0)\oplus\bigl(\oplus _{i\not =
j}E_{i,j}(1,-1)\bigr)\ ;\cr 
(b) \hskip 15pt V^+&=\bigl(\oplus_{j=0}^k \widetilde{ {\go g}}^{\lambda _j}\bigr)
\oplus\bigl(\oplus _{i<j}E_{i,j}(1,1)\bigr)\ ;\cr
(c) \hskip 15pt V^-&=\bigl(\oplus_{j=0}^k \widetilde{ {\go g}}^{-\lambda _j}\bigr) \oplus\bigl(\oplus
_{i<j}E_{i,j}(-1,-1)\bigr)\ .
\end{align*}
\end{itemize}

In the rest of the paper, we fix such a family  $(\la_0,\ldots, \la_k)$ of roots. The integer  $k+1$ is called the {\it rank } of the graded Lie algebra $\widetilde{\go g}$. From   Proposition \ref{prop-memedimension}, we know that for $j=0,\ldots, k$,  the spaces  $\widetilde{\go g}^{\la_j}$ have the same dimension and also that for $i\neq j$, the spaces  $E_{i,j}(\pm1,\pm 1)$ have  the same dimension.\me

Remember also from \ref{notdle} the definition of the following constants:
\medskip
           \begin{align*}
           \ell&=\dim \widetilde{ {\go g}}^{\lambda _j} \hbox{  for }j=0,\ldots ,k\;\cr
           d&=\dim E_{i,j}(\pm 1,\pm 1) \hbox{  for }i\not = j\in \{0,\ldots ,k\}\;\cr
           e&=\dim \tilde{\go g}^{(\lambda_i+\lambda_j)/2}  \hbox{  for }i\not = j\in \{0,\ldots ,k\}\ (e \text{ may be equal to } 0).\
                \end{align*}

From the classification of the simple graded Lie algebras  (see section \ref{sec:section-Table}), the integer  $\ell$ is either the square of an integer, or equal to  $3$ and  $e\in\{0,1,2,3,4\}$. Moreover $d-e$ is even.
 
\me

For  a reductive Lie algebra $\go k$   let  $\text{Aut}(\go k)$  be the group of automorphisms of  $\go k$ and     $\text{Aut}_e(\go k)$ the subgroup of elementary automorphisms, that is the automorphisms which are finite products of  $e^{\ad x}$, where  $\ad(x)$ is nilpotent on  $\go k$. Define  
$$\text{Aut}_0(\go k)=\text{Aut}(\go k)\cap \text{Aut}_e(\widetilde{\go k}\otimes\overline{F}).$$

Let   $G= {\mathcal Z}_{\text{Aut}_{0}(\widetilde{\go{g}})}(H_{0})=\{g\in \text{Aut}_{0}(\widetilde{\go{g}}),\, g.H_{0}=H_{0}\}$ be the centralizer of $H_{0}$ in   $\text{Aut}_{0}(\widetilde{\go{g}})$ (see section \ref{sub-section-generic-strongly-orth}).\me

The group  $G$ is the group of  $F$-point  of an algebraic group defined over  $F$. Moreover, the Lie algebra of the group  $G$  is $\go g\cap[\widetilde{\go g}, \widetilde{\go g}]=[\go g,\go g]+[V^-,V^+]$.\me

We recall also that  $(G,V^+)$ is an absolutely irreducible prehomogeneous vector space, of commutative type, and regular. By Theorem \ref{thexistedelta_0},  there exists on  $V^+$ a unique (up to scalar multiplication) relative invariant polynomial $\Delta_{0}$ which is absolutely irreducible  (i.e. irreducible as a polynomial on $\overline{V^+}$).  We denote by $\chi_0$ the character of $G$ associated to $\De_0$.\me

Similarly, $(G,V^-)$ is a prehomogeneous vector space of the same type, whose absolutely irreducible polynomial relative invariant is denoted by  $\nabla_0$.  Its associated character is denoted by $\chi_0^-$.  \vskip 5pt

We set 
\beq\label{def-m} \index{m@$m$}m=\frac{{\rm dim}\; V^+}{{\rm deg}(\De_0)}.\eeq

 Let $dX$ (resp. $dY)$ be arbitrary Haar measures on $V^+ $ (resp. $V^-$). We know from  the  proof of Lemma \ref{lem-propchij} that 
 $\det g_{|_{V^+}}=\chi_{0}(g)^m$ and $\det g_{|_{V^-}}=\chi_{0}(g)^{-m}$.
This implies that the measures 
 \begin{equation}\label{eq-d^*X} d^*X=\frac{dX}{|\Delta_{0}(X)|^m} \,\, \text{ and } \displaystyle d^*Y=\frac{dY}{|\nabla_{0}(Y)|^m}\index{d*X@$d^*X$}\index{d*Y@$d^*Y$}\end{equation}
 are $G$-invariant measures on $V^+$ and $ V^-$ respectively.
 
\vskip 5pt

{\bf In this Part II, we will always suppose that  $\ell=1$,  except in  section 2 in which the results of  sections \ref{sec:section-parabolique},    \ref{sec:section-Fourier-Normalization},   \ref{sec:section-MeanFunctions} and   \ref{sec:section-Weil-formula} are valid without any condition on $\ell$. For $\ell=1$, we have $m=1+\dfrac{kd}{2}$.}
\me

According to section \ref{sec:NKF},  we fix    a non degenerate  extension $\widetilde{B}$ of the Killing form of  $[\widetilde{\go{g}},\widetilde{\go{g}}]$ to $\widetilde{\go{g}}$. We define the {\it normalized Killing form} by setting:
\beq\label{def-b}b(X,Y)= -\frac{k+1}{4\,\dim V^+} \widetilde{ B }(X,Y)\quad X\in, Y\in \tilde{\go g}.\eeq

Let   $X_s\in\tilde{\go g}^{\la_s}$ for   $s=0,\ldots k$. For $i\neq j$ let us consider the quadratic form $q_{X_i,X_j}$ on  $E_{i,j}(-1,-1)$ defined by  \beq\label{def-qXiXj}q_{X_i,X_j}(Y)=-\frac{1}{2} b([X_i,Y], [X_j,Y]).\eeq
\begin{definition}\label{def-qe-I+-I-}\hfill

 According to Definition \ref{def-qe}, we fix $\go{sl}_2$-triples $(Y_s,H_{\lambda_s}, X_s)$, $s\in\{0,\dots, k\}$,  with $Y_s\in\tilde{\go g}^{-\la_s}$ and $X_s\in \tilde{\go g}^{\la_s}$ such that,  for $k\geq 1$ and $i\neq j$, the quadratic forms  $q_{X_i,X_j}$ are all $G$-equivalent. We  set 
$$q_e:=q_{X_0,X_1}=q_{an,e}+q_{hyp,d-e},$$
where  $q_{hyp,d-e}$ is an hyperbolic quadratic form of rank $d-e$,  $q_{an,0}=0$ and for $e\neq 0$,  $q_{an,e}$ is an anisotropic quadratic form of rank $e$ which represents $1$. \\
We define 
$$ I^+=X_0+\ldots +X_k,\quad\text{and}\quad I^-=Y_0+\ldots Y_k,$$
And we  normalize the polynomials  $\De_0$ and  $\nabla_0$ by setting  $\De_0(I^+)=\nabla_0(I^-)=1$. 
\end{definition}

\begin{definition}\label{def-ensembleS} \hfill.  

According to Lemma \ref{lem-ensembleSe},  we set   $S_e=\{a\in F^*, aq_{e}\sim q_{e}\}=\{a\in F^*, aq_{an,e}\sim q_{an,e}\}$ for $k\geq 1$. By convention, for $k=0$, we set $e=0$ and $S_e=F^*$. Then   
$$S_e=\left\{\begin{array}{ccc} F^* & \text{for} & e=0\;\text{or}\; 4\\
F^{*2} & \text{for} & e=1\;\text{or}\; 3\\
N_{E/F}(E^*)=\text{Im}(q_{an,e})^*& \text{for} & e=2.\end{array}\right.$$
We denote by   $\Sr_e$ a set of representatives of  $F^*/S_e$ in $F^*/F^{*2}$.
\end{definition}
As two hyperbolic forms of the same rank are always equivalent, we also have $S_e=\{a\in F^*, aq_{an, e}\simeq q_{an,e}\}$.

The precise description of the  $G$-orbits  in  $V^+$ has been given in  section \ref{sec:G-orbites}. The number  of orbits depends on $e$ and, in some cases, on $k$ (see Theorem \ref{thm-orbouverte}). This is summarized as follows:\me

\begin{enumerate}\item If    $e=0$ or $4$, the group  $G$ has a unique open orbit in  $V^+$,
 
\item if  $e\in\{1,2,3\}$, the number of open  $G$-orbits in  $V^+$ depends on $e$ and on the parity of  $k$:

\begin{enumerate}\item if  $e=2$ then $G$ has a unique open orbit in $V^+$ if $k$ is even and  $2$ open orbits if $k$ is odd,

\item if  $e=1$ then  $G$ has a unique open orbit in  $V^+$ if $k=0$, it has $4$ open orbits if $k=1$, it has  $2$ open orbits if $k\geq 2$ is even , and  $5$ open orbits if $k\geq 2$ is odd .

\item  if  $e=3$, then $G$ has $4$ open orbits.
\end{enumerate}
\end{enumerate}

  Let us denote by $\Om^\pm$ the union of the open  $G$-orbits   $V^\pm$. Then one has:  
$$\Om^+=\{X\in V^+; \De_0(X)\neq 0\}\quad \text{and}\quad \Om^-=\{Y\in V^-; \nabla_0(Y)\neq 0\}.$$

If  $X\in \Om^+$, there exists a unique element  $\iota(X)\in \Omega^-$ such that  $ \{\iota(X),H_0,X\}$ is an  ${\go sl}_2$-triple.

From  the remark   after Definition \ref{iota}, we know that the map  $\iota:\Om^+\to\Om^-$ is a  $G$-equivariant  isomorphism from $\Om^+$ on  $\Om^-$.

\begin{definition}\label{def-hx} Let $(y,h,x)$ be a $\go sl_2$-triple with $y\in V^-, h\in\go a$ and $x\in V^+$. According to Definition \ref{def-theta-h}, for $t\in \overline{F}^*$, we  define the following elements of $\text{\rm Aut}_{e}(\overline{\widetilde{ {\go g} }})$:
$$\theta_{x}(t)=e^{t\ad_{\overline{\widetilde{\go{g}}} }(x)}e^{t^{-1}\ad_{\overline{\widetilde{\go{g}}}}(y)} e^{t\ad_{\overline{\widetilde{\go{g}}} }(x)}\quad{\rm and } \quad h_{x}(t)=\theta_{x}(t)\theta_{x}(-1).$$

By  Lemma \ref{lem-tId-dansG}, if $t\in F$, the element
 $h_{I^+}(\sqrt{t})$ (defined by the $\go{sl}_{2}$-triple $(I^-,H_{0},I^+)$) belongs to $ G$  (and even to the Levi subgroup $L=Z_G(\go a^0)$). It stabilizes $\go g$ and acts by $t.Id_{V^+}$ on $V^+$.   We set 
 $$m_t=h_{I^+}(\sqrt{t}).$$
\end{definition}

 \begin{rem}\label{rem-hx} Let  $(y,h,x)$ be a $\go sl_2$-triple with $y\in V^-, h\in\go a$ and $x\in V^+$. Then, by  (\cite{Bou2} Chap. VIII, \textsection 1, Proposition 6), $h_x(t)$ acts by $t^n$ on the subspace of $\tilde{\go g}$ of weight $n$ under the action of $h$.
\end{rem}

\subsection  {The subgroups $P$ and  $\tilde{P}$ and their open orbits in  $V^+$}\label{subsection-P}
\hfill

Set $\go a^0=\oplus_{j=0}^k FH_{\la_j}$. The parabolic subgroup  $ P$ of $G$ was defined as follows  (section \ref{sec:sigma-parabolic}):

$$P=LN,\;\text{where}\; L=Z_G(\go a^0)\;\text{and} \;\; N=\exp \text{ad}\; \go n,\; \go n=\oplus_{0\leq i<j\leq k}E_{i,j}(1,-1).$$

The representations   $(P,V^+)$ and  $(P,V^-)$ are prehomogeneous  (Proposition \ref{(P,V)PH}).  The corresponding sets of fundamental relative invariants are respectively denoted by  $\De_0,\ldots, \De_k$ and  $\nabla_0,\ldots, \nabla_k$. These polynomials are normalized by the conditions $\De_j(I^+)=\nabla_j(I^-)=1$ for  $j=0,\ldots, k$. Let  $\chi_j$ and  $\chi_j^-$ be the characters of  $P$ associated respectively to  $\De_j$ and  $\nabla_j$ pour $j=0,\ldots, k$.\me

From   Theorem \ref{th-Porbites} and Theorem \ref{th-PorbitesV-}, we know that the union   $\cO^\pm$  of the open    $P$-orbits in  $V^\pm$ is given by $$\cO^+=\{X\in V^+; \De_j(X)\neq 0, j=0,\ldots,k\}\quad\text{and}\quad  \cO^-=\{Y\in V^+; \nabla_j(Y)\neq 0, j=0,\ldots,k\}.$$
 
 \begin{definition}\label{def-xj-PLtilde}\hfill
 
  \begin{enumerate}\item  As $\ell=1$ and as  $L$ centralizes $\go a^0=\oplus_{j=0}^k FH_{\la_j}$, each element  $l\in L$ acts by scalar multiplication on  $X_j$ for $j=0,\ldots ,k$. Define the character  $x_j$ \index{xj@$x_{j}$}of  $L$ by
 $$l.X_j=x_j(l) X_j,\quad l\in L.$$
\item The subgroups $\tilde{L}$ and $\tilde{P}$ \index{Ltilde@$\tilde{L}$} \index{Ptilde@$\tilde{P}$}are defined by
$$\tilde{L}=\{l\in L; x_j(l)\in S_e,\; j=0,\ldots, k\},\quad \tilde{P}=\tilde{L}N.$$
\end{enumerate}
\end{definition}

\begin{rem}\label{rem-image-muj}\hfill

Define the map $\cT$\index{T@$\cT$} by 
$$\begin{array}{rll}
\cT: L&\longrightarrow&(F^*)^{k+1}\\
l&\longmapsto &({x_{0}(l),\ldots,x_{k}(l))}
\end{array}$$
 
 From Corollary \ref{cor-Lorbits} we can see that $$\text{\rm Im} \cT=\cup_{x\in \Sr_{e}}xS_{e}^{k+1}.$$
In particular we have $\cT(\tilde L)=S_{e}^{k+1}$.

 \end{rem}

\begin{definition}\label{def-Ptilde-orbites} For $a=(a_0,\ldots,a_k)\in \Sr_e^{k+1}$, define the sets  
$$\cO^+(a)\index{O+@$\cO^+(a)$}=\{X\in V^+; \De_j(X)a_j\ldots a_k\in S_e,\;\text{for }\; j=0,\ldots,k\},$$
and 
$$\cO^-(a)\index{O-@$\cO^-(a)$}=\{Y\in V^-; \nabla_j(Y)a_0\ldots a_{k-j}\in S_e,\;\text{for }\; j=0,\ldots,k\}.$$
\end{definition}
\begin{lemme}\label{lemme-Ptilde}\hfill

   Remember that     $m_x\in L$, $x\in F^*$ is the element of  $L$   whose action on $V^+$ is the multiplication by $x$ (cf. Definition \ref{def-hx}). 

{\rm 1)} One has  $$L=\cup_{x\in\Sr_e} m_x \tilde{L} $$
{\rm 2)} The representations  $(\tilde{P}, V^\pm)$ are prehomogeneous  vector spaces which  have the same sets of fundamental relative invariants as  $(P,V^\pm)$,

{\rm 3)} The open orbits of  $\tilde{P}$  in $V^+$ (resp. $V^-$) are the sets 
$$\cO^+(a)=\tilde{P}.I^+(a),\;\text{where}\; I^+(a)=a_0X_0+\ldots +a_k X_k,$$
  $$\text{(respectively }\;\cO^-(a)=\tilde{P}.I^-(a),\;\text{where}\; I^-(a)=a_0^{-1} Y_0+\ldots +a_k^{-1} Y_k),$$

for  $a=(a_0,\ldots,a_k)\in \Sr_e^{k+1}$.

Moreover, any open  $P$-orbit  in  $V^\pm$ is given by   $\cup_{x\in\Sr_e} m_x\cO^\pm(a)$ for  $a=(a_0,\ldots, a_{k-1},1)\in\Sr_e^{k+1}$.

\end{lemme}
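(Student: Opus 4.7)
My plan is to prove the three assertions in order, using throughout the key facts that $\ell=1$ implies $\kappa=1$ (so $\Delta_j$ is multilinear on the diagonal and by Theorem \ref{thproprideltaj}(3) satisfies $\Delta_j(\sum_s x_sX_s)=\prod_{s=j}^k x_s$), and that $F^{*2}\subset S_e$ in every case (Definition \ref{def-ensembleS}).

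For assertion (1), I would show that every $l\in L$ lies in some $m_x\tilde L$. Since $m_x$ acts by scalar multiplication by $x$ on $V^+$, I have $x_j(m_x)=x$ for every $j$, hence $\cT(m_x)=(x,\dots,x)$. By Remark \ref{rem-image-muj}, $\cT(l)\in \bigcup_{x\in\Sr_e}xS_e^{k+1}$, so picking the unique $x\in \Sr_e$ in whose coset $\cT(l)$ lies produces $\cT(m_x^{-1}l)\in S_e^{k+1}$, i.e. $m_x^{-1}l\in\tilde L$. Disjointness follows from $\Sr_e$ being a set of coset representatives.

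For assertion (3), the core computation is that $\chi_j(l)=\prod_{s=j}^k x_s(l)$ for $l\in L$ (immediate by applying Theorem \ref{thproprideltaj}(3) to $l.I^+=\sum_s x_s(l)X_s$), whence $\chi_j(\tilde P)\subset S_e$ and the sets $\cO^+(a)$ are $\tilde P$-stable, containing $I^+(a)$ because $\Delta_j(I^+(a))=\prod_{s=j}^k a_s$ and $(\prod_{s=j}^k a_s)^2\in F^{*2}\subset S_e$. The converse inclusion $\cO^+(a)\subset \tilde P.I^+(a)$ is the main step: using Lemma \ref{lem-NO+} I conjugate any $X\in\cO^+(a)$ by some $n\in N$ to a diagonal element $\sum x_sX_s$, and the condition $X\in\cO^+(a)$ translates, by backwards induction on $j$ starting at $j=k$ and using $F^{*2}\subset S_e$, into $x_j/a_j\in S_e$ for every $j$. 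Then Remark \ref{rem-image-muj} (the equality $\cT(\tilde L)=S_e^{k+1}$) provides the required element $l\in\tilde L$ with $l.I^+(a)=\sum x_sX_s$, yielding $X=nl.I^+(a)\in N\tilde L.I^+(a)=\tilde P.I^+(a)$. Disjointness of the $\cO^+(a)$ for distinct $a\in \Sr_e^{k+1}$ follows by reading off $a_k,a_{k-1},\dots,a_0$ successively from the $S_e$-classes of $\Delta_k(X),\Delta_{k-1}(X),\dots$. The case of $V^-$ is strictly parallel, using the involution $\gamma$ of Theorem \ref{th-involution-gamma} and the formulas of Theorem \ref{th-nabla}; the only arithmetic point is that $\nabla_j(I^-(a))=\prod_{s=0}^{k-j}a_s^{-1}$, which matches the defining condition of $\cO^-(a)$.

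For assertion (2), the equalities $\cO^+=\bigsqcup_{a\in\Sr_e^{k+1}}\cO^+(a)$ (established as a byproduct of (3)) and $\cO^+=\{X;\prod_j\Delta_j(X)\neq 0\}$ show that the complements in $V^+$ of the open $P$- and open $\tilde P$-orbit sets coincide as algebraic sets; since this complement is the union of the irreducible hypersurfaces $\{\Delta_j=0\}$, the polynomials $\Delta_0,\dots,\Delta_k$ remain a complete system of fundamental relative invariants for $(\tilde P,V^+)$, and likewise for $V^-$. Finally, the ``moreover'' statement follows from (1): writing $P=\bigcup_{x\in\Sr_e}m_x\tilde P$, the $P$-orbit of $I^+(a)$ with $a_k=1$ decomposes as $\bigcup_{x\in\Sr_e}m_x\cO^+(a)$, and a count using $\chi_k(m_x)=x$ shows the $m_x\cO^+(a)$ are disjoint with $\Delta_k$ ranging over all classes of $F^*/S_e$; comparing with the enumeration of open $P$-orbits in Theorem \ref{th-Porbites} (there are $|F^*/S_e|^k$ of them) shows every open $P$-orbit is obtained this way.

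The main obstacle, and the only step requiring genuine computation rather than bookkeeping, is the backwards-induction argument in the converse inclusion of (3): one has to carefully exploit $F^{*2}\subset S_e$ to peel off the factors $a_k,a_{k-1}\dots$ one at a time from the conditions $\Delta_j(X)a_j\cdots a_k\in S_e$ and thereby show that the diagonalized element lies in $\tilde L.I^+(a)$; without this observation the naive quotient $\Delta_j/\Delta_k^{k+1-j}$ used to index $P$-orbits in Theorem \ref{th-Porbites} cannot be refined to the finer invariant attached to $\tilde P$.
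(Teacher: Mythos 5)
Your proposal is correct and follows essentially the same route as the paper: assertion (1) via Corollary \ref{cor-Lorbits}/Remark \ref{rem-image-muj}, and assertion (3) by $N$-conjugating to the diagonal (Lemma \ref{lem-NO+}), peeling off the $S_e$-classes of the $\Delta_j$ using $F^{*2}\subset S_e$, and then applying the surjectivity $\cT(\tilde L)=S_e^{k+1}$. The only cosmetic difference is in (2), where the paper simply invokes the finiteness of $P/\tilde P$ while you argue via the coincidence of the singular sets; both are valid.
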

\begin{proof} \hfill

If $e=0$ or  $4$, we have    $S_e=F^*$ and hence $\Sr_e=\{1\}$ and  $P=\tilde{P}$. Then  Theorem \ref{th-Porbites} and Theorem \ref{th-PorbitesV-}   imply the result.\me

Suppose now that  $e\in\{1,2,3\}$. This implies $k\geq 1$.  \me

Using  Corollary \ref{cor-Lorbits}, we know that two diagonal elements $a_0X_0+\ldots +a_kX_  k$ and  $b_0X_0+\ldots +b_kX_k$ are  $L$-conjugated if and only if there exists  $x\in\Sr_e$ such that  $x a_j b_j\in S_e$ for all  $j=0,\ldots, k$. 

If  $l\in L$, then  $l.I^+=x_0(l)X_0+\ldots +x_k(l) X_k$. Hence there exists  $x\in\Sr_e$ such that  $xx_j(l)\in S_e$ for all  $j=0,\ldots, k$. Therefore  $m_xl\in \tilde{L}$,  this proves assertion 1).\me
 
As $P/\tilde{P}$ is a finite group, assertion 2) is true.\me

Let us now show assertion 3). Let  $a=(a_0,\ldots, a_k)\in\Sr_e^{k+1}$. From Theorem \ref{th-delta_{j}invariants}, we have  for $l\in\tilde{L}$ and  $n\in N$:
$$\De_j(ln.(a_0X_0+\ldots +a_kX_k))=\prod_{s=j}^k x_s(l)a_s.$$
As $x_s(l)\in S_e$ for  $s=0,\ldots, k$, we obtain from the definition of   $\cO^+(a_0,\ldots,a_k)$ that  $\tilde{P}.(a_0X_0+\ldots +a_kX_k)\subset\cO^+(a_0,\ldots,a_k)$.\me

Conversely, let  $Z\in \cO^+(a_0,\ldots,a_k)$. From Lemma \ref{lem-NO+}, there exists  $n\in N$ et $z_0,\ldots z_k$ in $F^*$ such that  
$n.Z=z_0X_0+\dots +z_kX_k$. From the definition of  $ \cO^+(a_0,\ldots,a_k)$ (and as $F^{*2}\subset S_{e}$), there exist  $\mu_j\in S_e$ such that   $z_j=a_j\mu_j$ for   $j=0,\ldots, k$. 

  From Remark \ref{rem-image-muj} there exists  $l\in \tilde{L}$ such that  $x_j(l)=\mu_j^{-1}$ for  $j=0,\ldots, k$. Then  we have $ln.Z=a_0X_0+\ldots +a_kX_k$ and hence  $$ \cO^+(a_0,\ldots,a_k)=\tilde{P}.I^+(a).$$

The proof concerning the open  $\tilde{P}$-orbits  in $V^-$ is similar. And the last  assertion is a consequence of the  description of the $P$-orbits in  $V^\pm$ given in     Theorem \ref{th-Porbites}  and Theorem \ref{th-PorbitesV-}. \end{proof}

\section{Mean functions and Weil formula}\label{sec:moyennes-Weil}
\hfill

 Except for the last section \textsection \ref{section-calcul-gamma} which is only valid for $\ell=1$, the results of this paragraph are valid for any regular graded algebra   (see Definition \ref{def-alg-grad}).   Hence $\ell$ is either the square of an integer, or $\ell=3$ (Notation \ref{notdle}).

\subsection{A class of  parabolic subgroups}\label{sec:section-parabolique}
\hfill
\vskip 5pt

Let us fix $p\in\{0,\ldots, k-1\}$ and define the following elements:
 
  $$I_{p,1}^+\index{Ip1+@$I_{p,1}^+$}=\sum_{i=0}^pX_{i},\hskip 30pt H_{0}^{p,1}=\sum_{i=0}^pH_{\lambda_{i}},\quad\text{and}\quad I_{p,1}^-\index{Ip1-@$I_{p,1}^-$}=\sum_{i=0}^p Y_i,$$ and
  $$I_{p,2}^+\index{Ip2+@$I_{p,2}^+$}=\sum_{i=p+1}^kX_i, \hskip 30pt H_{0}^{p,2}=\sum_{i=p+1}^kH_{\la_{i}},\quad\text{and}\quad I_{p,2}^-\index{Ip2-@$I_{p,2}^-$}=\sum_{i=p+1}^kY_{i}.$$ 
Hence we have
$$I^+=I_{p,1}^++I_{p,2}^+,\quad H_0=H_{0}^{p,1}+H_{0}^{p,2},\quad\text{and }\quad I^-=I_{p,1}^-+I_{p,2}^-,$$
and  $\{I_{p,j}^-, H_0^{p,j}, I_{p,j}^+\}$ is an  $\go{sl}_2$-triple, for  $j=1,2$.

 For a fixed  $p\in \{0,1,\ldots,k\}$ and  $i,j\in \Z$ we define the following spaces:
  $$K_{p}(i,j)\index{Kpij@$K_{p}(i,j)$}=\{X\in \go{g},\, [H_{0,p}^1,X]=iX,\,[H_{0,p}^2,X]=jX\}.$$
 The same way as in   Theorem \ref{th-decomp-Eij} one shows that  $K_{p}(i,j)\neq \{0\}\Longrightarrow |i|+|j|\leq 2$. 
 
 More precisely one obtains the following decompositions:
 $$\begin{array}{lll}
 V^- &=&K_{p}(-2,0)\oplus K_{p}(-1,-1)\oplus K_{p}(0,-2),\cr
 \go{g}&=&K_{p}(1,-1)\oplus K_{p}(0,0)\oplus K_{p}(-1,1),\cr
 V^+ &=&K_{p}(2,0)\oplus K_{p}(1,1)\oplus K_{p}(0,2).
 
 \end{array}$$
 
 Hence there are four new regular  prehomogeneous vector spaces (corresponding to graded algebras) at hand. Namely $(K_{p}(0,0), K_{p}(2,0))$, $(K_{p}(0,0), K_{p}(0,2))$, $(K_{p}(0,0), K_{p}(-2,0))$, $(K_{p}(0,0), K_{p}(0,-2))$. For further use we need to make explicit the fundamental relative invariants corresponding to these spaces.

 The graded Lie algebra $\tilde{\go{l}}_{j}={\tilde{\go g}}^{-\lambda _j}\oplus  [{\tilde{\go g}}^{-\lambda _j},{\tilde{\go g}}^{\lambda _j}]\oplus {\tilde{\go g}}^{\lambda _j}$ satisfies also the conditions of Definition \ref{def-alg-grad}. 
 Recall that $\kappa$  is    the common degree of the corresponding relative invariants (see  Notation \ref{notation-kappa}). More precisely $\kappa=\de$ if $\ell=\de^2$ and $\kappa=2$ if $\ell=3$.

  \begin{lemme} \label{lem-K(0,2)-abs-irr} The representations  $(K_{p}(0,0), K_{p}(2,0))$ and 
  $(K_{p}(0,0), K_{p}(0,2))$ are absolutely irreducible.
  \end{lemme}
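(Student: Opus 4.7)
The proof will proceed by direct application of Corollary \ref{cor-gA} (from Part I) to two complementary subsets of $\{0,1,\ldots,k\}$. The key observation is that the decomposition by the joint eigenvalues of $\mathrm{ad}(H_0^{p,1})$ and $\mathrm{ad}(H_0^{p,2})$ is nothing but the refinement of the decomposition given by the grading element $H_A = \sum_{j \in A} H_{\lambda_j}$ for $A = \{0,1,\ldots,p\}$.

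More precisely, set $A = \{0,1,\ldots,p\}$ and $A^c = \{p+1,\ldots,k\}$, so that $H_A = H_0^{p,1}$ and $H_{A^c} = H_0^{p,2}$. First I would check that with the notation of Corollary \ref{cor-gA}, one has the identifications
\[
\go g_A = K_p(0,0) = \go g_{A^c}, \qquad V_A^+ = K_p(2,0), \qquad V_{A^c}^+ = K_p(0,2).
\]
Each of these equalities is a one-line verification using the decompositions
$$\go g = K_p(1,-1)\oplus K_p(0,0)\oplus K_p(-1,1), \qquad V^+ = K_p(2,0)\oplus K_p(1,1)\oplus K_p(0,2),$$
together with the fact that $\go g_A$ (resp.\ $V_A^+$) is by definition the eigenspace of $\mathrm{ad}(H_A)$ in $\go g$ (resp.\ $V^+$) for the eigenvalue $0$ (resp.\ $2$), and analogously for $A^c$.

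Once these identifications are made, Corollary \ref{cor-gA} applied to $A$ asserts that the graded Lie algebra $\widetilde{\go g}_A = V_A^- \oplus \go g_A \oplus V_A^+$ satisfies $({\bf H_1})$, $({\bf H_2})$ and $({\bf H_3})$; in particular hypothesis $({\bf H_2})$ is exactly the absolute irreducibility of the representation $(\go g_A, V_A^+) = (K_p(0,0), K_p(2,0))$. Applying the same corollary to $A^c$ gives the absolute irreducibility of $(K_p(0,0), K_p(0,2))$.

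I do not expect any real obstacle; the entire content of the lemma lies in recognizing that the two-step grading by the pair $(H_0^{p,1}, H_0^{p,2})$ coincides, on the relevant pieces, with the one-step gradings by $H_A$ and $H_{A^c}$, and then invoking Corollary \ref{cor-gA} from Part I.
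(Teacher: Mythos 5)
Your proof is correct: the identifications $\go g_A=\go g_{A^c}=K_p(0,0)$, $V_A^+=K_p(2,0)$ and $V_{A^c}^+=K_p(0,2)$ for $A=\{0,\ldots,p\}$ follow immediately from $H_0=H_0^{p,1}+H_0^{p,2}$ together with the eigenvalue constraints defining the $K_p(i,j)$, and Corollary \ref{cor-gA} (whose hypotheses are satisfied here, since this section works with a regular graded algebra) then delivers $(\bf H_2)$ for both gradings, which is exactly the claimed absolute irreducibility. This is, however, not the route the paper takes: the paper gives a direct, self-contained argument, assuming $\overline{K_p(0,2)}$ reducible, taking a second highest weight vector $v_2$, and showing that $\mathcal U(\overline{K_p(0,0)})v_1$ would generate a proper $\overline{\go g}$-submodule of $\overline{V^+}$ via the action of $\overline{K_p(1,-1)}$, contradicting $(\bf H_2)$ for $\widetilde{\go g}$ (with a symmetric lowest-weight argument for $K_p(2,0)$). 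The two arguments are close in spirit — the proof of Corollary \ref{cor-gA} itself proceeds by showing that an invariant subspace $U\subset\overline{V_A^+}$ generates the $\go g$-invariant subspace $U_0\oplus U_1\oplus U$ of $\overline{V^+}$ — so your version essentially observes that the work has already been done in Part I and avoids duplicating it, at the price of having to verify the dictionary between the two notations; the paper's version keeps the section self-contained and makes the highest-weight mechanism explicit, which it reuses in the surrounding discussion. (One cosmetic point: you should note that both $A$ and $A^c$ must be nonempty, i.e.\ $p\in\{0,\ldots,k-1\}$ as fixed at the start of the section; for $p=k$ the second space is zero and the statement is vacuous.)
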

  
  \begin{proof} Recall that we denote by  $\overline F$ the algebraic closure of $F$and for any $F$-vector space $U$, we define 
  $$\overline{U}=U\otimes _{F} \overline{F}$$
The space $\overline{K_{p}(0,2)}$ contains the highest weight vector  of  $\overline{V^+}$ (for the action of $\overline{\go{g}}$).  Let  $v_{1}$ be such a non zero vector. Of course $v_{1}$ is also a highest weight vector   for the action of $\overline{K_{p}(0,0)}$.  Suppose that      $\overline{K_{p}(0,2)}$ is reducible under the action of $\overline{K_{p}(0,0)}$. Then it would exist a  second highest weight vector $v_{2}$ for this action.    
   Let us examine how these vectors move under the action of ${\overline{n}}^{^-}$ ($=$ the sum of the negative root spaces in  $\overline{\go{g}}$). One has
   $${\overline{n}}^{^-}=\overline{K_{p}(1,-1)}{\oplus\overline{K_{p}(0,0)}}^{^-}.$$
   The action of  $\overline{K_{p}(1,-1)}$ on  $\overline{K_{p}(0,2)}$ is as follows:
   $$v_{1},v_{2}\in  \overline{K_{p}(0,2)}\hskip 5pt  \overset{\ad \overline{K_{p}(1,-1)}}{\longrightarrow} \hskip 5pt  \overline{K_{p}(1,1)}\hskip 5pt  \overset{\ad \overline{K_{p}(1,-1)}}{\longrightarrow} \hskip 5pt  \overline{K_{p}(2,0)}  \hskip 5pt  \overset{\ad \overline{K_{p}(1,-1)}}{\longrightarrow} \hskip 5pt \{0\}.$$
   On the other hand the action of  ${\overline{K_{p}(0,0)}}^{^-}$ stabilizes $ \overline{K_{p}(0,2)}$
   $$v_{1},v_{2}\in  \overline{K_{p}(0,2)}\hskip 5pt  \overset{\ad \overline{K_{p}(0,0)}^{^-}}{\longrightarrow} \hskip 5pt  \overline{K_{p}(0,2)}.$$
  Let ${\cal U}(\overline{K_{p}(0,0)})$ be the enveloping algebra of $\overline{K_{p}(0,0)}$. As  $\overline{K_{p}(0,2)}$ is supposed to be reducible under  $\overline{K_{p}(0,0)}$ we obtain   $$\overline{V_{1}}={\cal U}(\overline{K_{p}(0,0)})v_{1}\subsetneq \overline{K_{p}(0,2)}.$$
   As  $\overline{V^+}$ is irreducible under  $\overline{\go{g}}$ we should have:
   $$\begin{array}{rll}
   \overline{V^+}&=&{\cal U}(\overline{K_{p}(1,-1)}){\cal U}(\overline{K_{p}(0,0)})v_{1}\cr
   {}&=&{\cal U}(\overline{K_{p}(1,-1)})\overline{V_{1}}\subset \overline{V_{1}}\oplus \overline{K_{p}(1,1)}\oplus \overline{K_{p}(2,0)} \subset \overline{K_{p}(0,2)}\oplus \overline{K_{p}(1,1)}\oplus \overline{K_{p}(2,0)}=\overline{V^+}.
   \end{array}$$
   But this is not possible as  $\overline{V_{1}}\subsetneq \overline{K_{p}(0,2)}$.

As  $\overline{K_{p}(2,0)}$ contains a lowest weight vector of the representation $(\overline{\go{g}}, \overline{V^+})$, the proof for $\overline{K_{p}(2,0)}$ is analogue. 
\end{proof}

It is worth noticing that     from the preceding Lemma \ref{lem-K(0,2)-abs-irr} and from  Proposition \ref{ell-0-simple}   the  Lie algebras  $K_{p}(-2,0)\oplus [K_{p}(2,0),K_{p}(-2,0)]\oplus K_{p}(2,0)$ and  $K_{p}(0,-2)\oplus [K_{p}(0,2),K_{p}(0,-2)]\oplus K_{p}(0,2)$ are absolutely simple.

\vskip 10pt

Let us fix  $p\in\{0,1,\ldots,k\}$. Consider now the maximal   parabolic  subalgebra of  $\go{g}$ given by
$$\begin{array}{rll}
{\go{p}}_{p}\index{pgothiquep@${\go{p}}_{p}$}&=& {\cal Z}_{\go{g}}(H_{0,p}^1)\oplus K_{p}(1,-1)=  {\cal Z}_{\go{g}}(H_{0,p}^2)\oplus K_{p}(1,-1) ={\cal Z}_{\go{g}}(F.H_{0,p}^1\oplus F.H_{0,p}^2)\oplus K_{p}(1,-1)\cr
{}&=& K_{p}(0,0)\oplus K_{p}(1,-1)
\end{array} $$
 (As $H_{0}=H_{0,p}^1+H_{0,p}^2$ and as  $\go{g}={\cal Z}_{\widetilde{\go{g}}}(H_{0})$, we get  $ {\cal Z}_{\go{g}}(H_{0,p}^1)= {\cal Z}_{\go{g}}(H_{0,p}^2) = {\cal Z}_{\go{g}}(F.H_{0,p}^1\oplus F.H_{0,p}^2)$.)
 \vskip 7pt

 The maximal   parabolic subgroup  $P_{p}$ corresponding to  $\go{p}_{p}$ is now defined as follows. Define first:
  $$N_{p}\index{Np@$N_{p}$}=\exp\; \ad K_{p}(1,-1)$$
  $$ L_{p}\index{Lp@$L_{p}$}={\cal Z}_{G}(F.H_{0,p}^1\oplus F.H_{0,p}^2)={\cal Z}_ {\text{Aut}_{0}(\widetilde{\go{g}})}(F.H_{0,p}^1\oplus F.H_{0,p}^2)={\cal Z}_{G}(F.H_{0,p}^1)={\cal Z}_{G}(F.H_{0,p}^2) .$$
  Then 
  $$P_{p}\index{Pp@$P_{p}$}=L_{p}N_{p }$$
  
  The Lie algebra of $L_{p}$ is  $K_{p}(0,0)$, the Lie algebra of  $N_{p}$ is $K_{p}(1,-1)$  and hence the Lie algebra of $P_{p}$ is $K_{p}(0,0)\oplus K_{p}(1,-1)$.

   \begin{definition}\label{def-invariants}\hfill 
 
Let $p=0,\ldots,k-1$. From the definitions  in  section \ref{sec:sectiondeltaj} and section \ref{sec:PorbitesV-} the fundamental relative invariant of $(K_{p}(0,0), K_{p}(0,2))$ is $\Delta_{p+1}$ whose degree is $\kappa(k+1-(p+1))= \kappa(k-p)$. Similarly the fundamental relative invariant of $(K_{p}(0,0), K_{p}(-2,0)$ is $\nabla_{k-p}$ whose degree is $\kappa(k+1-(k-p))= \kappa(p+1)$.

We denote by $\widetilde \Delta_{k-p}$ the fundamental relative invariant of $(K_{p}(0,0), K_{p}(2,0))$ whose degree is $\kappa(k+1-(k-p))=\kappa(p+1)$.

We denote by $\widetilde \nabla_{p+1}$ the fundamental relative invariant of $(K_{p}(0,0), K_{p}(0,-2))$ whose degree is $\kappa(k+1-(p+1))=\kappa(k-p)$.

It is easy to see from the definitions that the polynomials $\widetilde \Delta_{p}$ ($p=0,\ldots,k-1$) are the fundamental relative invariants of the prehomogeneous vector space $(P^-, V^+)$ where $P^-$ is the parabolic subgroup opposite to $P$. Also the polynomials $\widetilde \nabla_{p}$  ($p=0,\ldots,k-1$) are the fundamental relative invariants of the prehomogeneous vector space $(P^-, V^-)$.

 \end{definition}

  \vskip 0,5cm
  
  \subsection{Fourier transform and normalization of measures}\label{sec:section-Fourier-Normalization}\hfill

  We will, in the sequel of this paragraph, normalize the Haar measures such that there is no constant appearing in some integral formulas occurring later, in particular in Theorem \ref{Formules-int-simples} and Theorem \ref{th-Weil}. We use  here  the same method as in (\cite {B-R} chap. 4) and in (\cite{Mu} \S 3.2). For the convenience of the reader we give some details.
  
  If $U$  is a subspace of $\tilde{\go g}$, let $\cS(U)$ be the space of locally constant functions on $U$ with compact support.

  Remember that we use the normalized Fourier Killing form  $b(\cdot,\cdot)$ 	defined by 
  $$b(X,Y)= -\frac{k+1}{4\,\dim V^+} \widetilde{ B }(X,Y)\quad X\in, Y\in \tilde{\go g}.$$
 Using $b$ the space $V^-$ is identified with the dual space ${V^+}^*$, and the natural action of $g\in G$ on $V^-$ corresponds to the dual action of $g$ on ${V^+}^*$.

 We fix once and for all a non trivial  additive character $\Sigma$ of  $F$. Consider  Haar measures $dX$ and $dY$ on $V^+$ and $V^-$  respectively.

  \begin{definition} \label{def-Fourier}\index{Fourier transform}The Fourier transform $f\in \cS(V^+)$  is the function $\cF(f)\in \cS(V^-)$ defined by
 $$\cF(f)(Y)\index{Fcal@$\cF$}=\int_{V^+} f(X) \psi(b(X,Y)) dX,\quad Y\in V^-,$$
 and the inverse Fourier transform of  $g\in\cS(V^-)$ is the function $\overline{\cF}(g)\in \cS(V^+)$ defined by
 $$\overline{\cF}(g)(X)\index{Fcalsurligne@$\overline{\cF}(g)$}=\int_{V^-} g(Y) \overline{\psi(b(X,Y))} dY,\quad X\in V^+.$$
   
 Using the same character $\psi$  and the same form $b$, we define similarly:
 
 - Fourier transforms $\cF_{2,0}$ \index{Fcalij@$\cF_{i,j}$}and $\overline\cF_{2,0}$ between $\cS(K_{p}(2,0))$ and $\cS(K_{p}(-2,0))$, 
 
 - Fourier transforms $\cF_{0,2}$ and $\overline\cF_{0,2}$ between $\cS(K_{p}(0,2))$ and $\cS(K_{p}(0,-2))$,
 
 - Fourier transforms $\cF_{1,1}$ and $\overline\cF_{1,1}$ between $\cS(K_{p}(1,1))$ and $\cS(K_{p}(-1,-1))$,
 
 - Fourier transforms $\cF_{-1,1}$ and $\overline\cF_{-1,1}$ between $\cS(K_{p}(-1,1))$ and $\cS(K_{p}(1,-1))$.

 \end{definition}
 
 It is well known that there exists a pair of Haar measures $(dX,dY)$ on $V^+$ and $V^-$ respectively wich are so-called {\it dual } for $\cF$. This means that $\overline{\cF}\circ \cF=Id_{\cS(V^+)}$. Moreover if such a pair $(dX,dY)$ of dual measures is given, any other pair of dual  measures  is of the form $(\alpha dX,\frac{1}{\alpha}dY)$, with $\alpha\in \R^{+ *}$. 
 
 Let us indicate one way to construct such dual measures. Define the Fourier transform on $F$ by
 $$\cF_{1}h(y)= \int_{F}h(x)\psi(xy)dx,\hskip 7pt h\in \cS(F),\, \cF_{1}h\in \cS(F),$$
 and fix once and  for all a self-dual measure (for $\cF_{1}$) $dx$ on $F$.
 
 Let us  choose a basis $(e_{1},\ldots,e_{n})$ of $V^+$ which contains a base of each root space contained in $V^+$. Consider the dual base $(e_{1}^*,\ldots,e_{n}^*)$ of $V^-$ ({\it i.e} $b(e_{i},e_{j}^*)=\delta_{ij}$). Denote by  $X=\sum_{i=1}^{n}x_{i}e_{i}$ (resp. $Y=\sum_{j=1}^{n}y_{j}e_{j}^*$) a general element in $V^+$ (resp. in $V^-$) and define the measures
 $$d_{1}X=dx_{1}\ldots dx_{n}\,\,\, d_{1}Y=dy_{1}\ldots dy_{n},$$
 where $dx_{i},dy_{j}$ are copies of $dx$.
 
 Then the measures $d_{1}X, d_{1}Y$ are dual for $\cF$. Recall that $\i$ is the bijective map from $\Omega^+$ to $\Omega^-$, defined for $X\in \Omega^+$ by the fact that $( \i(X),H_{0},X)$ is an $\go{sl}_{2}$-triple. This map is clearly $G$-equivariant: $\i(g.X)=g.\i(X)$ for $X\in \Omega^+$ and $g\in G$. Moreover this map can also be defined as a map from  $\overline{\Omega^+}$ onto $\overline{\Omega^-}$, which is $\overline{G}$-equivariant, where  $\overline{G}$ is the group which is defined the same way as $G$, but for the extended grading of $\overline{\tilde{\go g}}$. 
 
 \vskip 5 pt
  Let us denote by $K_{p}(\pm 2,0)'$  and  $K_{p}(0,\pm 2)'$ the sets of generic elements  in $K_{p}(\pm 2,0)$ and  $K_{p}(0,\pm 2)$, respectively (as $(L_{p}, K_{p}(\pm 2,0))$ and $(L_{p}, K_{p}(  0,\pm2))$ are prehomogeneous, the notion of generic element is clear).
 \vskip 5pt

 It is worth noticing that similarly one can consider  maps still denoted by  $\iota $ from $K_{p}(2,0)'$ onto $K_{p}(-2,0)'$ and from $K_{p}(0,2)'$ onto $K_{p}(0,-2)'$ defined by the conditions that $(\iota(u), H_{0}^{p,1}, u)$ and  $(\iota(v), H_{0}^{p,2}, v)$ are $\go{sl}_{2}$-triples ($u\in K_{p}(2,0)'$, $v\in K_{p}(0,2)'$).

 We then  normalize the various relative invariants by the conditions:
 $$\nabla_{0}(\i(X))=\frac{1}{\Delta_{0}(X)} \,\, \text{ for all }X\in \Omega^+,$$
 $$\nabla_{k-p}(\i(u))=\frac{1}{\widetilde\Delta_{k-p}(u)} \,\, \text{ for all }u\in K_{p}(2,0)',$$
 $$\widetilde \nabla_{p+1}(\i(v))=\frac{1}{\Delta_{p+1}(v)} \,\, \text{ for all }v\in K_{p}(0,2)'.$$
  It is easy to see, due to the $\overline{G}$-equivariance and the $\cZ_{G}(FH_{0}^{p,1}+FH_{0}^{p,2})$-equivariance,  that these  conditions are equivalent to the same equalities holding only for fixed elements $X, u, v$.

We will also impose the conditions:
$$\Delta_{0}(u+v)=\widetilde \Delta_{k-p}(u)\Delta_{p+1}(v)  \,\, \text{ for all } u\in K_{p}(2,0), \,v\in K_{P}(0,2),$$
 $$\nabla_{0}(u'+v')=  \nabla_{k-p}(u')\widetilde \nabla_{p+1}(v')  \,\, \text{ for all } u'\in K_{p}(-2,0), \,v'\in K_{P}(0,-2).$$

 
 Let $(Y,H_{0},X)$ be an $\go{sl}_{2}$-triple with $X\in V^+$, $Y\in V^-$. Recall from Definition \ref{def-hx} that we have set for $t\in F^*$
 $$\theta_{X}(t)= \exp(t\ad X)\exp(\frac{1}{t}\ad Y) \exp(t\ad X)= \exp(\frac{1}{t}\ad Y) \exp(t\ad X)\exp(\frac{1}{t}\ad Y),$$
 and $\theta_{X}=\theta_{X}(-1)$.
 Then  
  $$\theta_{X}(X)=(-1)^2Y=Y,\, \theta_{X}(Y)=(-1)^2X,\, \theta_{X}(H_{0})=-H_{0}.$$
  
  Moreover $\theta_{X}^2$ acts by $(-1)^r$ on the weight $r$ space for any finite dimensional module.
  Hence $\theta_{X}$ is an involution of $\overline{\tilde{\go g}}$. Is is also easy to see that for $g\in G$ and $X\in \Omega^+$ we have $\theta_{gX}=g\theta_{X}g^{-1}.$
  
  The restriction ${\theta_{X}}_{|_{V^+}}$ is a linear map from $V^+$ to $V^-$. From the preceding identity one obtains
  $$\det({\theta_{gX}}_{|_{V^+}})=(\det g_{|_{V^-}})\det({\theta_{X}}_{|_{V^+}})(\det {g^{-1}}_{|_{V^+}})= (\det g_{|_{V^-}})^2\det({\theta_{X}}_{|_{V^+}}).$$
Hence $\det({\theta_{X}}_{|_{V^+}})$ is a relative invariant whose character is $(\det g_{|_{V^-}})^2$. As we know (\cite{B-R}, p.95) that $\det g_{|_{V^-}}=\chi_{0}(g)^{-m}$, where $m= \frac{\dim V^+}{\text{deg}(\Delta_{0})}= \frac{\dim V^+}{\kappa(k+1)}$, we get that there exists $c\in F^*$ such that 
\beq\label{eq-detthetaX}\det({\theta_{X}}_{|_{V^+}})=c\Delta_{0}(X)^{-2m}. \eeq

Fix $X_{0}\in \Omega^+$, and remark that $\theta_{X_{0}}g\theta_{X_{0}}^{-1}\in G$. Then
$$\nabla_{0}(\theta_{X_{0}}(gX))=\nabla_{0}(\theta_{X_{0}}g\theta_{X_{0}}^{-1}\theta_{X_{0}}(X))=\chi_{0}^{-1}(\theta_{X_{0}}g\theta_{X_{0}}^{-1})\nabla_{0}(\theta_{X_{0}}(X)).$$ 
Therefore the map $X\longmapsto \nabla_{0}(\theta_{X_{0}}(X))$ is a polynomial relative invariant with the same degree as $\nabla_{0}$ (hence the same degree as $\De_0$). Thus $\nabla_{0}(\theta_{X_{0}}(X))=\alpha \Delta_{0}(X)$ with $\alpha\in F^*$. Replacing $X$ by $X_{0}$, and using the normalization  above we obtain
$$\alpha \Delta_{0}(X_0)=\nabla_{0} (\theta_{X_{0}}(X_{0}))=\nabla_{0}(\i(X_0))=\frac{1}{\Delta_{0}(X_0)}.$$
Hence $\displaystyle \alpha=\frac{1}{\Delta_{0}(X_0)^2}$ and 
\beq\label{eq-nablatheta}\nabla_{0}(\theta_{X_{0}}(X))=\frac{\Delta_{0}(X)}{\Delta_{0}(X_{0})^2}\,\,\, (X\in \Omega^+).\eeq

\begin{prop}\label{prop-mesures-1}\hfill

There exists a unique pair of Haar measures $(dX,dY)$ on $V^+\times V^-$ such that 

$1)$ $\overline \cF\circ \cF=Id_{\cS(V^+)}$ (in other words the measures are dual for $\cF$) 

$2)$ $\forall f\in L^1(V^-) $ and $\forall X_{0}\in \Omega^+$
\beq\label{eq-int-V+/V-}\int_{V^+}f(\theta_{X_{0}}(X))dX=|\Delta_{0}(X_{0})|^{2m}\int_{V^-}f(Y) dY.\eeq

In fact if the   identity (\ref{eq-int-V+/V-})   is true for one fixed $X_{0}$, then it holds for all.

Moreover if $d^*X=\frac{dX}{|\Delta_{0}(X)|^m}$ and $d^*Y=\frac{dY}{|\nabla_{0}(Y)|^m}$ denote the corresponding $G$-invariant measures on $V^+$ and $V^-$ respectively, we have for $\Psi\in L^1(\Omega^-, d^*Y)$ and $\Phi \in L^1(\Omega^+,d^*X)$:
$$\int_{\Omega^+}\Psi(\theta_{X_{0}}(X))d^*X=\int_{\Omega^-}\Psi(Y)d^*Y\,\,\,\text{and} \,\,\, \int_{\Omega^-}\Phi(\theta_{X_{0}}(Y))d^*Y=\int_{\Omega^+}\Phi(X)d^*X.$$

\end{prop}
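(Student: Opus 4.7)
The plan is to reduce the problem to finding the correct scalar rescaling of an arbitrary pair of dual Haar measures. Start from any pair $(d_1X, d_1Y)$ of Haar measures on $V^+$ and $V^-$ which is dual for $\cF$; such a pair exists (for instance the one built from dual bases and a self-dual measure on $F$ as recalled after Definition \ref{def-Fourier}), and any other pair of dual measures has the form $(\alpha\,d_1X,\alpha^{-1}d_1Y)$ with $\alpha\in\R_{>0}$. So the whole game is to pick the right $\alpha$.

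Fix $X_0\in\Omega^+$. The restriction $\theta_{X_0}|_{V^+}:V^+\to V^-$ is a linear isomorphism, and the usual $p$-adic change of variable formula yields
\beq\label{eq-plan-changevar}
\int_{V^+}f(\theta_{X_0}(X))\,d_1X=|\det(\theta_{X_0}|_{V^+})|_F^{-1}\int_{V^-}f(Y)\,d_1Y.
\eeq
Substituting the identity \eqref{eq-detthetaX}, i.e.\ $\det(\theta_{X_0}|_{V^+})=c\,\Delta_0(X_0)^{-2m}$ with $c\in F^*$ a constant that does \emph{not} depend on $X_0$, equation \eqref{eq-plan-changevar} becomes
\[
\int_{V^+}f(\theta_{X_0}(X))\,d_1X=|c|_F^{-1}\,|\Delta_0(X_0)|_F^{2m}\int_{V^-}f(Y)\,d_1Y.
\]
Now impose condition 2) of the Proposition for the rescaled pair $(dX,dY)=(\alpha\,d_1X,\alpha^{-1}d_1Y)$: this forces $\alpha^2=|c|_F$, i.e.\ $\alpha=|c|_F^{1/2}$, which is a unique positive real. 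Since this rescaling also preserves duality (it keeps the product measure invariant), both conditions are simultaneously satisfied by a unique pair, and the crucial point is that the scalar $|c|_F$ is independent of $X_0$, so that the identity in 2) for a single $X_0$ automatically propagates to all $X_0\in\Omega^+$.

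For the last two identities, transfer the statement to invariant measures by plugging $f(Y)=\Psi(Y)/|\nabla_0(Y)|^m$ into \eqref{eq-int-V+/V-} and using \eqref{eq-nablatheta}, which gives $\nabla_0(\theta_{X_0}(X))=\Delta_0(X)/\Delta_0(X_0)^2$. Then
\[
\int_{\Omega^+}\Psi(\theta_{X_0}(X))\,d^*X
=\int_{\Omega^+}\frac{\Psi(\theta_{X_0}(X))}{|\nabla_0(\theta_{X_0}(X))|^m\,|\Delta_0(X_0)|^{2m}}\,dX
=\int_{\Omega^-}\Psi(Y)\,d^*Y,
\]
where the second equality uses \eqref{eq-int-V+/V-}; the dual identity for $\Phi$ follows by the symmetric argument, or by noting that $\theta_{X_0}$ plays the same role on the other side (with $\i(X_0)$ in place of $X_0$ and $\nabla_0$ in place of $\Delta_0$).

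\textbf{Main obstacle.} The only non-routine input is the formula \eqref{eq-detthetaX}, which is already established in the text (using that $\det(\theta_{gX_0}|_{V^+})$ is a relative invariant of $(G,V^+)$ with character $\chi_0^{-2m}$ and therefore a constant multiple of $\Delta_0^{-2m}$). Given this, the existence and uniqueness boil down to solving the single scalar equation $\alpha^2=|c|_F$, so the proof is essentially a careful bookkeeping of Jacobians rather than a deep computation.
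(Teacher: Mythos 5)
Your proposal is correct and follows essentially the same route as the paper's proof: rescale a fixed dual pair $(d_1X,d_1Y)$ by $\alpha$, use the change-of-variable formula together with $\det(\theta_{X_0}|_{V^+})=c\,\Delta_0(X_0)^{-2m}$ to force $\alpha=\sqrt{|c|}$, note that the independence of $c$ from $X_0$ propagates the identity to all $X_0$, and deduce the invariant-measure identities from \eqref{eq-nablatheta}. The only cosmetic difference is that the paper obtains the companion identity for $\Phi$ by the explicit substitution $\Phi=\Psi\circ\theta_{X_0}$ rather than by symmetry, which is equivalent.
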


\begin{proof}\hfill

Let us start with the pair of dual Haar measures $(d_{1}X,d_{1}Y)$ described before. Suppose that $(dX,dY)$ is a couple of Haar measures satisfying the condition $1)$. Then there exists $\lambda\in{ \R^*}^+$ such that $dX=\lambda d_{1}X$ and $dY=\frac{1}{\lambda}d_{1}Y$. We make  the change of variable $Y=\theta_{X_{0}}(X)$ in the left hand side of $2)$. From the definition of $d_{1}X$ and $d_{1}Y$ we get $d_{1}Y= |\det({\theta_{X_{0}}}_{|_{V^+}})|d_{1}X$. Hence $$ \begin{array}{rl}
\displaystyle \int_{V^+}f(\theta_{X_{0}}(X))dX &\displaystyle=\int_{V^+}f(\theta_{X_{0}}(X))\lambda d_{1}X=\int_{V^-}f(Y)\frac{1}{|\det({\theta_{X_{0}}}_{|_{V^+}})|}\lambda d_{1}Y\cr
{}&\displaystyle=\int_{V^-}f(Y)\frac{1}{|\det({\theta_{X_{0}}}_{|_{V^+}})|}\lambda^2 dY = \frac{\lambda^2}{|c|}|\Delta_{0}(X_{0})|^{2m}\int_{V^-}f(Y)dY ),\cr
\end{array}$$
where the last equality comes from  (\ref{eq-detthetaX}).

Therefore if we take $\lambda=\sqrt{|c|}= |\det({\theta_{X_{0}}}_{|_{V^+}})|^{\frac{1}{2}}|\Delta_{0}(X_{0})|^{2m}$ we have proved the first part of the statement. And the fact that $c$ does not depend on $X_{0}$ implies that  if (\ref{eq-int-V+/V-}) holds for one $X_{0}$ then it holds for all.

Using (\ref{eq-nablatheta}) and 2) we obtain:

$$ \begin{array}{rl}
\displaystyle\int_{\Omega^+}\Psi(\theta_{X_{0}}(X))d^*X&\displaystyle= \int_{\Omega^+}\Psi(\theta_{X_{0}}(X))\frac{dX}{|\Delta_{0}(X)|^m}\cr
{} &=\displaystyle |\Delta_{0}(X_{0})|^{-2m}\int_{V^+}\Psi(\theta_{X_{0}}(X))\frac{dX}{|\nabla_{0}(\theta_{X_{0}}(X))|^m}\cr
{}&\displaystyle=\int_{V^-}\Psi(Y)d^*Y.

\end{array}$$

Setting $\Phi=\Psi\circ\theta_{X_{0}}$ in this identity, we obtain the last assertion.
\end{proof}




\begin{prop}\label{prop-mesures-2}\hfill

Fix $X_{0}\in \Omega^+$ and let $Y_{0}=\i(X_{0})$. Define $H=\cZ_{G}(X_{0})=\cZ_{G}(Y_{0})$.
One defines two $G$-invariant measures $d_{1}\dot g$ and  $d_{2}\dot g$ on $G/H$ by setting for $\Phi\in \cS(G.X_{0})$ and $\Psi\in\cS(G.Y_{0}) $
$$\int_{G.X_{0}}\Phi(X)d^*X=\int_{G/H}\Phi(g.X_{0})d_{1}\dot g\,\,\, \text{and}\,\,\,\int_{G.Y_{0}}\Psi(Y)d^*Y=\int_{G/H}\Psi(g.Y_{0})d_{2}\dot g.$$

Then $d_{1}\dot g= d_{2}\dot g$.
\end{prop}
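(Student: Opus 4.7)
Both $d_1\dot g$ and $d_2\dot g$ are nonzero $G$-invariant measures on the homogeneous space $G/H$, hence they differ by a positive scalar: $d_1\dot g = c\,d_2\dot g$ for some $c>0$. The plan is to use the involution $\theta_{X_0}$ and the two identities from the last assertion of Proposition \ref{prop-mesures-1} to show $c = 1$.

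First I would set up the dictionary between $G/H$ and the orbits. Let $p_1:G/H\to G.X_0$, $gH\mapsto g.X_0$ and $p_2:G/H\to G.Y_0$, $gH\mapsto g.Y_0$ be the natural parametrizations. By definition, $(p_1)_*d_1=d^*X|_{G.X_0}$ and $(p_2)_*d_2=d^*Y|_{G.Y_0}$. Next, I would analyze the restriction $\theta_{X_0}:G.X_0\to G.Y_0$. Since $\theta_{X_0}(H_0)=-H_0$, conjugation by $\theta_{X_0}$ preserves $G=\cZ_{\mathrm{Aut}_0(\tilde{\go g})}(H_0)$, and the automorphism $\sigma_{X_0}(g):=\theta_{X_0}g\theta_{X_0}^{-1}$ preserves $H=Z_G(X_0)=Z_G(Y_0)$ (direct check using $\theta_{X_0}(X_0)=Y_0$). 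For $g\in G$ one then has
\[
\theta_{X_0}(g.X_0)=\sigma_{X_0}(g).Y_0,\qquad \theta_{X_0}(g.Y_0)=\sigma_{X_0}(g).X_0,
\]
so $\theta_{X_0}$ exchanges the two open orbits $G.X_0$ and $G.Y_0$. Define $\tau:G/H\to G/H$, $gH\mapsto \sigma_{X_0}(g)H$; it is an involution, and by the displayed formulas $\theta_{X_0}\circ p_1=p_2\circ\tau$ and $\theta_{X_0}\circ p_2=p_1\circ\tau$.

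Now I would push forward measures. The first identity in the last assertion of Proposition \ref{prop-mesures-1}, applied to functions supported in $G.Y_0$, reads $(\theta_{X_0})_*(d^*X|_{G.X_0})=d^*Y|_{G.Y_0}$. Writing this in the $G/H$ coordinates,
\[
(p_2)_*d_2=d^*Y=(\theta_{X_0})_*(p_1)_*d_1=(p_2\circ\tau)_*d_1=(p_2)_*\tau_*d_1,
\]
whence $\tau_*d_1=d_2$. The second identity of the same assertion, applied symmetrically, yields $\tau_*d_2=d_1$.

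The hard part — which is in fact not really hard once the setup is in place — is to combine these two relations. Write $d_1=c\,d_2$. Applying $\tau_*$ to this equality gives $d_2=\tau_*d_1=c\,\tau_*d_2=c\,d_1=c^2 d_2$, hence $c^2=1$ and therefore $c=1$, since $c>0$. This proves $d_1\dot g=d_2\dot g$. The main thing to watch carefully is that $\sigma_{X_0}$ really preserves $H$ (needed for $\tau$ to be well-defined on $G/H$) and that $\theta_{X_0}$ maps $G.X_0$ bijectively onto $G.Y_0$ (so that the two pushforward computations take place on the orbits where $d^*X$ and $d^*Y$ live); both follow directly from $\theta_{X_0}(X_0)=Y_0$ and the involution property $\theta_{X_0}^2=\mathrm{Id}$.
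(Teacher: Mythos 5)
Your proof is correct and follows essentially the same route as the paper: the paper's argument is a single chain of equalities using the last assertion of Proposition \ref{prop-mesures-1} together with the change of variable $g\mapsto\theta_{X_0}g\theta_{X_0}$ on $G/H$, which is exactly your map $\tau$. The only difference is that you make explicit (via $\tau_*d_1=d_2$, $\tau_*d_2=d_1$ and $c^2=1$) the justification that the paper leaves implicit when it performs that change of variable.
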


\begin{proof}\hfill

From the last assertion of Proposition \ref{prop-mesures-1} we obtain 

$\int_{G/H}\Phi(g.X_{0})d_{1}\dot g=\int_{\Omega^+}\Phi(X)d^*X=\int_{\Omega^-}\Phi(\theta_{X_{0}}(Y))d^*Y =\int_{G/H}\Phi(\theta_{X_{0}}(g.Y_{0}))d_{2}\dot g $

$=\int_{G/H}\Phi(\theta_{X_{0}}g\theta_{X_{0}}\theta_{X_{0}}(Y_{0}))d_{2}\dot g=\int_{G/H}\Phi(g.X_{0})d_{2}\dot g$

(The last equality is obtained by taking into account that the conjugation  $g\longmapsto \theta_{X_{0}}g\theta_{X_{0}}$ is an involution of $G$ which stabilizes $H$, so we make  the change of variable $g=\theta_{X_{0}}g\theta_{X_{0}}$).
\end{proof}

\begin{definition} We will denote by $d\dot g$ \index{ddotg@$d\dot g$}the $G$-invariant measure appearing in Proposition \ref{prop-mesures-2}.

\end{definition}

\begin{prop}\label{prop-mesures-3}\hfill

Let $\sigma$ be an involution of $\tilde{\go{g}}$ such that $\sigma(H_{0})=-H_{0}$. We also suppose  that there exists $X_{0}\in V^+$ such that $(\sigma(X_{0}),H_{0}, X_{0})$ is an $\go{sl}_{2}$-triple. Then for $\Psi\in L^1(\Omega^-,d^*Y)$ we have
$$\int_{\Omega^+}\Psi(\sigma(X))d^*X= \int_{\Omega^-}\Psi(Y)d^*Y.$$
\end{prop}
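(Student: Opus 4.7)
The plan is to reduce the statement to the corresponding identity for $\theta_{X_{0}}$ established in Proposition \ref{prop-mesures-1}, by comparing the two $F$-linear isomorphisms $\sigma,\theta_{X_{0}}:V^+\to V^-$. First, the hypothesis forces $\sigma(X_{0})=\i(X_{0})=:Y_{0}$ by uniqueness of the element completing $X_{0}$ to an $\go{sl}_{2}$-triple (Proposition \ref{prop-generiques-cas-regulier}). Since $\text{Aut}_{0}(\widetilde{\go g})$ is normal in $\text{Aut}(\widetilde{\go g})$ (being the $F$-points of the identity component after base change), the map $\widetilde\sigma(g):=\sigma g\sigma^{-1}$ sends $G$ into itself, and a short matrix computation using $\sigma^{2}=\text{Id}$ gives $\det((\sigma g\sigma^{-1})|_{V^+})=\det(g|_{V^-})$, so $\chi_{0}\circ\widetilde\sigma=\chi_{0}^{-1}$. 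Repeating the argument that led to \eqref{eq-nablatheta}, $X\mapsto\nabla_{0}(\sigma X)$ is a relative invariant of $(G,V^+)$ with character $\chi_{0}$, and evaluation at $X_{0}$ (using Proposition \ref{prop-delta-iota}) yields
\[
\nabla_{0}(\sigma(X))=\frac{\Delta_{0}(X)}{\Delta_{0}(X_{0})^{2}},\qquad X\in\Omega^+.
\]

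The crux is to show $|\det(\sigma|_{V^+})|=|\det(\theta_{X_{0}}|_{V^+})|=|\Delta_{0}(X_{0})|^{-2m}$. Set $\varphi:=\theta_{X_{0}}^{-1}\circ\sigma\in\text{Aut}(\widetilde{\go g})$; since both $\theta_{X_{0}}$ and $\sigma$ swap $X_{0}$ and $Y_{0}$ and send $H_{0}$ to $-H_{0}$, the automorphism $\varphi$ fixes each of $X_{0},Y_{0},H_{0}$ and therefore centralizes the $\go{sl}_{2}$-subalgebra $\go s:=FY_{0}\oplus FH_{0}\oplus FX_{0}$. Two consequences follow. First, $\varphi$ commutes with $(\ad X_{0})^{2}$, and by Proposition \ref{prop-generiques-cas-regulier} the restriction $(\ad X_{0})^{2}:V^-\to V^+$ is a linear bijection, so intertwining through it gives $\det(\varphi|_{V^+})=\det(\varphi|_{V^-})$. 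Second, $\varphi$ preserves the normalized Killing form $b$, which is a perfect pairing $V^+\times V^-\to F$, forcing $\det(\varphi|_{V^+})\cdot\det(\varphi|_{V^-})=1$. Combining these, $(\det(\varphi|_{V^+}))^{2}=1$, hence $|\det(\varphi|_{V^+})|=1$; the determinant identity then follows from $\sigma=\theta_{X_{0}}\circ\varphi$ and the value $|\det(\theta_{X_{0}}|_{V^+})|=|\Delta_{0}(X_{0})|^{-2m}$ extracted from the proof of Proposition \ref{prop-mesures-1}.

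Finally, I will perform the change of variables $Y=\sigma(X)$ in the integral. The Jacobian formula yields $dX=|\det(\sigma|_{V^+})|^{-1}\,dY=|\Delta_{0}(X_{0})|^{2m}\,dY$, while the first paragraph gives $|\Delta_{0}(X)|^{m}=|\Delta_{0}(X_{0})|^{2m}\,|\nabla_{0}(Y)|^{m}$; the two factors of $|\Delta_{0}(X_{0})|^{2m}$ cancel, so $d^{*}X$ transforms into $d^{*}Y$ under the substitution, and the claimed equality of integrals follows.

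The main obstacle is the $\go{sl}_{2}$-theoretic step showing $|\det(\varphi|_{V^+})|=1$: the intertwining via $(\ad X_{0})^{2}$ and the invariance of $b$ each contribute one relation between $\det(\varphi|_{V^+})$ and $\det(\varphi|_{V^-})$, and only together do they force $\det(\varphi|_{V^+})=\pm 1$. Everything else is parallel to the change-of-variable computation used to derive \eqref{eq-int-V+/V-}.
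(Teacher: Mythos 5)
Your argument is correct, and it follows the same overall strategy as the paper's proof: both reduce the statement to Proposition \ref{prop-mesures-1} by comparing $\sigma$ with $\theta_{X_{0}}$. Where you diverge is in how the residual factor is handled, and in which measures you work with. The paper writes $\sigma=(\sigma\theta_{X_{0}})\circ\theta_{X_{0}}$, applies the last assertion of Proposition \ref{prop-mesures-1} directly to the invariant measures $d^{*}X,d^{*}Y$, and then disposes of $\sigma\theta_{X_{0}}$ in one stroke by observing that $\sigma$ and $\theta_{X_{0}}$ commute (via $\sigma\theta_{X_{0}}\sigma^{-1}=\theta_{\sigma(X_{0})}=\theta_{X_{0}}$ on $V^{\pm}$), so that $\sigma\theta_{X_{0}}$ is an involution of $V^{-}$ preserving $d^{*}Y$. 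You instead factor on the other side, $\sigma=\theta_{X_{0}}\circ\varphi$ with $\varphi=\theta_{X_{0}}^{-1}\sigma$, avoid the commutation entirely, and prove $|\det(\varphi|_{V^{+}})|=1$ from two independent relations (the intertwining by $(\ad X_{0})^{2}$ and the invariance of the pairing $b$ on $V^{+}\times V^{-}$); since you work with $dX,dY$ rather than $d^{*}X,d^{*}Y$, you also need the explicit identity $\nabla_{0}(\sigma X)=\Delta_{0}(X)/\Delta_{0}(X_{0})^{2}$, which you derive correctly in parallel with \eqref{eq-nablatheta}. Your route is longer but makes the measure bookkeeping fully explicit; the paper's is shorter because $d^{*}X$ and $d^{*}Y$ absorb all the relative-invariant factors at once. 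One small point of precision: equation \eqref{eq-detthetaX} in the proof of Proposition \ref{prop-mesures-1} gives $\det(\theta_{X_{0}}|_{V^{+}})=c\,\Delta_{0}(X_{0})^{-2m}$ with an undetermined constant $c$, so the "value'' you quote is not literally the coordinate determinant; what you are actually using (and all you need) is statement 2) of Proposition \ref{prop-mesures-1} itself, i.e.\ that the Jacobian of $\theta_{X_{0}}$ \emph{relative to the normalized pair} $(dX,dY)$ is $|\Delta_{0}(X_{0})|^{-2m}$ — the constant $|c|$ is exactly what the normalization $\lambda=\sqrt{|c|}$ was chosen to cancel. Since $|\det(\varphi|_{V^{+}})|$ is basis-independent, your composition argument is then sound.
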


\begin{proof}\hfill

One should note that if $\sigma$ is such an involution, then $\sigma(\go{g})=\go{g}$ and $\sigma(V^+)=V^-$. From Proposition \ref {prop-mesures-1} we obtain
$$\int_{\Omega^+}\Psi(\sigma(X))d^*X= \int_{\Omega^+}\Psi\circ \sigma\theta_{X_{0}}(\theta_{X_{0}}(X))d^*X=\int_{\Omega^-}\Psi(\sigma\theta_{X_{0}}(Y))d^*Y.$$

It is easy to see that if we set $Y_{0}=\sigma(X_{0}$) we have 
$$\theta_{X_{0}}\circ \sigma= \sigma\circ \exp(-\ad(Y_{0}))\exp(-\ad(X_{0}))\exp(-\ad(Y_{0}))=\sigma\circ\theta_{X_{0}}.$$

Therefore $\sigma\circ\theta_{X_{0}}$ is an involution preserving $V^-$. Hence if we make the change of variable 
$Y=\sigma\theta_{X_{0}}(Y)$ we obtain the assertion. \end{proof}

\begin{prop}\label{prop-mesures-4}\hfill

Remember that for $X\in \Omega^+$, the element $\i(X)\in \Omega^-$ is defined by the fact that $(\i(X),H_{0},X)$ is an $\go{sl}_{2}$-triple.
For $\Psi\in L^1(\Omega^-,d^*Y)$ we have
$$\int_{\Omega^+}\Psi(\i(X))d^*X= \int_{V^-}\Psi(Y)d^*Y.$$

\end{prop}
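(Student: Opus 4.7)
The plan is to use the $G$-equivariance of $\iota$ together with Proposition \ref{prop-mesures-2}, applied orbit by orbit on the finite set of open $G$-orbits in $V^+$.

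First, since $V^+ \setminus \Omega^+$ is the zero set of the nonzero polynomial $\Delta_0$ (and likewise $V^- \setminus \Omega^-$ for $\nabla_0$), both have measure zero for $d^*X$ and $d^*Y$ respectively. Hence the asserted identity is equivalent to
$$\int_{\Omega^+}\Psi(\iota(X))\,d^*X \;=\; \int_{\Omega^-}\Psi(Y)\,d^*Y.$$
Recall (see section \ref{section-notation}) that $G$ has only finitely many open orbits in $V^+$, whose union is $\Omega^+$. Write $\Omega^+=\bigsqcup_{j}\Omega_j^+$ with $\Omega_j^+=G.X_j$ for suitable representatives $X_j$, and set $Y_j=\iota(X_j)\in\Omega^-$, $H_j=Z_G(X_j)$. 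Since $(\iota(X),H_0,X)$ being an $\go{sl}_2$-triple is a $G$-equivariant condition, one has $\iota(g.X_j)=g.Y_j$ for all $g\in G$, so $\iota$ maps $\Omega_j^+$ bijectively onto $\Omega_j^-:=G.Y_j$, and $Z_G(Y_j)=Z_G(X_j)=H_j$.

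Next, for each $j$, apply Proposition \ref{prop-mesures-2} with the base point $X_0=X_j$, $Y_0=Y_j$: there is a single $G$-invariant measure $d\dot g$ on $G/H_j$ such that for $\Phi\in\cS(\Omega_j^+)$ and $\Psi\in\cS(\Omega_j^-)$,
$$\int_{\Omega_j^+}\Phi(X)\,d^*X=\int_{G/H_j}\Phi(g.X_j)\,d\dot g,\qquad \int_{\Omega_j^-}\Psi(Y)\,d^*Y=\int_{G/H_j}\Psi(g.Y_j)\,d\dot g.$$
Combining these with $\iota(g.X_j)=g.Y_j$ yields
$$\int_{\Omega_j^+}\Psi(\iota(X))\,d^*X=\int_{G/H_j}\Psi(g.Y_j)\,d\dot g=\int_{\Omega_j^-}\Psi(Y)\,d^*Y.$$

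Finally, we note that $\iota:\Omega^+\to\Omega^-$ is a bijection (its inverse sends $Y\in\Omega^-$ to the unique element $X\in\Omega^+$ such that $(Y,H_0,X)$ is an $\go{sl}_2$-triple, whose existence follows from Proposition \ref{prop-generiques-cas-regulier}), so the open $G$-orbits $\Omega_j^-=\iota(\Omega_j^+)$ form a partition of $\Omega^-$. Summing the orbit-wise identity over $j$ then gives $\int_{\Omega^+}\Psi(\iota(X))\,d^*X=\sum_j\int_{\Omega_j^-}\Psi(Y)\,d^*Y=\int_{\Omega^-}\Psi(Y)\,d^*Y$, as required. The routine approximation argument extends the identity from $\cS$ to $L^1(\Omega^-,d^*Y)$. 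The only delicate point, which Proposition \ref{prop-mesures-2} has already settled, is that the two natural $G$-invariant measures on $G/H_j$ (those coming from $d^*X$ on $V^+$ and $d^*Y$ on $V^-$) actually coincide; once this is available, the argument is essentially bookkeeping.
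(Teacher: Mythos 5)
Your proof is correct and follows essentially the same route as the paper's: decompose $\Omega^+$ into its finitely many open $G$-orbits, use the $G$-equivariance of $\iota$ to identify $\iota(\Omega_j^+)$ with the orbit $G.Y_j$, and invoke Proposition \ref{prop-mesures-2} to see that the two induced invariant measures on $G/H_j$ coincide. The paper's argument is exactly this orbit-by-orbit computation (reducing to $\Psi\in\cS(\Omega_i^-)$), so nothing further is needed.
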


\begin{proof}\hfill

Let $\Omega^+=\cup_{i=1}^{r_0} \Omega_{i}^+$ be the decomposition of $\Omega^+$ into open $G$-orbits. Let $\Omega_{i}^-=\iota(\Omega_{i}^+)$. Take $X_{i}\in \Omega_{i}^+$ and define $Y_{i}=\iota(X_{i})\in \Omega_{i}^-$. Define also $\cH_{i}=\cZ_{G}(X_{i})=\cZ_{G}(Y_{i})$. It is enough to prove the assertion for $\Psi\in \cS(\Omega_{i}^-)$. Using Proposition \ref{prop-mesures-2} we obtain:

$\int_{\Omega_{i}^+}\Psi(\i(X))d^*X=\int_{G/\cH_{i}}\Psi(\i(gX_{i}))d\dot g=\int_{G/\cH_{i}}\Psi(g\i(X_{i}))d\dot g=\int_{G/\cH_{i}}\Psi(gY_{i})d\dot g=\int_{\Omega_{i}^-}\Psi(Y)d^*Y.$
 \end{proof}

We now consider the subset  $(e_{{i}_{1}},\ldots,e_{i_{r}}) $ of $(e_{1},\ldots,e_{n})$ which is a basis of $K_{p}(1,1)$ (see the discussion after definition \ref{def-Fourier}). Then $(e_{{i}_{1}}^*,\ldots,e_{i_{r}}^*) $ is the dual basis of $K_{p}(-1,-1)$ with respect to the form $b$.
If $d_{1}w$ and $d_{1}w'$ are the Haar measures  on $K_{p}(1,1)$ and $K_{p}(-1,-1)$ respectively, defined through the preceding basis and the choice of a self dual measure on $F$, then these measures are dual for $\cF_{1,1}$.

\begin{prop}\label{prop-mesures-5} \hfill

There exists a unique pair of Haar measures $(dw,dw')$ \index{dw@$(dw,dw')$} on $K_{p}(1,1)\times K_{p}(-1,-1)$ such that 

$1)$ $\overline{ \cF}_{1,1}\circ \cF_{1,1}=Id_{\cS(K_{p}(1,1)}$ (in other words the measures are dual for $\cF_{1,1}$) 

$2)$ $\forall g\in L^1(K_{p}(1,1)) $ and $\forall u\in K_{p}(2,0)'$ and  $\forall v\in K_{p}(0,2)'$
$$\int_{K_{p}(1,1)}g(\theta_{u+v} (w))dw=|\Delta_{p+1}(v)|^{\frac{\dim(K_{p}(1,1))}{\deg(\Delta_{p+1})}}|\widetilde{\Delta}_{k-p}(u)|^{\frac{\dim(K_{p}(1,1))}{\deg(\widetilde\Delta_{k-p})}} \int_{K_{p}(-1,-1)}g(w')dw'.$$

As ${{\theta_{u+v}}^2_{|_{K_{p}(1,1)}}}=Id_{K_{p}(-1,-1)}$ it is easy to see that condition $2)$ is equivalent to the following condition $2')$.

$2')$ $\forall f\in L^1(K_{p}(-1,-1)) $ and $\forall u\in K_{p}(2,0)'$ and  $\forall v\in K_{p}(0,2)'$
$$\int_{K_{p}(1,1)}f(w)dw=|\Delta_{p+1}(v)|^{\frac{\dim(K_{p}(1,1))}{\deg(\Delta_{p+1})}}|\widetilde{\Delta}_{k-p}(u)|^{\frac{\dim(K_{p}(1,1))}{\deg(\widetilde\Delta_{k-p})}} \int_{K_{p}(-1,-1)}f(\theta_{u+v}(w'))dw'.$$

\end{prop}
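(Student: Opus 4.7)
The strategy parallels that of Proposition \ref{prop-mesures-1}. Start with a reference pair $(d_1 w, d_1 w')$ of dual Haar measures, for instance built from a self-dual measure on $F$ relative to $\psi$ and from bases of $K_p(1,1)$ and $K_p(-1,-1)$ which are dual under $b$. Every other pair of dual Haar measures is of the form $(\mu\, d_1 w,\, \mu^{-1} d_1 w')$ with $\mu>0$, so that uniqueness reduces to determining a single scalar $\mu$. The task is then to find, and prove unique, the value of $\mu$ that enforces condition 2).

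The heart of the proof is the analog of equation (\ref{eq-detthetaX}): I would show that
\[
\det\bigl(\theta_{u+v}\big|_{K_p(1,1)}\bigr) \;=\; c\,\widetilde\Delta_{k-p}(u)^{-(k-p)d/\kappa}\,\Delta_{p+1}(v)^{-(p+1)d/\kappa}
\]
for a constant $c\in F^*$ independent of $(u,v)\in K_p(2,0)'\times K_p(0,2)'$. First I verify that $\theta_{u+v}$ restricts to a linear isomorphism $K_p(1,1)\to K_p(-1,-1)$: since $\iota(u)\in K_p(-2,0)$ and $\iota(v)\in K_p(0,-2)$, and since the bigraded components $K_p(\pm2,\mp2)$ of $\tilde{\go g}$ vanish, the cross brackets $[\iota(u),v]$ and $[u,\iota(v)]$ are zero, giving $\iota(u+v)=\iota(u)+\iota(v)$ and the factorization $\theta_{u+v}=\theta_u\theta_v$ (commuting $\go{sl}_2$-triples $\go u_1$ and $\go u_2$). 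Tracking bigraded weights, $\theta_v$ carries $K_p(1,1)$ onto $K_p(1,-1)$ and $\theta_u$ then carries $K_p(1,-1)$ onto $K_p(-1,-1)$. Next, the rational function $D(u,v):=\det(\theta_{u+v}|_{K_p(1,1)})$ is relatively invariant under the diagonal action of $L_p$, by the identity $\theta_{l(u+v)}=l\,\theta_{u+v}\,l^{-1}$. By Lemma \ref{lem-K(0,2)-abs-irr}, $(L_p,K_p(2,0))$ and $(L_p,K_p(0,2))$ are absolutely irreducible regular prehomogeneous vector spaces whose fundamental relative invariants are $\widetilde\Delta_{k-p}$ and $\Delta_{p+1}$ respectively, so $D(u,v)$ is necessarily a constant times a monomial in these two polynomials.

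To determine the exponents I would use scaling. Writing $\theta_{su}=h_u(-s)\,\theta_u(-1)^{-1}$ and applying Remark \ref{rem-hx} (so that $h_u(-s)$ acts by $(-s)^{-1}$ on the $H_0^{p,1}$-weight $-1$ space $K_p(-1,1)$, and trivially on $K_p(0,0)$-fixed pieces), one computes that composition with the analogous scaling for $\theta_{tv}$ gives
\[
|D(su,tv)| \;=\; |s|^{-(p+1)(k-p)d}\,|t|^{-(p+1)(k-p)d}\,|D(u,v)|.
\]
Since $|\widetilde\Delta_{k-p}(su)|=|s|^{\kappa(p+1)}|\widetilde\Delta_{k-p}(u)|$ and $|\Delta_{p+1}(tv)|=|t|^{\kappa(k-p)}|\Delta_{p+1}(v)|$, matching scaling fixes the exponents as $-(k-p)d/\kappa = -\dim K_p(1,1)/\deg\widetilde\Delta_{k-p}$ and $-(p+1)d/\kappa = -\dim K_p(1,1)/\deg\Delta_{p+1}$. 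Once this determinant formula is in hand, setting $\mu=|c|^{1/2}$ and performing the change of variable $w'=\theta_{u+v}(w)$ in the left-hand side of 2) converts $dw$ into $|D(u,v)|^{-1}\mu^2\,dw'$, producing the right-hand side exactly; the $(u,v)$-independence of $|c|$ forces $\mu$ to be unique, yielding existence and uniqueness simultaneously. The main obstacle is step two: the clean splitting of $\theta_{u+v}$, the identification of $D(u,v)$ as a monomial in $\widetilde\Delta_{k-p}$ and $\Delta_{p+1}$ via Lemma \ref{lem-K(0,2)-abs-irr}, and the careful $\go{sl}_2$-bookkeeping needed to pin down the two exponents through the $h_u$- and $h_v$-actions on the odd-weight space $K_p(\pm1,\mp1)$.
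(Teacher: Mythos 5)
Your proposal is correct and follows essentially the same route as the paper: reduce to a single scalar via a reference dual pair, show $\det(\theta_{u+v}|_{K_p(1,1)})$ is a relative invariant of $(K_p(0,0),K_p(2,0)\oplus K_p(0,2))$, pin down the exponents by the $h_u(t_1)h_v(t_2)$ scaling, and set the scalar to $\sqrt{|c|}$. The factorization $\theta_{u+v}=\theta_u\theta_v$ and the weight bookkeeping you add are a correct (and slightly more explicit) justification of the fact, used without comment in the paper, that $\theta_{u+v}$ maps $K_p(1,1)$ isomorphically onto $K_p(-1,-1)$.
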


\begin{proof}\hfill

For $u,v$ as in $2)$, the map 
$\theta_{u+v} $ is a linear isomorphism from $K_{p}(1,1)$ onto $K_{p}(-1,-1)$ and for $g\in \cZ_{G}(FH_{0}^{p,1}+FH_{0}^{p,2})$ we have:
 $$  \det({\theta_{g(u+v)}}_{|_{K_{p}(1,1)}}) =  \det(g_{|_{K_{p}(-1,-1)}})^2 \det({\theta_{u+v}}_{|_{K_{p}(1,1)}}).$$ 

Therefore $\det({\theta_{u+v}}_{|_{K_{p}(1,1)}})$ is a relative invariant of the prehomogeneous vector space $(K_{p}(0,0),K_{p}(2,0)\oplus K_{p}(0,2)$.  Hence there exist $c\in F^*$ and $\alpha,\beta \in \Z$ such that 

\beq\label{eq-thetau+v}\det({\theta_{u+v}}_{|_{K_{p}(1,1)}})=c\widetilde  \Delta_{k-p}(u)^\alpha\Delta_{p+1}(v)^\beta.\eeq
For $t_{1},t_{2}\in F$, let us consider the element $g_{0}=h_{u}(t_{1})h_{v}(t_{2})\in \cZ_{G}(FH_{0}^{p,1}+FH_{0}^{p,2})$ (see Definition \ref{def-hx}). This element acts by $ t_{2}^{-1}t_{1}^{-1}$ on $K_{p}(-1,-1)$, by $t_{1}^2$ on $K_{p}(2,0)$ and by $t_{2}^2$ on $K_{p}(0,2)$ (see Remark \ref{rem-hx}). Hence
$$\det({g_{0}}_{|_{K_{p}(-1,-1)}})^2= t_{2}^{-2\dim K_{p}(-1,-1)}t_{1}^{-2\dim K_{p}(-1,-1)}$$
   But from (\ref{eq-thetau+v}) above we also have:
   $$\det({\theta_{g_{0}(u+v)}}_{|_{K_{p}(1,1)}})=ct_{1}^{2\alpha \deg \widetilde \Delta_{k-p} }t_{2}^{2\beta \deg \Delta_{p+1}}\det({\theta_{ (u+v)}}_{|_{K_{p}(1,1)}}).$$
   Therefore $\alpha=-\frac{\dim K_{p}(1,1)}{\deg \widetilde\Delta_{k-p}} \,\,\, \text{ and } \beta=-\frac{\dim K_{p}(1,1)}{\deg \Delta_{p+1}}$. Hence
\beq\label{thetau+vprecis}\det({\theta_{u+v}}_{|_{K_{p}(1,1)}})=c\widetilde  \Delta_{k-p}(u)^{-\frac{\dim K_{p}(1,1)}{\deg \widetilde\Delta_{k-p}}}\Delta_{p+1}(v)^ {-\frac{\dim K_{p}(1,1)}{\deg \Delta_{p+1}}}.\eeq   
Let us start with the pair of dual Haar measures $(d_{1}w,d_{1}w')$ described before. Suppose that $(dw,dw')$ is a couple of Haar measures satisfying the condition $1)$. Then there exists $\lambda\in{ \R^*}^+$ such that $dw=\lambda d_{1}w$ and $dw'=\frac{1}{\lambda}d_{1}w'$. We make  the change of variable $w'=\theta_{u+v}(w)$ in the left hand side of $2)$. From the definition of $d_{1}w$ and $d_{1}w'$ we get $d_{1}w'= |\det({\theta_{u+v}}_{|_{K_{p}(1,1)}}) |d_{1}w$. Hence

$$ \begin{array}{rl}
\displaystyle \int_{K_{p}(1,1)}g(\theta_{u+v}(w))dw &\displaystyle=\int_{K_{p}(1,1)}g(\theta_{u+v}(w))\lambda d_{1}w=\int_{K_{p}(-1,-1)}g(w')\frac{1}{  |\det({\theta_{u+v}}_{|_{K_{p}(1,1)}} |}\lambda d_{1}w'\cr
{}&\displaystyle=\int_{K_{p}(-1,-1)}g(w')\frac{1}{  |\det({\theta_{u+v}}_{|_{K_{p}(1,1)}}) |}\lambda^2 d w' \cr
{}&\displaystyle= \frac{\lambda^2}{|c|}|\widetilde  \Delta_{k-p}(u)|^{\frac{\dim K_{p}(1,1)}{\deg \widetilde\Delta_{k-p}}}|\Delta_{p+1}(v)|^ {\frac{\dim K_{p}(1,1)}{\deg \Delta_{p+1}}}\int_{K_p(-1,-1)}g(w')dw',
\end{array}$$
where the last equality comes from (\ref{thetau+vprecis}).
Therefore if we take $\lambda=\sqrt{|c|}$ the result is proved.

 \end{proof}

As above we consider  Haar measures $d_{1}A$ and $d_{1}A'$      on $K_{p}(-1,1)$ and $K_{p}(1,-1)$ respectively, which are dual for $\cF_{-1,1}$ (see Definition \ref{def-Fourier}).These measures are defined through the choice of a basis of $ K_{p}(-1,1) \subset V^+$, the dual basis of $ K_{p}(1,-1) \subset V^-$ (via the form $b$) and the choice of a self dual measure on $F$.

\begin{prop}\label{prop-mesures-6} \hfill

There exists a unique pair of Haar measures $(dA,dA')$ \index{dA@$(dA,dA')$} on $K_{p}(-1,1)\times K_{p}(1,-1)$ such that 

$1)$ $\overline{ \cF}_{-1,1}\circ \cF_{-1,1}=Id_{\cS(K_{p}(-1,1)}$ (in other words the measures are dual for $\cF_{-1,1}$) 

$2)$ $\forall g\in L^1(K_{p}(1,-1)) $ and $\forall u\in K_{p}(2,0)'$ and  $\forall v\in K_{p}(0,2)'$
$$\int_{K_{p}(-1,1)}g(\theta_{u+v} (A))dA=|\Delta_{p+1}(v)|^{\frac{\dim(K_{p}(1,-1))}{\deg(\Delta_{p+1})}}|\widetilde{\Delta}_{k-p}(u)|^{-\frac{\dim(K_{p}(1,-1))}{\deg(\widetilde\Delta_{k-p})}} \int_{K_{p}(1,-1)}g(A')dA'.$$
As ${{\theta_{u+v}}^2_{|_{K_{p}(-1,1)}}}=Id_{K_{p}(-1,1)}$ it is easy to see that condition is equivalent to the following condition $2')$.

$2')$ $\forall f\in L^1(K_{p}(-1,1)) $ and $\forall u\in K_{p}(2,0)'$ and  $\forall v\in K_{p}(0,2)'$
$$\int_{K_{p}(-1,1)}f(A)dA=|\Delta_{p+1}(v)|^{\frac{\dim(K_{p}(1,-1))}{\deg(\Delta_{p+1})}}|\widetilde{\Delta}_{k-p}(u)|^{-\frac{\dim(K_{p}(1,-1))}{\deg(\widetilde\Delta_{k-p})}} \int_{K_{p}(1,-1)}f(\theta_{u+v}(A'))dA'.$$

\end{prop}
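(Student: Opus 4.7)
The strategy mirrors that of Proposition \ref{prop-mesures-5}, with the key modification coming from the fact that $K_p(-1,1)$ carries weight $-1$ for $H_0^{p,1}$ and weight $+1$ for $H_0^{p,2}$ (rather than both weights $+1$ as in $K_p(1,1)$). This sign change will propagate into a sign change for one of the exponents in the final formula.

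First, I would start from the canonical pair of dual measures $(d_1 A, d_1 A')$ on $K_p(-1,1) \times K_p(1,-1)$ constructed exactly as in the discussion before Proposition \ref{prop-mesures-5}: choose a basis of $K_p(-1,1)$, take the $b$-dual basis in $K_p(1,-1)$, and use the self-dual measure on $F$. Any pair of dual measures for $\cF_{-1,1}$ is then of the form $(dA,dA')=(\lambda d_1A,\lambda^{-1}d_1A')$ for some $\lambda\in\mathbb R^{+*}$, and I must pin down a unique $\lambda$ from the second compatibility condition.

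Next, I would show that $\det({\theta_{u+v}}_{|_{K_p(-1,1)}})$ is a relative invariant of the prehomogeneous space $(K_p(0,0), K_p(2,0)\oplus K_p(0,2))$. The key identity is $\theta_{g(u+v)} = g\,\theta_{u+v}\,g^{-1}$ for $g\in\cZ_G(FH_0^{p,1}+FH_0^{p,2})$, which combined with the $b$-duality between $K_p(-1,1)$ and $K_p(1,-1)$ (giving $\det(g_{|K_p(-1,1)})=\det(g_{|K_p(1,-1)})^{-1}$) yields
\[
\det({\theta_{g(u+v)}}_{|_{K_p(-1,1)}})=\det(g_{|_{K_p(1,-1)}})^2\,\det({\theta_{u+v}}_{|_{K_p(-1,1)}}).
\]
Hence there exist $c\in F^*$ and $\alpha,\beta\in\mathbb Z$ with
\[
\det({\theta_{u+v}}_{|_{K_p(-1,1)}}) = c\,\widetilde\Delta_{k-p}(u)^\alpha\,\Delta_{p+1}(v)^\beta.
\]

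To determine $\alpha$ and $\beta$, I would test with $g_0 = h_u(t_1)h_v(t_2)\in \cZ_G(FH_0^{p,1}+FH_0^{p,2})$. By Remark \ref{rem-hx}, $h_u(t_1)$ acts by $t_1^r$ on the weight-$r$ space of $H_0^{p,1}$ and $h_v(t_2)$ acts by $t_2^s$ on the weight-$s$ space of $H_0^{p,2}$. Consequently, on $K_p(1,-1)$ (weights $(+1,-1)$), $g_0$ acts by $t_1t_2^{-1}$, so $\det(g_{0|K_p(1,-1)})^2 = t_1^{2\dim K_p(1,-1)}t_2^{-2\dim K_p(1,-1)}$; while $g_0$ acts on $K_p(2,0)$ by $t_1^2$ and on $K_p(0,2)$ by $t_2^2$. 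Matching powers of $t_1,t_2$ on both sides of the invariance identity gives
\[
\alpha = \frac{\dim K_p(1,-1)}{\deg\widetilde\Delta_{k-p}},\qquad \beta = -\frac{\dim K_p(1,-1)}{\deg\Delta_{p+1}},
\]
so the sign of $\beta$ is opposite to what appears in the $(1,1)/(-1,-1)$ case — this is precisely the point where the weight-sign change enters.

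Finally, I would perform the change of variable $A' = \theta_{u+v}(A)$ in the left-hand side of (2), using $d_1A' = |\det({\theta_{u+v}}_{|K_p(-1,1)})|\,d_1A$, to obtain
\[
\int_{K_p(-1,1)} g(\theta_{u+v}(A))\,dA = \frac{\lambda^2}{|c|}\,|\widetilde\Delta_{k-p}(u)|^{-\alpha}\,|\Delta_{p+1}(v)|^{-\beta}\int_{K_p(1,-1)} g(A')\,dA'.
\]
The choice $\lambda = \sqrt{|c|}$ both enforces condition (2) (in view of the values of $\alpha,\beta$ above) and is the unique value of $\lambda$ for which condition (2) can hold for every $(u,v)\in K_p(2,0)'\times K_p(0,2)'$; this gives existence and uniqueness simultaneously. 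The equivalence with (2') follows immediately from the fact that ${\theta_{u+v}^2}_{|K_p(-1,1)} = \mathrm{Id}$, already noted in the statement. The only mild obstacle is careful bookkeeping of the weights under $H_0^{p,1}$ and $H_0^{p,2}$ to obtain the correct sign of $\beta$; everything else is a direct transcription of the argument for Proposition \ref{prop-mesures-5}.
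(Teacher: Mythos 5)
Your proposal is correct and follows exactly the route the paper takes: the paper's own proof simply says "same as Proposition \ref{prop-mesures-5}, except that $g_0=h_u(t_1)h_v(t_2)$ acts by $t_1t_2^{-1}$ on $K_p(1,-1)$", and you have spelled out precisely that argument, arriving at the right exponents $\alpha=\dim K_p(1,-1)/\deg\widetilde\Delta_{k-p}$ and $\beta=-\dim K_p(1,-1)/\deg\Delta_{p+1}$. One cosmetic slip: it is the sign of $\alpha$ (the exponent of $\widetilde\Delta_{k-p}(u)$) that flips relative to the $(1,1)/(-1,-1)$ case, not that of $\beta$, which stays negative in both; your computed values and the final formula are nonetheless correct.
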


\begin{proof}\hfill

The proof is almost the same as for the preceding Proposition. The only change comes from the fact that the element $g_{0}=h_{u}(t_{1})h_{v}(t_{2})$ acts by $t_{1}t_{2}^{-1}$ on $K_{p}(1,-1)$. 
\end{proof}

\begin{rem} \label{rem-mesures}

It is clear that Proposition \ref{prop-mesures-1} applies   to the graded algebras $K_{p}(-2,0)\oplus K_{p}(0,0)\oplus K_{p}(2,0)$ and $K_{p}(0,-2)\oplus K_{p}(0,0)\oplus K_{p}(0,2)$. 

Therefore  there exists a unique pair of measures $(du,du')$ \index{du@$(du,du')$}on $K_{p}(2,0)\times K_{p}(-2,0)$ such that 

1) These measures are dual for the Fourier transform $\cF_{2,0}$ between $\cS(K_{p}(2,0))$ and $\cS(K_{p}(-2,0))$,

2)  $\forall f\in L^1(K_{p}(-2,0) $ and $\forall u_{0}\in  K_{p}(2,0)'$
$$\int_{K_{p}(2,0)}f(\theta_{u_{0}}(u))du=|\widetilde\Delta_{k-p}(u_{0})|^{\frac{2\dim K_{p}(2,0)}{\deg \widetilde \Delta_{k-p}}}\int_{K_{p}(-2,0)}f(u')du'.$$

Similarly there exists a unique pair of measures $(dv,dv')$ \index{dv@$(dv,dv')$} on $K_{p}(0,2)\times K_{p}(0,-2)$ such that 

1) These measures are dual for the Fourier transform $\cF_{0,2}$ between $\cS(K_{p}(0,2))$ and $\cS(K_{p}(0,-2))$  

2)  $\forall f\in L^1(K_{p}(0,-2) $ and $\forall v_{0}\in  K_{p}(0,2)'$
$$\int_{K_{p}(0,2)}f(\theta_{v_{0}}(v))dv=|\Delta_{p+1}(v_{0})|^{ \frac{2\dim K_{p}(0,2)}{\deg   \Delta_{p+1}}}\int_{K_{p}(0,-2)}f(v')dv'.$$

  \end{rem}
  
  \begin{definition}\label{def-normalisation} {\bf Normalization of the measures}\hfill

  From now on we will always make the following choice of measures:
 
  a) On $V^+\times V^-$ we take the measures $(dX,dY)$ which were defined in Proposition \ref{prop-mesures-1}.
  
  b) On $K_{p}(2,0)\times K_{p}(-2,0)$ and on $K_{p}(0,2)\times K_{p}(0,-2)$ respectively, we take the pairs measures $(du,du')$ and $(dv,dv')$ which were defined in Remark \ref{rem-mesures}.
  
  c) On $K_{p}(1,1)\times K_{p}(-1,-1)$ we take the pair of measures $(dw,dw')$ which were defined in Proposition \ref{prop-mesures-5}.
  
  d) On $K_{p}(-1,1)\times K_{p}(1,-1)$ we take the pair of measures $(dA,dA')$ which were defined in Proposition \ref{prop-mesures-6}.

  \end{definition}
  
  \begin{prop}\label{prop-decomposition-dX}\hfill
  
    a) $dX=dudvdw $

   b)  $dY=du'dv'dw'$
  \end{prop}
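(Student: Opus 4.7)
The strategy is to invoke the uniqueness statement in Proposition \ref{prop-mesures-1}: I will show that the product measures $du\,dv\,dw$ on $V^{+}$ and $du'\,dv'\,dw'$ on $V^{-}$ satisfy both characterizing properties (duality under $\cF$, and the compatibility identity \eqref{eq-int-V+/V-}), which forces them to equal $dX$ and $dY$ respectively.

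\emph{Step 1: Duality under $\cF$.} Using the weights under $(H_{0}^{p,1},H_{0}^{p,2})$, the form $b$ pairs $K_{p}(2,0)$ with $K_{p}(-2,0)$, $K_{p}(0,2)$ with $K_{p}(0,-2)$, and $K_{p}(1,1)$ with $K_{p}(-1,-1)$ nondegenerately, while all other cross-pairings vanish. Hence $\cF$ factors as the tensor product $\cF_{2,0}\otimes\cF_{1,1}\otimes\cF_{0,2}$, and since each factor measure is self-dual for the corresponding Fourier transform (by Proposition \ref{prop-mesures-5} and Remark \ref{rem-mesures}), the product measures are dual under $\cF$.

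\emph{Step 2: The compatibility identity.} I would test condition \eqref{eq-int-V+/V-} at a specific $X_{0}=u_{0}+v_{0}$ with $u_{0}\in K_{p}(2,0)'$ and $v_{0}\in K_{p}(0,2)'$. The first crucial observation is that $\theta_{u_{0}+v_{0}}$ respects the three-term decomposition of $V^{+}$: on $K_{p}(2,0)$ it agrees with $\theta_{u_{0}}$ (since $v_{0}$ and $\iota(v_{0})$ act trivially, as all commutators land in weight spaces that vanish in $\widetilde{\go g}$ by $\bf{(H_1)}$); on $K_{p}(0,2)$ it agrees with $\theta_{v_{0}}$ for the symmetric reason; and any element of $K_{p}(1,1)$ is primitive of weight $2$ for the $\go{sl}_{2}$-triple $(\iota(u_{0}+v_{0}),H_{0},u_{0}+v_{0})$, so it gets sent into $K_{p}(-1,-1)$. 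Thus the integral factorizes along the three blocks.

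\emph{Step 3: Computing the Jacobian.} Applying Remark \ref{rem-mesures} on the two rank-one pieces and Proposition \ref{prop-mesures-5} on the middle piece, the combined Jacobian factor is
\[
|\widetilde{\Delta}_{k-p}(u_{0})|^{\frac{2\dim K_{p}(2,0)+\dim K_{p}(1,1)}{\deg\widetilde{\Delta}_{k-p}}}\;|\Delta_{p+1}(v_{0})|^{\frac{2\dim K_{p}(0,2)+\dim K_{p}(1,1)}{\deg\Delta_{p+1}}}.
\]
Using the normalization $\Delta_{0}(u+v)=\widetilde{\Delta}_{k-p}(u)\Delta_{p+1}(v)$ and the degree identity $\deg\Delta_{0}=\deg\widetilde{\Delta}_{k-p}+\deg\Delta_{p+1}=\kappa(k+1)$, the target factor $|\Delta_{0}(u_{0}+v_{0})|^{2m}$ equals $|\widetilde{\Delta}_{k-p}(u_{0})|^{2m}|\Delta_{p+1}(v_{0})|^{2m}$. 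Matching exponents reduces to the two dimension identities
\[
(k+1)\bigl(2\dim K_{p}(2,0)+\dim K_{p}(1,1)\bigr)=2(p+1)\dim V^{+},
\]
and its symmetric counterpart with $p$ replaced by $k-p-1$. Using $\dim K_{p}(2,0)=(p+1)(\ell+pd/2)$, $\dim K_{p}(1,1)=(p+1)(k-p)d$ and $\dim V^{+}=(k+1)(\ell+kd/2)$ (Proposition \ref{propdimV}), a direct computation confirms both identities; this is the main (but purely arithmetic) obstacle.

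\emph{Step 4: Conclusion.} By the uniqueness clause in Proposition \ref{prop-mesures-1}, $du\,dv\,dw=dX$. For the second identity, I would run the same argument with the roles of $V^{+}$ and $V^{-}$ exchanged, replacing $(u_{0},v_{0})$ by $(\iota(u_{0}),\iota(v_{0}))$ and using that $(du',dv',dw')$ is the pair canonically associated (via the same uniqueness) to the dual side; the corresponding compatibility identity, with $|\nabla_{0}(\iota(u_{0}+v_{0}))|^{2m}=|\Delta_{0}(u_{0}+v_{0})|^{-2m}$, holds by exactly the same dimension computation.
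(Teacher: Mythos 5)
Your proposal is correct and follows essentially the same route as the paper: define the product measures, verify the two characterizing conditions of Proposition \ref{prop-mesures-1} (duality for $\cF$, plus the compatibility identity tested at $X_{0}=u_{0}+v_{0}$ using the block decomposition of $\theta_{u_{0}+v_{0}}$ together with Remark \ref{rem-mesures} and Proposition \ref{prop-mesures-5}), and conclude by uniqueness; your exponent-matching identity is just a rearrangement of the paper's computation that both exponents equal $2m$. The only superfluous part is your second pass for b): since the uniqueness in Proposition \ref{prop-mesures-1} is a statement about the \emph{pair} of measures, verifying the two conditions once already yields $du\,dv\,dw=dX$ and $du'\,dv'\,dw'=dY$ simultaneously.
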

  
  \begin{proof}\hfill
  
    Define $dX\coloneqq dudvdw$ and $dY\coloneqq du'dv'dw'$. It is now enough to prove that the pair $(dX,dY)$ satisfies the two conditions of Proposition \ref{prop-mesures-1}.
    
    1) It is clear from the definitions that the measures $dX,dY$ are dual for $\cF$.  
    
    2) Let $f\in \cS(V^+)$, $u_{0}\in K_{p}(2,0)'$, $v_{0}\in K_{p}(0,2)'$. Then we have, using Remark \ref{rem-mesures} and Proposition \ref{prop-mesures-5}:
    \vskip 5pt
    $\begin{array}{rl}
   {}& \displaystyle \int_{V^+}f(\theta_{u_{0}+v_{0}}(u+v+w))dudvdw=\int_{V^+}f(\theta_{u_{0}}(u)+\theta_{v_{0}}(v)+\theta_{u_{0}+v_{0}}(w))dudvdw\cr
    {}&\displaystyle =|\widetilde \Delta_{k-p}(u_{0})|^{\frac{2\dim K_{p}(2,0)+\dim K_{p}(1,1)}{\deg \widetilde \Delta_{k-p}}}|\Delta_{p+1}(v_{0})|^{\frac{2\dim K_{p}(0,2)+\dim K_{p}(1,1)}{\deg   \Delta_{p+1}}}\int_{V^-} f(u'+v'+w')du'dv'dw'.
    \end{array}$
   \vskip 5pt
   
    From Proposition \ref{propdimV}   we know that $\dim V^+=(k+1)(\ell +\frac{kd}{2})$, $\dim K_{p}(2,0)=(p+1)(\ell +\frac{pd}{2})$, $\dim K_{p}(0,2)=(k-p)(\ell +\frac{(k-p-1)d}{2})$. Moreover as $K_{p}(1,1)=\oplus_{0\leq i\leq p; p+1\leq j\leq k} E_{i,j}(1,1)$, we obtain that $\dim K_{p}(1,1)=(p+1)(k-p)d$. We also have $\deg \widetilde \Delta_{k-p}=\kappa(p+1) $ and $\deg \Delta_{p+1}=\kappa(k-p)$.
   A simple calculation shows then that 
   $${\frac{2\dim K_{p}(2,0)+\dim K_{p}(1,1)}{\deg \widetilde \Delta_{k-p}}}={\frac{2\dim K_{p}(0,2)+\dim K_{p}(1,1)}{\deg   \Delta_{p+1}}}=\frac{2}{\kappa}(\ell +\frac{kd}{2})=2m.$$
   Finally we get
   $$ \int_{V^+}f(\theta_{u_{0}+v_{0}}(u+v+w))dudvdw=|\Delta_{0}(u_{0}+v_{0})|^{2m} \int_{V^-}f( u'+v'+w')du'dv'dw'$$
   
   \end{proof}


\subsection{Mean functions}\label{sec:section-MeanFunctions}
\hfill

 We will first define the mean functions $T^+_{f}(u,v)$ ($u\in K_{p}(2,0), v\in K_{p}(0,2)$) and $T^-_{f}(u',v')$ ($u'\in K_{p}(-2,0), v\in K_{p}(0,-2)$) which were introduced by I. Muller (\cite {Mu} Definition 4.3.1 p. 83) and used in the real case by N. Bopp and H. Rubenthaler (\cite{B-R} p.104).   \vskip 5pt
  Recall that $K_{p}(\pm 2,0)'$  and  $K_{p}(0,\pm 2)'$ are the sets of generic elements  in $K_{p}(\pm 2,0)$ and  $K_{p}(0,\pm 2)$, respectively.
 \vskip 5pt
  In the rest of the section we adopt the following convention concerning the variables:
  
   $u\in K_{p}(2,0)$, $v\in K_{p}(0,2)$, $u'\in K_{p}(-2,0)$, $v'\in K_{p}(0,-2)$, $A\in K_{p}(-1,1)$, $A'\in K_{p}(1,-1)$, $w\in K_{p}(1,1)$, $w'\in K_{p}(-1,-1)$, $X\in V^+$, $Y\in V^-$.

 \begin{lemme} \label{lem-formule-int}\hfill
 
 There exist positive constants $\al,\be,\ga$ and $\de$ which depend on the choice of Lebesgue measures on the different spaces such that the following assertions hold.
 
 1)  If  $v\in K_{p}(0,2)'$, then  for all $f\in L^1(K_{p}(1,1))$ one has :
 $$\int_{K_{p}(1,1)}f(w)dw=\alpha|\Delta_{p+1}(v)|^{\frac{\dim K_{p}(1,-1)}{ \deg \Delta_{p+1}}}\int_{K_{p}(1,-1)}f([A',v])dA'.$$
 
 2)  If $v\in K_{p}(0,2)'$, then  for all $g\in L^1(K_{p}(-1,1))$ one has :
 
 $$\int_{K_{p}(-1,1)}g(A)dA=\beta|\Delta_{p+1}(v)|^{\frac{\dim K_{p}(1,-1)}{ \deg \Delta_{p+1}}}\int_{K_{p}(-1,-1)}g([w',v])dw' .$$
 
   3) If $u'\in K_{p}(-2,0)'$, then  for all $f\in L^1(K_{p}(-1,-1))$ one has :
 $$\int_{K_{p}(-1,-1)}f(w')dw'=\gamma|\nabla_{k-p}(u')|^{\frac{\dim K_{p}(1,-1)}{\deg \nabla_{k-p}}}\int_{K_{p}(1,-1)}f([A',u'])dA'.$$ 
 
  4)   If $u'\in K_{p}(-2,0)'$, then  for all $g\in L^1(K_{p}(-1,1))$ one has :
 $$\int_{K_{p}(-1,1)}g(A)dA=\delta|\nabla_{k-p}(u')|^{\frac{\dim K_{p}(1,-1)}{\deg \nabla_{k-p}}}\int_{K_{p}(1,1)}g([w,u'])dw.  $$
 
 (Remember that $\dim K_{p}(1,-1)=\dim K_{p}(-1,1)=\dim K_{p}(1,1)=\dim K_{p}(-1,-1)$).

 \end{lemme}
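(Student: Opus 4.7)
All four assertions have the form of the change-of-variable formula $dw=|{\det\phi}|\,dA'$ for a linear map $\phi$ of the shape ``bracket with a generic element''. I treat assertion~(1) in detail; the other three follow by the same three-step argument with the roles of $(H_0^{p,1},H_0^{p,2})$ and of $v$, $u'$ permuted.

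Define $\phi_v\colon K_p(1,-1)\to K_p(1,1)$ by $\phi_v(A')=[A',v]$; a weight count under $(H_0^{p,1},H_0^{p,2})$ confirms the image lies in $K_p(1,1)$. The first step is to show that $\phi_v$ is an isomorphism whenever $v\in K_p(0,2)'$. Applying Proposition~\ref{prop-generiques-cas-regulier} to the regular graded Lie algebra $K_p(0,-2)\oplus K_p(0,0)\oplus K_p(0,2)$ furnished by Corollary~\ref{cor-gA} (graded by $H_0^{p,2}$), the genericity of $v$ yields $v'\in K_p(0,-2)$ such that $(v',H_0^{p,2},v)$ is an $\go{sl}_2$-triple. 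The subspace $K_p(1,-1)\oplus K_p(1,1)$ is stable under this triple, and under $H_0^{p,2}$ it splits as the $(-1)$- and $(+1)$-weight spaces, both of dimension $(p+1)(k-p)d$ by Proposition~\ref{prop-memedimension}. A finite-dimensional $\go{sl}_2$-module supported only on weights $\pm 1$ must be a direct sum of copies of the standard two-dimensional representation, on each of which $\ad v$ sends the lowest-weight vector bijectively to the highest; hence $\phi_v$ is a linear isomorphism.

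Next I compute $\det\phi_v$ relative to fixed bases of $K_p(1,-1)$ and $K_p(1,1)$. For $g\in L_p$, which stabilizes each $K_p(i,j)$, the equivariance $\phi_{g\cdot v}\circ g=g\circ\phi_v$ yields $\det\phi_{g\cdot v}=\det(g|_{K_p(1,1)})\det(g|_{K_p(1,-1)})^{-1}\det\phi_v$, so $\det\phi_v$ is a polynomial relative invariant of the prehomogeneous vector space $(L_p,K_p(0,2))$, whose fundamental relative invariant is $\Delta_{p+1}$ (cf.\ Lemma~\ref{lem-K(0,2)-abs-irr} and Definition~\ref{def-invariants}). Therefore $\det\phi_v=c\,\Delta_{p+1}(v)^\alpha$ for some $c\in F^*$ and $\alpha\in\N$, and rescaling $v\mapsto tv$ (so that $\phi_{tv}=t\phi_v$ while $\Delta_{p+1}(tv)=t^{\kappa(k-p)}\Delta_{p+1}(v)$) forces $\alpha=\dim K_p(1,-1)/\deg\Delta_{p+1}$, which is exactly the exponent in the statement. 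The substitution $w=\phi_v(A')$ then gives assertion~(1) with $\alpha=|c|$.

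Parts~(2), (3), (4) follow by the same three-step argument, using respectively the bracket maps $w'\mapsto[w',v]$ from $K_p(-1,-1)$ onto $K_p(-1,1)$, $A'\mapsto[A',u']$ from $K_p(1,-1)$ onto $K_p(-1,-1)$, and $w\mapsto[w,u']$ from $K_p(1,1)$ onto $K_p(-1,1)$; the controlling fundamental relative invariant becomes $\nabla_{k-p}$ in parts~(3) and~(4). The main potential obstacle in all four cases is the bijectivity of the bracket map, and the same device settles it each time: the relevant pair of $K_p$-spaces forms a sub-$\go{sl}_2$-module supported on the two opposite weights $\pm 1$ for either $H_0^{p,1}$ or $H_0^{p,2}$, which forces the module to be a sum of standard two-dimensional representations on which the bracket is bijective.
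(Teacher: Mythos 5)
Your proof is correct and follows essentially the same route as the paper: establish that the bracket map with the generic element is a bijection via $\go{sl}_2$-representation theory, identify the resulting Jacobian as a relative invariant of the prehomogeneous space $(L_p,K_p(0,2))$ (resp. $(L_p,K_p(-2,0))$), and pin down the exponent by a scaling argument. The only cosmetic difference is that you work with the polynomial $\det\phi_v$ and compare degrees, whereas the paper works directly with the positive constant $\alpha(v)$ and evaluates it on the element $h_{I_{p,2}^+}(t)$; these are interchangeable.
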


\begin{proof}: 

We only prove  assertion 1).  The proofs of 2), 3) and 4) are similar.

If $v \in K_{p}(0,2)'$ the map :
$$\ad(v):K_{p}(1,-1)\longrightarrow K_{p}(1,1)$$
is an isomorphism  (classical result for  $\go{sl}_{2}$-modules). Therefore there exists a non zero positive constant  $\alpha(v)$ such that 
$$\int_{K_{p}(1,1)}f(w)dw=\alpha(v)\int_{K_{p}(1,-1)}f([A',v])dA'  $$
On the other hand the subspace  $K_{p}(1,-1)$ is stable under   $L_p={\cal Z}_{G}(F.H_{0,p}^1\oplus F.H_{0,p}^2)$. This implies that there exists  a character $\chi_{_{1}}$ of  $L_p$ such that for  $F\in L^1(K_{p}(1,-1))$ and  $g\in L_p$ one has:
$$\int_{K_{p}(1,-1)}F(gA')dA'=\chi_{1}(g)\int_{K_{p}(1,-1)}F(A')dA'.$$
Moreover, the set $K_{p}(0,2)'$   is also  $L_p$-invariant.
Hence:
$$ \begin{array}{rll}
 \displaystyle\int_{K_{p}(1,1)}f(w)dw&=& \displaystyle\alpha(gv)\int_{K_{p}(1,-1)}f([A',gv])dA'\cr
{}&=& \displaystyle\alpha(gv)\int_{K_{p}(1,-1)}f(g[g^{-1}A',v])dA'\cr
{}&=& \displaystyle\alpha(gv)\chi_{_{1}}^{-1}(g)\int_{K_{p}(1,-1)}f(g[A',v])dA'\cr
{}&=& \displaystyle\alpha(gv)\chi_{_{1}}^{-1} (g)\frac{1}{\alpha(v)}\int_{K_{p}(1,1)}f(g.w)dw.\cr
\end{array}$$

But there is also a character  $\chi_{_{2}}$ of  $L_p$ such that 
$$\int_{K_{p}(1,1)}f(g.w)dw= \chi_{_{2}}(g)\int_{K_{p}(1,1)}f(w)dw.$$
Hence
$$\alpha(gv)=\chi_{_{1}} (g) \chi_{_{2}}(g)^{-1}\alpha(v). $$

This means that $\alpha(v)$ is a positive relative invariant on $K_{p}(0,2)$ under the action of  $L_p$, hence there exist $n\in \Z$ and $\alpha\in{ \R^*}^+$ such that
$$\alpha(v)=\alpha|\Delta_{p+1}(v)|^{n}.$$

 To compute $n$, let us take $g=  h_{I_{p,2}^+}(t)$ as defined in  Definition \ref{def-hx}. Then $gv=t^{2}v$.

Thus, we have
 $$\begin{array}{rll}
 \displaystyle\int_{K_{p}(1,1)}f(w)dw&= &\displaystyle\alpha(v) \int_{K_{p}(1,-1)}f([A', v])dA'\cr
 {}&= &\displaystyle\alpha(t^{2}v)\int_{K_{p}(1,-1)}f([A',t^{2}v])dA'\cr{}&=&\displaystyle\alpha(t^{2}v)\int_{K_{p}(1,-1)}f([t^{2}A',v])dA'\cr
{}&=&\displaystyle\alpha(t^{2}v)t^{-2\dim(K_{p}(1,-1))}\int_{K_{p}(1,-1)}f([A',v])dA'\cr
 \end{array}$$
 Hence $\alpha(t^{2}v)=t^{2\dim(K_{p}(1,-1))}\alpha(v)$. 
We obtain then: 
 $$\alpha|\Delta_{p+1}(t^{2}v)|^{n}=\alpha t^{2n \deg \Delta_{p+1}}|\Delta_{p+1}(v)|^{n} = \alpha t^{2\dim K_{p}(1,-1)}|\Delta_{p+1}(v)|^{n}.$$
Therefore $n=\frac{\dim K_{p}(1,-1)}{ \deg \Delta_{p+1}}$ and  $\alpha(v)=\alpha| \Delta_{p+1}(v)|^{\frac{\dim K_{p}(1,-1)}{ \deg \Delta_{p+1}}}$

This ends the proof of assertion 1).

\end{proof}
\begin{prop} \label{Prop-constantes=1}\hfill

If the measures are normalized as in Definition \ref{def-normalisation}, then the four constants $\alpha,\beta,\delta,\gamma$ occuring in Lemma \ref{lem-formule-int} are equal to $1$.

\end{prop}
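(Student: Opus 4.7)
The plan is to combine two sources of information: the Jacobian formulas obtained from the two possible factorizations $\theta_{u+v}=\theta_u\theta_v=\theta_v\theta_u$ (which commute because $u,v,\iota(u),\iota(v)$ mutually commute, since $[V^\pm,V^\pm]=0$ and the relevant root-space brackets vanish for weight reasons), together with the normalizations of Propositions \ref{prop-mesures-5} and \ref{prop-mesures-6}, and the Plancherel identity for the Fourier transform $\cF_{1,1}$.

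First I would exploit the two factorizations. Restricting $\theta_{u+v}$ to $K_p(-1,-1)\to K_p(1,1)$, Proposition \ref{prop-mesures-5} gives its Jacobian (with respect to the normalized measures $(dw',dw)$) as $|\De_{p+1}(v)|^{n_v}|\widetilde\De_{k-p}(u)|^{m_u}$, where $n_v=\dim K_p(1,1)/\deg\De_{p+1}$ and $m_u=\dim K_p(1,1)/\deg\widetilde\De_{k-p}$. Using the factorization $\theta_u\theta_v$, which yields $\theta_{u+v}|_{K_p(-1,-1)}=\pm\ad u\circ\ad v$ via the $\go{sl}_2$ action on weight spaces, this Jacobian splits as $J(\ad v:(dw',dA))\cdot J(\ad u:(dA,dw))$. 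Lemma \ref{lem-formule-int} Formula~2 identifies the first factor as $\beta|\De_{p+1}(v)|^{n_v}$; the second factor, using that $\ad u$ and $\ad\iota(u)$ are mutually inverse on these spaces (a direct verification from $[u,\iota(u)]=H_0^{p,1}$ together with vanishing of $\ad u$ on $K_p(1,1)$ by weight considerations), equals the reciprocal of the Jacobian of $\ad\iota(u):(K_p(1,1),dw)\to(K_p(-1,1),dA)$, which Formula~4 identifies as $\delta|\widetilde\De_{k-p}(u)|^{-m_u}$. This forces $\beta=\delta$. The other factorization $\theta_v\theta_u$ on the same space gives $\alpha=\gamma$, and applying Proposition \ref{prop-mesures-6} to $\theta_{u+v}:K_p(1,-1)\to K_p(-1,1)$ in its two factorizations yields $\alpha\delta=1$ and $\beta\gamma=1$. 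At this stage the four constants are determined only up to one positive scalar, since these relations leave $\alpha=\gamma=\beta^{-1}=\delta^{-1}$ free.

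The missing constraint comes from Fourier duality. I would apply the Plancherel identity $\|f\|^2_{L^2(K_p(1,1),dw)}=\|\cF_{1,1}(f)\|^2_{L^2(K_p(-1,-1),dw')}$ to an arbitrary $f\in\cS(K_p(1,1))$. On the left-hand side, specializing Formula~1 to $v=I^+_{p,2}$, for which $\De_{p+1}(I^+_{p,2})=1$ (this follows from $\De_0(I^+)=1$ and the product decomposition $\De_0(u+v)=\widetilde\De_{k-p}(u)\De_{p+1}(v)$), one obtains $\alpha\int|h(A')|^2\,dA'$ with $h(A')=f([A',I^+_{p,2}])$. For the right-hand side, a direct computation using the $\ad$-invariance of $b$, namely $b(\ad I^+_{p,2}(A'),w')=-b(A',\ad I^+_{p,2}(w'))$, establishes the crucial identity $\cF_{1,1}(f)(w')=\alpha\,\cF_{-1,1}(h)\bigl(\ad I^+_{p,2}(w')\bigr)$. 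Then Formula~3 at $u'=I^-_{p,1}$ (for which $\nabla_{k-p}(I^-_{p,1})=1$) together with the Jacobi-identity calculation $\ad I^+_{p,2}([A',I^-_{p,1}])=-\ad I^-_{p,1}\circ\ad I^+_{p,2}(A')$ and a change of variables on $K_p(-1,1)$ with Jacobian $\alpha\delta$ (provided by Formulas~1 and~4 specialized to the reference elements), followed by Plancherel for $\cF_{-1,1}$, gives $\alpha\int|h|^2\,dA'=\gamma\alpha^2(\alpha\delta)^{-1}\int|h|^2\,dA'$, i.e.\ $\gamma=\delta$.

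Combining this with $\alpha=\gamma$, $\beta=\delta$ and $\alpha\delta=1$ forces $\alpha=\beta=\gamma=\delta$ and $\alpha^2=1$, so by positivity $\alpha=\beta=\gamma=\delta=1$. The main obstacle is the careful bookkeeping in the derivation of $\cF_{1,1}(f)(w')=\alpha\,\cF_{-1,1}(h)(\ad I^+_{p,2}(w'))$: one must track signs coming from $[A',v]=-\ad v(A')$, from the $\ad$-invariance of $b$, and from the Jacobi identity, to ensure no phantom constants slip in. Once this identification is in hand, the rest is a direct application of Plancherel and the already-derived Jacobian identifications.
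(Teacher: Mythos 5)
Your proposal is correct and follows essentially the same two-step strategy as the paper: first the factorization $\theta_{u+v}=\ad u'\circ\ad v=\ad v\circ\ad u'$ on the off-diagonal spaces, combined with the normalizations of Propositions \ref{prop-mesures-5} and \ref{prop-mesures-6} and the four formulas of Lemma \ref{lem-formule-int}, pins the constants down to one free positive scalar; then a Fourier-duality identity for $\cF_{1,1}$ (you use Plancherel where the paper uses the inversion formula $\overline{\cF}_{1,1}\circ\cF_{1,1}=\mathrm{Id}$, an equivalent tool) closes the loop via the same key identity relating $\cF_{1,1}(f)$ to the transform of $A'\mapsto f([A',v])$. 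The sign bookkeeping you flag is exactly the delicate point in the paper's proof as well, and your relations $\alpha=\gamma$, $\beta=\delta$, $\alpha\delta=\beta\gamma=1$ carry the same information as the paper's four product relations.
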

\begin{proof}\hfill

Let $u\in K_{P}(2,0)'$, $v\in K_{p}(0,2)'$, and define $u'=\iota(u)$, $v'=\iota(v)$. If  $A\in K_{p}(1,-1)$, then $\ad u A=\ad v' A=0$ and hence $\ad v\,\ad u' A= \ad u' \,\ad v A=\ad u' \ad(u+v) A=\ad [u',u+v]A +\ad(u+v)\ad u'A=A+ \ad(u+v)\ad u' A=A+ \ad(u+v)\ad (u'+v') A=\theta_{u+v}(A)$ (for the last equality see Theorem \ref{th-invol}). Hence we have 
\beq\label{eq-theta-1}\forall  A\in K_{P}(1,-1),\,\,\ad v\,\ad u' A= \ad u' \,\ad v A=\theta_{u+v}(A).\eeq

It is equivalent to say  that the following diagram is commutative:

$$\xymatrix{
&K_{p}(-1,1)& \\
K_{p}(-1,-1)\ar[ur]^*+{\ad v}&&K_{p}(1,1)\ar[ul]_*+{\ad u'}\\
&\ar[ul]^*+{\ad u'}K_{p}(1,-1)\ar[uu]|*+{\theta_{u+v}}\ar[ur]_*+{\ad v}&\\
}$$

Take now $w'\in K_{p}(-1,-1)$. Then (recall that  $\te_{u+v}(v)=v'$)

$\ad \theta_{u+v}(v) \ad \, v \, w'=\ad [\theta_{u+v}(v),v] \,w'+\ad\,v\, \ad\theta_{u+v}(v) w'=-w'+\ad\,v\,\ad\theta_{u+v}(v) w'=-w'.$

Hence \begin{equation}\label{eq-theta-2}
  \ad v\circ\theta_{u+v}\circ\ad v\, w'=\theta_{u+v}\circ\theta_{u+v}\circ\ad v\circ\theta_{u+v}\circ \ad v\, w'=\theta_{u+v}\circ\ad \theta_{u+v}(v)\circ\ad v\, w'  =-\theta_{u+v}(w'). 
\end{equation}
Similarly one can also prove that for all $w\in K_{p}(1,1)$:
\begin{equation}\label{eq-theta-3}
\ad u'\circ\theta_{u+v}\circ\ad u' w=-\theta_{u+v}(w)
\end{equation}

-- { \bf Step 1}. We will first prove that $\alpha\delta=\alpha\beta=\beta\gamma=\gamma\delta=1$.

Let $u\in K_{p}(2,0)'$ and $v\in K_{p}(0,2)'$.   

Using successively relation $4)$ and $1)$ in Lemma \ref{lem-formule-int}, the relation $\nabla_{k-p}(u')=\widetilde\Delta_{k-p}(u)^{-1}$ and equation (\ref{eq-theta-1}) we obtain:
$$\begin{array}{rl}
\displaystyle \int_{K_{p}(-1,1)}g(A)dA&\displaystyle =\delta|\nabla_{k-p}(u')|^{\frac{\dim K_{p}(1,-1)}{\deg \nabla_{k-p}}}\int_{K_{p}(1,1)}g(-\ad u' w)dw \cr
{}&\displaystyle =\alpha\delta|\nabla_{k-p}(u')|^{\frac{\dim K_{p}(1,-1)}{\deg \nabla_{k-p}}} |\Delta_{p+1}(v)|^{\frac{\dim K_{p}(1,-1)}{ \deg \Delta_{p+1}}}\int_{K_{p}(1,-1)}g(\ad u' \ad v A')dA'\cr
{}&\displaystyle =\alpha\delta|\widetilde\Delta_{k-p}(u)|^{-\frac{\dim K_{p}(1,-1)}{\deg \nabla_{k-p}}} |\Delta_{p+1}(v)|^{\frac{\dim K_{p}(1,-1)}{ \deg \Delta_{p+1}}}\int_{K_{p}(1,-1)}g(\ad u' \ad v A')dA'\cr
{}&\displaystyle =\alpha\delta|\widetilde\Delta_{k-p}(u)|^{-\frac{\dim K_{p}(1,-1)}{\deg \nabla_{k-p}}} |\Delta_{p+1}(v)|^{\frac{\dim K_{p}(1,-1)}{ \deg \Delta_{p+1}}}\int_{K_{p}(1,-1)}g(\theta_{u+v}(A'))dA'

 \end{array}$$
 Then Proposition \ref{prop-mesures-6} implies that $\alpha\delta=1$.
 
 Using successively relation $1)$ in Lemma \ref{lem-formule-int}, Proposition \ref{prop-mesures-6}, equation  $2)$ in Lemma \ref{lem-formule-int}, and finally equation \eqref{eq-theta-2} above we obtain:
 
 $$\begin{array}{rl}
\displaystyle \int_{K_{p}(1,1)}f(w)dw&\displaystyle =\alpha|\Delta_{p+1}(v)|^{\frac{\dim K_{p}(1,-1)}{\Delta_{p+1}}}\int_{K_{p}(1,-1)}f([A',v])dA' \cr
{}&\displaystyle =\alpha |\Delta_{p+1}(v)|^{\frac{\dim K_{p}(1,1)}{\Delta_{p+1}}} |\Delta_{p+1}(v)|^{-\frac{\dim K_{p}(1,1)}{\Delta_{p+1}}}|\widetilde \Delta_{k-p}(u)|^{\frac{\dim K_{p}(1,1)}{\deg \widetilde \Delta_{k-p}}}\int_{K_{p}(-1,1)}f([\theta_{u+v}(A),v])dA\cr
{}&\displaystyle =\alpha\beta|\widetilde \Delta_{k-p}(u)|^{\frac{\dim K_{p}(1,1)}{\deg \widetilde \Delta_{k-p}}} |\Delta_{p+1}(v)|^{\frac{\dim K_{p}(1,-1)}{ \deg \Delta_{p+1}}}\int_{K_{p}(-1,-1)}f([\theta_{u+v}([w',v]),v])dw'\cr
{}&\displaystyle =\alpha\beta|\widetilde \Delta_{k-p}(u)|^{\frac{\dim K_{p}(1,1)}{\deg \widetilde \Delta_{k-p}}} |\Delta_{p+1}(v)|^{\frac{\dim K_{p}(1,-1)}{ \deg \Delta_{p+1}}}\int_{K_{p}(-1,-1)}f(-\theta_{u+v}(w'))dw'\cr

 \end{array}$$
  Then Proposition \ref{prop-mesures-5} implies that $\alpha\beta=1.$
 
 The proofs that $\beta\gamma=1$ and $\gamma\delta=1$ are similar.
 
{\bf - Step 2} We will now prove that  $\alpha \gamma =1$. From Step 1, this will imply  that $\alpha=\beta=\gamma=\delta=1$.

Let $f\in \cS(K_{p}(1,1)$. Then by Lemma \ref{lem-formule-int}, 3), for $w\in K_{p}(1,1)$

\beq\label{eq-f-gamma}\begin{array}{rl}  
\displaystyle f(w)&\displaystyle= (\overline \cF_{1,1}\circ \cF_{1,1}  f)(w)= \int_{K_{p}(-1,-1)}\cF_{1,1}  f(w')\overline{\psi(b(w',w))} dw'\cr
 \displaystyle{}&\displaystyle= \gamma|\nabla_{k-p}(u')|^{\frac{\dim K_{p}(1,-1)}{\deg \nabla_{k-p}}}\int_{K_{p}(1,-1)}\cF_{1,1}  f([A',u'])\overline{\psi(b([A',u'],w))} dA'. 
  \end{array}\eeq
  On the other hand, using Lemma \ref{lem-formule-int}, 1) we have 
  $$\begin{array}{rl}  
 \displaystyle\cF_{1,1}(w')&\displaystyle=   \int_{K_{p}(1,1)}   f(w)\psi(b(w,w')) dw\cr
 \displaystyle{}&\displaystyle=\alpha|\Delta_{p+1}(v)|^{\frac{\dim K_{p}(1,-1)}{\deg \Delta_{p+1}}}\int_{K_{p}(1,-1)} f([A',v])\psi(b(A',[v,w'])) dA'.
  \end{array}$$
  If we define $\varphi(A')=f([A',v])$ the preceding equation becomes:
  
  $$   
 \displaystyle\cF_{1,1}(w') \displaystyle=\alpha|\Delta_{p+1}(v)|^{\frac{\dim K_{p}(1,-1)}{\deg \Delta_{p+1}}}\overline \cF_{1,-1}\varphi(-[v,w'])
  \,\,\,\, \text {  (see Definition \ref{def-Fourier}).}$$
  Then, from equation \eqref {eq-theta-1} we get
  $$\displaystyle\cF_{1,1}([A',u']) =\alpha|\Delta_{p+1}(v)|^{\frac{\dim K_{p}(1,-1)}{\deg \Delta_{p+1}}}\overline \cF_{1,-1}\varphi(\ad v\ad u' A')=\alpha|\Delta_{p+1}(v)|^{\frac{\dim K_{p}(1,-1)}{\deg \Delta_{p+1}}}\overline \cF_{1,-1}\varphi(\theta_{u+v}(A')).$$
  Therefore equation (\ref{eq-f-gamma}) above can be written as
  $$f(w)= \gamma \alpha|\nabla_{k-p}(u')|^{\frac{\dim K_{p}(1,-1)}{\deg \nabla_{k-p}}}|\Delta_{p+1}(v)|^{\frac{\dim K_{p}(1,-1)}{\deg \Delta_{p+1}}}\int_{K_{p}(1,-1)}\overline \cF_{1,-1}\varphi(\theta_{u+v}(A'))\overline{\psi(b(A',\ad u'w))} dA'.$$
  Using Proposion \ref{prop-mesures-6}, 2') (and the relation $\nabla_{k-p}(u') = \widetilde \Delta_{k-p}(u)^{-1}$) we obtain:
   $$\begin{array}{rl}  
    \displaystyle f(w)&\displaystyle=\gamma\alpha \int_{K_{p}(-1,1)}\overline \cF_{1,-1}\varphi(A)\overline{\psi(b(\theta_{u+v}A,\ad u' w))} dA\cr
    \displaystyle{}&\displaystyle=\gamma\alpha \int_{K_{p}(-1,1)}\overline \cF_{1,-1}\varphi(A)\overline{\psi(b( A,\theta_{u+v}\circ\ad u' w))} dA\cr
    \displaystyle{}&\displaystyle=\gamma\alpha \int_{K_{p}(-1,1)}\overline \cF_{1,-1}\varphi(A) \psi(b( A,-\theta_{u+v}\circ\ad u' w)) dA\cr
    \displaystyle{}&\displaystyle=\gamma\alpha \varphi(-\theta_{u+v}\circ\ad u' w)= \gamma\alpha f([-\theta_{u+v}\circ\ad u' w,v])\cr
     \displaystyle{}&\displaystyle=\gamma\alpha f(\ad v\circ\theta_{u+v}\circ\ad u' w).
     \end{array}$$
     
     Define $A'\in K_{p}(1,-1)$ such that $w=\ad v A'$ (always possible). Then (using \eqref{eq-theta-1})
     $$f(w)= \gamma\alpha f(\ad v\circ\theta_{u+v}\circ\ad u'\circ \ad v A')= \gamma\alpha f(\ad v\circ \theta_{u+v}^2 A')   = \gamma\alpha f(\ad v A') =\gamma\alpha f(w).$$
     Hence $\gamma\alpha=1$.   
     
\end{proof}

{\begin{theorem} \label{Formules-int-simples} \hfill

1) For  $f\in L^1(V^+)$ we have
$$\int_{V^+}f(X)dX=\int_{u\in K_{p}(2,0)}\int_{A\in K_{p}(1,-1)}\int_{v\in K_{p}(0,2)}f(e^{\ad A}(u+v))|\Delta_{p+1}(v)|^{\frac{(p+1)d}{\kappa}}du\,dA\,dv,$$

2) For  $g\in L^1(V^-)$ we have 
$$\int_{V^-}g(Y)dY=\int_{u'\in K_{p}(-2,0)}\int_{A\in K_{p}(1,-1)}\int_{v'\in K_{p}(0,-2)}g(e^{\ad A}(u'+v'))|\nabla_{p+1}(u')|^{\frac{(k-p)d}{\kappa}}du'\,dA\,dv'.$$

\end{theorem}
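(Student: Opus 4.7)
\textbf{Proof plan for Theorem \ref{Formules-int-simples}.}

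The plan is to use the product decomposition of the Haar measure established in Proposition \ref{prop-decomposition-dX}, namely $dX=du\,dv\,dw$ on $V^+=K_p(2,0)\oplus K_p(1,1)\oplus K_p(0,2)$, and then perform a change of variable which replaces the middle factor $w\in K_p(1,1)$ by an element $A\in K_p(1,-1)$ through the map $A\mapsto [A,v]$. The first step is to observe that for $A\in K_p(1,-1)$, $u\in K_p(2,0)$ and $v\in K_p(0,2)$, the weight constraints force $[A,u]\in K_p(3,-1)=\{0\}$, while $[A,v]\in K_p(1,1)$, $[A,[A,v]]\in K_p(2,0)$, and $[A,[A,[A,v]]]\in K_p(3,-1)=\{0\}$. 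Consequently the power series defining $e^{\ad A}$ terminates and one obtains the explicit decomposition
\[
e^{\ad A}(u+v) \;=\; \Bigl(u+\tfrac12[A,[A,v]]\Bigr) \;+\; [A,v] \;+\; v
\]
according to the splitting $V^+=K_p(2,0)\oplus K_p(1,1)\oplus K_p(0,2)$.

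Next, I would fix $v$ generic in $K_p(0,2)$ (i.e.\ $\Delta_{p+1}(v)\neq 0$) and view the right-hand side of the theorem as a triple integral over $(u,A,v)$. For fixed $v$, the inner integration over $u\in K_p(2,0)$ is invariant under the translation $u\mapsto u+\tfrac12[A,[A,v]]$, so this shift in the $K_p(2,0)$ component contributes no Jacobian. The nontrivial change of variable is therefore $A\mapsto [A,v]=w$, which is a linear isomorphism $K_p(1,-1)\to K_p(1,1)$ whenever $v$ is generic. Its Jacobian is exactly what Lemma \ref{lem-formule-int}\,(1) computes, with the constant $\alpha=1$ by Proposition \ref{Prop-constantes=1}: one has $\int_{K_p(1,1)}f(w)\,dw=|\Delta_{p+1}(v)|^{\dim K_p(1,-1)/\deg\Delta_{p+1}}\int_{K_p(1,-1)}f([A',v])\,dA'$. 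Using $\dim K_p(1,-1)=(p+1)(k-p)d$ (from the dimensional bookkeeping in the proof of Proposition \ref{prop-decomposition-dX}) and $\deg\Delta_{p+1}=\kappa(k-p)$, the exponent simplifies to $(p+1)d/\kappa$, matching the factor in the statement. Combining with $dX=du\,dv\,dw$ and Fubini yields formula (1) on the open dense set where $v$ is generic; the exceptional locus $\{\Delta_{p+1}(v)=0\}$ has measure zero and causes no trouble.

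For the second formula (2), I would repeat the argument on $V^-=K_p(-2,0)\oplus K_p(-1,-1)\oplus K_p(0,-2)$ with $dY=du'\,dv'\,dw'$. The roles of $u'$ and $v'$ are now swapped: this time $[A,v']\in K_p(1,-3)=\{0\}$ while $[A,u']\in K_p(-1,-1)$ and $[A,[A,u']]\in K_p(0,-2)$, so that $e^{\ad A}(u'+v')=u'+[A,u']+(v'+\tfrac12[A,[A,u']])$. The $K_p(0,-2)$-component is a translation of $v'$ (trivial for the measure $dv'$), and the change of variable $A\mapsto [A,u']=w'$ is a linear isomorphism $K_p(1,-1)\to K_p(-1,-1)$ for $u'$ generic. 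Its Jacobian is given by Lemma \ref{lem-formule-int}\,(3) with $\gamma=1$, producing the factor $|\nabla_{k-p}(u')|^{\dim K_p(1,-1)/\deg\nabla_{k-p}}=|\nabla_{k-p}(u')|^{(k-p)d/\kappa}$.

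The main technical point—and the only place where something could go wrong—is the weight-space vanishing that makes the $e^{\ad A}$-expansion terminate at order two and leave the $u$-component as a pure translation. Once this is in place, everything reduces to applying the clean Jacobian identities of Lemma \ref{lem-formule-int} together with the normalization of constants provided by Proposition \ref{Prop-constantes=1}, which is precisely why the measures were normalized in \S\ref{sec:section-Fourier-Normalization} and Definition \ref{def-normalisation} in the first place.
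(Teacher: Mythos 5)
Your proposal is correct and follows essentially the same route as the paper: expand $e^{\ad A}(u+v)=u+\tfrac12(\ad A)^2v+[A,v]+v$, translate away the $K_p(2,0)$-shift, change variables $A\mapsto[A,v]$ via Lemma \ref{lem-formule-int} with the constants normalized to $1$ by Proposition \ref{Prop-constantes=1}, and conclude with the product decomposition $dX=du\,dv\,dw$ of Proposition \ref{prop-decomposition-dX} (and symmetrically for $V^-$). Your exponent bookkeeping and the identification of the invariant as $\nabla_{k-p}(u')$ in part (2) are consistent with the paper's Definitions \ref{def-invariants} and \ref{def-moyennes}.
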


\begin{proof}

We only prove the first formula. The proof of the second is similar. 
The right hand side is equal to  
$$\int_{u\in K_{p}(2,0)}\int_{A\in K_{p}(1,-1)}\int_{v\in K_{p}(0,2)}f(u+\frac{1}{2}(\ad A)^2v+(\ad A )v+v)|\Delta_{p+1}(v)|^{\frac{(p+1)d}{\kappa}}du\,dA\,dv.$$
We make first the change of variable $u\longrightarrow (u+\frac{1}{2}(\ad A)^2v)$ (which is  possible as  $\frac{1}{2}(\ad A)^2v\in K_{p}(2,0)$) and we get 
$$\int_{u\in K_{p}(2,0)}\int_{A\in K_{p}(1,-1)}\int_{v\in K_{p}(0,2)}f(u +[A,v]+v)|\Delta_{p+1}(v)|^{\frac{(p+1)d}{\kappa}}du\,dA\,dv,$$
and then, using Lemma \ref{lem-formule-int}, Proposition \ref{Prop-constantes=1} and Proposition \ref{prop-decomposition-dX},  we obtain
$$\int_{K_{p}(2,0)}\int_{K_{p}(1,1)}\int_{K_{p}(0,2)}f(u+w+v)du\,dw\,dv=\int_{V^+}f(X)dX.$$ 

\end{proof}

  \begin{definition} \label{def-moyennes}  (Introduced   in (\cite{Mu} Definition 4.3.1) for a special kind of maximal parabolic, see also (\cite{B-R} Definition 4.20).)

  Let  $p\in \{0,\ldots,k\}$   
 1) For  $f\in L^1(V^+)$ and  $(u,v)\in K_{p}(2,0)\times K_{p}(0,2)$, we define
 $$T_{f}^{p,+}(u,v)\index{Tp+@$T_{f}^{p,+}(u,v)$}=|\Delta_{p+1}(v)|^{\frac{(p+1)d}{2\kappa}}\int_{K_{p}(1,-1)}f(e^{\ad A}(u+v))dA.$$
 
  2) For $g\in L^1(V^-)$ and  $(u',v')\in K_{p}(-2,0)\times K_{p}(0,-2)$, we define
 $$T_{g}^{p,-}(u',v') \index{Tp-@$T_{g}^{p,-}(u',v')$}=|\nabla_{k-p}(u')|^{\frac{(k-p)d}{2\kappa}}\int_{K_{p}(1,-1)}f(e^{\ad A}(u'+v'))dA.$$
 \end{definition}
 \vskip 5pt
Then Theorem \ref{Formules-int-simples} can be re-formulated in the following manner:
 
 \vskip 10pt
\begin{theorem}  \label{th-formule-int-T_{f}} \hfill

 1) Let  $f\in L^1(V^+)$, then   the function $(u,v)\longmapsto T_{f}^{p,+}(u,v)|\Delta_{p+1}(v)|^{\frac{(p+1)d}{2\kappa}}$ belongs to $L^1(K_{p}(2,0)\times K_{p}(0,2))$ and 
 
 $$\int_{V^+}f(X)dX= \int_{u\in K_{p}(2,0)}\int_{v\in K_{p}(0,2)}T_{f}^{p,+}(u,v)|\Delta_{p+1}(v)|^{\frac{(p+1)d}{2\kappa}}du\,dv.$$
 
 2) Let   $g\in L^1(V^-)$, then then   the function $(u',v')\longmapsto T_{g}^{p,-}(u',v')|\nabla_{k-p}(u')|^{\frac{(k-p)d}{2\kappa}}$ belongs to $L^1(K_{p}(-2,0)\times K_{p}(0,-2))$ and 
 
 $$\int_{V^-}g(Y)dY= \int_{u'\in K_{p}(-2,0)}\int_{v'\in K_{p}(0,-2)}T_{g}^{p,-}(u',v')|\nabla_{k-p}(u')|^{\frac{(k-p)d}{2\kappa}}du'\,dv'.$$
 \end{theorem}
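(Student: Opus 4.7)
The statement is essentially a repackaging of Theorem~\ref{Formules-int-simples} in the language of the mean functions just introduced in Definition~\ref{def-moyennes}, so my plan is to reduce it entirely to that theorem together with Fubini--Tonelli.

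First, for the integrability assertion in part~1, I would apply Theorem~\ref{Formules-int-simples}~1) to the nonnegative function $|f|\in L^1(V^+)$. This yields
\[
\int_{V^+}|f(X)|\,dX=\int_{K_p(2,0)}\int_{K_p(1,-1)}\int_{K_p(0,2)}|f(e^{\ad A}(u+v))|\,|\Delta_{p+1}(v)|^{\frac{(p+1)d}{\kappa}}\,du\,dA\,dv<\infty.
\]
By Tonelli, the inner integral over $A\in K_p(1,-1)$ is finite for a.e.\ $(u,v)\in K_p(2,0)\times K_p(0,2)$, which allows us to define $T_{f}^{p,+}(u,v)$ for a.e.\ $(u,v)$. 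Multiplying and dividing by $|\Delta_{p+1}(v)|^{\frac{(p+1)d}{2\kappa}}$, the factorization
\[
|\Delta_{p+1}(v)|^{\frac{(p+1)d}{\kappa}}=|\Delta_{p+1}(v)|^{\frac{(p+1)d}{2\kappa}}\cdot|\Delta_{p+1}(v)|^{\frac{(p+1)d}{2\kappa}}
\]
together with the definition of $T_{|f|}^{p,+}$ shows that $(u,v)\mapsto T_{|f|}^{p,+}(u,v)\,|\Delta_{p+1}(v)|^{\frac{(p+1)d}{2\kappa}}$ is in $L^1(K_p(2,0)\times K_p(0,2))$; since $|T_f^{p,+}|\leq T_{|f|}^{p,+}$, the same holds for $T_f^{p,+}$.

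Once integrability is secured, I would invoke Fubini on the triple integral in Theorem~\ref{Formules-int-simples}~1), pulling out the factor $|\Delta_{p+1}(v)|^{\frac{(p+1)d}{2\kappa}}$ (which does not depend on $A$) and grouping it with the $A$-integral to recognize $T_f^{p,+}(u,v)$ by Definition~\ref{def-moyennes}. This gives precisely
\[
\int_{V^+}f(X)\,dX=\int_{K_p(2,0)}\int_{K_p(0,2)}T_f^{p,+}(u,v)\,|\Delta_{p+1}(v)|^{\frac{(p+1)d}{2\kappa}}\,du\,dv,
\]
which is the desired identity.

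The proof of part~2 is entirely parallel: apply Theorem~\ref{Formules-int-simples}~2) to $|g|$ for integrability, then use Fubini and the definition of $T_g^{p,-}$ to obtain the analogous iterated formula on $V^-$, with the roles of $\Delta_{p+1}$ and $\nabla_{k-p}$, and of the weights $(p+1)d/\kappa$ and $(k-p)d/\kappa$, interchanged accordingly. There is no real obstacle here: everything reduces to the nontrivial content already packed into Theorem~\ref{Formules-int-simples} (whose proof relied on Lemma~\ref{lem-formule-int}, Proposition~\ref{Prop-constantes=1}, and Proposition~\ref{prop-decomposition-dX}), and the present theorem is just its cleaner restatement in terms of the mean functions that will be used later in the paper.
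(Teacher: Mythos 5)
Your proof is correct and is essentially the paper's own argument: the paper simply states that Theorem \ref{th-formule-int-T_{f}} is a reformulation of Theorem \ref{Formules-int-simples} via Definition \ref{def-moyennes}, and your Tonelli/Fubini reduction (applying the formula first to $|f|$ to secure a.e.\ definedness and integrability of $T_f^{p,+}$, then regrouping the $A$-integral) is exactly the intended justification, spelled out slightly more carefully than the paper bothers to.
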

\vskip 10 pt
 We investigate now the smoothness of $T_{f}^{p,+}$ and $T_{g}^{p,-}.$
\vskip 10pt

\begin{theorem} \label{th-proprietes-T_{f}}\hfill

\begin{enumerate}\item\begin{enumerate}\item  For $f\in {\cal S}(V^+)$, $u\in K_{p}(2,0)$, $v\in K_{p}(0,2)'$, $A\in K_{p}(1,-1)$, the function
$$A\longmapsto f(e^{\ad A}(u+v))$$
belongs to   ${\cal S}(K_{p}(1,-1))$ and  $T_{f}^{p,+}(u,v)$  is everywhere defined on  $K_{p}(2,0)\times K_{p}(0,2)'$. Moreover $T_{f}^{p,+}(u,v)$ is locally constant on $K_{p}(2,0)\times K_{p}(0,2)'$. 

\item If $f\in {\cal S}(V^+)$ with compact support in  $ {\cal O}^+_{p+1}=\{X\in V^+,\, \Delta_{p+1}(X)\neq 0\}$, then  $T_{f}^{p,+}$ is everywhere defined on $K_{p}(2,0)\times K_{p}(0,2)$ and    is locally constant with compact support in  $K_{p}(2,0)\times K_{p}(0,2)'$.\end{enumerate}
\item \begin{enumerate}\item For $g\in {\cal S}(V^-)$, $u'\in K_{p}(-2,0)'$, $v'\in K_{p}(0,-2)$, $A\in K_{p}(1,-1)$, the function
$$A\longmapsto g(e^{\ad A}(u'+v'))$$
belongs to   ${\cal S}(K_{p}(1,-1))$ and  $T_{g}^{p,-}(u',v')$  is everywhere defined on  $K_{p}(-2,0)'\times K_{p}(0,-2)$. Moreover $T_{g}^{p,-}(u',v')$ is locally constant on $K_{p}(-2,0)'\times K_{p}(0,-2)$.

\item If $g\in {\cal S}(V^-)$ with compact support in  $ {\cal O}^-_{k-p}=\{Y\in V^-,\, \nabla_{k-p}\neq 0\}(Y)$, then  $T_{g}^{p,-}$ is everywhere defined on $K_{p}(-2,0)\times K_{p}(0,-2)$ and     is locally constant with compact support in  $K_{p}(-2,0)'\times K_{p}(0,-2)$.
 \end{enumerate}
\end{enumerate}
\end{theorem}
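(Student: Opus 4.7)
By the symmetry between $V^+$ and $V^-$ (exchanged, for instance, by the involution $\gamma$ from Theorem~\ref{th-involution-gamma}, which swaps $K_p(2,0) \leftrightarrow K_p(0,-2)$ and $K_p(0,2) \leftrightarrow K_p(-2,0)$ while preserving $K_p(1,-1)$), statement~(2) follows from statement~(1) applied to the reversed grading. I will therefore concentrate on part~(1).

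The starting point is an explicit formula for $e^{\ad A}(u+v)$ when $A\in K_p(1,-1)$, $u\in K_p(2,0)$, $v\in K_p(0,2)$. The weight structure forces $[A,u]\in K_p(3,-1)=\{0\}$, while $[A,v]\in K_p(1,1)$, $[A,[A,v]]\in K_p(2,0)$ and the higher commutators vanish. Hence
$$ e^{\ad A}(u+v) = \Bigl(u+\tfrac12[A,[A,v]]\Bigr)\;+\;[A,v]\;+\;v, $$
with summands in $K_p(2,0)$, $K_p(1,1)$, $K_p(0,2)$ respectively. Since this is polynomial in $A$ and $f\in\cS(V^+)$ is locally constant, the map $A\mapsto f(e^{\ad A}(u+v))$ is locally constant on $K_p(1,-1)$. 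For the compactness of its support when $v\in K_p(0,2)'$, I will use that $(\i(v),H_{\lambda_{p+1}}+\dots+H_{\lambda_k},v)$ can be completed into an $\go{sl}_2$-triple (Proposition~\ref{prop-generiques-cas-regulier}), so that $\ad v:K_p(1,-1)\to K_p(1,1)$ is a linear isomorphism. Consequently $\|[A,v]\|$ is comparable to $\|A\|$ (uniformly on compact subsets of $K_p(0,2)'$), the $K_p(1,1)$-component of $e^{\ad A}(u+v)$ drives the image out of $\operatorname{supp} f$ for large $A$, and the integrand has compact support in $A$. This settles the first half of (1)(a).

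For the local constancy of $T_f^{p,+}$ on $K_p(2,0)\times K_p(0,2)'$, I will fix $(u_0,v_0)$ with $v_0$ generic and choose an $\Or_F$-lattice $\Lambda\subset V^+$ on whose cosets $f$ is constant; using uniform continuity of the polynomial map $(A,u,v)\mapsto e^{\ad A}(u+v)$ on compact sets, together with the uniform lower bound on $\ad v$ for $v$ in a small neighbourhood of $v_0$ inside $K_p(0,2)'$, I will produce a neighbourhood of $(u_0,v_0)$ and a compact $K\subset K_p(1,-1)$ containing the $A$-support for all $(u,v)$ in the neighbourhood, such that $f\bigl(e^{\ad A}(u+v)\bigr)=f\bigl(e^{\ad A}(u_0+v_0)\bigr)$ for every $A\in K$. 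The integral is then locally constant. Part~(1)(a) follows.

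Part~(1)(b) rests on one crucial invariance: the $V_{p+1}^+$-component of $e^{\ad A}(u+v)$ is exactly $v$ (since $K_p(2,0)$ and $K_p(1,1)$ both lie in $V_{p+1}^\perp$), so by Theorem~\ref{thproprideltaj}(4),
$$ \Delta_{p+1}\bigl(e^{\ad A}(u+v)\bigr)=\Delta_{p+1}(v). $$
If $\operatorname{supp} f\subset\cO^+_{p+1}$, the integrand vanishes unless $\Delta_{p+1}(v)$ lies in the image $\Delta_{p+1}(\operatorname{supp} f)$, a compact subset of $F^*$; thus the $v$-support of $T_f^{p,+}$ is confined to a compact set $C\subset K_p(0,2)'$. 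On $C$ the norm of $(\ad v)^{-1}:K_p(1,1)\to K_p(1,-1)$ is uniformly bounded, which together with compactness of $\operatorname{supp} f$ controls first $[A,v]$, hence $A$, and then $u=\bigl(e^{\ad A}(u+v)-v-[A,v]\bigr)-\tfrac12[A,[A,v]]$. This gives joint compactness of the support in $(u,v)$. Combined with (1)(a) this concludes (1)(b), and (2) follows by the symmetry argument.

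The principal technical point is Step~5 above: turning the pointwise statements into uniform statements on small neighbourhoods so that the integral itself is locally constant. Once the right uniform control on $\ad v$ is extracted from $v\in K_p(0,2)'$, everything else is a routine manipulation of Schwartz--Bruhat functions in the $p$-adic setting.
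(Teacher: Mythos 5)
Your proposal is correct and follows essentially the same route as the paper's proof: the same expansion $e^{\ad A}(u+v)=u+\tfrac12(\ad A)^2v+[A,v]+v$, the same use of the isomorphism $\ad v\colon K_p(1,-1)\to K_p(1,1)$ for $v$ generic to get compact support in $A$ and local constancy, and the same observation that $\operatorname{supp} f\subset\cO^+_{p+1}$ forces the $v$-support into $K_p(0,2)'$ for part (1)(b). The only cosmetic differences are that the paper reduces to $f=f_1\otimes f_2\otimes f_3$ instead of working with a lattice of constancy, and proves (2) by repeating the argument rather than invoking $\gamma$ (which in any case sends the index $p$ to $k-p-1$, so a relabeling is needed).
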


\begin{proof} \hfill

 We only prove the first assertion. The proof of the second one will be similar.

 We fix $f\in \cal S(V^+)$. By ( \cite{Re}, II.1.3), we can suppose that there exist $f_{1}\in \cal S(K_{p}(2,0))$, $f_{2}\in \cal S(K_{p}(1,1))$ and $f_{3}\in \cal S(K_{p}(0,2))$ such that 
$$f(u+x+v)=f_{1}(u)f_{2}(x)f_{3}(v),\quad u\in K_{p}(2,0), x\in K_{p}(1,1)\;\text{ and }\; v\in K_{p}(0,2).$$
Let  $u\in K_{p}(2,0)$ and  $v\in K_{p}(0,2)'$. We consider the map
$$\alpha: \left\{\begin {array}{rll}
 K_p(1,-1) & \longrightarrow  & \C\cr
A&\longmapsto&f(e^{\ad A}(u+v))=f_{1}(u+\frac{1}{2}(\ad A)^2v)f_{2}([A,v])f_{3}(v).\cr
\end{array}\right.$$  
 Denote by $K_{2}$ the (compact) support of $f_{2}$. As the map $A\longmapsto [A,v]=\alpha_{v}(A)$ is a linear isomorphism  from $K_{p}(1,-1)$ onto $K_{p}(1,1)$ we see that there exists a compact subset $K_{2}'\subset K_{p}(1,-1)$ such that $f_{2}([A,v])=0$ if $A\notin K_{2}'$. \\
This shows that $\alpha$ has compact support. \vskip 5pt

In the rest of the proof we will denote by $||\,\,||$ a fixed norm on the various involved vector spaces.

We will now show that $\alpha$ is locally constant.  This means that for $A_0\in K_p(1,-1)$, there exists $\ep>0$ such that 
$$h\in K_P(1,-1),\;\Vert h\Vert<\ep\Longrightarrow \al(A_0+h)=\al(A_0).$$

 We consider first the function $A\longmapsto f_{1}(u+\frac{1}{2}(\ad A)^2v)$. As the map $A\mapsto (\ad A)^2v$ is continuous, for any $\ep>0$ there exists $\eta>0$ such that
$$||h||\leq \eta \Rightarrow \Vert  \ad (A_0+h)^2v-\ad(A_0)^2v\Vert <\epsilon.$$
As $f_{1}$ is locally constant, this implies that the function $A\longmapsto  f_{1}(u+\frac{1}{2}(\ad A)^2v)$ is locally constant. A similar argument shows that the function $A\longmapsto  f_{2}([A,v])$ is locally constant. Hence we have proved that $\alpha \in \cal{S}(K_{p}(1,-1))$.

But now, the fact that $T_{f}^{p,+}(u,v)$  is everywhere defined on  $K_{p}(2,0)\times K_{p}(0,2)'$ is clear as the integral which defines $T_{f}^{p,+}(u,v)$ is the integral of $\alpha$ over a compact set. 

The fact that $T_{f}^{p,+}$ is locally constant on $K_{p}(2,0)\times K_{p}(0,2)'$ will be a consequence on the proof of 1)b). See below. \me
 
Let us now prove 1) b). Here we suppose that $f\in \cal{S}(\cal{O}_{p+1})$.  Denote by $K_{i}$ the support of $f_{i}$ ($i=1,2,3$).

 If $\Delta_{p+1}(u+v)=\Delta_{p+1}(v)=0$ (in other words $v\notin K_{p}(0,2)'$), then $f(e^{\ad A}(u+v))=f(u+\frac{1}{2}(\ad A)^2v+(\ad A )v+v)=0$ because Supp$(f)\subset \cal{O}_{p+1}$. Then   $K_{3}\subset  K_{p}(0,2)'$ and  $T_{f}^{p,+}$ is now defined everywhere with support in $ K_{p}(2,0)\times K_{p}(0,2)'$.\me
 
 Let us now prove that $T_{f}^{p,+}$ has compact support. 
 

 Remember from above that the map $\alpha_{v}$ defined by $\alpha_{v}(A)=[A,v]$ is an isomorphism form $K_{p}(1,-1)$ onto $K_{p}(1,1)$. Consider now the map
$$\begin {array}{rll}
 K_{3}\times K_{2} & \stackrel{\varphi}\longrightarrow  & K_{p}(1,-1)\cr
 (v,x)&\longmapsto&\varphi(v,x)= \alpha_{v}^{-1}(x).\cr
\end{array}$$
As it is continuous,  its image  $K=\varphi(K_{3}\times K_{2})$ is compact. Moreover the support of the map
 $$A\longmapsto f_{2}([A,v])$$
 is contained in  $\alpha_{v}^{-1}(K_{2})$, so that the support of all the maps $A\longmapsto f_{2}([A,v])$ when $v\in K_{3}$ is contained in $K=\varphi(K_{3}\times K_{2})\subset K_{p}(1,-1)$. In other words, the support of all the maps $$A\longmapsto f(e^{\ad A}(u+v))$$
 when $v\in K_{3}$, is contained in $K$.

 Since the map $\phi:(A,v)\mapsto \frac{1}{2}(\ad A)^2v$ is continuous from $K_p(1,-1)\times K_p(0,2)$ to $K_p(2,0)$,  the set $K_0=\phi(K\times K_3)$ is a compact subset of $K_p(2,0)$. Thus  there exists $C\in {\R^*}^+$ such that for $v\in K_{3}$ and $A\in K$, we have 
$$||\frac{1}{2}(\ad A)^2v||\leq C.$$
Then  $||u+\frac{1}{2}(\ad A)^2v||\geq |\,||u||-||\frac{1}{2}(\ad A)^2v||\,|\geq \,||u||-||\frac{1}{2}(\ad A)^2v||\geq ||u|| -C$. As $f_{1}$ has compact support, there exists $C'>0$ such that for $v\in K_{3}$ and $A\in K$, 
 $$||u||>C'\Longrightarrow f_{1}(u+\frac{1}{2}(\ad A)^2v)=0.$$

 Hence 
 $$T_{f}^{p,+}(u,v)=|\Delta_{p+1}(v)|^{\frac{(p+1)d}{2\kappa}}\int_{K_{p}(1,-1)}f(e^{\ad A}(u+v))dA$$
 has compact support for $f\in \cal S(\cal{O}_{p+1})$.\me
 
 It  remains to  prove that $T_{f}^{p,+}$ is locally constant. 

As the integral in the definition of  $T_{f}^{p,+}$ is equal to zero for $v\notin K_{3}$ and as $|\Delta_{p+1}(v)|$ is locally constant on $K_{p}(0,2)'$, it is enough to prouve that 
$$\Psi(u,v)=\int_{K_{p}(1,-1)}f_{1}(u+\frac{1}{2}(\ad A)^2v)f_{2}([A,v])f_{3}(v)dA$$
is locally constant.

This means that,   for $u\in K_p(2,0)$ and $v\in K_p(0,2)$,  there exists $\epsilon>0$ such that
$$(h,k)\in K_{p}(2,0)\times K_{p}(0,2), \,\,||h||\leq \epsilon, ||k||\leq\epsilon\Longrightarrow \Psi(u+h,v+k)=\Psi(u,v).$$

 As $f_i$ is locally constant with compact support,  there exists $\epsilon_{i}>0$ such that $f_{i}$ is constant on each ball of radius $\epsilon_{i}$ ($i=1,2$). Also  there exists $\delta_{2}>0$, such that for $||k||\leq\delta_{2}$,  and $A\in K$, we have  $||[A,k]||\leq c||A||\,||k||\leq \epsilon_{2}$ (for a   $c\in {\R^*}$) , and hence 
\beq\label{Dem-eq1}f_{2}([A,v+k])=f_{2}([A,v]+[A,k])=f_{2}([A,v]).\eeq

Similarly, there exists $\delta_{1}>0$ and $c>0$ such that 
$$||h+\frac{1}{2}(\ad A)^2k||\leq||h|| +c||A||^2||k||\leq \epsilon_{1}\text{ if } ||h||\leq \delta_{1} \text{, }||k||\leq \delta_{1} \text{ and } A\in K.$$
Hence 
\beq\label{Dem-eq2}f_{1}(u+h+\frac{1}{2}(\ad A)^2(v+k))=f_{1}(u+\frac{1}{2}(\ad A)^2(v)+h+\frac{1}{2}(\ad A)^2(k))=f_{1}(u+\frac{1}{2}(\ad A)^2(v)).\eeq
Finally (\ref{Dem-eq1}) and (\ref{Dem-eq2}) imply that $ \Psi(u+h,v+k)=\Psi(u,v)$ if $||h||,\,||k||\leq Min(\delta_{1},\delta_{2},\epsilon_{3})$.
This ends the proof of 1)b). 

Lets us return to the end of the proof of 1)a). Taking now $K_{3}$ to be an open compact subset of $K_{p}(0,2)'$ which contains a fixed element $v$ (instead of the support of $f_{3}$), and taking into account that $|\Delta_{p+1}(.)|^{\frac{(p+1)d}{2\kappa}}$ is locally constant on $K_{3}$, the same proof as before shows that $   T_{f}^{p,+} $ is locally constant on $K_{p}(0,2)\times K_{p}(0,2)'$. 
\end{proof} 
\me

\centerline{\bf In the rest of the paper all the involved measures   are normalized as in Definition \ref{def-normalisation}.}
\me

\subsection{Weil formula and the computation of $T_{{\cal F}f}^{p,-}$}\label{sec:section-Weil-formula}
\hfill

\begin{notation}\label{notation-Fgeneral} 
From now on, for sake of simplicity, and as the context will be clear, we will always denote by the same letter $\cF$, the Fourier transforms between various subspaces of $V^+$, $V^-$ and $\go g$.
\end{notation} 

 In this section we will   make a connection, roughly speaking,  between  $T_{{\cal F}f}^{p,-}$ and  ${\cal F}T_{f}^{p,+}$ (for the precise statement see Theorem \ref{th-T_{Ff}-T_{f}} below).

Define  $E=K_{p}(1,-1)$. Then  $E^*$ is identified to  $K_{p}(-1,1)$ through the normalized Killing  form $b$ (see (\ref{def-b})). More precisely  an element $B\in K_{p}(-1,1)$ is identified with  the form $y ^*\in E^*$ defined by
$$\langle y^*,A\rangle=b(B,A)\, \text{ for }A\in K_{p}(1,-1). $$
As usual the Fourier transform of a function $f\in \cal{S}(K_{p}(1,-1))$  by 
$${\cal F}f(B)=\overset{\wedge}f(y^*)=\int_{K_{p}(1,-1)}f(A)\psi(\langle y^*,A\rangle)dA=\int_{K_{p}(1,-1)}f(A)\psi(b(B,A ))dA.$$

(Remember that we have chosen, once and for all,   a non trivial unitary additive character $\psi$ of  $F$).

 Suppose now that a non degenerate quadratic form $Q$ is given on $E$. Denote by $$\beta(A,A')=Q(A+A')-Q(A)-Q(A') \hskip 20pt\text{ for  }A,A' \in E$$
 the corresponding bilinear form. This bilinear form induces a linear isomorphism $\alpha_{\beta}$ between $E$ and $E^*$ given by
 $$b( \alpha_{\beta}(A),A')=\beta(A,A') \hskip 10pt \text { for } A,A'\in E.$$
 
 Then there exists a constant $C>0$ such that for any $f^*\in \cal{S}(E^*)$
 
 \beq\label{ConstanteC}\int_{E^*}f^*(B)dB=C\int_{E}f^*(\alpha_{\beta}(A))dA.\eeq

 \begin{theorem} \label{th-Weil} {\rm (A. Weil) } 
 
 There exists  a constant $\gamma_{\psi}(Q)$ \index{gammapsi@$\gamma_{\psi}(Q)$} of module 1 such that for $f\in \cal{S}(E)$
 $$\int_{E}\cal{F}f(\alpha_{\beta}(A))\psi(Q(A))dA=\frac{1}{\sqrt{C}}\gamma_{\psi}(Q)\int_{E}f(A)\psi(-Q(A))dA.$$ \end{theorem}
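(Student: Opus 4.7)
The plan is to expand the left-hand side using the definition of the Fourier transform and reduce to a formal Gaussian integral that will define $\gamma_\psi(Q)$. Writing
$$\mathcal F f(\alpha_\beta(A))=\int_E f(A')\,\psi\bigl(b(\alpha_\beta(A),A')\bigr)\,dA'=\int_E f(A')\,\psi(\beta(A,A'))\,dA',$$
the left-hand side becomes the double integral $\int_E\int_E f(A')\,\psi(Q(A)+\beta(A,A'))\,dA'\,dA$. The polarization identity $Q(A+A')=Q(A)+Q(A')+\beta(A,A')$ yields the crucial algebraic rewriting
$$\psi(Q(A)+\beta(A,A'))=\psi(Q(A+A'))\,\psi(-Q(A')).$$
Interchanging the order of integration (to be justified, see below) and translating $A\mapsto A-A'$ in the inner integral gives
$$\int_E f(A')\,\psi(-Q(A'))\Bigl(\int_E \psi(Q(A))\,dA\Bigr)dA',$$
so that the theorem reduces to identifying the inner ``Gaussian integral'' $\int_E \psi(Q(A))\,dA$ with a constant of modulus $1/\sqrt{C}$, and defining $\gamma_\psi(Q):=\sqrt{C}\int_E\psi(Q(A))\,dA$ by this regularization.

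The main obstacle is that $\int_E \psi(Q(A))\,dA$ does not converge absolutely, so both the Fubini interchange and the very meaning of the inner integral require care. The key technical input is the standard fact in $p$-adic analysis that for a non-degenerate quadratic form $Q$ on a finite-dimensional $F$-vector space and a non-trivial character $\psi$, the integrals $\int_{K_n}\psi(Q(A))\,dA$ taken over an increasing exhaustion $K_n\subset E$ by compact open subgroups \emph{stabilize} for $n$ sufficiently large (depending on $\psi$ and on $Q$). I would diagonalize $Q$ in an orthogonal basis so that $Q(x_1,\dots,x_N)=\sum a_ix_i^2$ and reduce to the one-dimensional Fresnel-type integrals $\int_F\psi(ax^2)\,dx$, whose regularized values are the classical local Weil indices; multiplicativity under orthogonal direct sum then gives the multi-dimensional $\gamma_\psi(Q)$.

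With the inner integral interpreted in this regularized sense, the Fubini interchange is legitimate: since $f$ is locally constant with compact support, one truncates the outer $A$-integration to a large compact open subgroup, applies ordinary Fubini on the resulting compact double domain, and then passes to the limit using the stabilization property above. This gives the stated identity.

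Finally, I would verify that $|\gamma_\psi(Q)|=1$ by a Plancherel-type argument: applying the formula twice, first to $f$ and then to $\mathcal F f$ (combined with the Fourier inversion $\overline{\mathcal F}\circ\mathcal F=\mathrm{Id}$ encoded in the normalization constant $C$), one obtains the relation $\gamma_\psi(Q)\,\gamma_\psi(-Q)=1$. Since complex conjugation of the integrand gives $\overline{\gamma_\psi(Q)}=\gamma_\psi(-Q)$, this forces $|\gamma_\psi(Q)|^2=1$, completing the proof.
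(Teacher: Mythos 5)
Your argument is correct, but it takes a genuinely different and more self-contained route than the paper's. The paper does not re-derive Weil's formula at all: it introduces the auxiliary Fourier transform $\mathcal F'f(A')=\int_E f(A)\,\psi\bigl(b(\alpha_{\beta}(A'),A)\bigr)\,d\mu(A)$ on $E$ itself, notes that $\mathcal F'f(A')=\lambda\,\mathcal Ff(\alpha_{\beta}(A'))$ when $d\mu=\lambda\,dA$, and uses Fourier inversion together with the constant $C$ of (\ref{ConstanteC}) to show that $\sqrt{C}\,dA$ is the self-dual measure for $\mathcal F'$; the stated identity is then literally formula (1-4), p.~500 of Rallis--Schiffmann \cite{R-S}, which is phrased for self-dual measures and is itself a special case of Weil's theorem on quadratic characters \cite{W}. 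You instead reprove that cited formula from scratch: polarization, Fubini justified by truncation to compact open subgroups and the stabilization of $p$-adic Gaussian integrals, diagonalization of $Q$ to define the regularized integral $\int_E\psi(Q(A))\,dA$, and the double-application/conjugation argument for $|\gamma_{\psi}(Q)|=1$. Both are valid; the paper's reduction is two lines long and makes clear that $1/\sqrt{C}$ is exactly the ratio between $dA$ and the self-dual measure, while yours is independent of the literature and exhibits the constant directly as the regularized Gaussian. The one step you should write out carefully is the second application of the functional equation in the $|\gamma_{\psi}(Q)|=1$ argument: since $\mathcal F$ maps $\mathcal S(E)$ to $\mathcal S(E^*)$, you must transport either the test function or the quadratic form through $\alpha_{\beta}$ before iterating, and keep track of the sign coming from $\overline{\mathcal F}\circ\mathcal F=\mathrm{Id}$ versus $\mathcal F'\circ\mathcal F'f(A)=f(-A)$; this is the standard Weil computation, but it is where sign errors typically creep in.
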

 
 \begin{proof} \hfill

 For another   Haar measure $d\mu(A)$ on $E$, we consider  for $f\in \cal{S}(E)$, a ``new'' Fourier transform by
  
 $$\cal{F}'f(A')=\int_{E}f(A)\psi(b(\alpha_{\beta}(A'),A)d\mu(A) \hskip 10pt(A,A'\in E).$$
 
 First of all we determine $\lambda>0$ such the measure $d\mu(A)=\lambda dA$ is self dual for the Fourier transform $\cal{F}'$. From the definitions we see that $\cal{F}' f(A')=\lambda\cal{F}f(\alpha_{\beta}(A'))$.
 
 Then, using (\ref{ConstanteC}) above,  the measure $\lambda dA$ is self dual for $\cal{F}'$ if and only if
 $$\int_{E}\cal{F}'f(A')\lambda dA'= \lambda^2\int_{E}\cal{F}f({\alpha_{\beta}(A'))}dA'= \frac{\lambda^2}{C}\int_{E^*}\cal{F}f(B)dB= \frac{\lambda^2}{C}f(0)=f(0).$$
 Hence the self dual measure for $\cal{F}'$ is $\sqrt{C}dA$. Once this self dual measure is known, the formula in the Theorem is juste a rewriting of formula (1-4) p. 500 in \cite{R-S} (where self-dual measures are needed). And this is a particular case of a more general result of A. Weil concerning the Fourier transform of quadratic characters of abelian groups (\cite{W}). 
 
 \end{proof}

 We will now make a particular choice of $Q$. Consider   two generic elements $u'\in K_{p}(-2,0)'$ and  $v\in K_{p}(0,2)'$.  
We define the quadratic form $Q_{u',v}$ on  $K_{p}(1,-1)$ (as in section \ref{sec:section-qXiXj})   by
$$Q_{u',v}(A)=\frac{1}{2}b((\ad A)^2u',v).$$

We compute now the bilinear form  $\beta_{u',v}$ corresponding to this form (the computation is the same as in \cite{B-R}, p.106, we give it here for the convenience of the reader):
$$\begin{array}{rll}
\beta_{u',v}(A,A')&=&Q_{u',v}(A+A')-Q_{u',v}(A)-Q_{u',v}(A')\cr
&=&\frac{1}{2}b((\ad A+\ad A')^2u',v)-\frac{1}{2}b((\ad A)^2u',v)-\frac{1}{2}b((\ad A')^2u',v)\cr
&=&\frac{1}{2}b([A,[A',u']],v)+\frac{1}{2}b([A',[A,u']],v)\cr
&=&-\frac{1}{2}b([A',u'],[A,v])+\frac{1}{2}b(A',[[A,u'],v])\cr
&=&\frac{1}{2}b(A',[[A,v],u'])+\frac{1}{2}b(A',[[A,u'],v])\cr
&=&b(A',[[A,v],u'])\;  \text{ (since}\;  \ad u'\;\ad v=\ad v\; \ad u') \cr
&=&b(A',[[A,u'],v])
\end{array}$$
Hence if the map  $\alpha_{\beta_{u',v}}:E\longrightarrow E^*$ is defined by 
$$\langle \alpha_{\beta_{u',v}}(A), A'\rangle=\beta_{u',v}(A,A'),$$ we obtain (modulo the identification $E*=K_{p}(-1,1))$:
$$ \alpha_{\beta_{u',v}}(A)=\ad v\ad u' (A)= \ad u' \ad v(A).$$

We have seen previously that there exists a constant $c_{ \alpha_{\beta_{u',v}}}>0$ such that for all $f^*\in {\cal S}(E^*)$  
$$\int_{E^*}f^*(B)dB=c_{ \alpha_{\beta_{u',v}}}\int_{E}f^*( \alpha_{\beta_{u',v}}(x))dx.$$
 Using Lemma \ref{lem-formule-int} we have: 
 $$\displaystyle \begin{array}{rll
}\displaystyle \int_{K_{p}(1,-1)}f^* (\alpha_{\beta_{u',v}}(A))dA&=&\displaystyle\int_{K_{p}(1,-1)}f^*(\ad u' \ad v(A))dA\cr
&=&\displaystyle|\Delta_{p+1}(v)|^{-\frac{(p+1)d}{\kappa}}\int_{K_{p}(1,1)}f^*(\ad u'(w))dw\cr
 
&=&\displaystyle|\Delta_{p+1}(v)|^{-\frac{(p+1)d}{\kappa}}|\nabla_{k-p}(u')|^{-\frac{(k-p)d}{\kappa}}\int_{K_{p}(-1,1)}f^*(B)dB\cr
 
\end{array}$$
Hence 
$$c_{ \alpha_{\beta_{u',v}}}=\displaystyle|\Delta_{p+1}(v)|^{-\frac{(p+1)d}{\kappa}}|\nabla_{k-p}(u')|^{-\frac{(k-p)d}{\kappa}}$$
 
 Then if we set  $ \gamma(u',v)\index{gammauv@$\gamma(u',v)$} =\gamma_{\psi}(Q_{u',v})$, Theorem \ref {th-Weil}, can be specialized to:
 
 \vskip 10pt
\begin{theorem} \label{th-Weil-(u',v)}\rm {(See Corollary 4.27 p. 107 of \cite{B-R}, for the case  $p=0$ and $F=\R$)}

Let $u'\in K_{p}(-2,0)'$ and $v\in K_{p}(0,2)'$. Then for    $f\in {\cal S}(K_{p}(1,-1))$ we have:
$$\begin{array}{lll}\displaystyle \int_{K_{p}(1,-1)}{\cal F}f(\ad u' \ad v (A))\psi(Q_{u',v}(A))dA&&\cr
=\displaystyle\gamma(u',v)|\Delta_{p+1}(v)|^{-\frac{(p+1)d}{2\kappa}}|\nabla_{k-p}(u')|^{-\frac{(k-p)d}{2\kappa}}\int_{K_{p}(1,-1)}f(A) \psi(-Q_{u',v}(A))
dA&&
\end{array}$$

\end{theorem}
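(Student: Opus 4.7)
\textbf{Proof plan for Theorem \ref{th-Weil-(u',v)}.} The statement is a direct specialization of the general Weil formula of Theorem~\ref{th-Weil} to the non-degenerate quadratic form $Q_{u',v}$ on $E=K_p(1,-1)$, and all the ingredients have already been produced in the discussion immediately preceding the theorem. My plan is therefore simply to assemble them in the right order.

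First I would recall the three pieces of data that enter Theorem~\ref{th-Weil}: the quadratic form $Q$, the linear map $\alpha_\beta:E\to E^*$ attached to the polarization $\beta$ of $Q$ via $b(\alpha_\beta(A),A')=\beta(A,A')$, and the constant $C>0$ defined by $\int_{E^*}f^*(B)\,dB=C\int_E f^*(\alpha_\beta(A))\,dA$. For $Q=Q_{u',v}$ the associated bilinear form was computed in the text to be $\beta_{u',v}(A,A')=b(A',[[A,v],u'])$, so that, under the identification $E^*=K_p(-1,1)$ via $b$, the map $\alpha_{\beta_{u',v}}$ is exactly $\mathrm{ad}(v)\,\mathrm{ad}(u')=\mathrm{ad}(u')\,\mathrm{ad}(v)$, and $\gamma_\psi(Q_{u',v})$ is by definition $\gamma(u',v)$.

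Next I would compute $C=c_{\alpha_{\beta_{u',v}}}$ by factoring $\alpha_{\beta_{u',v}}$ as the composition
\[
K_p(1,-1)\xrightarrow{\ \mathrm{ad}\,v\ }K_p(1,1)\xrightarrow{\ \mathrm{ad}\,u'\ }K_p(-1,1),
\]
and applying the mean-function identities of Lemma~\ref{lem-formule-int} (parts~(1) and~(4)), in which the constants $\alpha$ and $\delta$ have been shown in Proposition~\ref{Prop-constantes=1} to equal~$1$ under the normalisation of measures of Definition~\ref{def-normalisation}. This yields the Jacobian factors $|\Delta_{p+1}(v)|^{(p+1)d/\kappa}$ and $|\nabla_{k-p}(u')|^{(k-p)d/\kappa}$ and, after inverting the relation $\int_{E^*}f^*=C\int_E f^*\!\circ\alpha_{\beta_{u',v}}$, gives
\[
C=|\Delta_{p+1}(v)|^{(p+1)d/\kappa}\,|\nabla_{k-p}(u')|^{(k-p)d/\kappa}.
\]

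Finally I would plug $Q=Q_{u',v}$, $\alpha_\beta=\alpha_{\beta_{u',v}}$ and this value of $C$ into Theorem~\ref{th-Weil}; since $1/\sqrt{C}=|\Delta_{p+1}(v)|^{-(p+1)d/2\kappa}|\nabla_{k-p}(u')|^{-(k-p)d/2\kappa}$, this is exactly the claimed identity. There is no real obstacle; the only delicate point is bookkeeping for the Jacobian determinant $|\det\alpha_{\beta_{u',v}}|$ on $K_p(1,-1)\to K_p(-1,1)$, which one must derive carefully from Lemma~\ref{lem-formule-int} (paying attention to the normalisation $\alpha=\delta=1$) in order to get the correct sign of the exponents of $|\Delta_{p+1}(v)|$ and $|\nabla_{k-p}(u')|$ in $C$, and hence in $1/\sqrt{C}$.
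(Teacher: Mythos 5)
Your proposal is correct and follows exactly the route the paper takes: the identity $\alpha_{\beta_{u',v}}=\ad u'\,\ad v$ and the value of the constant $C$ (via Lemma \ref{lem-formule-int} with $\alpha=\delta=1$ from Proposition \ref{Prop-constantes=1}) are established in the text immediately before the theorem, and the statement is then just Theorem \ref{th-Weil} specialized to $Q_{u',v}$. Your bookkeeping of the exponents of $C$, and hence of $1/\sqrt{C}$, is consistent with the theorem's right-hand side.
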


 \vskip 10pt
\begin{theorem} \label{th-T_{Ff}-T_{f}}\rm{(I. Muller (\cite{Mu}), for the case $\ell=1$, and $p=k-1$,  see also Theorem 4.28 p. 107 of \cite{B-R} for the case  $p=0$ and $F=\R$). }

If $f\in {\cal S}(V^+)$ and if  $(u',v)\in K_{p}(-2,0)'\times K_{p}(0,-2)$ then
$$T_{{\cal F}f}^{p,-}(u',v')={\cal F}_{v}(\gamma(u',v)({\cal F}_{u}T_{f}^{p,+})(u',v))(v').$$
\end{theorem}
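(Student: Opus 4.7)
The plan is to expand $T_{\cF f}^{p,-}(u',v')$ using the definitions of $T_{\cF f}^{p,-}$ and $\cF f$, manipulate the resulting iterated integral by partial Fourier transforms and changes of variables from Lemma~\ref{lem-formule-int}, and then apply Weil's formula (Theorem~\ref{th-Weil-(u',v)}) to recognize the right-hand side. Throughout I would rely on the decomposition $V^{\pm} = K_p(\pm 2,0) \oplus K_p(\pm 1,\pm 1) \oplus K_p(0,\pm 2)$ together with the factorization $dX = du\,dw\,dz$ and $dY = du'\,dw'\,dv'$ of Proposition~\ref{prop-decomposition-dX}.

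First I would observe that for $A \in K_p(1,-1)$ one has $[A,v']=0$ (the weight $(1,-3)$ is forbidden), so $e^{\ad A}(u'+v') = u' + [A,u'] + v' + \tfrac12 [A,[A,u']]$, with summands in $K_p(-2,0)$, $K_p(-1,-1)$ and $K_p(0,-2)$ respectively. Expanding $\cF f(e^{\ad A}(u'+v'))$ and decomposing the dummy variable as $X = u + w + z$, the orthogonality of the weight spaces under $b$ gives $b(X, e^{\ad A}(u'+v')) = b(u,u') + b(w,[A,u']) + b(z,v') + Q_{u',z}(A)$, where $Q_{u',z}(A) = \tfrac12 b((\ad A)^2 u', z)$ is the quadratic form from Section~\ref{sec:section-qXiXj}. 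Integrating over $u$ first yields the partial Fourier transform $\hat f_1(u',w,z) := \int_{K_p(2,0)} f(u+w+z)\psi(b(u,u'))\,du$.

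I would then swap integrations so that $z$ is outermost and focus on the inner double integral in $A$ and $w$. Substituting $w = [C,z]$ with $C \in K_p(1,-1)$ via Lemma~\ref{lem-formule-int}(1) introduces a factor $|\De_{p+1}(z)|^{(p+1)d/\kappa}$. Repeated use of the $\ad$-invariance of $b$ together with the Jacobi identity---and the commutation $\ad u'\circ \ad z = \ad z\circ \ad u'$ on $K_p(1,-1)$, which holds because $[u',z]=0$---yields the key identity $b([C,z],[A,u']) = b(C,\ad u'\ad z (A))$. The inner double integral then becomes $|\De_{p+1}(z)|^{(p+1)d/\kappa}\int_{K_p(1,-1)} \cF\phi_z(\ad u'\ad z(A))\psi(Q_{u',z}(A))\,dA$ for the auxiliary function $\phi_z(C) := \hat f_1(u',[C,z],z) \in \cS(K_p(1,-1))$. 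Weil's formula (Theorem~\ref{th-Weil-(u',v)}) now converts this into $\gamma(u',z)$ times an integral of $\phi_z$ against $\psi(-Q_{u',z})$, and the accumulated powers of $|\De_{p+1}(z)|$ and $|\nabla_{k-p}(u')|$ collapse to the single prefactor $|\De_{p+1}(z)|^{(p+1)d/(2\kappa)}$.

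To conclude, I would compute $\cF_u T_f^{p,+}(u',z)$ directly from the definition: the $A$-dependent substitution $u \mapsto u - \tfrac12 (\ad A)^2 z$ in $K_p(2,0)$, together with the identity $b((\ad A)^2 z, u') = b(z,(\ad A)^2 u') = 2Q_{u',z}(A)$, produces precisely $|\De_{p+1}(z)|^{(p+1)d/(2\kappa)}\int_A \hat f_1(u',[A,z],z)\psi(-Q_{u',z}(A))\,dA$. Comparing the two expressions gives the claimed identity after reinstating the outer $\int_z \psi(b(z,v'))\,dz$. The main obstacle is careful bookkeeping of the modulus factors $|\De_{p+1}(z)|^{\bullet}$ and $|\nabla_{k-p}(u')|^{\bullet}$ that appear successively from the definitions of $T^{p,\pm}$, from the change of variable in Lemma~\ref{lem-formule-int}(1), and from the Weil formula; a second subtlety is tracking signs in the repeated Jacobi manipulations needed to identify the point $\ad u'\ad z(A) \in K_p(-1,1)$ at which the Fourier transform $\cF\phi_z$ must be evaluated in order to match the Weil formula.
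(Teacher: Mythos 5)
Your proposal is correct and follows essentially the same route as the paper's proof: the same weight-space decomposition of $V^{\pm}$, the same change of variables from Lemma \ref{lem-formule-int}, the same quadratic form $Q_{u',v}$ with $\alpha_{\beta_{u',v}}=\ad u'\ad v$, and the same application of the Weil formula of Theorem \ref{th-Weil-(u',v)}, ending with the identical computation of $\cF_{u}T_{f}^{p,+}(u',v)$. The only difference is organizational — you unwind $T_{\cF f}^{p,-}$ directly, whereas the paper starts from $\cF_{u}T_{f}^{p,+}$, applies Weil, and then takes $\cF_{v}$; the paper also supplies the Fubini and $L^{1}$ justifications (by reducing to product functions $f_{1}\otimes f_{2}\otimes f_{3}$) that your sketch leaves implicit.
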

  \begin{proof}\hfill
  
  We know from Theorem \ref{th-proprietes-T_{f}},  1)a) and 1)b) that $T_{f}^{p,+}(u,v)$ is everywhere defined for $u\in K_{p}(2,0)$ and $v\in K_{p}(0,2)'$ and that $T_{{\cal F}f}^{p,-}(u',v')$ is everywhere defined for $u'\in K_{p}(-2,0)'$ and $v'\in K_{p}(0,-2)$. The same Theorem tells us also that  $T_{f}^{p,+}(u,v)$ is almost everywhere defined on $K_{p}(2,0)\times K_{p}(0,2)$ and that the function $u\longmapsto T_{f}^{p,+}(u,v)$ is integrable over $K_{p}(2,0)$ for almost all $v\in K_{p}(0,2)$.
  
  We note also that here, although the base field $F$ is $p$-adic and that we work with a general   maximal parabolic $P_{p}$, the main steps of the proof are the same as in  (\cite{B-R} Theorem 4.28, p.107) where $F=\R$ and where $p=0$.
  
  In the rest of the proof we will omit the spaces where the integrations are performed, and we make the following convention: the integration in the variables $A,A'$ will be on $K_{p}(1,-1)$, in $u$ on $K_{p}(2,0)$, in $v$ on $K_{p}(0,2)$.
  
    Let $f\in \cal{S}(V^+)$. As we have already seen we can suppose that 
  $$f(u+w+v)=f_{1}(u)f_{2}(w)f_{3}(v) \hskip 10pt \text { where }\hskip 5pt f_{1}\in \cal{S}(K_{p}(2,0)),  f_{2}\in \cal{S}(K_{p}(1,1)),  f_{3}\in \cal{S}(K_{p}(0,2)).$$
  {\it We suppose now that $v$ is fixed in $K_{p}(0,2)'$}. We have then
  
  $$\begin{array}{rll}
  \displaystyle \int_{u} |T_{f}^{p,+}(u,v)|du&\leq&\displaystyle  |\Delta_{p+1}(v)|^{\frac{(p+1)d}{2\kappa}}\int_{u}\int_{A}|f(e^{\ad A}(u+v))|dAdu\cr
 
  &=&\displaystyle   |\Delta_{p+1}(v)|^{\frac{(p+1)d}{2\kappa}}|f_{3}(v)|\int_{u}\int_{A}|f_{1}(u+\frac{1}{2}(\ad A)^2v)|\,|f_{2}([A,v])|du dA.
  \end{array}$$
   We make the change of variable $u+\frac{1}{2}(\ad A)^2v\rightarrow u$ in the $u$-integral and  we note  that the $A$-integral is over a compact set (due to the fact that $A\longmapsto [A,v]$ is an isomorphism from $K_{p}(1,-1)$ onto $K_{p}(1,1)$). Hence, for any $v\in K_{p}(0,2)'$
   \beq\label{Dem-Fubini}\int_{u}| T_{f}^{p,+}(u,v)|du\leq \int_{u} T_{|f|}^{p,+}(u,v)du<+\infty.\eeq
Therefore  
   $$u\longmapsto T_{f}^{p,+}(u,v)\in L^1(K_{p}(2,0))\hskip 10pt \text { for } v\in K_{p}(0,2)'.$$
   
   Let us now compute the $u$-Fourier transform of $T_{f}^{p,+}$ given by 
   $$\cal{F}_{u}T_{f}^{p,+}(u',v)=|\Delta_{p+1}(v)|^{\frac{(p+1)d}{2\kappa}}\int_{u,A}f(u+\frac{1}{2}(\ad A)^2v+[A,v]+v)\psi(b(u,u'))dAdu.$$
   Due to (\ref{Dem-Fubini}) the order of the integrations does not matter. Making the change of variable $u+\frac{1}{2}(\ad A)^2v\rightarrow u$  we obtain
   $$\cal{F}_{u}T_{f}^{p,+}(u',v)=|\Delta_{p+1}(v)|^{\frac{(p+1)d}{2\kappa}}\int_{u,A}f(u+[A,v]+v)\psi(b(u-\frac{1}{2}(\ad A)^2v,u'))dAdu.$$
   We introduce now the function
   $$J (u',A,v)=\int_{u}f(u+[A,v]+v)\psi(b(u,u'))du=(\cal{F}_{u}f)(u',[A,v],v).$$
   
      Then from the definition of the form $Q_{u',v}$
   \beq\label{Dem-FuQ}\cal{F}_{u}T_{f}^{p,+}(u',v)=|\Delta_{p+1}(v)|^{\frac{(p+1)d}{2\kappa}}\int_{A}J (u',A,v)\psi(-Q_{u',v}(A))dA.\eeq 
   
   {\it We suppose now that $u'\in K_{p}(-2,0)'$.}
   
   As $v$ is generic in $K_{p}(0,2)$ the function $(u,A)\longmapsto f(u+[A,v]+v)$ belongs to $\cal{S}(K_{p}(2,0)\times K_{p}(1,-1))$ and the function $A\longmapsto J (u',A,v)$ belongs to $\cal{S}(K_{p}(1,-1))$. As $u'$ is generic too, the quadratic form $Q_{u',v}$ is non degenerate and we can apply the Weil formula as in  Theorem \ref{th-Weil-(u',v)} to the integral in the right hand side of $(4)$. We obtain
$$\int_{A}J(u',A,v)\psi(-Q_{u',v}(A))dA $$
\beq\label{Dem-FuQ2}=\gamma(u',v)^{-1}|\Delta_{p+1}(v)|^{\frac{(p+1)d}{2\kappa}}|\nabla_{k-p}(u')|^{\frac{(k-p)d}{2\kappa}}\int_{A'}(\cal{F}_{A}J )(u', \ad u' \ad v (A'), v)\psi(Q_{u',v}(A')) dA' ,  \eeq
   where
   $$(\cal{F}_{A}J)(u', \ad u' \ad v\, A', v)= \int_{A}(\cal{F}_{u}f)(u',[A,v],v)\psi(b((A,\ad u' \ad v\, A'))dA.$$
   If we make the change of variable $w=[A,v]$, and if we use Lemma \ref{lem-formule-int}, together with the fact that $b((A,\ad u' \ad v\, A')=b([A,v],[u',A'])$ the preceding formula becomes
  \beq\label{Dem-FAJ}(\cal{F}_{A}J )(u', \ad u' \ad v\, A', v)= |\Delta_{p+1}(v)|^{-\frac{(p+1)d}{\kappa}}(\cal{F}_{w}\cal{F}_{u}f)(u',[u',A'],v).\eeq   
   From (\ref{Dem-FuQ}), (\ref{Dem-FuQ2}) and (\ref{Dem-FAJ}), we obtain
\beq\label{Dem-gammaFu}\gamma(u',v)(\cal{F}_{u}T_{f}^{p,+})(u',v)=|\nabla_{k-p}(u')|^{\frac{(k-p)d}{2\kappa}}\int_{A'}(\cal{F}_{w}\cal{F}_{u}f)(u',[u',A'],v)\psi(Q_{u',v}(A'))dA'.\eeq
   \vskip 5pt 
   {\it From now on we only  suppose that $u'$ is generic in $K_{p}(-2,0)$ ($v$ may be singular).}
   \vskip 5pt

   The Fourier transform $\cal{F}_{w}\cal{F}_{u}f\in \cal{S}(K_{p}(-2,0)\times K_{p}(-1,-1)\times K_{p}(0,2))$ and as $u'$ is generic we obtain, as before, that 
   $$\int_{v,A'}|(\cal{F}_{w}\cal{F}_{u}f)(u',[u',A'],v)|dA'dv<+\infty.$$
   
   It follows than from (\ref{Dem-gammaFu}) that the function $v\longmapsto \gamma(u',v)(\cal{F}_{u}T_{f}^{p,+})(u',v)$ belongs to $L^1(K_{p}(0,2)$ for $u'$ generic in $K_{p}(-2,0)$ and this allows, by Fubini's Theorem, to change the order of the integrations in $A'$ and 
   $v$ in the following computation of its Fourier transform.
   
   We have
   \begin{align*}&\cal{F}_{v}(\gamma(u',v)(\cal{F}_{u}T_{f}^{p,+})(u',v))(v')\cr
   &=|\nabla_{k-p}(u')|^{\frac{(k-p)d}{2\kappa}}\int_{v}\int_{A'}(\cal{F}_{w}\cal{F}_{u}f)(u',[u',A'],v)\psi(b(\frac{1}{2}(\ad A')^2u',v)+b(v',v))dA'dv\cr
    &=|\nabla_{k-p}(u')|^{\frac{(k-p)d}{2\kappa}}\int_{A'}\int_{v}(\cal{F}_{w}\cal{F}_{u}f)(u',[u',A'],v)\psi(b(\frac{1}{2}(\ad A')^2u',v)+b(v',v))dvdA'\cr
    &=|\nabla_{k-p}(u')|^{\frac{(k-p)d}{2\kappa}}\int_{A'} (\cal{F}_{v}\cal{F}_{w}\cal{F}_{u}f)(u',[u',A'],v'+\frac{1}{2}(\ad A')^2u') dA'.\cr
   \end{align*}
   But $u'+[u',A']+v'+\frac{1}{2}(\ad A')^2u'= e^{\ad(- A')(u'+v')}$ and $\cal{F}_{v}\cal{F}_{w}\cal{F}_{u}f=\cal{F}f$ and we recognize the definition of $T_{\cal{ F}f}^{p,-}$ (Definition \ref{def-moyennes}, 2)). Hence
   $$T_{{\cal F}f}^{p,-}(u',v')={\cal F}_{v}(\gamma(u',v)({\cal F}_{u}T_{f}^{p,+})(u',v))(v')$$
   for $u'$ generic in $K_{p}(-2,0)$.
   
  \end{proof}
  
  \begin{rem} \label{rem-analogie-GJ}\hfill
  
  Godement and Jacquet have used a mean function $\varphi_{\Phi}$ (\cite {G-J} p.37) which looks like our mean function $T^{p,+}_{f}$ (or $T^{p,-}_{g}$) but which  is not equal to our's in the case of $GL_{n}(F)\times GL_{n}(F)$ acting on $M_{n}(F)$ which corresponds to their situation. Also they proved that $\cF(\varphi_\Phi)=\varphi_{\cF(\Phi)}$ (Loc. cit Lemma 3.4.0 p. 38), which again  looks similar to the preceeding Theorem \ref{th-T_{Ff}-T_{f}}.
  \end{rem}

\subsection{Computation  of the Weil constant $\ga(u',v)$}\label{section-calcul-gamma}
\hfill
\vskip 5pt

  (Remember that in this section, we suppose that $\ell=1$).\me

The aim of this paragraph is to compute the Weil constant  $\ga(u',v)=\ga_\psi(Q_{u',v})$ introduced in Theorem  \ref{th-Weil}, in the case where  $u'\in K_{k-1}(-2,0)'$ and  $v\in K_{k-1}(0,2)'= \tilde{\go g}^{\la_k}\setminus \{0\}$. We recall first some general results of A.~Weil \cite{W} and of  S. Rallis et G. Schiffmann \cite{R-S} concerning $\ga_\psi(Q)$  . \vskip 5pt

First of all, if   $Q$ and  $Q'$ are two equivalent non degenerate quadratic forms ,we have
$$\ga_\psi(Q)=\ga_\psi(Q').$$
 
Remember that  there are four classes modulo the squares   in $F$, namely  $\Cr=F^*/F^{*2}=\{1,\ep,\pi,\ep\pi\}$ where  $\ep$ is a unit which is not a square, and  where  $\pi$ is a uniformizer for $F$.\me

For $a,b\in F^*$, let us denote by  $(a,b)_\Cr$ the Hilbert symbol defined by 
$$(a,b)_\Cr =\left\{\begin{array}{ccc} 1 & {\rm if} \;\; b\;\textrm{belongs to the norm group of }\;  F[\sqrt{a}]\\ -1 & {\rm if \,\,not }\end{array}\right..$$
Denote by  $\chi_a$ the quadratic character of  $F^*$ (associated to the quadratic extension  $F[\sqrt{a}]$) defined by
$$\chi_a(b)=(a,b)_\Cr.$$

If  $\go q_1$ is the quadratic form on  $F$ defined by  $\go q_1(x)=x^2$ we set: 

\beq\label{def-alpha}\al(a)=\ga_\psi(a\go q_1),\quad \varphi(a)=\frac{\al(a)}{\al(1)}\quad a\in F^*.\eeq

As $a^2\go q_{1}$ is equivalent to $\go q_{1}$ one can consider $\alpha$ and $\varphi$ as functions on $\Cr$.

A fundamental result of  A. Weil (\cite{R-S} (1-6)) tells us   that 
$$\varphi(ab)=\varphi(a)\varphi(b)(a,b)_\Cr,\quad a,b\in F^*.$$
As  $\ga_\psi(Q)\ga_\psi(-Q)=1$ for any non degenerate quadratic form  $Q$, one has   $\al(1)^{-1}=\al(-1)$ and hence  $$\al(ab)=\al(a)\al(b)(a,b)_\Cr\al(-1).$$
One has also the following result:
\begin{lemme}\label{lem-gamma(xQ)}(\cite{R-S}  Proposition I-7) Let $Q$ be a non degenerate quadratic form  on a vector space  $E$ of dimension $n$, and discriminant  $disc(Q)$. 
\begin{enumerate}
\item If  $n=2r$ is even then  
$$\ga_\psi(xQ)=\ga_\psi(Q) (x, (-1)^r disc(Q))_\Cr,\quad x\in F^*$$
\item If $n=2r+1$ is odd then
$$\ga_\psi(xQ)=\ga_\psi(Q) (x,(-1)^rdisc(Q))_\Cr \al(x)\al(-1).
$$
\end{enumerate}
\end{lemme}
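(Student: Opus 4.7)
The strategy is to reduce everything to the one-variable case via diagonalization and the multiplicativity of $\gamma_\psi$ under orthogonal sums. First I would diagonalize $Q$: there exist $a_1,\ldots,a_n\in F^*$ such that $Q$ is equivalent to $\sum_{i=1}^n a_i x_i^2$, so that $\mathrm{disc}(Q)=a_1\cdots a_n$ modulo squares. Since $\gamma_\psi$ is an invariant of the equivalence class and is multiplicative on orthogonal sums, one has $\gamma_\psi(Q)=\prod_{i=1}^n\alpha(a_i)$ and similarly $\gamma_\psi(xQ)=\prod_{i=1}^n\alpha(xa_i)$.

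Next I would apply Weil's functional equation. Rewriting $\varphi(ab)=\varphi(a)\varphi(b)(a,b)_{\mathcal{C}}$ in terms of $\alpha=\alpha(1)\varphi$ and using $\alpha(1)\alpha(-1)=1$, one obtains the basic identity
\[
\alpha(ab)=\alpha(a)\alpha(b)(a,b)_{\mathcal{C}}\,\alpha(-1),\qquad a,b\in F^*.
\]
Applying this with $a=x$ and $b=a_i$ for each $i$ and taking the product over $i$ yields
\[
\gamma_\psi(xQ)=\alpha(x)^n\,\alpha(-1)^n\,\bigl(x,\textstyle\prod_i a_i\bigr)_{\mathcal{C}}\prod_i\alpha(a_i)=\alpha(x)^n\,\alpha(-1)^n\,(x,\mathrm{disc}(Q))_{\mathcal{C}}\,\gamma_\psi(Q).
\]

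The final step is to simplify the factor $\alpha(x)^n\alpha(-1)^n$. Specializing the basic identity to $a=b=x$ and using that $\alpha(x^2)=\alpha(1)$ (since $x^2 \mathfrak{q}_1 \sim \mathfrak{q}_1$) one gets $\alpha(x)^2=\alpha(1)^2(x,x)_{\mathcal{C}}=\alpha(1)^2(x,-1)_{\mathcal{C}}$, the second equality being the standard identity $(x,x)_{\mathcal{C}}=(x,-1)_{\mathcal{C}}$ for the Hilbert symbol. Combined with $\alpha(1)\alpha(-1)=1$, this gives $\alpha(x)^2\alpha(-1)^2=(x,-1)_{\mathcal{C}}$. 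If $n=2r$ is even, raising to the $r$-th power yields $\alpha(x)^n\alpha(-1)^n=(x,(-1)^r)_{\mathcal{C}}$, hence
\[
\gamma_\psi(xQ)=\gamma_\psi(Q)\,(x,(-1)^r\mathrm{disc}(Q))_{\mathcal{C}},
\]
which is assertion (1). If $n=2r+1$ is odd, factoring out one copy of $\alpha(x)\alpha(-1)$ leaves $\alpha(x)^n\alpha(-1)^n=\alpha(x)\alpha(-1)(x,(-1)^r)_{\mathcal{C}}$, which yields (2).

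The argument is largely bookkeeping once the two inputs (diagonalization plus multiplicativity of $\gamma_\psi$, and the functional equation for $\alpha$) are in hand; no single step is a real obstacle, but the most delicate point is the identity $\alpha(x)^2\alpha(-1)^2=(x,-1)_{\mathcal{C}}$, whose correct proof relies on the Hilbert-symbol identity $(x,x)_{\mathcal{C}}=(x,-1)_{\mathcal{C}}$ and must be handled carefully to obtain the $(-1)^r$ factor with the right parity.
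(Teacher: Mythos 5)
Your proof is correct and uses essentially the same ingredients as the paper's: diagonalization together with $\ga_\psi(\sum_i a_i x_i^2)=\prod_i\al(a_i)$, Weil's identity $\al(ab)=\al(a)\al(b)(a,b)_\Cr\,\al(-1)$, and the Hilbert-symbol relation $(x,x)_\Cr=(x,-1)_\Cr$. The only difference is bookkeeping: the paper first derives the closed formula $\ga_\psi(Q)=\al(1)^{n-1}\al(a_1\cdots a_n)\prod_{i<j}(a_i,a_j)_\Cr$ by induction and then compares $Q$ with $xQ$, whereas you apply the identity factor by factor to $\al(xa_i)$ and simplify $\al(x)^n\al(-1)^n$ via $\al(x)^2\al(-1)^2=(x,-1)_\Cr$ — both routes are valid and give the stated parities.
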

\begin{proof} 

The first assertion is exactly the assertion of  Proposition I-7 of  \cite{R-S}.  As our formulation of  assertion 2)  is slightly different from  that in \cite{R-S}, we give some details. \me

From   (\cite{R-S} Proposition 1-3), we know that if  $Q(x_1,\ldots, x_n)=\sum_{s=1}^n a_s x_s^2$ then  $\ga_\psi(Q)= \prod_{s=1}^n \al(a_s)$. Using the fact that  $\al(a)\al(b)=\al(ab)(a,b)_\Cr\al(1)$, one obtains  by induction that 
$$\ga_\psi(Q)= \al(1)^{n-1} \al(a_1\ldots a_n)\prod_{i<j}(a_i,a_j)_\Cr.$$
Set $D=disc(Q)=a_{1}\ldots a_{n} \text{ mod}F^{*2}$. As  $(xa_i,xa_j)_\Cr=(x,x)_\Cr(x,a_i)_\Cr(x,a_j)_\Cr (a_i,a_j)_\Cr$,  $(x,x)_\Cr=(-1,x)_\Cr$,  and  $\prod_{i<j}(x,a_{i})_{\Cr}(x,a_{j})_{\Cr}=(x^{n-1},D)_{\Cr}$,  one obtains  
$$\ga_\psi(xQ)=\ga_\psi(Q) (x,(-1)^{\frac{n(n-1)}{2}})_\Cr(x^{n-1},D)_\Cr \frac{\al(x^nD)}{\al(D)}.$$
If  $n$ is even then  $(x^{n-1},D)_\Cr =(x,D)_\Cr$ and  $\al(x^nD)=\al(D)$.  This gives the first assertion . And if $n$ is odd, we have  $(x^{n-1},D)_\Cr =1$ and  $\al(x^nD)=\al(xD)=\al(x)\al(D) (x,D)_\Cr\al(-1)$. This gives the second assertion.
\end{proof}

In what follows we assume that  $k\geq 1$.  \vskip 5pt

\begin{notation}\label{notation-Delta-souligne}  We denote by  $\tilde{\go l}_k$ the Lie algebra generated by $\tilde{\go g}^{\pm\la_k}$, that is 
$$\tilde{\go l}_k=\tilde{\go g}^{-\la_k}\oplus FH_{\la_k}\oplus \tilde{\go g}^{\la_k}.$$
Let $\underline{\tilde{\go g}}$ be the centralizer of $\tilde{\go l}_k$ in $\tilde{\go g}$ . We will denote by underlined letters  the elements and algebras associated to  $\underline{\tilde{\go g}}$.  The algebra $\underline{\tilde{\go g}}$ is  $3$-graded by the element  $\underline{H}_0=\sum_{j=0}^{k-1} H_{\la_j}$, and is regular of rank  $k$.
 We denote by  
$\underline{\tilde{\go g}}=\underline{V}^-\oplus \underline{\go g}\oplus \underline{V}^+$  the corresponding decomposition. It can be noted that in the notations  of section  \ref{sec:section-parabolique} we have  $\underline{V}^\pm= K_{k-1}(\pm 2,0)$
 and  $\underline{\go g}=K_{k-1}(0,0)$ and  $\tilde{\go g}^{\pm\la_k}=K_{k-1}(0,\pm2)$. We have also $\underline{\go g}= Z_{\underline{\tilde{\go g}}}(\go a^0)\oplus \left(\oplus_{i\neq j<k}E_{i,j}(1,-1)\right).$\me
 
     If    $\underline{\go a}^0=\oplus_{j<k} F H_{\la_j}$, then  $\underline{G}$ stands for the group  $\cZ_{{\rm Aut}_0(\underline{\tilde{\go g}})}(\underline{H_0})$ whose Lie algebra is  $\underline{\go g}$ and  $\underline{P}=\underline{L}\underline{N}$ is the parabolic subgroup of $\underline{G}$ corresponding to the Lie algebra $$\underline{\go p}=\cZ_{\underline{\go g}}(\underline{\go a}^0)\oplus\oplus_{r<s<k} E_{r,s}(1,-1).$$  
Let  $\underline{\De}_j$ be the polynomial relative invariants   of $(\underline{P},\underline{V}^+) $ normalized such $\underline{\De}_j(X_0+\ldots+X_{k-1})=1$, etc... (in the  notations of Definition \ref{def-invariants},  we have for example $\underline{\De}_0= \tilde \Delta_{1})$.
\end{notation}
\me

 \begin{rem}\label{rem-e-souligne} From the classification  in Table I  and the convention in Definition \ref{def-ensembleS}, we see  that the integers $\underline{\ell}$ and $\underline{e}$ associated to $\underline{\tilde{\go g}}$ satisfy    $\underline{\ell}=\ell$,  and  $\underline{e}=e$  if $k\geq 2$,   $\underline{e}=0$ if $k=1$. 
\end{rem}
\begin{lemme}\label{lem-decomposition-Delta} 

(i) Let  $u,v$ and  $A$ be respectively in  $\underline{V}^+, \tilde{\go g}^{\la_k}$ and $K_{k-1}(1,-1)$, then 
$$\De_j(e^{\ad A}(u+v))=\left\{\begin{array}{ccc}  \underline{\De}_{j}(u)\De_k(v) & {\rm if} &  j=0,\ldots, k-1,\\
\De_k(v)&  {\rm if } &  j=k.\end{array}\right.$$
(ii)  If $v'=\nu Y_k$, we set  $\tilde{\nabla}_k(v')=\nu$. Then , for  $u',v'$ and $A$ respectively in   $\underline{V}^-,\tilde{\go g}^{-\la_k}$ and $K_{k-1}(1,-1)$, we have  
$$\nabla_j(e^{\ad A}(u'+v'))=\left\{\begin{array}{ccc} \underline{\nabla}_0(u')\widetilde{\nabla}_k(v') & {\rm if} & j=0,\\
\underline{\nabla}_{j-1}(u')& {\rm if} & j=1,\ldots, k.\end{array}\right.$$

\end{lemme}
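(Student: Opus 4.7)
The plan is to reduce both assertions to polynomial identities using $N$-invariance of the relative invariants, then to pin down the remaining polynomials via a scaling argument with the elementary automorphisms $h_{X_k}(t)$ and $h_{Y_k}(t)$, concluding by uniqueness of fundamental relative invariants on suitable sub-prehomogeneous spaces.

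Since $A \in K_{k-1}(1,-1) \subset \go n$, we have $e^{\ad A} \in N \subset P$; Theorems \ref{th-delta_{j}invariants} and \ref{th-nabla} then give $\De_j(e^{\ad A}(u+v)) = \De_j(u+v)$ and $\nabla_j(e^{\ad A}(u'+v')) = \nabla_j(u'+v')$, so it suffices to evaluate these. The case $j=k$ of (i) is immediate from Definition \ref{defdeltaj}: since $V_k^+ = \tilde{\go g}^{\la_k}$ and $V_k^\perp = \underline{V}^+$, we get $\De_k(u+v) = \De_k(v)$.

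For case (i) with $j<k$, I will first write $u = u_{<j} + u_{\geq j}$ with $u_{<j} \in \underline{V}^+ \cap V_j^\perp$ and $u_{\geq j} \in \underline{V}^+ \cap V_j^+$; Definition \ref{defdeltaj} then gives $\De_j(u+v) = \De_j(u_{\geq j} + v)$. Next, $h_{X_k}(t)$ (Definition \ref{def-hx}) lies in $L \subset G$, acts trivially on $\underline{V}^+$ and by $t^2$ on $\tilde{\go g}^{\la_k}$ (Remark \ref{rem-hx}), with character $\chi_j(h_{X_k}(t)) = \De_j(X_j+\cdots+X_{k-1}+t^2 X_k) = t^{2\kappa} = t^2$ by Theorem \ref{thproprideltaj}~(3) and the fact that $\kappa = 1$ when $\ell = 1$. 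Hence $v \mapsto \De_j(u_{\geq j}+v)$ is homogeneous of degree $1$ on the one-dimensional space $\tilde{\go g}^{\la_k}$, and may be written $\De_j(u_{\geq j}+v) = \De_k(v)\,h(u_{\geq j})$ for a unique polynomial $h$ on $\underline{V}^+ \cap V_j^+$. To identify $h$, I will use that elements of the group playing the role of $G_j$ for the graded algebra $\underline{\tilde{\go g}}$ embed into $\bar G$ by extension of elementary automorphisms (Remark \ref{rem-inclusion-groupes}) and fix $\tilde{\go g}^{\la_k}$ pointwise (since the corresponding subalgebra centralizes $\tilde{\go l}_k$); applying $\De_j$ to this action shows $h$ is relatively invariant. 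Comparing degrees, $\deg h = (k+1-j) - 1 = k - j$, matches the degree of the fundamental relative invariant $\underline{\De}_j|_{\underline{V}^+ \cap V_j^+}$, so by Theorem \ref{thexistedelta_0} the two are proportional; evaluation at $X_j + \cdots + X_{k-1}$ together with the normalizations $\De_j(I^+) = \underline{\De}_j(X_j+\cdots+X_{k-1}) = \De_k(X_k) = 1$ gives proportionality constant $1$. Extending $\underline{\De}_j$ trivially on $\underline{V}^+ \cap V_j^\perp$ (Definition \ref{defdeltaj}) yields $\De_j(u+v) = \underline{\De}_j(u) \De_k(v)$.

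Case (ii) runs in parallel with $h_{Y_k}(t)$, which acts by $t^{-2}$ on $\tilde{\go g}^{-\la_k}$ and trivially on $\underline{V}^-$. A diagonal computation gives $\chi_j^-(h_{Y_k}(t)) = t^{-2}$ for $j=0$ and $1$ for $j \geq 1$. For $j = 0$ the scheme above yields $\nabla_0(u'+v') = \underline{\nabla}_0(u')\tilde{\nabla}_k(v')$. For $j \geq 1$, the identity $\nabla_j(u'+t^{-2}v') = \nabla_j(u'+v')$ for $t \in F^*$ means the polynomial $\nu \mapsto \nabla_j(u'+\nu Y_k)$ is constant on the infinite set $F^{*2}$, hence identically constant, so $\nabla_j(u'+v') = \nabla_j(u')$; identifying the restriction of $\nabla_j$ to $\underline{V}^-$ with $\underline{\nabla}_{j-1}$ by the same relative-invariance-and-degree argument finishes case (ii). The most delicate step—and the main likely obstacle—is the identification of $h$ and its case-(ii) counterpart as relative invariants of the smaller prehomogeneous vector space: this crucially uses that the natural extensions to $\bar G$ of automorphisms of the relevant $\underline{\tilde{\go g}}$-type subalgebras act trivially on $\tilde{\go l}_k$, which follows from the construction of $\underline{\tilde{\go g}}$ as the centralizer of $\tilde{\go l}_k$ in $\tilde{\go g}$.
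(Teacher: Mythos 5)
The paper offers no proof of this lemma beyond a citation of the real-case analogue (\cite{B-R}, Lemma 5.18), so your argument cannot be compared line by line with an argument in the text; judged on its own it is correct and self-contained, and it follows the natural route: reduce to $u+v$ by $N$-invariance, use the scaling element $h_{X_k}(t)\in L$ to isolate the $\De_k(v)$ (resp.\ $\tilde\nabla_k(v')$) factor, and identify the remaining coefficient polynomial by relative invariance, degree count and normalization. Three small points deserve attention. First, $V_k^\perp$ is not equal to $\underline V^+$ (it also contains $K_{k-1}(1,1)=\oplus_{r<k}E_{r,k}(1,1)$); you only need the inclusion $\underline V^+\subset V_k^\perp$, which is what the case $j=k$ actually uses. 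Second, Definition \ref{def-hx} only defines $h_x(t)$ for $x\in V^+$, and with the triple $(X_k,-H_{\la_k},Y_k)$ the element you call $h_{Y_k}(t)$ acts by $t^{2}$, not $t^{-2}$, on $\tilde{\go g}^{-\la_k}$; it is cleaner to use $h_{X_k}(t)$ in part (ii) as well, which acts by $t^{-2}$ on $\tilde{\go g}^{-\la_k}$ and trivially on $\underline V^-$, and this changes nothing in the argument. Third, in the identification of $\nabla_j|_{\underline V^-}$ with $\underline\nabla_{j-1}$ for $j\geq 1$, degree and relative invariance under $\underline P\subset P$ only give a priori a monomial $c\prod_i\underline\nabla_i^{m_i}$; you should add the one-line check that the restriction of $\chi_j^-$ to the torus $\underline A^0$ equals $\underline\chi_{j-1}^-$ (both are $a\mapsto a^{-\kappa(\la_0+\cdots+\la_{k-j})}$), which pins down the exponents, after which evaluation at $Y_0+\cdots+Y_{k-1}$ gives $c=1$. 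With these adjustments the proof is complete.
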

\begin{proof} See (\cite{B-R} Lemma 5.18). 

 \end{proof}

\begin{cor}\label{cor-reduction-orbite} Let   $k\geq 1$  and $a=(a_0,\ldots, a_k)\in\Sr_e^{k+1}$.   Recall that $\cO^\pm(a)$ is the $\tilde{P}$-orbit in $V^\pm$ defined in Definition \ref{def-Ptilde-orbites}.  

\begin{enumerate}\item  Let $u\in \underline{V}^+$ and  $v\in \tilde{\go g}^{\la_k}$ then  
$$u+v\in \cO^+(a_0,\ldots a_{k})\Longleftrightarrow u\in\underline{\cO}^+(a_0,\ldots a_{k-1}),\;{\rm and }\;  \De_k(v)a_k\in S_e.$$
 where $$\underline{\cO}^+(a_0,\ldots a_{k-1})=\{X\in\underline{V}^+; \underline{\De}_j(X) a_j\ldots a_{k-1}\in S_e\;\text{for}\; j=0,\ldots, k-1\}.$$

\item Let  $u'\in  \underline{V}^-$ and  $v'\in \tilde{\go g}^{-\la_k}$ then 
$$u'+v'\in \cO^-(a_0,\ldots a_{k})\Longleftrightarrow  u'\in \underline{\cO}^-(a_0,\ldots a_{k-1})\;\;{\rm and }\; \tilde{\nabla}_k(v')a_k\in S_e.$$
 where $$\underline{\cO}^-(a_0,\ldots a_{k-1})=\{X\in\underline{V}^+; \underline{\nabla}_j(X) a_j\ldots a_{k-1}\in S_e\;\text{for}\; j=0,\ldots, k-1\}.$$
\end{enumerate}
 Notice that for $k=1$, the set $\underline{\cO}^\pm(a_0)$ is not  a $\tilde{\underline{P}}$ orbit (see Remark \ref{rem-e-souligne}).

\end{cor}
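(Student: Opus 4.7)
The statement is a direct translation of the definitions once one has the factorization of the fundamental relative invariants along the decomposition $V^{\pm} = \underline{V}^{\pm} \oplus \tilde{\go g}^{\pm\la_k}$. The first step is to apply Lemma~\ref{lem-decomposition-Delta} with $A=0$, which yields
\[
\De_j(u+v) = \underline{\De}_j(u)\,\De_k(v) \quad (j=0,\ldots,k-1), \qquad \De_k(u+v) = \De_k(v),
\]
and the analogous identities for $\nabla_j(u'+v')$ in terms of $\underline{\nabla}_{j-1}(u')$ and $\tilde{\nabla}_k(v')$. These identities reduce the problem to a purely combinatorial manipulation of the $k+1$ congruence conditions defining $\cO^+(a)$.

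Next, for assertion~(1), I would substitute the identities above into the defining conditions $\De_j(u+v)\,a_j\cdots a_k\in S_e$. The condition for $j=k$ is exactly $\De_k(v)\,a_k\in S_e$, and each of the conditions for $j=0,\ldots,k-1$ becomes
\[
\underline{\De}_j(u)\,\De_k(v)\,a_j\cdots a_{k-1}\,a_k \in S_e.
\]
The crucial ingredient here is that $S_e$ is a subgroup of $F^*$: indeed, by Lemma~\ref{lem-ensembleSe}, $S_e$ is either $F^*$, $F^{*2}$, or $N_{E/F}(E^*)$, all of which are multiplicative subgroups. Therefore, once we know that $\De_k(v)\,a_k\in S_e$, we can multiply or divide by this element while staying inside $S_e$, and the above condition becomes equivalent to $\underline{\De}_j(u)\,a_j\cdots a_{k-1}\in S_e$. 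Conjunction over $j=0,\ldots,k-1$ is exactly the definition of $\underline{\cO}^+(a_0,\ldots,a_{k-1})$, which proves the equivalence in~(1).

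For assertion~(2), the argument is strictly parallel. The identity $\nabla_j(u'+v')=\underline{\nabla}_{j-1}(u')$ for $j=1,\ldots,k$ directly translates the defining conditions $\nabla_j(u'+v')\,a_0\cdots a_{k-j}\in S_e$ into $\underline{\nabla}_{j-1}(u')\,a_0\cdots a_{k-j}\in S_e$, which (reindexing $j'=j-1$) are precisely the $k$ defining conditions of $\underline{\cO}^-(a_0,\ldots,a_{k-1})$. The remaining condition, corresponding to $j=0$, is $\underline{\nabla}_0(u')\,\tilde{\nabla}_k(v')\,a_0\cdots a_k\in S_e$, and combining it with the $j=1$ condition $\underline{\nabla}_0(u')\,a_0\cdots a_{k-1}\in S_e$ (again using that $S_e$ is a subgroup) yields $\tilde{\nabla}_k(v')\,a_k\in S_e$.

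There is no real obstacle here; the only point to keep in mind is that the argument uses nothing more than the subgroup property of $S_e$, and that the remark concerning $k=1$ (where $\underline{\cO}^\pm(a_0)$ is not a $\tilde{\underline{P}}$-orbit because $\underline{e}=0$ and thus $\underline{S_e}=F^*\neq S_e$ in general) is consistent with the statement, since the corollary only characterizes $\underline{\cO}^\pm(a_0,\ldots,a_{k-1})$ as a set defined by congruence conditions, not as an orbit.
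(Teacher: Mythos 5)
Your proof is correct and follows the same route as the paper's: apply Lemma \ref{lem-decomposition-Delta} (with $A=0$) to factor the $\De_j$ and $\nabla_j$ along $V^{\pm}=\underline{V}^{\pm}\oplus\tilde{\go g}^{\pm\la_k}$, then translate the congruence conditions using the fact that $S_e$ is a multiplicative subgroup. You merely spell out the "immediate" step that the paper leaves implicit, which is fine.
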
 
\begin{proof}   Let  $u\in \underline{V}^+$ and  $v\in \tilde{\go g}^{\la_k}$. By Lemma \ref{lem-decomposition-Delta}, we have
$\De_j(u+v)=\underline{\De}_j(u)\De_k(v)$ for  $0\leq j\leq k-1$ and  $\De_k(u+v)=\De_k(v)$.   Then the definition of the open sets  $\cO^+(a_0,\ldots,a_k)$ implies immediately the first assertion.\me

Let  $u'\in  \underline{V}^-$ and  $v'\in \tilde{\go g}^{-\la_k}$. By Lemma \ref{lem-decomposition-Delta} we have  $\nabla_j(u'+v')=
\underline{\nabla}_{j-1}(u')$ if  $j=1,\ldots, k$ and  $\nabla_0(u'+v')= \underline{\nabla}_0(u')\tilde{\nabla}_k(v')$, where $\tilde{\nabla}_k(y_kY_k)=y_k$. Therefore 
$u'+v'\in  \cO^-(a_0,\ldots a_{k})$ if and only if $ \underline{\nabla}_{j-1}(u')a_0\ldots a_{k-j}\in S_e$
for  $j=1,\ldots, k$ et $\underline{\nabla}_0(u')\tilde{\nabla}_k(v')a_0\ldots a_k\in S_e$. These  conditions are equivalent to 
$ \underline{\nabla}_{j}(u')a_0\ldots a_{k-1-j}\in S_e$ for  $j=0,\ldots, k-1$ and  $\tilde{\nabla}_k(v')a_k\in S_e$. This gives the second assertion.

\end{proof}

\begin{prop}\label{prop-calcul-gamma} Let $k\geq 1$. Fix $\underline{a}=(a_0,\ldots ,a_{k-1})\in\Sr_e^k$ and
 $c_k\in \Sr_e$. 
  Let  $u'\in  \underline{\cO}^-(\underline{a})$ and let  $v\in \tilde{\go g}^{\la_k}\setminus\{0\}$such that  $\De_k(v)c_k\in S_e$. Then  $\ga(u',v)$ depends only on  $\underline{a}$ and on  $c_k$. 

Therefore let us set: 
 $$\ga_k(\underline{a}, c_k)=\ga(u',v),\quad \text{for}\;\; u'\in\underline{\cO}^-(\underline{a}),\; v\in  K_p(0,2),\; \De_k(v)c_k\in S_e.$$
\item  Remember  that $\al(a)=\ga_\psi(a\go q_1)$ for  $a\in F^*$, where $\go q_1(x)=x^2,$ $x\in F$.  
Remember also  that  if  $e=2$,   we have  $S_e=N_{E/F}(E^*)$ where   $E$ is the quadratic extension of  $F$ such that  $ N_{E/F}(E^*)=\text{Im}(q_{an,2})^*$ (see Definition \ref{def-ensembleS}). Denote by $\varpi_E$ the quadratic character associated to $E$, that is   $\varpi_E(a)=1$ if $a\in N_{E/F}(E^*)$ and  $\varpi_E(a)=-1$  if $a\notin N_{E/F}(E^*)$ (hence if $E=F[\sqrt{\xi}]$, one has  $\varpi_E=\chi_\xi$).
Then 

$\ga_k(\underline{a},c_k)=$
\begin{align*}
&- \ga_\psi(q_e)^k \text { \rm if }  e=0\;{\rm or  }\; e=4\;  (\text{case where}\; \Sr_e=\{1\})\\
&- \ga_\psi(q_e)^k \varpi_E(c_k)^k\prod_{j=0}^{k-1}  \varpi_E(a_j)  \text { \rm if }  e=2 \;\; \;  (\text{case where}\; \Sr_e=F^*/N_{E/F}(E^*))\\
 &-   \prod_{j=0}^{k-1}\ga_\psi(q_e) ((-1)^{\frac{d-1}{2}}disc(q_e), a_jc_k )_\Cr \al(a_jc_k )\al(-1)\text{ \rm if  }  e=1\;{\rm or  }\; e=3, (\text{case where}\; \Sr_e=\Cr=F^*/F^{*2}).\\
   \end{align*}
($disc(q_e)$ is the  discriminant of $q_e)$.
 
\end{prop}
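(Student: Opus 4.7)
The proof will combine an invariance argument, an explicit decomposition of $Q_{u',v}$, an identification of each piece with a scalar multiple of $q_e$, and a case-by-case application of Lemma \ref{lem-gamma(xQ)}.

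First I would verify that $\gamma(u',v)=\gamma_\psi(Q_{u',v})$ depends only on $(\underline{a},c_k)$. For $g\in\tilde{P}$, conjugation intertwines the forms: if we transport $Q_{u',v}$ by the linear automorphism of $K_{k-1}(1,-1)$ induced by the adjoint action of $g$, we obtain $Q_{gu',gv}$ (up to a multiplier that is a square, using the structure of the character $x_j$ on $\tilde{L}$). Since $\gamma_\psi$ is an invariant of the equivalence class of a quadratic form, Lemma \ref{lemme-Ptilde} and Corollary \ref{cor-reduction-orbite} (applied to $\underline{\tilde{\go g}}$) show that $\gamma(u',v)$ depends only on the $\tilde{P}$-orbits of $u'$ in $\underline{V}^-$ and of $v$ in $\tilde{\go g}^{\lambda_k}$, hence only on $(\underline{a},c_k)$.

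Then I would pick explicit representatives $u'=\sum_{s=0}^{k-1}a_{k-1-s}^{-1}Y_s\in\underline{\cO}^-(\underline{a})$ and $v=c_kX_k\in\tilde{\go g}^{\lambda_k}$ (with $\Delta_k(v)c_k=c_k^2\in S_e$) and write the weight-space decomposition $K_{k-1}(1,-1)=\oplus_{i=0}^{k-1}E_{i,k}(1,-1)$. A simple weight count shows that for $A_i\in E_{i,k}(1,-1)$, the bracket $[A_i,Y_s]$ is zero unless $s=i$, because otherwise the triple of $\lambda$-weights produced would lie outside the admissible range $\{-2,-1,0,1,2\}$. Consequently $(\text{ad}\,A)^2u'$ splits as a sum over $i$ of terms $a_{k-1-i}^{-1}[A_i,B_i]$ with $B_i=[A_i,Y_i]\in E_{i,k}(-1,-1)$, and each $[A_i,B_i]$ lies in $\tilde{\go g}^{-\lambda_k}$. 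The mixed terms vanish, so $Q_{u',v}=\oplus_{i=0}^{k-1}R_i$ is an orthogonal sum, where $R_i(A_i)=\tfrac{a_{k-1-i}^{-1}c_k}{2}b([A_i,B_i],X_k)$.

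The central structural step is to identify $R_i$ with a scalar multiple of $q_{X_i,X_k}$. The map $\varphi_i:=\text{ad}\,Y_i:E_{i,k}(1,-1)\to E_{i,k}(-1,-1)$ is a linear isomorphism (with inverse $-\text{ad}\,X_i$, using $[X_i,B_i]=-A_i$ obtained from Jacobi together with $[X_i,X_k]=0$). Starting from the definition $q_{X_i,X_k}(B)=-\tfrac12 b([X_i,B],[X_k,B])$ and applying invariance of $b$ together with $[X_k,Y_i]=0$, a direct calculation expresses $q_{X_i,X_k}(B_i)$ as $\pm\tfrac12 b([A_i,B_i],X_k)$. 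This gives $R_i\sim \eta\cdot a_{k-1-i}\,c_k\,q_{X_i,X_k}$ as quadratic forms (absorbing $a^{-1}\equiv a\bmod F^{*2}$), for a sign $\eta\in\{\pm 1\}$ determined by the invariance calculation. Since $q_{X_i,X_k}\sim q_e$ by Proposition \ref{prop-equivalenceqXiXj}, reindexing by $j=k-1-i$ yields $\gamma_\psi(Q_{u',v})=\prod_{j=0}^{k-1}\gamma_\psi(\eta\,a_jc_k\,q_e)$.

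Finally, I would apply Lemma \ref{lem-gamma(xQ)} in each of the three cases. For $e\in\{0,4\}$ one has $d$ even and a direct computation of discriminants of hyperbolic planes and of the unique anisotropic form of rank $4$ gives $(-1)^{d/2}\mathrm{disc}(q_e)\equiv 1 \bmod F^{*2}$, so each Hilbert symbol is trivial and the product collapses to $\gamma_\psi(q_e)^k$. For $e=2$, with $q_{an,2}=x^2-\alpha y^2$ where $E=F[\sqrt\alpha]$, one computes $(-1)^{d/2}\mathrm{disc}(q_e)\equiv\alpha\bmod F^{*2}$ so the Hilbert symbol becomes $\chi_\alpha=\varpi_E$, and splits multiplicatively as $\varpi_E(a_j)\varpi_E(c_k)$. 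For $e\in\{1,3\}$, $d$ is odd and Lemma \ref{lem-gamma(xQ)}(2) directly produces the Hilbert symbol and the $\alpha$-factors appearing in the statement. The main obstacle will be the careful bookkeeping of the sign $\eta$ and of the equivalence $q_{X_i,X_k}\sim q_e$ so that the overall factor $-1$ in each formula emerges consistently; the computations of $(-1)^{d/2}\mathrm{disc}(q_e)\bmod F^{*2}$ in each case, although elementary, must be done carefully.
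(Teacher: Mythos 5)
Your proposal is correct and follows essentially the same route as the paper's proof: reduction of $(u',v)$ to diagonal representatives by $\tilde P$, orthogonal decomposition of $Q_{u',v}$ over the blocks $E_{j,k}(1,-1)$, identification of each block with $a_jc_k$ (times an element of $S_e$, harmless since $sq_e\sim q_e$ for $s\in S_e$) times $q_{X_j,X_k}\sim q_e$, and the case analysis via Lemma \ref{lem-gamma(xQ)}. Your ``main obstacle'' is not one: the sign $\eta$ is $+1$ by the exact identity $Q_{j,k}([X_j,Y])=q_{X_j,X_k}(Y)$ furnished by Remark \ref{rem-lienmuller} (which is precisely how the paper normalizes this step), and the leading dashes in the three displayed formulas of the statement are list markers rather than minus signs --- as confirmed by the later use in Corollary \ref{cor-calcul-constanteEF-e-pair}, where $\gamma_j(1,\ldots,1)=\gamma_\psi(q_e)^j$ --- so no overall factor $-1$ has to be produced.
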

\begin{proof} Let $u'\in \underline{\cO}^-(\underline{a})$  and  $v\in \tilde{\go g}^{\la_k}=FX_k$ such that  $\De_k(v)c_k\in S_e$. This implies that  $v=c_k\nu X_k$ with  $\nu\in S_e$. Set $Y_{\underline{a}}=a_0Y_0+\ldots a_{k-1} Y_{k-1}$. We will prove simultaneously the two assertions.\me

 Let us first show that there exists $p\in \tilde{P}$ such that  $pu'=Y_{\underline{a}}$ and  $pX_k=X_k$. 

   From Lemma \ref{lem-NO+}    applied to the algebra  $\underline{\tilde{\go g}}$, there exists  $\underline{n}\in\underline{N}$  and  scalars  $y_0,\ldots y_{k-1}$ such that   $\underline{n}u'=y_0Y_0+\ldots+y_{k-1}Y_{k-1}$.  From the definition  of  $\underline{N}$, one has  $\underline{N}\subset N$ and  $\underline{n}.v=v$. 
 
 As  $y_0Y_0+\ldots+y_{k-1}Y_{k-1}\in \underline{\cO}^-(\underline{a})$, there exists $\mu_j\in S_e$ such that $y_j=\mu_j a_j$ for $j=0,\ldots, k-1$. 

From Remark \ref{rem-image-muj}, the elements  $\mu_0^{-1}X_0+\ldots +\mu_{k-1}^{-1} X_{k-1}+X_k$ and $X_0+\ldots +X_k$ are  $\tilde{L}$-conjugated. Hence, there exists  $l\in \tilde{L}$ such that  $x_j(l)=\mu_j^{-1}$ for $j=0,\ldots, k-1$ and  $x_k(l)=1$. The element $p=l\underline{n}$ satisfies then the required property.\me

Then  $pu'=Y_{\underline{a}}$ and  $pv=v=\nu c_k X_k$ with $p\in \tilde{P}$.

Hence the quadratic form  $Q_{u',v}$ is equivalent to the form $c_k \nu  Q_{Y_{\underline{a}}, X_k}$, and therefore
$$\ga(u',v)= \ga_\psi(c_k \nu   Q_{Y_{\underline{a}}, X_k}).$$

The space $K_{k-1}(1,-1)$ (on which the forms live) is the direct sum of the spaces $E_{j,k}(1,-1)$ for $j=0,\ldots, k-1$.  These spaces are orthogonal for  $Q_{a_0Y_0+\ldots+a_{k-1}Y_{k-1}, X_k}$. Moreover, if  $A\in E_{j,k}(1,-1)$, we have  $b((\ad A)^2 Y_i, X_k)=0$ for  $j\neq i$. Therefore
$$Q_{u',v}\sim (\oplus _{j=0}^{k-1} c_k\nu  a_jQ_{j,k}),\quad {\rm where }\; Q_{j,k}(A)=\frac{1}{2} b((\ad A)^2Y_j,X_k),\; {\rm for }\; A\in E_{j,k}(1,-1).$$
Using    Remark \ref{rem-lienmuller} we see that for  $Y\in E_{j,k}(-1,-1)$, one has  $Q_{j,k}([X_j,Y])=q_{X_j,X_k}(Y)$ where $q_{X_i,X_j}$ is defined  as in (\refeq{def-qXiXj}). As $\ad X_j$ is an isomorphism from 
$E_{j,k}(-1,-1)$ onto  $E_{j,k}(1,-1)$, the quadratic forms  $Q_{j,k}$ and  $q_{X_j,X_k}$ are equivalent. And as all the forms  $q_{X_i,X_j}$ are equivalent, we obtain that   $Q_{j,k}\sim q_{X_0,X_1}=q_e$.
 
From (\cite{R-S} Proposition I-3)
 we obtain that
$$\ga(u',v)=\prod_{j=0}^{k-1}\ga_\psi(c_k a_j\nu  q_e).$$
 

If $e$ is odd then  $\nu \in S_e=F^{*2}$  and hence  $\ga_\psi(c_k a_j\nu q_e)=\ga_\psi(c_ka_j q_e)$. Moreover is $e$ is odd then $d$ is odd too (see  table 1 in \cite{B-R}). As the rank of $q_e$ is  $d$, the result is a consequence of  Lemma  \ref{lem-gamma(xQ)}  (2).
\me

If $e=0$ or $4$, the form  $q_e$ is the sum of an anisotropic quadratic form of rank $e$  and a hyperbolic form of rank $d-e$. As all the anisotropic quadratic forms of  rank $4$ are equivalent, and as the same is true for the hyperbolic forms of rank  $d-e$, we obtain that  $c_k a_j\nu q_e$ is equivalent to $q_e$, and therefore $\ga_\psi(c_k a_j\nu q_e)=\ga_\psi(q_e)$.  This gives the result in that case.\me

Suppose now that $e=2$. Then $q_{2}$ is equivalent to $q_{an,2}+q_{hyp,2}$ where $q_{hyp,2}$ is a hyperbolic form of rank $d-2$ and $q_{an,2}$  is an anisotropic form of rank  $2$ such that  $\text{Im}(q_{an,2})^*=N_{E/F}(E^*)$. It follows that  $q_{an,2}$ is equivalent to   $x^2-\xi y^2$. Therefore $(-1)^{d/2} disc(q_2)= -disc(q_{an,2})=\xi$.  As  $\chi_{\xi}$ coincides with the quadratic character  $\varpi_{E}$  associated to $E$ and as  $\nu \in S_e=N_{E/F}(E^*)$,  Lemma  \ref{lem-gamma(xQ)}  (1)    implies that 
$$\ga_\psi(c_k a_j\nu   q_2)=\ga_\psi(q_2) (c_ka_j \nu, \xi)_\Cr= \ga_\psi(q_2) \varpi_E(c_k a_j).$$
Hence
$$\ga(u',v)=\prod_{j=0}^{k-1} \ga_\psi(q_2) \varpi_E(c_k a_j),$$
and this ends the proof.

 \end{proof}

 \section{Functional equation of the zeta functions associated to   $(\tilde{P},V^+)$}\label{sec:zetaPH}

\subsection{The (A2') Condition in the Theorem  $k_p$ of  F. Sato.}
\hfill
\vskip 5pt

{\bf In this paragraph, we suppose that $F$ is algebraically closed}. \me

A maximal split abelian subalgebra  $\go a $ of $ {\go g}$ is then a Cartan subalgebra of  $\tilde{\go g}$ and of $\go{g}$. We  denote by  $\tilde{\Si}$ and $\Sigma$ the corresponding root systems. Let  $\Sigma^+\subset\tilde{\Si}^+$ the  positive subsystems  defined   in Theorem \ref{thbasepi}. Then  $\mu\in \tilde{\Si}^+\setminus \Sigma^+$  if and only if    $\tilde{\go g}^\mu\subset V^+$ and   $\mu\in  {\Si}^+$ if and only if $\tilde{\go g}^\mu= \go{g}^\mu\subset E_{i,j}(1,-1)\subset \go g$ for  $i>j$ (Proposition \ref{proplambda>0}). 

\begin{prop}\label{recurrence} Let  $k\geq 1$.  Let  $\mu\in \tilde{\Si}^+\setminus \Sigma^+$. Let  $H_\mu$ be the coroot of $\mu$ and let    $\{X_{-\mu}, H_\mu, X_\mu\}$ be an $\go sl_2$-triple. Let us denote by  $\tilde{\go l}[\mu]$ the subalgebra (isomorphic to $\go{sl}_{2}(F))$ generated by   $FX_{\pm\mu}$ and set 
$$\tilde{\go g}[\mu]=Z_{\tilde{\go g}}(\tilde{\go l}[\mu]).$$ 

If $\tilde{\go g}[\mu]\cap V^+\neq \{0\}$ then  
\begin{enumerate}\item the Lie algebra $\tilde{\go g}[\mu]$ is reductive and $3$-graded  by the element  $H_0-H_\mu$. 
The corresponding grading will be denoted by
$$\tilde{\go g}[\mu]=V[\mu]^-\oplus \go{g}[\mu]\oplus V[\mu]^+.$$
\item  The algebra  $\go a[\mu]= \{H\in \go a, \mu(H)=0\}$ is a Cartan subalgebra of  $\tilde{\go g}[\mu]$ and the corresponding root system is given by  
$$\tilde{\Si}[\mu]=\{\al\in \tilde{\Si}; (\al,\mu)=0\}.$$
We will set: $\tilde{\Si}[\mu]^+=\tilde{\Si}[\mu]\cap  \tilde{\Si}^+$. \me

   \item Moreover, the Lie algebra $\tilde{\go g}[\mu]$ is regular  of rank   $k$ if $\mu$ is a long root, and of  rank of  $k-1$ if $\mu$ is a short root.
\end{enumerate}
 \end{prop}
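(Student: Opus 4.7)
The approach mirrors that of Proposition \ref{prop-gtilde(1)}, which settled the case $\mu=\lambda_0$. For assertion (1), since $\tilde{\go l}[\mu]\simeq\go{sl}_2$ is reductive, the centralizer $\tilde{\go g}[\mu]$ is reductive in $\tilde{\go g}$ by \cite{Bou2} (Chap.~VII, \S1, $n^\circ 5$, Prop.~13). The element $H_0-H_\mu$ lies in $\tilde{\go g}[\mu]$: indeed, $\mu(H_0)=2$ since $\tilde{\go g}^\mu\subset V^+$, so $[H_0,X_{\pm\mu}]=\pm 2X_{\pm\mu}=[H_\mu,X_{\pm\mu}]$, while $[H_0-H_\mu,H_\mu]=0$. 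Because $H_\mu$ acts as zero on $\tilde{\go g}[\mu]$, the restrictions $\ad(H_0)|_{\tilde{\go g}[\mu]}$ and $\ad(H_0-H_\mu)|_{\tilde{\go g}[\mu]}$ coincide, so the eigenvalues of $\ad(H_0-H_\mu)$ form a subset of $\{-2,0,2\}$. The hypothesis $\tilde{\go g}[\mu]\cap V^+\neq\{0\}$ supplies the $+2$ eigenspace, and applying the nontrivial Weyl element of $\tilde{\go l}[\mu]$ produces the $-2$ eigenspace, yielding the three-grading with $V[\mu]^\pm=V^\pm\cap\tilde{\go g}[\mu]$ and $\go g[\mu]=\go g\cap\tilde{\go g}[\mu]$.

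For (2), the subspace $\go a[\mu]=\go a\cap\tilde{\go g}[\mu]=\ker(\mu|_{\go a})$ is abelian of codimension one in $\go a$ and consists of semisimple elements; its maximality as a split abelian subalgebra of $\tilde{\go g}[\mu]$ follows because any proper enlargement would, together with $FH_\mu$, contradict the maximality of $\go a$ in $\tilde{\go g}$. Over the algebraic closure each root space $\tilde{\go g}^\alpha$ is one-dimensional, and $\tilde{\go g}^\alpha\subset\tilde{\go g}[\mu]$ iff $\alpha$ annihilates $H_\mu$ and $[\tilde{\go g}^\alpha,\tilde{\go g}^{\pm\mu}]=\{0\}$, i.e.\ $(\alpha,\mu)=0$ and $\alpha\pm\mu\notin\tilde\Sigma$. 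The crux of the argument, and the main obstacle of the whole proof, is to show that for $\alpha\in\tilde\Sigma$ with $(\alpha,\mu)=0$ one automatically has $\alpha\pm\mu\notin\tilde\Sigma$. One splits on $\alpha(H_0)\in\{-2,0,2\}$: the extremal cases $\alpha(H_0)=\pm 2$ fall immediately from the commutativity $[V^\pm,V^\pm]=\{0\}$ combined with the symmetry of the $\mu$-string through $\alpha$ guaranteed by $(\alpha,\mu)=0$; the delicate case $\alpha(H_0)=0$ requires combining the commutativity of $V^+$ applied to $\tilde{\go g}^{\alpha+\mu}$ and $\tilde{\go g}^\mu$ with the fine structure of $\widetilde\Sigma^+\setminus\Sigma^+$, in the spirit of Corollary \ref{cor-orth=fortementorth}.

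For (3), regularity of $(\tilde{\go g}[\mu],H_0-H_\mu)$ requires producing an $\go{sl}_2$-triple $(I^-[\mu],H_0-H_\mu,I^+[\mu])$ with $I^\pm[\mu]\in V[\mu]^\pm$. Using Proposition \ref{prop-generiques-cas-regulier} together with Proposition \ref{X0+...+Xkgenerique}, one begins from a diagonal generic triple $(I^-,H_0,I^+)$ with $I^+=\sum_j X_{\lambda_j}$, then projects onto $\tilde{\go g}[\mu]$ by subtracting the $\mu$-component and matching on the $V^-$ side, obtaining the required triple. For the rank, invoke Proposition \ref{prop-systememax}: when $\mu$ is long, there is a maximal system of strongly orthogonal long roots in $\widetilde\Sigma^+\setminus\Sigma^+$ containing $\mu$, say $\{\mu=\mu_0,\mu_1,\ldots,\mu_k\}$; the remaining $k$ roots lie in $V[\mu]^+$ and form a maximal system there by the descent of Theorem \ref{th-descente}, so the rank equals $k$. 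When $\mu$ is short, $\mu$ cannot be part of any maximal system of strongly orthogonal long roots, and the combinatorial procedure of Proposition \ref{prop-diagg1} applied to the Satake-Tits diagram of $\tilde{\go g}[\mu]$ shows that such a system in $V[\mu]^+$ contains exactly $k-1$ roots, yielding rank $k-1$. This long-versus-short dichotomy is the secondary technical point where the choice of $\mu$ genuinely affects the conclusion.
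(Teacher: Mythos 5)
The central step you single out in part (2) --- that $(\alpha,\mu)=0$ forces $\alpha\pm\mu\notin\tilde{\Sigma}$ --- is false when $\mu$ is a short root, and this is exactly where the long/short dichotomy already intervenes, not only in part (3). Take $\tilde{\Sigma}$ of type $C_{k+1}$ (case (6) of Table 1; recall $F$ is algebraically closed in this paragraph), $\mu=\frac{\lambda_{k-1}+\lambda_k}{2}$ and $\alpha=\frac{\lambda_{k-1}-\lambda_k}{2}$: then $(\alpha,\mu)=0$ because all the $\lambda_j$ have the same length, yet $\alpha+\mu=\lambda_{k-1}\in\tilde{\Sigma}$, and indeed $[\tilde{\go g}^{\alpha},\tilde{\go g}^{\mu}]=\tilde{\go g}^{\lambda_{k-1}}\neq\{0\}$, so $\tilde{\go g}^{\alpha}\not\subset\tilde{\go g}[\mu]$. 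Your case analysis on $\alpha(H_0)$ cannot repair this: the counterexample sits precisely in your ``delicate'' case $\alpha(H_0)=0$, where commutativity of $V^+$ only yields $\alpha+2\mu\notin\tilde{\Sigma}$. The identification of the roots of $\tilde{\go g}[\mu]$ with $\{\alpha:(\alpha,\mu)=0\}$ is genuinely a long-root phenomenon (via Corollary \ref{cor-orth=fortementorth} after conjugating $\mu$ to $\lambda_0$ by Proposition \ref{prop-systememax}); for short $\mu$ the argument must instead go through the classification, as the paper does: $\tilde{\Sigma}$ is then of type $B_n$ or $C_n$, the $B_n$ case is excluded because there $H_\mu=H_0$ so the hypothesis $\tilde{\go g}[\mu]\cap V^+\neq\{0\}$ fails, and in type $C_n$ one computes $\tilde{\go g}[\mu]$ explicitly for $\mu=\frac{\lambda_{k-1}+\lambda_k}{2}$, finding a root system of type $C_{k-1}$ and hence rank $k-1$.

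Two further points. In (1), the nontrivial Weyl element of $\tilde{\go l}[\mu]$ acts trivially on $Z_{\tilde{\go g}}(\tilde{\go l}[\mu])$ (every element there has weight $0$ for the triple), so it cannot produce the $-2$ eigenspace; either note that only $V[\mu]^+\neq\{0\}$ is needed for $(\mathbf{H}_1)$, or use that the Killing form restricted to the reductive algebra $\tilde{\go g}[\mu]$ is nondegenerate and pairs the $\pm2$ eigenspaces of $\ad(H_0-H_\mu)$. In (3), ``subtracting the $\mu$-component'' of the diagonal triple is meaningless unless $\mu$ is one of the $\lambda_j$: for $\mu=\frac{\lambda_{k-1}+\lambda_k}{2}$ the summand $X_{\lambda_{k-1}}+X_{\lambda_k}$ of $I^+$ lies neither in $\tilde{\go g}^{\mu}$ nor in $V[\mu]^+$. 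Regularity and the rank should instead be obtained by conjugating $\mu$ to $\lambda_0$ in the long case and invoking Corollary \ref{cor-decomp-j}, and, in the short case, from the explicit description of $\tilde{\go g}[\mu]$, where $(Y_{\lambda_0}+\cdots+Y_{\lambda_{k-2}},\,H_{\lambda_0}+\cdots+H_{\lambda_{k-2}},\,X_{\lambda_0}+\cdots+X_{\lambda_{k-2}})$ is the required $\go{sl}_2$-triple.
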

\begin{proof} Although the root $\mu $ is here arbitrary, the proof of the 2 first points is similar to  Proposition \ref{prop-gtilde(1)}).\me

If $\mu$ is a long root of $\tilde{\Si}$ then by  Proposition \ref{prop-systememax}, there exists a element $w$ of the Weyl group such that $w\mu=\la_0$. The third assertion  follows from   Corollary  \ref{cor-decomp-j}.\me

Suppose that $\mu$ is a short root in $\tilde{\Si}$. From  Table  1 in  section \ref{sec:section-Table}, the root system $\tilde{\Si}$ is of type $B_n$ or $C_n$ with $n\geq 2$.\me

If $\tilde{\Si}$ is of type $B_n$ then $k=1$ and $\mu=\dfrac{\la_0+\la_1}{2}$. As $\la_0$ and $\la_1$ are long and orthogonal, we have  $H_\mu=H_0$ and then  $\tilde{\go g}[\mu]\cap V^+=\{0\}$.\me

As we suppose that  $\tilde{\go g}[\mu]\cap V^+\neq \{0\}$ the root system $\tilde{\Si}$ is of type $C_n$ with $n=k+1\geq 3$ and we have 
$$\tilde{\Si}^+=\{\frac{\la_i+\la_j}{2}; 0\leq i\leq j\leq k\}.$$

For our purpose, we can suppose that $\mu=\dfrac{\la_k+\la_{k-1}}{2}$. Hence $H_{\mu}=H_{\la_k}+H_{\la_{k-1}}$ and $H_0-H_\mu=H_{\la_0}+\ldots +H_{\la_{k-2}}$. Then $\tilde{\Si}[\mu]$ is equal to $\{\pm\la_0\}$ if $k=2$ and $\tilde{\Si}[\mu]$ is of type $C_{k-1}$ if $k\geq 3$. This gives the last assertion. 

\end{proof}

Define 
$$N_{0}\index{N0@$N_{0}$}=\exp \ad\,\go{n}_{0},\text{ \hskip 20pt where } \go{n}_{0}\index{ngothique0@$\go{n}_{0}$}=\oplus_{\mu\in{\Si}^+}\tilde{\go g}^{-\mu}.$$
\begin{prop}\label{Prop231} (compare with \cite{Mu} Lemme 2.2.1 and   Proposition 2.3.1).\hfill

 Let $x\in V^+$. Then there exists a family of two by two strongly orthogonal roots  $\{\mu_1,\ldots, \mu_r\}$ such that 
$$N_{0}.x\cap (\oplus_{j=1}^r\tilde{\go g}^{\mu_j})\neq \emptyset.$$
\end{prop}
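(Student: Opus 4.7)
The plan is to proceed by induction on the rank $k+1$ of the graded Lie algebra $\tilde{\go g}$, the base case $k=-1$ (i.e.\ $V^+=\{0\}$) being the empty family. For the inductive step, I first dispose of $x=0$ (again the empty family works) and otherwise select a "pivot" root $\mu_1 \in \tilde{\Si}^+\setminus \Sigma^+$ that is maximal, for the dominance order on $\tilde{\Si}^+$, among the roots $\nu$ such that the projection of $x$ onto $\tilde{\go g}^{\nu}$ is nonzero. Write $x = x_{\mu_1} + \sum_{\nu \neq \mu_1} x_\nu$ in the weight decomposition.

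The crucial reduction is then the following claim: there exists $n \in N_{0}$ such that $n\cdot x = x_{\mu_1} + x''$ with $x'' \in V[\mu_1]^+ = \tilde{\go g}[\mu_1]\cap V^+$. I would prove this by systematically killing, one by one, every component $x_\nu$ with $\nu \neq \mu_1$ that is not strongly orthogonal to $\mu_1$. The key observation is that if $\nu \in \tilde{\Si}^+\setminus \Sigma^+$ is not strongly orthogonal to $\mu_1$, then $\mu_1+\nu \notin \tilde{\Si}$ (since $(\mu_1+\nu)(H_0)=4$, forbidden by $(\bf H_1)$), so $\alpha := \mu_1-\nu$ must be a root; it is of $\go g$ (since $\alpha(H_0)=0$), and, by the maximality of $\mu_1$, one checks that $\alpha\in \Sigma^+$. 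Then $\tilde{\go g}^{-\alpha}\subset \go n_0$, and $\ad(\tilde{\go g}^{-\alpha})$ sends $\tilde{\go g}^{\mu_1}$ into $\tilde{\go g}^{\nu}$; choosing $Z\in \tilde{\go g}^{-\alpha}$ with $[Z,x_{\mu_1}]=-x_\nu$ (which is possible, since over an algebraically closed field the map $\ad x_{\mu_1}: \tilde{\go g}^{-\alpha}\to \tilde{\go g}^{\nu}$, built from the underlying $\go{sl}_2$-module structure, is surjective onto the relevant weight space), the element $\exp(\ad Z)\in N_0$ cancels the $\nu$-component modulo strictly lower terms. Performing these cancellations in descending order ensures that byproducts of one step only contribute components of weight strictly below the current one, so the procedure terminates with an element of the required form.

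Once this reduction is achieved, Proposition \ref{recurrence} shows that $\tilde{\go g}[\mu_1]$ is itself a regular graded Lie algebra of strictly smaller rank, graded by $H_0-H_{\mu_1}$, and whose set of positive "small" roots $\Sigma[\mu_1]^+ \subset \Sigma^+$ gives rise to a nilpotent subalgebra $\go n_0[\mu_1]\subset \go n_0$ and to a subgroup $N_0[\mu_1]\subset N_0$ which fixes $\tilde{\go g}^{\mu_1}$ pointwise. Applying the induction hypothesis to $x''\in V[\mu_1]^+$ inside $\tilde{\go g}[\mu_1]$ yields two by two strongly orthogonal roots $\mu_2,\ldots,\mu_r$ in $\tilde{\Si}[\mu_1]^+\setminus \Sigma[\mu_1]^+$ and an element $n'\in N_0[\mu_1]\subset N_0$ with $n'\cdot x''\in \oplus_{j=2}^r \tilde{\go g}^{\mu_j}$. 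Since $\mu_2,\ldots,\mu_r$ all lie in $\tilde{\Si}[\mu_1]$, they are strongly orthogonal to $\mu_1$, and $n' n\cdot x \in \oplus_{j=1}^r \tilde{\go g}^{\mu_j}$, closing the induction.

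The main obstacle lies in the reduction step: one must confirm both that the relevant bracket map $\ad x_{\mu_1}: \tilde{\go g}^{-\alpha}\to \tilde{\go g}^{\nu}$ is surjective (so that the chosen $Z$ exists), and that the successive cancellations can be organized in an order—a total refinement of the dominance order restricted to the support of $x$—so that each step preserves the cancellations already obtained without reintroducing higher-weight components. The surjectivity exploits the algebraic closedness of $F$ and the $\go{sl}_2$-representation theory attached to the pairs $(\mu_1,-\alpha)$, while the ordering argument rests on the fact that $\ad Z$ for $Z$ in a negative root space of $\go g$ strictly lowers $\go a$-weights, so no byproduct can return to a weight $\geq \mu_1$ that has already been set to zero.
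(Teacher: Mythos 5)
Your overall strategy---pick a maximal root $\mu_1$ in the support of $x$, use elements of $N_0$ to push $x$ into $\tilde{\go g}^{\mu_1}\oplus V[\mu_1]^+$, then induct inside $\tilde{\go g}[\mu_1]$ via Proposition \ref{recurrence}---is the same as the paper's. The genuine gap is in your reduction step, precisely at the point you yourself flag as ``the main obstacle''. Your greedy one-component-at-a-time cancellation produces, at each step, byproducts $[Z,x_\tau]\in\tilde{\go g}^{\tau-(\mu_1-\nu)}$ for \emph{every} $\tau$ in the current support, and these lie strictly below their \emph{sources} $\tau$, not strictly below the weight $\nu$ currently being killed. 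Since the dominance order on $\tilde{\Si}^+\setminus\Sigma^+$ is only a partial order (e.g.\ $2\ep_2$ and $\ep_1+\ep_3$ are incomparable in type $C$), a byproduct can land on a previously killed, incomparable weight. Your justification---``no byproduct can return to a weight $\geq\mu_1$ that has already been set to zero''---addresses the wrong weights: the weights you have already set to zero are the non-strongly-orthogonal components, which are \emph{below or incomparable to} $\mu_1$, not above it. Maximality of $\mu_1$ only excludes sources $\tau>\mu_1$; ruling out reintroduction (and hence termination of your loop) requires checking that the relevant sources $\nu_1+k(\mu_1-\nu_2)$ are never roots in the support, and that is a nontrivial combinatorial verification depending on the structure of $\tilde{\Si}^+\setminus\Sigma^+$.

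This is exactly where the paper's proof does real work that your sketch skips: it distinguishes whether the maximal root of the support is long or short, uses that for a long pivot $n(\mu,\mu_0)=2$ forces $\mu=\mu_0$ so the support splits as $\{\mu_0\}\cup s(x)_0\cup s(x)_1$, and then performs \emph{all} cancellations with a single exponential $e^{\ad A}$, $A=\sum Z_{\mu-\mu_0}$, verifying directly that every cross term lands in $\mu_0^\perp$ (so there is no ``already killed'' state to preserve). For a short pivot (types $B_n$, $C_n$) the situation is genuinely different: in type $C_n$ the first exponential leaves residual components sharing the index $i_0$ with $\mu_0$ which are \emph{not} strongly orthogonal to $\mu_0$, and a second exponential $e^{\ad B}$ with $B$ in a different family of root spaces is needed. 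Your proposal makes no long/short distinction and does not perform (or reduce to) this verification, so as written the reduction step is not established.
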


\begin{proof} The proof goes by induction on the    rank of $\tilde{\go g}$.  The result is clear for  $k=0$ since $V^+=\tilde{\go g}^{\la_0}$.

 Suppose now that the result is true  for any irreducible  graded regular algebra of rank  strictly less than $k+1$.\me

If $\mu\in \tilde{\Si}^+\setminus {\Si}^+$, then  $\mu(H_0)=2$, and hence for all  $\mu,\mu'$ in  $\tilde{\Si}^+\setminus {\Si}^+$, $\mu+\mu'$ is never  a root. Therefore  $n(\mu,\mu')\geq 0$. From the classification of the 3-graded  Lie algebras we consider, we know that $G_{2}$ does never occur and hence $n(\mu,\mu')\in\{0,1,2\}$ for  $\mu,\mu'\in \tilde{\Si}^+\setminus {\Si}^+$.\me

We will also use the fact that for $\mu,\mu'$ in  $\tilde{\Si}^+\setminus {\Si}^+$, we have $\mu\perp \mu'  \Longleftrightarrow \mu\sorth \mu'$ (the proof is similar to the proof of Corollary \ref{cor-orth=fortementorth}).\me

Let  $x\in V^+$.  Then  $\displaystyle x=\sum_{\mu\in {\tilde{\Si}^+\setminus {\Si}^+ }}  X_\mu$ with  $X_\mu\in\tilde{\go g}^\mu\subset V^+$.   We denote  $$s(x)=\{\mu\in  \tilde{\Si}^+\setminus {\Si}^+ , X_\mu\neq 0\}.$$

\no{\bf 1st case: }  We suppose that there exists a long root  $\mu_0$ among the roots of maximal  height in $s(x)$.  \me

For  $j=0,1,2$, we set  $s(x)_j=\{\mu\in s(x); n(\mu,\mu_0)=j\}$.  As  $\mu_0$ is a long root,    $n(\mu,\mu_0)=2$ if and only if  $\mu=\mu_0$. Hence 
 \beq\label{s(x)} s(x)=\{\mu_0\}\cup s(x)_0\cup s(x)_1.\eeq

Let  $x=X_{\mu_0}+\sum_{\mu\in s(x)_0} X_\mu+\sum_{\mu\in s(x)_1} X_\mu$.\me

 If $s(x)_1=\emptyset$ then $s(x-X_{\mu_0})\subset \mu_0^\perp$. From Proposition \ref{recurrence}, one has  $x-X_{\mu_0}\in V[\mu_0]^+$ and we obtain the result by induction from $\tilde{\go g}[\mu_0]$, which is of rank $k$ (as $\go n[\mu_0]\subset \go n$ from our definition of $\tilde{\Si}[\mu_0]^+$).

 We suppose that $s(x)_1\neq \emptyset$. 

For  $\mu\in s(x)_1$, we have  $\mu-\mu_0\in - {\Si}^+$  (it is a root because $n(\mu,\mu_0)=1$ and it is negative because  $\mu_0$ is of maximal height).  Let us fix  $Z_{\mu-\mu_0}\in\tilde{\go g}^{\mu-\mu_0}\subset {\go n}_{0}$ such that  $[Z_{\mu-\mu_0}, X_{\mu_0}]=-X_\mu$.\me

Define  $A=\sum_{\mu\in s(x)_1} Z_{\mu-\mu_0}\in\go n_{0}$. \me

Let   $\mu\in s(x)_1$ and  $\mu'\in s(x)_0$. Suppose that  $\mu-\mu_0+\mu'\in \tilde{\Si}$. As   $n(\mu-\mu_0+\mu', \mu_0)=-1$, then the linear form   $\mu-\mu_0+\mu'+\mu_0=\mu+\mu'$ would be a root of 
$\tilde{\Si}^+$ and this is not  possible. Therefore  $\mu-\mu_0+\mu'\notin \tilde{\Si}$ and hence  $[Z_{\mu-\mu_0},X_{\mu'}]=0$. Then 
 $\sum_{\mu'\in s(x)_0} [A,X_{\mu'}]=0$, and this implies that  $$e^{\text{ad} A} (\sum_{\mu'\in s(x)_0} X_{\mu'})=\sum_{\mu'\in s(x)_0} X_{\mu'}.$$

Take  $\mu$ and $\mu'$ in  $ s(x)_1$. Then   $n(\mu-\mu_0+\mu', {\mu}_0)=0$. And therefore the element  $y= [A,\sum_{\mu'\in s(x)_1}X_{\mu'}]=\sum_{\mu\in s(x)_1}\sum_{\mu'\in s(x)_1}[Z_{\mu-\mu_0},X_{\mu'}]$ is such that  $s(y)\subset \mu_0^\perp$. Considering the preceding case one obtains  $\text{ad}(A)^2  \sum_{\mu'\in s(x)_1}X_{\mu'}=[A,y]=0$. Then 
$$e^{\text{ad}A} (\sum_{\mu'\in s(x)_1}X_{\mu'})= \sum_{\mu'\in s(x)_1}X_{\mu'}+y,\;\text{where}\; s(y)\subset \mu_0^\perp.$$

As   $[A,X_{\mu_0}]=\sum_{\mu\in s(x)_1} [Z_{\mu-\mu_0},X_{\mu_0}]=-\sum_{\mu\in s(x)_1}X_\mu$, we have
$$e^{\text{ad}\; A}X_{\mu_0}=X_{\mu_0}-\sum_{\mu\in s(x)_1}X_\mu-\frac{1}{2}[A,\sum_{\mu\in s(x)_1}X_\mu]=X_{\mu_0}-\sum_{\mu\in s(x)_1}X_\mu-\frac{y}{2}.$$

Finally we get
\begin{align*}
 e^{\text{ad}\; A}  x&= e^{\text{ad}\; A} X_{\mu_{0}}+e^{\text{ad}\; A}(\sum_{\mu'\in s(x)_0}X_{\mu'})+e^{\text{ad}\; A}(\sum_{\mu'\in s(x)_1}X_{\mu'}) \\
 &=(X_{\mu_0}-\sum_{\mu\in s(x)_1}X_\mu-\frac{y}{2})+ (\sum_{\mu'\in s(x)_0}X_{\mu'})+( \sum_{\mu'\in s(x)_1}X_{\mu'}+y)\\
 &=X_{\mu_0}+\sum_{\mu'\in s(x)_0} X_{\mu'}+\frac{y}{2},\quad\text{where}\; s(y)\subset \mu_0^\perp.
 \end{align*}

If we set  $y_0=\sum_{\mu'\in s(x)_0} X_{\mu'}+\frac{y}{2}$, then  $s(y_0)\subset \mu_0^\perp$ and hence  $y_0\in V[\mu_0]^+$. Again the result is obtained by induction from  $\tilde{\go g}[\mu_0]$. \me

 \no{\bf 3rd case:} We suppose that all roots of maximal height in  $s(x)$ are short. From Table 1 in section \ref{sec:section-Table}, we know that the only cases were there are roots of different length correspond to cases where $\tilde{\Si}$ is of type  $B_n$  or of type $C_n$ for  $n\geq 2$.\me
 
$\bullet $ If $\tilde{\Si}$ is of  type $C_n$,  we have  $\tilde{\Si}^+\setminus \Sigma^+ =\{ \la_j; 0\leq j\leq k\}\cup \{\dfrac{\la_j+ \la_i}{2}; 0\leq i<j\leq k\}$ where 
 $n=k+1$. We also have  $\la_k\notin s(x)$ (because the long root  $\la_k$ is the greatest root in  $\tilde{\Si}^+$). \me
 
  If  $s(x)\subset \la_k^\perp$ then $x\in V[\la_k]^+$. Again  the result  is obtained by induction from $\tilde{\go g}[\la_k]$.\me

 We suppose that $s(x)\not\subset \la_k^\perp$.  This implies that there exists  $i<k$ such that  $ \dfrac{\la_k+\la_i}{2}\in s(x)$.  
 
 If $k=1$, then  $x\in \tilde{\go g}^{\mu_0}$ and the result follows.
 
 From know on, we suppose that $k\geq 2$.
 Define  
 
 \centerline{$i_0=\max \{i; \dfrac{\la_k+\la_i}{2}\in s(x)\}$ and  $\mu_0=\dfrac{\la_k+\la_{i_0}}{2}$.\me}
 
Hence
 $$s(x)\subset\{\mu_0\}\cup  \{\dfrac{\la_i+\la_k}{2}, i< i_0\}\cup\{\dfrac{\la_i+\la_{j}}{2}; 0\leq i\leq j<k\}.$$
And therefore one can write 
 $$x=X_{\mu_0}+x_1+x_2,\;\text{with} \; s(x_1)\subset  \{\dfrac{\la_i+\la_k}{2}, i< i_0\}\;\text{and }\; s(x_2)\subset \{\dfrac{\la_i+\la_{j}}{2}; 0\leq i\leq j<k\}.$$

 If  $\mu=\dfrac{\la_i+\la_{k}}{2}$,  with  $i<i_0$, belongs to   $s(x)$, one has  $\mu-\mu_0\in \tilde{\Si}$ (as $n(\mu,\mu_0)=1$).

 Then  we fix the elements  $Z_{\mu-\mu_0}\in\tilde{\go g}^{\mu-\mu_0}\subset E_{i, {i_0}}(1,-1)\subset \go n_{0}$   such that  $[Z_{\mu-\mu_0},X_{\mu_0}]=-X_\mu$.\me
 
   Let us define  $A=\sum_{\mu=\frac{(\la_i+\la_{k})}{2},0\leq i<i_0} Z_{\mu-\mu_0}\in \go n_{0}$ and  $n_0=e^{\text{ad}A}$. One has then  $[A,X_{\mu_0}]=-x_1$ and  $[A,x_1]\in \oplus_{i<i_0, j<i_0}[E_{i, {i_0}}(1,-1), E_{j,k}(1,1)] =\{0\}$.    Therefore  $(\text{ad} A)^2 x_1=0$.

  Let  $i<i_0$. For $ r,s\leq k$, one has $[E_{i,i_0}(1,-1), E_{r,i_0}(1,1)]\subset E_{i,r}(1,1)$ and     $[E_{i,i_0}(1,-1), E_{r,s}(1,1)]= \{0\}$ for $r\neq i_0$ and $s\neq i_0$. 
 As $A\in \oplus_{ i<i_0} E_{i,i_0}(1,-1)$, we deduce that $e^{\ad A}$ normalizes $\oplus_{r\leq s<k}E_{r,s}(1,1)$
And therefore 
 $$n_0.x=X_{\mu_0}+y\;\text{ with }\; s(y)\subset \{\dfrac{\la_i+\la_{j}}{2}; 0\leq i\leq j<k\}.$$

Set $y=y_0+y_1$ with  $s(y_0)\subset \mu_0^\perp=\{\dfrac{\la_i+\la_j}{2};    \{i,j\}\cap\{i_0,k\}=\emptyset\}$ and  $s(y_1)\subset \{\dfrac{\la_i+\la_{j}}{2};  1\leq i\leq j<k; i_0\in\{i,j\}\}$. 

Under the action of  $H_{\mu_0}$ the elements in $E_{i,k}(1,-1)$ are of weight $0$ if $i=i_{0}$ and of weight $-1$ if $i\neq i_{0}$.
This implies that  $\text{ad}(X_{\mu_0})$ is surjective  from  $\oplus_{i=0}^{k-1} E_{i,k}(1,-1)$ onto   $\oplus_{i=0}^{k-1} E_{i,i_0}(1,1)$. Hence there exists  $B\in \oplus_{i=0}^{k-1} E_{i,k}(1,-1)$ such that $[X_{\mu_0}, B]= y_1$. 

As  $[E_{i,k}(1,-1), E_{r,s}(1,1)]=\{0\}$  for  $i<k$ and  $r\leq s< k$, one has  $[B,y_1]=[B,y_0]=0$. 

Therefore the element  $n_1=e^{\text{ad} B}\in N_{0}$ is such that  $n_1n_0x= n_1(X_{\mu_0}+y_0+y_1)= X_{\mu_0}+y_0$ with  $s(y_0)\subset \mu_0^\perp$. We obtain the result by applying the induction to the element  $n_1n_0x-X_{\mu_0}\in\tilde{\go g}[\mu_0]$. \me

$\bullet $ If  $\tilde{\Si}$ is of type  $B_n$ for  $n\geq 2$, then  $k=1$ and  $V^+=\tilde{\go g}^{\la_0}\oplus E_{0,1}(1,1)\oplus \tilde{\go g}^{\la_1}$.

 From the tables and notations in  (\cite{Bou1} Chapitre VI) , if we take  $\tilde{\Si}^+=\{\ep_i\pm \ep_j; 1\leq i<j\leq n\}\cup\{\ep_i; 1\leq i\leq n\}$, then  $\la_0=\ep_1-\ep_2=\al_1$, and  $\la_1=\ep_1+\ep_2$ is the greatest root. Therefore   $\tilde{\go g}^\mu\subset V^+$ if and only if $\mu=\ep_1$ or $\mu=\ep_1\pm\ep_j$ for $j\geq 2$. \me

 Moreover,  $\mu_0=\dfrac{\la_0+\la_1}{2}=\varepsilon_{1}$ is the unique short root in  $\tilde{\Si}^+$ such that  $\tilde{\go g}^{\mu_0}\subset V^+$. 
  As $\la_0$ and $ \la_1$ are orthogonal, one has $H_{\mu_0}=H_{\la_0}+H_{\la_1}=H_0$.\me

From the assumption on  $s(x)$, the root    $\mu_0$ is of maximal height in   $s(x)$. Let $\mu\in s(x)$, $\mu\neq\mu_0$. As  $n(\mu,\mu_0)=2$ , one has $\mu-\mu_0\in \tilde{\Si}^-$. Let us fix  $Z_{\mu-\mu_0}\in \tilde{\go g}^{\mu-\mu_0}\subset  \go n_{0}$ such that  $[Z_{\mu-\mu_0}, X_{\mu_0}]=-X_\mu$ and set  $A=\sum_{\mu\in s(x),\mu\neq\mu_0} Z_{\mu-\mu_0}$. \me

As $\tilde{\Si}$ is of  type $B_n$, we have that if  $\mu\neq \mu_0$ is such  that  $\mu(H_0)=2$, the root  $\mu-\mu_0$ is short and orthogonal to  $\mu_0$ (in the preceding  notations, one has  $\mu-\mu_0=\pm\ep_i$ for $i\geq 2$). It follows that if  $\mu$ and $\mu'$ are two elements of  $s(x)$, distinct from $\mu_0$, then  $\mu-\mu_0$ is a short negative root and hence  $\mu+\mu'-\mu_0\notin\tilde{\Si}$ ( because $\mu-\mu_0=-\ep_i$ with  $i\geq 2$ and  $\mu'=\ep_1-\ep_j$ with  $j\geq 2$). \me

Therefore
  $[A,\sum_{\mu\in s(x), \mu\neq \mu_0}X_\mu]=0$.  Then if  $n=e^{\text{ad}\; A}$, we have  $n.x=X_{\mu_0}$.\me

This ends the proof.

\end{proof}
 As $F$ is algebraically closed, the group   $P$ has a unique open orbit in  $V^+$ given by 
$$\cO^+=\{ X\in V^+; \prod_{j=0}^k \De_j(X)\neq 0\}.$$

We define  $\cS=V^+-\cO^+$. If $\chi_{j}$ ($j=0,\ldots,k$) is  the character of $P$ corresponding to $\Delta_{j}$, we denote by ${\X}_{V^+}(P)$ the group of characters of $P$ generated by the ${\chi_{j}}'s.$
\begin{theorem}\label{ConditionA2} {\rm (Condition (A2') \index{condition@Condition (A2')} of F. Sato)(see \cite{Sa} \textsection 2.3)}
\begin{enumerate}\item $P$ has a finite number of orbits in $\cS$ (and hence in $V^+$).

\item Let $x\in \cS$ and let  $P_{x}^0$ the identity component of the centralizer of $x$ in  $P$. Then there exists a non trivial character  $\chi$ in $\X_{V^+}(P)$ and  $p\in P_{x}^0$ such that  $\chi(p)\neq 1$. \end{enumerate}
\end{theorem}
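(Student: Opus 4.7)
The strategy is to use Proposition \ref{Prop231} to reduce every element of $V^+$ to a diagonal representative $\sum_{l=1}^r X_{\mu_l}$, with $\{\mu_1,\ldots,\mu_r\}\subset \tilde{\Sigma}^+\setminus\Sigma^+$ pairwise strongly orthogonal, and then to verify both assertions by analysing the weight data of $A$. First observe that $N_0=N$: indeed $\go n_0=\bigoplus_{\mu\in\Sigma^+}\tilde{\go g}^{-\mu}$ coincides, by Proposition \ref{proplambda>0}, with $\bigoplus_{0\leq i<j\leq k}E_{i,j}(1,-1)=\go n$, so the $N_0$-conjugations delivered by Proposition \ref{Prop231} are $P$-conjugations.

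\textbf{Finiteness of orbits (1).} By Proposition \ref{Prop231}, every $P$-orbit in $V^+$ meets some subspace $\bigoplus_{l=1}^r \tilde{\go g}^{\mu_l}$ indexed by a strongly orthogonal subfamily of $\tilde{\Sigma}^+\setminus\Sigma^+$. There are only finitely many such subfamilies, and inside a fixed $\bigoplus_l\tilde{\go g}^{\mu_l}$ the torus $A$ acts via the characters $\mu_l$, which are linearly independent (strongly orthogonal roots being linearly independent). Since $F$ is algebraically closed, $F^*$ is divisible, so $A$ acts transitively on the set of vectors with any prescribed set of nonzero coordinates. Summing over the finite combinatorial data gives finitely many $P$-orbits.

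\textbf{Construction of the character (2).} Let $x\in\cS$. Since $\X_{V^+}(P)$ is invariant under inner conjugation in $P$, we may assume $x=\sum_{l=1}^r X_{\mu_l}$ with all $X_{\mu_l}\neq 0$ and $\{\mu_l\}\subset \tilde{\Sigma}^+\setminus\Sigma^+$ strongly orthogonal. The stabilizer $A_x^0$ is the identity component of $\bigcap_l\ker(\mu_l)$, a subtorus of $A$ of codimension $r$ contained in $P_x^0$. By Theorem \ref{th-delta_{j}invariants}, the restriction to $A$ of $\X_{V^+}(P)$ is the lattice $\kappa\Z\lambda_0+\cdots+\kappa\Z\lambda_k$, and a character in this lattice is trivial on $A_x^0$ iff its weight lies in $\mathrm{Span}_\Q(\mu_1,\ldots,\mu_r)$. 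One therefore needs to find some index $j_0$ with $\lambda_{j_0}\notin \mathrm{Span}_\Q(\mu_l)$. When $r\leq k$ this is immediate, since $\dim\mathrm{Span}_\Q(\lambda_0,\ldots,\lambda_k)=k+1>r$. When $r=k+1$, $\{\mu_l\}$ is a maximal strongly orthogonal family in $\tilde{\Sigma}^+\setminus\Sigma^+$; if $\{\mu_l\}=\{\lambda_j\}$ as sets, Theorem \ref{thproprideltaj}(3) forces $\Delta_j(x)\neq 0$ for all $j$, contradicting $x\in\cS$; otherwise one argues that the two families span distinct $\Q$-subspaces of $\go a^*$ and the required $\lambda_{j_0}$ exists.

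\textbf{Main obstacle.} The delicate point is the edge case $r=k+1$ with $\{\mu_l\}\neq\{\lambda_j\}$: one must verify that $\mathrm{Span}_\Q(\mu_l)\neq \mathrm{Span}_\Q(\lambda_j)$. Combined with Proposition \ref{prop-systememax}(2), which yields a Weyl element $w\in W$ carrying a long maximal family onto $\{\lambda_j\}$, this forces a case-by-case inspection of the weighted Dynkin diagrams listed at the beginning of Section \ref{section-classification} to rule out accidental coincidences of spans under a $W$-rearrangement of $\{\lambda_j\}$, and to confirm that maximal strongly orthogonal families in $\tilde{\Sigma}^+\setminus\Sigma^+$ consist of long roots in the classification at hand.
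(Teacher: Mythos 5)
Your reduction via Proposition \ref{Prop231} is the same starting point as the paper's, and your argument for assertion (1) is correct and in fact more economical than the paper's (which passes through the rank-one subgroups $L[\mu]\subset L$ and the orbit classification of Theorem \ref{th-k=0}, whereas you exploit that linearly independent characters of a torus over an algebraically closed field give a surjection onto $(\mathbb{G}_m)^r$). One slip: $N_0\neq N$ in general, since $\go n_0=\oplus_{\mu\in\Sigma^+}\tilde{\go g}^{-\mu}$ also contains the negative root spaces lying inside $\cZ_{\go g}(\go a^0)$, which are not in any $E_{i,j}(1,-1)$; this is harmless because all you use is $N_0\subset P$, which holds (and is what the paper states).

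For assertion (2) there is a genuine gap, located exactly where you place your ``main obstacle''. Your criterion requires some weight of the lattice $\kappa\Z\lambda_0+\cdots+\kappa\Z\lambda_k$ to lie outside $\mathrm{Span}_\Q(\mu_1,\ldots,\mu_r)$, and you establish this only for $r\leq k$ and for the sub-case $\{\mu_l\}=\{\lambda_j\}$; the remaining case is deferred to an unperformed ``case-by-case inspection of the weighted Dynkin diagrams'', so the proof is incomplete as written (you also never rule out $r>k+1$, where the span of the $\mu_l$ could strictly contain that of the $\lambda_j$). The open case can in fact be closed uniformly, with no appeal to the classification: if $r=k+1$ and $\mathrm{Span}_\Q(\lambda_j)\subseteq\mathrm{Span}_\Q(\mu_l)$, the two spans coincide, so each $\mu_l=\sum_i c_{li}\lambda_i$; pairing with $H_{\lambda_i}$ and using Theorem \ref{th-decomp-Eij} gives $2c_{li}=\mu_l(H_{\lambda_i})\in\{0,1,2\}$ with $\sum_i c_{li}=\tfrac{1}{2}\mu_l(H_0)=1$, so each $\mu_l$ equals some $\lambda_i$ or some $\tfrac{1}{2}(\lambda_i+\lambda_{i'})$; orthogonality of distinct $\mu_l$'s then forces their supports $\{i: c_{li}\neq 0\}$ to be pairwise disjoint, and $k+1$ pairwise disjoint nonempty subsets of $\{0,\ldots,k\}$ are singletons, i.e. $\{\mu_l\}=\{\lambda_j\}$ --- precisely the case you already excluded via Theorem \ref{thproprideltaj}(3). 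Until such an argument (or the announced inspection) is supplied, part (2) is not proved. For comparison, the paper avoids the span criterion altogether: it proves (2) by induction on the rank using the descent to $\tilde{\go g}[\lambda_k]$, splitting into cases according to $\Delta_k(X)$ and $\lambda_k(h)\in\{0,1,2\}$ with $h=\sum_l H_{\mu_l}$, and exhibiting explicit one-parameter subgroups $l_t$ (products of elements $h_Z(t)$) in $P_X^0$ with $\chi_k(l_t)=t^2$ or $t^3$.
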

\begin{proof}
Of course $N_{0}\subset P$ and   $P=LN_{0}$. From Proposition \ref{Prop231}, for any element $x$ in  $\cS$, there exists a family $\cR\subset \tilde{\Si}^+_2$ of strongly orthogonal roots such that  $N_{0}.X\cap (\oplus_{\mu\in\cR} \tilde{\go g}^\mu)\neq \emptyset$.

As $\tilde{\Si}^+\setminus \Sigma^+$ is finite, there are also only a finite number  of families of strongly orthogonal roots in  $\tilde{\Si}^+\setminus \Sigma^+$. Therefore, in order to prove assertion 1), it will be enough to show that  if $\cR=\{\mu_1,\ldots\mu_r\}\subset \tilde{\Si}^+\setminus \Sigma^+ $ is such a family of strongly orthogonal roots then $\oplus_{j=1}^r \tilde{\go g}^{\mu_j}$ is included in a finite number of $L$-orbits .  

Fix  such a family $\cR=\{\mu_1,\ldots\mu_r\}$. If  $\mu\in\cR$, we denote by  $H_\mu$ the coroot  $\mu$ and by  $\tilde{\go l}[\mu]$ the algebra generated by  $ \tilde{\go g}^{\pm\mu}$, that is  $\tilde{\go l}[\mu]=\tilde{\go g}^{-\mu}\oplus FH_\mu\oplus \tilde{\go g}^{\mu}$. Let  $L[\mu]$ the group associated to this graded algebra. As $F$ is algebraically closed, the group  $L[\mu]$ is the centralizer in  $\text{Aut}_e(\tilde{\go l}[\mu])$ of  $H_\mu$, and hence  $L[\mu]\subset \text{Aut}_e(\tilde{\go g})$. \me

Let us show that  $L[\mu]\subset L$.    Let  $l\in L[\mu]$. Then  $l.H_\mu=H_\mu$. 
If $Z\in \go{a}$ is  orthogonal to $\mu$ then $[\tilde{\go l}[\mu],Z]=\{0\}$ and therefore $l.Z=Z$. If  $\mu$ is one of the roots $\la_j$,  we get immediately that  $l\in L$. If not, there exist $i<j$ such that  $\tilde{\go g}^\mu\subset E_{i,j}(1,1)$. Then  $\mu(H_\mu-2H_{\la_i})=\mu(H_\mu-2H_{\la_j})=0$, and then   $l.H_{\la_i}=H_{\la_i}$ and $l.H_{\la_j}=H_{\la_j}$, and $\mu(H_{\la_s})=0$ for $s\neq i,j$. Therefore $l$ centralizes $\go a^0$, and hence   $L[\mu]\subset L$. \me

From   Theorem \ref{th-k=0}, each $L[\mu_j]$ has a finite number of orbits in  $\tilde{\go g}^{\mu_j}$. As the roots  $\mu_j$   are strongly orthogonal, the subgroups  $L[\mu_j]$ commute 2 by 2. Therefore the group $L[\mu_1]\ldots L[\mu_r]\subset L$ has a finite number of orbits in  $\oplus_{\mu\in\cR} \tilde{\go g}^\mu$. This proves  assertion 1).\me

\no Assertion  2) will be proved by induction on $k$. The case $k=0$ is obvious. Suppose that  $k\geq 1$.

We suppose that assertion 2)  is true  for all regular graded algebras (in the sense of  Definition \ref{def-alg-grad}) of rank $k$
 and we will prove it if  $\tilde{\go g}$ is of rank  $k+1$.\me

Remember the notations  \ref{notation-Delta-souligne}, that is 
$$\underline{\tilde{\go g}}=\tilde{\go g}[\la_k],\;\;\text{et}\;\; \underline{V}^\pm=V[\la_k]^\pm.$$
( where  $\tilde{\go g}[\la_k]=Z_{\tilde{\go g}}(\tilde{\go l}_k)$ and  $\tilde{\go l}_k=\tilde{\go g}^{-\la_k}\oplus [\tilde{\go g}^{-\la_k},\tilde{\go g}^{\la_k}]\oplus \tilde{\go g}^{\la_k}$).\\
The Lie algebra $\underline{\tilde{\go g}}$, which is of rank $k$,  is graded by the element  $\underline{H}_0=\sum_{j=0}^{k-1} H_{\la_j}$. We will denote by underlined letters  all the elements or spaces attached to  $\underline{\tilde{\go g}}$.

As we have supposed that  $F$ is algebraically closed, we have  $\text{Aut}_0(\tilde{\go g})=\text{Aut}_e(\tilde{\go g})$ and hence   $L$ is the centralizer  of $\go a^0=\oplus_{j=0}^k F H_{\la_j}$ in $\text{Aut}_e(\tilde{\go g})$. Let   $\underline{l}\in\text{Aut}_e(\underline{\tilde{\go g}})$. As  $[\tilde{\go l}_k,\underline{\tilde{\go g}}]=\{0\}$, we have  $\underline{l}.H_{\la_k}=H_{\la_k}$ and  $\underline{l}$  acts trivially on  $\tilde{\go g}^{\la_k}$. 

Therefore any element of  $\underline{L}$ defines an element in  $L$ which acts trivially on   $\tilde{\go g}^{\la_k}$. 
 
Remember that  $\chi_j$ is the character of $P$ attached to  $\De_j$ ($j=0,\ldots,k$) and that ${\X}_{V^+}(P)$ is the group of characters of $P$ generated by the ${\chi_j}'s$.  
 
Let  $l\in \underline{L}\subset L$ and $j=0,\ldots,k-1$. By Lemma \ref{lem-decomposition-Delta},  we have  $\underline{\chi}_j(l)=\chi_j(l)$ and $\chi_k(l)=1$.

As  $\underline{\go n}=\oplus_{i<j<k}E_{i,j}(1,-1)\subset \go n$, we have  $\underline{P}\subset P$  and any element of  $\underline{P}$ acts trivially on $\tilde{\go g}^{\la_k}$. As any character in $\X_{V^+}(P)$  is trivial on  $N$, the mapping  $\underline{\chi}_j\longrightarrow \chi_j $  ($j=0,\ldots,k-1$) can be seen as an inclusion $\X_{\underline{V}^+}(\underline{P})\hookrightarrow\X_{V^+}(P)$.\me

Let  $x\in \cS$. By Proposition  \ref{Prop231}, there exists $n_0\in N_{0}$ and a family  $\cR=\{\mu_1,\ldots \mu_r\}$ of stronly orthogonal roots in  $\tilde{\Si}^+\setminus \Sigma^+$ such that 
the element  $X=n_0.x$ belongs to  $\tilde{\go g}^{\mu_1}\oplus\ldots \oplus\tilde{\go g}^{\mu_r}$. We suppose that  $r$ is minimal, that is   $X=X_{\mu_1}+\ldots +X_{\mu_r}$ with  $X_{\mu_j}\in \tilde{\go g}^{\mu_j}-\{0\}$. As $n_0^{-1}P_X^0n_0=P_x^0$, it is enough to prove assertion 2) for  $X$.\me

 For  $j=1,\ldots,r$, let   $H_{\mu_j}$ be the coroot of $\mu_j$ and fix  $X_{-\mu_j}\in \tilde{\go g}^{-\mu_j}$ such that  $\{X_{-\mu_j}, H_{\mu_j}, X_{\mu_j}\}$ is an  $\go sl_2$-triple. Define  $$h=H_{\mu_1}+\ldots H_{\mu_r}.$$

As the roots  $\mu_j$ are strongly orthogonal, if we set   $Y=X_{-\mu_1}+\dots +X_{-\mu_r}\in V^-$ and $X=X_{\mu_1}+\dots +X_{\mu_r}\in V^+$, then $\{Y,h,X\}$ is an  $\go sl_2$-triple.  If  $Z\in V^+$, then  $[X,Z]=0$ and  $\ad(Y)^3Z=0$. Therefore the element  $Z$ generates a submodule of dimension at most  $3$ under the action of this triple. It follows  that  the weights of  $\text{ad}(h)$ in  $V^+$ belong to  $\{0,1,2\}$. \me

Remember  from Definition \ref{def-hx}, that if   $\{Z^-,u,Z\}$ is an $\go sl_2$-triple where  $Z^-\in V^-$, $u\in \go a$, and  $Z\in V^+$, we denote by  $h_Z(t)$  the element in $L$ which acts by   $t^m$ on the space of weight $m$ under  $\text{ad}\; u$.\me

\no{\bf 1st case :} If  $\De_k(X)\neq 0$ then  $\la_k\in \cR$. Hence   $X=Z_0+Z_2$ with  $Z_0 \in\underline{V}^+$ and $Z_2\in \tilde{\go g}^{\la_k}$.
As  $X\in \cal S$ and $\De_k(X)\neq 0$, there exists  $j<k$ such that  $\De_j(X)=0$. By Lemma \ref{lem-decomposition-Delta}, we have   $\De_k(X)=\De_k(Z_2)\neq 0$ and  $\De_j(Z_0+Z_2)=\underline{\De}_j(Z_0)\De_k(Z_2)=0$, and hence  $Z_0\in\underline{\cS}$. 
By induction, there exists    $\chi \in\X_{\underline{V}^+}(\underline{P})\subset \X_{ {V}^+}( {P})$ and  $p\in \underline{P}_{Z_0}^0\subset P$ such that  $\chi(p)\neq 1$. As $p\in \underline{P}$, one has  $p.Z_2=Z_2$, and hence  $p\in P_{X}^0$. This gives the result in that case.  \me

\no {\bf 2nd  case}. Suppose  $\De_k(X)=0$ and  $X\in \underline{V}^+$ (this is equivalent to the condition   $\la_k(h)=0$). 
Then for all $t\in F^*$, the element  $l_t=h_{X_{\la_k}}(t)\in L$ satisfies   $l_t.X=X$ and $\chi_k(l_t)=t^2$.  Hence $l_t\in P_X^0$ and   for  $t\neq\pm 1$, we have  $\chi_k(l_t)\neq 1$.\me

\no {\bf 3rd case}. Suppose $\De_k(X)=0$ and  $X\notin \underline{V}^+$ (this is equivalent to $\la_k\notin\cR$ and  $\la_k(h)=1$ or  $2$).\me

If  $\la_k(h)=1$ then there exists a unique root  $\mu$ in  $\cR$ such that  $\lambda_{k}(H_{\mu})=n(\la_k,\mu)=1$. One can suppose that  $\mu=\mu_1$ and then, for $j\geq 2$,  the root  $\mu_j$   is orthogonal to $\lambda_{k}$.
As $\la_k$ is a long root, we have  $n(\mu_1,\la_k)=1$.  It follows that $[H_{\la_k},X_{\mu_1}]=X_{\mu_1}$ and  $[H_{\la_k},X_{\mu_j}]=[H_{\mu_1},X_{\mu_j}]=0$ for $j\geq 2$. For $t\in F^*$ we define    $l_t=h_{X_{\la_k}}(t)^2 h_{X_{-\mu_1}}(t)$. Then $l_t.X=X$   (because  $h_{X_{-\mu_1}}(t).(X_{\mu_1}+\ldots +X_{\mu_r})=t^{-2}X_{\mu_1}+X_{\mu_2}+\ldots +X_{\mu_r}$,  $h_{X_{\la_k}}(t)(X_{\mu_1})=t X_{\mu_1} $ and $h_{X_{\la_k}}(t)(X_{\mu_j})=X_{\mu_j} $ for  $j\geq 2$). We have also  $l_t.X_{\la_k}=t^3 X_{\la_k}$  (because  $[H_{-\mu_1}, X_{\la_k}]=-X_{\la_k}$ implies  $h_{X_{-\mu_1}}(t)(X_{\la_k})=t^{-1} X_{\la_k}$ and  $h_{X_{\la_k}}(t)^2(X_{\la_k})=t^4X_{\la_k}$).  Hence  $\chi_k(l_t)=t^3$.  As  $l_t\in P_X^0$ (product of exponentials), assertion 2) is proved. \me

 If  $\la_k(h)=2$, there are two possible cases. Up to a permutation of the $\mu_j$'s, we can suppose that:
 
- either $n(\la_k,\mu_1)=n(\la_k,\mu_2)=1$ and  $n(\la_k,\mu_j)=0$ for  $j\geq 3$,

- or    $n(\la_k,\mu_1)=2$ and   $n(\la_k,\mu_j)=0$ for  $j\geq 2$.\me
 
If  $n(\la_k,\mu_1)=n(\la_k,\mu_2)=1$ and  $n(\la_k,\mu_j)=0$ for  $j\geq 3$, then, as $\la_k$ is long, one has   $n(\mu_1,\la_k)=\mu_{1}(H_{\lambda_{k}})=n(\mu_2,\la_k)=\mu_{2}(H_{\lambda_{k}})=1$. Therefore the element  $u=2H_{\la_k}-H_{\mu_1}-H_{\mu_2}$ commutes with   $X$ and for  $Z\in \tilde{\go g}^{\la_k}$, one has  $[u,Z]=2Z$.  Then, as before, if  $t\in  F^*$, the element $l_t=h_{X_{\la_k}}(t)^2 h_{X_{-\mu_1}}(t)h_{X_{-\mu_2}}(t)\in L$ fixes $X$ and   $\chi_k(l_t)=t^2$. Again assertion 2) is proved. \me

If $n(\la_k,\mu_1)=2$ and   $n(\la_k,\mu_j)=0$ for $j\geq 2$, then we have $n(\mu_1,\la_k)=1$  (from the tables in \cite{Bou1} we know that if $n(\la_k,\mu_1)=2$ then $n( \mu_1,\la_k)=1$ or $2$; $n( \mu_1,\la_k)=2$ would imply $\mu_{1}=\lambda_{k}$, but   $\la_k\notin \cR$). As $\mu_1(H_0)=2$, there exists a unique  $j<k$ such that  $\mu_1(H_{\la_j})=1$. As  $\la_k$ and  $\la_j$ have the same length, we  have $n(\la_j,\mu_1)=2$ ($n(\la_k,\mu_1)=2$ and $n( \mu_1,\la_k)=1$ imply that $||\lambda_{k}||=\sqrt{2}||\mu_{1}||=||\lambda_{j}||$, and then $n( \mu_1,\la_j)=1$ implies  $n(\la_j , \mu_1)=2$). 
As $\la_j(h)\in\{0,1,2\}$, we have  $n(\la_j, \mu_s)=0$ for  $s\geq 2$. Hence  $[H_{\la_k}-H_{\la_j},X]=0$. Then, for  $t\in F^*$, the element  $l_t=h_{X_{\la_k}}(t) h_{X_{-\la_j}}(t) =h_{X_{\la_k}+X_{-\lambda_{j}}}(t)$ is such that  $l_t.X=X$ and $\chi_k(l_t)=t^2$. 
 
 This ends the proof of the Theorem.
 
 \end{proof}

\subsection{Zeta functions associated to  $(\tilde{P},V^+)$: existence of a functional equation.}
\hfill
\vskip 5pt

We denote by  $\widehat{F^*}$ and  $\widehat\Or_F^*$ the groups of characters of $F^*$ and $\Or_F^*$ respectively.  A character  $\om$ in  $\widehat{\Or_F^*}$ will be identified with a character in $\widehat{F^*}$ by setting   $\om(\pi)=1$. Then, for any $\om\in \widehat{F^*}$, there exists a unique   $\de\in\widehat{\Or_F^*}$  and  $s\in \C$ such that  $\om(x)=\de(x)|x|^s$ for $x\in F^*$. 
The complex number  $s$ is only defined modulo  $\displaystyle\left(\frac{2\pi i}{\log q}\right) \Z$, but its real part  $\text{Re}(s)$ is uniquely defined by $\omega$.    We therefore set    $\text{Re}(\om)=\text{Re}(s)$. \me

\begin{definition}\label{def-sharp}
Let $\om=(\om_0,\ldots, \om_k)\in \widehat{F^*}^{k+1}$ and  $s=(s_0,\ldots ,s_k)\in\C^{k+1}$.  Remember that we have defined $m=\frac{\dim V^+}{\deg \Delta_{0}}$. As we have said we consider here the case where $\ell=1$, and then we have $m=1+\frac{kd}{2}$. We set
$$\om^\sharp\index{omegabecarre@$\om^\sharp$}=((\om_0\ldots \om_k)^{-1}, \om_k,\ldots ,\om_1),\quad{\rm and }\quad s^\sharp \index{sbecarre@$s^\sharp$}=(s^\sharp_{0},s^\sharp_{1},\ldots,s^\sharp_{k})=(-(s_0+\ldots +s_k),s_k,\ldots, s_1)$$
and  $s^\sharp-m=(s_0^\sharp-m, s_1^\sharp,\ldots, s_k^\sharp)=(-(s_0+\ldots +s_k)-m,s_k,\ldots, s_1)$.
\end{definition}
\vskip 5pt
For $\om=(\om_j)_{j}\in(\widehat{F^*})^{k+1}$ and  $s=(s_0,\ldots, s_k)\in \C^{k+1}$, we  define
$$(\om,s)(\De(X))=\prod_{j=0}^k \om_j(\De_j(X))\;|\De_j(X)|^{s_j},\quad X\in V^+$$
and 
$$(\om,s)(\nabla(Y))=\prod_{j=0}^k \om_j(\nabla_j(Y))\;|\nabla_j(Y)|^{s_j},\quad Y\in V^-.$$

\begin{definition}\label{def-zetaP} Let $a\in\Sr_e^{k+1}$    (see Definition \ref{def-ensembleS}) and  $(\om,s)\in \widehat{F^*}^{k+1}\times \C^{k+1}$. The zeta functions associated to $(\tilde{P},V^+)$  and to $(\tilde{P},V^-)$ are the functions:
$$\cK^+_a(f,\om,s)\index{Kcal+@$\cK^+_a$}=\int_{\cO^+(a)} f(X)(\om,s)(\De(X)) dX,\quad f\in \cS(V^+),$$
 and  $$\cK^-_a(g,\om,s)\index{Kcal-@$\cK^-_a$}=\int_{\cO^-(a)} g(Y)(\om,s)(\nabla(Y)) dY,\quad g\in \cS(V^-).$$
It is clear that the integrals defining these functions are absolutely convergent for $\text{Re}(s_j)+\text{Re}(\om_j)>0$, $j=0,\ldots, k$.
\end{definition}

\begin{theorem}\label{thm-Sato-kp} \hfill

Let  $a\in\Sr_e^{k+1}$ and  $(\om,s)\in \widehat{F^*}^{k+1}\times \C^{k+1}$.

1)   The zeta functions $\cK^+_a(f,\om,s)$ for   $f\in \cS(V^+)$ and $\cK^-_a(g,\om,s)$ for  $g\in \cS(V^-)$ are rational functions in $q^{s_j}$ and  $q^{-s_j}$, and hence they admit a meromorphic continuation on $\C^{k+1}$.\medskip

 Moreover, if  $\om_j=\tau_j\otimes |\cdot|^{\nu_j}$ with $\tau_j\in\widehat{\Or_F^*}$ and $\nu_j\in\C$,   there exist polynomials $R^+(\om, s)$ and  $R^-(\om,s)$ in the variables $q^{-s_{j}}$, product of polynomials of type $(1-q^{-N-\sum_{j=0}^k N_j(s_j+\nu_j)})$ with $N,N_j\in \N$,  such that, for all $f\in \cS(V^+)$ and all  $g\in\cS(V^-)$, the functions 
$R^+(\om, s)\cK^+_a(f,\om,s)$ and  $R^-(\om, s)\cK^-_a(g,\om,s)$ are polynomials in  $q^{s_j}$ et $q^{-s_j}$.

2)   For  $a\in\Sr_e^{k+1}$ and $g\in \cS(V^-)$, the   zeta functions satisfy the following functional equation:
$$ \cK^+_a(\cF g,\om^\sharp,s^\sharp-m)=\sum_{c\in \Sr_e^{k+1}} C^k_{(a,c)}(\om,s) \cK^-_c(g,\om,s)$$
where  $C^k_{(a,c)}(\om,s)$ are rational functions in $q^{s_j}$ and  $q^{-s_j}$.

There is a similar result for  $f\in \cS(V^+)$ and $a\in\Sr_e^{k+1}$:
$$\cK^-_a(\cF(f),\om^\sharp,s^\sharp-m)=\sum_{c\in \Sr_e^{k+1}} D^k_{(a,c)}(\om,s)  \cK^+_c(f,\om,s).$$
(again $D^k_{(a,c)}(\om,s)\in \C(q^{\pm s_{0}},\ldots,q^{\pm s_{k}})$).
\end{theorem}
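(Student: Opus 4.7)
The strategy is to apply F.~Sato's general theorem on zeta functions attached to prehomogeneous vector spaces of several variables (\cite{Sa}, Theorem ``$k_p$''). This theorem yields exactly the rationality statement of part (1) and the multiplicity-$|\mathcal{S}_e^{k+1}|$ functional equation of part (2), once its hypotheses are verified for the pair $(\tilde{P}, V^\pm)$. So the real content of the proof is a verification of hypotheses together with an identification of the data appearing in the functional equation.

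\textbf{Step 1 (setup for Sato's theorem).} First I would recall that $(\tilde{P}, V^\pm)$ is a prehomogeneous vector space (Lemma \ref{lemme-Ptilde}), whose open orbits in $V^+$ and $V^-$ are respectively the sets $\mathcal{O}^+(a)$ and $\mathcal{O}^-(a)$ indexed by $a\in\mathcal{S}_e^{k+1}$, with fundamental relative invariants $\Delta_0,\ldots,\Delta_k$ (resp.\ $\nabla_0,\ldots,\nabla_k$) and associated characters $\chi_0,\ldots,\chi_k$ (resp.\ $\chi_0^-,\ldots,\chi_k^-$) of $\tilde P$ described in Theorems \ref{th-delta_{j}invariants} and \ref{th-nabla}. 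The characters $\chi_j$ generate a lattice $\mathcal{X}_{V^+}(\tilde P)$ of full rank $k+1$, which is condition (A1).

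\textbf{Step 2 (condition (A2')).} The decisive hypothesis in Sato's framework is the geometric condition (A2'), namely: each non-open $P$-orbit in $V^+$ contains a point whose isotropy in $P^0$ admits a nontrivial $\mathcal{X}_{V^+}(P)$-character. Since (A2') is formulated over an algebraic closure, I would invoke Theorem \ref{ConditionA2}, which proves this condition for $(P,V^+)$ over an algebraically closed field; the same condition then holds for the $\overline F$-points of $(\tilde P, V^+)$ because $\tilde P$ and $P$ coincide over $\overline F$ (the defect group $L/\tilde L$ is finite and becomes trivial after extension of scalars, as $\mathcal{S}_e=F^\ast$ over $\overline F$). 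The same argument transferred via the involution $\gamma$ of Theorem \ref{th-involution-gamma} yields (A2') for $(\tilde P, V^-)$.

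\textbf{Step 3 (rationality and shape of denominator).} With (A1) and (A2') in hand, Sato's theorem directly gives assertion (1): $\mathcal{K}^\pm_a(f,\omega,s)$ is a rational function in $q^{\pm s_j}$ and the explicit form of the denominator $R^\pm(\omega,s)$ as a product of factors $1-q^{-N-\sum N_j(s_j+\nu_j)}$ is exactly Sato's description of poles in terms of the $b$-function(s) of the prehomogeneous vector space, combined with Igusa's general rationality theorem for local zeta functions with parameters. The absolute convergence in the stated region follows from the tempered growth of $(\omega,s)(\Delta(X))$ on $\mathcal{O}^+(a)$, since the $\Delta_j$ are polynomials and the open orbits are characterized by $\Delta_0\cdots\Delta_k\neq 0$.

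\textbf{Step 4 (functional equation).} For part (2), the Fourier transform $\mathcal{F}$ intertwines the dual $\tilde P$-actions on $V^+$ and $V^-$. Because $(\tilde P, V^+)$ has several open orbits indexed by $c \in \mathcal{S}_e^{k+1}$, Sato's ``$k_p$''-theorem produces a functional equation whose coefficients form a square matrix $(C^k_{(a,c)}(\omega,s))_{a,c}$ indexed by these orbits, with rational entries in $q^{\pm s_j}$. The shift by $m=\dim V^+/\deg\Delta_0$ in the $s_0$-variable is dictated by homogeneity: under scalar multiplication by $t\in F^\ast$ (realized by $m_t\in L$, Definition \ref{def-hx}), $dX$ scales by $|t|^{\dim V^+}$ while $|\Delta_0|^{s_0}$ scales by $|t|^{(\deg\Delta_0) s_0}$, and the dual variables on $V^-$ scale oppositely, producing precisely the shift $s_0\mapsto -(s_0+\cdots+s_k)-m$ and the permutation $s_j\mapsto s_{k+1-j}$ for $j\geq 1$ determined by $\gamma$ (compare Corollary \ref{cor-nabla-psi}). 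The involution $\sharp$ on characters $\omega_j$ is the corresponding unitary twist. The second equation of part (2) is obtained symmetrically by swapping the roles of $V^+$ and $V^-$, using the Fourier inversion formula from Proposition \ref{prop-mesures-1}.

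\textbf{Main obstacle.} The bulk of the technical work is concentrated in Step 2: the translation of the geometric condition (A2') from $(P,V^+)$ (already established in Theorem \ref{ConditionA2} over $\overline F$) to the finite-index subgroup $\tilde P$, and carefully checking that the open-orbit decomposition of $(\tilde P, V^\pm)$ over $F$ corresponds, after scalar extension, to the unique open $P$-orbit over $\overline F$ that Sato's theorem is formulated for. Once (A2') is established, the rest is a direct application of Sato's framework; the explicit computation of $C^k_{(a,c)}(\omega,s)$ in terms of local Tate factors and Weil constants (via $\gamma(u',v)$ of Proposition \ref{prop-calcul-gamma}) is postponed to Theorem \ref{EF-(P,V^+)}.
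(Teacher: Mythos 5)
Your proposal follows essentially the same route as the paper: part (2) is obtained by checking Sato's conditions (A.1) (regularity of $\Delta_0$) and (A.2') (via Theorem \ref{ConditionA2}) and invoking his Theorem $k_{\mathfrak p}$, while part (1) rests on the Igusa--Denef rationality mechanism. The only point where the paper is more careful than your sketch is the precise shape of the denominators $R^\pm(\om,s)$ (positivity of the exponents $N, N_j$), which it extracts from the details of Denef's proof rather than from any $b$-function argument; this is a refinement, not a gap in your approach.
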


\begin{proof}\hfill

1) This result was first proved, for integrals of the same kind as  $\cK^+_a(f,\om,s)$  and $\cK^-_a(g,\om,s)$,   and for one variable, by J-I Igusa (see Lemma 2 in \cite{Ig2}) and sketched for several variables by F. Sato (see Lemma 2.1 in \cite{Sa}), both proofs using  a result of J. Denef (\cite{D}). 
But these authors did not point out the fact that the analogue of the integers  $N$ and $N_{j}$ are positive. For the convenience of the reader, we sketch   briefly the proof for $\cK^+_a(f,\om,s)$. This is essentially Igusa's proof.

First of all we identify $V^+$ with $F^r$ ($r=\dim_{F} V^+$). We can assume that $f$ is the characteristic function of an open compact subset $C$ of $F^r$. We choose an integer $m$ satisfying $\tau_{j}^m=1$ for $j=0,\ldots,k$. For $x=\pi^\alpha u$ ($\alpha\in \Z, u\in \cO^*_{F}$) we set $\tau_{j}(x)=\tau_{j}(u)$. We also define the map $\Delta: F^r \longrightarrow F^{k+1}$ by
$\Delta(X)=(\Delta_{0}(X),\ldots,\Delta_{k}(X))$ ($X\in V^+=F^r$).

Let $\cA$  be a complete   set of representatives of  $F^*/F^{*m}$, where $F^{*m}$ is the set of  elements $a^m$ for $a\in F^*$. We denote by $c=(c_{0},\ldots,c_{k})$ the elements of $\cA^{k+1}$.Then  we obtain
$$\cK^+_a(f,\om,s)= \sum_{c=(c_{i})\in \cA^{k+1}}\tau_{0}(c_{0})\ldots\tau_{k}(c_{k})\int_{D_{c}} |\Delta_{0}(X)|^{s_{0}+\nu_{0}} \ldots|\Delta_{k}(X)|^{s_{k}+\nu_{k}}dX $$
where $D_{c}=\cO^+(a)\cap C \cap \Delta^{-1}(c F^{*m})$.

It is now enough to prove that assertion 1) is true for the integral $\int_{D_{c}} |\Delta_{0}(X)|^{s_{0}+\nu_{0}} \ldots|\Delta_{k}(X)|^{s_{k}+\nu_{k}}dX$. Let $\theta: F\longrightarrow \Q_{p}^{\alpha}$ be a $\Q_{p}$-linear isomorphism. We still denote by $\theta$ its extension as a $\Q_{p}$-linear isomorphism  from $V^+=F^r$ to $\Q_{p}^{r\alpha}$. Let $dY$ be the measure on $\Q_{p}^{r\alpha}$ which is  the image of the measure $dX$ under $\theta$ and let $E_{c}=\theta(D_{c})$. Let also $N$ be the norm map from $F$ to $\Q_{p}$ and denote by $|\,|_{p}$ the absolute value on $\Q_{p}$. Then

$$\int_{D_{c}} |\Delta_{0}(X)|^{s_{0}+\nu_{0}} \ldots|\Delta_{k}(X)|^{s_{k}+\nu_{k}}dX = \int_{E_{c}}|N(\Delta_{0}(\theta^{-1}(Y))|_{p}^{s_{0}+\nu_{0}} \ldots|N(\Delta_{k}(\theta^{-1}(Y))|_{p}^{s_{k}+\nu_{k}}dY.$$

Of course the functions $P_{j}(X)=\Delta_{j}(\theta^{-1}(Y))$ are polynomials.Therefore we must consider integrals of the form
$$\int_{E_{c}}|P_{0}(Y)|^{s_{0}+\nu_{0}}\ldots|P_{k}(Y)|^{s_{k}+\nu_{k}}dY$$
where the $P_{j}$'s are polynomials.

But a result  of Denef  (\cite{D}, Theorem 3.2)\footnote {This result   is still true for several polynomials with the same proof (private communication of J. Denef)} tells us  that, under the condition that $E_{c}$ is a so-called  {\it boolean combination of sets of type $I$, $II$ and $III$} (\cite{D}, p. 2-3) and is contained in a compact set, this kind of integrals are always  rational functions in the variables $p^{\pm s_{j}}$. The second condition on the integration domain    is of course true for our set $E$. The proof that $E$ is a boolean combination  is exactly the same as the proof of Igusa in the one variable case (see \cite{Ig1}, p.1018-1019).

It is easily seen that the integrals under consideration  are  absolutely convergent Laurent series in $q^{\pm s_{j}}$ (for $\text{Re}(s_j)+\nu_{j}>0$. This implies that they are rational functions in $q^{\pm s_{j}}$.

Finally the fact that there exist polynomials polynomials $R^+(\om, s)$ and  $R^-(\om,s)$ in the variables $q^{-s_{j}}$ having the asserted properties is a consequence of the proof of Denef's result (see \cite{D}, p.5-6).

2) As the prehomogeneous vector space $(G, V^+)$ is regular, the relative invariant $\Delta_{0}$ is non degenerate in the sense of Definition 1.2 in \cite{Sa}. As $\Delta_{0}$ is also a relative invariant for $(P,V^+)$, the prehomogeneous vector space $(\tilde P, V^+)$ satisfies the regularity condition condition $(A.1)$ of F. Sato (\cite{Sa}).

By Theorem  \ref{ConditionA2}, the prehomogeneous vector space $(\tilde P, V^+)$ satisfies also condition $(A.2')$ in \cite{Sa}, and therefore the second assertion is a consequence of   Theorem $k_{\go p}$ in  {\it loc. cit} (\S 2.4).

\end{proof}

\subsection{Explicit functional equation.}\label{par-explicite}
\hfill
\vskip 5pt

 We specify first the functional equation  in Theorem \ref{thm-Sato-kp} in the case where  $k=0$ (rank $1$). }\\
In that case,  $V^+$ and $V^-$ are isomorphic to $F$, and $F^*$ is the unique open orbit of the group   $G=P=\tilde{P}$ in  $V^+$, and  the set  $\Sr_e$ is reduced to $\{1\}$.
 \vskip 5pt

We will identify the spaces   $V^\pm$ with  $F$.  Then the zeta functions  $\cK_a^\pm(f,\de,s)$ appearing in Definition  \ref{def-zetaP} are equal.  Since $a=1$,    we will omit the index $a$ and  the exponents  $\pm$   in  the notations below, that is we set for $f\in \cS(F)$: 
$$\cK(f,\de, s)=\int_F f(t)|t|^s \de(t)dt.$$

 If $dt$ is a (additive) Haar measure  on $F$, $d^*t=\dfrac{dt}{|t|}$ is a Haar measure on $F^*$.

 Tate's zeta function (see \cite{Ta}) is then defined, for  $f\in\cS(F)$, $s\in \C$ and  $\de\in\widehat{F^*}$, by
 $$Z(f,\de,s)= \int_{F^*}f(t) \de(t)|t|^s d^*t.$$
This integral is absolutely convergent  for  $s\in\C$ such that $\text{Re}(s)+\text{Re}(\de)-1>0$ and $Z(f,\de,s)$ extends to a meromorphic function  on $\C$ which satisfies the following functional equation:
$$Z(f,\de ,s)\index{zetafdeltas@$Z(f,\de ,s)$}=\rho(\de,s) \index{rhodeltas@$\rho(\de,s)$}Z(\cF(f),\de^{-1}, 1-s),$$
where  $\rho(\de,s)$ is the so-called  $\rho$ factor of  Tate (see \cite{Ta}).

 
 Then Tate's functional equation can be re-written as follows: 
$$ \cK(f,\de, s)=\rho(\de,s+1)\cK(\cF(f),\de^{-1}, -s-1)\;\text{and }\; \cK(\cF(f), \de,s)=\de(-1) \rho(\de, s+1) \cK(f,\de^{-1}, -s-1).$$
 Hence, this gives  the explicit functional equation in Theorem \ref{thm-Sato-kp} for $k=0$.\vskip 10pt

 To compute the factors $D^k_{(a,c)}(\om,s)$ appearing in Theorem \ref{thm-Sato-kp} in the case  $k\geq 1$, we have to introduce new functions.\vskip 5pt

 Let  $S$ be one of the groups $F^*, F^{*2}$ or $N_{E/F}(E^*)$ for a quadratic extension $E$ of $F$. We set $\Sr=F^*/S$ and let  $\widehat{\Sr}$ be the group of characters of  $\Sr$. Any element  $\chi\in \widehat{\Sr}$ extends uniquely to a character of  $F^*$ which  is trivial on $S$. We still denote   by  $\chi$ this extension.
 .\me

 \begin{definition}\label{def-rhotilde} Let  $\de\in\widehat{F^*}$ and $s\in\C$. For  $x\in\Sr$, we define
$$\tilde{\rho}(\de,s;x)\index{rhotilde@$\tilde{\rho}(\de,s;x)$}=\frac{1}{|\Sr|}\sum_{\chi\in\widehat{\Sr}} \chi(x)\rho(\de\chi,s).$$
As a function of $x$, $\tilde{\rho}(\de,s;x)$ is the Fourier transform of the function  $\chi\in\widehat{\Sr} \mapsto \rho(\de\chi,s).$
Therefore $\rho(\de\chi,s)=\sum_{x\in\Sr}\chi(x) \tilde{\rho}(\de,s;x),$ for  $ \chi\in\widehat{\Sr}.$
\end{definition}

\begin{lemme}\label{lem-EFP-k=0}(compare with \cite{Mu} Corollaire 3.6.3).  \hfill

 Let  $\de\in\widehat{F^*}$ and  $a\in \Sr$. Define 
$$\underline{\cK}_{(a)}(f,\de,s)=\int_{\{t\in F^*; ta\in S\}}f(t)|t|^s \de(t) dt.$$ 

 The functions $\underline{\cK}_{(a)}(f,\de,s)$ satisfy the following functional equations:
$$ \underline{\cK}_{(a)}(f,\de,s)
=\sum_{c\in\Sr}\tilde{\rho}(\de,s+1; ac)\underline{\cK}_{(c)}(\cF(f),\de^{-1}, -s-1),\quad f\in \cS(F),$$
and
$$\underline{\cK}_{(a)}(\cF(g),\de,s)=\sum_{c\in\Sr}\de(-1)\tilde{\rho}(\de,s+1; -ac)\underline{\cK}_{(c)}(g,\de^{-1},-s-1)  \quad g\in  \cS(F).$$

\end{lemme}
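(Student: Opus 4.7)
The plan is to reduce Lemma \ref{lem-EFP-k=0} to Tate's classical functional equation for $Z(f,\delta,s)$ by using Fourier analysis on the finite abelian quotient $\Sr = F^*/S$. The key observation is that in each of the three cases for $S$, we have $F^{*2}\subset S$, so every character $\chi\in\widehat{\Sr}$ (viewed as a character of $F^*$ trivial on $S$) satisfies $\chi^2=1$, and in particular $\chi = \chi^{-1}$.

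First, I would rewrite the indicator function of the coset $aS$ by Fourier inversion on $\Sr$:
\[
\mathbf{1}_{\{t\in F^*:\,ta\in S\}}(t)=\frac{1}{|\Sr|}\sum_{\chi\in\widehat{\Sr}}\chi(ta).
\]
Plugging this into the definition of $\underline{\cK}_{(a)}(f,\delta,s)$ and using $dt=|t|\,d^*t$ gives
\[
\underline{\cK}_{(a)}(f,\delta,s)=\frac{1}{|\Sr|}\sum_{\chi\in\widehat{\Sr}}\chi(a)\,Z(f,\delta\chi,s+1).
\]
Applying Tate's functional equation $Z(h,\omega,\sigma)=\rho(\omega,\sigma)\,Z(\cF h,\omega^{-1},1-\sigma)$ to each term (using $(\delta\chi)^{-1}=\delta^{-1}\chi$) yields
\[
\underline{\cK}_{(a)}(f,\delta,s)=\frac{1}{|\Sr|}\sum_{\chi}\chi(a)\rho(\delta\chi,s+1)\,Z(\cF(f),\delta^{-1}\chi,-s).
\]
Next, I would run the decomposition of $\mathbf{1}_{\{tc\in S\}}$ backwards to re-express each twisted Tate zeta function on the right: splitting $F^*$ into cosets $cS$ and noting that $\chi$ is constant equal to $\chi(c)$ on $cS$, one obtains
\[
Z(\cF(f),\delta^{-1}\chi,-s)=\sum_{c\in\Sr}\chi(c)\,\underline{\cK}_{(c)}(\cF(f),\delta^{-1},-s-1).
\]
Substituting and interchanging the order of summation collects the coefficient
\[
\frac{1}{|\Sr|}\sum_{\chi}\chi(ac)\,\rho(\delta\chi,s+1)=\tilde{\rho}(\delta,s+1;ac),
\]
directly from Definition \ref{def-rhotilde}. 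This proves the first functional equation.

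For the second equation, I would start from the first one applied to $g$ in the form already obtained, but with $f$ replaced by $\cF(g)$ in the role of the argument. Running the same computation gives
\[
\underline{\cK}_{(a)}(\cF(g),\delta,s)=\sum_{c\in\Sr}\tilde{\rho}(\delta,s+1;ac)\,\underline{\cK}_{(c)}(\cF^{2}(g),\delta^{-1},-s-1).
\]
The Fourier inversion formula in Proposition \ref{prop-mesures-1} gives $\cF^{2}(g)(t)=g(-t)$, so the change of variable $u=-t$ in the integral defining $\underline{\cK}_{(c)}(\cF^{2}(g),\delta^{-1},-s-1)$ produces a factor $\delta^{-1}(-1)=\delta(-1)$ together with the substitution of the integration domain $\{t:tc\in S\}$ by $\{u:-uc\in S\}=\{u:u(-c)\in S\}$. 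Relabeling $c'=-c$ (a bijection of $\Sr$) converts the argument $ac$ of $\tilde{\rho}$ into $-ac'$, yielding exactly the second formula.

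The main technical point to watch is the bookkeeping of signs and inverses: one must use that every $\chi\in\widehat{\Sr}$ is real-valued ($\chi=\chi^{-1}$, because $F^{*2}\subset S$ in all three cases listed in Definition \ref{def-ensembleS}), so that $(\delta\chi)^{-1}=\delta^{-1}\chi$ and the Fourier-inversion formula for $\tilde{\rho}$ applies cleanly. The rest is essentially formal manipulation of the orthogonality relations for characters of the finite group $\Sr$ and Tate's equation; no new analytic estimate is required, since meromorphic continuation is already guaranteed by Tate's theory.
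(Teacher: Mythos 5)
Your proof is correct and follows essentially the same route as the paper: Fourier inversion on the finite group $\Sr$ to express $\underline{\cK}_{(a)}$ as an average of twisted Tate zeta functions, Tate's functional equation term by term (using $\chi=\chi^{-1}$ since $F^{*2}\subset S$), and re-collection of the coset sums to produce $\tilde\rho$. The only cosmetic difference is in the second equation, where the paper directly invokes the second form of Tate's equation $\cK(\cF(g),\de,s)=\de(-1)\rho(\de,s+1)\cK(g,\de^{-1},-s-1)$ already recorded in section \ref{par-explicite}, whereas you re-derive it from the first form via $\cF^{2}(g)(t)=g(-t)$ and the substitution $c\mapsto -c$; both are equivalent.
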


\begin{proof}: 

From the definition, one has  $\cK(f,\de\chi, s)= \sum_{a\in\Sr}\chi(a)\underline{\cK}_{(a)}(f,\de,s)$  for $\chi\in\widehat{\Sr}$.
By Fourier inversion on the group $\Sr$, we obtain:
$$ \underline{\cK}_{(a)}(f,\de,s)=\frac{1}{|\Sr|}\sum_{\chi\in\widehat{\Sr}} \chi(a)\cK(f,\de\chi, s),\quad a\in\Sr.$$
From Tate's functional equation, we obtain
$$ \underline{\cK}_{(a)}(f,\de,s)=\frac{1}{|\Sr|}\sum_{\chi\in\widehat{\Sr}} \chi(a)\rho(\de\chi,s+1)\cK(\cF(f),\de^{-1}\chi, -s-1))$$
$$=\frac{1}{|\Sr|}\sum_{c\in\Sr_e}\sum_{\chi\in\widehat{\Sr}} \chi(ac)\rho(\de\chi,s+1) \underline{\cK}_{(c)}(\cF(f),\de^{-1}, -s-1))$$
$$=\sum_{c\in\Sr }\tilde{\rho}(\de,s+1;ac) \underline{\cK}_{(c)}(\cF(f),\de^{-1}, -s-1)).$$
This is the first assertion.

 For the second assertion we have similarly:  
$$ \underline{\cK}_{(a)}(\cF(g),\de,s)=\frac{1}{|\Sr |}\sum_{\chi\in\widehat{\Sr }} \chi(a)K(\cF(g),\de\chi, s)$$
$$=\frac{1}{|\Sr |}\sum_{\chi\in\widehat{\Sr }} \chi(a)\rho(\de\chi,s+1)(\de^{-1}\chi)(-1)\cK(g,\de^{-1}\chi, -s-1)$$
$$=\frac{1}{|\Sr |}\sum_{c\in\Sr }\sum_{\chi\in\widehat{\Sr }} \chi(-ac)\rho(\de\chi,s+1)\de^{-1}(-1) \underline{\cK}_{(c)}(g,\de^{-1}, -s-1)$$
$$=\sum_{c\in\Sr }\de(-1)\tilde{\rho}(\de, s+1;-ac) \underline{\cK}_{(c)}(g,\de^{-1}, -s-1).$$\end{proof}

If $k\geq 1$ and if  $x=(x_0,\ldots, x_k)$ is a  $k+1$-tuple of elements (taken in any set), we will set  $\underline{x}=(x_0,\ldots, x_{k-1})$.

\begin{theorem}\label{EF-(P,V^+)}\hfill

 Let $k\geq 1$. Let $\om\in \widehat{F^*}^{k+1}$,  $s\in\C^{k+1}$ and  $f\in \cS(V^+)$. For $a\in\Sr_e^{k+1}$, we have 
$$\cK^-_a(\cF f,\om^\sharp,s^\sharp-m)=\sum_{c\in \Sr_e^{k+1}} D^k_{(a,c)}\index{Dkac@$D^k_{(a,c)}$}(\om,s) \cK^+_c(f,\om,s),$$
where 
 \begin{align*}&D^k_{(a,c)}(\om,s)\\
&=\ga_k(\underline{a},c_k))(\om_0\ldots \om_k)(-1)\tilde{\rho}((\om_0\ldots \om_k)^{-1},-(s_0+\ldots+s_k+\frac{kd}{2});-a_kc_k)
D^{k-1}_{(\underline{a},\underline{c})}(\underline{\om},\underline{s})\end{align*}
with $D^0_{(a_0,c_0)}(\om_0,s_0)= \om_0(-1)\tilde{\rho}(\om_0^{-1}, -s_0;-a_0 c_0).$
Hence 
\begin{align*}&D^k_{(a,c)}(\om,s)\\
&=\prod_{j=1}^k\ga_j((a_0,\ldots, a_{j-1}), c_j) \prod_{j=0}^k(\om_0\ldots \om_j)(-1)\tilde{\rho}((\om_0\ldots \om_j)^{-1},-(s_0+\ldots+s_j+\frac{jd}{2});- a_jc_j),\\
\end{align*}
where the constant    $\ga_j((a_0,\ldots, a_{j-1}), c_j)$  is the one given in Proposition  \ref{prop-calcul-gamma} (depending on $e$).

 Here, the function $\tilde{\rho}$ is associated to the group $\Sr_e$.

\end{theorem}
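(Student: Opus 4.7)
I will argue by induction on the rank $k$. The base case $k=0$ forces $d=0$, hence $m=1$. The spaces $V^{\pm}$ are identified with $F$ via $\Delta_0$ and $\nabla_0$, and the sets $\cO^{\pm}(a_0)$ correspond to $\{t\in F^*\,:\,ta_0\in S_e\}$; the zeta functions $\cK^{\pm}_{a_0}$ then coincide with the Tate-type integrals $\underline{\cK}_{(a_0)}$ of Lemma \ref{lem-EFP-k=0}, and the announced formula $D^0_{(a_0,c_0)}(\om_0,s_0)=\om_0(-1)\tilde\rho(\om_0^{-1},-s_0;-a_0c_0)$ is exactly the second functional equation of that lemma applied with $\de=\om_0^{-1}$, $s=-s_0-1$.

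For the inductive step, assume the statement holds in rank $k$ for the underlying regular graded algebra $\underline{\tilde{\go g}}=\tilde{\go g}[\lambda_k]$. The key move is to iterate the integration via the mean functions of Section \ref{sec:section-MeanFunctions} with $p=k-1$. Using $\kappa=1$ (since $\ell=1$) in Theorem \ref{th-formule-int-T_{f}}, together with Lemma \ref{lem-decomposition-Delta} (which gives $\Delta_j(u+v)=\underline{\Delta}_j(u)\Delta_k(v)$ for $j<k$ and $\Delta_k(u+v)=\Delta_k(v)$) and Corollary \ref{cor-reduction-orbite} (which factorizes the orbits $\cO^{\pm}(c)$ as $\underline{\cO}^{\pm}(\underline{c})\times\{\Delta_k(\cdot)c_k\in S_e\}$), one rewrites
$$\cK^+_c(f,\om,s)=\int_{\underline{\cO}^+(\underline{c})}\int_{\{v:\Delta_k(v)c_k\in S_e\}} T^{k-1,+}_f(u,v)\,(\underline{\om},\underline{s})(\underline{\Delta}(u))\,(\om_0\cdots\om_k)(\Delta_k(v))|\Delta_k(v)|^{s_0+\cdots+s_k+\tfrac{kd}{2}}\,du\,dv,$$
and an analogous formula for $\cK^-_a(\cF f,\om^{\sharp},s^{\sharp}-m)$ involving $T^{k-1,-}_{\cF f}(u',v')$, where the $u'$-factor $|\nabla_1(u')|^{d/2}=|\underline{\nabla}_0(u')|^{d/2}$ coming from Theorem \ref{th-formule-int-T_{f}} combines with the shift $s^{\sharp}-m$ (using the splitting $m=\underline m+\tfrac{d}{2}$ with $\underline m=1+\tfrac{(k-1)d}{2}$) to produce precisely the dual shift $(\underline{\om}^{\sharp},\underline{s}^{\sharp}-\underline m)(\underline{\nabla}(u'))$ needed for the inductive hypothesis. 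The remaining $v'$-factor is a Tate integrand on $\tilde{\go g}^{-\lambda_k}\simeq F$ restricted to the $S_e$-coset indexed by $a_k$, with character $(\om_0\cdots\om_k)^{-1}$ and exponent $-(s_0+\cdots+s_k)-m$.

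I then plug in Theorem \ref{th-T_{Ff}-T_{f}}, which writes $T^{k-1,-}_{\cF f}(u',v')=\cF_v(\gamma(u',v)\,\cF_u T^{k-1,+}_f(u',v))(v')$. By Proposition \ref{prop-calcul-gamma}, the Weil constant $\gamma(u',v)$ is constant on $\underline{\cO}^-(\underline{a})\times\{v:\Delta_k(v)c_k\in S_e\}$ with value $\gamma_k(\underline a,c_k)$, so it can be pulled out of the integrals after summing over $c_k\in\Sr_e$. The $v'$-integration, paired with $\cF_v$, is then a rank-$1$ Tate functional equation in the $\Sr_e$-graded form of Lemma \ref{lem-EFP-k=0}: it produces the factor
$$\sum_{c_k\in\Sr_e}(\om_0\cdots\om_k)(-1)\,\tilde\rho\!\left((\om_0\cdots\om_k)^{-1},-(s_0+\cdots+s_k+\tfrac{kd}{2});-a_kc_k\right)$$
times the integrand evaluated at the corresponding $c_k$-coset. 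The remaining $u'$-integration, paired with $\cF_u$, is exactly a rank-$k$ functional equation for the graded algebra $\underline{\tilde{\go g}}$ applied to the Schwartz function $T^{k-1,+}_f(\cdot,v)\in\cS(\underline{V}^+)$, which by the inductive hypothesis contributes $\sum_{\underline c} D^{k-1}_{(\underline a,\underline c)}(\underline\om,\underline s)$. Recombining the three factors $\gamma_k(\underline a,c_k)$, the Tate $\tilde\rho$-factor, and $D^{k-1}_{(\underline a,\underline c)}(\underline\om,\underline s)$, and re-assembling the remaining $u$ and $v$ integrals into $\cK^+_c(f,\om,s)$, yields the recursion for $D^k_{(a,c)}$ stated in the theorem, and hence by induction the explicit product formula.

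The main obstacle is the careful bookkeeping of characters and exponents: one has to verify that the shifts $s^{\sharp}-m$ and $\om^{\sharp}$ split cleanly between the $\underline{\tilde{\go g}}$-piece and the rank-$1$ Tate piece, and that the extra exponent factors coming from the Jacobians in Theorem \ref{th-formule-int-T_{f}} (the $|\Delta_k(v)|^{kd/2}$ and $|\nabla_1(u')|^{d/2}$ terms) combine with the split $m=\underline m+d/2$ to reproduce \emph{exactly} the shift $\underline s^{\sharp}-\underline m$ required by induction and the Tate exponent $-(s_0+\cdots+s_k+\tfrac{kd}{2})$ appearing in the final answer. Once this bookkeeping is verified, the structural argument using mean functions, the Weil formula (Theorem \ref{th-T_{Ff}-T_{f}}) and its Proposition \ref{prop-calcul-gamma} evaluation, and the rank-$1$ Tate lemma \ref{lem-EFP-k=0}, closes the induction.
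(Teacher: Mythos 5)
Your proposal is correct and follows essentially the same route as the paper: induction via the mean functions at $p=k-1$, the factorizations of Lemma \ref{lem-decomposition-Delta} and Corollary \ref{cor-reduction-orbite}, Theorem \ref{th-T_{Ff}-T_{f}} combined with Proposition \ref{prop-calcul-gamma}, and the $S_e$-graded Tate equation of Lemma \ref{lem-EFP-k=0}. The one point worth making explicit is that at $k=1$ the underlying algebra has $\underline{e}=0$, so the $u'$-integral over $\underline{\cO}^-(a_0)$ cannot be handled by the inductive hypothesis for $\underline{\tilde{\go g}}$ with its own invariants but must use Lemma \ref{lem-EFP-k=0} with $S=S_e$ directly --- which is precisely what your base-case identification of $\cK^{\pm}_{a_0}$ with the Tate-type integrals $\underline{\cK}_{(a_0)}$ supplies.
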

\begin{proof} \hfill

  From Theorem  \ref{thm-Sato-kp}, the constants  $D^k_{(a,c)}(\om,s)$ do not depend on the function  $f\in\cS(V^+)$. Therefore to compute these constants we can take  $f\in \cS(\cO^+)$ and suppose that   $\text{Re}((s^\sharp-m)_j)+\text{Re}(\om_j^\sharp)>0$.  Then the functions  $\cK^-_a(\cF f,\om^\sharp,s^\sharp-m)$ and  $\cK^+_c(f,\om,s)$ are defined by the converging integrals given in Definition  \ref{def-zetaP}.\vskip 5pt
  
Notations will be as in  \ref{notation-Delta-souligne}. 
From the integration formula in    Theorem  \ref{th-formule-int-T_{f}}, we obtain $$\cK^-_a(\cF f,\om^\sharp,s^\sharp-m)$$
$$=\int_{u'\in \underline{V}^-} \int_{v'\in \tilde{\go g}^{-\la_k}}T^{k-1,-}_{\cF f}(u',v'){\mathbf 1}_{\cO^-(a)}(u'+v')\om^\sharp(\nabla(u'+v'))\; |\nabla(u'+v')|^{s^\sharp-m}\;|\nabla_1(u')|^{\frac{d}{2}} du' dv'$$
and then from Corollary \ref{cor-reduction-orbite}, we get 
$$\cK^-_a(\cF f,\om^\sharp,s^\sharp-m)$$
$$=\int_{u'\in \underline{\cO}^-(a_0,\ldots, a_{k-1})} \int_{v'\in  \tilde{\go g}^{-\la_k};\tilde{\nabla}_k(v') a_k\in S_e}T^{k-1,-}_{\cF f}(u',v')\om^\sharp(\nabla(u'+v'))\; |\nabla(u'+v')|^{s^\sharp-m}\;|\nabla_1(u')|^{\frac{d}{2}} du' dv'.$$

From Lemma  \ref{lem-decomposition-Delta} we have
$$\nabla_j((u'+v'))=\left\{\begin{array}{ccc} \underline{\nabla}_0(u')\tilde{\nabla}_k(v') & {\rm for} & j=0\\
\underline{\nabla}_{j-1}(u')& {\rm for} & j=1,\ldots, k\end{array}\right.$$
Define  $(\underline{\om},\underline{s})=((\om_0,\ldots, \om_{k-1}), (s_0,\ldots, s_{k-1}))$ and  $[\om]=\prod_{j=0}^k \om_j$. We also note that    $m=1+\frac{kd}{2}=\underline{m}+\frac{d}{2}$. Then:$$|\nabla(u'+v')|^{s^\sharp-m}\;|\nabla_1(u')|^{\frac{d}{2}}=|\underline{\nabla}_0(u')\tilde{\nabla}_k(v')|^{s^\sharp_0-m}\prod_{j=1}^k |\underline{\nabla}_{j-1}(u')|^{s^\sharp_j}\;|\nabla_1(u')|^{\frac{d}{2}}$$
$$=|\underline{\nabla}_0(u')\tilde{\nabla}_k(v')|^{-(s_0+\ldots+s_k)-(1+\frac{kd}{2})} |\underline{\nabla}_{0}(u')|^{s_{k}}|\underline{\nabla}_1(u')|^{s_{k-1}}\ldots |\underline{\nabla}_{k-1}(u')|^{s_1}\;|\underline{\nabla}_0(u')|^{\frac{d}{2}}$$
$$=|\underline{\nabla}(u')|^{\underline{s}^\sharp-\underline{m}} |\tilde{\nabla}_k(v')|^{-(s_0+\ldots+s_k+\frac{kd}{2})-1},$$ 
and similarly we have 
$$\om^\sharp(\nabla(u'+v'))=\underline{\om}^\sharp (\underline{\nabla}(u'))\big([\om](\tilde{\nabla}_k(v'))\big)^{-1}.$$
Therefore
\begin{align*}&\cK^-_a(\cF f,\om^\sharp,s^\sharp-m)\\
&=\int_{u'\in \underline{\cO}^-(a_0,\ldots, a_{k-1})} \int_{v'\in  \tilde{\go g}^{-\la_k};\tilde{\nabla}_k(v') a_k\in S_e}T^{k-1,-}_{\cF f}(u',v')\underline{\om}^\sharp (\underline{\nabla}(u'))\big([\om](\tilde{\nabla}_k(v'))\big)^{-1}\\
 &{\hskip 200pt} \times |\underline{\nabla}(u')|^{\underline{s}^\sharp-\underline{m}} |\tilde{\nabla}_k(v')|^{-(s_0+\ldots+s_k+\frac{kd}{2})-1}du'dv'\end{align*}
For $u'\in\underline{V}^-$, define the function
 $$G_{u'}(v)= \ga_k(u',v) (\cF_u T_f^{k-1,+})(u',v),\quad v\in \tilde{\go g}^{\la_k}.$$ 
Theorem   \ref{th-T_{Ff}-T_{f}}, says that  
$$\cF_v(G_{u'})(v')=T^{k-1,-}_{\cF f}(u',v').$$

    The maps  $v\in \tilde{\go g}^{\la_k}\mapsto \De_k(v)$ and  $v'\in\tilde{\go g}^{-\la_k}\mapsto \tilde{\nabla}_k(v')$ are isomorphisms  from $\tilde{\go g}^{\la_k}$ and  $\tilde{\go g}^{-\la_k}$ respectively  onto  $F$. Therefore the functional equation obtained in Lemma  \ref{lem-EFP-k=0} for $S=S_e$ implies  (note that as $f\in \cS(\cO^+)$, Theorem \ref{th-proprietes-T_{f}} implies that  $ T^{k-1,+}_f\in\cS(\underline{V}^+\times \tilde{\go g}^{\la_k})$)
   \begin{align*}& \int_{v'\in\tilde{\go g}^{-\la_k};\tilde{\nabla}_k(v') a_k\in S_e}T^{k-1,-}_{\cF f}(u',v')\big([\om](\tilde{\nabla}_k(v'))\big)^{-1} |\tilde{\nabla}_k(v')|^{-(s_0+\ldots+s_k+\frac{kd}{2})-1}dv'\\
&=\sum_{c_k\in\Sr_e} [\om](-1)\tilde{\rho}([\om]^{-1},-(s_0+\ldots+s_k+\frac{kd}{2});-a_kc_k)\\
& \hskip50pt \times \int_{v\in \tilde{\go g}^{\la_k};\De_k(v)c_k\in S_e} \ga_k(u',v) (\cF_u T_f^{k-1,+})(u',v)[\om](\De_k(v))  |v|^{ s_0+\ldots+s_k+\frac{kd}{2}} dv.
\end{align*}
We know from Proposition  \ref{prop-calcul-gamma}, that for  $u'\in \underline{\cO}^-(\underline{a})$ and  $v\in\tilde{\go g}^{\la_k}$ with $\De_k(v)c_k\in S_e$, one has  $\ga(u',v)=\ga_k(\underline{a},c_k)$. Hence
\begin{align*}&\cK^-_a(\cF f,\om^\sharp,s^\sharp-m)\\
&=\sum_{c_k\in\Sr_e} [\om](-1)\tilde{\rho}([\om]^{-1},-(s_0+\ldots+s_k+\frac{kd}{2});-a_kc_k)\ga_k(\underline{a},c_k)\\
 &\hskip 20pt\times \int_{v\in \tilde{\go g}^{\la_k};\De_k(v)c_k\in S_e} [\om](\Delta_{k}(v))  |v|^{ s_0+\ldots+s_k+\frac{kd}{2}} \int_{u'\in \underline{\cO}^-(\underline{a})}  (\cF_u T_f^{k-1,+})(u',v)\underline{\om}^\sharp (\underline{\nabla}(u')) |\underline{\nabla}(u')|^{\underline{s}^\sharp-\underline{m}}du'dv.
 \end{align*}
 If $k=1$, then ${\underline V}^-$ is equal to $\tilde{\go g}^{-\la_0}$ which is isomorphic to $F$ (see Notation \ref{notation-Delta-souligne}). Hence $\underline{a}=(a_0)$ and $u'\in  \underline{\cO}^-(\underline{a})$ if and only if $u'=y_0 Y_0$ with $y_0a_0\in S_e$. If $k\geq 2$, then $\underline{\cO}^-(\underline{a})$ is a $\underline{\tilde{P}} $ open orbit in $V^-$. Applying Lemma \ref{lem-EFP-k=0} (with $S=S_e$) if $k=1$, and  the abstract functional equation (Theorem \ref{thm-Sato-kp}) for $\underline{\tilde{\go g}}$ if $k\geq 2$  to the function  $u\mapsto T_f^{k-1,+}(u,v)$, we  obtain that:

$$ \int_{u'\in \underline{\cO}^-(\underline{a})}  (\cF_u T_f^{k-1,+})(u',v)\underline{\om}^\sharp (\underline{\nabla}(u')) |\underline{\nabla}(u')|^{\underline{s}^\sharp-\underline{m}}du'$$
$$=\sum_{\underline{c}=(c_0,\ldots c_{k-1})\in\Sr_e^k}D^{k-1}_{(\underline{a},\underline{c})}(\underline{\om},\underline{s})\int_{u\in \underline{\cO}^+(\underline{c})}T_f^{k-1,+}(u,v) \underline{\om}(\underline{\De})(u)|\underline{\De}(u)|^{\underline{s}} du.$$

 with $D^0_{(a_0,c_0)}(\om_0,s_0)= \om_0(-1)\tilde{\rho}(\om_0^{-1}, -s_0;-a_0 c_0)$.\vskip 5pt

The preceding equation becomes now: 
\begin{align*}&\cK^-_a(\cF f,\om^\sharp,s^\sharp-m)\\
 &=\sum_{c\in\Sr_e^{k+1}}[\om](-1)\tilde{\rho}([\om]^{-1},-(s_0+\ldots+s_k+\frac{kd}{2}); -a_kc_k)\ga_k(\underline{a},c_k)
D^{k-1}_{(\underline{a},\underline{c})}(\underline{\om},\underline{s})\\
&\hskip 20pt \times \int_{v\in \tilde{\go g}^{\la_k};\De_k(v)c_k\in S_e} \int_{u\in \underline{\cO}^+(\underline{c})}T_f^{k-1,+}(u,v) \underline{\om}(\underline{\De})(u)|\underline{\De}(u)|^{\underline{s}}  [\om](\De_k(v))  |v|^{ s_0+\ldots+s_k+\frac{kd}{2}} dv du.
\end{align*}
From Lemma   \ref{lem-decomposition-Delta}  we have:
$$\De_j(u+v)=\left\{\begin{array}{ccc} \underline{\De}_j(u)\De_k(v) & {\rm for} & j=0,\ldots, k-1\\
\De_k(v)& {\rm for} & j=k\end{array}\right.,$$
which implies  
$$\om(\De)(u+v)=\prod_{j=0}^k \om_j(\De_j(u+v))=\prod_{j=0}^{k-1} \om_j(\underline{\De}_j(u)) [\om](\De_k(v))
=\underline{\om}(\underline{\De})(u) [\om](\De_k(v))$$
and 
$$|\De(u+v)|^s=\prod_{j=0}^{k }| {\De}_j(u+v)|^{s_j}=\prod_{j=0}^{k-1}|\underline{\De}_j(u)|^{s_j} |\De_k(v)|^{s_0+\ldots+s_k}.$$
Moreover from Corollary  \ref{cor-reduction-orbite} we have 
$$u\in \underline{\cO}^+(\underline{c}),\; {\rm and} \;\De_k(v) c_k\in S_e\Longleftrightarrow u+v\in\cO^+(c).$$
Then, if we define 
$$D^k_{(a,c)}(\om,s)=\ga_k(\underline{a},c_k)[\om](-1)\tilde{\rho}([\om]^{-1},-(s_0+\ldots+s_k+\frac{kd}{2});- a_kc_k)
D^{k-1}_{(\underline{a},\underline{c})}(\underline{\om},\underline{s})$$
(which is the announced relation between $D^k_{(a,c)}(\om,s)$ and $D^{k-1}_{(\underline{a},\underline{c})}(\underline{\om},\underline{s})$),

we obtain finally:
\begin{align*}&\cK^-_a(\cF f,\om^\sharp,s^\sharp-m)\\
&=\sum_{c\in\Sr_e^{k+1}}D^k_{(a,c)}(\om,s) \int_{u\in \underline{V}^+}\int_{v\in \tilde{\go g}^{\la_k}} {\mathbf 1}_{\cO^+(c)}(u+v) T_f^{k-1,+}(u,v)\om(\De)(u+v)|\De(u+v)|^s|\De_k(v)|^{\frac{kd}{2}} du dv\\
&=\sum_{c\in \Sr_e^{k+1}}D^k_{(a,c)}(\om,s) \cK^+_c(f,\om,s).
\end{align*}

The explicit computation of $D^k_{(a,c)}(\om,s)$ follows easily by induction.

 \end{proof}

In the case where $e$ is even, the explicit value of  $\ga_k(\underline{a},c_k)$ allows to obtain a simpler expression for $D^k_{(a,c)}(\om,s)$. %

\begin{cor}\label{cor-calcul-constanteEF-e-pair} \hfill

Let $\om\in\widehat{F^*}^{k+1}$ and  $s\in\C^{k+1}$.

{\rm 1.} If $e=0$ or $4$, then $\Sr_e=\{1\}$ and we omit  the dependance on the variables $a,c$. For $f\in\cS(V^+)$, we have 
$$\cK^-(\cF f,\om^\sharp,s^\sharp-m)=D^k(\om,s) \cK^+(f, \om, s),$$
where  $$D^k(\om,s)=\ga_\psi(q)^{\frac{k(k+1)}{2}}\prod_{j=0}^{k}(\om_0\ldots \om_j)(-1)\rho\big((\om_0\ldots \om_j)^{-1},-(s_0+\ldots +s_j+\frac{jd}{2})\big).$$

{\rm 2.} If  $e=2$, remember that $S_e=N_{E/F}(E^*)$ where $E$ is a quadratic extension of  $F$ with  $\varpi_E$ as associated quadratic character . Let  $a=(a_0,\ldots,a_k)$ and   $c=(c_0,\ldots, c_k)$ be elements of  $ \Sr_e^{k+1}$. Then  
\begin{align*}&D^k_{(a,c)}(\om,s)\\
&=\ga_\psi(q_e)^{\frac{k(k+1)}{2}}\prod_{j=1}^k\varpi_E(c_j)^j\varpi_E(a_0\ldots a_{j-1}) \prod_{j=0}^k(\om_0\ldots \om_{j})(-1)\tilde{\rho}((\om_0\ldots \om_{j})^{-1},-(s_0+\ldots+s_{j}+\frac{jd}{2}); -a_{j}c_{j}). 
\end{align*}

\end{cor}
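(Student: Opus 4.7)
The corollary is a direct computation: both formulas are obtained by substituting the explicit values of the Weil constants $\gamma_j((a_0,\ldots,a_{j-1}), c_j)$ from Proposition \ref{prop-calcul-gamma} into the general formula for $D^k_{(a,c)}(\omega,s)$ established in Theorem \ref{EF-(P,V^+)}. No new ingredient is needed; the work is purely bookkeeping of the resulting product.

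For assertion (1), when $e=0$ or $e=4$, we have $\mathcal{S}_e = \{1\}$, so $\widehat{\mathcal{S}_e}$ is trivial and Definition \ref{def-rhotilde} degenerates to $\tilde\rho(\delta,s;x) = \rho(\delta,s)$. Simultaneously, Proposition \ref{prop-calcul-gamma} yields $\gamma_j((a_0,\ldots,a_{j-1}),c_j) = \gamma_\psi(q_e)^j$, independent of the parameters. The product appearing in Theorem \ref{EF-(P,V^+)} therefore collapses to
\[
\prod_{j=1}^k \gamma_\psi(q_e)^j \;=\; \gamma_\psi(q_e)^{1+2+\cdots+k} \;=\; \gamma_\psi(q_e)^{k(k+1)/2},
\]
which, combined with the replacement of $\tilde\rho$ by $\rho$, gives the stated expression for $D^k(\omega,s)$.

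For assertion (2), when $e=2$, Proposition \ref{prop-calcul-gamma} gives
\[
\gamma_j((a_0,\ldots,a_{j-1}),c_j) \;=\; \gamma_\psi(q_e)^j\,\varpi_E(c_j)^j\prod_{i=0}^{j-1}\varpi_E(a_i).
\]
Multiplying these identities for $j = 1, \ldots, k$, the $\gamma_\psi(q_e)$ factors again combine into $\gamma_\psi(q_e)^{k(k+1)/2}$, while the $\varpi_E$ factors reorganize into $\prod_{j=1}^k \varpi_E(c_j)^j$ and $\prod_{j=1}^k \varpi_E(a_0 a_1 \cdots a_{j-1})$. Inserting this product into the formula of Theorem \ref{EF-(P,V^+)} yields precisely the claimed expression.

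The only delicate point, and the one worth double-checking carefully, is the exponent of $\gamma_\psi(q_e)$: one must verify that the product over $j=1,\ldots,k$ of $\gamma_\psi(q_e)^j$ indeed equals $\gamma_\psi(q_e)^{k(k+1)/2}$ and that no extra sign arises from the way $\gamma_k(\underline a, c_k)$ is written in Proposition \ref{prop-calcul-gamma}. Once that arithmetic is confirmed, both identities follow without further input. There is no real obstacle; the result should be presented as a straightforward specialization of the general functional equation.
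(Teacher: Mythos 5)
Your proposal is correct and follows exactly the paper's route: the corollary is obtained by substituting the values of $\ga_j$ from Proposition \ref{prop-calcul-gamma} into the product formula of Theorem \ref{EF-(P,V^+)}, with $\tilde\rho$ collapsing to $\rho$ when $\Sr_e=\{1\}$. Your reading of the leading dashes in Proposition \ref{prop-calcul-gamma} as list markers rather than minus signs is also the correct one, so the bookkeeping checks out.
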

\begin{proof} \hfill

If   $e=0$ or  $4$ then $\Sr_e=\{1\}$. In that case, for all  $x\in \Sr_e$, $z\in\C$ and  $\de\in\widehat{F^*}$, one has  $\tilde{\rho}(\de,z; x)=\rho(\de,z)$. The corollary is the an easy consequence of Proposition  \ref{prop-calcul-gamma} and of    Theorem \ref{EF-(P,V^+)}.

\end{proof}
 

\section{Zeta functions associated to the $H$-distinguished minimal principal series}\label{sec:zeta}
\subsection{Characters of the group  $L$ which are trivial on  $L\cap \cH_p$}
\hfill

Let   $\Om_p^\pm $ \index{Omega+@$\Om_p^\pm $},  $p\in \{1,\ldots,r_0\}\index{r0@$\,r_0$}$ be the open  $G$-orbits  in $V^\pm$    (Theorem \ref{thm-orbouverte}). The integer $r_0$ depends on  $k$ and on  $e$,  and  $r_0\leq 5$ 

 For $1\leq p \leq r_0$, we fix an element  $I_p^+$ \index{Ip+@$I_p^+$}in  $\Om_p^+$ which is in the  "diagonal" $\oplus_{j=0}^k\tilde{\go g}^{\la_j}$. We suppose moreover that  $I_1^+=I^+$. We set   $I_p^-=\iota(I_p^+)\in \oplus_{j=1}^k\tilde{\go g}^{-\la_j}$ and hence  $\{I_p^-, H_0, I_p^+\}$ is an  $\go sl_2$-triple. 

The non trivial element of the Weyl group associated to this $\go sl_2$-triple acts on  $\tilde{\go g}$ by the element $w_p\in Aut_0(\tilde{\go g})$ given by    $w_p=e^{\ad I_p^+}e^{\ad I_p^-}e^{\ad I_p^+}$. 
Let  $\si_p$  \index{sigmap@$\si_p$} be the automorphism of $\text{Aut}_0(\tilde{\go g})$   defined by  $\si_p(g)=w_p gw_p^{-1}$ (for $g\in Aut_0(\tilde{\go g})$).\me

   We know that  $\si_p$ is an involution of  $G$ (Theorem \ref{th-invol}). Moreover  the group  $\cH_p\index{Hcalp@$\cH_p$}=Z_G(I_p^+)=Z_G(I_p^-)$ is an open  subgroup  of the group of fixed points of  $\si_p$. Therefore  $\Om_p^\pm$ is isomorphic to the symmetric space $G/\cH_p$.\me

We recall also that we denote by  $P$ the parabolic subgroup defined by  $P=LN$ where  $L=Z_G(\go a^0)$,   $N=\exp\ad \go n$, and  $\go n=\oplus_{0\leq i<j\leq k} E_{i,j}(1,-1)$ (cf.  \ref{subsection-P}).

 The key point is that the group   $P$  is a minimal  $\si_p$-split parabolic subgroup for $p=1,\ldots,r_0$, see section \ref{sec:sigma-parabolic} (this means that  $\si_p(P)$ is the parabolic subgroup opposite to  $P$, and $P$  is minimal for this property). The subgroup  $L$ is then the unique Levi subgroup  of  $P$ which is stable under $\sigma_{p}$. \me

 As $\ell=1$, the group $L$ acts by a character $x_j(\cdot)$ on each $\tilde{\go g}^{\la_j}$. Thus, for $p\in\{1,\ldots ,r_0\}$, as $I_p^+$ belongs to the "diagonal", the group $L\cap \cH_p$ is the group of elements  $l\in L$ such that $x_j(l)=1$ for $j=0,\ldots, k$, hence $L\cap \cH_p$ does not  depend on $p$. We set 
\beq\label{def-LH} L_H \index{LH@$L_H$}=\{l\in L; x_j(l)=1,\;\text{for}\; j=0,\ldots, k\}=L\cap \cH_p, \; p\in\{1,\ldots , r_0\}.\eeq
Let  $A_L$ the maximal split torus of the center of  $L$.
 We denote by  $\X(L)$ (resp. $\X(A_L)$) the group of rational characters of $L$ (resp. de $A_L$) which are defined over  $F$.   The  real vector space  $a_L$ is defined by  $a_L=\text{Hom}_\Z(\X(L), \R)$.  The restriction map from $L$ to $A_L$ induces an injection from  $\X(L)$  on $\X(A_L)$, and these $2$ lattices have the same rank, which is equal to the dimension of $\go a_L$  (see \cite{Re} V.2.6). Then, the  dual space  $a^*_L$  satisfies $a_L^*\simeq\X(L)\otimes_\Z\R\simeq \X(A_L)\otimes_\Z\R$ and each root  $\al\in\Si$   induces a linear form of $a^*_L$, which we also denote by $\al$.\me

The canonical map  $H_L:L\to a_L$ \index{HL@$H_L$} is then defined by
$$e^{\langle H_L(l),\chi \rangle}=|\chi(l)|,\quad l\in L, \chi\in\X(L).$$

The group of non ramified characters of  $L$ is then defined by: 
$$X(L)=\text{Hom}(L/\text{Ker} H_L, \C^*),$$
The map $\nu\longmapsto \chi_\nu$ from $(a_L^*)_\C$ to $X(L)$ defined by  $\chi_\nu:l\mapsto e^{\langle \nu, H_L(l)\rangle}$ (or equivalently the map 

 $\chi\otimes s\in\X(L)\otimes_\Z \C \longmapsto(l\mapsto   |\chi(l)|^s)$) 
is surjective and its kernel is a lattice in  $(a_L^*)_\C$.  This defines a structure of algebraic variety on  $X(L)$. More precisely  $X(L)$ is a complex torus whose Lie algebra is $(a_L^*)_\C$.

 Let $p\in\{1,\ldots, r_0\}$. The involution $\si_p$ on $L$ induces an involution on  $a_L$ and $a_L^*$, which we denote by the same letter. From the proof of  Lemma \ref{lem-SECq}, the Lie algebra of  $A_L$  decomposes as   $\go a_L=\go a_L\cap \go  h_p\oplus \go a^0$ where  $\go h_p$ is the Lie algebra of  $\cH_p$.  Since the groups $L\cap \cH_p,\; p=1,\ldots , r_0$ are equal (see (\ref{def-LH})), this decomposition does not depend on  $\si_p$. Hence, the induced involution  on $a_L$ does not depend on $p$. We set  $a_L=a_L^\si\oplus a^0$ and $a_L^*=(a_L^\si)^*\oplus a^{0*}$ the decompositions of $a_L$ and  $a_L^*$ in invariant and anti-invariant spaces under the action of  $\si_p$ for all $p$. By definition of  $\go a^0$, a basis of  $(a^{0,*})_\C$ is given by  $\la_0,\ldots, \la_k$.\me

 Let $X(L)_{\si}$ be the image of  $(a^{0*})_\C$ by the map $\nu\mapsto \chi_\nu$. Then, the torus $X(L)_\si$ is the group of non ramified characters given by  $$l\mapsto \prod_{j=0}^k |x_j(l)|^{\mu_j}\;\text{for}\; \mu=\mu_1\la_1+\ldots+\mu_k\la_k\in (a^{0*})_\C.$$

Let  $\de_P$ \index{deltaP@$\de_P$} be the modular function of  $P$, which is given by  $\de_P(ln)=e^{2\langle\rho_P, H_{L}(l)\rangle}$. Here  $\rho_P$ \index{rhoP@$\rho_P$}is   half the sum of the roots of  $A_L$ in  $N$ (counted with multiplicity).  \begin{lemme}\label{lem-rhoP} We have
$$2\rho_P=\frac{d}{2}\sum_{j=0}^k  (k-2j)\la_j.$$ 
This implies that for  $l\in L$,  $$\de_P(l)^{1/2}=\prod_{j=0}^l|x_j(l)|^{\rho_j},\quad{\rm where }\;\;  \rho_j=\frac{d}{4}(k-2j).$$

\end{lemme}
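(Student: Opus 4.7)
The plan is to compute $2\rho_P$ by summing, with multiplicity, the restrictions to $\go a^0$ of all the $A_L$-weights occurring in the nilradical $\go n = \oplus_{0\leq i<j\leq k} E_{i,j}(1,-1)$, and then to translate the result into a formula for $\delta_P^{1/2}$ via the characters $x_j$.

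First I would identify the weight of each root space $E_{i,j}(1,-1)$ restricted to $\go a^0 = \oplus_{s} FH_{\lambda_s}$. By the defining property of $E_{i,j}(1,-1)$, any element there is an eigenvector of $H_{\lambda_i}$ with eigenvalue $1$, of $H_{\lambda_j}$ with eigenvalue $-1$, and of $H_{\lambda_s}$ with eigenvalue $0$ for $s\notin\{i,j\}$. Since $\lambda_0,\dots,\lambda_k$ are mutually strongly orthogonal with $\lambda_r(H_{\lambda_s})=2\delta_{rs}$, the basis of $(\go a^0)^*$ dual to $\{H_{\lambda_s}/2\}$ is $\{\lambda_s\}$, so the restriction of the weight of $E_{i,j}(1,-1)$ to $\go a^0$ is exactly $\tfrac{\lambda_i-\lambda_j}{2}$. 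Although each $E_{i,j}(1,-1)$ may further decompose under the full torus $A_L\supset A^0$, every such $A_L$-weight has the same restriction $\tfrac{\lambda_i-\lambda_j}{2}$ to $\go a^0$, so for the purpose of summing $2\rho_P$ (which lies in $(\go a^{0*})_\C$ since it is $\sigma$-anti-invariant) I only need the restricted weight and its multiplicity $\dim E_{i,j}(1,-1)=d$ (Proposition \ref{prop-memedimension}).

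Next I would carry out the combinatorial sum:
\[
2\rho_P \;=\; \sum_{0\leq i<j\leq k} d\cdot \frac{\lambda_i-\lambda_j}{2} \;=\; \frac{d}{2}\sum_{s=0}^{k}\Big((k-s)-s\Big)\lambda_s \;=\; \frac{d}{2}\sum_{s=0}^{k}(k-2s)\lambda_s,
\]
where each fixed $\lambda_s$ appears with coefficient $+1$ for the $k-s$ pairs $(s,j)$ with $j>s$ and with coefficient $-1$ for the $s$ pairs $(i,s)$ with $i<s$. This establishes the first formula.

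For the second formula, I would observe that the character $x_j$ of $L$ from Definition \ref{def-xj-PLtilde} is a rational $F$-character whose differential on $\go a^0\subset\go a_L$ is precisely $\lambda_j$ (by the very definition $l\cdot X_j = x_j(l)X_j$ with $X_j\in\tilde{\go g}^{\lambda_j}$). Since $\rho_P$ vanishes on $\go a_L^\sigma$, the pairing $\langle \rho_P,H_L(l)\rangle$ only sees the $\go a^0$-components, and the relation $e^{\langle \lambda_j,H_L(l)\rangle}=|x_j(l)|$ then gives
\[
\delta_P(l)^{1/2} = e^{\langle \rho_P,H_L(l)\rangle} = \prod_{j=0}^{k} |x_j(l)|^{\frac{d}{4}(k-2j)}.
\]
No genuine obstacle is expected; the only subtle point to be careful about is the distinction between $A_L$-weights on $\go n$ and their restrictions to $\go a^0$, but this distinction is harmless because $\rho_P$ is $\sigma$-anti-invariant and hence automatically factors through $\go a^0$.
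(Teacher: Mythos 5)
Your proof is correct, and it reaches the key intermediate identity $2\rho_P=\frac{d}{2}\sum_{i<j}(\lambda_i-\lambda_j)$ by a different mechanism than the paper. The paper works with the actual $\go a$-root decomposition of each $E_{i,j}(1,-1)$: it pairs each root $\mu\neq(\lambda_i-\lambda_j)/2$ occurring there with $\mu'=w_{i,j}(-\mu)=\lambda_i-\lambda_j-\mu$ (Remark \ref{rem-actionwij}), which has the same multiplicity, and treats the possible middle root $(\lambda_i-\lambda_j)/2$ of multiplicity $e$ separately, so that the sum of roots in $E_{i,j}(1,-1)$ comes out to $\frac{e}{2}(\lambda_i-\lambda_j)+\frac{d-e}{2}(\lambda_i-\lambda_j)=\frac{d}{2}(\lambda_i-\lambda_j)$. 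You instead observe that $2\rho_P$ is $\sigma$-anti-invariant, hence lies in $a^{0*}$ and is determined by restriction to $\go a^0$, where every vector of $E_{i,j}(1,-1)$ has the single weight $(\lambda_i-\lambda_j)/2$ by the very definition of $E_{i,j}(1,-1)$ and strong orthogonality of the $\lambda_s$; this makes the pairing argument and the constant $e$ unnecessary, at the cost of invoking the decomposition $a_L^*=(a_L^\sigma)^*\oplus a^{0*}$ set up just before the lemma. Both routes then finish with the same elementary count of how often $\lambda_s$ occurs with sign $\pm1$ in $\sum_{i<j}(\lambda_i-\lambda_j)$, and your derivation of the formula for $\de_P^{1/2}$ from the first identity (via $e^{\langle\lambda_j,H_L(l)\rangle}=|x_j(l)|$ and the vanishing of $\rho_P$ on $a_L^\sigma$) spells out what the paper dismisses as immediate.
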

\begin{proof} The Lie algebra    of $P$ decomposes as $\go p= \go l\oplus\go n$ with  $\go l=Z_{\go g}(\go a^0)$ and  $\go n=\oplus_{i<j}E_{i,j}(1,-1)$.
Let  $S_{i,j}$ be the set of roots  $\mu$ such that  $\mu\neq (\la_i-\la_j)/2$ and  $\tilde{\go g}^\mu\subset E_{i,j}(1,-1)$. Let $m_\mu={\rm dim}\; \tilde{\go g}^{\mu}$ be the multiplicity of  $\mu$. We know from    Remark \ref{rem-actionwij}, that if  $\mu\in S_{i,j}$  then   $\mu'=\la_i-\la_j-\mu$ is a root in  $S_{i,j}$ which is not equal to  $\mu$ but has the same multiplicity,  and  of course $\mu+\mu'= \la_i-\la_j$. As $\dim(\tilde{\go g}^{(\la_i-\la_j)/2})=e$,  and $d={\rm dim}\; E_{i,j}(1,-1)=e+\sum_{\mu\in S_{i,j}} m_\mu$, we obtain  
$$\sum_{\mu\in S_{i,j}}m_\mu\mu=\frac{1}{2} \sum_{\mu\in S_{i,j}}m_\mu(\la_i-\la_j)=\frac{d-e}{2}(\la_i-\la_j).$$
Therefore
$$2\rho_P=e\sum_{i<j}\frac{(\la_i-\la_j)}{2}+\frac{d-e}{2}\sum_{i<j} (\la_i-\la_j )=\frac{d}{2}\sum_{i<j} (\la_i-\la_j ).$$
A simple computation gives then the first assertion.
The second assertion is then an immediate consequence of the first one.

\end{proof}

From Remark \ref{rem-image-muj}  we know now that the map  $\cT: l\mapsto (x_0(l),\ldots, x_k(l))$ induces an isomorphism of groups from  $L/L\cap \cH_p=L/L_H$ onto  $\cup_{x\in\Sr_e} x S_e^{k+1}$.\me
 
Hence, any character of  $F^{*k+1}$ induces, by restriction to $\text{\rm Im}\cT$, a character of $L$ trivial on $L\cap \cH_p$ for any $p\in\{1,\ldots,N\}$. If $\de=(\de_{0},\ldots,\de_{k})$ \index{delta@$\de$, $\de_{i}$ (Part II)} is a character of $(F^*)^{k+1}$  the corresponding character of $L$, which is invariant on   $L/L\cap \cH_p=L/L_H$, is still denoted by $\delta$ and is given by 
$$\delta(l)=\prod_{i=0}^k \delta_{i}(x_j(l)).$$

This is made more precise in the following Lemma.

\begin{lemme}\label{lem-caractere-L} 
\hfill

{\bf 1.} Any character of  $\text{\rm Im}\cT=\cup_{x\Sr_e} x S_e^{k+1}$ is the restriction to $\text{\rm Im}\cT$ of a (non unique) character of $(F^*)^{k+1}.$

{\bf 2.} Two characters  $\de=(\de_{0},\ldots,\de_{k})$ and  $\de'=(\de'_{0},\ldots,\de'_{k})$ of $(F^*)^{k+1}$ coincide on  $\text{\rm Im}\cT=\cup_{x\in \Sr_e} x S_e^{k+1}$, if and only if for all $j=0\ldots k$, there exists a character $\chi^{j}$ of  $ {F^*}$ with values in  $\{\pm1\}$, and trivial on  $S_e$ such that   
$$ \de_{j}(x)=\chi^{j}(x) \de_j'(x),\quad{\rm et }\quad \prod_{j=0}^k \chi^{j}(x)=1,\quad  x\in F^*.$$\end{lemme}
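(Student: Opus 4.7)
My plan is to treat $\cT$ as an isomorphism onto its image so that one may work entirely inside the commutative group $(F^*)^{k+1}$, where both assertions become elementary statements about characters of the subgroup $H := \mathrm{Im}\,\cT$. Unpacking Definition \ref{def-xj-PLtilde} together with Remark \ref{rem-image-muj}, this subgroup admits the intrinsic description
\[
H = \{(y_0,\ldots,y_k)\in (F^*)^{k+1} : y_i y_j^{-1}\in S_e \text{ for all } i,j\},
\]
which is clearly a subgroup (closed under products and inverses); it contains both $S_e^{k+1}$ and the full diagonal $\{(x,\ldots,x) : x\in F^*\}$. The map $(y_0,\ldots,y_k)\mapsto (y_0 y_1^{-1},\ldots,y_0 y_k^{-1}) \pmod{S_e^k}$ identifies $(F^*)^{k+1}/H$ with $(F^*/S_e)^k$, a finite abelian group. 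Moreover, using the explicit description of $S_e$ in Definition \ref{def-ensembleS} and the identity $N_{E/F}(x)=x^2$ for $x\in F^*$ in the $e=2$ case, one checks in each case that $F^{*2}\subset S_e$, so $F^*/S_e$ has exponent dividing $2$.

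For assertion (1), since $(F^*)^{k+1}/H$ is a finite abelian group, the restriction map $\widehat{(F^*)^{k+1}}\to \widehat H$ is surjective: equivalently, every character of $H$ extends to $(F^*)^{k+1}$. One can either invoke Pontryagin duality (equivalently, divisibility of $\C^*$), or extend cyclic-factor by cyclic-factor on $(F^*)^{k+1}/H$. Either justification is short; I would simply note the general principle.

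For assertion (2), two characters $\delta,\delta'$ of $(F^*)^{k+1}$ have equal restriction to $H$ if and only if $\chi := (\chi^0,\ldots,\chi^k) := \delta/\delta'$ is trivial on $H$. I would extract the two stated conditions by testing $\chi$ on the two distinguished subgroups of $H$ identified above. Testing on $S_e^{k+1}\subset H$ with all but one coordinate equal to $1$ yields $\chi^j|_{S_e}=1$ for each $j$; then because $F^{*2}\subset S_e$, each such $\chi^j$ is $\{\pm 1\}$-valued. Testing on the diagonal $(x,x,\ldots,x)\in H$ yields the relation $\prod_{j=0}^k \chi^j(x)=1$ for all $x\in F^*$. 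Conversely, the two conditions are sufficient: any element of $H$ has the form $(xz_0,\ldots,xz_k)$ with $x\in \Sr_e$ and $z_j\in S_e$, so $\prod_j \chi^j(xz_j)=\prod_j \chi^j(x)=1$. No step is an obstacle here; the only subtlety worth flagging is the verification that $F^{*2}\subset S_e$ even in the norm-group case $e=2$, which is what forces the characters $\chi^j$ to be $\{\pm 1\}$-valued.
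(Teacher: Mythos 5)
Your proof is correct and follows essentially the same route as the paper: both arguments rest on the inclusion $F^{*2}\subset S_e$, the finiteness (and exponent $2$) of $(F^*)^{k+1}/\mathrm{Im}\,\cT$, and extension of characters across a finite-index subgroup — the paper does the extension explicitly by adjoining square roots one coset at a time, which is just the concrete form of the divisibility argument you invoke. For assertion (2) the paper merely declares the claim "clear", so your verification by testing $\delta/\delta'$ on $S_e^{k+1}$ and on the diagonal supplies exactly the details the paper omits.
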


\begin{proof} Note first that as  $F^{*2}\subset S_e$, the group $(F^*)^{k+1}/\text{Im}\cT$ is a finite abelian group whose non trivial elements are of order 2.

Let   $\tilde{\de}$ be a character of  $\text{Im}\cT$ and take  $x\in (F^*)^{k+1}\setminus\text{Im}\cT$. Then, as $F^{*2}\subset  \text{Im}\cT$, we have $x^2\in \text{Im}\cT$. Let  $\varpi_x$  be a square root of  $\tilde{\de}(x^2$),  and let  $\cM_1$ be the subgroup of $(F^*)^{k+1}$ generated by $\text{Im}\cT$ and  $x$. One can now  define a character of    $\cM_1$ by setting  $\tilde{\de}_1(ax)=\tilde{\de}(a)\varpi_x$ for $a\in\text{Im}\cT$. As $(F^*)^{k+1}/\text{Im}\cT$ is finite, we obtain the first assertion by induction.

As  $\Sr_e=F^*/S_e$ and  $F^{*2}\subset S_e$, the second assertion is clear.

\end{proof}

\subsection{The minimal principal $H$-distinguished series}
\hfill

Let $p\in\{1,\ldots r_0\}$.    If  $(\pi, V_\pi)$ is a  smooth representation  of  $G$  we denote by  $(\pi^*, V_\pi^*)$ \index{Vpi*@$V_\pi^*$} \index{pi*@$\pi^*$}its dual representation. Such a representation is  said to be  $\cH_p$-distinguished \index{Hpdistinguished@$\cH_p$-distinguished}if the space  $(V_\pi^*)^{\cH_p}$  of  $\cH_p$-invariant linear forms on $V_\pi$ is non-trivial.  This is also equivalent   to the condition \\$\text{Hom}_{\cH_p}(V_\pi, C^\infty(G/\cH_p))\neq\{0\}$ (where $ C^\infty(G/\cH_p)$ is the space of locally constant functions on $G/\cH_p$).\me

If $\chi$ is a character of  $P$  we define as usual the space $I_{\chi }$ \index{Ichi@$I_{\chi }$} to be the space of functions  $v:G\to \C$ which are right invariant under a compact open subgroup of $G $  and which satisfy the condition:
$$v(nlg)=\de_P(l)^{1/2} \chi(l) v(g), \quad n\in N, l\in L, g\in G.$$
The induced representation $I_{P}^{G}(\chi)$ \index{IPG@$I_{P}^{G}(\chi)$} is the right regular representation of $G$ on $I_{\chi}$.

One knows from   (\cite{B-D} Th\'eor\`eme 2.8), that if there exists    an open $P$- orbit $P\ga\cH_p$ in $G/\cH_{p}$  (or equivalently an open orbit $P\gamma I^+_{p}$ in $\Omega_{p}^+$) with $\gamma\in G$, such that $\chi$ is trivial on $L\cap \ga \cH_p\ga^{-1}$, then (under some additional conditions, see below), the representation $I_{P}^{G}(\chi)$ is $\cH_p$-distinguished.

But as any open $P$-orbit in $\Omega_{p}$ meets the diagonal we can suppose that $\gamma. I^+_{p}\in\oplus_{j=0}^k\tilde{\go g}^{\la_j}$ and hence, using (\ref{def-LH}),  the character $\chi$ is trivial on $L\cap \ga\cH_p\ga^{-1}=L_H=L\cap \cH_p$. Therefore we make the following definition.

\begin{definition} The minimal principal $H$-distinguished \index{minimal principal ...}series is the set  of representations $I_P^G(\chi)$ where   $\chi$ is a character  of $L$ trivial on $L_H$ such that  $I_P^G(\chi)$ is  $\cH_p$-distinguished  for any  $p\in\{1,\ldots,r_0\}$.
\end{definition}

Fix a unitary character    $\de=(\de_0,\ldots, \de_k)$ of  $(F^*)^{k+1}$  and fix also  $\mu=\mu_0\la_0+\ldots +\mu_k\la_k\in(  a^{0\; *})_{\C}$. This defines a character  $\de_\mu$ of  $L$ which is trivial on  $L\cap \cH_p$ by 
$$ \de_\mu(l)=\prod_{j=0}^{k} \de_j(x_j(l))|x_j(l)|^{\mu_j},\quad l\in L.$$
We set  $\pi_{\de,\mu}\index{pideltamu@$\pi_{\de,\mu}$}=I_P^G(\de_\mu)\index{IPGdeltamu@$I_P^G(\de_\mu)$}$ and denote by  $I_{\de,\mu}=I_{\de_\mu}$\index{Ideltasubmu@$I_{\de_\mu}$}\index{Ideltamu@$I_{\de,\mu}$} its space.

Let  $K$ be a maximal compact subgroup of $G$ such that $G=PK$ (see \cite{Re}  Th\'eor\`eme V.5.1). Let  $I(\de)$ be the space of functions  $v:K\longmapsto \C$ which are right invariant under an open compact subgroup of $K$   and satisfy the following condition: 
$$v(nlk)=\de(l)v(k) \text{ for } nl\in P\cap K \text{ and }  k\in K.$$
Then the restriction of   functions in  $I_{\de,\mu}$ to  $K$ gives an isomorphism of  $K$-modules from  $I_{\de,\mu}$ onto  $I(\de)$ and we denote by  $\overline{\pi_{\de,\mu}}$ the representation of  $G$ on  $I(\de)$ obtained from  $\pi_{\de,\mu}$ through this  isomorphism (this is the so called compact picture of the induced representation).\me

 We identify the spaces  $(a^{0*})_\C$ and  $\C^{k+1}$ by the isomorphism  $\mu=\sum_{j=0}^k \mu_j\la_j\mapsto (\mu_0,\ldots, \mu_k)$. 

From   (\cite{B-D} Th\' eor\`eme 2.8), we know  that there exists a rational function on   $\cR_p$ on  $X(L)_{\si}\simeq (a^{0*})_\C$, which is a product of functions  of the form  $(1-c q^{\sum_{j=0}^k a_j \mu_j})$, where  $c\in\C^*$ and  $a_j\in\Z$, such that   if  $\cR_p(\mu)\neq 0$, the representation  $(\pi_{\de,\mu},I_{\de,\mu})$ is  $\cH_p$-distinguished. Moreover, in that case,  $\dim (I_{\de,\mu}^*)^{\cH_p}$ is equal to the number of  open $P$-orbits  in $G/\cH_{p}$ (or equivalently in $\Omega_{p}$)  and a basis of  $ (I_{\de,\mu}^*)^{\cH_p}$ is explicitly described  in \cite{B-D} (see also below for our case). Differences between our notations and the results of \cite {B-D} come from the fact that there the authors consider non normalized left induction.

\begin{definition}\label{def-ouvertU}  Set  $\cR\index{Rcal@$\cR$ (Part II)}=\prod_{p=1}^{r_0} \cR_p$  where  $\cR_p$ are the rational functions mentioned above.     Let $\bU$ \index{Ubold@$\bU$} be the open dense subset of  $ (a^{0*})_\C$ defined by $\bU= \{\mu\in (a^{0*})_\C, \cR(\mu)\neq 0\}$.  Hence for all $\mu\in\bU$ and $p=1,\ldots,k$, the representation  $(\pi_{\de,\mu},I_{\de,\mu})$ is  $\cH_p$-distinguished.
\end{definition}

 We will now describe, in our case, the basis   of$(I_{\de,\mu}^*)^{\cH_p}$ given in   (\cite{B-D} Th\'eor\`eme 2.8 and  Th\'eor\`eme 2.16, \cite{La} \textsection 4.2).\me

Let  $p\in\{1,\ldots ,r_0\}$. The open  $(P,\cH_p)$-orbits in  $G$ can be view as the  open $P$-orbits  in $V^+$ contained in  $\Om_p^+\simeq G/\cH_p$. Remember that the open  $P$-orbits in $V^+$ are the sets  $P.I^+(a)$ where  $I^+(a)=a_0X_0+\ldots +a_{k-1} X_{k-1}+X_k$ for  $a=(a_0,\ldots,a_{k-1},1)$ in  $\Sr_e^k\times\{1\}$ (see   Lemma \ref{lemme-Ptilde}). For $\underline{a}=(a_0,\ldots, a_{k-1})\in\Sr_e^{k}$, we set  $(\underline{a},1)=(a_0,\ldots,a_{k-1},1)$.
 \me 

\begin{definition}\label{def-lienP-G-orbites} For  $p\in\{1,\ldots, r_0\}$, we denote by  $\cS_p$ the set of  $\underline{a}\in\Sr_e^k$ such that  $I^+(\underline{a},1)\in \Om_{p}^+$. 
Let us fix  $\ga_{\underline{a}}\in G$ such that  $\ga_{\underline{a}}I^+_{p}=I^+(\underline{a},1)$.  Hence  $\cup_{\underline{a}\in\cS_p} P\ga_{\underline{a}} \cH_p$ is the union of the open  $(P,\cH_p)$-orbits in $G$. 
\end{definition}

\begin{definition}\label{def-noyauPoisson}
Let $\mu\in (a^{0*})_\C$ , $p\in\{1,\ldots, r_0\}$  and  $\underline{a}\in \cS_p$. The Poisson kernel \index{Poisson kernel} $\mathbb P_{\de,\mu}^{\underline{a}}$ \index{Pbb@$\mathbb P_{\de,\mu}^{\underline{a}}$} is the function on $G$ defined by:
$$\P_{\de,\mu}^{\underline{a}}(g)=\left\{\begin{array}{ccc} \de_P(l)^{1/2} (\de_\mu)^{-1}(l) & {\rm if} & g=nl\ga_{\underline{a}}h\in NL\ga_{\underline{a}}\cH_{p},\\ 0 & {\rm if } &g\notin P\ga_{\underline{a}}\cH_{p}.\end{array}\right.$$
\end{definition}
 \me
 Let $C(G,P,\de^*,\mu)$  \index{CG@$C(G,P,\de^*,\mu)$} be the space of continuous functions $w:G\longmapsto \C$  such that 
\beq\label{eq-CGP}w(nlg)=\de_P^{1/2}(l) \de_\mu(l)^{-1} w(g) \hskip 7pt n\in N, l\in L, g\in G.\eeq

We will say that  $\text{Re}(\de_P^{-1/2}\de_\mu)$ is strictly  $P$-dominant \index{strictlyP@strictly  $P$-dominant} if $\langle\text{Re}(\mu)-\rho_P,\al\rangle>0$ for all roots $\al$  of  $A$ in   $N$. From  (\cite{B-D} Th\'eor\`eme 2.16) and  (\cite{La}  Th\'eor\`eme 4 (ii)), we know that  if    $\mu\in(a^{0*})_\C$  is  such that   $\text{Re}(\de_P^{-1/2}\de_\mu)$ is strictly  $P$-dominant and if  $\underline{a}\in\Sr_e^k$,  the function  $\P_{\de,\mu}^{\underline{a}}$    is a right $\cH_p$-invariant element of $C(G,P,\de^*,\mu)$. 

Moreover by (\cite{B-D} (2.29)), the map 
\beq\label{eq-dualite}(w,v)\in C(G,P,\de^*,\mu)\times I_{\de,\mu}\mapsto \langle w,v\rangle=\int_K w(k)v(k) dk\eeq
defines a $G$-invariant duality between  $ C(G,P,\de^*,\mu)$ and  $I_{\de,\mu}$.  Hence  $C(G,P,\de^*,\mu)$ can be viewed as a subspace of $I_{\de,\mu}^*$ and $\P_{\de,\mu}^{\underline{a}}$ is identified to a $\cH_{p}$-invariant linear form on $I_{\de,\mu}$. Let us make this more precise in the following definition.

\begin{definition}\label{def-C+-formes}
Let  $\bC^+$\index{Cbold@$\bC^+$} be the set of  $\mu\in (a^{0*})_\C$   such that  $\text{Re}(\de_P^{-1/2}\de_\mu)$ is strictly  $P$-dominant.

Let   $\mu\in\bU\cap \bC^+$, $p\in\{1,\ldots, N\}$ and   $\underline{a}\in\cS_p$. We denote  by $\xi_{\de,\mu}^{\underline{a}}\in (I_{\de,\mu}^*)^{\cH_p}$ \index{xideltamu@$\xi_{\de,\mu}^{\underline{a}}$} the $\cH_p$-invariant linear form which corresponds to  $\P_{\de,\mu}^{\underline{a}}$ via the preceding duality between  $ C(G,P,\de^*,\mu)$ and $I_{\de,\mu}$.

\end{definition}
The isomorphism  $I_{\de,\mu}\simeq I(\de)$   (given by restriction to  $K$) induces an isomorphism   $\xi\mapsto \overline{\xi}$ from  $I_{\de,\mu}^*$ onto  $I(\de)^*$. From  Th\'eor\`eme  2.8 in \cite{B-D}, the map  $\mu\in \bU \cap \bC^+ \mapsto \overline{\xi_{\de,\mu}^{\underline{a}}}$ has a meromorphic continuation to $(a^{0 *})_\C$ such that $\cR(\mu) \overline{\xi_{\de,\mu}^{\underline{a}}}$ is a polynomial function in the variables  $q^{\pm\mu_j}$.
Moreover, for  $p\in\{1,\ldots,N\}$, $\underline{a}\in \cS_{p}$ and  $\mu\in\bU$, the set of $\xi_{\de,\mu}^{\underline{a}}$    is a basis of  $(I_{\de,\mu}^*)^{\cH_p}$ (loc.cit) .\me

In the rest of the paper we fix  Haar measures $dg$ and  $dk$ on $G$ and  $K$ respectively and a right Haar measure  $d_rp$ on  $P$ such that 
$$\int_G f(g) dg=\int_P\int_K f(pk) \de_P(p)^{-1} d_rp\; dk,\quad f\in L^1(G).$$

The character $\delta_{\mu}$ is extended to $P$ by setting  $\de_\mu(ln)=\de_\mu(l)$ for  $l\in L$ and  $n\in N$.

 Let $C_c^\infty(G)$ be the space of locally constant   functions on $G$ with compact support and define $L_{\de,\mu}: C_c^\infty(G)\longmapsto I_{\de,\mu}$  by 
$$L_{\de,\mu}(\varphi)(g)=\int_P \delta_P(p)^{-1/2} \de_\mu(p)^{-1} \varphi(pg)d_rp,\quad g\in G.$$

It is well known that the map  $L_{\de,\mu}$ is surjective $G$-equivariant (here  $G$ acts on $C_c^\infty(G)$ by the right regular representation).
\begin{lemme}\label{lem-crochet-dualite} For  $\psi\in C(G,P,\de^*,\mu)$ and  $\varphi\in C_c^\infty(G)$, one has 
$$\int_G \psi(g) \varphi(g) dg=\int_K  \psi(k) L_{\de,\mu}(\varphi)(k)dk$$
\end{lemme}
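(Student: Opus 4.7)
The plan is to establish this identity by a direct unfolding of both sides using the Iwasawa-type decomposition $G = PK$ together with the defining transformation properties of $\psi$ and $L_{\de,\mu}(\varphi)$. This is essentially a Fubini computation, and no deep structural input is needed beyond the integration formula
$$\int_G f(g)\,dg = \int_P\int_K f(pk)\,\de_P(p)^{-1} d_rp\,dk$$
stated just before the lemma.

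First I would apply this integration formula to the left-hand side, writing $\int_G \psi(g)\varphi(g)\,dg$ as an iterated integral over $P \times K$. Next I would invoke the transformation rule \eqref{eq-CGP} satisfied by $\psi \in C(G,P,\de^*,\mu)$, which gives $\psi(pk) = \de_P(p)^{1/2}\de_\mu(p)^{-1}\psi(k)$ for $p = nl \in P$ (note that $\de_\mu$ has been extended to $P$ by triviality on $N$, and that $\de_P$ is likewise a character of $P$). Substituting this expression, the $\de_P(p)^{-1}$ factor from the Haar measure combines with the $\de_P(p)^{1/2}$ from $\psi$ to yield the factor $\de_P(p)^{-1/2}$ that appears in the definition of $L_{\de,\mu}$.

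At this stage the integrand factors as $\psi(k)$ times $\de_P(p)^{-1/2}\de_\mu(p)^{-1}\varphi(pk)$, and the $p$-integral is precisely $L_{\de,\mu}(\varphi)(k)$ by definition. The interchange of the order of integration is justified because $\varphi$ has compact support in $G$, hence $p \mapsto \varphi(pk)$ has compact support in $P$ uniformly as $k$ ranges over the compact set $K$, so the double integral is absolutely convergent. Swapping the integrals then gives the right-hand side.

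There is no genuine obstacle here: the only subtlety worth flagging is checking that $\psi$ is in fact integrable against $\varphi$ on $G$ (so that the left-hand side makes sense), which follows from $\varphi \in C_c^\infty(G)$ together with continuity of $\psi$. Everything else is bookkeeping with the modulus characters.
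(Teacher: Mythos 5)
Your proof is correct and follows essentially the same route as the paper's: apply the integration formula for $dg$ over $P\times K$, use the transformation law \eqref{eq-CGP} to pull $\psi(k)$ out with the factor $\de_P(p)^{-1/2}\de_\mu(p)^{-1}$, and recognize the inner $p$-integral as $L_{\de,\mu}(\varphi)(k)$. Your remark justifying the interchange of integrals via the compact support of $\varphi$ is a detail the paper leaves implicit.
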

\begin{proof} 
From the above given decomposition of $dg$  and from  and the definition of $C(G,P,\de^*,\mu)$, we have  
$$\int_G \psi(g) \varphi(g) dg= \int_P\int_{  K} \psi(pk) \varphi(pk) \de_P(p)^{-1} d_rp \; dk$$
$$= \int_P\int_{ K}\de_P(p)^{-1/2}\psi(k) \de_\mu(p)^{-1}\varphi(pk) d_rp \;dk$$
$$=\int_{  K}\psi(k) \int_P\de_P(p)^{-1/2} \de_\mu(p)^{-1}\varphi(pk) d_rp dk=\int_{  K}\psi(k)L_{\de,\mu}(\varphi)(k) dk.$$\end{proof}
\begin{cor}\label{cor-pi-star(g)xi} Let  $\mu\in\bU\cap \bC^+$and  $v\in I_{\de,\mu}$. Then, for  $x\in G$,  one has  
$$\langle \pi_{\de,\mu}^*(x)\xi_{\de,\mu}^{\underline{a}},v\rangle= \int_{  K}\P_{\de,\mu}^{\underline{a}}(k)  v(kx^{-1}) dk=\int_{  K} \P_{\de,\mu}^{\underline{a}}(kx) v(k) dk.$$
\end{cor}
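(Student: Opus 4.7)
The plan is to prove the two equalities separately, the first by a direct definition chase and the second by applying Lemma \ref{lem-crochet-dualite} in both directions together with the translation invariance of the Haar measure on $G$ (which is unimodular since it is reductive).

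For the first equality, I would unwind the definitions. By Definition \ref{def-C+-formes}, the linear form $\xi_{\de,\mu}^{\underline{a}}$ corresponds to the Poisson kernel $\P_{\de,\mu}^{\underline{a}}$ via the duality \eqref{eq-dualite}, so $\langle \xi_{\de,\mu}^{\underline{a}},u\rangle=\int_K\P_{\de,\mu}^{\underline{a}}(k)u(k)\,dk$ for every $u\in I_{\de,\mu}$. The representation $\pi_{\de,\mu}$ being the right regular action on $I_{\de,\mu}$, we have $(\pi_{\de,\mu}(x^{-1})v)(g)=v(gx^{-1})$, hence by the definition of the contragredient action
\[
\langle \pi_{\de,\mu}^*(x)\xi_{\de,\mu}^{\underline{a}},v\rangle
=\langle \xi_{\de,\mu}^{\underline{a}},\pi_{\de,\mu}(x^{-1})v\rangle
=\int_K \P_{\de,\mu}^{\underline{a}}(k)\,v(kx^{-1})\,dk.
\]

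For the second equality, the idea is to lift the integral over $K$ to an integral over $G$ through Lemma \ref{lem-crochet-dualite}, perform a change of variable there (which is legitimate on $G$ but not on $K$), and then come back. Concretely, using the surjectivity of $L_{\de,\mu}:C_c^\infty(G)\to I_{\de,\mu}$, choose $\varphi\in C_c^\infty(G)$ with $L_{\de,\mu}(\varphi)=v$. Setting $\varphi_x(g)=\varphi(gx^{-1})$, the $G$-equivariance of $L_{\de,\mu}$ gives $L_{\de,\mu}(\varphi_x)=\pi_{\de,\mu}(x^{-1})v$, and Lemma \ref{lem-crochet-dualite} applied to $\psi=\P_{\de,\mu}^{\underline{a}}$ and $\varphi_x$ yields
\[
\int_K \P_{\de,\mu}^{\underline{a}}(k)\,v(kx^{-1})\,dk=\int_G \P_{\de,\mu}^{\underline{a}}(g)\,\varphi(gx^{-1})\,dg.
\]
The change of variable $g\mapsto gx$ in the right-hand side (valid by unimodularity) turns this into $\int_G \P_{\de,\mu}^{\underline{a}}(gx)\,\varphi(g)\,dg$.

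The last step is to apply Lemma \ref{lem-crochet-dualite} a second time, but now with the function $\psi'(g):=\P_{\de,\mu}^{\underline{a}}(gx)$ in place of $\psi$. Here the only thing to check is that $\psi'$ still belongs to $C(G,P,\de^*,\mu)$, which is immediate from Definition \ref{def-noyauPoisson}: for $n\in N$, $l\in L$, $g\in G$ one has $\psi'(nlg)=\P_{\de,\mu}^{\underline{a}}(nlgx)=\de_P(l)^{1/2}\de_\mu(l)^{-1}\P_{\de,\mu}^{\underline{a}}(gx)=\de_P(l)^{1/2}\de_\mu(l)^{-1}\psi'(g)$, which is precisely \eqref{eq-CGP}. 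Applying the Lemma then gives
\[
\int_G \P_{\de,\mu}^{\underline{a}}(gx)\,\varphi(g)\,dg=\int_K \P_{\de,\mu}^{\underline{a}}(kx)\,L_{\de,\mu}(\varphi)(k)\,dk=\int_K \P_{\de,\mu}^{\underml{a}}(kx)\,v(k)\,dk,
\]
and we are done. The main (very mild) obstacle is bookkeeping: one must carefully match the right regular convention on $I_{\de,\mu}$ and on $C(G,P,\de^*,\mu)$ and make sure the change of variable $g\mapsto gx$ lands us with $\P_{\de,\mu}^{\underline{a}}(\cdot x)$ rather than $\P_{\de,\mu}^{\underline{a}}(x\cdot)$, since only the former transforms correctly on the left under $NL$ and hence fits into the hypotheses of Lemma \ref{lem-crochet-dualite}.
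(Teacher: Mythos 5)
Your proof is correct and follows essentially the same route as the paper: first equality by unwinding the contragredient action and the Poisson-kernel realization of $\xi_{\de,\mu}^{\underline{a}}$, second equality by lifting to $G$ via $L_{\de,\mu}$ and Lemma \ref{lem-crochet-dualite}, changing variables, and applying the lemma again to $g\mapsto \P_{\de,\mu}^{\underline{a}}(gx)$. Your explicit checks (that $L_{\de,\mu}(\varphi_x)=\pi_{\de,\mu}(x^{-1})v$ and that the translated Poisson kernel still lies in $C(G,P,\de^*,\mu)$) are details the paper leaves implicit, but the argument is identical.
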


\begin{proof} One has $\langle \pi_{\de,\mu}^*(x)\xi_{\de,\mu}^{\underline{a}},v\rangle =\langle \xi_{\de,\mu}^{\underline{a}},\pi_{\de,\mu}(x^{-1})v\rangle $. As  $\mu\in\bU\cap \bC^+$, the linear form  $\xi_{\de,\mu}^{\underline{a}}$ is given by the Poisson kernel. Hence  
 $$\langle\pi_{\de,\mu}^*(x) \xi_{\de,\mu}^{\underline{a}},v\rangle=\int_{  K}\P_{\de,\mu}^{\underline{a}}(k)\big(\pi_{\de,\mu}(x^{-1})v\big)(k) dk=\int_{  K}\P_{\de,\mu}^{\underline{a}}(k)  v(kx^{-1}) dk.$$
This gives the first equality.\me

Take  $\varphi\in C_c^\infty(G)$ such that  $L_{\de,\mu}(\varphi)=v$. As  $\P_{\de,\mu}^{\underline{a}}\in C(G,P,\de^*,\mu)$, Lemma  \ref{lem-crochet-dualite} implies 
$$\int_{  K}\P_{\de,\mu}^{\underline{a}}(k)  v(kx^{-1}) dk=
\int_G\P_{\de,\mu}^{\underline{a}}(g)\varphi(gx^{-1})   dg=\int_G\P_{\de,\mu}^{\underline{a}}(gx)\varphi(g)  dg.$$
For $x$ fixed, the map  $g\mapsto \P_{\de,\mu}^{\underline{a}}(gx)$ belongs also to  $ C(G,P,\de^*,\mu)$.
Again Lemma  \ref{lem-crochet-dualite} implies that   $\int_G\P_{\de,\mu}^{\underline{a}}(gx)\varphi(g)  dg=\int_K\P_{\de,\mu}^{\underline{a}}(kx) v(k)dk.$ 

\end{proof}
\subsection{Zeta functions associated to the minimal principal $H$-distinguished series}
\hfill

Let   $(dX, dY)$ be the pair of Haar measures on $V^+$ and  $V^-$   which were determined in Proposition  \ref{prop-mesures-1}. Recall that $d^*X$ and  $d^*Y$ are the $G$-invariant measures on  $V^+$ and  $V^-$   defined by 
$$d^*X=\dfrac{dX}{|\De_0(X)|^m},\quad dX\;\textrm{and  }\; \; d^*Y=\dfrac{dY}{|\nabla_0(X)|^m}\quad\textrm{ where }\;\; m= 1+\frac{kd}{2}.$$ 
(see (\ref{eq-d^*X})).\me


  By  Proposition \ref{prop-mesures-2}, this determines uniquely a $G$-invariant measure  $d_p\dot{g}$ on  $G/\cH_p$, for all $p\in\{1,\ldots r_0\}$.

Then the fixed Haar measure  $dg$ on  $G$ determines a Haar measure  $d_ph$ on  $\cH_p$  such that

$$\int_G f(g) dg=\int_{G/\cH_p}\int_{\cH_p} f(gh) d_ph \; d_p\dot{g},\quad f\in L^1(G).$$

\begin{definition}\label{def-pi(X)-fonction-zeta} Let  $p=1,\ldots,r_0$ and let  $\xi$ be a $\cH_p$-invariant linear form  on  $I_{\de,\mu}$ (i.e. an element of $(I_{\de,\mu}^*)^{\cH_{p}}$). 
For  $X\in G.I_p^+$ 	and  $Y\in G.I_p^-$, we define  
$$\pi_{\de,\mu}^*(X)\xi=\pi_{\de,\mu}^*(g)\xi\;\;{\rm if }\;\; X=g.I_p^+, g\in G,\quad{\rm } \quad \pi_{\de,\mu}^*(Y)\xi=\pi^*(g)\xi\;\;{\rm if }\;\; Y=g.I_p^-, g\in G.$$
(Remember that $\cH_{p}$ is the stabilizer of $I_p^+$ and of $I_p^-$).

Let  $z\in \C$ and  $w\in I_{\de,\mu}$. The zeta fonction associated to $(\pi_{\delta,\mu},z,\xi,w)$ are formally defined, for  $\Phi\in \cS(V^+)$, by 
 $$Z_p^+(\Phi, z,\xi,w)\index{zetap+@$Z_p^+(\Phi, z,\xi,w)$}=\int_{\Om_p^+} \Phi(X) |\De_0(X)|^z \langle\pi_{\de,\mu}^*(X)\xi,w\rangle d^*X,$$
and for  $\Psi\in  \cS(V^-)$, by 
$$Z_p^-(\Psi, z,\xi,w)\index{zetap-@$Z_p^-(\Psi, z,\xi,w)$}=\int_{\Om_p^-} \Psi(Y)  |\nabla_0(Y)|^z \langle\pi_{\de,\mu}^*(Y)\xi,w\rangle d^*Y.$$

 \end{definition}

\begin{rem}\label{rem-k=0-Om+} 1) If  $k=0$, then $G$ and $P$ are equal and  isomorphic to  $F^*$. Thus we have $\pi_{\de,\mu}=\de_\mu$ and we can take the generalized coefficient $\langle \pi^*(g)\xi,w\rangle$ to be equal to $(\de_\mu)^{-1}(g)$, $g\in G$. The spaces $V^\pm$ are isomorphic to $F$ and an element  $a\in F^*$ acts by the multiplication by $a$ on $V^+=F$, and by the multiplication by $a^{-1}$ on $V^-$. Thus, we have 
$$Z^+(\Phi, z,\xi,w)=Z(\Phi, \de^{-1},z-\mu),\quad{\rm and}\quad Z^-(\Psi,z,\xi,w)=Z(\Psi, \de,z+\mu),$$
where $Z(g,\de,s)$ is   Tate's zeta function (see section \ref{par-explicite}).

\me

2) Let  $\mu\in\bU$,  $\xi\in (I_{\de,\mu}^*)^{\cH_p}$ and  $w\in I_{\de,\mu}$. Then for  $z\in \C$ and  $\Phi\in \cS(\Om^+)$ (that is  $\Phi\in\cS(V^+)$ with compact support in $\Om^+$), the zeta function  $Z_p^+(\Phi, z,\xi,w)$ can be seen as a generalized coefficient of $\pi_{\de,\mu}$. More precisely if  $\varphi\in C_c^\infty(G)$ is such that $\Phi(g.I_p^+)=\int_{\cH_p}\varphi(gh) d_ph$ and if  $\varphi_z\in C_c^\infty(G)$ is the function defined by  $\varphi_z(g)=\varphi(g) |\De_0(g.I_p^+)|^z$, it is easily seen that  
$$Z_p^+(\Phi, z, \xi,w)=\langle\pi_{\de,\mu}^*(\varphi_z)\xi,w\rangle.$$

A similar result holds for the zeta functions $Z_p^-(\Psi, z,\xi,w)$, for $\Psi\in \cS(\Om^-)$.\end{rem}\me

We will now study the  zeta functions associated to the linear forms  $\xi_{\de,\mu}^{\underline{a}}$ by giving a connection between them and the zeta function  $\cK_c^\pm(f,\om,s)$ ($c\in\Sr_e^{k+1}$) introduced in Definition  \ref{def-zetaP}. \me

\begin{definition}\label{def-omdelta-smu}   For  $\mu=\sum_{j=0}^k \mu_j\la_j\in (a^{0*})_\C$, we define  $s(\mu)=(s_0,\ldots, s_k)\in(\C^*)^{k+1}$ by the relation 
 $$\rho_P-\mu=s_0\la_0+(s_0+s_1)\la_1+\ldots+(s_0+\ldots +s_k)\la_k.$$

If     $\de=(\de_0,\ldots,\de_k)$  is   a unitary character of  $(F^*)^{k+1}$, we define $\om(\de)\in\widehat{(F^*)^{k+1}}$ by  
$$\om(\de)=(\om_0,\ldots ,\om_k)=(\de_0^{-1},\de_0\de_1^{-1},\ldots, \de_{k-1}\de_k^{-1}).$$
Then $$  \om_0\ldots \om_j=\de_j^{-1},\; (\text{ for }j=0,\ldots,k)$$
and, using  Lemma \ref{lem-rhoP}:$$  s_0=\frac{kd}{4}-\mu_0,\;\;  s_j= \mu_{j-1}-\mu_j-\frac{d}{2},\quad (\text{ for }j=1,\ldots, k).$$
Also if  $s=(s_0,\ldots, s_k)\in\C^{k+1}$ and  $z\in\C$, we set  $s-z=(s_0-z, s_1,\ldots, s_k)$.
\end{definition}
\me
\begin{lemme}\label{lem-muj-sj>0} Let  $\mu\in (a_{0}^{*})_\C$. Then  $\de_P^{-1/2}\de_\mu$ is strictly  $P$-dominant (or equivalently $\mu\in {\bC}^+$) if and only if 
$s(\mu)=(s_0,\ldots, s_k)$ satisfies the condition  $\text{Re}(s_j)>0$ for  $j=1,\ldots, k$.
\end{lemme}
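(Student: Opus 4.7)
The plan is to translate the strict $P$-dominance condition into the language of the $\eta_j$'s, via the decomposition of $\go n$ into $\go a^0$-weight spaces, and then compute directly.

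First I would recall from decomposition (S2)(a) that $\go n=\oplus_{0\le i<j\le k}E_{i,j}(1,-1)$ is the nilradical of $\go p$, and that an element $h=\sum_{s}c_{s}H_{\lambda_{s}}\in\go a^{0}$ acts on $E_{i,j}(1,-1)$ by the scalar $c_{i}-c_{j}$. Hence, if $\eta_{s}\in(\go a^{0\,*})$ is defined by $\eta_{s}(H_{\lambda_{t}})=\delta_{st}$, then the weights of $A_{L}$ on $\go n$, restricted to $A^{0}$, are exactly the linear forms $\eta_{i}-\eta_{j}$ for $i<j$, and it suffices (by convexity) to check strict positivity of $\langle \text{Re}(\mu)-\rho_{P},\,\cdot\,\rangle$ on the simple ones, namely $\eta_{i}-\eta_{i+1}$ for $i=0,\ldots,k-1$.

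Next I would express $\mu-\rho_{P}$ in the basis $(\eta_{0},\ldots,\eta_{k})$. Since $\lambda_{s}(H_{\lambda_{t}})=2\delta_{st}$, we have $\lambda_{s}=2\eta_{s}$. The definition of $s(\mu)$ gives $\rho_{P}-\mu=\sum_{j=0}^{k}T_{j}\lambda_{j}$ where $T_{j}=s_{0}+s_{1}+\cdots+s_{j}$, so
\[
\mu-\rho_{P}=-\sum_{j=0}^{k}T_{j}\lambda_{j}=-2\sum_{j=0}^{k}T_{j}\eta_{j}.
\]

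I would then observe that the inner product on $(\go a^{0\,*})_{\mathbb R}$ used to define strict $P$-dominance is the one coming from the (normalized) Killing form $b$. By Proposition \ref{prop-systememax} the roots $\lambda_{0},\ldots,\lambda_{k}$ are pairwise strongly orthogonal, and by Proposition \ref{propWconjugues} they are all $W$-conjugate, so their coroots $H_{\lambda_{j}}$ are pairwise orthogonal and of equal length for $b$; dually the $\lambda_{j}$ (hence the $\eta_{j}$) form an orthogonal family of equal length. Up to a positive scalar (which is irrelevant for a strict inequality), $(\eta_{0},\ldots,\eta_{k})$ is therefore an orthonormal basis. Consequently, for any $i<j$,
\[
\langle\mu-\rho_{P},\,\eta_{i}-\eta_{j}\rangle\;=\;-2(T_{i}-T_{j})\;=\;2(s_{i+1}+s_{i+2}+\cdots+s_{j}).
\]
Taking $j=i+1$, the simple-root conditions become $\text{Re}(s_{i+1})>0$ for $i=0,\ldots,k-1$, i.e.\ $\text{Re}(s_{j})>0$ for $j=1,\ldots,k$; conversely these imply positivity of every partial sum $s_{i+1}+\cdots+s_{j}$, so full $P$-dominance holds. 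Note that $s_{0}$ does not appear in any of these inequalities, in agreement with the statement of the lemma.

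The only minor point that needs care is the justification that the $\eta_{j}$ are orthogonal of equal length with respect to the inner product used in the definition of $P$-dominance, but this follows immediately from the strong orthogonality and $W$-conjugacy of the $\lambda_{j}$, and only the \emph{positivity} of the common scaling matters for the final strict inequality; there is no real computational obstacle.
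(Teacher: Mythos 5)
Your proof is correct and follows essentially the same route as the paper's: reduce the dominance condition on the roots of $A$ in $N$ to the adjacent differences $\eta_{i}-\eta_{i+1}$ via the decomposition $\go n=\oplus_{i<j}E_{i,j}(1,-1)$, and then observe that the resulting quantities are exactly $\text{Re}(s_{i+1})$ up to a positive constant (the paper phrases this as $\text{Re}(\mu_{i-1}-\mu_{i})>\tfrac{d}{2}$ using Lemma \ref{lem-rhoP}, which is the same telescoping computation in the coordinates $\mu_{j}$ rather than the partial sums $T_{j}$). Your explicit justification that the $\eta_{j}$ form an orthogonal family of equal length is a point the paper leaves implicit, but it is not a different argument.
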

\begin{proof} By definition  $\mu\in\bC^+$ if and only if    $\langle \text{Re}(\mu)-\rho_P,\al\rangle>0$ for any root  $\al$ of $A$ in  $N$. From the definition of  $N$, for  such a root, there exists a pair $i<j$ such that   $\tilde{\go g}^\al\subset E_{i,j}(1,-1)$. Let  $\rho_P=\sum_{s=0}^k\rho_s\la_s$ and  $\mu=\sum_{s=0}^k \mu_s \la_s$.

Then the condition $\mu \in \bf C^+$ is equivalent to  $\text{Re}(\mu_{i}-\mu_{j})-(\rho_{i}-\rho_{j})>0$ for any pair  $i<j$.


From Lemma  \ref{lem-rhoP}, we get  $\rho_{i-1}-\rho_i=\frac{d}{2}$, and therefore  $\mu\in \bC^+$ if and only if    $\text{Re}(\mu_{i-1}-\mu_i)>\frac{d}{2}$ for $i=1,\ldots,k$.

As we have seen in the preceding definition that    $s_i=\mu_{i-1}-\mu_i-\frac{d}{2}$ for $i=1,\ldots,k$, the Lemma is proved.
\me 

\end{proof}
\begin{definition}\label{def-Poisson-Omega} Let  $p\in\{1,\ldots r_0\}$, $\underline{a}\in\cS_p$, $\om=(\om_0,\ldots ,\om_k)\in (\widehat{F^*})^{k+1}$ and  $s=(s_0,\dots, s_k)\in \C^{k+1}$. Define the functions  $\bP_{\om,s}^{\underline{a},\pm}$  \index{Pboldpm@$\bP_{\om,s}^{\underline{a},\pm}$}on  $V^\pm$ by 
$$\bP_{\om,s}^{\underline{a},+}(X)=\left\{\begin{array}{ccl} (\om,s)(\De(X))& {\rm if}& X\in P.I^+(\underline{a},1),\\
0 & {\rm if }& X\in V^+\setminus   P.I^+(\underline{a},1).\end{array}\right.$$
and 
$$\bP_{\om,s}^{\underline{a},-}(Y)=\left\{\begin{array}{ccl} (\om,s)(\nabla(Y))& {\rm if }& Y\in P.I^-(\underline{a},1),\\
0 & {\rm if}& Y\in V^-\setminus   P.I^-(\underline{a},1).\end{array}\right.$$

\end{definition}
\me
\begin{rem}\label{rem-bP-V+}  The union of the open $P$-orbits in  $V^+$ is the set $\{X\in V^+,   \De_j(X)\neq 0 \text {  for } j=0,\ldots ,k\}$. On the other hand we have $\Om_p^+\subset \Om^+=\{X\in V^+; \De_0(X)\neq 0\}$. Hence, if  $\text{Re}(s_j)>0$ for  $j=1,\ldots k$, the function   $\bP_{\om,s}^{\underline{a},+}$ is continuous on  $\Om_p^+$. If moreover $\text{Re}(s_0)>0$,   $\bP_{\om,s}^{\underline{a},+}$ is continuous on $V^+$. 

Similarly the union of the open  $P$-orbits in  $V^-$ is the set  $\{Y\in V^-, \nabla_j(Y)\neq 0 \text{ for } j=0,\ldots ,k\}$ and we have  $\Om^-_p\subset \Om^-=\{Y\in V^-; \nabla_0(Y)\neq 0\}$. Hence if $\text{Re}(s_j)>0$ for  $j=1,\ldots k$, the function $\bP_{\om,s}^{\underline{a},-}$ is continuous on  $\Om_p^-$ and if moreover  $\text{Re}(s_0)>0$, then  $\bP_{\om,s}^{\underline{a},-}$ is continuous on $V^-$.
\end{rem}
\begin{lemme}\label{lem-PV+} Let  $p\in\{1,\ldots, r_0\}$ and  $\underline{a}\in \cS_p$.
For $g\in G$, we have  $$\P^{\underline{a}}_{\de,\mu}(g)=c_{\de,\mu}(\underline{a})\bP^{\underline{a},+}_{\om(\de), s(\mu)}(g.I_{p}^+)=c_{\de,\mu}(\underline{a}) \bP^{\underline{a},-}_{\om(\de)^\sharp, s(\mu)^\sharp}(g.I_{p}^-),$$
where  $\om(\de)^\sharp$ and  $s(\mu)^\sharp$ are given in definition \ref{def-sharp}  and  $c_{\de,\mu}(\underline{a})=\prod_{j=0}^{k-1} \de_j(a_j)|a_j|^{-(\rho_j-\mu_j)}.$
\end{lemme}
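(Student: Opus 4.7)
My plan is to prove the two equalities separately, with the second following quickly from the first via the map $\iota$.

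For the first equality $\P^{\underline{a}}_{\de,\mu}(g) = c_{\de,\mu}(\underline{a})\bP^{\underline{a},+}_{\om(\de),s(\mu)}(g.I_p^+)$, I would first check that the supports agree. Since $\cH_p$ is the stabilizer of $I_p^+$, the identity $g.I_p^+ = nl\ga_{\underline{a}}.I_p^+ = nl.I^+(\underline{a},1)$ shows that $g \in P\ga_{\underline{a}}\cH_p$ if and only if $g.I_p^+ \in P.I^+(\underline{a},1)$. Then for $g = nl\ga_{\underline{a}}h$, I would compute the right-hand side using the $P$-covariance of each $\De_j$ (Theorem \ref{th-delta_{j}invariants}): since $\ell=1$ forces $\kappa=1$, the character of $\De_j$ satisfies $\chi_j(l)=\prod_{s=j}^k x_s(l)$, so $\De_j(nl.I^+(\underline{a},1)) = \chi_j(l)\De_j(I^+(\underline{a},1))$ where $\De_j(I^+(\underline{a},1))=\prod_{s=j}^{k-1}a_s$ (and $\De_k=1$).

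The key bookkeeping step is to reorganize the product $\prod_{j=0}^k \om_j(\chi_j(l))|\chi_j(l)|^{s_j}$ by swapping the order of multiplication. Using $\om(\de)_0\cdots\om(\de)_j = \de_j^{-1}$ (telescoping) and $\sum_{i=0}^j s_i(\mu) = \rho_j - \mu_j$, this product collapses to $\prod_{j=0}^k\de_j^{-1}(x_j(l))|x_j(l)|^{\rho_j-\mu_j} = \de_P(l)^{1/2}\de_\mu(l)^{-1} = \P^{\underline{a}}_{\de,\mu}(g)$. The same reorganization applied to the $\underline{a}$-dependent factor $\prod_{j=0}^{k-1}\om_j(\prod_{s=j}^{k-1}a_s)|\prod_{s=j}^{k-1}a_s|^{s_j}$ gives exactly $\prod_{s=0}^{k-1}\de_s^{-1}(a_s)|a_s|^{\rho_s-\mu_s} = c_{\de,\mu}(\underline{a})^{-1}$, confirming the identity with the explicit constant.

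For the second equality, I would not redo the computation on $V^-$ but instead exploit the $G$-equivariance of $\iota$. Since $\iota(I^+(\underline{a},1)) = I^-(\underline{a},1)$ (both elements complete the same $\go{sl}_2$-triple with $H_0$), one has $g.I_p^- = \iota(g.I_p^+)$, and $\iota$ restricts to a bijection between $P.I^+(\underline{a},1)$ and $P.I^-(\underline{a},1)$. From Corollary \ref{cor-nabla-psi}, $|\nabla|^{s^\sharp}(\iota(X)) = |\De|^s(X)$; a straightforward computation using $\nabla_0\circ\iota = \De_0^{-1}$ and $\nabla_j\circ\iota = \De_{k+1-j}/\De_0$ (Proposition \ref{prop-delta-iota}) together with the telescoping $\prod_{i=0}^k \om(\de)_i = \de_k^{-1}$ shows the analogous multiplicative identity $\prod_j\om(\de)^\sharp_j(\nabla_j(\iota(X))) = \prod_j\om(\de)_j(\De_j(X))$. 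Combining, $\bP^{\underline{a},-}_{\om(\de)^\sharp,s(\mu)^\sharp}(g.I_p^-) = \bP^{\underline{a},+}_{\om(\de),s(\mu)}(g.I_p^+)$, which yields the second equality at once.

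The main obstacle is purely bookkeeping: tracking the several switches of summation order and telescoping products to see that the definitions of $\om(\de)$, $s(\mu)$ and the constant $c_{\de,\mu}(\underline{a})$ are precisely what makes the two sides match. There is no conceptual difficulty, but one must keep careful track of the roles of $\rho_j$, $\mu_j$, $\de_j$ and the indexing convention $\De_k(I^+(\underline{a},1))=1$.
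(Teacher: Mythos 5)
Your proposal is correct and follows essentially the same route as the paper's proof: reduce by $N$-invariance and $\cH_p$-invariance to $g=l\ga_{\underline a}$, evaluate both sides on the diagonal element $l.I^+(\underline a,1)$ via the telescoping identities $\om_0\cdots\om_j=\de_j^{-1}$ and $s_0+\cdots+s_j=\rho_j-\mu_j$ (the paper telescopes the ratios $\De_j/\De_{j+1}$ directly, you telescope the covariance characters $\chi_j(l)$ — same bookkeeping), and then deduce the second equality from the first using the $G$-equivariance of $\iota$ together with Corollary \ref{cor-nabla-psi} and Proposition \ref{prop-delta-iota}, exactly as in the paper.
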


\begin{proof} Let us prove the first equality. Remember that  $I(\underline{a},1)^+=\ga_{\underline{a}}.I_p^+$ (cf. Definition \ref{def-lienP-G-orbites}) . If  $g\notin P\ga_{\underline{a}}\cH_{p}$, then  $g.I_{p}^+\notin P.I^+(\underline{a},1)$, and hence the two members of the   first equality   are equal to zero. These two members are also left invariant by $N$ and right invariant by  $\cH_{p}$. Therefore it is enough to show the results for $g=l\ga_{\underline{a}}$ where  $l\in L$. Define then  $X=g.I_{p}^+=l.I^+(\underline{a},1)$. From Definition \ref{def-omdelta-smu} we have
$$\om(\de)(\De(X))=\prod_{j=0}^k \om_j(\De_j(X))=\de_0^{-1}(\De_0(X))(\de_1^{-1}\de_0)(\De_1(X))\ldots (\de_k^{-1}\de_{k-1})(\De_k(X))$$
$$=\de_0^{-1}(\frac{\De_0(X)}{\De_1(X)})\; \de_1^{-1}(\frac{\De_1(X)}{\De_2(X)})\ldots \de_{k-1}^{-1}(\frac{\De_{k-1}(X)}{\De_k(X)})\de_k^{-1}(\De_k(X)).$$
As $X=l.I^+(\underline{a},1)=a_0 x_0(l)X_0+a_1x_1(l)X_1+\ldots +a_{k-1} x_{k-1}(l) X_{k-1}+x_k(l) X_k$, we obtain:
 $$\om(\de)(\De(X))=\de_k(x_k(l))^{-1}\prod_{j=0}^{k-1}\de_j(x_j(l)a_j)^{-1}=\de(l)^{-1} \prod_{j=0}^{k-1}\de_j(a_j)^{-1}. $$
Also, as  $s(\mu)=(s_0,\ldots,s_k)$ is such that  $s_0+\ldots s_j=\rho_j-\mu_j$, we obtain  
$$|\De(X)|^{s(\mu)}=|\De_0(X)|^{s_0}|\De_1(X)|^{s_1}|\De_2(X)|^{s_2}\ldots |\De_k(X)|^{s_k}$$
$$=
\left(\frac{|\De_0(X)|}{|\De_1(X)|}\right)^{s_0}\left(\frac{|\De_1(X)|}{|\De_2(X)|}\right)^{s_0+s_1}\ldots \left(\frac{|\De_{k-1}(X)|}{|\De_k(X)|}\right)^{s_0+\ldots +s_{k-1}}|\De_k(X)|^{s_0+\ldots+s_k}$$
$$=|x_{k}(l)|^{\rho_{k}-\mu_{k}}\prod_{j=0}^{k-1}(a_{j}|x_{j}(l)|)^{\rho_{j}-\mu_{j}}.$$
Using Lemma  \ref{lem-rhoP}, we get  
$$|\De(X)|^{s(\mu)}=\big(\prod_{j=0}^{k-1}|a_j|^{\rho_j-\mu_j}\big)\de_P(l)^{1/2}\prod_{j=0}^k |x_j(l)|^{-\mu_j}.$$
Therefore
\begin{align*}\bP^{\underline{a},+}_{\om(\de), s(\mu)}(g.I_{p}^+)&=\om(\de)(\De(X))|\De(X)|^{s(\mu)}\\
&=\de(l)^{-1} \prod_{j=0}^{k-1}\de_j(a_j)^{-1}\big(\prod_{j=0}^{k-1}|a_j|^{\rho_j-\mu_j}\big)\de_P(l)^{1/2}\prod_{j=0}^k |x_j(l)|^{-\mu_j}\\
&=(c_{\de,\mu}(\underline{a}))^{-1}\P^{\underline{a}}_{\de,\mu}(g).
\end{align*}

The first equality is proved.\me

If $l\in L$ then  $\{l.I^-(\underline{a},1),H_0,l.I^+(\underline{a},1)\}$ is an $\go sl_2$-triple.  
From  corollaire \ref{cor-nabla-psi} we know that  $|\nabla(l.I^-(\underline{a},1))|^{s(\mu)^\sharp}=|\Delta(l.I^+(\underline{a},1))|^{s(\mu)}$, similarly it is easy to see that   $\om(\de)^\sharp(\nabla(l.I^-(\underline{a},1)))=\om(\de)(\De(l.I^+(\underline{a},1)))$. Hence  $\bP^{\underline{a},+}_{\om(\de), s(\mu)}(g.I_{p}^+)=  \bP^{\underline{a},-}_{\om(\de)^\sharp, s(\mu)^\sharp}(g.I_{p}^-)$. This gives the second  equality.

\end{proof}
\begin{theorem}\label{thm-Zeta-mero} Let $p\in\{1,\ldots, r_0\}$,   $\underline{a}\in\cS_p$ and $z\in\C$. Let also $\delta=(\delta_{0},\delta_{1},\ldots,\delta_{k})$ be a unitary character of $(F^*)^{k+1}$.

\begin{enumerate}
 \item \begin{enumerate}\item  Let  $\mu=\mu_0\la_0+\ldots \mu_k\la_k\in \bU\cap \bC^+$. Suppose     $\text{Re}(z-\mu_0)>m-\dfrac{kd}{4}=1+\dfrac{kd}{4}$.  
 
 Then,  for all  $\Phi\in\cS(V^+)$,  $\xi\in (I_{\de,\mu}^*)^{\cH_p}$ and  $w\in I_{\de,\mu}$,   the integral defining the zeta function   $Z_p^+(\Phi,z,\xi, w)$  (cf. Definition \ref{def-pi(X)-fonction-zeta})  is absolutely convergent  and the following relation holds:
$$ Z_p^+(\Phi,z,\xi_{\de,\mu}^{\underline{a}}, w) =c_{\de,\mu}(\underline{a})\index{cdeltamu@$c_{\de,\mu}(\underline{a})$}\sum_{x\in\Sr_e}\int_K \cK_{(x\underline{a},x)}^+(\cL(k)\Phi ,\om(\de),s(\mu)+z-m) w(k) dk,$$
where  $\cL$ \index{Lcal@$\cL$}is the left regular representation.

\item The function  $Z_p^+(\Phi,z,\xi_{\de,\mu}^{\underline{a}}, w)$  is a rational function in the variables $q^{\pm\mu_j}$ and  $q^{\pm z}$ and hence admits a meromorphic continuation.  Moreover there exists a polynomial  $\cR^+(\de,\mu,z)$ in the variables  $q^{\mu_j}$, $q^{-\mu_j}$ and  $q^{- z}$, which is a product   of polynomials of type $(1-d_jq^{-Nz-\sum_{j=0}^k m_j\mu_j})$, where  $N\in \N$, $m_j\in\Z$ and $d_j\in\C$ are independent of $z$ and of the $\mu_j$'s, such that for all  $\Phi\in \cS(V^+)$, $\xi\in (I_{\de,\mu}^*)^{\cH_p}$ and  $w\in I_{\de,\mu}$,  the function $ \cR^+(\de,\mu,z)Z_p^+(\Phi,z,\xi, w) $ is a  polynomial in $q^{\pm\mu_j}$ and  $q^{\pm z}$.

\end{enumerate}
\item \begin{enumerate}\item  Let  $\mu=\mu_0\la_0+\ldots \mu_k\la_k\in \bU\cap \bC^+$. Suppose that  $\text{Re}(z+\mu_k)>m-\dfrac{kd}{4}=1+\dfrac{kd}{4}$.  Then for all $\Psi\in\cS(V^-)$, $\xi\in (I_{\de,\mu}^*)^{\cH_p}$ and  $w\in I_{\de,\mu}$,     the integral defining the zeta function  $Z_p^-(\Psi,z,\xi, w)$ is absolutely convergent and the following relation holds:
$$Z_p^-(\Psi,z,\xi_{\de,\mu}^{\underline{a}}, w) =c_{\de,\mu}(\underline{a})\sum_{x\in\Sr_e}\int_K  \cK_{(x\underline{a},x)}^-(\cL(k)\Psi,\om(\de)^\sharp, s(\mu)^\sharp+z-m) w(k) dk,$$

\item The function  $Z_p^-(\Psi,z,\xi_{\de,\mu}^{\underline{a}}, w)$  is a rational function in the variables $q^{\pm\mu_j}$ and  $q^{\pm z}$ and hence admits a meromorphic continuation.  Moreover there exists a polynomial  $\cR^-(\de,\mu,z)$ in the variables   $q^{\mu_j}$, $q^{-\mu_j}$ and  $q^{- z}$, which is a product    of polynomials of type $(1-d_jq^{-Nz-\sum_{j=0}^k m_j\mu_j})$ where  $N\in \N$, $m_j\in\Z$ and $d_j\in\C$ are independent of $z$ and of the $\mu_j$'s,  such that for all  $\Psi\in \cS(V^-)$, $\xi\in (I_{\de,\mu}^*)^{\cH_p}$ and  $w\in I_{\de,\mu}$,  the function $ \cR^-(\de,\mu,z)Z_p^-(\Psi,z,\xi, w) $ is polynomial in $q^{\pm\mu_j}$ and  $q^{\pm z}$.\end{enumerate}
\end{enumerate}
\end{theorem}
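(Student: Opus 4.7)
The strategy is to reduce the zeta functions $Z_p^\pm$ to the prehomogeneous zeta functions $\cK^\pm_c$ of Section \ref{sec:zetaPH} by expressing the $H$-invariant functional $\xi_{\de,\mu}^{\underline{a}}$ through the Poisson kernel, and then to deduce meromorphic continuation and the explicit denominator from Theorem \ref{thm-Sato-kp}. I treat the $(+)$-case in detail; the $(-)$-case is strictly parallel, using the second equality of Lemma \ref{lem-PV+}.

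For $X=g\cdot I_p^+\in\Omega_p^+$, Corollary \ref{cor-pi-star(g)xi} together with Lemma \ref{lem-PV+} yields
\[
\langle\pi_{\de,\mu}^*(X)\xi_{\de,\mu}^{\underline{a}},w\rangle=c_{\de,\mu}(\underline{a})\int_K\bP^{\underline{a},+}_{\om(\de),s(\mu)}(k\cdot X)\,w(k)\,dk,
\]
since $\bP^{\underline{a},+}_{\om(\de),s(\mu)}(k g\cdot I_p^+)=\bP^{\underline{a},+}_{\om(\de),s(\mu)}(k\cdot X)$. Plugging this in, the proposed first step is to apply Fubini (justified momentarily) and then perform the change of variable $X\mapsto k^{-1}X$; this preserves $d^{*}X$ (by $G$-invariance) and $|\De_0(X)|^{z}$ since $|\chi_0(k)|=1$ for $k$ in the compact group $K$. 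This converts $\Phi(X)$ into $(\cL(k)\Phi)(X)$ and removes $k$ from the Poisson kernel. The function $\bP^{\underline{a},+}_{\om(\de),s(\mu)}$ is supported on $P\cdot I^+(\underline{a},1)$, which Lemma \ref{lemme-Ptilde} decomposes as the disjoint union $\bigsqcup_{x\in\Sr_e}\cO^+(x\underline{a},x)$ of open $\widetilde P$-orbits; on each piece it equals $(\om(\de),s(\mu))(\De(X))$. Combining $|\De_0(X)|^{z}d^{*}X=|\De_0(X)|^{z-m}dX$ with the character shift converts the inner integral into $\sum_{x\in\Sr_e}\cK^+_{(x\underline a,x)}(\cL(k)\Phi,\om(\de),s(\mu)+z-m)$, which is exactly the claimed identity in~(1)(a).

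To justify Fubini and obtain absolute convergence, one bounds the integrand by its absolute value and observes that the resulting integral factors as $\bigl(\int_K|w(k)|dk\bigr)\cdot\bigl(\int_{\Om_p^+}|\Phi(X)|\,|\De_0(X)|^{\text{Re}(z)-m}\prod_j|\De_j(X)|^{\text{Re}(s_j(\mu))}dX\bigr)$. The $K$-factor is obviously finite; the $V^+$-factor is a zeta integral of the type $\cK^+$ with absolute values in place of characters, which converges precisely when $\text{Re}(s_j(\mu))>0$ for $j=1,\ldots,k$ and $\text{Re}(s_0(\mu)+z-m)>0$. By Lemma \ref{lem-muj-sj>0} the first $k$ conditions are equivalent to $\mu\in\bC^+$; using $s_0(\mu)=\frac{kd}{4}-\mu_0$ and $m=1+\frac{kd}{2}$, the last condition reads $\text{Re}(z-\mu_0)>1+\frac{kd}{4}$, which is the hypothesis in~(1)(a).

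For part (1)(b), Theorem \ref{thm-Sato-kp} provides a polynomial $R^+(\om(\de),s(\mu)+z-m)$, depending only on $\om(\de)$ and the shape of the exponent and not on $\Phi$ or $k$, such that $R^+\cdot\cK^+_{(x\underline a,x)}(\cL(k)\Phi,\om(\de),s(\mu)+z-m)$ is a polynomial in $q^{\pm(s_j(\mu)+z\delta_{j,0}-m\delta_{j,0})}$. Substituting the affine formulas $s_0(\mu)+z-m=-1-\frac{kd}{4}+z-\mu_0$ and $s_j(\mu)=\mu_{j-1}-\mu_j-d/2$ ($j\geq1$) converts each factor $(1-q^{-N-\sum N_j(s_j+\nu_j)})$ of $R^+$ into a factor $(1-d_jq^{-Nz-\sum m_j\mu_j})$ of the asserted form — note that $\nu_j=0$ since $\de$, and hence $\om(\de)$, is unitary. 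Pulling the resulting polynomial $\cR^+(\de,\mu,z)$ outside the $K$-integral (where it does not depend on $k$), the bracketed expression becomes a polynomial in $q^{\pm\mu_j},q^{\pm z}$ whose coefficients are continuous functions of $k\in K$; integrating against $w(k)dk$ over the compact group $K$ yields a polynomial in $q^{\pm\mu_j},q^{\pm z}$, proving the stated rationality and denominator bound. The only mildly technical point is the book-keeping in this variable substitution, but no genuine obstacle arises; the corresponding statements in (2) follow by the same argument, replacing Lemma \ref{lem-PV+}'s first equality by its second and using $(\om(\de)^\sharp,s(\mu)^\sharp)$ in place of $(\om(\de),s(\mu))$.
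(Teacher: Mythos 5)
Your proposal is correct and follows essentially the same route as the paper: both reduce $Z_p^\pm$ to the prehomogeneous zeta functions $\cK^\pm$ via Corollary \ref{cor-pi-star(g)xi}, Lemma \ref{lem-PV+} and the orbit decomposition of Lemma \ref{lemme-Ptilde}, and both deduce (1)(b) and (2)(b) from Theorem \ref{thm-Sato-kp} by substituting the affine formulas for $s(\mu)$; you merely run the Fubini computation from $Z_p^+$ toward the $\cK^+$ sum rather than in the reverse direction. One small imprecision: your convergence bound does not literally factor as stated, since after the change of variable the $V^+$-integrand involves $\cL(k)\Phi$ rather than $\Phi$ (the $\De_j$ for $j\geq 1$ are not $K$-invariant); this is repaired by the uniform bound $|\cL(k)\Phi|\leq\|\Phi\|_\infty\mathbf 1_C$ over the compact set of translates, or, as in the paper, by observing directly that the integrand is continuous with compact support on $K\times V^+$.
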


\begin{proof} For $k=0$, the zeta functions considered here coincide with   Tate's zeta functions (see Remark \ref{rem-k=0-Om+}, 1)), for which the results are well-known.\me

From now on , we suppose that $k\geq 1$.\me

We have already mentioned that the set of   $\xi_{\de,\mu}^{\underline{a}}$ for $\underline{a}\in\Sr_e^k$ is a basis of  $(I_{\de,\mu}^*)^{\cH_p}$.  Therefore it is enough to prove the results for  $\xi=\xi_{\de,\mu}^{\underline{a}}$. Let us prove assertion (1).\me

Let  $\mu=\mu_0\la_0+\ldots +\mu_k\la_k\in \bU\cap \bC^+$. To simplify the notations we set $s=s(\mu)=(s_0,\ldots, s_k)$ and  $\om=\om(\de)$.  From Definition \ref{def-omdelta-smu} and Lemma \ref{lem-muj-sj>0}, one has  

a) $\text{Re}(s_j)=\text{Re}(\mu_{j-1}-\mu_j)-\dfrac{d}{2}>0$ if  $j=1,\ldots k$ and $s_0=\dfrac{kd}{4}-\mu_0$. 

b) Also  from our condition on $z$, we have  $\text{Re}(z+s_0)-m=\text{Re}(z-\mu_0)+\dfrac{kd}{4}-m>0$. 

Taking into account that $\delta$ is here a unitary character, we know from Definition \ref{def-zetaP} that  these  conditions imply that the functions $\cK_c^+(f,\om(\de), z+s(\mu)-m)$, $c\in\Sr_e^{k+1}$, are defined by an absolutely convergent integral  for all  $f\in\cS(V^+)$.  

Using Lemma  \ref{lemme-Ptilde},  we obtain for  $f\in \cS(V^+)$:

$$\sum_{x\in\Sr_e}\cK^+_{(x\underline{a},x)}(f,\om(\de), z+s(\mu)-m)=\sum_{x\in\Sr_e}\int_{\cO(x\underline{a},x)^+}f(X) \om(\de)(\De(X)) |\De(X)|^{s(\mu)+z-m} dX$$
$$
=\int_{P.I(\underline{a},1)^+} f(X) |\De_0(X)|^{z-m} \bP_{\om(\de),s(\mu)}^{\underline{a}, +}(X) dX=\int_{V^+} f(X) |\De_0(X)|^{z-m} \bP_{\om(\de),s(\mu)}^{\underline{a}, +}(X) dX.$$

Remark  \ref{rem-bP-V+}  and the conditions on  $\mu$,  $z$ and  $\Phi$, imply that the  the function 
$$(k,X)\mapsto (\cL(k)\Phi)(X) |\De_0(X)|^{z-m} \bP_{\om(\de),s(\mu)}^{\underline{a}, +}(X)$$ 
is continuous with compact support on  $  K\times V^+$. 

Then, from Lemma  \ref{lem-PV+} and the Theorem of Fubini, we get 
$$c_{\de,\mu}(\underline{a})\sum_{x\in\Sr_e}\int_K \cK_{(x\underline{a},x)}^+(\cL(k)\Phi ,\om(\de),s(\mu)+z-m) w(k) dk$$
$$=c_{\de,\mu}(\underline{a})\int_K \left(\int_{\Om_p^+} (\cL(k)\Phi)(X) |\De_0(X)|^z \bP_{\om(\de),s(\mu)}^{\underline{a}, +}(X) d^*X\right) w(k) dk$$
$$=\int_K\left(\int_{G/\cH_p} \Phi(k^{-1}g.I_p^+) |\De_0(g.I_p^+)|^z \P_{\de,\mu}^{\underline{a}, +}(g) d_p\dot{g} \right) w(k)dk$$
$$=\int_K\left(\int_{G/\cH_p} \Phi(g.I_p^+) |\De_0(g.I_p^+)|^z \P_{\de,\mu}^{\underline{a}, +}(kg) d_p\dot{g} \right) w(k)dk$$
(because  $|\chi_0|$ is trivial on  $K$)
$$=\int_{G/\cH_p} \Phi(g.I_p^+) |\De_0(g.I_p^+)|^z\left(\int_K\P_{\de,\mu}^{\underline{a}, +}(kg)  w(k)dk\right)d_p\dot{g} .$$

Then from Corollary  \ref{cor-pi-star(g)xi}, we obtain:
$$c_{\de,\mu}(\underline{a})\sum_{x\in\Sr_e}\int_K \cK_{(x\underline{a},x)}^+(\cL(k)\Phi ,\om(\de),z+s(\mu)-m) w(k) dk$$
$$=\int_{G/\cH_p} \Phi(g.I_p^+) |\De_0(g.I_p^+)|^z\langle \pi_{\de,\mu}^*(g)\xi_{\de,\mu}^{\underline{a}},w\rangle d_p\dot{g}$$
$$=Z_p^+(\Phi, z, \xi_{\de,\mu}^{\underline{a}}, w),$$
which is assertion  {(1)(a)}.\me

From  Theorem  \ref{thm-Sato-kp} there exists a polynomial  $R^+(\om, s)\in  \C[q^{-s_{0}},q^{-s_{1}},\ldots,q^{-s_{k}}]$ in the variables $q^{-s_{j}}$, product of polynomials of type $(1-b_j q^{-\sum_{j=0}^k N_js_j)}$ with $N_j\in \N$ and $b_j\in\C$, such that 
$R^+(\om, s)\cK_c^+(f,\om,s)\in \C[q^{\pm s_{0}},q^{\pm s_{1}},\ldots,q^{\pm s_{k}}]$ for all $f\in\cS(V^+)$ and  $c\in\Sr_e^{k+1}$.  But here we must consider integrals of the form $\cK_{(x\underline{a},x)}^+(\cL(k)\Phi ,\om(\de),z+s(\mu)-m) $, where by Definition \ref{def-omdelta-smu}, 
$$(z+s(\mu)-m)=(\frac{kd}{4}-\mu_{0}+z-m, \mu_{0}-\mu_{1}-\frac{d}{2},\ldots,\mu_{k-1}-\mu_{k}-\frac{d}{2}).$$
This implies immediately assertion (1)(b).\me

Remember that    $s(\mu)^\sharp=(-(s_0+\ldots +s_k), s_k,\ldots, s_1)$ with $s_0+\ldots+s_k=-\dfrac{kd}{4}-\mu_k$. Therefore   $\text{Re}(z-(s_0+\ldots+s_k))-m=\text{Re}(z+\mu_k)+\dfrac{kd}{4}-m>0$. 

A similar computation to the one in the proof of 1) b) above  shows thats for $g\in\cS(V^-)$ one has 
$$c_{\de,\mu}(\underline{a})\sum_{x\in\Sr_e} \cK_{(x\underline{a},x)}^-(g,\om(\de)^\sharp,z+s(\mu)^\sharp-m)=\int_{V^-} g(Y) |\nabla_0(Y)|^{z-m} \bP_{\om(\de)^\sharp,s(\mu)^\sharp}^{\underline{a}, -}(Y) dY.$$

Then the same arguments prove assertion 2)a) and 2)b).

 \end{proof}

\begin{theorem}\label{thm-principal} {\bf (Main Theorem, version 1)}\hfill

Let $k\geq 1$ (for $k=0$, the zeta functions we consider coincide with Tate's zeta functions by Remark \ref{rem-k=0-Om+}). Let  $\de$ be a unitary character of  $(F^*)^{k+1}$ and  $\mu\in\bU$. If  $\underline{a}\in \Sr_e^{k}$, we denote by  $p_{\underline{a}}$ the unique integer in  $\{1,\ldots ,r_0\}$ such that  $P.I(\underline{a},1)^+\subset \Om_{p_{\underline{a}}}^+$. 
Let also  $\ga_j$  ($j\in\N^*$)  be the function defined in Proposition \ref{prop-calcul-gamma}. 

Then for  $\Phi\in \cS(V^+)$ and $w\in I_{\de,\mu}$, the zeta functions satisfy the following functional equation:

$$Z_{p_{\underline{a}}}^-(\cF \Phi, \frac{m+1}{2}-z,\xi_{\de,\mu}^{\underline{a}},w)=\sum_{\underline{c}\in\Sr_e^k}\cB_{\underline{a},\underline{c}}(\de, \mu)(z)
Z_{p_{\underline{c}}}^+(\Phi, z+\frac{m-1}{2},\xi_{\de,\mu}^{\underline{c}},w),$$
where the functions  $\cB_{\underline{a},\underline{c}}(\de, \mu)(z)$ \index{Bcala@$\cB_{\underline{a},\underline{c}}(\de, \mu)(z)$} are given by 
$$\cB_{\underline{a},\underline{c}}(\de, \mu)(z)=c_{\de,\mu}(\underline{a}) c_{\de,\mu}(\underline{c})^{-1}\sum_{y\in\Sr_{e}} \Big(\ga_k((a_0,\ldots, a_{k-1}), y)\de_k(-1)\tilde{\rho}(\de_k,\mu_k-z+1;- y)$$
$$\times \prod_{j=1}^{k-1}\ga_j((a_0,\ldots, a_{j-1}), yc_j)\prod_{j=0}^{k-1}\de_j(-1)\tilde{\rho}(\de_j,\mu_j-z+1;- a_jc_jy)\Big).$$ 
Moreover these  functions  $\cB_{\underline{a},\underline{c}}(\de, \mu)(z)$ do not depend on the character of $(F^*)^{k+1}$ which defines  $\de_\mu$ (see Lemma \ref{lem-caractere-L}).
\end{theorem}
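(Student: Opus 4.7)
\textbf{Proof plan for Theorem~\ref{thm-principal}.} The strategy is to reduce the functional equation for the representation-theoretic zeta functions to the one for the prehomogeneous zeta functions $\cK^\pm_c$ (Theorem~\ref{EF-(P,V^+)}), via the bridge provided by Theorem~\ref{thm-Zeta-mero}. We first work in a region where all integrals converge, namely $\mathrm{Re}(\mu_0)+1<\mathrm{Re}(z)<\mathrm{Re}(\mu_k)$ (intersected with $\bU\cap\bC^+$), so that the hypotheses of both parts of Theorem~\ref{thm-Zeta-mero} are satisfied for $z''=z+\tfrac{m-1}{2}$ and $z'=\tfrac{m+1}{2}-z$, and then extend the resulting identity by meromorphic continuation. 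Applying Theorem~\ref{thm-Zeta-mero}(2)(a) to $\Psi=\cF\Phi$ and using the $G$-equivariance of the Fourier transform (which follows from $b(gX,gY)=b(X,Y)$, hence $\cF\circ\cL(k)=\cL(k)\circ\cF$), one rewrites the left-hand side as
\[ c_{\de,\mu}(\underline a)\sum_{x\in\Sr_e}\int_K \cK^-_{(x\underline a,x)}\bigl(\cF(\cL(k)\Phi),\om(\de)^\sharp,s(\mu)^\sharp+z'-m\bigr)\,w(k)\,dk. \]

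One then checks, using the involutivity of $\sharp$ and $s(\mu)^\sharp_0=\tfrac{kd}{4}+\mu_k$, that with $\om:=\om(\de)$ and $s:=s(\mu)-z'$ (shift on the $s_0$ coordinate) the pair $(\om(\de)^\sharp,s(\mu)^\sharp+z'-m)$ coincides with $(\om^\sharp,s^\sharp-m)$. The abstract functional equation of Theorem~\ref{EF-(P,V^+)} then yields
\[ \cK^-_{(x\underline a,x)}\bigl(\cF(\cL(k)\Phi),\om(\de)^\sharp,s(\mu)^\sharp+z'-m\bigr)=\sum_{c\in\Sr_e^{k+1}}D^k_{((x\underline a,x),c)}(\om(\de),s(\mu)-z')\,\cK^+_c(\cL(k)\Phi,\om(\de),s(\mu)-z'). \]
Since $\Sr_e$ is a finite abelian group, the map $c\mapsto(c_k,(c_0c_k^{-1},\ldots,c_{k-1}c_k^{-1}))$ identifies $\Sr_e^{k+1}$ with $\Sr_e\times\Sr_e^k$, so one may write $c=(x'\underline c,x')$ and exchange the sums. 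Because $z+z'=m$, the third argument $s(\mu)-z'$ matches exactly the $s(\mu)+z''-m$ that Theorem~\ref{thm-Zeta-mero}(1)(a) will require on the right-hand side.

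The key technical step is the observation that $D^k_{((x\underline a,x),(x'\underline c,x'))}(\om(\de),s(\mu)-z')$ depends on $(x,x')$ only through the product $y:=xx'$. Using the explicit formula of Theorem~\ref{EF-(P,V^+)}, each $\tilde\rho$-factor has third argument $-x x'a_jc_j=-ya_jc_j$ (and $-y$ when $j=k$), and for the Weil constants $\ga_j((xa_0,\ldots,xa_{j-1}),x'c_j)$ one verifies case-by-case from Proposition~\ref{prop-calcul-gamma} that this equals $\ga_j((a_0,\ldots,a_{j-1}),yc_j)$ (respectively $\ga_k((a_0,\ldots,a_{k-1}),y)$): for $e\in\{0,4\}$ the constant depends on none of the data; for $e=2$ it reduces to the multiplicative character $\varpi_E$ applied to $xx'c_j$ and to $a_i$; for $e\in\{1,3\}$ it combines the Hilbert symbol, which is bilinear, with the function $\al$ to yield the same identity. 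Consequently, for each fixed $x'$ the sum $\sum_{x\in\Sr_e}D^k_{((x\underline a,x),(x'\underline c,x'))}$ is independent of $x'$ and equals $\sum_{y\in\Sr_e}D^k_{((\underline a,1),(y\underline c,y))}$.

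This allows one to pull the $x$-summation out of the $x'$-summation and recognize, via Theorem~\ref{thm-Zeta-mero}(1)(a), the remaining $K$-integral of $\cK^+_{(x'\underline c,x')}$ as $c_{\de,\mu}(\underline c)^{-1} Z^+_{p_{\underline c}}(\Phi,z'',\xi^{\underline c}_{\de,\mu},w)$. Reading off the $y$-sum and using $-(s_0+\cdots+s_j+\tfrac{jd}{2})=\mu_j+z'-\tfrac{kd}{4}=\mu_j-z+1$ together with $\om_0\cdots\om_j=\de_j^{-1}$ produces precisely the claimed explicit formula for $\cB_{\underline a,\underline c}(\de,\mu)(z)$. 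The main obstacle, beyond the bookkeeping of the $(x,x')\mapsto y$ reduction in all cases of $e$, is the final assertion that $\cB_{\underline a,\underline c}$ does not depend on the chosen character of $(F^*)^{k+1}$ lifting $\de_\mu$: by Lemma~\ref{lem-caractere-L} two lifts differ by characters $\chi^0,\ldots,\chi^k$ of $\Sr_e$ with $\prod_j\chi^j=1$, and one verifies on the formula that the induced twists on the Fourier-transformed factors $\tilde\rho(\de_j,\cdot;-a_jc_jy)$ sum to $1$ after the $y$-summation owing to $\prod_j\chi^j=1$ and the fact that the Weil constants $\ga_j$ do not depend on $\de$. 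The identity, established in the convergence domain, extends to all $(\mu,z)$ by meromorphic continuation since both sides are rational in $q^{\pm\mu_j}$ and $q^{\pm z}$ by Theorem~\ref{thm-Zeta-mero}.
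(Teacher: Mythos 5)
Your proposal is correct and follows essentially the same route as the paper's proof: apply Theorem \ref{thm-Zeta-mero} to both sides, insert the prehomogeneous functional equation of Theorem \ref{EF-(P,V^+)}, and exploit the fact that $D^k_{((x\underline a,x),(x'\underline c,x'))}$ depends on $(x,x')$ only through $y=xx'$ (the paper phrases this as $D^k_{((x\underline a,x),c)}=D^k_{((\underline a,1),xc)}$, so that $A(\underline a,c)=\sum_x D^k_{((x\underline a,x),c)}$ is invariant under $c\mapsto yc$). One small slip: your claimed convergence region $\text{Re}(\mu_0)+1<\text{Re}(z)<\text{Re}(\mu_k)$ is empty once intersected with $\bC^+$, since $\mu\in\bC^+$ forces $\text{Re}(\mu_0)>\cdots>\text{Re}(\mu_k)$; this is harmless because each link in the chain is separately an identity of rational functions in $q^{\pm\mu_j}$ and $q^{\pm z}$ (which is how the paper implicitly proceeds), but one cannot literally work in a single common convergence domain. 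For the final independence assertion, note that the paper's main proof uses the slicker observation that $\cB_{\underline a,\underline c}$ is a ratio of zeta functions (for $\Phi$ supported in one orbit) which depend only on the induction data; your direct verification on the formula is essentially the one carried out in Remark \ref{rem-independance}.
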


\begin{proof}  
Let us first show that for  $k\in K$, one has  $\cF(\cL(k)\Phi)=\cL(k)\cF(\Phi)$. 

By definition we have 
$$\cF(\Phi)(Y)=\int_{V^+} \Phi(X) \tau\circ b(X,Y) dX,\quad Y\in V^-.$$
As the measure  $d^*X=\frac{dX}{|\De_0(X)|^m}$ is  $G$-invariant and as $|\De_0(k.X)|=|\De_0(X)|$ for $k\in K$ and $X\in V^+$, the measure  $dX$ is  $K$-invariant. 

As the bilinear form  $b(\cdot,\cdot)$ is  $G$-invariant, we see that  the Fourier transform on $V^+$ commutes with the action of $K$:

$$\cL(k)\cF(\Phi)=\cF\big(\cL(k)\Phi),\text {  for }   k\in K.$$  

From  Theorem \ref{thm-Zeta-mero} (2) (a), we obtain 
$$Z_{p_{\underline{a}}}^-(\cF\Phi,\frac{m+1}{2}-z,\xi_{\de,\mu}^{\underline{a}},w)=c_{\de, \mu}(\underline{a})\sum_{x\in\Sr_e}
\int_K \cK_{(x\underline{a},x)}^-(\cF\big(\cL(k)\Phi\big),\om(\de)^\sharp,s(\mu)^\sharp-z+\frac{m+1}{2}-m) w(k) dk.$$
As $s(\mu)^\sharp-z+\frac{m+1}{2}=(s(\mu)+z-\frac{m+1}{2})^\sharp$ (see Definition \ref{def-sharp}),     Theorem  \ref{EF-(P,V^+)}  implies that 
$$Z_{p_{\underline{a}}}^-(\cF\Phi,\frac{m+1}{2}-z,\xi_{\de,\mu}^{\underline{a}},w) =c_{\de, \mu}(\underline{a})\sum_{x\in\Sr_e}\sum_{c\in\Sr_e^{k+1}}D^k_{((x\underline{a},x),c)}(\om(\de),s(\mu)+z-\frac{m+1}{2})$$
 \beq\label{eq-Zp-Kc+} \hskip 80pt  \times \int_K \cK_c^+(\cL(k)\Phi,\om(\de),s(\mu)+z-\frac{m+1}{2}) w(k) dk, \eeq

where the functions  $D^k$ satisfy the following induction relation:
\begin{align*}&D^k_{(a,c)}(\om',s')\\
& =\ga_k(\underline{a},c_k))(\om'_0\ldots \om'_k)(-1)\tilde{\rho}((\om'_0\ldots \om'_k)^{-1},-(s'_0+\ldots+s'_k+\frac{kd}{2});-a_kc_k)
D^{k-1}_{(\underline{a},\underline{c})}(\underline{\om'},\underline{s'}).\end{align*}

Proposition \ref{prop-calcul-gamma} implies that    $\ga_k(x\underline{a},c_k)=\ga_k(\underline{a}, xc_k)$ for $x\in \Sr_e$. Then by an easy induction from Theorem \ref{EF-(P,V^+)} we see that 
$D^k_{((x\underline{a},x), c)}(\om',s')=D^k_{((\underline{a},1), xc)}(\om',s')$. 

It follows that if we define  $A(\om',s')(\underline{a},c)=\sum_{x\in\Sr_e}D^k_{((x\underline{a},x), c)}(\om',s')$  then  $A(\om',s')(\underline{a},yc)=A(\om',s')(\underline{a},c)$ for all $y\in\Sr_e$. Then,  for $f\in\cS(V^+)$, 
$$\sum_{x\in\Sr_e}\sum_{c\in\Sr_e^{k+1}}D^k_{((x\underline{a},x), c)}(\om',s') \cK_c^+(f,\om',s')=\sum_{c\in\Sr_e^{k+1}}A_k(\om', s')(\underline{a},c) \cK_c^+(f,\om',s')$$
$$ =\sum_{\underline{c}\in\Sr_e^{k}}\sum_{x\in\Sr_e }A_k(\om', s')(\underline{a},(x\underline{c},x)) \cK_{(x\underline{c},x)}^+(f,\om',s') $$
$$=\sum_{\underline{c}\in\Sr_e^{k}}A_k(\om', s')(\underline{a},(\underline{c},1))\sum_{x\in\Sr_e}\cK_{(x\underline{c},x)}^+(f,\om',s').$$

Then by  Theorem \ref{thm-Zeta-mero} and the expression of  $Z_{p_{\underline{a}}}^-(\cF\Phi,\frac{m+1}{2}-z,\xi_{\de,\mu}^{\underline{a}},w)$ given in (\ref{eq-Zp-Kc+}), we obtain that 
$$Z_{p_{\underline{a}}}^-(\cF\Phi,\frac{m+1}{2}-z,\xi_{\de,\mu}^{\underline{a}},w)$$
$$=c_{\de, \mu}(\underline{a})\sum_{\underline{c}\in\Sr_e^k}A_k(\om(\de),s(\mu)+z-\frac{m+1}{2})(\underline{a},(\underline{c},1))c_{\de,\mu}(\underline{c})^{-1}Z_{p_{\underline{c}}}^+(\Phi,z+\frac{m-1}{2},\xi_{\de,\mu}^{\underline{c}},w).$$

In order to prove the functional equation it remains to show that the functions 
  $$\cB_{\underline{a},\underline{c}}(\de,\mu)(z) =c_{\de, \mu}(\underline{a})c_{\de,\mu}(\underline{c})^{-1}A_k(\om(\de),s(\mu)+z-\frac{m+1}{2})(\underline{a},(\underline{c},1))$$

have the   form requested in the statement. \me

From Theorem  \ref{EF-(P,V^+)},   we obtain
$$D^k_{((x\underline{a},x),(\underline{c},1))}(\om(\de),s(\mu)+z-\frac{m+1}{2})=D^k_{((\underline{a},1),(x\underline{c},x))}(\om(\de),s(\mu)+z-\frac{m+1}{2})$$
$$=\ga_k(\underline{a},x)(\om_0\ldots\om_k)(-1)\tilde{\rho}((\om_0\ldots\om_k)^{-1},-(s_0+\ldots+s_k+z-\frac{m+1}{2}+\frac{jd}{2});- x)$$
$$\times \prod_{j=1}^{k-1}\ga_j((a_0,\ldots, a_{j-1}), xc_j) \prod_{j=0}^k(\om_0\ldots \om_j)(-1)\tilde{\rho}((\om_0\ldots \om_j)^{-1},-(s_0+\ldots+s_j+z-\frac{m+1}{2}+\frac{jd}{2});- xa_jc_j).$$

From Definition \ref{def-omdelta-smu}  and the definition of  $s(\mu)$ we have $\om_0\ldots \om_j=\de_j^{-1}$ (if  $\om(\de)=(\om_0,\ldots,\om_k)$), and $s_0+\ldots +s_j=\rho_j-\mu_j$. Hence
$$-(s_0+\ldots+s_j+\frac{jd}{2}+z-\frac{m+1}{2})=-(\frac{(k-2j)d}{4}-\mu_j+\frac{jd}{2}+z-\frac{kd}{4}-1)$$
$$=\mu_j-z+1.$$
Hence 
$$D^k_{((x\underline{a},x),(\underline{c},1))}(\om(\de),s(\mu)+z-\frac{m+1}{2})$$
$$=\ga_k(\underline{a},x)\de_k(-1)\tilde{\rho}(\de_k,\mu_k-z+1;-x) \prod_{j=1}^{k-1}\ga_j((a_0,\ldots, a_{j-1}), xc_j) \prod_{j=0}^k\de_j(-1)\tilde{\rho}(\de_j,\mu_j-z+1;- xa_jc_j).$$
This implies the result concerning the functional equation.

It remains to show that  the functions $\cB_{\underline{a},\underline{c}}(\de, \mu)(z)$ do not depend on the character of $(F^*)^{k+1}$ which defines  $\de_\mu$.

To see this we take   $\Phi\in \cS(\Omega^+_{p_{\underline a}})$. Then from the functional equation we have
$$\cB_{\underline{a},\underline{c}}(\de, \mu)(z)=\frac{Z_{p_{\underline{c}}}^-(\cF \Phi, \frac{m+1}{2}-z,\xi_{\de,\mu}^{\underline{c}},w)}{Z_{p_{\underline{a}}}^+(\Phi, z+\frac{m-1}{2},\xi_{\de,\mu}^{\underline{a}},w)}.$$
By definition (\ref{def-pi(X)-fonction-zeta}) the zeta functions do only depend on the induction parameters.

 \end{proof}
 
 \begin{rem}\label{rem-independance}
 
 Of course one can also check directly that the functions  $\cB_{\underline{a},\underline{c}}(\de, \mu)(z)$ do not depend on the choice of the character $\de$ of   $(F^*)^{k+1}$ defining the character  $\de_\mu$ of  $L$. 
 
If  $\de'$ is another character  $(F^*)^{k+1}$ which defines the same character  $\de_\mu$  then, by Lemma \ref{lem-caractere-L}, for  $j\in\{0,\ldots ,k\}$,   there exists a character $\chi^j$ of  $ F^*$ with values in  $\{\pm1\}$ and trivial on $S_e$,  such that 
$ \de_{j}(x)=\chi^j(x) \de_j'(x)$  and  $ \prod_{j=0}^k \chi^j(x)=1$ for $  x\in F^*$. By definition (cf. Lemma   \ref{lem-PV+}), we have  
$c_{\de,\mu}(\underline{a})=\prod_{j=0}^{k-1} \de_j(a_j) |a_j|^{-(\rho_j-\mu_j)}$. Hence 
\beq\label{eq-delta/delta'}c_{\de',\mu}(\underline{a}) c_{\de',\mu}(\underline{c})^{-1}=\prod_{j=0}^{k-1} \chi^j(c_j) \chi^j(a_j) c_{\de,\mu}(\underline{a}) c_{\de,\mu}(\underline{c})^{-1}.\eeq

From the definition of  $\tilde{\rho}$ (cf. Definition \ref{def-rhotilde}), for a character  $\tilde{\de}$  of  $F^*$, $s\in\C$ and  $b\in\Sr_e$, we have  
$$\tilde{\rho}(\tilde{\de},s;b)=\frac{1}{|\Sr_e|}\sum_{\chi\in\widehat{\Sr_e}} \chi(b)\rho(\tilde{\de}\chi,s).$$
Then, for  $y\in\Sr_e$: $$\tilde{\rho}(\de_k\chi^k,\mu_k-z+1;- y)=\chi^k(-y) \tilde{\rho}(\de_k,\mu_k-z+1;- y)  $$ $$ \text{ and  } \tilde{\rho}(\de_j\chi^j,\mu_j-z+1;- a_jc_jy)=\chi^j(-ya_jc_j)\tilde{\rho}(\de_j,\mu_j-z+1;- a_jc_jy).$$
As  $\prod_{j=0}^k \chi^j(-y)=1$, relation   (\ref{eq-delta/delta'})  implies that   $\cB_{\underline{a},\underline{c}}(\de', \mu)(z)=\cB_{\underline{a},\underline{c}}(\de, \mu)(z)$.

 \end{rem}
 \me 
In the case where  $e=0$ or  $4$  we have   $\Sr_e=\{1\}$ and the groups  $G$ and  $P$ have a unique open orbit in  $V^+$. We therefore omit  the indices $p\in\{1\}$ and  $a\in\Sr_e^k$. Moreover, $L/L\cap \cH$ is isomorphic to $(F^*)^{k+1}$, 
therefore there exists a unique character  $\de$ of $(F^*)^{k+1}$ such that $\de(l)=\de(x_0(l),\ldots ,\de_k(l))$ for  $l\in L$. Then the functional equation is scalar and has the following form.
\begin{cor}\label{EF-globale-04} Let  $e=0$ or  $4$. Then for  $\Phi\in\cS(V^+)$, $\xi\in (I_{\de,\mu}^*)^{\cH}$ and  $w\in I_{\de,\mu}$, we have  
$$Z^-(\cF\Phi,\frac{(m+1)}{2}-z, \xi,w)=d(\de,\mu,z)\;\;  Z^+(\Phi,z+\dfrac{(m-1)}{2},\xi,w))$$

where  $d(\de,\mu,z) =\ga_\tau(q)^{\frac{k(k+1)}{2}}\prod_{j=0}^{k}\de_j(-1)\rho(\de_j,\mu_j+1-z)$.
\end{cor}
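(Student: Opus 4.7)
When $e=0$ or $e=4$, Lemma \ref{lem-ensembleSe} gives $S_e=F^*$, hence $\Sr_e=\{1\}$. By Theorem \ref{thm-orbouverte} (and Theorem \ref{th-Porbites}) both $G$ and $P$ have a unique open orbit in $V^+$ and in $V^-$, and there is a unique $\cH$-invariant linear form $\xi=\xi_{\de,\mu}^{(1,\ldots,1)}$ up to scalar. All sums indexed by $\Sr_e$, $\Sr_e^k$, or $\Sr_e^{k+1}$ that appear in the earlier statements collapse to a single term; in particular $c_{\de,\mu}(\underline a)=1$ for the unique index $\underline a=(1,\ldots,1)$.

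Rather than substituting in the formula of Theorem \ref{thm-principal}, I would argue directly from Theorem \ref{thm-Zeta-mero} and Corollary \ref{cor-calcul-constanteEF-e-pair}(1), which is cleaner. By Theorem \ref{thm-Zeta-mero}(2)(a) (with $\underline a=(1,\ldots,1)$, $p=1$), the left-hand side equals
\[
Z^-\!\Bigl(\cF\Phi,\tfrac{m+1}{2}-z,\xi,w\Bigr)=\int_K \cK^-\!\Bigl(\cF\bigl(\cL(k)\Phi\bigr),\om(\de)^\sharp,s^\sharp-m\Bigr)\,w(k)\,dk,
\]
where I have set $s:=s(\mu)+z-\tfrac{m+1}{2}$ (shift on the first component only), using the easily checked identity $s(\mu)^\sharp-z+\tfrac{m+1}{2}=(s(\mu)+z-\tfrac{m+1}{2})^\sharp$. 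Corollary \ref{cor-calcul-constanteEF-e-pair}(1) then gives, inside the integrand,
\[
\cK^-(\cF f,\om^\sharp,s^\sharp-m)=D^k(\om,s)\,\cK^+(f,\om,s)
\]
with $\om=\om(\de)$, and by Theorem \ref{thm-Zeta-mero}(1)(a) the remaining $K$-integral of $\cK^+(\cL(k)\Phi,\om(\de),s)$ against $w$ is exactly $Z^+(\Phi,z+\tfrac{m-1}{2},\xi,w)$, since $s(\mu)+(z+\tfrac{m-1}{2})-m=s$. Hence the functional equation holds with scalar factor $D^k(\om(\de),s)$.

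The last step is to check that this factor equals $d(\de,\mu,z)$. From Definition \ref{def-omdelta-smu} one has $\om(\de)_0\cdots\om(\de)_j=\de_j^{-1}$ and $s_0(\mu)+\cdots+s_j(\mu)=\rho_j-\mu_j$. Combining this with Lemma \ref{lem-rhoP} (giving $\rho_j=\tfrac{d}{4}(k-2j)$) and $m=1+\tfrac{kd}{2}$, a direct arithmetic check shows
\[
-\Bigl(s_0+\cdots+s_j+\tfrac{jd}{2}\Bigr)=\mu_j-z+1\qquad\text{for every }j\in\{0,\ldots,k\}.
\]
Since $\de$ is unitary, $\de_j(-1)\in\{\pm 1\}$, so $\de_j^{-1}(-1)=\de_j(-1)$. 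Substituting into the formula from Corollary \ref{cor-calcul-constanteEF-e-pair}(1) yields
\[
D^k(\om(\de),s)=\ga_\psi(q)^{k(k+1)/2}\prod_{j=0}^k \de_j(-1)\,\rho(\de_j,\mu_j-z+1),
\]
which is exactly $d(\de,\mu,z)$.

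The only non-routine point is the scalar arithmetic in the third paragraph: tracking how the shift $z-\tfrac{m+1}{2}$ on the first variable of $s(\mu)$ interacts with the $\sharp$ involution and with the normalisation $\tfrac{jd}{2}$ coming from $D^k$. Once the identity $-(\rho_j+\tfrac{jd}{2})+\tfrac{m+1}{2}=1$ is recorded, everything else is formal.
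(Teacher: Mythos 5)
Your argument is correct, and it checks out line by line: the identity $(s(\mu)+z-\tfrac{m+1}{2})^\sharp=s(\mu)^\sharp-z+\tfrac{m+1}{2}$, the matching of the $\cK^+$-argument with $s(\mu)+(z+\tfrac{m-1}{2})-m$, and the arithmetic $-(s_0+\cdots+s_j+\tfrac{jd}{2})=\mu_j-z+1$ (via $\rho_j=\tfrac{d}{4}(k-2j)$ and $\tfrac{m+1}{2}=1+\tfrac{kd}{4}$) are all exactly right, as is $\de_j^{-1}(-1)=\de_j(-1)$ for unitary $\de_j$. The route is, however, not the one the paper takes: the paper simply specializes Theorem \ref{thm-principal} to $\Sr_e=\{1\}$, observing that $\tilde{\rho}(\chi,s;1)=\rho(\chi,s)$ (Definition \ref{def-rhotilde}) and $\ga_j(1,\ldots,1)=\ga_\psi(q_e)^j$ (Proposition \ref{prop-calcul-gamma}), so the Corollary is a two-line consequence. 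You instead bypass Theorem \ref{thm-principal} and reassemble the functional equation from Theorem \ref{thm-Zeta-mero} and Corollary \ref{cor-calcul-constanteEF-e-pair}(1); this is cleaner in that it never touches the $\tilde{\rho}$ averaging or the $\Sr_e$-indexed sums, but it duplicates the $\sharp$-shift and scalar arithmetic already carried out inside the proof of Theorem \ref{thm-principal}, so it buys transparency at the cost of repetition. Two small points to record explicitly: (i) after applying Theorem \ref{thm-Zeta-mero}(2)(a) with $\Psi=\cF\Phi$ the integrand involves $\cL(k)\cF\Phi$, and you need the commutation $\cL(k)\circ\cF=\cF\circ\cL(k)$ (which holds because $dX$ is $K$-invariant and $b$ is $G$-invariant, as noted at the start of the proof of Theorem \ref{thm-principal}) before invoking Corollary \ref{cor-calcul-constanteEF-e-pair}(1); (ii) to interchange the $K$-integral with the application of that corollary you should first argue for $\Phi$ supported in the open orbit and parameters in the convergence range, then conclude by rationality/density exactly as in Theorem \ref{thm-Zeta-mero} — the corollary is an identity of rational functions for each fixed $\cL(k)\Phi$, so this is routine but worth saying.
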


\begin{proof} As  $\Sr_e=\{1\}$,  we have  $\tilde{\rho}(\chi,s;1)=\rho(\chi,s)$ for all  $\chi\in\widehat{F^*}$ and  $s\in\C$. Also        $\ga_j(1,\ldots,1)=\ga_\tau(q)^j$ from Proposition  \ref{prop-calcul-gamma}. The Corollary is then an immediate consequence of  Theorem \ref{thm-principal}.

\end{proof}
\me 
\subsection{Another version of the functional equation.}
\hfill

Let  $\de$ be a unitary character of  $(F^*)^{k+1}$ and  $\mu\in\bU\cap \bC^+$.

 It is known from Theorem \ref{thm-Zeta-mero} that if  $\text{Re}(\mu_{0})<-1-\frac{kd}{4}$ then $Z_p^+(\Phi,0,\xi_{\de,\mu}^{\underline{a}},w)$ is defined by an absolutely convergent integral  for any $\Phi\in\cS(V^+)$ and  that if  $\text{Re}(\mu_{k})>1+\frac{kd}{4}$,  then $Z_p^-(\Psi,0,\xi_{\de,\mu}^{\underline{a}},w)$ is also defined by an absolutely  convergent  integral for any $\Psi\in\cS(V^-)$. 
 
 We will first describe the linear form $w\longmapsto Z_p^\pm(\Phi,0,\xi_{\de,\mu}^{\underline{a}},w) $ in terms of the duality (\ref{eq-dualite}).

\me

\begin{definition}\label{def-tildeZ} Let   $\de$ be a unitary character of $(F^*)^{k+1}$ and let  $\mu\in\bU\cap \bC^+$. Let also  $p\in\{1,\ldots, r_0\}$ and  $\underline{a}\in \cS_p$. If  $ \Phi\in \cS(V^+)$ and  $\Psi\in \cS(V^-)$, we define the two following functions of  $C(G,P,\de^*,\mu)$ (cf. (\ref{eq-CGP}))\,.
\me

-  for $\text{Re}(\mu_0)<-1-\frac{kd}{4}$ :
$$\tilde{Z}_p^+(\Phi, \pi_{\de,\mu}, \xi_{\de,\mu}^{\underline{a}})(g) \index{Ztildep+-@$\tilde{Z}_p^\pm(\Phi, \pi_{\de,\mu}, \xi_{\de,\mu}^{\underline{a}})(g)$}=c_{\de,\mu}(\underline{a})\int_{V^+} \Phi(g^{-1}X)\bP_{\om(\de),s(\mu)}^{\underline{a},+}(X)d^*X=\int_{G/\cH_p} \Phi( yI_p^+) \P_{\de,\mu}^{\underline{a}}(gy) dy \quad (g\in G) $$

 -  for $\text{Re}(\mu_{k})>1+\frac{kd}{4}$ :  
 $$\tilde{Z}_p^-(\Psi, \pi_{\de,\mu}, \xi_{\de,\mu}^{\underline{a}})(g)=c_{\de,\mu}(\underline{a})\int_{V^-} \Psi(g^{-1}X)\bP_{\om(\de)^\sharp,s(\mu)^\sharp}^{\underline{a},-}(X)d^*X=\int_{G/\cH_p} \psi(yI_p^-) \P_{\de,\mu}^{\underline{a}}(gy) dy \quad (g\in G).$$
  (The functions  $\P_{\de,\mu}^{\underline{a}}$, $\bP_{\om(\de),s(\mu)}^{\underline{a},+}$ et $\bP_{\om(\de)^\sharp,s(\mu)^\sharp}^{\underline{a},-}$ were given in  Definition \ref{def-noyauPoisson} and  \ref{def-Poisson-Omega} and the second equalities are a consequence of  Lemma \ref{lem-PV+}).

Note also that these functions are well defined because the Poisson kernels $\bP_{\om(\de),s(\mu)}^{\underline{a},+}$ and $ \bP_{\om(\de)^\sharp,s(\mu)^\sharp}^{\underline{a},-} $  are continuous for $\mu\in\bU\cap \bC^+$   satisfying the additional conditions $\text{Re}(\mu_0)<-1-\frac{kd}{4}$ and  $\text{Re}(\mu_{k})>1+\frac{kd}{4}$ respectively (see Remark \ref{rem-bP-V+}).
\end{definition}

Let  $C(K,P\cap K,\de^*)$   be the space of continuous functions  $v:K\longrightarrow \C$   such that  $v( pk)=\de(p)^{-1} v(k)$ for  $k\in K$ and  $p\in P\cap K$. The restriction to  $K$ is an isomorphism from  $C(G,P,\de^*,\mu)$ onto  $C(K,P\cap K,\de^*)$.

\begin{prop} Let   $\de$ be a unitary character of $(F^*)^{k+1}$ and let  $\mu\in\bU\cap \bC^+$. Let also  $p\in\{1,\ldots, r_0\}$ and  $\underline{a}\in \cS_p$

1) Let  $\varphi\in C_c^\infty(G)$, the linear form  $\pi_{\de,\mu}^*(\varphi)\xi_{\de,\mu}^{\underline{a}}$ is given (via the duality  (\ref{eq-dualite})) by the following function  in  $C(G,P,\de^*,\mu)$:
$$\pi_{\de,\mu}^*(\varphi)\xi_{\de,\mu}^{\underline{a}}(g)=\int_G\varphi(y) \P_{\de,\mu}^{\underline{a}}(gy) dy,\quad g\in G.$$

2) If   $\Phi^\pm\in C_c^\infty(\Om_p^\pm)$ we fix    $\varphi^\pm\in C_c^\infty(G)$ such that  $\Phi^\pm(g.I_p^\pm)=\int_{\cH_p} \varphi^\pm(gh) d_ph$ for  $g\in G$. Then 
$$\tilde{Z}_p^{\pm}(\Phi^\pm,\pi_{\de,\mu},\xi_{\de,\mu}^{\underline{a}})=\pi_{\de,\mu}^*(\varphi^\pm)\xi_{\de,\mu}^{\underline{a}}.$$
\end{prop}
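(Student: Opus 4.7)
The plan for part (1) is to start from Corollary \ref{cor-pi-star(g)xi}, which for $\mu\in\bU\cap\bC^+$ and $x\in G$ gives the matrix coefficient formula $\langle\pi_{\de,\mu}^*(x)\xi_{\de,\mu}^{\underline{a}},v\rangle=\int_K\P_{\de,\mu}^{\underline{a}}(kx)v(k)\,dk$ for any $v\in I_{\de,\mu}$. I will then integrate against $\varphi\in C_c^\infty(G)$, apply Fubini (which is justified because $\varphi$ has compact support and $\P_{\de,\mu}^{\underline{a}}$ is continuous on $G$ in the domain $\bU\cap\bC^+$), and define $F(g)=\int_G\varphi(y)\P_{\de,\mu}^{\underline{a}}(gy)\,dy$. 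The resulting identity is $\langle\pi_{\de,\mu}^*(\varphi)\xi_{\de,\mu}^{\underline{a}},v\rangle=\int_K F(k)v(k)\,dk$, so it remains to check that $F\in C(G,P,\de^*,\mu)$, so that the pairing (\ref{eq-dualite}) identifies the distribution $\pi_{\de,\mu}^*(\varphi)\xi_{\de,\mu}^{\underline{a}}$ with $F$. The transformation law $F(nlg)=\de_P(l)^{1/2}\de_\mu(l)^{-1}F(g)$ follows immediately from the corresponding property of $\P_{\de,\mu}^{\underline{a}}$ under left multiplication by $NL$ (cf.~Definition \ref{def-noyauPoisson}), while continuity of $F$ and its right invariance under a compact open subgroup of $G$ follow from standard smoothness arguments on $\varphi$ and a dominated convergence type argument on the relatively compact translate.

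For part (2), I will use part (1) to write $\pi_{\de,\mu}^*(\varphi^\pm)\xi_{\de,\mu}^{\underline{a}}(g)=\int_G\varphi^\pm(y)\P_{\de,\mu}^{\underline{a}}(gy)\,dy$, and then split the Haar integral on $G$ along the fibration $G\to G/\cH_p$ using the normalization $\int_G f(y)\,dy=\int_{G/\cH_p}\int_{\cH_p}f(yh)\,d_ph\,d_p\dot y$. The critical point is that $\P_{\de,\mu}^{\underline{a}}$ is right $\cH_p$-invariant (by construction of the Poisson kernel attached to an $(P,\cH_p)$-double coset), so $\P_{\de,\mu}^{\underline{a}}(gyh)=\P_{\de,\mu}^{\underline{a}}(gy)$ for $h\in\cH_p$; the inner $\cH_p$-integral then collapses to $\int_{\cH_p}\varphi^\pm(yh)\,d_ph=\Phi^\pm(y\cdot I_p^\pm)$ by the defining relation of $\varphi^\pm$. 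This gives $\pi_{\de,\mu}^*(\varphi^\pm)\xi_{\de,\mu}^{\underline{a}}(g)=\int_{G/\cH_p}\Phi^\pm(y\cdot I_p^\pm)\P_{\de,\mu}^{\underline{a}}(gy)\,d_p\dot y$, which is exactly $\tilde Z_p^{\pm}(\Phi^\pm,\pi_{\de,\mu},\xi_{\de,\mu}^{\underline{a}})(g)$ by Definition \ref{def-tildeZ}.

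The main obstacle is the justification of the Fubini exchanges, especially in (2) where one needs to know the double integral converges absolutely. For the $+$ case the convergence of $\int_{G/\cH_p}\Phi^+(yI_p^+)\P_{\de,\mu}^{\underline{a}}(gy)\,d_p\dot y$ is guaranteed by the assumption $\text{Re}(\mu_0)<-1-kd/4$ (which is precisely what was used to prove absolute convergence of $Z_p^+$ at $z=0$ in Theorem \ref{thm-Zeta-mero}(1)(a) translated via Lemma \ref{lem-PV+}), and similarly $\text{Re}(\mu_k)>1+kd/4$ handles the $-$ case. Since $\varphi^\pm\in C_c^\infty(G)$ and the fibers $y\cH_p$ meet $\mathrm{Supp}(\varphi^\pm)$ in a compact set, the inner $\cH_p$-integral is bounded with compact support on $G/\cH_p$, so absolute convergence of $\int_G|\varphi^\pm(y)\P_{\de,\mu}^{\underline{a}}(gy)|\,dy$ reduces to that of $\tilde Z_p^\pm$ and Fubini applies. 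Once this is established, the identification in (2) is essentially formal.
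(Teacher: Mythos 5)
Your proof is correct and follows essentially the same route as the paper: part (1) is Corollary \ref{cor-pi-star(g)xi} plus Fubini, and part (2) is the fibration of the Haar integral over $G\to G/\cH_p$ combined with the right $\cH_p$-invariance of the Poisson kernel and the defining relation between $\Phi^\pm$ and $\varphi^\pm$. The only difference is cosmetic (the paper runs the chain of equalities in (2) starting from $\tilde Z_p^+$ rather than from $\pi_{\de,\mu}^*(\varphi^+)\xi_{\de,\mu}^{\underline{a}}$), and your added justifications of the Fubini exchanges and of the membership $F\in C(G,P,\de^*,\mu)$ are sound.
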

\begin{proof} \hfill 

1) Using Corollary \ref{cor-pi-star(g)xi}, we obtain for   $v\in I_{\delta,\mu}$:
$$\langle \pi_{\de,\mu}^*(\varphi)\xi_{\de,\mu}^{\underline{a}},v\rangle=\int_{G}\varphi(y)\langle \pi_{\de,\mu}^*(y )\xi_{\de,\mu}^{\underline{a}},v\rangle dy =\int_{K}(\int_{G}\varphi(y) \P_{\de,\mu}^{\underline{a}}(ky)dy)v(k)dk.$$
Hence the linear form $ \pi_{\de,\mu}^*(\varphi)\xi_{\de,\mu}^{\underline{a}}$ is given via the duality (\ref{eq-dualite})) by the the function $g \mapsto \int_G\varphi(y) \P_{\de,\mu}^{\underline{a}}(gy) dy.$

2)  We only prove the assertion for $\tilde{Z}_p^+(\Phi, \pi_{\de,\mu}, \xi_{\de,\mu}^{\underline{a}})$, the proof for  $\tilde{Z}_p^-(\Psi, \pi_{\de,\mu}, \xi_{\de,\mu}^{\underline{a}})$ is similar.\me

From the definition of $\tilde{Z}_p^{+}(\Phi^+,\pi_{\de,\mu},\xi_{\de,\mu}^{\underline{a}})$ and from 1) we get
\begin{align*}
&\tilde{Z}_p^{+}(\Phi^+,\pi_{\de,\mu},\xi_{\de,\mu}^{\underline{a}})(g)= \int_{G/\cH_p} \Phi^+( yI_p^+) \P_{\de,\mu}^{\underline{a}}(gy) dy= \int_{G/\cH_p}( \int_{\cH_{p}}\varphi^+(yh)d_{p}h)\P_{\de,\mu}^{\underline{a}}(gy)dy\\
&=\int_{G}\varphi^+(y)\P_{\de,\mu}^{\underline{a}}(gy)dy=\pi_{\de,\mu}^*(\varphi^+)\xi_{\de,\mu}^{\underline{a}}(g).\cr
\end{align*}
 \end{proof}

\begin{prop}\label{prop-Ztilde} Let   $\de$ be a unitary character of $(F^*)^{k+1}$ and let  $\mu\in\bU\cap \bC^+$. Let also  $p\in\{1,\ldots, r_0\}$ and  $\underline{a}\in \cS_p$

1)  Let  $\Phi^\pm\in\cS(V^\pm)$. Then   the map 
$$\mu\mapsto \tilde{Z}_p^+(\Phi^+, \pi_{\de,\mu}, \xi_{\de,\mu}^{\underline{a}})_{|_{K}}\in C(K,P\cap K,\de^*)$$ extends meromorphically from the set  $\{\mu \in \bU\cap \bC^+; \text{Re}(\mu_0)<-1-\frac{kd}{4}\}$ to $\bU$, 

and the map  $$\mu\mapsto \tilde{Z}_p^-(\Phi^-, \pi_{\de,\mu}, \xi_{\de,\mu}^{\underline{a}})_{|_{K}}\in C(K,P\cap K,\de^*)$$ extends meromorphically from the set  $\{\mu \in \bU\cap \bC^+;  \text{Re}(\mu_k)>1+\frac{kd}{4}\}$ to $\bU$.\me

2) Using the duality bracket  (\ref{eq-dualite}), the linear form  on $I_{\de,\mu}$ represented by the functions $\tilde{Z}_p^\pm(\Phi, \pi_{\de,\mu}, \xi_{\de,\mu}^{\underline{a}})$    are  given for $w\in I_{\de,\mu}$,  by 
$$\langle \tilde{Z}_p^+(\Phi,\pi_{\de,\mu}, \xi_{\de,\mu}^{\underline{a}}),w\rangle= Z_p^+(\Phi, 0,  \xi_{\de,\mu}^{\underline{a}},w),\quad
\text{and}\quad
\langle \tilde{Z}_p^-(\Psi,\pi_{\de,\mu}, \xi_{\de,\mu}^{\underline{a}}),w\rangle= Z_p^-(\Psi, 0,  \xi_{\de,\mu}^{\underline{a}},w).$$

\end{prop}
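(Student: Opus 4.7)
The plan is to prove Part 2 first by a direct Fubini argument using the Poisson-kernel description of the $\cH_p$-invariant linear forms, and then to prove Part 1 by expressing $\tilde Z_p^{\pm}$ explicitly as a finite sum of the prehomogeneous zeta functions $\cK^{\pm}_c$ from Definition \ref{def-zetaP}, so that the meromorphic continuation follows from Theorem \ref{thm-Sato-kp}.

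\textbf{Part 2.} In the region of absolute convergence I would use the alternative expression of $\tilde Z_p^+$ from Definition \ref{def-tildeZ}, namely
$$\tilde Z_p^+(\Phi, \pi_{\de,\mu}, \xi_{\de,\mu}^{\underline a})(g) = \int_{G/\cH_p} \Phi(yI_p^+)\, \P^{\underline a}_{\de,\mu}(gy)\, dy.$$
Pairing with $w \in I_{\de,\mu}$ via the duality \eqref{eq-dualite} gives
$$\langle \tilde Z_p^+(\Phi, \pi_{\de,\mu}, \xi_{\de,\mu}^{\underline a}), w\rangle = \int_K \int_{G/\cH_p} \Phi(yI_p^+)\, \P^{\underline a}_{\de,\mu}(ky)\, dy\, w(k)\, dk.$$
Interchanging the two integrations (justified by absolute convergence) and applying Corollary \ref{cor-pi-star(g)xi}, the inner $K$-integral becomes $\langle \pi^*_{\de,\mu}(y)\xi_{\de,\mu}^{\underline a}, w\rangle$, which yields
$$\langle \tilde Z_p^+(\Phi, \pi_{\de,\mu}, \xi_{\de,\mu}^{\underline a}), w\rangle = \int_{G/\cH_p} \Phi(yI_p^+)\, \langle \pi^*_{\de,\mu}(y)\xi_{\de,\mu}^{\underline a}, w\rangle\, dy = Z_p^+(\Phi, 0, \xi_{\de,\mu}^{\underline a}, w),$$
the last equality using the $G$-equivariant identification $y\cH_p \mapsto yI_p^+$ between $G/\cH_p$ and $\Om_p^+$ together with the compatibility of measures from Proposition \ref{prop-mesures-2}. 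The $-$ case proceeds identically.

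\textbf{Part 1.} In the region of absolute convergence, substituting the explicit form of the Poisson kernel from Definition \ref{def-Poisson-Omega} into Definition \ref{def-tildeZ}, converting $d^*X = dX/|\De_0(X)|^m$, and using the decomposition $P\cdot I^+(\underline a, 1) = \bigsqcup_{x\in \Sr_e} \cO^+(x\underline a, x)$ (up to a null set) implicit in Lemma \ref{lemme-Ptilde} and already employed in the proof of Theorem \ref{thm-Zeta-mero} part 1(a), I obtain
$$\tilde Z_p^+(\Phi, \pi_{\de,\mu}, \xi_{\de,\mu}^{\underline a})(g) = c_{\de,\mu}(\underline a) \sum_{x \in \Sr_e} \cK^+_{(x\underline a, x)}(\cL(g)\Phi, \om(\de), s(\mu) - m).$$
Since $\Phi$ is locally constant with compact support, there exists a compact open subgroup $K_\Phi \subset K$ such that $\cL(k_1)\Phi = \cL(k_2)\Phi$ whenever $k_1 K_\Phi = k_2 K_\Phi$; hence $\tilde Z_p^+(\Phi, \pi_{\de,\mu}, \xi_{\de,\mu}^{\underline a})_{|_K}$ is locally constant on $K$ with values in a fixed finite-dimensional subspace of $C(K, P\cap K, \de^*)$. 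By Theorem \ref{thm-Sato-kp}, each of the finitely many values extends to a rational function in $q^{\pm\mu_j}$ with a common denominator $R^+(\om(\de), s(\mu)-m)$ depending neither on $k$ nor on $\Phi$. Therefore the vector-valued map $\mu \mapsto \tilde Z_p^+(\Phi, \pi_{\de,\mu}, \xi_{\de,\mu}^{\underline a})_{|_K}$ extends meromorphically to $\bU$, and the transformation law $v(pk) = \de(p)^{-1}v(k)$ is preserved under continuation by uniqueness of analytic continuation. The $-$ case is analogous, using Theorem \ref{thm-Zeta-mero} part 2(a).

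\textbf{Main obstacle.} The technical heart is the explicit formula in Part 1, which requires checking the decomposition of the $P$-orbit $P\cdot I^+(\underline a, 1)$ into the $\tilde P$-orbits $\cO^+(x\underline a, x)$, $x \in \Sr_e$, together with the careful tracking of the shift $s(\mu) - m$ (i.e. the absorption of $|\De_0|^{-m}$ from $d^*X = dX/|\De_0|^m$) in the arguments of the prehomogeneous zeta functions. This bookkeeping is essentially carried out in the proof of Theorem \ref{thm-Zeta-mero}, so adapting it to the present setting is routine; once it is granted, both assertions become formal consequences of Theorem \ref{thm-Sato-kp} (for the meromorphic continuation) and of the integration-over-$K$ identity in Theorem \ref{thm-Zeta-mero} (for the duality identity).
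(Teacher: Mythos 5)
Your proposal is correct and follows essentially the same route as the paper: the identity $\tilde Z_p^+(\Phi,\pi_{\de,\mu},\xi_{\de,\mu}^{\underline a})(g)=c_{\de,\mu}(\underline a)\sum_{x\in\Sr_e}\cK^+_{(x\underline a,x)}(\cL(g)\Phi,\om(\de),s(\mu)-m)$ plus Theorem \ref{thm-Sato-kp} for Part 1, and the Fubini/Corollary \ref{cor-pi-star(g)xi} computation (which is exactly the computation already carried out in the proof of Theorem \ref{thm-Zeta-mero}(1)(a), specialized to $z=0$) for Part 2.
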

\begin{proof} We only prove the two assertions for $\tilde{Z}_p^+(\Phi, \pi_{\de,\mu}, \xi_{\de,\mu}^{\underline{a}})$, the proof for  $\tilde{Z}_p^-(\Psi, \pi_{\de,\mu}, \xi_{\de,\mu}^{\underline{a}})$ is similar.\me

1) The proof  of  Theorem \ref{thm-Zeta-mero} shows that 
$$\tilde{Z}_p^+(\Phi, \pi_{\de,\mu}, \xi_{\de,\mu}^{\underline{a}})(g)=c_{\de,\mu}(\underline{a})\int_{V^+}\Phi(g^{-1}X) \bP_{\om(\de),s(\mu)}^{\underline{a},+}(X) d^*X$$
$$=c_{\de,\mu}(\underline{a})\sum_{x\in\Sr_e}\cK^+_{(x\underline{a},x)}(\cL(g)\Phi,\om(\de), s(\mu)-m).$$

Considering the right hand side of the preceding equality, Theorem  \ref{thm-Sato-kp} implies that, for a fixed $g\in G$, the map $$\mu\,\,\,\,\,\, \longmapsto \tilde{Z}_p^+(\Phi, \pi_{\de,\mu}, \xi_{\de,\mu}^{\underline{a}})(g)$$
is a rational function in the variables  $q^{\pm\mu_j}$ and hence extends to $\bU$ as a meromorphic function.     As the map  $g\mapsto \cL(g)\Phi$ is locally constant (because the representation $\cL$ on $\cS(V^+)$ is smooth), the function $\tilde{Z}_p^+(\Phi, \pi_{\de,\mu}, \xi_{\de,\mu}^{\underline{a}})$ is continuous  for any value of the parameter $\mu$ for which it is defined.  

Moreover as $\tilde{Z}_p^+(\Phi, \pi_{\de,\mu}, \xi_{\de,\mu}^{\underline{a}})(g)=  c_{\de,\mu}(\underline{a})\int_{V^+}\Phi( X) \bP_{\om(\de),s(\mu)}^{\underline{a},+}(gX) d^*X$  for $ \mu \in \bU\cap \bC^+$ such that $ \text{Re}(\mu_0)<-1-\frac{kd}{4}$,  Lemma \ref{lem-PV+} implies that  this function belongs to $C(G,P,\de^*,\mu)$. This remains true by analytic continuation.\me

  2)  Again from the proof of Theorem \ref{thm-Zeta-mero} we get:
  \begin{align*}Z_p^+(\Phi, z, \xi_{\de,\mu}^{\underline{a}}, w)&=
  c_{\de,\mu}(\underline{a})\sum_{x\in\Sr_e}\int_K \cK_{(x\underline{a},x)}^+(\cL(k)\Phi ,\om(\de), s(\mu)-m) w(k) dk\\
  {}&= \int_K (c_{\de,\mu}(\underline{a})\sum_{x\in\Sr_e}  \cK_{(x\underline{a},x)}^+(\cL(k)\Phi ,\om(\de), s(\mu)-m)) w(k) dk\\
  {}&=\int_{K}\tilde{Z}_p^+(\Phi, \pi_{\de,\mu}, \xi_{\de,\mu}^{\underline{a}})(k)w(k)dk\\
  {}&=\langle\tilde{Z}_p^+(\Phi, \pi_{\de,\mu}, \xi_{\de,\mu}^{\underline{a}}),w\rangle 
  \end{align*}

\end{proof}
Consider the representation $|\chi_0|^{-m}\otimes \pi_{\de,\mu}$ of  $G$ on  $I_{\de,\mu}$ given by  $|\chi_0|^{-m}\otimes \pi_{\de,\mu}(g)w=|\chi_0(g)|^{-m} \pi_{\de,\mu}(g)w$. If $p\in\{1,\ldots,r_0\}$, the  $\cH_p$-invariant linear forms  on  $I_{\de,\mu}$ are the same  for $|\chi_0|^{-m}\otimes \pi_{\de,\mu}$ and for $\pi_{\de,\mu}$ and for  $\underline{a}\in\cS_p$, we have  
$$(|\chi_0|^{-m}\otimes \pi_{\de,\mu})^*(g)\xi_{\de,\mu}^{\underline{a}}=|\chi_0(g)|^m \pi^*_{\de,\mu}(g)\xi_{\de,\mu}^{\underline{a}}.$$

 From the definition, if  $\Phi\in\cS(V^+)$ and   $w\in I_{\de,\mu}$, we have 
$$Z_p^+(\Phi,m, \xi_{\de,\mu}^{\underline{a}},w)=\int_{\Om_p^+} \Phi(X)  |\De_0(X)|^m\langle\pi_{\de,\mu}^*(X)\xi_{\de,\mu}^{\underline{a}},w\rangle d^*X$$
$$=\int_{\Om_p^+} \Phi(X)  \langle(|\chi_0|^{-m}\otimes \pi_{\de,\mu})^*(X) \xi_{\de,\mu}^{\underline{a}},w\rangle d^*X.$$
 As in Definition \ref{def-tildeZ}, for $\mu\in\bU\cap \bC^+$ such that $\text{Re}(\mu_0)<\frac{kd}{4}$ and $\Phi\in\cS(V^+)$, we define 
\begin{equation}\label{eq-Ztilde-rho}\tilde{Z}_p^+(\Phi,|\chi_0|^{-m}\otimes \pi_{\de,\mu},  \xi_{\de,\mu}^{\underline{a}})(g)=c_{\de,\mu}(\underline{a})\int_{V^+} \Phi(g^{-1}X)|\Delta_{0}(g^{-1}X)|^m\bP_{\om(\de),s(\mu)}^{\underline{a},+}(X)d^*X.\end{equation}

  Then the same arguments as in the proof of Proposition \ref{prop-Ztilde} show that 
\beq\label{eq-dualite-Ztilde-rho} \langle\tilde{Z}_p^+(\Phi,|\chi_0|^{-m}\otimes \pi_{\de,\mu},  \xi_{\de,\mu}^{\underline{a}}),w\rangle=Z^+_{p}(\Phi,m, \xi_{\de,\mu}^{\underline{a}},w) .\eeq

\begin{definition} Let  $\de$ be a unitary character of $(F^*)^{k+1}$ and  $\mu\in\bU$. Let $\xi=(\xi_1,\ldots, \xi_{r_0})\in \prod_{p=1}^{r_0}  (I_{\de,\mu}^*)^{\cH_p}$. For $\Phi\in \cS(V^+)$,  $\Psi\in \cS(V^-)$,   we define the following linear forms on  $I_{\de,\mu}$ :
$$\tilde{Z}^+(\Phi,|\chi_0|^{-m}\otimes \pi_{\de,\mu}, \xi)=\sum_{p=1}^{r_0} \tilde{Z}_p^+(\Phi,|\chi_0|^{-m}\otimes \pi_{\de,\mu}, \xi_p)$$ 
and also
$$ \tilde{Z}^-(\Psi,\pi_{\de,\mu}, \xi)=\sum_{p=1}^{r_0} \tilde{Z}_p^-(\Psi,\pi_{\de,\mu}, \xi_p).$$
\end{definition}

\begin{theorem}\label{thm-mainbis}   {\bf (Main Theorem, version 2)}\hfill

Let   $\de$ be a unitary character of  $(F^*)^{k+1}$ and  $\mu\in\bU$. There exists an operator  $\displaystyle A^{\de,\mu}\in{\rm End}\Big( \prod_{p=1}^{r_0} (I_{\de,\mu}^*)^{\cH_p}\Big)$ such that for all $\Phi\in \cS(V^+)$ and all  $\xi\in \prod_{p=1}^{r_0}  (I_{\de,\mu}^*)^{\cH_p}$ the following functional equation is satisfied:
$$\tilde{Z}^-(\cF(\Phi),\pi_{\de,\mu},\xi)=\tilde{Z}^+(\Phi, |\chi_0|^{-m}\otimes \pi_{\de,\mu},A^{\de,\mu}\xi).$$
The operator $A^{\de,\mu}$ is represented by a square matrix of size  $r_0$ whose coefficients  $A_{p,q}^{\de,\mu}$ belong to ${\rm Hom} \Big((I_{\de,\mu}^*)^{\cH_q},(I_{\de,\mu}^*)^{\cH_p}\Big)$. In the bases   $(\xi_{\de,\mu}^{\underline{a}})_{\underline{a}\in \cS_p}$  and  $(\xi_{\de,\mu}^{\underline{c}})_{\underline{c}\in \cS_q}$ of  $(I_{\de,\mu}^*)^{\cH_p}$ and  $(I_{\de,\mu}^*)^{\cH_q}$ respectively, the matrix of  of  $A_{p,q}^{\de,\mu}$ is given by: 
$$\Big(A_{p,q}^{\de,\mu}\Big)_{\underline{a},\underline{c}}= \cB_{\underline{a},\underline{c}}(\de,\mu)(\frac{m+1}{2}),$$
where    $ \cB_{\underline{a},\underline{c}}(\de,\mu)(z)$ has been defined in  Theorem \ref{thm-principal}.
\end{theorem}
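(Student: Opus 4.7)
The plan is to derive Theorem \ref{thm-mainbis} as a repackaging of the scalar functional equation of Theorem \ref{thm-principal}, specialized at the symmetric point $z=\frac{m+1}{2}$, and then reinterpreted via the operator-valued objects $\tilde{Z}^{\pm}$. The strategy is conceptually straightforward: (a) choose the correct value of $z$ so that both arguments of the zeta functions in Theorem \ref{thm-principal} land at the points where the $\tilde{Z}^{\pm}$ representatives of Proposition \ref{prop-Ztilde} and of equation \eqref{eq-dualite-Ztilde-rho} live; (b) read off the matrix of $A^{\delta,\mu}$ directly from the coefficients $\cB_{\underline{a},\underline{c}}(\delta,\mu)(z)$; (c) sum the scalar identities over the open $G$-orbits to obtain the global identity of linear forms on $I_{\delta,\mu}$.

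First I would set $z=\frac{m+1}{2}$ in Theorem \ref{thm-principal}. Then $\frac{m+1}{2}-z=0$ on the left-hand side and $z+\frac{m-1}{2}=m$ on the right-hand side, so the functional equation becomes, for each $\underline{a}\in\cS_p$ and each $w\in I_{\delta,\mu}$,
\begin{equation*}
Z_{p_{\underline{a}}}^{-}(\cF\Phi,0,\xi_{\delta,\mu}^{\underline{a}},w)
=\sum_{\underline{c}\in\Sr_e^{k}}\cB_{\underline{a},\underline{c}}(\delta,\mu)(\tfrac{m+1}{2})\,Z_{p_{\underline{c}}}^{+}(\Phi,m,\xi_{\delta,\mu}^{\underline{c}},w).
\end{equation*}
By Proposition \ref{prop-Ztilde}, the left-hand side equals $\langle\tilde{Z}_{p_{\underline{a}}}^{-}(\cF\Phi,\pi_{\delta,\mu},\xi_{\delta,\mu}^{\underline{a}}),w\rangle$, and by \eqref{eq-dualite-Ztilde-rho} each summand on the right-hand side equals $\langle\tilde{Z}_{p_{\underline{c}}}^{+}(\Phi,|\chi_0|^{-m}\otimes\pi_{\delta,\mu},\xi_{\delta,\mu}^{\underline{c}}),w\rangle$. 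Since this holds for all $w\in I_{\delta,\mu}$, I obtain an identity of $\mathcal{H}_{p_{\underline{a}}}$-invariant linear forms, namely
\begin{equation*}
\tilde{Z}_{p_{\underline{a}}}^{-}(\cF\Phi,\pi_{\delta,\mu},\xi_{\delta,\mu}^{\underline{a}})
=\sum_{\underline{c}\in\Sr_e^{k}}\cB_{\underline{a},\underline{c}}(\delta,\mu)(\tfrac{m+1}{2})\,\tilde{Z}_{p_{\underline{c}}}^{+}(\Phi,|\chi_0|^{-m}\otimes\pi_{\delta,\mu},\xi_{\delta,\mu}^{\underline{c}}).
\end{equation*}

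Next I define $A^{\delta,\mu}\in\operatorname{End}\bigl(\prod_{p=1}^{r_0}(I_{\delta,\mu}^{*})^{\cH_p}\bigr)$ block by block: the $(p,q)$-block $A_{p,q}^{\delta,\mu}\in\operatorname{Hom}((I_{\delta,\mu}^{*})^{\cH_q},(I_{\delta,\mu}^{*})^{\cH_p})$ has, in the bases $(\xi_{\delta,\mu}^{\underline{a}})_{\underline{a}\in\cS_p}$ and $(\xi_{\delta,\mu}^{\underline{c}})_{\underline{c}\in\cS_q}$, matrix entry $\cB_{\underline{a},\underline{c}}(\delta,\mu)(\frac{m+1}{2})$. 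The decomposition $\Sr_e^{k}=\bigsqcup_{q=1}^{r_0}\cS_q$ partitions the summation index $\underline{c}$ in the displayed identity above according to which $G$-orbit $\Omega_q^+$ contains $I^+(\underline{c},1)$, and by linearity the identity extends from basis vectors to all $\xi\in\prod_{p=1}^{r_0}(I_{\delta,\mu}^{*})^{\cH_p}$. Summing both sides over $p\in\{1,\dots,r_0\}$ (equivalently, collecting the blocks) and applying the definitions of $\tilde{Z}^{\pm}$ as sums of the $\tilde{Z}_p^{\pm}$, I arrive at the desired operator-valued functional equation
\begin{equation*}
\tilde{Z}^{-}(\cF\Phi,\pi_{\delta,\mu},\xi)
=\tilde{Z}^{+}(\Phi,|\chi_0|^{-m}\otimes\pi_{\delta,\mu},A^{\delta,\mu}\xi).
\end{equation*}

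The only genuine subtlety, and thus the main obstacle, is the issue of meromorphic continuation. The scalar functional equation of Theorem \ref{thm-principal} is an identity of rational functions of $q^{\pm z}$ and $q^{\pm\mu_j}$; evaluating it at $z=\frac{m+1}{2}$ is legitimate provided no pole is encountered. The $\tilde{Z}^{\pm}$ are defined on the convergence domains of Definition \ref{def-tildeZ} ($\text{Re}(\mu_0)<-1-\frac{kd}{4}$ for $\tilde{Z}^{+}$, $\text{Re}(\mu_k)>1+\frac{kd}{4}$ for $\tilde{Z}^{-}$, respectively $\text{Re}(\mu_0)<\frac{kd}{4}$ for the twisted version of $\tilde{Z}^{+}$ in \eqref{eq-Ztilde-rho}), but Proposition \ref{prop-Ztilde} extends them meromorphically to all $\mu\in\bU$. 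I would therefore first establish the identity in a nonempty open subset of $\bU$ where all four convergence conditions hold simultaneously --- namely the intersection of $\bU\cap\bC^{+}$ with $\{\text{Re}(\mu_0)<\frac{kd}{4}\}\cap\{\text{Re}(\mu_k)>1+\frac{kd}{4}\}$, which is nonempty because the constraints concern different coordinates of $\mu$ --- and then invoke analytic continuation in $\mu$ to extend the identity to all $\mu\in\bU$. Independence of $A^{\delta,\mu}$ from the choice of extension of $\delta_\mu$ to a character of $(F^{*})^{k+1}$ is inherited from the corresponding property of the $\cB_{\underline{a},\underline{c}}(\delta,\mu)(z)$ established at the end of Theorem \ref{thm-principal}.
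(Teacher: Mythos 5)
Your proposal is correct and follows essentially the same route as the paper's proof: specialize the scalar functional equation of Theorem \ref{thm-principal} at $z=\frac{m+1}{2}$, translate both sides into the linear forms $\tilde{Z}^{\pm}$ via Proposition \ref{prop-Ztilde} and \eqref{eq-dualite-Ztilde-rho}, regroup the sum over $\underline{c}\in\Sr_e^{k}$ according to the partition $\bigsqcup_{q}\cS_q$, and define $A^{\de,\mu}$ block by block. Your additional care about where the convergence domains overlap before invoking meromorphic continuation in $\mu$ is a sound (and slightly more explicit) treatment of a point the paper leaves implicit, but it does not change the argument.
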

\begin{proof}\hfill

 Let $p\in\{1,\ldots, r_0\}$ and  $\underline{a}\in\cS_p$.  Let $\xi\in \prod_{s=1}^{r_0}  (I_{\de,\mu}^*)^{\cH_s}$ be the vector whose  $p$-th component is equal to $\xi_{\de,\mu}^{\underline{a}}$ and all other components are equal to $0$.

Using    Proposition \ref{prop-Ztilde},  Theorem \ref{thm-principal} and (\ref{eq-Ztilde-rho}), we obtain for  $w\in I_{\de,\mu}$:
\begin{align*}& \langle \tilde{Z}^-(\cF(\Phi),\pi_{\de,\mu}, \xi),w\rangle=\langle \tilde{Z}_p^-(\cF(\Phi),\pi_{\de,\mu}, \xi_{\de,\mu}^{\underline{a}}), w\rangle=Z_p^-(\cF(\Phi), 0, \xi_{\de,\mu}^{\underline{a}},w)\\
&= \sum_{\underline{c}\in \Sr_e^{k}} \cB_{\underline{a},\underline{c}}(\de,\mu)(\frac{m+1}{2})Z_{p_{\underline{c}}}^+(\Phi,m, \xi_{\de,\mu}^{\underline{c}},w)\\
&=\sum_{q=1}^{r_0}\sum_{\underline{c}\in \cS_q}\cB_{\underline{a},\underline{c}}(\de,\mu)(\frac{m+1}{2})Z_{q}^+(\Phi,m, \xi_{\de,\mu}^{\underline{c}},w)\\
 &=\sum_{q=1}^{r_0}\sum_{\underline{c}\in \cS_q}\cB_{\underline{a},\underline{c}}(\de,\mu)(\frac{m+1}{2})\langle \tilde{Z}_q^+(\Phi, \chi_0^{-m}\otimes \pi_{\de,\mu},\xi_{\de,\mu}^{\underline{c}}),w\rangle\\
&=\langle\tilde{Z}^+(\Phi, |\chi_0|^{-m}\otimes \pi_{\de,\mu},A^{\de,\mu}\xi),w\rangle.
 \end{align*}
 \end{proof}

\me 
\subsection{$L$-functions and $\ep$-factors.}
\hfill

In this section we will always suppose that  $e=0$ or $4$. This implies that the groups $G$ and $P$ have both a unique open orbit in $V^\pm$ which we denote by $\Om^\pm$ and  $\cO^\pm$ respectively.

 Then  $\Om^\pm\simeq G/H$ where  $H$ is the centralizer of $I^\pm$ in  $G$.\me

We fix a unitary character  $\de=(\de_{0},\ldots,\de_{k})\in\widehat{F^*}^{k+1}$ and  $\mu\in\bU$. As before we denote by  $(\pi_{\de,\mu},I_{\de,\mu})$ the minimal spherical principal series. As $e=0$ or $4$, the group $S_e$ is equal to  $F^*$ and hence   $\Sr_e=\{1\}$. Therefore, in what follows, we will omit the indexes in this space.  The space  $(I_{\de,\mu}^*)^H$ is $1$-dimensional  and we fix  a non zero linear form  $\xi$ in this space.\me

 The aim of this section is to prove an analogue of  (\cite{G-J} Theorem 3.3.3)) for the minimal spherical principal series.

\begin{definition}\label{def-facteur-eulerien} \hfill

\begin{enumerate}\item An Euler factor \index{Euler factor} is a function $L$ on  $\C$ of the form  $\displaystyle L(s)=\frac{1}  {P(q^{-s})}$ where  $P\in\C[T]$ is a polynomial such that  $P(0)=1$.  

\item We denote by  $\cE^\pm$ the set of Eulor factors  $L^\pm(z)$ such that for all $\Phi^\pm\in\cS(V^\pm)$ and all  $w\in I_{\de,\mu}$ the quotient  
$\displaystyle \frac{Z^\pm(\Phi^\pm,z+\frac{1}{2}(m-1),\xi,w)}{L^\pm(z)}$ is a polynomial in the variables  $q^{-z}$ and  $q^z$.
\end{enumerate}
\end{definition}
\me
  We know from Theorem \ref{thm-Zeta-mero}, that there exist polynomials  $\cR^\pm (\de,\mu,z)$ in the variable  $q^{-z}$, which are products of polynomials of the form  $(1-cq^{-Nz})$ ($c\in\C$ and $N\in\N^*$)  such that, for all  $w\in I_{\de,\mu}$ and for all $\Phi^\pm\in\cS(V^\pm)$, the product  $\cR^\pm(\de,\mu,z) Z^\pm(\Phi^\pm, z,\xi,w)$ is a polynomial in the variables  $q^{-z}$ and  $q^z$.\me

Hence  $\cE^\pm\neq \emptyset$.
\begin{lemme}\label{lem-ideal-poly} \hfill

  {\rm(1)} Let  $L_0^+(z)=P_0(q^{-z})^{-1}\in\cE^+$. We denote by   $\cJ_{P_0}$ the set of Laurent polynomials  $P\in\C[T,T^{-1}]$ such that there exists  finite families  $(\Phi_j)_{j\in J}$ in  $\cS(V^+)$  and  $(w_j)_{j\in J}$ in  $ I_{\de,\mu}$   such that 
 $$\sum_{j\in J}Z^+(\Phi_j, z+\frac{1}{2}(m-1),\xi,w_j)= \frac{P(q^{-z},q^z)}{P_{0}(q^{-z})}.$$
 Then  $\cJ_{P_0}$ is an ideal of  $\C[T,T^{-1}]$.
 
 {\rm(2)}  Let  $L_0^-(z)=Q_0(q^{-z})^{-1}\in\cE^-$. We denote by $\cJ_{Q_0}$ the set of Laurent polynomials $Q\in\C[T,T^{-1}]$ such that there exists  finite families  $(\Psi_j)_{j\in J}$ in  $\cS(V^+)$ and  $(w_j)_{j\in J}$ in  $ I_{\de,\mu}$  such that 
 $$\displaystyle \sum_{j\in J}Z^-(\Psi_j, z+\frac{1}{2}(m-1),\xi,w_j)= \frac{Q(q^{-z},q^z)}{Q_{0}(q^{-z})}.$$ 
 Then  $\cJ_{Q_0}$ is an ideal of  $\C[T,T^{-1}]$.

\end{lemme}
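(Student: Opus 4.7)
The plan is to verify, for $\cJ_{P_0}$ and then $\cJ_{Q_0}$ by the same argument, the two ingredients that make a subset of $\C[T,T^{-1}]$ an ideal: that it is an additive subgroup and that it is stable under multiplication by $T^n$ for every $n\in\Z$. Since every Laurent polynomial is a $\C$-linear combination of monomials $T^n$, these two properties together suffice. The additive part is formal: the empty family gives $0\in\cJ_{P_0}$; concatenating families realizing $P/P_0$ and $P'/P_0$ realizes $(P+P')/P_0$; and replacing each $w_j$ by $\lambda w_j$ realizes $\lambda P/P_0$ for any $\lambda\in\C$ by linearity of $w\mapsto Z^+(\Phi,z,\xi,w)$.

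The substantive step is stability under multiplication by $T^n$. For $g\in G$ and $\Phi\in\cS(V^+)$, set $\Phi^g(X)=\Phi(g^{-1}X)$, which still lies in $\cS(V^+)$ because $g$ acts linearly on $V^+$. Performing the change of variable $X=gY$ (legitimate by the $G$-invariance of $d^*X$), using the identity $|\De_0(gY)|=|\chi_0(g)|\,|\De_0(Y)|$, and the cocycle relation $\pi_{\de,\mu}^*(gY)\xi=\pi_{\de,\mu}^*(g)\pi_{\de,\mu}^*(Y)\xi$ (well defined because $\xi$ is $H$-invariant and because $\pi_{\de,\mu}^*$ is a representation of $G$), I obtain
\begin{equation*}
Z^+(\Phi^g,z',\xi,w)\;=\;|\chi_0(g)|^{z'}\,Z^+\bigl(\Phi,z',\xi,\pi_{\de,\mu}(g^{-1})w\bigr).
\end{equation*}
Applied to a realization $\sum_j Z^+(\Phi_j,z+\tfrac{m-1}{2},\xi,w_j)=P/P_0$, the substitutions $\Phi_j\mapsto\Phi_j^g$ and $w_j\mapsto\pi_{\de,\mu}(g)w_j$ (which keep the $w_j$ inside $I_{\de,\mu}$) therefore produce a new realization equal to $|\chi_0(g)|^{z+(m-1)/2}\cdot P/P_0$.

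To extract the factor $T^n$ I choose $g\in G$ with $\chi_0(g)=\pi^n$, giving $|\chi_0(g)|=q^{-n}$ and hence $|\chi_0(g)|^{z+(m-1)/2}=q^{-n(m-1)/2}\,T^n$. Such a $g$ exists precisely because the standing hypothesis $e\in\{0,4\}$ puts us in type I, where Theorem \ref{thm-orbites-e04} yields $\chi_0(G)=F^*$. Consequently $q^{-n(m-1)/2}T^n P\in\cJ_{P_0}$, and rescaling the $w_j$'s by a constant gives $T^nP\in\cJ_{P_0}$, finishing the proof that $\cJ_{P_0}$ is an ideal. The argument for $\cJ_{Q_0}$ is verbatim after exchanging $V^+,\De_0,\chi_0,Z^+$ for $V^-,\nabla_0,\chi_0^-,Z^-$; the analogue $|\nabla_0(gY)|=|\chi_0^-(g)|\,|\nabla_0(Y)|$ plays the same role, and by Lemma \ref{lem-propchij} we have $\chi_0^-=\chi_0^{-1}$, so $\chi_0^-(G)=F^*$ still holds and every $T^n$ remains attainable.

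The main ``obstacle'' is purely conceptual: one has to notice that the correct invariance to exploit is the scalar transformation of $Z^\pm$ under left translation of $\Phi$ by $g\in G$, combined with the fact that, precisely in type I, $\chi_0$ is surjective onto $F^*$. Everywhere else the verification is bookkeeping; in the other types one would only obtain a proper sublattice of $q^{\Z}$ and the claim would fail as stated.
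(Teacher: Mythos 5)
Your proof is correct and follows essentially the same route as the paper: both arguments reduce to the translation identity $Z^+(\Phi(g^{-1}\cdot),z',\xi,w)=|\chi_0(g)|^{z'}Z^+(\Phi,z',\xi,\pi_{\de,\mu}(g^{-1})w)$ combined with the surjectivity $\chi_0(G)=F^*$ in type I (Theorem \ref{thm-orbites-e04}) to realize multiplication by $T^n$, the only cosmetic difference being that the paper absorbs the constant $|\chi_0(g)|^{-(m-1)/2}$ into the test function from the start while you rescale the $w_j$ at the end.
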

\begin{proof} We only prove assertion $(1)$, the proof of (2) is similar. As $e=0$ or $4$, one has  $\chi_0(G)=F^*$ (Theorem \ref{thm-orbites-e04}), and  therefore there exists $g\in G$  such $|\chi_0(g)|=q^{-r}$ (for any $r\in \Z$). 

Let  $\Phi\in\cS(V^+)$ and  $w\in I_{\de,\mu}$.  Define:
$$\Phi_r(X)=\Phi(g^{-1}X)|\chi_{0}(g)|^{-\frac{1}{2}(m-1)} \text{ and } w_r=\pi_{\de,\mu}(g)w$$. 

As $m=1+\frac{kd}{2}$ in our case, we know from  Theorem \ref{thm-Zeta-mero} (1)(a), that for  $\mu=\mu_0\la_0+\ldots +\mu_k\la_k\in\bU\cap \bC^+$ such that  ${\rm Re}(z-\mu_0)>1$,
the zeta function $Z^+(\Phi_r, z+\frac{1}{2}(m-1),\xi, w_r)$ is defined by an absolutely convergent integral. More precisely we have:

$$\begin{array}{lll} \displaystyle
Z^+(\Phi_r, z+\frac{1}{2}(m-1),\xi, w_r)&=&\displaystyle\int_{V^+}\Phi_r(X)|\Delta_{0}(X)|^{z+\frac{1}{2}(m-1)}\langle \pi_{\de,\mu}^*(X)\xi,w_r\rangle d^*X\cr
{}&=&\displaystyle\int_{V^+}\Phi(g^{-1}X)|\chi_{0}(g)|^{-\frac{1}{2}(m-1)}|\Delta_{0}(X)|^{z+\frac{1}{2}(m-1)}\langle \pi_{\de,\mu}^*(X)\xi,\pi_{\de,\mu}(g)w\rangle d^*X\cr
{}&=&\displaystyle\int_{V^+}\Phi(g^{-1}X)|\chi_{0}(g)|^{-\frac{1}{2}(m-1)}|\Delta_{0}(X)|^{z+\frac{1}{2}(m-1)}\langle \pi_{\de,\mu}^*(g^{-1}X)\xi,w\rangle d^*X\cr
{}&=&\displaystyle\int_{V^+}\Phi( X)|\chi_{0}(g)|^{z}|\Delta_{0}(X)|^{z+\frac{1}{2}(m-1)}\langle \pi_{\de,\mu}^*(X)\xi,w\rangle d^*X\cr
{}&=&\displaystyle q^{-zr}Z^+(\Phi, z+\frac{1}{2}(m-1),\xi, w).

\end{array}$$
By analytic continuation  we obtain the following equality of rational functions:
 $$Z^+(\Phi_r, z+\frac{1}{2}(m-1),\xi, w_r)=  q^{-rz}Z^+(\Phi, z+\frac{1}{2}(m-1),\xi, w)$$

This shows that if  $P(T,T^{-1})\in\cJ_{P_0}$ then, for all $r\in\Z$,      $T^rP(T,T^{-1})\in \cJ_{P_0}$. Hence $\cJ_{P_0}$ is an ideal (the fact that $\cJ_{P_0}$ is stable under addition is obvious from the definition).

\end{proof}
\begin{prop}\label{prop-L(z,pi)}  \hfill

\begin{enumerate}\item There exist unique Euler factors   $L^\pm(\pi_{\de,\mu},z)$  \index{Lpideltamu@$L^\pm(\pi_{\de,\mu},z)$}(called $L$-functions)  satisfying the two following conditions: 

(a) For all $\Phi^\pm\in \cS(V^\pm)$  and  $ w \in  I_{\de,\mu}$, the quotients   $$\dfrac{Z^\pm(\Phi^\pm,z+\frac{m-1}{2},\xi,w)}{L^\pm(\pi_{\de,\mu},z)} $$ are polynomials in the variables  $q^z$ and  $q^{-z}$,\me

(b) There exists two  finite families   $(\Phi_i^\pm, v_i)_{i\in I}$ in  $\cS(V^\pm)\times I_{\de,\mu}$  such that 
$$\sum_{i\in I} \dfrac{Z^\pm(\Phi_i^\pm,z+\frac{m-1}{2},\xi,v_i)}{L^\pm(\pi_{\de,\mu},z)}=1.$$

\item  Moreover, if  $L^\pm\in\cE^\pm$,     then the quotients  $\dfrac{L^\pm(\pi_{\de,\mu},z)}{L^\pm(z)}$  are polynomials in  $q^{-z}$.
\end{enumerate}

\end{prop}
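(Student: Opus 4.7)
The plan is to reformulate existence and uniqueness of $L^\pm(\pi_{\de,\mu},z)$ in terms of generators of a principal fractional ideal over the PID $\C[T,T^{-1}]$, where $T = q^{-z}$. By Theorem~\ref{thm-Zeta-mero}, each $Z^+(\Phi, z+\frac{m-1}{2}, \xi, w)$ is a rational function of $T$ lying in $P_0(T)^{-1}\C[T,T^{-1}]$ for some fixed $P_0 \in \C[T]$ with $P_0(0)=1$. Let $\cM^+$ denote the $\C[T,T^{-1}]$-submodule of $\C(T)$ generated by these zeta functions as $\Phi$ and $w$ vary (and analogously $\cM^-$). Then $P_0 \cdot \cM^+$ equals the ideal $\cJ_{P_0}$ of Lemma~\ref{lem-ideal-poly}(1), which is principal since $\C[T,T^{-1}]$ is a PID; hence $\cM^+$ is itself a principal fractional ideal, $\cM^+ = g^+ \cdot \C[T,T^{-1}]$, with $g^+$ unique up to units $cT^n$. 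Absorbing $T$-factors into units, I would write $g^+ = N^+(T)/D^+(T)$ in lowest terms with $N^+, D^+ \in \C[T]$, $\gcd(N^+,D^+)=1$, $N^+(0)\ne 0$ and $D^+(0)=1$.

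Conditions (a) and (b) together are equivalent to $\cM^+ = L^+(\pi_{\de,\mu},z) \cdot \C[T,T^{-1}]$, so for an Euler-factor generator $1/P(q^{-z})$ with $P(0)=1$ to exist, $N^+$ must be a unit of $\C[T,T^{-1}]$, i.e., a non-zero constant. This is the heart of the argument. To establish it I would exhibit a non-zero constant inside $\cM^+$ as follows: since $\xi \ne 0$, there is $w \in I_{\de,\mu}$ with $\langle \xi, w \rangle \ne 0$. The matrix coefficient $g \mapsto \langle \pi_{\de,\mu}^*(g)\xi, w \rangle$ is locally constant on $G$ by smoothness of the representation, and $|\De_0|$ is locally constant on $\Om^+$; together with $\De_0(I^+)=1$, this yields an open compact neighborhood $U$ of $I^+$ in $\Om^+$ on which $|\De_0|\equiv 1$ and $X \mapsto \langle \pi_{\de,\mu}^*(X)\xi, w \rangle \equiv \langle \xi, w\rangle$. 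Taking $\Phi = \chi_U$,
\[Z^+(\chi_U, z+\tfrac{m-1}{2}, \xi, w) = \langle \xi, w \rangle\, \mathrm{vol}(U) \ne 0,\]
a non-zero constant, hence a unit of $\C[T,T^{-1}]$. A routine verification shows that for a principal fractional ideal $(N^+/D^+)\C[T,T^{-1}]$ in lowest terms with $N^+(0), D^+(0)\ne 0$, one has $\cM^+ \cap \C[T,T^{-1}] = N^+ \cdot \C[T,T^{-1}]$; the presence of a unit then forces $N^+$ to be a unit, hence a non-zero constant.

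After rescaling, $\cM^+ = (1/D^+)\C[T,T^{-1}]$ with $D^+(0)=1$, and setting $L^+(\pi_{\de,\mu},z) = 1/D^+(q^{-z})$ produces an Euler factor for which (a) holds (from $\cM^+ \subset L^+\cdot \C[T,T^{-1}]$) and (b) holds (from $L^+ \in \cM^+$, which realizes $L^+$ as a finite sum of zeta functions, equivalently $\sum_i Z_i^+/L^+ = 1$). Uniqueness follows because the principal fractional ideal $\cM^+$ admits a unique generator of the form $1/P(q^{-z})$ with $P\in\C[T]$, $P(0)=1$. The construction of $L^-(\pi_{\de,\mu},z)$ is parallel, using $I^-$, $\nabla_0$ (with $\nabla_0(I^-)=1$), and Lemma~\ref{lem-ideal-poly}(2).

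For assertion (2), any $L^+ = 1/P \in \cE^+$ satisfies $P\cdot \cM^+ \subset \C[T,T^{-1}]$, i.e., $P/D^+ \in \C[T,T^{-1}]$; combined with $D^+\in\C[T]$ and $D^+(0)\ne 0$, this forces $D^+ \mid P$ in $\C[T]$, whence $L^+(\pi_{\de,\mu},z)/L^+(z) = P(q^{-z})/D^+(q^{-z}) \in \C[q^{-z}]$ is a polynomial in $q^{-z}$. The hard part I anticipate is the construction of the non-zero constant element of $\cM^+$: the argument depends on the simultaneous local constancy on a small neighborhood of $I^+$, which in turn requires producing a $w$ with $\langle \xi, w\rangle \ne 0$ — this uses the non-triviality of the chosen $\xi \in (I_{\de,\mu}^*)^H$, itself guaranteed by $\mu \in \bU$.
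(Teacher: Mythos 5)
Your proposal is correct and follows essentially the same route as the paper: both rest on Lemma \ref{lem-ideal-poly} and the PID structure of $\C[T,T^{-1}]$, and both kill the numerator of the generator by producing a nowhere-vanishing zeta value from the characteristic function of a small compact open neighborhood of $I^+$ paired with $w$ satisfying $\langle\xi,w\rangle\neq 0$. Your fractional-ideal packaging of $\cJ_{P_0}$ and your explicit arrangement that $|\De_0|\equiv 1$ on $U$ (so the distinguished element is a genuine constant) are only cosmetic refinements of the paper's argument via the HCF of $R_0$ and $P_0$.
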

\begin{proof}\hfill

We only prove the result for $L^+(\pi_{\de,\mu},z)$.  The proof for  $L^-(\pi_{\de,\mu},z)$ is similar.\me

 Let  $L_0(z)=P_0(q^{-z})^{-1}\in\cE^+$. From Lemma \ref{lem-ideal-poly}, the set  $\cJ_{P_0}$ is an ideal of the principal ideal domain $\C[T,T^{-1}]$. Hence there exists a polynomial   $R_0\in \C[T]$ such that   $\cJ_{P_0}=R_0(T)\C[T,T^{-1}]$ and we can suppose that either  $R_0=0$ or $R_0(0)=1$.\me

We show first that  $R_0\neq 0$. 

 Let $w\in I_{\de,\mu}$ such that  $\langle \xi,w\rangle\neq 0$. As  $w$ is right invariant under an open compact subgroup,   there exists an open compact neighborhood $\cV$ of $I^+$ in  $\Om^+$ such that for all  $X\in \cV$,   $ \langle \pi_{\de,\mu}^*(X)\xi,w\rangle= \langle \xi,w\rangle\neq 0$. Let   $\Phi\in \cS(V^+)$ be the characteristic function of  $\cV$. Then for all  $z\in\C$,  we have  $Z^+(\Phi, z+\frac{1}{2}(m-1),\xi,w)=  \langle \xi,w\rangle\int_\cV |\De_0(X)|^{z+\frac{1}{2}(m-1)}d^*X\neq 0$. Hence  $\cJ_{P_0}\neq \{0\}$ and $R_0\neq 0$.\me

Therefore we can take   $R_0(0)=1$. \me

Let  $Q(T)$ be the HCF of $R_0$ and  $P_0$ normalized by the condition  $Q(0)=1$. There exist coprime polynomials  $R_1$  and  $P_1$ de $\C[T]$ such that  $R_0=R_1Q$, $P_0=P_1Q$  and  $P_1(0)=R_1(0)=1$.\me

If  $R_1\neq 1$ then  $R_1$ has a non zero root, and it exists  $z_1\in \C$ such that 
$$R_1(q^{-z_1})=0,\quad{\rm and }\quad P_1(q^{-z_1})\neq 0.$$
In that case,  for all $\Phi\in\cS(V^+)$ and  $w\in I_{\de,\mu}$ we would have  $Z^+(\Phi,z_1+\frac{1}{2}(m-1),\xi,w)=0$, and this is not possible  (take for example  $w$ and  $\Phi$ as above).\me

Therefore $R_1=1$, $Q=R_{0}$ and  $P_0=P_1R_0$. Hence for any  $P\in\C[T,T^{-1}]$, there exist finite families $(\Phi_i)_{i\in I}$ in $\cS(V^+)$ and  $(w_i)_{i\in I}$ in  $I_{\de,\mu}$ such that  
$$\sum_{i\in I} Z^+(\Phi_i, z+\frac{1}{2}(m-1), \xi, w_i)=\frac{R_0(T)P(T,T^{-1})}{P_0(T)}=\frac{P(T,T^{-1})}{P_1(T)}.$$
Define   $L^+(\pi_{\de,\mu},z)=\dfrac{1}{P_1(q^{-z})}$. Then from the discussion above we see that this Euler factor satisfies condition  (1) (a). Moreover if we take  $P=1$  in the equation above we obtain the condition  (1)(b). 

If  $\tilde{L}$ is another Euler factor satisfying conditions   (1) (a)  (1)(b),  then   $\dfrac{L^+(\pi_{\de,\mu},z)}{\tilde{L}(z)}\in\C[q^{-z}]$ and  $\dfrac{\tilde{L}(z)}{L^+(\pi_{\de,\mu},z)}\in\C[q^{-z}]$.  Hence   $\tilde{L}(z)=L(\pi_{\de,\mu},z)$ and the uniqueness is proved.

Let us show assertion $(2)$. From the construction  of $L(\pi_{\de,\mu},z)$ above we see that we can take $L^+=L_{0}$. Then $\dfrac{L(\pi_{\de,\mu},z)}{L_0(z)}=\dfrac{P_0(q^{-z})}{P_1(q^{-z})}= R_0(q^{-z})$.  \me

\end{proof}
\begin{definition}\label{def-Xi+-} \hfill

If $w\in I_{\de,\mu}$, $\Phi\in\cS(V^+)$ and  $\Psi\in\cS(V^-)$, we define
$$\Xi^+(\Phi,z,\xi,w)\index{ximajuscule@$\Xi^\pm(\Phi,z,\xi,w)$}=\frac{Z^+(\Phi, z+\frac{1}{2}(m-1), \xi,w)}{L^+(\pi_{\de,\mu},z)},$$
and  $$\Xi^-(\Psi,z,\xi,w)=\frac{Z^-(\Psi, z+\frac{1}{2}(m-1), \xi,w)}{L^-(\pi_{\de,\mu},z)}.$$
From  Proposition \ref{prop-L(z,pi)}, these two functions are polynomials in  $q^{-z}$ and  $q^z$.
\end{definition}

If  $k=0$ (that is in the case of Tate's theory) we denote by  $L_0(\de,z)$ the $L$-function associated to the character $t\longmapsto \delta(t)|t|^z$ (remember that here $\delta$ is a character of $F^*$). It is given by

\begin{equation}\label{eq-L0}L_0(\de,z)\index{L0@$L_0(\de,z)$}=\left\{\begin{array}{cl} (1-\de(\pi) q^{-z})^{-1} & {\rm if }\; \de\; \text{is unramified}\\ 1 & {\rm if }\; \de\; \text{is ramified},\end{array}\right.\end{equation}

(see \cite{B-H} (23.4.1))
 
We define now the $\ep$-factor $\ep_0(\de, z,\psi)$\index{epsilon0@$\ep_0(\de, z,\psi)$} by the equation:

\begin{equation}\label{eq-rho-L-ep}\de(-1)\rho(\de^{-1}, 1-z) =\rho(\de,z)^{-1}= \frac{L_0( \de^{-1},1-z)\ep_0(\de, z,\psi)}{L_0(\de,z)},\end{equation}

where Tate's $\rho$ function was defined in section \ref{par-explicite}. (This is the Definition given in (\cite{B-H}, \textsection 23.4 p.142), taking into account that our $\rho$ is the inverse of the $\gamma$ function of Bushnell and Henniart).

The function  $\ep_0(z,\de,\psi)$ is explicitly known (\cite{B-H} Theorem 23.4 p.144). It is always  of the form $c_0q^{-n_0z}$ where $c_0\in\C$ and  $n_0\in\Z$.

Moreover we know from (\cite{B-H} (23.4.2), p.142) that 
\begin{equation}\label{epsilon}\ep_0(\de, z,\psi)=\frac{\delta(-1)}{\ep_{0}(\delta^{-1},1-z,\psi)}
\end{equation}

 \me
 
 \begin{theorem}\label{thm-analogueGJ} \hfill

{\rm(1)} Let $\Phi\in\cS(V^+)$ and $\Psi\in\cS(V^-)$. The functions $\Xi^+(\Phi,z,\xi,w)$ and  $\Xi^-(\Psi,z,\xi,w)$ satisfy the following functional equations:
$$\Xi^-(\cF(\Phi),1-z,\xi,w)=\ep^+(\pi_{\de,\mu},z,\psi)\index{epsilon+-@$\ep^+(\pi_{\de,\mu},z,\psi)$} \Xi^+(\Phi,z,\xi,w), $$
$$\Xi^+(\cF(\Psi), 1-z,\xi, w)=\ep^-(\pi_{\de,\mu},z,\psi)\Xi^-(\Psi,z,\xi,w), ,$$
where  $$\ep^+(\pi_{\de,\mu},z,\psi)=\ga_\psi(q)^{\frac{k(k+1)}{2}}\frac{L^+(\pi_{\de,\mu},z)}{L^-(\pi_{\de,\mu},1-z)}\prod_{j=0}^{k}\frac{L_0(\de_j,1-(z-\mu_j))}{L_0(\de_j^{-1}, z-\mu_j)}\ep_0(\de_j^{-1}, z-\mu_j,\psi),$$
and 
$$\ep^-(\pi_{\de,\mu},z,\psi)  = \ga_{\psi}(q)^{-\frac{k(k+1)}{2}}\frac{L^-(\pi_{\de,\mu},z) }{L^+(\pi_{\de,\mu},1-z)}\prod_{j=0}^k\frac{L_0(\de_j^{-1}, 1-(\mu_j+z))}{L_0(\de_j, \mu_j+z)}\ep_0(\de_j , \mu_j+z,\psi).$$
{\rm(2)} The  $\ep$-factors $\ep^+(\pi_{\de,\mu}, z,\psi)$ and  $\ep^-(\pi_{\de,\mu},z,\psi)$ are monomials of the form $cq^{-nz}$ where $c\in\C$ and  $n\in\Z$, and they satisfy the relation:
$$\ep^+(\pi_{\de,\mu}, z,\psi)\ep^-(\pi_{\de,\mu},1-z,\psi)=(\de_0\ldots \de_k)(-1).$$
\end{theorem}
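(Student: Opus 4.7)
The plan rests on the explicit scalar functional equation of Corollary \ref{EF-globale-04}, which is available precisely because the hypothesis $e \in \{0,4\}$ forces $\Sr_e = \{1\}$, so that both $G$ and $P$ have a single open orbit on each of $V^\pm$. For part (1), starting from $Z^-(\cF\Phi, \tfrac{m+1}{2}-z, \xi, w) = d(\de,\mu,z) Z^+(\Phi, z+\tfrac{m-1}{2}, \xi, w)$ with $d(\de,\mu,z) = \ga_\psi(q)^{k(k+1)/2} \prod_j \de_j(-1) \rho(\de_j, \mu_j+1-z)$, I would use the identity $\tfrac{m+1}{2}-z = (1-z)+\tfrac{m-1}{2}$ to recognise $\Xi^-$ on the left and $\Xi^+$ on the right after dividing by the appropriate $L$-functions; this yields $\Xi^-(\cF\Phi, 1-z, \xi, w) = \epsilon^+(\pi_{\de,\mu},z,\psi) \Xi^+(\Phi,z,\xi,w)$ with $\epsilon^+ = d(\de,\mu,z)\, L^+(\pi_{\de,\mu},z)/L^-(\pi_{\de,\mu},1-z)$. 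To match the stated shape of $\epsilon^+$, the next step is to substitute the character $\de_j^{-1}$ and the variable $z - \mu_j$ into the Tate identity \eqref{eq-rho-L-ep}: since $\de_j(-1) = \de_j^{-1}(-1)$, this rewrites $\de_j(-1)\rho(\de_j, \mu_j+1-z) = L_0(\de_j, 1-(z-\mu_j))\,\epsilon_0(\de_j^{-1}, z-\mu_j, \psi)/L_0(\de_j^{-1}, z-\mu_j)$, which, when inserted into $d(\de,\mu,z)$, produces exactly the announced formula for $\epsilon^+$.

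The formula for $\epsilon^-$ requires the reverse functional equation $Z^+(\cF\Psi, \tfrac{m+1}{2}-z, \xi, w) = d^-(\de,\mu,z) Z^-(\Psi, z+\tfrac{m-1}{2}, \xi, w)$, which I would derive by re-running the proofs of Theorem \ref{EF-(P,V^+)} and Theorem \ref{thm-principal} from the reverse abstract identity $\cK^+_a(\cF g, \om^\sharp, s^\sharp - m) = \sum_c C^k_{(a,c)}(\om,s) \cK^-_c(g,\om,s)$ provided by Theorem \ref{thm-Sato-kp}(2). In the process the quadratic forms $Q_{u',v}$ on $V^-$ become the mirror forms on $V^+$, producing the conjugate Weil constant and flipping the exponent of $\ga_\psi(q)$ to $-k(k+1)/2$, while the Tate factors are shifted by $\mu_j+z$ rather than $z-\mu_j$ (the dual parameter being $-\mu_j$). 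Applying \eqref{eq-rho-L-ep} now with the character $\de_j$ and the variable $\mu_j+z$ converts $\de_j(-1)\rho(\de_j^{-1}, 1-(\mu_j+z))$ into the desired ratio of $L_0$ factors times $\epsilon_0(\de_j, \mu_j+z, \psi)$, yielding the stated formula for $\epsilon^-$. The principal obstacle is executing this reverse argument carefully enough to pin down the correct sign in the $\ga_\psi(q)$-exponent and the correct shift $\mu_j+z$.

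For part (2), the monomial assertion follows from a short divisibility argument. Proposition \ref{prop-L(z,pi)}(1)(b) produces $(\Phi_i,v_i)$ with $\sum_i \Xi^+(\Phi_i,z,\xi,v_i) = 1$, hence $\epsilon^+(\pi_{\de,\mu},z,\psi) = \sum_i \Xi^-(\cF\Phi_i, 1-z, \xi, v_i)$ is a polynomial in $q^{\pm z}$; the symmetric property for $L^-$ shows that $1/\epsilon^+$ is also such a polynomial, forcing $\epsilon^+$ to be a unit in $\C[q^{\pm z}]$, hence a monomial $cq^{-nz}$; the same reasoning handles $\epsilon^-$. The product identity $\epsilon^+(\pi_{\de,\mu},z,\psi)\,\epsilon^-(\pi_{\de,\mu},1-z,\psi) = (\de_0\cdots\de_k)(-1)$ follows by direct substitution into the explicit formulas: the $\ga_\psi(q)^{\pm k(k+1)/2}$ factors cancel, the ratios of $L^\pm(\pi_{\de,\mu},\cdot)$ telescope, the ratios of $L_0$ factors cancel after noting $1-(z-\mu_j) = \mu_j+1-z$, and the residual term $\prod_j \epsilon_0(\de_j^{-1}, z-\mu_j, \psi)\,\epsilon_0(\de_j, 1-(z-\mu_j), \psi)$ simplifies to $\prod_j \de_j(-1) = (\de_0\cdots\de_k)(-1)$ via the Tate $\epsilon_0$-reciprocity \eqref{epsilon}.
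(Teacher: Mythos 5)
Your derivation of the first functional equation and of $\ep^+$ is exactly the paper's: divide Corollary \ref{EF-globale-04} by $L^-(\pi_{\de,\mu},1-z)$, use $\tfrac{m+1}{2}-z=(1-z)+\tfrac{m-1}{2}$, and convert $\de_j(-1)\rho(\de_j,\mu_j+1-z)$ via \eqref{eq-rho-L-ep}. The monomial argument in part (2) is also the paper's. The genuine gap is your treatment of the second functional equation. You propose to obtain it by re-running the entire machinery of Theorems \ref{EF-(P,V^+)} and \ref{thm-principal} in the reverse direction, starting from the abstract identity $\cK^+_a(\cF g,\om^\sharp,s^\sharp-m)=\sum_c C^k_{(a,c)}(\om,s)\cK^-_c(g,\om,s)$. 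But the constants $C^k_{(a,c)}$ are never computed anywhere in the paper, and your claims that the mirror quadratic forms produce ``the conjugate Weil constant,'' flip the exponent of $\ga_\psi(q)$ to $-k(k+1)/2$, and shift the Tate parameter to $\mu_j+z$ are asserted, not proved --- as you yourself concede when you call this ``the principal obstacle.'' Since your verification of the product identity $\ep^+(z)\ep^-(1-z)=(\de_0\cdots\de_k)(-1)$ proceeds by substituting this unestablished formula for $\ep^-$, and since the monomial claim in turn leans on that product identity, the gap propagates through all of part (2) except the polynomiality of $\ep^+$.

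The missing idea is that no reverse computation is needed. The paper applies Corollary \ref{EF-globale-04} to $\Phi=\cF(\Psi)$ and uses $\cF\circ\cF(\Psi)(Y)=\Psi(-Y)$ together with the central element $m_{-1}$, for which $\pi_{\de,\mu}(m_{-1})=(\de_0\cdots\de_k)(-1)\,{\rm Id}$, to get
$$Z^-(\cF\circ\cF(\Psi),\tfrac{m+1}{2}-z,\xi,w)=(\de_0\cdots\de_k)(-1)\,Z^-(\Psi,\tfrac{m+1}{2}-z,\xi,w).$$
Combining this with the forward equation and substituting $z\mapsto 1-z$ yields
$$\ep^-(\pi_{\de,\mu},z,\psi)=\frac{(\de_0\cdots\de_k)(-1)}{d(\de,\mu,1-z)}\cdot\frac{L^-(\pi_{\de,\mu},z)}{L^+(\pi_{\de,\mu},1-z)},$$
from which the explicit formula for $\ep^-$ (with the sign flip on the $\ga_\psi(q)$ exponent and the shift $\mu_j+z$) follows by the same Tate manipulation you used for $\ep^+$, and the product identity is then immediate from the two definitions rather than something to be checked a posteriori. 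You should replace your reverse-direction plan by this argument; as written, the formula for $\ep^-$ is not established.
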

\me
\begin{proof} \hfill     

Let  $\Phi\in \cS(V^+)$. Corollary \ref{EF-globale-04} implies that
$$\Xi^-(\cF(\Phi),1-z,\xi,w)=\frac{1}{L^-(\pi_{\de,\mu},1-z) }Z^-(\cF(\Phi),1-z+\frac{1}{2}(m-1),\xi,w)$$
$$=\frac{d(\de,\mu,z) }{L^-(\pi_{\de,\mu},1-z)}Z^+(\Phi, z+\frac{1}{2}(m-1),\xi,w)=d(\de,\mu,z)\frac{L^+(\pi_{\de,\mu}, z)}{L^-(\pi_{\de,\mu},1-z)}
\Xi^+(\Phi,z,\xi,w),$$
where $d(\de,\mu,z)=\ga_\psi(q)^{\frac{k(k+1)}{2}}\prod_{j=0}^{k}\delta_{j}(-1) \rho(\de_j,\mu_j+1-z)$. 
We define then
\begin{equation}\label{eq-defep+}\ep^+(\pi_{\de,\mu}, z, \psi)=d(\de,\mu,z)\frac{L^+(\pi_{\de,\mu},z)}{L^-(\pi_{\de,\mu}, 1-z)}.\eeq
From  (\ref{eq-rho-L-ep}) and (\ref{epsilon}) we obtain
$$d(\de,\mu,z)= \ga_\psi(q)^{\frac{k(k+1)}{2}}\prod_{j=0}^{k}\ep_0(\de_j^{-1},z-\mu_j,\psi)\frac{L_0(\de_j,1-(z-\mu_j))}{L_0(\de_j^{-1}, z-\mu_j)},$$

and this proves the first functional equation.\me


Let now  $\Psi\in \cS(V^-)$. Applying Corollary  \ref{EF-globale-04} to the function $\Phi=\cF(\Psi)$, we get 
\begin{equation}\label{eq-FFPhi} Z^-(\cF\circ\cF(\Psi), \frac{m+1}{2}-z,\xi,w)=d(\de,\mu,z) Z^+(\cF(\Psi),z+\frac{m-1}{2},\xi,w).\end{equation}

As $\cF\circ\cF(\Psi)(Y)=\Psi(-Y)$,   Theorem \ref{thm-Zeta-mero} (2)(a) implies that for  $z\in\C$ and  $\mu\in\bU\cap \bC^+$ such  that ${\rm Re}(\mu_k-z)>0$, we have: 
$$Z^-(\cF\circ\cF(\Psi), \frac{m+1}{2}-z,\xi,w)=\int_{V^-}\Psi(-Y)|\nabla_0(Y)|^{\frac{m+1}{2}-z} \langle \pi^*(Y)\xi, w\rangle d^*Y.$$
Remember that there exists an element  $m_{-1}\in G$ which acts by $-1$ on $V^+$ and  $V^-$ and  trivially  on  $\go g$ (cf. Definition \ref{def-hx}). Hence  $m_{-1}$ is central in $G$ and from the definition we have  $\pi_{\de,\mu}(m_{-1})= \prod_{j=0}^k\de_j(-1)\rm{Id}_{I_{\delta,\mu}}$. Therefore  
\begin{equation}\label{eq-psi-Y}Z^-(\cF\circ\cF(\Psi), \frac{m+1}{2}-z,\xi,w)= (\prod_{j=0}^k\de_j(-1))Z^-(\Psi, \frac{m+1}{2}-z,\xi,w).\end{equation}
By analytic continuation, this equality between these two rational functions in the variables  $q^{-z}$ and  $q^z$ remains true for all  $\mu\in\bU$. From  (\ref{eq-FFPhi}), we get
$$Z^+(\cF(\Psi),z+\frac{m-1}{2},\xi,w)=\frac{(\de_0\ldots \de_k)(-1)}{d(\de,\mu,z)} Z^-(\Psi, \frac{m+1}{2}-z,\xi,w).$$
Making the change of variable  $z\to 1-z$, we get 
$$Z^+(\cF(\Psi),\frac{m+1}{2}-z,\xi,w)=\frac{(\de_0\ldots \de_k)(-1)}{d(\de,\mu,1-z)} Z^-(\Psi, z+\frac{m-1}{2},\xi,w),$$
and hence 
$$\Xi^+(\cF(\Psi), 1-z,\xi,w)=\frac{(\de_0\ldots \de_k)(-1)}{d(\de,\mu,1-z)} \frac{L^-(\pi_{\de,\mu},z)}{L^+(\pi_{\de,\mu},1-z)}\Xi^-(\Psi,z,\xi,w).$$
Therefore we set:
\begin{equation}\label{eq-defep-}\ep^-(\pi_{\de,\mu}, z, \psi)=\frac{(\de_0\ldots \de_k)(-1)}{d(\de,\mu,1-z)}\frac{L^-(\pi_{\de,\mu},z)}{L^+(\pi_{\de,\mu},1-z)}.\end{equation}
From the knowledge of  $d(\de,\mu,z)$ (Corollary  \ref{EF-globale-04}) and  (\ref{eq-rho-L-ep}), we obtain  
\begin{align*}\frac{(\de_0\ldots \de_k)(-1)}{d(\de,\mu,1-z)} &=\ga_{\psi}(q)^{-\frac{k(k+1)}{2}}\prod_{j=0}^k \rho(\de_j,\mu_j+z)^{-1}\\
{}&=\ga_{\psi}(q)^{-\frac{k(k+1)}{2}}\prod_{j=0}^k\frac{L_0(\de_j^{-1}, 1-(\mu_j+z))}{L_0(\de_j, \mu_j+z)}\epsilon_0(\de_j, \mu_j+z,\psi).
\end{align*}
This gives the second functional equation and ends the proof of assertion (1).\me

Let  $\Phi\in \cS(V^+)$. Define $\Psi(Y)=\cF(\Phi)(-Y)$,  then  $\Phi=\cF(\Psi)$. From  (\ref{eq-psi-Y}), we get  
\begin{equation}\label{eq-Xi-}\Xi^-(\cF(\Phi), 1-z,\xi,w)=(\de_0\ldots \de_k)(-1) \Xi^-(\Psi,1-z,\xi,w). \end{equation}Using the two functional equations and (\ref{eq-Xi-}) we obtain
$$\Xi^-(\cF(\Phi),1-z, \xi,w)=\ep^+(\pi_{\de,\mu},z,\psi)\Xi^+(\Phi,z,w)=\ep^+(\pi_{\de,\mu},z,\psi)\Xi^+(\cF(\Psi),z,w)$$
$$=\ep^+(\pi_{\de,\mu},z,\psi)\ep^-(\pi_{\de,\mu},1-z,\psi)\Xi^-(\Psi, 1-z,\xi,w)$$
$$=(\de_0\ldots \de_k)(-1)\ep^+(\pi_{\de,\mu},z,\psi)\ep^-(\pi_{\de,\mu},1-z,\psi)\Xi^-(\cF(\Phi),1-z,\xi,w),$$
which implies that 
\begin{equation}\label{eq-produit-ep}\ep^+(\pi_{\de,\mu},z,\psi)\ep^-(\pi_{\de,\mu},1-z,\psi)=(\de_0\ldots \de_k)(-1).\end{equation}
We know from  Proposition \ref{prop-L(z,pi)}, that there exist finite families  $(\Phi_i^\pm)_{i\in I}\in \cS(V^\pm)$  and  $(w_i^\pm)_{i\in I}\in I_{\de,\mu}$ such that   $\sum_{i\in I} \Xi^\pm(\Phi_i^\pm,z, \xi, w_i^\pm)=1$ and such that  $\Xi^\pm(\cF(\Phi_i^\pm), z, \xi, w_i^\pm)$ are polynomials in $q^{-z}$ and  $q^z$.  The two preceding functional equations imply then that  $\ep^\pm(\pi_{\de,\mu},z,\psi)$ are polynomials in $q^{-z}$ and $q^z$. Finally equation  (\ref{eq-produit-ep}) implies that there are in fact monomials in  $q^{-z}$. 
This ends the proof of the Theorem.
\end{proof}

\begin{rem}\label{rem-ep+ep-} The relation
$$\ep^+(\pi_{\de,\mu},z,\psi)\ep^-(\pi_{\de,\mu},1-z,\psi)=(\de_0\ldots \de_k)(-1)$$
 generalizes the  one for $GL(n,F) $ (see \cite{G-J} p.33 and \cite {B-H} (23.4.2) p. 142)\end{rem}

\begin{cor}\label{cor-calcul-L}\hfill. 

(1)  There exists a positive integer $d(\delta,\mu)$ and a polynomial  $Q^+_{\de_\mu}(T)=\prod_{r=1}^{d(\delta,\mu)}(1-a_r(\de_\mu) T)\in\C[T]$ such that  
$$L^+(\pi_{\de,\mu},z)=\frac{1}{Q^+_{\de_\mu}(q^{-z})}\prod_{j=0}^k L_0(\de_j^{-1}, z-\mu_j).$$
Then if we set  $Q^-_{\de_\mu}(T)=\prod_{r=1}^{d(\delta,\mu)}(1-a_r(\de_\mu)^{-1}q T)$ we have also
  $$ L^-(\pi_{\de,\mu}, z)=\frac{1}{Q^-_{\de_\mu}(q^{-z})}\prod_{j=0}^k L_0(\de_j, z+\mu_j).$$
(2) Let   $\de\in\widehat{\Or_F^*}^{k+1}$.   There exist roots of the unit  $u_r$ and rational numbers $p_r, p_{r,j}\in\Q$   ($r=1,\ldots,d(\delta,\mu)$ and  $j=0,\ldots, k$),  such that 
$$ a_r(\de_\mu)=u_rq^{-p_r-\sum_{j=0}^k p_{r,j}\mu_j}.$$
\end{cor}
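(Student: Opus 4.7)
\textbf{Proof plan for Corollary \ref{cor-calcul-L}.} The strategy is to first prove part (1) for $L^+$ by exhibiting $F^+(z):=\prod_{j=0}^k L_0(\de_j^{-1},z-\mu_j)$ as an element of $\cE^+$; since every Euler factor in $\cE^+$ is a multiple of $L^+(\pi_{\de,\mu},z)$ in the sense of Proposition \ref{prop-L(z,pi)}(2), there is then a polynomial $Q^+_{\de_\mu}(T)$ with $Q^+_{\de_\mu}(0)=1$ satisfying $L^+(\pi_{\de,\mu},z)\,Q^+_{\de_\mu}(q^{-z})=F^+(z)$, which is the announced formula. Writing $Q^+_{\de_\mu}(T)=\prod_{r=1}^{d(\de,\mu)}(1-a_r(\de_\mu)T)$ defines $d(\de,\mu)$ and the scalars $a_r(\de_\mu)$.

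To prove $F^+(z)\in\cE^+$ I argue by induction on the rank $k+1$. For $k=0$ the group $G\simeq F^*$ acts on $V^+\simeq F$, $\pi_{\de,\mu}$ is the character $\de_\mu$, and $Z^+(\Phi,z+(m-1)/2,\xi,w)$ reduces to Tate's zeta function $Z(\Phi,\de^{-1},z-\mu)$ (Remark \ref{rem-k=0-Om+}), so $L^+(\pi_{\de,\mu},z)=L_0(\de^{-1},z-\mu)$ and $Q^+_{\de_\mu}=1$. For the inductive step, Theorem \ref{thm-Zeta-mero}(1)(a) reduces the analysis of $Z^+(\Phi,z+(m-1)/2,\xi,w)$ to that of $\cK^+(\cL(k)\Phi,\om(\de),s(\mu)+z-(m+1)/2)$ averaged over $K$. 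Applying Theorem \ref{th-formule-int-T_{f}} with $p=k-1$ together with the identities of Lemma \ref{lem-decomposition-Delta} (which give $\De_j(u+v)=\underline{\De}_j(u)\De_k(v)$ for $j<k$ and $\De_k(u+v)=\De_k(v)$) splits $\cK^+$ into an iterated integral over $\underline{V}^+\times\widetilde{\go g}^{\la_k}$. A direct computation, using that the total exponent of $|\De_k(v)|$ is $s_0+\cdots+s_k=z-\mu_k+(m-1)/2-(m+1)/2\cdot\text{const}$ and the character piece is $\de_k^{-1}$, identifies the $\widetilde{\go g}^{\la_k}$-integral as a Tate zeta integral contributing $L_0(\de_k^{-1},z-\mu_k)$, while the $\underline{V}^+$-integral falls within the inductive hypothesis applied to the graded algebra $\underline{\widetilde{\go g}}$ of rank $k$ (with parameters $(\underline{\de},\underline{\mu})$), yielding the factor $\prod_{j=0}^{k-1}L_0(\de_j^{-1},z-\mu_j)$ after clearing by the corresponding $\underline{Q}^+$.

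The formula for $L^-$ now follows from the functional equation of Theorem \ref{thm-analogueGJ}(1). Since $\ep^+(\pi_{\de,\mu},z,\psi)$ is a monomial in $q^{-z}$ and each $\ep_0(\de_j^{-1},z-\mu_j,\psi)$ is itself a monomial in $q^{-z}$, solving the explicit formula
\[
\ep^+(\pi_{\de,\mu},z,\psi)=\ga_\psi(q)^{\frac{k(k+1)}{2}}\frac{L^+(\pi_{\de,\mu},z)}{L^-(\pi_{\de,\mu},1-z)}\prod_{j=0}^k\frac{L_0(\de_j,1-(z-\mu_j))}{L_0(\de_j^{-1},z-\mu_j)}\ep_0(\de_j^{-1},z-\mu_j,\psi)
\]
for $L^-(\pi_{\de,\mu},1-z)$ and substituting the form of $L^+$ from part (1) gives
\[
L^-(\pi_{\de,\mu},1-z)=\bigl(\text{monomial in }q^{-z}\bigr)\cdot\frac{1}{Q^+_{\de_\mu}(q^{-z})}\prod_{j=0}^k L_0(\de_j,1-(z-\mu_j)).
\]
Changing the variable $z\mapsto 1-z$ and rewriting each factor $1-a_r q^{-z}$ at $z\mapsto 1-z$ via the identity $1-a_r q^{z-1}=-a_r q^{-1} q^z\bigl(1-a_r^{-1}q\cdot q^{-z}\bigr)$ converts $Q^+_{\de_\mu}(q^{-(1-z)})^{-1}$ into $(\text{monomial})\cdot Q^-_{\de_\mu}(q^{-z})^{-1}$ with $Q^-_{\de_\mu}(T)=\prod_r(1-a_r(\de_\mu)^{-1}qT)$; absorbing the monomial (which must be $1$ because both sides are Euler factors normalized at $T=0$) yields the stated formula for $L^-$.

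The main obstacle is the inductive Step 2: one must check that the iterated integration formula produces a \emph{genuine} Tate integral in the $v\in\widetilde{\go g}^{\la_k}$ variable, with the correct character $\de_k^{-1}$ and the correct shift by $\mu_k$. This requires careful bookkeeping of the variables $s(\mu)$ of Definition \ref{def-omdelta-smu} together with the shifts imposed by the factors $|\De_{p+1}(v)|^{(p+1)d/(2\kappa)}$ in Definition \ref{def-moyennes}. For part (2), when $\de\in\widehat{\Or_F^*}^{k+1}$ the local constants $\ep_0(\de_j^{\pm 1},z\mp\mu_j,\psi)$ are explicitly of the form $c_j q^{-n_j z\pm n_j\mu_j}$ with $c_j$ a root of unity and $n_j\in\Z$ (see \cite{B-H}, Theorem 23.4), and $\ga_\psi(q)$ is a fourth root of unity; tracking these through the relation $\ep^+(\pi_{\de,\mu},z,\psi)\cdot L^-(\pi_{\de,\mu},1-z)/L^+(\pi_{\de,\mu},z)=(\text{Tate monomial})$ forces each $a_r(\de_\mu)$ to be a root of unity times a rational power of $q$ and the $q^{\mu_j}$, with rational exponents coming from the linear change of variables $\mu\mapsto s(\mu)$. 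This gives $a_r(\de_\mu)=u_r q^{-p_r-\sum_{j=0}^k p_{r,j}\mu_j}$ with $u_r$ a root of unity and $p_r,p_{r,j}\in\Q$, as claimed.
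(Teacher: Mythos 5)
Your plan for part (1) deduces the divisibility in the wrong direction. If you succeed in showing that $F^+(z)=\prod_{j=0}^k L_0(\de_j^{-1},z-\mu_j)$ belongs to $\cE^+$, then Proposition \ref{prop-L(z,pi)}(2) tells you that $L^+(\pi_{\de,\mu},z)/F^+(z)$ is a polynomial in $q^{-z}$; writing $L^+(\pi_{\de,\mu},z)=P^+_{\de_\mu}(q^{-z})^{-1}$ and $F^+(z)=\widetilde P(q^{-z})^{-1}$ with $\widetilde P(T)=\prod_j P_0(\de_j^{-1},q^{\mu_j}T)$, this says $P^+_{\de_\mu}$ divides $\widetilde P$. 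The Corollary asserts the opposite divisibility, $P^+_{\de_\mu}=Q^+_{\de_\mu}\widetilde P$, so your identity $L^+(\pi_{\de,\mu},z)\,Q^+_{\de_\mu}(q^{-z})=F^+(z)$ with $Q^+_{\de_\mu}$ a \emph{polynomial} does not follow; the two statements are compatible only when $Q^+_{\de_\mu}$ is constant, and whenever $d(\de,\mu)>0$ the Euler factor $F^+$ is in fact \emph{not} in $\cE^+$ (by Proposition \ref{prop-L(z,pi)}(1)(b) the zeta functions realize every pole of $L^+(\pi_{\de,\mu},\cdot)$, so no Euler factor with strictly fewer poles can clear them). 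What is actually needed, and what the paper does, is to cross-multiply the identity $Cq^{-nz}=\dfrac{L^+(\pi_{\de,\mu},z)}{L^-(\pi_{\de,\mu},1-z)}\prod_j\dfrac{L_0(\de_j,1-(z-\mu_j))}{L_0(\de_j^{-1},z-\mu_j)}$ (valid because $\ep^+$ and the $\ep_0$'s are monomials in $q^{-z}$) into a Laurent-polynomial identity in $T=q^{-z}$; since the factors $P_0(\de_j,q^{-1-\mu_j}T^{-1})$ only contribute negative powers of $T$, one reads off that $\widetilde P(T)$ divides $P^+_{\de_\mu}(T)$, and the quotient is $Q^+_{\de_\mu}$. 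Your derivation of $L^-$ from $L^+$ and the substitution $z\mapsto 1-z$ is essentially the paper's, but it rests on the flawed step.

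Part (2) also has a gap: the relation between $\ep^+$, $L^+$ and $L^-$ only constrains the multiset $\{a_r\}$ against $\{q\,a_r^{-1}\}$ and cannot determine the individual roots $a_r(\de_\mu)$. The essential input, which your proposal omits, is Theorem \ref{thm-Sato-kp}(1) (resting on Denef's rationality theorem): there is an explicit polynomial $R^+(\om,s)=\prod_r\bigl(1-q^{-N_r-\sum_j N_{r,j}s_j}\bigr)$ with $N_r,N_{r,j}\in\N$ clearing all poles of the $\cK^+$'s, hence, after the substitution $s=s(\mu)+z-m$, an explicit element $\cP(q^{-z})^{-1}$ of $\cE^+$ whose roots are $N_{r,0}$-th roots of unity times $q$ raised to rational linear combinations of the $\mu_j$'s. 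Proposition \ref{prop-L(z,pi)}(2) then gives $P^+_{\de_\mu}\mid\cP$, so every $a_r(\de_\mu)$ is among these roots. Without this a priori universal denominator, the claimed form of $a_r(\de_\mu)$ does not follow.
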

\begin{proof} From the definition of Euler factor, there exist polynomials  $P^\pm_{\de_\mu}(T)\in\C[T]$ such that $P^\pm_{\de_\mu}(0)=1$ and
$\displaystyle L^\pm(\pi_{\de,\mu},z)=\frac{1}{P^\pm_{\de_\mu}(q^{-z})}$. 
Let us denote by
 $$P^+_{\de,\mu}(T)=\prod_{r=1}^N (1-a_rT),\quad{\rm and }\quad  P^-_{\de,\mu}(T)=\prod_{t=1}^{N' }(1-b_tT),\quad \;\; (a_r,b_t\in\C)$$
the decomposition into prime factors of these two polynomials.\me

As the factors $\ep_0(\de_j^{-1},z-\mu_j,\psi)$ and $\ep^+(\pi_{\de,\mu},z,\psi)$ are monomials in $q^{-z}$ (Theorem \ref{thm-analogueGJ} (2)), we obtain that there exists  $C\in\C$ and  $n\in\Z$ such that 
$$Cq^{-nz}=\frac{L^+(\pi_{\de,\mu},z)}{L^-(\pi_{\de,\mu},1-z)}\prod_{j=0}^k\frac{L_0(\de_j,1-(z-\mu_j))}{L_0(\de_j^{-1},z-\mu_j)}.$$

Here  $L_0(\chi,s)=P_0(\chi, q^{-s})^{-1}$ where $ P_0(\chi,T)\in\C[T]$ and  $P_0(\chi,T)=1$ if $\chi$ is ramified and  $P_0(\chi,T)=1-\chi(\pi)T$ if $\chi $ is non ramified (see (\ref{eq-L0})). \me

Then, we obtain easily 
\begin{equation}\label{eq-depart}CT^n P_{\de_\mu}^+(T)\prod_{j=0}^k P_0(\de_j, q^{-1-\mu_j} T^{-1})=P_{\de_\mu}^-(q^{-1}T^{-1})\prod_{j=0}^kP_0(\de_j^{-1}, q^{\mu_j}T)\end{equation}
And if all the $\de_j$'s are ramified, the preceding relation becomes
$$CT^n\prod_{r=1}^N (1-a_rT)=\prod_{t=1}^{N' }(1-b_t(qT)^{-1})=T^{-N'}\prod_{t=1}^{N'}(-b_t q^{-1})\prod_{t=1}^{N' }(1-b_t^{-1} qT).$$
This implies that  $N=N'$ and that  $\{a_1,\ldots,a_N\}=\{q b_1^{-1},\ldots, qb_N^{-1}\}$, and hence    assertion (1) is proved in this case by taking  $Q^\pm_{\de_\mu}(T)=P^\pm_{\de_\mu}(T)$.\me

Suppose now that at least one $\de_j$ is non ramified. Up to a change of the indexation of the $\de_j$'s, one can suppose that there exists  $k_0\geq 0$ such that  $\de_j$ is non ramified for  $0\leq j\leq k_0$ and  $\de_j$ is ramified for  $j>k_0$.

Equation (\ref{eq-depart}) becomes then
\begin{equation}\label{eq-nonramifie}\begin{array}{c}P_{\de_\mu}^-(q^{-1} T^{-1})\prod_{j=0}^{k_0}(1-\de_j^{-1}(\pi)q^{\mu_j}T)=CT^nP^+_{\de_\mu}(T)\prod_{j=0}^{k_0}(1-\de_j(\pi) q^{-1-\mu_j}T^{-1})\\
\\
=CT^{n-(k_0+1)}\left(\prod_{j=0}^{k_0}(-\de_j(\pi) q^{-1-\mu_j}\right)P^+_{\de_\mu}(T)\prod_{j=0}^{k_0}(1-\de_j(\pi)^{-1} q^{1+\mu_j}T).\end{array}\end{equation}
This shows that the polynomial  $\prod_{j=0}^{k_0}(1-\de_j^{-1}(\pi)q^{\mu_j}T)$ divides  $P^+_{\de_\mu}(T)$. We set   $$Q_{\de_\mu}^+(T)=\frac{P^+_{\de_\mu}(T)}{\prod_{j=0}^{k_0}(1-\de_j^{-1}(\pi)q^{\mu_j}T)}.$$
Relation  (\ref{eq-nonramifie}) can now be written as 
$$P_{\de_\mu}^-(q^{-1} T^{-1})=CT^nQ^+_{\de_\mu}(T)\prod_{j=0}^{k_0}(1-\de_j(\pi) q^{-1-\mu_j}T^{-1}).$$
Therefore   $ \prod_{j=0}^{k_0}(1-\de_j(\pi) q^{-\mu_j}T)$ divides  $P^-_{\de_\mu}(T)$ and we set 
$$Q_{\de_\mu}^-(T)=\frac{P^-_{\de_\mu}(T)}{\prod_{j=0}^{k_0}(1-\de_j(\pi) q^{-\mu_j}T)}.$$
Then 
$$CT^nQ^+_{\de_\mu}(T)=Q^-_{\de_\mu}(q^{-1}T^{-1}).$$
As before, we obtain then the asserted form for the functions $L^\pm(\pi_{\de,\mu},z)$.\me

Let us now show assertion  (2). 

From  Theorem \ref{thm-Sato-kp}, we know that for  $\om\in\widehat{\Or_F^*}^{k+1}$,  there exists a polynomial  \begin{equation}\label{eq-expressionR}R^+(\om,s)=\prod_{r=1}^{d_0}(1-q^{-N_r-\sum_{j=0}^k N_{r,j}s_j}),\quad N_r\in\N, N_{r,j}\in \N\end{equation}
such that, for all $\Phi\in\cS(V^+)$, the function  $R^+(\om,s)\cK^+(\Phi,\om,s)$ is a polynomial is the variables  $q^{s_j}$ and  $q^{-s_j}$. 

Remember that  $\om(\de)=(\om_0,\ldots ,\om_k)=(\de_0^{-1},\de_0\de_1^{-1},\ldots, \de_{k-1}\de_k^{-1})$ and that  $s(\mu)=(s_0,\ldots, s_k)$ is defined by the relations   $s_0+\ldots+s_j=\dfrac{d}{4}(k-2j)-\mu_j$ (see   Definition \ref{def-omdelta-smu}).   Theorem  \ref{thm-Zeta-mero} implies that for  $\Phi\in\cS(V^+)$   $w\in I_{\de,\mu}$ and  $\xi\in (I_{\de,\mu}^*)^H$, the function  $R^+(\om(\de),s(\mu)+z-m)Z^+(\Phi,z, \xi,w)$ is a polynomial in  $q^{-z}$ and  $q^z$. 

Set $\cP(T)=\prod_{r=1}^{d_0}(1-c_r(\de_\mu)T^{N_{r,0}}) $ with  $c_r(\de_\mu)=q^{-N_r+\frac{N_{r,0}}{2}(m+1)-\sum_{j=0}^k N_{r,j}s_j}$. Then  $\cP(q^{-z})=R^+(\om(\de),s(\mu)+z-\frac{1}{2}(m+1))$.

As $z+\frac{1}{2}(m-1)-m=z-\frac{1}{2}(m+1)$, it is easily seen that the function  $L^+(z)=\dfrac{1}{\cP(q^{-z})}$ is an Euler factor in  $\cE^+$ (see Definition \ref{def-facteur-eulerien} (2)).\me

Then from  Proposition \ref{prop-L(z,pi)}, 2), the polynomial  $P_{\de,\mu}^+$ divides  $\cP$ in  $\C[T]$. Hence, up to permutations of the families  $(N_r, N_{r,0},\ldots,N_{r,k})$, there exists  $r_0^+\leq d_0$, and for all $r\in\{1,\ldots ,r^+_0\}$, a finite family  $U^+_r$ of  $N_{r,0}$-th root  of unity  such that 
$$P_{\de,\mu}^+(T)=\prod_{r=1}^{r_0^+}\prod_{u\in U^+_r}(1-u \; b_r(\de_\mu)T),\;\;{\rm with }\;\; b_r(\de_\mu)=c_r(\de_\mu)^{1/N_{r,0}}=q^{-\frac{N_r}{N_{r,0}}+\frac{m+1}{2}-\sum_{j=0}^k \frac{N_{r,j}}{N_{r,0}}s_j}.$$
As $s_0=\dfrac{kd}{4}-\mu_0$ and  $s_j=\mu_{j-1}-\mu_j-\dfrac{d}{2}$, we obtain assertion (2).\end{proof}

 We will now describe how the $\ep$-factors  $\varepsilon^\pm(\pi_{\de,\mu}, z,\psi)$  depend on the additive character $\psi$. \me
 
Let $a\in F^*$  Remember that $m_a$ is the element in $G$ which acts by multiplication by  $a$ on $V^+$, by  multiplication by $a^{-1}$ on $V^-$ and trivially on  $\go g$ (Definition  \ref{def-hx}). Therefore $m_a$ is central in  $G$. Let  $\mu=\mu_0\la_0+\ldots \mu_k\la_k$ and define $\varpi(a)=(\de_0\ldots\de_k)(a)|a|^{\mu_0+\ldots +\mu_k}$. Then  $\pi_{\de,\mu}(m_a)=\varpi(a) {\rm Id}_{I_{\delta,\mu}}$.

 \begin{prop}\label{Prop-dependance-psi}. Let  $a\in F^*$. Denote by $\psi^a$ the character of $F$ given by $\psi^a(t)=\psi(at)$. Let   $(d^aX, d^aY)=(c_a dX, d_a dY)$,  with  $c_a, d_a>0$,  a pair of measures on $(V^+,V^-)$  which are dual for the Fourier transform $\cF^a$ defined by $\psi^a$. 
Then    $c_{a}d_{a}=|a|^{\dim V^+}$ and

$$\varepsilon^+(\pi_{\de,\mu},z, \psi^a)=\frac{\varpi(a)^{-1}|a|^{(k+1)(\frac{m-1}{2}+z)}}{ c_{a}}\ep^+(\pi_{\de,\mu}, z,\psi),$$
$$\ep^-(\pi_{\de,\mu}, z, \psi^a)=c_{a} \varpi(a) |a|^{-(k+1)(\frac{m+1}{2}-z)} \ep^-(\pi_{\de,\mu}, z, \psi).$$

If  $d_{a}=c_{a}= |a|^{\frac{\dim V^+}{2}}=|a|^{m\frac{(k+1)}{2}}$, the preceding formulas become   
$$\varepsilon^+(\pi_{\de,\mu}, z,\psi^a)=\varpi(a)^{-1} {|a|^{(k+1)(z-\frac{1}{2} )}} \varepsilon^+(\pi_{\de,\mu}, z,\psi),$$
$$\varepsilon^-(\pi_{\de,\mu}, z,\psi^a)= \varpi(a) |a|^{(k+1)(z-\frac{1}{2})} \ep^-(\pi_{\de,\mu}, z, \psi).$$

\end{prop}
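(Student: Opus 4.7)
The plan is to reduce everything to tracking how the four ingredients entering the definition of $\varepsilon^\pm$—the measures, the Fourier transform, the zeta integrals, and the induced action of the center—rescale when $\psi$ is replaced by $\psi^a$. First, the relation $c_a d_a = |a|^{\dim V^+}$ is forced by applying $\overline{\cF^a}\circ \cF^a=\mathrm{Id}$ to any $f\in\cS(V^+)$, swapping the order of integration, and using the distributional identity $\int_{V^-}\psi(b(X'-X,Y))\,dY=\delta_0(X'-X)$ that follows from $(dX,dY)$ being dual for $\cF$: the change of variable $Y\mapsto aY$ in this Dirac identity introduces precisely the factor $|a|^{-\dim V^+}$, which must cancel $c_ad_a$.

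Next I express the new Fourier transforms in terms of the old ones. Writing $\psi^a(b(X,Y))=\psi(b(X,aY))=\psi(b(aX,Y))$ and replacing $d^aX=c_a\,dX$ in the definition gives
\[
\cF^a(\Phi)=c_a\,\cL(m_a)\cF_\psi(\Phi),\qquad
\cF^a(\Psi)=d_a\,\cL(m_{a^{-1}})\cF_\psi(\Psi),
\]
where $m_a\in G$ is the element of Definition \ref{def-hx} acting by $a\cdot\mathrm{Id}$ on $V^+$ and $a^{-1}\cdot\mathrm{Id}$ on $V^-$, and $\cL$ is the left regular representation. The zeta integrals rescale trivially under the change of measure: $Z^\pm_{\psi^a}(\cdot)=c_a\,Z^+_\psi(\cdot)$ (resp.\ $d_a\,Z^-_\psi(\cdot)$). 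To handle the left translation, I use $G$-invariance of $d^*X$ and $d^*Y$, the homogeneity $\Delta_0(a^{-1}X)=a^{-(k+1)}\Delta_0(X)$ (and analogously for $\nabla_0$, using that $\deg\Delta_0=\kappa(k+1)=k+1$ since $\ell=1$, $\kappa=1$), and the scalar identity $\pi_{\de,\mu}(m_a)=\varpi(a)\mathrm{Id}$, which gives $\pi_{\de,\mu}^*(m_a)=\varpi(a)^{-1}\mathrm{Id}$. This yields
\[
Z^-_\psi(\cL(m_a)f,s,\xi,w)=|a|^{-(k+1)s}\varpi(a)^{-1}Z^-_\psi(f,s,\xi,w),
\]
\[
Z^+_\psi(\cL(m_{a^{-1}})f,s,\xi,w)=|a|^{-(k+1)s}\varpi(a)\,Z^+_\psi(f,s,\xi,w).
\]

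Combining these formulas with $c_ad_a=|a|^{(k+1)m}$ (using $\dim V^+=(k+1)m$) gives
\[
Z^-_{\psi^a}(\cF^a(\Phi),\tfrac{m+1}{2}-z,\xi,w)=|a|^{(k+1)(\frac{m-1}{2}+z)}\varpi(a)^{-1}\,Z^-_\psi(\cF_\psi(\Phi),\tfrac{m+1}{2}-z,\xi,w),
\]
\[
Z^+_{\psi^a}(\cF^a(\Psi),\tfrac{m+1}{2}-z,\xi,w)=|a|^{(k+1)(\frac{m-1}{2}+z)}\varpi(a)\,Z^+_\psi(\cF_\psi(\Psi),\tfrac{m+1}{2}-z,\xi,w),
\]
while $Z^+_{\psi^a}(\Phi,\ldots)=c_a Z^+_\psi(\Phi,\ldots)$ and $Z^-_{\psi^a}(\Psi,\ldots)=d_a Z^-_\psi(\Psi,\ldots)$. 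Substituting these identities into the two functional equations of Theorem~\ref{thm-analogueGJ}—which hold with the same $L$-functions (they depend only on $\pi_{\de,\mu}$, not on $\psi$)—and solving for $\varepsilon^\pm(\pi_{\de,\mu},z,\psi^a)$ gives the first pair of formulas; the second pair is obtained simply by specializing $c_a=d_a=|a|^{\dim V^+/2}=|a|^{m(k+1)/2}$. The only nontrivial verification along the way is keeping track of the exponents, which is a routine but careful accounting.
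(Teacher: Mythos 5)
Your proposal is correct and follows essentially the same route as the paper: both arguments write $\cF^a(\Phi)=c_a\,\cF(\Phi)(a\,\cdot)$, absorb the dilation into the central element $m_a$ using $G$-invariance of $d^*X,d^*Y$, the homogeneity $\deg\Delta_0=k+1$, and $\pi_{\de,\mu}(m_a)=\varpi(a)\mathrm{Id}$, and then read off the new $\ep$-factors from the rescaled functional equations. Your explicit derivation of $c_ad_a=|a|^{\dim V^+}$ merely fills in what the paper dismisses as "easily seen."
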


\begin{proof} \hfill

From the definitions we have:
\begin{equation}\label{eq-Fourier-a}\cF^a(\Phi)(Y) =c_{a}\int_{V^+}\Phi(X)\psi(a\;b(X,Y) ) dX= c_{a}\cF(\Phi) (aY).
\end{equation}

It is easily seen  that  the measures  $c_{a}dX$ and  $d_{a}dY$ are  dual for $\cF^a$ if and only if $$c_{a}d_{a}=|a|^{\dim V^+}.$$
From  (\ref{eq-defep+}) and  (\ref{eq-defep-}), the functions  $\ep^\pm(\pi_{\de,\mu}, z,\psi)$ are uniquely determined by  $d(\de,\mu,z)$ which appears in the functional equation satisfied by  $Z^\pm(\Phi,z,\xi,w)$. The definition of these functions depends on the choice of the dual measures $(dX,dY)$ on  $V^+\times V^-$. Let   $Z^{a,\pm}(\Phi,z,\xi,w)$ be the new  zeta functions relative to  $(c_a dX, d_a dY)$. Then   $Z^{1,\pm}(\Phi,z,\xi,w)= Z^{\pm}(\Phi,z,\xi,w)$.\me
 
 Let  $\Phi\in\cS(V^+)$ and  $w\in I_{\de,\mu}$. Let also  $z\in \C$ such that ${\rm Re}(\mu_k-z)>0$. Then  $Z^-(\cF(\Phi), \frac{1}{2}(m+1)-z,\xi, w)$ is given by an integral  (Theorem \ref{thm-Zeta-mero} (2)(a)). Using  (\ref{eq-Fourier-a}) and the $G$-invariance of  $d^*Y$ under the central element $m_a$, we obtain: 
$$  Z^{a,-}(\cF^a(\Phi), \frac{m+1}{2}-z, \xi,w)= Z^{a,-}( c_{a}\cF(\Phi)(a\,  {\bf \cdot})), \frac{m+1}{2}-z, \xi,w)$$
$$= |a|^{\dim V^+}\int_{V^-}\cF(\Phi)(m_a^{-1}.Y)|\nabla_{0}(Y)|^{\frac{m+1}{2}-z} \langle\pi_{\de,\mu}^*(Y)\xi,w\rangle d^*Y$$
$$= |a|^{\dim V^+}\int_{V^-}\cF(\Phi)(Y)\frac{1}{|a|^{(k+1)\left(\frac{m+1}{2}-z\right)}}|\nabla_{0}(Y)|^{\frac{m+1}{2}-z} \langle\pi_{\de,\mu}^*(m_a.Y)\xi,w\rangle d^*Y$$
$$= |a|^{\dim V^+}\int_{V^-}\cF(\Phi)(Y)\frac{1}{|a|^{(k+1)\left(\frac{m+1}{2}-z\right)}}|\nabla_{0}(Y)|^{\frac{m+1}{2}-z} \langle\pi_{\de,\mu}^*(Y)\xi,\pi_{\de,\mu}(m_a^{-1})w\rangle d^*Y$$
$$= \varpi(a)^{-1}|a|^{\dim V^+}\int_{V^-}\cF(\Phi)(Y)\frac{1}{|a|^{(k+1)\left(\frac{m+1}{2}-z\right)}}|\nabla_{0}(Y)|^{\frac{m+1}{2}-z} \langle\pi_{\de,\mu}^*(Y)\xi,  w\rangle d^*Y$$
  \vskip 5pt

As $\displaystyle m=\frac{\dim V^+}{k+1}$ we have 

$$Z^{a,-}(\cF^a(\Phi), \frac{m+1}{2}-z, \xi,w)=\varpi(a)^{-1}|a|^{(k+1)(\frac{m-1}{2}+z)}  Z^-(\cF(\Phi),\frac{m+1}{2}-z, \xi,w).$$

Therefore  $ Z^{a,-}(\cF^a(\Phi), \frac{m+1}{2}-z, \xi,w)=d^a(\de,\mu,z) Z^{a,+}(\Phi, z+\frac{m-1}{2}, \xi,w)$ where
$$d^a(\de,\mu,z)=d(\de,\mu,z) \frac{\varpi(a)^{-1}|a|^{(k+1)(\frac{m-1}{2}+z)}}{c_{a}},$$
which implies that $$\ep^+(\pi_{\de,\mu}, z,\psi^a)=  \frac{\varpi(a)^{-1}|a|^{(k+1)(\frac{m-1}{2}+z)}}{c_{a}}\ep^+(\pi_{\de,\mu}, z,\psi).$$

Similarly, taking again  $z\in \C$ such that  ${\rm Re}(\mu_k-z)>0$, one has  
$$\cF^a(\Psi)(X)=d_a\int_{V^+} \Psi(Y) \psi(a\;b(X,Y))dY=d_a\cF(\Psi)(aX)$$
and 
$$Z^{a,+}(\cF^a(\Psi), \frac{m+1}{2}-z, \xi,w)= |a|^{\dim V^+} \int_{V^+} \cF(\Psi)(a X) |\De_0(X)|^{\frac{m+1}{2}-z}\langle\pi_{\de,\mu}^*(X)\xi,  w\rangle d^*X$$
$$= |a|^{\dim V^+}  \int_{V^+} \cF(\Psi)(X) |a|^{-(k+1)(\frac{m+1}{2}-z)}|\De_0(X)|^{\frac{m+1}{2}-z}\langle\pi_{\de,\mu}^*(m_a^{-1}.X)\xi,  w\rangle d^*X$$
$$= |a|^{\dim V^+}  \varpi(a) |a|^{-(k+1)(\frac{m+1}{2}-z)} Z^+(\cF(\Psi), \frac{m+1}{2}-z, \xi,w)$$
If we divide both sides of the preceding identity by $L(\pi_{\delta,\mu},1-z)$ we obtain 
\begin{align*}\Xi^{a,+}(\cF^a(\Psi),1-z,\xi,w)&= |a|^{\dim V^+}  \varpi(a) |a|^{-(k+1)(\frac{m+1}{2}-z)}\Xi^+(\cF(\Psi),1-z,\xi,w)\\
{}&= |a|^{\dim V^+}  \varpi(a) |a|^{-(k+1)(\frac{m+1}{2}-z)}\ep^-(\pi_{\delta,\mu},z,\psi)\Xi^-(\Psi,z,\xi,w)\\
{}&=\frac{1}{d_{a}}|a|^{\dim V^+}  \varpi(a) |a|^{-(k+1)(\frac{m+1}{2}-z)}\ep^-(\pi_{\delta,\mu},z,\psi)\Xi^{a,-}(\Psi,z,\xi,w).
 \end{align*}

And hence
 $$\ep^-(\pi_{\de,\mu}, z, \psi^a)=c_a \varpi(a) |a|^{-(k+1)(\frac{m+1}{2}-z)} \ep^-(\pi_{\de,\mu}, z, \psi).$$
 \end{proof}

      
   \vskip 5pt 
    
 \printindex

 \end{document}